\titlespacing{\paragraph}{0em}{0em}{0.5em}
\titlespacing{\subparagraph}{0em}{0em}{0.5em}
\theoremstyle{plain}
\newtheorem{theorem}{Theorem}[section]
\newtheorem{proposition}[theorem]{Proposition}
\newtheorem{lemma}[theorem]{Lemma}
\newtheorem{corollary}[theorem]{Corollary}
\newtheorem{example}[theorem]{Example}
\newtheorem*{theorem*}{Theorem}
\newtheorem*{main}{Main Theorem}
\newtheorem*{CorA}{Corollary A}
\newtheorem*{ThmC}{Theorem C}
\theoremstyle{definition}
\newtheorem{definition}[theorem]{Definition}
\newtheorem{notation}[theorem]{Notation}
\newtheorem{hypothesis}[theorem]{Hypothesis}
\newtheorem*{HypB}{Hypothesis B}
\theoremstyle{remark}
\newtheorem*{remark}{Remark}
\renewcommand{\Gamma}{\varGamma}
\renewcommand{\epsilon}{\varepsilon}
\renewcommand{\bar}{\overline}
\renewcommand{\hat}{\widehat}
\renewcommand{\leq}{\leqslant}
\renewcommand{\geq}{\geqslant}
\newcommand{\normaleq}{\trianglelefteq}
\newcommand{\divides}{\bigm|}
\newcommand{\fs}{\mathcal{F}}
\newcommand{\N}{\mathbb{N}}
\newcommand{\SL}{\mathrm{SL}} 
\newcommand{\SU}{\mathrm{SU}} 
\newcommand{\GF}{\mathrm{GF}} 
\newcommand{\syl}{\mathrm{Syl}}
\newcommand{\GL}{\mathrm{GL}}
\newcommand{\Sp}{\mathrm{Sp}}
\newcommand{\PSL}{\mathrm{PSL}}
\newcommand{\PSp}{\mathrm{PSp}}
\newcommand{\PSU}{\mathrm{PSU}}
\newcommand{\Sz}{\mathrm{Sz}}
\newcommand{\Ree}{\mathrm{Ree}}
\newcommand{\Sym}{\mathrm{Sym}} 
\newcommand{\Alt}{\mathrm{Alt}} 
\newcommand{\Dih}{\mathrm{Dih}}
\newcommand{\Frob}{\mathrm{Frob}}
\newcommand{\Aut}{\mathrm{Aut}}
\newcommand{\Out}{\mathrm{Out}}
\newcommand{\Inn}{\mathrm{Inn}}
\newcommand{\Mor}{\mathrm{Mor}}
\newcommand{\Hom}{\mathrm{Hom}}
\newcommand{\Iso}{\mathrm{Iso}}
\newcommand{\Inj}{\mathrm{Inj}}
\newcommand{\Ob}{\mathrm{Ob}}
\def \wt {\widetilde}
\title{Rank $2$ Amalgams and Fusion Systems}
\author{Martin van Beek}
\thanks{This work formed the majority of author's PhD thesis at the University of Birmingham under the supervision of Prof. Chris Parker. The author gratefully acknowledges the support received from the EPSRC (EP/N509590/1) during this period.}
\begin{document}

\maketitle

\begin{abstract}
We classify fusion systems $\fs$ in which $O_p(\fs)=\{1\}$, and there are two $\Aut_{\fs}(S)$-invariant essential subgroups whose normalizer systems generate $\fs$. We employ the amalgam method and, as a bonus, obtain $p$-local characterizations of certain rank $2$ group amalgams whose parabolic subgroups involve strongly $p$-embedded subgroups.
\end{abstract}

\section{Introduction}
For a finite group $G$ and a prime $p$ dividing the order of $G$, the $p$-fusion category of $G$ provides a means to concisely express properties of the conjugacy of $p$-elements within $G$. Fusion systems may then be viewed as an abstraction of fusion categories without the need to specify a group $G$, instead focusing only on the properties of a particular $p$-group. 

The purpose of this paper is to classify certain fusion systems which are generated by automorphisms of two subgroups which satisfy certain properties. This is achieved by identifying a rank two amalgam within the fusion system, and then utilizing the amalgam method. In this way, the work in this paper may be viewed not only as a result about fusion systems, but as a result about (not necessarily finite) groups (see \hyperlink{ThmC}{Theorem C}). Furthermore, as an application, the work here will aid in classifying saturated fusion systems over maximal unipotent subgroups of finite groups of Lie type of rank $2$. Indeed, this result has already been used to give a complete description of all saturated fusion systems supported on $p$-groups isomorphic to a Sylow $p$-subgroup of $\mathrm{G}_2(p^n)$ or $\PSU_4(p^n)$, see \cite{G2pPaper}.

The methodology for proving this result breaks down as follows. Firstly, we identify the automizers of the two distinguished subgroups in our fusion system $\fs$. Here the subgroups in question are \emph{essential} in $\fs$ and, using fusion techniques and classification results concerning groups with strongly $p$-embedded subgroups, we can almost completely describe the outer automorphisms induced by $\fs$. Then, using the model theorem, we are able to able to investigate finite groups whose fusion categories are isomorphic to normalizer subsystems of the two distinguished essential subgroups. With these two groups in hand, we can then form an amalgam whose faithful completion realizes $\fs$. 

This leads to the second part of the analysis. Here, we employ the amalgam method, building on work began by Goldschmidt \cite{goldschmidt}. In our interpretation, we closely follow the techniques developed and refined by Delgado and Stellmacher \cite{Greenbook} and a large number of the amalgams we investigate are fortunately already classified there. Indeed, several of the amalgams we investigate are unique up to isomorphism and, as it turns out, this is enough to determine the fusion system up to isomorphism. However, in some cases, we do not go so far and instead aim only to bound the order of the $p$-group on which $\fs$ is supported and apply a package in MAGMA \cite{Comp1} which identifies the fusion system. In fact, in two instances there are no finite groups which realize the amalgam appropriately and we uncover two exotic fusion systems, one of which was known about previously by work of Parker and Semeraro \cite{parkersem}, and another which has been described in \cite{ExoSpo}. With that said, given the information we gather about the amalgams, it does not seem such a stretch to at least provide a characterization of these amalgams up to some weaker notion of isomorphism.

Within this work, we very often use a $\mathcal{K}$-group hypothesis when investigating automizers of essential subgroups and a $\mathcal{CK}$-system hypothesis on the fusion system $\fs$. Recall that $ \mathcal{K}$-group is a finite group in which every simple section is isomorphic to a known finite simple group. A $\mathcal{CK}$-system is then a saturated fusion system in which the induced automorphism groups on all $p$-subgroups are $\mathcal{K}$-groups. At some stage in the analysis, unfortunately, we make explicit use of the classification of finite simple groups (CFSG), specifically when $\fs$ is exotic. However, up to that point, we are still able to determine the size of the $p$-group on which $\fs$ is supported, as well the local actions, within a $\mathcal{CK}$-system hypothesis and only appeal to the classification to prove that the fusion system is exotic. Thus, we believe this result would still be suitable for use in any investigation of fusion systems in which induction via a minimal counterexample is utilized. The main theorem is as follows:

\begin{main}\hypertarget{MainThm}{}
Let $\fs$ be a local $\mathcal{CK}$-system on a $p$-group $S$. Assume that $\fs$ has two $\Aut_{\fs}(S)$-invariant essential subgroups $E_1, E_2\normaleq S$ such that for $\fs_0:=\langle N_{\fs}(E_1), N_{\fs}(E_2) \rangle_S$ the following conditions hold:
\begin{itemize}
\item $O_p(\fs_0)=\{1\}$;
\item for $G_i:=\Out_{\fs}(E_i)$, if $G_i/O_{3'}(G_i)\cong\Ree(3)$, $G_i$ is $p$-solvable or $T$ is generalized quaternion, then $N_{G_i}(T)$ is strongly $p$-embedded in $G_i$ for $T\in\syl_p(G_i)$.
\end{itemize}
Then $\fs_0$ is saturated and one of the following holds:
\begin{enumerate}
\item $\fs_0=\fs_S(G)$, where $F^*(G)$ is isomorphic to a rank $2$ simple group of Lie type in characteristic $p$;
\item $\fs_0=\fs_S(G)$, where $G\cong \mathrm{M}_{12}, \Aut(\mathrm{M}_{12}), \mathrm{J}_2, \Aut(\mathrm{J}_2), \mathrm{G}_2(3)$ or $\PSp_6(3)$ and $p=2$;
\item $\fs_0=\fs_S(G)$, where $G\cong \mathrm{Co}_2, \mathrm{Co}_3,\mathrm{McL}$, $\Aut(\mathrm{McL}), \mathrm{Suz}, \Aut(\mathrm{Suz})$ or $\mathrm{Ly}$ and $p=3$;
\item $\fs_0=\fs_S(G)$, where $G\cong\PSU_5(2), \Aut(\PSU_5(2)),  \Omega_8^+(2), \mathrm{O}_8^+(2), \Omega_{10}^-(2),\\ \Sp_{10}(2), \PSU_6(2)$ or  $\PSU_6(2).2$ and $p=3$;
\item $\fs_0$ is simple fusion system on a Sylow $3$-subgroup of $\mathrm{F}_3$ and, assuming $\mathrm{CFSG}$, $\fs_0$ is an exotic fusion system uniquely determined up to isomorphism;
\item $\fs_0=\fs_S(G)$, where $G\cong \mathrm{Ly}, \mathrm{HN}, \Aut(\mathrm{HN})$ or $\mathrm{B}$ and $p=5$; or
\item $\fs_0$ is a simple fusion system on a Sylow $7$-subgroup of $\mathrm{G}_2(7)$ and, assuming $\mathrm{CFSG}$, $\fs_0$ is an exotic fusion system uniquely determined up to isomorphism.
\end{enumerate}
\end{main}

We include $\mathrm{G}_2(2)'\cong\PSU_3(3)$, $\Sp_4(2)'\cong\Alt(6)$ and the Tits groups ${}^2\mathrm{F}_4(2)'$ as groups of Lie type in characteristic $2$. 

\sloppy{Some explanation is required for the exceptional condition when $\Out_{\fs}(E_i)/O_{3'}(\Out_{\fs}(E_i))\cong \Ree(3)$, $\Out_{\fs}(E_i)$ is $p$-solvable or $\Out_S(E_i)$ is generalized quaternion in the above theorem. This assumption ensures that the centralizers of non-central chief factors in a model of $N_{\fs}(E_i)$ are $p$-closed. This is key in our methodology and facilitates the use of various techniques, especially coprime action arguments, which are used to deduce $\Out_{\fs}(E_i)$ and its action on $E_i$. In the case where $\Out_{\fs}(E_i)/O_{5'}(\Out_{\fs}(E_i))\cong \Sz(32):5$, this could also pose a problem. However, in this situation we are able to apply a transfer argument to dispel this case (see \cref{Ree3}). It is easily seen using the Alperin--Goldschimidt theorem that if $E_i$ is not contained in any other essential subgroup of $\fs$ (which we term \emph{maximally essential}) then the exceptional condition holds. We anticipate that in in the majority of the applications of the \hyperlink{MainThm}{Main Theorem}, this hypothesis can be forced.}

In support of the \hyperlink{MainThm}{Main Theorem}, applying various results from \cite{Clelland}, \cite{OliSmall}, \cite{Sp4}, \cite{Comp1}, \cite{G2pPaper} and \cite{ExoSpo}, we can also describe $\fs$ up to isomorphism in most cases. It remains to classify the fusion systems supported on a Sylow $p$-subgroup of ${}^2\mathrm{F}_4(2^n)$, ${}^3\mathrm{D}_4(p^n)$ or $\PSU_5(p^n)$. The important cases to consider are where the set of essential subgroups is not contained in the set of unipotent radicals of the two maximal parabolic subgroups of the ${}^2\mathrm{F}_4(2^n)$, ${}^3\mathrm{D}_4(p^n)$ or $\PSU_5(p^n)$ arranged to contain $S$. We expect that this never happens in these examples and, in the language above, that $\fs=\fs_0$.

In the classification in \hyperlink{MainThm}{Main Theorem}, where $\fs_0$ is \emph{realizable} by  finite group, we provide only one example of a group which realizes the fusion system. In several instances, this example is not unique, even amongst finite simple groups. In particular, if $\fs_0$ is realized by a simple group of Lie type in characteristic coprime to $p$, then there are lots of examples which realize the fusion system, see for instance \cite{OliEquiv}. Note also that we manage to capture a large number of fusion systems at odd primes associated to sporadic simple groups. Indeed, as can be witnessed in the tables provided in \cite{AWC}, almost all of the $p$-fusion categories of the sporadic simple groups at odd primes are either constrained, supported on an extraspecial group of exponent $p$ and so are classified in \cite{RV1+2}, or satisfy the hypothesis of the \hyperlink{MainThm}{Main Theorem}.

It is surprising that in the conclusion of the \hyperlink{MainThm}{Main Theorem} there are so few exotic fusion systems. It has seemed that, at least for odd primes, exotic fusion systems were reasonably abundant. Perhaps an explanation for the apparent lack of exotic fusion system is that the setup from the \hyperlink{MainThm}{Main Theorem} somehow reflects some of the geometry present in rank $2$ groups of Lie type. Additionally, we remark that in the two exotic examples in the classification, the fusion systems are obtained by ``pruning" a particular class of essential subgroups, as defined in \cite{Comp1}. Indeed, these essential subgroups, along with their automizers, seem to resemble Aschbacher blocks, the minimal counterexamples to the Local $C(G,T)$-theorem \cite{CGT}. Most of the exotic fusion systems the author is aware of either have a set of essentials resembling blocks, or are obtained by pruning a class of essentials resembling blocks out of the fusion category of some finite group. For instance, pearls in fusion systems, investigated in \cite{grazian} and \cite{ParkerMax}, are the smallest examples of blocks in fusion systems.

The work we undertake in the proof of the \hyperlink{MainThm}{Main Theorem} may be regarded as a generalization of some of the results in \cite{OliAmal}, where only certain configurations at the prime $2$ are considered. There, the authors exhibit a situation in which a pair of subgroups of the automizers of pairs of essential subgroup generate a subsystem, and then describe the possible actions present in the subsystem, utilizing Goldschmidt's pioneering results in the amalgam method. With this in mind, we provide the following corollary along the same lines which, at least with regards to essential subgroups, may also be considered as the minimal situation in which a saturated fusion system satisfies $O_p(\fs)=\{1\}$.

\begin{CorA}\hypertarget{CorA}{}
Suppose that $\fs$ is a saturated fusion system on a $p$-group $S$ such that $O_p(\fs)=\{1\}$. Assume that $\fs$ has exactly two essential subgroups $E_1$ and $E_2$. Then $N_S(E_1)=N_S(E_2)$ and writing $\fs_0:=\langle N_{\fs}(E_1), N_{\fs}(E_2)\rangle_{N_S(E_1)}$, $\fs_0$ is a saturated normal subsystem of $\fs$ and either
\begin{enumerate}
\item $\fs=\fs_0$ is determined by the \hyperlink{MainThm}{Main Theorem};
\item $p$ is arbitrary, $\fs_0$ is isomorphic to the $p$-fusion category of $H$, where $F^*(H)\cong\PSL_3(p^n)$, and $\fs$ is isomorphic to the $p$-fusion category of $G$ where $G$ is the extension of $H$ by a graph or graph-field automorphism;
\item $p=2$, $\fs_0$ is isomorphic to the $2$-fusion category of $H$, where $F^*(H)\cong\PSp_4(2^n)$, and $\fs$ is isomorphic to the $2$-fusion category of $G$ where $G$ is the extension of $H$ by a graph or graph-field automorphism; or
\item $p=3$, $\fs_0$ is isomorphic to the $3$-fusion category of $H$, where $F^*(H)\cong\mathrm{G}_2(3^n)$, and $\fs$ is isomorphic to the $3$-fusion category of $G$ where $G$ is the extension of $H$ by a graph or graph-field automorphism.
\end{enumerate}
\end{CorA}

As intimated earlier in this introduction, we utilize the amalgam method to classify the fusion systems in the statement of the \hyperlink{MainThm}{Main Theorem}. Here, we work in a purely group theoretic setting and so, as a consequence of the work in this paper, we obtain some generic results concerning amalgams of finite groups which apply outside of fusion systems. We operate under the following hypothesis, and note that the relevant definitions are provided in \cref{AmalgamsSetup}:

\begin{HypB}\hypertarget{HypB}{}
$\mathcal{A}:=\mathcal{A}(G_1, G_2, G_{12})$ is a characteristic $p$ amalgam of rank $2$ satisfying the following:
\begin{enumerate}
\item for $S\in\syl_p(G_{12})$, $N_{G_1}(S)=N_{G_2}(S)\le G_{12}$; and
\item writing $\bar{G_i}:=G_i/O_p(G_i)$, $\bar{G_i}$ contains a strongly $p$-embedded subgroup and if $\bar{G_i}/O_{3'}(\bar{G_i})\cong \Ree(3)$, $\bar{G_i}$ is $p$-solvable or $\bar{S}$ is generalized quaternion, then $N_{\bar{G_i}}(\bar{S})$ is strongly $p$-embedded in $\bar{G_i}$ for $S\in \syl_p(G_{12})$.
\end{enumerate}
\end{HypB}

It transpires that all the amalgams satisfying \hyperlink{HypB}{Hypothesis B} are either weak BN-pairs of rank $2$; or $p\leq 7$, $|S|\leq 2^9$ when $p=2$, and $|S|\leq p^7$ when $p$ is odd. Moreover, in the latter exceptional cases we can generally describe, at least up to isomorphism, the parabolic subgroups of amalgam. It is worth mentioning that condition (ii) holds whenever $\bar{G_i}$ has a strongly $p$-embedded subgroup and $O^{p'}(\bar{G_i})$ is $p$-minimal.

What is remarkable about these results is that amalgams produced have ``critical distance" (defined in \cref{BasicAmalNot}) bounded above by $5$ . In the cases where the amalgam is not a weak BN-pair of rank $2$, the critical distance is bounded above by $2$, and when this distance is equal to $2$, the amalgam is \emph{symplectic} and was already known about by work of Parker and Rowley \cite{parkerSymp}. We present an undetailed version of the theorem summarizing the amalgam theoretic results below.

\begin{ThmC}\hypertarget{ThmC}{}
Suppose that $\mathcal{A}:=\mathcal{A}(G_1, G_2, G_{12})$ satisfies \hyperlink{HypB}{Hypothesis B}. Then one of the following occurs:
\begin{enumerate}
\item $\mathcal{A}$ is a weak BN-pair of rank $2$;
\item $p=2$, $\mathcal{A}$ is a symplectic amalgam, $G_1/O_2(G_1)\cong \Sym(3)$, $G_2/O_2(G_2)\cong (3\times 3):2$ and $|S|=2^6$;
\item $p=2$, $\Omega(Z(S))\normaleq G_2$, $\langle (\Omega(Z(S))^{G_1})^{G_2})\rangle\not\le O_2(G_1)$, $O^{2'}(G_1)/O_2(G_1)\cong\SU_3(2)'$, $O^{2'}(G_2)/O_2(G_2)\cong\Alt(5)$ and $|S|=2^9$;
\item $p=3$, $\Omega(Z(S))\normaleq G_2$, $\langle (\Omega(Z(S))^{G_1})\rangle\not\le O_3(G_2)$, $O_3(G_1)=\langle (\Omega(Z(S))^{G_1})\rangle$ is cubic $2F$-module for $O^{3'}(G_1/O_3(G_1))$ and $|S|\leq 3^7$; or
\item $p=5$ or $7$, $\mathcal{A}$ is a symplectic amalgam and $|S|=p^6$.
\end{enumerate}
\end{ThmC}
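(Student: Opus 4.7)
The plan is to apply the amalgam method in the Goldschmidt--Delgado--Stellmacher framework to $\mathcal{A}$. First, using classification-type results on groups containing a strongly $p$-embedded subgroup (Bender, and for $p$ odd the work of Aschbacher--Gorenstein--Lyons et al.), I would enumerate the possibilities for $\bar{G}_i := G_i/O_p(G_i)$. The exceptional clause (ii) of \hyperlink{HypB}{Hypothesis B} is tailored so that inside a model $M_i$ of $G_i$, the centralizer in $M_i$ of any noncentral chief factor is $p$-closed. This is the workhorse: it unlocks standard coprime action and $p$-stability technology whenever one needs to pin down the action of $M_i$ on a $p$-local module, and in particular rules out the pathologies attached to $\Ree(3)$, $p$-solvable cases, and generalized quaternion Sylow subgroups that would otherwise obstruct the module-theoretic analysis.

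Next, I would pass to the coset graph $\Gamma$ attached to $\mathcal{A}$ and set up the critical distance $b$ and the modules $Z_\alpha := \langle \Omega(Z(S_\alpha))^{G_\alpha} \rangle$ as in \cref{BasicAmalNot}. Condition (i) of \hyperlink{HypB}{Hypothesis B} identifies $G_{12}$ with the full edge stabilizer, so the basic pushing-up machinery applies without modification. Fixing a critical pair $(\alpha,\alpha')$ at distance $b$, the defining property $[Z_\alpha,Z_{\alpha'}]\neq 1$ together with $Z_{\alpha'}\leq O_p(G_\alpha)$ extracts a nontrivial FF- or $2F$-action of $\bar{G}_\alpha$ on a quotient of $Z_\alpha$. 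The rigidity imposed by the strongly $p$-embedded hypothesis on $\bar{G}_\alpha$, combined with the known lists of FF- and cubic $2F$-modules for quasisimple groups, constrains the local action sharply.

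The main case split is on $b$. For $b\geq 3$, standard weak-closure and pushing-up arguments from the Delgado--Stellmacher programme force $\mathcal{A}$ to be a weak BN-pair of rank $2$, yielding conclusion (i); the bulk of these amalgams are already classified in \cite{Greenbook}. For $b=1$, the amalgam is symplectic in the sense of Parker--Rowley \cite{parkerSymp}, and their classification, intersected with the strongly $p$-embedded constraint, produces (ii) at $p=2$ and (v) at $p=5,7$. The substantive remaining range is $b=2$: here $Z_\alpha$ carries a cubic FF- or $2F$-module structure for $\bar{G}_\alpha$, and matching this against the strongly-$p$-embedded candidates reduces to the configurations in (iii) and (iv). Once the chief factors of $G_\alpha$ on $O_p(G_\alpha)$ are identified, bounds on $|S|$ are obtained by iterating commutators inside $O_p(G_\alpha)$ and using that finitely many $\bar{G}_\alpha$-translates of $Z_\beta$ generate $O_p(G_\alpha)$, producing $|S|\leq 2^9$ in (iii) and $|S|\leq 3^7$ in (iv).

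The hardest step is the $b=2$, $p=3$ analysis: the pool of candidate cubic $2F$-module actions supporting a strongly $3$-embedded normalizer is narrow but each requires painstaking verification, and the interplay between $G_1$ and $G_2$ yields several near-miss configurations. One must simultaneously control the commutator structure of $O_3(G_1)$ and $O_3(G_2)$, show $\langle \Omega(Z(S))^{G_1}\rangle\not\leq O_3(G_2)$, identify $O_3(G_1)$ with $\langle \Omega(Z(S))^{G_1}\rangle$, and extract the cubic $2F$-bound on $|S|$. The analogous $b=2$ case at $p=2$ leading to (iii) requires the same style of argument with the additional verification, against the FF-module classification over $\mathbb{F}_2$, that $O^{2'}(G_1)/O_2(G_1)\cong\SU_3(2)'$ and $O^{2'}(G_2)/O_2(G_2)\cong\Alt(5)$ is the unique surviving configuration.
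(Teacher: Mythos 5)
Your high-level plan — coset graph, critical distance $b$, strongly $p$-embedded constraints on $\bar G_i$, FF/$2F$-module classification, and appeal to the Delgado--Stellmacher weak BN-pair classification — matches the paper's general strategy. However, the organization of the case analysis has several errors that would prevent the argument from going through.

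First, the definition of a critical pair $(\alpha,\alpha')$ is merely $Z_\alpha\not\le G_{\alpha'}^{(1)}$, equivalently $Z_\alpha\not\le Q_{\alpha'}$; it does \emph{not} imply $[Z_\alpha,Z_{\alpha'}]\ne 1$. The paper's primary case split (Sections 6 and 7) is exactly whether $Z_{\alpha'}\not\le Q_\alpha$ (equivalently $[Z_\alpha,Z_{\alpha'}]\ne\{1\}$) or $Z_{\alpha'}\le Q_\alpha$. Your proposal silently assumes $[Z_\alpha,Z_{\alpha'}]\ne 1$ from the start and thereby discards the entire ``$Z_{\alpha'}\le Q_\alpha$'' branch, which is where $b$ is forced odd (\cref{bodd}), where the $b=3$ analysis produces the ${}^2\mathrm{F}_4(2^n)$ and $\mathrm{M}_{12}$-type weak BN-pairs, where $b=5$ produces the $\mathrm{F}_3$-amalgam, and where $b=1$ produces conclusion (iv).

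Second, you have the correspondence between $b$ and the conclusions inverted. Symplectic amalgams have critical distance $b=2$ (this is immediate from the definition and noted explicitly in the paper), not $b=1$: conclusions (ii), (iii), and (v) all arise from $b=2$ in the case $[Z_\alpha,Z_{\alpha'}]\ne\{1\}$ (see \cref{5Symp}, \cref{b2i}, \cref{b2ii}), whereas conclusion (iv) arises from $b=1$ in the case $Z_{\alpha'}\le Q_\alpha$ (see \cref{b=1a}, \cref{b=1b}, \cref{b=1c}). Your claim that ``$b=2$'' is the ``substantive remaining range'' producing (iii) and (iv) conflates two disjoint parts of the argument living on opposite sides of the primary dichotomy, and your assertion that $b=1$ yields the symplectic amalgams is false.

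Third, ``for $b\ge 3$ standard weak-closure and pushing-up arguments force a weak BN-pair'' is not a proof. In the $[Z_\alpha,Z_{\alpha'}]\ne\{1\}$ branch the paper shows $b\le 2$ outright (\cref{BetaCenterI}, \cref{b=2}); in the $Z_{\alpha'}\le Q_\alpha$ branch the bound $b\le 5$ and the odd-$b$ restriction take a long sequence of lemmas (\cref{bnot7}, etc.), and the $b=3,5$ cases are not read off a generic pushing-up theorem but require identifying the module structure in detail before \cite{Greenbook} can be invoked. As written, your proposal would assert conclusion (i) for a range of $b$ where the paper still has to do substantial work to rule out non-BN-pair configurations.
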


Much more information about the amalgams is provided where they arise in the proofs.

The contributions in this paper may be viewed as the ``rank $2$'' case of an attempt to classify fusion systems which contain a ``parabolic system.'' For parabolic systems in groups, work of various authors indicates that given sufficient control of the rank $2$ residues of a parabolic system, using the theory of chamber systems and buildings, one can identify a BN-pair and, if the group under investigation is finite, a result of Tits implies that the group is a group of Lie type. For a survey of results in this area, see \cite{MeixnerSurv}. A result of Onofrei \cite{Ono} translating parabolic systems in groups to their fusion theoretic counterparts, suggests that it may be possible to use the rank $2$ information present in a fusion system $\fs$ to identify a ``BN-pair'' and show that $\fs$ is actually isomorphic to the fusion category of a finite group of Lie type. Although there are some simple and gratifying corollaries to extract relating to this from the results in this article, we feel that this work merits its own investigation.

In \cref{GrpSec}, we set up the various group theoretic terminologies and results we use throughout the paper. Most importantly, we characterize groups with strongly $p$-embedded, groups with associated FF-modules and $2$F-modules, groups which contain elements which act quadratically, and exhibit situations in which these phenomena occur. The typical examples of automizers in our investigations are rank $1$ groups of Lie type in defining characteristic and, because of this, large parts of this section are devoted to the properties of such groups and their ``natural'' modules. In \cref{FusSec}, we introduce fusion systems and, for the most part, reproduce definitions and properties associated to fusion systems which may be readily found in the literature. We end this section by determining the potential automizers of maximally essential subgroups of fusion systems, which may be of independent interest. In \cref{AmalgamsSetup} we demonstrate how to identify a rank $2$ amalgam given certain hypotheses on a fusion system and begin setting up the group theoretic framework needed for the amalgam method. Here is where we deduce the \hyperlink{MainThm}{Main Theorem} from \hyperlink{ThmC}{Theorem C}. In \cref{AmalSec} we set up the hypothesis and notations needed for the amalgam method. We demonstrate how to force quadratic or cubic action in this framework, which allows the use of results from \cref{GrpSec}. For several arguments, we investigate a minimal counterexample where minimality is imposed on the order of the models of the normalizers of essential subgroups. In the amalgam method, the case division separates fairly naturally, and we follow the divisions used in \cite{Greenbook}. Then \cref{evensec} and \cref{oddsec} deal with each of the particular cases.

Our notation and terminology for groups is reasonably standard and generally follows that used in \cite{asch2}, \cite{gor}, \cite{kurz}, \cite{Huppert} and in the ATLAS \cite{atlas}. For fusion systems, we follow the notation used in \cite{ako} and when working with the amalgam method, our definitions and notation follow \cite{Greenbook}. Throughout, we adopt bar notation for quotients. That is, if $G$ is a group, $H\le G$ and $U\normaleq G$, then writing $\bar{G}:=G/U$, we recognize $\bar{H}:=HU/U$. Some clarification is probably also required on the notation used for group extensions. We use $A:B$ to denote the a semidirect product of $A$ and $B$, where $A$ is normalized by $B$. We use the notation $A.B$ to denote an arbitrary extension of $B$ by $A$. That is, $A$ is a normal subgroup of $A.B$ such that the quotient of $A.B$ by $A$ is isomorphic to $B$. We use the notation $A\cdot B$ to denote a central extension of $B$ by $A$ and the notation $A\circ B$ to denote a central product of $A$ and $B$, where the intersection of $A$ and $B$ will be clear whenever this arises.
\section{Group Theory Preliminaries}\label{GrpSec}

In this section, we collect various results to be used later in the paper. Several are well known or elementary, and where possible, we aim to give explicit references or rudimentary proofs. We use \cite{asch2}, \cite{gor}, \cite{GLS2}, \cite{Huppert} and \cite{kurz} as background texts and also appeal to them for proofs of several results we will use frequently throughout this work. We also make appeals to \cite{GLS3} for known facts about known finite simple groups. The remainder of the citations in this section occur as justifications of more isolated results. 

\begin{definition}
A finite group $G$ is a $\mathcal{K}$-group if every simple section of $G$ is a known finite simple group.
\end{definition}

\begin{definition}
Let $G$ be a finite group and $p$ a prime dividing $|G|$. Then $G$ is of \emph{characteristic $p$} if $C_G(O_p(G))\le O_p(G)$. Equivalently, if $F^*(G)=O_p(G)$.
\end{definition}

\begin{lemma}
Let $G$ be a finite group of characteristic $p$. If $H\normaleq\normaleq G$ or $O_p(G)\le H\le G$, then $H$ is of characteristic $p$.
\end{lemma}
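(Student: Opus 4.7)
The plan is to handle the two cases separately, starting with the overgroup case, which is the easier of the two.

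For the overgroup case, suppose $O_p(G)\le H\le G$. Since $O_p(G)$ is a normal $p$-subgroup of $G$, and in particular a normal $p$-subgroup of $H$, we have $O_p(G)\le O_p(H)$. Then using the characteristic $p$ hypothesis on $G$, I would chain
\[
C_H(O_p(H))\le C_H(O_p(G))\le C_G(O_p(G))\le O_p(G)\le O_p(H),
\]
which gives that $H$ is of characteristic $p$.

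For the subnormal case, an easy induction on the length of a subnormal chain reduces the problem to $H\normaleq G$. Here I would argue via the generalized Fitting subgroup, using the equivalent formulation $F^*(G)=O_p(G)$. First I would show $F^*(H)\le F^*(G)$: since $H\normaleq G$, $F(H)$ is characteristic in $H$ hence normal in $G$ and nilpotent, so $F(H)\le F(G)$; and components of $H$ are subnormal in $G$, hence are components of $G$ (or trivial), so $E(H)\le E(G)$. Therefore $F^*(H)=F(H)E(H)\le F(G)E(G)=F^*(G)=O_p(G)$.

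In particular, $F^*(H)$ is a $p$-group, and being characteristic in $H$ it is a normal $p$-subgroup of $H$, so $F^*(H)\le O_p(H)$. Combined with the standard containment $O_p(H)\le F(H)\le F^*(H)$, this gives $F^*(H)=O_p(H)$. Now invoking the standard property $C_H(F^*(H))\le F^*(H)$ of the generalized Fitting subgroup, we conclude that $C_H(O_p(H))\le O_p(H)$, i.e., $H$ is of characteristic $p$. The main conceptual step is verifying $F^*(H)\le F^*(G)$ for normal $H$, but this is straightforward given the subnormality of components; everything else is bookkeeping.
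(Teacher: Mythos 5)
Your proof is correct. The paper simply states ``This is elementary'' and gives no argument, so there is no proof of record to compare against; your write-up is a perfectly reasonable filling-in of the details. One small remark: for the subnormal case you could shorten the $F^*$ computation by invoking the standard identity $F^*(H)=H\cap F^*(G)$ for $H$ subnormal in $G$ (see, e.g., Aschbacher, \emph{Finite Group Theory}), which immediately gives $F^*(H)=H\cap O_p(G)\le O_p(H)\le F(H)\le F^*(H)$ and hence $F^*(H)=O_p(H)$; but your componentwise verification of $F^*(H)\le F^*(G)$ is just as valid.
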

\begin{proof}
This is elementary.
\end{proof}

\begin{lemma}[Coprime Action]
Suppose that a group $G$ acts on a group $A$ coprimely, and $B$ is a $G$-invariant subgroup of $A$. Then the following hold:
\begin{enumerate}
\item $C_{A/B}(G)=C_A(G)B/B$;
\item if $G$ acts trivially on $A/B$ and $B$, then $G$ acts trivially on $A$;
\item $[A, G]=[A,G,G]$;
\item $A=[A,G]C_A(G)$ and if $A$ is abelian $A=[A,G]\times C_A(G)$;
\item if $G$ acts trivially on $A/\Phi(A)$, then $G$ acts trivially on $A$;
\item if $p$ is odd, $A$ is a $p$-group and $G$ acts trivially on $\Omega(A)$, then $G$ acts trivially on $A$; and
\item for $S\in\syl_p(G)$, if $m_p(S)\geq 2$ then $A=\langle C_A(s) \mid s\in S\setminus\{1\}\rangle$.
\end{enumerate}
\end{lemma}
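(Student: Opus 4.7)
The plan is to establish (i) first and let the rest cascade. For (i), the inclusion $C_A(G)B/B \le C_{A/B}(G)$ is immediate. For the reverse, a $G$-invariant coset $aB$ must contain a $G$-fixed element; I would show this by induction on $|B|$ along a $G$-invariant normal series of $B$ with elementary abelian factors, where on each factor the averaging map $x \mapsto \frac{1}{|G|}\sum_{g \in G} x^g$ is well-defined by coprimality of $|G|$ and $|B|$ and produces the required fixed point. Equivalently, one may apply Schur--Zassenhaus to the semidirect product $G \ltimes B$ acting on $aB$. Given (i), part (ii) follows at once: the first hypothesis and (i) give $A = C_A(G)B$, while the second gives $B \le C_A(G)$. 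Part (iv) is (i) applied with $B = [A, G]$, since $G$ acts trivially on $A/[A, G]$; in the abelian case the averaging projection is an idempotent with kernel $[A, G]$ and image $C_A(G)$, making the product direct.

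For (iii), apply (iv) to the $G$-invariant subgroup $[A, G]$ in place of $A$: this gives $[A, G] = [A, G, G]\cdot C_{[A, G]}(G)$. Passing to the abelian quotient $[A, G]/[A, G, G]$, on which $G$ acts trivially by construction, the direct-sum statement from (iv) forces $C_{[A, G]}(G) \le [A, G, G]$, yielding (iii). For (v), (iv) gives $A = [A, G]C_A(G)$; the hypothesis that $G$ is trivial on $A/\Phi(A)$ forces $[A, G] \le \Phi(A)$, so $A = \Phi(A)C_A(G)$, and the nongenerator property of $\Phi(A)$ then gives $A = C_A(G)$.

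Part (vi) is the most delicate and is where I expect the main obstacle to lie. I would induct on $|A|$, handling first the case $A$ abelian: (iv) gives $A = [A, G]\times C_A(G)$, and since $\Omega(A) \le C_A(G)$ by hypothesis we have $[A, G]\cap\Omega(A) = 1$, which in a non-trivial $p$-group forces $[A, G] = 1$. For the non-abelian case the reduction is to work through a $G$-invariant characteristic series, using commutator-power identities of the form $[x, g]^{p^k} \equiv [x^{p^k}, g]$ modulo deeper terms of the lower central series; these identities are available for odd $p$ but fail at $p = 2$, which is precisely why the odd-characteristic hypothesis is essential.

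For (vii), since $m_p(S) \ge 2$ we may pick $E \le S$ elementary abelian of rank $2$ and it suffices to show $A = \langle C_A(e) : 1 \ne e \in E\rangle$. Inducting on $|A|$ along a $G$-invariant chief series, reduce to the case where $A$ is an elementary abelian $q$-group for some prime $q \ne p$ viewed as an $\mathbb{F}_q[E]$-module. Since $E$ is non-cyclic of exponent $p$ and $\bar{\mathbb{F}}_q^\times$ is locally cyclic, $E$ admits no faithful linear character; hence every irreducible $\mathbb{F}_q[E]$-constituent of $A$ has non-trivial kernel in $E$, and so $A = \sum_{1 \ne e \in E} C_A(e)$. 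Combining with (i) to lift centralizers from each chief factor to the next step of the series completes the induction.
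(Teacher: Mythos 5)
Parts (i), (ii), (iv), and (v) are fine: the Schur--Zassenhaus route for (i) (conjugating the complements $G$ and $G^a$ of $B$ in $BG\le A\rtimes G$ by an element of $B$, then observing $N_A(G)=C_A(G)$) is cleaner than the averaging induction but both work, and (ii), (iv), (v) follow as you describe. For (iii), however, the execution is off. You call $[A,G]/[A,G,G]$ an ``abelian quotient'' and invoke the direct-sum part of (iv) on it, but that quotient is not known to be abelian a priori --- indeed once (iii) is proved it is trivial, so the claimed abelianness is essentially the conclusion. The correct route with the same ingredients: pass to $\overline{A}:=A/[A,G,G]$, observe that $[[\overline{A},G],G]=1$ so $[\overline{A},G]\le C_{\overline{A}}(G)$, and then the first half of (iv) gives $\overline{A}=[\overline{A},G]C_{\overline{A}}(G)=C_{\overline{A}}(G)$, whence $[\overline{A},G]=1$. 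No abelianness or direct sum is needed.

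Part (vi) is, as you concede, only a sketch, and the sketch does not close. The abelian base case (using $\Omega([A,G])\le\Omega(A)\le C_A(G)$ together with the direct decomposition from (iv)) is correct, but ``work through a $G$-invariant characteristic series using commutator-power identities'' is not an argument: the identity $[x,g]^{p^k}\equiv[x^{p^k},g]$ holds only modulo carefully controlled lower-central terms, the required Hall--Petrescu bookkeeping and the divisibility of the relevant binomial coefficients for odd $p$ have to be carried out, and it is not specified what series is used or how the induction terminates. This step needs to be either written out in full or replaced by the standard textbook argument.

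Part (vii) has a more serious gap. You reduce along a ``$G$-invariant chief series'' to factors that are elementary abelian $q$-groups, but a $G$-chief factor of $A$ is only characteristically simple, i.e.\ a direct product of isomorphic simple groups; these need not be abelian, since $A$ is an arbitrary $p'$-group and can well be nonsolvable (e.g.\ $A\cong\mathrm{PSL}_2(7)$ with $p=5$). Your $\mathbb{F}_q[E]$-module argument applies only to the elementary abelian factors, so for nonsolvable $A$ the induction does not go through. Handling chief factors that are direct products of nonabelian simple groups requires additional case analysis (on whether $E$ fixes each simple factor --- where one needs control of $p'$-automorphisms, typically via the Schreier conjecture --- or permutes the factors nontrivially). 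This is exactly why the paper cites \cite[Proposition 11.23]{GLS2} rather than reproving it; a self-contained proof must address the nonsolvable case.
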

\begin{proof}
See, for instance, \cite[Chapter 8]{kurz}.
\end{proof}

\begin{lemma}\label{GLS2p'}
If a non-cyclic elementary abelian $p$-group $G$ acts on a $p'$-group $A$, then 
\[A=\langle C_A(B) \mid B\le G, [G: B]=p\rangle.\]
\end{lemma}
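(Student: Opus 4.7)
The plan is to proceed by induction on $|A|$. Set $A_0 := \langle C_A(B) \mid B \le G,\, [G:B]=p\rangle$; since $G$ is abelian, each hyperplane $B$ is normal in $G$, so every $C_A(B)$, and hence $A_0$ itself, is $G$-invariant. The goal is to show $A = A_0$.

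The first step is a reduction to the case in which $A$ has no proper nontrivial $G$-invariant normal subgroup. Suppose $1 < N \normaleq A$ is $G$-invariant and proper. Coprime action yields $C_{A/N}(B) = C_A(B)N/N$ for each hyperplane $B$, so applying the inductive hypothesis to the smaller $p'$-groups $N$ and $A/N$ (each with the induced $G$-action) gives $N \le A_0$ and $A \le A_0 N$, whence $A = A_0$. In the reduced situation, the classification of characteristically simple groups forces $A$ to be a direct product of isomorphic simple groups which are permuted transitively by $G$.

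When $A$ is abelian, it is elementary abelian of exponent some prime $q \ne p$, and I would view $A$ as an $\mathbb{F}_q G$-module. Coprime order together with Maschke's theorem makes it semisimple, and since $G$ is abelian, extending scalars to a splitting field $\bar{\mathbb{F}}_q$ decomposes $A \otimes \bar{\mathbb{F}}_q$ into one-dimensional $G$-eigenspaces indexed by characters $\chi \colon G \to \bar{\mathbb{F}}_q^\times$. Because $G$ is elementary abelian of exponent $p$, the image of $\chi$ is cyclic of order $1$ or $p$, so $\ker\chi$ has index $1$ or $p$ in $G$; the trivial-character summand lies in $C_A(G) \le C_A(B)$ for any hyperplane $B$ (here the non-cyclicity of $G$ ensures a hyperplane exists), while each nontrivial-character summand lies in $C_A(\ker\chi)$. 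Descending back to $A$ yields $A = A_0$.

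The main obstacle will be the non-abelian case of the reduced situation, in which $A = S_1 \times \cdots \times S_k$ with the $S_i$ isomorphic non-abelian simple groups transitively permuted by $G$. Here one exploits that the kernel $K$ of the permutation action of $G$ on $\{S_1,\dots,S_k\}$ has $p$-power index in $G$, reducing (via coprime-action fixed-point theorems applied to the factor-wise action) to the $G$-action on a single $S_i$, with $G/C_G(S_i)$ embedded in $\Out(S_i)$. Invoking the Schreier property (that $\Out(S_i)$ is solvable, a consequence of CFSG) together with case-by-case inspection of the few non-abelian simple groups admitting a faithful noncyclic elementary abelian $p$-subgroup of $\Out$ with $p \nmid |S_i|$ completes the argument; note that for the typical applications throughout this paper $A$ is solvable, so only the abelian step is actually needed.
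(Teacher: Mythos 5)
The paper itself does not prove this lemma; it is quoted directly from \cite[Proposition 11.23]{GLS2}, so there is no ``paper proof'' to compare against. Your reduction step and abelian case are both correct (the detour through $\overline{\mathbb{F}}_q$ is avoidable --- Schur's lemma applied directly to an irreducible $\mathbb{F}_qG$-summand $V$ of $A$ already shows that $G/C_G(V)$ embeds in the multiplicative group of a field and is therefore cyclic of order $1$ or $p$ --- but the argument as written is sound). Note also that your reduction already disposes of the case where $A$ is a $q$-group for a single prime $q$, since a minimal $G$-invariant normal subgroup of a $q$-group is elementary abelian.

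The genuine gap is in the non-abelian characteristically simple case. Writing $A = S_1 \times \cdots \times S_k$ with the $S_i$ non-abelian simple, you gesture at a reduction to $k=1$ ``via coprime-action fixed-point theorems applied to the factor-wise action'' and then at a CFSG/Schreier case check, but neither step is carried out: the reduction to a single factor is not explained, and it is never said how a case-by-case inspection of $\Out(S_i)$ would actually establish the generation statement $A = A_0$. This route is also unnecessarily heavy. A much cleaner way to pass from the $q$-group case to the general case is Glauberman's lemma: since $G$ is a $p$-group (hence solvable) acting coprimely on $A$, for each prime $q$ dividing $|A|$ there is a $G$-invariant Sylow $q$-subgroup $P_q$ of $A$. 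Any such choice of Sylow subgroups generates $A$ (compare orders), and each $P_q$ is handled by the $q$-group case, giving $P_q = \langle C_{P_q}(B) \mid B\le G,\ [G:B]=p\rangle \le A_0$ and hence $A = A_0$. This bypasses the classification entirely; the solvable core of your argument is doing all the real work, and the bridge from there to arbitrary $p'$-groups is Glauberman's lemma, not CFSG.
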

\begin{proof}
This is is \cite[Proposition 11.23]{GLS2}.
\end{proof}

\begin{lemma}[Burnside]\label{burnside}
Let $S$ be a finite $p$-group. Then $C_{\Aut(S)}(S/\Phi(S))$ is a normal $p$-subgroup of $\Aut(S)$.
\end{lemma}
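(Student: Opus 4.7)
The plan is to split the statement into its two assertions: that $C_{\Aut(S)}(S/\Phi(S))$ is normal in $\Aut(S)$, and that it is a $p$-group. Both follow essentially formally from earlier material in the excerpt, so the proof will be short.

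For the normality part, I would exploit the fact that $\Phi(S)$ is a characteristic subgroup of $S$. Every $\alpha \in \Aut(S)$ therefore maps $\Phi(S)$ to itself, and so induces an automorphism of the quotient $S/\Phi(S)$. This yields a group homomorphism $\pi \colon \Aut(S) \to \Aut(S/\Phi(S))$, and directly from the definition, $\ker\pi = C_{\Aut(S)}(S/\Phi(S))$. As the kernel of a homomorphism, it is automatically normal in $\Aut(S)$.

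For the $p$-subgroup assertion, set $A := C_{\Aut(S)}(S/\Phi(S))$ and pick any $\alpha \in A$ of order coprime to $p$. Then $\langle \alpha \rangle$ is a cyclic $p'$-group acting on the $p$-group $S$, so the action is coprime; by assumption, $\alpha$ acts trivially on $S/\Phi(S)$. Part (v) of the Coprime Action lemma stated just above then forces $\alpha$ to act trivially on $S$, i.e.\ $\alpha = 1$. Hence every element of $A$ has $p$-power order, and since $A$ is finite, Cauchy's theorem rules out any other prime divisor of $|A|$, so $A$ is a $p$-group.

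The only potential obstacle is whether one needs a bare-hands commutator estimate to bound the order of a $p'$-automorphism fixing $S/\Phi(S)$; however, the Coprime Action lemma stated in the excerpt packages exactly that content, so no extra work is required and the argument above is essentially immediate.
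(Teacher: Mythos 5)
Your proof is correct and is the standard argument. The paper states this lemma without proof (it is classical Burnside), so there is nothing to compare against; your two-part decomposition — normality via the induced map $\Aut(S)\to\Aut(S/\Phi(S))$ and the $p$-group assertion via coprime action (item (v) of the Coprime Action lemma) plus Cauchy — is exactly the textbook route and requires no repair.
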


The final result we describe here which still falls under the umbrella of ``coprime action'' and is essential to our analysis further on is the A$\times$B-lemma due to Thompson.

\begin{lemma}[A$\times$B-Lemma]
Let $AB$ be a finite group which acts on a $p$-group $V$. Suppose that $B$ is a $p$-group, $A=O^p(A)$ and $[A,B]=\{1\}=[A, C_V(B)]$. Then $[A,V]=\{1\}$.
\end{lemma}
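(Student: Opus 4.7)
The plan is to proceed in two stages: first reduce to $A$ cyclic of $p'$-order, then induct on $|V|$ and force a minimal counterexample to yield a contradiction via the three subgroups lemma. For the reduction, observe that $A = O^p(A)$ is generated by its $p'$-elements, and for any $p'$-element $a \in A$ the subgroup $\langle a \rangle$ inherits all the hypotheses: $O^p(\langle a\rangle) = \langle a \rangle$ is automatic, while $[\langle a\rangle, B] \le [A, B] = 1$ and $[\langle a\rangle, C_V(B)] \le [A, C_V(B)] = 1$. If the lemma holds for such cyclic subgroups then each $p'$-element of $A$ centralizes $V$, and hence so does $A$. So henceforth $A = \langle a\rangle$ is cyclic of $p'$-order and acts coprimely on the $p$-group $V$, so the coprime action lemma from the excerpt applies.

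Now I induct on $|V|$ and consider a minimal counterexample. I first force $V = [V,A]$: if $[V,A] < V$ then $[V,A]$ is a proper $AB$-invariant subgroup with $C_{[V,A]}(B) \le C_V(B)$, so induction gives $[A,[V,A]] = 1$, and then the identity $[V,A] = [V,A,A]$ from the coprime action lemma forces $[V,A] = 1$, a contradiction. Next, I choose a maximal proper $AB$-invariant subgroup $W$ of $V$. Since $\Phi(V)$ is characteristic, $W\Phi(V)$ is also $AB$-invariant and contains $W$, and maximality forces $W \Phi(V) \in \{W, V\}$; the possibility $W \Phi(V) = V$ contradicts $\Phi(V)$ consisting of non-generators. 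Hence $\Phi(V) \le W$, which makes $V/W$ elementary abelian and $W$ normal in $V$, and the induction hypothesis gives $[A, W] = 1$. Because $V = [V,A] \not\le W$, $A$ acts non-trivially on $V/W$; then Maschke's theorem (available since $A$ is cyclic of $p'$-order acting on an $\mathbb{F}_p$-vector space) gives $V/W = C_{V/W}(A) \oplus [V/W,A]$, and the $AB$-irreducibility of $V/W$ forces $C_{V/W}(A) = 0$.

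Finally, I derive a contradiction from the fact that $C_{V/W}(B) \ne 0$ (since $B$ is a $p$-group acting on the non-trivial $p$-group $V/W$). Pick $\bar v \in C_{V/W}(B)$ non-trivial, lift to $v \in V$ so that $[v, b] \in W$ for all $b \in B$, and set $U := W\langle v^{a^i} : i \ge 0\rangle$. Then $U$ is a subgroup (as $W$ is normal in $V$), is $A$-invariant by construction, and is $B$-invariant because $[v^{a^i}, b] = [v, b]^{a^i} \in W^{a^i} = W$, using $[A,B] = 1$ and the $A$-invariance of $W$. A direct commutator calculation using the normality of $W$ in $V$ then shows $[U, B] \le W$, and combining with $[A,W] = 1$ gives $[U, B, A] = 1$. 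Since $[B, A, U] = 1$ trivially, the three subgroups lemma yields $[A, U, B] = 1$, i.e.\ $[A, U] \le C_V(B)$. The hypothesis $[A, C_V(B)] = 1$ then gives $[A, U, A] = 1$, and coprime action upgrades this to $[A, U] = [A,U,A] = 1$. In particular $v \in U$ is centralized by $A$, so $\bar v \in C_{V/W}(A) = 0$, contradicting the choice of $\bar v$.

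The main obstacle is the final paragraph: constructing the right subgroup $U$ and extracting the containment $[A, U] \le C_V(B)$ via the three subgroups lemma. Once that is in place, the hypothesis $[A, C_V(B)] = 1$ together with coprime action close the argument immediately.
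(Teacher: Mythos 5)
Your proof is correct. The paper does not supply a proof of this lemma at all, deferring to \cite[(24.2)]{asch2}; the argument you give — reduce via $A=O^p(A)$ to $A$ cyclic of $p'$-order, induct on $|V|$ to force $V=[V,A]$ and an $AB$-irreducible top quotient $V/W$ with $[A,W]=\{1\}$, then combine the three subgroups lemma with coprime action — is essentially the standard one found in that reference. One small simplification you could make: since $V/W$ is $AB$-irreducible and $C_{V/W}(B)$ is a nonzero $AB$-invariant subgroup (it is $A$-invariant because $[A,B]=\{1\}$), you get $C_{V/W}(B)=V/W$ outright, hence $[V,B]\le W$; this lets you run the three-subgroups step with $U=V$ directly, making the ad hoc construction of $U$ unnecessary (indeed, maximality of $W$ already forces your $U$ to equal $V$).
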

\begin{proof}
See \cite[(24.2)]{asch2}.
\end{proof}

\begin{definition}
Let $G$ be a finite group and $S\in \syl_p(G)$. Then $G$ is \emph{$p$-minimal} if $S\not\normaleq G$ and $S$ is contained in a unique maximal subgroup of $G$.
\end{definition}

\begin{lemma}[McBride's Lemma]\label{McBride}
Let $G$ be a finite group, $S\in\syl_p(G)$ and $\mathcal{P}_G(S)$ denote the collection of $p$-minimal subgroups of $G$ over $S$. Then $G= \langle \mathcal{P}_G(S) \rangle N_G(S)$. Moreover, $O^{p'}(G)= \langle \mathcal{P}_G(S) \rangle.$
\end{lemma}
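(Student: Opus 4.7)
The plan is to prove both equalities by induction on $|G|$. When $S \normaleq G$ we have $N_G(S)=G$ and the first equality is immediate, so assume $S \not\normaleq G$ and set $H := \langle\mathcal{P}_G(S)\rangle N_G(S)$; the task is to show $H=G$.

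The crucial observation enabling the induction is that $p$-minimality is an intrinsic property of a subgroup: if $S \le P \le L \le G$ and $P$ is $p$-minimal in $L$ over $S$, the defining conditions---namely $S \not\normaleq P$ and $S$ lying in a unique maximal subgroup of $P$---refer only to $P$ itself, so $P \in \mathcal{P}_G(S)$. Thus $\mathcal{P}_L(S) \subseteq \mathcal{P}_G(S)$, and trivially $N_L(S) \le N_G(S)$. Applying the inductive hypothesis to any proper overgroup $L < G$ of $S$ yields $L = \langle\mathcal{P}_L(S)\rangle N_L(S) \le H$. Suppose for contradiction that $H<G$; then every maximal subgroup of $G$ containing $S$ is proper and so lies in $H$. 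We split into two cases. If $S$ is contained in a unique maximal subgroup of $G$, then $G$ itself satisfies the definition of $p$-minimality over $S$, giving $G \in \mathcal{P}_G(S) \le H$, a contradiction. Otherwise $S$ lies in two distinct maximal subgroups $M_1, M_2$; since $M_2 \not\le M_1$ and $M_1$ is maximal, $\langle M_1,M_2\rangle = G$, again forcing $G \le H$. Either way we reach a contradiction, so $G=H$.

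For the ``moreover'' statement, observe first that each $P \in \mathcal{P}_G(S)$ satisfies $P = O^{p'}(P)$: the quotient $P/O^{p'}(P)$ is a $p'$-group, but $S \le O^{p'}(P)$ since $S$ consists of $p$-elements, and $P = SO^{p'}(P)$, so $P = O^{p'}(P)$. Since $O^{p'}(P) \le O^{p'}(G)$, this gives $\langle\mathcal{P}_G(S)\rangle \le O^{p'}(G)$. Conversely, $\langle\mathcal{P}_G(S)\rangle$ is $N_G(S)$-invariant (conjugation by $N_G(S)$ permutes $\mathcal{P}_G(S)$), hence normal in $G = \langle\mathcal{P}_G(S)\rangle N_G(S)$; and the quotient $G/\langle\mathcal{P}_G(S)\rangle$ is a quotient of $N_G(S)$ in which the image of $S$ is trivial (as $S$ is already absorbed into $\langle\mathcal{P}_G(S)\rangle$), so it is a $p'$-group, yielding $O^{p'}(G) \le \langle\mathcal{P}_G(S)\rangle$. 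The main subtlety is the case analysis within the minimal counterexample---in particular, exploiting the dichotomy between $S$ lying in a unique maximal subgroup (so $G$ is itself $p$-minimal) and $S$ lying in at least two (so their span covers $G$)---and this is routine once the intrinsic nature of $p$-minimality is in place.
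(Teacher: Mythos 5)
Your inductive proof of $G=\langle\mathcal{P}_G(S)\rangle N_G(S)$ and your proof that $O^{p'}(G)\le\langle\mathcal{P}_G(S)\rangle$ are both correct and run along the same lines as the paper's. The gap is in the reverse containment $\langle\mathcal{P}_G(S)\rangle\le O^{p'}(G)$. You want each $P\in\mathcal{P}_G(S)$ to satisfy $P = O^{p'}(P)$, and you deduce this from the assertion $P = SO^{p'}(P)$. But $S\le O^{p'}(P)$ (as you yourself note), so $SO^{p'}(P)=O^{p'}(P)$ automatically, and ``$P=SO^{p'}(P)$'' is therefore literally the same statement as ``$P=O^{p'}(P)$''. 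You have assumed what you set out to prove, and nowhere used that $P$ is $p$-minimal.

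The missing ingredient is a Frattini argument exploiting $p$-minimality. Since $O^{p'}(P)\normaleq P$ has $p'$-index, $S\in\syl_p(O^{p'}(P))$, so the Frattini argument gives $P=O^{p'}(P)\,N_P(S)$. If $O^{p'}(P)<P$ then $O^{p'}(P)$ lies in the unique maximal subgroup $M$ of $P$ containing $S$; and $N_P(S)$ is a proper subgroup of $P$ (as $S\not\normaleq P$) that contains $S$, so $N_P(S)\le M$ as well; hence $P=O^{p'}(P)N_P(S)\le M<P$, a contradiction. This is precisely what the paper does, except it argues by contradiction from $P\not\le O^{p'}(G)$ (using $O^{p'}(P)\le P\cap O^{p'}(G)<P$) rather than proving $P=O^{p'}(P)$ outright; the mechanism is the same. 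Once this is supplied, the rest of your argument goes through.
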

\begin{proof}
If $G\in \mathcal{P}_G(S)$ then the result holds trivially so assume that $G$ is counterexample to the first statement with $|G|$ minimal. Since $G$ is not $p$-minimal over $S$, there are maximal subgroups $M_1, M_2$ of $G$ which contain $S$. But then, since $G$ was a minimal counterexample, $M_i=\langle \mathcal{P}_{M_i}(S) \rangle N_{M_i}(S)$ for $i\in\{1,2\}$. Since $\mathcal{P}_{M_i}(S) \subseteq \mathcal{P}_{G}(S)$, $N_{M_i}(S)\le N_G(S)$ and $G=\langle M_1, M_2\rangle$, the result holds.

Now, let $P\in \mathcal{P}_{G}(S)$ and $x\in N_G(S)$. Then for $M$ the unique maximal subgroup of $P$ containing $S$, $M^x$ is the unique maximal subgroup of $P^x$ containing $S^x=S$, and $S\not\normaleq P^x$. It follows that $N_G(S)$ normalizes $\langle \mathcal{P}_G(S) \rangle$ and by the definition of $O^{p'}(G)$ and since $G=\langle \mathcal{P}_G(S) \rangle N_G(S)$, $O^{p'}(G)\le \langle \mathcal{P}_G(S) \rangle$. Now, suppose that there is $P\in \mathcal{P}_{G}(S)$ with $P\not\le O^{p'}(G)$. Then $O^{p'}(P)\le P\cap O^{p'}(G)<P$ and so $O^{p'}(P)$ is contained in the unique maximal subgroup of $P$ which contains $S$. Since $S$ is not normal in $P$, $N_P(S)$ is also contained in the unique maximal subgroup of $P$ containing $S$. But then, by the Frattini argument, $P=O^{p'}(P)N_P(S)<P$, a contradiction. Therefore, $\langle \mathcal{P}_G(S) \rangle\le O^{p'}(G)$ and the lemma holds.
\end{proof}

\begin{lemma}\label{p-min quot}
Suppose that $H$ is $p$-minimal over $S$ and $R$ is a normal $p$-subgroup of $H$. Then $H/R$ is $p$-minimal.
\end{lemma}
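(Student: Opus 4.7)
The plan is to use the correspondence theorem directly. Since $R$ is a normal $p$-subgroup of $H$, we have $R \le O_p(H) \le S$, so $S/R$ makes sense as a subgroup of $H/R$, and in fact $S/R \in \syl_p(H/R)$.

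First I would verify that $S/R$ is not normal in $H/R$. This is immediate: if $S/R \normaleq H/R$, then $S \normaleq H$ by the correspondence theorem, contradicting the $p$-minimality hypothesis on $H$.

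Next I would establish the uniqueness of the maximal subgroup containing $S/R$. The correspondence theorem gives a bijection between subgroups of $H/R$ and subgroups of $H$ containing $R$, preserving the maximality and inclusion. Under this bijection, maximal subgroups of $H/R$ containing $S/R$ correspond to maximal subgroups of $H$ containing $SR$. Since $R \le S$, we have $SR = S$, so these are exactly the maximal subgroups of $H$ containing $S$. By the $p$-minimality of $H$ over $S$, there is a unique such maximal subgroup, and hence a unique maximal subgroup of $H/R$ containing $S/R$.

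Combining the two observations shows $H/R$ is $p$-minimal over its Sylow $p$-subgroup $S/R$. There is no real obstacle here; the result is essentially a formal consequence of the correspondence theorem together with the containment $R \le S$ that comes for free from $R$ being a normal $p$-subgroup.
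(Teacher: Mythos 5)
Your proof is correct and is the standard correspondence-theorem argument one would expect; the paper itself omits the proof with the remark "This is elementary." Nothing more is needed.
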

\begin{proof}
This is elementary.
\end{proof}

\begin{definition}
Let $G$ be a finite group and $H<G$. Then $H$ is \emph{strongly $p$-embedded} in $G$ if and only if $|H|_p>1$ and $N_G(Q)\le H$ for each non-trivial $p$-subgroup $Q$ with $Q\le H$. 
\end{definition}

\begin{lemma}\label{spelemma1}
Suppose that $G$ contains a strongly $p$-embedded subgroup $X$. Then the following hold:
\begin{enumerate}
\item $X$ contains a Sylow $p$-subgroup of $G$;
\item if $H\le G$ with $H\not\le X$ then provided $|H\cap X|_p>1$, $H\cap X$ is strongly $p$-embedded in $H$;
\item $O^{p'}(G)\cap X$ is strongly $p$-embedded in $O^{p'}(G)$; and
\item if $G\ne XO_{p'}(G)$, then $XO_{p'}(G)/O_{p'}(G)$ is strongly $p$-embedded in $G/O_{p'}(G)$.
\end{enumerate}
\end{lemma}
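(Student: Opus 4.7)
The four parts are largely formal consequences of the definition, but each needs a small Sylow--theoretic argument; I would organise the proof as follows.

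For (i), pick any $P\in\syl_p(X)$ (which is nontrivial since $|X|_p>1$) and extend to $S\in\syl_p(G)$ with $P\le S$. Because $P$ is a nontrivial $p$-subgroup of $X$, the strongly $p$-embedded hypothesis gives $N_S(P)\le N_G(P)\le X$, so $N_S(P)$ is a $p$-subgroup of $X$ containing $P$; by maximality of $P$ in $X$, $N_S(P)=P$, and the normalizer property in the $p$-group $S$ forces $P=S\in\syl_p(G)$.

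For (ii), given $H\not\le X$ with $|H\cap X|_p>1$, observe $H\cap X<H$ and, for any nontrivial $p$-subgroup $Q\le H\cap X$, simply intersect the inclusion $N_G(Q)\le X$ with $H$ to obtain $N_H(Q)=N_G(Q)\cap H\le X\cap H$.

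Part (iii) reduces to (ii) once one checks $O^{p'}(G)\not\le X$ and $|O^{p'}(G)\cap X|_p>1$. The second is immediate since by (i) some Sylow $p$-subgroup $S$ lies in $X$, and $S\le O^{p'}(G)$ by the standard description of $O^{p'}(G)$ as the subgroup generated by the Sylow $p$-subgroups. For the first, if $O^{p'}(G)\le X$, then the Frattini argument gives $G=O^{p'}(G)N_G(S)$, and since $S$ is a nontrivial $p$-subgroup of $X$, $N_G(S)\le X$, forcing $G\le X$, contradicting $X<G$. Now apply (ii) with $H=O^{p'}(G)$.

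Part (iv) is the one requiring actual care. Write $\bar G:=G/O_{p'}(G)$ and $\bar X:=XO_{p'}(G)/O_{p'}(G)$; the hypothesis $G\ne XO_{p'}(G)$ gives $\bar X<\bar G$, and $|\bar X|_p>1$ by (i). Given a nontrivial $p$-subgroup $\bar Q\le\bar X$, the main step is to lift $\bar Q$ to an honest $p$-subgroup $R\le X$ with $\bar R=\bar Q$: letting $Q_0$ be the full preimage of $\bar Q$ in $XO_{p'}(G)$, the map $X\cap Q_0\to\bar Q$ is surjective (since every element of $\bar Q\le\bar X$ lifts into $X$) with $p'$-kernel $X\cap O_{p'}(G)$, so any Sylow $p$-subgroup $R$ of $X\cap Q_0$ does the job. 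Then for $\bar g\in N_{\bar G}(\bar Q)$ with lift $g\in G$, the conjugate $R^g$ is a Sylow $p$-subgroup of $Q_0$ (since $\bar{R^g}=\bar Q$), so by Sylow's theorem in $Q_0$ there is $h\in Q_0$ with $R^{gh}=R$; hence $gh\in N_G(R)\le X$ (as $R$ is a nontrivial $p$-subgroup of $X$), and since $h\in Q_0\le XO_{p'}(G)$, we conclude $g\in XO_{p'}(G)$, i.e.\ $\bar g\in\bar X$.

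The only genuine obstacle is the lifting step in (iv); everything else is a direct translation of the strongly $p$-embedded condition. I would present (i)--(iii) in a couple of lines each and spend the bulk of the proof on the Sylow/Schur--Zassenhaus style argument for (iv).
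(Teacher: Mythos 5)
Your proof is correct, and since the paper simply cites [parkerSE, Lemmas 3.2--3.3] rather than giving its own argument, there is nothing to compare against directly. Your organisation is the natural one: (i) follows from the normalizer-grows property of $p$-groups, (ii) is immediate, (iii) reduces to (ii) once you verify via Frattini that $O^{p'}(G)\not\le X$ (using that $S\le O^{p'}(G)$), and the genuine content is the lifting/Sylow-conjugation argument in (iv), which you handle correctly. One small point worth making explicit in (iv): you should record that $R$ and $R^g$ are \emph{both} Sylow $p$-subgroups of $Q_0$ (you verify it for $R^g$ but use it implicitly for $R$ when invoking Sylow conjugacy in $Q_0$); this follows since $|R|=|\bar Q|=|Q_0|_p$, as $X\cap O_{p'}(G)$ and $O_{p'}(G)$ are $p'$-groups. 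With that addition the argument is complete.
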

\begin{proof}
See \cite[Lemmas 3.2, 3.3]{parkerSE}.
\end{proof}

\begin{lemma}\label{CyclicSE}
If $G$ has a cyclic or generalized quaternion Sylow $p$-subgroup $T$ and $O_p(G)=1$, then $N_G(\Omega(T))$ is strongly $p$-embedded in $G$.
\begin{proof}
For $X\le T$ a non-trivial subgroup, $X$ is also cyclic or generalized quaternion and so also has a unique subgroup of order $p$. Thus, $\Omega(X)=\Omega(T)$ and since $O_p(G)\ne 1$, we have that $N_G(X)\le N_G(\Omega(X))=N_G(\Omega(T))<G$ so that $N_G(\Omega(T))$ is strongly $p$-embedded in $G$.
\end{proof}
\end{lemma}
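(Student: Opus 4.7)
The plan is to verify directly the two defining conditions for $N_G(\Omega(T))$ to be strongly $p$-embedded in $G$, exploiting the key structural feature that in a cyclic or generalized quaternion $p$-group every non-trivial subgroup contains the unique minimal subgroup of order $p$.

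First I would handle the easy points. Since $\Omega(T)$ is characteristic in $T$, we have $T \le N_G(\Omega(T))$, which gives $|N_G(\Omega(T))|_p = |T| > 1$. Next I would use the hypothesis $O_p(G) = 1$ to force $N_G(\Omega(T)) < G$: if instead $\Omega(T) \trianglelefteq G$, then $\Omega(T) \le O_p(G) = 1$, contradicting $|\Omega(T)| = p$. (This step really needs $O_p(G) = 1$, so the ``$O_p(G) \ne 1$'' in the displayed proof appears to be a typographical slip for ``$O_p(G) = 1$''.)

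The heart of the argument is to show that for every non-trivial $p$-subgroup $Q \le N_G(\Omega(T))$, one has $N_G(Q) \le N_G(\Omega(T))$. My strategy is to embed $Q$ into a Sylow $p$-subgroup $P$ of $N_G(\Omega(T))$. Since $\Omega(T) \trianglelefteq N_G(\Omega(T))$, it is contained in every Sylow $p$-subgroup of $N_G(\Omega(T))$, so $\Omega(T) \le P$. Moreover $P$, being a $p$-subgroup of $G$, lies in a $G$-conjugate of $T$, and is therefore itself cyclic or generalized quaternion. Consequently $P$ possesses a unique subgroup of order $p$, which must coincide with $\Omega(T)$. Since $Q \le P$ is non-trivial, the same structural fact forces $\Omega(Q) = \Omega(T)$. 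Because $\Omega(Q)$ is characteristic in $Q$, any element normalizing $Q$ normalizes $\Omega(Q) = \Omega(T)$, yielding $N_G(Q) \le N_G(\Omega(T))$ as required.

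I anticipate no serious obstacle; the argument is essentially atomic once one notes the unique-subgroup-of-order-$p$ feature. The only small subtlety worth flagging is the first move of the main step, in which $Q$ must be placed in a Sylow subgroup $P$ of $N_G(\Omega(T))$ that also contains $\Omega(T)$ — a harmless point, since the normality of $\Omega(T)$ in $N_G(\Omega(T))$ automatically places $\Omega(T)$ in every such Sylow subgroup.
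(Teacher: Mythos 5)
Your proof is correct and takes essentially the same route as the paper's, hinging on the fact that every non-trivial subgroup of a cyclic or generalized quaternion $p$-group contains the same unique subgroup of order $p$. You are more careful than the paper in one respect: the paper's proof only treats subgroups $X \le T$ and leaves implicit the reduction (via conjugacy inside $N_G(\Omega(T))$, where $T$ remains Sylow) from an arbitrary $p$-subgroup $Q \le N_G(\Omega(T))$ to a subgroup of $T$, whereas you spell this out. You are also right that the ``$O_p(G)\ne 1$'' in the paper's proof is a typo for ``$O_p(G)=1$''; properness of $N_G(\Omega(T))$ indeed requires the triviality hypothesis.
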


Quite remarkably, possessing a strongly $p$-embedded subgroup is a surprisingly limiting condition. In the following two propositions, we roughly determine the structure of groups with strongly $p$-embedded subgroups. For $p=2$, we refer to work of Bender \cite{Bender}, while if $p$ is odd we make use of the classification of finite simple groups. In the application of these results, groups with strongly $p$-embedded subgroups will only ever appear in the local analysis of fusion systems. Particularly, these groups appear as automizers of certain $p$-subgroups and so would fit into the framework of any proofs utilizing a ``minimal counterexample'' hypothesis.

\begin{proposition}\label{SE1}
Suppose that $G=O^{p'}(G)$ is a group with a strongly $p$-embedded subgroup. Let $S\in\syl_p(G)$ and denote $\wt{G}:=G/O_{p'}(G)$. If $m_p(S)=1$ then one of the following holds:
\begin{enumerate}
\item $p$ is an odd prime, $S$ is cyclic, $G$ is perfect and $\wt G$ is a non-abelian finite simple group;
\item $G=SO_{p'}(G)$, $S$ is cyclic or generalized quaternion and $G$ is $p$-solvable; or
\item $p=2$, $S$ is generalized quaternion and $G/\Omega(S)O_{2'}(G)\cong K$, where either $\PSL_2(q)\le K\le \mathrm{P\Upgamma L}_2(q)$ for $q$ odd, or $K=\Alt(7)$.
\end{enumerate}
Moreover, in cases (ii) and (iii), $\langle \Omega(S)^G\rangle=\Omega(S)[\Omega(S), O_{p'}(G)]$ is the unique normal subgroup of $G$ which is divisible by $p$ and minimal with respect to this condition.
\end{proposition}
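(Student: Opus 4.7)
The strategy is to split on the $p$-solvability of $G$ and, in the non-$p$-solvable case, pass to $\wt G := G/O_{p'}(G)$ and invoke the classification of (quasi)simple groups with a strongly $p$-embedded subgroup.

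First, $m_p(S)=1$ forces $S$ cyclic (in all cases) or, when $p=2$, possibly generalized quaternion. For $p=2$ with $S$ cyclic, $S\le C_G(S)$ makes $|N_G(S)/C_G(S)|$ coprime to $p$; as it also embeds in $\Aut(S)$, a $2$-group, we get $N_G(S)=C_G(S)$ and Burnside's normal $p$-complement theorem delivers $G=SO_{2'}(G)$, case (ii). So from now on $S$ is cyclic with $p$ odd, or generalized quaternion with $p=2$. Next, if $G$ is $p$-solvable, then $\wt G$ is $p$-solvable with $O_{p'}(\wt G)=1$, so $F^*(\wt G)=O_p(\wt G)=:\wt T$ is a nontrivial $p$-group, cyclic or generalized quaternion. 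Then $\wt G/\wt T\hookrightarrow\Aut(\wt T)$; when $\wt T$ is cyclic, $\Aut(\wt T)$ is abelian, so $[\wt G,\wt G]\le\wt T$, and $G=O^{p'}(G)$ forces the abelian quotient $\wt G/[\wt G,\wt G]$ to be a $p$-group, hence $\wt G$ itself is a $p$-group and $G=SO_{p'}(G)$; the generalized-quaternion subcase is handled in parallel using the structure of $\Aut(\wt T)$ and the constraint that $\wt S$ has rank~$1$. This completes case (ii).

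Assume now $G$ is not $p$-solvable. Apply \cref{spelemma1}(iv) to the strongly $p$-embedded $X\geq S$: provided $G\neq XO_{p'}(G)$, we obtain a strongly $p$-embedded $\wt X\leq\wt G$. The degenerate case $G=XO_{p'}(G)$ must be ruled out, and this is the subtlest point. Here one exploits $N_G(S)\le X$ and the control of $p$-fusion by $X$ to argue that $\wt X=\wt G$ would be $p$-nilpotent, hence a $p$-group (as $O_{p'}(\wt G)=1$), so $G$ would be $p$-solvable, contrary to hypothesis. Therefore $\wt G$ has a strongly $p$-embedded subgroup, giving $O_p(\wt G)=1=O_{p'}(\wt G)$ and $F^*(\wt G)=E(\wt G)$. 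Since components commute and $\wt S$ has rank~$1$, at most one component has order divisible by $p$; others would lie in $O_{p'}(\wt G)=1$. So $F^*(\wt G)=L$ is a single quasisimple group.

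For $p=2$, Bender's theorem restricted to the generalized-quaternion Sylow case gives $L/Z(L)\cong\PSL_2(q)$ for $q$ odd, or $L/Z(L)\cong\Alt(7)$; controlling the outer extension $\wt G/L$ via $\wt G=O^{p'}(\wt G)$ and the rank-$1$ structure of $\Omega(\wt S)$ yields case (iii). For $p$ odd, $\mathrm{CFSG}$ furnishes the list of simple groups with cyclic Sylow $p$-subgroup admitting a strongly $p$-embedded subgroup, and combined with $\wt G=O^{p'}(\wt G)$ and $O_{p'}(\wt G)=1$, this forces $\wt G$ to be nonabelian simple and $G$ to be perfect, giving case (i). For the moreover clause, set $N:=\langle\Omega(S)^G\rangle\normaleq G$. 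In case (ii), $G=SO_{p'}(G)$ and $\Omega(S)$ is characteristic in $S$, so $N=\langle\Omega(S)^{O_{p'}(G)}\rangle=\Omega(S)[\Omega(S),O_{p'}(G)]$ using $\Omega(S)^z=\Omega(S)\cdot[\Omega(S),z]$ for $z\in O_{p'}(G)$; case (iii) is analogous via the explicit structure of $K$. For uniqueness, any $M\normaleq G$ with $p$ dividing $|M|$ satisfies $1\neq M\cap S\in\syl_p(M)$, so $\Omega(S)=\Omega(M\cap S)\leq M$ and $N\leq M$. The main obstacle I anticipate is ruling out $G=XO_{p'}(G)$ in the non-$p$-solvable situation---this requires a careful fusion-theoretic argument exploiting the strongly $p$-embedded structure; a secondary difficulty is invoking $\mathrm{CFSG}$ for odd $p$ together with the outer-automorphism bookkeeping for $p=2$ to pin down $L$ and $\wt G$ exactly.
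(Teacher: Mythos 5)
Your overall architecture differs from the paper's in a way that creates two genuine gaps. The paper does not split on $p$-solvability of $G$. Instead it passes directly to $\wt G$ and analyzes $F^*(\wt G)$: if $F^*(\wt G)=O_p(\wt G)$ then $O_p(\wt G)=\wt S$ and $G=SO_{p'}(G)$ (case (ii)); otherwise $E(\wt G)\ne 1$ and component analysis applies. This dichotomy \emph{automatically} handles the scenario you flag as "the subtlest point": there is never a need to know in advance whether $\wt G$ inherits a strongly $p$-embedded subgroup, and hence no need to rule out $G=XO_{p'}(G)$.

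The first gap is precisely that degenerate case. You assert that $G=XO_{p'}(G)$ would force $\wt G$ to be $p$-nilpotent "by control of $p$-fusion," but strong $p$-embedding controls fusion without giving $p$-nilpotence (e.g.\ $\PSL_2(p)$), and you offer no argument that the hypothesis $\wt X=\wt G$ converts fusion control into a normal $p$-complement. Note the paper's proof of Proposition~\ref{SE2} rules out $G=XO_{p'}(G)$ using coprime action, but that argument needs $m_p(G)\geq 2$ and is unavailable here, which is exactly why Proposition~\ref{SE1} takes a different route.

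The second gap is the step from "$F^*(\wt G)=L$ is a single quasisimple group" to "$\wt G$ is nonabelian simple." For this one must show $\wt S\le L$ (so that $\wt G/L$ is a $p'$-group and $\wt G=O^{p'}(\wt G)\le L$), and that is not automatic: the paper devotes a Frattini-type argument to it, assuming $\wt H:=L\wt S>L$ and deriving a normal $p$-complement in $\wt H$, contradicting the presence of a component. Your write-up asserts the conclusion from "$\wt G=O^{p'}(\wt G)$ and $O_{p'}(\wt G)=1$" without this intermediate work.

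Finally, for $p=2$ with generalized quaternion $S$, you invoke "Bender's theorem restricted to the generalized-quaternion Sylow case," but Bender's list ($\PSL_2(2^n)$, $\Sz(2^n)$, $\PSU_3(2^n)$) contains no group with generalized quaternion Sylow $2$-subgroups, so the theorem does not apply to $L$. The paper uses the correct toolkit: the Brauer--Suzuki theorem (to make $\langle x\rangle O_{2'}(G)\normaleq G$ for the central involution $x\in S$) followed by the Gorenstein--Walter classification of groups with dihedral Sylow $2$-subgroups applied to $G/\langle x\rangle O_{2'}(G)$. This also has the virtue of treating both $2$-solvable and non-$2$-solvable generalized quaternion cases in one stroke, rather than in two separate branches as your organization requires.
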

\begin{proof}
Since $m_p(S)=1$, $S$ is either cyclic or generalized quaternion by \cite[I.5.4.10 (ii)]{gor}. Suppose first that $S$ is cyclic. If $p=2$, then $G$ has a normal $2$-complement (see \cite[Theorem 7.4.3]{gor}) and (ii) holds. Hence, we may assume from now that $S$ is cyclic and $p$ is odd. Notice that $F(\wt{G})=O_p(\wt G)$ since $O_{p'}(\wt G)= \{1\}$. If $F^*(\wt G)=F(\wt G)=O_p(\wt G)$, then $O_p(\wt G)$ is self-centralizing and as $\wt S$ is abelian, we have that $O_p(\wt G)=\wt S$ and $SO_{p'}(G)\normaleq G$. In particular, $G=O^{p'}(G)\le SO_{p'}(G)\le G$, $G$ is $p$-solvable and (ii) holds.

Suppose now that $\wt G$ has a component $\wt L$. If $p\nmid |\wt L|$, then $L\le O_{p'}(E(\wt G))\le O_{p'}(\wt G)$, a contradiction. Hence, $p$ divides the order of any component of $\wt G$. Since $\wt S$ is cyclic, $\wt L$ has cyclic Sylow $p$-subgroups. By \cite[Lemma 33.14]{asch2}, $Z(\wt L)$ is a $p'$-prime group, and so $Z(\wt L)\le O_{p'}(E(\wt G))=\{1\}$ and $\wt L$ is simple. Notice also that since each component is simple, $E(\wt G)$ is a direct product of components, and since $p$ divides the order of any component, $E(\wt G)=\wt L$ is the unique component of $\wt G$, else $m_p(\wt G)=m_p(G)>1$. Since $O_p(\wt G)\cap E(\wt G)=\{1\}$, we have that $F^*(\wt G)=O_p(\wt G)\times E(\wt G)$ and since $m_p(\wt G)=1$, $O_p(\wt G)=\{1\}$. Therefore, $F^*(\wt G)$ is a non-abelian simple group. 

It remains to prove that $\wt S\le F^*(\wt G)$ to show that (i) holds. Form the group $\wt H=F^*(\wt G) \wt S$ and assume that $\wt H\ne F^*(\wt G)$. Note that by the Frattini argument, $\wt H=F^*(\wt G)N_{\wt H}(R)$ for all $R\in \syl_r(\wt F^*(\wt G))$. Moreover, for $r\ne p$ a prime, $\syl_r(F^*(\wt G))\subseteq \syl_r(\wt H)$. Then for $R\in \syl_r(F^*(\wt G))$ with $r\ne p$, let $P\in \syl_p(N_{\wt H}(R))$ and $T\in\syl_p(\wt H)$ containing $P$. Then $F^*(\wt H)\cap  T<T$ and as $T$ is cyclic and $\wt H=F^*(\wt G)N_{\wt H}(R)$, we deduce that $P=T$ and $N_{\wt H}(R)$ contains a Sylow $p$-subgroup of $\wt H$. Hence, by conjugacy, $\wt S$ normalizes a Sylow $r$-subgroup of $\wt H$, for all primes $r$. But then $\wt S$ normalizes a Sylow $r$-subgroup of $N_{\wt H}(\wt S)$ for all $r$, and so centralizes a Sylow $r$-subgroup of $N_{\wt H}(\wt S)$ for all $r$. Applying \cite[Theorem 7.4.3]{gor}, $\wt H$ has a normal $p$-complement, a contradiction since $\wt H$ contains a component of $\wt G$. Thus, $\wt S\le F^*(\wt G)$ and since $G=O^{p'}(G)$ it follows that $\wt G$ is a non-abelian simple group. Hence, $\wt{G'}=\wt G$ and so $S\le G'$. Then $G=O^{p'}(G)\le G'\le G$, $G$ is perfect and (i) holds.

Assume now that $p=2$ and $S$ is generalized quaternion. Then by the Brauer-Suzuki theorem (\cite[Theorem II.12.1.1]{gor}), we see that $\langle x\rangle O_{2'}(G)$ is a normal subgroup of $G$, and $\bar{G}:=G/\langle x\rangle O_{2'}(G)$ has dihedral Sylow $2$-subgroups. If $O_{2'}(\bar{G})\ne \{1\}$ then for $K$ the preimage in $\wt G$ of $O_{2'}(\bar{G})$, it is easily seen that $O_{2'}(K)\ne\{1\}$ and so $O_{2'}(\wt G)\ne\{1\}$, a contradiction. Thus, $\bar{G}$ is described by \cite[(III.16.3)]{gor} and the result holds.

Suppose case (ii) or (iii) occurs and let $N$ be a normal subgroup of $G$ whose order is divisible by $p$. Then, as $m_p(S)=1$, $\Omega(S)\le N$ and so $\Omega(S)[\Omega(S), O_{p'}(G)]=\Omega(S)[\Omega(S), G]=\langle \Omega(S)^G\rangle\le N$, and the result follows.
\end{proof}

\begin{remark}
Notice that if $H$ is a non-abelian finite simple with cyclic Sylow $p$-subgroups, then for $S\in\syl_p(H)$, $N_G(\Omega(S))$ is strongly $p$-embedded in $H$ by \cref{CyclicSE}. Thus, the description in case (i) is best possible up to a better understanding of $O_{p'}(G)$.  It is also worth noting that every non-abelian finite simple group has a cyclic Sylow $p$-subgroup for some odd prime $p$.
\end{remark}

\begin{proposition}\label{SE2}
Suppose that $G=O^{p'}(G)$ is a $\mathcal{K}$-group with a strongly $p$-embedded subgroup $X$. Let $S\in\syl_p(G)$ and set $\wt{G}:=G/O_{p'}(G)$. If $m_p(G)\geq 2$ then $\wt{G}$ is isomorphic to one of:
\begin{enumerate}
\item $\PSL_2(p^{a+1})$ or $\PSU_3(p^b)$ for $p$ arbitrary, $a\geq 1$ and $p^b>2$;
\item $\Sz(2^{2a+1})$ for $p=2$ and $a\geq 1$;
\item $\Alt(2p)$ for $p>3$;
\item $\mathrm{Ree}(3^{2a+1}), \PSL_3(4)$ or $\mathrm{M}_{11}$ for $p=3$ and $a\geq 0$;
\item $\Sz(32):5, {}^2\mathrm{F}_4(2)', \mathrm{McL}$ or $\mathrm{Fi}_{22}$ for $p=5$; or
\item $\mathrm{J}_4$ for $p=11$.
\end{enumerate}
\end{proposition}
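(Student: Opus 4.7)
The plan is to pass to the quotient $\wt G := G/O_{p'}(G)$, identify $F^*(\wt G)$ as a nonabelian finite simple group, and then invoke the CFSG-classification of simple $\mathcal{K}$-groups admitting a strongly $p$-embedded subgroup. First I would reduce to the case $O_{p'}(G) = 1$ by applying \cref{spelemma1}(iv): when $G \neq XO_{p'}(G)$, the image $\wt X$ is strongly $p$-embedded in $\wt G$, and the hypotheses $G = O^{p'}(G)$ and $m_p(G) \geq 2$ pass to $\wt G$. The edge case $G = XO_{p'}(G)$ would be handled by a direct structural argument showing that the Frattini argument applied to a Sylow $p$-subgroup of the preimage of $O_p(\wt G)$ together with $G = O^{p'}(G)$ either contradicts $m_p(G) \geq 2$ or lands in the same list.

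Working now in $\wt G$, the next step is to show that $F^*(\wt G)$ is a nonabelian finite simple group. The strongly $p$-embedded property immediately gives $O_p(\wt G) = 1$, since otherwise $N_{\wt G}(O_p(\wt G)) = \wt G$ would be contained in the proper subgroup $\wt X$. Hence $F^*(\wt G) = E(\wt G)$ is a central product of components. Each component $L$ has order divisible by $p$ (else $L \leq O_{p'}(\wt G) = 1$), and $Z(L)$ is a $p'$-subgroup normal in $\wt G$ and so trivial, whence every component is nonabelian simple. If there were two components $L_1, L_2$, picking $1 \neq t_i \in L_i$ of order $p$ would force $L_{3-i} \leq C_{\wt G}(t_i) \leq N_{\wt G}(\langle t_i \rangle) \leq \wt X$, hence $L_1L_2 \leq \wt X$; but $L_1L_2 \normaleq \wt G$ would then give $L_1L_2 \leq \wt X \cap \wt X^g$ for every $g \in \wt G$, contradicting the strongly $p$-embedded intersection property. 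Thus $F^*(\wt G) = L$ is a single nonabelian simple $\mathcal{K}$-group, $L \cap \wt X$ is strongly $p$-embedded in $L$ by \cref{spelemma1}(ii), and $\wt G \leq \Aut(L)$ because $C_{\wt G}(L) \leq Z(F^*(\wt G)) = 1$.

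Finally, I would invoke the CFSG-dependent classification of nonabelian finite simple groups admitting a strongly $p$-embedded subgroup (see \cite{GLS3}) to enumerate the possibilities for $L$. The additional constraints $\wt G = O^{p'}(\wt G)$ and $m_p(\wt G) \geq 2$ then determine which almost simple extensions arise, eliminate the entries with cyclic or generalized quaternion Sylow $p$-subgroups, and force the precise numerical parameters ($p^{a+1} \geq p^2$ for $\PSL_2$, $p^b > 2$ for $\PSU_3$, etc.), yielding the stated list.

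The main obstacle is the CFSG-input itself: the classification of simple groups with strongly $p$-embedded subgroups is nontrivial, and for each candidate $L$ one must check against the outer automorphism group which extensions satisfy $G = O^{p'}(G)$ together with the $m_p \geq 2$ constraint—this is a table-lookup rather than a conceptual step, but it is where the real content lies. A secondary subtlety is the corner case $G = XO_{p'}(G)$ in the reduction, which falls outside the direct scope of \cref{spelemma1}(iv) and requires a brief separate argument.
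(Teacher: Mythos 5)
Your main reduction is correct and amounts to unpacking the references the paper cites: the paper simply quotes \cite[(2.5), (3.3)]{parkerSE} (which in turn relies on \cite[Theorem 7.6.1]{GLS3}) for the case $G \ne XO_{p'}(G)$, whereas you reconstruct the standard argument (pass to $\wt G$ via \cref{spelemma1}(iv), show $O_p(\wt G)=1$ and that $F^*(\wt G)$ is a single nonabelian simple component using the strongly $p$-embedded intersection property, then invoke the CFSG table). That reconstruction is fine and self-contained.

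The genuine gap is your treatment of the corner case $G = XO_{p'}(G)$. Your sketch — ``the Frattini argument applied to a Sylow $p$-subgroup of the preimage of $O_p(\wt G)$ either contradicts $m_p(G)\geq 2$ or lands in the same list'' — is both vague and off target: it is not clear what $O_p(\wt G)$ is here or how a Frattini argument would engage with the strongly $p$-embedded hypothesis, and in fact this case does not ``land in the list'' at all. The correct and much shorter argument, which the paper uses, is that this case is simply impossible: since $m_p(G)\geq 2$, $S$ contains a noncyclic elementary abelian $p$-subgroup, so by coprime action (as in \cref{GLS2p'}) one has $O_{p'}(G) = \langle C_{O_{p'}(G)}(a) \mid 1\neq a\in S\rangle$; but each $C_G(a)\leq N_G(\langle a\rangle)\leq X$ because $X$ is strongly $p$-embedded, hence $O_{p'}(G)\leq X$, whence $G = XO_{p'}(G) = X$, contradicting $X<G$. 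You should replace your Frattini remark with this coprime-action argument.
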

\begin{proof}
If $G\ne XO_{p'}(G)$, then this follows from \cite[(2.5), (3.3)]{parkerSE} which in turn uses \cite[Theorem 7.6.1]{GLS3}. So assume that $G=XO_{p'}(G)$. By coprime action, \[O_{p'}(G)=\langle C_{O_{p'}(G)}(a) | 1\ne a\in S\rangle\] since $m_p(G)\geq 2$ and so $O_{p'}(G)\le X$ and $G=X$, a contradiction.
\end{proof}

As witnessed in the above classification, the generic examples of groups with a strongly $p$-embedded subgroup are rank $1$ simple groups of Lie type in characteristic $p$. These are the groups which will appear most often in later work, and so we take this opportunity to list some important properties of these groups (and some properties of groups which ``resemble'' rank $1$ groups). While almost all of these results are well known, we aim to provide explicit references and proofs of these results.

\begin{lemma}\label{SLGen}
Let $G\cong\PSL_2(p^n)$ or $\SL_2(p^n)$ and $S\in\syl_p(G)$. Then the following hold:
\begin{enumerate}
\item $S$ is elementary abelian of order $p^n$;
\item $\SL_2(2)\cong\Sym(3)$, $\PSL_2(3)\cong\Alt(4)$ and $\SL_2(3)$ are all solvable;
\item if $p=2$, then for $U\le S$ with $|U|=4$, there is $x\in G$ such that $G=\langle U, u^x\rangle$ for $1\ne u\in U$;
\item if $p=2$, all involutions in $S$ are conjugate and so, for $1\ne u \in S$ an involution, there is $x,y\in G$ such that $G=\langle u, u^x, u^y\rangle$;
\item if $p$ is odd, then for $1\ne u\in S$, there is $x\in G$ such that $G=\langle u, u^x\rangle$ unless $p^n=9$ in which case there is $x\in G$ such that $H:=\langle u, u^x\rangle<G$ is maximal subgroup of $G$ and $H/Z(H)\cong\PSL_2(5)$;
\item $N_G(S)$ is a solvable maximal subgroup of $G$ and for $K$ a Hall $p'$-subgroup of $N_G(S)$, $K/Z(G)$ is cyclic of order $(p^n-1)/(p^n-1,2)$ and $K$ acts fixed point freely on $S\setminus\{1\}$;
\item if $p^n\geq 4$, then $G$ is perfect and if $\wt G$ is a perfect central extension of $G$ by a group of $p'$-order, then $\wt G\cong\PSL_2(p^n)$ or $\SL_2(p^n)$; and
\item if $x$ is a non-trivial automorphism of $G$ which centralizes $S$, then $x\in\Aut_S(G)$.
\end{enumerate}
\end{lemma}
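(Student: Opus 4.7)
The plan is to verify (i)--(viii) sequentially, relying on the standard matrix realization of $\SL_2(p^n)$, Dickson's classification of subgroups of $\PSL_2(p^n)$, and the standard computation of Schur multipliers. For (i), identify $S$ with the group of upper unitriangular matrices, which is isomorphic to the additive group of $\mathbb{F}_{p^n}$ and so is elementary abelian of order $p^n$. Part (ii) follows from direct order counts and the classical exceptional isomorphisms. For (vi), take $N_G(S)$ to be the image of the Borel subgroup of upper triangular matrices and $K$ to be the image of its diagonal maximal torus; conjugation by $\mathrm{diag}(t, t^{-1})$ sends the unipotent with parameter $a$ to the one with parameter $t^2 a$, which is fixed-point-free on $S \setminus \{1\}$ outside the center and yields the stated Frobenius structure and the order $(p^n-1)/(p^n-1,2)$ of $K$. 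Maximality and solvability of $N_G(S)$ are immediate from Dickson's theorem. For (vii), perfectness follows from the Bruhat decomposition $G = \langle S, S^w\rangle$ together with the commutator identity realizing any transvection as $[\mathrm{diag}(t, t^{-1}), u]$ whenever $t^2 \ne 1$, available because $p^n \geq 4$; the cover statement is a consequence of the Schur multiplier calculation, which gives a cyclic group of order $(p^n-1, 2)$ for $p^n \notin \{4, 9\}$, and in the exceptional case $p^n = 9$ the only $p'$-part of the multiplier is $2$, whose cover is $\SL_2(9)$.

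For the generation statements (iii)--(v), I consult Dickson's list of subgroups of $\PSL_2(p^n)$. In (iv), the transitive action of $K$ on $S \setminus \{1\}$ established in (vi) shows all involutions in $S$ are conjugate, and hence by Sylow's theorem all involutions in $G$ are conjugate. Given $u$, inspection of Dickson's list shows that any proper subgroup of $\PSL_2(2^n)$ containing three involutions not simultaneously lying in a common dihedral overgroup must be of very restricted type, and a generic choice of conjugators $x, y$ forces $\langle u, u^x, u^y\rangle = G$. Part (iii) follows by the same analysis applied to the Klein four-group $U$ together with one further involution outside $C_G(U)$. For (v) with $p$ odd, the subgroups of $\PSL_2(p^n)$ generated by two non-trivial unipotent elements in distinct Sylow $p$-subgroups are, by Dickson, subfield subgroups $\PSL_2(p^m)$ (for $m \mid n$) or exceptional subgroups isomorphic to $\Alt(4)$, $\Sym(4)$, or $\Alt(5)$; for $p^n \ne 9$ one can choose $x$ so that no such proper subgroup contains both $u$ and $u^x$, while the exceptional case $p^n = 9$ is genuinely unavoidable because two conjugate elements of order $3$ in $\PSL_2(9) \cong \Alt(6)$ may generate only the maximal subgroup $\Alt(5) \cong \PSL_2(5)$, which must be exhibited explicitly.

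Finally, for (viii), I appeal to the standard description of $\Aut(\PSL_2(p^n))$ as generated by inner, diagonal, and field automorphisms, with no graph automorphism in rank one. A direct computation in the matrix realization shows that a non-trivial field automorphism restricts on $S$ to a non-trivial Frobenius of $\mathbb{F}_{p^n}$, while a non-inner diagonal automorphism acts as a scalar distinct from $\pm 1$; neither of these centralizes $S$. Hence $x$ must be inner, induced by conjugation by some $s \in G$, and since $C_G(S) = SZ(G)$ (from the Bruhat decomposition and the analysis in (vi)), $s$ lies in $SZ(G)$, placing $x$ in $\Aut_S(G)$. The main obstacle I anticipate is the bookkeeping around the $\PSL_2(9)$ exception in (v), where one must not merely exclude the case but identify the maximal $\PSL_2(5)$-overgroup (and its preimage $\SL_2(5)$ in $\SL_2(9)$, via the isomorphism $\SL_2(9) \cong 2\cdot \Alt(6)$); the remaining items reduce to routine structural computations.
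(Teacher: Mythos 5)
Your proposal is correct and follows essentially the same route as the paper: the paper's proof of this lemma consists of a bare citation to Huppert (II.6--II.8) for (i)--(vi) and to GLS Vol.~3 (Theorems 2.5.12, 6.1.2) for the automorphism group and Schur multiplier facts underlying (vii)--(viii), which are precisely the classical sources (Dickson's subgroup classification, the matrix realization, the Schur multiplier and $\Out(\PSL_2(q))$ computations) you are invoking and fleshing out.
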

\begin{proof}
The proofs of (i)-(vi) are written out fairly explicitly in \cite[{{II.6-II.8}}]{Huppert}. Detailed information on automorphism groups and Schur multipliers is provided in \cite[Theorem 2.5.12]{GLS3} and \cite[Theorem 6.1.2]{GLS3}.
\end{proof}

\begin{lemma}\label{SU3Gen}
Let $G\cong\PSU_3(p^n)$ or $\SU_3(p^n)$ and $S\in\syl_p(G)$. Then the following hold:
\begin{enumerate}
\item $S$ is a special $p$-group of order $p^{3n}$ with $|Z(S)|=p^n$;
\item $\SU_3(2)$ is solvable, a Sylow $2$-subgroup of $\SU_3(2)$ is isomorphic to the quaternion group of order $8$ and $\SU_3(2)'\cong 3^{1+2}_+:2$ has index $4$ in $\SU_3(2)$;
\item for $p^n>2$, $N_G(S)$ is a solvable maximal subgroup of $G$ and for $K$ a Hall $p'$-subgroup of $N_G(S)$, $|K/Z(G)|=(p^{2n}-1)/(p^{2n}-1, 3)$ and $K$ acts irreducibly on $S/Z(S)$;
\item for $p^n>2$, $N_G(Z(S))=N_G(S)$ and for $K$ a Hall $p'$-subgroup of $N_G(S)$, $|C_K(Z(S))|=p^n+1$ and $C_K(Z(S))$ acts fixed point freely on $S/Z(S)$;
\item for any $x\in G\setminus N_G(S)$, $\langle Z(S), Z(S)^x\rangle\cong\SL_2(p^n)$ and $G=\langle Z(S), S^x\rangle$;
\item for $\{1\}\ne U\le Z(S)$, unless $p^n=9$ and $|U|=3$ or $p=2$ and $|U|=2$, there is $x,z\in G$ such that $G=\langle U, U^x, U^z\rangle$;
\item for $\{1\}\ne U\le Z(S)$, if $p^n=9$ and $|U|=3$ or $p=2<p^n$ and $|U|=2$, then there is $x,y,z\in G$ such that $G=\langle U, U^x, U^y, U^z\rangle$;
\item for $\{1\}\ne U\normaleq S$ with $U\not\le Z(S)$, if $p^n\ne 2$ then there is $x\in G$ such that $G=\langle U, U^x\rangle$;
\item if $p^n>2$, then $G$ is perfect and if $\wt G$ is a perfect central extension of $G$ by a group of $p'$-order, then $\wt G\cong\PSU_3(p^n)$ or $\SU_3(p^n)$; and
\item if $x$ is a non-trivial automorphism of $G$ which centralizes $S$, then $x\in\Aut_{Z(S)}(G)$.
\end{enumerate}
\end{lemma}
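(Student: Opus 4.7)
The plan is to treat $G$ as a rank one group of Lie type and extract the structural claims from the Bruhat decomposition together with the corresponding statements for $\SL_2(p^n)$ already established in Lemma~\ref{SLGen}. For (i), I would realize $S$ as the upper unitriangular matrices preserving the Hermitian form, so that the root subgroup structure exhibits $S$ as an extension of the short root group $Z(S)$ of order $p^n$ by the long root quotient $S/Z(S)$ of order $p^{2n}$; the Chevalley commutator formulas then immediately give $S$ special with $\Phi(S) = [S,S] = Z(S)$. Part (ii) is a direct finite check inside $\SU_3(2)$. For (iii)--(iv), $B := N_G(S)$ is the stabilizer of an isotropic point of the Hermitian form; its maximality follows from the rank one Bruhat--Tits structure, and the Levi decomposition $B = SK$ gives the stated cyclic torus $K$ of order $(p^{2n}-1)/(p^{2n}-1,3)$ acting irreducibly on $S/Z(S)$, while its subgroup $C_K(Z(S))$ of order $p^n + 1$ acts fixed-point-freely on $Z(S)$.

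For the generation statements (v)--(viii), the key observation is that for any $x \in G \setminus B$, the subgroup $L := \langle Z(S), Z(S)^x \rangle$ is a rank one Levi-type subgroup isomorphic to $\SL_2(p^n)$, built from the two opposite short root groups. The Bruhat decomposition $G = B \sqcup B n B$ for $n$ a Weyl element then immediately yields (v). For (vi) and (vii), I would restrict attention to $L$ and invoke Lemma~\ref{SLGen}\,(iv),(v) to generate $L$ from a small number of conjugates of a non-trivial subgroup of $Z(S)$; combining this with a further conjugate lying outside $N_G(L)$ produces $G$. The exceptional counts mirror precisely the $\SL_2$ exceptions: $p^n=9$ with $|U|=3$ forces passage through $\PSL_2(5) < \PSL_2(9)$, while $p = 2 < p^n$ with $|U|=2$ is dictated by the single conjugacy class of involutions in $\SL_2(p^n)$ together with the need for more generators in characteristic $2$. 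For (viii), if $U \not\le Z(S)$ then $U$ projects non-trivially to $S/Z(S)$, so $K$-irreducibility forces $\langle U^K \rangle Z(S) = S$, and a single further Weyl conjugate as in (v) generates $G$.

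For (ix), perfectness for $p^n > 2$ follows from Steinberg's presentation of the simply connected Chevalley group, and the explicit classification of Schur multipliers in \cite[Theorem~6.1.2]{GLS3} shows that every perfect $p'$-central extension of $\PSU_3(p^n)$ sits between $\PSU_3(p^n)$ and $\SU_3(p^n)$. For (x), an automorphism $\alpha$ centralizing $S$ must in particular fix $B = N_G(S)$ setwise and centralize both $Z(S)$ and $S/Z(S)$; the description of diagonal, field and graph automorphisms in \cite[Theorem~2.5.12]{GLS3} then forces $\alpha$ to be inner, and since $C_G(S) = Z(S)$, we conclude that $\alpha \in \Aut_{Z(S)}(G)$.

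The main obstacle I anticipate lies in the low-rank exceptions $p^n \in \{2, 9\}$ appearing in parts (vi) and (vii): these are precisely the cases where the embedded Levi fails to be generated by two conjugates of a cyclic subgroup, so one has to verify explicitly that the extra conjugate truly escapes every maximal overgroup of $L$ in $G$. The very small case $\SU_3(2)$ arising in (ii) also needs independent handling, which is why it is flagged separately in parts (iii)--(viii).
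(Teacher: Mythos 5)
Your outline of parts (i)--(v), (ix), (x) matches the paper's in spirit (the paper simply cites Huppert \cite[II.10]{Huppert} and \cite[Theorems 2.5.12, 6.1.2]{GLS3}), but parts (vi)--(viii) are where the paper actually supplies a proof, and there your argument has a genuine gap that you flag but do not close.

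For (vi) and (vii), you build $L = \langle Z(S), Z(S)^x\rangle \cong \SL_2(p^n)$ and generate it from two or three conjugates of $U$ via Lemma~\ref{SLGen}(iv),(v), then propose adding ``a further conjugate lying outside $N_G(L)$.'' That step does not work by itself: a conjugate outside $N_G(L)$ could still lie in some larger proper subgroup together with $L$. The missing ingredient is the fact (due to Mitchell, the paper's citation \cite{Mitchell2}) that $L$ lies in a \emph{unique} maximal subgroup $M \cong \mathrm{GU}_2(p^n)\cong (p^n+1).\SL_2(p^n)$ of $G$. Given that, choosing $z$ with $U^z \not\le M$ (possible since $G=\langle U^G\rangle$) forces $\langle L, U^z\rangle = G$. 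You correctly identify this as ``the main obstacle'' but treat it as something to ``verify explicitly'' rather than as a theorem you need to invoke; without it the argument is incomplete.

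For (viii), your approach is off-track. You try to use $K$-conjugates of $U$ to recover $S$, but the statement requires $G = \langle U, U^x\rangle$ for a single $x$, so $K$ is not available inside the generated subgroup. The paper's argument instead first shows $Z(S)\le U$: since $U\normaleq S$ and $U\not\le Z(S)$, one has $\{1\}\ne[U,S]\le Z(S)\cap U$, and then by irreducibility of $C:=C_{N_G(S)}(Z(S))$ on $S/Z(S)$ (part (iv)) one gets $[U,S]=[\langle U^C\rangle,\langle S^C\rangle]=Z(S)\le U$. Now $|U|>p^n$, so $U$ cannot lie in $M$ (whose Sylow $p$-subgroups have order $p^n$), whence $\langle U, U^x\rangle \supsetneq \langle Z(S), Z(S)^x\rangle$ escapes $M$ and must be all of $G$, again using the uniqueness of $M$. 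You should add the commutator computation establishing $Z(S)\le U$, and in all three of (vi)--(viii) you should invoke the maximal-subgroup classification of $\SU_3(p^n)$ rather than leaving the overgroup question open.
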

\begin{proof}
The proofs of (i)-(v) may be found in \cite[{{II.10}}]{Huppert}. Again, information on automorphism groups and Schur multipliers may be found in \cite[Theorem 2.5.12, Theorem 6.1.2]{GLS3}. It remains to prove (vi)-(viii).

For (vi) and (vii) suppose that $U\le Z(S)$, $p^n\ne 2$ and set $H:=\langle Z(S), Z(S)^x\rangle\cong\SL_2(p^n)$ for $x\in G\setminus N_G(S)$. By \cref{SLGen} (iv), (v), $H$ is generated by two or three conjugates of $U$, and by \cite{Mitchell2}, $H$ is contained in a unique maximal subgroup $M\cong \mathrm{GU}_2(p^n)\cong (p^n+1).\SL_2(p^n)$. Since $G=\langle U^G\rangle$, there is $z$ such that $U^z\not\le M$. It then follows from the maximality of $M$ in $G$ that $G=\langle H, U^z\rangle$ and (vi) and (vii) are proved.

Suppose now that $U\not\le Z(S)$, $U\normaleq S$ and $p^n\ne 2$. Since $U\not\le Z(S)$, $\{1\}\ne [U,S]\le Z(S)\cap U$. Set $C:=C_{N_G(S)}(Z(S))$ and observe that $C$ is irreducible on $S/Z(S)$ by (iv). Then, since $[U,S]\le Z(S)$,  $[U,S]=[U,S]^C=[\langle U^C\rangle, \langle S^C\rangle]$. By the irreducibility of $C$ on $S/Z(S)$, $(UZ(S)/Z(S))^C=S/Z(S)$ and so $[\langle U^C\rangle, \langle S^C\rangle]=Z(S)=[U,S]\le U$. Now, there is $x\in G\setminus N_G(S)$ such that $\langle Z(S), Z(S)^x\rangle\cong\SL_2(p^n)$ is contained in a unique maximal subgroup $M\cong \mathrm{GU}_2(p^n)$. Then, as $U>Z(S)$, $|U|>p^n$, $\langle Z(S), Z(S)^x\rangle<\langle U, U^x\rangle$ and (viii) follows.
\end{proof}

\begin{lemma}\label{SzGen}
Let $G\cong\Sz(2^n)$ and $S\in\syl_2(G)$. Then the following hold:
\begin{enumerate}
\item $n$ is odd and $3$ does not divide the order of $G$;
\item $\Sz(2)\cong 5:4$ is a Frobenius group, $\Phi(\Sz(2))=\{1\}$, $|\Sz(2)'|=5$ and a Sylow $2$-subgroup of $\Sz(2)$ is cyclic of order $4$;
\item if $n>1$ then $\Phi(S)=Z(S)=\Omega(S)$ and $S/\Phi(S)\cong \Phi(S)$ is elementary abelian of order $2^n$;
\item $N_G(S)$ is a solvable maximal subgroup of $G$ and for $K$ a Hall $2'$-subgroup of $N_G(S)$,$|K|=2^n-1$ and $K$ acts irreducibly on $S/\Phi(S)$ and $\Phi(S)$;
\item if $n>1$ then there is $x \in G$ such that $G=\langle Z(S), Z(S)^x\rangle$;
\item all involutions in $S$ are conjugate and if $n>1$, for $1\ne u\in Z(S)$, there is $x,y\in G$ such that $G=\langle u, u^x, u^y\rangle$;
\item for $U\le S$ with $U$ elementary abelian of order $4$, there is $x\in G$ such that $G=\langle U, U^x\rangle$;
\item if $n>1$ then $G$ is perfect and has trivial Schur multiplier; and
\item if $x$ is a non-trivial automorphism of $G$ which centralizes $S$, then $x\in\Aut_{Z(S)}(G)$.
\end{enumerate}
\end{lemma}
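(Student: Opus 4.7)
The plan is to treat the nine parts sequentially, relying on the classical structure theory of Suzuki groups (as found in Huppert II.10, Suzuki's original papers, or GLS3) for the structural assertions (i)--(iv), (viii), (ix), and then carry out a maximal-subgroup analysis for the generation statements (v)--(vii). For (i), the order formula $|\Sz(q)|=q^2(q-1)(q^2+1)$ with $q=2^n$, $n$ odd, gives $3\nmid|G|$ by direct reduction modulo $3$. Part (ii) is a direct verification inside $\Sz(2)\cong 5{:}4$. Parts (iii) and (iv) follow from the standard description of the Suzuki $2$-group $S$ (order $q^2$, exponent $4$, $\Phi(S)=Z(S)=\Omega(S)$ of order $q$ with $S/\Phi(S)$ elementary abelian of order $q$) and of the Borel $N_G(S)=S\rtimes K$ with $K$ cyclic of order $q-1$ acting fixed-point-freely on $Z(S)\setminus\{1\}$; maximality of $N_G(S)$ is part of Suzuki's classification of maximal subgroups.

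For (viii), perfection follows from simplicity of $\Sz(q)$ for $q>2$, and the triviality of the relevant part of the Schur multiplier is Theorem 6.1.4 of GLS3 (with the understanding that what matters here is the absence of $2'$-central extensions). For (ix), $\Out(\Sz(q))$ is cyclic of order $n$, generated by the field automorphism which acts non-trivially on $Z(S)$; so any $\alpha\in\Aut(G)$ centralizing $S$ is inner, say $\alpha=c_g$, with $g\in C_G(S)$. Since $K$ acts fixed-point-freely on $Z(S)\setminus\{1\}$, $C_{N_G(S)}(S)=Z(S)$, and since $C_G(S)\le N_G(S)$ (as $S$ lies in a unique Sylow $2$-subgroup), we conclude $g\in Z(S)$.

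The generation statements (v)--(vii) rest on two key facts: distinct Sylow $2$-subgroups of $\Sz(q)$ intersect trivially; and the only maximal subgroups whose Sylow $2$-subgroups contain an elementary abelian $2$-group of order $>4$ are the Borels and subfield subgroups $\Sz(q_0)$ with $q_0^r=q$. For (v), pick any $x\notin N_G(S)$ and set $H=\langle Z(S),Z(S)^x\rangle$. If $H$ were a $2$-group it would lie in a single Sylow $2$-subgroup, forcing $Z(S)=Z(S)^x$ and thus $x\in N_G(S)$, a contradiction; so $H$ has odd-order elements and is contained in some maximal subgroup $M$. Since $H$ contains two trivially intersecting copies of the elementary abelian group $Z(S)$ of order $q$, $M$ must be a Borel $N_G(S'')$ (after avoiding the subfield case by generic choice of $x$), but containment of both $Z(S)$ and $Z(S)^x$ in one Borel forces $S''=S=S^x$, contradiction. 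So $H=G$. Part (vii) is the same argument, using $U\le\Omega(S)=Z(S)$ and again a generic choice of $x$ to avoid subfield containment.

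For (vi), the subtlety is that two conjugates of a single involution only generate a dihedral subgroup. The approach is: fix $u\in Z(S)$; all involutions are $G$-conjugate since $K$ acts transitively on $Z(S)\setminus\{1\}$ and Sylow $2$-subgroups intersect trivially. Choose $x\notin N_G(S)$ so that $D=\langle u,u^x\rangle$ is a dihedral subgroup of maximal possible order, then choose $y$ so that $u^y$ lies outside every maximal overgroup of $D$; the finitely many maximal overgroups make this possible, and the resulting $\langle u,u^x,u^y\rangle$ equals $G$. The main obstacle across (v)--(vii) is the careful maximal-subgroup bookkeeping, especially managing the subfield subgroups $\Sz(q_0)$; these cause no serious difficulty because they have strictly smaller $2$-rank than $S$, so the generic choice of conjugating elements avoids them.
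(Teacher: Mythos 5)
Your plan takes a genuinely different route from the paper's proof, which consists of a single sentence citing Suzuki's original paper \cite[Sections 13--16]{Suzuki} for the structural facts and \cite[Theorem 6.1.2]{GLS3} for the Schur multiplier (part (viii)). You instead attempt a self-contained argument via the classification of maximal subgroups. For parts (i)--(iv), (viii) and (ix) your reduction to standard structure theory is fine, and your argument for (ix) via $C_G(S)\le N_G(S)$ and $C_{N_G(S)}(S)=Z(S)$ is correct. For (v) your argument is essentially right, and in fact simpler than you let on: since $|Z(S)|=q=2^n$ while a proper subfield subgroup $\Sz(q_0)$ ($q_0=2^{n_0}$, $n_0\mid n$, $n_0<n$, $n$ odd, so $2n_0<n$) has Sylow $2$-subgroups of order $q_0^2<q$, the group $Z(S)$ can never embed in a subfield subgroup, so \emph{any} $x\notin N_G(S)$ works and no ``generic choice'' is needed. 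The real gaps are in (vi) and (vii).

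In (vi), the step ``choose $y$ so that $u^y$ lies outside every maximal overgroup of $D$; the finitely many maximal overgroups make this possible'' does not follow from finiteness alone --- finitely many maximal subgroups can of course cover all conjugates of $u$. What is needed is an estimate showing the union of the maximal overgroups of $D$ does not exhaust the $(q-1)(q^2+1)$ conjugates of $u$. This is cleanest if one first observes that $D$ can always be taken to be $D_{2(q-1)}=N_G(C_{q-1})$ (whose $q-1$ reflections all lie in the single $G$-class of involutions, and two suitable reflections already generate it); since $D_{2(q-1)}$ is itself maximal, its only maximal overgroup is itself, and it contains only $q-1\ll (q-1)(q^2+1)$ involutions. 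As written, your sketch omits both why such a $D$ exists and why the overgroups can be avoided.

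In (vii), the subfield issue is not resolved by the remark that subfield subgroups ``have strictly smaller $2$-rank than $S$'': $U$ has $2$-rank $2$, and a subfield subgroup $\Sz(q_0)$ with $q_0\ge 8$ (which occurs as soon as $n$ is composite, e.g.\ $n=9$ with $\Sz(8)<\Sz(2^9)$) contains elementary abelian subgroups of order $4$ inside the centers of its Sylow $2$-subgroups. So unlike in (v), the subfield case is genuinely present, and ``generic choice of $x$'' needs a proof --- either a counting argument comparing the number of conjugates of $U$ lying in subfield subgroups through $U$ with the total $(q-1)(q^2+1)$, or a finer structural argument. Since these parts carry the content of the lemma that is actually used in the amalgam analysis, the gaps are not cosmetic; the approach is repairable, but as written it is not a proof.
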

\begin{proof}
Most of the proofs of these facts may be found in \cite[Sections 13 - 16]{Suzuki}, except the proof of (viii) which may be gleaned from \cite[Theorem 6.1.2]{GLS3}.
\end{proof}

\begin{lemma}\label{ReeGen}
Let $G\cong\Ree(3^n)$ and $S\in\syl_3(G)$. Then the following hold:
\begin{enumerate}
\item $n$ is odd;
\item the Sylow $2$-subgroups of $G$ are abelian;
\item if $n=1$, then $G\cong\PSL_2(8):3$, $G'\cong\PSL_2(8)$, $S\cong 3^{1+2}_-$, $Z(S)=\Phi(S)$ has order $3$, $S\cap G'$ is cyclic of order $9$, $\Omega(S)$ is elementary abelian of order $9$ and $|S|=27$;
\item if $n>1$, then $S$ has order $3^{3n}$, $\Phi(S)=\Omega(S)$ has order $3^{2n}$, $Z(S)=[S, \Phi(S)]$ has order $3^n$ and $S/\Phi(S)\cong \Phi(S)/Z(S)\cong Z(S)$ is elementary abelian of order $3^n$;
\item $N_G(S)$ is a solvable maximal subgroup of $G$ and for $K$ a Hall $3'$-subgroup of $N_G(S)$, $|K|=3^n-1$ and $K$ acts irreducibly on $S/\Omega(S)$, $\Omega(S)/Z(S)$ and $Z(S)$;
\item for $\{1\}\ne U\normaleq S$, if $n>1$ then there is $x,y\in G$ such that $G=\langle U, U^x, U^y\rangle$;
\item if $n>1$ then $G$ is perfect and has trivial Schur multiplier, and $\Ree(3)'$ is perfect and has trivial Schur multiplier; and
\item if $x$ is a non-trivial automorphism of $G$ which centralizes $S$, then $x\in\Aut_{Z(S)}(G)$.
\end{enumerate}
\end{lemma}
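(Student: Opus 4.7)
The plan is to derive all parts by combining the standard structure theory of Ree groups (as developed by Ree, Ward, and documented in \cite{GLS3}) with the template already established in the preceding lemmas on $\SL_2$, $\SU_3$, and $\Sz$. Parts (i) and (ii) are classical: (i) is implicit in the very definition of $\Ree(3^n)$ as arising from a Steinberg endomorphism of $\mathrm{G}_2$ over $\mathbb{F}_{3^n}$, while (ii) follows from the fact that the centralizer of an involution in $\Ree(3^n)$ has the form $\langle t\rangle\times\PSL_2(3^n)$, whose Sylow $2$-subgroups are elementary abelian. Part (iii) reduces to a direct computation in $\Ree(3)\cong\PSL_2(8){:}3$, where the ambient Sylow $3$-subgroup is obtained by extending a Sylow $3$-subgroup of $\PSL_2(8)$ (cyclic of order $9$) by a field automorphism of order $3$; the listed invariants then fall out by inspection.

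For (iv), I would realize $S$ as the unipotent radical of the Borel subgroup of $\Ree(3^n)$ and exhibit the three-step filtration $S>\Phi(S)=\Omega(S)>Z(S)>\{1\}$ with sections isomorphic to the additive group of $\mathbb{F}_{3^n}$. Part (v) then follows from the rank one BN-pair structure: $N_G(S)$ is the Borel, hence a maximal solvable subgroup, and its Cartan complement $K$ is cyclic of order $3^n-1$ acting by multiplication at various weights on each section, giving the claimed irreducibility. Part (vii) I would cite directly from \cite[Theorems 2.5.12, 6.1.2]{GLS3}, and (viii) follows because $\Aut(\Ree(3^n))=\Ree(3^n){:}\langle\phi\rangle$ with $\phi$ a field automorphism of order $n$, and no nontrivial power of $\phi$ centralizes $S$.

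The main obstacle is (vi). Here I would first observe that, since $S$ has nilpotency class $3$ (for $n>1$) and $U$ is a nontrivial normal subgroup of $S$, the intersection $U\cap Z(S)$ is nontrivial; it therefore suffices to show that $G$ is generated by three suitable $G$-conjugates of any nonidentity element $z\in Z(S)$. I would invoke Kleidman's classification of the maximal subgroups of $\Ree(3^n)$ ($n>1$ odd), whose only members containing $S$ is $N_G(S)$, and whose other members are involution normalizers, normalizers of certain tori, and subfield subgroups $\Ree(3^m)$ for proper divisors $m\mid n$. Choosing $x\in G\setminus N_G(S)$ forces $H_1:=\langle z,z^x\rangle\not\le N_G(S)$, and a short case analysis — based on orders, the fact that involution normalizers have abelian Sylow $2$-subgroups and that subfield subgroups contain no full-rank elementary abelian $3$-section of $S$ — shows that $H_1$ lies in at most one proper maximal overgroup. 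Choosing $y$ outside this overgroup produces $\langle z,z^x,z^y\rangle=G$, and replacing $z$ by $U$ only strengthens the conclusion. The delicate point, and where I expect the most care to be needed, is ruling out every conceivable proper overgroup of $H_1$ using the Kleidman list together with the orbit structure of $K$ on $Z(S)\setminus\{1\}$.
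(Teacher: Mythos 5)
For parts (i)--(v), (vii) you correctly identify the relevant sources (Ward for the structural facts, \cite{GLS3} for the Schur multiplier and automorphism group), which matches the paper's citations; (viii) is also fine once you add the observation that an automorphism centralizing $S$ must be inner, induced by some $g\in C_G(S)=Z(S)$.

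The genuine problem is (vi), and the gap is in the claim that ``Choosing $x\in G\setminus N_G(S)$ forces $H_1:=\langle z,z^x\rangle\not\le N_G(S)$'' together with the subsequent assertion that $H_1$ lies in at most one proper maximal overgroup. The first sentence is simply false as stated: $z^x$ can return to $S$ (or at least to $N_G(S)$) even when $x\notin N_G(S)$. Even after repairing this by choosing $x$ so that $z^x\notin N_G(S)$, the second claim is not justified and is in fact doubtful: $z\in Z(S)$ is a root element, $z^x$ is another, and $\langle z,z^x\rangle$ can perfectly well be a $3$-group lying inside many conjugates of the Borel, and it can also lie inside a (conjugate of a) subfield subgroup $\Ree(3^m)$ whenever $z$ happens to be a $\GF(3^m)$-rational element of $Z(S)$. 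Your hint that ``subfield subgroups contain no full-rank elementary abelian $3$-section of $S$'' does not save you, because $\langle z,z^x\rangle$ is nowhere near full rank. The paper dodges this precisely by not reducing to a single central element: when $U\le Z(S)$, it first conjugates \emph{inside} $N_G(S)$, using the irreducible (indeed transitive) action of the Cartan complement $K$ on $Z(S)\setminus\{1\}$ to produce an element $u^y$ not lying in any subfield, so that the abelian group $\langle U,U^y\rangle\le Z(S)$ of order $\ge 9$ lies in $N_G(S)$ and nothing else; only then does it take $x$ with $U^x\not\le S$. When $U\not\le Z(S)$, the paper instead picks $u\in\Omega(U)\setminus Z(S)$, puts it in a component $L\cong\PSL_2(3^n)$ of an involution centralizer, generates $L$ with two conjugates of $u$, and uses $U\cap Z(S)\ne\{1\}$ together with $C_G(i)\cap Z(S)=\{1\}$ to jump from $L$ to $G$ --- so in this case two conjugates already suffice. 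Your reduction to $z\in Z(S)$ is formally valid but throws away the extra leverage that elements of $\Omega(S)\setminus Z(S)$ provide, and you do not replace the $N_G(S)$-conjugation step that handles subfield obstructions; as a result the core generation argument is not carried.
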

\begin{proof}
The proofs of (i) to (v) follow from the main theorem of \cite{Ward} while (vii) and (viii) follow from \cite[Theorem 2.5.12, Theorem 6.1.2]{GLS3}. We make use of results in \cite{Ward} to prove (vi). Since the results when $n=1$ are easily verified, we assume that $n>1$ throughout. Note that if $U\not\le \Omega(S)$, then as $U\normaleq S$ and $\Omega(S)=Z_2(S)$, $Z(S)<Z(S)[U, S]\le \Omega(S)$ and $\Omega(U)\not\le Z(S)$. In particular, if $U\not\le Z(S)$, then there is $u\in \Omega(U)\setminus Z(S)$.

Suppose first that $U\le Z(S)$. Since $K$ is irreducible on $Z(S)$, there is $y\in N_G(S)$ such that for some $u\in U$, $u^y$ is not represented by elements of a subfield of $\mathrm{GF}(3^n)$. Then $\langle U, U^y\rangle$ is elementary abelian of order at least $9$ and contained in a maximal subgroup of $G$. Considering the maximal subgroup structure of $G$ (as in \cite[Theorem C]{Kleidman}) and using that the centralizer of an involution in $K$ intersects $Z(S)$ trivially, we deduce that $\langle U, U^y\rangle$ lies in a unique maximal subgroup, namely $N_G(S)$. But now, there is $x\in G\setminus N_G(S)$ such that $U^x\not\le S$. Thus, $G=\langle U, U^x, U^y\rangle$, as required.

Suppose now that $U\not\le Z(S)$ so that there is $u\in U$ such that $u\in\Omega(U)\setminus Z(S)$. Then by (v), it follows that $C_{N_G(S)}(u)=\Omega(S)\langle i\rangle$, where $i\in K$ is an involution. Then $u\in C_G(i)$ and by \cite{Ward}, $C_G(i)\cong\langle i\rangle \times L$, where $L\cong \PSL_2(3^n)$, and $C_G(i)$ is a maximal subgroup of $G$ (see also \cite[Theorem C]{Kleidman}). Since $n>1$ is odd, there is $x\in L$ such $L=\langle u, u^x\rangle$ by \cref{SLGen} (v). Further, $C_G(i)\cap Z(S)=\{1\}$ and since $U\cap Z(S)\ne\{1\}$ as $U\normaleq S$, $L< \langle U, U^x\rangle$ and since $C_G(i)$ is maximal, it follows that $G=\langle U, U^x\rangle$. 
\end{proof}

Pivotal to the analysis of local actions in the amalgam method and within a fusion system is recognizing $\SL_2(p^n)$ acting on its modules in characteristic $p$. Below, we list the most important modules for this work.

\begin{definition}
Let $X\cong\SL_2(q)$, $q=p^n$, $k=\mathrm{GF}(q)$ and $V$ a faithful $2$-dimensional $kX$-module.
\begin{itemize}
\item $V|_{\mathrm{GF}(p)X}$ is a \emph{natural $\SL_2(q)$-module} for $X$. 
\item A \emph{natural $\Omega_3(q)$-module} for $X$ is the $3$-dimensional submodule of $V\otimes_k V$ regarded as a $\mathrm{GF}(p)X$-module by restriction, and is irreducible whenever $p$ is an odd prime. 
\item If $n=2a$ for some $a\in \N$, a \emph{natural $\Omega_4^-(q^{\frac{1}{2}})$-module} for $X$ is any non-trivial irreducible submodule of $(V\otimes_k V^{\tau})|_{\mathrm{GF}(q^{\frac{1}{2}})X}$, where $\tau$ is an automorphism of $\mathrm{GF}(q)$ of order $2$, regarded as a $\mathrm{GF}(p)X$-module by restriction. 
\item If $n=3a$ for some $a\in \N$, a \emph{triality module} for $X$ is any non-trivial irreducible submodule of $(V\otimes V^{\tau}\otimes V^{\tau^2})|_{\mathrm{GF}(q^{\frac{1}{3}})X}$, where $\tau$ is an automorphism of $k$ of order $3$, regarded as a $\mathrm{GF}(p)X$-module by restriction.
\end{itemize}
\end{definition}

\begin{lemma}\label{NatMod}
Suppose $G\cong\SL_2(p^n)$, $S\in\syl_p(G)$ and $V$ is natural $\SL_2(p^n)$-module. Then the following hold:
\begin{enumerate}
\item $[V, S,S]=\{1\}$;
\item $|V|=p^{2n}$ and $|C_V(S)|=p^n$;
\item $C_V(s)=C_V(S)=[V,S]=[V,s]=[v, S]$ for all $v\in V\setminus C_V(S)$ and $1\ne s\in S$;
\item $V=C_V(S)\times C_V(S^g)$ for $g\in G\setminus N_G(S)$;
\item every $p'$-element of $G$ acts fixed point freely on $V$;
\item $V$ is self dual; and
\item $V/C_V(S)$ and $C_V(S)$ are irreducible $\mathrm{GF}(p)N_G(S)$-modules upon restriction.
\end{enumerate}
\end{lemma}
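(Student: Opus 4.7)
The plan is to fix a concrete realization and verify the seven parts by direct matrix computation. Identify $V$ with $\GF(p^n)^2$, viewed as a $\GF(p)$-space of dimension $2n$, with $G=\SL_2(p^n)$ acting on the right by matrix multiplication (legitimate since all faithful two-dimensional $\GF(p^n)G$-modules are Galois twists of one another and the assertions below are invariant under such twists). Take $S$ to be the upper unitriangular subgroup, so each $s_a\in S$ sends $(x,y)\mapsto(x,ax+y)$; then $S\in\syl_p(G)$ and $|S|=p^n$.

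Parts (i)--(iii) all flow from the single identity $(x,y)(s_a-1)=(0,ax)$. Applying any further $s_b-1$ annihilates this vector, giving (i). A direct count yields $|V|=p^{2n}$ and $C_V(S)=\{(0,y):y\in\GF(p^n)\}$ of order $p^n$, proving (ii). For (iii), when $x\ne 0$ the set $\{ax:a\in\GF(p^n)\}$ equals $\GF(p^n)$, so $[v,S]=C_V(S)$ for $v\notin C_V(S)$; for $1\ne s=s_a$, we have $[V,s]=\{(0,ax):x\in\GF(p^n)\}=C_V(S)$ and $C_V(s)=\{(x,y):ax=0\}=C_V(S)$.

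For (iv), $N_G(S)$ is the stabilizer of the $\GF(p^n)$-line $C_V(S)$ (the standard Borel), so for $g\notin N_G(S)$ the line $C_V(S^g)=C_V(S)g$ is a distinct $\GF(p^n)$-line in the two-dimensional space and $V=C_V(S)\oplus C_V(S^g)$. For (v), a $p'$-element $g$ fixing a nonzero vector has $1$ as an eigenvalue, and since $\det g=1$ the other eigenvalue is also $1$; thus $g-1$ is nilpotent and $g$ is unipotent, forcing $g=1$. For (vi), the determinant pairing $\omega((x_1,y_1),(x_2,y_2))=x_1y_2-x_2y_1$ is a nondegenerate $G$-invariant symplectic form valued in $\GF(p^n)$, and composing with $\mathrm{Tr}_{\GF(p^n)/\GF(p)}$ produces a nondegenerate $\GF(p)G$-invariant bilinear form, witnessing self-duality.

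Part (vii) is the only step requiring an observation beyond matrix arithmetic. Write $N_G(S)=S\rtimes K$ where $K$ is the diagonal torus: a generator of $K$ acts on $C_V(S)$ by multiplication by a primitive element $\lambda$ of $\GF(p^n)^*$, and analogously on $V/C_V(S)$. Any $\GF(p)$-subspace preserved by multiplication by $\lambda$ is automatically preserved by the $\GF(p)$-subalgebra generated by $\lambda$, which is the whole field $\GF(p^n)$, so the subspace must be trivial or the entire module. This is the one genuinely non-computational point; the real obstacle, such as it is, lies merely in not overlooking any item on the fairly long list of conclusions.
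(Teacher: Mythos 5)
Your proof is correct and complete, and it is genuinely a proof: the paper simply cites \cite[Lemma 4.6]{ParkerBN} and does not give an argument at all. Your approach — fix the standard realization $V=\GF(p^n)^2$ with right matrix action, note that the assertions are invariant under Frobenius twists, and verify each item by direct computation — is the standard self-contained route, and every step checks out. The one place you flag as ``genuinely non-computational'', the irreducibility of $C_V(S)$ and $V/C_V(S)$ under the torus, is handled cleanly by observing that a $\GF(p)$-subspace of $\GF(p^n)$ closed under multiplication by a primitive element is a $\GF(p^n)$-subspace and hence trivial or everything. The only minor caveats, neither of which is a gap: in (iv) you use, without spelling it out, that $N_G(S)$ coincides with the stabilizer of the line $C_V(S)$ (both inclusions are immediate since $C_V(S)$ is the unique $S$-fixed line and $S$ is the unipotent radical of that stabilizer), and in (v) the eigenvalue argument is carried out over $\GF(p^n)$ (or its algebraic closure) even though $V$ is being regarded as a $\GF(p)$-module — this is legitimate because a fixed vector over $\GF(p)$ is in particular a fixed vector over $\GF(p^n)$. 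What your proof buys is self-containment and transparency; what the paper's citation buys is brevity and uniformity with its other module-recognition lemmas (\cref{Omega3}, \cref{Omega4}, \cref{trialitydescription}, \cref{SUMod}), all of which point to the same source.
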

\begin{proof}
See \cite[Lemma 4.6]{ParkerBN}.
\end{proof}

\begin{lemma}\label{NatGen}
Suppose that $G\cong\SL_2(q)$, $q=p^n$, and $V$ is a direct sum of two natural $\SL_2(q)$-modules. If $U\le C_V(S)$ is $N_G(S)$-invariant and of order $q$, then $|\langle U^G\rangle|=q^2$.
\end{lemma}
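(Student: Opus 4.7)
The plan is to classify $N_G(S)$-invariant subgroups of $C_V(S)$ of order $q$ as $\mathrm{GF}(q)$-lines in a natural $2$-dimensional $\mathrm{GF}(q)$-structure on $C_V(S)$, then match these bijectively with the proper non-trivial $G$-submodules of $V$ (each of order $q^2$).

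First, I would write $V = V_1 \oplus V_2$ as $G$-modules, with each $V_i$ a natural $\SL_2(q)$-module. By \cref{NatMod}, $|C_{V_i}(S)| = q$, so $C_V(S) = C_{V_1}(S) \oplus C_{V_2}(S)$ has order $q^2$. A Hall $p'$-complement $T \le N_G(S)$ (of order $q-1$) acts fixed-point-freely on each $C_{V_i}(S) \setminus \{0\}$ by the scalar action of $\mathrm{GF}(q)^\times$, and the characters by which $T$ acts on $C_{V_1}(S)$ and $C_{V_2}(S)$ are the same (the natural character of the Cartan). This endows $C_V(S)$ with a $2$-dimensional $\mathrm{GF}(q)$-structure on which $N_G(S)$ acts $\mathrm{GF}(q)$-linearly. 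Since $U$ is $T$-invariant and abelian, $U$ is a $\mathrm{GF}(q)$-subspace, and having order $q$ it is a $\mathrm{GF}(q)$-line; there are exactly $q+1$ such lines in $C_V(S)$.

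Next, I would enumerate the proper non-trivial $G$-submodules of $V$. Since $V_1$ and $V_2$ are absolutely irreducible isomorphic $\mathrm{GF}(q)[G]$-modules, Schur's lemma gives $\mathrm{Hom}_G(V_1,V_2)$ as a $1$-dimensional $\mathrm{GF}(q)$-space. Fixing a non-zero $\psi \in \mathrm{Hom}_G(V_1,V_2)$, the proper non-trivial $G$-submodules of $V$ are precisely $V_1$, $V_2$, and the diagonals $D_\alpha := \{(v,\alpha\psi(v)) : v \in V_1\}$ for $\alpha \in \mathrm{GF}(q)^\times$, giving $q+1$ submodules. Each is isomorphic to the natural module and hence of order $q^2$, and each intersects $C_V(S)$ in a subgroup of order $q$ (using that $\psi$ restricts to an isomorphism $C_{V_1}(S) \to C_{V_2}(S)$), which is therefore a $\mathrm{GF}(q)$-line of $C_V(S)$.

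A direct verification shows that these $q+1$ lines are pairwise distinct, so the map $W \mapsto W \cap C_V(S)$ is a bijection between the proper non-trivial $G$-submodules of $V$ and the $\mathrm{GF}(q)$-lines of $C_V(S)$. Applied to our $U$, we obtain a unique proper non-trivial $G$-submodule $W$ with $U = W \cap C_V(S)$, so $\langle U^G \rangle \le W$; since $W$ is irreducible as a $G$-module and $U \ne \{1\}$, we conclude $\langle U^G \rangle = W$ has order $q^2$. The only technicality to watch is the compatibility of the $\mathrm{GF}(q)$-structures on the two summands of $C_V(S)$ (so that $N_G(S)$-invariant really does mean $\mathrm{GF}(q)$-subspace), but this falls out immediately from the explicit form of the $T$-action on a natural module.
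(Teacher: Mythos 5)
Your proof is correct and follows essentially the same strategy as the paper's: both set up a bijection between the $q+1$ irreducible $G$-submodules of $V$ and the $q+1$ $N_G(S)$-invariant subgroups of $C_V(S)$ of order $q$, via $W\mapsto C_W(S)$, and read off the conclusion. The paper obtains both counts by twice citing Gorenstein's result on submodules of a completely reducible module, whereas you derive them by hand (Schur's lemma and the diagonal submodules $D_\alpha$ for the $G$-side, the common $\mathrm{GF}(q)$-structure on $C_V(S)$ for the $N_G(S)$-side), but the logical architecture is identical.
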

\begin{proof}
By \cite[(I.3.5.6)]{gor}, the number of distinct irreducible submodules of $V$ is $q+1=(q^2-1)/q-1$. For each $W$ an irreducible submodule, $C_W(S)$ is $N_G(S)$-invariant and of order $q$. Since $C_V(S)$ may be viewed as direct sum of two irreducible modules for $N_G(S)$, again applying \cite[(I.3.5.6)]{gor} we have that there are $q+1$ $N_G(S)$-invariant subgroups of $C_V(S)$ of order $q$. Hence, each $N_G(S)$-invariant subgroup of $C_V(S)$ of order $q$ is of the form $C_W(S)$ for some irreducible module $W$ and $C_W(S)$ determines $W$ uniquely. Thus, $U$ uniquely determines a submodule $\langle U^G\rangle$ of order $q^2$.
\end{proof}

\begin{lemma}\label{Omega3}
Suppose that $G\cong\SL_2(p^n)$, $p$ an odd prime, $S\in \syl_p(G)$ and $V$ is a natural $\Omega_3(p^n)$-module for $G$. Then the following hold:
\begin{enumerate}
\item $C_G(V)=Z(G)$;
\item $[V, S,S, S]=\{1\}$;
\item $|V|=p^{3n}$ and $|V/[V,S]|=|C_V(S)|=p^n$;
\item $[V, S]=[V, s]$ and $[V,S,S]=[V,s,s]=C_V(s)=C_V(S)$ for all $1\ne s\in S$;
\item $[V.S]/C_V(S)$ is centralized by $N_G(S)$; and
\item $V/[V,S]$ and $C_V(S)$ are irreducible $\mathrm{GF}(p)N_G(S)$-modules upon restriction.
\end{enumerate}
\end{lemma}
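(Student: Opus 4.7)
The plan is to realize $V$ concretely as the symmetric square $\mathrm{Sym}^2(W)$ of the natural $2$-dimensional $kG$-module $W$ (with $k = \mathrm{GF}(p^n)$), then read everything off matrix computations. Fix a basis $e_1,e_2$ of $W$ so that $S = \{s_t : t \in k\}$ with $s_t = \left(\begin{smallmatrix} 1 & t \\ 0 & 1\end{smallmatrix}\right)$, and take $e_1^2, e_1e_2, e_2^2$ as a $k$-basis of $V$. A direct calculation gives
\[
s_t \cdot e_1^2 = e_1^2, \qquad s_t \cdot e_1 e_2 = e_1 e_2 + t\, e_1^2, \qquad s_t \cdot e_2^2 = e_2^2 + 2t\, e_1 e_2 + t^2\, e_1^2,
\]
from which $C_V(S) = \langle e_1^2\rangle_k$, $[V,S] = \langle e_1^2, e_1e_2\rangle_k$, and $[V,S,S] = \langle e_1^2\rangle_k$; this yields (iii) and the assertion $[V,S,S,S]=1$ in (ii). For (iv), the same formulas show that for any $t\ne 0$ the element $s_t$ already realizes $[V,s_t] = [V,S]$ and $[V,s_t,s_t] = C_V(S)$, while $C_V(s_t) = C_V(S)$ because $s_t - 1$ has constant rank $2$ on $V$.

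For (i), the scalar $-I \in Z(G)$ certainly acts trivially on $\mathrm{Sym}^2(W)$, so $Z(G) \le C_G(V)$. When $p^n \geq 4$, $G/Z(G) \cong \PSL_2(p^n)$ is simple, so equality follows once we observe that some element outside $Z(G)$ (e.g.\@ any non-trivial $s_t$) acts non-trivially. For $p^n = 3$, the only intermediate normal subgroup is a quaternion Sylow $2$-subgroup, and a direct check on the standard order-$4$ element shows it acts non-trivially on $V$, so again $C_G(V) = Z(G)$.

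For (v) and (vi), write $N_G(S) = S \rtimes T$ with $T = \{\mathrm{diag}(\lambda, \lambda^{-1}) : \lambda \in k^\times\}$; $T$ acts on $e_1^2, e_1e_2, e_2^2$ by $\lambda^2, 1, \lambda^{-2}$ respectively. Since $S$ centralizes $e_1e_2$ modulo $C_V(S)$ and $T$ fixes it, $N_G(S)$ centralizes $[V,S]/C_V(S)$, proving (v). Both $C_V(S)$ and $V/[V,S]$ are $1$-dimensional over $k$, on which $T$ acts by $\lambda^{\pm 2}$. To show irreducibility over $\mathrm{GF}(p)N_G(S)$, take any non-zero $\mathrm{GF}(p)N_G(S)$-submodule $U$ of $C_V(S)$ and $0 \ne w \in U$; then $U$ contains the $\mathrm{GF}(p)$-span of $\{\lambda^2 w : \lambda \in k^\times\}$. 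The main point is the classical fact that, for $p$ odd, every element of $k = \mathrm{GF}(p^n)$ is a sum of two squares (a pigeonhole count on $|\{a^2\}| + |\{c - b^2\}| > p^n$), so the $\mathrm{GF}(p)$-span of squares is all of $k$, forcing $U = C_V(S)$. The analogous argument handles $V/[V,S]$. The sum-of-two-squares step is the only non-routine ingredient; everything else is matrix bookkeeping.
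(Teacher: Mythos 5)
Your proposal is correct. The paper itself gives no argument for this lemma---it simply cites Parker--Rowley \cite[Lemma~4.7]{ParkerBN}---so a self-contained proof is welcome, and yours works. Realizing the natural $\Omega_3(p^n)$-module as $\mathrm{Sym}^2(W)$ (which for $p$ odd is precisely the $3$-dimensional submodule of $W\otimes_k W$ appearing in the paper's definition) reduces (ii)--(v) to the explicit $3\times 3$ matrix computations you record, all of which I checked; in particular the constancy of $C_V(s_t)$, $[V,s_t]$ and $[V,s_t,s_t]$ for $t\ne 0$ falls out because $p$ is odd (so $2t\ne 0$). For (i), the normal-subgroup argument handles $q\ge 5$ via simplicity of $\PSL_2(q)$, and the separate check on the order-$4$ element disposes of $q=3$ (where $\PSL_2(3)$ is not simple, so the quaternion subgroup must be ruled out directly). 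The only step that genuinely goes beyond bookkeeping is (vi): since $C_V(S)$ and $V/[V,S]$ are $1$-dimensional over $k$ but $n$-dimensional over $\mathrm{GF}(p)$, irreducibility over $\mathrm{GF}(p)N_G(S)$ is not automatic, and your reduction to the fact that every element of $\mathrm{GF}(p^n)$ ($p$ odd) is a sum of two squares is exactly the right device; the pigeonhole count $(p^n+1)/2+(p^n+1)/2>p^n$ is the standard proof and is correctly invoked, and it applies equally to the $\lambda^{-2}$ weight on $V/[V,S]$ since the squares form a group. One very minor wording nitpick: ``$s_t-1$ has constant rank $2$'' should be read as ``rank $2$ for every $t\ne 0$''; the rank degenerates at $t=0$, but you only use $t\ne 0$, so the argument stands.
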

\begin{proof}
See \cite[Lemma 4.7]{ParkerBN}.
\end{proof}

\begin{lemma}\label{Omega4}
Let $G\cong \SL_2(p^{2n})$, $S\in\syl_p(G)$ and $V$ a natural $\Omega_4^-(p^n)$-module for $G$. Then the following hold:
\begin{enumerate}
\item $C_G(V)=Z(G)$;
\item $[V, S,S, S]=\{1\}$;
\item $|V|=p^{4n}$ and $|V/[V,S]|=|C_V(S)|=p^n$;
\item $|C_V(s)|=|[V,s]|=p^{2n}$ and $[V,S]=C_V(s)\times [V,s]$ for all $1\ne s\in S$; and
\item $V/[V,S]$ and $C_V(S)$ are irreducible $\mathrm{GF}(p)N_G(S)$-modules upon restriction.
\end{enumerate}
Moreover, for $\{1\}\ne F\le S$, one of the following occurs:
\begin{enumerate}[label=(\alph*)]
\item $[V, F]=[V, S]$ and $C_{V}(F)=C_{V}(S)$;
\item $p=2$, $[V, F]=C_{V}(F)$ has order $p^{2n}$, $F$ is quadratic on $V$ and $|F|\leq p^n$; or
\item $p$ is odd, $|[V, F]|=|C_{V}(F)|=p^{2n}$, $[V, S]=[V, F]C_{V}(F)$, $C_V(S)=C_{[V, F]}(F)$ and $|F|\leq p^n$.
\end{enumerate}
\end{lemma}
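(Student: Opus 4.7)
The plan is to realize $V$ explicitly as a $\GF(p^n)G$-submodule of $W \otimes_k W^\tau$, where $k = \GF(p^{2n})$, $W$ is the natural $2$-dimensional $k$-module, and $\tau \in \mathrm{Gal}(k/\GF(p^n))$ is the involution, and then argue by direct matrix computation. Choose a basis $e_1, e_2$ of $W$ so that $S$ is the upper unitriangular subgroup: its non-trivial elements are of the form $u_a : e_1 \mapsto e_1, e_2 \mapsto e_2 + a e_1$ for $a \in k^\times$, and take $T \le N_G(S)$ to be the diagonal torus. Then $u_a$ acts on $W \otimes_k W^\tau$ as $u_a \otimes u_a^\tau$, with an explicit $4 \times 4$ matrix over $k$. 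A short computation shows $(u_a \otimes u_a^\tau - 1)^3 = 0$ with the stronger $(u_a \otimes u_a^\tau - 1)^2 = 0$ precisely when $p = 2$, and a bilinear expansion using $(u_a - 1)(u_b - 1) = 0$ yields $(u_a \otimes u_a^\tau - 1)(u_b \otimes u_b^\tau - 1)(u_c \otimes u_c^\tau - 1) = 0$ for all $a, b, c \in k$; this gives (ii). The same calculation identifies the kernel and image of $u_a \otimes u_a^\tau - 1$ as $k$-subspaces of $k$-dimension $2$ each.

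To locate $V$ inside $W \otimes_k W^\tau$, use the natural $\GF(p^n)$-linear involution $\sigma$ obtained by swapping the tensor factors together with the Galois twist, which commutes with the $G$-action. For $p$ odd, $W \otimes_k W^\tau$ decomposes as $V^+ \oplus V^-$ with each eigenspace of $\GF(p^n)$-dimension $4$; for $p = 2$, $\sigma - 1$ is nilpotent and $V = \ker(\sigma - 1)$ is the unique non-trivial proper $\GF(p^n)G$-submodule, of the same dimension. Intersecting the $k$-subspaces from the previous step with $V$ delivers (i), (iii), and (iv). For (v), the torus $T$ scales $e_1 \otimes e_1 \in C_V(S)$ by $t \cdot t^\tau$ as $t$ ranges over $k^\times$; since the $k/\GF(p^n)$ norm is surjective, $T$ realizes all of $\GF(p^n)^\times$ acting on the one-dimensional $\GF(p^n)$-space $C_V(S)$, so $C_V(S)$ is irreducible as a $\GF(p^n)N_G(S)$-module, and $V/[V, S]$ is irreducible by a dual argument.

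For the moreover part, let $\{1\} \ne F \le S$ and let $A \subseteq k$ be the additive subgroup corresponding to $F$. The matrix computation shows that both $C_V(u_a)$ and $[V, u_a]$ depend only on the $\GF(p^n)$-line of $k$ spanned by $a$. If $A$ contains two $\GF(p^n)$-independent elements, it follows that $[V, F] = [V, S]$ and $C_V(F) = C_V(S)$, giving (a). Otherwise $A$ lies in a single $\GF(p^n)$-line, forcing $|F| \le p^n$; in characteristic $2$ the unipotent action is already quadratic by the first step, so $[V, F] = C_V(F)$ has order $p^{2n}$, giving (b), while in odd characteristic the Jordan type $J_3 \oplus J_1$ of $u_a$ on $V$ yields the chain $C_V(S) < [V, F] \cap C_V(F) < [V, S]$ together with $C_{[V, F]}(F) = C_V(S)$ and $[V, S] = [V, F] C_V(F)$, giving (c). The main technical obstacle is the characteristic $2$ case: there $V$ is not a direct summand of $W \otimes_k W^\tau$, one must work with its uniserial structure, and the differing Jordan type of the unipotents forces the split of the moreover conclusion into cases (b) and (c).
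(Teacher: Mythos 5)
The paper proves this lemma only by citation to \cite[Lemma 4.8]{ParkerBN} and \cite[Lemma 3.15]{parkerSymp}, so your explicit model is a genuinely different, self-contained route, and its core is sound: the square $(u_a\otimes u_a^\tau-1)^2$ carries a factor $2aa^\tau$, so quadratic action holds exactly when $p=2$; triple products $(u_a\otimes u_a^\tau-1)(u_b\otimes u_b^\tau-1)(u_c\otimes u_c^\tau-1)$ always vanish; and $\ker(u_a\otimes u_a^\tau-1)$, $\mathrm{im}(u_a\otimes u_a^\tau-1)$ are $\sigma$-stable $k$-planes determined by the $\GF(p^n)$-line through $a$, so Galois descent via $\sigma$ halves all $\GF(p^n)$-dimensions on intersecting with $V$, and the norm-surjectivity argument gives (v).

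Two sub-claims need repair, though. The assertion that in characteristic $2$ the restriction of $W\otimes_k W^\tau$ to $\GF(p^n)G$ is uniserial with $\ker(\sigma-1)$ its unique non-trivial proper submodule is wrong: since $W\otimes_k W^\tau$ is absolutely irreducible over $k$ and isomorphic to its own $\tau$-twist, the restriction of scalars to $\GF(p^n)$ is semisimple, isomorphic to $V\oplus V$, in \emph{every} characteristic. What $\sigma$ supplies is a $\GF(p^n)$-form of half the dimension --- ordinary Galois descent --- which works uniformly in $p$; the $\pm1$ eigenspace decomposition available in odd characteristic is a convenience, not a necessity, so the ``main technical obstacle'' you describe does not exist. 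Second, your odd-characteristic chain $C_V(S)<[V,F]\cap C_V(F)<[V,S]$ contradicts the identity $C_{[V,F]}(F)=C_V(S)$ asserted in the same sentence, since $C_{[V,F]}(F)=[V,F]\cap C_V(F)$. Your own matrix calculation shows that $\ker(u_a\otimes u_a^\tau-1)\cap\mathrm{im}(u_a\otimes u_a^\tau-1)=\langle e_1\otimes e_1\rangle_k$ for $p$ odd, whence $[V,F]\cap C_V(F)=C_V(S)$ --- equality, not strict inequality, on the left of that chain --- and this is precisely what case (c) requires. Incidentally, the same calculation gives $C_V(s)\cap[V,s]=C_V(S)\neq\{1\}$ for $p$ odd, so the ``$\times$'' in part (iv) cannot be an internal direct product (compare $|[V,S]|=p^{3n}$); what your model actually proves there is the ordinary product $[V,S]=C_V(s)\,[V,s]$.
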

\begin{proof}
See \cite[Lemma 4.8]{ParkerBN} and \cite[Lemma 3.15]{parkerSymp}.
\end{proof}

We require one miscellaneous result concerning the exceptional $1$-cohomology of $\PSL_2(9)$ on a natural $\Omega_4^-(3)$-module.

\begin{lemma}\label{A6Cohom}
Suppose that $G\cong \PSL_2(p^2)$, $p\in\{2,3\}$ and $S\in\syl_p(G)$. If $V$ is a $5$-dimensional $\mathrm{GF}(p)G$-module such that $V/C_V(G)$ is isomorphic to a natural $\Omega_4^-(p)$-module, then either $V=[V, G]\times C_V(G)$; or $p=3$ and $[V, S, S]$ is $2$-dimensional as a $\mathrm{GF}(3)S$-module.
\end{lemma}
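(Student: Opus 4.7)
The plan is to analyze the short exact sequence
\[ 0 \to C_V(G) \to V \to \bar V \to 0 \]
of $\mathrm{GF}(p)G$-modules, where $\bar V := V/C_V(G)$ is the natural $\Omega_4^-(p)$-module (so $\dim \bar V = 4$ and $\dim C_V(G) = 1$). The identity $V = [V, G] \times C_V(G)$ is equivalent to this sequence splitting, so it suffices to show that if the extension is non-split then $p = 3$ and $\dim [V, S, S] = 2$.

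For $p = 2$ we have $G \cong \PSL_2(4) \cong \Alt(5)$, and I claim $\bar V$ is a projective $\mathrm{GF}(2)G$-module. Indeed, $\Alt(5)$ has an ordinary irreducible character of degree $4$, and since $4 = |\Alt(5)|_2$, this character lies in a block of $2$-defect zero, so its $2$-modular reduction yields a simple projective $\mathrm{GF}(2)\Alt(5)$-module of dimension $4$. This module is absolutely irreducible and coincides with the natural $\Omega_4^-(2)$-module, which is the Steinberg module for $\SL_2(4)$, recovered through the definition of natural $\Omega_4^-$-module as the irreducible submodule of the tensor product of the natural $\SL_2(4)$-module with its Frobenius twist. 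Projectivity of $\bar V$ then gives $\mathrm{Ext}^1_{\mathrm{GF}(2)G}(\mathrm{GF}(2), \bar V) = 0$, so the extension splits.

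For $p = 3$ in the non-split case, I realize $V$ as a concrete permutation module. Let $M = \mathrm{Ind}_{\Alt(5)}^{\Alt(6)}(\mathrm{GF}(3))$ be the $6$-dimensional natural permutation module for $G \cong \Alt(6)$ on basis $\{e_1, \ldots, e_6\}$, and let $M_0$ denote its $5$-dimensional sum-zero submodule. Since $6 \equiv 0 \pmod{3}$, the all-ones vector $\mathbf{1} = e_1 + \cdots + e_6$ lies in $M_0$ and spans a trivial $\Alt(6)$-submodule; meanwhile the only $\Alt(6)$-invariant linear functional on $M$ is the coordinate-sum, which vanishes on $M_0$, so $M_0$ has no non-zero trivial quotient. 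Hence $M_0$ is a non-split extension of the $4$-dimensional irreducible $M_0/\langle \mathbf{1}\rangle$ by the trivial module; this quotient is necessarily the natural $\Omega_4^-(3)$-module, since $\Alt(6)$ has a unique non-trivial $4$-dimensional simple $\mathrm{GF}(3)$-module. Uniqueness of the non-split extension up to isomorphism, following from $\dim H^1(\Alt(6), V_4) = 1$, then yields $V \cong M_0$.

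The computation of $[V, S, S]$ is now direct. Taking the Sylow $3$-subgroup $S = \langle s, t \rangle$ with $s = (1,2,3)$ and $t = (4,5,6)$, the disjoint supports of $s$ and $t$ give $(s - 1)(t - 1) = (t - 1)(s - 1) = 0$ on $M$, while $(s - 1)^2$ sends each of $e_1, e_2, e_3$ to $e_1 + e_2 + e_3$ and annihilates $e_4, e_5, e_6$, with the analogous statement for $(t - 1)^2$. Therefore $[M, S, S] = \langle e_1 + e_2 + e_3,\; e_4 + e_5 + e_6 \rangle$, a $2$-dimensional subspace of $M_0$, and hence $\dim [V, S, S] = 2$ as required. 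The main obstacle is the identification $V \cong M_0$: this depends on $\dim H^1(\Alt(6), V_4) = 1$, which I would either cite as the standard exceptional cohomology of $\Alt(6)$, or bypass by noting that the restriction map $H^1(G, \bar V) \to H^1(S, \bar V)$ is injective (as $[G:S] = 40$ is coprime to $3$) and then arguing at the Sylow level that any non-split $\mathrm{GF}(3)S$-extension of $\bar V|_S$ by the trivial module forces $C_V(G) \leq [V, S, S]$.
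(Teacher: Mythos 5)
Your $p=2$ argument via the defect-zero block of $\Alt(5)$ is correct and is a genuinely more conceptual route than the paper's ``direct computation in $\GL_5(p)$''; no concerns there. The gap is in the $p=3$ case, and it is real: the claim $\dim H^1(\Alt(6), V_4)=1$ is false. The correct value is $2$, which you can see from your own permutation module: apply Shapiro's lemma to $M=\mathrm{Ind}_{\Alt(5)}^{\Alt(6)}\GF(3)$, so $H^1(\Alt(6),M)=H^1(\Alt(5),\GF(3))=0$ and $H^2(\Alt(6),M)=H^2(\Alt(5),\GF(3))=0$ (since the Schur multiplier of $\Alt(5)$ is $\Z/2$), while $H^2(\Alt(6),\GF(3))\cong\GF(3)$ because of the exceptional Schur multiplier $\Z/6$. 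Chasing the two long exact sequences attached to $0\to\langle\mathbf 1\rangle\to M\to M/\langle\mathbf 1\rangle\to 0$ and $0\to V_4\to M/\langle\mathbf 1\rangle\to\GF(3)\to 0$ then gives $\dim H^1(\Alt(6),V_4)=1+\dim H^1(\Alt(6),M/\langle\mathbf 1\rangle)=1+1=2$; a direct count of cocycles for the Sylow $3$-subgroup $S\cong 3\times 3$ gives the same answer ($\dim Z^1(S,V_4)=5$, $\dim B^1(S,V_4)=3$). Consequently the non-split extension is \emph{not} unique up to isomorphism, the identification $V\cong M_0$ fails, and your explicit computation of $[M_0,S,S]$ does not cover all the modules the lemma quantifies over.

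Your proposed bypass is the right fix, but it still needs to be carried out: one must show that for \emph{every} non-split $\GF(3)S$-extension $0\to\GF(3)\to V\to V_4\vert_S\to 0$ (and not just the one realized by $M_0$) the central line $\GF(3)$ lies in $[V,S,S]$. This is true, and the cleanest way to see it is to parameterize extensions by cocycles $(\lambda,\mu)=(f(s),f(t))\in V_4^2$ satisfying $(s-1)^2\lambda=(t-1)^2\mu=0$ and $(s-1)\mu=(t-1)\lambda$, compute $[V_{(\lambda,\mu)},S,S]$ in terms of $(\lambda,\mu)$, and check that the set of $(\lambda,\mu)$ giving a $1$-dimensional $[V,S,S]$ is exactly the coboundary space $\{((s-1)c,(t-1)c):c\in V_4\}$. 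This is a small but non-trivial linear-algebra calculation, independent of the one you did for $M_0$, so the proof as written is incomplete for $p=3$.
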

\begin{proof}
This follows from direct computation in $\GL_5(p)$.
\end{proof}

\begin{lemma}\label{trialitydescription}
Suppose that $G\cong\mathrm{(P)SL}_2(p^{3n})$, $S\in \syl_p(G)$ and $V$ is a triality module for $G$. Then the following hold:
\begin{enumerate}
\item $[V, S, S, S, S]=\{1\}$;
\item $|V|=p^{8n}$, $|V/[V,S]|=|C_V(S)|=|[V, S, S, S]|=p^n$ and $|[V,S,S]|=p^{4n}$;
\item if $p$ is odd then $|V/C_V(s)|=p^{5n}$, while if $p=2$ then $|V/C_V(s)|=p^{4n}$, for all $1\ne s\in S$; and 
\item $V/[V,S]$ and $C_V(S)$ are irreducible $\mathrm{GF}(p)N_G(S)$-modules upon restriction.
\end{enumerate}
\end{lemma}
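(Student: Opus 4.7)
My plan is to realize the triality module via Galois descent. Let $K = \mathrm{GF}(p^{3n})$, $F = \mathrm{GF}(p^n)$, let $W$ be the natural $2$-dimensional $K\SL_2(p^{3n})$-module, and let $\tau \in \mathrm{Gal}(K/F)$ have order $3$. Setting $U := W \otimes_K W^\tau \otimes_K W^{\tau^2}$, the cyclic permutation of tensor factors yields $U \cong U^\tau$, so $U$ descends to an $8$-dimensional $F$-module $V$ with $V \otimes_F K \cong U$. Since extension of scalars preserves both $[-,S^k]$ and $C_{-}(s)$, every dimension statement in the lemma reduces to a $K$-dimension calculation inside $U$, with a factor of $n$ introduced to pass from $F$-dimensions to $\mathrm{GF}(p)$-orders.

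Fix a basis $\{e_1, e_2\}$ of $W$ with $S = \{u(\lambda) : \lambda \in K\}$ acting via $u(\lambda)e_1 = e_1$, $u(\lambda)e_2 = e_2 + \lambda e_1$, so that $u(\lambda)$ acts on $W^{\tau^i}$ as $u(\lambda^{p^{in}})$. Write $\mu = \lambda$, $\nu = \lambda^{p^n}$, $\rho = \lambda^{p^{2n}}$, $e_{ijk} = e_i \otimes e_j \otimes e_k$, and let $U_\ell$ be the $K$-span of those $e_{ijk}$ having at most $\ell$ of the indices equal to $2$, so that $\dim_K U_\ell / U_{\ell - 1} = \binom{3}{\ell}$. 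A direct calculation shows $(u(\lambda) - I)U_\ell \subseteq U_{\ell - 1}$, whence $[U, S^4] = 0$, giving (i). For the matching lower bounds in (ii), I use the identity
\[ (u(\lambda) - I)e_{222} \equiv \mu e_{122} + \nu e_{212} + \rho e_{221} \pmod{U_1} \]
together with the observation that, for a primitive element $\alpha$ of $K$ over $F$, the triples $(\lambda, \lambda^{p^n}, \lambda^{p^{2n}})$ for $\lambda \in \{1, \alpha, \alpha^2\}$ are $K$-linearly independent in $K^3$ (Vandermonde in the Galois conjugates of $\alpha$), so $\{(\mu, \nu, \rho) : \lambda \in K\}$ spans $K^3$ as a $K$-module. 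Iterating at each filtration step gives $[U, S] = U_2$, $[U, S^2] = U_1$, $[U, S^3] = U_0 = C_U(S)$, yielding (ii).

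For (iii), fix $1 \ne s = u(\lambda) \in S$ (so $\mu, \nu, \rho$ are all nonzero) and compute the matrix of $(s - I)$ in the ordered basis $(e_{111}, e_{211}, e_{121}, e_{112}, e_{221}, e_{212}, e_{122}, e_{222})$. The columns indexed by $e_{211}, e_{121}, e_{112}$ all land in $Ke_{111}$, contributing rank $1$; the $e_{222}$-column has nonzero projection to $U_2/U_1$, contributing one more dimension. The decisive middle block, describing the action on $(e_{221}, e_{212}, e_{122})$ modulo $U_0$, is
\[ M = \begin{pmatrix} \nu & \rho & 0 \\ \mu & 0 & \rho \\ 0 & \mu & \nu \end{pmatrix}, \qquad \det M = -2\mu\nu\rho. \]
Thus $\mathrm{rank}\,M = 3$ when $p$ is odd and $\mathrm{rank}\,M = 2$ when $p = 2$, and summing contributions gives total $(s-I)$-rank $5$ or $4$, yielding the claimed value of $|V/C_V(s)|$.

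Finally, for (iv), $C_V(S)$ is the $F$-span of $e_{111}$, and the Cartan torus element $\mathrm{diag}(t, t^{-1})$ acts on $e_{111}$ as $t \cdot t^{p^n} \cdot t^{p^{2n}} = N_{K/F}(t)$. Since $N_{K/F}\colon K^\times \to F^\times$ is surjective, the torus orbit of any nonzero vector of $C_V(S)$ covers $C_V(S) \setminus \{0\}$, leaving no proper $\mathrm{GF}(p)N_G(S)$-invariant subspace; a symmetric argument at $e_{222}$ handles $V/[V, S]$. The chief technical obstacle is the characteristic $2$ collapse in (iii), cleanly isolated in $\det M = -2\mu\nu\rho$: its vanishing when $p = 2$ drops the total rank by one and produces the dichotomy at the end of the claim.
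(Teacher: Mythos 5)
Your proof is correct. The paper itself proves this lemma only by citation to \cite[Lemma 4.10]{ParkerBN}, so there is no internal argument here to compare against; what you have supplied is a self-contained verification. Your strategy --- realize the triality module by Galois descent from the eight-dimensional $K$-module $U = W \otimes W^\tau \otimes W^{\tau^2}$, identify $V \otimes_F K \cong U$, and compute directly in the monomial basis $e_{ijk}$ against the filtration by number of indices equal to $2$ --- is sound, and the observation that the rank bookkeeping in (iii) collapses onto the single $3 \times 3$ block $M$ with $\det M = -2\mu\nu\rho$ is precisely the right way to isolate the characteristic-two degeneration. I checked the matrix $M$ and the rank count $\mathrm{rank}(s-1) = 2 + \mathrm{rank}\,M$ (with the $2 \times 2$ minor $-\mu\rho$ forcing $\mathrm{rank}\,M = 2$ when $p = 2$), the filtration identities $[U, S^k] = U_{3-k}$ and $C_U(S) = U_0$ via the Vandermonde argument in the Galois conjugates of a primitive element, and the norm-surjectivity argument giving irreducibility in (iv). Two small points are worth making explicit. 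First, for the equality $[U, S] = U_2$ and its iterates, note that $[U, S]$ is automatically a $K$-subspace because each $s - 1$ is $K$-linear; only then does the $K$-span of $\{(\mu,\nu,\rho)\}$ actually sit inside $[U,S]$, and the same one-line remark is needed again when you run the span argument on $[e_{221}, S]$, $[e_{212}, S]$, $[e_{122}, S]$ to obtain $U_1 \subseteq [U,S]$. Second, for $p$ odd the centre of $\SL_2(p^{3n})$ acts on $U$ by $(-1)^{1 + p^n + p^{2n}} = -1$, so the triality module does not factor through $\PSL_2$ and the hypothesis $G \cong \mathrm{(P)SL}_2(p^{3n})$ forces $G \cong \SL_2(p^{3n})$ in odd characteristic; this is what licenses your unqualified use of the natural module $W$ and of the torus element $\mathrm{diag}(t, t^{-1})$ in part (iv).
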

\begin{proof}
See \cite[Lemma 4.10]{ParkerBN}.
\end{proof}

We will also need facts concerning the natural modules for $\SU_3(p^n)$ and $\Sz(2^n)$. 

\begin{definition}
The natural modules for $\SU_3(p^n)$ and $\Sz(2^n)$ are the unique irreducible $\mathrm{GF}(p)$-modules of smallest dimension. Equivalently, they may be viewed as the restrictions of a ``natural'' $\SL_3(p^{2n})$-module and $\Sp_4(2^n)$-module respectively.
\end{definition}

\begin{lemma}\label{SUMod}
Suppose $G\cong\SU_3(p^n)$, $S\in\syl_p(G)$ and $V$ is a natural module.  Then the following hold:
\begin{enumerate}
\item $C_V(S)=[V, Z(S)]=[V, S,S]$ is of order $p^{2n}$;
\item $C_V(Z(S))=[V,S]$ is of order $p^{4n}$; and
\item $V/[V, S]$, $[V, S]/C_V(S)$ and $C_V(S)$ are irreducible $\mathrm{GF}(p)N_G(S)$-modules upon restriction.
\end{enumerate}
\end{lemma}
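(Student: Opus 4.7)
The plan is to realize $V$ concretely as $\GF(q^2)^3$ with $q=p^n$, on which $G\cong\SU_3(q)$ acts preserving a non-degenerate Hermitian form whose Gram matrix has $1$s on the anti-diagonal and $0$s elsewhere, relative to a basis $v_1, v_2, v_3$. A Sylow $p$-subgroup $S$ then consists of the upper unitriangular matrices $s(a,b)$ with $a, b\in\GF(q^2)$ and $b+b^q+a^{q+1}=0$, whence $|S|=q^3$, and $Z(S)=\{s(0,b):b+b^q=0\}$ has order $q$. The torus $T=\{\mathrm{diag}(\lambda,\lambda^{q-1},\lambda^{-q}):\lambda\in\GF(q^2)^\times\}$ normalizes $S$, and $N_G(S)=ST$.

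For parts (i) and (ii), I would compute directly: $[v_1, s(a,b)] = 0$, $[v_2, s(a,b)] = a v_1$, and $[v_3, s(a,b)] = b v_1 - a^q v_2$. Since $G$ acts $\GF(q^2)$-linearly, commutator subgroups are $\GF(q^2)$-stable, and one reads off $C_V(S)=\langle v_1\rangle_{\GF(q^2)}$ of order $p^{2n}$, $[V,S]=\langle v_1, v_2\rangle_{\GF(q^2)}$ of order $p^{4n}$, and $[V, S, S]=\langle v_1\rangle_{\GF(q^2)}=C_V(S)$. Restricting to $z\in Z(S)$, only the $v_3$-coefficient contributes, giving $[v_3, z]=bv_1$ with $b$ ranging over the trace-zero subspace of $\GF(q^2)$; scaling by arbitrary $\gamma\in\GF(q^2)^\times$ exhausts $\langle v_1\rangle_{\GF(q^2)}$, so $[V, Z(S)]=C_V(S)$. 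Finally, solving $z\cdot v=v$ for $z\in Z(S)$ yields $C_V(Z(S))=\langle v_1, v_2\rangle_{\GF(q^2)}=[V, S]$.

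For (iii), each of the three sections is one-dimensional over $\GF(q^2)$ and $S$ acts trivially on it, so irreducibility under $N_G(S)$ reduces to irreducibility under $T$. Direct computation on the basis shows that on $C_V(S)$, $[V, S]/C_V(S)$, and $V/[V, S]$, the torus $T$ acts by multiplication by $\lambda$, $\lambda^{q-1}$, and $\lambda^{-q}$ respectively. On the outer sections, using $\gcd(q, q^2-1)=1$, both $\lambda\mapsto\lambda$ and $\lambda\mapsto\lambda^{-q}$ are bijections on $\GF(q^2)^\times$, so $T$ acts transitively on non-zero vectors and irreducibility is immediate. The main obstacle is the middle section, where the image of $T$ is the norm-one subgroup $N_1$ of order $q+1$; here I would show that the $\GF(p)$-subalgebra of $\mathrm{End}_{\GF(p)}(\GF(q^2))$ generated by $N_1$ is a subfield $\GF(p^d)$, with $d$ the multiplicative order of $p$ modulo $q+1$. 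Since $(p^n+1)\mid(p^{2n}-1)$ forces $d\mid 2n$, and any proper divisor of $2n$ is at most $n$ (giving $p^d-1<p^n+1$, incompatible with $(p^n+1)\mid(p^d-1)$), we must have $d=2n$; the subalgebra is then $\GF(q^2)$ itself, which is a field, so $\GF(q^2)$ has no proper non-zero submodule and the $T$-action is irreducible.
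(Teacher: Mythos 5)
Your proof is correct. The paper's proof of this lemma is simply the citation ``See \cite[Lemma 4.13]{ParkerBN},'' whereas you give a self-contained argument from the explicit realization of $V=\GF(q^2)^3$ with the Hermitian form in antidiagonal gauge, the unipotent elements $s(a,b)$, and the torus $T=\{\mathrm{diag}(\lambda,\lambda^{q-1},\lambda^{-q})\}$ with $N_G(S)=ST$. Parts (i) and (ii) follow cleanly from your commutator computations once one notes, as you do, that $[V,S]$, $[V,S,S]$ and $[V,Z(S)]$ are $\GF(q^2)$-stable because $S$ acts $\GF(q^2)$-linearly. For (iii) the reduction to $T$-irreducibility of each $\GF(q^2)$-line is right: the outer sections carry the weights $\lambda\mapsto\lambda$ and $\lambda\mapsto\lambda^{-q}$, both bijective on $\GF(q^2)^\times$ since $\gcd(q,q^2-1)=1$, so $T$ is transitive on nonzero vectors there. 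The middle section is the only genuine obstacle, and your argument works: the image of $T$ is the norm-one subgroup $N_1$ of order $q+1$; the $\GF(p)$-subalgebra of $\mathrm{End}_{\GF(p)}(\GF(q^2))$ that it generates is the subfield $\GF(p^d)$ with $d$ the order of $p$ modulo $q+1$; the divisibility $(q+1)\mid(q^2-1)$ gives $d\mid 2n$, and a proper divisor would force $p^d-1\leq p^n-1 < p^n+1$, contradicting $(p^n+1)\mid(p^d-1)$; hence $d=2n$, the subalgebra is all of $\GF(q^2)$, and every $N_1$-invariant $\GF(p)$-subspace is a $\GF(q^2)$-subspace, so the module is irreducible. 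The citation route keeps the paper short; your route has the virtue of making the $\GF(q^2)$-structure and the $T$-weights of the three quotients explicit, which is precisely the extra information used when the lemma is invoked later on $G_{\alpha,\beta}$-invariant subgroups.
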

\begin{proof}
See \cite[Lemma 4.13]{ParkerBN}.
\end{proof}

\begin{lemma}\label{SzMod}
Suppose $G\cong\Sz(2^n)$, $S\in\syl_2(G)$ and $V$ is the natural module. Then the following hold:
\begin{enumerate}
\item $[V, S]$ has order $2^{3n}$;
\item $[V,\Omega(S)]=C_V(\Omega(S))=[V,S,S]$ has order $2^{2n}$;
\item $C_V(S)=[V,S,\Omega(S)]=[V, \Omega(S), S]=[V,S,S,S]$ has order $2^n$; and
\item $V/[V,S]$, $[V,S]/C_V(\Omega(S))$, $C_V(\Omega(S))/C_V(S)$ and $C_V(S)$ are all irreducible $\mathrm{GF}(p)N_G(S)$-modules upon restriction.
\end{enumerate}
\end{lemma}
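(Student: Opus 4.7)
The plan is to analyze $V$ via an explicit matrix realization. The natural module for $\Sz(q)$, $q = 2^n$, can be taken as the restriction of the natural $4$-dimensional $\mathrm{GF}(q)$-module for $\Sp_4(q)$ along the standard embedding $\Sz(q) \leq \Sp_4(q)$. I would fix a basis $e_1, e_2, e_3, e_4$ in which $S = \{s(a,b) : a, b \in \mathrm{GF}(q)\}$ consists of lower unitriangular matrices of the familiar Suzuki parametric form involving the field automorphism $\theta$ with $x^{\theta} = x^{2^{(n+1)/2}}$, so that $\Omega(S) = Z(S) = \{s(0,b) : b \in \mathrm{GF}(q)\}$ and a Hall $2'$-subgroup $K$ of $N_G(S)$ consists of diagonal matrices $h(\lambda) = \mathrm{diag}(\lambda^{\theta+1}, \lambda, \lambda^{-1}, \lambda^{-(\theta+1)})$, $\lambda \in \mathrm{GF}(q)^{\times}$.

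With this set up, direct computation of $s(a,b)\cdot e_i$ for each basis vector, collecting the commutators $[e_i,s(a,b)] = s(a,b)e_i - e_i$ as $(a,b)$ ranges over $\mathrm{GF}(q)^2$ (respectively $\{0\}\times\mathrm{GF}(q)$), yields
\[ C_V(S) = \langle e_1 \rangle, \quad [V, \Omega(S)] = C_V(\Omega(S)) = \langle e_1, e_2 \rangle, \quad [V, S] = \langle e_1, e_2, e_3 \rangle, \]
of orders $q$, $q^2$ and $q^3$ respectively, establishing the stated cardinalities together with $[V,\Omega(S)] = C_V(\Omega(S))$. The iterated commutators are obtained by applying the same matrices to basis vectors of these subspaces: one gets $[V, S, S] = \langle e_1, e_2\rangle = [V,\Omega(S)]$ (the inclusion $[V, \Omega(S)] \leq [V, S, S]$ following from $\Omega(S) = [S, S]$, by \cref{SzGen}), and then $[V, S, \Omega(S)] = [V, \Omega(S), S] = [V, S, S, S] = \langle e_1 \rangle = C_V(S)$, establishing (i)--(iii).

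For (iv), each of the four factors $V/[V, S]$, $[V, S]/C_V(\Omega(S))$, $C_V(\Omega(S))/C_V(S)$ and $C_V(S)$ is $1$-dimensional over $\mathrm{GF}(q)$, spanned respectively by $e_4 + [V,S]$, $e_3 + C_V(\Omega(S))$, $e_2 + C_V(S)$ and $e_1$, and $K$ acts by a character $\lambda \mapsto \lambda^{c}$ with $c \in \{-(\theta+1), -1, 1, \theta+1\}$. The key arithmetic point is that the defining relation $x^{\theta^2} = x^2$ forces $r := 2^{(n+1)/2}$ to satisfy $r^2 \equiv 2 \pmod{q-1}$, hence $(r+1)(r-1) \equiv 1 \pmod{q-1}$, so each of the exponents $\pm 1, \pm(\theta+1) \equiv \pm(r+1)$ is a unit modulo $q-1$. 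Each of the four characters is therefore surjective onto $\mathrm{GF}(q)^{\times}$, which rules out any proper $K$-invariant $\mathrm{GF}(2)$-subspace on each factor; since $S$ acts trivially on each factor, this yields the claimed $\mathrm{GF}(p)N_G(S)$-irreducibility.

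The main obstacle is simply accurate bookkeeping in the matrix calculations and keeping the twisted Suzuki parametrization straight; conceptually the structure is transparent, as the factors correspond to the four weight spaces of the torus $K$ on $V$, but some care is needed with the Frobenius-twisted exponents to conclude that every weight character is surjective.
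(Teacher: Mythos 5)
Your proposal is correct and takes essentially the same approach the paper alludes to, namely the elementary matrix calculation in $\Sp_4(2^n)$ (the paper's proof is literally just "This is an elementary calculation in $\Sp_4(2^n)$"). Your explicit treatment fills in that calculation with correct bookkeeping, and the observation that $r^2\equiv 2\pmod{q-1}$ makes each weight exponent a unit, so that each torus character is surjective and each $\mathrm{GF}(q)$-weight space becomes a Singer-cycle module over $\mathrm{GF}(2)$, is exactly what is needed for (iv).
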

\begin{proof}
This is an elementary calculation in $\Sp_4(2^n)$.
\end{proof}

Given the descriptions of rank $1$ Lie type groups and their modules, we now require ways to identify them. Furthermore, we would like to have ways to completely determine a group $G$ with a strongly $p$-embedded subgroup, and its actions, given reasonably general hypotheses. We achieve this through characterizations of FF-modules, quadratic action and some Hall--Higman type arguments. We first list some generic module results which will be used throughout this work.

\begin{lemma}\label{SplitMod}
Let $G$ be a group and $V$ be a faithful $\mathrm{\GF}(p)G$-module. Let $T\in\syl_p(O^p(G))$ and assume that $V = \langle C_V(T)^G\rangle$. Then $V=[V, O^p(G)]C_V(O^p(G))$.
\end{lemma}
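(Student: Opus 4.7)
Set $H:=O^p(G)$, and write $V$ additively. Let $W:=[V,H]+C_V(H)$. The plan is to show $C_V(T)\subseteq W$ and then exploit $G$-invariance of $W$ together with the hypothesis $V=\langle C_V(T)^G\rangle$.

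First I would check that $W$ is a $G$-submodule of $V$. Since $H\normaleq G$, conjugation by $g\in G$ permutes $H$, so both $[V,H]$ and $C_V(H)$ are $G$-invariant; hence so is their sum $W$. Thus if $C_V(T)\subseteq W$, then $\langle C_V(T)^G\rangle\subseteq W$, and so $V=W$ by hypothesis.

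The heart of the argument is a standard averaging trick. Fix $v\in C_V(T)$. Since $T\leq \mathrm{Stab}_H(v)$ and $T\in\syl_p(H)$, the $H$-orbit $v^H$ has size $n:=|H:\mathrm{Stab}_H(v)|$ dividing $|H:T|$, which is coprime to $p$. Moreover, for any $h\in H$ one has $v\cdot h - v = [v,h]\in [V,H]$, so the entire orbit $v^H$ is contained in the coset $v+[V,H]$. Because $n$ is invertible in $\mathrm{GF}(p)$, the element
\[
w \;:=\; n^{-1}\sum_{u\in v^H} u
\]
is well-defined; it is fixed by $H$ (as the normalized sum over an $H$-orbit), and from $u\equiv v\pmod{[V,H]}$ for each $u\in v^H$ we get $w\equiv v\pmod{[V,H]}$. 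Thus $w\in C_V(H)$ and $v\in w+[V,H]\subseteq C_V(H)+[V,H]=W$, so $C_V(T)\subseteq W$ as required.

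There is no real obstacle here: the only subtlety is recognizing that the coprime-averaging trick is available precisely because $T$ is a Sylow $p$-subgroup of $H$ (rather than of $G$), which forces the $H$-orbit of each vector in $C_V(T)$ to have $p'$-length. Once that observation is made, the proof is essentially two lines.
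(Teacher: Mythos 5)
Your proof is correct. The key observations are all sound: $W:=[V,O^p(G)]+C_V(O^p(G))$ is a $G$-submodule because $O^p(G)\normaleq G$; and for $v\in C_V(T)$ the $O^p(G)$-orbit of $v$ lies in $v+[V,O^p(G)]$ and has length dividing $|O^p(G):T|$, hence coprime to $p$, so the normalized orbit sum is an $O^p(G)$-fixed point congruent to $v$ modulo $[V,O^p(G)]$. This places $C_V(T)$ in $W$, and $G$-invariance of $W$ plus the generation hypothesis finishes the argument. The paper does not prove this lemma itself but cites it from Chermak, and the coprime-averaging argument you give is the standard (and, to my knowledge, essentially the only natural) proof; note that the faithfulness hypothesis plays no role in the argument, and you were right to flag that the crucial point is that $T$ is Sylow in $O^p(G)$ rather than in $G$.
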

\begin{proof}
See \cite[Lemma 1.1]{ChermakSmall}.
\end{proof}

We will require some knowledge of the minimal $\mathrm{GF}(p)$-representations of groups with a strongly $p$-embedded subgroups, especially when their $p$-rank is at least $2$. The following results achieve this.

\begin{lemma}\label{SpeMod2}
Let $G=O^{2'}(G)$ be a group with a strongly $2$-embedded subgroup and $m_2(G)>1$. Assume that $V$ is a faithful $\mathrm{GF}(2)$-module for $G$, let $S\in\syl_2(G)$ and $A\le Z(S)$ of order $4$. Then the following hold:
\begin{enumerate}
    \item if $G/O_{2'}(G)\cong \PSL_2(q)$ then $|V|\geq q^2$, $|V/C_V(A)|\geq q$ and $|V/C_V(s)|\geq q^{\frac{2}{3}}$ for any non-trivial $s\in Z(S)^\#$;
    \item if $G/O_{2'}(G)\cong \Sz(q)$ then $|V|\geq q^4$, $|V/C_V(A)|\geq q^2$ and $|V/C_V(s)|\geq q^{\frac{4}{3}}$ for any non-trivial $s\in Z(S)^\#$; and
    \item if $G/O_{2'}(G)\cong \PSU_3(q)$ then $|V|\geq q^6$, $|V/C_V(A)|\geq q^2$ and $|V/C_V(s)|\geq q^{\frac{3}{2}}$ for any non-trivial $s\in Z(S)^\#$.
\end{enumerate}
\end{lemma}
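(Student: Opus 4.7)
The plan is to reduce to the case where $G=\bar G:=G/O_{2'}(G)$ is simple acting on an irreducible faithful $\mathrm{GF}(2) G$-module, and then to invoke the defining-characteristic representation theory of the three rank-$1$ groups together with direct computation of Jordan structures of involutions in $Z(S)$.

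First, I would reduce to $O_{2'}(G)=\{1\}$. Since $O_{2'}(G)$ acts coprimely on the elementary abelian $2$-group $V$, coprime action gives $V=C_V(O_{2'}(G))\oplus [V,O_{2'}(G)]$; writing $W:=C_V(O_{2'}(G))$, the group $\bar G$ acts on $W$, and because $V$ is faithful for $G$ while $\bar G$ is simple, $W$ contains a faithful irreducible $\mathrm{GF}(2)\bar G$-composition factor (the alternative, that all such factors are trivial, forces $W\le C_V(G)$, and one then handles $[V,O_{2'}(G)]$ by a Clifford-theoretic argument). Lower bounds on $|V|$, $|V/C_V(A)|$, and $|V/C_V(s)|$ then transfer from such a faithful composition factor to $V$ via the standard inequality $\dim_{\mathrm{GF}(2)}[V,s]\geq\sum_i\dim_{\mathrm{GF}(2)}[F_i,s]$ across the factors $F_i$ (and analogously for $A$ in place of $s$), using that this quantity is multiplicative over direct sums.

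Second, with $G=\bar G$ simple, Steinberg's tensor product theorem classifies absolutely irreducible $\overline{\mathrm{GF}(2)} G$-modules as twisted tensor products of fundamental modules. The smallest non-trivial fundamental modules are the natural modules of $\mathrm{GF}(q)$-dimension $2$ for $\PSL_2(q)$, $\mathrm{GF}(q)$-dimension $4$ for $\Sz(q)$, and $\mathrm{GF}(q^2)$-dimension $3$ for $\PSU_3(q)$. Restricting scalars to $\mathrm{GF}(2)$ gives $\mathrm{GF}(2)$-dimensions $2n$, $4n$, and $6n$ respectively, whence $|V|\geq q^2,q^4,q^6$. On the natural modules themselves, the centralizer bounds are read off directly from \cref{NatMod}, \cref{SzMod}, and \cref{SUMod}, giving $|V/C_V(A)|$ and $|V/C_V(s)|$ equal to $q$ (for $\PSL_2$) or $q^2$ (for $\Sz$ and $\PSU_3$), all comfortably exceeding the claimed lower bounds $q,q^2,q^2$ and $q^{2/3},q^{4/3},q^{3/2}$.

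Third, for non-natural irreducible modules arising from Steinberg tensor products of Frobenius twists of the natural, I would use that the Jordan block structure of an involution on a tensor product is controlled by its block structure on each factor, so $\dim[V,s]$ only grows with the tensor length. The fractional exponents $\tfrac{2}{3},\tfrac{4}{3},\tfrac{3}{2}$ are precisely the uniform lower bounds surviving across all such tensor products; they are never saturated on the natural modules but are chosen slack enough to absorb the worst case (typically a restriction of scalars from $\mathrm{GF}(q)$ to a proper $\mathrm{GF}(2)$-form). The main obstacle I anticipate is this final verification: a clean uniform bound on how involutions act on arbitrary twisted tensor products requires either a careful case analysis, or an appeal to existing tables of minimal faithful $\mathrm{GF}(2)$-modules for the rank-$1$ Lie type groups (as in \cite{GLS3}); a secondary subtlety is the $O_{2'}(G)\neq\{1\}$ case when $C_V(O_{2'}(G))$ carries no faithful $\bar G$-constituent, which forces a Clifford-theoretic detour through $[V,O_{2'}(G)]$.
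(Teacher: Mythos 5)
Your approach is genuinely different from the paper's, and I think it is worth explaining the divergence because the paper's argument is substantially shorter and sidesteps the two gaps you yourself flag.

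The paper does not pass to $\bar G=G/O_{2'}(G)$, does not invoke Steinberg's tensor product theorem, and does not analyse Jordan block structures at all. Instead it works entirely with subgroups of $G$: for the overall bound it cites the proof of \cite[Lemma 1.7]{OliMod} for $|V|\geq q^2,q^4,q^6$. For the centralizer bounds it uses a generation argument. Let $d_A$ (resp.\ $d_s$) be the number of $G$-conjugates of $A$ (resp.\ of $\langle s\rangle$) needed to generate $G$ modulo $O_{2'}(G)$; these numbers are read off from \cref{SLGen}~(iii),(iv), \cref{SzGen}~(vi),(vii), and \cref{SU3Gen}~(vi),(vii). Choosing $d_A$ conjugates $A^{g_1},\dots,A^{g_{d_A}}$ with $H:=\langle A^{g_1},\dots,A^{g_{d_A}}\rangle$ satisfying $G=HO_{2'}(G)$, one has the elementary supermultiplicative inequality $|V/C_V(H)|\leq\prod_i|V/C_V(A^{g_i})|=|V/C_V(A)|^{d_A}$, and since $H$ is a subgroup of $G$ it acts \emph{faithfully} on $V$ with $H/O_{2'}(H)$ again of the given Lie-type isomorphism class, so $|V/C_V(H)|\geq q^2,q^4,q^6$ by the same base bound. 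Taking $d_A$-th roots yields $|V/C_V(A)|\geq q^{2/d_A}$ etc., and the fractional exponents $q^{2/3}$, $q^{4/3}$, $q^{3/2}$ drop out immediately from the generation numbers $(d_s=3,3,4$ and $d_A=2,2,3)$. This explains why the exponents are what they are: in your framework they look ad hoc and would require a case-by-case audit of twisted tensor products, which is precisely the step you identify as problematic.

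Two concrete issues with your proposal. First, the reduction to $O_{2'}(G)=1$: your argument that $C_V(O_{2'}(G))$ carries a faithful irreducible $\bar G$-constituent is not automatic, and the promised Clifford-theoretic detour through $[V,O_{2'}(G)]$ is nontrivial --- $G$ may permute the $O_{2'}(G)$-homogeneous components, and there is no a priori reason a $\bar G$-faithful irreducible constituent appears there either. The paper's device of working with a subgroup $H\leq G$ rather than the quotient $\bar G$ avoids this entirely: $H$ inherits faithfulness on $V$ for free. Second, for the fractional centralizer bounds, the Jordan-block computation for arbitrary twisted tensor products is genuinely delicate (you would need to rule out, say, a $\PSU_3(q)$-module where a central involution is ``too close'' to unipotent-free); the generation argument replaces this with a single inequality and the known two- or three-element generation of rank-$1$ Lie type groups. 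If you wish to complete your route, I would recommend instead adopting the paper's generation lemma, keeping the Steinberg argument only for the base bound on $|V|$ in place of the citation to \cite{OliMod}.
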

\begin{proof}
The bounds on $V$ follow from the proof of \cite[Lemma 1.7]{OliMod}. Set $d_A$ be the number of conjugates of $A$ required to generate $G/O_{2'}(G)$ and $d_s$ the number of conjugates of $s$ required to generate $G/O_{2'}(G)$. Then we form $H\le G$ from $d_A$ conjugates of $A$ or $d_s$ conjugates of $s$ with the property $G=HO_{2'}(G)$. Moreover, $H$ acts faithfully on $V$ and $|V/C_V(H)|\leq \mathrm{min}(|V/C_V(A)|^{d_A}, |V/C_V(s)|^{d_s})$. Since $H$ is also a group with a strongly $2$-embedded subgroup, $|V/C_V(H)|\geq q^2, q^4$ or $q^6$ respectively. Applying \cref{SLGen} (iii), (iv) when $G/O_{2'}(G)\cong \PSL_2(q)$, \cref{SzGen} (vi), (vii) when $G/O_{2'}(G)\cong \Sz(q)$ and \cref{SU3Gen} (vi), (vii) when $G/O_{2'}(G)\cong \PSU_3(q)$, we deduce the bounds on $|V/C_V(A)|$ and $|V/C_V(s)|$.
\end{proof}

\begin{lemma}\label{SpeModOdd}
Let $G=O^{p'}(G)$ be a $\mathcal{K}$-group with a strongly $p$-embedded subgroup, $m_p(G)>1$ and $p$ an odd prime. Assume that $V$ is a faithful $\mathrm{GF}(p)$-module for $G$, let $S\in\syl_p(G)$ and $A\le Z(S)$ of order $p^2$. Then the following hold:
\begin{enumerate}
    \item if $G/O_{p'}(G)\cong \PSL_2(q)$ then $|V|\geq q^2$, $|V/C_V(A)|\geq q$ and $|V/C_V(s)|\geq q^{\frac{2}{3}}$ for any non-trivial $s\in Z(S)^\#$;
    \item if $G/O_{3'}(G)\cong \Ree(q)$ with $q>3$ then $|V|\geq \mathrm{max}(q^6, 3^7)$ and $|V/C_V(s)|\geq \mathrm{max}(q^2, 9)$ for any non-trivial $s\in Z(S)^\#$;
    \item if $G/O_{p'}(G)\cong \PSU_3(q)$ then $|V|\geq q^6$, $|V/C_V(A)|\geq q^2$ and $|V/C_V(s)|\geq q^{\frac{3}{2}}$ for any non-trivial $s\in Z(S)^\#$; and
    \item $|V|\geq p^5$ otherwise.
\end{enumerate}
\end{lemma}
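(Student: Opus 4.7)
The plan is to reduce to the classification of $\bar G:=G/O_{p'}(G)$ given by \cref{SE2} and then proceed in the spirit of \cref{SpeMod2}. For each possibility listed in \cref{SE2}, I would establish the lower bound on $|V|$ first, and then use a generation count to pass to the bounds on $|V/C_V(A)|$ and $|V/C_V(s)|$.

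For the $|V|$ bounds in cases (i)--(iii), I would invoke the minimal faithful $\mathrm{GF}(p)$-representation dimensions of the relevant Lie-type groups -- the natural module for $\PSL_2(q)$ giving $|V|\ge q^2$, for $\PSU_3(q)$ giving $|V|\ge q^6$, and the restriction of the $\mathrm{G}_2(q)$-module for $\Ree(q)$ giving $|V|\ge q^7\ge\max(q^6,3^7)$ when $q>3$ -- as in the proof of \cite[Lemma 1.7]{OliMod}. For case (iv), \cref{SE2} leaves the list $\Alt(2p)$ (for $p>3$), $\PSL_3(4)$ and $\mathrm{M}_{11}$ (for $p=3$), $\Sz(32):5$, ${}^2\mathrm{F}_4(2)'$, $\mathrm{McL}$ and $\mathrm{Fi}_{22}$ (for $p=5$), and $\mathrm{J}_4$ (for $p=11$); for each one I would read $|V|\ge p^5$ off the $p$-modular character tables or decomposition matrices, the extremal case being the $5$-dimensional $\mathrm{GF}(3)\mathrm{M}_{11}$-module.

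For the remaining bounds in (i)--(iii), I would proceed exactly as in \cref{SpeMod2}: let $d_A$ (resp.\ $d_s$) be the minimum number of $G$-conjugates of $A$ (resp.\ of $s$) needed to generate a subgroup $H\le G$ with $HO_{p'}(G)=G$. The identity $C_V(H)=\bigcap_i C_V(A^{x_i})$ forces
\[
|V/C_V(H)|\le\min\bigl(|V/C_V(A)|^{d_A},\,|V/C_V(s)|^{d_s}\bigr),
\]
and since $H/(H\cap O_{p'}(G))\cong\bar G$ inherits a strongly $p$-embedded subgroup and acts faithfully on $V$, the lower bound on $|V|$ applied to $H$ forces $|V/C_V(H)|\ge q^2,\,q^6,\,q^6$ in (i)--(iii) respectively. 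The generation counts are then read off the rank-one generation lemmas: \cref{SLGen}(v) yields $d_A\le 2$ and $d_s\le 3$ for $\PSL_2(q)$ (the extra conjugate for $s$ handling the $\PSL_2(9)$ exception, where two conjugates of an order $3$ element may land inside $\Alt(5)$); \cref{ReeGen}(vi) gives $d_s\le 3$ for $\Ree(q)$ with $q>3$, using that $\langle s\rangle$ is normal in $S$; and \cref{SU3Gen}(vi),(vii) give $d_A\le 3$ and $d_s\le 4$ for $\PSU_3(q)$, the fourth conjugate covering the exceptional $\PSU_3(9)$, $|\langle s\rangle|=3$ case. Substituting these counts into the displayed inequality yields the stated bounds.

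The main obstacle is case (iv), where the bound $|V|\ge p^5$ is not uniform and must be verified one sporadic or small twisted group at a time by consulting the $p$-modular character data, with $\mathrm{M}_{11}$ saturating the bound; by contrast, the arguments for the generic Lie-type cases are essentially a verbatim repeat of the $p=2$ reasoning in \cref{SpeMod2}.
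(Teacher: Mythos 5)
Your proposal is substantively correct and follows essentially the same route as the paper's argument: reduce via \cref{SE2}, establish the $|V|$ lower bounds from minimal-faithful-module data for the rank-one Lie groups, and then transfer these to bounds on $|V/C_V(A)|$ and $|V/C_V(s)|$ through the same generation-count device used in \cref{SpeMod2}, reading $d_A$ and $d_s$ from \cref{SLGen}(v), \cref{SU3Gen}(vi),(vii) and \cref{ReeGen}(vi). The one place you diverge is case (iv): where you propose to verify $|V|\geq p^5$ group-by-group from $p$-modular character data, the paper instead makes the cleaner observation that for every group on the residual list, $|G/O_{p'}(G)|$ fails to divide $|\GL_4(p)|$ (e.g.\ $7$ does not divide $|\GL_4(3)|$, $11$ does not divide $|\GL_4(3)|$, etc.), which rules out any faithful module of dimension at most $4$ without consulting a decomposition matrix. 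Two minor remarks: your enumeration for case (iv) omits $\Ree(3)$ (the $a=0$ term in \cref{SE2}(iv); it is excluded from item (ii) by the hypothesis $q>3$, so it must be absorbed here — the paper treats it by a separate MAGMA check), and the reference the paper actually uses for the $|V|$ bounds in (i)--(iii) is the proof of \cite[Lemma 4.6]{Comp1}, the odd-prime companion to the \cite[Lemma 1.7]{OliMod} citation you carried over from \cref{SpeMod2}. Neither affects the validity of your argument.
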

\begin{proof}
Suppose first that $G/O_{p'}(G)$ is not isomorphic to the derived subgroup of a rank $1$ finite group of Lie type, including $\Ree(3)$. Then one can check, comparing with \cref{SE2}, that $|G/O_{p'}(G)|$ does not divide the order of $\GL_4(p)$. Hence, $G/O_{p'}(G)$ is isomorphic to a rank $1$ finite group of Lie type and we appeal to the proof of \cite[Lemma 4.6]{Comp1} to deduce the bounds on $V$. For $G/O_{3'}(G)\cong \Ree(3)$, we simply calculate in $\GL_6(3)$ using MAGMA. As in \cref{SpeMod2}, using instead \cref{SLGen} (v), \cref{SU3Gen} (vi), (vii) and \cref{ReeGen} (vi) for the values of $d_A$ and $d_s$, we we deduce the bounds on $|V/C_V(A)|$ and $|V/C_V(s)|$.
\end{proof}

We require, at least when $p$ is an odd prime, a way to distinguish between $\SL_2(p^n)$ and $\PSL_2(p^n)$ from a strongly $p$-embedded hypothesis. Additionally, as can be seen from the \hyperlink{MainThm}{Main Theorem}, none of the configurations we are interested in have Ree groups as their automizers, so we will also have to dispel of this case later on. Generally, we achieve this using quadratic action.
 
\begin{definition}
Let $G$ be a finite group and $V$ an $\mathrm{GF}(p)G$-module. If $A\le G$ satisfies $[V, A,A]=\{1\}\ne [V,A]$, then $A$ acts \emph{quadratically} on $V$ and if $[V, A,A,A]=\{1\}$ and $A$ is not quadratic or trivial on $V$, then $A$ acts \emph{cubically}.
\end{definition}

\begin{lemma}\label{NotQuad}
Suppose that $V$ is an irreducible $\mathrm{GF}(p)$-module for $G\cong \Ree(3^n)$ or $G\cong \PSL_2(p^n)\not\cong\SL_2(p^n)$. If there is a non-trivial subgroup $A$ of $G$ with $[V, A,A]=\{1\}$, then $[V, A]=[V, G]=\{1\}$.
\end{lemma}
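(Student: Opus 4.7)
The plan is to reduce to the case where $V$ is absolutely irreducible and then apply Steinberg's tensor-product theorem together with a direct Jordan-block computation to rule out nontrivial quadratic action.

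First I would observe that for each $a\in A$, the relation $[V,a,a]=\{1\}$ forces $(a-1)^2=0$ in $\operatorname{End}(V)$, hence $a^p=1$ on $V$; so $A/C_A(V)$ is an elementary abelian $p$-group, and replacing $A$ by a Sylow $p$-subgroup of its preimage lets me assume $A$ is itself a $p$-subgroup of $G$ contained in a Sylow $p$-subgroup $S$. Extending scalars, I would decompose $V\otimes_{\GF(p)}\bar{\GF(p)}=\bar V_1\oplus\cdots\oplus\bar V_k$ as a sum of Galois-conjugate absolutely irreducible $\bar{\GF(p)}G$-modules, on each of which $A$ still acts quadratically; it then suffices to show that no nontrivial absolutely irreducible $\bar{\GF(p)}G$-module admits a nontrivial quadratic element.

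For $G\cong\PSL_2(p^n)$ with $p$ odd, Steinberg's theorem expresses each absolutely irreducible $\bar{\GF(p)}\SL_2(p^n)$-module as $V_{\underline a}=\bigotimes_{j=0}^{n-1}\Sym^{a_j}(V_{\mathrm{nat}})^{(\phi^j)}$ with $0\le a_j\le p-1$, and this descends to a $\PSL_2(p^n)$-module exactly when $\sum_j a_j$ is even, so that the central element $-I\in\SL_2(p^n)$ acts trivially. A nontrivial unipotent $s\in S$ acts on each tensor factor as a single Jordan block of size $a_j+1$, so on $V_{\underline a}$ the nilpotency index of $(s-1)$ equals $1+\sum_j a_j$; quadratic nontrivial action would force $\sum_j a_j=1$, which is odd and inconsistent with the parity constraint. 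For $G\cong\Ree(3^n)$, the strategy is analogous: the absolutely irreducible $\bar{\GF(3)}G$-modules arise by restricting restricted $\mathrm{G}_2(\bar{\GF(3)})$-modules along the twisted Frobenius defining $\Ree(3^n)$, and a weight-string analysis of a short-root unipotent on any nonzero-highest-weight $\mathrm{G}_2$-module in characteristic $3$ shows the nilpotency index of $(s-1)$ is at least $3$, a bound preserved under Frobenius twisting and tensoring. In both cases, no nontrivial element of $S$ can act quadratically on any nontrivial absolutely irreducible module, so we conclude $[V,A]=\{1\}$.

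Irreducibility of $V$ together with $A\ne\{1\}$ and $[V,A]=\{1\}$ then forces the kernel $C_G(V)\normaleq G$ to be nontrivial; simplicity of $G$ (with direct verification in the non-simple cases $\PSL_2(3)\cong\Alt(4)$ and $\Ree(3)$) yields $C_G(V)=G$, so $[V,G]=\{1\}$ as required. The main obstacle is the Ree case: the $\mathrm{G}_2$-weight-bound in characteristic $3$ is slightly delicate to establish from scratch. A more self-contained alternative, in keeping with the paper's elementary framework, would be to use \cref{ReeGen} to identify $\PSL_2(3^n)$ inside the centralizer of an involution of $\Ree(3^n)$ for $n>1$ and reduce to the $\PSL_2$ case already established, dispatching the small cases by direct computation.
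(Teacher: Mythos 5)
Your proposal is correct in outline for the $\PSL_2$ case but takes a genuinely different route from the paper, and it leaves the Ree case unfinished. The paper's own proof is a one-line appeal to $p$-stability: $\PSL_2(p^n)$ for $p$ odd has abelian or dihedral Sylow $2$-subgroups, $\Ree(3^n)$ has abelian Sylow $2$-subgroups, and by the result from Gorenstein cited as (I.3.8.4), groups with that $2$-local structure act $p$-stably on $\GF(p)$-modules; since $O_p(G)=\{1\}$ in each case, a nontrivial $p$-subgroup acting quadratically cannot exist. Your Steinberg tensor-product plus Jordan-block analysis gives a correct and more explicit independent proof for $\PSL_2(p^n)$; the parity constraint on $\sum_j a_j$ coming from the central involution of $\SL_2(p^n)$ is a nice observation, and the argument yields precise control of minimal polynomials that the $p$-stability machinery does not provide.

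The Ree case is where the gap lies. Your primary route asserts a weight-string bound for restricted $\mathrm{G}_2$-modules in characteristic $3$ but, as you concede, does not establish it. Your fallback --- reducing to $\PSL_2(3^n)\le C_G(t)$ for an involution $t$ --- fails at the critical elements: in $\Ree(3^n)$ the $3$-part of $C_G(t)$ is a complement to $Z(S)$ inside $\Omega(S)$, because an involution of the torus acts fixed-point-freely on $Z(S)$ (see the proof of \cref{ReeGen}), so the elements of $Z(S)^\#$, precisely the extremal unipotents one most needs to rule out, centralize no involution at all. Even when one can place $A$ inside some $L\cong\PSL_2(3^n)$, concluding $[V,A]=\{1\}$ from trivial action on every irreducible $L$-subquotient of $V|_L$ needs a further argument, since $V|_L$ need not be semisimple. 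The paper's appeal to $p$-stability handles both families uniformly precisely because only the Sylow $2$-subgroup structure is used.
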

\begin{proof}
Since the Sylow $2$-subgroups of $\PSL_2(p^n)$ are either abelian or dihedral and the Sylow $2$-subgroups of $\Ree(3^n)$ are abelian, this follows from \cite[(I.3.8.4)]{gor}.
\end{proof}

For $p\geq 5$, the pairs $(G,V)$ where $G$ is a group acting faithfully on a module $V$ such that $G$ is generated by elements which act quadratically on $V$ were classified by Thompson. Thompson's results were extended to the prime $3$ by work of Ho. It seems imperative to emphasize that the these works predate the classification of finite simple groups. For convenience, the version we use here is by Chermak and utilizes the classification of finite simple groups, although as we stressed earlier, these groups will only ever appear as local subgroups in any arguments.

\begin{lemma}\label{SEQuad}
Suppose $G$ is a $\mathcal{K}$-group which has a strongly $p$-embedded subgroup for $p$ an odd prime and $V$ be a faithful $\mathrm{GF}(p)$-module for $G$. Suppose there is a $p$-subgroup $A\le G$ such that $[V,A,A]=\{1\}$ and set $L=\langle A^G\rangle$. Then one of the following occurs:
\begin{enumerate}
\item $G=L\cong \SL_2(p^n)$ where $p$ is any odd prime;
\item $G=L\cong\mathrm{(P)SU}_3(p^n)$ where $p$ is any odd prime; or
\item $G=L$, $G/C_G(U)\cong \SL_2(3), 2\cdot \Alt(5)$ or $2^{1+4}_-.\Alt(5)$ for every non-trivial irreducible composition factor $U$ of $V$, $p=3$ and $|S|=3$.
\end{enumerate}
\end{lemma}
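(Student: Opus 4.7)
The strategy is to combine the classification of $\mathcal{K}$-groups with strongly $p$-embedded subgroups (\cref{SE1}, \cref{SE2}) with the Thompson--Ho--Chermak classification of finite groups generated by elements acting quadratically on a faithful $\mathrm{GF}(p)$-module. First, I reduce: we may assume $A\ne\{1\}$ and $[V,A]\ne\{1\}$, otherwise $A$ centralizes $V$ and hence is trivial by faithfulness. Replacing $G$ by $O^{p'}(G)$ --- which still contains $A$, still acts faithfully on $V$, and still has a strongly $p$-embedded subgroup by \cref{spelemma1}(iii) --- we may further assume $G=O^{p'}(G)$. Since quadratic action is preserved under conjugation, $L:=\langle A^G\rangle\trianglelefteq G$ is generated by $p$-subgroups acting quadratically on $V$, so $(L,V|_L)$ is a quadratic pair in the sense of Thompson--Ho--Chermak.

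The core step is to intersect the Thompson--Ho--Chermak list for $L$ with the list of possibilities for $G/O_{p'}(G)$ from \cref{SE1}, \cref{SE2}. When $m_p(G)\geq 2$, \cref{NotQuad} immediately eliminates $\PSL_2(p^n)\not\cong\SL_2(p^n)$ and $\Ree(3^{2a+1})$; the remaining non-rank-$1$-Lie-type entries in \cref{SE2} --- the sporadic groups, $\Alt(2p)$ for $p>3$, $\PSL_3(4)$ and $\mathrm{M}_{11}$ --- are discarded either directly by the Thompson--Ho--Chermak list (none being a quadratic pair group in odd characteristic) or by confronting the lower bounds on $|V|$ and $|V/C_V(A)|$ from \cref{SpeModOdd} with their minimal faithful $\mathrm{GF}(p)$-representation degrees. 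What remains is $L\cong\SL_2(p^n)$ or $\mathrm{(P)SU}_3(p^n)$, giving conclusions (i) and (ii).

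For $m_p(G)=1$, \cref{SE1} yields either a perfect $G$ with simple $\widetilde G=G/O_{p'}(G)$ of cyclic Sylow $p$-subgroup (the generalized quaternion alternative is absent for $p$ odd), in which case Thompson--Ho--Chermak once more forces $L\cong\SL_2(p^n)$; or a $p$-solvable $G=SO_{p'}(G)$ with $S$ cyclic. In the $p$-solvable subcase, Hall--Higman-type analysis of the minimal polynomial of a generator of $A$ on each composition factor of $V$, combined with coprime action of $O_{p'}(G)$, restricts to $p=3$, $|S|=3$ and identifies $G/C_G(U)$ for every non-trivial irreducible composition factor $U$ as one of $\SL_2(3)$, $2\cdot\Alt(5)$, or $2^{1+4}_-.\Alt(5)$ --- this is conclusion (iii). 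In cases (i) and (ii), $L$ is quasi-simple and normal in $G$; conjugates of $A$ cover a Sylow $p$-subgroup of $G$ by strong $p$-embedding (applied to the generation of the Borel-type structure of a rank-$1$ Lie-type group from conjugates of any non-trivial $p$-subgroup), so $L$ contains a Sylow $p$-subgroup, and the assumption $G=O^{p'}(G)$ together with the $p'$-part of $\Out(L)$ handling the remaining outer extensions forces $G=L$.

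The main obstacle is the $m_p(G)=1$, $p=3$, $|S|=3$ case of (iii): with $|A|=3$ the hypothesis $[V,A,A]=\{1\}$ is extremely weak, so pinning down the three allowed module structures for $G/C_G(U)$ requires delicate coprime-action and minimal polynomial arguments that track how the single order-$3$ generator of $S$ acts on each composition factor of $V$ in tandem with the $O_{3'}(G)$-module structure. A secondary subtlety is verifying for each sporadic/alternating case in \cref{SE2} that the dimension bounds in \cref{SpeModOdd} genuinely preclude any faithful module supporting a quadratic $p$-subgroup, which may demand consulting modular character tables in the borderline cases (e.g., $\mathrm{McL}$ and $\mathrm{Fi}_{22}$ at $p=5$).
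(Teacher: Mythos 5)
Your overall strategy matches the paper's (the published proof is essentially a citation to Chermak's quadratic-pair classification plus \cref{NotQuad} and a comparison with \cref{SE1}, \cref{SE2}), but there is a genuine misallocation of cases in your $m_p(G)=1$ analysis that breaks the argument.

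You assert that when $m_p(G)=1$ and $\widetilde{G}=G/O_{p'}(G)$ is a non-abelian simple group (\cref{SE1}(i)), the Thompson--Ho--Chermak classification forces $L\cong\SL_2(p^n)$, and you then place the $2\cdot\Alt(5)$ and $2^{1+4}_-.\Alt(5)$ outcomes of conclusion (iii) into the $p$-solvable subcase. Both claims are wrong, and for the same reason: neither $2\cdot\Alt(5)\cong\SL_2(5)$ nor $2^{1+4}_-.\Alt(5)$ is $3$-solvable. Each is a perfect group with $O_{3'}$-quotient $\Alt(5)$ simple and cyclic Sylow $3$-subgroup of order $3$, so both fall squarely into the ``perfect $G$ with simple $\widetilde{G}$'' branch of \cref{SE1}, not the $p$-solvable one. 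Conversely, in the genuinely $p$-solvable branch ($G=SO_{p'}(G)$ with $O_p(G)=1$), Hall--Higman rules out quadratic action by any non-trivial element of order $p^n$ except when $p=3$ and $n=1$, and even there it yields only the $\SL_2(3)$-type quotients, not $2\cdot\Alt(5)$ or $2^{1+4}_-.\Alt(5)$. So your case-split, as written, loses exactly the two exotic entries of conclusion (iii): they should appear as Chermak quadratic-pair groups inside the ``$\widetilde{G}$ simple, $m_p(G)=1$'' branch (where Chermak's list at $p=3$ is strictly larger than $\{\SL_2(3^n)\}$), and the $p$-solvable branch should contribute only $\SL_2(3)$. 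You would need to re-examine the $p=3$, $|S|=3$ part of Chermak's list under strong $3$-embedding rather than relying on Hall--Higman to produce these outcomes.

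A smaller caveat: you silently replace $G$ by $O^{p'}(G)$ at the outset and then invoke $G=O^{p'}(G)$ when forcing $G=L$ at the end. That reduction is fine for identifying the isomorphism type of $L$, but the lemma as stated concludes $G=L$ for the original $G$; you should either note that $G=O^{p'}(G)$ is an implicit hypothesis of the lemma (which is consistent with how the paper applies it, e.g. in the proofs preceding \cref{Quad2F}) or argue why $G$ cannot properly contain $L$ in the situations that arise.
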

\begin{proof}
This follows from \cite{ChermakQuadN}, \cite{ChermakQuad}, \cref{NotQuad} and a comparison with the groups listed in \cref{SE1}, \cref{SE2}.
\end{proof}

As the typical example in our case will be central extensions of a rank $1$ simple groups of Lie type, we also provide a generic result regarding these particular actions.

\begin{lemma}\label{GreenbookQuad}
Suppose that $G$ is a rank $1$ group of Lie type, $S\in\syl_p(G)$ and let $\{1\}\ne A\le S$ with $[V, A, A]=\{1\}$ for some faithful $\GF(p)G$-module $V$. Then $A\le \Omega(Z(S))$ and if $|A|\geq 3$, for any non-trivial element $t\in A$, the following hold:
\begin{enumerate}
    \item If $G\cong \SL_2(p^n)$, then $|V/C_V(t)|\geq p^n$.
    \item If $G\cong \Sz(2^n)$ or $\mathrm{(P)SU}_3(p^n)$ then $|V/C_V(t)|\geq p^{2n}$.
\end{enumerate}
\end{lemma}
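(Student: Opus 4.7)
The plan is to argue in two stages: first locate $A$ inside $\Omega(Z(S))$, then reduce the lower bounds on $|V/C_V(t)|$ to the known structure of irreducible $\mathrm{GF}(p)G$-modules via composition factors.

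For the containment $A\le\Omega(Z(S))$: if $G\cong\SL_2(p^n)$, then \cref{SLGen}(i) gives $S$ elementary abelian, so $\Omega(Z(S))=S$ and the containment is automatic. For $G\cong\Sz(2^n)$ or $\mathrm{(P)}\SU_3(p^n)$, the Sylow descriptions in \cref{SzGen}(iii) and \cref{SU3Gen}(i) show $\Omega(Z(S))=Z(S)$. Suppose for contradiction there is $a\in A\setminus Z(S)$. Using the fact that $S$ is special in the $\SU_3$-case and that $Z(S)=\Phi(S)=\Omega(S)$ in the $\Sz$-case, one sees $C_S(a)$ is a proper subgroup of $S$ and that commutation with elements of $S\setminus C_S(a)$ produces a nontrivial element of $Z(S)\cap[a,S]$. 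Choosing $g\in S\setminus C_S(a)$ and passing to $a^h$ for $h\in N_G(S)$ acting irreducibly on $Z(S)$ (respectively on $S/Z(S)$, using \cref{SzGen}(iv) and \cref{SU3Gen}(iii)--(iv)), I would produce a second quadratic conjugate of $a$ inside $A\cdot A^h$ whose generated subgroup contains a nontrivial element of $Z(S)$ acting nontrivially on $V$ yet centralised by a quadratic pair. Combined with \cref{SEQuad} applied to the subgroup $\langle A^G\rangle$, whose image in $G/C_G(V)$ is forced to be one of the listed rank $1$ groups, this yields a contradiction to faithfulness; hence $A\le Z(S)=\Omega(Z(S))$.

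Now suppose $|A|\ge 3$ and pick $1\ne t\in A$. Because $t$ is unipotent in characteristic $p$ and quadratic, $|V/C_V(t)|=|[V,t]|$ behaves well on composition factors: for any $\mathrm{GF}(p)G$-composition factor $W$ of $V$ on which $t$ acts nontrivially, $|V/C_V(t)|\ge|W/C_W(t)|$. Since $V$ is faithful and $t\ne 1$, such a factor exists. The plan is then to invoke the classification of non-trivial irreducible $\mathrm{GF}(p)G$-modules admitting a quadratic element in $\Omega(Z(S))$ provided by \cref{SEQuad}: for $G\cong\SL_2(p^n)$ the minimal such module is the natural module, for which \cref{NatMod}(iii) yields $|W/C_W(t)|=p^n$; for $G\cong\Sz(2^n)$ and $\mathrm{(P)}\SU_3(p^n)$, the minimal such modules are the natural modules for which \cref{SzMod} and \cref{SUMod} give $|W/C_W(t)|\ge p^{2n}$ (noting that in the $\SU_3$ case, since $t\in Z(S)$, \cref{SUMod}(ii) applies). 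Other irreducible modules with a nontrivial quadratic element in $\Omega(Z(S))$ are twists or tensor products of these and yield strictly larger commutator quotients, so the minimum is attained on the natural module.

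The main obstacle is the first step: ruling out quadratic elements lying outside $\Omega(Z(S))$ in the non-abelian Sylow cases. The delicate point is that in $\Sz(2^n)$ and $\SU_3(p^n)$, an element $a\in S\setminus Z(S)$ automatically satisfies $[S,a]\le Z(S)$, so $a$ acts ``quadratically'' on $S$ itself, and this near-abelian behavior makes ruling out quadratic action on $V$ subtle. The cleanest way I see is to combine the irreducibility of $N_G(S)$ on the relevant sections with \cref{SEQuad} to force $\langle A^G\rangle$ to have a standard structure incompatible with the existence of a non-central quadratic element.
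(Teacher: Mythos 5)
The paper's own proof of this lemma is simply a citation to \cite[(5.9),(5.10)]{Greenbook}, so there is no in-paper argument to compare against, and you are essentially reconstructing a result from scratch. Your overall plan (first pin $A$ to $\Omega(Z(S))$, then bound $|V/C_V(t)|$ by passing to a composition factor and analysing the minimal irreducible) is the right shape, but as written it has genuine gaps.

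The most serious is the containment $A\le \Omega(Z(S))$ in the case $G\cong\mathrm{(P)SU}_3(p^n)$ with $p$ odd. You flag this yourself, and the sketch you give (conjugate by $N_G(S)$, invoke \cref{SEQuad} on $\langle A^G\rangle$, ``yields a contradiction to faithfulness'') doesn't actually close: $G$ already \emph{is} one of the groups on the list in \cref{SEQuad}, so applying it to $\langle A^G\rangle\le G$ gives no contradiction, and you never produce a concrete violated inequality. Moreover, for $p=2$ you are making this unnecessarily hard: in $\Sz(2^n)$ and $\SU_3(2^n)$, every element of $S\setminus Z(S)$ has order $4$, and a $\GF(2)$-element $a$ with $[V,a,a]=\{1\}$ satisfies $a^2=1$ on $V$, so faithfulness forces $a$ to be an involution. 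That clean order argument evaporates precisely when $p$ is odd (where $S$ has exponent $p$ in $\SU_3$), which is the one case your sketch needs to handle and doesn't.

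A second, smaller issue is that your citations in part (ii) run the wrong way. \cref{SUMod}(ii) and \cref{SzMod}(ii) say that $C_V(Z(S))$ (resp.\ $C_V(\Omega(S))$) equals $[V,S]$ (resp.\ $[V,S,S]$) and compute its order; this gives $C_V(t)\supseteq C_V(Z(S))$ and hence an \emph{upper} bound on $|V/C_V(t)|$, not the lower bound you need. What you actually need is the equality $C_V(t)=C_V(Z(S))$ for each nontrivial $t\in Z(S)$, which is true on the natural module but is not what those lemmas state; it requires a short additional argument (e.g.\ a direct matrix computation, or noting that the Hall $p'$-subgroup of $N_G(S)$ acts transitively on $Z(S)^\#$ and preserves $C_V(Z(S))$). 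Finally, the statement that every other nontrivial irreducible admitting a quadratic central element yields a strictly larger commutator quotient is asserted without argument; it is plausible via Steinberg's tensor-product decomposition, but it is exactly the content of the result being proved and cannot just be waved at.
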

\begin{proof}
See \cite[(5.9)]{Greenbook} and \cite[(5.10)]{Greenbook}.
\end{proof}

More than just a quadratic module, the natural module for $\SL_2(p^n)$ provides the minimal example of an \emph{FF-module}. FF-modules are named due to how they arise as counterexamples to \emph{Thompson factorization} (see \cite[{{32.11}}]{asch2}), which aims to factorize a group into two $p$-local subgroups. One of these $p$-local subgroups is the normalizer of the \emph{Thompson subgroup} of a fixed Sylow $p$-subgroup.

\begin{definition}
Let $S$ be a finite $p$-group. Set $\mathcal{A}(S)$ to be the set of all elementary abelian subgroups of $S$ of maximal rank. Then the \emph{Thompson subgroup} of $S$ is defined as $J(S):=\langle A \mid A\in\mathcal{A}(S)\rangle$.
\end{definition}

\begin{proposition}\label{BasicJS}
Let $S$ be a finite $p$-group. Then the following hold:
\begin{enumerate}
\item $J(S)$ is a non-trivial characteristic subgroup of $S$;
\item for $A\in\mathcal{A}(S)$, $A=\Omega(C_S(A))$;
\item $\Omega(C_S(J(S)))=\Omega(Z(J(S)))=\bigcap_{A\in\mathcal{A}(S)} A$; and
\item if $J(S)\le T\le S$, then $J(S)=J(T)$.
\end{enumerate}
\end{proposition}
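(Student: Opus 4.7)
The plan is to verify the four statements in order, each of which follows from the defining property that elements of $\mathcal{A}(S)$ are elementary abelian subgroups of $S$ of \emph{maximal} rank.

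For (i), non-triviality comes from observing that $\Omega(Z(S))$ is a non-trivial elementary abelian subgroup of $S$ (since $S$ is a non-trivial $p$-group), so $\mathcal{A}(S)$ is non-empty and contributes a non-trivial subgroup to $J(S)$. The characteristic property is immediate: any automorphism of $S$ preserves both ``elementary abelian'' and ``of maximal rank among elementary abelian subgroups,'' so it permutes $\mathcal{A}(S)$ and hence fixes the subgroup they generate.

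For (ii), one inclusion is obvious since $A$ is elementary abelian and self-centralizing in itself, so $A \le \Omega(C_S(A))$. For the other, take $x \in \Omega(C_S(A))$. Then $\langle A, x\rangle$ is abelian because $x$ centralizes $A$, and it has exponent $p$ since both $A$ has exponent $p$ and $x^p = 1$. Hence $\langle A, x\rangle$ is elementary abelian containing $A$, so by maximality of the rank of $A$ we get $\langle A, x\rangle = A$ and $x \in A$.

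For (iii), I will prove the chain $\Omega(C_S(J(S))) \subseteq \bigcap_{A \in \mathcal{A}(S)} A \subseteq \Omega(Z(J(S))) \subseteq \Omega(C_S(J(S)))$. The last inclusion is trivial. For the first, if $x \in \Omega(C_S(J(S)))$ then for every $A \in \mathcal{A}(S)$, $x$ centralizes $A$ and satisfies $x^p = 1$, so $x \in \Omega(C_S(A)) = A$ by (ii). For the middle, any $x \in \bigcap_A A$ has order dividing $p$ and lies in each generator of $J(S)$, hence lies in $J(S)$ and centralizes each generator $A$ of $J(S)$, so $x \in \Omega(Z(J(S)))$. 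Finally, (iv) follows by noting that every $A \in \mathcal{A}(S)$ is contained in $J(S) \le T$, so $\mathcal{A}(S) \subseteq \mathcal{A}(T)$; conversely any element of $\mathcal{A}(T)$ is elementary abelian in $S$ of some rank $r \le \max\{\mathrm{rk}(A) : A \in \mathcal{A}(S)\}$, and since $\mathcal{A}(S) \subseteq \mathcal{A}(T)$ ensures the maximal ranks in $T$ and $S$ coincide, we obtain $\mathcal{A}(T) \subseteq \mathcal{A}(S)$ and hence $J(T) = J(S)$. No step presents a genuine obstacle here; the only subtlety is ensuring in (iii) that the intersection lies in $J(S)$ in order to identify it with $\Omega(Z(J(S)))$, which is automatic since every element of the intersection lies in some (indeed, every) $A \in \mathcal{A}(S)$.
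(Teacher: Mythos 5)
Your overall strategy is right, and parts (i), (iv), and the second and third inclusions of the chain in (iii) are sound. The paper itself simply cites Kurzweil--Stellmacher [9.2.8] for this proposition, so there is no in-paper proof to compare against.

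There is, however, a genuine gap in (ii) that is inherited by the first inclusion of (iii). In (ii) you take $x\in\Omega(C_S(A))$ and immediately write $x^p=1$. But $\Omega(C_S(A))$ is the subgroup \emph{generated by} the elements $g\in C_S(A)$ with $g^p=1$, not the set of such elements; when $C_S(A)$ is nonabelian a product of elements of order $p$ need not again have order $p$, so $x^p=1$ does not follow. The subsequent claim that $\langle A,x\rangle$ has exponent $p$ therefore fails as written. The identical issue recurs in (iii), where you take $x\in\Omega(C_S(J(S)))$ and assert $x^p=1$.

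The repair is short and standard: argue on generators. Take $x\in C_S(A)$ with $x^p=1$. Your argument then correctly shows $A\langle x\rangle$ is elementary abelian and contains $A$, whence $x\in A$ by maximality of the rank of $A$. Since every such generator of $\Omega(C_S(A))$ lies in the subgroup $A$, it follows that $\Omega(C_S(A))\le A$. With (ii) established this way, the first inclusion of (iii) is repaired identically by starting from a generator of $\Omega(C_S(J(S)))$ and noting that $\bigcap_{A\in\mathcal{A}(S)}A$ is a subgroup. The middle inclusion of (iii) is immune to this because elements of $\bigcap_{A\in\mathcal{A}(S)}A$ genuinely have order dividing $p$, as they lie in an elementary abelian group.
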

\begin{proof}
See \cite[{{9.2.8}}]{kurz}.
\end{proof}

\begin{definition}
Let $G$ be a finite group and $V$ a $\mathrm{GF}(p)$-module. If there exists $A\le G$ such that
\begin{enumerate}
\item $A/C_A(V)$ is an elementary abelian $p$-group;
\item $[V,A]\ne\{1\}$; and 
\item $|V/C_V(A)|\leq |A/C_A(V)|$
\end{enumerate}
then $V$ is a \emph{failure to factorize module} (abbrev. FF-module) for $G$ and $A$ is an \emph{offender} on $V$.
\end{definition}

The following proposition describes a fairly natural situation in which one can identify an FF-module from a group failing to satisfy Thompson factorization. This result is well known and the proof is standard (see \cite[{{9.2}}]{kurz}).

\begin{proposition}\label{BasicFF}
Let $G$ be a finite group with $S\in\syl_p(G)$ and $F^*(G)=O_p(G)$. Set $V:=\langle \Omega(Z(S))^G\rangle$. Then $O_p(G)=O_p(C_G(V))$ and $O_p(G/C_G(V))=\{1\}$. Furthermore, if $\Omega(Z(S))<V$ and $J(S)\not\le C_S(V)$ then $V$ is an FF-module for $G/C_G(V)$.
\end{proposition}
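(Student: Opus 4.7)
The plan is to first show that $V$ lies in $Z(O_p(G))$, which forces $V$ to be elementary abelian and yields the first two assertions, and then to use the standard maximality argument on elementary abelian subgroups to establish the FF-module conclusion.

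Setting $Q := O_p(G)$, the characteristic $p$ hypothesis gives $C_G(Q) \le Q$, and since $Z(S)$ centralizes $Q \le S$, we obtain $Z(S) \le Q$ and hence $Z(S) \le Z(Q)$. This forces $\Omega(Z(S)) \le \Omega(Z(Q))$, and as $\Omega(Z(Q))$ is characteristic in $Q \normaleq G$, we deduce $V \le \Omega(Z(Q))$. Thus $V$ is elementary abelian and $Q \le C_G(V)$, giving $Q \le O_p(C_G(V))$. The reverse inclusion is immediate since $V \normaleq G$ implies $C_G(V) \normaleq G$, so $O_p(C_G(V))$ is characteristic in $C_G(V)$ and hence a normal $p$-subgroup of $G$, giving $O_p(C_G(V)) \le Q$.

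For $O_p(G/C_G(V)) = \{1\}$, I would let $K/C_G(V) := O_p(G/C_G(V))$, note $K \normaleq G$, and set $P := S \cap K$, which is Sylow in $K$, so $K = PC_G(V)$. A Frattini argument yields $G = N_G(P)C_G(V)$. Since $V$ is abelian, for any $g = nc$ with $n \in N_G(P)$ and $c \in C_G(V)$, we have $\Omega(Z(S))^{nc} = \Omega(Z(S))^n$, so $V = \langle \Omega(Z(S))^{N_G(P)} \rangle$. But $P \le S$ centralizes $\Omega(Z(S))$, and conjugating by $n \in N_G(P)$ preserves the commutator relation $[P, \Omega(Z(S))] = 1$ (as $P^n = P$), so $P$ centralizes every generator of $V$. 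Hence $P \le C_G(V)$ and $K = C_G(V)$, as required.

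Finally, assuming $J(S) \not\le C_S(V)$, I would pick $A \in \mathcal{A}(S)$ with $A \not\le C_S(V)$ and form the subgroup $B := V \cdot C_A(V)$. This is elementary abelian as it is a product of two commuting elementary abelian subgroups of $S$, and $V \cap C_A(V) = V \cap A$ since $V$ is abelian. Maximality of $|A|$ gives $|V||C_A(V)|/|V \cap A| = |B| \le |A|$, which rearranges to $|V/(V \cap A)| \le |A/C_A(V)|$. Since $V \cap A \le C_V(A)$ (as $V$ is abelian), we conclude $|V/C_V(A)| \le |A/C_A(V)|$, with $[V, A] \ne 1$ and $A/C_A(V)$ elementary abelian, so the image of $A$ in $G/C_G(V)$ witnesses $V$ as an FF-module. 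The main obstacle is setting up the Frattini argument correctly so that $P \le S$ and conjugation by $N_G(P)$ transports the centralizing relation; this step rests entirely on the earlier observation that $V \le Z(Q)$, and once that is in hand the remainder is standard.
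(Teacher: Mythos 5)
Your proof is correct and follows the standard line of argument: you first place $V$ inside $\Omega(Z(O_p(G)))$ to get the first two claims, then run the classical Thompson-factorization computation on the elementary abelian group $VC_A(V)$ to produce the offender. The paper does not spell out a proof (it cites this as a well-known fact), but your argument is precisely the one in the standard references; the only minor cosmetic point is that the step $\Omega(Z(S))^{nc} = \Omega(Z(S))^n$ really uses $c \in C_G(V)$ and $\Omega(Z(S))^n \le V$ rather than abelianness of $V$ per se.
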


As a counterpoint to the determination of groups with a strongly $p$-embedded subgroup, whenever a group with a strongly $p$-embedded subgroup has an associated FF-module, we can almost completely determine the group and its action without the need for a $\mathcal{K}$-group hypothesis. Indeed, the following lemma relies only on a specific case in the Local $C(G,T)$-theorem \cite{CGT}.

\begin{lemma}\label{SEFF}
Suppose $G=O^{p'}(G)$ has a strongly $p$-embedded subgroup and $V$ is (dual to) a faithful FF-module. Then $G\cong \SL_2(p^n)$ and $V/C_V(O^p(G))$ is the natural module.
\end{lemma}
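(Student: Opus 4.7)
The strategy is to reduce to a quadratic offender via a replacement argument and then identify $G$ as $\SL_2(p^n)$ by appealing to the rank-$1$ case of the Local $C(G,T)$-theorem, with the strongly $p$-embedded hypothesis performing the role that would otherwise require a $\mathcal{K}$-group assumption as in \cref{SEQuad}. Replacing $V$ by its dual if necessary, we may assume $V$ itself is a faithful FF-module. Choose an offender $A \leq G$ of minimal order; by a standard replacement argument (Timmesfeld-style, using minimality to rule out any proper offending subgroup), we may take $A$ to be elementary abelian and to act quadratically on $V$, so that $C_A(V) = \{1\}$, $[V, A, A] = \{1\}$, $[V, A] \neq \{1\}$ and $|V/C_V(A)| \leq |A|$.

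Fix $S \in \syl_p(G)$ with $A \leq S$ and set $L := \langle A^G \rangle \normaleq G$. Then $A \leq J(S)$ and $J(S)$ does not centralize $V$, so the pair $(L, V)$ witnesses a failure of Thompson factorization. Since $V$ is faithful and $L$ is generated by $p$-elements, coprime action gives $O_{p'}(L) = \{1\}$, and \cref{spelemma1} ensures that $L$ itself carries a strongly $p$-embedded subgroup. Combined with the FF-hypothesis, the possibilities for $L/O_p(L)$ are constrained by \cref{SE1,SE2} to a rank-$1$ group of Lie type together with a small list of sporadic exceptions.

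We now invoke the relevant rank-$1$ case of the Local $C(G, T)$-theorem \cite{CGT}: in a characteristic-$p$ group exhibiting failure of Thompson factorization, a ``block'' of $\SL_2$- or $\SU_3$-type must appear, and the strongly $p$-embedded hypothesis forces this block to account for all of $L/O_p(L)$. To discard every non-$\SL_2$ candidate, observe that $\PSU_3(p^n)$, $\Sz(2^n)$, $\Ree(3^n)$, and the sporadic exceptions listed in \cref{SE2} do not admit a faithful FF-module: the dimension estimates of \cref{SpeMod2,SpeModOdd} give $|V/C_V(A')| > |A'|$ for every elementary abelian $A' \leq L$ in those cases, contradicting the FF condition. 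Hence $L \cong \SL_2(p^n)$, and matching the offender bound with \cref{NatMod} identifies $V/C_V(L)$ as the natural $\SL_2(p^n)$-module.

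Finally, since $L \normaleq G = O^{p'}(G)$ and since any automorphism of $\SL_2(p^n)$ centralizing a Sylow $p$-subgroup is inner by \cref{SLGen}(viii), the outer automorphisms induced by $G$ on $L$ are all inner, so $G = L \cong \SL_2(p^n)$ and $V/C_V(O^p(G))$ is the natural module. The main obstacle is the classification-free identification of the block as $\SL_2$-type rather than $\SU_3$-type; this is precisely where the strongly $p$-embedded hypothesis is essential, restricting the rank-$1$ block structure to the unique family admitting a faithful FF-module and bypassing any appeal to CFSG.
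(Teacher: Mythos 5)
The paper's own proof is a citation to \cite[Theorem 5.6]{henkesl2}, with the surrounding commentary stressing that \cref{SEFF} is deliberately CFSG-free: ``we can almost completely determine the group and its action \emph{without the need for a $\mathcal{K}$-group hypothesis}. Indeed, the following lemma relies only on a specific case in the Local $C(G,T)$-theorem.'' Your argument correctly identifies the Local $C(G,T)$-theorem as the engine, but then undercuts the whole point by routing the classification of $L/O_{p'}(L)$ through \cref{SE2} and the dimension bounds of \cref{SpeModOdd} --- both of which carry an explicit $\mathcal{K}$-group hypothesis, i.e.\ they rest on CFSG. The lemma as stated imposes no such hypothesis on $G$, so your proof only establishes a weaker statement. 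Your final sentence claims to be ``bypassing any appeal to CFSG,'' which is at odds with the argument actually given. A genuine CFSG-free proof must extract the $\SL_2$-block structure directly from the $C(G,T)$-theorem applied to the characteristic-$p$ group $V\rtimes G$ (a step you gesture at but never set up), and then rule out alternative block types by elementary FF-module computations, not by \cref{SE2}/\cref{SpeModOdd}.

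There are also two local defects. First, the assertion that ``coprime action gives $O_{p'}(L)=\{1\}$'' is simply false: $\SL_2(p^n)$ for odd $p$ has a non-trivial $p'$-order centre acting faithfully on the natural module, and the conclusion of the lemma itself allows $O_{p'}(G)\ne 1$. Second, the step that $L=\langle A^G\rangle$ inherits a strongly $p$-embedded subgroup via \cref{spelemma1}(ii) requires knowing $L\not\le X$ for the strongly $p$-embedded subgroup $X$; this follows from a short Frattini argument (take $T\in\syl_p(L)$ non-trivial, note $G=LN_G(T)$ and $N_G(T)\le X$ if $T\le X$), but you state it without justification. Likewise, the closing claim that $G=L$ because inner automorphisms account for everything centralizing a Sylow $p$-subgroup does not by itself control $C_G(L)$ and needs more care.
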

\begin{proof}
See \cite[Theorem 5.6]{henkesl2}.
\end{proof}

Given a way to characterize a natural $\SL_2(p^n)$-module, it is a natural to ask whether we can characterize some other modules, particularly those irreducible modules associated to Lie type groups of rank $1$.

\begin{lemma}\label{DirectSum}
Let $G\cong\SL_2(p^n)$ and $S\in\syl_p(G)$. Suppose that $V$ is a module for $G$ over $\GF(p)$ such that $[V,S,S]=\{1\}$ and such that $[V, O^p(G)]\ne\{1\}$. Then $[V/C_V(O^p(G)), O^p(G)]$ is a direct sum of natural modules for $G$.
\end{lemma}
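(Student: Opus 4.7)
Set $L := O^p(G)$, $\bar V := V/C_V(L)$, and $W := [\bar V, L]$; the goal is to show $W$ is a direct sum of natural $\SL_2(p^n)$-modules. The quadratic hypothesis $[V,S,S]=1$ passes to subquotients, so $[W,S,S]=1$. The plan is first to pin down the $\GF(p)L$-composition factors of $W$, then to establish semisimplicity, and finally to exclude trivial summands.

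To identify the composition factors, I would extend scalars to $\GF(p^n)$ and invoke Steinberg's tensor product theorem: every irreducible $\GF(p^n)L$-module has the form $\bigotimes_{i=0}^{n-1} L(a_i)^{(p^i)}$ with $0 \le a_i \le p-1$, and $S$ acts on such a module with nilpotency index $1 + \sum_i a_i$. The quadratic condition therefore forces $\sum_i a_i \le 1$, leaving only the trivial module or a Frobenius twist $L(1)^{(p^i)}$ of the $2$-dimensional natural module as constituents of $W \otimes \GF(p^n)$. Since these Frobenius twists form a single orbit under $\mathrm{Gal}(\GF(p^n)/\GF(p))$, Galois descent identifies the $\GF(p)L$-composition factors of $W$ as either trivial or the natural module $V_{nat}$ of $\GF(p)$-dimension $2n$.

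For semisimplicity, the key input is that $\mathrm{Ext}^1_L(A,B) \hookrightarrow \mathrm{Ext}^1_S(A,B)$ injects when $A, B \in \{V_{nat}, \text{trivial}\}$, since $[L:S]$ is coprime to $p$ and the modules are over $\GF(p)$. A direct cocycle calculation shows that the quadratic condition $[M,S,S]=0$ on an extension module $M$ restricts the defining cocycle $\alpha : S \to \Hom(A,B)$ to take values in the annihilator of $[A,S]$; combined with $L$-equivariance, perfectness of $L$ for $p^n \ge 4$, and the generic vanishing of $H^1(L, V_{nat})$, this forces $\alpha$ to be a coboundary, so each such extension splits. To exclude trivial summands of $W$, any such summand would lie in $W \cap C_{\bar V}(L)$; the preimage in $V$ consists of elements of $Z_2(L;V) \cap ([V,L] + C_V(L))$, and using $Z_2(L;V_{nat})=0$ for natural modules together with the quadratic action, one sees such elements already lie in $C_V(L)$, forcing the summand to be zero.

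The small cases $p^n \in \{2,3\}$, in which $L$ is cyclic of order $3$ or isomorphic to the quaternion group of order $8$ respectively, are dispatched by explicit inspection using coprime action of the Hall $p'$-complement of $N_G(S)$ on $V$, together with \cref{SplitMod}. The main obstacle is the semisimplicity step: verifying the requisite $\mathrm{Ext}^1$-vanishing under the quadratic constraint for all values of $p^n$. The generic case follows from standard cohomological computations for $\SL_2(p^n)$-modules, but small $q$ values with exceptional $H^1(L,V_{nat})$ (for instance $q \in \{2,3,4,9\}$) need ad hoc verification that any non-trivial cohomology class produces a non-quadratic $S$-action, contradicting the hypothesis.
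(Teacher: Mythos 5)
The paper does not actually prove this statement; it cites Chermak's \emph{Quadratic action and the $\mathcal{P}(G,V)$-theorem} (the reference \texttt{ChermakQuad}, Lemma~2.2), so there is no in-paper argument to compare against. Your blueprint --- extend scalars, invoke Steinberg's tensor product theorem to pin down the composition factors, prove semisimplicity by a cohomological/cocycle argument, and discard trivial summands --- is essentially the standard route to this result and is sound in outline.

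There are two places that need more care than your write-up supplies. First, the nilpotency-index claim. You state that $S$ acts on $\bigotimes_i L(a_i)^{(p^i)}$ with nilpotency index $1+\sum a_i$, and this is correct if read as a statement about the whole root subgroup $S$ (i.e.\ $[V,\underbrace{S,\ldots,S}_{k}]=0$ iff $k\geq 1+\sum a_i$). It is \emph{not} the statement that a single unipotent $u$ has $(u-1)^{1+\sum a_i}=0$ and $(u-1)^{\sum a_i}\neq 0$: for $p=2$ a single involution already satisfies $(u-1)^2=0$ on \emph{every} module, so the single-element version would let through, e.g., the natural $\Omega_4^-(2^{n/2})$-module $L(1)\otimes L(1)^{(p^{n/2})}$, where individual unipotents are quadratic but $[V,S,S]\neq 0$. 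You therefore need the group-theoretic version of the nilpotency formula, which is less standard than the Jordan-block computation and should be established (e.g.\ via the weight-space filtration of $V|_S$ and the observation that $C_V(S)$ is a single highest-weight line). Second, the semisimplicity step is the real content of the lemma and remains a sketch: the assertion that the quadratic constraint on an extension $0\to B\to M\to A\to 0$ forces the relevant $1$-cocycle to be a coboundary requires a genuine argument, especially for $\Ext^1_G(V_{nat},V_{nat})$, whose underlying cohomology group $H^1(G,\End(V_{nat}))$ is not generically zero. You flag this yourself, which is good, but as it stands this is where the proof would fail to close. A small simplification on the last step: to exclude trivial summands of $W$ it suffices to observe that $C_{\bar V}(L)=0$, because for $\bar v\in C_{\bar V}(L)$ the map $g\mapsto [v,g]$ is a $1$-cocycle of $L$ with values in the trivial module $C_V(L)$, and $H^1(L,C_V(L))=\Hom(L,C_V(L))=0$ in every case (perfectness for $p^n\geq 4$, coprimality when $L$ is a $p'$-group); there is no need to invoke $Z_2(L;V_{nat})$.
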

\begin{proof}
See \cite[Lemma 2.2]{ChermakQuad}.
\end{proof}

\begin{lemma}\label{SL2ModRecog}
Let $G\cong\SL_2(p^n)$, $S\in\syl_p(G)$ and $V$ an irreducible $\GF(p)G$-module. If $|V|\leq p^{3n}$ then both $C_V(S)$ and $V/[V,S]$ are irreducible as $N_G(S)$-modules, $|C_V(S)|=|V/[V, S]|$ and either
\begin{enumerate}
\item $V$ is natural $\SL_2(p^n)$-module, $|V|=p^{2n}$ and $|C_V(S)|=p^n$;
\item $V$ is natural $\Omega_4^-(p^{\frac{n}{2}})$, $n$ is even, $|V|=p^{2n}$ and $|C_V(S)|=p^{\frac{n}{2}}$;
\item $V$ is natural $\Omega_3(p^n)$, $p$ is odd, $|V|=p^{3n}$ and $|C_V(S)|=p^n$; or
\item $V$ is a triality module, $n=3r$ for some $r\in\N$, $|V|=p^{\frac{8n}{3}}$ and $|C_V(S)|=p^{\frac{n}{3}}$.
\end{enumerate}
\end{lemma}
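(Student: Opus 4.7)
The plan is to enumerate the absolutely irreducible $\bar{\GF{p}}\SL_2(p^n)$-modules via the Steinberg tensor product theorem and then pass to Galois orbits to classify the $\GF(p)$-irreducible modules. Recall that every absolutely irreducible $\bar{\GF{p}}\SL_2(p^n)$-module has the form
\[W \;=\; V(a_0) \otimes V(a_1)^{[1]} \otimes \cdots \otimes V(a_{n-1})^{[n-1]},\]
where $V(a)$ is the $(a+1)$-dimensional symmetric power of the natural module, $0 \leq a_i \leq p-1$, and $(-)^{[i]}$ denotes Frobenius twist by $p^i$, so that $\dim_{\bar{\GF{p}}} W = \prod_i (a_i+1)$. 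The $\GF(p)$-irreducible modules correspond bijectively to Galois orbits of these $W$ under the cyclic shift of the tuple induced by $\sigma\colon x \mapsto x^p$.

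If the tuple $(a_0,\dots,a_{n-1})$ has minimal period $s$ (so $s \mid n$ and the Galois orbit has size $s$), and one sets $d_0 := \prod_{i=0}^{s-1}(a_i+1)$ and $m := n/s$, then $\dim_{\GF(p)} V = s \cdot d_0^{m}$. The hypothesis $|V|\leq p^{3n}$ therefore becomes $d_0^{m} \leq 3m$. A short case analysis on $m$ now yields:
\begin{itemize}
\item $m=1$: $d_0 \leq 3$, giving either the trivial module ($d_0=1$, discarded), the natural $\SL_2(p^n)$-module ($d_0=2$, one $a_i=1$), or, when $p$ is odd, the symmetric square, i.e.\ the natural $\Omega_3(p^n)$-module ($d_0=3$, one $a_i=2$);
\item $m=2$: $d_0^2\leq 6$ forces $d_0=2$ with one $a_i = 1$ in the fundamental domain, so that (up to shift) the full tuple has $1$'s at positions $0$ and $n/2$; this is $W \cong N \otimes N^{[n/2]}$, the natural $\Omega_4^-(p^{n/2})$-module;
\item $m=3$: $d_0^3 \leq 9$ forces $d_0 = 2$, yielding $W\cong N\otimes N^{[n/3]}\otimes N^{[2n/3]}$, the triality module;
\item $m\geq 4$: $2^m \leq 3m$ fails, so no non-trivial solutions exist.
\end{itemize}

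Hence $V$ is one of the four listed modules, and the claimed equalities $|V|=p^{2n}, p^{2n}, p^{3n}, p^{8n/3}$ and $|C_V(S)| = |V/[V,S]|$ equal to $p^n, p^{n/2}, p^n, p^{n/3}$ respectively, together with the irreducibility of $C_V(S)$ and $V/[V,S]$ as $N_G(S)$-modules, follow directly from \cref{NatMod}, \cref{Omega3}, \cref{Omega4} and \cref{trialitydescription}.

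The main obstacle will be the bookkeeping: precisely matching Steinberg parameters to the named modules and correctly identifying orbit sizes. In particular, when $n$ is divisible by both $2$ and $3$ (e.g.\ $n=6$), one must check that the $\Omega_4^-(p^{n/2})$- and triality-cases arise from genuinely distinct period patterns and are not conflated — concretely, triality has $1$'s at the three equally-spaced positions $0,n/3,2n/3$ (period $n/3$) while $\Omega_4^-$ has $1$'s at the two antipodal positions $0,n/2$ (period $n/2$). Once this is pinned down, the inequality $d_0^m \leq 3m$ is sharp enough to leave no further exceptional tuples within the bound $|V|\leq p^{3n}$.
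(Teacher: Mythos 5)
The paper does not prove this statement; it cites it as \cite[Lemma 2.6]{ChermakJ}. Your proposal supplies a self-contained argument by a genuinely different route: rather than deferring to the literature, you classify the modules directly via the Steinberg tensor product theorem and Galois descent to $\GF(p)$.

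Your argument is correct. The crux is the dimension formula $\dim_{\GF(p)} V = s\,d_0^m$, where $s$ is the minimal period of the tuple $(a_0,\dots,a_{n-1})$, $d_0 = \prod_{i=0}^{s-1}(a_i+1)$ and $m = n/s$; this implicitly uses that the $\GF(p)$-irreducibles are in bijection with Galois orbits of absolutely irreducibles with \emph{no} Schur-index correction, which holds because the Brauer group of a finite field is trivial — worth a word. The reduction of $|V| \leq p^{3n}$ to $d_0^m \leq 3m$ and the exhaustive case analysis on $m \leq 3$ identify exactly the four named modules, with the $\Omega_4^-$ case (ones at positions $0, n/2$, period $n/2$) and the triality case (ones at positions $0, n/3, 2n/3$, period $n/3$) correctly distinguished even when $6 \mid n$. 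The remaining assertions about $C_V(S)$, $V/[V,S]$ and $N_G(S)$-irreducibility are correctly delegated to \cref{NatMod}, \cref{Omega3}, \cref{Omega4} and \cref{trialitydescription}. One small bookkeeping slip: the lemma implicitly assumes $V$ is non-trivial, and your discarding of $d_0=1$ reflects this, but strictly the all-zero tuple has minimal period $1$ (hence $m=n$) and so does not belong in the $m=1$ bucket at all; the conclusion is unaffected. What the paper's citation buys is brevity; what your argument buys is transparency about exactly where the bound $p^{3n}$ bites and the observation that it is sharp, attained by the $\Omega_3(p^n)$-module.
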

\begin{proof}
This is \cite[Lemma 2.6]{ChermakJ}. 
\end{proof}

\begin{lemma}\label{q^3module}
Assume that $G=O^{p'}(G)$, $G/O_{p'}(G)\cong \PSL_2(q)$ and $V$ is a faithful, irreducible $\mathrm{GF}(p)$-module for $p\in\{2,3\}$, where $q=p^n$ and $n>1$. If $|V|\leq q^3$, then $O_{p'}(G)\le Z(G)$ and both $G$ and $V$ are determined in \cref{SL2ModRecog}.
\end{lemma}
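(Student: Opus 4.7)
The plan is to first establish $N := O_{p'}(G) \le Z(G)$, after which the conclusion follows directly. Since $G = O^{p'}(G)$ and $G/N \cong \PSL_2(q)$ is perfect (as $q = p^n \ge 4$), a short argument shows that $G$ itself is perfect: $G/G'$ surjects onto the trivial abelianization of $G/N$, so $G = G'N$ and $G/G' \cong N/(G' \cap N)$ is both a $p$-group (from $G = O^{p'}(G)$) and a $p'$-group (as $N$ is $p'$), hence trivial. Once $N \le Z(G)$ is known, $G$ becomes a perfect central extension of $\PSL_2(q)$ by a $p'$-group, and \cref{SLGen}(vii) forces $G \cong \PSL_2(q)$ or $\SL_2(q)$. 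The possibilities for $V$ are then enumerated by \cref{SL2ModRecog}, since the hypothesis $|V| \le q^3$ translates to $\dim_{\GF(p)} V \le 3n$.

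To show $N \le Z(G)$, note that the faithfulness of $V$ forces $N$ to act faithfully on $V$, and apply Clifford's theorem to decompose $V|_N$ into homogeneous components $W_1, \ldots, W_k$ transitively permuted by $G$. Writing $H := \Stab_G(W_1) \ge N$, we have $[G : H] = k$, $\dim_{\GF(p)} V = k \cdot \dim_{\GF(p)} W_1 \le 3n$, and $k$ equals the index of the proper subgroup $H/N$ of $G/N \cong \PSL_2(q)$ whenever $k > 1$. By Dickson's classification of subgroups of $\PSL_2(q)$, this index satisfies $k \ge q + 1$ except for the small values $q \in \{4, 8, 9\}$, and for $n > 1$ we have $q + 1 > 3n$ outside these same small cases. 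Each small case is handled directly: the dimension constraint forces $\dim W_1 = 1$, so $N$ acts on each $W_i$ via a character into $\GF(p)^\times$; for $p = 2$ this character is trivial, contradicting $N$ faithful on $V$, and for $(p, q) = (3, 9)$ the potential counterexamples collapse upon examining the $\Alt(6)$-permutation action on these characters together with the Mackey irreducibility criterion for $V$.

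Hence $V|_N$ is homogeneous, so $V \cong U \otimes_E W$ as $\GF(p)N$-modules, where $U$ is an irreducible $\GF(p)N$-module and $E := \End_{\GF(p)N}(U)$. The $G$-action respects the $E$-structure: the induced map $G \to \Gal(E/\GF(p))$ is trivial since $G$ is perfect and the Galois group is abelian, so $G$ commutes with the $E$-scalar action on $V$. Setting $u := \dim_E U$, in the subcase $u = 1$, the group $N$ acts on $V$ via a character $\chi : N \to E^\times$; since $G$ centralizes the $E$-scalars, $\chi$ is $G$-invariant, so $[G, N] \subseteq \ker \chi$, and faithfulness gives $\ker \chi = 1$, hence $N \le Z(G)$. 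In the subcase $u \ge 2$, the image $\rho(N) \le \GL_E(U)$ is non-scalar, so $U$ realizes a faithful irreducible $\GF(p)$-representation of $N$ of dimension at least $ue$ with $u \ge 2$; combining the minimum dimension estimates for faithful modular representations of the non-abelian $p'$-group $N$ with the constraint that $G/N \cong \PSL_2(q)$ acts compatibly (so $W$ carries a projective $\PSL_2(q)$-representation) forces $\dim_{\GF(p)} V > 3n$, a contradiction. The main obstacle is this last step: cleanly controlling the interplay between the projective $\PSL_2(q)$-representation on $W$ and the non-scalar $N$-action on $U$ requires careful dimension bookkeeping and, in the tightest cases, an explicit check of the small $p'$-groups $N$ that could embed into $\GL_E(U)$ with $ue \le 3n/2$.
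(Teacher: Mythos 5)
Your route is genuinely different from the paper's. Rather than working with all of $N:=O_{p'}(G)$ at once, the paper first peels off the quasisimple case, then finds a prime $r$ and a critical-type normal subgroup $K\le O_r(G)$ (with $[K,O_r(G)]\le Z(K)$, $\Omega(K)=K$, and $[K,x]\ne 1$ for all $1\ne x\in S$) whose proper characteristic subgroups are all cyclic. This forces $K$ to be elementary abelian or extraspecial, two classes of $r$-group whose faithful irreducible $\GF(p)$-representations are completely understood: for the elementary abelian $K$ the paper runs Clifford theory much as you do, and for extraspecial $K$ the dimension is $r^m$ and the outer automorphism group is pinned down by Winter, so the dimension bookkeeping closes. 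Your Clifford argument on all of $N$, with the tensor decomposition $V\cong U\otimes_E W$, is conceptually tidier but pushes the hard structural work into the case you flag.

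That flagged case, $u:=\dim_E U\ge 2$, is a genuine gap and not a deferrable detail. The numerical constraints you have there are just $u\ge 2$ and $u\,e\,m\le 3n$ with $m=\dim_E W$; nothing in the sketch rules out $m=1$, which is precisely the scenario where $N$ acts irreducibly and non-scalarly on $V=U$ and $G/N\cong\PSL_2(q)$ embeds in the outer automorphism group of $\rho(N)$ inside $\GL_E(U)$. This is exactly the extraspecial picture (e.g.\ $N$ of $r$-power order with a faithful irreducible of dimension $r^m$ and $\Out$ containing $\PSL_2(q)$), and ruling it out is where the paper invests the real work (bounding $r^m\le 3n$, appealing to \cite{Winter} to constrain $S\hookrightarrow\Sp_{2m}(r)$ or $\mathrm{SO}_{2m}^{\pm}(2)$, and finishing by computation). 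Without an analogue of that input, "careful dimension bookkeeping" does not by itself force $\dim_{\GF(p)}V>3n$. The $q=9$ branch of your inhomogeneous case has the same problem in miniature: asserting that the $\Alt(6)$-action on the six linear characters of $N$ together with Mackey irreducibility "collapses" the counterexamples is a claim, not an argument, and it still has to rule out $N$ being a nontrivial elementary abelian $2$-group sitting inside $C_2^6$ with $\Alt(6)$ permuting coordinates. The first half of your sketch (perfectness of $G$, the inhomogeneous bound for $p=2$, and the $u=1$ scalar case) is fine; it is the two unresolved branches that prevent this from being a proof.
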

\begin{proof}
Assume that $G$ is a minimal counterexample to the lemma and let $S\in\syl_p(G)$. If $E(G)\not\le O_{p'}(G)$, then $S\le E(G)$, $E(G)$ is quasisimple and as $G=O^{p'}(G)$, $G=E(G)$ is quasisimple and both $G$ and $V$ are determined in \cref{SL2ModRecog}. Hence $F^*(G)\le O_{p'}(G)$. If $p=3$ and $E(G)\ne\{1\}$, we may assume that for $K$ a component of $G$, $K\cong \Sz(l)$ for some $l=2^{2t+1}$. Moreover, we have that $G\ne N_G(K)O_{3'}(G)$, for otherwise we by minimality, $K\normaleq G$. But then, $C_S(K)\ne\{1\}$ since $\Out(K)$ is cyclic from which it follows that $[S, K]=\{1\}$ so that $K\le Z(G)$, a contradiction. But now, $N_G(K)$ has index at least $q+1$ in $G$, except when $q=9$ in which case $N_G(K)$ has index at least $6$. In the latter case, we have that $|V|\leq 3^6$, and one can check that $\Sz(8)$ has no non-trivial modules of this size, so $\Sz(l)$ does not. Let $U$ be a non-trivial constituent of $V$ restricted to $H:=\langle K^G\rangle$, viewed as a $\mathbb{F} H$-module where $\mathbb{F}$ is a splitting field for $K$. Applying \cite[Proposition 9.14]{GLS2} iteratively, we may write $U$ as a tensor product of $U=\otimes_{g\in G} W_g$ where each $W_g$ is an irreducible $\mathbb{F} K^g$-module. Hence, $3^{3n}=q^3\geq |V|\geq |U|\geq |W_g|^{q+1}=|W_g|^{3^n+1}$ for $n\in\N$, a contradiction.

Thus, there is a prime $r$ dividing $|G|$ such that $[O_r(G), x]\ne\{1\}$ for all $x\in S^\#$. Choose $K\normaleq G$ minimally by inclusion such that $K\le O_r(G)$, $[K, O_r(G)]\le Z(K)$ and $[K, x]\ne\{1\}$ for all $x\in S^\#$. Such a subgroup exists by the critical subgroup theorem (see \cref{CriticalSubgroup}). Furthermore, if $r$ is an odd prime then by minimality, we deduce that $\Omega(K)=K$.

Let $T$ be a proper, non-trivial characteristic subgroup of $K$. By minimality, we have that there is $x\in S^\#$ such that $[x, T]=\{1\}$ from which it follows that $T\le Z(G)$. In particular, $T$ is abelian. By Clifford theory (see \cite[Theorem 9.7]{GLS2}), $V|_{T}=\bigoplus\limits_{i\in I} W_i$ where $G$ acts transitively on the set $\{W_i\}$ and each $W_i$ is irreducible for $T$. Since $T$ is abelian, by Schur's lemma, we have that $T/C_{T}(W_i)$ is a cyclic $r$-group and since $C_{T}(W_i)\le Z(G)$, $C_{T}(W_i)\le C_{T}(V)=\{1\}$. Hence, $T$ is cyclic.

Assume that $\Omega(K)<K$ so that $r=2$. Then, as above, $\Omega(K)$ is cyclic and we have that $|\Omega(K)|=2$ and $K$ has a unique subgroup of order $2$. Indeed, $K$ is either cyclic or generalized quaternion. Since $n>1$ and $S$ acts faithfully on $K$, we have a contradiction. Hence, $\Omega(K)=K$ and every proper, non-trivial characteristic subgroup of $K$ is of order $p$. If $K$ is non-abelian, then $Z(K)=\Phi(K)$ is of order $r$ and $K$ is extraspecial. Set $|K|=r^{2m+1}$. Since $K$ acts faithfully on $V$, by \cite[Lemma 9.5]{GLS2}, we deduce that $|V|\geq p^{r^m}$ so that $3n\geq r^m$. Since $S$ acts faithfully on $K$ and $G/O_{p'}(G)$ is simple, by \cite{Winter}, $S$ embeds in $\Sp_{2m}(r)$ if $r$ is odd, and $S$ embeds in $\mathrm{SO}_{2m}^{\pm}(2)$ if $r=2$. Applying \cite[Corollary 13.23]{parkerSymp}, if $p=3$ then either $r=2$ and $m\leq 4$ or $r=5$ and $m=1$. Likewise, if $p=2$, then $r=3$ and $m\leq 3$ or $r\leq 11$ and $m=1$. For these small cases, we employ MAGMA to observe that $G/O_{p'}(G)\cong \PSL_2(q)$ never occurs as a section of $\Aut(K)$.

Assume now that $K$ is elementary abelian. By Clifford theory, $V|_{K}=\bigoplus\limits_{i\in I} W_i$ where $G$ acts transitively on the set $\{W_i\}$ and each $W_i$ is irreducible for $K$. Write $I(W_i)$ for the stabilizer in $G$ of $W_i$. Note that $C_K(W_i)\normaleq I(W_i)$ for all $i$. If $C_K(W_i)\normaleq G$, $C_K(W_i)\le C_K(V)=\{1\}$, $K$ acts faithfully on $W_i$ and by Schur's lemma, $|K|=r$, impossible since $|S|> p^2$ and $S$ acts faithfully on $K$. Hence, $I(W_i)\le N_G(C_K(W_i))\le M<G$ for $M$ some maximal subgroup. If $G=I(W_i)O_{p'}(G)$ then for $V_i$ an irreducible, non-trivial $O^{p'}(I(W_i))$ composition factor of $W_i$, we have by minimality that $|V_i|\geq q^2$ from which it follows that $V=W_i$, a contradiction. Thus, we can select $M$ such that $O_{p'}(G)<M$. Then we get that $q=9$ and $[G:M]=6$, or $[G:M]\geq q+1$. In the former case, we have that $|W_i|=3$, $I(W_i)=M$, $M/O_{3'}(G)\cong \Alt(5)$ and $|M/C_M(W_i)|=2$. In particular, $|O_{3'}(G)/C_{O_{3'}(G)}(W_i)|=2$ and as $O_{3'}(G)$ is faithful on $V$, $O_{3'}(G)$ is a $2$-group of order at most $2^6$. Then $|K|=2^4$ and as $G=O^{3'}(G)$, $G/K\cong \Alt(6)$ or $\SL_2(9)$. But now, $C_K(W_1)=C_K(W_j)$ for some $j\ne 1$ and by transitivity, we have that $|K|<2^4$, a contradiction. Hence, $|W_i|^{q+1}\leq |V|\leq q^3$ and since $K$ acts non-trivially on each $W_i$, we have that $|W_i|\geq 3$ and since $n>1$, we have a contradiction.
\end{proof}

We may relax the restrictions in the definition of an FF-module to allow for a greater class of module setups. An an example, the natural modules for $\SU_3(p^n)$ and $\Sz(2^n)$ are not FF-modules but satisfy the ratio $|V/C_V(A)|\leq |A/C_A(V)|^2$ for $A$ an elementary abelian $p$-group. Such modules are referred to as \emph{$2$F-modules}.

\begin{definition}
Let $G$ be a finite group and $V$ a $\mathrm{GF}(p)$-module. If there exists $A\le G$ such that
\begin{enumerate}
\item $A/C_A(V)$ is an elementary abelian $p$-group;
\item $[V,A]\ne\{1\}$; and 
\item $|V/C_V(A)|\leq |A/C_A(V)|^2$
\end{enumerate}
then $V$ is \emph{$2$F-module} for $G$.
\end{definition}

If $G$ is an almost quasisimple group with a $2$F module $V$, then both $G$ and $V$ are known by work of Guralnick, Lawther and Malle \cite{2F}, \cite{2F2}, \cite{2F3}. Importantly for applications in this work, even when $G$ is not almost quasisimple, we have good idea of the structure of groups which have a strongly $p$-embedded subgroup and a $2$F-module which admits a quadratically acting element.

First we introduce two groups that have associated $\GF(p)$-modules which exhibit $2$F-action and arise heavily in the local actions in later sections. In addition, we provide some ``characterizations'' of these groups, and some structural properties of the groups and the associated $2$F-module we are interested in.

\begin{lemma}\label{Badp2}
There is a unique group $G$ of shape $(3\times 3):2$ which has a faithful quadratic $2$F-module $V$, namely the generalized dihedral group of order $18$. Moreover, for $S\in\syl_2(G)$ and $V$ an associated faithful quadratic $2$F-module, the following hold:
\begin{enumerate}
\item $|V|=2^4$ and $G$ is unique up to conjugacy in $\GL_4(2)$;
\item $\{G, \Dih(18)\}=\{H\mid |H|=18, O_2(H)=\{1\}\,\,\text{and}\,\, H=O^{2'}(H)\}$;
\item there are exactly four overgroups of $S$ in $G$ which are isomorphic to $\Sym(3)$, any two of which generate $G$; and
\item $C_{\GL_4(2)}(G)=\{1\}$ and $|\Out_{\GL_4(2)}(G)|=4$.
\end{enumerate}
\end{lemma}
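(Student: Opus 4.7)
My plan is to prove the four parts in sequence, exploiting the small order of the candidate groups. First I would enumerate all groups $G$ of shape $(3\times 3):2$ via the classification of involutions in $\GL_2(3)$ up to conjugacy: these are $I$, $-I$, and $\mathrm{diag}(1,-1)$, yielding respectively $3^2\times\Z/2$, the generalized dihedral group of order $18$, and $3\times\Sym(3)$. The first is eliminated immediately because its central $\Z/2$ acts trivially on every $\GF(2)$-module, so no faithful $V$ can exist. For $3\times\Sym(3)$, the central direct factor $\Z/3$ must act non-trivially on any faithful $V$, forcing a Clifford decomposition in which the $t$-action is necessarily trivial on the summand faithful for that central $\Z/3$; a short calculation then shows $|V/C_V(t)|\leq|\langle t\rangle|$, so $V$ is an FF-module rather than a genuine $2$F-module and is ruled out. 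This leaves the generalized dihedral group as the unique candidate.

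Next, for $G = 3^2{:}\langle t\rangle$, I construct $V$ explicitly. Fix two distinct subgroups $P_1, P_2$ of order $3$ in $O_3(G)$, and let $X_i$ be the (unique) $2$-dimensional irreducible $\GF(2)G$-module on which $P_j$ ($j\neq i$) acts trivially and $P_i\langle t\rangle\cong\Sym(3)$ acts as $\SL_2(2)$ on its natural module, with $t$ realizing the $\GF(4)$-Galois Frobenius. Setting $V := X_1\oplus X_2$, one checks that $V$ is faithful (as $P_1\cap P_2 = \{1\}$), that $t$ acts quadratically (because $[X_i, t] = C_{X_i}(t)$ is one-dimensional in each summand), and that $|V/C_V(t)| = 4 = |\langle t\rangle|^2$ verifies the $2$F-bound. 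For uniqueness up to $\GL_4(2)$-conjugacy, Maschke applied to $O_3(G)$ (order coprime to $2$) decomposes $V$ as a sum of $\GF(2)(3\times 3)$-irreducibles; faithfulness together with the quadratic $2$F condition then pins $V$ down as a sum of exactly two non-isomorphic $2$-dimensional $\GF(2)(3\times 3)$-irreducibles, giving $|V|=2^4$. Since $\Out(G)\cong \mathrm{PGL}_2(3)\cong\Sym(4)$ acts transitively on unordered pairs from the four such irreducibles of $G$, all embeddings $G\hookrightarrow\GL(V)$ form a single $\GL_4(2)$-conjugacy class.

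Parts (ii)--(iv) are then direct. For (ii), I would enumerate the five isomorphism classes of groups of order $18$, namely $\Z/18$, $\Z/6\times\Z/3$, $\Dih(18)$, $3\times\Sym(3)$, and the generalized dihedral group, and verify by inspection that only $\Dih(18)$ and the generalized dihedral group satisfy both $O_2(H)=\{1\}$ and $H=O^{2'}(H)$. Part (iii) is counting: since $t$ inverts $3\times 3$, each of the four order-$3$ subgroups $P_i\leq O_3(G)$ is $t$-invariant, giving exactly four subgroups $P_i\langle t\rangle\cong\Sym(3)$ containing $S=\langle t\rangle$; any two of them generate $G$ because distinct $P_i, P_j$ generate $3\times 3$. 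Part (iv) uses Schur's lemma applied to the decomposition $V = X_1 \oplus X_2$: $C_{\GL_4(2)}(O_3(G))\cong \GF(4)^\times\times\GF(4)^\times$ has order $9$, and intersecting with $C_{\GL_4(2)}(t)$ (which acts as Frobenius on each $\mathrm{End}(X_i)=\GF(4)$) leaves only the identity, so $C_{\GL_4(2)}(G)=\{1\}$. For $|\Out_{\GL_4(2)}(G)|$, the normalizer $N_{\GL_4(2)}(G)/G$ is identified with the stabilizer in $\Out(G)\cong\Sym(4)$ of the unordered pair $\{X_1, X_2\}$, which is the Klein four group of order $4$.

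The principal obstacle is the clean elimination of $3\times\Sym(3)$ in the first step: distinguishing a genuine $2$F-action from an FF-action must be carried out carefully against the paper's conventions, since both configurations structurally resemble a sum of natural $\Sym(3)$-modules, and the discriminant is really that in the generalized dihedral case $t$ acts non-trivially on \emph{both} $(3\times 3)$-irreducible summands whereas in $3\times\Sym(3)$ it necessarily acts trivially on the summand carrying the central $\Z/3$.
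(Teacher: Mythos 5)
The paper's own proof is a one-line appeal to a MAGMA computation against the Small Groups Library, so a hand argument along your lines is a genuinely different route and would be a real improvement if it worked. As written, however, the elimination step has a gap. Your dismissal of $3\times 3\times 2$ rests on the assertion that its central $\Z/2$ acts trivially on every $\GF(2)$-module, and this is simply false: over $\GF(2)$ a central involution can act non-trivially (for example as a transvection, or as the coordinate swap on a direct sum of two copies of any module). More seriously, the bare hypothesis ``faithful quadratic $2$F-module'' excludes neither of the other candidates. For $G=3\times\Sym(3)=\langle a\rangle\times\langle b,t\rangle$, the $\GF(2)$-realisation of $\mathrm{Ind}_{O_3(G)}^{G}\chi$ with $\chi(a)=\chi(b)=\omega$ — concretely $\GF(4)^{2}$ with $a=\omega I$, $b=\mathrm{diag}(\omega,\omega^{2})$ and $t$ the coordinate swap — is a faithful $4$-dimensional $\GF(2)G$-irreducible with $|V/C_V(t)|=16/4=4=|\langle t\rangle|^{2}$, so $t$ is a quadratic $2$F-offender that is not an FF-offender. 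This contradicts your claim that the Clifford decomposition forces $|V/C_V(t)|\leq|\langle t\rangle|$ for this group. The variant with $t$ swapping two copies of a $2$-dimensional irreducible on $X_1^{\oplus 2}\oplus X_2$ produces one for $3\times 3\times 2$ as well.

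The uniqueness in the first sentence therefore does not follow from the definitions alone; the intended discriminant is exactly the pair of conditions surfaced in part (ii), namely $O_2(G)=\{1\}$ and $G=O^{2'}(G)$, which are always in force where the lemma is applied and which reduce the five groups of order $18$ to $\Dih(18)$ and the generalized dihedral group at a stroke. One then discards $\Dih(18)$ because any faithful $\GF(2)$-module contains the $6$-dimensional $\GF(64)$-irreducible of $\Z/9$, on which $t$ acts as $\mathrm{Frob}^{3}$ with $|V/C_V(t)|=8>4$. Your proof needs to invoke these auxiliary hypotheses explicitly; for part (i) you also need something like $C_V(G)=\{1\}$ (or $V=[V,G]$) to rule out a trivial direct summand inflating $|V|$ past $2^{4}$, since the $2$F inequality alone does not see such a summand. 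The construction of $V=X_1\oplus X_2$ with $t$ realising the Frobenius on each factor, and the normalizer and centralizer computations in parts (iii)--(iv), read as correct.
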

\begin{proof}
This follows directly from calculations in MAGMA, working explicitly with matrices in $\GL_4(2)$ and comparing with the Small Groups Library.
\end{proof}

Indeed, in the above lemma $G$ is also isomorphic to $\PSU_3(2)'$ and is listed in the Small Groups Library as $SmallGroup(18,4)$.

\begin{lemma}\label{Badp3}
There is a unique group $G$ of shape $(Q_8\times Q_8):3$ which has a faithful quadratic $2$F-module $V$. Moreover, for $S\in\syl_3(G)$ and $V$ an associated faithful quadratic $2$F-module, the following hold:
\begin{enumerate}
\item $|V|=3^4$ and $G$ is determined uniquely up to conjugacy in $\GL_4(3)$;
\item $G$ is the unique group of order $2^4.3$ or $2^6.3$ such that $O_3(G)=\{1\}$, $Z(G)\ne\{1\}$, $G=O^{3'}(G)$ and, if the order is $2^6.3$, there exists at least two distinct normal subgroups of $G$ of order $8$;
\item there are exactly five overgroups of $S$ in $G$ which are isomorphic to $\SL_2(3)$, any two of which generate $G$;
\item $N_{O_2(G)}(S)=Z(G)\cong 2\times 2$;
\item $\Aut(G)=\Aut_{\GL_4(3)}(G)$, $C_{\GL_4(3)}(G)=Z(G)$ and $|\Out(G)|=2^2.3$; and
\item if $U<V$ is $N_G(S)$-invariant and $|U|=3$, then $|\langle U^G\rangle|=9$.
\end{enumerate}
\end{lemma}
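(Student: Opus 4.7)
The plan is to mirror the proof of \cref{Badp2} by reducing the lemma to a finite computation in $\GL_4(3)$ supplemented by a lookup in the Small Groups Library.

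For (i), I would first argue that $\dim_{\GF(3)} V = 4$. Since $Q_8$ has a unique faithful irreducible $\GF(3)$-module, of dimension $2$, the group $Q_8 \times Q_8$ admits no faithful $\GF(3)$-module of dimension less than $4$, the bound being attained by the outer tensor product of the faithful $Q_8$-modules. Combined with the 2F-inequality $|V/C_V(x)| \le |\langle x \rangle|^2 = 9$ for a generator $x$ of the order-$3$ subgroup of $G$ and the quadraticity $[V,x,x]=\{1\}$, this pins $\dim V$ to $4$, whence $|V|=3^4$. To obtain uniqueness up to $\GL_4(3)$-conjugacy I would enumerate the conjugacy classes of order-$3$ elements in $\GL_4(3)$ with minimal polynomial $(t-1)^2$ and, inside the normalizer of each such cyclic subgroup, search for normal subgroups isomorphic to $Q_8 \times Q_8$ completing to a copy of $(Q_8\times Q_8):3$; MAGMA returns a single $\GL_4(3)$-conjugacy class.

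For (ii), fix the realization $G \le \GL_4(3)$ produced in (i). Using the Small Groups Library, enumerate groups $H$ of order $48$ and of order $192$ satisfying $O_3(H)=\{1\}$, $Z(H)\ne\{1\}$ and $H=O^{3'}(H)$, together with the extra condition that $H$ has at least two distinct normal subgroups of order $8$ when $|H|=192$. A direct check shows the list contains a unique isomorphism type, which agrees with $G$.

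Parts (iii)--(vi) are then routine verifications inside the fixed copy of $G$ in $\GL_4(3)$: enumerate the overgroups of $S$ of isomorphism type $\SL_2(3)$ and confirm that there are exactly five and any pair generates $G$; compute $N_{O_2(G)}(S)$ and identify it with $Z(G) \cong 2 \times 2$; compute $\Aut(G)$ abstractly from the Small Groups Library, construct the natural homomorphism $N_{\GL_4(3)}(G) \to \Aut(G)$ and verify surjectivity to conclude $\Aut(G)=\Aut_{\GL_4(3)}(G)$, $C_{\GL_4(3)}(G)=Z(G)$ and $|\Out(G)|=2^2 \cdot 3$; and finally, enumerate the $N_G(S)$-invariant subgroups $U<V$ of order $3$ and evaluate $|\langle U^G\rangle|$. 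The main obstacle is arranging the search in (i) robustly---verifying that a single $\GL_4(3)$-conjugacy class simultaneously realizes the prescribed isomorphism type, the quadratic order-$3$ element, and the 2F-action. Once this is in place, everything else is direct bookkeeping inside $\GL_4(3)$, entirely parallel to the treatment of \cref{Badp2}.
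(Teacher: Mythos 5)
Your overall strategy---a direct MAGMA search in $\GL_4(3)$ supplemented by a Small Groups Library lookup, in parallel with the treatment of \cref{Badp2}---is exactly what the paper does; its proof is the one-line assertion that the result follows from such a calculation. However, two steps in your sketch contain errors that would derail the computation as described. First, the outer tensor product $W_1\otimes W_2$ of the faithful $2$-dimensional $\GF(3)Q_8$-modules is \emph{not} faithful for $Q_8\times Q_8$: the diagonal central involution $(z,z)$ acts as $(-I)\otimes(-I)=I$ and lies in the kernel. The $4$-dimensional faithful module realising the lower bound is the direct sum $W_1\oplus W_2$, with the two $Q_8$ factors acting on complementary $2$-dimensional summands and the order-$3$ part acting as a size-$2$ Jordan block on each summand. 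Second, the proposed search locus is wrong: you suggest looking inside $N_{\GL_4(3)}(\langle x\rangle)$ for a normal subgroup isomorphic to $Q_8\times Q_8$, but $O_2(G)$ is $\langle x\rangle$-invariant, not $\langle x\rangle$-normalizing. Indeed part (iv) of the lemma gives $N_{O_2(G)}(\langle x\rangle)=Z(G)$ of order $4$, so $Q_8\times Q_8$ meets $N_{\GL_4(3)}(\langle x\rangle)$ only in $Z(G)$ and the search as phrased would return nothing. You should instead run over $\GL_4(3)$-conjugacy classes of subgroups $K\cong Q_8\times Q_8$ and look in $N_{\GL_4(3)}(K)$ for order-$3$ elements inducing an appropriate automorphism of $K$. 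With those repairs, the remainder of your outline for (ii)--(vi) is the paper's approach verbatim. (A smaller caveat: quadraticity and the $2$F-inequality bound the number of size-$2$ Jordan blocks of $x$ by $2$, hence bound $\dim[V,x]$, but they do not by themselves exclude trivial direct summands; pinning $\dim V$ to exactly $4$ uses the minimality implicit in how the lemma is applied, e.g.\ $V=[V,O^3(G)]$.)
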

\begin{proof}
This follows directly from calculations in MAGMA, working explicitly with matrices in $\GL_4(3)$ and comparing with the Small Groups Library.
\end{proof}

The above group is listed in the Small Groups Library as $SmallGroup(192, 1022)$.

We now give an important characterization of certain ``small" groups which have an associated non-trivial quadratic $2$F-module. The proof of this result will be broken up over a series of lemmas.

\begin{lemma}
Assume that $G=O^{p'}(G)$ is a $\mathcal{K}$-group that has a strongly $p$-embedded subgroup, $S\in\syl_p(G)$, $V$ is a faithful $\mathrm{GF}(p)$-module with $C_V(O^p(G))=\{1\}$ and $V=\langle C_V(S)^G\rangle$. Furthermore, assume that $m_p(S)\geq 2$. If there is a $p$-element $1\ne x\in S$ such that $[V, x,x]=\{1\}$ and $|V/C_V(x)|=p^2$ then either:
\begin{enumerate}
    \item $p$ is odd, $G\cong\mathrm{(P)SU}_3(p)$ and $V$ is the natural module;
    \item $p$ is arbitrary, $G\cong\SL_2(p^2)$ and $V$ is the natural module; or
    \item $p=2$, $G\cong \PSL_2(4)$ and $V$ is a natural $\Omega_4^-(2)$-module.
\end{enumerate}
\end{lemma}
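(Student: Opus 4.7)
The hypothesis $m_p(S)\geq 2$ rules out the cyclic and generalized quaternion Sylow cases of \cref{SE1}, so $\wt G := G/O_{p'}(G)$ must appear in the list of \cref{SE2}. The plan is to first pin down $G$ using the quadratic action and the tight numerical constraint $|V/C_V(x)| = p^2$, and then identify $V$ using the module-structure lemmas already established.

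For $p$ odd, I would apply \cref{SEQuad} to $A := \langle x\rangle$: the quadratic hypothesis $[V,x,x] = \{1\}$ together with the $\mathcal{K}$-group and strongly $p$-embedded assumptions force $G = \langle x^G\rangle \cong \SL_2(p^n)$ or $\mathrm{(P)SU}_3(p^n)$, the third alternative (with $|S| = p$) being excluded by $m_p(S)\geq 2$. Then \cref{GreenbookQuad}, applicable as $|A| = p \geq 3$, gives $|V/C_V(x)|\geq p^n$ in the $\SL_2$ case and $|V/C_V(x)|\geq p^{2n}$ in the $\mathrm{(P)SU}_3$ case; combined with $|V/C_V(x)| = p^2$ and $m_p(S)\geq 2$, this forces $n = 2$ in the first case and $n = 1$ in the second. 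For $p = 2$, \cref{SEQuad} is unavailable, so I instead apply \cref{SpeMod2}: the bounds $|V/C_V(x)|\geq q^{4/3}$ and $|V/C_V(x)|\geq q^{3/2}$ immediately eliminate $\Sz(q)$ and $\PSU_3(q)$ (the latter using $q \geq 4$), leaving $\wt G\cong\PSL_2(q)$ with $q^{2/3}\leq 4$, i.e.\ $q\in\{4, 8\}$. To rule out $q = 8$, I enumerate the faithful composition factors of $\GF(2)[\SL_2(8)]$-modules using \cref{SL2ModRecog}: the $\Omega_3$-module requires odd $p$ and the $\Omega_4^-$-module requires even Frobenius exponent, so only natural and triality modules occur among irreducibles of order at most $q^3$, each with $|W/C_W(x)|\geq 8$; larger irreducibles, arising as Steinberg tensor products with non-trivial Galois orbits, have even bigger $\GF(2)$-dimension and $|W/C_W(x)|$. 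A standard composition-factor comparison then gives $|V/C_V(x)|\geq 8$, contradicting $= 4$.

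Having reduced to $G\cong\SL_2(p^2)$, $\mathrm{(P)SU}_3(p)$, or $\PSL_2(4)$, I identify $V$. The hypothesis $V = \langle C_V(S)^G\rangle$ combined with $C_V(O^p(G)) = \{1\}$ yields $V = [V, O^p(G)]$ by \cref{SplitMod}, excluding trivial composition factors. For $G\cong\SL_2(p^2)$: the elementary abelian structure of $S$ together with the quadratic action of $x$ and the tight bound $|V/C_V(x)| = p^2 = |S|$ forces $[V, S, S] = \{1\}$ (by comparing $C_V(x)$ with $C_V(x^g)$ for a second generator $x^g$ of $S$), so \cref{DirectSum} writes $V$ as a direct sum of natural modules, and the count $|V/C_V(x)| = p^{2k}$ for $k$ copies forces $k = 1$; the alternative $\Omega_4^-(p)$-module for $p$ odd is ruled out by case (c) of \cref{Omega4}, where no quadratic element achieves $|V/C_V(x)| = p^2$. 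For $G\cong\mathrm{(P)SU}_3(p)$: I would restrict $V$ to the $\SL_2(p)$-subgroup $L = \langle Z(S), Z(S)^g\rangle$ supplied by \cref{SU3Gen}(v), apply \cref{DirectSum} at the $L$-level to control the restriction (noting that $Z(S)$ is the Sylow $p$-subgroup of $L$ and $[V, Z(S), Z(S)] = [V, x, x] = \{1\}$), and combine with the $N_G(S)$-invariance of $C_V(S)$ and the numerical data from \cref{SUMod} ($|V| = p^6$, $|C_V(S)| = p^2$) to force $V$ to be a single natural $\mathrm{(P)SU}_3(p)$-module. For $G\cong\PSL_2(4)$: direct inspection using \cref{NatMod} and \cref{Omega4} leaves exactly the natural $\SL_2(4)$-module (conclusion (ii)) and the natural $\Omega_4^-(2)$-module (conclusion (iii)). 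The most delicate step is the $\mathrm{(P)SU}_3(p)$ analysis, since there is no direct $\SU_3$-analogue of \cref{DirectSum}; the strategy of descending to the $\SL_2(p)$-subgroup and exploiting the $N_G(S)$-invariance of $C_V(S)$ inherited from $V = \langle C_V(S)^G\rangle$ is the crux, with extra care needed for $p = 3$ where $L \cong \SL_2(3)$ is not perfect.
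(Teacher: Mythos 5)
Your proposal takes a genuinely different route from the paper. The paper runs a minimal-counterexample argument: choose a $p$-minimal $P\leq G$ containing $S$ with $G=PO_{p'}(G)$; if $G=P$ invoke \cite[Theorem~1]{ChermakSmall} and \cref{SplitMod}; if $P<G$, apply the inductive hypothesis to $P$ and then use case-specific arguments (a coprime-action argument for $\mathrm{(P)SU}_3(p)$, and for $\PSL_2(4)$ the bound $|V|=|V/C_V(G)|\leq 2^6$ followed by comparison with subgroups of $\SL_6(2)$ via \cite{LowMax}) to force $G=P$ and contradict minimality. You instead try to nail down $G$ head-on from the quadratic-module classification and then read off $V$. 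For $p$ odd this works and is arguably more transparent: \cref{SEQuad} pins down $G$ itself (not merely $G/O_{p'}(G)$), \cref{GreenbookQuad} then fixes $n$, and the identity $(xy-1)^2=2(x-1)(y-1)$ on the abelian $S$ kills $[V,S,S]$ for $p$ odd, after which \cref{DirectSum} and \cref{SplitMod} close the $\SL_2(p^2)$ case. Your $\mathrm{(P)SU}_3(p)$ module identification is sketchier --- quoting the numerics of \cref{SUMod} presupposes what you want to show, and the non-perfection of the $\SL_2(3)$-subgroup at $p=3$ is flagged but not resolved --- but it is probably fillable along the lines you indicate.

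The genuine gap is at $p=2$. \cref{SpeMod2} only constrains $\wt G:=G/O_{2'}(G)$, and after eliminating $\Sz(q)$, $\PSU_3(q)$ and $q=8$ you pass silently from ``$\wt G\cong\PSL_2(4)$'' to ``$G\cong\PSL_2(4)$'', i.e.\ you never show $O_{2'}(G)=\{1\}$. Your enumeration of $\GF(2)[\SL_2(8)]$-irreducibles and the closing ``direct inspection'' of $\PSL_2(4)$-modules are analyses of $\wt G$-modules, and say nothing about $V$ as a $G$-module when $O_{2'}(G)$ acts nontrivially. Even granting $O_{2'}(G)=\{1\}$, you should say why $V$ cannot have more than one nontrivial $\PSL_2(4)$-composition factor: quadratic rank $2$ gives the upper bound $|W/C_W(x)|\leq 4$ on every composition factor $W$, but does not by itself exclude two nontrivial factors sitting in a nonsplit extension. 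The paper evades both issues with one clean observation: once $V/C_V(P)$ is identified, $G=\langle P, s^g\rangle$ for a single further conjugate of an involution $s$, so $|V|=|V/C_V(G)|\leq 2^4\cdot 2^2 = 2^6$ and $G$ is recognized inside $\SL_6(2)$. You would need an argument in that spirit, or a direct coprime-action argument as in the paper's $\mathrm{(P)SU}_3(p)$ branch, to close the $p=2$ case.
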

\begin{proof}
Applying the characterization in \cref{SE2} and using \cref{SEQuad} when $p$ is odd, we deduce that $G/O_{p'}(G)\cong\PSL_2(p^{n+1}), \PSU_3(p^n)$ or $\Sz(2^{2n+1})$ for $n\geq 1$. Assume that $G$ is a minimal counterexample to the lemma and let $P$ be a $p$-minimal subgroup $G$ containing $S$ with the property $G=PO_{p'}(G)$. If $G=P$ then by \cite[Theorem 1]{ChermakSmall} and \cref{SplitMod}, the lemma holds, a contradiction since $G$ was an assumed counterexample. Hence, we may assume that $P<G$. Set $V^P$ to be some non-trivial $P$-composition factor of $V$. By \cref{SEFF}, using that $m_p(S)\geq 2$, we have that $|V^P/C_{V_P}(x)|=p^2$ and $[V, x, x]$. By minimality, $P$ is described by the lemma. Since $C_V(P)\le C_V(x)$, we can arrange that $V/C_V(P)=V^P$. 

If $P\cong \SL_2(p^2)$ and $V/C_V(P)$ is a natural module, then for each $y\in S^\#$, we have that $C_V(y)=[V, y]C_V(P)=[V, S]C_V(P)$ from which it follows that $C_V(S)=[V, S]C_V(P)$ has index $p^2$ in $V$. Then \cref{SEFF} yields that $G=P$, a contradiction. If $G\cong\mathrm{(P)SU}_3(p)$ and $V/C_V(P)$ is the natural module, then $\langle x\rangle=Z(S)$. Moreover, for $A\le S$ with $|A|=p^2$ and $A$ elementary abelian, for $y,z\in A$ with $y,z\not\in Z(S)$ and $\langle y\rangle\ne \langle z\rangle$, we have that $C_V(y)=C_V(z)=C_V(A)$. Furthermore, $[V, y]C_V(y)=[V, z]C_V(z)=[V, A]C_V(A)=C_V(x)$ and by coprime action, $C_V(x)$ is normalized by $O_{p'}(G)$. Then $H:=\langle x^{O_{p'}(G)}\rangle$ centralizes $C_V(x)$ an by coprime action, $V=[V, O_{p'}(H)]\times C_V(O_{p'}(H))$ is a $\langle x\rangle$ invariant decomposition. Since $C_V(x)\le C_V(O_{p'}(H))$, we deduce that $O_{p'}(H)$ centralizes $V$ so that $O_{p'}(H)=\{1\}$ and $O_{p'}(G)$ normalizes $\langle x\rangle=Z(S)$. Hence, $[G, O_{p'}(G)]=[\langle Z(S)^G\rangle, O_{p'}(G)]=\{1\}$, $G$ is quasisimple and $G=P$, a contradiction. Finally, if $P\cong \PSL_2(4)$ and $V/C_V(P)$ is a natural $\Omega_4^-(2)$-module, then we select $x\in G$ and $s\in S^\#$ with the property $s^x\not\le P$. Then $H:=\langle P, s^x\rangle\not\cong \PSL_2(4)$ and by minimality, we have that $G=H$. Since $|V/C_V(s^x)|=4$, we deduce that $|V|=|V/C_V(G)|\leq 2^6$ and so $G$ embeds in $\SL_6(2)$. Comparing with \cite{LowMax} yields that $G=P$, a contradiction.
\end{proof}

\begin{lemma}
Assume that $G=O^{p'}(G)$ is a $\mathcal{K}$-group, $S\in\syl_p(G)$, $V$ is a faithful $\mathrm{GF}(p)$-module with $C_V(O^p(G))=\{1\}$ and $V=\langle C_V(S)^G\rangle$. Furthermore, assume that $m_p(S)=1$, $G$ is not $p$-solvable and if $p=2$ and $S$ is generalized quaternion then $N_G(S)$ is strongly $2$-embedded in $G$. If there is a $p$-element $1\ne x\in S$ such that $[V, x,x]=\{1\}$ and $|V/C_V(x)|=p^2$ then either:
\begin{enumerate}    
    \item $p=3$, $G\cong 2\cdot\Alt(5)$ or $2^{1+4}_-.\Alt(5)$ and $V$ is the unique irreducible quadratic $2$F-module of dimension $4$; or
    \item $p\geq 5$, $G\cong\SL_2(p)$ and $V$ is the direct sum of two natural $\SL_2(p)$-modules.
\end{enumerate}
\end{lemma}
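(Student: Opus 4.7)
The plan is to reduce to the case where $p$ is odd and $S$ is cyclic, then show $O_p(G)=\{1\}$ so that \cref{CyclicSE} provides a strongly $p$-embedded subgroup, and finally apply \cref{SE1} and \cref{SEQuad} to identify both $G$ and $V$. At the outset, replacing $x$ by a generator of $\Omega(\langle x\rangle)=\Omega(S)$ if necessary (which remains quadratic on $V$ with $|V/C_V(\Omega(S))|\le p^2$), I may assume $x$ has order $p$.

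First, since $m_p(S)=1$, $S$ is cyclic or (only when $p=2$) generalized quaternion. If $p=2$ and $S$ is cyclic, Burnside's normal $p$-complement theorem makes $G$ $p$-solvable, contradicting the hypothesis. If $p=2$ and $S$ is generalized quaternion, the extra hypothesis provides $N_G(S)$ strongly $2$-embedded in $G$, so \cref{SE1}\,(iii) describes $\bar G:=G/O_{2'}(G)$ as a central extension of $K$ by $\Omega(S)$, with $\PSL_2(q)\le K\le\mathrm{P\Upgamma L}_2(q)$ for $q$ odd, or $K=\Alt(7)$. I would then analyze the central involution $x_0\in \Omega(S)$, which is central in $\bar G$: on every irreducible $\bar G$-composition factor of the $O_{2'}$-fixed quotient $V/[V,O_{2'}(G)]$, Schur's lemma forces $x_0$ to act as an element of order dividing $2$ in a finite field of characteristic $2$, hence trivially. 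Combined with $V=\langle C_V(S)^G\rangle$, $C_V(O^2(G))=\{1\}$, the coprime decomposition $V=C_V(O_{2'}(G))\oplus[V,O_{2'}(G)]$, and the unipotent structure of $x_0-1$, a dimension count precludes $|V/C_V(x)|=4$. Hence $p$ is odd and $S$ is cyclic.

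Next I rule out $O_p(G)\ne\{1\}$: its unique subgroup of order $p$, $\langle x_1\rangle$, is characteristic in $O_p(G)$ and hence normal in $G$. Since $G/C_G(\langle x_1\rangle)$ embeds in $\Aut(\langle x_1\rangle)$, of $p'$-order, and $G=O^{p'}(G)$, we deduce $x_1\in Z(G)$. Then $C_V(x_1)$ is $G$-invariant, contains $C_V(S)$, and so contains $V=\langle C_V(S)^G\rangle$, contradicting $|V/C_V(x_1)|\le|V/C_V(x)|=p^2$. Thus $O_p(G)=\{1\}$, and \cref{CyclicSE} shows $N_G(\Omega(S))$ is strongly $p$-embedded in $G$. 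In \cref{SE1}, case (ii) is excluded since $G$ is not $p$-solvable and case (iii) since $p$ is odd, so case (i) holds: $G$ is perfect and $\tilde G:=G/O_{p'}(G)$ is a non-abelian finite simple group. I now invoke \cref{SEQuad} with $A=\Omega(S)$ and $L=\langle A^G\rangle=G$. Case (ii) is ruled out because $\mathrm{(P)SU}_3(p^n)$ has $p$-rank at least $2$; in case (i), $m_p(S)=1$ forces $n=1$, giving $\tilde G\cong\PSL_2(p)$; case (iii) yields $p=3$, $|S|=3$, and $G/C_G(U)\in\{\SL_2(3),\,2\cdot\Alt(5),\,2^{1+4}_-.\Alt(5)\}$ for every non-trivial irreducible constituent $U$ of $V$, with $\SL_2(3)$ excluded since $G$ is not $3$-solvable.

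Finally, to identify $V$ and lift from $\tilde G$ to $G$, \cref{SplitMod} gives $V=[V,O^p(G)]$; for $p\ge 5$, \cref{NatMod} shows that each natural $\SL_2(p)$-summand $W$ satisfies $|W/C_W(x)|=p$, so $|V/C_V(x)|=p^2$ forces $V$ to be a direct sum of exactly two natural modules, and a coprime action argument on $V$ restricted to $O_{p'}(G)$ yields $O_{p'}(G)\le Z(G)$ and $G\cong\SL_2(p)$. For $p=3$, \cref{Badp3} identifies the $2^{1+4}_-.\Alt(5)$ action uniquely on its $4$-dimensional quadratic $2$F-module, while the $2\cdot\Alt(5)\cong\SL_2(5)$ case corresponds to the unique faithful $4$-dimensional irreducible $\GF(3)$-module, determined by a dimension count and the quadratic constraint. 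The main obstacle is isolating the correct $O_{p'}$-core in the $p=3$ case: ruling out configurations strictly between $2\cdot\Alt(5)$ and $2^{1+4}_-.\Alt(5)$ requires a careful combination of coprime action, the constraint $V=\langle C_V(S)^G\rangle$, and tight control over $C_G(U)$ on each non-trivial irreducible constituent.
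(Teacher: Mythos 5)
Your route parallels the paper's---disposing of $p=2$, invoking \cref{SEQuad}, and then pinning down $V$---and the added step of showing $O_p(G)=\{1\}$ and appealing to \cref{CyclicSE} is a sound way to supply the strongly $p$-embedded hypothesis that \cref{SE1} and \cref{SEQuad} need, which the lemma leaves implicit. However, two steps are left genuinely incomplete. Your dismissal of $p=2$ with $S$ generalized quaternion is only a sketch: Schur's lemma tells you $x_0$ acts trivially on irreducible constituents of $C_V(O_{2'}(G))$, but it controls neither the nilpotent part of $x_0-1$ on a non-semisimple module nor the action on $[V,O_{2'}(G)]$, and the promised ``dimension count'' is never carried out. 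In fact no analysis of $V$ is needed here: by the Brauer--Suzuki theorem the central involution $x_0$ of $S$ lies in $Z(G/O_{2'}(G))$, and a Frattini argument applied to $\langle x_0\rangle O_{2'}(G)\normaleq G$ gives $G=O_{2'}(G)C_G(\Omega(S))$; the hypothesis that $N_G(S)$ is strongly $2$-embedded forces $C_G(\Omega(S))\le N_G(\Omega(S))=N_G(S)=S\rtimes H$ with $H$ of odd order, so $N_G(S)$ and hence $G$ is solvable, contradicting the non-$p$-solvability of $G$.

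The second gap is in the $p=3$ branch, which you explicitly flag as ``the main obstacle'' and do not close. As written, you obtain only that $G/C_G(U)\cong 2\cdot\Alt(5)$ or $2^{1+4}_-.\Alt(5)$ for each non-trivial constituent $U$; you do not show $C_G(U)=\{1\}$ or $V=U$. The paper's argument is: by \cref{SE1}(i), $G/O_{p'}(G)$ is simple, and since $3$ divides $|G/C_G(U)|$ the normal subgroup $C_G(U)$ must lie in $O_{p'}(G)$, so is a $p'$-group; coprime action then splits $V=C_V(C_G(U))\times[V,C_G(U)]$ with $U$ a constituent of the first factor; because \cref{SEFF} rules out $U$ being an FF-module, $|U/C_U(x)|=3^2$ already exhausts $|V/C_V(x)|=3^2$, so $x$---and hence $G=\langle x^G\rangle$---centralizes $[V,C_G(U)]$; since $G$ is perfect, $C_V(G)=C_V(O^p(G))=\{1\}$, so $[V,C_G(U)]=\{1\}$, and faithfulness gives $C_G(U)=\{1\}$ and, via \cref{SplitMod}, $V=U$. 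Two smaller points: \cref{Badp3} concerns $(Q_8\times Q_8){:}3$, not $2^{1+4}_-.\Alt(5)$ (the intended reference is \cref{AltMod}); and once \cref{SEQuad}(i) with $m_p(S)=1$ gives $G=L\cong\SL_2(p)$ directly, the subsequent ``lifting'' of $\tilde G$ to $G$ in the $p\ge 5$ branch is unnecessary.
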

\begin{proof}
Suppose first that $p=2$. Applying \cref{SE1}, we deduce that $S$ is generalized quaternion and $G=O_{2'}(G)C_G(\Omega(S))$. But now, $C_G(\Omega(S))=N_G(\Omega(Z(S)))=N_G(S)$ is solvable so that $G$ itself is solvable, a contradiction to the initial hypothesis. Hence, $p$ is odd. Applying \cref{SEQuad} and using that $G$ is not $p$-solvable, we deduce that for $L:=\langle x^G\rangle$, $L/C_L(U)\cong \SL_2(p)$ for $p\geq 5$, $2\cdot\Alt(5)$ or $2^{1+4}_-.\Alt(5)$ for $U$ some non-trivial irreducible composition factor of $V|_L$. Indeed, applying \cref{SE1}, $G=L$ and $C_G(U)$ is a $p'$-group. Now, by coprime action $V=C_V(C_G(U))\times [V, C_G(U)]$ and $U\le C_V(C_G(U))$. Applying \cref{SEFF}, if $2\cdot\Alt(5)$ or $2^{1+4}_-.\Alt(5)$ when $p=3$, we have that $|U/C_U(s)|=3^2$ so that $[V, C_G(U)]\le C_V(s)$ so that $[V, C_G(U)]\le C_V(G)=\{1\}$ and as $V$ is a faithful module, $C_G(U)=\{1\}$. Indeed, by \cref{SplitMod} and using that $C_V(G)=\{1\}$, $V=U$ is an irreducible module and outcome (i) holds.

Hence, we may assume that $G/C_G(U)\cong \SL_2(p)$ and $p\geq 5$. Then $C_V(C_G(U))$ is a quadratic module for $G/C_G(U)$ and \cref{DirectSum} and using that $C_V(G)=\{1\}$, $C_V(C_G(U))$ is a direct sum of at most two natural $\SL_2(p)$-modules. Suppose first that $C_V(C_G(U))$ is a natural $\SL_2(p)$-modules so that $U=C_V(C_G(U))$ and $|U/C_{U}(s)|=p$. Then $|[V, C_G(U)]/C_{[V, C_G(U)]}(s)|=p$ and applying \cref{SEFF}, we deduce that $G/C_G([V, C_G(U)])\cong \SL_2(p)$ and $[V, C_G(U)]$ is a natural $\SL_2(p)$-module. Since $[V, C_G(U)]$ is acted upon non-trivially by $C_G(U)$ and $C_G(U)$ is a $p'$-group, we conclude that \sloppy{$C_G([V, C_G(U)])C_G(U)/C_G(U)=Z(G/C_G(U))$, $C_G([V, C_G(U)])C_G(U)/C_G([V, C_G(U)])=Z(G/C_G([V, C_G(U)]))$ and $G/C_G([V, C_G(U)])\cap C_G(U)$ is a central extension of $\PSL_2(p)$ by a fours group.} Since the $2$-part of the Schur multiplier of $\PSL_2(p)$ has order $2$, $G$ is perfect and $G=O^{p'}(G)$, this is a contradiction. Suppose now that $C_V(C_G(U))$ is a direct sum of two natural $\SL_2(p)$-modules. Then $|C_V(C_G(U))/C_{C_V(C_G(U))}(s)|=p^2$ and we deduce that $[V, C_G(U)]\le C_V(s)$ so that $[V, C_G(U)]\le C_V(G)=\{1\}$ and as $V$ is a faithful module, $C_G(U)=\{1\}$ and outcome (ii) holds.
\end{proof}

\begin{lemma}
Assume that $G=O^{p'}(G)$, $S\in\syl_p(G)$, $V$ is a faithful $\mathrm{GF}(p)$-module with $C_V(O^p(G))=\{1\}$ and $V=\langle C_V(S)^G\rangle$. Furthermore, assume that $m_p(S)=1$, $N_G(S)$ is strongly $p$-embedded in $G$, and $G$ is $p$-solvable. If there is a $p$-element $1\ne x\in S$ such that $[V, x,x]=\{1\}$ and $|V/C_V(x)|=p^2$ then,  setting $L:=\langle x^G\rangle$, one of the following holds:
\begin{enumerate}
\item $p\leq 3$, $G=L\cong\SL_2(p)$ and $V$ is the direct sum of two natural $\SL_2(p)$-modules;
\item $p=2$, $L\cong\SU_3(2)'$, $G$ is isomorphic to a subgroup of $\SU_3(2)$ which contains $\SU_3(2)'$ and $V$ is a natural $\SU_3(2)$-module viewed as an irreducible $\mathrm{GF}(2)G$-module by restriction;
\item $p=2$, $L\cong \Dih(10)$, $G\cong\Dih(10)$ or $\Sz(2)$ and $V$ is a natural $\Sz(2)$-module viewed as an irreducible $\mathrm{GF}(2)G$-module by restriction;
\item $p=3$, $G=L\cong (Q_8\times Q_8):3$ and $V=V_1\times V_2$ where $V_i$ is a natural $\SL_2(3)$-module for $G/C_G(V_i)\cong\SL_2(3)$;
\item $p=2$, $G=L\cong(3\times 3):2$ and $V=V_1\times V_2$ where $V_i$ is a natural $\SL_2(2)$-module for $G/C_G(V_i)\cong\Sym(3)$; or
\item $p=2$, $L\cong(3\times 3):2$, $G\cong (3\times 3):4$, $V$ is irreducible as a $\mathrm{GF}(2)G$-module and $V|_L=V_1\times V_2$ where $V_i$ is a natural $\SL_2(2)$-module for $L/C_L(V_i)\cong\Sym(3)$. 
\end{enumerate}
\end{lemma}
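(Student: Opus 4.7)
The plan is to first apply \cref{SE1}(ii): since $G = O^{p'}(G)$ is $p$-solvable with a strongly $p$-embedded subgroup $N_G(S)$ and $m_p(S) = 1$, we obtain $G = SO_{p'}(G)$ with $S$ cyclic or generalized quaternion. Set $R := O_{p'}(G)$. Because $[V, x, x] = \{1\}$, the operator $(x-1)$ squares to zero on $V$, and in characteristic $p$ the binomial expansion forces $x^p = 1$ on $V$; faithfulness then gives that $x$ has order exactly $p$. Combined with $m_p(S) = 1$, this shows $\langle x\rangle = \Omega(Z(S))$ is the unique subgroup of $S$ of order $p$, so $\langle x\rangle$ is normal in $S$ and $L := \langle x^G\rangle = \langle x^R\rangle$ lies inside $\langle x\rangle R$ with $L/(L \cap R)$ cyclic of order $p$.

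Next, I would analyze the action of $x$ on $R$ via its effect on $V$. The hypothesis $|V/C_V(x)| = |[V, x]| = p^2$ forces the minimal polynomial of $x$ on $V$ to have degree $2$, with exactly two Jordan blocks of size $2$. Applying Clifford theory to a minimal $G$-invariant normal subgroup of $R$ and decomposing $V$ into homogeneous components permuted by $G$, the narrowness of $[V, x]$ severely restricts both the number of components on which $x$ acts non-trivially and their internal structure. Using $V = \langle C_V(S)^G\rangle$, $C_V(O^p(G)) = \{1\}$ and \cref{SplitMod}, $V$ is forced to decompose as a direct sum of at most two $L$-irreducible components (or a single irreducible piece of restricted dimension), each being either a small quadratic module in the sense of \cref{Badp2} or \cref{Badp3}, or a natural module for one of the rank $1$ groups appearing in the conclusion.

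The final step is a case-by-case identification matching admissible $(L, V|_L)$ configurations to the listed outcomes. When $p = 3$ and $R$ is a $2$-group, the only non-trivial possibility beyond $\SL_2(3)$ acting on two natural modules (case (i)) is the unique configuration characterized in \cref{Badp3}, giving $(Q_8 \times Q_8):3$ acting on the $4$-dimensional quadratic $2$F-module (case (iv)). When $p = 2$ and $R$ is a $3$-group, one obtains either $\Sym(3)$ on two natural modules (case (i)), the configurations $(3\times 3):2$ or $(3\times 3):4$ distinguished via \cref{Badp2} (cases (v) and (vi)), or $\SU_3(2)'$ with its natural module arising when $R \cong 3^{1+2}_+$ and $S \cong Q_8$, extending possibly to a larger subgroup of $\SU_3(2)$ (case (ii)). When $p = 2$ and $R$ is a $5$-group, the sole admissible structure has $R$ of order $5$ acted upon by cyclic or generalized quaternion $S$, yielding $\Dih(10)$ or $\Sz(2)$ with its natural module (case (iii)); larger primes $p \geq 5$ produce no configurations under the hypotheses, since $R$ would have to be too constrained to supply the requisite conjugates of $x$. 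The main obstacle will be correctly capturing the possible extensions of $L$ to $G$ --- particularly distinguishing (v) from (vi) and identifying the full range of subgroups in case (ii) --- which relies on the outer automorphism information in \cref{Badp2}(iv) and \cref{Badp3}(v), together with the normality of $\langle x\rangle$ in $G$ and the strong $p$-embedding hypothesis.
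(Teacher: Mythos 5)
Your first two steps track the paper's argument closely: using \cref{SE1}(ii) to write $G = SO_{p'}(G)$ with $S$ cyclic or generalized quaternion, observing that quadratic action in characteristic $p$ forces $x$ to have order exactly $p$ so that $\langle x\rangle = \Omega(Z(S))$, and concluding that $L = \langle x^G\rangle = \langle x^{O_{p'}(G)}\rangle$ is the minimal normal subgroup of order divisible by $p$. These are all correct and match the paper's setup. Your observation that $|[V,x]| = |V/C_V(x)| = p^2$ forces exactly two Jordan blocks of size $2$ is also sound.

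The genuine gap is in the middle: you propose to classify the pairs $(L, V)$ by a direct Clifford-theory analysis of a minimal $G$-invariant normal subgroup of $R := O_{p'}(G)$, but this is exactly the hard part and your sketch does not carry it through. The paper instead notes that \cref{SplitMod} gives $V = [V, O^p(L)]$ and then cites \cite[Lemma 4.3]{ChermakSmall}, which already contains the full classification of these ``small'' $p$-solvable quadratic-offender modules. Your phrase ``the narrowness of $[V,x]$ severely restricts \dots their internal structure'' gestures at the right mechanism but does not deliver the argument; in particular it does not explain why no other extraspecial or imprimitive configurations for $R$ can arise, which requires either Chermak's result or a substantial amount of representation-theoretic work (bounding $\dim V$, constraining the homogeneous components, controlling $I(W_i)$, etc.) of the kind appearing in the proof of \cref{q^3module}.

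The second gap is in the extension from $L$ to $G$. The paper's key structural input is that no element of $S$ can centralize $L$ (else $\Omega(Z(S)) \le Z(L)$ would be normal in $G$, contradicting $O_p(G) = \{1\}$), which yields a faithful map $S \hookrightarrow \Aut(L)$ and hence an embedding of $S/\Omega(S)$ into $\Out(L)$. This embedding, combined with explicit knowledge of $\Out(L)$ (and small-group / $N_{\GL_n(p)}(L)$ computations in the cases $L \cong \SU_3(2)'$, $(Q_8 \times Q_8):3$, $(3\times 3):2$), pins down the possibilities for $G$ and distinguishes conclusion (v) from (vi). You correctly flag this as ``the main obstacle'' and name the right ingredients --- \cref{Badp2}(iv), \cref{Badp3}(v), normality of $\langle x\rangle$, strong $p$-embedding --- but do not actually establish the embedding $S/\Omega(S) \hookrightarrow \Out(L)$ or carry out the case analysis, so the extension step is left open.
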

\begin{proof}
Let $L:=\langle x^G\rangle$ so that $L=[\Omega(S), O_{p'}(G)]\Omega(S)$ by \cref{SE1}. Since $N_G(S)=N_G(\Omega(S))$, we deduce that $G=LS$ so that $O^p(L)=O^p(G)=[\Omega(S), O_{p'}(G)]$ and $C_V(O^p(L))=\{1\}$. Moreover, any element of $S$ centralizes $\Omega(Z(S))\in\syl_p(L)$ but does not centralize $L$, for otherwise, since $S$ contains a unique subgroup of order $p$, $[\Omega(Z(S)), L]=\{1\}$ and $\Omega(Z(S))\normaleq G$. Thus, $S/\Omega(S)$ embeds into $\Out(L)$. Finally, using \cref{SplitMod}, $V=[V, O^p(L)]$ and so both $L$ and $V$ are determined in \cite[Lemma 4.3]{ChermakSmall}. We examine each of the cases individually, using MAGMA for the explicit calculation in $\Out(L)$.

First, if $L\cong\SL_2(p)$ then it follows from \cref{SLGen} (viii) that $\Out_S(L)=\{1\}$, $L=G$ and $V$ is a direct sum of two natural modules. If $L\cong\Dih(10)$ then $\Aut(L)\cong\Sz(2)$ and it follows that $G=\Dih(10)$ or $\Sz(2)$, and $V$ is the restriction of a natural $\Sz(2)$-module to $G$.

Suppose that $L\cong\SU_3(2)'$. Then a Sylow $2$-subgroup of $\Aut(L)$ is isomorphic to a semidihedral group of order $16$ and since $m_p(S)=1$, $|S|\leq 8$ and $S$ is either cyclic or quaternion. Moreover, $54\leq |G|\leq 216$ and $|G|=54$ if and only if $G=L\cong\SU_3(2)'$. Suppose that $|G|=216$ and $S$ is cyclic. Utilizing the small group library in MAGMA, we identify a unique group $H$ such that $\langle \Omega(S)^H\rangle\cong\SU_3(2)'$. But in such a group, $N_H(T)<N_H(\Omega(T))$ for $T\in\syl_2(H)$, a contradiction to our hypothesis. Employing similar methods when $|G|=108$, or when $|G|=216$ and $S$ is quaternion, gives that $G$ is isomorphic to any index $2$ subgroup of $\SU_3(2)$ resp. $G\cong\SU_3(2)$. In all cases, $V$ is the restriction of a natural $\SU_3(2)$-module to $G$.

Suppose that $L\cong (Q_8\times Q_8):3$. Since $G$ acts faithfully on $V$, of order $3^4$, $G$ embeds into $\GL_4(3)$ and since the embedding of $L$ is uniquely determined up to conjugacy in $\GL_4(3)$, it follows that $G$ embeds into its normalizer in $\GL_4(3)$. For $H$ the image of $L$ in $\GL_4(3)$, we have that a Sylow $3$-subgroup of $N_{\GL_4(3)}(H)$ is elementary abelian of order $9$. Since $m_p(S)=1$, we have that $G=L$ in this case and $V$ is as described in \cite[Lemma 4.3]{ChermakSmall}.

Finally, suppose that $L\cong (3\times 3):2$. Since $G$ acts faithfully on $V$, of order $2^4$, $G$ embeds into $\GL_4(2)$ and since the embedding of $L$ is uniquely determined up to conjugacy in $\GL_4(2)$, it follows that $G$ embeds into the normalizer of its image. For $H$ the image of $L$ in $\GL_4(2)$, we have that a Sylow $2$-subgroup of $N_{\GL_4(2)}(H)$ is a dihedral group of order $8$ and there is a unique proper overgroup of $H$ in $N_{\GL_4(2)}(H)$ with a cyclic Sylow $2$-subgroup. Moreover, this group is irreducible in $\GL_4(2)$, is defined uniquely up to conjugacy in $\GL_4(2)$ and is isomorphic to any index $2$ subgroup of $\PSU_3(2)$. We denote this group $(3\times 3):4$ and it follows that either $G=L\cong(3\times 3):2$ or $G\cong (3\times 3):4$. Then $V$ is as given in \cite[Lemma 4.3]{ChermakSmall}.
\end{proof}

The following proposition is the summation of the previous three lemmas. This situation occurs frequently throughout the later sections of this work.

\begin{proposition}\label{Quad2F}
Assume that $G=O^{p'}(G)$ is a $\mathcal{K}$-group that has a strongly $p$-embedded subgroup, $S\in\syl_p(G)$, $V$ is a faithful $\mathrm{GF}(p)$-module with $C_V(O^p(G))=\{1\}$ and $V=\langle C_V(S)^G\rangle$. Furthermore, assume that if $m_p(G)=1$ and $G$ is $p$-solvable or $S$ is generalized quaternion, then $N_G(S)$ is strongly $p$-embedded in $G$. Suppose that there is a $p$-element $1\ne x\in S$ such that $[V, x,x]=\{1\}$ and $|V/C_V(x)|=p^2$. Setting $L:=\langle x^G\rangle$ one of the following holds:
\begin{enumerate}
\item $p$ is odd, $G=L\cong\mathrm{(P)SU}_3(p)$ and $V$ is the natural module;
\item $p$ is arbitrary, $G=L\cong\SL_2(p^2)$ and $V$ is the natural module;
\item $p=2$, $G=L\cong \PSL_2(4)$ and $V$ is a natural $\Omega_4^-(2)$-module;
\item $p=3$, $G=L\cong 2\cdot\Alt(5)$ or $2^{1+4}_-.\Alt(5)$ and $V$ is the unique irreducible quadratic $2$F-module of dimension $4$;
\item $p$ is arbitrary, $G=L\cong\SL_2(p)$ and $V$ is the direct sum of two natural $\SL_2(p)$-modules;
\item $p=2$, $L\cong\SU_3(2)'$, $G$ is isomorphic to a subgroup of $\SU_3(2)$ which contains $\SU_3(2)'$ and $V$ is a natural $\SU_3(2)$-module viewed as an irreducible $\mathrm{GF}(2)G$-module by restriction;
\item $p=2$, $L\cong \Dih(10)$, $G\cong\Dih(10)$ or $\Sz(2)$ and $V$ is a natural $\Sz(2)$-module viewed as an irreducible $\mathrm{GF}(2)G$-module by restriction;
\item $p=3$, $G=L\cong (Q_8\times Q_8):3$ and $V=V_1\times V_2$ where $V_i$ is a natural $\SL_2(3)$-module for $G/C_G(V_i)\cong\SL_2(3)$;
\item $p=2$, $G=L\cong(3\times 3):2$ and $V=V_1\times V_2$ where $V_i$ is a natural $\SL_2(2)$-module for $G/C_G(V_i)\cong\Sym(3)$; or
\item $p=2$, $L\cong(3\times 3):2$, $G\cong (3\times 3):4$, $V$ is irreducible as a $\mathrm{GF}(2)G$-module and $V|_L=V_1\times V_2$ where $V_i$ is a natural $\SL_2(2)$-module for $L/C_L(V_i)\cong\Sym(3)$. 
\end{enumerate}
\end{proposition}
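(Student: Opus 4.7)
The plan is essentially bookkeeping: this proposition is the disjunction of the three lemmas proved immediately before it, so the main task is to verify that the hypotheses match up and that the listed outcomes cover all cases. I would split on $m_p(S)$ and, in the rank one case, on whether $G$ is $p$-solvable.

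First I would note that $m_p(S) = m_p(G)$ since $S \in \syl_p(G)$, and that the hypothesis ``if $m_p(G)=1$ and $G$ is $p$-solvable or $S$ is generalized quaternion, then $N_G(S)$ is strongly $p$-embedded in $G$'' is tailored precisely to invoke the second and third lemmas. If $m_p(S)\geq 2$, I would apply the first preceding lemma verbatim; its three conclusions are exactly outcomes (i), (ii), (iii) of the proposition (the $\mathrm{(P)SU}_3(p)$, $\SL_2(p^2)$, and $\PSL_2(4)$ cases together with their natural modules). The $\mathcal{K}$-group assumption is needed here to feed \cref{SEQuad} into that lemma when $p$ is odd.

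If $m_p(S)=1$ and $G$ is not $p$-solvable, the second lemma applies: note that when $p=2$ and $S$ is generalized quaternion, the proposition's hypothesis gives exactly the strongly $2$-embedded condition needed, while for $p$ odd no extra assumption is required. This lemma yields outcomes (iv) and (v) for $p \geq 5$ and the $2 \cdot \Alt(5)$ or $2^{1+4}_-.\Alt(5)$ configurations at $p=3$. If $m_p(S)=1$ and $G$ is $p$-solvable, the proposition's hypothesis ensures $N_G(S)$ is strongly $p$-embedded in $G$, so the third lemma applies and produces outcomes (v) through (x), which cover the small $\SL_2(p)$ cases for $p\leq 3$ together with the $\SU_3(2)$-, $\Sz(2)$-, $(Q_8 \times Q_8){:}3$-, $(3\times 3){:}2$-, and $(3\times 3){:}4$-type configurations.

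Finally, I would observe that outcome (v) appears in both the second lemma (for $p \geq 5$) and the third lemma (for $p \in \{2,3\}$), which is why the proposition phrases it uniformly as ``$p$ is arbitrary''; combining these two sources gives the claim for all primes. No genuine obstacle arises: the only delicate point is checking that the dichotomy $p$-solvable versus not $p$-solvable under the rank one hypothesis is correctly handled by the two quaternion caveats, and this is immediate from the definitions. The real work was already done in the three preceding lemmas.
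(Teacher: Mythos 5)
Your proposal is correct and matches the paper exactly: the paper itself offers no proof and simply remarks that the proposition ``is the summation of the previous three lemmas,'' and your case split on $m_p(S)$ and (in the rank one case) on $p$-solvability, together with the observation that the proposition's hypothesis delivers precisely the strongly $p$-embedded condition each of the latter two lemmas requires, is the intended verification.
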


While most of the groups and modules above have been described earlier in this section, we list some properties of the groups and modules occurring in (iv) and (x) above.

\begin{lemma}\label{AltMod}
Suppose that $G\cong 2\cdot\Alt(5)$ or $2^{1+4}_-.\Alt(5)$, $S\in\syl_3(G)$ and $V$ is an associated faithful quadratic $2$F-module. Then $C_V(S)=[V, S]$ has order $3^2$ and $V/[V,S]$ and $[V,S]$ are irreducible as $\mathrm{GF}(3)N_G(S)$-modules.
\end{lemma}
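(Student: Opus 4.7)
The plan is as follows. First observe that $|S|=3$ and $|V|=3^4$ in both cases. Let $s$ generate $S$; the $\GF(3)$-linear endomorphism $\phi\colon v\mapsto v(s-1)$ has kernel $C_V(s)=C_V(S)$ and image $[V,s]=[V,S]$, so $|C_V(S)|\cdot|[V,S]|=3^4$. The quadratic hypothesis $[V,S,S]=\{1\}$ says $\phi^2=0$, hence $[V,S]\le C_V(S)$, which gives $|[V,S]|^2\le 3^4$ and so $|[V,S]|\le 9$.

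I would establish the reverse inequality via the identification of $V$ coming from the classification used in Proposition~\ref{Quad2F}(iv), namely \cite[Lemma 4.3]{ChermakSmall}. For $G\cong 2\cdot\Alt(5)\cong\SL_2(5)$, $V$ is the restriction of the natural $\SL_2(9)$-module along the embedding $\SL_2(5)\hookrightarrow\SL_2(9)$, and $S$ lies in the Sylow $3$-subgroup (a root group) of $\SL_2(9)$; Lemma~\ref{NatMod} then gives $|C_V(S)|=9$, whence $[V,S]=C_V(S)$. For $G\cong 2^{1+4}_-.\Alt(5)$, $V$ is the unique $4$-dimensional faithful irreducible $\GF(3)G$-module and the same conclusion $|C_V(S)|=9$ follows from a direct matrix computation in $\GL_4(3)$.

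For the $\GF(3)N_G(S)$-irreducibility of $C_V(S)$ and $V/[V,S]$, I would argue as follows. The map $\phi$ is $N_G(S)$-equivariant up to the sign character $\chi\colon N_G(S)/C_{N_G(S)}(S)\hookrightarrow \Aut(S)\cong C_2$ (since $(s-1)n=n(s^{\chi(n)}-1)$ and on $[V,S]\le C_V(S)$ the operator $s^{-1}-1$ agrees with $-(s-1)$), so $V/[V,S]\cong [V,S]\otimes\chi$ as $\GF(3)N_G(S)$-modules and the two have identical submodule lattices. It therefore suffices to prove $[V,S]$ is irreducible, for which I would exhibit an element $t\in N_G(S)$ whose image in $\GL([V,S])\cong\GL_2(3)$ has order $4$: for $G\cong\SL_2(5)$, the normalizer $N_G(S)\cong Q_{12}$ contains a cyclic subgroup of order $4$, and $Z(G)\le\langle t\rangle$ acts as $-1$ on $V$, so the image of $t$ is of order $4$; for $G\cong 2^{1+4}_-.\Alt(5)$, an analogous element is produced by lifting an inverting element of $\bar S\le\Alt(5)$ to $G$ and combining with the central involution of $O_2(G)$. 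The only elements of order $4$ in $\GL_2(3)$ have minimal polynomial $x^2+1$, which is irreducible over $\GF(3)$, so $\langle t\rangle$ and hence $N_G(S)$ acts irreducibly on $[V,S]$. The main obstacle, beyond bookkeeping, is verifying the order-$4$ claim in the $2^{1+4}_-.\Alt(5)$ case, where one must control both the centralizer of $S$ in $O_2(G)$ and the relevant lift.
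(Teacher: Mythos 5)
Your proof is correct in outline, and it takes a genuinely different route from the paper's: the paper's own proof is a one-line appeal to an explicit MAGMA computation with matrices in $\Sp_4(3)$, whereas you give a structural, (almost) machine-free argument. The key ingredients you use --- the kernel--image count $|C_V(S)|\cdot|[V,S]|=|V|$ for $\phi\colon v\mapsto v(s-1)$, the quadraticity $\phi^2=0$ forcing $[V,S]\le C_V(S)$, the identification of $V$ to pin down $|[V,S]|=9$, the $\chi$-twist equivalence $V/[V,S]\cong[V,S]\otimes\chi$ (so it suffices to handle one of the two factors), and the observation that elements of order $4$ in $\GL_2(3)$ necessarily have irreducible minimal polynomial $x^2+1$ --- are all sound, and together they buy an argument that explains \emph{why} the lemma holds rather than certifying it by calculation. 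The cost is that you still invoke a matrix check for $|C_V(S)|$ when $G\cong 2^{1+4}_-.\Alt(5)$, and the ``order-$4$ lift'' in that case is left as the acknowledged obstacle.

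On that obstacle: your proposed lift-and-multiply-by-$z$ recipe is not clearly guaranteed to produce an element squaring to $z$ (replacing $t$ by $tc$ with $c\in C_{O_2(G)}(S)$ changes $t^2$ by $c^t c$, which you would have to control), and if $t$ already squares to the identity then so does $zt$. There is a cleaner path that avoids lifting altogether. By coprime action and the $A\times B$-lemma, $C_{O_2(G)}(S)$ acts faithfully on $C_V(S)=[V,S]$; its image in the $O_4^-(2)$-module $O_2(G)/Z$ is the ($2$-dimensional) fixed space of the $3$-element, which is nondegenerate of $O_2^+$ type, so $C_{O_2(G)}(S)\cong 2^{1+2}_+=D_8$. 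A faithful $2$-dimensional $\GF(3)$-representation of a nonabelian group of order $8$ is automatically irreducible (a reducible one has abelian image), so $N_G(S)\supseteq C_{O_2(G)}(S)$ already acts irreducibly on $[V,S]$, and the twist handles $V/[V,S]$. If you do want an explicit order-$4$ element, note that $D_8$ contains one, and it necessarily squares to the unique central involution $z$, which is $-1$ on $V$; this is a more robust version of your idea.
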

\begin{proof}
This follows directly from calculations in MAGMA, working explicitly with the matrices in $\Sp_4(3)$.
\end{proof}

\begin{lemma}\label{334}
Suppose that $G\cong (3\times 3):4$, $S\in\syl_2(G)$ and $V$ is an associated faithful quadratic $2$F-module. Then the following hold:
\begin{enumerate}
\item $[V, S]$ has order $2^{3}$;
\item $[V,\Omega(S)]=C_V(\Omega(S))=[V,S,S]$ has order $2^{2}$; and
\item $C_V(S)=[V,S,\Omega(S)]=[V, \Omega(S), S]=[V,S,S,S]$ has order $2$.
\end{enumerate}
\end{lemma}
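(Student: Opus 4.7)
My plan would be as follows. First I would invoke case (x) of \cref{Quad2F} to set up the structure: $L := \langle \Omega(S)^G\rangle \cong (3\times 3){:}2$ is normal of index $2$ in $G$, $V|_L = V_1 \oplus V_2$ with each $V_i$ a natural $\SL_2(2)$-module for $L/C_L(V_i) \cong \Sym(3)$, and $V$ is an irreducible $\GF(2)G$-module of order $2^4$. Since $S$ is cyclic of order $4$, I would fix a generator $s$ of $S$ and write $\Omega(S) = \langle t \rangle$ with $t = s^2$. On each two-dimensional factor $V_i$ the involution $t$ acts as a non-trivial unipotent element, so $[V_i, t] = C_{V_i}(t)$ is one-dimensional; summing over $i$ gives $[V, t] = C_V(t) = [V_1, t] \oplus [V_2, t]$ of order $2^2$.

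The key observation is the identity $(s-1)^2 = s^2 - 1 = t - 1$ in $\mathrm{End}_{\GF(2)}(V)$, which is valid in characteristic $2$. Applying it to $V$ yields $[V, S, S] = (s-1)^2 V = (t-1) V = [V, \Omega(S)]$, which completes (ii). Since $S$ is abelian, $s-1$ and $t-1$ commute, so
\[
[V, S, S, S] = (s-1)(t-1)V = (s-1)[V, \Omega(S)] = [V, \Omega(S), S],
\]
and a symmetric computation using $(t-1)(s-1)V$ identifies $[V, S, \Omega(S)]$ with the same subspace, establishing the chain of equalities in (iii).

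To pin down orders I would use irreducibility of $V$ as a $\GF(2)G$-module: since $L \normaleq G$ has index $2$ and $V|_L = V_1 \oplus V_2$ is already reducible, the generator $s$ must interchange the two summands $V_1$ and $V_2$. Fixing $0 \ne u \in C_{V_1}(t)$, a direct inspection shows that $v_1 + v_2 \in V_1 \oplus V_2$ lies in $C_V(s)$ if and only if $v_2 = sv_1$ and $v_1 = tv_1$, so $C_V(s) = \langle u + su \rangle$ has order $2$ and $|[V, s]| = |V|/|C_V(s)| = 2^3$, giving (i). Finally, $(s-1)|_{C_V(t)}$ is an endomorphism of the four-element space $C_V(t)$ with kernel $C_V(s) \cap C_V(t) = C_V(s)$ of order $2$, so its image $(s-1)C_V(t) = [V, \Omega(S), S] = [V, S, S, S]$ has order $2$; the short check that $(s-1)u = u + su$ identifies this image with $C_V(s)$, completing (iii).

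The only real subtlety is pinning down the action of $s$; once one knows by irreducibility that $s$ swaps $V_1$ and $V_2$, everything reduces to linear algebra over $\GF(2)$ on a four-dimensional space. A direct verification in $\GL_4(2)$ in the style of \cref{Badp2} and \cref{Badp3} would be an equally valid alternative.
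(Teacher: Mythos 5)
Your proposal is correct, and it takes a genuinely different route from the paper: the paper simply states that the lemma ``follows directly from calculations in MAGMA, working explicitly with the matrices in $\GL_4(2)$,'' whereas you give a conceptual linear-algebra proof. Your argument rests on three clean observations: (a) the restriction to $L\cong(3\times 3){:}2$ decomposes as $V_1\oplus V_2$ with $t=\Omega(S)$ acting as a transvection on each factor, giving (ii) almost immediately; (b) the characteristic-$2$ identity $(s-1)^2=t-1$ collapses the iterated commutators and shows $[V,S,S]=[V,\Omega(S)]$ and $[V,S,S,S]=[V,\Omega(S),S]=[V,S,\Omega(S)]$ without any basis computation; (c) irreducibility of $V$ over $G$ forces $s$ to swap the two $L$-summands (valid because $V_1\not\cong V_2$ as $L$-modules, since their kernels $C_L(V_i)$ are distinct -- worth saying explicitly so the Clifford-theory step is airtight), after which $C_V(s)=\langle u+su\rangle$ is a short direct check. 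What this buys over the MAGMA citation is a human-verifiable proof that exposes exactly why the quadratic identity drives the descent $[V,S]\supset[V,S,S]\supset[V,S,S,S]$ with drops $2,2$; the cost is that you must quietly lean on the uniqueness statement implicit in the paper's identification of ``the'' $(3\times 3){:}4$ inside $\GL_4(2)$ (Proposition~\ref{Quad2F}(x) and the discussion around \cref{Badp2}), which the MAGMA route bundles into the computation.
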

\begin{proof}
This follows directly from calculations in MAGMA, working explicitly with the matrices in $\GL_4(2)$. 
\end{proof}

\begin{lemma}\label{pgen}
Suppose that $(G, V)$ satisfies the hypothesis of \cref{Quad2F}. In addition, assume that $V$ is generated as a $\mathrm{GF}(p)G$-module by an $N_G(S)$-invariant subspace of order $p$. Then $G\cong\PSL_2(4), \Dih(10), \Sz(2)$, $(3\times 3):2$ or $(3\times 3):4$ and $V$ is as described in \cref{Quad2F}.
\end{lemma}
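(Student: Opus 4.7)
The plan is to work through the ten outcomes of \cref{Quad2F} and decide which are compatible with the extra hypothesis. A first reduction is immediate: if $U \le V$ is $N_G(S)$-invariant with $|U| = p$, then $U$ is $1$-dimensional over $\mathrm{GF}(p)$ and $S$ acts on $U$ through $\mathrm{GF}(p)^\times$; since $|S|$ is a $p$-power and $|\mathrm{GF}(p)^\times|$ is coprime to $p$, $S$ fixes $U$ pointwise, whence $U \le C_V(S)$. The problem therefore reduces to locating a $1$-dimensional $N_G(S)$-invariant subspace of $C_V(S)$ whose $G$-span equals $V$.

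For cases (i), (ii), (iv) and (vi) of \cref{Quad2F} no such $U$ exists because $C_V(S)$ is $N_G(S)$-irreducible of order strictly larger than $p$: this is \cref{SUMod} (iii) in case (i), \cref{NatMod} (vii) in case (ii) and \cref{AltMod} in case (iv). For case (vi) I would argue that since $[\SU_3(2):\SU_3(2)'] = 4$ is coprime to $|Z(\SU_3(2))| = 3$, the centre $Z := Z(\SU_3(2))$ is contained in every $G$ with $\SU_3(2)' \le G \le \SU_3(2)$, and, being central in $\SU_3(2)$, it lies inside $N_G(S)$; now $Z \cong C_3$ acts on the natural module $V$ as scalar multiplication by a primitive cube root of unity in $\mathrm{GF}(4)$, hence fixed-point-freely on $V \setminus \{0\}$ and in particular on the three non-zero vectors of $C_V(S)$ (which has order $4$), so $C_V(S)$ admits no $N_G(S)$-invariant subspace of order $2$.

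Cases (v) and (viii) are ruled out by a different mechanism: a candidate $U$ exists but its $G$-span is too small. \cref{NatGen} gives $|\langle U^G \rangle| = p^2 < p^4 = |V|$ in case (v), and \cref{Badp3} (vi) gives $|\langle U^G\rangle| = 9 < 81 = |V|$ in case (viii).

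The surviving cases (iii), (vii), (ix) and (x) account for exactly the five isomorphism types in the conclusion. In cases (iii) and (x), $|C_V(S)| = p$ by \cref{Omega4} and \cref{334} respectively, so $C_V(S)$ itself is an $N_G(S)$-invariant subspace of order $p$ and $V = \langle C_V(S)^G\rangle$ by the \cref{Quad2F} hypothesis. In case (vii), $V$ is the $4$-dimensional natural $\mathrm{GF}(2)[\Sz(2)]$-module and remains irreducible as a $\mathrm{GF}(2)G$-module for both $G \cong \Sz(2)$ and $G \cong \Dih(10)$ since the four non-trivial characters of $C_5 \normaleq G$ lie in a single Galois orbit over $\mathrm{GF}(2)$, so any $1$-dimensional $S$-invariant subspace of $C_V(S)$ $G$-generates $V$. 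Finally, in case (ix), $V = V_1 \oplus V_2$ with $V_1 \not\cong V_2$ as $G$-modules because their annihilators $C_G(V_i)$ are distinct subgroups of order $3$ inside $3 \times 3 \normaleq G$; hence the submodule lattice of $V$ is $\{0, V_1, V_2, V\}$, and the ``diagonal'' $1$-dimensional subspace $D$ of $C_V(S) = C_{V_1}(S) \oplus C_{V_2}(S)$, which is $N_G(S)$-invariant since $N_G(S) = S$ acts trivially on $C_V(S)$, has $\langle D^G \rangle$ projecting non-trivially onto both $V_i$ and therefore equal to $V$. The main technical point is the scalar-action argument in case (vi); the rest is routine bookkeeping with the module structures recorded earlier in this section.
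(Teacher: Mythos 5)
Your proof is correct and follows the same case-by-case elimination as the paper's: rule out (i), (ii), (iv), (vi) of \cref{Quad2F} by the nonexistence of an $N_G(S)$-invariant order-$p$ subspace, rule out (v), (viii) because the span of any such subspace is a proper submodule, and observe that the four surviving cases give exactly the five listed groups. You make two small improvements on the paper's terse argument. First, the up-front observation that any $N_G(S)$-invariant subspace of order $p$ must lie in $C_V(S)$ (since $|S|$ is coprime to $|\GF(p)^\times|$) cleanly reduces all of the exclusion arguments to properties of $C_V(S)$; the paper relies on this implicitly. Second, for case (vi) the paper cites no lemma and the irreducibility statement \cref{SUMod}~(iii) applies directly only to $N_{\SU_3(2)}(S)$ rather than $N_G(S)$ for a proper subgroup $G \ge \SU_3(2)'$; your argument via the scalar action of $Z(\SU_3(2)) \le \SU_3(2)' \le N_G(S)$ is the correct way to handle this uniformly across all admissible $G$, and is a genuine gap-fill. (You also cite \cref{NatMod}~(vii), which is the relevant item; the paper's reference to (vi) is evidently an off-by-one slip.) The closing paragraph, verifying that the surviving cases actually admit such a generating $1$-space, is not required for the lemma as stated but is harmless.
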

\begin{proof}
We apply \cref{Quad2F} to get the list of candidates for $G$ and $V$. By \cref{SUMod} (iii), \cref{NatMod} (vi) and \cref{AltMod}, if $(G, V)$ satisfy (i), (ii), (iv) or (vi), then there are no $N_G(S)$-invariant subspaces of order $p$. By \cref{NatGen} and \cref{Badp3} (vi), if $(G,V)$ satisfy (v) or (viii) then $V$ is not generated by a subspace of order $p$. This leaves outcomes (iii), (vii), (ix) and (x), as required.
\end{proof}

Following on from \cref{SEQuad} and \cref{Quad2F}, we now recognize some groups with a strongly $2$-embedded subgroup and $2$-rank greater than $2$ which have a quadratic $2$F-module.

\begin{lemma}\label{2FRecog}
Suppose that $G$ is a finite group such that $G=O^{p'}(G)$ and $G/O_{p'}(G)\cong \PSL_2(p^n)$, $\Sz(2^n)$ or $\PSU_3(p^n)$ for $n>1$. Assume that $V$ is a faithful $\mathrm{GF}(p)$-module for $G$ such that there is $A\le S\in\syl_p(G)$ with $|A|>p^{\frac{n}{2}}$, $[V, A, A]=\{1\}$ and $|V/C_V(A)|\leq |A|^2$. Then $G\cong \SL_2(p^n)$, $\Sz(2^n)$ or $\SU_3(p^n)$ and $V/C_V(G)$ is a direct sum of at most $r$ natural modules for $G$, where $r=2$ if $G\cong \SL_2(p^n)$ and $r=1$ otherwise.
\end{lemma}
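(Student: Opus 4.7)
The plan is to identify the irreducible constituents of $V$ as natural modules by exploiting the tension between the quadratic $2$F-hypothesis $|V/C_V(A)| \le |A|^2$ with $|A| > p^{n/2}$ and the lower bounds on module dimensions coming from the strongly $p$-embedded structure. First I would pass to $V/C_V(G)$ and apply \cref{SplitMod} to the submodule generated by $C_V(S)$ under $G$ to separate off fixed points, reducing the analysis to $[V, O^p(G)]$. Note also that by \cref{GreenbookQuad} applied to faithful sections of the action of $\bar{G}:=G/O_{p'}(G)$, the image $\bar{A}$ must lie in $\Omega(Z(\bar{S}))$, so $A$ sits inside the appropriate root subgroup of $\bar{G}$.

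Next, I would show every irreducible $\bar{G}$-composition factor $W$ of $V$ over $\overline{\GF(p)}$ is a natural module. By Steinberg's tensor product theorem, every absolutely irreducible $\bar{G}$-representation is a tensor product of Frobenius twists of the natural module (and, in the $\PSU_3$ case, its dual). If $W = W_1 \otimes W_2^{\sigma} \otimes \cdots$ involves a non-trivial Frobenius twist, then $A$ acts on $W_i^{\sigma}$ via $\sigma(A)$, whose effective size against the root group of the relevant subfield is at most $p^{n/2}$; comparing with the minimal dimension bounds of \cref{SpeMod2} and \cref{SpeModOdd}, this forces $|W/C_W(A)|$ to exceed $|A|^2$, contradicting the hypothesis. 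Hence only untwisted natural modules occur as constituents, and descent to $\GF(p)$ produces the natural module for $G$ itself.

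For $G/O_{p'}(G) \cong \PSL_2(p^n)$, quadratic action of $A$ on natural constituents propagates to $[V, S, S] = \{1\}$ using the sharp structure of natural modules (where $[V, S] = C_V(S)$ by \cref{NatMod}), so \cref{DirectSum} applies and $V/C_V(G)$ is a direct sum of natural modules; the count $|V/C_V(S)| = p^n$ per summand combined with $|V/C_V(A)| \le |A|^2 \le p^{2n}$ bounds the number of summands by two. For $\SU_3(p^n)$ and $\Sz(2^n)$, \cref{SUMod} and \cref{SzMod} give $|V/C_V(A)| \ge p^{2n}$ per natural summand with $A$ the appropriate root subgroup, so $|A| \le p^n$ forces exactly one summand.

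Finally, since $O_{p'}(G)$ centralizes $O^p(G)$, it acts by scalars on each absolutely irreducible natural summand by Schur's lemma; combining the faithfulness of $V$ with the Schur multiplier information in \cref{SLGen}(vii), \cref{SU3Gen}(ix), and \cref{SzGen}(viii) then identifies $G$ as $\SL_2(p^n)$, $\SU_3(p^n)$, or $\Sz(2^n)$ respectively. The main obstacle will be the Steinberg-twist analysis—carefully tracking how quadratic action behaves under Frobenius twists and in mixed tensor products, to rule out any non-natural constituent given only the lower bound $|A| > p^{n/2}$ which saturates exactly at the two-natural-modules configuration in the $\SL_2$ case.
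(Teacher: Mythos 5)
Your approach is genuinely different from the paper's: where the paper reduces, via a minimal counterexample, to the case that $G$ is $p$-minimal and then invokes \cite[Theorem~1]{ChermakSmall} for the module classification, you attempt to re-derive that classification directly from Steinberg's tensor product theorem. This is a reasonable strategy in outline, but there are two genuine gaps.

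First, you assert that ``$O_{p'}(G)$ centralizes $O^p(G)$'' and then apply Schur's lemma. This is not a hypothesis; it is precisely what the bulk of the paper's proof establishes. The hypothesis $G=O^{p'}(G)$ with $G/O_{p'}(G)$ quasisimple does \emph{not} force $O_{p'}(G)$ to be central. The paper's argument takes a $p$-minimal $P\le G$ with $G=PO_{p'}(G)$, applies the inductive hypothesis to $V|_P$ to pin down $C_V(A)=C_V(Z(S))=C_V(z)$ for every $z\in Z(S)^\#$, and then runs a coprime-action argument to force $[Z(S),O_{p'}(G)]=1$ and hence $O_{p'}(G)\le Z(G)$. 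Without some analogue of this step, your reduction to a quasisimple group acting on an absolutely irreducible module is unjustified. Relatedly, the opening invocation of \cref{SplitMod} is not available as stated, since nothing in the hypotheses gives $V=\langle C_V(S)^G\rangle$.

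Second, the Steinberg-twist analysis is sketched but not closed. The claim that any non-trivial Frobenius twist forces $|W/C_W(A)|>|A|^2$ is precisely what has to be verified case by case, and it is not a formal consequence of ``effective size against the root group.'' The one case where this is delicate is exactly the one the hypothesis $|A|>p^{n/2}$ is designed to exclude: the $\Omega_4^-(p^{n/2})$-module for $\SL_2(p^n)$, where the quadratic radical has order exactly $p^{n/2}$ by \cref{Omega4}. The paper handles this in one line after citing Chermak, whereas your argument needs to treat it (and the general twisted tensor product) explicitly via \cref{Omega4}, \cref{trialitydescription}, etc. There is also an unaddressed extension problem in passing from ``every composition factor is natural'' to ``$[V,S,S]=\{1\}$ on the whole module'' before \cref{DirectSum} can be applied; in light of \cref{A6Cohom}, extensions of natural modules by trivial modules are not automatically split, so this requires an argument, not just an observation about constituents. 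In short, your route avoids the citation to \cite{ChermakSmall} only by implicitly re-proving it, and the key steps of that re-proof are missing.
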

\begin{proof}
Assume that $G$ is a finite group satisfying the hypothesis of the lemma with $|G|$ minimal. Since $A$ acts quadratically on $V$ we have that $A$ is elementary abelian and by \cref{SEQuad} and \cref{GreenbookQuad} when $p$ is odd, we deduce that $A\le Z(S)$.

Let $P$ be a $p$-minimal subgroup of $G$ such that $G=PO_{p'}(G)$. If $P<G$, then by minimality, $P\cong \SL_2(p^n)$, $\Sz(2^n)$ or $\SU_3(2^n)$ and $V/C_V(P)$ is as described in the lemma. Since $[V, A]C_V(P)\le C_V(A)$ we deduce that $C_V(Z(S))=C_V(A)=C_V(z)$ for all $z\in Z(S)^\#$. By coprime action, $O_{p'}(G)$ normalizes $C_V(Z(S))$ and forming $H:=\langle Z(S)^{O_{p'}(G)Z(S)}\rangle$, we have that $H$ acts trivially on $C_V(Z(S))$. But then $V=[V O_{p'}(H)]\times C_V(O_{p'}(H))$  is a $Z(S)$-invariant decomposition and since $C_V(Z(S))\le C_V(O_{p'}(H))$, we conclude that $V=C_V(O_{p'}(H))$ and $Z(S)$ is normalized by $O_{p'}(G)$. But then $\{1\}=[Z(S), O_{p'}(G)]^G=[G, O_{p'}(G)]$, $O_{p'}(G)\le Z(G)$ and $G$ is quasisimple. A consideration of Schur multipliers contradicts that $G$ is a minimal counterexample.

Thus, we have that $G$ is $p$-minimal. We apply \cite[Theorem 1]{ChermakSmall} to $V/C_V(G)$. Since $|A|>p^{\frac{n}{2}}$ and $[V, A, A]=\{1\}$, $V/C_V(G)$ is not a natural $\Omega_4^-(2^{\frac{n}{2}})$ for $G$. Thus, $G\cong\SL_2(p^n)$, $\Sz(2^n)$ or $\SU_3(p^n)$ and $V/C_V(G)$ is a direct sum of natural modules, a contradiction since $G$ was an assumed counterexample.
\end{proof}

We now generalize even further than quadratic or cubic action by investigating the \emph{minimal polynomial} of $p$-elements in a representation, noticing that in quadratic and cubically acting elements, the minimal polynomial is of degree $2$ and $3$ respectively. We cannot hope to make such strong statements as in the earlier cases, but for larger primes and solvable groups, we have decent control due to the Hall--Higman theorem.

\begin{theorem}[Hall--Higman Theorem]
Suppose that $G$ is $p$-solvable group with $O_p(G)=\{1\}$ and $V$ a faithful $\mathrm{GF}(p)$-module for $G$. If $x\in G$ has order $p^n$ and $[V, x; r]=\{1\}$ then one of the following holds:
\begin{enumerate}
\item $r=p^n$;
\item$p$ is a Fermat prime, the Sylow $2$-subgroups of $G$ are non-abelian and $r\geq p^n-p^{n-1}$; or
\item $p=2$, the Sylow $q$-subgroups of $G$ are non-abelian for some Mersenne prime $q=2^m-1<2^n$ and $r\geq 2^n-2^{n-m}$.
\end{enumerate}
\end{theorem}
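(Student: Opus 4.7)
The plan is to follow the classical Hall--Higman strategy: reduce to a faithful irreducible module, cut down to the case where $F^*(G)$ is an extraspecial $q$-group extended by $\langle x\rangle$, and then analyse the minimal polynomial of $x$ on the unique faithful irreducible module of that semidirect product. To begin, I would pass to a composition factor of $V$ on which the conclusion still fails: in characteristic $p$ the operator $(x-1)^r$ annihilates $V$ if and only if it annihilates every composition factor, and faithfulness can be recovered by replacing $G$ by its image on the offending factor, so I may assume $V$ is a faithful irreducible $\GF(p)G$-module.

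Next I exploit the $p$-solvability. Because $O_p(G)=\{1\}$ and the only simple $p$-sections of $G$ are cyclic of order $p$, we obtain $F^*(G)=F(G)=O_{p'}(G)=:Q$ with $C_G(Q)\le Q$; in particular $\langle x\rangle$ acts faithfully on $Q$. Applying Clifford theory to $V|_Q$ and passing to the stabiliser of a homogeneous component, I reduce to the case where $Q$ acts homogeneously, and using the Sylow decomposition of the nilpotent group $Q$ I further reduce to where $Q$ is a $q$-group for a single prime $q\ne p$. Successively quotienting by $\langle x\rangle$-invariant subgroups of $Q$ that act as scalars on $V$, I may then arrange that $Q$ is extraspecial (of exponent $q$ if $q$ is odd and of exponent at most $4$ if $q=2$) with $V$ realised as the unique faithful irreducible $\GF(p)Q$-module of dimension $q^m$, where $|Q|=q^{1+2m}$. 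The non-abelianness hypotheses appearing in clauses (ii) and (iii) correspond exactly to the situations in which such a genuinely extraspecial $Q$ can arise.

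The heart of the argument is then an explicit computation on this induced module. Writing $V=\mathrm{Ind}_A^Q(\chi)$ for $A$ a maximal abelian subgroup of $Q$ containing $Z(Q)$ and $\chi$ a suitable linear character, the element $x$ permutes the one-dimensional $\chi$-weight spaces for $A$ in a manner determined by its action on $A/Z(Q)\cong \GF(q)^{2m}$. Computing the characteristic polynomial of $x$ on $V$ in terms of the cycle structure of $\langle x\rangle$ on those weight spaces, and using that $x^{p^n}=1$, one obtains a congruence in $\GF(p)[T]$ that forces the minimal polynomial to equal $(T-1)^{p^n}$ unless a specific cyclotomic identity holds. A delicate analysis of when $\Phi_{p^n}(T)$ can divide $T^{q-1}-1$ over $\GF(p)$ then shows that the only exceptions are the Fermat case ($q=2$ with $p$ a Fermat prime) and the Mersenne case ($p=2$ with $q=2^m-1<2^n$), and in each exceptional configuration the explicit weight-space computation produces precisely the bounds $r\ge p^n-p^{n-1}$ and $r\ge 2^n-2^{n-m}$ recorded in the theorem.

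The principal obstacle is this final cyclotomic/representation-theoretic computation inside $Q\langle x\rangle$, which intertwines the detailed module theory of extraspecial $q$-groups (weight-space decomposition and the permutation action of $x$) with number-theoretic facts about $p^n\pm 1$. Rather than reproducing the full argument, I would at this point invoke the original Hall--Higman Theorem B as presented in \cite{Huppert} or \cite{gor}, having verified that the reductions above place the problem squarely within its hypotheses.
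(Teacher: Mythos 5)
The paper gives no proof beyond a citation to the original Hall--Higman Theorem B \cite[Theorem B]{HH}, and your proposal likewise ends by invoking that result (via the textbook treatments in \cite{Huppert} or \cite{gor}), so the two approaches coincide. Your preceding reduction sketch is the classical one in outline, though a couple of intermediate assertions are stated loosely---the claimed equivalence that $(x-1)^r$ annihilates $V$ if and only if it annihilates every composition factor holds only in one direction, and the cyclotomic condition governing the Fermat/Mersenne exceptions is more delicate than divisibility of $T^{q-1}-1$ by $\Phi_{p^n}(T)$ over $\mathrm{GF}(p)$---but since you ultimately cite the original source rather than reproduce the computation, neither looseness affects the validity of the proof.
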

\begin{proof}
See \cite[Theorem B]{HH}.
\end{proof}

Whenever $p\geq 5$, applying the Hall--Higman theorem to the situation where the group $G$ has a strongly $p$-embedded subgroup and some associated cubic module, we can generally characterize $G$ completely. Thus, we pursue ways which in which to force cubic action. The final concept in this section is that of \emph{critical subgroups}, which first arose in the proof of the Feit--Thompson theorem. Originally in this work, critical subgroups provided a means to control the automizer of some $p$-group $Q$ whenever $p\geq 5$. In the context of the amalgam method, they force cubic action on some faithful section of $Q$ and from there, one can apply Hall--Higman type results to deduce information about $Q$ and its automizer. Where this methodology was previous employed, we now have methods to treat these cases uniformly across all primes and so critical subgroups now play a far lesser role in this work. However, we believe they still provide some interesting consequences in the amalgam method and we still include some of these consequences (see \cref{CriticalFusion}). We present the \emph{critical subgroup theorem}, due to Thompson, below.

\begin{theorem}\label{CriticalSubgroup}
Let $Q$ be a $p$-group. Then there exists $C\le Q$ such that the following hold:
\begin{enumerate}
\item $C$ is characteristic in $Q$;
\item $\Phi(C)\le Z(C)$ so that $C$ has class at most $2$;
\item $[C, Q]\le Z(C)$;
\item $C_Q(C)\le C$; and
\item $C$ is coprime automorphism faithful.
\end{enumerate}
\end{theorem}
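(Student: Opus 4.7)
The plan is the classical Thompson construction: build $C$ as a maximally large element of a distinguished family of characteristic subgroups, and then verify the remaining conditions essentially for free. Specifically, consider the family
\[
\mathcal{X} = \{ K \le Q : K \text{ is characteristic in } Q,\ \Phi(K)\le Z(K),\ \text{and}\ [K,Q]\le Z(K)\}.
\]
This family is nonempty, since $Z(Q)\in\mathcal{X}$ trivially. Pick $C\in\mathcal{X}$ of maximal order. Conditions (i), (ii), (iii) are then immediate from the defining conditions of $\mathcal{X}$, and it remains to verify (iv) and (v).

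For (iv), I argue by contradiction. Suppose $R:=C_Q(C)\not\le C$. Then $R$ is characteristic in $Q$ (being the centralizer of the characteristic subgroup $C$), and $R\cap C=Z(C)<R$, so $CR>C$. Since $Q/C$ is a nontrivial $p$-group acting on the nontrivial $p$-group $CR/C$, the intersection $(CR/C)\cap Z(Q/C)$ is nontrivial. I select a characteristic subgroup $X$ of $Q$ with $C<X\le CR$ such that $X/C$ lies in $Z(Q/C)$ and has exponent $p$ (with appropriate modification at $p=2$). Writing $X_1:=X\cap R$, one checks that $X=CX_1$, that $Z(C)\le Z(X_1)$, and that $X_1/Z(C)$ has controlled exponent. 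Combined with the commuting property $[X_1,C]=\{1\}$ coming from $X_1\le R$, the commutator expansion gives $[X,X]\le [C,C][C,X_1][X_1,X_1]\le Z(C)\le Z(X)$, and a short computation with $p$-th powers yields $\Phi(X)\le Z(X)$; one likewise verifies $[X,Q]\le [C,Q][X_1,Q]\le Z(C)\cdot (X_1\cap C)=Z(C)\le Z(X)$. Thus $X\in\mathcal{X}$, contradicting the maximality of $|C|$.

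For (v), I use (iv) together with the $A\times B$-lemma from the excerpt. Let $\alpha$ be a $p'$-automorphism of $Q$ centralizing $C$, set $A=\langle\alpha\rangle$ acting on $V=Q$, and let $B=C$ act on $Q$ by inner automorphisms. Then $A=O^p(A)$, $B$ is a $p$-group, $[A,B]=\{1\}$ since $\alpha$ fixes $C$ pointwise, and $C_V(B)=C_Q(C)=C$ by (iv), so $[A,C_V(B)]=[\alpha,C]=\{1\}$. The $A\times B$-lemma yields $[A,Q]=\{1\}$, hence $\alpha=1$, so $C$ is coprime automorphism faithful.

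The main obstacle is the verification in (iv). The naive enlargement $C\cdot C_Q(C)$ need not lie in $\mathcal{X}$: although $C_Q(C)$ centralizes $C$ elementwise, $C_Q(C)$ itself can have nilpotency class larger than $2$, so the commutator $[C_Q(C),C_Q(C)]$ is uncontrolled. The entire point of cutting down to the subgroup $X$, whose image in $Q/C$ is simultaneously central and of exponent $p$, is to force $X_1=X\cap R$ to be (essentially) abelian modulo $Z(C)$, which is exactly what is needed to close the commutator identities. Ensuring that this $X$ can be chosen genuinely characteristic in $Q$ (rather than merely normal), and handling the prime $2$ case where the natural exponent hypothesis must be relaxed, are the delicate technical points.
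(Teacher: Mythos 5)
The paper does not give a proof of this result; it simply cites Gorenstein \cite[(I.5.3.11)]{gor}, and your argument is the classical Thompson construction as given there, so your approach and the intended one coincide. Your proof is correct, but let me flag two small slips.

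First, the parenthetical ``(with appropriate modification at $p=2$)'' is unnecessary. The point where such a caveat would ordinarily be needed is the computation of $p$-th powers, but you may write any $x\in X$ as $x=cx_1$ with $c\in C$, $x_1\in X_1=X\cap C_Q(C)$, and since $x_1$ \emph{centralizes} $c$ one has $(cx_1)^p=c^p x_1^p$ exactly, with no commutator correction terms at any prime; combined with $c^p\in\Phi(C)\le Z(C)$ and $x_1^p\in Z(C)$ (from $X_1/Z(C)\cong X/C$ having exponent $p$), this gives $X^p\le Z(C)$ directly. For the record, the existence of the required characteristic $X$ is also unproblematic: $CR/C$ is a nontrivial characteristic subgroup of the $p$-group $Q/C$, so $\Omega\bigl((CR/C)\cap Z(Q/C)\bigr)$ is nontrivial, and its preimage $X$ is characteristic in $Q$ because $C$ and $CR$ are characteristic and every automorphism of $Q$ descends to $Q/C$. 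The inclusion $[X_1,Q]\le X_1\cap C=Z(C)$ holds because $X_1$, as the intersection of two characteristic subgroups, is normal in $Q$, and $[X,Q]\le C$; this closes the verification $[X,Q]\le Z(C)\le Z(X)$.

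Second, in the proof of (v) you write ``$C_V(B)=C_Q(C)=C$ by (iv).'' That is not what (iv) gives: (iv) yields $C_Q(C)\le C$, and in fact $C_Q(C)=C_Q(C)\cap C=Z(C)$, which is usually proper in $C$. This does not damage the argument, since all you need is $[\alpha,C_Q(C)]=1$, and this follows from $C_Q(C)\le C$ together with the hypothesis that $\alpha$ fixes $C$ pointwise. You should just state the correct identity $C_V(B)=Z(C)\le C$.
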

\begin{proof}
This is \cite[(I.5.3.11)]{gor}.
\end{proof}

We call such a subgroup $C\le Q$ a \emph{critical subgroup} of $Q$.

\begin{corollary}\label{CubicAction}
Suppose that $G=O^{p'}(G)$ is a $\mathcal{K}$-group which has a strongly $p$-embedded subgroup, $S\in\syl_p(G)$ and $V$ is a faithful $\mathrm{GF}(p)$-module. Suppose that $p\geq 5$ and there is $s\in S$ of order $p^n$ such that $[V, s, s, s]=\{1\}$. Then $G\cong\mathrm{(P)SL}_2(p^n)$ or $\mathrm{(P)SU}_3(p^n)$ for any prime $p\geq 5$, or $p=5$, $G\cong 3\cdot\Alt(6)$ or $3\cdot\Alt(7)$ and for $W$ some irreducible composition factor of $V$, $|W|\geq 5^6$. 
\end{corollary}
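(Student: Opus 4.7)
The plan is to reduce the hypothesis to a minimum-polynomial condition, exclude the $p$-solvable case via Hall--Higman, and then dispatch the candidates provided by \cref{SE1} and \cref{SE2}.

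First I would observe that $O_p(G)=\{1\}$ since $G$ has a strongly $p$-embedded subgroup, so $V$ is a faithful module. The hypothesis $[V,s,s,s]=\{1\}$ says $(s-1)^3=0$ on $V$; combined with the Frobenius identity in characteristic $p\geq 5>3$, this gives $s^p-1=(s-1)^p=0$ on $V$, so by faithfulness $s$ has order exactly $p$ (the case $s=1$ being vacuous). Next, if $G$ were $p$-solvable then Hall--Higman applied to $s$ would force the minimum-polynomial degree $r\leq 3$ to satisfy $r=p$ (case (i)), or $r\geq p-1$ (case (ii), which includes the Fermat prime $p=5$), or $p=2$ (case (iii)); each contradicts $p\geq 5$ and $r\leq 3$. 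Thus $G$ is not $p$-solvable.

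I would then split on the $p$-rank of $G$. When $m_p(G)\geq 2$, \cref{SE2} restricts $\wt{G}:=G/O_{p'}(G)$ to $\PSL_2(p^n)$, $\PSU_3(p^n)$, $\Alt(2p)$ for $p>3$, or (for small $p$) one of $\Sz(32){:}5$, ${}^2\mathrm{F}_4(2)'$, $\mathrm{McL}$, $\mathrm{Fi}_{22}$ at $p=5$, and $\mathrm{J}_4$ at $p=11$. For $\PSL_2(p^n)$ and $\PSU_3(p^n)$, \cref{NatMod} and \cref{SUMod} exhibit the natural module as witness to cubic action (for $\PSU_3$ using $[V,S,S]=C_V(S)$ and hence $[V,S,S,S]=\{1\}$), supplying the desired conclusion. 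To rule out $\Alt(2p)$ I would note that an element of order $p$ decomposes as one or two disjoint $p$-cycles and verify, via the Brauer character tables, that it acts with minimum polynomial of degree exactly $p\geq 5$ on every non-trivial faithful constituent. For each of the sporadic candidates and $\mathrm{J}_4$, the lower bound $|V|\geq p^5$ of \cref{SpeModOdd}, combined with inspection of their modular character tables at $p$, shows that no $p$-element achieves minimum polynomial degree $\leq 3$.

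The case $m_p(G)=1$ is where the main obstacle lies: \cref{SE1} together with the non-$p$-solvability of $G$ forces $S$ cyclic, $G$ perfect and $\wt{G}$ a non-abelian finite simple group with cyclic Sylow $p$-subgroup, but offers no further control. Using the CFSG-based classification of such simple groups together with a classification of their faithful modular representations admitting a $p$-element with cubic minimum polynomial, I would deduce that for $p\geq 7$ only $\mathrm{(P)SL}_2(p)$ survives, while for $p=5$ the Schur covers $3\cdot\Alt(6)\cong 3\cdot\PSL_2(9)$ and $3\cdot\Alt(7)$ appear: each admits a faithful $3$-dimensional projective representation over $\mathrm{GF}(25)$ (automatically giving cubic action in dimension $3$), and restricting scalars to $\mathrm{GF}(5)$ yields an irreducible composition factor $W$ of dimension $6$, explaining the bound $|W|\geq 5^6$.
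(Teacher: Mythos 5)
Your outline tracks the paper's broad strategy (Hall--Higman, split on $m_p$, invoke \cref{SE1}/\cref{SE2}, then classify modules), and your observation that $(s-1)^3=0$ together with faithfulness and $p\geq 5$ forces $s$ to have order exactly $p$ is a correct and clean simplification that the paper uses implicitly. But there is a genuine gap in the middle, and as written the later steps don't compile.

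The gap is the reduction of $O_{p'}(G)$ to the centre. You pass from $G$ to $\wt{G}:=G/O_{p'}(G)$ via \cref{SE1} and \cref{SE2}, and then classify modular representations of $\wt{G}$ (Brauer characters, minimum polynomial degree, Schur covers of $\Alt(6)$, $\Alt(7)$). But $V$ is a module for $G$, not for $\wt{G}$; if $O_{p'}(G)\not\le Z(G)$, then none of that representation theory applies, and the conclusion $G\cong\mathrm{(P)SL}_2(p^n)$ etc.\ (a statement about $G$, not $\wt{G}$) does not follow from knowing $\wt{G}$. The paper closes this by applying Hall--Higman \emph{again}, now to the $p$-solvable auxiliary subgroup $X:=\langle s\rangle O_{p'}(G)$ (or $\Omega(S)O_{p'}(G)$ in the $m_p=1$ case): cubic action and $p\geq 5$ force $O_p(X)\ne\{1\}$, hence $\langle s\rangle\normaleq X$ since $|s|=p$, hence $[s,O_{p'}(G)]=\{1\}$; as $G=\langle s^G\rangle$ (using that $\wt{G}$ is simple and $G=O^{p'}(G)$), this gives $O_{p'}(G)\le Z(G)$, so $G$ is a perfect central extension of $\wt{G}$ and the Schur multiplier facts in \cref{SLGen}(vii) and \cref{SU3Gen}(ix) finish those cases. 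You already have the Hall--Higman tool in hand — you use it to rule out $p$-solvable $G$ — but you need to use it a second time on this smaller subgroup; without that step the appeal to character tables and Schur covers is not licensed. A related loose end: when $\wt{G}\cong\Sz(32){:}5$ at $p=5$, one can have $G\ne\langle s^G\rangle$, so the above argument needs the paper's variant that looks instead at $L=\langle s^G\rangle$ with $L/O_{5'}(L)\cong\Sz(32)$. Finally, the sentence for the $\PSL_2$/$\PSU_3$ cases (``the natural module witnesses cubic action, supplying the desired conclusion'') has the implication reversed: exhibiting a cubic module for $\PSL_2(p^n)$ says nothing about your given $(G,V)$; what is needed is to identify which central extensions of $\PSL_2(p^n)$ and $\PSU_3(p^n)$ can occur, which again requires the $O_{p'}(G)\le Z(G)$ step.
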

\begin{proof}
Suppose first that $m_p(S)=1$. Then, by \cite[I.5.4.10 (ii)]{gor}, $S$ is cyclic and so we may as well assume that $[V,\Omega(S), \Omega(S), \Omega(S)]=\{1\}$. Suppose first that $G$ is $p$-solvable. Since $p^n-p^{n-1}=p^{n-1}(p-1)\geq 4$, the Hall--Higman theorem implies that $O_p(G)\ne\{1\}$, a contradiction since $G$ has a strongly $p$-embedded subgroup.

Suppose now that $m_p(S)=1$ and $G$ is not $p$-solvable. Since $G=O^{p'}(G)$, by \cref{SE1} we have that $G/O_{p'}(G)$ is a simple group with a cyclic Sylow $p$-subgroup. Form $X:=\Omega(S) O_{p'}(G)$. Then $X$ is a $p$-solvable group and $V$ is a faithful module for $X$ by restriction. Since $p\geq 5$, $p^n-p^{n-1}=p^{n-1}(p-1)\geq 4$ and by the Hall--Higman theorem $O_p(X)\ne\{1\}$. In particular, $\Omega(S)\normaleq X$ and $[O_{p'}(G), \Omega(S)]\le O_{p'}(G)\cap \Omega(S)=\{1\}$. But then, since $G/O_{p'}(G)$ is simple, $[O_{p'}(G), G]=[O_{p'}(G), \langle \Omega(S)^G\rangle]=\{1\}$ and $O_{p'}(G)\le Z(G)$. Hence, $G$ is a quasisimple group with a cyclic Sylow $p$-subgroup such that the degree of the minimal polynomial of some $p$-element is $3$. Such groups and their associated modules are determined in \cite{Zalesskii}.

Suppose that $m_p(S)\geq 2$ so that $G/O_{p'}(G)$ is determined by \cref{SE2}, and let $X=O_{p'}(G)\Omega(Z(S))$. Unless $G/O_{5'}(G)\cong \Sz(32):5$, we have that for any $1\ne s\in \Omega(S)$, $G=\langle s^G\rangle$. In this case, forming $X:=\langle s\rangle O_{p'}(G)$, we have that $X$ acts faithfully on $V$ with $s$ acting cubically, and by the Hall--Higman theorem, $\langle s\rangle\normaleq X$. But then $[s, O_{p'}(G)]\le \langle s\rangle\cap O_{p'}(G)=\{1\}$. Thus, $[G, O_{p'}(G)]=[\langle s^G\rangle, O_{p'}(G)]=[s, O_{p'}(G)]^G=\{1\}$ and $O_{p'}(G)\le Z(G)$. Since $G=O^{p'}(G)$ is perfect, $G$ is a perfect central extension of $G/O_{p'}(G)$. If $G/O_{p'}(G)$ is isomorphic to a rank $1$ simple group of Lie type in characteristic $p$, then the result follows from \cref{SLGen} (vii) and \cref{SU3Gen} (ix). If $G/O_{p'}(G)\cong \Alt(2p)$ then, as $p\geq 5$, $G$ has no faithful modules which witness cubic action by \cite{ZalAlt}. Hence, by \cref{SE2}, we are left with a finite number of perfect $p'$-central extensions of simple groups. We verify that none of these groups have a faithful module which witnesses cubic action using MAGMA, although there exists results in the literature which substantiate this claim.

So assume that $G/O_{5'}(G)\cong \Sz(32):5$. Then, for $s\in\Omega(S)$, we have that for $L:=\langle s^G\rangle$, $L/O_{5'}(L)\cong \Sz(32)$ and following the reasoning above, we have that $O_{5'}(L)\le Z(L)$. Since the Schur multiplier of $\Sz(32)$ is trivial and $\Sz(32)$ is perfect, we have that $L\cong \Sz(32)$. But $L$ acts faithfully on $V$, with $s\in S\cap L$ acting cubically, and since $\Sz(32)$ has no cubic modules, we have a contradiction. Hence, the result.
\end{proof}

As witnessed in the Hall-Higman theorem, cubic action is far less powerful tool in our application whenever $p\in\{2,3\}$. We remedy this with the following strengthening in \cite{nearquad}.

\begin{definition}
Let $G$ be a finite group and $V$ an $\mathrm{GF}(p)G$-module. If $A\le G$ acts cubically on $V$ and $[V, A]C_V(A)=[v\GF(p), A]C_V(A)$ for all $v\in V\setminus [V, A]C_V(A)$ then $A$ is said to act \emph{nearly quadratically} on $V$.
\end{definition}

\begin{theorem}\label{nearquad}
Suppose $G=O^{p'}(G)$ is a group which has a strongly $p$-embedded subgroup and $V$ be a faithful, irreducible $\mathrm{GF}(p)$-module for $G$. If $G$ is generated by elementary abelian $p$-subgroups which act nearly quadratically, but not quadratically, on $V$ then either $G$ is quasisimple, or one of the following holds:
\begin{enumerate}
    \item $p=3$ and $G\cong \Frob(39)$ and $|V|=3^3$; or
    \item $p=3$, $G\cong (C_2 \wr \Sym(n))'=O^{3'}(C_2 \wr \Sym(n))$ for $3\leq n\leq 5$ and $|V|=3^n$.
\end{enumerate}
\end{theorem}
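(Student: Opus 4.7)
The plan is to argue by contradiction: assume $G$ is not quasisimple, and show that the near-quadratic but non-quadratic hypothesis forces one of the two listed exceptional families at $p=3$. The first step is to invoke the classifications \cref{SE1} and \cref{SE2} to pin down $G/O_{p'}(G)$ from the strongly $p$-embedded hypothesis. Since $G = O^{p'}(G)$, if $O_{p'}(G) \le Z(G)$ then $G$ is quasisimple and we are done; so I would assume $O_{p'}(G) \not\le Z(G)$ and fix a prime $r \ne p$ together with a generating nearly-quadratic (non-quadratic) $A \le S \in \syl_p(G)$ with $[O_r(G), A] \ne 1$.

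Next, I would unpack the near-quadratic condition. Writing $W := [V,A]C_V(A)$, the defining property says $V/W$ is one-dimensional over $\GF(p)$ and, since $A$ is not quadratic, $W/C_V(A) \ne 1$ while $[W,A] \le C_V(A)$. Thus $|V/C_V(A)| = p \cdot |[V,A]/([V,A]\cap C_V(A))|$ and $A$ acts cubically. This numerical handle, together with irreducibility of $V$, tightly constrains the Clifford decomposition of $V|_{O_{p'}(G)}$: the $G$-orbits of irreducible summands must be short, and the stabilizers must contain most of $A$.

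For $p \geq 5$ the argument is short. Cubic action combined with the strongly $p$-embedded hypothesis forces $G$ into the list of \cref{CubicAction}: $\mathrm{(P)SL}_2(p^n)$, $\mathrm{(P)SU}_3(p^n)$, or (when $p=5$) $3{\cdot}\Alt(6)$, $3{\cdot}\Alt(7)$. Every group on this list is quasisimple, contradicting our assumption. A parallel argument handles $p=2$: near-quadratic with $[V,A,A]\ne 1$ requires $|A|\geq 4$, and inspection of \cref{SEQuad}/\cref{Quad2F} together with \cref{2FRecog} shows that any non-quadratic cubic action in this regime either reduces to a quadratic situation on a submodule or forces $G$ to be quasisimple.

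This leaves $p = 3$, which is where the genuine exceptions live and where the main obstacle lies. Here the Sylow $3$-subgroup of $G$ can be as small as $C_3$, and the Hall--Higman bound on the minimal polynomial degree is weakest, so one cannot simply force $A \normaleq G \cdot O_{3'}(G)$. Instead, I would apply Clifford theory to $V|_{O_{3'}(G)}$: the direct summands are permuted by $A$, and the near-quadratic inequality severely restricts both the orbit length and the pointwise stabilizers in $O_{3'}(G)$. In the case where $O_{3'}(G)$ is a $q$-group for a single prime $q$, coprime action and \cref{CriticalSubgroup} reduce the analysis to an extraspecial or elementary abelian kernel, and one verifies directly that $O_{3'}(G) \cong C_{13}$ with $V$ the natural $3$-dimensional $\GF(3)\Frob(39)$-module, or $O_{3'}(G)$ is a $2$-group of rank $n\leq 5$ permuted as in the signed-permutation representation of $(C_2\wr \Sym(n))'$. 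Ruling out every other configuration requires a careful case-by-case exhaustion using the fact that a nearly-quadratic irreducible faithful module cannot exist for most extensions on the \cref{SE2} list; this exhaustive verification, especially in the mixed-prime cases for $O_{3'}(G)$, is where the bulk of the work lies.
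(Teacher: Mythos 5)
The paper does not prove this theorem internally: the body of its proof is a single citation, \verb|\cite[Theorem 2]{nearquad}|. So there is no in-paper argument to compare your proposal against, and what you are attempting is a from-scratch replication of that external result.

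Your proposal has a concrete misstep at the very first unpacking of the hypothesis. You write that for $W := [V,A]C_V(A)$ ``the defining property says $V/W$ is one-dimensional over $\GF(p)$.'' It does not. The paper's definition of nearly quadratic is that $A$ acts cubically and $[v\GF(p), A]\,C_V(A) = W$ for every $v \in V \setminus W$ --- i.e.\ the commutators of any single line outside $W$, together with $C_V(A)$, already generate $W$. That is a statement about how commutation acts, not about the codimension of $W$ in $V$; the definition places no upper bound on $\dim(V/W)$. The numerical identity you derive, $|V/C_V(A)| = p\cdot|[V,A]/([V,A]\cap C_V(A))|$, depends on $|V/W|=p$ and therefore does not follow, and the Clifford-theoretic bounds you build on it are unsupported. (The examples in the conclusion do happen to have $|V/W|=p$, but that is an output of the classification, not an input.)

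Two further issues would remain even after fixing this. First, \cref{SE2} and \cref{CubicAction}, which you invoke to dispatch $p\ge 5$ and to pin down $G/O_{p'}(G)$, both carry an explicit $\mathcal{K}$-group hypothesis that \cref{nearquad} does not assume; you would either need to add that hypothesis or replace those steps with CFSG-free arguments, which is what the cited source actually does. Second, you acknowledge but do not carry out the exhaustive $p=3$ verification --- the step that both produces the exceptional families $\Frob(39)$ and $(C_2\wr\Sym(n))'$ and rules out every other $O_{3'}(G)$ --- and that is precisely where the theorem's content lies, so the proposal as written is incomplete exactly where it matters.
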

\begin{proof}
See \cite[Theorem 2]{nearquad}.
\end{proof}

\begin{remark}
In the above, we have that $(C_2 \wr \Sym(3))'\cong \PSL_2(3)$ and $V$ is a natural $\Omega_3(3)$-module.
\end{remark}
\section{Fusion Systems}\label{FusSec}

In this section, we set up notations and terminology, and list some properties of fusion systems. The standard references for the study of fusion systems are \cite{ako} and \cite{craven} and most of what follows may be gleaned from these texts.

\begin{definition}
Let $G$ be a finite group with $S\in\syl_p(G)$. The \emph{fusion category} of $G$ over $S$, written $\fs_S(G)$, is the category with object set $\Ob(\fs_S(G)):= \{Q: Q\le S\}$ and for $P,Q\le S$, $\Mor_{\fs_S(G)}(P,Q):=\Hom_G(P,Q)$, where $\Hom_G(P,Q)$ denotes maps induced by conjugation by elements of $G$. 
\end{definition}

\begin{definition}
Let $S$ be a $p$-group. A fusion system $\fs$ over $S$ is a category with object set $\Ob(\fs):=\{Q: Q\le S\}$ and whose morphism set satisfies the following properties for $P, Q\le S$:
\begin{itemize}
\item $\Hom_S(P, Q)\subseteq \Mor_{\fs}(P,Q)\subseteq \Inj(P,Q)$; and
\item each $\phi\in\Mor_{\fs}(P,Q)$ is the composite of an $\fs$-isomorphism followed by an inclusion,
\end{itemize}
where $\Inj(P,Q)$ denotes injective homomorphisms between $P$ and $Q$. 

To motivate the group analogy, we write $\Hom_{\fs}(P,Q):=\Mor_{\fs}(P,Q)$ and $\Aut_{\fs}(P):=\Hom_{\fs}(P,P)$.

Two subgroups of $S$ are said to be \emph{$\fs$-conjugate} if they are isomorphic as objects in $\fs$. We write $Q^{\fs}$ for the set of all $\fs$-conjugates of $Q$. We say a fusion system is \emph{realizable} if there exists a finite group $G$ with $S\in\syl_p(G)$ and $\fs=\fs_S(G)$. Otherwise, the fusion system is said to be \emph{exotic}.
\end{definition}

\begin{definition}
Let $\fs$ be a fusion system on a $p$-group $S$. Then $\mathcal{H}$ is a \emph{subsystem} of $\fs$, written $\mathcal{H}\le \fs$, on a $p$-group $T$ if $T\le S$, $\mathcal{H}\subseteq \fs$ as sets and $\mathcal{H}$ is itself a fusion system. Then, for $\fs_1, \fs_2$ subsystems of $\fs$ supported on $S$, write $\langle \fs_1, \fs_2\rangle$ for the smallest subsystem of $\fs$ containing $\fs_1$ and $\fs_2$.
\end{definition}

\begin{definition}
Let $\fs$ be a fusion system over a $p$-group $S$ and let $Q\le S$. Say that $Q$ is
\begin{itemize}
\item \emph{fully $\fs$-normalized} if $|N_S(Q)|\ge |N_S(P)|$ for all $P\in Q^{\fs}$;
\item \emph{fully $\fs$-centralized} if $|C_S(Q)|\ge |C_S(P)|$ for all $P\in Q^{\fs}$;
\item \emph{fully $\fs$-automized} if $\Aut_S(Q)\in\syl_p(\Aut_{\fs}(Q))$;
\item \emph{receptive} in $\fs$ if for each $P\le S$ and each $\phi\in\Iso_{\fs}(P,Q)$, setting \[N_{\phi}=\{g\in N_S(P) : {}^{\phi}c_g\in\Aut_S(Q)\},\] there is $\bar{\phi}\in\Hom_{\fs}(N_{\phi}, S)$ such that $\bar{\phi}|_Q = \phi$;
\item \emph{$\fs$-centric} if $C_S(P)=Z(P)$ for all $P\in Q^{\fs}$;
\item \emph{$\fs$-radical} if $O_p(\Out_{\fs}(Q))=\{1\}$;
\item \emph{$\fs$-essential} if $Q$ is $\fs$-centric, fully $\fs$-normalized and $\Out_{\fs}(Q)$ contains a strongly $p$-embedded subgroup; and 
\item \emph{strongly closed} in $\fs$ if $x\alpha \le Q$ for all $\alpha\in\Hom_{\fs}(\langle x\rangle, S)$ whenever $x\in Q$.
\end{itemize}
\end{definition}

If it is clear which fusion system we are working in, we will refer to subgroups as being fully normalized (centralized, centric etc.) without the $\fs$ prefix. Note that essential subgroups of $S$ are also $\fs$-radical subgroups by definition.

\begin{definition}
Let $\fs$ be a fusion system over a $p$-group $S$. Then $\fs$ is \emph{saturated} if the following conditions hold:
\begin{enumerate}
\item Every fully $\fs$-normalized subgroup is also fully $\fs$-centralized and fully $\fs$-automized.
\item Every fully $\fs$-centralized subgroup is receptive in $\fs$.
\end{enumerate}
By a theorem of Puig \cite{Puig1}, the fusion category of a finite group $\fs_S(G)$ is a saturated fusion system.
\end{definition}

From this point on we focus more on saturated fusion systems, as they most closely parallel the group phenomenon. Throughout we will often adopt a \emph{local CK-system} hypothesis.

\begin{definition}
A \emph{local CK-system} is a saturated fusion $\fs$ such that $\Aut_{\fs}(P)$ is a $\mathcal{K}$-group for all $P\le S$.
\end{definition}

Local $\mathcal{CK}$-systems provides a means to apply the results from \cref{GrpSec} which relied on a $\mathcal{K}$-group hypothesis. This allows for minimal counterexample arguments in fusion systems and provides a link between fusion systems and the classification of finite simple groups. That is, if $G$ is a finite group which is a counterexample to the classification with $|G|$ minimal subject to these constraints, then $\fs_S(G)$ is a local $\mathcal{CK}$-system for $S\in\syl_p(G)$.

We now present arguably the most important tool in classifying saturated fusion systems. Because of this, we need only investigate the local action on a relatively small number of $p$-subgroups to obtain a global characterization of a saturated fusion system.

\begin{theorem}[Alperin -- Goldschmidt Fusion Theorem]
Let $\fs$ be a saturated fusion system over a $p$-group $S$. Then \[\fs=\langle \Aut_{\fs}(Q) \mid Q\,\, \text{is essential or}\,\, Q=S \rangle.\]
That is, every morphism in $\fs$ as a composite of restrictions of maps in $\Aut_{\fs}(Q)$ for $Q$ described above.
\end{theorem}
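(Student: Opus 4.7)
The plan is to prove the stronger statement that every $\fs$-isomorphism $\phi\colon P\to Q$ between subgroups of $S$ is the composite of restrictions of maps in $\Aut_{\fs}(R)$, with $R$ essential or $R=S$, and to proceed by downward induction on $|P|$. The base case $P=S$ is immediate since $\phi\in\Aut_{\fs}(S)$. So fix $P<S$ and assume the claim for all $\fs$-isomorphisms whose domain strictly contains $P$.

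First I would reduce to decomposing $\Aut_{\fs}(R)$ for fully $\fs$-normalized $R$. Choose a fully normalized representative $R$ of $P^{\fs}=Q^{\fs}$; by saturation, $R$ is fully automized and receptive. Given any $\fs$-isomorphism $\psi\colon P\to R$, Sylow's theorem applied inside $\Aut_{\fs}(R)$ produces $\sigma\in\Aut_{\fs}(R)$ with ${}^{\sigma\psi}\Aut_S(P)\le\Aut_S(R)$; therefore $N_{\sigma\psi}=N_S(P)$, and receptivity extends $\sigma\psi$ to a morphism $N_S(P)\to S$ whose domain strictly contains $P$. Running the same argument on $Q$ yields $\psi'\colon Q\to R$ with an extension to $N_S(Q)\to S$. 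Writing $\phi=\psi'^{-1}\circ(\psi'\phi\psi^{-1})\circ\psi$, the two outer factors fall under the inductive hypothesis, and the problem is reduced to decomposing the middle factor, an element of $\Aut_{\fs}(R)$.

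So suppose $R<S$ is fully normalized and not essential; I split into two subcases. If $R$ is not $\fs$-centric, then since $R$ is fully centralized one has $C_S(R)\not\le R$, and for any $\alpha\in\Aut_{\fs}(R)$ the set $N_\alpha$ contains $R\cdot C_S(R)>R$ (because every $g\in C_S(R)$ satisfies $c_g|_R=\mathrm{id}$); receptivity then extends $\alpha$ to a strictly larger subgroup, and induction closes the case. If $R$ is $\fs$-centric, then by hypothesis $\Out_{\fs}(R)$ contains no strongly $p$-embedded subgroup; since $\Out_S(R)\in\syl_p(\Out_{\fs}(R))$ this forces
\[
\Out_{\fs}(R)=\langle N_{\Out_{\fs}(R)}(U):1\ne U\le\Out_S(R)\rangle.
\]
For each generator and each lift $\alpha\in\Aut_{\fs}(R)$ normalizing the preimage $T\le\Aut_S(R)$ of $U$, the receptivity set $N_\alpha$ contains the preimage of $T$ under $N_S(R)\twoheadrightarrow\Aut_S(R)$, which strictly contains $R$ because $U\ne 1$; so $\alpha$ extends to a strictly larger subgroup and the inductive hypothesis dispatches it.

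The main obstacle is the careful bookkeeping in the reductions---adjusting $\psi$ by $\sigma$ so that $N_{\sigma\psi}$ is as large as possible, and lifting $\bar\alpha\in N_{\Out_{\fs}(R)}(U)$ to an $\alpha\in\Aut_{\fs}(R)$ that actually normalizes a $p$-subgroup $T\le\Aut_S(R)$ large enough to force $N_\alpha>R$. Both steps depend on the interplay between the saturation axioms (full automization, receptivity) and the Sylow theory inside $\Aut_{\fs}(R)$; once the correct representatives are in hand, the characterization of essentiality via the strongly $p$-embedded condition is precisely the combinatorial input that makes the last induction step go through.
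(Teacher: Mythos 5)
Your proposal is correct and follows essentially the same downward induction on $|P|$, reduction to a fully normalized representative via Sylow's theorem inside $\Aut_{\fs}(R)$, and case-split on whether $R$ is $\fs$-centric that appears in \cite[Theorem I.3.5]{ako}, which is precisely the reference the paper cites for this result. The one small slip is phrasing the inductive hypothesis as covering morphisms ``whose domain strictly contains $P$'' when you mean ``whose domain has order strictly greater than $|P|$'' — the extensions produced by receptivity live over $N_S(R)$ or subgroups thereof, not over overgroups of $P$ itself — but the intended induction on order is clear from context.
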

\begin{proof}
See \cite[Theorem I.3.5]{ako}.
\end{proof}

Along these lines, another important notion is for a $p$-subgroup to be \emph{normal} in a saturated fusion system.

\begin{definition}
Let $\fs$ be a fusion systems over a $p$-group $S$ and $Q\le S$. Say that $Q$ is \emph{normal} in $\fs$ if $Q\normaleq S$ and for all $P,R\le S$ and $\phi\in\Hom_{\fs}(P,R)$, $\phi$ extends to a morphism $\bar{\phi}\in\Hom_{\fs}(PQ,RQ)$ such that $\bar{\phi}(Q)=Q$.

It may be checked that the product of normal subgroups is itself normal. Thus, we may talk about the largest normal subgroup of $\fs$ which we denote $O_p(\fs)$ (and occasionally refer to as the $p$-core of $\fs$).  Further, it follows immediately from the saturation axioms that any subgroup normal in $S$ is fully normalized and fully centralized.
\end{definition}

\begin{definition}
Let $\fs$ be a fusion system over a $p$-group $S$ and let $Q\le S$. The \emph{normalizer fusion subsystem} of $Q$, denoted $N_{\fs}(Q)$, is the largest subsystem of $\fs$, supported over $N_S(Q)$, in which $Q$ is normal. 
\end{definition}

It is clear from the definition that if $\fs$ is the fusion category of a group $G$ i.e. $\fs=\fs_S(G)$, then $N_{\fs}(Q)=\fs_{N_S(Q)}(N_G(Q))$. The following result is originally attributed to Puig \cite{Puig2}.

\begin{theorem}
Let $\fs$ be a saturated fusion system over a $p$-group $S$. If $Q\le S$ is fully $\fs$-normalized then $N_{\fs}(Q)$ is saturated.
\end{theorem}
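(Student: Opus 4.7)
The plan is to verify the two saturation axioms for $N_\fs(Q)$ on $T := N_S(Q)$: that every fully $N_\fs(Q)$-normalized subgroup is both fully $N_\fs(Q)$-automized and fully $N_\fs(Q)$-centralized, and that every fully $N_\fs(Q)$-centralized subgroup is receptive in $N_\fs(Q)$. The crucial input is saturation of $\fs$ at $Q$ itself: since $Q$ is fully $\fs$-normalized, one immediately obtains $\Aut_T(Q) = \Aut_S(Q) \in \syl_p(\Aut_\fs(Q))$, that $Q$ is fully $\fs$-centralized, and that $Q$ is receptive in $\fs$. These three consequences are the levers that drive everything else.

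The technical heart of the argument is a \emph{straightening lemma}: for every $P \le T$ there exists $\phi \in \Iso_{N_\fs(Q)}(P, P^*)$ with $P^*Q$ fully $\fs$-normalized. To produce $P^*$, first choose any $\psi \in \Iso_\fs(PQ, R)$ with $R$ fully $\fs$-normalized, which exists by saturation of $\fs$. Since $Q \normaleq PQ$, we have $\psi(Q) \normaleq R$ with $\psi(Q)$ an $\fs$-conjugate of $Q$. Because $\Aut_T(Q)$ is Sylow in $\Aut_\fs(Q)$, after post-composing $\psi$ with a suitable element of $\Aut_\fs(R)$ we may arrange that the iso $\psi(Q) \to Q$ conjugates $\Aut_R(\psi(Q))$ into $\Aut_T(Q)$; receptivity of $Q$ then extends this iso to an $\fs$-morphism $\hat\alpha$ whose domain contains $R$. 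The composite $\hat\alpha \circ \psi$ sends $Q$ to $Q$, hence restricts to the desired $N_\fs(Q)$-iso $P \to P^*$ with $P^*Q = \hat\alpha(R)$, and $P^*Q$ inherits the property of being fully $\fs$-normalized since $\hat\alpha|_R$ is an $\fs$-iso and $R$ was.

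With this in hand, axiom (I) is verified as follows. Suppose $P$ is fully $N_\fs(Q)$-normalized and choose $P^*$ as above. Saturation of $\fs$ at $P^*Q$ gives $\Aut_S(P^*Q) \in \syl_p(\Aut_\fs(P^*Q))$. Inside $\Aut_\fs(P^*Q)$, consider the subgroup $\Gamma$ of automorphisms setwise stabilizing both $Q$ and $P^*$; restriction to $P^*$ defines a map $\rho: \Gamma \to \Aut_\fs(P^*)$ whose image is, by the very definition of $N_\fs(Q)$, equal to $\Aut_{N_\fs(Q)}(P^*)$, and which sends $\Gamma \cap \Aut_S(P^*Q)$ onto $\Aut_T(P^*)$. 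A Frattini argument using fully-automization of $Q$ shows $\Gamma \cap \Aut_S(P^*Q)$ is Sylow in $\Gamma$, so $\Aut_T(P^*) \in \syl_p(\Aut_{N_\fs(Q)}(P^*))$. A parallel counting argument on $C_{N_S(P^*Q)}(P^*) = C_T(P^*) \cap N_S(Q)$ delivers fully-centralization, and both conclusions transfer back to $P$ via the straightening isomorphism $\phi$.

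For axiom (II), given $\phi \in \Iso_{N_\fs(Q)}(P, R)$ with $R$ fully $N_\fs(Q)$-centralized, the defining extension $\bar\phi \in \Hom_\fs(PQ, RQ)$ with $\bar\phi(Q) = Q$ exists by the very definition of $N_\fs(Q)$. The straightening lemma combined with axiom (I) now in hand yields that $RQ$ is fully $\fs$-centralized, hence receptive in $\fs$, so $\bar\phi$ extends to $\tilde\phi \in \Hom_\fs(N_{\bar\phi}, S)$ with $N_\phi \le N_{\bar\phi}$; restricting $\tilde\phi$ to $N_\phi \le T$ provides the required extension inside $N_\fs(Q)$, since $\tilde\phi$ setwise fixes $Q$ by construction. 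The main obstacle is the straightening lemma and the accompanying Sylow bookkeeping in the third paragraph: simultaneously making $\hat\alpha \circ \psi$ fix $Q$ and send $P$ into $T$ requires a careful Sylow adjustment in $\Aut_\fs(Q)$, and verifying that restriction $\rho: \Gamma \to \Aut_{N_\fs(Q)}(P^*)$ is Sylow-compatible demands an extra Frattini argument exploiting that $Q$ is fully automized.
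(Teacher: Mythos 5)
The paper does not supply its own proof of this statement; it simply cites \cite[Theorem I.5.5]{ako}. Your attempt follows the right general shape --- a straightening step, transport via the fully-automization and receptivity of $Q$, and Sylow bookkeeping inside automorphism groups --- but there is a genuine gap at the heart of the straightening lemma. You justify that $P^*Q=\hat\alpha(R)$ is fully $\fs$-normalized ``since $\hat\alpha|_R$ is an $\fs$-iso and $R$ was.'' That is not a valid inference: being fully normalized is a property of a particular representative of an $\fs$-conjugacy class and is emphatically \emph{not} preserved under arbitrary $\fs$-isomorphisms (the whole point of the saturation axioms is to single out the good representatives). Worse, the conclusion you are after is too strong to aim for: an $N_{\fs}(Q)$-isomorphism $P\to P^*$ forces $Q\normaleq P^*Q$, so $P^*Q$ ranges only over those $\fs$-conjugates of $PQ$ which contain $Q$ as a normal subgroup, and nothing guarantees that the maximum of $|N_S(\cdot)|$ over the whole $\fs$-class of $PQ$ is achieved in this restricted family. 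I do not believe the lemma as you state it is achievable in general.

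The correct move, which is what the cited proof does, is weaker but sufficient. After choosing $\psi\in\Iso_{\fs}(PQ,R_0)$ with $R_0$ fully $\fs$-normalized and setting $Q_0:=\psi(Q)\normaleq R_0$, one exploits that $R_0$ (rather than the eventual $P^*Q$) is fully automized in $\fs$: choose $V\in\syl_p\bigl(N_{\Aut_{\fs}(R_0)}(Q_0)\bigr)$ containing $\Aut_S(R_0)\cap N_{\Aut_{\fs}(R_0)}(Q_0)$, then $\gamma\in\Aut_{\fs}(R_0)$ with $\gamma V\gamma^{-1}\le\Aut_S(R_0)$, and set $Q_1:=\gamma(Q_0)$, so that $\Aut_S(R_0)\cap N_{\Aut_{\fs}(R_0)}(Q_1)\in\syl_p\bigl(N_{\Aut_{\fs}(R_0)}(Q_1)\bigr)$. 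Only now invoke the extension property of the fully normalized subgroup $Q$ to get $\chi\in\Hom_{\fs}(N_S(Q_1),T)$ with $\chi(Q_1)=Q$, and put $P^*:=\chi(\gamma(\psi(P)))$. The displayed Sylow relation transports along $\chi$ to give $\Aut_T(P^*)\in\syl_p(\Aut_{N_{\fs}(Q)}(P^*))$, and a parallel argument gives receptivity of $P^*$ in $N_{\fs}(Q)$; one then concludes via the equivalence of saturation with ``every subgroup has a conjugate that is fully automized and receptive'' (cf. \cite[Proposition I.2.5]{ako}). None of this requires $P^*Q$ to be fully $\fs$-normalized. Relatedly, your Frattini claim that $\Gamma\cap\Aut_S(P^*Q)$ is Sylow in $\Gamma$ is asserted rather than proved; it is precisely the Sylow adjustment inside $\Aut_{\fs}(R_0)$ above, not a Frattini argument in $\Aut_{\fs}(Q)$, that makes this bookkeeping go through. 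You correctly flag this step as the main obstacle, but the resolution lives in $\Aut_{\fs}(R_0)$, not in $\Aut_{\fs}(Q)$ alone.
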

\begin{proof}
See \cite[Theorem I.5.5]{ako}.
\end{proof}

\begin{definition}
Let $\fs$ be a fusion systems over a $p$-group $S$ and $P\le Q\le S$. Say that $P$ is \emph{$\fs$-characteristic} in $Q$ if $\Aut_{\fs}(Q)\le N_{\Aut(Q)}(P)$.
\end{definition}

Plainly, if $Q\normaleq\fs$ and $P$ is $\fs$-characteristic in $Q$, then $P\normaleq\fs$.

We now present a link between normal subgroups of a saturated fusion system $\fs$ and its essential subgroups.

\begin{proposition}\label{normalinF}
Let $\fs$ be a saturated fusion system over a $p$-group $S$. Then $Q$ is normal in $\fs$ if and only if $Q$ is contained in each essential subgroup, $Q$ is $\Aut_{\fs}(E)$-invariant for any essential subgroup $E$ of $\fs$ and $Q$ is $\Aut_{\fs}(S)$-invariant.
\end{proposition}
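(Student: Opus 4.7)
The plan is to prove the two directions separately. For the backward direction, the Alperin--Goldschmidt fusion theorem does most of the work: given the three conditions, every $\fs$-morphism $\phi : P \to R$ factors as a composite of restrictions $\phi_i = \alpha_i|_{P_{i-1}}$ with $\alpha_i \in \Aut_{\fs}(E_i)$ and $E_i$ essential or equal to $S$. The containment $Q \le E_i$ ensures $P_{i-1}Q \le E_i$ at each stage, while the invariance of $Q$ under the $\alpha_i$ makes the restrictions $\alpha_i|_{P_{i-1}Q}$ send $Q$ to $Q$; composing these yields an extension $\bar{\phi} : PQ \to RQ$ of $\phi$ with $\bar{\phi}(Q) = Q$. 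Note also that $Q \normaleq S$ follows from $\Inn(S) \le \Aut_{\fs}(S)$ together with the $\Aut_\fs(S)$-invariance.

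For the forward direction, assume $Q \normaleq \fs$. The $\Aut_{\fs}(S)$-invariance of $Q$ is immediate: any $\alpha \in \Aut_{\fs}(S)$ is itself the only extension to $SQ = S$, and this extension must preserve $Q$. The substantive content is to prove $Q \le E$ for each essential $E$; once this is known, the same extension argument yields $\Aut_{\fs}(E)$-invariance. The first step is to show $\Aut_Q(E) \normaleq \Aut_{\fs}(E)$: given $\alpha \in \Aut_{\fs}(E)$, normality of $Q$ supplies an extension $\bar{\alpha} \in \Aut_{\fs}(EQ)$ preserving $E$ and $Q$, and for $q \in N_Q(E)$, one computes $\alpha c_q \alpha^{-1}|_E = c_{\bar{\alpha}(q)}|_E$ with $\bar{\alpha}(q) \in N_Q(E)$. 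Being a normal $p$-subgroup, $\Aut_Q(E)$ has trivial image in $\Out_{\fs}(E)$ by $\fs$-radicality of $E$, and $\fs$-centricity then forces $N_Q(E) = Q \cap E$.

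To upgrade $N_Q(E) = Q \cap E$ to $Q \le E$, I would show $Q \le N_S(E)$. The $p$-group $Q$ acts on the $\fs$-conjugacy class $E^{\fs}$ via $c_q$ for $q \in Q$, and hence fixes some $E' \in E^{\fs}$; applying the previous paragraph's argument to $E'$ (also essential) yields $Q \le E'$. Since $E$ is fully $\fs$-normalized, $\Aut_S(E)$ is a Sylow $p$-subgroup of $\Aut_{\fs}(E)$; choosing an $\fs$-isomorphism $\psi : E' \to E$, the $p$-subgroup $\psi \Aut_Q(E') \psi^{-1}$ is $\Aut_{\fs}(E)$-conjugate into $\Aut_S(E)$ by Sylow's theorem, so adjusting $\psi$ by an element of $\Aut_{\fs}(E)$ we may arrange $Q \le N_{\psi}$. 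Receptivity of $E$ then extends $\psi$ to a morphism $\bar{\psi}$ whose domain contains $Q \cdot E'$, and strong $\fs$-closure of $Q$ (an easy consequence of $Q \normaleq \fs$) forces $\bar{\psi}(Q) = Q$; since $\bar{\psi}(Q)$ also normalizes $\bar{\psi}(E') = E$, we conclude $Q \le N_S(E)$, so $Q = N_Q(E) = Q \cap E \le E$.

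The main obstacle is the final passage from $Q \le E'$ to $Q \le E$. Each ingredient---the normal $p$-subgroup argument for radical centric $E$, the fixed-point theorem for $p$-groups on $E^{\fs}$, the Sylow adjustment inside $\Aut_{\fs}(E)$, and the combination of receptivity with strong closure---is standard on its own, but coordinating them so that $Q$ lands inside $N_S(E)$ without losing control of the image of $Q$ under $\bar{\psi}$ is where the proof requires care.
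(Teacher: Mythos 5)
The paper cites \cite[Proposition I.4.5]{ako} without reproving this, so I compare your argument against the standard one found there. Your backward direction via Alperin--Goldschmidt is correct and matches the standard proof. In the forward direction, the deduction that $\Aut_Q(E) \normaleq \Aut_{\fs}(E)$ from the extension property of normal subgroups, and hence $\Aut_Q(E) \le \Inn(E)$ by $\fs$-radicality and then $N_Q(E) = Q \cap E$ by $\fs$-centricity, is also correct and is exactly how the standard argument begins.

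The gap is in the passage from $N_Q(E) \le E$ to $Q \le E$. You claim that the $p$-group $Q$, acting on $E^{\fs}$ by conjugation, fixes some $E'$. A $p$-group acting on a finite set has a fixed point only when the set has order coprime to $p$; you offer no reason $|E^{\fs}|$ should be, and in the realizable case a short Sylow computation gives $|E^{\fs}|_p = [S:N_S(E)]$, so the claim fails whenever the essential subgroup $E$ is not normal in $S$ (which is the hypothesis you cannot assume). Everything that follows --- the Sylow adjustment inside $\Aut_{\fs}(E)$, receptivity, strong closure --- is built on this unsupported step, and it is in any case far more machinery than is needed. The finish, and it is the one used in \cite{ako}, requires no further fusion theory beyond what you already have: since $Q \normaleq S$ (from the $\Aut_{\fs}(S)$-invariance), $QE$ is a subgroup of $S$ and hence a finite $p$-group; an element $qe$ of $QE$, with $q \in Q$ and $e \in E$, normalizes $E$ precisely when $q$ does, so $N_{QE}(E) = N_Q(E)\cdot E = E$; but in a finite $p$-group a proper subgroup is strictly contained in its normalizer, so $QE = E$, i.e.\ $Q \le E$. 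Your concluding remark that $Q \le E$ together with normality of $Q$ yields $\Aut_{\fs}(E)$-invariance is then fine.
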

\begin{proof}
See \cite[Proposition I.4.5]{ako}.
\end{proof}

As for finite groups, we desire a more global sense of normality in fusion systems, not just restricted to $p$-subgroups. That is, we are interested in subsystems of a fusion system $\fs$ which are \emph{normal}.

\begin{definition}
Let $\fs$ be a saturated fusion system over a $p$-group $S$. A fusion system $\mathcal{E}$ is \emph{weakly normal} in $\fs$ if the following conditions hold:
\begin{enumerate}
\item $\mathcal{E}$ is a saturated subsystem of $\fs$ over $T\le S$;
\item $T$ is strongly closed in $S$;
\item $^\alpha\mathcal{E}=\mathcal{E}$ for all $\alpha\in\Aut_{\fs}(T)$; and
\item for each $P\le T$ and each $\phi\in\Hom_{\fs}(P,T)$ there are $\alpha\in\Aut_{\fs}(T)$ and $\phi_0\in\Hom_{\mathcal{E}}(P,T)$ such that $\phi=\alpha\circ\phi_0$.
\end{enumerate}

A fusion system $\mathcal{E}$ is \emph{normal} in $\fs$, denoted $\mathcal{E}\normaleq \fs$, if $\mathcal{E}$ is weakly normal in $\fs$ and each $\alpha\in\Aut_{\mathcal{E}}(T)$ extends to some $\bar{\alpha}\in\Aut_{\fs}(TC_S(T))$ which fixes every coset of $Z(T)$ in $C_S(T)$.
\end{definition}

Conditions (iii) and (iv) are referred to as the invariance condition and Frattini condition respectively. As one would hope, for a $p$-subgroup $Q$, if $Q\normaleq \fs$, then $\fs_Q(Q)\normaleq \fs$. As is the case with groups, we refer to a saturated fusion system as \emph{simple} if it contains no proper non-trivial normal subsystems.

We shall describe some important subsystems associated to a saturated fusion which have a natural analogues in finite group theory. More details on the construction of such subsystems may be found in Section I.7 of \cite{ako}.

\begin{definition}
Let $\fs$ be a saturated fusion system on a $p$-group $S$. Say a subsystem $\mathcal{E}$ has \emph{index prime to $p$} in $\fs$ if $\mathcal{E}$ is a fusion system on $S$ and $\Aut_{\mathcal{E}}(P)\ge O^{p'}(\Aut_{\fs}(P))$ for all $P\le S$. 

By \cite[Theorem I.7.7]{ako}, there is a unique minimal saturated fusion system of index prime to $p$ in $\fs$ denoted by $O^{p'}(\fs)$ and $O^{p'}(\fs)$ is a normal subsystem of $\fs$.
\end{definition}

\begin{definition}
Let $\fs$ be a saturated fusion system on a $p$-group $S$. Then the \emph{hyperfocal subgroup} $\mathfrak{hyp}(\fs)$ of $\fs$ is defined as \[\mathfrak{hyp}(\fs):=\langle g^{-1}\alpha(g)\mid g\in P\le S, \alpha\in O^p(\Aut_{\fs}(P))\rangle.\] A subsystem $\mathcal{E}$ has \emph{$p$-power index} in $\fs$ if $\mathcal{E}$ is a fusion system on $T\ge \mathfrak{hyp}(\fs)$ and $\Aut_{\mathcal{E}}(P)\ge O^{p}(\Aut_{\fs}(P))$ for all $P\le S$. 

Moreover, by \cite[Theorem I.7.4]{ako}, there is a unique minimal fusion subsystem of $p$-power index in $\fs$ denoted by $O^p(\fs)$, over $\mathfrak{hyp}(\fs)$, and $O^{p}(\fs)$ is a normal subsystem of $\fs$.
\end{definition}

\begin{definition}
A saturated fusion system is \emph{reduced} if $O_p(\fs)=\{1\}$ and $\fs=O^p(\fs)=O^{p'}(\fs)$.
\end{definition}

Naturally, an important consideration in fusion systems is the notion of isomorphism. After defining what isomorphism means in the context fusion systems,  it follows readily that the ``sensible'' properties hold, which we state below.

\begin{definition}
Let $\fs$ be a fusion system on a $p$-group $S$ and $\mathcal{E}$ a fusion system on a $p$-group $T$. A \emph{morphism} $\phi:\fs\to \mathcal{E}$ is a tuple $(\phi_S, \phi_{P,Q} \mid  P,Q\le S)$ such that $\phi_S:S\to T$ is a group homomorphism and $\phi_{P,Q}:\Hom_{\fs}(P,Q)\to \Hom_{\mathcal{E}}(P\phi, Q\phi)$ is such that $\alpha\phi_S=\phi_S(\alpha\phi_{P,Q})$ for all $\alpha\in\Hom_{\fs}(P,Q)$. 

Say that $\phi$ is \emph{injective} if $\phi_S\colon S\to T$ is injective, and $\phi$ is \emph{surjective} if $\phi_S$ is surjective and, for all $P,Q\le S$, $\phi_{P_0, Q_0}\colon\Hom_{\fs}(P_0,Q_0)\to \Hom_{\mathcal{E}}(P\phi, Q\phi)$ is surjective, where $P_0,Q_0$ denote the preimages in $S$ of $P\phi,Q\phi$. Then, $\phi$ is an \emph{isomorphism} of fusion systems if $\phi:\fs\to\mathcal{E}$ is an injective, surjective morphism.
\end{definition}

\begin{lemma}\label{fusioniso}
Let $G\cong H$ be finite groups with $S\in\syl_p(G)$ and $T\in\syl_p(H)$. Then $\fs_S(G)\cong \fs_T(H)$.
\end{lemma}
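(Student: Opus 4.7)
The plan is to build the required isomorphism by transporting a group isomorphism $G \to H$ along an inner automorphism of $H$ so that the chosen Sylow subgroups match up, and then reading off the morphism of fusion systems directly from the group isomorphism.

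First I would fix any group isomorphism $\alpha\colon G \to H$. Then $\alpha(S)\in\syl_p(H)$, so by Sylow's theorem there exists $h\in H$ with $\alpha(S)^{h}=T$. Composing with conjugation by $h$ gives a group isomorphism $\beta\colon G\to H$ satisfying $\beta(S)=T$. This is the only place where a choice is made, and is the sole reason the naive map fails before this adjustment.

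Next I would define the candidate morphism $\phi=(\phi_S,\phi_{P,Q})$ of fusion systems by $\phi_S:=\beta|_S\colon S\to T$ and, for $P,Q\le S$ and any $\psi=c_g|_P\in\Hom_G(P,Q)$ (with $g\in G$ satisfying $P^g\le Q$), by
\[
\phi_{P,Q}(\psi):=c_{\beta(g)}|_{\beta(P)}\in\Hom_H(\beta(P),\beta(Q)).
\]
Independence of the representative $g$ is immediate since two such choices differ by an element of $C_G(P)$, which $\beta$ sends into $C_H(\beta(P))$; compatibility $\alpha\circ\phi_S=\phi_S\circ(\alpha\phi_{P,Q})$ reduces to the identity $\beta(x)^{\beta(g)}=\beta(x^g)$.

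Finally, $\phi_S$ is a group isomorphism because $\beta$ is and $\beta(S)=T$, and each $\phi_{P,Q}$ is a bijection onto $\Hom_H(\beta(P),\beta(Q))=\Hom_{\fs_T(H)}(\phi_S(P),\phi_S(Q))$ with inverse induced by $\beta^{-1}$ (after adjusting by an inner automorphism of $G$ if needed, by the same Sylow argument used for $\beta$). There is no real obstacle here: the lemma is essentially a bookkeeping check, and the only conceptual point is the initial Sylow adjustment, which ensures that the isomorphism of groups can be chosen to restrict to a bijection between the underlying $p$-groups of the two fusion systems.
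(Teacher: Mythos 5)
The paper states this lemma without proof, treating it as routine, and your argument is precisely the routine argument. Your proof is correct: the only genuine content is the initial Sylow adjustment replacing an arbitrary isomorphism $\alpha$ by $\beta=c_h\circ\alpha$ with $\beta(S)=T$, after which the morphism of fusion systems is read off verbatim from $\beta$ and its bijectivity on both objects and hom-sets (as required by the paper's definition of isomorphism, including surjectivity of the $\phi_{P,Q}$) is immediate. One small overcaution: at the end you say the inverse may need adjusting by an inner automorphism of $G$, but since $\beta(S)=T$ you already have $\beta^{-1}(T)=S$, so no further Sylow argument is needed for the inverse. Also note a cosmetic clash of notation: you reuse $\alpha$ both for your initial group isomorphism and (implicitly, when quoting the compatibility identity $\alpha\phi_S=\phi_S(\alpha\phi_{P,Q})$ from the definition) for a morphism of the fusion system; using $\psi$ as you do earlier would avoid this, but it does not affect correctness.
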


\begin{lemma}\label{p'quo}
Let $\fs=\fs_S(G)$ be a saturated fusion system and set $\bar{G}=G/O_{p'}(G)$. Then $\fs_S(G)\cong \fs_{\bar{S}}(\bar{G})$.
\end{lemma}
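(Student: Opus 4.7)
The plan is to construct the isomorphism explicitly from the canonical projection $\pi\colon G\twoheadrightarrow\bar G$. Since $S$ is a $p$-group and $O_{p'}(G)$ a $p'$-group, $S\cap O_{p'}(G)=\{1\}$, so $\pi|_S$ is injective; comparing $p$-parts of $|G|$ and $|\bar G|$ forces $\bar S\in\syl_p(\bar G)$ with $\pi|_S\colon S\xrightarrow{\sim}\bar S$. The rule $P\mapsto\bar P:=\pi(P)$ is therefore a bijection on objects, and I would define $\Phi$ on morphisms by $c_g\mapsto c_{\bar g}$. Well-definedness and injectivity on each Hom-set are routine: if $c_g=c_{g'}$ on $P$ then $g'g^{-1}\in C_G(P)$ projects into $C_{\bar G}(\bar P)$; conversely, if $c_{\bar g}$ and $c_{\bar{g'}}$ agree on $\bar P$, then for each $x\in P$ the elements $x^g,x^{g'}\in S$ have the same image in $\bar S$ and so coincide by injectivity of $\pi|_S$.

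The one step with real content --- and the main (essentially only) obstacle --- is surjectivity on Hom-sets. Given $\bar\phi=c_{\bar g}\colon\bar P\to\bar Q$ in $\fs_{\bar S}(\bar G)$, a lift $g\in G$ of $\bar g$ only guarantees $\overline{P^g}\le\bar Q$, not $P^g\le Q$. To repair this I would let $Q_1\le Q$ denote the preimage of $\overline{P^g}$ in $Q$ and observe that $N:=P^g\,O_{p'}(G)=Q_1\,O_{p'}(G)$, so both $P^g$ and $Q_1$ are Sylow $p$-subgroups of $N$. Sylow's theorem in $N$ produces a conjugating element $x\in N$; writing $x=hp$ with $h\in O_{p'}(G)$ and $p\in P^g$ and noting that conjugation by $p$ fixes $P^g$, we may take $h\in O_{p'}(G)$ with $(P^g)^h=Q_1\le Q$. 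Then $c_{gh}\in\Hom_G(P,Q)$ and $\Phi(c_{gh})=c_{\overline{gh}}=c_{\bar g}=\bar\phi$.

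Functoriality of $\Phi$ (compatibility with composition and with inclusion morphisms) is automatic from the formula $c_g\mapsto c_{\bar g}$ together with the fact that $\pi$ is a group homomorphism, so $\Phi\colon\fs_S(G)\to\fs_{\bar S}(\bar G)$ is an isomorphism of fusion systems. Everything except the Sylow-adjustment in the surjectivity step is bookkeeping, and the use of \cref{fusioniso} is not needed since the map is written down explicitly.
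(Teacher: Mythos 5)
Your argument is correct in substance, and since the paper states \cref{p'quo} without proof (as a known background fact), there is nothing to compare it against; what you have written is the standard proof, the key point being exactly the Sylow adjustment you identify. There is one small technical slip in that step: you write $x=hp$ with $h\in O_{p'}(G)$ and $p\in P^g$, but then $(P^g)^x=\bigl((P^g)^h\bigr)^p$, which is not immediately $(P^g)^h$ since $p$ need not normalize $(P^g)^h$. The decomposition you want is $x=ph$ with $p\in P^g$ first (equivalently, replace your $h$ by $h^p\in O_{p'}(G)$, using normality of $O_{p'}(G)$); then $(P^g)^{ph}=(P^g)^h=Q_1\le Q$ and the rest goes through verbatim. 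Also, for the well-definedness and injectivity on Hom-sets you should be consistent about which of $g'g^{-1}$ or $gg'^{-1}$ lies in $C_G(P)$ given the chosen convention for $c_g$, but this is bookkeeping. With the $x=ph$ correction the proof is complete.
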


In order to investigate the local actions in a saturated fusion systems, and in particular in its normalizer subsystems, it will often be convenient to work in a purely group theoretic context. The \emph{model theorem} guarantees that we may do this for a certain class of $p$-subgroups of a saturated fusion system $\fs$.

\begin{definition}
Let $\fs$ be a saturated fusion system on a $p$-group $S$. Then a \emph{model} for $\fs$ is a finite group $G$ with $S\in\syl_p(G)$, $F^*(G)=O_p(G)$ and $\fs=\fs_S(G)$.
\end{definition}

\begin{theorem}[Model Theorem]\label{model}
Let $\fs$ be a saturated fusion system over a $p$-group $S$. Fix $Q\le S$ which is $\fs$-centric and normal in $\fs$. Then the following hold:
\begin{enumerate} 
\item There are models for $\fs$.
\item If $G_1$ and $G_2$ are two models for $\fs$, then there is an isomorphism $\phi: G_1\to G_2$ such that $\phi|_S = \mathrm{Id}_S$.
\item For any finite group $G$ containing $S$ as a Sylow $p$-subgroup such that $Q\le G$, $C_G(Q) \le Q$ and $Aut_G(Q) = Aut_{\fs}(Q)$, there is $\beta\in\Aut(S)$ such that $\beta|_Q = \mathrm{Id}_Q$ and $\fs_S(G) = {}^{\beta}\fs$. Thus, there is a model for $\fs$ which is isomorphic to $G$.
\end{enumerate}
\end{theorem}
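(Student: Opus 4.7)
The plan is to reduce all three parts to the analysis of a single group extension. Since $Q$ is normal in $\fs$ we have $\fs = N_{\fs}(Q)$, and since $Q$ is fully normalized, $N_{\fs}(Q)$ is saturated. Set $\Delta := \Aut_{\fs}(Q)$. Because $Q$ is $\fs$-centric, $Z(Q)$ is self-centralizing in $Q$ and $\Inn(Q) \cong Q/Z(Q)$ is a normal subgroup of $\Delta$ with quotient $\Out_{\fs}(Q)$. A candidate model must realize $Q$ as a normal $p$-centric subgroup whose $G$-automizer is exactly $\Delta$, so one is forced to build a group extension
\[
1 \longrightarrow Z(Q) \longrightarrow G \longrightarrow \Delta \longrightarrow 1
\]
in which the preimage of $\Inn(Q) \le \Delta$ is isomorphic to $Q$, acting on $Z(Q)$ by the restriction of $\Delta$.

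For part (i), the first step will be to identify the obstruction to assembling such an extension. The extension exists precisely when an appropriate class in $H^{3}(\Out_{\fs}(Q); Z(Q))$ vanishes, and the equivalence class of the extension is then parameterized by $H^{2}(\Out_{\fs}(Q); Z(Q))$. The main obstacle is proving that this obstruction class is zero whenever $\fs$ is saturated; I would do this by applying the standard saturation consequences (namely that $\Aut_S(Q) \in \syl_p(\Delta)$ together with receptivity of $Q$), which force compatible local lifts of conjugations on subgroups of $S$ containing $Q$, and then gluing them using the Alperin–Goldschmidt fusion theorem. Once $G$ has been built, verifying that it is a model is routine: $S \in \syl_p(G)$ follows from $|S : Q|\cdot|\Aut_S(Q)| = |S/Z(Q)|\cdot|\Aut_S(Q)| = |G|_p$, the condition $C_G(Q) = Z(Q) \le Q$ yields $F^{*}(G) = O_p(G) = Q$, and the identity $\fs = \fs_S(G)$ follows from $\fs = N_{\fs}(Q)$ together with Alperin–Goldschmidt, since every morphism in $\fs$ is built from $\Aut_{\fs}(P)$ with $Q \le P$.

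For part (ii), given two models $G_1, G_2$, I would first identify $S$ with the Sylow $p$-subgroup of each via the identity on $S$, and note that both $G_i$ realize the same abstract extension data $(\Delta, Z(Q))$ with the same outer action. Any two solutions of this extension problem differ by an element of $H^{2}(\Out_{\fs}(Q); Z(Q))$, and a second obstruction argument (analogous to the one used in (i), now applied to constructing an intertwining map rather than a cocycle) will show that the relevant difference vanishes, producing an isomorphism $\phi \colon G_1 \to G_2$. Adjusting $\phi$ by an inner automorphism (using that $\Aut_S(Q) \in \syl_p(\Delta)$ and the Sylow theorem inside $\Delta$), we can arrange $\phi|_S = \mathrm{Id}_S$.

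Finally, part (iii) is essentially a corollary of (ii). The hypotheses $Q \le G$, $C_G(Q) \le Q$ and $\Aut_G(Q) = \Aut_{\fs}(Q)$ say exactly that $G$ satisfies the structural conditions of a model: $F^{*}(G) = O_p(G) = Q$ (since $Q$ is a $p$-group and self-centralizing in $G$), and the $G$-automizer on $Q$ matches $\Delta$. Therefore $\fs_S(G)$ is a saturated fusion system on $S$ in which $Q$ is normal and centric with automizer $\Delta$, so $\fs_S(G) = N_{\fs_S(G)}(Q)$ is determined by exactly the same data as $\fs$. Applying the uniqueness statement (ii) to a model produced in (i) and to $G$, I obtain an isomorphism fixing $Q$ pointwise and carrying $\fs_S(G)$ to $\fs$; its restriction to $S$ is the desired $\beta$. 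Throughout, the hard step is the cohomological vanishing in (i); everything else is a structured unwinding of the saturation axioms and Alperin–Goldschmidt.
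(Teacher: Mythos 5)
The paper does not prove this statement: it is quoted verbatim from \cite[Theorem I.4.9]{ako} (originally due to Broto--Castellana--Grodal--Levi--Oliver), and the ``proof'' in the source you are comparing against is simply a citation. Your sketch is, in outline, the standard obstruction-theoretic route taken in that literature, so the framework is the right one. Two points, however, are more serious than you acknowledge.

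First, the crux of part (i) is exactly the vanishing of the obstruction class in $H^{3}(\Out_{\fs}(Q);Z(Q))$, and the mechanism you offer --- ``standard saturation consequences together with receptivity of $Q$ force compatible local lifts, glued via Alperin--Goldschmidt'' --- is not an argument. Since $Q$ is normal in $\fs$, every essential subgroup already contains $Q$, so Alperin--Goldschmidt only says $\fs = \langle \Aut_{\fs}(P) : Q\le P\le S\rangle$, which gives no purchase on the extension problem. Receptivity and $\Aut_S(Q)\in\syl_p(\Aut_{\fs}(Q))$ are indeed the inputs, but turning them into the vanishing of the $H^3$ class is a genuinely delicate cohomological calculation (this is essentially the content of Proposition C in the BCGLO paper ``Subgroup families controlling $p$-local finite groups,'' and of the technical lemmas preceding I.4.9 in \cite{ako}); your plan asserts the conclusion where the proof is needed.

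Second, your reduction of (iii) to (ii) does not close. Statement (ii) compares two groups that are \emph{models for the same} $\fs$, i.e.\ it already presumes $\fs_{S}(G_1)=\fs=\fs_{S}(G_2)$. In (iii) you are handed a group $G$ for which $Q\normaleq G$, $C_G(Q)\le Q$, and $\Aut_G(Q)=\Aut_{\fs}(Q)$, but you do \emph{not} know that $\fs_S(G)=\fs$; indeed the whole point of the $\beta$ in the conclusion is that $\fs_S(G)$ may differ from $\fs$ by a nontrivial twist. So you cannot simply ``apply (ii) to a model from (i) and to $G$.'' What is actually needed is a comparison of the two constrained systems $\fs$ and $\fs_S(G)$ as solutions of the same extension problem (same $\Out_{\fs}(Q)$, $Z(Q)$, outer action, and preimage condition on $\Inn(Q)$), showing that the ambiguity in $H^2(\Out_{\fs}(Q);Z(Q))$ is absorbed by an automorphism of $S$ restricting to the identity on $Q$. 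That step is where the real content of (iii) lives and must be supplied separately.
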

\begin{proof}
See \cite[Theorem I.4.9]{ako}.
\end{proof}

In order to make use of the Alperin--Goldschmidt fusion theorem, and given some restrictions on the number of essential subgroups, we must also determine the induced automorphism groups on the essential subgroups by $\fs$. The first result along these lines determines the potential automizer $\Aut_{\fs}(E)$ of an essential subgroup $E$ whenever some non-central chief factor of $E$ is an FF-module. It is important to note that this theorem does not rely on a $\mathcal{K}$-group hypothesis, and it is essentially the fusion theoretic equivalent of \cref{SEFF}.

\begin{theorem}\label{SEFFFus}
Suppose that $E$ is an essential subgroup of a saturated fusion system $\fs$ over a $p$-group $S$, and assume that there is an $\Aut_{\fs}(E)$-invariant subgroup $V\le \Omega(Z(E))$ such that $V$ is an FF-module for $G:=\Out_{\fs}(E)$. Then, writing $L:=O^{p'}(G)$, we have that $L/C_L(V)\cong \SL_2(p^n)$, $C_L(V)$ is a $p'$-group and $V/C_V(O^p(L))$ is a natural $\SL_2(q)$-module.
\end{theorem}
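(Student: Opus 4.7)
My plan is to reduce to \cref{SEFF} by passing to the faithful action of $L$ on $V$. Since $E$ is essential, $G := \Out_{\fs}(E)$ contains a strongly $p$-embedded subgroup $X$, and since $E$ is $\fs$-radical we have $O_p(G) = 1$. Setting $L := O^{p'}(G)$, \cref{spelemma1}(iii) yields that $Y := X \cap L$ is strongly $p$-embedded in $L$, and $O_p(L) \le O_p(G) = 1$. Because $G/L$ is a $p'$-group, $L$ contains every $p$-subgroup of $G$, so any elementary abelian $p$-subgroup $A \le G$ witnessing the FF-structure on $V$ lies in $L$, and $V$ remains an FF-module for $L$; replacing $A$ with a complement of $C_A(V)$ in $A$ if necessary, I may assume $A$ is faithful on $V$.

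Next I set $K := C_L(V)$ and $\bar L := L/K$. This quotient acts faithfully on $V$ and, being a quotient of $L = O^{p'}(L)$, satisfies $\bar L = O^{p'}(\bar L)$. Since $A \cap K = A \cap C_L(V) = 1$, the image $\bar A \cong A$ is still an FF-offender on $V$ for $\bar L$. Assuming $K$ is a $p'$-group (which I treat below), the image $\bar Y := YK/K$ is a strongly $p$-embedded subgroup of $\bar L$, because the quotient map $L \twoheadrightarrow \bar L$ then induces a bijection on $p$-subgroups up to conjugacy, so the normalizer condition descends. An application of \cref{SEFF} to $\bar L$ yields $\bar L \cong \SL_2(p^n)$ with $V/C_V(O^p(\bar L))$ a natural $\SL_2(p^n)$-module; since the preimage of $O^p(\bar L)$ in $L$ equals $O^p(L)K$, I identify $C_V(O^p(\bar L))$ with $C_V(O^p(L))$, yielding the module assertion.

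Finally, I would verify that $K$ is a $p'$-group. As $K \normaleq L$ and $O_p(L) = 1$, we have $O_p(K) = 1$; suppose for contradiction that $P \in \syl_p(K)$ is non-trivial, and by Sylow I may arrange $P \le Y$. Frattini's argument gives $L = K \cdot N_L(P)$, and the strong $p$-embedding of $Y$ in $L$ forces $N_L(P) \le Y$, so $L = KY$ and $\bar L \cong Y/(Y \cap K)$. Combining this with the $\SL_2(p^n)$-structure of $\bar L$ and $O_p(L) = 1$ should pin down $P = 1$ via a Sylow analysis of $L$. The main obstacle is the apparent circularity here: deriving $\bar L \cong \SL_2(p^n)$ in the previous paragraph used that $K$ is a $p'$-group. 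The cleanest resolution is to adapt the proof of \cref{SEFF} from \cite[Theorem 5.6]{henkesl2} so that the faithfulness of $V$ is not required at the outset, thereby extracting the $\SL_2(p^n)$-structure of $L/C_L(V)$ and the $p'$-character of $C_L(V)$ from the same Local $C(G,T)$-theorem input.
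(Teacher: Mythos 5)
The paper proves this statement by a direct citation to \cite[Theorem 1.2]{henkesl2}; it does not attempt a reduction to \cref{SEFF} (which is \cite[Theorem 5.6]{henkesl2}). Your proposal takes a genuinely different route — derive the fusion-theoretic statement from the group-theoretic one — and you have correctly located where that route runs into trouble, but I want to stress that the obstacle you flag is a genuine gap, not a loose end.

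The problem is precisely that ``$C_L(V)$ is a $p'$-group'' is one of the three \emph{conclusions} of the theorem, and your argument needs it as an \emph{input}: without it you cannot legitimately invoke \cref{spelemma1}(iv) to transport the strongly $p$-embedded subgroup $Y$ of $L$ to a strongly $p$-embedded subgroup of $\bar L = L/C_L(V)$, so \cref{SEFF} does not apply to $\bar L$. Your Frattini argument in the last paragraph does not break the circle: from $1 \ne P \in \syl_p(K)$ and $N_L(P) \le Y$ you get $L = KY$, which is exactly the degenerate situation $\bar L = \bar Y$ in which there is no strongly $p$-embedded subgroup of $\bar L$ in sight, and the promised ``Sylow analysis'' of $L$ has nothing to pin it against because the $\SL_2(p^n)$-structure has not yet been established. (There is also a secondary point you glossed over: even granting that $K$ is a $p'$-group, \cref{spelemma1}(iv) requires $L \ne Y O_{p'}(L)$, which is only automatic when $m_p(L) \ge 2$; for cyclic or quaternion Sylow $p$-subgroups this degeneracy is exactly what the $p'$-group claim is supposed to rule out.) The missing ingredient is the fusion-system data you have discarded by the time you reach $L$: Henke's Theorem~1.2 works inside a model $M$ for $N_{\fs}(E)$, which has characteristic $p$ with $O_p(M) = E$, and it is this structure — not strong $p$-embedding alone — that forces $p$-elements of $\Out_{\fs}(E)$ centralizing a subgroup of $\Omega(Z(E))$ into $O_p(\Out_{\fs}(E)) = 1$. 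Your own suggested fix, ``adapt the proof of \cref{SEFF},'' amounts to reproving \cite[Theorem 1.2]{henkesl2}, which is what the paper cites directly. If you want a self-contained argument, the right move is to first establish the $p'$-group claim from the model's characteristic-$p$ structure and the FF-module hypothesis (in the spirit of \cref{BasicFF}), and only then pass to $\bar L$ and apply \cref{SEFF}.
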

\begin{proof}
This is \cite[Theorem 1.2]{henkesl2}.
\end{proof}

Armed with the analysis of groups with strongly $p$-embedded subgroups from \cref{GrpSec}, we have reasonable limitations on the structure of $\Out_{\fs}(E)$ for $E$ an essential subgroup of $\fs$. We now substantiate the claim in the introduction that the restriction we impose whenever an essential automizer has quotient $\Ree(3)$, is $p$-solvable or has a generalized quaternion Sylow $2$-subgroup is implied by the maximality of the essential subgroup. We also record some generic results for \emph{maximally essential} subgroups for application elsewhere.

\begin{definition}
Suppose that $\fs$ is a saturated fusion system on a $p$-group $S$. Then $E\le S$ is \emph{maximally essential} in $\fs$ if $E$ is essential and, if $F\le S$ is essential in $\fs$ and $E\le F$, then $E=F$.
\end{definition}

Coupled with saturation arguments and the Alperin--Goldschmidt theorem, this definition drastically limits the possibilities for $\Out_{\fs}(E)$.

\begin{lemma}\label{MaxEssen}
Let $\fs$ be a saturated fusion systems on a $p$-group $S$ with $E$ a maximally essential subgroup of $\fs$. Then $N_{\Out_{\fs}(E)}(\Out_S(E))$ is strongly $p$-embedded in $\Out_{\fs}(E)$.
\end{lemma}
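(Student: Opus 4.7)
Put $G := \Out_{\fs}(E)$ and $T := \Out_S(E)$. Since $E$ is essential it is fully $\fs$-automised, so $T \in \syl_p(G)$. To verify that $N_G(T)$ is strongly $p$-embedded, I aim to show $N_G(Q) \le N_G(T)$ for every non-trivial $Q \le T$. Write such a $Q$ as $R/E$ with $E < R \le N_S(E)$, fix $\alpha \in N_G(Q)$, and lift to $\hat\alpha \in \Aut_{\fs}(E)$ (adjusting by $\Inn(E)$ if necessary) so that $\hat\alpha$ normalises $\Aut_R(E)$. Essential subgroups are fully $\fs$-normalised, hence fully $\fs$-centralised by saturation, so the extension axiom applies: because $\hat\alpha$ stabilises $\Aut_R(E) \le \Aut_S(E)$ one has $R \le N_{\hat\alpha}$, and $\hat\alpha$ extends to $\bar\alpha \colon R \to S$. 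Comparing $\Aut_{\bar\alpha(R)}(E) = \hat\alpha\,\Aut_R(E)\,\hat\alpha^{-1} = \Aut_R(E)$ with $E \le \bar\alpha(R) \le N_S(E)$ forces $\bar\alpha(R) = R$, so $\bar\alpha \in \Aut_{\fs}(R)$ with $\bar\alpha(E) = E$.

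Next I would apply the Alperin--Goldschmidt fusion theorem to $\bar\alpha$, producing a factorisation $\bar\alpha = \psi_k|_{R_{k-1}} \circ \cdots \circ \psi_1|_{R}$ through $\fs$-conjugates $R_0 = R,\, R_1,\, \dots,\, R_k = R$ with $\psi_i \in \Aut_{\fs}(P_i)$, $R_{i-1}, R_i \le P_i$, and each $P_i$ either essential or equal to $S$. Each $R_{i-1}$ is an $\fs$-conjugate of $R \supsetneq E$ and hence contains an $\fs$-conjugate $E_{i-1}$ of $E$. The key step is to verify that maximal essentiality transfers across $\fs$-conjugation, so every $E_{i-1}$ remains maximally essential. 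Granting this, if any $P_i$ were essential then $P_i \supseteq E_{i-1}$ would force $P_i = E_{i-1}$, contradicting $R_{i-1} \subseteq P_i$ together with $R_{i-1} \supsetneq E_{i-1}$. Therefore $P_i = S$ for all $i$.

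Writing $\Psi := \psi_k \circ \cdots \circ \psi_1 \in \Aut_{\fs}(S)$, one obtains $\Psi|_R = \bar\alpha$, $\Psi(R) = R$, and $\Psi(E) = \hat\alpha(E) = E$, so $\Psi$ normalises $N_S(E)$. For any $g \in N_S(E)$, $\hat\alpha\, c_g\, \hat\alpha^{-1} = c_{\Psi(g)}|_E$, which lies in $\Aut_{N_S(E)}(E) = \Aut_S(E)$. Hence $\hat\alpha$ normalises $\Aut_S(E)$ and $\alpha \in N_G(T)$, as required.

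The hard part will be the $\fs$-invariance of maximal essentiality: given $E' \in E^{\fs}$ and essential $F \le S$ with $E' \le F$, showing $F = E'$. Full normalisation of $E$ provides an $\fs$-iso $E' \to E$ extending to $N_S(E') \to N_S(E)$, but in general $F \not\le N_S(E')$, so the iso cannot be applied directly to $F$. The plan is to exploit the action of the $p$-group $F$ on the finite set $E^{\fs} \cap F$: a normaliser-chain argument inside $F$ produces an $\fs$-conjugate of $E$ which is normal in $F$, after which the extension of the corresponding iso over its full normaliser transports $F$ onto an essential overgroup of $E$ in $S$, and the maximal essentiality of $E$ then forces $|F| = |E|$, yielding $F = E'$.
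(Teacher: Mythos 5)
Your proof traces the same route as the paper's: lift $\alpha$ to $\Aut_{\fs}(E)$, extend over $R$ via receptivity, apply Alperin--Goldschmidt, and invoke maximal essentiality to force the decomposition through $\Aut_{\fs}(S)$ alone. You have correctly noticed that the Alperin--Goldschmidt step needs a word of justification that the paper elides, but you misdiagnose what that word is. The decomposition never involves arbitrary $\fs$-conjugates of $E$, only $\Aut_{\fs}(S)$-translates, and inductively so: since $P_1 \supseteq R_0 \supsetneq E$ and $E$ is maximally essential, $P_1$ cannot be essential, so $P_1 = S$, and then $E_1 := \psi_1(E)$ is an $\Aut_{\fs}(S)$-translate of $E$. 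It is elementary that applying an element of $\Aut_{\fs}(S)$ preserves essentiality and maximal essentiality (centricity and the isomorphism type of $\Out_{\fs}(\cdot)$ are $\fs$-conjugation invariants, $N_S(\psi_1(E)) = \psi_1(N_S(E))$ gives full normalization, and maximality is preserved by pulling back along $\psi_1^{-1}$), so $E_1$ is maximally essential, $P_2 \supseteq R_1 \supsetneq E_1$ forces $P_2 = S$, and the induction closes without any appeal to general $\fs$-conjugation.

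Your final paragraph is therefore aimed at a claim that is neither needed nor established by the sketch you give. A $p$-group $F$ acting by conjugation on $E^{\fs} \cap F$ need not have a fixed point, so there is no guarantee of a member $E'' \in E^{\fs}$ normal in $F$; and even given such an $E''$, the image of $F$ under an extension of an isomorphism $E'' \to E$ is merely $\fs$-conjugate to $F$, hence need not be fully normalized, hence need not be essential, so the maximality of $E$ cannot be invoked against it. (The statement you isolate is in fact true, but it is most cleanly obtained by the same Alperin--Goldschmidt induction applied to a decomposition of a chosen isomorphism $E' \to E$, not by a direct transport of $F$.)
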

\begin{proof}
Let $T\le N_S(E)$ with $E<T$. Now, since $E$ is receptive, for all $\alpha\in N_{\Aut_{\fs}(E)}(\Aut_T(E))$, $\alpha$ lifts to a morphism $\hat{\alpha}\in\Hom_{\fs}(N_\alpha, S)$ with $N_\alpha>E$. Since $E$ is maximally essential, applying the Alperin--Goldschmidt theorem, $\hat{\alpha}$ is the restriction of a morphism $\bar{\alpha}\in\Aut_{\fs}(S)$. But then, $\alpha$ normalizes $\Aut_S(E)$ and so $N_{\Aut_{\fs}(E)}(\Aut_T(E))\le N_{\Aut_{\fs}(E)}(\Aut_S(E))$. This induces the inclusion $N_{\Out_{\fs}(E)}(\Out_T(E))\le N_{\Out_{\fs}(E)}(\Out_S(E)$. Since this holds for all $T\le N_S(E)$ with $E<T$, we infer that $N_{\Out_{\fs}(E)}(\Out_S(E))$ is strongly $p$-embedded in $\Out_{\fs}(E)$, as required.
\end{proof}

Hence, \cref{MaxEssen} implies that maximally essential subgroups of $\fs$ which are $\Aut_{\fs}(S)$-invariant satisfy the extra conditions in the hypothesis of the \hyperlink{MainThm}{Main Theorem}. As in the earlier analysis of groups with strongly $p$-subgroups, we divide into two cases, where $m_p(\Out_S(E))=1$ or $m_p(\Out_S(E))\geq 2$.

\begin{proposition}\label{p'-index1}
Let $\fs$ be a saturated fusion systems on a $p$-group $S$ with $E$ a maximally essential subgroup of $\fs$, and set $G=\Out_{\fs}(E)$. If $m_p(G)=1$ then either 
\begin{enumerate}
\item $\Out_S(E)$ is cyclic or generalized quaternion and 
\begin{align*}
    O^{p'}(G)&=\Out_S(E)[O_{p'}(O^{p'}(G)), \Omega(\Out_S(E))]\\
    &=\Out_S(E)\langle \Omega(\Out_S(E))^{O^{p'}(G)}\rangle
\end{align*} 
is $p$-solvable; or
\item $O^{p'}(G)/O_{p'}(O^{p'}(G))$ is a non-abelian simple group, $p$ is odd and $\Out_S(E)$ is cyclic.
\end{enumerate}
\end{proposition}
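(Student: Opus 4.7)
My plan is as follows. Since $E$ is maximally essential, Lemma~\ref{MaxEssen} provides that $N_G(\Out_S(E))$ is strongly $p$-embedded in $G$, and by Lemma~\ref{spelemma1}(iii) the intersection $N_H(\Out_S(E)) = H \cap N_G(\Out_S(E))$ is strongly $p$-embedded in $H := O^{p'}(G)$. Since $\Out_S(E) \in \syl_p(H)$ and $m_p(G) = 1$, Proposition~\ref{SE1} applies to $H$ and offers three configurations. Cases (i) and (ii) there correspond directly to our outcomes (2) and (1) respectively, so the remaining tasks are to exclude case (iii) and to verify the displayed formulas in case (ii).

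To rule out case (iii), in which $p=2$, $\Out_S(E)$ is generalized quaternion, and $\bar H := H/\Omega(\Out_S(E)) O_{2'}(H) \cong K$ with $\PSL_2(q) \le K$ (for $q > 3$ odd) or $K = \Alt(7)$, I apply the Frattini argument to the normal subgroup $N := \Omega(\Out_S(E)) O_{2'}(H) \normaleq H$. Since $\Omega(\Out_S(E))$ is a Sylow $2$-subgroup of $N$, this yields $H = O_{2'}(H) \cdot N_H(\Omega(\Out_S(E)))$. Strong embedding of $N_H(\Out_S(E))$ forces $N_H(\Omega(\Out_S(E))) = N_H(\Out_S(E))$, so projecting to $\bar H$, where the image of $O_{2'}(H)$ is trivial, gives $\bar H = \overline{N_H(\Out_S(E))}$, which normalizes the Sylow $2$-subgroup $\bar{\Out_S(E)} = \Out_S(E)/\Omega(\Out_S(E))$. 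This forces the Sylow $2$-subgroup of the non-solvable quotient $K$ to be normal in $K$, a contradiction.

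For case (ii) of Proposition~\ref{SE1} we have that $\Out_S(E)$ is cyclic or generalized quaternion, $H$ is $p$-solvable, and $H = \Out_S(E) O_{p'}(H)$. The equivalence of the two displayed formulas for $O^{p'}(G)$ is immediate from the ``moreover'' clause of Proposition~\ref{SE1}, which gives $\langle \Omega(\Out_S(E))^H\rangle = \Omega(\Out_S(E))[\Omega(\Out_S(E)), O_{p'}(H)]$. The coincidence of this subgroup with $H$ then reduces, via $\Out_S(E) \cap O_{p'}(H) = 1$, to verifying $O_{p'}(H) = [\Omega(\Out_S(E)), O_{p'}(H)]$. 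Strong embedding forces $O_p(H) = 1$, so that $F^*(H) = O_{p'}(H)$ and $\Out_S(E)$ acts faithfully on $O_{p'}(H)$; moreover, strong embedding places $C_{O_{p'}(H)}(\Omega(\Out_S(E)))$ inside $N_H(\Out_S(E))$, and a direct analysis of $p'$-automorphisms of cyclic or generalized quaternion $\Out_S(E)$ identifies this centralizer with $C_{O_{p'}(H)}(\Out_S(E))$. A coprime action argument on an $H$-invariant chief series of $O_{p'}(H)$ then promotes this equality of centralizers to the commutator equality $[\Omega(\Out_S(E)), O_{p'}(H)] = [\Out_S(E), O_{p'}(H)]$, and combining with $H = O^{p'}(H) = \Out_S(E)[\Out_S(E), O_{p'}(H)]$ completes the claim.

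The main obstacle will be the final chief-series step when $O_{p'}(H)$ has non-abelian composition factors, where the standard coprime-action direct product decomposition is unavailable and propagating the commutator equality through each subfactor requires the A$\times$B-lemma or a more delicate commutator identity; the generalized quaternion case of $\Out_S(E)$, in which $\Aut(Q_8)$ admits non-trivial $p'$-automorphisms fixing $Z(Q_8)$ pointwise, also requires additional care in identifying the relevant centralizers.
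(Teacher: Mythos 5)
Your overall strategy---pass to $H := O^{p'}(G)$, invoke \cref{SE1}, and exploit the strongly $p$-embedded subgroup of $H$ supplied by \cref{MaxEssen} via a Frattini argument---is the right one, but your execution diverges from the paper's, and your argument for case (ii) of \cref{SE1} has genuine gaps that you yourself flag. The paper handles cases (ii) and (iii) of \cref{SE1} uniformly: it applies the Frattini argument to the minimal normal subgroup $\Omega(\Out_S(E))[O_{p'}(H),\Omega(\Out_S(E))]$ singled out by the ``moreover'' clause of \cref{SE1} (not to $\Omega(\Out_S(E))O_{p'}(H)$ as you do), obtaining $H = N_H(\Omega(\Out_S(E)))\cdot[O_{p'}(H),\Omega(\Out_S(E))]$; \cref{MaxEssen} then forces $N_H(\Omega(\Out_S(E)))=N_H(\Out_S(E))$, so $\Out_S(E)[O_{p'}(H),\Omega(\Out_S(E))]$ is a normal subgroup of $H$ of $p'$-index, and $O^{p'}(H)=H$ gives equality at once. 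That product is visibly $p$-solvable (a $p$-group on top of a $p'$-group), which implicitly rules out case (iii) of \cref{SE1}, so your separate exclusion of that case is correct but superfluous.

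The gaps in your case (ii) argument are real. The identity $C_{O_{p'}(H)}(\Omega(\Out_S(E)))=C_{O_{p'}(H)}(\Out_S(E))$ cannot be obtained by ``a direct analysis of $p'$-automorphisms of $\Out_S(E)$''---as you note, that analysis fails for $\Out_S(E)\cong Q_8$---and the correct justification has nothing to do with $\Aut(\Out_S(E))$ at all: if $x\in O_{p'}(H)$ normalizes $\Out_S(E)$, then since $O_{p'}(H)\normaleq H$ we have $[x,\Out_S(E)]\le\Out_S(E)\cap O_{p'}(H)=1$, so $N_{O_{p'}(H)}(\Out_S(E))=C_{O_{p'}(H)}(\Out_S(E))$ outright. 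The chief-series step is a more serious problem: a $p$-solvable group may have non-abelian simple composition factors inside $O_{p'}(H)$, and the coprime complement decomposition you need to promote the equality of centralizers to the commutator equality $[\Omega(\Out_S(E)),O_{p'}(H)]=[\Out_S(E),O_{p'}(H)]$ is only available for abelian chief factors, so the argument as written does not close. The paper's Frattini-plus-definition-of-$O^{p'}$ route sidesteps both difficulties and is what you should adopt.
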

\begin{proof}
Since $G$ has a strongly $p$-embedded subgroup, so does $O^{p'}(G)$ and we apply \cref{SE1} and (ii) follows immediately. \sloppy{In the other cases of \cref{SE1}, since $\Omega(\Out_S(E))[O_{p'}(O^{p'}(G)), \Omega(\Out_S(E))]\normaleq O^{p'}(G)$, by the Frattini argument, }
\begin{align*}
    O^{p'}(G)&=N_{O^{p'}(G)}(\Omega(\Out_S(E)))[O_{p'}(O^{p'}(G)), \Omega(\Out_S(E))]\\
    &=N_{O^{p'}(G)}(\Omega(\Out_S(E)))\langle \Omega(\Out_S(E))^{O^{p'}(G)}\rangle.
\end{align*} 
Since $E$ is maximally essential, applying \cref{MaxEssen}, $N_{O^{p'}(G)}(\Omega(\Out_S(E)))\le N_{G}(\Omega(\Out_S(E)))=N_G(\Out_S(E))$. \sloppy{But then $\Out_S(E)[O_{p'}(O^{p'}(G)), \Omega(\Out_S(E))]\normaleq O^{p'}(G)$ and by the definition of $O^{p'}(G)$, we have that $O^{p'}(G)=\Out_S(E)[O_{p'}(O^{p'}(G)), \Omega(\Out_S(E))]$.}
\end{proof}

\begin{proposition}\label{p'-index2}
Let $\fs$ be a local $\mathcal{CK}$-system on a $p$-group $S$ with $E$ a maximally essential subgroup of $\fs$ and set $G=\Out_{\fs}(E)$. If $m_p(G)\geq 2$ then $O^{p'}(G)$ is isomorphic to a central extension by a group of $p'$-order of one of the following groups:
\begin{enumerate}
\item $\PSL_2(p^{a+1})$ or $\PSU_3(p^b)$ for $p$ arbitrary, $a\geq 1$ and $p^b>2$;
\item $\Sz(2^{2a+1})$ for $p=2$ and $a\geq 1$;
\item $\mathrm{Ree}(3^{2a+1}), \PSL_3(4)$ or $\mathrm{M}_{11}$ for $p=3$ and $a\geq 0$;
\item $\Sz(32):5, {}^2\mathrm{F}_4(2)'$ or $\mathrm{McL}$ for $p=5$; or
\item $\mathrm{J}_4$ for $p=11$.
\end{enumerate}
Furthermore, either $O^{p'}(G)$ is a perfect central extension, or $O^{p'}(G)\cong \Ree(3)$ resp. $\Sz(32):5$ and $p=3$ resp. $p=5$.
\end{proposition}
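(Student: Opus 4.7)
The strategy is to classify $\tilde{G} := O^{p'}(G)/O_{p'}(O^{p'}(G))$ using the hypothesis that $E$ is maximally essential, eliminate the two groups in the list of \cref{SE2} that do not appear in the conclusion, and then lift the structural information back to $O^{p'}(G)$ itself.

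First I would apply \cref{MaxEssen} to obtain that $N_G(\Out_S(E))$ is strongly $p$-embedded in $G$. Combining \cref{spelemma1}(iii) and (iv) — noting that $m_p(\tilde{G}) \geq 2$ guarantees the image of the Sylow $p$-normalizer is a proper subgroup of $\tilde{G}$ — the image $N_{\tilde{G}}(\tilde{S})$ is strongly $p$-embedded in $\tilde{G}$, where $\tilde{S}\in\syl_p(\tilde{G})$. Since $\fs$ is a local $\mathcal{CK}$-system, $\tilde{G}$ is a $\mathcal{K}$-group, so \cref{SE2} applies and furnishes the list of candidates for $\tilde{G}$. This leaves me to excise $\Alt(2p)$ (for $p > 3$) and $\mathrm{Fi}_{22}$ (for $p = 5$), which appear in \cref{SE2} but not in the target conclusion.

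For $\Alt(2p)$ with $p \geq 5$, choose a Sylow $p$-subgroup $S = \langle a, b\rangle$ with $a = (1\,\cdots\, p)$ and $b = (p+1\,\cdots\, 2p)$. The subgroup $\Alt(\{p+1,\ldots,2p\}) \cong \Alt(p)$ centralizes $a$, hence normalizes $\langle a\rangle$, but does not normalize $\langle b\rangle$ and thus does not normalize $S$. Consequently $N_{\Alt(2p)}(\langle a\rangle) \not\le N_{\Alt(2p)}(S)$, contradicting strong $p$-embedding. For $\mathrm{Fi}_{22}$ at $p = 5$, a direct inspection of the 5-local subgroup structure (e.g.\ via the ATLAS) shows that the centralizer of a non-trivial element of $\tilde{S}$ properly escapes $N_{\mathrm{Fi}_{22}}(\tilde{S})$, again contradicting strong $5$-embedding. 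This eliminates both cases.

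Finally, set $K := O_{p'}(O^{p'}(G))$, so that $O^{p'}(G)/K \cong \tilde{G}$ is one of the enumerated groups. A standard coprime action argument, exploiting that $O^{p'}(G) = \langle \Omega(\Out_S(E))^{O^{p'}(G)}\rangle$ together with the (near-)simplicity of $\tilde{G}$ and the transfer of the strongly $p$-embedded condition, shows that $K \le Z(O^{p'}(G))$, so $O^{p'}(G)$ is a central $p'$-extension of $\tilde{G}$. For the "furthermore" clause: if $\tilde{G}$ is perfect (i.e.\ in every case except $\Ree(3)$ and $\Sz(32){:}5$), then $O^{p'}(G) = [O^{p'}(G),O^{p'}(G)]K$, so the abelianization of $O^{p'}(G)$ is a $p'$-group; the idempotency $O^{p'}(O^{p'}(G)) = O^{p'}(G)$ rules out non-trivial $p'$-quotients and forces $O^{p'}(G)$ to be perfect. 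In the two exceptional non-perfect cases, $\Ree(3)$ and $\Sz(32){:}5$ both have trivial Schur multiplier, so the central $p'$-extension splits as $O^{p'}(G) = K \times \tilde{G}$; idempotency of $O^{p'}$ then collapses $K$ to $\{1\}$, yielding $O^{p'}(G) \cong \tilde{G}$. The main obstacle is the elimination of $\mathrm{Fi}_{22}$ at $p=5$, requiring a careful case analysis of its $5$-local subgroup structure.
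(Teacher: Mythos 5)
Your overall strategy coincides with the paper's: invoke \cref{MaxEssen} to get a strongly $p$-embedded Sylow normalizer, pass to $K := O^{p'}(G)$ and its quotient by $O_{p'}(K)$, apply \cref{SE2}, prune the list, and then analyse the extension. That said, you depart from the paper in a few places worth flagging. First, there is a mild ordering issue: to apply \cref{spelemma1}(iv) and transfer strong $p$-embedding to $\tilde{G}$, you need $K \neq N_K(T)O_{p'}(K)$ (where $T = \Out_S(E)$), and the cleanest way to see this is precisely the coprime-action argument that $O_{p'}(K) \leq N_K(T)$ — which you defer to the last step. The paper does the coprime action \emph{first} (using \cref{GLS2p'} with an elementary abelian $A \leq T$ of order $p^2$, hence the hypothesis $m_p \geq 2$), concludes $O_{p'}(K) \leq Z(K)$ before ever touching the quotient, and only then cites \cref{SE2}; your remark that ``$m_p \geq 2$ guarantees the image is proper'' is really a compressed pointer to that same calculation, and should be unpacked so there is no appearance of circularity. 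Second, you eliminate $\Alt(2p)$ by a direct Sylow argument (showing $\Alt(\{p{+}1,\dots,2p\})$ normalizes $\langle a \rangle$ but not $S$), which is valid and pleasantly self-contained, whereas the paper disposes of both $\Alt(2p)$ and $\mathrm{Fi}_{22}$ with a single appeal to \cite[Theorem 7.6.2]{GLS3}; you leave $\mathrm{Fi}_{22}$ to an unspecified ATLAS check, so the citation is still the cleaner route there. Finally, for the non-perfect exceptional cases $\Ree(3)$ and $\Sz(32){:}5$, you argue on the non-perfect group directly via a Schur-multiplier splitting; for this to be airtight you should also observe that $H^2(\tilde{G}, Z) \cong \mathrm{Ext}(\tilde{G}^{\mathrm{ab}}, Z) \oplus \mathrm{Hom}(M(\tilde{G}), Z)$ vanishes because $\tilde{G}^{\mathrm{ab}}$ is a $p$-group while $Z$ is a $p'$-group (not just because $M(\tilde{G})$ is trivial). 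The paper instead passes to the perfect characteristic subgroup $O^{p'}(O^p(K))$, identifies it with $\PSL_2(8)$ resp.\ $\Sz(32)$ via the Schur multiplier of that \emph{simple} group, and then reassembles $K = O^{p'}(O^p(K))\,\Out_S(E)$; this sidesteps any cohomology of non-perfect groups. Both approaches reach the same conclusion, but the paper's reduction stays within the territory covered by the lemmas already stated in Section 2.
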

\begin{proof}
Set $\wt{G}=G/O_{p'}(G)$ and $K=O^{p'}(G)$. By \cref{MaxEssen}, $N_G(\Out_S(E))$ is strongly $p$-embedded in $G$. In particular, we deduce that $N_K(\Out_S(E))$ is strongly $p$-embedded in $K$. Let $A\le \Out_S(E)$ be elementary abelian of order $p^2$. By coprime action, $O_{p'}(K)=\langle C_{O_{p'}(K)}(a) \mid a \in A^\#\rangle$. Since $N_K(\Out_S(E))$ is strongly $p$-embedded in $K$, we have that $O_{p'}(K)\le N_K(\Out_S(E))$ so that $[O_{p'}(K), \Out_S(E)]=\{1\}$. Then \[[O_{p'}(K), K]=[O_{p'}(K), \langle \Out_S(E)^K\rangle]=[O_{p'}(K), \Out_S(E)]^K=\{1\}\] and $O_{p'}(K)\le Z(K)$. 

Now, $\wt K\cong K/O_{p'}(K)$ is determined as in \cref{SE2}. Moreover, $\wt{N_K(\Out_S(E))}=N_{\wt K}(\wt {\Out_S(E))})$ is strongly $p$-embedded in $\wt K$ and applying \cite[Theorem 7.6.2]{GLS3}, $\wt K\not\cong \Alt(2p)$ or $\mathrm{Fi}_{22}$. Unless $\wt K\cong \Ree(3)$ or $\Sz(32):5$, using that $\wt K$ is simple and $K=O^{p'}(K)$, $K$ is perfect central extension of $\wt K$ by a group of $p'$-order. If $\wt K\cong \Ree(3)$ or $\Sz(32):5$ then $O^{p'}(O^p(K))$ is a perfect central extension of $\Ree(3)'\cong \PSL_2(8)$ resp. $\Sz(32)$ by a $p'$-group so that $O^{p'}(O^p(K))\cong \PSL_2(8)$ resp. $\Sz(32)$. Since $O_{p'}(K)\le N_K(\Out_S(E))$ and $K=O^{p'}(O^p(K))O_{p'}(K)\Out_S(E)$, we conclude that $O_{p'}(K)=\{1\}$ and $K=O^{p'}(K)=O^{p'}(O^p(K))\Out_S(E)\cong \Ree(3)$ resp. $\Sz(32):5$. 
\end{proof}

The following elementary example gives a flavor of what can happen without the restriction on maximality of essentials.

\begin{example}\label{MaxExample}
Let $V$ be a $4$-dimensional vector space over $\GF(2)$ and let $\Dih(10)$ act irreducibly on it. In its embedding in $\GL_4(2)$, $\Dih(10)$ is centralized by a $3$-element and so we may form a subgroup of $\GL_4(2)$ of shape $\Dih(10)\times 3$. This group is normalized by an element $t$ of order $4$ such that $\langle \Dih(10), t\rangle\cong\Sz(2)$, $t^2\in\Dih(10)$ and $t$ inverts the $3$-element which centralizes $\Dih(10)$. Thus, we may construct a group $H$ of shape $\Dih(10).\Sym(3)$ in $\GL_4(2)$. Form the semidirect product $G:=V\rtimes H$ and consider the $2$-fusion category of $G$ over some Sylow $2$-subgroup $S$. Since $H$ has cyclic Sylow $2$-subgroups and $O_2(H)=\{1\}$, we have that $V$ is essential in the $2$-fusion category of $G$. Moreover, for $s$ the unique involution in $H\cap S$, we have that $E:=V\langle s\rangle$ has order $2^5$ and $N_G(E)/E\cong \Sym(3)$. Therefore, $E$ is also an essential subgroup which properly contains another essential subgroup $V$. 
\end{example}

It is easy to construct other examples in which smaller essentials are contained in some larger essential, even when imposing the condition that the essential subgroups are $\Aut_{\fs}(S)$-invariant. But it is reasonable to ask whether such examples actually occur in an amalgam setting motivated by the hypothesis of the \hyperlink{MainThm}{Main Theorem}. To this end, let $E$ be an $\Aut_{\fs}(S)$-invariant essential subgroup of a saturated fusion system $\fs$ on a $p$-group $S$, let $G$ be a model for $N_{\fs}(E)$ and suppose that $\Omega(Z(S))\not\normaleq G$. In the midst of the amalgam method, to determine $\Out_{\fs}(E)$ and its actions, we work ``from the bottom up" by determining $\Out_{\fs}(E)$-chief factors of $E$, starting with those in $\langle \Omega(Z(S))^G \rangle$ and taking progressively larger subgroups of $E$, so working ``up." Taking \cref{MaxExample} as inspiration, one might imagine a situation in which $\Out_{\fs}(E)$ induces a $\Sym(3)$-action on almost all $\Out_{\fs}(E)$-chief factors in $E$. Without examining an ever increasing sequence of subgroups and chief factors, it may be hard to eventually uncover a chief factor which witnesses non-trivial action by a $5$-element (although this would probably only happen for amalgams with large ``critical distance", see \cref{BasicAmalNot}, and even then it seems unlikely). It seems some additional tricks and techniques (or perhaps an even more granular case division) are required to treat these types of examples uniformly.
\section{Amalgams in Fusion Systems}\label{AmalgamsSetup}

In this section, we introduce amalgams and demonstrate their connections with and applications to saturated fusion systems. We will only make use of elementary definitions and facts regarding amalgams as can be found in \cite[Chapter 2]{Greenbook}.

\begin{definition}
An \emph{amalgam} of rank $n$ is a tuple $\mathcal{A}=\mathcal{A}(G_1,\dots, G_n, B, \phi_1,\cdots, \phi_n)$ where $B$ is a group, each $G_i$ is a group and $\phi_i:B\to G_i$ is an injective group homomorphism. A group $G$ is a \emph{faithful completion} of $\mathcal{A}$ if there exists injective group homomorphisms $\psi_i:G_i\to G$ such that for all $i,j\in\{1,\dots, n\}$, $\phi_i\psi_i=\phi_j\psi_j$, $G=\langle \mathrm{Im}(\psi_i)\rangle$ and no non-trivial subgroup of $B\phi_i\psi_i$ is normal in $G$. Under these circumstances, we identify $G_1,\dots, G_n, B$ with their images in $G$ and opt for the notation $\mathcal{A}=\mathcal{A}(G_1, \dots, G_n, B)$.
\end{definition}

For almost all of the work towards the \hyperlink{MainThm}{Main Theorem}, we reduce to the case where the amalgam is of rank $2$ and the groups $G_1$ and $G_2$ are finite groups. In this setting, we may always realize $\mathcal{A}$ in a faithful completion, namely the free amalgamated product of $G_1$ and $G_2$ over $B$, denoted $G_1\ast_B G_2$. This completion is universal in that every faithful completion occurs as some quotient of this free amalgamated product. Generally, whenever we work in the setting of rank $2$ amalgams we will opt to work in this free amalgamated product which we will often denote $G$ and, in an abuse of terminology, refer to $G$ as an amalgam. In particular, we may as well assume the following:

\begin{enumerate}
\item $G=\langle G_1, G_2\rangle$, $G_i$ is a finite group and $G_i<G$ for $i\in\{1,2\}$;
\item no non-trivial subgroup of $B$ is normal in $G$; and
\item $B=G_1\cap G_2$.
\end{enumerate}

\begin{definition}
Let $\mathcal{A}=\mathcal{A}(G_1, G_2, B, \phi_1, \phi_2)$ and $\mathcal{B}=\mathcal{B}(H_1, H_2, C, \psi_1, \psi_2)$ be two rank $2$ amalgams. Then $\mathcal{A}$ and $\mathcal{B}$ are \emph{isomorphic} if, up to permuting indices, there are isomorphisms $\theta_i: G_i\to H_i$ and $\xi:B\to C$ such that the following diagram commutes for $i\in\{1,2\}$:

\begin{center}
\begin{tikzcd}
G_1 \arrow[dd, "\theta_1"'] &  & B \arrow[ll, "\phi_1"'] \arrow[rr, "\phi_2"] \arrow[dd, "\xi"'] &  & G_2 \arrow[dd, "\theta_2"] &                                                \\
                            &  & {} \arrow[phantom, loop, distance=2em, in=305, out=235]         &  &                            &                                                \\
H_1                         &  & C \arrow[ll, "\psi_1"] \arrow[rr, "\psi_2"']                    &  & H_2                        & 
\end{tikzcd}
\end{center}

Often, for some finite group $H$ arising as a faithful completion of some rank $2$ amalgam $\mathcal{B}$, we will often say a completion $G$ of $\mathcal{A}$ is \emph{locally isomorphic} to $H$, by which we mean $\mathcal{A}$ is isomorphic to $\mathcal{B}$.

An important observation in this definition is that the faithful completions of two isomorphic amalgams coincide. In fact, two amalgams being isomorphic is equivalent to demanding that $G_1\ast_B G_2\cong H_1\ast_C H_2$.

Say that $\mathcal{A}=\mathcal{A}(G_1, G_2, B)$ and $\mathcal{B}=\mathcal{B}(H_1, H_2, C)$ are \emph{parabolic isomorphic} if, up to permuting indices, $G_i\cong H_i$ and $B\cong C$ as abstract groups.
\end{definition}

We provide the following elementary example with regard to isomorphisms of amalgams.

\begin{example}\label{J2Example}
For $G=\mathrm{J}_2$, there are two maximal subgroups $M_1, M_2$ containing $N_G(S)$ for $S\in\syl_2(G)$. Furthermore, $M_1/O_2(M_1)\cong \SL_2(4)$, $M_2/O_2(M_2)\cong \Sym(3)\times 3$ and $|N_G(S)/S|=3$. Thus, $G$ gives rise to the amalgam $\mathcal{A}:=\mathcal{A}(M_1, M_2, N_G(S))$.

For $H=\mathrm{J}_3$ and $T\in\syl_2(H)$, $S\cong T$ and $H$ contains two maximal subgroups $N_1, N_2$ containing $N_G(T)$ such that $N_i\cong M_i$ for $i\in\{1,2\}$. Thus, $H$ gives rise to the amalgam $\mathcal{B}:=\mathcal{B}(N_1, N_2, N_H(T))$.

Then $\mathcal{A}$ is isomorphic to $\mathcal{B}$.
\end{example}

\begin{definition}
Let $\mathcal{A}=\mathcal{A}(G_1, G_2, B)$ be an amalgam of rank $2$. Then $\mathcal{A}$ is a \emph{characteristic $p$ amalgam of rank $2$} if the following hold for $i\in\{1,2\}$:
\begin{enumerate}
\item $G_i$ is a finite group;
\item $\syl_p(B)\subseteq \syl_p(G_1)\cap \syl_p(G_2)$; and
\item $G_i$ is of characteristic $p$.
\end{enumerate}
\end{definition}

For any faithful completion $G$ of an amalgam $\mathcal{A}$, $G$ is not necessarily a finite group and so we must define generally what a Sylow $p$-subgroup is. We say that $P$ is a Sylow $p$-subgroup of a group $G$ if every finite $p$-subgroup of $G$ is conjugate in $G$ to some subgroup of $P$.

The following theorem provides the connection between amalgams and fusion systems. Indeed, the original application of this theorem demonstrates that any saturated fusion system may be realized by a (possibly infinite) group. 

\begin{theorem}\label{robinson}
Let $p$ be a prime, $G_1$, $G_2$ and $G_{12}$ be groups with $G_{12}\le G_1\cap G_2$. Assume that $S_1\in\syl_p(G_1)$ and $S_2\in\syl_p(G_{12})\cap \syl_p(G_2)$ with $S_2\le S_1$. Set 
\[ G=G_1\ast_{G_{12}}G_2\] to be the free amalgamated product of $G_1$ and $G_2$ over $G_{12}$.
Then $S_1\in\syl_p(G)$ and
\[\fs_{S_1}(G)=\langle \fs_{S_1}(G_1), \fs_{S_2}(G_2)\rangle.\]
\end{theorem}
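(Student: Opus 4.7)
The plan is to attack both claims using the Bass--Serre tree $T$ of the amalgam $G_1\ast_{G_{12}}G_2$, on which $G$ acts with vertex stabilizers conjugate to $G_1$ or $G_2$ and edge stabilizers conjugate to $G_{12}$. For the Sylow assertion, Serre's theorem on groups acting on trees says every finite subgroup of $G$ fixes a vertex of $T$, hence is contained in a conjugate of $G_1$ or $G_2$; Sylow inside that factor then conjugates any finite $p$-subgroup of $G$ further into $S_1$ or $S_2\le S_1$, giving $S_1\in\syl_p(G)$. The inclusion $\langle \fs_{S_1}(G_1),\fs_{S_2}(G_2)\rangle \subseteq \fs_{S_1}(G)$ is immediate from $S_2\le S_1$ and $G_1,G_2\le G$.

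For the reverse inclusion, take a morphism $\phi=c_g\colon P\to Q$ of $\fs_{S_1}(G)$ and I proceed by induction on the reduced syllable length $n=\ell(g)$ of $g$ in the amalgamated product. If $g\in G_1$ (in particular if $n=0$), then $c_g$ is already a morphism of $\fs_{S_1}(G_1)$. Otherwise write $g=g_1g_2\cdots g_n$ with $g_i\in G_{\epsilon(i)}\setminus G_{12}$ and $\epsilon(i)\in\{1,2\}$ alternating. The key Bass--Serre input is this: since $P\le G_1$ and $P^g\le G_1$, $P$ fixes both $v_0:=G_1$ and $gv_0$ in $T$; as the fixed-point set of the finite group $P$ on $T$ is a subtree, $P$ fixes the entire geodesic between these vertices, including its first edge. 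When $\epsilon(1)=1$ and $n\ge 2$, this first edge is $g_1G_{12}$ with stabilizer $g_1G_{12}g_1^{-1}$, forcing $P^{g_1}\le G_{12}$; when $\epsilon(1)=2$, the first edge is $G_{12}$ itself and the second edge is $g_1G_{12}$, yielding both $P\le G_{12}$ and $P^{g_1}\le G_{12}$.

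With this in hand the inductive step is routine bookkeeping. In the case $\epsilon(1)=1$ with $n\ge 2$, Sylow in $G_{12}$ (using $S_2\in\syl_p(G_{12})$) provides $a\in G_{12}$ with $P^{g_1a}\le S_2\le S_1$; then $c_{g_1a}\colon P\to P^{g_1a}$ is a morphism of $\fs_{S_1}(G_1)$, and the residual word $a^{-1}g_2\cdots g_n$ has reduced length $n-1$ because $a^{-1}\in G_{12}$ fuses with the adjacent $G_2$-syllable $g_2$, so induction applies to the cofactor $c_{a^{-1}g_2\cdots g_n}\colon P^{g_1a}\to Q$. In the case $\epsilon(1)=2$, Sylow in $G_{12}$ supplies $a,b\in G_{12}$ with $P^a\le S_2$ and $P^{g_1b}\le S_2$, and one factors $c_g=c_\beta\circ c_{a^{-1}g_1b}\circ c_a$: the outer factors are morphisms of $\fs_{S_1}(G_1)$, the middle factor $c_{a^{-1}g_1b}$ lies in $\fs_{S_2}(G_2)$ since $a^{-1}g_1b\in G_2$ and its endpoints lie in $S_2$, and the rightmost residue $\beta=b^{-1}g_2\cdots g_n$ has reduced length $n-1$ (or is trivially a morphism of $\fs_{S_1}(G_1)$ when $n=1$, using $\beta=b^{-1}\in G_{12}$). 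The main obstacle is justifying the Bass--Serre edge-stabilizer observation, which rests on Serre's theorem that the fixed-point set of a finite group acting without inversions on a tree is a non-empty subtree; once that is granted, all remaining work is a careful tracking of Sylow conjugations.
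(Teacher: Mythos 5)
Your proof is correct. The paper itself offers no argument for this statement --- it is justified solely by the citation to \cite[Theorem~1]{Robinson} --- so there is no internal proof to compare against; but the Bass--Serre tree argument you give is, as far as I am aware, essentially the proof in the cited reference and is the standard route to results of this type. All the steps check out: finite $p$-subgroups of $G$ are elliptic by Serre's fixed-point theorem, which gives $S_1\in\syl_p(G)$; and for the nontrivial containment, $P\leq G_1$ and $P^g\leq G_1$ force $P$ to fix the geodesic in the tree from $G_1$ to $gG_1$, whose first edge (or first two edges, in the $\epsilon(1)=2$ case) gives exactly the inclusion $P^{g_1}\leq G_{12}$ (and $P\leq G_{12}$) needed to peel off a morphism in $\fs_{S_1}(G_1)$ or $\fs_{S_2}(G_2)$ and strictly shorten the reduced word. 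Two small points worth noting when you write this up formally: the fact that the residue $a^{-1}g_2\cdots g_n$ (resp.\ $b^{-1}g_2\cdots g_n$) has reduced length exactly $n-1$ should be stated explicitly --- it follows because $a^{-1}g_2\notin G_{12}$ as $g_2\notin G_{12}$ and $a\in G_{12}$ --- and the base case should be stated as ``$g\in G_1$'' rather than ``$n=0$'', since the reduced length of a nontrivial element of $G_1\setminus G_{12}$ is $1$, and this case must land in the base case rather than recurse.
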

\begin{proof}
This is \cite[Theorem 1]{Robinson}.
\end{proof}

In other words, the above theorem implies that given two fusion systems which give rise to two rank $2$ amalgams, and the data from these amalgams ``generate" the fusion system, then provided that the amalgams are isomorphic, the fusion systems are isomorphic.

However, there are some key differences in the group theoretic applications of amalgams, and the fusion theoretic applications. Consider the configurations from \cref{J2Example}. The two amalgams there, $\mathcal{A}$ and $\mathcal{B}$, are isomorphic. In this way, we can actually embed a copy of the $2$-fusion system of $\mathrm{J}_2$ inside the $2$-fusion system of $\mathrm{J}_3$, but $\mathrm{J}_2$ is certainly not a subgroup of $\mathrm{J}_3$. Indeed, the $2$-fusion system of $\mathrm{J}_3$ contains an additional class of essential subgroups arising from different maximal subgroups of $\mathrm{J}_3$ of shape $2^4: (3\times \SL_2(4))$ not involved in the amalgams.

Thus, there are some important considerations demonstrated in \cref{J2Example} that one should be aware of. One is that for a group $G$ with two maximal subgroups $M_1$ and $M_2$ containing a Sylow $p$-subgroup of $G$, even though $G=\langle M_1, M_2\rangle$ there are situations in which $\fs_S(G)\ne \langle \fs_S(M_1), \fs_S(M_2)\rangle$. The second is that one must be very careful in choosing the ``correct" completion when working with amalgams in the context of fusion systems. Indeed, most of the time, this often requires knowledge of the fusion systems, and in particular the essential subgroups, of the completions of the amalgam.

We now collect some results using the \emph{amalgam method} which are relevant to this work. With the application to fusion systems in mind, we are particular interested in the case where the local action involves strongly $p$-embedded subgroups.

\begin{definition}\label{WBDef}
Let $\mathcal{A}:=\mathcal{A}(G_1, G_2, G_{12})$ be a characteristic $p$ amalgam of rank $2$ such that there is $G_i^*\normaleq G_i$ satisfying the following for $i\in\{1,2\}$:
\begin{enumerate}
\item $O_p(G_i)\le G_i^*$ and $G_i=G_i^*G_{12}$; 
\item $G_i^*\cap G_{12}$ is the normalizer of a Sylow $p$-subgroup of $G_i^*$; and
\item $G_i^*/O_p(G_i)\cong\PSL_2(p^n), \SL_2(p^n), \PSU_3(p^n), \SU_3(p^n), \Sz(2^n), \Dih(10), \Ree(3^n)$ or $\Ree(3)'$.
\end{enumerate}
Then $\mathcal{A}$ is a \emph{weak BN-pair of rank $2$}. For $G$ a faithful completion of $\mathcal{A}$, we say that $G$ is a group with a weak BN-pair of rank $2$.
\end{definition}

We define the set of groups
\begin{align*}
\bigwedge=\{&\PSL_3(q), \PSp_4(q), \PSU_4(q), \PSU_5(q), \mathrm{G}_2(q), {}^3\mathrm{D}_4(q), {}^2\mathrm{F}_4(2^n), \\
&\mathrm{G}_2(2)', {}^2\mathrm{F}_4(2)', \mathrm{M}_{12}, \mathrm{J}_2, \mathrm{F}_3 \mid q=p^n, \text{$p$ a prime}\}
\end{align*} 
and associate a distinguished prime in each case. For ${}^2\mathrm{F}_4(2^n), \mathrm{G}_2(2)', {}^2\mathrm{F}_4(2)', \mathrm{M}_{12}, \mathrm{J}_2$ the prime is $2$, for $\mathrm{F}_3$ the prime is $3$ and for the other cases, the prime is $p$ where $q=p^n$. 

For $X\in \bigwedge$, let $\Aut^0(X)=\Aut(X)$ unless $X=\PSL_3(q), \PSp_4(2^n), \mathrm{G}_2(3^n)$ in which case $\Aut^0(X)$ is group generated by all inner, diagonal and field automorphisms of $X$ so that $\Aut^0(X)$ is of index $2$ and $\Aut(X)=\langle \Aut^0(X), \phi\rangle$ where $\phi$ is a graph automorphism. Finally, define \[\bigwedge\nolimits^0=\{Y\mid \Inn(X)\le Y\le \Aut^0(X), X\in\bigwedge\}.\]

For the remainder of this work, whenever we describe a group as being locally isomorphic to $Y\in\bigwedge^0$, we will always mean that $Y$ is a faithful completion of the rank $2$ amalgam given by amalgamating two non-conjugate maximal parabolic subgroups of $Y$ which share a common Borel subgroup, sometimes referred to as the \emph{Lie amalgam} of $Y$. It is straightforward to check that this amalgam is a weak BN-pair of rank $2$.

\begin{theorem}\label{greenbook}     
Suppose that $G$ is a group with a weak BN-pair of rank $2$. Then one of the following holds:
\begin{enumerate}
\item $G$ is locally isomorphic to $Y$ for some $Y\in\bigwedge^0$;
\item $G$ is parabolic isomorphic to $\mathrm{G}_2(2)'$, $\mathrm{J}_2$, $\Aut(\mathrm{J}_2)$, $\mathrm{M}_{12}$, $\Aut(\mathrm{M}_{12})$ or $\mathrm{F}_3$.
\end{enumerate}
\end{theorem}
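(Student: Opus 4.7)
The plan is to apply the amalgam method, following the strategy systematized in \cite{Greenbook}. Let $G$ be a faithful completion of $\mathcal{A}$ and form the coset graph $\Gamma$ with vertex set $G/G_1\sqcup G/G_2$ and edges given by the cosets of $G_{12}$. Because no non-trivial subgroup of $G_{12}$ is normal in $G$, the graph $\Gamma$ is a tree and $G$ acts edge-transitively with vertex stabilizers conjugate to $G_1$ or $G_2$ and edge stabilizer conjugate to $G_{12}$. For each vertex $\alpha$ write $G_\alpha$ for the stabilizer, $Q_\alpha:=O_p(G_\alpha)$, and $Z_\alpha:=\langle \Omega(Z(S))^{G_\alpha}\rangle$ for $S\in\syl_p(G_{\alpha,\beta})$ with $\beta$ a neighbor of $\alpha$. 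The characteristic $p$ hypothesis gives $C_{G_\alpha}(Q_\alpha)\le Q_\alpha$, and the definition of a weak BN-pair pins down $G_\alpha/Q_\alpha$ up to the short list of rank $1$ Lie type quotients in \cref{WBDef}.

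Next I would introduce the critical distance $b:=\min\{d(\alpha,\alpha'):Z_\alpha\not\le Q_{\alpha'}\}$ and fix a critical pair $(\alpha,\alpha')$ realizing $b$. The elementary but crucial observation is that $Z_\alpha$ acts faithfully and non-trivially on a section $V_{\alpha'}$ of $Q_{\alpha'}$ contained in $Z_{\alpha'}$, and by the extremality of $b$ this action is quadratic in a controlled way. Since each $G_\beta/Q_\beta$ has a strongly $p$-embedded subgroup and $V_\beta$ is by construction generated by its $\Omega(Z(S))$-translates, the identification lemmas of \cref{GrpSec}, specifically \cref{SEFFFus} for the FF-case, \cref{SEQuad} and \cref{Quad2F} for quadratic action, \cref{SL2ModRecog} for recognising the natural $\SL_2$-type modules, and \cref{SUMod},\cref{SzMod},\cref{ReeGen} for the $\SU_3$, $\Sz$ and $\Ree$ configurations, pin down $V_{\alpha'}$ as a natural module for the appropriate rank $1$ Lie type group. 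By symmetry or by analysing the pushing-up chain from $\alpha$ to $\alpha'$, the same information is obtained for $V_\alpha$, and with it an explicit bound on $b$.

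The core of the argument is then the case division according to the pair $(\bar G_1^*,\bar G_2^*)$ of quotients $G_i^*/Q_i$, subdivided further by the parity and value of $b$. In each case one aims to recognise $(G_1,G_2,G_{12})$ as the Lie amalgam of a rank $2$ group in $\bigwedge^0$ by identifying $b$ with the appropriate geodesic length in the corresponding rank $2$ building (giving $b\in\{1,2,3,5\}$ in the Lie cases and distinguishing $\PSL_3,\PSp_4,\PSU_4,\PSU_5,\mathrm{G}_2,{}^3\mathrm{D}_4,{}^2\mathrm{F}_4$ by the $(b,\bar G_1^*,\bar G_2^*)$ triple). The exceptional configurations occur precisely when the module-theoretic constraints force $q=2$ or $q=3$ together with an unusual action on $V_{\alpha'}$; there one extracts enough structural data about $Q_1$, $Q_2$ and the common $S$ to apply a recognition theorem (order of $S$, class and exponent of $Q_i$, and the centralizer of an involution or of $Z(S)$) identifying the amalgam with that of $\mathrm{G}_2(2)'$, $\mathrm{J}_2$, $\Aut(\mathrm{J}_2)$, $\mathrm{M}_{12}$, $\Aut(\mathrm{M}_{12})$ or $\mathrm{F}_3$.

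The main obstacle, and the reason this occupies the bulk of \cite{Greenbook}, is the combinatorial explosion of subcases when both parabolic quotients are small, especially when $p=2$ with both $\bar G_i^*\in\{\Sym(3),\SL_2(4),\PSU_3(2)',\Sz(2)\}$: here $V_\alpha$ need not be a genuine FF-module, only a quadratic $2$F-module, and one must invoke \cref{Quad2F}, \cref{2FRecog}, \cref{NatGen} and \cref{Badp2},\cref{Badp3} to control the non-generic actions. A secondary difficulty is ruling out arbitrarily large $b$: this requires a careful pushing-up / chief factor analysis along the geodesic $\alpha=\alpha_0,\alpha_1,\dots,\alpha_b=\alpha'$, showing that any additional composition factor in $Q_{\alpha_i}$ would force either $C_{G_\alpha}(Q_\alpha)\not\le Q_\alpha$ or a non-trivial normal subgroup of $G$ contained in $G_{12}$, in either case contradicting the setup.
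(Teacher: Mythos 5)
The paper's ``proof'' of this theorem is a one-line citation: it follows from Theorem~A of Delgado--Stellmacher's Green Book together with the supplementary classifications in the references the paper labels [F3] and [Fan] (which settle the cases that were left open in the Green Book, namely the $\mathrm{M}_{12}/\mathrm{J}_2$-type and $\mathrm{F}_3$-type amalgams). What you have written is not that citation but an attempt to reconstruct the amalgam-method argument itself.

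Your outline does capture the correct high-level architecture of the Green Book proof --- coset graph over the free amalgamated product, observing it is a tree, critical distance $b$ and critical pair, module identification via FF-module, quadratic-action and $2$F-module recognition, followed by a case division on the pair of parabolic quotients $(\bar G_1^*,\bar G_2^*)$. But it is a roadmap, not a proof. The sentence ``in each case one aims to recognise $(G_1,G_2,G_{12})$ as the Lie amalgam of a rank $2$ group'' compresses several hundred pages of explicit pushing-up and chief-factor analysis that constitute the actual content of the result, and the phrase ``ruling out arbitrarily large $b$'' likewise hides the hard bound $b\le 5$ (or $b\le 2$ in the non-generic cases) which is proved in the Green Book by entirely non-formal combinatorial commutator calculations. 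There are also local inaccuracies: $Z_\alpha$ does not in general act on a section of $Q_{\alpha'}$ ``contained in $Z_{\alpha'}$'' with controlled quadratic action --- that is only clean in the case $[Z_\alpha,Z_{\alpha'}]\ne 1$; when that commutator vanishes one must pass to $V_\beta=\langle Z_\lambda\mid\lambda\in\Delta(\beta)\rangle$ and argue quite differently, as the paper does at length in \cref{oddsec}. In short: the gap is that your proposal substitutes a description of the method for the method itself. For a theorem of this magnitude, which is imported verbatim from the literature, the paper's choice to cite rather than reprove is the only realistic one, and your ``proof'' is not self-contained enough to count as an alternative.
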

\begin{proof}
This follows from \cite[Theorem A]{Greenbook}, \cite{F3} and \cite{Fan}.
\end{proof}

In the above theorem, using a lemma of Goldschmidt \cite[(2.7)]{goldschmidt}, it may be checked (e.g. using MAGMA) that the groups with a weak BN-pair of rank $2$ are actually determined up to local isomorphism in all cases. However, we will not require this fact here.

Notice that all the candidates for $G_i^*/O_p(G_i)$ in the definition of a weak BN-pair of rank $2$ have strongly $p$-embedded subgroups. Indeed, the fusion categories of groups which possess a weak BN-pair of rank $2$ form the majority of the examples stemming from the hypothesis in the \hyperlink{MainThm}{Main Theorem}. Another important class of amalgams which provide examples in the \hyperlink{MainThm}{Main Theorem} and \hyperlink{ThmC}{Theorem C} are \emph{symplectic amalgams}.

\begin{definition}
Let $\mathcal{A}:=\mathcal{A}(G_1, G_2, G_{12})$ be a characteristic $p$ amalgam of rank $2$. Then $\mathcal{A}$ is a \emph{symplectic amalgam} if, up to interchanging $G_1$ and $G_2$, the following hold:
\begin{enumerate}
\item $O^{p'}(G_1)/O_p(G_1)\cong\SL_2(p^n)$;
\item for $W:=\langle ((O_p(G_1)\cap O_p(G_2))^{G_1})^{G_2}\rangle$, $G_2=G_{12}W$ and $O^p(O^{p'}(G_2))\le W$;
\item for $S\in\syl_p(G_{12})$, $G_{12}=N_{G_1}(S)$;
\item $\Omega(Z(S))=\Omega(Z(O^{p'}(G_2)))$ for $S\in\syl_p(G_{12})$; and
\item for $Z_1:=\langle \Omega(Z(S))^{G_1}\rangle$, $Z_1\le O_p(G_2)$ and there is $x\in G_2$ such that $Z_1^x\not\le O_p(G_1)$.
\end{enumerate}
\end{definition}

\begin{theorem}\label{Symp}
Suppose that $\mathcal{A}:=\mathcal{A}(G_1, G_2, G_{12})$ is a symplectic amalgam such that $G_2/O_p(G_2)$ has a strongly $p$-embedded subgroup and for $S\in\syl_p(G_{12})$, $G_{12}=N_{G_1}(S)=N_{G_2}(S)$. Assume further than $G_i$ is a $\mathcal{K}$-group for $i\in\{1,2\}$. Then one of the following holds, where $\mathcal{A}_k$ corresponds to the listing given in \cite[Table 1.8]{parkerSymp}:
\begin{enumerate}
\item $\mathcal{A}$ has a weak BN-pair of rank $2$ of type ${}^3\mathrm{D}_4(p^n)$ $(\mathcal{A}_{27})$, $\mathrm{G}_2(p^n)$ $(\mathcal{A}_2$, $\mathcal{A}_6$ and $\mathcal{A}_{26}$ when $p\ne 3)$, $\mathrm{G}_2(2)'$ $(\mathcal{A}_{1})$, $\mathrm{J}_2$ $(\mathcal{A}_{41})$ or $\Aut(\mathrm{J}_2)$ $(\mathcal{A}_{41}^1)$;
\item $p=2$, $\mathcal{A}=\mathcal{A}_{4}$, $|S|=2^6$, $O_2(L_2)\cong 2^{1+4}_+$ and $L_2/O_2(L_2)\cong (3\times 3):2$;
\item $p=5$, $\mathcal{A}=\mathcal{A}_{20}$, $|S|=5^6$, $O_5(L_2)\cong 5^{1+4}_+$ and $L_2/O_5(L_2)\cong 2^{1+4}_-.5$;
\item $p=5$, $\mathcal{A}=\mathcal{A}_{21}$, $|S|=5^6$, $O_5(L_2)\cong 5^{1+4}_+$ and $L_2/O_5(L_2)\cong 2^{1+4}_-.\Alt(5)$;
\item $p=5$, $\mathcal{A}=\mathcal{A}_{46}$, $|S|=5^6$, $O_5(L_2)\cong 5^{1+4}_+$ and $L_2/O_5(L_2)\cong 2\cdot\Alt(6)$; or
\item $p=7$, $\mathcal{A}=\mathcal{A}_{48}$, $|S|=7^6$, $O_7(L_2)\cong 7^{1+4}_+$ and $L_2/O_7(L_2)\cong 2\cdot\Alt(7)$.
\end{enumerate}
\end{theorem}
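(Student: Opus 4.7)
The plan is to invoke the classification of symplectic amalgams due to Parker and Rowley in \cite{parkerSymp}, and then filter the entries of their Table 1.8 by the additional hypothesis that $\bar{G_2}:=G_2/O_p(G_2)$ contains a strongly $p$-embedded subgroup. Since the structure of $O^{p'}(\bar{G_2})$ is essentially read off from each entry $\mathcal{A}_k$ of that table, this is primarily a bookkeeping exercise: one walks through the list and retains precisely the amalgams in which the relevant quotient matches one of the groups appearing in the conclusion of \cref{SE1} and \cref{SE2}. The $\mathcal{K}$-group hypothesis on $G_i$ is what licenses the appeal to \cref{SE2} for the $m_p(\bar S)\geq 2$ cases at odd primes.

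First, I would separate the list of symplectic amalgams into two families: those already recognized in \cite{parkerSymp} as realizing a weak BN-pair of rank $2$, and the residual ``exceptional'' configurations. For the first family, the restriction on $\bar{G_2}$ combined with \cref{greenbook} pins down which weak BN-pair types can occur. Most weak BN-pair types $Y\in\bigwedge^0$ have $\bar{G_2}\cong L_2/O_p(L_2)$ for $L_2$ a parabolic, and a direct check against the list in \cref{SE2} eliminates the rank $2$ Lie-type parabolics that fail the strongly $p$-embedded condition; what survives matches the entries listed in (i), namely ${}^3\mathrm{D}_4(p^n)$, $\mathrm{G}_2(p^n)$ (with the exclusion $p=3$ corresponding to $\mathcal{A}_2$, $\mathcal{A}_6$, $\mathcal{A}_{26}$ being reallocated to case (iv) of \hyperlink{ThmC}{Theorem C} elsewhere), together with the sporadic entries $\mathrm{G}_2(2)'$, $\mathrm{J}_2$ and $\Aut(\mathrm{J}_2)$.

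Second, for the remaining entries of Table 1.8, the amalgam is not a weak BN-pair, so both $L_1:=O^{p'}(G_1)/O_p(G_1)\cong\SL_2(p^n)$ and $L_2:=O^{p'}(G_2)/O_p(G_2)$ must be inspected directly. The quadratic $2$F-hypothesis implicit in the definition of a symplectic amalgam, together with the structure of $O_p(G_2)$ as an extraspecial-like group of order $p^{1+4}$ recorded in \cite{parkerSymp}, forces $L_2$ to act irreducibly on $O_p(G_2)/Z(O_p(G_2))$ and to have a strongly $p$-embedded subgroup of $p$-rank $2$. Consulting \cref{SE2} together with the groups appearing in \cref{Quad2F} and the Guralnick--Lawther--Malle classification of $2$F-modules already used in \cref{2FRecog}, the only solvable or near-solvable candidates that survive are $(3\times 3):2$ at $p=2$, $2^{1+4}_-.5$ and $2^{1+4}_-.\Alt(5)$ and $2\cdot\Alt(6)$ at $p=5$, and $2\cdot\Alt(7)$ at $p=7$; each of these matches one of the enumerated amalgams $\mathcal{A}_4, \mathcal{A}_{20}, \mathcal{A}_{21}, \mathcal{A}_{46}, \mathcal{A}_{48}$, giving conclusions (ii)--(vi).

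The main obstacle, and the step that requires the most care, is bridging the precise hypotheses here with those under which Parker--Rowley produced Table 1.8: one must verify that their normalizer-of-Sylow assumption on $G_{12}$ agrees with our $G_{12}=N_{G_1}(S)=N_{G_2}(S)$, and that the $\mathcal{K}$-group hypothesis is strong enough to import their classification without further qualification. Once this compatibility is established, the remainder is a finite and mechanical verification against the table. A subtlety is that a few amalgams in the table have $\bar{G_2}$ which looks ``almost'' like a group with a strongly $p$-embedded subgroup (for instance those with non-trivial $O_{p'}(\bar{G_2})$ centralizing $\bar S$): here one must apply \cref{spelemma1}(iv) to pass to the quotient $\bar{G_2}/O_{p'}(\bar{G_2})$ before concluding.
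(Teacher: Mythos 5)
Your overall strategy---invoke Parker--Rowley's classification and filter Table 1.8 by the strongly $p$-embedded condition on $G_2/O_p(G_2)$---matches the opening move of the paper's proof. The gap is that you treat the filter as essentially decisive, whereas it is not: after imposing the strongly $p$-embedded hypothesis, four amalgams from Table 1.8 \emph{survive} and must be eliminated by entirely separate arguments, and your proposal neither identifies which these are nor supplies the arguments. Concretely, $\mathcal{A}_3$ at $p=3$ has $L_2/O_3(L_2)\cong\SL_2(3)$ and $\mathcal{A}_{42}$ at $p=2$ has $L_2/O_2(L_2)\cong\Alt(5)$; both of these quotients \emph{do} have strongly $p$-embedded subgroups, so they pass your filter, and the actual elimination comes from observing that the resulting amalgam would be a weak BN-pair of rank $2$ with $|S|=3^6$ resp.\ $|S|=2^9$, which contradicts the Delgado--Stellmacher tables in \cite{Greenbook}. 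You allude to ``\cref{greenbook} pins down which weak BN-pair types can occur'' but do not connect this to eliminating specific entries of Table 1.8 that survive the strongly $p$-embedded check.

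The remaining two cases require yet other arguments that your proposal omits entirely. For $\mathcal{A}_{45}$ at $p=3$, the elimination is not by the strongly $p$-embedded condition at all: it is the separate hypothesis $G_{12}=N_{G_1}(S)=N_{G_2}(S)$ that fails, and the paper quotes \cite[Theorem 11.4]{parkerSymp} to show $N_{G_2}(S)\not\le G_1$ in that configuration. You do mention reconciling the normalizer-of-Sylow conventions between the two papers, but you frame this as a compatibility check rather than recognizing that this hypothesis actively kills a surviving amalgam. For $\mathcal{A}_5$ at $p=3$ (where $L_2$ has shape $3.((3^2:Q_8)\times(3^2:Q_8)):3$), the elimination is a genuinely nontrivial computation: a Hall $2'$-subgroup $K_2$ of $L_2\cap N_G(S)$ is elementary abelian of order $4$, one uses \cite[Lemma 6.21]{parkerSymp} to see $|\Omega(Z(Q_1))/\Omega(Z(L_2))|=3$, produces an involution $k\in K_2$ centralizing $\Omega(Z(Q_1))$, and then taking normal closures shows $k$ centralizes $Q_2/\Phi(Q_2)$, contradicting that $G_2$ has characteristic $3$. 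Nothing resembling this appears in your write-up. So while the frame of the proposal is correct, the proof as written would incorrectly retain $\mathcal{A}_3$, $\mathcal{A}_5$, $\mathcal{A}_{42}$, and $\mathcal{A}_{45}$, and those are precisely where the actual work of the theorem lies.
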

\begin{proof}
We apply the classification in \cite{parkerSymp} and upon inspection of the tables there, we need only rule out $\mathcal{A}_3$, $\mathcal{A}_5$ and $\mathcal{A}_{45}$ when $p=3$; and $\mathcal{A}_{42}$ when $p=2$. Set $Q_i:=O_3(G_i)$ and $L_i:=O^{3'}(L_i)$. With regards to $\mathcal{A}_{45}$, it is proved in \cite[Theorem 11.4]{parkerSymp} that $N_{G_2}(S)\not\le G_1$. In $\mathcal{A}_3$, we have that $L_2/O_3(L_2)\cong \SL_2(3)$ and $|S|=3^6$. In particular, if $G_{12}=N_{G_1}(S)=N_{G_2}(S)$ then $G$ has a weak BN-pair but comparing with the configurations in \cite{Greenbook}, we have a contradiction. 

Suppose that we are in the situation of $\mathcal{A}_5$ so that $L_2$ is of shape $3.((3^2:Q_8)\times (3^2:Q_8)):3$. Furthermore, by \cite[Lemma 6.21]{parkerSymp}, we have that $Q_2=\langle \Omega(Z(Q_1))^{G_2}\rangle$. Let $K_2$ be a Hall $2'$-subgroup of $L_2\cap N_G(S)$. Then $K_2$ is elementary abelian of order $4$. By hypothesis, $K_2$ normalizes $Q_1$ and so $K_2$ normalizes $\Omega(Z(Q_1))$. Moreover, $K_2$ centralizes $\Omega(Z(S))=\Omega(Z(L_2))=\Phi(Q_2)$ and since $|\Omega(Z(Q_1))/\Omega(Z(L_2))|=3$ by \cite[Lemma 6.21]{parkerSymp}, it follows that there is $k\in K$ an involution which centralizes $\Omega(Z(Q_1))$. Since $\langle kQ_2\rangle\normaleq G_2$, we infer that $\Omega(Z(S))=[\langle kQ_2\rangle, \Omega(Z(Q_1))]^{G_2}=[\langle kQ_2\rangle, \langle \Omega(Z(Q_1))^{G_2}\rangle]$. But  $Q_2=\langle \Omega(Z(Q_1))^{G_2}\rangle$ by \cite[Lemma 6.21]{parkerSymp} so that $k$ centralizes $Q_2/\Phi(Q_2)$, a contradiction since $G_2$ is of characteristic $3$.

In the situation of $\mathcal{A}_{42}$ when $p=2$, we have that $L_2/Q_2\cong\Alt(5)\cong\SL_2(4)$ so that $G$ has a weak BN-pair of rank $2$. Since $|S|=2^9$ in this case, comparing with \cite{Greenbook}, we have a contradiction.
\end{proof}

\begin{remark}
The symplectic amalgams $\mathcal{A}_3$, $\mathcal{A}_5$ and $\mathcal{A}_{45}$ where $G_2/O_3(G_2)$ has a strongly $p$-embedded subgroup have as example completions $\Omega_8^+(2): \Sym(3)$, $\mathrm{F}_4(2)$ and $\mathrm{HN}$. Indeed, in these configurations $|S|$ is bounded and one can employ \cite{Comp1} to get a list of candidate fusion systems supported on $S$. It transpires that the only appropriate fusion systems supported on $S$ are exactly the fusion categories of the above examples, but in each case there are three essentials, all normal in $S$, one of which is $\Aut_{\fs}(S)$-invariant while the other two are fused under the action of $\Aut_{\fs}(S)$.
\end{remark}

\begin{remark}
In a later section, we come across an amalgam which satisfies almost all of the properties of $\mathcal{A}_{42}$. Indeed, this amalgam contains $\mathcal{A}_{42}$ as a subamalgam and we show that the fusion system supported from this configuration is the $2$-fusion system of $\PSp_6(3)$. Indeed, $\PSp_6(3)$ is listed as an example completion of $\mathcal{A}_{42}$ in \cite{parkerSymp} and in $\PSp_6(3)$ itself, there is a choice of generating subgroups $G_1, G_2$ such that $(G_1, G_2, G_1\cap G_2)$ is a symplectic amalgam. However, the fusion subsystem generated by the fusion systems of the groups $G_1$ and $G_2$ fails to generate the fusion system of $\PSp_6(3)$. In fact, such a subsystem fails to be saturated.
\end{remark}

We now state the main hypothesis of this paper with regard to fusion systems.

\begin{hypothesis}\label{HypFus}
$\fs$ is a local $\mathcal{CK}$-system on a $p$-group $S$ with two $\Aut_{\fs}(S)$-invariant essential subgroups $E_1, E_2\normaleq S$ such that for $\fs_0:=\langle N_{\fs}(E_1), N_{\fs}(E_2) \rangle$, we have that $O_p(\fs_0)=\{1\}$. Furthermore, for $G_i:=\Out_{\fs}(E_i)$, if $G_i/O_{3'}(G_i)\cong \Ree(3)$, $G_i$ is $p$-solvable or $T$ is generalized quaternion, then $N_{G_i}(T)$ is a strongly $p$-embedded in $G_i$ for $T\in\syl_p(G_i)$.
\end{hypothesis}

We now recognize a characteristic $p$ amalgam of rank $2$ in $\fs_0$. Namely, we take the models $G_1$, $G_2$ and $G_{12}$ of $N_{\fs}(E_1)$, $N_{\fs}(E_2)$ and $N_{\fs}(S)$ and by \cref{robinson}, we have that $\fs_0=\fs_S(G)$ where $G=G_1*_{G_{12}}G_2$, and we take the liberty of recognizing $G_1, G_2$ and $G_{12}$ as subgroups of $G$. Moreover, by the uniqueness of the models in \cref{model}, it follows quickly that we may assume that $N_{G_1}(E_2)=N_{G_2}(E_1)=G_{12}$, and since $E_i$ is $\Aut_{\fs}(S)$-invariant, we deduce that $N_{G_i}(S)\le N_{G_i}(E_{3-i})=G_{12}$ for $i\in\{1,2\}$.

We now have a hypothesis in purely amalgam theoretic terms. Indeed, $G$ is a characteristic $p$ amalgam of rank $2$ such that the following hold:

\begin{hypothesis}\label{MainHypA}
$\mathcal{A}:=\mathcal{A}(G_1, G_2, G_{12})$ is a characteristic $p$ amalgam of rank $2$ with faithful completion $G$ satisfying the following:
\begin{enumerate}
\item for $S\in\syl_p(G_{12})$, $N_{G_1}(S)=N_{G_2}(S)\le G_{12}$; and
\item writing $\bar{G_i}:=G_i/O_p(G_i)$, $\bar{G_i}$ contains a strongly $p$-embedded subgroup and if $\bar{G_i}/O_{3'}(\bar{G_i})\cong \Ree(3)$, $\bar{G_i}$ is $p$-solvable or $\bar{S}$ is generalized quaternion, then $N_{\bar{G_i}}(\bar{S})$ is strongly $p$-embedded in $\bar{G_i}$.
\end{enumerate}
\end{hypothesis}

By the strongly $p$-embedded hypothesis, whenever $\fs_0$ is the $p$-fusion category of a finite group $G$ with $F^*(G)$ one of the groups in $\bigwedge$, we are almost always able to deduce that $F^*(G)=O^{p'}(G)$.

\begin{lemma}\label{noppower}
Suppose that $\fs$ satisfies \cref{HypFus}. Let $G_i$ be a model for $N_{\fs}(E_i)$ such that $S\in\syl_p(G_i)$ with $i\in\{1,2\}$. If the amalgam $\mathcal{A}:=\mathcal{A}(G_1, G_2, G_{12})$ extracted from $\fs_0$ is a weak BN-pair of rank $2$ and is locally isomorphic to $Y\in\bigwedge^0$ with $F^*(Y)\not\cong \mathrm{G}_2(2)', \PSp_4(2)', {}^2\mathrm{F}_4(2)', \mathrm{M}_{12}$ or $\mathrm{J}_2$, then $F^*(Y)=O^{p'}(Y)$.
\end{lemma}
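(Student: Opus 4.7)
My plan is to proceed by contradiction, assuming that $F^*(Y) < O^{p'}(Y)$. Setting $X := F^*(Y) \in \bigwedge$, the quotient $Y/X$ then contains a nontrivial $p$-element; after replacing by a suitable power and conjugating into a Sylow $p$-subgroup, I pick $\psi \in Y\setminus X$ of order $p$ lying in some $S \in \syl_p(Y)$. My first task is to pin down the type of $\psi$. Using \cite[Theorem 2.5.12]{GLS3}, for each $X \in \bigwedge$ the diagonal automorphisms have order prime to $p$; the only graph automorphisms of $p$-power order occur when $X \in \{\PSL_3(q), \PSp_4(2^n), \mathrm{G}_2(3^n)\}$, and these have been explicitly excluded from $\Aut^0(X)$ by the definition. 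Combined with the hypothesis that $F^*(Y) \not\cong \mathrm{G}_2(2)', \PSp_4(2)', {}^2\mathrm{F}_4(2)', \mathrm{M}_{12}$ or $\mathrm{J}_2$ (which covers the remaining small/sporadic sources of outer $p$-elements), this forces $\psi$ to induce a nontrivial field automorphism on $X$.

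Next, I exploit that $\mathcal{A}$ is locally isomorphic to $Y$: each $G_i$ is isomorphic to the maximal parabolic $P_i$ of $Y$. Since a standard field automorphism normalizes every standard parabolic of $X$ and acts nontrivially on each root subgroup of its unipotent radical, $O_p(P_i) = Q_i$, the unipotent radical of the parabolic $P_i\cap X$ of $X$ itself. Hence $\bar{G_i} := G_i/O_p(G_i) \cong L_i \cdot \langle \bar\psi\rangle$, where $L_i$ is the Levi factor of the corresponding parabolic of $X$ and $\bar\psi$ acts on $L_i$ as a field automorphism of order $p$. Reading off the Levi factors case-by-case across $\bigwedge$, the rank 1 simple section $K$ of $O^{p'}(L_i)$ (modulo its $p'$-center) belongs to the family $\{\PSL_2(q^k) : k\in\{1,2,3\}\} \cup \{\PSU_3(q), \Sz(q)\}$.

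Finally, I invoke \cref{MainHypA}(ii): $\bar{G_i}$ has a strongly $p$-embedded subgroup. For $\bar\psi$ to be nontrivial the underlying field over which $L_i$ is defined must properly extend $\GF(p)$, and in all such cases $L_i$ has non-cyclic Sylow $p$-subgroups, giving $m_p(\bar{G_i}) \geq 2$. Then \cref{p'-index2} forces $O^{p'}(\bar{G_i})/O_{p'}(O^{p'}(\bar{G_i}))$ to appear on the explicit list there. But $O^{p'}(\bar{G_i})/Z$ is almost simple of shape $K \cdot \langle \bar\psi\rangle$ with $\bar\psi$ a nontrivial field automorphism of $p$-power order. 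No such group appears on that list: the only non-simple almost-simple entry is $\Sz(32){:}5$, and this cannot match — $\Sz$ occurs as a Levi rank 1 section only inside ${}^2\mathrm{F}_4(2^n)$, whose field automorphism group has odd order (as $n$ is odd), so no $2$-power field extension of $\Sz$ can arise. Similarly, extensions of $\PSL_2(q^k)$ or $\PSU_3(q)$ by a nontrivial field automorphism of $p$-power order are not isomorphic to any $\PSL_2(q'), \PSU_3(q'), \Sz(q'), \Ree(q')$, $\Alt(2p)$ or the finite collection of sporadic entries on the list (they are all almost simple but non-simple, whereas every such generic entry is simple). This contradiction establishes $F^*(Y) = O^{p'}(Y)$.

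The main obstacle is the final case-by-case verification: for each $X\in\bigwedge$ and each associated Levi factor one must tabulate the rank 1 simple sections and their field-automorphism groups, and confirm that no $p$-power field-extension coincides with any entry of the strongly $p$-embedded classification of \cref{SE2}. Modulo this (somewhat mechanical but essential) check against tables of outer automorphisms and parabolic structures for groups of Lie type, the argument closes immediately.
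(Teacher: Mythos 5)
Your argument is essentially the same as the paper's (very terse) proof: reduce to the quotient $\bar{G_i}=G_i/O_p(G_i)$, note it carries a strongly $p$-embedded subgroup, invoke the classification in \cref{SE2}/\cref{p'-index2}, and compare against the Levi structure of the parabolics of $Y$ to see that no $p$-power field-automorphism extension can occur. The contradiction framing and the explicit reduction to field automorphisms are nice elaborations of what the paper compresses into a single sentence, and the observation that a nontrivial field automorphism of $p$-power order forces $m_p(\bar{G_i})\geq 2$ (so that \cref{SE2} applies) is exactly the content of the paper's dichotomy on $m_p(S/O_p(G_i))$.

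One factual slip in the closing case-check: you assert that $\Sz(32){:}5$ is the only non-simple almost-simple entry on the \cref{SE2} list, and that ``every such generic entry is simple.'' This misses $\Ree(3^{2a+1})$ with $a=0$: $\Ree(3)\cong\PSL_2(8){:}3$ is explicitly allowed ($a\geq 0$) and is not simple. It does not break the argument --- its socle $\PSL_2(8)$ is a rank-$1$ group in characteristic $2$, while any candidate $K$ here is a rank-$1$ Lie-type section of some $X\in\bigwedge$ in characteristic $p=3$, so no identification $K.3\cong\Ree(3)$ is possible --- but as written your check does not rule it out, and the blanket claim about simplicity of the generic entries is false. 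A similar remark applies to your treatment of $\Sz(32){:}5$: the cleanest way to dispose of it is to note that it requires $p=5$, whereas a $\Sz$ Levi section only arises for $X={}^2\mathrm{F}_4(2^n)$ in characteristic $p=2$; the odd-order-of-field-automorphisms observation you make is a valid alternative, but it addresses a slightly different mismatch than the one that actually occurs.
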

\begin{proof}
Suppose that $\mathcal{A}$ is locally isomorphic to $Y\in\bigwedge^0$. If $m_p(S/O_p(G_i))>1$, then $O^{p'}(G_i/O_p(G_i))$ is determined by \cref{SE2} and comparing with the structure of the parabolic subgroups of $Y$, we deduce that $F^*(Y)=O_{p'}(G)$. If $m_p(S/O_p(G_i))=1$ then $Y$ is generated by only inner and diagonal automorphisms so that $F^*(Y)=Y$ as required.
\end{proof}

\begin{proposition}\label{WBNFS}
Suppose that $\fs$ satisfies \cref{HypFus}. Let $G_i$ be a model for $N_{\fs}(E_i)$ such that $S\in\syl_p(G_i)$ with $i\in\{1,2\}$. If the amalgam $\mathcal{A}:=\mathcal{A}(G_1, G_2, G_{12})$ extracted from $\fs_0$ is a weak BN-pair of rank $2$ then either:
\begin{enumerate}
    \item $\fs_0$ is the $p$-fusion category of $Y$ for some $Y\in\bigwedge^0$; or
    \item $\mathcal{A}$ is parabolic isomorphic $\mathrm{F}_3$.
\end{enumerate}
\end{proposition}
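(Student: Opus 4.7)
The plan is to apply Theorem \ref{greenbook} directly to the weak BN-pair amalgam $\mathcal{A}$. This yields two cases: either $G$ is locally isomorphic to some $Y \in \bigwedge^0$, or $\mathcal{A}$ is parabolic isomorphic to one of $\mathrm{G}_2(2)'$, $\mathrm{J}_2$, $\Aut(\mathrm{J}_2)$, $\mathrm{M}_{12}$, $\Aut(\mathrm{M}_{12})$, or $\mathrm{F}_3$. Each of these six sporadic-type completions already belongs to $\bigwedge^0$, and by the remark following Theorem \ref{greenbook} (which rests on a lemma of Goldschmidt and can be checked computationally), parabolic isomorphism upgrades to local isomorphism in every weak BN-pair of rank $2$. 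Hence, in either case of Theorem \ref{greenbook}, $\mathcal{A}$ is locally isomorphic to some $Y \in \bigwedge^0$, and it remains to identify $\fs_0$ with the $p$-fusion category of $Y$ whenever $F^*(Y) \not\cong \mathrm{F}_3$.

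For this identification, Theorem \ref{robinson} gives $\fs_0 = \fs_S(G) = \langle \fs_S(G_1), \fs_S(G_2) \rangle$. The local isomorphism of $\mathcal{A}$ with the Lie amalgam of $Y$ transports this to $\langle \fs_T(P_1), \fs_T(P_2) \rangle$, where $P_1, P_2$ are the two maximal parabolic subgroups of $Y$ containing a fixed $T \in \syl_p(Y)$. By the Alperin--Goldschmidt fusion theorem, $\fs_T(Y)$ is generated by the automizers of its essential subgroups together with $\Aut_{\fs_T(Y)}(T)$, so showing $\fs_0 \cong \fs_T(Y)$ reduces to verifying that every essential subgroup of $\fs_T(Y)$ lies inside $P_1$ or $P_2$ (and that $\Aut_{\fs_T(Y)}(T)$ is visible in $\fs_S(G_{12})$, which follows from $N_{G_1}(S) = N_{G_2}(S) \le G_{12}$).

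When $F^*(Y)$ is a rank $2$ group of Lie type in characteristic $p$, the BN-pair structure guarantees that every proper $p$-local overgroup of $T$ lies in $P_1$ or $P_2$, so the same holds for every essential of $\fs_T(Y)$. For the remaining $Y \in \bigwedge^0$ with $F^*(Y)$ isomorphic to one of $\mathrm{G}_2(2)', {}^2\mathrm{F}_4(2)', \mathrm{M}_{12}$, or $\mathrm{J}_2$, this inclusion can be verified directly from the known list of $p$-local subgroups, or via MAGMA as done elsewhere in the paper; \cref{J2Example} already illustrates such a case. Together with the fact that $\Aut_{\fs_T(Y)}(T)$ is visible in the Borel $P_1 \cap P_2 \cong G_{12}$, this yields $\fs_0 \cong \fs_T(Y)$.

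The main obstacle, and the reason $\mathrm{F}_3$ has to be singled out, is that for $Y = \mathrm{F}_3$ at $p = 3$ the last step fails: $\fs_T(\mathrm{F}_3)$ contains an essential subgroup not contained in either maximal parabolic, so the amalgam-generated subsystem $\fs_0$ is a proper subsystem of $\fs_T(\mathrm{F}_3)$ rather than the full fusion category. This is precisely the exotic subsystem appearing in outcome (v) of the Main Theorem, and it forces outcome (ii) of the proposition as an alternative that cannot be absorbed into outcome (i).
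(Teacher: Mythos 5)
Your overall plan is sound, but it takes a route the paper deliberately steps around. The crucial divergence is your first move: you invoke the remark after \cref{greenbook} that parabolic isomorphism determines the weak BN-pair amalgam up to local isomorphism, and then fold the whole sporadic-type branch of \cref{greenbook} into the Lie-type branch, treating every $Y \neq \mathrm{F}_3$ uniformly by arguing that all essentials of $\fs_T(Y)$ sit inside $P_1$ or $P_2$. The paper, after stating exactly that remark, explicitly says ``we will not require this fact here,'' and instead handles the parabolic-isomorphic cases ($\mathrm{G}_2(2)'$, ${}^2\mathrm{F}_4(2)'$, $\mathrm{M}_{12}$, $\mathrm{J}_2$, and their $\mathrm{Aut}$-extensions) by a different computation: a MAGMA enumeration of all saturated fusion systems on $S$ with trivial $p$-core, from which $\fs_0$ is read off directly. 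For the Lie-type cases the paper first reduces, via \cref{noppower}, to the situation $F^*(Y) = O^{p'}(Y)$ so that \cite[Corollary~3.1.6]{GLS3} applies and pins the essentials down to the two unipotent radicals; you gesture at ``the BN-pair structure'' but do not invoke this reference or the reduction, which matters when $Y$ carries diagonal or field automorphisms of $p'$-order. Your uniform argument would also need a concrete verification of essential-containment for $\mathrm{M}_{12}$, $\mathrm{J}_2$, $\mathrm{G}_2(2)'$ and ${}^2\mathrm{F}_4(2)'$ and you leave it at ``can be verified directly or via MAGMA''; this is plausible but it is a different (and weaker) check than the one the paper actually runs, and the citation of \cref{J2Example} is a touch misleading, since that example illustrates the failure mode (the extra essential class in $\mathrm{J}_3$) rather than the inclusion you want for $\mathrm{J}_2$. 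In exchange, your route does avoid listing the candidate fusion systems on $S$ case-by-case and buys a conceptually uniform explanation for why $\mathrm{F}_3$ is the sole exception (an essential living outside both parabolics), which the paper relegates to a citation of \cite{ExoSpo}. Both arguments ultimately rest on a computer check for the small cases, so neither is more elementary; the paper's choice to avoid the Goldschmidt uniqueness remark makes its proof self-contained at the cost of a more case-bound presentation.
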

\begin{proof}
By \cref{greenbook} we have that $\mathcal{A}$ is locally isomorphic to $Y$ for some $Y\in\bigwedge^0$; or parabolic isomorphic to $\mathrm{G}_2(2)'$, ${}^2\mathrm{F}_4(2)'$, $\mathrm{J}_2$, $\Aut(\mathrm{J}_2)$, $\mathrm{M}_{12}$, $\Aut(\mathrm{M}_{12})$ or $\mathrm{F}_3$. Assume first that $\mathcal{A}$ is parabolic isomorphic to $\mathrm{G}_2(2)'$, ${}^2\mathrm{F}_4(2)'$, $\mathrm{J}_2$, $\Aut(\mathrm{J}_2)$, $\mathrm{M}_{12}$, $\Aut(\mathrm{M}_{12})$. Then the possible fusion systems on $S$ with trivial $2$-core may be enumerated using the fusion systems package in MAGMA \cite{Comp1}. Indeed, $\fs=\fs_0$ or $\fs$ is the $2$-fusion category of $J_3$ or $\Aut(J_3)$ in which case $\fs_0$ is isomorphic to the $2$-fusion category of $J_2$ resp. $\Aut(J_2)$. Alternatively, except in the case of ${}^2\mathrm{F}_4(2)'$, one could apply \cite{OliSmall} to the reduction of $\fs$ and using the \emph{tameness} of the candidate systems, both $\fs$ and $\fs_0$ can be calculated from the automorphisms groups of certain small simple groups.

Assume now that $\mathcal{A}$ is locally isomorphic to $Y$ where $F^*(Y)=O^{p'}(Y)$ is a simple group of Lie type. By \cite[Corollary 3.1.6]{GLS3}, the $p$-fusion category of $Y$ has only two essential subgroups, namely the unipotent radical subgroups of the maximal parabolic subgroups in $F^*(Y)$, which we label $P_1$ and $P_2$. Hence, by the Alperin--Goldschmidt theorem and  we have that $\fs_S(Y)=\langle N_{\fs_S(Y)}(P_1), N_{\fs_S(Y)}(P_2)\rangle$. By Robinson's result, $\fs_S(Y)$ is isomorphic to the fusion system of the free amalgamated product and by the uniqueness of the amalgam, $\fs_S(Y)=\fs_0$, as desired.
\end{proof}

The case where $S$ is isomorphic to a Sylow $3$-subgroup of $\mathrm{F}_3$ has been treated in \cite{ExoSpo}. Indeed, this is another instance where $\fs_0\subset \fs$. We also mention that for $\fs$ the $5$-fusion category of $\mathrm{Co}_1$, we have that $\fs_0$ is isomorphic to the $5$-fusion category of $\PSp_4(5)$, yet another example of this phenomena.

The main work in this paper is in proving \hyperlink{ThmC}{Theorem C}, which we list below for convenience.

\begin{ThmC}\hypertarget{MainGrpThm}{}
Suppose that $\mathcal{A}=\mathcal{A}(G_1, G_2, G_{12})$ satisfies \cref{MainHypA}. Then one of the following occurs:
\begin{enumerate}
\item $\mathcal{A}$ is a weak BN-pair of rank $2$;
\item $p=2$, $\mathcal{A}$ is a symplectic amalgam, $|S|=2^6$ , $G_1/O_2(G_1)\cong \Sym(3)$ and $G_2/O_2(G_2)\cong (3\times 3):2$;
\item $p=2$, $\Omega(Z(S))\normaleq G_2$, $\langle (\Omega(Z(S))^{G_1})^{G_2})\rangle\not\le O_2(G_1)$, $|S|=2^9$ , $O^{2'}(G_1)/O_2(G_1)\cong \SU_3(2)'$ and $O^{2'}(G_2)/O_2(G_2)\cong\Alt(5)$;
\item $p=3$, $\Omega(Z(S))\normaleq G_2$, $\langle (\Omega(Z(S))^{G_1})\rangle\not\le O_2(G_2)$, $|S|\leq 3^7$ and $O_3(G_1)=\langle (\Omega(Z(S))^{G_1})\rangle$ is cubic $2F$-module for $G_1/O_3(G_1)$; or
\item $p=5$ or $7$, $\mathcal{A}$ is a symplectic amalgam and $|S|=p^6$.
\end{enumerate}
\end{ThmC}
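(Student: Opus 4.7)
The plan is to run the amalgam method on $G = G_1 \ast_{G_{12}} G_2$ in the style of Delgado--Stellmacher. First I would build the coset graph $\Gamma$ of $G$ on the two classes of cosets of $G_1$ and $G_2$, with the edge set given by cosets of $G_{12}$, and let $G_i$ act as the stabilizer of a vertex $\alpha_i$ of type $i$. The vital invariant is the \emph{critical distance} $b = d(\alpha, \alpha')$, where $(\alpha, \alpha')$ is a critical pair: a pair of vertices at minimal distance with $Z_\alpha := \langle \Omega(Z(S_\alpha))^{G_\alpha} \rangle$ not contained in $O_p(G_{\alpha'})$. Hypothesis~B ensures that some such pair exists, since otherwise $\langle Z_{\alpha_1}^G \rangle$ would be a non-trivial normal $p$-subgroup of a faithful completion, contradicting the no-core condition on the amalgam.

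Next I would identify the ``automizers'' $\bar G_i := G_i/O_p(G_i)$. By Hypothesis~B these contain strongly $p$-embedded subgroups, and under the auxiliary $\Ree(3)/p$-solvable/generalized-quaternion restriction one may apply coprime action arguments to show $O_{p'}(\bar G_i) \le Z(\bar G_i)$; combined with Propositions~\ref{SE1} and~\ref{SE2} this reduces each $\bar G_i$ to a short list of (almost) quasisimple groups of Lie type of rank $1$ (plus a few sporadic cases). At the same time, near the critical pair one shows by a standard induction on $b$ that $[Z_\alpha, Z_{\alpha'}] \ne 1$, so that $Z_{\alpha'}$ acts non-trivially on $Z_\alpha$. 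When $b$ is odd, $Z_{\alpha'}$ even lies in an $S_\alpha$-conjugate and hence acts as a $p$-subgroup of $\Out_{G_\alpha}(O_p(G_\alpha))$; this is the input needed to force quadratic action on the module $V_\alpha$ spanned by $Z_\alpha^{G_\alpha}$, and, combined with the strongly $p$-embedded hypothesis, to trigger Theorem~\ref{SEFFFus}, Proposition~\ref{Quad2F}, and Lemma~\ref{2FRecog}. This identifies $V_\alpha$ as essentially a natural module, a natural $\Omega_3(q)$- or $\Omega_4^-(q)$-module, a natural $\mathrm{SU}_3$/$\Sz$-module, or (in the bad cases at $p \in \{2,3,5,7\}$) one of the exceptional quadratic $2F$-modules of Lemmas~\ref{Badp2}--\ref{334}.

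From this point the proof splits according to $b$ and the module type, following the case division of \cite{Greenbook}. In the generic situation the module identification forces the weak BN-pair axioms of Definition~\ref{WBDef}: one checks that $G_i^* := O^{p'}(G_i)$ satisfies $G_i = G_i^* G_{12}$ and $G_i^* \cap G_{12} = N_{G_i^*}(S \cap G_i^*)$, so case (i) holds and Theorem~\ref{greenbook} pins down $\mathcal{A}$. The remaining configurations -- those in which the $p$-residue arguments fail to produce the BN-structure -- must be handled by bounding $|S|$. Here a key ingredient is that when $Z_\alpha$ is a cubic or nearly quadratic $2F$-module then one of Corollary~\ref{CubicAction}, Theorem~\ref{nearquad}, or the small-prime exceptions in Proposition~\ref{Quad2F} apply, forcing $p \in \{2,3,5,7\}$ and restricting $|O_p(G_i)|$ severely. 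In these situations the chief-factor analysis of $O_p(G_i)$ and iterated commutators along a geodesic from $\alpha$ to $\alpha'$ cap $b \le 2$, and the data read off at the critical pair match either the symplectic amalgam definition (giving cases (ii) and (v) via Theorem~\ref{Symp}) or one of the two exceptional configurations at $|S|=2^9$ (case (iii)) and $|S| \le 3^7$ (case (iv)).

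The main obstacle will be the odd-prime case (iv). Here $p = 3$, $\bar G_1/O_3(\bar G_1)$ can be one of several groups with $3$-rank $\ge 2$ and the module $O_3(G_1)$ is a cubic $2F$-module but not quadratic, so Proposition~\ref{Quad2F} does not apply directly and Theorem~\ref{nearquad} only partially constrains $G_1$. One must combine the near-quadratic action from the $\alpha'$-side with a careful analysis of the normal structure of $O_3(G_1)$ and its $O^{3'}(G_1)$-chief factors, using the rank-$1$ module lemmas (\ref{NatMod}--\ref{SzMod}) and Lemma~\ref{A6Cohom} to control the pushing-up obstructions, before finally arguing that a further extension along the geodesic would force $\Omega(Z(S)) \normaleq G_1$ and contradict $O_p(\mathcal{A}) = \{1\}$. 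The bound $|S| \le 3^7$ is obtained by iterating this analysis and showing that any additional chief factor would allow one to propagate the critical configuration and derive a contradiction with the strongly $p$-embedded hypothesis on $\bar G_2$.
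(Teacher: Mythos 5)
There is a genuine gap in the proposal at the step where you claim that ``near the critical pair one shows by a standard induction on $b$ that $[Z_\alpha, Z_{\alpha'}] \ne 1$.'' This is false as a general statement, and in fact the dichotomy $[Z_\alpha, Z_{\alpha'}] \ne 1$ versus $[Z_\alpha, Z_{\alpha'}] = 1$ (equivalently, $Z_{\alpha'} \not\le O_p(G_\alpha)$ versus $Z_{\alpha'} \le O_p(G_\alpha)$) is the primary case division of the proof, each branch requiring its own long analysis. By \cref{p-closure2}, $Z_\alpha \not\le Q_{\alpha'}$ is guaranteed at a critical pair, but there is no symmetry: $Z_{\alpha'}$ may well centralize $Z_\alpha$, and by \cref{bodd} this happens precisely when $b$ is odd, $\alpha'$ is conjugate to $\beta$, and $Z_{\alpha'} = \Omega(Z(L_{\alpha'}))$. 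Since $O_p(\mathcal{A}) = 1$ does not rule this out, your argument only covers the configurations treated in \cref{evensec} and omits all of \cref{oddsec}. (Your parenthetical about ``$b$ odd'' also cuts the wrong way: in the paper $b$ odd with $[Z_\alpha, Z_{\alpha'}] = 1$ is the problematic branch, whereas the $[Z_\alpha, Z_{\alpha'}] \ne 1$ branch is where $b$ can be even and where \cref{SL2} immediately delivers a natural $\SL_2$-module for both automizers.)

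This is not a cosmetic omission. The case $Z_{\alpha'} \le Q_\alpha$ is responsible for several conclusions of the theorem: it produces the $b = 1$ ``pushing-up'' configurations that yield outcome (iv) (the cubic $2F$-modules with $|S| \le 3^7$), the $b = 3$ configurations locally isomorphic to $\mathrm{M}_{12}$ and to ${}^2\mathrm{F}_4(2^n)$, and the $b = 5$ configuration parabolic isomorphic to $\mathrm{F}_3$, all of which land under (i) or (iv). Moreover, because $Z_{\alpha'}$ centralizes $Z_\alpha$ in that branch, the quadratic action one needs cannot be extracted from the pair $(Z_\alpha, Z_{\alpha'})$ directly: the paper instead forces quadratic action through the larger closures $V_\beta$, $V_\alpha^{(2)}$, \ldots\ (see \cref{b>1}, \cref{SL2implies}, and the qrc lemma), and this machinery behaves quite differently from the FF-module recognition you invoke. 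Any correct proof along the proposed lines must add a separate treatment of the $[Z_\alpha, Z_{\alpha'}] = 1$ case; the analysis you sketch only addresses half the theorem.
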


It seems that more information may be extracted than what we have provided here, but with the application of fusion systems in mind and the available results classifying fusion systems supported on $p$-groups of small order, we stop short of completely describing $G_1$ and $G_2$ up to isomorphism, although this seems possible in most cases.

We now prove the \hyperlink{MainThm}{Main Theorem} assuming the validity of \hyperlink{MainGrpThm}{Theorem C}. The necessary structural details for some of the cases we treat are documented later in \cref{b=1a}, \cref{b=1b} and \cref{b=1c}. To verify that two of the fusion systems uncovered are exotic, the classification of the finite simple groups is invoked (see \cite{ExoSpo} and \cite{parkersem}). This is the only occasion in this work where we apply the classification in its full strength and not in an inductive context. Without the classification, outcome (v) below would instead read ``$\fs$ is a simple fusion system on a Sylow $3$-subgroup of $\mathrm{F}_3$ which is not isomorphic to the $3$-fusion category of $\mathrm{F}_3$'' and outcome (vii) would read ``$\fs$ is a simple fusion system on a Sylow $7$-subgroup of $\mathrm{G}_2(7)$ which is not isomorphic to $7$-fusion category of $\mathrm{G}_2(7)$ or $\mathrm{M}$.''

\begin{main}\hypertarget{MainThm}{}
Let $\fs$ be a local $\mathcal{CK}$-system on a $p$-group $S$. Assume that $\fs$ has two $\Aut_{\fs}(S)$-invariant essential subgroups $E_1, E_2\normaleq S$ such that for $\fs_0:=\langle N_{\fs}(E_1), N_{\fs}(E_2) \rangle_S$ the following conditions hold:
\begin{itemize}
\item $O_p(\fs_0)=\{1\}$;
\item for $G_i:=\Out_{\fs}(E_i)$, if $G_i/O_{3'}(G_i)\cong\Ree(3)$, $G_i$ is $p$-solvable or $T$ is generalized quaternion, then $N_{G_i}(T)$ is strongly $p$-embedded in $G_i$ for $T\in\syl_p(G_i)$.
\end{itemize}
Then $\fs_0$ is saturated and one of the following holds:
\begin{enumerate}
\item $\fs_0=\fs_S(G)$, where $F^*(G)$ is isomorphic to a rank $2$ simple group of Lie type in characteristic $p$;
\item $\fs_0=\fs_S(G)$, where $G\cong \mathrm{M}_{12}, \Aut(\mathrm{M}_{12}), \mathrm{J}_2, \Aut(\mathrm{J}_2), \mathrm{G}_2(3)$ or $\PSp_6(3)$ and $p=2$;
\item $\fs_0=\fs_S(G)$, where $G\cong \mathrm{Co}_2, \mathrm{Co}_3,\mathrm{McL}$, $\Aut(\mathrm{McL}), \mathrm{Suz}, \Aut(\mathrm{Suz})$ or $\mathrm{Ly}$ and $p=3$;
\item $\fs_0=\fs_S(G)$, where $G\cong\PSU_5(2), \Aut(\PSU_5(2)),  \Omega_8^+(2), \mathrm{O}_8^+(2), \Omega_{10}^-(2),\\ \Sp_{10}(2), \PSU_6(2)$ or  $\PSU_6(2).2$ and $p=3$;
\item $\fs_0$ is simple fusion system on a Sylow $3$-subgroup of $\mathrm{F}_3$ and, assuming $\mathrm{CFSG}$, $\fs_0$ is an exotic fusion system uniquely determined up to isomorphism;
\item $\fs_0=\fs_S(G)$, where $G\cong \mathrm{Ly}, \mathrm{HN}, \Aut(\mathrm{HN})$ or $\mathrm{B}$ and $p=5$; or
\item $\fs_0$ is a simple fusion system on a Sylow $7$-subgroup of $\mathrm{G}_2(7)$ and, assuming $\mathrm{CFSG}$, $\fs_0$ is an exotic fusion system uniquely determined up to isomorphism.
\end{enumerate}
\end{main}
\begin{proof}
Let $\mathcal{A}(G_1, G_2, G_{12})$ be the amalgam determined by $\fs_0$. If $\mathcal{A}$ is a weak BN-pair of rank $2$ then by \cref{WBNFS}, we either have that $\fs_0$ satisfies one of the conclusions of the theorem; or $\mathcal{A}$ is parabolic isomorphic to $\mathrm{F}_3$ and $S$ is isomorphic to a Sylow $3$-subgroup of $\mathrm{F}_3$. In the latter case, we refer to \cite{ExoSpo} where both $\fs$ and $\fs_0$ are determined. 

Suppose now that $\mathcal{A}$ is a symplectic amalgam but not a weak BN-pair of rank $2$. If $p=2$ and $|S|=2^6$ as in case (ii) of \cref{Symp},  then it follows from \cite[Lemma 6.21]{parkerSymp} that $S=(O^2(G_1)\cap S)(O^2(G_2)\cap S)$ so that $O^2(\fs)=\fs$ by \cite[Theorem I.7.4]{ako}. Checking against the lists provided in \cite[Theorem 4.1]{OliSmall}, $\fs$ is isomorphic to the $2$-fusion category of $\mathrm{G}_2(3)$, which has two essential subgroups both of which are $\Aut_{\fs}(S)$-invariant. By the Alperin--Goldschmidt theorem, we have that $\fs_0=\fs$ so that $\fs_0$ is saturated. If $p\in\{5,7\}$ and $\mathcal{A}$ satisfies (iii)-(vi) of \cref{Symp}, then $|S|\leq p^6$ and again we deduce that $O^p(\fs)=\fs$. Then using the results follows from the tables provided in \cite{Comp1}, we deduce that if $p=5$ then $\fs$ has only two essential subgroups, both of which are $\Aut_{\fs}(S)$-invariant, and $\fs=\fs_0$ is as described. If $p=7$ then $S$ is isomorphic to a Sylow $7$-subgroup of $\mathrm{G}_2(7)$ and $\fs$ is determined in \cite{parkersem}. Indeed, it can be gleaned from \cite[Table 1]{parkersem} that $\fs_0$ is either isomorphic to the $7$-fusion category of $\mathrm{G}_2(7)$ or $\fs_0$ is a simple exotic fusion system on $S$. In both cases, there exist fusion systems which satisfy the hypothesis of $\fs$ with $\fs_0\subset \fs$.

Suppose that outcome (iii) of \hyperlink{MainGrpThm}{Theorem C} holds. Comparing with the structure provided in \cref{b2ii}, it follows that $S=(O^2(G_1)\cap S)(O^2(G_2)\cap S)$ from which it follows that $O^2(O^{2'}(\fs))=O^{2'}(\fs)$ so $O^{2'}(\fs)$ is reduced. Comparing with the lists in \cite{OliSmall}, $O^{2'}(\fs)$ is isomorphic to the $2$-fusion system of $\PSp_6(3)$. Furthermore, by \cite[Proposition 6.4]{OliSmall}, the only saturated fusion system supported on a Sylow $2$-subgroup of $\PSp_6(3)$ with $O_2(\fs)=\{1\}$ is the fusion category of $\PSp_6(3)$, which has exactly two essential subgroups, both of which are $\Aut_{\fs}(S)$-invariant. By the Alperin--Goldschmidt theorem, we have that $\fs_0=\fs=O^{2'}(\fs)$ and the result holds.

Finally, assume that outcome (iv) of \hyperlink{MainGrpThm}{Theorem C} holds and $\mathcal{A}$ is not a weak BN-pair of rank $2$. Applying the structural results provided in \cref{b=1a}, \cref{b=1b} and \cref{b=1c}, we deduce that $S=(S\cap O^3(G_1))(S\cap O^3(G_2))$ and so $\fs=O^3(\fs)$ and $O_3(\fs)=\{1\}$. Since $|S|\leq 3^7$, $\fs$ is determined by the work in \cite{Comp1} and in all cases, there are only two essential subgroups of $\fs$, both of which are $\Aut_{\fs}(S)$-invariant so that $\fs_0=\fs$ is saturated.
\end{proof}

In the above theorem, whenever $\fs\ne \fs_0$, $\fs_0$ is obtained from $\fs$ by pruning (\cite[23]{Comp1}) a class of \emph{pearls} (see \cite{grazian}). Indeed, $\fs_0$ occurs as the fusion theoretic equivalent of the $C(G, S)$ subgroup \cite{CGT}, and $\fs$ is an odd prime equivalent to an \emph{obstruction to pushing up} in the sense of Aschbacher \cite{asch1}.

Given the conclusions of the main theorem, it is not hard to prove \hyperlink{CorA}{Corollary A}.

\begin{CorA}\hypertarget{CorA}{}
Suppose that $\fs$ is a saturated fusion system on a $p$-group $S$ such that $O_p(\fs)=\{1\}$. Assume that $\fs$ has exactly two essential subgroups $E_1$ and $E_2$. Then $N_S(E_1)=N_S(E_2)$ and writing $\fs_0:=\langle N_{\fs}(E_1), N_{\fs}(E_2)\rangle_{N_S(E_1)}$, $\fs_0$ is a saturated normal subsystem of $\fs$ and either
\begin{enumerate}
\item $\fs=\fs_0$ is determined by the \hyperlink{MainThm}{Main Theorem};
\item $p$ is arbitrary, $\fs_0$ is isomorphic to the $p$-fusion category of $H$, where $F^*(H)\cong\PSL_3(p^n)$, and $\fs$ is isomorphic to the $p$-fusion category of $G$ where $G$ is the extension of $H$ by a graph or graph-field automorphism;
\item $p=2$, $\fs_0$ is isomorphic to the $2$-fusion category of $H$, where $F^*(H)\cong\PSp_4(2^n)$, and $\fs$ is isomorphic to the $2$-fusion category of $G$ where $G$ is the extension of $H$ by a graph or graph-field automorphism; or
\item $p=3$, $\fs_0$ is isomorphic to the $3$-fusion category of $H$, where $F^*(H)\cong\mathrm{G}_2(3^n)$, and $\fs$ is isomorphic to the $3$-fusion category of $G$ where $G$ is the extension of $H$ by a graph or graph-field automorphism.
\end{enumerate}
\end{CorA}
\begin{proof}
Note that if both $E_1$ and $E_2$ are $\Aut_{\fs}(S)$-invariant then (appealing to \cref{MaxEssenAmal} to verify that $E_1$ and $E_2$ are maximally essential) $\fs=\fs_0$ is determined by the \hyperlink{MainThm}{Main Theorem}. Assume throughout that at least one of $E_1$ and $E_2$ is not $\Aut_{\fs}(S)$-invariant, and without loss of generality, $E_1$ is not $\Aut_{\fs}(S)$-invariant. Then $N_S(E_1)\alpha\le N_S(E_1\alpha)$ and since $E_1$ is fully $\fs$-normalized, it follows that $N_S(E_1)\alpha=N_S(E_1\alpha)$. Moreover, $E_1\alpha$ is also essential in $\fs$ and so $E_1\alpha=E_2$. By a similar reasoning, $E_2\alpha=E_1$, $\alpha^2\in N_{\fs}(E_1)\cap N_{\fs}(E_2)$ and both $E_1$ and $E_2$ are maximally essential. Suppose first that $p$ is odd. Then $S=N_S(E_1)=N_S(E_2)$ and by \cite[Lemma I.7.6(b)]{ako} and the Alperin--Goldschmidt theorem, $\fs_0$ is a saturated subsystem of $\fs$ of index $2$ and by \cite[Theorem I.7.7]{ako}, $\fs_0$ is normal in $\fs$. Hence, $O_p(\fs_0)$ is normalized by $\fs$ and as $O_p(\fs)=\{1\}$, $O_p(\fs_0)=\{1\}$ and $\fs_0$ is determined by the \hyperlink{MainThm}{Main Theorem}. 

Since there is $\alpha\in\Aut_{\fs}(S)$ such that $E_1\alpha=E_2$, we must have that $E_1\cong E_2$ as abstract $p$-groups. Thus, comparing with the \hyperlink{MainThm}{Main Theorem}, $\fs_0$ is isomorphic to the $p$-fusion category of $H$ where $F^*(H)$ is one of $\PSL_3(p^n)$ or $\mathrm{G}_2(3^n)$ (where $p>2$ is arbitrary or $p=3$ respectively). Indeed, since $\fs_0\normaleq \fs$, there is $\fs^0\normaleq \fs$ with $\fs^0$ isomorphic to the $p$-fusion category of $F^*(H)$ and supported on $S$. At this point, we can either apply \cite[Theorem A]{FSLie}; or recognize that the possible fusion systems correspond exactly to certain subgroups of $\Aut(F^*(H))$ containing $\Inn(F^*(H))$ by applying \cite[Theorem I.7.7]{ako}.

Suppose now that $p=2$. Then $N_S(E_1)=N_S(E_2)=E_1E_2$ has index $2$ in $S$. Let $G_i$ be a model for $N_{\fs}(E_i)$ for $i\in\{1,2\}$. Note that if there is $Q\le N_S(E_1)$ with $Q$ normal in both $N_{\fs}(E_1)$ and $N_{\fs}(E_2)$, then $Q\alpha=Q$ is normal in $\fs$. Since $O_2(\fs)=\{1\}$, we deduce that $Q$ is trivial. Moreover, applying \cite[(2.2.4)]{asch1}, $N_{G_1}(E_2)=N_{G_1}(N_S(E_2))$ is isomorphic to $N_{G_2}(E_1)=N_{G_2}(N_S(E_2)$ by an isomorphism fixing $N_S(E_1)$.

Hence, suppressing the necessary inclusion maps, we form the rank $2$ amalgam $\mathcal{A}:=\mathcal{A}(G_1, G_2, G_{12}^*)$ writing $G_{12}^*$ for the group gained by identifying $N_{G_1}(N_S(E_1))$ with $N_{G_2}(N_S(E_2))$ in the previously described isomorphism. Then \sloppy{$\fs_0=\langle \fs_{N_S(E_1)}(G_1), \fs_{N_S(E_2)}(G_2)\rangle=\fs_{N_S(E_1)}(G_1\ast_{G_{12}^*} G_2)$ by \cref{robinson}, $N_S(E_1)$ is a Sylow of $\fs_0$ and $O_2(\fs_0)=\{1\}$.} Moreover, $\mathcal{A}$ satisfies \cref{MainHyp} and since $E_2=E_1\alpha$, $E_1$ and $E_2$ are isomorphic as abstract $2$-groups. Then $G_1\ast_{G_{12}^*} G_2$ is locally isomorphic to $H$ where $H\in\bigwedge^0$ is as described after \cref{WBDef}, and $F^*(H)\cong \PSL_3(2^n)$ or $\PSp_4(2^n)$. Then by \cref{WBNFS}, $\fs_0$ is isomorphic to the $2$-fusion category of $Y$ and so $\fs_0$ is saturated. Moreover, applying \cite[Theorem I.7.4]{ako} and the Alperin--Goldschmidt theorem, $\fs_0$ is a normal subsystem of index $2$ in $\fs$. Again, there is $\fs^0\normaleq \fs$ with $\fs^0$ isomorphic to the $p$-fusion category of $F^*(H)$ and supported on $N_S(E_1)$ and we can either apply \cite[Theorem A]{FSLie}; or recognize that the possible fusion systems correspond exactly to certain subgroups of $\Aut(F^*(H))$ containing $\Inn(F^*(H))$ by applying \cite[Theorem I.7.7]{ako}.
\end{proof}
\section{The Amalgam Method}\label{AmalSec}

By the previous section, it suffices to prove \hyperlink{MainGrpThm}{Theorem C} and so we classify configurations under the following hypothesis which we fix for the remainder of this work. 

\begin{hypothesis}\label{MainHyp}
$\mathcal{A}:=\mathcal{A}(G_1, G_2, G_{12})$ is a characteristic $p$ amalgam of rank $2$ with faithful completion $G$ satisfying the following:
\begin{enumerate}
\item for $S\in\syl_p(G_{12})$, $N_{G_1}(S)=N_{G_2}(S)\le G_{12}$; and
\item writing $\bar{G_i}:=G_i/O_p(G_i)$, $\bar{G_i}$ contains a strongly $p$-embedded subgroup and if $\bar{G_i}/O_{3'}(\bar{G_i})\cong \Ree(3)$, $\bar{G_i}$ is $p$-solvable or $\bar{S}$ is generalized quaternion, then $N_{\bar{G_i}}(\bar{S})$ is strongly $p$-embedded in $\bar{G_i}$.
\end{enumerate}
\end{hypothesis}

From this point, our methodology is completely based in group theory. At various stages of the analysis, we refer to $\mathcal{A}$ or $G$ as being a minimal counterexample to the \hyperlink{ThmC}{Theorem C}. By this, we mean a counterexample in each case chosen such that $|G_1|+|G_2|$ is as small as possible. 

We assume \cref{MainHyp} and fix the following notation for the remainder of this work. We let $G=G_1\ast_{G_{12}} G_2$ and $\Gamma$ be the (right) coset graph of $G$ with respect to $G_1$ and $G_2$, with vertex set $V(\Gamma)=\{G_ig \mid g\in G, i\in\{1,2\}\}$ and $(G_ig, G_jh)$ an edge if $G_ig\ne G_jh$ and $G_ig\cap G_jh\ne\emptyset$ for $\{i,j\}=\{1,2\}$. It is clear that $G$ operates on $\Gamma$ by right multiplication. Throughout, we identify $\Gamma$ with its set of vertices, let $d(\cdot, \cdot)$ to be the usual distance on $\Gamma$ and observe the following notations.

\begin{notation}
\begin{itemize}
\item For $\delta\in\Gamma$, $\Delta^{(n)}(\delta)=\{\lambda\in\Gamma\mid d(\delta,\lambda)\leq n\}$. In particular, we have that $\Delta^{(0)}(\delta)=\{\delta\}$ and we write $\Delta(\delta):=\Delta^{(1)}(\delta)$.
\item For $\delta\in\Gamma$ and $\lambda\in\Delta(\delta)$, we let $G_{\delta}$ be the stabilizer in $G$ of $\delta$ and $G_{\delta, \lambda}$ be the stabilizer in $G$ of the edge $\{\delta, \lambda\}$.
\item For $\delta\in\Gamma$, $G_{\delta}^{(n)}$ is the largest normal subgroup of $G_{\delta}$ which fixes $\Delta^{(n)}(\delta)$ element-wise. In particular, $G_\delta=G_{\delta}^{(0)}$.
\end{itemize}
\end{notation}

The following proposition is elementary and its proof may be found in \cite[Chapter 3]{Greenbook}.  

\begin{proposition}
The following facts hold:
\begin{enumerate}
\item $G_{G_ig}=G_i^g$ so that every vertex stabilizer is conjugate in $G$ to either $G_1$ or $G_2$. In particular, $G$ has finite vertex stabilizers.
\item Each edge stabilizer of $\Gamma$ is conjugate in $G$ to $G_{12}$ in its action on $\Gamma$.
\item $\Gamma$ is a tree.
\item $G$ acts faithfully and edge transitively on $\Gamma$, but does not act vertex transitively.
\item For each edge $\{\lambda_1, \lambda_2\}$, $G=\langle G_{\lambda_1}, G_{\lambda_2}\rangle$.
\item For $\delta\in\Gamma$ such that $G_{\delta}=G_i^g$, we have that $\Delta(\delta)$ and $G_{\delta}/G_{12}^g$ are equivalent as $G_\delta$-sets. In particular, $G_\delta$ is transitive on $\Delta(\delta)\setminus \{\delta\}$.
\item $G_\delta$ is of characteristic $p$ for all $\delta\in\Gamma$.
\item If $\delta$ and $\lambda$ are adjacent vertices, then $\syl_p(G_{\delta,\lambda})\subseteq \syl_p(G_{\delta})\cap \syl_p(G_{\lambda})$.
\item If $\delta$ and $\lambda$ are adjacent vertices, then for $S\in\syl_p(G_{\delta,\lambda})$, $N_{G_{\delta}}(S)=N_{G_{\lambda}}(S)\le G_{\delta, \lambda}$.
\end{enumerate}
\end{proposition}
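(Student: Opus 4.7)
The plan is to verify the nine assertions in turn, most of which are instances of the Bass--Serre theory of groups acting on trees specialized to the free amalgamated product $G = G_1 *_{G_{12}} G_2$, supplemented by the two specific conditions imposed in \cref{MainHyp}.

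For (i), I would observe that $G$ acts on the set of right cosets of $G_i$ by right multiplication, and the stabilizer of $G_i g$ is the set of $h \in G$ with $G_i g h = G_i g$, i.e.\ $g h g^{-1} \in G_i$; this yields $G_{G_i g} = G_i^g = g^{-1} G_i g$. Every vertex of $\Gamma$ is a right coset of $G_1$ or of $G_2$, so each vertex stabilizer is a conjugate of one of the $G_i$. For (ii), the edge $\{G_1, G_2\}$ corresponds to the (non-empty) intersection $G_1 \cap G_2 = G_{12}$, so its stabilizer is $G_{12}$; an arbitrary edge $\{G_i g, G_j h\}$ with $G_i g \cap G_j h \ne \emptyset$ is of the form $\{G_1 k, G_2 k\}$ for some $k$, and so has stabilizer $G_{12}^k$.

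For (iii), that $\Gamma$ is a tree is the standard fact that the coset geometry of an amalgamated free product is the Bass--Serre tree; concretely, one uses normal form in $G_1 *_{G_{12}} G_2$ to show that any reduced edge-path between two vertices is unique, so there are no circuits, and connectedness follows because every element of $G$ is a word in $G_1 \cup G_2$. For (iv), edge-transitivity is immediate since every edge is a $G$-translate of $\{G_1, G_2\}$; $G$ cannot act vertex-transitively because the two $G$-orbits on vertices are exactly the cosets of $G_1$ and of $G_2$ (an element of $G$ cannot map a $G_1$-coset to a $G_2$-coset since their stabilizers are $G_i$-conjugates, and $G_1$ is not conjugate to $G_2$ in $G$). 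Faithfulness follows from the kernel of the action being contained in every vertex stabilizer, hence in every conjugate of $G_{12}$, hence in the largest subgroup of $G_{12}$ normal in $G$; by our standing assumption on faithful completions this is trivial.

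For (v), the subgroup $H := \langle G_{\lambda_1}, G_{\lambda_2}\rangle$ acts on the connected tree $\Gamma$, contains the edge stabilizer $G_{\lambda_1, \lambda_2}$, and acts transitively on the two ends of one edge; walking along the tree and using that each new edge has its stabilizer generated together with the previous vertex stabilizer, one sees that $H$ acts edge-transitively, and since $H$ contains both $G_{\lambda_1}$ and $G_{\lambda_2}$ it equals $G$. Assertion (vi) is the orbit-stabilizer theorem: $G_\delta$ acts on $\Delta(\delta)$ and by edge-transitivity of $G$ combined with (i), (ii) this action is transitive with point stabilizer $G_{\delta,\lambda} = G_{12}^g$. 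For (vii), each $G_\delta$ is conjugate to $G_1$ or $G_2$, which are characteristic $p$ by \cref{MainHyp}, and characteristic $p$ is preserved under conjugation. Finally, (viii) and (ix) are direct translations of conditions (i) and the Sylow hypothesis of \cref{MainHyp}: since $G_{\delta, \lambda}$ is $G$-conjugate to $G_{12}$ and $G_\delta, G_\lambda$ are correspondingly conjugate to $G_1, G_2$, the Sylow $p$-subgroup containment $\syl_p(G_{12}) \subseteq \syl_p(G_1) \cap \syl_p(G_2)$ and the equality $N_{G_1}(S) = N_{G_2}(S) \le G_{12}$ transport under conjugation. No step here is genuinely hard; the only point requiring care is the faithfulness in (iv), where one must be careful to invoke exactly the ``no non-trivial subgroup of $B$ is normal in $G$'' hypothesis built into the notion of faithful completion.
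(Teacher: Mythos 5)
Your proof is correct and follows the standard Bass--Serre / coset-geometry arguments for the tree of a free amalgamated product; the paper itself does not prove this proposition but cites \cite[Chapter 3]{Greenbook} for it. Two small remarks: for (iv), rather than appealing to non-conjugacy of $G_1$ and $G_2$ in $G$, it is cleaner to note that right multiplication preserves the partition of $V(\Gamma)$ into $G_1$-cosets and $G_2$-cosets and that these two classes are disjoint (if $G_1g=G_2h$ then $G_1=G_2$), so the two $G$-orbits on vertices are already visible in the definition; and for (v) the ``walking along the tree'' step can be replaced by the one-liner that, by (ii) and edge-transitivity, $\{\lambda_1,\lambda_2\}=\{G_1,G_2\}\cdot g$ for some $g\in G$, whence $\langle G_{\lambda_1},G_{\lambda_2}\rangle=\langle G_1^g,G_2^g\rangle=\langle G_1,G_2\rangle^g=G^g=G$.
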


The above proposition, along with \cref{BasicAmal} and \cref{crit pair}, holds in far greater generality than the situation enforced by \cref{MainHyp}.

The following notations will be used extensively throughout the rest of this work.

\begin{notation}\label{BasicAmalNot}
Set $\delta\in\Gamma$ to be an arbitrary vertex and $S\in\syl_p(G_{\delta})$.
\begin{itemize}
\item $L_{\delta}:=O^{p'}(G_{\delta})$.
\item $Q_\delta:=O_p(G_{\delta})=O_p(L_{\delta})$.
\item $\bar{L_{\delta}}:=L_{\delta}/Q_{\delta}$.
\item $Z_{\delta}:=\langle \Omega(Z(S))^{G_{\delta}}\rangle$.
\item For $n\in\N$, $V_{\delta}^{(n)}:=\langle Z_{\lambda}\mid d(\lambda, \delta)\leq n\rangle\normaleq G_{\delta}$, with the additional conventions $V_{\delta}^{(0)}=Z_{\delta}$ and $V_{\delta}:=V_{\delta}^{(1)}$.
\item $b_\delta:=\mathrm{min}_{\lambda\in\Gamma}\{d(\delta, \lambda)\mid Z_{\delta}\not\le G_{\lambda}^{(1)}\}$.
\item $b:=\mathrm{min}_{\delta\in\Gamma}\{b_\delta\}$.
\end{itemize}
\end{notation}

We refer to $b$ as the \emph{critical distance} of the amalgam. Indeed, as $G$ acts edge transitively on $\Gamma$ it follows that $b=\mathrm{min}\{b_\delta, b_\lambda\}$ where $\delta$ and $\lambda$ are any adjacent vertices in $\Gamma$. A \emph{critical pair} is any pair $(\delta, \lambda)$ such that $Z_\delta\not\le G_{\lambda}^{(1)}$ and $d(\delta, \lambda)=b$. This definition is not symmetric and so $(\lambda, \delta)$ is not necessarily a critical pair in this case.

It is clear from the definition that symplectic amalgams have critical distance $2$. It is remarkable that in all the examples we uncover, $b\leq 5$ and if $G$ does not have a weak BN-pair, then $b\leq 2$.

\begin{proposition}\label{BasicAmal}
The following facts hold:
\begin{enumerate}
\item $b\geq 1$ is finite.
\item We may choose $\{\alpha,\beta\}$ such that $\{G_{\alpha},G_\beta\}=\{G_1, G_2\}$ and $G_{\alpha,\beta}=G_{12}\ge N_{G_i}(S)$ for $i\in\{1,2\}$.
\item If $N\le G_{\alpha,\beta}$, $N_{G_{\alpha}}(N)$ operates transitively on $\Delta(\alpha)$ and $N_{G_{\beta}}(N)$ operates transitively on $\Delta(\beta)$, then $N=\{1\}$.
\item For $\delta\in\Gamma$, $\lambda\in\Delta(\delta)$ and $T\in\syl_p(G_{\delta, \lambda})$, no subgroup of $T$ is normal in $\langle L_{\delta}, L_{\lambda}\rangle$.
\item For $\delta\in\Gamma$ and $\lambda\in\Delta(\delta)$, there does not exist a non-trivial element $g\in G_{\delta, \lambda}$ with $gQ_{\delta}/Q_{\delta}\in Z(L_{\delta}/Q_{\delta})$ and $gQ_{\lambda}/Q_{\lambda}\in Z(L_{\lambda}/Q_{\lambda})$.
\item For $\delta\in\Gamma$ and $\lambda\in\Delta(\delta)$, $V_{\lambda}^{(i)}=\langle (V_{\delta}^{(i-1)})^{G_{\lambda}}\rangle$.
\end{enumerate}
\end{proposition}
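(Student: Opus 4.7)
The plan is to treat the six assertions in sequence, relying on the characteristic-$p$ hypothesis on each $G_\delta$ together with the faithful, edge-transitive action of $G$ on the coset tree $\Gamma$.

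For (i), the inequality $b\geq 1$ follows because in a characteristic-$p$ group $C_{G_\delta}(Q_\delta)\leq Q_\delta$, which combined with $[Z(S),Q_\delta]=1$ gives $Z(S)\leq Q_\delta$; taking the $G_\delta$-normal closure then yields $Z_\delta\leq Q_\delta$, and since $Q_\delta\normaleq G_\delta$ and $Q_\delta\leq S\leq G_{\delta,\lambda}$, it sits inside every $G_\delta$-conjugate of $G_{\delta,\lambda}$, hence inside the core $G_\delta^{(1)}$. Finiteness of $b$ is the contrapositive of faithfulness: if $Z_\delta$ fixed every vertex of $\Gamma$ it would lie in the kernel of $G$'s action on $\Gamma$, which is trivial by the definition of a faithful completion, whereas $Z_\delta\geq\Omega(Z(S))\neq\{1\}$. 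Part (ii) is immediate on taking $\{\alpha,\beta\}=\{G_1,G_2\}$ as the distinguished edge of the coset graph, for which $G_{\alpha,\beta}=G_{12}$ contains $N_{G_i}(S)$ by \cref{MainHyp}(i).

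Parts (iii) and (iv) are tree-theoretic. For (iii), transitivity of $N_{G_\alpha}(N)$ on $\Delta(\alpha)$ means every neighbor of $\alpha$ has the form $\beta^g$ for some $g\in N_{G_\alpha}(N)$, whence $N=N^g\leq G_{\alpha,\beta}^g=G_{\alpha,\beta^g}$ shows $N$ fixes all of $\Delta(\alpha)$; symmetrically, $N$ fixes $\Delta(\beta)$. Since every other vertex stabilizer is $G$-conjugate to $G_\alpha$ or $G_\beta$ and inherits the transitivity assumption, tree induction shows $N$ fixes $\Gamma$ pointwise, forcing $N=\{1\}$ by faithfulness. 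For (iv), given $N\leq T$ with $N\normaleq\langle L_\delta,L_\lambda\rangle$, we have $N\leq T\leq G_{\delta,\lambda}$; since $T\in\syl_p(L_\delta)$ the Frattini argument combined with $N_{G_\delta}(S)\leq G_{\delta,\lambda}$ gives $G_\delta=L_\delta N_{G_\delta}(T)=L_\delta G_{\delta,\lambda}$, so $L_\delta\leq N_{G_\delta}(N)$ acts transitively on $\Delta(\delta)$; the same holds at $\lambda$, and (iii) yields $N=\{1\}$.

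For (v), the first observation is that $Z(L_\delta/Q_\delta)$ is a $p'$-group, since any $p$-part would pull back to a normal $p$-subgroup of $G_\delta$ strictly larger than $Q_\delta$, contradicting $Q_\delta=O_p(G_\delta)$. Hence the images of $g$ in both quotients are $p'$-elements, and the canonical decomposition $g=g_pg_{p'}$ forces $g_p\in Q_\delta\cap Q_\lambda$; it then suffices to rule out the two cases $g_{p'}\neq 1$ and $1\neq g\in Q_\delta\cap Q_\lambda$. The strategy in each case is to assemble, from the centrality hypothesis together with coprime action (for example the A$\times$B-Lemma applied to $\langle g_{p'}\rangle$ acting on $Q_\delta$), a non-trivial $p$-subgroup of $T$ normalized by both $L_\delta$ and $L_\lambda$, contradicting (iv). Part (vi) follows from the tree-orbit identity $\Delta^{(i)}(\lambda)=\bigcup_{\delta'\in\Delta(\lambda)}\Delta^{(i-1)}(\delta')$ (valid for $i\geq 1$ in a tree), together with $G_\lambda$-transitivity on $\Delta(\lambda)\cong G_\lambda/G_{\delta,\lambda}$ and the equivariance $Z_\mu^g=Z_{\mu^g}$. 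The delicate point is (v): the natural candidate subgroup $\{g\in G_{\delta,\lambda} : gQ_\delta\in Z(\bar{L_\delta}),\ gQ_\lambda\in Z(\bar{L_\lambda})\}$ is not manifestly $L_\delta$-invariant (since $L_\delta$ moves $\lambda$), so honest $\langle L_\delta,L_\lambda\rangle$-normality of a $p$-subgroup must be extracted with care, and this is where the characteristic-$p$ hypothesis inside both stabilizers simultaneously becomes essential.
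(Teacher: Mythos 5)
Parts (i)--(iv) and (vi) are essentially right, though in (iii) the propagation of the transitivity hypothesis deserves to be spelled out: if $\gamma=\beta^g$ with $g\in N_{G_\alpha}(N)$, then $N_{G_\gamma}(N)=N_{G_{\beta^g}}(N^g)\supseteq N_{G_\beta}(N)^g$, and the latter is transitive on $\Delta(\beta)^g=\Delta(\gamma)$; iterating this outward along the tree is what makes the ``inheritance'' legitimate. Your (iv) correctly reduces to (iii) by the Frattini argument $G_\delta=L_\delta N_{G_\delta}(T)\leq L_\delta G_{\delta,\lambda}$, noting $L_\delta\leq N_{G_\delta}(N)$ when $N\normaleq\langle L_\delta,L_\lambda\rangle$.

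Part (v) is where you have a real gap. First, your reduction splits off the case $1\neq g\in Q_\delta\cap Q_\lambda$ as something to be ``ruled out,'' but this case cannot be ruled out: for any $g\in Q_\delta\cap Q_\lambda$ the images $gQ_\delta$ and $gQ_\lambda$ are trivially central, and $Q_\delta\cap Q_\lambda$ is nontrivial in essentially every configuration the paper studies (already in $\PSL_3(q)$ it has order $q$). The statement is therefore only tenable under the reading that the images of $g$ in $L_\delta/Q_\delta$ and $L_\lambda/Q_\lambda$ are \emph{nontrivial} central elements --- which is also the only reading compatible with every application in the paper, where $g$ is an involution and $p$ is odd. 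Under that reading, since $Z(L_\delta/Q_\delta)$ and $Z(L_\lambda/Q_\lambda)$ are $p'$-groups and $g_p\in Q_\delta\cap Q_\lambda$, one may replace $g$ by $g_{p'}$, which is forced to be nontrivial; the $p$-element case simply does not arise. Second, for the remaining $p'$-element case your sketch (A$\times$B-lemma, coprime action, extract a normal $p$-subgroup and contradict (iv)) is not carried out, and I don't think it's the natural route: one cannot expect $g$ to act nontrivially on $Q_\delta$ at all, so the $p$-subgroup you hope to manufacture may well be trivial. The clean argument is a direct application of (iii): since $[g,L_\delta]\leq Q_\delta$, the group $\langle g\rangle Q_\delta$ is normal in $L_\delta$ with normal $p$-complement $\langle g\rangle$; the Schur--Zassenhaus/Frattini argument then gives $L_\delta=Q_\delta N_{L_\delta}(\langle g\rangle)$, whence $G_\delta=L_\delta G_{\delta,\lambda}=N_{L_\delta}(\langle g\rangle)G_{\delta,\lambda}$, so $N_{G_\delta}(\langle g\rangle)$ is transitive on $\Delta(\delta)$, and symmetrically at $\lambda$. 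Now (iii) applied to $N=\langle g\rangle\leq G_{\delta,\lambda}$ forces $g=1$. No centralizer computation or A$\times$B-lemma is needed.
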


For the remainder of this work, we will often fix a critical pair $(\alpha, \alpha')$. As $\Gamma$ is a tree, we may set $\beta$ to be the unique neighbour of $\alpha$ with $d(\beta, \alpha')=b-1$. Then we label each vertex along the path from $\alpha$ to $\alpha'$ additively e.g. $\beta=\alpha+1$, $\alpha'=\alpha+b$. In this way we also see that $\beta$ may be written as $\alpha'-b+1$ and so we will often write vertices on the path from $\alpha'$ to $\alpha$ subtractively with respect to $\alpha'$. The following diagram better explains the situation.

\begin{center}
\begin{tikzpicture}
\draw[black, thick] (-6.5,0) -- (-0.5,0);
\draw[black, thick] (0.5,0) -- (6.5,0);
\filldraw[black] (7.25,0) circle (0.5pt) node[anchor=south] {};
\filldraw[black] (7,0) circle (0.5pt) node[anchor=south] {};
\filldraw[black] (6.75,0) circle (0.5pt) node[anchor=south] {};
\filldraw[black] (6,0) circle (2pt) node[anchor=south] {$\alpha-1$};
\filldraw[black] (4.5,0) circle (2pt) node[anchor=south] {$\alpha$};
\filldraw[black] (3,0) circle (2pt) node[anchor=south] {$\beta$};
\filldraw[black] (1.5,0) circle (2pt) node[anchor=south] {$\alpha+2$};
\filldraw[black] (0.25,0) circle (0.5pt) node[anchor=south] {};
\filldraw[black] (0,0) circle (0.5pt) node[anchor=south] {};
\filldraw[black] (-0.25,0) circle (0.5pt) node[anchor=south] {};
\filldraw[black] (-1.5,0) circle (2pt) node[anchor=south] {$\alpha'-2$};
\filldraw[black] (-3,0) circle (2pt) node[anchor=south] {$\alpha'-1$};
\filldraw[black] (-4.5,0) circle (2pt) node[anchor=south] {$\alpha'$};
\filldraw[black] (-6.0,0) circle (2pt) node[anchor=south] {$\alpha'+1$};
\filldraw[black] (-7.25,0) circle (0.5pt) node[anchor=south] {};
\filldraw[black] (-7,0) circle (0.5pt) node[anchor=south] {};
\filldraw[black] (-6.75,0) circle (0.5pt) node[anchor=south] {};
\end{tikzpicture}
\end{center}

\begin{lemma}\label{crit pair}
Let $\delta\in\Gamma$, $(\alpha, \alpha')$ be a critical pair, $T\in\syl_p(G_\alpha)$ and $S\in\syl_p(G_{\alpha,\beta})$. Then
\begin{enumerate}
\item $Q_\delta\le G_{\delta}^{(1)}$;
\item $Z_{\alpha'}\le G_\alpha$, $Z_{\alpha}\le G_{\alpha'}$ and $[Z_{\alpha}, Z_{\alpha'}]\le Z_{\alpha}\cap Z_{\alpha'}$;
\item $Z_\alpha\ne \Omega(Z(T))$; and
\item if $\Omega(Z(S))$ is centralized by $L\le G_\beta$ such that $L$ acts transitively on $\Delta(\beta)$, then $Z(L_\alpha)=\{1\}$.
\end{enumerate}
\end{lemma}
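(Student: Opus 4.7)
The plan is to handle the four parts sequentially, leaning on \cref{BasicAmal} throughout. For (i), the idea is that the action of $G_\delta$ on $\Delta(\delta)$ is equivalent to its action on $G_\delta/G_{\delta,\lambda}$ for $\lambda\in\Delta(\delta)$, and since $\syl_p(G_{\delta,\lambda})\subseteq\syl_p(G_\delta)$, the index $[G_\delta:G_{\delta,\lambda}]=|\Delta(\delta)|$ is coprime to $p$. Since $Q_\delta$ is a normal $p$-subgroup of $G_\delta$, every $Q_\delta$-orbit on $\Delta(\delta)$ has $p$-power length, yet these orbits partition a set of size coprime to $p$, so each orbit is a singleton and $Q_\delta\le G_\delta^{(1)}$.

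For (ii), I would exploit the minimality of $b=\min_\delta b_\delta$, which forces $b_\alpha\geq b$ and $b_{\alpha'}\geq b$. Since $d(\alpha,\alpha'-1)=b-1<b$, the definition of $b_\alpha$ gives $Z_\alpha\le G_{\alpha'-1}^{(1)}$, and this subgroup fixes $\Delta(\alpha'-1)\ni\alpha'$ pointwise, so $Z_\alpha\le G_{\alpha'}$; exactly the symmetric argument applied at $\beta=\alpha+1$ yields $Z_{\alpha'}\le G_\alpha$. The commutator inclusion is then a one-line consequence of the fact that $Z_\alpha\normaleq G_\alpha$ and $Z_{\alpha'}\normaleq G_{\alpha'}$: since $Z_{\alpha'}\le G_\alpha$ we get $[Z_\alpha,Z_{\alpha'}]\le Z_\alpha$, and by the symmetric containment $[Z_\alpha,Z_{\alpha'}]\le Z_{\alpha'}$.

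Part (iii) will be proved by contradiction. Assume $Z_\alpha=\Omega(Z(T))$. This forces $\Omega(Z(T))$ to be $G_\alpha$-invariant, and since for every $g\in G_\alpha$ one has $\Omega(Z(T))=\Omega(Z(T^g))$, the subgroup $\Omega(Z(T))$ is centralized by each Sylow $p$-subgroup of $G_\alpha$. Therefore $C_{L_\alpha}(Z_\alpha)$ contains a Sylow $p$-subgroup of $L_\alpha$, so $L_\alpha/C_{L_\alpha}(Z_\alpha)$ is a $p'$-group; but $L_\alpha=O^{p'}(L_\alpha)$, hence $L_\alpha=C_{L_\alpha}(Z_\alpha)$ and $Z_\alpha\le Z(L_\alpha)\cap Q_\alpha$. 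The plan is then to combine this with part (ii)---so that $Z_\alpha$ sits in $G_{\alpha'}$---to produce a non-trivial subgroup of $G_{\alpha,\beta}$ whose normalizers in $G_\alpha$ and $G_\beta$ are transitive on $\Delta(\alpha)$ and $\Delta(\beta)$, contradicting \cref{BasicAmal}(3). The delicate point, and the main obstacle, is producing the $G_\beta$-transitive normalizer: the candidate subgroup is $\Omega(Z(L_\alpha))\cap \Omega(Z(S))$, which is centralized by $L_\alpha$, normal in $G_\alpha$, and (via the fact that $T\in\syl_p(G_\beta)$ normalizes $\Omega(Z(T))$ together with the hypothesis $N_{G_\beta}(S)\le G_{\alpha,\beta}$) is also normalized by a large enough subgroup of $G_\beta$ to apply \cref{BasicAmal}(3).

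For (iv), the argument mirrors the tail of the previous paragraph but in a cleaner form. Set $W:=\Omega(Z(L_\alpha))$; since $L_\alpha$ has characteristic $p$ one has $Z(L_\alpha)\le Q_\alpha$, so $W\le \Omega(Z(S))$ and thus $W$ is centralized by the hypothesized $L$. Moreover $W$ is characteristic in $L_\alpha$ and hence normal in $G_\alpha$, so $N_{G_\alpha}(W)=G_\alpha$ is transitive on $\Delta(\alpha)$, while $N_{G_\beta}(W)\ge L$ is transitive on $\Delta(\beta)$. Also $W\le Q_\alpha\le G_\alpha^{(1)}\le G_\beta$ by (i), so $W\le G_{\alpha,\beta}$, and \cref{BasicAmal}(3) forces $W=\{1\}$. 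As $Z(L_\alpha)$ is a $p$-group, $W=\{1\}$ implies $Z(L_\alpha)=\{1\}$.
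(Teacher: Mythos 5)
Parts (ii) and (iv) are handled correctly and essentially as in the paper. Part (i) has a small logical slip: from ``the $Q_\delta$-orbits have $p$-power length and partition a set of size coprime to $p$'' one cannot conclude that every orbit is a singleton — a set of size $5$ could split into $Q_\delta$-orbits of sizes $1$ and $4$ when $p=2$. You must also invoke the normality of $Q_\delta$ to force all orbits to have equal length (they are blocks permuted by the transitive group $G_\delta$), and only then does coprimality force singletons. The paper's route is more direct: $Q_\delta$ is a normal $p$-subgroup, hence lies in every Sylow $p$-subgroup of $G_\delta$, hence in $T_\lambda\le G_{\delta,\lambda}\le G_\lambda$ for each $\lambda\in\Delta(\delta)$.

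The real gap is in (iii), which you yourself flag as incomplete, and the obstacle you identify (producing a $G_\beta$-transitive normalizer) is genuinely where your approach stalls — there is no a priori subgroup of $G_\beta$ centralizing $\Omega(Z(S))$ and transitive on $\Delta(\beta)$, so routing (iii) through \cref{BasicAmal}(iii) as in part (iv) does not work without a hypothesis that isn't available. The argument you're missing is much more elementary and doesn't touch $Z(L_\alpha)$ at all: if $Z_\alpha=\Omega(Z(T))$, then since $Z_\alpha\normaleq G_\alpha$ and all Sylow $p$-subgroups of $G_\alpha$ are $G_\alpha$-conjugate, $Z_\alpha=\Omega(Z(S))$. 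But $S\in\syl_p(G_{\alpha,\beta})\subseteq\syl_p(G_\beta)$, so by the very definition of $Z_\beta$ one has $Z_\alpha=\Omega(Z(S))\le Z_\beta$; and since $d(\beta,\alpha')=b-1<b\le b_\beta$, minimality of $b$ gives $Z_\beta\le G_{\alpha'}^{(1)}$, so $Z_\alpha\le G_{\alpha'}^{(1)}$, contradicting the definition of the critical pair $(\alpha,\alpha')$. Your observation that $Z_\alpha\le Z(L_\alpha)$ under this hypothesis is correct but leads down a harder path, whereas the inclusion $Z_\alpha\le Z_\beta$ combined with the minimality of $b$ settles the matter immediately.
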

\begin{proof}
For all $\lambda\in\Delta(\delta)$, we have that $Q_\delta\le T_\lambda\in\syl_p(G_\lambda\cap G_\delta)$ and $Q_\delta\le G_\lambda$. Since $Q_\delta\normaleq G_\delta$, it follows immediately that $Q_\delta\le G_{\delta}^{(1)}$. By the minimality of $b$, we have that $Z_{\alpha'}\le G_{\beta}^{(1)}\le G_\alpha$ and similarly $Z_{\alpha}\le G_{\alpha'-1}^{(1)}\le G_{\alpha'}$. In particular, $Z_\alpha$ normalizes $Z_{\alpha'}$ and vice versa, so that $[Z_{\alpha}, Z_{\alpha'}]\le Z_{\alpha}\cap Z_{\alpha'}$.

Suppose that $Z_\alpha=\Omega(Z(T))$. Then $Z_{\alpha}=\Omega(Z(S))$ by the transitivity of $G_{\alpha}$. By definition and minimality of $b$, $Z_\alpha\le Z_\beta\le G_{\alpha'}^{(1)}$, a contradiction. Finally, suppose that $\Omega(Z(S))$ is centralized by $L\le G_\beta$ such that $L$ acts transitively on $\Delta(\beta)$. Since $Q_{\alpha}$ is self-centralizing, it follows that $Z(L_\alpha)$ is a $p$-group and so $\Omega(Z(L_\alpha))\le \Omega(Z(S))$ and $L$ centralizes $\Omega(Z(L_\alpha))$. Then \cref{BasicAmal} (iii) implies that $\Omega(Z(L_\alpha))=\{1\}$, and so $Z(L_\alpha)=\{1\}$.
\end{proof}

As described at the end of  \cref{GrpSec}, we can guarantee cubic action on a faithful module for $\bar{L_{\delta}}$ for $\delta$ at least one of $\alpha,\beta$. We use critical subgroups to achieve this and refer to \cref{CriticalSubgroup} for their properties. We describe this in the following proposition.

\begin{proposition}\label{CriticalAmalgam}
There is $\lambda\in\Gamma$ such that there is a $\bar{G_\lambda}$-module $V$ on which $p'$-elements of $\bar{G_\lambda}$ act faithfully and a $p$-subgroup $C$ of $\bar{G_\lambda}$ such that $[V, C, C, C]=\{1\}$.
\end{proposition}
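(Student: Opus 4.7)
The plan is the following. Let $(\alpha,\alpha')$ be a critical pair and take $\lambda=\alpha'$. Apply \cref{CriticalSubgroup} to $Q_{\alpha'}$ to obtain a Thompson critical subgroup $C_0$, which is characteristic in $Q_{\alpha'}$, has $[C_0,C_0,C_0]=1$ and $[C_0,Q_{\alpha'}]\le Z(C_0)$, satisfies $C_{Q_{\alpha'}}(C_0)\le C_0$, and is coprime-automorphism faithful in $Q_{\alpha'}$. Since $C_0$ is characteristic in the normal subgroup $Q_{\alpha'}$, it is normal in $G_{\alpha'}$. Pick a $G_{\alpha'}$-chief series $C_0=D_0>D_1>\cdots>D_n=1$ and set $V:=\bigoplus_{i=0}^{n-1}D_i/D_{i+1}$. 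Each chief factor is an elementary abelian $\GF(p)$-module and is centralized by the normal $p$-subgroup $Q_{\alpha'}$, so $V$ is a $\GF(p)\bar{G_{\alpha'}}$-module.

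Next I would verify faithfulness of the $p'$-action. A $p'$-element $\bar x$ of $\bar{G_{\alpha'}}$ lifts by Schur--Zassenhaus to a $p'$-element $x\in G_{\alpha'}$. If $\bar x$ centralizes $V$ then $x$ centralizes each chief factor $D_i/D_{i+1}$, and coprime action applied inductively down the chain forces $[C_0,x]=1$. Coprime-automorphism faithfulness in \cref{CriticalSubgroup}(v) then gives $[Q_{\alpha'},x]=1$, and since $G_{\alpha'}$ has characteristic $p$ we deduce $x\in C_{G_{\alpha'}}(Q_{\alpha'})\le Q_{\alpha'}$, whence $\bar x=1$. A virtually identical argument shows $C_{G_{\alpha'}}(C_0)=Z(C_0)\le Q_{\alpha'}$: any $p'$-element centralizing $C_0$ is trivial, so $C_{G_{\alpha'}}(C_0)$ is a normal $p$-subgroup of $G_{\alpha'}$, hence contained in $Q_{\alpha'}$, and then in $C_{Q_{\alpha'}}(C_0)\le C_0$.

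For the cubic $p$-subgroup I take $C:=\overline{Z_\alpha}\le\bar{G_{\alpha'}}$; by \cref{crit pair}(ii), $Z_\alpha\le G_{\alpha'}$, and by the discussion preceding \cref{crit pair} (namely $\Omega(Z(S))\le C_S(Q_\alpha)=Z(Q_\alpha)$), $Z_\alpha\le Z(Q_\alpha)$ is elementary abelian. The choice $\lambda=\alpha'$ is what guarantees the conclusion is not vacuous: by the definition of a critical pair, $Z_\alpha\not\le G_{\alpha'}^{(1)}$, so a fortiori $Z_\alpha\not\le Q_{\alpha'}$, i.e.\ $C\ne 1$, and by the previous paragraph $Z_\alpha$ cannot centralize $C_0$ either, because $C_{G_{\alpha'}}(C_0)=Z(C_0)\le Q_{\alpha'}$. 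The cubic bound $[V,C,C,C]=1$ reduces factor by factor to $[C_0,Z_\alpha,Z_\alpha,Z_\alpha]\le D_{i+1}$ inside $D_i$, and ultimately to $[C_0,Z_\alpha,Z_\alpha,Z_\alpha]=1$. This I would extract from the class-two structure of $C_0$ by splitting $Z_\alpha=(Z_\alpha\cap Q_{\alpha'})R$: one commutator with $Z_\alpha\cap Q_{\alpha'}$ drops $C_0$ into $Z(C_0)$ via $[C_0,Q_{\alpha'}]\le Z(C_0)$, while the remaining iterations are absorbed using $[Z(C_0),C_0]=1$, the abelian exponent-$p$ nature of $Z_\alpha$, and a Hall--Witt style manipulation tracking how the $R$-piece moves $Z(C_0)$.

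The principal obstacle is the last iterated commutator calculation and, on the side, refining the chief series so that non-triviality of the action of $Z_\alpha$ on $C_0$ actually registers on $V$ rather than getting smeared across the whole series; if the direct-sum construction above were to collapse, one would instead take $V$ to include $C_0/\Phi(C_0)[C_0,Q_{\alpha'}]$ as a first term, which is elementary abelian, is a bona fide $\bar{G_{\alpha'}}$-module by construction, and has $\bar{Z_\alpha}$ acting non-trivially precisely because $Z_\alpha\not\le Z(C_0)$. Either way the conclusion of the proposition is achieved at $\lambda=\alpha'$ with the module $V$ above and $C=\bar{Z_\alpha}$.
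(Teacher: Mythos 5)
The cubic commutator is the gap, and it is a real one: nothing in your setup forces $[C_0, Z_\alpha, Z_\alpha, Z_\alpha]=\{1\}$ for the \emph{fixed} choice $\lambda=\alpha'$ and $C=\bar{Z_\alpha}$. Your structural handles are $[C_0,Q_{\alpha'}]\le Z(C_0)$ and $[C_0,C_0]\le Z(C_0)$ from \cref{CriticalSubgroup}, together with the fact that $Z_\alpha\le G_{\alpha'}$ normalizes $C_0$ (since $C_0$ is characteristic in $Q_{\alpha'}\normaleq G_{\alpha'}$). These give $[C_0,Z_\alpha]\le C_0$, and for $Z_\alpha\cap Q_{\alpha'}$ they give one step of descent into $Z(C_0)$; but for an element of $Z_\alpha\setminus Q_{\alpha'}$ --- precisely the part that makes $C$ nontrivial --- there is no quadratic control at all. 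Nothing prevents $C_0$ from being elementary abelian of rank $\geq 4$ with $\bar{Z_\alpha}$ acting through a unipotent element of Jordan type $\geq 4$ (possible once $p\geq 5$), in which case $[V,C,C,C]\ne\{1\}$. The promised ``Hall--Witt style manipulation'' is not a proof, and you flag the step yourself as the principal obstacle; as written it does not close.

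The paper's proof does not commit to $\lambda=\alpha'$ and $C=\bar{Z_\alpha}$. It places a critical subgroup $K_\nu$ at \emph{every} vertex $\nu$ on the path from $\alpha$ to $\alpha'$, and picks a pair $(\mu,\lambda)$ on the path with $K_\mu\not\le Q_\lambda$ at minimal distance $c$ (the minimum exists because $Z_\alpha\le Z(Q_\alpha)\le C_{Q_\alpha}(K_\alpha)\le K_\alpha$, so $K_\alpha\not\le Q_{\alpha'}$). Minimality of $c$ yields two containments simultaneously: $K_\mu\le Q_\nu\le G_\lambda$ for the neighbour $\nu$ of $\lambda$ towards $\mu$, so $K_\mu$ has nontrivial image $C$ in $\bar{G_\lambda}$; and symmetrically $K_\lambda\le Q_{\nu'}\le G_\mu$ for the neighbour $\nu'$ of $\mu$, so $K_\lambda$ normalizes $Q_\mu$ and hence its characteristic subgroup $K_\mu$. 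Now $[K_\lambda,K_\mu]\le K_\mu$, and since $K_\mu$ has class at most $2$, $[K_\lambda,K_\mu,K_\mu,K_\mu]\le[K_\mu,K_\mu,K_\mu]=\{1\}$. Taking $V=K_\lambda/\Phi(K_\lambda)$ and $C$ the image of $K_\mu$ finishes; $p'$-faithfulness follows from \cref{CriticalSubgroup}(v) and \cref{burnside}, essentially as you argue. The point you are missing is that the cubic bound comes not from $Z_\alpha$ being abelian, nor from $C_0$ having class $\leq 2$, but from $K_\mu$ having class $\leq 2$ \emph{combined with} the normalization $[K_\lambda,K_\mu]\le K_\mu$ manufactured by the minimal choice of the pair; $\bar{Z_\alpha}$ sits in $\bar{G_{\alpha'}}$ with no such a priori relation to $K_{\alpha'}$.
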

\begin{proof}
Let $(\alpha, \dots,\alpha')$ be a path in $\Gamma$ with $(\alpha, \alpha')$ a critical pair. For each $\lambda\in (\alpha,\dots, \alpha')$, set $K_{\lambda}$ to be a critical subgroup of $Q_{\lambda}$. Since $Z_{\alpha}\le K_{\alpha}$, we must have that $K_{\alpha}\not\le Q_{\alpha'+1}$. Set $c:=\{\mathrm{min}(d(\mu, \lambda)) \mid K_{\mu}\not\le Q_{\lambda}, \mu,\lambda\in(\alpha,\dots,\alpha'+1)\}$. Choose a pair $(\mu, \lambda)$ such that $K_{\mu}\not\le Q_{\lambda}$ and $d(\mu, \lambda)=c$. Then, by minimality of $c$, $K_{\mu}\le G_{\lambda}$ but $K_{\mu}\not\le Q_{\lambda}$ and from the definition of a critical subgroup, $p'$-elements of $\bar{G_{\lambda}}$ act faithfully on the $\bar{G_\lambda}$-module $K_{\lambda}/\Phi(K_{\lambda})$. Moreover, again by minimality, $K_{\lambda}$ normalizes $K_{\mu}$ so that $[K_{\lambda}, K_{\mu}, K_{\mu}, K_{\mu}]\le [K_{\mu}, K_{\mu}, K_{\mu}]=\{1\}$, as required.
\end{proof}

\begin{remark}
It seems likely that the above observation could also be proved using the results on Glauberman's $K$-infinity groups \cite[Theorem A]{Kinf}. Furthermore, following the above proof, we see that either $K_{\mu}$ acts quadratically on $K_{\lambda}$; or $K_{\lambda}\not\le Q_{\mu}$ and $K_{\lambda}$ acts cubically on $K_{\mu}$.
\end{remark}

We are now in a good position to apply Hall--Higman style arguments whenever $p\geq 5$. We get the following fact almost immediately from \cref{CubicAction}.

\begin{corollary}\label{CriticalFusion}
Suppose that $p\geq 5$, and $\bar{L_{\alpha}}$ and $\bar{L_{\beta}}$ have strongly $p$-embedded subgroups. Then, for some $\lambda\in\{\alpha, \beta\}$, one of the following holds:
\begin{enumerate}
    \item $p\geq 5$ is arbitrary and $\bar{L_{\lambda}}\cong \PSL_2(p^n), \SL_2(p^n), \PSU_3(p^n)$ or $\SU_3(p^n)$ for $n\in\N$; or
    \item $p=5$ and $\bar{L_{\lambda}}\cong 3\cdot\Alt(6)$ or $3\cdot\Alt(7)$.
\end{enumerate} 
\end{corollary}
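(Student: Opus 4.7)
The plan is to combine \cref{CriticalAmalgam} with \cref{CubicAction} in a fairly direct way. First, I would apply \cref{CriticalAmalgam} to obtain a vertex $\lambda\in\Gamma$, a $\bar{G_{\lambda}}$-module $V$ on which $p'$-elements of $\bar{G_{\lambda}}$ act faithfully, and a non-trivial $p$-subgroup $C\le \bar{G_{\lambda}}$ with $[V, C, C, C]=\{1\}$. Since $G$ is edge-transitive on $\Gamma$, the vertex $\lambda$ is $G$-conjugate to either $\alpha$ or $\beta$, so $\bar{L_{\lambda}}$ is isomorphic to $\bar{L_{\alpha}}$ or $\bar{L_{\beta}}$ and therefore contains a strongly $p$-embedded subgroup by hypothesis.

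Next, I would check that $V$ is faithful for $\bar{G_{\lambda}}$, and hence for $\bar{L_{\lambda}}$. Since $G_{\lambda}$ is of characteristic $p$, $O_p(\bar{G_{\lambda}})=\{1\}$ (the preimage of $O_p(G_\lambda/Q_\lambda)$ in $G_\lambda$ is a normal $p$-subgroup, hence lies in $Q_\lambda$). The kernel $C_{\bar{G_{\lambda}}}(V)$ is normal in $\bar{G_{\lambda}}$ and contains no non-trivial $p'$-element (by the $p'$-faithfulness of the action), so it is a normal $p$-subgroup of $\bar{G_{\lambda}}$, forcing $C_{\bar{G_{\lambda}}}(V)=\{1\}$. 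Since $L_{\lambda}=O^{p'}(G_{\lambda})$ contains every $p$-subgroup of $G_{\lambda}$, we also have $C\le \bar{L_{\lambda}}$. As $C\ne\{1\}$ and $V$ is faithful, we may choose $s\in C$ of order $p$ with $[V,s]\ne\{1\}$, and by construction $[V,s,s,s]\le [V,C,C,C]=\{1\}$.

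Finally, noting that $\bar{L_{\lambda}}=O^{p'}(\bar{G_{\lambda}})=O^{p'}(\bar{L_{\lambda}})$, I would apply \cref{CubicAction} to the triple $(\bar{L_{\lambda}}, V, s)$. Since $p\ge 5$, the conclusion of that result is exactly that $\bar{L_{\lambda}}\cong\mathrm{(P)SL}_2(p^n)$ or $\mathrm{(P)SU}_3(p^n)$ for some $n\in\N$, or $p=5$ and $\bar{L_{\lambda}}\cong 3\cdot\Alt(6)$ or $3\cdot\Alt(7)$, matching the two outcomes of the corollary.

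I do not expect a serious obstacle: the corollary is essentially a transcription of the Hall--Higman-based classification \cref{CubicAction} into the amalgam-theoretic language once the critical-subgroup machinery of \cref{CriticalAmalgam} has produced the cubically acting $p$-subgroup. The only subtle point is ensuring that the action of $\bar{L_{\lambda}}$ on $V$ is faithful so that \cref{CubicAction} may be applied, and that the critical-subgroup construction guarantees $C$, and hence some $s\in C$, acts non-trivially; both are handled by the characteristic-$p$ hypothesis on $G_\lambda$ together with the $p'$-faithfulness built into the definition of a critical subgroup.
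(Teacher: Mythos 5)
Your proof is correct and follows essentially the same route as the paper: apply \cref{CriticalAmalgam} to produce a module $V$ with a cubically acting $p$-subgroup, upgrade $p'$-faithfulness to full faithfulness using the characteristic-$p$ hypothesis, transfer to $\alpha$ or $\beta$ by conjugacy, and invoke \cref{CubicAction}. The paper phrases the faithfulness step directly in terms of $L_\lambda$ rather than passing through $\bar{G_\lambda}$, but the underlying argument is identical.
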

\begin{proof}
By \cref{CriticalAmalgam}, there is a $p$-element $x\in\bar{L_{\lambda}}$ which acts cubically on $K_{\lambda}/\Phi(K_{\lambda})$. Suppose there is $y\in L_{\lambda}$ such that $[y, K_{\lambda}]\le \Phi(K_{\lambda})$. Since $K_{\lambda}$ is a critical subgroup, by coprime action, $y$ is a $p$-element so that $C_{L_{\lambda}}(K_{\lambda}/\Phi(K_{\lambda}))$ is a normal $p$-subgroup. In particular, $\bar{L_{\lambda}}$ acts faithfully on $K_{\lambda}/\Phi(K_{\lambda})$ and so we may apply \cref{CubicAction} and the result holds.
\end{proof}

In the following proposition, we essentially rid ourselves of the possibility that one of $G_\alpha$ or $G_\beta$ involves $\Sz(32):5$. This group is problematic in much the same way as the $\Ree(3)$ and $p$-solvable cases we exclude in the hypothesis of the \hyperlink{MainThm}{Main Theorem} in that there may arise situations in which the centralizer of a non-central chief factor is not $p$-closed. The following proposition asserts that this is not the case. We also believe that this result could also be extended to get rid of the $p$-solvable case whenever $p\geq 5$. The $\Ree(3)$ case when $p=3$ would rely on some control of transfer results at the prime $3$ which the author is unaware of.

\begin{proposition}\label{Ree3}
For any $\lambda\in\Gamma$ and $S\in\syl_5(G_\lambda)$, if $\bar{L_{\lambda}}/O_{5'}(\bar{L_{\lambda}})\cong \Sz(32):5$, then $O^5(L_\lambda)\le \langle x^{G_{\lambda}}\rangle$ for any $x\in S\setminus Q_\lambda$.
\end{proposition}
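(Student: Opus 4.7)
Set $N := \langle x^{G_\lambda}\rangle$, let $K$ be the preimage in $L_\lambda$ of $O_{5'}(\bar{L_\lambda})$, and set $M := O^5(L_\lambda)$. Since $|G_\lambda/L_\lambda|$ is coprime to $5$, we have $S \le L_\lambda$, and because $Q_\lambda$ is the Sylow $5$-subgroup of the $5$-nilpotent group $K$, the hypothesis $x \in S \setminus Q_\lambda$ forces $x \notin K$. By hypothesis $L_\lambda/K \cong \Sz(32):5$, and since $\Sz(32)$ is perfect, $M$ is the preimage of $\Sz(32) = O^5(\Sz(32):5)$ and $|L_\lambda : M| = 5$. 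The goal is to prove $M \le N$.

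The first step is formal: because $\Sz(32)$ is simple and $\Sz(32):5/\Sz(32)$ has prime order, the only normal subgroups of $L_\lambda/K \cong \Sz(32):5$ are $1$, $\Sz(32)$, and $\Sz(32):5$. The image of the normal subgroup $N \cap L_\lambda$ in $L_\lambda/K$ contains the non-trivial $5$-element $xK$, hence contains $\Sz(32) = M/K$, giving $(N \cap L_\lambda)K \supseteq M$.

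The crux is to upgrade this to $M \le N$, i.e., to show $K \le N$. Since $M/K$ is perfect and covered by the image of $N \cap L_\lambda$, one may select finitely many $L_\lambda$-conjugates $x_1, \dots, x_r$ of $x$ lying in $N$ whose images modulo $K$ generate $\Sz(32)$; write $H := \langle x_1, \dots, x_r\rangle$, so $H \le N$ and $HK = M$. Then $[K, H] \le K \cap N$, and coprime action of the $5$-part of $H$ on the $5'$-section $K/Q_\lambda$ gives
\[
K/Q_\lambda = C_{K/Q_\lambda}(H) \cdot [K/Q_\lambda, H] \le C_{K/Q_\lambda}(H) \cdot (K \cap N)Q_\lambda/Q_\lambda.
\]
The characteristic $5$ property $F^*(L_\lambda) = Q_\lambda$ yields $C_{L_\lambda}(Q_\lambda) \le Q_\lambda$, so any Hall $5'$-complement $T$ of $Q_\lambda$ in $K$ (produced by Schur--Zassenhaus applied to the coprime pair $Q_\lambda \normaleq K$) acts faithfully on $Q_\lambda$. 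Exploiting that $H$ covers the simple perfect section $M/K \cong \Sz(32)$ acting non-trivially on $Q_\lambda$, any non-trivial $H$-centralized portion of $K/Q_\lambda$ would lift to a piece of $T$ on which $\Sz(32)$ acts trivially, contradicting the faithfulness of $T$ on $Q_\lambda$. Hence $C_{K/Q_\lambda}(H) = 1$ and $K \le NQ_\lambda$. A final coprime-action argument with $T \le K \cap N$ on the $5$-group $Q_\lambda$, again via characteristic $5$, absorbs $Q_\lambda$ into $N$, completing $M \le N$.

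The hard part is the centralizer-elimination step: a priori the simple perfect section $\Sz(32)$ could act trivially on the intermediate $5'$-layer $K/Q_\lambda$, and ruling this out requires carefully extracting enough faithfulness from the characteristic $5$ property of $L_\lambda$ at the bottom layer $Q_\lambda$ and transferring it up through $T$.
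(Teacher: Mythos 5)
There is a genuine gap, and the approach is wrong in kind. Your proof is entirely local to $G_\lambda$, but the paper's proof is a minimal-counterexample argument over the whole amalgam: it invokes Glauberman's $K^\infty$-theory to produce quadratic action on a chief factor at a vertex, uses the strongly $p$-embedded classification together with \cref{CriticalFusion} to pin down $\bar{L_\mu}$, applies Glauberman's Theorem B (a transfer theorem) to compare $O^5(G_\mu)$ with $O^5(G_{\lambda,\mu})$, and finally constructs an index-$5$ subamalgam with $\bar{L_\lambda^*}/O_{5'}(\bar{L_\lambda^*})\cong\Sz(32)$ to contradict minimality. None of this is accessible from $G_\lambda$ alone; the whole point of the proposition is to plug the hole in \cref{p-closure}(i) that opens precisely because $\Sz(32){:}5$ is not simple, and the paper does it globally.

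Concretely, the crux step of your argument fails. You claim $C_{K/Q_\lambda}(H)=1$ on the grounds that a non-trivial $H$-centralized piece of $K/Q_\lambda$ ``contradicts the faithfulness of $T$ on $Q_\lambda$.'' But the conjugation action of $H$ (covering $M/K\cong\Sz(32)$) on the $5'$-layer $K/Q_\lambda$ and the action of $T$ on $Q_\lambda$ are actions on different objects, and there is no incompatibility between ``$\Sz(32)$ centralizes a piece of $T$'' and ``$T$ acts faithfully on $Q_\lambda$.'' Indeed, in the scenario the paper singles out (cf.\ \cref{p'-index2}), $O_{5'}(\bar{L_\lambda})$ is \emph{central} in $\bar{L_\lambda}$, so $C_{K/Q_\lambda}(H)=K/Q_\lambda$ is as large as possible and your claim is maximally false. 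Two further problems: the target $M\le N$, i.e.\ $K\le N$, is stronger than the required $O^5(L_\lambda)\le N$ and can fail --- in the split model $L_\lambda=Q_\lambda\rtimes(\Sz(32){:}5)$ with $x$ a $5$-element of the $\Sz(32)$ factor, one computes $N=\Sz(32)[\Sz(32),Q_\lambda]=O^5(L_\lambda)$, which is a proper subgroup of $M$ whenever $[\Sz(32),Q_\lambda]<Q_\lambda$; and the final ``absorb $Q_\lambda$ into $N$'' step fails because coprime action of $T$ on $Q_\lambda$ only gives $Q_\lambda=C_{Q_\lambda}(T)[Q_\lambda,T]$ with $[Q_\lambda,T]\le N$, and faithfulness of $T$ on $Q_\lambda$ does not make $T$ fixed-point-free, so $C_{Q_\lambda}(T)\le N$ does not follow.
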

\begin{proof}
Assume that $G$ is a minimal counterexample to the above statement. Set $G_1=G_\lambda$, $G_2=G_\mu$ for some $\mu\in\Delta(\lambda)$, $Q_i=O_5(G_i)$ for $i\in\{1,2\}$ and $S\in \syl_5(G_{\lambda, \mu}$). We may fix $j\in\{1,2\}$ such that $K^\infty(S)\not\normaleq G_j$, where $K^\infty(S)$ is defined in \cite{Kinf}. Writing $L_j=O^{5'}(G_j)$, we have that $K^\infty(S)\not\normaleq L_j$. Then, by \cite[Theorem 4]{Kinf}, there is $x\in S\setminus Q_i$ and a chief factor $X/Y$ of $L_j$ contained in $Q_i$ such that $[X,x,x]\le Y$ but $[X, x]\not\le Y$. 

Set $H_j:=\langle x^{G_j}\rangle Q_j$ so that by \cref{SE1} and \cref{SE2}, either $H_j=L_j$, $G_j$ is $5$-solvable or for $\bar{G_j}:=G_j/Q_j$, $\bar{L_j}/O_{p'}(\bar{L_j})\cong \Sz(32):5$ in which case, perhaps $\bar{H_j}/O_{5'}(\bar{H_j})\cong \Sz(32)$. Note that in all cases, $Q_j=O_{5}(H_j)$. Let $Y\le V\le U\le X$ with $U/V$ a chief factor for $H_j$ in $Q_j$. Then, unless, $G_j$ is $5$-solvable we may assume that $[U, x, x]\le V$ and $[U, x]\not\le V$. Even if $G_j$ is $5$-solvable, then this holds unless $H_j=C_{H_j}(X/Y)\langle x\rangle$ and as $G_j=H_j N_{G_j}(\langle x\rangle Q_j)$, we deduce that $[X, x]\normaleq G_j$ so that $[X, x]\le Y$, and we obtain a contradiction. Now, applying \cref{SEQuad}, we deduce that $H_j/C_{H_j}(U/V)\cong \SL_2(5^n)$ or $\SU_3(5^n)$. Then \cref{SE1} and \cref{SE2} reveal that $H_j=L_j$.

Applying \cref{CriticalFusion} we have that $\bar{L_j}\cong \SL_2(5^n)$ or $\SU_3(5^n)$ so that $G_j=G_{\mu}$. Then $G_{\lambda,\mu}=N_{G_\mu}(S)=N_{G_\mu}(K^\infty(S))$. Applying \cite[Theorem B]{Kinf}, $G_{\mu}/O^5(G_\mu)\cong G_{\lambda,\mu}/O^5(G_{\lambda,\mu})$ from which it follows that $Q_\mu/Q_\mu\cap O^5(G_\mu)\cong Q_\mu/ Q_\mu\cap O^5(G_{\lambda,\mu})$ so that $Q_\mu\cap O^5(G_\mu)=Q_\mu\cap O^5(G_{\lambda,\mu})$. But now, since $\Sz(32):5$ has no outer automorphisms, we deduce that $\bar{G_{\lambda}}/O_{5'}(\bar{G_\lambda})\cong \Sz(32):5$ so that $(O^5(G_{\lambda,\mu})\cap S)Q_\lambda<S$. In particular, writing $S^*:=(Q_\mu\cap O^5(G_\mu))Q_\lambda$, $L_\delta^*:=\langle (S^*)^{G_\delta})\rangle$ and $G_\delta^*:=L_\delta^*K$ for $\delta\in\{\lambda, \mu\}$ and $K$ a Hall $5'$-subgroup of $G_{\lambda,\mu}$, we have that $L_\delta^*=O^{5'}(G_\delta^*)$, $G_{\delta}^*\normaleq G_\delta$ and $|S/S^*|=|L_\delta/L_\delta^*|=|G_\delta/G_\delta^*|=5$ for $\delta\in\{\lambda. \mu\}$. Hence, any subgroup of $G_\lambda^*\cap G_\mu^*$ which is normal in $\langle G_\lambda^*, G_\mu^*\rangle$ is also normalized by $G$, so is trivial. Thus, the triple $(G_\lambda^*, G_\mu^*, G_\lambda^*\cap G_\mu^*)$ satisfies \cref{MainHyp}, and by minimality, and since $\bar{L_\lambda^*}/O_{5'}(\bar{L_{\lambda}^*})\cong \Sz(32)$, we have a contradiction.
\end{proof}

\begin{lemma}\label{p-closure}
Suppose that $N\normaleq G_\delta$ with $N$ not $p$-closed and set $S\in\syl_p(G_\delta)$. Then the following holds:
\begin{enumerate}
\item If $L_\delta$ is not $p$-solvable, then $O^p(L_\delta)\le N$.
\item If $L_\delta$ is $p$-solvable, then $K\le NQ_\delta$, where $\bar{K}$ is the unique normal subgroup of $\bar{L_\delta}$ which is divisible by $p$ and minimal with respect to this constraint.
\item $G_\delta=NN_{G_\delta}(S)$ and $N$ is transitive on $\Delta(\delta)$.
\item For $U/V$ any non-central chief factor for $L_{\delta}$ inside of $Q_{\delta}$, we have that $Q_{\delta}\in\syl_p(C_{L_{\delta}}(U/V))$.
\end{enumerate}
\end{lemma}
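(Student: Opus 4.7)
The plan is to pass to $\bar G_\delta := G_\delta/Q_\delta$ and exploit the classification of groups with a strongly $p$-embedded subgroup in \cref{SE1} and \cref{SE2} applied to $\bar L_\delta$. First I would verify that $\bar N := NQ_\delta/Q_\delta$ is not $p$-closed: if its Sylow $p$-subgroup were normal, its preimage in $N$ would be a normal Sylow $p$-subgroup of $N$. Since $\bar G_\delta/\bar L_\delta$ is a $p'$-group, every Sylow $p$-subgroup of $\bar N$ is contained in $\bar L_\delta$, so the normal subgroup $\bar N\cap\bar L_\delta\normaleq\bar L_\delta$ has the same Sylow $p$-subgroups as $\bar N$ and is also not $p$-closed. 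Note that $\bar L_\delta = O^{p'}(\bar L_\delta)$ admits no proper normal subgroup of $p'$-index (this follows from $L_\delta = O^{p'}(G_\delta)$), and by \cref{spelemma1}(iii) inherits a strongly $p$-embedded subgroup from $\bar G_\delta$.

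For (i), when $L_\delta$ is not $p$-solvable, \cref{SE1} and \cref{SE2} force $\bar L_\delta/O_{p'}(\bar L_\delta)$ to be non-abelian simple. The image of $\bar N\cap\bar L_\delta$ in this simple quotient is normal, so either trivial or everything; triviality would place $\bar N\cap\bar L_\delta$ inside $O_{p'}(\bar L_\delta)$, making it a $p'$-group and hence $p$-closed, contrary to the previous paragraph. Hence $(\bar N\cap\bar L_\delta)O_{p'}(\bar L_\delta) = \bar L_\delta$, so $\bar L_\delta/(\bar N\cap\bar L_\delta)$ is a $p'$-quotient; as $\bar L_\delta=O^{p'}(\bar L_\delta)$ this forces $\bar N\cap\bar L_\delta = \bar L_\delta$, i.e., $L_\delta\le NQ_\delta$. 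Since $NQ_\delta/N$ is a $p$-group, every $p'$-element of $NQ_\delta$ lies in $N$, so $O^p(L_\delta)\le N$. For (ii), the hypothesis that $L_\delta$ is $p$-solvable places us in \cref{SE1}(ii), and the ``moreover'' clause identifies $\bar K$ as the unique minimal normal subgroup of $\bar L_\delta$ divisible by $p$; since $\bar N\cap\bar L_\delta$ is normal of order divisible by $p$, uniqueness forces $\bar K\le\bar N\cap\bar L_\delta$, which lifts to $K\le NQ_\delta$.

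For (iii) I would apply Frattini to $L_\delta\normaleq G_\delta$ with $S\in\syl_p(L_\delta)$, obtaining $G_\delta = L_\delta\cdot N_{G_\delta}(S)$, and thereby reduce to showing $L_\delta\le N\cdot N_{G_\delta}(S)$. In case (i) this is immediate from $L_\delta\le NQ_\delta\le N\cdot N_{G_\delta}(S)$; in case (ii), Frattini applied to $\bar K\normaleq\bar L_\delta$ with Sylow $\Omega(\bar S)$ gives $\bar L_\delta = \bar K\cdot N_{\bar L_\delta}(\Omega(\bar S))$, and the strong $p$-embeddedness of $N_{\bar G_\delta}(\bar S)$ from \cref{MainHyp}(ii) (which applies to SE1(ii) since $\bar L_\delta$ is $p$-solvable) forces $N_{\bar L_\delta}(\Omega(\bar S))\le N_{\bar L_\delta}(\bar S)$, yielding the desired containment after lifting to $L_\delta$. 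For transitivity, \cref{MainHyp}(i) gives $N_{G_\delta}(S)\le G_{\delta,\lambda}$ for the distinguished neighbour $\lambda$; writing any $g\in G_\delta$ as $g=hn$ with $h\in N_{G_\delta}(S)$ and $n\in N$ immediately yields $\lambda\cdot g = \lambda\cdot n$, proving $N$ is transitive on $\Delta(\delta)$.

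Finally, for (iv), since $Q_\delta$ is a $p$-group acting on the elementary abelian chief factor $U/V$, the standard fixed-point theorem for $p$-groups acting on $p$-groups together with chief-factor minimality forces $[Q_\delta,U]\le V$, whence $Q_\delta\le C_{L_\delta}(U/V)$. The remaining task is to show $C_{\bar L_\delta}(U/V)$ is a $p'$-group: if it were $p$-closed with nontrivial Sylow, that Sylow would be characteristic and hence a nontrivial normal $p$-subgroup of $\bar L_\delta$, contradicting $O_p(\bar L_\delta)=1$; if not $p$-closed, the argument of (i)--(ii) applied to this normal subgroup in place of $\bar N\cap\bar L_\delta$ yields $C_{\bar L_\delta}(U/V) = \bar L_\delta$ in the non-$p$-solvable case, contradicting non-centrality of $U/V$, or $C_{\bar L_\delta}(U/V)\supseteq\bar K$ in the $p$-solvable case, which is ruled out by a Clifford/coprime action argument on the irreducible $\bar L_\delta$-module $U/V$ (any trivial action of $O_{p'}(\bar L_\delta)$ would reduce $U/V$ to an irreducible module for the $p$-group $\bar L_\delta/O_{p'}(\bar L_\delta)$, which must then be trivial, contradicting non-centrality). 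The main obstacle is this final Clifford-theoretic argument for (iv) in the $p$-solvable case, together with careful handling of the generalized-quaternion subcase of \cref{SE1}(iii) in (i), where strong $p$-embeddedness via \cref{MainHyp}(ii) is needed to exclude the pathological situation $\bar N\cap\bar L_\delta\le\Omega(\bar S)O_{2'}(\bar L_\delta)$.
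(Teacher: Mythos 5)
Your proof of parts (ii) and (iii) follows essentially the same path as the paper's, and your reduction to $\bar N\cap\bar L_\delta$ at the start is a clean way to set up the argument. But there is a real gap in (i) that you have not addressed. You assert that when $L_\delta$ is not $p$-solvable, \cref{SE1} and \cref{SE2} force $\bar L_\delta/O_{p'}(\bar L_\delta)$ to be non-abelian simple. This is false: \cref{SE2} explicitly lists $\Ree(3)$ (the case $a=0$ at $p=3$) and $\Sz(32):5$ (at $p=5$) among the possibilities, and neither is simple — each has a proper normal subgroup of index $3$ resp.\ $5$ divisible by $p$. Your ``normal image in a simple quotient is trivial or everything'' dichotomy therefore breaks down precisely in these two cases, and they cannot be folded into the generalized-quaternion subcase of \cref{SE1}(iii) that you flag at the end, since they occur with $m_p(\bar S)\ge 2$. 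The paper handles $\Ree(3)$ by invoking the extra condition from \cref{MainHyp}(ii) (strong $p$-embedding of $N_{\bar G_\delta}(\bar S)$) together with a coprime-action argument to force $O_{3'}(\bar L_\delta)\le Z(\bar L_\delta)$, after which the structure of $\bar L_\delta$ is pinned down and (i) follows; and it handles $\Sz(32):5$ by appealing to the separately-proved \cref{Ree3}, which is a substantive minimal-counterexample argument, not a routine observation. Neither of these can be dismissed: $\Sz(32):5$ in particular is exactly the ``bad'' case that motivates \cref{Ree3}, and without it (i) simply does not go through.

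There is also a softer issue in your treatment of (iv) in the $p$-solvable case. You claim that $\bar K\le C_{\bar L_\delta}(U/V)$ ``would reduce $U/V$ to an irreducible module for the $p$-group $\bar L_\delta/O_{p'}(\bar L_\delta)$.'' But $\bar K=\Omega(\bar S)[\Omega(\bar S),O_{p'}(\bar L_\delta)]$ contains only part of $O_{p'}(\bar L_\delta)$; trivial action of $\bar K$ does not force trivial action of $O_{p'}(\bar L_\delta)$, and the group $\bar L_\delta/C_{\bar L_\delta}(U/V)$ can still be a genuine $p$-solvable group with nontrivial $p'$-core acting nontrivially. The paper avoids this by applying part (iii) (whose proof only uses $N\normaleq L_\delta$, so this is legitimate) to $N=C_{L_\delta}(U/V)$, obtaining $L_\delta=C_{L_\delta}(U/V)\cdot N_{L_\delta}(S)$; Clifford theory then makes $U/V$ irreducible as an $N_{L_\delta}(S)$-module, whence $[U/V,S]=1$ because $S\normaleq N_{L_\delta}(S)$, and finally $U/V$ is centralized by $\langle S^{L_\delta}\rangle=L_\delta$, the desired contradiction. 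Your own Frattini-plus-strong-embedding machinery from (iii) would salvage the argument in exactly this way; the Clifford/coprime route you sketch instead does not close the case.
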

\begin{proof}
Suppose $L_\delta$ is not $p$-solvable and let $A\in\syl_p(N)$. Notice that as $N$ is not $p$-closed, $A\not\le Q_\delta$ and since $\bar{L_\delta}$ has a strongly $p$-embedded subgroup, by \cref{MainHyp} we have that $\wt L_\delta:=\bar{L_\delta}/O_{p'}(\bar{L_\delta})$ is isomorphic to a non-abelian simple group; $\Sz(32):5$ or $\Ree(3)$. If $\wt{L_\delta}$ is a non-abelian simple group then $\wt L_\delta=\langle \wt A^{\wt L_\delta}\rangle$. In particular, $\bar{S}\le \bar{\langle A^{L_\delta}\rangle}$ and so $S\le \langle A^{L_\delta} \rangle Q_\delta\le L_\delta$ and since $L_\delta=O^{p'}(L_\delta)$, $L_\delta=\langle A^{L_\delta} \rangle Q_\delta$. It then follows that $O^p(L_\delta)\le\langle A^{L_\delta} \rangle\le N$. If $\wt L_\delta\cong \Ree(3)$, then as $N_{\bar{L_\delta}}(\bar{S})$ is strongly $3$-embedded in $\bar{L_\delta}$ and $m_3(\bar{S})=2$, by coprime action we have that $O_{3'}(\bar{L_{\delta}})$ normalizes $\bar{S}$. Hence, $\{1\}=[O_{3'}(\bar{L_\delta}), \bar{S}]=[O_{3'}(\bar{L_\delta}), \bar{L_\delta}]$. Then $O^3(\bar{L_\delta})$ is isomorphic to an extension of $\PSL_2(8)$ by a $3'$-group so that $O^{3'}(O^3(\bar{L_\delta}))\cong \PSL_2(8)$. But now, since $O_{3'}(\bar{L_\delta})$ normalizes $\bar{S}$, $O^{3'}(O^3(\bar{L_{\delta}}))\bar{S}\normaleq \bar{L_\delta}$ and $\bar{L_\delta}\cong \Ree(3)$ and so (i) holds in this case. By \cref{Ree3}, (i) holds when $\wt L_\delta\cong \Sz(32):5$. Thus, (i) holds in all cases.

By the Frattini argument $G_\delta=L_\delta N_{G_\delta}(S)=O^{p}(L_\delta) N_{G_\delta}(S)=\langle A^{G_\delta}\rangle N_{G_\delta}(S)$. Since $\langle A^{G_\delta} \rangle\le N$, (iii) follows whenever $L_{\delta}$ is not $p$-solvable.

Suppose now that $L_\delta$ is $p$-solvable and let $\bar{K}$ be the unique minimal normal subgroup of $\bar{L_\delta}$ divisible by $p$. Again, we let $A\in\syl_p(N)$ and remark that since $N$ is not $p$-closed $A\not\le Q_\delta$. Hence, $p\divides |\bar{N}|$ so that $\bar{K}\le \bar{N}$ and $K\le NQ_\delta$, completing the proof of (ii). By \cref{SE1}, $\bar{L_\delta}=N_{\bar{L_\delta}}(\bar{S})\bar{K}\le \bar{N_{G_\delta}(S)}\bar{N}$ so that $G_\delta=L_\delta N_{G_\delta}(S)\le NN_{G_\delta}(S)\le G_\delta$, completing the proof of (iii).

For (iv), choose any non-central chief factor $U/V$ for $L_{\delta}$ inside $Q_{\delta}$. Then $U/V$ is a faithful, irreducible module for $L_{\delta}/C_{L_{\delta}}(U/V)$. Since $[Q_{\delta}, U]\normaleq L_{\delta}$ and $[Q_{\delta}, U]<U$, $Q_{\delta}\le C_{L_{\delta}}(U/V)$. Moreover, as $C_{L_{\delta}}(U/V)$ is normal in $L_{\delta}$, we deduce that $O_p(C_{L_{\delta}}(U/V))=Q_{\delta}$. If $C_{L_{\delta}}(U/V)$ is not $p$-closed, then $L_{\delta}=C_{L_{\delta}}(U/V) N_{L_{\delta}}(S)$ and it follows that $U/V$ is irreducible for $N_{L_{\delta}}(S)$. But then $[U/V, S]=\{1\}$ from which it follows that $\{1\}=[U/V, \langle S^{L_{\delta}} \rangle]=[U/V, L_{\delta}]$, a contradiction. Hence, (iv). 
\end{proof}

Of course, the following lemma is superfluous if we initially assumed that the essentials $E_1, E_2$ of $\fs$ were maximally essential.

\begin{proposition}\label{MaxEssenAmal}
For all $\delta\in\Gamma$ and $\lambda\in\Delta(\delta)$, $Q_{\delta}\not\le Q_{\lambda}$.
\end{proposition}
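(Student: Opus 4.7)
My plan is to argue by contradiction: assume $Q_\alpha \le Q_\beta$ for some adjacent pair, and by edge transitivity take the pair to be the standard $\{\alpha,\beta\}$. The target is to produce a non-trivial subgroup $N \le G_{\alpha,\beta}$ that is normalized by both $G_\alpha$ and $G_\beta$, thereby contradicting \cref{BasicAmal}(iii).

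I begin by dispatching the easy sub-case $Q_\alpha = Q_\beta$: here the common subgroup is non-trivial (by the characteristic $p$ hypothesis) and is manifestly normal in both $G_\alpha$ and $G_\beta$, so taking $N := Q_\alpha$ immediately yields the contradiction. Assume henceforth that $Q_\alpha < Q_\beta$. Two structural observations will then drive the argument. First, from characteristic $p$ of $G_\alpha$: since $Z(Q_\beta) \le Q_\beta \le S \le G_\alpha$ centralizes $Q_\beta$ and hence also $Q_\alpha$, we obtain $Z(Q_\beta) \le C_{G_\alpha}(Q_\alpha) \le Q_\alpha$, and even $Z(Q_\beta) \le Z(Q_\alpha)$; in particular $\Omega(Z(S)) \le Z(Q_\beta) \le Q_\alpha$. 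Second, using the transitivity of $G_\alpha$ on $\Delta(\alpha)$ and normality of $Q_\alpha$ in $G_\alpha$, conjugating the containment yields $Q_\alpha \le Q_{\lambda'}$ for every $\lambda' \in \Delta(\alpha)$, and since the intersection $\bigcap_{\lambda' \in \Delta(\alpha)} Q_{\lambda'}$ is then a $G_\alpha$-invariant normal $p$-subgroup, it is forced to equal $Q_\alpha$.

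The main argument then examines the image $\bar Q_\beta := Q_\beta/Q_\alpha$ inside $\bar L_\alpha := L_\alpha/Q_\alpha$. This is a non-trivial $p$-subgroup of the Sylow $\bar S$, normalized by $\bar{G_{\alpha,\beta}\cap L_\alpha}$, which contains $N_{\bar L_\alpha}(\bar S)$. The strategy is to combine this with the strongly $p$-embedded structure on $\bar L_\alpha$ guaranteed by \cref{MainHyp}(ii), using the explicit classifications of \cref{SE1} and \cref{SE2}, to force $\bar Q_\beta$ to be either trivial or all of $\bar S$. Triviality contradicts $Q_\alpha < Q_\beta$, while $\bar Q_\beta = \bar S$ gives $Q_\beta = S$, so that $\bar L_\beta = L_\beta/Q_\beta$ has trivial Sylow $p$-subgroup, contradicting that $\bar L_\beta$ has a strongly $p$-embedded subgroup. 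The exceptional cases of \cref{MainHyp}(ii) (where $\bar L_\alpha$ is $p$-solvable, of type $\Ree(3)$, or $\bar S$ is generalized quaternion) will be handled cleanly via the hypothesis that $N_{\bar L_\alpha}(\bar S)$ is strongly $p$-embedded in $\bar L_\alpha$. The generic cases, where $\bar L_\alpha/O_{p'}(\bar L_\alpha)$ is one of the rank-$1$ Lie type groups, require a case-by-case analysis of the Borel-invariant subgroups of the Sylow, using the structure of the natural modules described in \cref{NatMod}, \cref{SUMod}, \cref{SzMod}, and \cref{ReeGen}.

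The hard part of the proof will be the generic case, where the Borel of a rank-$1$ Lie type group may genuinely normalize non-trivial proper subgroups of its Sylow (for instance $Z(\bar S) < \bar S$ in $\SU_3(q)$-type situations). Ruling out these intermediate possibilities requires exploiting the additional amalgam-theoretic constraints beyond mere Borel-invariance: namely, the fact that $\bar Q_\beta$ is pinned down structurally as the image of $O_p(G_\beta)$ rather than some abstract Borel-invariant subgroup, together with the reciprocal containment $Q_\mu \le Q_\beta$ obtained by applying the standing hypothesis to the adjacent edges $\{\mu,\beta\}$ for $\mu \in \Delta(\beta)$. Combining this rigid identification with the module structure eventually yields the required contradiction via \cref{BasicAmal}(iii).
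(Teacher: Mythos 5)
Your proposal is correct in its easy steps but has a genuine gap at exactly the point you yourself flag as ``the hard part.'' You correctly observe that $\bar{Q_\beta} := Q_\beta/Q_\alpha$ is a non-trivial $p$-subgroup of $\bar S$ normalized by a subgroup of $\bar{L_\alpha}$ containing the Borel, and you correctly dispatch the extremes $\bar{Q_\beta}=1$ and $\bar{Q_\beta}=\bar S$. But when $\bar{L_\alpha}/O_{p'}(\bar{L_\alpha})$ is of type $\PSU_3(q)$, $\Sz(q)$ or $\Ree(q)$, the Borel genuinely preserves the proper non-trivial subgroup $Z(\bar S)=\Phi(\bar S)<\bar S$, and nothing in your argument excludes $\bar{Q_\beta}=Z(\bar S)$. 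The appeal to ``the reciprocal containment $Q_\mu\le Q_\beta$'' and to vague ``module structure'' is not a proof; the reciprocal containments are conjugates of the assumed one and give you only that $\langle Q_\mu : \mu\in\Delta(\beta)\rangle\le Q_\beta$, which is no new constraint. Without a mechanism that actually forces $\SL_2$-type local structure, the intermediate case cannot be eliminated.

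The paper closes exactly this gap with an FF-module argument which your plan does not contain. One first notes $J(Q_\lambda)\not\le Q_\delta$ (else $J(Q_\lambda)=J(Q_\delta)$ by \cref{BasicJS}(iv) would be a common normal subgroup, contradicting \cref{BasicAmal}(iii)), then sets $V:=\langle\Omega(Z(Q_\lambda))^{G_\delta}\rangle\le\Omega(Z(Q_\delta))$, shows $Q_\delta=C_S(V)$, and takes any $A\in\mathcal{A}(Q_\lambda)\setminus\mathcal{A}(Q_\delta)$; a short counting argument makes $AQ_\delta/Q_\delta$ an offender on $V$. Now \cref{SEFF} forces $L_\delta/C_{L_\delta}(V)\cong\SL_2(p^n)$ with $V/C_V(O^p(L_\delta))$ natural, so $S/Q_\delta$ is the Sylow of an $\SL_2(p^n)$ on which the Borel acts irreducibly by \cref{SLGen}(vi); the proper non-trivial $G_{\lambda,\delta}$-invariant subgroup $Q_\lambda/Q_\delta$ is then impossible. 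It is this FF-module step --- absent from your sketch --- that rules out the $\PSU_3$/$\Sz$/$\Ree$ possibilities uniformly and in one stroke; if you try to make your appeal to ``module structure'' precise, you will likely end up reconstructing it.
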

\begin{proof}
Suppose that there is $\delta\in\Gamma$ and $\lambda\in\Delta(\delta)$ with $Q_{\delta}\le Q_{\lambda}$ and let $S\in\syl_p(G_{\delta,\lambda})$. Then $J(Q_{\lambda})\not\le Q_{\delta}$ for otherwise, by \cref{BasicJS} (v), $J(Q_{\lambda})=J(Q_{\delta})\normaleq \langle G_{\lambda}, G_{\delta}\rangle$. Furthermore, since $C_S(Q_{\delta})\le Q_{\delta}$, $\Omega(Z(Q_{\lambda}))<\Omega(Z(Q_{\delta}))$. Let $V:=\langle \Omega(Z(Q_{\lambda}))^{G_{\delta}}\rangle\le \Omega(Z(Q_{\delta}))$ and choose $A\in\mathcal{A}(Q_{\lambda})\setminus \mathcal{A}(Q_{\delta})$. If $Q_{\delta}<C_S(V)$, then by \cref{p-closure} (iii), $G_{\delta}=\langle C_S(V)^{G_{\delta}}\rangle N_{G_\delta}(S)=C_{G_{\delta}}(V)N_{G_\delta}(S)$ normalizes $\Omega(Z(Q_{\lambda}))$, a contradiction. Hence, $Q_{\delta}=C_S(V)$.

By the choice of $A$, $|A|\geq |C_A(V)V|=|C_A(V)||V|/|V\cap C_A(V)|=|C_A(V)||V|/|V\cap A|$. Since $A=\Omega(C_S(A))$, we have that $A\cap V=C_V(A)$ and rearranging we conclude that $|A|/|C_A(V)|\geq |V|/|C_V(A)|$ and $A/C_A(V)\cong AQ_{\delta}/Q_{\delta}$ is an offender on the FF-module $V$. By \cref{SEFF},  $L_{\delta}/C_{L_{\delta}}(V)\cong \SL_2(p^n)$ and $V/C_V(O^p(L_{\delta}))$ is a natural $\SL_2(q)$-module. But $Q_{\lambda}/Q_{\delta}< S/Q_{\delta}$ is a $G_{\lambda, \delta}$-invariant subgroup of $S/Q_{\delta}$, a contradiction by \cref{SLGen} (vi). 
\end{proof}

\begin{lemma}\label{p-closure2}
Let $\delta\in\Gamma$, $(\alpha, \alpha')$ be a critical pair and $S\in\syl_p(G_{\alpha,\beta})$. Then
\begin{enumerate}
\item $Q_\delta\in\syl_p(G_\delta^{(1)})$ and $G_\delta^{(1)}/Q_{\delta}$ is centralized by $L_{\delta}/Q_{\delta}$;
\item either $Q_\delta\in\syl_p(C_{L_\delta}(Z_\delta))$ or $Z_\delta=\Omega(Z(L_\delta))$;
\item $Z_\alpha\not\le Q_{\alpha'}$; and
\item $C_S(Z_\alpha)=Q_\alpha$, and $C_{G_{\alpha}}(Z_{\alpha})$ is $p$-closed and $p$-solvable.
\end{enumerate}
\end{lemma}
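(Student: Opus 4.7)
The plan is to prove the four assertions in order, each leveraging the structural output of \cref{p-closure}.

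For (i), first observe that $G_\delta^{(1)}\normaleq G_\delta$ fixes every vertex in $\Delta(\delta)$ pointwise, so it cannot act transitively on $\Delta(\delta)$; by \cref{p-closure}(iii), $G_\delta^{(1)}$ must therefore be $p$-closed. Its (unique, normal) Sylow $p$-subgroup $O_p(G_\delta^{(1)})$ is characteristic in $G_\delta^{(1)}$ and hence normal in $G_\delta$, which forces $O_p(G_\delta^{(1)})\le Q_\delta$; combined with $Q_\delta\le G_\delta^{(1)}$ from \cref{crit pair}(i) we obtain $Q_\delta=O_p(G_\delta^{(1)})\in\syl_p(G_\delta^{(1)})$. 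For the centralization assertion, $G_\delta^{(1)}/Q_\delta$ is a normal $p'$-subgroup of $\bar{G_\delta}$ which acts faithfully on $Q_\delta$ by characteristic $p$; I would use \cref{p-closure}(iv) to control its action on $L_\delta$-chief factors inside $Q_\delta$, and combine this with $L_\delta=\langle S^{L_\delta}\rangle$ (since $L_\delta=O^{p'}(G_\delta)$) together with a coprime action argument on an $S$-invariant Hall $p'$-complement of $G_\delta^{(1)}$ to conclude $[\bar{L_\delta},G_\delta^{(1)}/Q_\delta]=1$.

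For (ii), feed $Z_\delta$ into \cref{p-closure}(iv). If $Z_\delta$ admits a non-central $L_\delta$-chief factor $U/V\le Q_\delta$, then $C_{L_\delta}(Z_\delta)\le C_{L_\delta}(U/V)$ and, as $Q_\delta$ is a normal Sylow of the larger group, it is also the (unique, normal) Sylow of $C_{L_\delta}(Z_\delta)$. In the alternative case where every $L_\delta$-chief factor of $Z_\delta$ is central, the image of $L_\delta$ in $\Aut(Z_\delta)$ stabilizes the chief series and acts trivially on its factors, hence lies in a unipotent subgroup; combining this with $L_\delta=\langle S^{L_\delta}\rangle$ and the strong $p$-embedding in $\bar{L_\delta}$ forces $L_\delta$ to centralize $Z_\delta$ outright, so $Z_\delta\le\Omega(Z(L_\delta))\le\Omega(Z(S))\le Z_\delta$ and $Z_\delta=\Omega(Z(L_\delta))$.

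Assertion (iii) is then immediate from the definition of a critical pair combined with (i): by choice of $(\alpha,\alpha')$ we have $Z_\alpha\not\le G_{\alpha'}^{(1)}$, while $Q_{\alpha'}\le G_{\alpha'}^{(1)}$ by (i), so $Z_\alpha\not\le Q_{\alpha'}$. For (iv), apply (ii) at $\delta=\alpha$: the second alternative $Z_\alpha=\Omega(Z(L_\alpha))$ would force $Z_\alpha\le\Omega(Z(S))\le Z_\alpha$, i.e. $Z_\alpha=\Omega(Z(S))$, contradicting \cref{crit pair}(iii). Thus $Q_\alpha\in\syl_p(C_{L_\alpha}(Z_\alpha))$; since $Q_\alpha\normaleq G_\alpha$, it is in fact the unique Sylow $p$-subgroup, so any $p$-subgroup centralizing $Z_\alpha$ is contained in $Q_\alpha$, giving $C_S(Z_\alpha)=Q_\alpha$. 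Because $G_\alpha/L_\alpha$ is a $p'$-group, $Q_\alpha$ remains the unique Sylow of $C_{G_\alpha}(Z_\alpha)$, which is therefore $p$-closed with $p'$ quotient by $Q_\alpha$, and so in particular $p$-solvable. The principal obstacle is the centralization statement in (i) together with the ``every chief factor central'' alternative in (ii): the strongly $p$-embedded hypothesis alone does not eliminate normal $p'$-subgroups or $p$-quotients of $\bar{G_\delta}$, so both steps must exploit the geometric origin of $G_\delta^{(1)}$ as the action kernel on $\Delta(\delta)$ (which provides $S$-invariance of the relevant Hall complement) in conjunction with the chief-factor content of \cref{p-closure}(iv).
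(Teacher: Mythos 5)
Your decomposition of (ii) into ``$Z_\delta$ has a non-central $L_\delta$-chief factor'' versus ``all factors are central'' is fine, and the first case is handled correctly via \cref{p-closure}(iv). The second case, however, has a genuine gap. You correctly observe that if every chief factor of $Z_\delta$ is centralized then $L_\delta/C_{L_\delta}(Z_\delta)$ is a $p$-group, but the assertion that $L_\delta=\langle S^{L_\delta}\rangle$ together with the strongly $p$-embedded hypothesis on $\bar{L_\delta}$ forces this $p$-group to be trivial is false. A group $H$ with $O^{p'}(H)=H$ and a strongly $p$-embedded subgroup can certainly have a nontrivial $p$-quotient: a Frobenius group of shape $q{:}p$ (which is $p$-solvable with $N_H(\bar{S})=\bar{S}$ strongly $p$-embedded) does, and so does $\Ree(3)\cong\PSL_2(8){:}3$; both are admissible values of $\bar{L_\delta}$ under \cref{MainHyp}, so nothing in the hypothesis excludes a nontrivial quotient $L_\delta/C_{L_\delta}(Z_\delta)$.

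The paper's route to the second alternative of (ii) does not try to rule out $p$-quotients at all. It notes that $C_{L_\delta}(Z_\delta)\normaleq G_\delta$ with $O_p(C_{L_\delta}(Z_\delta))=Q_\delta$, so that if $Q_\delta\notin\syl_p(C_{L_\delta}(Z_\delta))$ then $C_{L_\delta}(Z_\delta)$ is not $p$-closed; \cref{p-closure}(iii) then gives $G_\delta=C_{L_\delta}(Z_\delta)\,N_{G_\delta}(T)$ for $T\in\syl_p(G_\delta)$, and since both factors leave $\Omega(Z(T))$ invariant this forces $Z_\delta=\langle\Omega(Z(T))^{G_\delta}\rangle=\Omega(Z(T))$, whence $[Z_\delta,T]=\{1\}$ and, conjugating by $G_\delta$ (using $Z_\delta\normaleq G_\delta$), $[Z_\delta,L_\delta]=\{1\}$. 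It is precisely this transitivity consequence of \cref{p-closure}(iii) --- the geometric input --- that makes the second alternative come out, and your argument never invokes it. Parts (i) (first half), (iii) and (iv) agree with the paper; the centralization claim in (i) is only outlined, not proved, and the proposed use of \cref{p-closure}(iv) there needs care, since it constrains $C_{L_\delta}(U/V)$ rather than $C_{G_\delta^{(1)}}(U/V)$ and $G_\delta^{(1)}$ need not lie inside $L_\delta$.
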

\begin{proof}
By \cref{crit pair} (i), we assume that $Q_\delta<T$ for $T\in\syl_p(G_\delta^{(1)})$. Since $G_\delta^{(1)}\normaleq G_\delta$ it follows that $O_p(G_\delta^{(1)})=Q_\delta$ and so $G_\delta^{(1)}$ is not $p$-closed. But by \cref{p-closure} (iii), then $G_\delta^{(1)}$ would be transitive on $\Delta(\delta)$, a clear contradiction. Thus, $Q_\delta\in\syl_p(G_\delta^{(1)})$. Letting $P\in\syl_p(G_{\delta})$, $[P, G_{\delta}^{(1)}]\le P\cap G_{\delta}^{(1)}=Q_{\delta}$ so that $[L_{\delta}, G_{\delta}^{(1)}]\le Q_{\delta}$, and so (i) holds.

If $Q_\delta\not\in\syl_p(C_{L_\delta}(Z_\delta))$ then by \cref{p-closure} (iii), $G_\delta=C_{L_\delta}(Z_\delta)N_{G_\delta}(S)$ and so $Z_\delta=\langle \Omega(Z(S))^{G_\delta}\rangle=\Omega(Z(S))$. But then $\{1\}=[Z_\delta, S]^{G_\delta}=[Z_\delta, L_\delta]$ and so $Z_\delta\le Z(L_\delta)$. Since $Q_{\delta}$ is self-centralizing, $Z(L_\delta)$ is a $p$-group and $Z_\delta=\Omega(Z(S))=\Omega(Z(L_\delta))$, so that (ii) holds.

If $Z_\alpha\le Q_{\alpha'}$ then $Z_\alpha\le G_{\alpha'}^{(1)}$ a contradiction and so (iii) holds. Since $Z_{\alpha}\ne \Omega(Z(S))$ by \cref{crit pair} (iii), $C_S(Z_{\alpha})=Q_{\alpha}\normaleq C_{G_\alpha}(Z_{\alpha})$ so that $C_{G_{\alpha}}(Z_{\alpha})$ is $p$-closed and $p$-solvable.
\end{proof}

By the above lemma, we can reinterpret the minimal distance $b$ as $b=\mathrm{min}_{\delta\in\Gamma}\{b_\delta\}$ where $b_\delta:=\mathrm{min}_{\lambda\in\Gamma}\{d(\delta, \lambda)\mid Z_{\delta}\not\le Q_{\lambda}\}$.

\begin{lemma}\label{critpair2}
Let $(\alpha, \alpha')$ be a critical pair. Then
\begin{enumerate}
\item if $Z_{\alpha'}\le Z(L_{\alpha'})$ then $\alpha$ is not conjugate to $\alpha'$; and
\item $C_{Z_{\alpha}}(Z_{\alpha'})\ne Z_\alpha\cap Q_{\alpha'}$ if and only if $Z_{\alpha'}=\Omega(Z(L_{\alpha'}))$ and $(\alpha', \alpha)$ is not a critical pair.
\end{enumerate}
\end{lemma}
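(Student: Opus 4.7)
For part (i), the strategy will be to show that the hypothesis $Z_{\alpha'}\le Z(L_{\alpha'})$ forces $Z_{\alpha'}$ to be as small as possible, and then to transport this smallness back to $\alpha$ to collide with the criticality of $(\alpha,\alpha')$. Since $\bar{L_{\alpha'}}$ contains a strongly $p$-embedded subgroup by \cref{MainHyp}, it is not $p$-closed, and hence $Q_{\alpha'}$ is a proper subgroup of a Sylow $p$-subgroup of $L_{\alpha'}=C_{L_{\alpha'}}(Z_{\alpha'})$. The dichotomy in \cref{p-closure2}(ii) then forces $Z_{\alpha'}=\Omega(Z(L_{\alpha'}))$. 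Assuming for contradiction that $\alpha$ is $G$-conjugate to $\alpha'$, the same conclusion holds at $\alpha$, so $Z_\alpha=\Omega(Z(L_\alpha))\le \Omega(Z(S))\le Z_\beta$ for $S\in\syl_p(G_{\alpha,\beta})$. Minimality of $b$ gives $Z_\beta\le Q_{\alpha'}$ (since $d(\beta,\alpha')=b-1$), and the chain contradicts $Z_\alpha\not\le Q_{\alpha'}$, which is part of the definition of the critical pair $(\alpha,\alpha')$.

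For part (ii) I will first record the easy inclusion $Z_\alpha\cap Q_{\alpha'}\le C_{Z_\alpha}(Z_{\alpha'})$: starting from $\Omega(Z(S'))\le Z(Q_{\alpha'})$ for any $S'\in\syl_p(G_{\alpha'})$ containing $Q_{\alpha'}$, and conjugating by $G_{\alpha'}$ using normality of $Q_{\alpha'}$, one sees $Z_{\alpha'}\le Z(Q_{\alpha'})$, whence $Q_{\alpha'}$ centralizes $Z_{\alpha'}$. The backward direction is then quick: if $Z_{\alpha'}=\Omega(Z(L_{\alpha'}))$, then $Z_\alpha\le G_{\alpha'}$ by \cref{crit pair}(ii), and since $Z_\alpha$ has exponent $p$ we obtain $Z_\alpha\le L_{\alpha'}$, so $Z_\alpha$ centralizes $Z_{\alpha'}$. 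Thus $C_{Z_\alpha}(Z_{\alpha'})=Z_\alpha$, which strictly contains $Z_\alpha\cap Q_{\alpha'}$ by \cref{p-closure2}(iii). In particular the hypothesis that $(\alpha',\alpha)$ is not critical is not actually needed here.

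The forward direction of (ii) is where both conclusions on the right-hand side must be extracted simultaneously, and this will be the main step. Suppose $x\in C_{Z_\alpha}(Z_{\alpha'})\setminus Q_{\alpha'}$ exists. Since $x$ is a $p$-element of $G_{\alpha'}$ it lies in $L_{\alpha'}$, and $\langle Q_{\alpha'},x\rangle$ is therefore a $p$-subgroup of $C_{L_{\alpha'}}(Z_{\alpha'})$ strictly containing $Q_{\alpha'}$, so \cref{p-closure2}(ii) again yields $Z_{\alpha'}=\Omega(Z(L_{\alpha'}))$. The remaining task, which I regard as the main obstacle, is upgrading this to $Z_{\alpha'}\le Q_\alpha$. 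I will do this by propagating $Z_{\alpha'}$ one step along the geodesic from $\alpha'$ to $\alpha$: picking $S''\in\syl_p(G_{\alpha',\alpha'-1})\subseteq \syl_p(L_{\alpha'})$ and using that $G_{\alpha'}$ has characteristic $p$, one gets $Z(L_{\alpha'})\le C_{L_{\alpha'}}(Q_{\alpha'})=Z(Q_{\alpha'})$, which is a $p$-group centralized by $S''$. Hence $Z_{\alpha'}\le\Omega(Z(S''))\le Z_{\alpha'-1}$, and minimality of $b$ delivers $Z_{\alpha'-1}\le Q_\alpha$, so $Z_{\alpha'}\le Q_\alpha$ as required.
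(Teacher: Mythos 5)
Your proof is correct and follows essentially the same engine as the paper's: both directions of (ii) hinge on \cref{p-closure2}(ii), and part (i) reduces to observing that $Z_{\alpha'}\le Z(L_{\alpha'})$ forces $Z_{\alpha'}=\Omega(Z(L_{\alpha'}))$. Two small differences are worth noting. For (i), the paper, having obtained $Z_{\alpha'}=\Omega(Z(L_{\alpha'}))$, simply quotes \cref{crit pair}(iii) to get the contradiction, whereas you re-derive that fact inline by pushing $Z_\alpha$ through $Z_\beta$ and invoking the minimality of $b$; both are fine, yours is slightly longer. For the forward direction of (ii), the paper stops after establishing $Z_{\alpha'}=\Omega(Z(L_{\alpha'}))$ and does not explicitly verify the second asserted conclusion that $(\alpha',\alpha)$ is not a critical pair; you carry out that propagation step ($Z_{\alpha'}\le\Omega(Z(S''))\le Z_{\alpha'-1}\le Q_\alpha$ by minimality of $b$), which is a genuine, if routine, fill-in. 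Your observation that the hypothesis ``$(\alpha',\alpha)$ not critical'' is redundant in the backward direction is also correct, and in fact your argument shows more: $Z_{\alpha'}=\Omega(Z(L_{\alpha'}))$ already \emph{implies} that $(\alpha',\alpha)$ is not critical, so the conjunction on the right-hand side of (ii) collapses to a single condition.
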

\begin{proof}
Suppose $Z_{\alpha'}\le Z(L_{\alpha'})$. By \cref{p-closure2} (ii), $Z_{\alpha'}=\Omega(Z(L_{\alpha'}))$. If $\alpha$ and $\alpha'$ were conjugate, then $Z_\alpha=\Omega(Z(L_\alpha))$, a contradiction to \cref{crit pair} (iii).

Suppose that $Z_{\alpha'}=\Omega(Z(L_{\alpha'}))$. Since $Z_\alpha\not\le Q_{\alpha'}$ but $Z_\alpha\le L_{\alpha'}$, we infer that $Z_\alpha=C_{Z_\alpha}(Z_{\alpha'})\ne Z_{\alpha}\cap Q_{\alpha'}$. Suppose conversely that $C_{Z_\alpha}(Z_{\alpha'})\ne Z_{\alpha}\cap Q_{\alpha'}$. Then $C_{L_{\alpha'}}(Z_{\alpha'})$ is not $p$-closed and by \cref{p-closure2} (ii), we have that $Z_{\alpha'}=\Omega(Z(L_{\alpha'}))$.
\end{proof}

\begin{lemma}\label{VAbelian}
Suppose that $b>2n$. Then $V_\delta^{(n)}$ is abelian for all $\delta\in\Gamma$.
\end{lemma}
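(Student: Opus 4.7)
The plan is to show, for any two vertices $\lambda_1,\lambda_2$ at distance at most $n$ from $\delta$, that $[Z_{\lambda_1},Z_{\lambda_2}]=1$; since $V_\delta^{(n)}$ is generated by the subgroups $\{Z_\lambda:d(\lambda,\delta)\le n\}$ this yields the claim. The two ingredients are (a) a containment of the form $Z_\mu\le Z(Q_\mu)$ for every $\mu\in\Gamma$, and (b) the observation that if $d(\mu_1,\mu_2)<b$ then $Z_{\mu_1}\le Q_{\mu_2}$.

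For (a): fix $\mu\in\Gamma$ and $S_\mu\in\syl_p(G_\mu)$. Since $G_\mu$ has characteristic $p$, $Q_\mu=O_p(G_\mu)$ is self-centralizing, and $Z(S_\mu)\le C_{S_\mu}(Q_\mu)=Z(Q_\mu)$. Hence $\Omega(Z(S_\mu))\le Z(Q_\mu)$. Since $Z(Q_\mu)$ is a characteristic subgroup of $Q_\mu\normaleq G_\mu$, it is $G_\mu$-invariant, and therefore $Z_\mu=\langle\Omega(Z(S_\mu))^{G_\mu}\rangle\le Z(Q_\mu)$.

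For (b): by definition $b=\min_{\mu\in\Gamma}b_\mu$, so $b_{\mu_1}\ge b$; hence whenever $d(\mu_1,\mu_2)<b$ we cannot have $Z_{\mu_1}\not\le Q_{\mu_2}$, giving $Z_{\mu_1}\le Q_{\mu_2}$ (using the reformulation of $b$ in terms of $Q_\lambda$ recorded right after \cref{p-closure2}).

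Combining: let $\lambda_1,\lambda_2\in\Gamma$ with $d(\lambda_i,\delta)\le n$. By the triangle inequality, $d(\lambda_1,\lambda_2)\le 2n<b$, so by (b), $Z_{\lambda_1}\le Q_{\lambda_2}$. By (a), $Z_{\lambda_2}\le Z(Q_{\lambda_2})$, and so $[Z_{\lambda_1},Z_{\lambda_2}]=1$. As $\lambda_1,\lambda_2$ were arbitrary generators of $V_\delta^{(n)}$, we conclude that $V_\delta^{(n)}$ is abelian.

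There is no real obstacle here: the only subtle point is justifying $Z_\mu\le Z(Q_\mu)$, which relies crucially on $G_\mu$ being of characteristic $p$ (so $Q_\mu$ is self-centralizing); once that is in hand the statement follows from the triangle inequality and the definition of $b$.
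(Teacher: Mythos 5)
Your proof is correct and follows essentially the same route as the paper's: both reduce to the observation that for $\lambda,\mu\in\Delta^{(n)}(\delta)$ one has $d(\lambda,\mu)\le 2n<b$, hence $Z_\lambda\le Q_\mu$ by minimality of $b$, and then use $Z_\mu\le Z(Q_\mu)$ to deduce $[Z_\lambda,Z_\mu]=1$. The only difference is that you spell out the self-centralizing argument ($C_{G_\mu}(Q_\mu)\le Q_\mu$ gives $\Omega(Z(S_\mu))\le Z(Q_\mu)\normaleq G_\mu$, hence $Z_\mu\le Z(Q_\mu)$) that the paper leaves implicit.
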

\begin{proof}
Since $b>2n$, for all $\lambda, \mu\in\Delta^{(n)}(\delta)$ we have that $Z_\lambda\le G_{\mu}^{(1)}$ by the minimality of $b$. Thus, $Z_\lambda\le Q_\mu$, $Z_\lambda$ centralizes $Z_\mu$ and since $V_\delta^{(n)}=\langle Z_\mu \mid \mu\in\Delta^{(n)}(\delta) \rangle$, it follows that $V_\delta^{(n)}$ is abelian.
\end{proof}

\begin{lemma}\label{CommCF}
$V_{\lambda}^{(n)}/[V_{\lambda}^{(n)}, Q_{\lambda}]$ contains a non-central chief factor for $L_{\lambda}$ for all $n\geq 1$ such that $V_{\lambda}^{(n)}\le Q_{\lambda}$.
\end{lemma}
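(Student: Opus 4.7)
Proof plan.

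The plan is to argue by contradiction: suppose every $L_{\lambda}$-chief factor in $V_{\lambda}^{(n)}/[V_{\lambda}^{(n)}, Q_{\lambda}]$ is central, equivalently $L_{\lambda}$ centralises this quotient, i.e.\ $[V_{\lambda}^{(n)}, L_{\lambda}] \le [V_{\lambda}^{(n)}, Q_{\lambda}]$. Write $W := V_{\lambda}^{(n)}$ and $C := [W, Q_{\lambda}]$, and note that $W \le Q_{\lambda}$ by hypothesis so that $W$ is a normal $p$-subgroup of $G_{\lambda}$ and $W/C$ is an abelian elementary abelian $p$-section. The argument rests on three inputs: \cref{p-closure}~(iv), the transitivity of $L_{\lambda}$ on $\Delta(\lambda)$, and \cref{BasicAmal}~(iii).

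First I would reduce to the top semisimple quotient. Applying \cref{p-closure}~(iv), every non-central $L_{\lambda}$-chief factor inside $Q_{\lambda}$ is $Q_{\lambda}$-centralised; central ones are $Q_{\lambda}$-centralised automatically. Picking any chief series $W = W_0 > W_1 > \cdots > W_k = 1$ for $L_{\lambda}$, this forces $[W_i, Q_{\lambda}] \le W_{i+1}$ for each $i$, and in particular $C = [W, Q_{\lambda}] \le W_1$ for every choice of maximal $L_{\lambda}$-invariant $W_1$. Hence $C \le \mathrm{rad}_{L_{\lambda}}(W)$ (the intersection of all maximal $L_{\lambda}$-invariant subgroups of $W$), so the top semisimple quotient $W/\mathrm{rad}_{L_{\lambda}}(W)$ is a further $L_{\lambda}$-module quotient of $W/C$ and is therefore $L_{\lambda}$-trivial by hypothesis.

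Next I would use that $L_{\lambda} \normaleq G_{\lambda}$ is not $p$-closed (since $\bar{L_{\lambda}}$ contains a strongly $p$-embedded subgroup), so by \cref{p-closure}~(iii), $L_{\lambda}$ is transitive on $\Delta(\lambda)$. Consequently $\{Z_{\mu} : \mu \in \Delta(\lambda)\}$ is a single $L_{\lambda}$-orbit, and as $L_{\lambda}$ fixes $W/\mathrm{rad}_{L_{\lambda}}(W)$ elementwise, the images of all these $Z_{\mu}$ coincide there. In particular $Z_{\mu}\,\mathrm{rad}_{L_{\lambda}}(W)$ is $L_{\lambda}$-invariant and independent of $\mu \in \Delta(\lambda)$. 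Combining this with the recursive identity $V_{\lambda}^{(n)} = \langle (V_{\mu}^{(n-1)})^{G_{\lambda}}\rangle$ of \cref{BasicAmal}~(vi) and Nakayama's Lemma applied to the finite-dimensional $L_{\lambda}$-module $W$ (with the $L_{\lambda}$-Frattini $\mathrm{rad}_{L_{\lambda}}(W)$ playing the role of the Jacobson radical), an induction on $n$ collapses the image of $W$ in $W/\mathrm{rad}_{L_{\lambda}}(W)$ to that of the single $L_{\lambda}$-submodule $Z_{\lambda}$, forcing $W = Z_{\lambda}$.

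Once $W = Z_{\lambda}$, then since $Z_{\lambda} \le Z(Q_{\lambda})$ we have $[W, Q_{\lambda}] = 1$, so the trivial-action hypothesis reads $Z_{\lambda} \le Z(L_{\lambda})$, and hence $Z_{\lambda} = \Omega(Z(L_{\lambda}))$ by \cref{p-closure2}~(ii). For any $\mu \in \Delta(\lambda)$ we then get $Z_{\mu} \le V_{\lambda}^{(1)} \le W = Z_{\lambda} \le Z(L_{\lambda})$, so $L_{\lambda}$ centralises $Z_{\mu}$, and in particular $L_{\lambda} \le N_{G}(Z_{\mu})$. Since $N_{G_{\lambda}}(S) \le G_{\lambda,\mu} \le N_{G}(Z_{\mu})$ by \cref{MainHyp}~(i), the Frattini decomposition $G_{\lambda} = L_{\lambda}\,N_{G_{\lambda}}(S)$ yields $G_{\lambda} \le N_{G}(Z_{\mu})$, i.e.\ $Z_{\mu} \normaleq G_{\lambda}$. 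Combined with $Z_{\mu} \normaleq G_{\mu}$, the non-trivial subgroup $Z_{\mu} \ge \Omega(Z(S)) \ne 1$, which is contained in $G_{\lambda,\mu}$, is normalised by both $G_{\lambda}$ and $G_{\mu}$, contradicting \cref{BasicAmal}~(iii).

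The hard part is the Nakayama-style descent when $n = 1$: there $W = V_{\lambda}^{(1)} = Z_{\lambda}\langle Z_{\mu}^{L_{\lambda}}\rangle$ is already generated by two $L_{\lambda}$-submodules, and ensuring that $\langle Z_{\mu}^{L_{\lambda}}\rangle$ is genuinely absorbed into $Z_{\lambda}\,\mathrm{rad}_{L_{\lambda}}(W)$ rather than just contributing a second trivial summand to the head requires a careful additional application of the transitivity argument (or an appeal to \cref{p-closure2}~(ii) to separately handle the case $Z_{\lambda} \ne \Omega(Z(L_{\lambda}))$, where one instead transfers a non-central $L_{\lambda}$-chief factor already present in $Z_{\lambda}$ upwards into $W/[W, Q_{\lambda}]$).
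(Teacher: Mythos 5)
Your instinct to run a Nakayama-style descent off the generating relation $V_\lambda^{(n)}=\langle (V_\mu^{(n-1)})^{L_\lambda}\rangle$ is exactly right, but you aim the descent at the wrong target. The conclusion "forcing $W=Z_\lambda$" does not follow and is not in fact what the argument produces: the single $L_\lambda$-invariant subgroup that survives the collapse is $V_\mu^{(n-1)}$, not $Z_\lambda$. Concretely, if $O^p(L_\lambda)$ centralises $W/[W,Q_\lambda]$ then $V_\mu^{(n-1)}[W,Q_\lambda]$ is $L_\lambda$-invariant (trivial action of $O^p(L_\lambda)$ on the quotient, and $G_{\lambda,\mu}$ already normalises $V_\mu^{(n-1)}$), and since $V_\mu^{(n-1)}$ generates $W$ over $L_\lambda$ one gets $W=V_\mu^{(n-1)}[W,Q_\lambda]$. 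Because $Q_\lambda$ normalises $V_\mu^{(n-1)}$ (indeed $Q_\lambda\le G_\mu$), one can iterate along the series $V_k:=[W,Q_\lambda;k]$: from $W=V_\mu^{(n-1)}V_k$ one deduces $V_1=[W,Q_\lambda]\le[V_\mu^{(n-1)},Q_\lambda]V_{k+1}\le V_\mu^{(n-1)}V_{k+1}$, hence $W=V_\mu^{(n-1)}V_{k+1}$. Since $Q_\lambda$ is a $p$-group acting on the $p$-group $W$, the $V_k$ terminate, so $W=V_\mu^{(n-1)}$. This is the contradiction: $V_\lambda^{(n)}$ and $V_\mu^{(n-1)}$ are normal in $G_\lambda$ and $G_\mu$ respectively, both sit inside $G_{\lambda,\mu}$, and by \cref{BasicAmal}~(iii) no non-trivial such subgroup is normal in both. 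Your detour through $\mathrm{rad}_{L_\lambda}(W)$ is both unnecessary (working directly with $[W,Q_\lambda]$ is cleaner, since the lower central series under $Q_\lambda$ supplies the Nakayama radical for free) and part of what misleads you into trying to land on $Z_\lambda$.

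The $n=1$ difficulty you flag at the end is real for your version of the argument, but it evaporates once the descent target is correct: with the paper's convention $V_\mu^{(0)}=Z_\mu$, the $n=1$ case descends to $W=Z_\mu$, which is normal in $G_\mu$ and again contradicts \cref{BasicAmal}~(iii); no case split on whether $Z_\lambda=\Omega(Z(L_\lambda))$ is needed. Your fallback of "transferring a non-central $L_\lambda$-chief factor already present in $Z_\lambda$ upwards into $W/[W,Q_\lambda]$" also does not obviously work, since nothing prevents $Z_\lambda\le[W,Q_\lambda]$ a priori.
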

\begin{proof}
Set $V_{\mu}^{(0)}=Z_{\mu}$ for all $\mu\in\Gamma$ and suppose that $O^p(L_{\lambda})$ centralizes $V_{\lambda}^{(n)}/[V_{\lambda}^{(n)}, Q_{\lambda}]$. Observe that $V_{\lambda}^{(n)}=\langle (V_{\mu}^{(n-1)})^{L_{\lambda}}\rangle$ for $\mu\in\Delta(\lambda)$ so that $V_{\mu}^{(n-1)}\not\le [V_{\lambda}^{(n)}, Q_{\lambda}]< V_{\lambda}^{(n)}$. Moreover, $V_{\mu}^{(n-1)}[V_{\lambda}^{(n)}, Q_{\lambda}]\normaleq L_{\lambda}$ so that $V_{\lambda}^{(n)}=V_{\mu}^{(n-1)}[V_{\lambda}^{(n)}, Q_{\lambda}]$. Set $V_i:=[V_{\lambda}^{(n)}, Q_{\lambda}; i]$. In particular, $V_0=V_{\lambda}^{(n)}$ and $V_1=[V_0, Q_{\lambda}]=[V_{\mu}^{(n-1)}, Q_{\lambda}]V_2$. Notice that $V_{\lambda}^{(n)}\ne V_{\mu}^{(n-1)}$ and let $k$ be maximal such that $V_{\lambda}^{(n)}=V_{\mu}^{(n-1)}V_k$. Then $V_1=[V_{\mu}^{(n-1)}, Q_{\mu}]V_{k+1}\le V_{\mu}^{(n-1)}V_{k+1}$. But $V_{\lambda}^{(n)}=V_{\mu}^{(n-1)}V_1=V_{\mu}^{(n-1)}V_{k+1}$, contradicting the maximal choice of $k$. Thus, $O^p(L_{\lambda})$ does not centralize $V_{\lambda}^{(n)}/[V_{\lambda}^{(n)}, Q_{\lambda}]$, as required.
\end{proof}

We will use the following lemma often in the amalgam method and without reference. Recall also that if $U, V\normaleq G$ with $V<U$ then, in our setup and using coprime action, $U/V$ does not contain a non-central chief factor for $G$ if and only if $O^p(G)$ centralizes $U/V$.

\begin{lemma}\label{nccf}
For any $\lambda\in\Gamma$, $V_{\lambda}^{(n)}/V_{\lambda}^{(n-2)}$ contains a non-central chief factor for $L_{\lambda}$ for all $n\geq 2$ such that $V_{\lambda}^{(n)}\le Q_{\lambda}$.
\end{lemma}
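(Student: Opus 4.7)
The plan is to mimic the proof of \cref{CommCF}, but with the commutator $[V_\lambda^{(n)},Q_\lambda]$ used there replaced by $V_\lambda^{(n-2)}$. The crucial structural fact enabling this replacement is the inclusion $V_\lambda^{(n-2)}\le V_\mu^{(n-1)}$ for every $\mu\in\Delta(\lambda)$: if $d(\nu,\lambda)\le n-2$ then $d(\nu,\mu)\le n-1$ by the triangle inequality, so $Z_\nu$ is automatically among the generators of $V_\mu^{(n-1)}$.

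Arguing by contradiction, suppose that $O^p(L_\lambda)$ centralizes $V_\lambda^{(n)}/V_\lambda^{(n-2)}$, and fix $\mu\in\Delta(\lambda)$. Since $V_\mu^{(n-1)}\le V_\lambda^{(n)}$, the assumption gives
\[[V_\mu^{(n-1)},O^p(L_\lambda)]\le V_\lambda^{(n-2)}\le V_\mu^{(n-1)},\]
so $O^p(L_\lambda)$ normalizes $V_\mu^{(n-1)}$. The subgroup $V_\mu^{(n-1)}$ is also normal in $G_\mu$ by definition, and in particular is normalized by $G_{\lambda,\mu}$. Because $S\in\syl_p(L_\lambda)$ is contained in $L_\lambda\cap G_{\lambda,\mu}$, we have $L_\lambda=O^p(L_\lambda)(L_\lambda\cap G_{\lambda,\mu})$, and combining this with the Frattini decomposition $G_\lambda=L_\lambda N_{G_\lambda}(S)$ together with the hypothesis $N_{G_\lambda}(S)\le G_{\lambda,\mu}$ from \cref{MainHyp}(i), we obtain $G_\lambda=O^p(L_\lambda)G_{\lambda,\mu}$. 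Therefore $V_\mu^{(n-1)}$ is $G_\lambda$-invariant.

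Combined with $V_\mu^{(n-1)}\normaleq G_\mu$, this shows $V_\mu^{(n-1)}\normaleq\langle G_\lambda,G_\mu\rangle=G$. However, under the standing hypothesis $V_\lambda^{(n)}\le Q_\lambda$, we have
\[V_\mu^{(n-1)}\le V_\lambda^{(n)}\le Q_\lambda\le S\le G_{\lambda,\mu},\]
and the faithful-completion property of $\mathcal{A}$ forbids any non-trivial subgroup of $G_{\lambda,\mu}$ from being normal in $G$. Hence $V_\mu^{(n-1)}=\{1\}$, contradicting $1\ne \Omega(Z(S))\le Z_\mu\le V_\mu^{(n-1)}$ (the former being non-trivial as $G_\mu$ has characteristic $p$).

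There is no substantive obstacle here; the argument is bookkeeping of invariances. The main novelty over \cref{CommCF} is that the quotient $V_\lambda^{(n)}/V_\lambda^{(n-2)}$ has the pleasant property $V_\mu^{(n-1)}V_\lambda^{(n-2)}=V_\mu^{(n-1)}$, so we never need to pass to the quotient to formulate normality and can appeal directly to the amalgam no-normal-subgroup condition.
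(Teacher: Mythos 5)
Your proof is correct and takes essentially the same route as the paper's: both hinge on the observation that $V_\lambda^{(n-2)}\le V_\mu^{(n-1)}\le V_\lambda^{(n)}$ forces $O^p(L_\lambda)$ to normalize $V_\mu^{(n-1)}$, then invoke the Frattini decomposition $G_\lambda=O^p(L_\lambda)G_{\lambda,\mu}$ to conclude $V_\mu^{(n-1)}\normaleq\langle G_\lambda,G_\mu\rangle$, contradicting the faithful-completion property. You merely spell out the Frattini step, the role of $N_{G_\lambda}(S)\le G_{\lambda,\mu}$, and the final contradiction in more detail than the paper does.
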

\begin{proof}
Assume that $V_{\lambda}^{(n)}/V_{\lambda}^{(n-2)}$ contains only central chief factors for $L_{\lambda}$ so that $O^p(L_{\lambda})$ centralizes $V_{\lambda}^{(n)}/V_{\lambda}^{(n-2)}$. Since $V_{\lambda}^{(n-2)}<V_{\mu}^{(n-1)}<V_{\lambda}^{(n)}$ for all $\mu\in\Delta(\lambda)$, we have that $V_{\mu}^{(n-1)}\normaleq O^p(L_{\lambda})G_{\lambda, \mu}=G_{\lambda}$ by a Frattini argument. But then $V_{\mu}^{(n-1)}\normaleq \langle G_{\mu}, G_{\lambda}\rangle$, a contradiction. Thus, $V_{\lambda}^{(n)}/V_{\lambda}^{(n-2)}$ contains a non-central chief factor, as required.
\end{proof}

We now introduce some notation which is non-standard in the amalgam method and is tailored for our purposes. 

\begin{notation}
\begin{itemize}
\item If $Z_\delta\ne \Omega(Z(L_{\delta}))$, then $R_\delta=C_{L_{\delta}}(Z_{\delta})$.
\item If $Z_\delta=\Omega(Z(L_{\delta}))$ and $b>1$, then $R_\delta=C_{L_{\delta}}(V_\delta/C_{V_{\delta}}(O^p(L_{\delta})))$.
\item If $Z_\delta=\Omega(Z(L_{\delta}))$ and $b>1$, then $C_\delta=C_{Q_{\delta}}(V_\delta)$.
\end{itemize}
\end{notation}

\begin{lemma}\label{BasicVB}
Suppose that $Z_\delta=\Omega(Z(L_{\delta}))$, $b>1$ and let $T\in\syl_p(G_{\delta})$. Then $R_{\delta}\cap T\le Q_{\delta}$ and $C_T(V_{\delta})=C_\delta$. 
\end{lemma}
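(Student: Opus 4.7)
The plan is to reduce the first assertion to an application of \cref{p-closure}(iv) by exhibiting a non-central chief factor for $L_\delta$ inside $V_\delta$ which $R_\delta$ centralizes. I would first observe that $b > 1$ forces $V_\delta \le Q_\delta$: for each $\mu \in \Delta(\delta)$, minimality of $b$ gives $Z_\mu \le G_\delta^{(1)}$, and by \cref{p-closure2}(i) the group $G_\delta^{(1)}$ has $Q_\delta$ as its unique Sylow $p$-subgroup, so $Z_\mu \le Q_\delta$ and hence $V_\delta \le Q_\delta$.

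Next I would apply \cref{CommCF} with $n=1$ to conclude that $V_\delta/[V_\delta,Q_\delta]$ contains a non-central chief factor for $L_\delta$. Lifting this yields $L_\delta$-normal subgroups $V \le U \le V_\delta$ such that $U/V$ is a non-central chief factor. By \cref{p-closure}(iv), $Q_\delta \in \syl_p(C_{L_\delta}(U/V))$; since $Q_\delta \normaleq L_\delta$ normalizes $C_{L_\delta}(U/V)$, in fact $Q_\delta$ is the unique Sylow $p$-subgroup of $C_{L_\delta}(U/V)$, so any $p$-subgroup of $C_{L_\delta}(U/V)$ lies in $Q_\delta$.

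The heart of the argument will be to show $R_\delta \le C_{L_\delta}(U/V)$. Setting $C := C_{V_\delta}(O^p(L_\delta))$, so that $R_\delta = C_{L_\delta}(V_\delta/C)$, the key observation is that $O^p(L_\delta)$ centralizes $C$, and consequently every $L_\delta$-chief factor contained in $C$ is central. I would then form the $L_\delta$-normal subgroup $W := (U \cap C)V$, which satisfies $V \le W \le U$; since $U/V$ is a chief factor, either $W = V$ or $W = U$. In the latter case the second isomorphism theorem gives $U/V \cong (U \cap C)/(V \cap C)$, a subquotient of $C$, hence central, contradicting the choice of $U/V$. Thus $U \cap C \le V$, and $[R_\delta, U] \le C \cap U \le V$ shows $R_\delta$ centralizes $U/V$. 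Combined with the previous paragraph this gives $R_\delta \cap T \le Q_\delta$. I expect this chief-factor juggling to be the main subtlety, but it is really just the remark that $C$ contains no non-central chief factors.

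For the second assertion, $C_\delta = C_{Q_\delta}(V_\delta) \le C_T(V_\delta)$ is immediate, while $C_T(V_\delta) \le C_{L_\delta}(V_\delta) \le C_{L_\delta}(V_\delta/C) = R_\delta$ together with the first part yields $C_T(V_\delta) \le R_\delta \cap T \le Q_\delta$, whence $C_T(V_\delta) \le C_{Q_\delta}(V_\delta) = C_\delta$.
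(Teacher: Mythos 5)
Your proof is correct, and it takes a genuinely different route from the paper's. The paper, after noting that $R_\delta\cap T\not\le Q_\delta$ would make $R_\delta$ non-$p$-closed, applies \cref{p-closure}(iii) to get $G_\delta = R_\delta N_{G_\delta}(T)$ and then argues (rather tersely, with a coprime-action step left implicit) that $O^p(L_\delta)$ would have to centralize $V_\delta$, forcing $Z_\mu\normaleq\langle G_\delta, G_\mu\rangle$ — a contradiction. You instead route through \cref{p-closure}(iv): you manufacture a non-central chief factor $U/V$ of $L_\delta$ inside $V_\delta$ via \cref{CommCF}, and then the lemma's conclusion $Q_\delta\in\syl_p(C_{L_\delta}(U/V))$, combined with $Q_\delta\normaleq L_\delta$, reduces the problem to showing $R_\delta\le C_{L_\delta}(U/V)$. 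Your observation that $U\cap C \le V$ (because $(U\cap C)V/V$ is a subquotient of $C=C_{V_\delta}(O^p(L_\delta))$, hence central, while $U/V$ is not) cleanly delivers $[R_\delta,U]\le C\cap U\le V$, which is exactly what is needed. The trade-off is that the paper's approach is shorter once one is willing to leave a coprime-action step to the reader, while yours makes every step of the $p$-closure machinery explicit; in particular, you verify $V_\delta\le Q_\delta$ and invoke the uniqueness of $Q_\delta$ as a Sylow $p$-subgroup of $C_{L_\delta}(U/V)$ carefully. Your derivation of the second assertion — $C_\delta\le C_T(V_\delta)\le R_\delta\cap T\le Q_\delta$, hence equality with $C_\delta$ — matches the spirit of the paper's closing line and is correct.
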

\begin{proof}
Suppose for a contradiction that $R_{\delta}\cap T\not\le Q_{\delta}$. Then $R_{\delta}$ is not $p$-closed and by \cref{p-closure} (iii), $G_{\delta}=R_{\delta}N_{G_{\delta}}(T)$. Let $\mu\in\Delta(\delta)$ with $T\in\syl_p(G_{\delta,\mu})$. Then $Z_{\mu}\le V_{\delta}$ so that $Z_{\mu}\normaleq \langle G_{\delta}, G_{\mu}\rangle$, a contradiction.

Suppose now that $C_T(V_{\delta})>Q_{\delta}$ so that $C_{G_{\delta}}(V_{\delta})$ is not $p$-closed and is normal in $G_{\delta}$. As above, by \cref{p-closure} (iii), we quickly get that $G_{\delta}=C_{G_{\delta}}(V_{\delta})G_{\delta,\mu}$ normalizes $Z_{\mu}$ for $\mu\in\Delta(\delta)$ with $T\in\syl_p(G_{\delta,\mu})$. Hence, the result.
\end{proof}

\begin{lemma}\label{spelemma2}
Suppose that there is $X\le L_{\delta}$ such that $\bar{X}$ is strongly $p$-embedded in $\bar{L_{\delta}}$ for $\delta\in\Gamma$. Then $XR_{\delta}/R_{\delta}$ is strongly $p$-embedded in $L_{\delta}/R_{\delta}$.
\end{lemma}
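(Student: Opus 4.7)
The plan is to reduce modulo $Q_\delta$ in order to exploit the hypothesis that $\bar X$ is strongly $p$-embedded in $\bar{L_\delta}$, and then show that the further quotient by $\bar{R_\delta}:=R_\delta/Q_\delta$ preserves this property, since $\bar{R_\delta}$ will turn out to be a normal $p'$-subgroup of $\bar{L_\delta}$. To execute this, I need two auxiliary facts: (a) $Q_\delta\le R_\delta$, so that $L_\delta/R_\delta$ is genuinely a quotient of $\bar{L_\delta}$; and (b) $R_\delta/Q_\delta$ is a $p'$-group.

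In the first definition of $R_\delta$ (when $Z_\delta\ne\Omega(Z(L_\delta))$), both facts are immediate. For (a), since $Q_\delta$ is a normal $p$-subgroup of $G_\delta$, it is contained in $S^g$ for every $g\in G_\delta$, so $Q_\delta$ centralizes $\Omega(Z(S^g))=\Omega(Z(S))^g$; since $Z_\delta=\langle \Omega(Z(S))^{G_\delta}\rangle$, this gives $Q_\delta\le C_{L_\delta}(Z_\delta)=R_\delta$. For (b), \cref{p-closure2}~(ii) together with $Z_\delta\ne\Omega(Z(L_\delta))$ yields $Q_\delta\in\syl_p(C_{L_\delta}(Z_\delta))=\syl_p(R_\delta)$, so $R_\delta/Q_\delta$ is a $p'$-group. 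In the second definition (when $Z_\delta=\Omega(Z(L_\delta))$ and $b>1$), fact (b) is furnished directly by \cref{BasicVB}, which gives $R_\delta\cap T\le Q_\delta$ for $T\in\syl_p(L_\delta)$, so that Sylow $p$-subgroups of the normal subgroup $R_\delta$ lie in $Q_\delta$. Fact (a) is more delicate: one uses that $b>1$ forces $V_\delta\le Q_\delta$, so that $Q_\delta$ automatically centralizes every chief factor of $L_\delta$ within $V_\delta$, and then combines this with \cref{p-closure2}~(iv) to analyse how $O^p(L_\delta)$ acts on $[V_\delta,Q_\delta]$ and conclude that $[V_\delta,Q_\delta]\le C_{V_\delta}(O^p(L_\delta))$.

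With (a) and (b) in hand, set $N:=\bar{R_\delta}$, a normal $p'$-subgroup of $\bar{L_\delta}$, and identify $L_\delta/R_\delta$ with $\bar{L_\delta}/N$. Strong $p$-embeddedness of $\bar X N/N$ in $\bar{L_\delta}/N$ then follows by the same mechanism as \cref{spelemma1}~(iv): given any non-trivial $p$-subgroup $\tilde P$ of $\bar X N/N$, lift it to a $p$-subgroup $P\le \bar X N$ with $P\cap N=1$ (possible since $N$ is a $p'$-group); by Sylow's theorem inside $\bar X N$, and using that $\bar X$ contains a Sylow $p$-subgroup of $\bar{L_\delta}$, one may conjugate so that $P\le \bar X$. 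Then for any $\bar g N\in N_{\bar{L_\delta}/N}(\tilde P)$, both $P^{\bar g}$ and $P$ are Sylow $p$-subgroups of $PN$, forcing $P^{\bar g}=P^n$ for some $n\in N$; hence $\bar g n^{-1}\in N_{\bar{L_\delta}}(P)\le \bar X$, so $\bar g N\in \bar X N/N$ as required.

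The main obstacle will be verifying fact (a) in the second case, namely that $[V_\delta,Q_\delta]\le C_{V_\delta}(O^p(L_\delta))$. Although every chief factor of $L_\delta$ inside the normal $p$-subgroup $Q_\delta$ is necessarily centralized by $Q_\delta$ (which, together with \cref{CommCF} applied to $V_\delta/[V_\delta,Q_\delta]$ and the tight control from \cref{p-closure2}~(iv) on centralizers of non-central chief factors, provides the structural information one needs), piecing this together to show that $O^p(L_\delta)$ acts trivially on the entire subgroup $[V_\delta,Q_\delta]$ is the delicate step, and it may require additional input on the module structure of $V_\delta$ as a $\bar{L_\delta}$-module.
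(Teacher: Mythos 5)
Your high-level strategy agrees with the paper's: verify that $\bar{R_\delta}$ is a normal $p'$-subgroup of $\bar{L_\delta}$ (citing \cref{p-closure2} and \cref{BasicVB}, exactly as the paper does) and then transfer strong $p$-embedding to the quotient $\bar{L_\delta}/\bar{R_\delta}$ by the normalizer computation underlying \cref{spelemma1}(iv). The normalizer step you write out — lift a $p$-subgroup, conjugate it into $\bar X$, and use $P^{\bar g}=P^n$ to land $\bar g$ in $\bar X\bar{R_\delta}$ — is correct and is precisely the content of that cited lemma.

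The genuine gap, however, is not where you locate it. The normalizer computation shows $N_{\bar{L_\delta}/N}(\tilde P)\le \bar X N/N$ for every nontrivial $p$-subgroup $\tilde P$, but a strongly $p$-embedded subgroup is by definition a \emph{proper} subgroup, and you never exclude the degenerate case $\bar{L_\delta}=\bar X\bar{R_\delta}$, in which $\bar X N/N$ is all of $\bar{L_\delta}/N$ and the conclusion simply fails. This is exactly where the paper's proof does its work: assuming $XR_\delta/R_\delta$ is not strongly $p$-embedded forces $\bar{L_\delta}=\bar X\bar{R_\delta}$ and hence $L_\delta=SR_\delta$, from which one derives $Z_\delta=\Omega(Z(L_\delta))$ in the first case of the definition of $R_\delta$, and $Z_\lambda\normaleq L_\delta$ for $\lambda\in\Delta(\delta)$ in the second — either of which contradicts the faithfulness of the amalgam (\cref{BasicAmal}). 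Your ``fact~(a)'' (that $Q_\delta\le R_\delta$, so that $L_\delta/R_\delta$ really is $\bar{L_\delta}/\bar{R_\delta}$) is a legitimate background point — note that the paper does not verify it explicitly in the second case either — but it is a secondary concern; the substance of the lemma, and the step your proposal omits entirely, is ruling out $\bar X\bar{R_\delta}=\bar{L_\delta}$ using the amalgam structure.
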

\begin{proof}
By \cref{p-closure2} and \cref{BasicVB}, we have that $\bar{R_{\delta}}$ is a $p'$-group. Then, if $XR_{\delta}/R_{\delta}$ is not strongly $p$-embedded in $L_{\delta}/R_{\delta}$, \cref{spelemma1} (iv) implies that $L_{\delta}=SR_{\delta}$. Thus, $Z_\delta=\Omega(Z(L_\delta))$ and it quickly follows that for any $\lambda\in\Delta(\delta)$, $Z_{\lambda}\normaleq L_{\delta}$, a contradiction.
\end{proof}

\begin{lemma}\label{NiceSL2}
Suppose that $L_{\delta}/R_{\delta}\cong\SL_2(p^n)$, $Q_{\delta}\in\syl_p(R_{\delta})$ and $R_{\delta}\le G_{\delta, \lambda}$ for some $\lambda\in\Delta(\delta)$. Then $\bar{L_{\delta}}\cong\SL_2(p^n)$.
\end{lemma}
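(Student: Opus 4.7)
The plan is to show $R_\delta = Q_\delta$, which immediately yields $\bar{L_\delta} = L_\delta/Q_\delta = L_\delta/R_\delta \cong \SL_2(p^n)$. First I would observe that $R_\delta$ is normal in $G_\delta$, not merely in $L_\delta$: it is defined as the centralizer in $L_\delta$ of a subgroup (either $Z_\delta$ or the section $V_\delta/C_{V_\delta}(O^p(L_\delta))$) which is manifestly $G_\delta$-invariant. Combining this with the hypothesis $R_\delta \le G_{\delta, \lambda}$ and the transitivity of $G_\delta$ on $\Delta(\delta)$ (\cref{BasicAmal}), conjugation yields $R_\delta = R_\delta^g \le G_{\delta, g\lambda}$ for every $g \in G_\delta$, so $R_\delta \le G_\delta^{(1)}$. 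By \cref{p-closure2}(i), $Q_\delta \in \syl_p(G_\delta^{(1)})$ and $G_\delta^{(1)}/Q_\delta$ is centralized by $\bar{L_\delta}$; hence $\bar{R_\delta} := R_\delta/Q_\delta$ is a \emph{central} $p'$-subgroup of $\bar{L_\delta}$.

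Next I would exploit the identity $L_\delta = O^{p'}(G_\delta)$, which gives $\bar{L_\delta} = O^{p'}(\bar{L_\delta})$, so $\bar{L_\delta}$ admits no non-trivial $p'$-quotient. In particular, the abelianization $\bar{L_\delta}^{\mathrm{ab}}$ is a $p$-group. Since $\SL_2(p^n)^{\mathrm{ab}}$ is also a $p$-group in every case (trivial for $p^n \ge 4$; cyclic of order $p$ for $p^n \in \{2, 3\}$), the image of the $p'$-group $\bar{R_\delta}$ in $\bar{L_\delta}^{\mathrm{ab}}$ must be trivial, placing $\bar{R_\delta}$ inside $Z(\bar{L_\delta}) \cap [\bar{L_\delta}, \bar{L_\delta}]$. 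Consequently $\bar{R_\delta}$ is a section of the Schur multiplier $M(\SL_2(p^n))$.

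The proof would then conclude by invoking the standard fact that $M(\SL_2(p^n))$ is a $p$-group in every case: it is trivial except when $p^n \in \{4, 9\}$, where it is $C_2$ or $C_3$ respectively, both $p$-groups. Since $\bar{R_\delta}$ is a $p'$-section of a $p$-group, $\bar{R_\delta} = \{1\}$, so $R_\delta = Q_\delta$ and $\bar{L_\delta} \cong \SL_2(p^n)$. The main subtlety lies in the first step: justifying that $R_\delta \le G_{\delta, \lambda}$ for a \emph{single} neighbour $\lambda$ already forces $R_\delta \le G_\delta^{(1)}$. This is the only place where the hypothesis beyond ``$Q_\delta \in \syl_p(R_\delta)$'' enters, and it hinges crucially on $R_\delta$ being normal in the full vertex stabilizer $G_\delta$ rather than only in $L_\delta$.
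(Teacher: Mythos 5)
Your proof is correct, and the opening step---showing that $\bar{R_\delta}$ is a central $p'$-subgroup of $\bar{L_\delta}$ by combining $R_\delta\normaleq G_\delta$, transitivity of $G_\delta$ on $\Delta(\delta)$, and \cref{p-closure2}(i)---is essentially the paper's first line, spelled out in more detail. Where you genuinely diverge is in what happens after that. The paper splits into cases: for $p^n\geq 4$ it uses perfectness of $\SL_2(p^n)$ together with \cref{SLGen}(vii) (uniqueness of perfect central extensions by $p'$-groups), while for $p^n=2$ it runs a coprime-action complement argument and for $p^n=3$ it invokes Gasch\"utz and a Small Groups Library search. You replace all three branches by a single uniform observation: because $\bar{L_\delta}=O^{p'}(\bar{L_\delta})$, its abelianization is a $p$-group, so the $p'$-group $\bar{R_\delta}$ lands in $\bar{L_\delta}'$; the extension $1\to \bar{R_\delta}\to \bar{L_\delta}\to\SL_2(p^n)\to 1$ is therefore a stem extension, and the five-term exact sequence exhibits $\bar{R_\delta}$ as a quotient of $M(\SL_2(p^n))$, which is a $p$-group in every case. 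A $p'$-quotient of a $p$-group is trivial, and you are done. This is tighter and, in particular, eliminates the MAGMA check from the $\SL_2(3)$ branch. Two small points worth making explicit if you were to write this up: (a) you rely on $\SL_2(p^n)^{\mathrm{ab}}$ being a $p$-group even when $\SL_2(p^n)$ fails to be perfect, i.e.\ $\Sym(3)^{\mathrm{ab}}\cong C_2$ and $\SL_2(3)^{\mathrm{ab}}\cong C_3$, which is not covered by \cref{SLGen}(vii); and (b) the fact that $M(\Sym(3))$ and $M(\SL_2(3))$ are both trivial is a separate computation, not a consequence of the perfect-central-extension statement. Both are true, so the argument stands, but they should be cited or verified rather than left implicit.
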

\begin{proof}
Since $R_\delta\le G_{\delta, \lambda}$ and $R_\delta\normaleq G_\delta$, $\bar{R_\delta}\le Z(\bar{L_{\delta}})$ is a $p'$-group by \cref{p-closure2}. If $p^n>3$, then as $L_{\delta}=O^{p'}(L_{\delta})$, it follows from \cref{SLGen} (vii) that $\bar{L_{\delta}}\cong\SL_2(p^n)$.

If $L_{\delta}/R_{\delta}\cong\Sym(3)$ and $R_{\delta}\ne Q_{\delta}$, then $\bar{R_\delta}$ is a non-trivial $3$-group since $L_{\delta}=O^{2'}(L_{\delta})$ and for any prime $r\ne 2,3$, $O_r(\bar{R_{\delta}})$ is complemented in $\bar{L_{\delta}}$. But now, since $\bar{R_{\delta}}$ is maximal and central in $O_3(\bar{L_{\delta}})$, $O_3(\bar{L_{\delta}})$ is abelian. By coprime action, $O_3(\bar{L_{\delta}})=[O_3(\bar{L_{\delta}}), \bar{S}]\times C_{O_3(\bar{L_{\delta}})}(\bar{S})$ and $\bar{R_{\delta}}$ is complemented in $\bar{L_{\delta}}$ by $[O_3(\bar{L_{\delta}}), \bar{S}]\bar{S}\cong\Sym(3)$. Since $L_{\delta}=O^{2'}(L_{\delta})$ the result follows.

If $L_{\delta}/R_{\delta}\cong\SL_2(3)$ then $\bar{R_\delta}$ is a non-trivial $2$-group since $L_{\delta}=O^{3'}(L_{\delta})$ and for any prime $r\ne 2,3$, $O_r(\bar{R_{\delta}})$ is complemented in $\bar{L_{\delta}}$. Let $A$ be a maximal subgroup of $\bar{R_{\delta}}$. Then $|O_2(\bar{L_{\delta}})/A|=16$. By Gaschutz' theorem \cite[(3.3.2)]{kurz}, we may assume that $\bar{R_{\delta}}/A$ is not complemented in $O_2(\bar{L_{\delta}})/A$. We see that $O_2(\bar{L_{\delta}})/A$ is a non-abelian group of order $16$ with center of order at most $4$. Checking the Small Groups Library in MAGMA for groups of order $48$ with a quotient by a central involution isomorphic to $\SL_2(3)$ and a Sylow $2$-subgroup satisfying the required properties, we have a contradiction.
\end{proof}

\begin{lemma}\label{SimExt}
Suppose that $\delta\in\Gamma$, $Z_{\delta-1}=Z_{\delta+1}$, $Q_{\delta}\in\syl_p(R_\delta)$ and $i\in\N$. If $Q_{\delta-1}Q_{\delta}\in\syl_p(L_{\delta})$, $L_{\delta}/R_{\delta}$ is generated by any two distinct Sylow $p$-subgroups and $O^p(R_\delta)$ normalizes $V_{\delta-1}^{(i-1)}$, then $V_{\delta-1}^{(i-1)}=V_{\delta+1}^{(i-1)}$.
\end{lemma}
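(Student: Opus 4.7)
The plan is to show that $W := V_{\delta-1}^{(i-1)}$ is normalized by all of $L_\delta$; once this is done, the conclusion follows immediately, because $L_\delta$ acts transitively on $\Delta(\delta)$, so any element $g \in L_\delta$ with $(\delta-1)g = \delta+1$ will satisfy $V_{\delta+1}^{(i-1)} = W^g = W$. Transitivity is obtained via \cref{p-closure} (iii): $\bar{L_\delta}$ has a strongly $p$-embedded subgroup (by \cref{MainHyp}), so is certainly not $p$-closed.

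Set $H := N_{L_\delta}(W)$. First I collect the obvious containments. Since $W \normaleq G_{\delta-1}$, $Q_{\delta-1}$ normalizes $W$; and since $Q_\delta \le G_\delta^{(1)} \le G_{\delta-1}$ by \cref{crit pair} (i), also $Q_\delta \le H$. The assumption $Q_\delta \in \syl_p(R_\delta)$ forces $R_\delta/Q_\delta$ to be a $p'$-group, so $R_\delta = Q_\delta \cdot O^p(R_\delta)$; together with the hypothesis $O^p(R_\delta) \le H$ this gives $R_\delta \le H$, and therefore $H \ge R_\delta Q_{\delta-1}$. Passing to $\bar{L_\delta} := L_\delta/R_\delta$, the image $\bar H = H/R_\delta$ contains $\overline{Q_{\delta-1}}$. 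Using $Q_\delta \le R_\delta$ and the hypothesis $Q_{\delta-1}Q_\delta \in \syl_p(L_\delta)$, one checks that $\overline{Q_{\delta-1}} = \overline{Q_{\delta-1}Q_\delta}$ is a Sylow $p$-subgroup of $\bar{L_\delta}$.

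The heart of the argument is now to prove that $\bar H$ contains a second Sylow $p$-subgroup of $\bar{L_\delta}$: once this is established, the hypothesis that $\bar{L_\delta}$ is generated by any two distinct Sylow $p$-subgroups forces $\bar H = \bar{L_\delta}$, hence $H = L_\delta$, which completes the proof by the opening paragraph. To produce this second Sylow we exploit $Z_{\delta-1} = Z_{\delta+1}$: choosing $g \in L_\delta$ with $(\delta-1)g = \delta+1$, conjugation by $g$ sends $Q_{\delta-1}$ to $Q_{\delta+1}$ and sends $Z_{\delta-1}$ to $Z_{\delta+1} = Z_{\delta-1}$, so $g \in N_{L_\delta}(Z_{\delta-1})$. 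Since $R_\delta \normaleq L_\delta$, the same analysis applied to $W^g$ shows that $N_{L_\delta}(W^g) \supseteq R_\delta Q_{\delta+1}$; thus $\overline{Q_{\delta+1}}$ is a Sylow of $\bar L_\delta$ lying in the image of the stabilizer of $V_{\delta+1}^{(i-1)}$. The task is then to argue that $Q_{\delta+1}$ actually lies in $H$, i.e., that the shared centre $Z_{\delta-1} = Z_{\delta+1}$ is enough to force equality of the stabilizers of $W$ and $W^g$ at the level of Sylow $p$-subgroups. The natural approach is induction on $i$ (the base case $i=1$ being trivial since $V_{\delta-1}^{(0)} = Z_{\delta-1} = Z_{\delta+1} = V_{\delta+1}^{(0)}$), combined with a chief factor analysis for $W/V_\delta^{(i-2)}$ using \cref{nccf} to propagate the equality one radius at a time, noting that $W$ and $W^g$ both contain $Z_{\delta-1}$ and both lie inside $V_\delta^{(i)}$.

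The main obstacle I anticipate is this last step, namely showing that $\overline{Q_{\delta+1}} \le \bar H$. The asymmetry of the hypotheses (the assumption that $O^p(R_\delta)$ normalizes $V_{\delta-1}^{(i-1)}$ is only given on the $(\delta-1)$-side, while the equality $Z_{\delta-1} = Z_{\delta+1}$ is only a statement at depth $0$) means that the symmetry between $\delta-1$ and $\delta+1$ is not manifest in the hypotheses, and must be recovered from the rank-one structure of $\bar{L_\delta}/R_\delta$ together with $L_\delta$-transitivity and $L_\delta$-normality of $R_\delta$. Once this is handled, one concludes either directly (if $\overline{Q_{\delta-1}} \ne \overline{Q_{\delta+1}}$, by the two-Sylow generation hypothesis) or in the degenerate case $\overline{Q_{\delta-1}} = \overline{Q_{\delta+1}}$, where $g \in N_{\bar L_\delta}(\overline{Q_{\delta-1}})$ modulo $R_\delta$ and one argues that $g$ itself already normalizes $W$ using the additional $Z_{\delta-1}$-invariance.
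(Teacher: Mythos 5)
Your opening plan --- showing that $W := V_{\delta-1}^{(i-1)}$ is normalized by all of $L_\delta$ --- is not what the lemma asserts and is in fact impossible. If $W \normaleq L_\delta$, then since also $W \normaleq G_{\delta-1}$, \cref{BasicAmal}(iv) forces $W=\{1\}$; but $W \supseteq Z_{\delta-1} \ne \{1\}$. So $\bar{H}$ is a \emph{proper} subgroup of $\bar{L_\delta}$ and cannot contain two distinct Sylow $p$-subgroups. Your case split is also misaligned with the logic: the hypotheses force $Q_{\delta-1}R_\delta = Q_{\delta+1}R_\delta$, for otherwise $L_\delta = \langle R_\delta, Q_{\delta-1}, Q_{\delta+1}\rangle$ would normalize $Z_{\delta-1}=Z_{\delta+1}$ (using \cref{UWNormal} with $n=0$ for the $R_\delta Q_{\delta-1}$ piece and the fact that $Q_{\delta+1}$ centralizes $Z_{\delta+1}$), contradicting \cref{BasicAmal}. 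What you call the ``degenerate case'' is the \emph{only} case, and recognizing this is the first step of the argument, not a side branch. Precisely there your sketch runs out of content: you say ``one argues that $g$ itself already normalizes $W$'' via induction on $i$ and chief-factor analysis through \cref{nccf}, but no mechanism is given, and the proposed induction would need exactly the conjugation fact it is trying to establish in order to pass from radius $i-2$ to radius $i-1$.

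The idea you are missing is that the conjugating element can be chosen \emph{inside} $R_\delta$. Having established $Q_{\delta-1}R_\delta = Q_{\delta+1}R_\delta$, both $Q_{\delta-1}Q_\delta$ and $Q_{\delta+1}Q_\delta$ are Sylow $p$-subgroups of this common group, so Sylow's theorem gives $x \in Q_{\delta-1}R_\delta$ with $(Q_{\delta-1}Q_\delta)^x = Q_{\delta+1}Q_\delta$; writing $x=qr$ with $q\in Q_{\delta-1}$ and absorbing $q$ (it normalizes $Q_{\delta-1}Q_\delta$), one may take $x=r\in R_\delta$. One then checks $(\delta-1)\cdot r = \delta+1$ --- this uses that $G_{\delta,\delta+1}=N_{G_\delta}(Q_\delta Q_{\delta+1})$ is the normalizer of a Sylow $p$-subgroup of $G_\delta$, hence self-normalizing, so stabilizers determine neighbours. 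Since $r$ acts on $\Gamma$ as an automorphism, $(V_{\delta-1}^{(i-1)})^r = V_{\delta+1}^{(i-1)}$ for every $i$ at once, with no induction needed; and since $R_\delta = O^p(R_\delta)Q_\delta$ normalizes $V_{\delta-1}^{(i-1)}$ (using $Q_\delta \le G_\delta^{(1)} \le G_{\delta-1}$ together with the hypothesis on $O^p(R_\delta)$), $(V_{\delta-1}^{(i-1)})^r = V_{\delta-1}^{(i-1)}$, which is the conclusion. No second Sylow in $\bar H$ and no normality of $W$ in $L_\delta$ is ever available.
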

\begin{proof}
Since $Q_{\delta-1}Q_{\delta}\in\syl_p(L_{\delta})$, if $Q_{\delta-1}R_{\delta}\ne Q_{\delta+1}R_{\delta}$, then $Z_{\delta+1}=Z_{\delta-1}\normaleq L_{\delta}=\langle R_{\delta}, Q_{\delta-1}, Q_{\delta+1}\rangle$, a contradiction. Thus, $Q_{\delta-1}R_{\delta}=Q_{\delta+1}R_{\delta}$. As $Q_{\delta-1}Q_{\delta}\in\syl_p(Q_{\delta-1}R_{\delta})$, there is $r\in R_{\delta}$ such that $Q_{\delta-1}^rQ_{\delta}=(Q_{\delta-1}Q_{\delta})^r=(Q_{\delta+1}Q_{\delta})=Q_{\delta+1}Q_{\delta}$. Since $Q_{\delta-1}Q_{\delta}$ is the unique Sylow $p$-subgroup of $G_{\delta-1, \delta}$, it follows that $G_{\delta, \delta-1}^r=G_{\delta, \delta+1}=N_{G_\delta}(Q_{\delta}Q_{\delta+1})$. Set $\theta=(\delta-1)\cdot r\in\Delta(\delta)$. Then by properties of the graph, $G_{\delta, \delta+1}=G_{\delta, \delta-1}^r=G_{\delta, \delta-1\cdot r}=G_{\delta, \theta}$ and so $(\delta-1)\cdot r=\delta+1$. Since $r$ acts as a graph automorphism on $\Gamma$, $r$ preserves $i$ neighbourhoods of vertices in the graph and it follows immediately that $V_{\delta-1\cdot r}^{(i-1)}=(V_{\delta-1}^{(i-1)})^r$ so that, as $V_{\delta-1}^{(i-1)}$ is normalized by $R_{\delta}=O^p(R_\delta)Q_{\delta}$, $V_{\delta+1}^{(i-1)}=V_{\delta-1}^{(i-1)}$, completing the proof.
\end{proof}

We record one further generic lemma concerning the action of $R_{\gamma}$ for $\gamma\in\Gamma$.

\begin{lemma}\label{UWNormal}
Let $\gamma\in\Gamma$ and fix $\delta\in\Delta(\gamma)$. Then for $n<b$, $\langle V_{\mu}^{(n)} \mid Z_{\mu}=Z_{\delta}, \mu\in\Delta(\gamma)\rangle\normaleq R_{\gamma}Q_{\delta}$. 
\end{lemma}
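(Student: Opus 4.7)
The plan is to reduce the claim to showing that $R_\gamma Q_\delta$ normalizes $Z_\delta$; once that is in hand, the conclusion follows by $G$-equivariance. Set $\mathcal{M}:=\{\mu\in\Delta(\gamma):Z_\mu=Z_\delta\}$ and let $U$ denote the subgroup in the statement. Both $R_\gamma\le L_\gamma\le G_\gamma$ and $Q_\delta\le S$ for some $S\in\syl_p(G_{\gamma,\delta})\subseteq \syl_p(G_\gamma)$ lie in $G_\gamma$, so every $g\in R_\gamma Q_\delta$ permutes $\Delta(\gamma)$. By $G$-equivariance of the constructions one has $V_\mu^{(n)}\cdot g=V_{\mu g}^{(n)}$ and $Z_{\mu g}=Z_\mu\cdot g$. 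Hence, provided $g\in N_G(Z_\delta)$, for $\mu\in\mathcal{M}$ we find $Z_{\mu g}=Z_\delta\cdot g=Z_\delta$, so $\mu g\in\mathcal{M}$ and $g$ permutes the generating set of $U$, yielding $U^g=U$.

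That $Q_\delta$ normalizes (in fact centralizes) $Z_\delta$ is immediate from $Z_\delta\le\Omega(Z(Q_\delta))$, so the substance of the argument is to verify $R_\gamma\le N_G(Z_\delta)$. I would split into the two defining cases for $R_\gamma$. When $Z_\gamma\ne\Omega(Z(L_\gamma))$, \cref{p-closure2}(ii) gives $Q_\gamma\in\syl_p(R_\gamma)$, so by Schur--Zassenhaus $R_\gamma=Q_\gamma H$ with $H$ a Hall $p'$-complement; the Frattini argument, together with \cref{MainHyp}(i), allows $H$ to be positioned inside $N_{G_\gamma}(S)\le G_{\gamma,\delta}\le G_\delta$, from which $R_\gamma\le G_\delta$ and the normalization of $Z_\delta$ is automatic. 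When $Z_\gamma=\Omega(Z(L_\gamma))$, $R_\gamma$ centralizes $V_\gamma/W$ for $W:=C_{V_\gamma}(O^p(L_\gamma))$; since $Z_\delta\le V_\gamma$ this gives $[Z_\delta,R_\gamma]\le W$, and upgrading this modular containment to the equality $Z_\delta^{R_\gamma}=Z_\delta$ uses that $R_\gamma\normaleq L_\gamma$ acts with equal-length orbits on the $L_\gamma$-transitive set $\{Z_\mu:\mu\in\Delta(\gamma)\}$, combined with the self-centralizing property of $V_\gamma$ from \cref{BasicVB} and the identification $Z_\gamma=\Omega(Z(S))$ (which forces $Z_\gamma\le W$).

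The hard part is the second case. In the first case the Frattini-plus-Sylow analysis pins $R_\gamma$ down inside $G_\delta$ rather cleanly, but in the second case only centralization of the quotient $V_\gamma/W$ is available a priori, so turning $[Z_\delta,R_\gamma]\le W$ into the stabilization of the individual subgroup $Z_\delta$ rather than merely the coset $Z_\delta W$ is the technical heart. Exploiting the normality $R_\gamma\normaleq L_\gamma$ together with the transitivity of $L_\gamma$ on $\{Z_\mu\}$ gives the orbit rigidity needed to close the gap.
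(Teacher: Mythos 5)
Your reduction is the right one, and it is exactly what the paper's proof does implicitly: once one knows that every $r\in R_\gamma Q_\delta$ satisfies $Z_\delta^r=Z_\delta$, equivariance of $\mu\mapsto Z_\mu$ and $\mu\mapsto V_\mu^{(n)}$ permutes the generating set of $U^\gamma$ among itself and normality follows. However, your justification of that crucial containment $R_\gamma\le N_G(Z_\delta)$ has a genuine gap in the first case. You assert that Frattini ``allows $H$ to be positioned inside $N_{G_\gamma}(S)$.'' This is false in general: the Hall $p'$-complements of $R_\gamma$ form a single $R_\gamma$-conjugacy class of $p$-power cardinality, and $S$ acting on a set of $p$-power size can have no fixed point. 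Concretely, when $\bar L_\gamma\cong(3\times 3)\colon 2$ with $p=2$ one has $R_\gamma/Q_\gamma\cong 3$ and $N_{\bar L_\gamma}(\bar S)=\bar S$ contains no element of order $3$, so no Hall $2'$-complement of $R_\gamma$ normalizes $S$ and your chain $H\le N_{G_\gamma}(S)\le G_{\gamma,\delta}$ does not get started. In the situations where the lemma is actually invoked, Case~1 (if it occurs at all) has a much shorter correct argument: there $\delta\in\beta^G$, so $Z_\delta=\Omega(Z(S))\le Z_\gamma$, and $R_\gamma=C_{L_\gamma}(Z_\gamma)$ centralizes $Z_\delta$ outright, no Sylow-positioning required.

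In Case~2 you have correctly located the technical heart, but the argument is left as a sketch. The modular containment $[Z_\delta,R_\gamma]\le W$ gives only $Z_\delta^r\le Z_\delta W$, and the equal-orbit-length observation (valid, since $R_\gamma\normaleq L_\gamma$ acts on the $L_\gamma$-transitive set $\{Z_\mu\}$) tells you every $R_\gamma$-orbit has the same size --- which is useful only if you can produce one orbit of size one, and that is precisely what must be shown. What is actually needed is the injectivity of the map $Z_\mu\mapsto Z_\mu W/W$ on $\{Z_\mu:\mu\in\Delta(\gamma)\}$; since $Z_\mu\cap W=Z_\gamma$, this amounts to the non-trivial assertion that distinct $Z_\mu$ have distinct images in $V_\gamma/W$, and your sketch does not supply it. To be fair, the paper's own proof of \cref{UWNormal} simply asserts $Z_\delta^r=Z_\delta$ without comment, so the omission is shared; but your Case~1 argument is affirmatively wrong, and Case~2 remains an open claim in your write-up, so the proposal does not yet constitute a proof.
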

\begin{proof}
Set $U^\gamma:=\langle V_{\mu}^{(n)} \mid Z_{\mu}=Z_{\delta}, \mu\in\Delta(\gamma)\rangle$ and let $r\in R_{\gamma}Q_{\delta}$. Since $r$ is a graph automorphism, for $\mu\in\Delta(\gamma)$ such that $Z_{\mu}=Z_{\delta}$, $(V_{\mu}^{(n)})^r=V_{\mu \cdot r}^{(n)}$. But now, $Z_{\mu\cdot r}=Z_{\mu}^r=Z_{\delta}^r=Z_{\delta}$ and so $(V_{\mu}^{(n)})^r\le U^{\gamma}$. Thus, $U^\gamma\normaleq  R_{\gamma}Q_{\delta}$, as required.
\end{proof}

We now deal with the so called ``pushing up'' case of the amalgam method. The proof breaks up over a series of lemmas, culminating in \cref{push}. This is a generalization of \cite[(6.5-6.6)]{Greenbook} and as the result there underpinned the generic study of higher rank amalgams in characteristic $p$ (see for instance \cite{Rank3Amalgams}), we believe \cref{push} will be applicable in future works on fusion systems and amalgams. Throughout, let $\lambda\in\Gamma$, $\mu\in\Delta(\lambda)$ and $S\in\syl_p(G_{\lambda,\mu})$.

\begin{lemma}
Suppose that $Q_\lambda\cap Q_{\mu}\normaleq G_\lambda$. Then, writing $L:=\langle Q_{\mu}^{G_{\lambda}} \rangle$, we have that $Q_{\mu}\in\syl_p(L)$, $O_p(L)=Q_{\mu}\cap Q_{\lambda}$, $Z_{\lambda}/Z(L_{\lambda})$ is a natural $\SL_2(q)$-module for $L_{\lambda}/R_{\lambda}$ and no non-trivial characteristic subgroup of $Q_{\mu}$ is normal in $L$.
\end{lemma}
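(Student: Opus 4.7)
The plan is to use the fact that $Q_\mu\cap Q_\lambda\normaleq G_\lambda$ forces every $G_\lambda$-conjugate $Q_\mu^g$ to intersect $Q_\lambda$ in the same subgroup $Q_\mu\cap Q_\lambda$, then leverage \cref{p-closure} to get transitivity on $\Delta(\lambda)$, and finally recognize $Z_\lambda$ as an FF-module for $\bar{L_\lambda}$ so as to apply \cref{SEFFFus}.

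For parts (i) and (ii), first note that $L$ is generated by $p$-subgroups, hence $L\le L_\lambda$, and $L\normaleq G_\lambda$. The subgroup $O_p(L)$ is a normal $p$-subgroup of $G_\lambda$, so $O_p(L)\le Q_\lambda$, giving $O_p(L)=L\cap Q_\lambda$. If $L$ were $p$-closed, then $L=O_p(L)\le Q_\lambda$ would contain $Q_\mu$, contradicting \cref{MaxEssenAmal}. Thus $L$ is not $p$-closed and \cref{p-closure}(iii) yields $G_\lambda=LN_{G_\lambda}(S)=LG_{\lambda,\mu}$ with $L$ transitive on $\Delta(\lambda)$. Since $N_{G_\lambda}(S)\le G_{\lambda,\mu}$, $[G_\lambda:G_{\lambda,\mu}]$ is coprime to $p$, whence $|L|_p=|L\cap G_{\lambda,\mu}|_p$ and we may take $S\cap L$ as a Sylow $p$-subgroup of $L$ containing $Q_\mu$. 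Using that $Q_\mu\cap Q_\lambda=Q_\nu\cap Q_\lambda$ for each $\nu\in\Delta(\lambda)$ and that $L$ acts transitively on these neighbours, the subgroup $D:=\bigcap_{\nu\in\Delta(\lambda)}Q_\nu$ is normal in $L$, contained in $Q_\mu\cap Q_\lambda$, and coincides with $O_p(L)$, giving (ii). Once $O_p(L)=Q_\mu\cap Q_\lambda$ is established, $(S\cap L)/(Q_\mu\cap Q_\lambda)$ embeds in $\bar{L_\lambda}$ and is normalized by $\bar{S}$; comparing with the structure of $\bar{L_\lambda}$ from \cref{SE1,SE2} and the strongly $p$-embedded hypothesis forces $\overline{S\cap L}=\bar{Q_\mu}$, so $S\cap L=Q_\mu$, yielding (i).

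For (iii), note $Z_\lambda\le Z(Q_\lambda)$ is elementary abelian, and $O_p(L)=Q_\mu\cap Q_\lambda$ centralizes $Z_\lambda$. Hence $Q_\mu/(Q_\mu\cap Q_\lambda)\cong Q_\mu Q_\lambda/Q_\lambda$ acts on $Z_\lambda$. The offender inequality $|Q_\mu/(Q_\mu\cap Q_\lambda)|\ge|Z_\lambda/C_{Z_\lambda}(Q_\mu)|$ comes from $C_{Z_\lambda}(Q_\mu)=Z_\lambda\cap Q_\mu$ together with $|Q_\mu Z_\lambda|=|Q_\mu||Z_\lambda|/|Z_\lambda\cap Q_\mu|$ and the observation that $Q_\mu Z_\lambda/Q_\mu$ embeds in $S/Q_\mu$, whose order can be computed against $|Q_\mu\cap Q_\lambda|$. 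Noting that $Q_\mu$ does not centralize $Z_\lambda$ (else $Q_\mu\le C_{L_\lambda}(Z_\lambda)=R_\lambda$ would be normal in $\langle L_\lambda,L_\mu\rangle$, violating \cref{BasicAmal}(iv)), $Z_\lambda$ is a faithful FF-module for $\bar{L_\lambda}/R_\lambda$. Then \cref{SEFFFus} gives $\bar{L_\lambda}/R_\lambda\cong\SL_2(q)$ and $Z_\lambda/C_{Z_\lambda}(O^p(L_\lambda))$ is a natural module; the identification $C_{Z_\lambda}(O^p(L_\lambda))=Z(L_\lambda)$ follows from \cref{p-closure2}(ii) since this centralizer is $\Aut_{G_\lambda}(Z_\lambda)$-invariant and a $p$-group fixed by $O^p(L_\lambda)$.

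For (iv), suppose $1\ne C\le Q_\mu$ is characteristic in $Q_\mu$ and normal in $L$. Since $C\text{ char }Q_\mu\normaleq G_\mu$, we get $C\normaleq G_\mu$, and combined with $C\normaleq L$, $C\normaleq\langle L,G_\mu\rangle$. Because $G_\lambda=LG_{\lambda,\mu}\le LG_\mu\le\langle L,G_\mu\rangle$, this joined subgroup contains $\langle G_\lambda,G_\mu\rangle=G$, so $C$ is normal in $G$. But $C\le Q_\mu\le G_{\lambda,\mu}$, contradicting the faithfulness of the amalgam (no non-trivial subgroup of $G_{\lambda,\mu}$ is normal in $G$). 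The main obstacle will be the Sylow argument in (i), which requires carefully tracking the image of $S\cap L$ in $\bar{L_\lambda}$ and invoking the strongly $p$-embedded hypothesis to preclude $S\cap L$ from properly containing $Q_\mu$; the offender calculation in (iii) is also delicate as it hinges on the numerical coincidence forced by $Q_\mu\cap Q_\lambda\normaleq G_\lambda$.
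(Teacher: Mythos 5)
Your proof of part (iv) is essentially the paper's argument and is fine, but there are genuine gaps in parts (ii) and (iii), and (i) depends on (ii).

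For (ii), you establish (correctly) that $D:=\bigcap_{\nu\in\Delta(\lambda)}Q_\nu$ equals $Q_\lambda\cap Q_\mu$ (the inclusion $D\le Q_\lambda$ follows from $D\normaleq G_\lambda$ being a normal $p$-subgroup, not merely from $D\le Q_\nu$ for all $\nu$), and that $D\le O_p(L)$. But you then assert "and coincides with $O_p(L)$" with no argument at all, and this is the heart of the lemma. Nothing you have written rules out $O_p(L)$ being a normal $p$-subgroup of $G_\lambda$ strictly between $Q_\lambda\cap Q_\mu$ and $Q_\lambda$. The paper first uses $[O_p(L),Q_\mu]\le[Q_\lambda,Q_\mu]\le Q_\lambda\cap Q_\mu$ to deduce that $L/(Q_\lambda\cap Q_\mu)$ is a central extension of $L/O_p(L)$ by $O_p(L)/(Q_\lambda\cap Q_\mu)$ with $\hat{Q_\mu}$ complementing $\hat{O_p(L)}$ in a Sylow $p$-subgroup, then applies Gaschutz' theorem to split the whole extension, and finally exploits $L=\langle Q_\mu^{G_\lambda}\rangle$ together with the action of a Hall $p'$-subgroup to conclude $\hat{O_p(L)}=1$; for $q\in\{2,3\}$ an extra ad hoc argument is needed. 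None of this machinery appears in your sketch. Note also that (i) is logically equivalent to (ii) given the counting $|S\cap L|/|Q_\mu|=|O_p(L)|/|Q_\mu\cap Q_\lambda|$, so the Sylow step you phrase in terms of the strongly $p$-embedded hypothesis is both unnecessary and unjustified without (ii).

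For (iii), the offender computation does not go through as written. To apply \cref{SEFF}/\cref{BasicFF} you need an offending subgroup whose image is elementary abelian, but $Q_\mu/(Q_\mu\cap Q_\lambda)\cong Q_\mu Q_\lambda/Q_\lambda$ is only known to be elementary abelian \emph{after} you have identified $\bar{L_\lambda}\cong\SL_2(q)$ — exactly what you are trying to prove. Moreover, the inequality you obtain from embedding $Q_\mu Z_\lambda/Q_\mu$ into $S/Q_\mu$ is $|Z_\lambda/(Z_\lambda\cap Q_\mu)|\le|Q_\lambda/(Q_\lambda\cap Q_\mu)|$, which has the wrong denominator: the offender condition requires the bound against $|Q_\mu/(Q_\mu\cap Q_\lambda)|$, and these two quantities are unequal in general since $|Q_\lambda|\ne|Q_\mu|$. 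The paper instead first shows $J(Q_\mu)\not\le O_p(L)$ (else $J(Q_\mu)=J(Q_\mu\cap Q_\lambda)$ would be normal in both $L_\lambda$ and $L_\mu$) and then runs the Thompson-subgroup offender argument with $A\in\mathcal{A}(Q_\mu)\setminus\mathcal{A}(Q_\lambda)$, where $A=\Omega(C_{Q_\mu}(A))$ gives $A\cap V=C_V(A)$ and the maximal-rank condition delivers the correct inequality. The paper also splits into the cases $Z_\lambda\ne\Omega(Z(S))$ and $Z_\lambda=\Omega(Z(S))$; in the latter case (for $b>1$) the FF-module must be recognized on $V_\lambda/C_{V_\lambda}(O^p(L_\lambda))$ via a critical-pair argument, and when $b=1$ this case is shown to be impossible. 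Your proposal handles neither distinction. Finally, your claim $C_{Z_\lambda}(Q_\mu)=Z_\lambda\cap Q_\mu$ is only half-proved: $C_{Z_\lambda}(Q_\mu)\le Z_\lambda\cap Q_\mu$ follows from $Q_\mu$ being self-centralizing, but the reverse inclusion requires $[Q_\mu, Z_\lambda\cap Q_\mu]=1$, which is not evident for the full group $Q_\mu$ (it is clear only for a maximal-rank elementary abelian $A$).
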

\begin{proof}
Set $L:=\langle Q_\mu^{G_\lambda} \rangle\normaleq L_\lambda$ and let $V:=Z_{\lambda}$ if $Z_{\lambda}\ne \Omega(Z(S))$, and $V:=V_{\lambda}/C_{V_\lambda}(O^p(L_\lambda))$ if $Z_{\lambda}=\Omega(Z(S))$ and $b>1$. Since $L\normaleq L_\lambda$, we have that $C_L(O_p(L))\le O_p(L)$ and since $Q_\mu\not\le Q_\lambda$, it follows by \cref{p-closure} that $L/O_p(L)$ has a strongly $p$-embedded subgroup and $L_{\lambda}=LS$. Note that if $J(Q_{\mu})\le O_p(L)$, then $J(Q_{\mu})\le Q_{\mu}\cap Q_{\lambda}\le Q_{\mu}$ and so, by \cref{BasicJS} (v), $J(Q_{\mu})=J(Q_{\mu}\cap Q_{\lambda})\normaleq L_{\lambda}$, a contradiction. 

Suppose first that $b=1$ and $Z_{\lambda}=\Omega(Z(S))$. Then $Z_{\lambda}\le Q_{\mu}$ and we may as well assume that $Z_{\mu}\not\le Q_{\lambda}$. But then $Z_{\mu}$ centralizes $Q_{\lambda}/Q_{\lambda}\cap Q_{\mu}$ and $Q_\lambda\cap Q_\mu$. Since $\langle Z_{\mu}^{G_{\lambda}}\rangle$ contains elements of $p'$-order, using coprime action and that $G_{\lambda}$ is of characteristic $p$, we have a contradiction. 

Now, if $V:=Z_{\lambda}$, then $O_p(L)=C_{S\cap L}(V)$ and by \cref{BasicFF} and \cref{SEFF}, $L/C_L(V)\cong\SL_2(q)$. If $Z_{\lambda}=\Omega(Z(S))$, then $Q_{\lambda}\cap Q_{\mu}=C_{\lambda}$ and we may assume that $\mu$ belongs to a critical pair $(\mu, \mu')$ with $d(\lambda, \mu)=b-1$. Then $b$ is odd, otherwise $\mu'-1\in\lambda^G$ and $Z_{\mu}\le Q_{\mu'-1}\cap Q_{\mu'-2}= Q_{\mu'-1}\cap Q_{\mu'}\le Q_{\mu'}$. Thus, $V_{\mu'}\cap Q_{\lambda}\le C_{\lambda}$ and $V_{\lambda}\cap Q_{\mu'}\le C_{\mu'}$. Without loss of generality, assume that $|V_{\mu'}/(V_{\mu'}\cap Q_{\lambda})|\leq |V_{\lambda}/(V_{\lambda}\cap Q_{\mu'})|$. A straightforward calculation ensures that $V_{\lambda}Q_{\mu'}/Q_{\mu'}$ is an offender on $V_{\mu'}/[V_{\mu'}, Q_{\mu'}]$, $[V_{\mu'}, Q_{\mu'}]\le C_{V_{\mu'}}(O^p(L_{\mu'}))$ and by \cref{SEFF}, $L_{\mu'}/C_{L_{\mu'}}(V_{\mu'}/C_{V_{\mu'}}(O^p(L_{\mu'})))\cong\SL_2(q)$. 

Either way, it follows from \cref{SLGen} (vi) that $L_{\lambda}/C_{L_{\lambda}}(V)\cong L/C_L(V)\cong\SL_2(q)$, $S=Q_\lambda Q_\mu$ and \[Q_\mu O_p(L)=Q_\mu(Q_\lambda\cap L)=Q_\lambda Q_\mu\cap L=S\cap L\in\syl_p(L).\] Since $[O_p(L), Q_\mu]\le [Q_\lambda, Q_\mu]\le Q_\lambda\cap Q_\mu\le O_p(L)$ it follows that $[O_p(L), L]=[O_p(L), \langle Q_\mu^{L_\lambda}\rangle]=[O_p(L), Q_\mu]^{L_\lambda}\le Q_\lambda \cap Q_\mu$ and so $\hat{L}:=L/(Q_\lambda \cap Q_\mu)$ is a central extension of $L/O_p(L)$ by $\hat{O_p(L)}$. But $Q_\mu\cap O_p(L)=Q_\mu\cap Q_\lambda$ and so $\hat{Q_\mu}$ is complement to $\hat{O_p(L)}$ in $\hat{S\cap L}$. It follows by Gaschutz' theorem \cite[(3.3.2)]{kurz} that there is a complement in $\hat{L}$ to $\hat{O_p(L)}$. Now, letting $K_{\lambda}$ be a Hall $p$'-subgroup of $N_L(S\cap L)$, unless $q\in\{2,3\}$, we deduce that $\hat{Q_{\mu}}\le [\hat{S\cap L}, K_{\lambda}]$ is contained in a complement to $\hat{O_p(L)}$ and since $L=\langle Q_{\mu}^{G_{\lambda}}\rangle$, it follows that $\hat{O_p(L)}=\{1\}$ and $Q_{\mu}\in\syl_p(L)$. If $q\in\{2,3\}$, then $\hat{L}\cong p\times \SL_2(p)$, $|\hat{Q_{\mu}}|=p$ and one can check that $\langle \hat{Q_{\mu}}^{\hat{L}}\rangle\cong \SL_2(p)$, contradicting the initial definition of $L$. Thus $Q_{\mu}\in\syl_p(L)$ and $O_p(L)=Q_{\mu}\cap Q_{\lambda}$. Since $L_{\lambda}=LQ_{\lambda}$, there is no non-trivial characteristic subgroup of $Q_{\mu}$ which is normal in $L$, for such a subgroup would then be normal in $\langle G_{\lambda}, G_{\mu}\rangle$.

It remains to show that $V:=Z_{\lambda}$ so suppose that $Z_{\lambda}=\Omega(Z(S))$ and $V=V_{\lambda}/C_{V_\lambda}(O^p(L_\lambda))$. Moreover, $Z(L_{\mu})=\{1\}$ by \cref{crit pair} (iv), $O_p(L)=C_\lambda$, $b>1$ is odd and $V_{\lambda}$ is abelian. Let $R_L$ be the preimage in $L$ of $O_{p'}(L/O_p(L))$ and suppose that $R_L$ is not a $p$-group. Then $V_{\lambda}=[V_{\lambda}, R_L]\times C_{V_{\lambda}}(R_L)$ is an $S$-invariant decomposition, and since $Z_{\lambda}=\Omega(Z(S))\le C_{V_{\lambda}}(R_L)$, $V_{\lambda}$ is centralized by $R_L$. Since $V_{\lambda}$ is an FF-module for $L/O_p(L)$, unless $q=2^n>2$, using coprime action and \cref{NatMod} (v) we infer that $C_{V_{\lambda}}(R_L)=C_{V_{\lambda}}(O^p(L))$ so that $Z_{\mu}$ is centralized by $L$ and normalized by $\langle L, G_{\mu}\rangle$, a contradiction. 

Suppose now that $q=2^n>2$ and let $K'\in (G_{\lambda, \mu}\cap L)/O_2(L)$ complementing $Q_\mu/O_2(L)$. Then for $K$ the preimage if $K'$ in $L$, we have that $[K, Q_{\lambda}]\le O_2(L)$. In particular, $[V_\lambda, K]$ is normalized by $Q_\lambda$ and as $V_\lambda=[V_\lambda, K]\times C_{V_\lambda}(K)$ by coprime action, $[V_\lambda, K, Q_\lambda]=\{1\}$ since $C_{V_{\lambda}}(K)=C_{V_{\lambda}}(L)$ and $V_\lambda/C_{V_\lambda}(L)$ is a natural $\SL_2(2^n)$-module for $L/R_L\cong\PSL_2(2^n)$. But then $[Z_\mu, K, Q_{\lambda}]=\{1\}$ so that $\{1\}\ne [Z_\mu, K]$ is centralized by $S=Q_\lambda Q_\mu$ and $[Z_\mu, K]\le Z_\lambda$. Hence, by coprime action $[Z_\mu, K, K]=[Z_\lambda, K]=\{1\}$, a contradiction.
\end{proof}

\begin{lemma}
Suppose that $Q_\lambda\cap Q_{\mu}\normaleq L_\lambda$. Then $b>1$ and, writing $L:=\langle Q_{\mu}^{L_{\lambda}} \rangle$, $L/O_p(L)\cong L_{\lambda}/Q_{\lambda}\cong \SL_2(q)$, $b=2$ and $O_p(L)$ contains a unique non-central chief factor for $L$. Moreover, there is $\lambda'\in\Delta(\mu)$ such that both $(\lambda, \lambda')$ and $(\lambda', \lambda)$ are critical pairs.
\end{lemma}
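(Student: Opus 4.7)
The plan is to leverage the preceding lemma, whose hypothesis is essentially equivalent to ours: since $N_{G_\lambda}(S) \le G_{\lambda,\mu}$ normalizes $Q_\mu$ (and hence $Q_\lambda \cap Q_\mu$) by \cref{BasicAmal}, and $G_\lambda = L_\lambda N_{G_\lambda}(S)$, our assumption $Q_\lambda \cap Q_\mu \normaleq L_\lambda$ upgrades automatically to $Q_\lambda \cap Q_\mu \normaleq G_\lambda$. Thus I would first quote the preceding lemma wholesale, inheriting $Q_\mu \in \syl_p(L)$, $O_p(L) = Q_\lambda \cap Q_\mu$, $L/O_p(L) \cong \SL_2(q)$ with the identified natural module action, and the fact that no non-trivial characteristic subgroup of $Q_\mu$ is normal in $L$.

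Next I would identify $L_\lambda/Q_\lambda$. Since $L \normaleq L_\lambda$ and $Q_\lambda \normaleq L_\lambda$, the intersection $L \cap Q_\lambda$ is a normal $p$-subgroup of $L$, so $L \cap Q_\lambda \le O_p(L)$; the reverse containment being obvious, equality holds. Then $LQ_\lambda/Q_\lambda \cong L/O_p(L) \cong \SL_2(q)$ sits normally in $L_\lambda/Q_\lambda$ and contains the Sylow $p$-subgroup $S/Q_\lambda$, so $L_\lambda/LQ_\lambda$ is a $p'$-group; combining with $L_\lambda = O^{p'}(L_\lambda)$ forces $L_\lambda = LQ_\lambda$, whence $L_\lambda/Q_\lambda \cong \SL_2(q)$.

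To prove $b > 1$, observe first that $Z_\lambda$ is a normal $p$-subgroup of $L$, so $Z_\lambda \le O_p(L) \le Q_\mu$; this immediately rules out any critical pair of the form $(\lambda,\mu)$. By edge-transitivity and the $G$-invariance of the bipartition, if $b=1$ the only remaining possibility is that $(\mu,\lambda)$ is a critical pair, i.e.\ $Z_\mu \not\le Q_\lambda$. Here I would analyze the $L$-normal closure $Y := \langle Z_\mu^L \rangle$: its image in $L/O_p(L) \cong \SL_2(q)$ is a normal subgroup containing non-trivial $p$-elements, so it must equal $\SL_2(q)$; this would force $YO_p(L) = L$ and I would derive a contradiction by combining the commutator relation $[Z_\lambda, Z_\mu] \le Z_\lambda \cap Z_\mu$, the faithful natural-module action of $L_\lambda/Q_\lambda$ on $Z_\lambda$, and the constraint that no non-trivial characteristic subgroup of $Q_\mu$ is normal in $L$ (applied, for instance, to a suitable characteristic subgroup constructed from $Z_\mu$ and its fixed points on $Z_\lambda$). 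This is the hard part: the asymmetry of the hypothesis (normality in $L_\lambda$, not $L_\mu$) means we cannot simply invoke symmetry, and the contradiction must be extracted from the interplay between the two sides of the amalgam.

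Finally, to obtain $b = 2$ and the critical pair structure, I would exploit that $L/O_p(L) \cong \SL_2(q)$ has exactly $q+1$ Sylow $p$-subgroups, whose $L$-conjugates in $\Delta(\lambda)$ form an orbit of size $q+1$. Since $L$ acts transitively on this orbit and the natural module $Z_\lambda$ has the property that any two distinct Sylows of $L/O_p(L)$ generate $\SL_2(q)$, a direct computation in the natural module produces an element $g \in L$ (yielding a neighbour $\mu^g \in \Delta(\lambda)$ distinct from $\mu$) together with an element of $G_\mu$ producing $\lambda' \in \Delta(\mu)$ with the required pair of critical relations $Z_\lambda \not\le Q_{\lambda'}$ and $Z_{\lambda'} \not\le Q_\lambda$. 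The uniqueness of the non-central chief factor for $L$ in $O_p(L)$ then follows because $Z_\lambda/C_{Z_\lambda}(L)$ is the natural $\SL_2(q)$-module identified in the preceding lemma, and any further non-central chief factor in $O_p(L) = Q_\lambda \cap Q_\mu$ would contradict either the structure of $L/O_p(L)$ or the no-characteristic-subgroup conclusion inherited above.
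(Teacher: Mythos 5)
The first part of your argument is sound: the observation that $Q_\lambda\cap Q_\mu\normaleq L_\lambda$ upgrades to $Q_\lambda\cap Q_\mu\normaleq G_\lambda$ (via $G_\lambda=L_\lambda N_{G_\lambda}(S)$) is exactly right, and your derivation of $L_\lambda/Q_\lambda\cong\SL_2(q)$ via $L\cap Q_\lambda=O_p(L)$ and $L_\lambda=O^{p'}(L_\lambda)$ is clean. For $b>1$ you are gesturing in the right direction but you overcomplicate it; you do not need any delicate interplay or a cleverly-chosen characteristic subgroup. Since $Z_\mu\le\Omega(Z(Q_\mu))$ and $O_p(L)=Q_\lambda\cap Q_\mu\le Q_\mu$, the element $Z_\mu$ automatically centralizes $O_p(L)$; if $Z_\mu\not\le Q_\lambda$ then $C_L(O_p(L))\normaleq L$ has non-trivial image in $L/O_p(L)\cong\SL_2(q)$, forcing $O^p(L)\le C_L(O_p(L))$, which contradicts $L$ being of characteristic $p$ (inherited from $L_\lambda$).

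The core of the lemma, however, is missing from your proposal. You handle $b=2$, the uniqueness of the non-central chief factor, and the double critical pair by asserting that ``a direct computation in the natural module produces'' the required element, with no substance. The argument the paper actually needs is substantially harder and, crucially, depends on an observation you never make: since $L_\lambda$ is transitive on $\Delta(\lambda)$ and $Q_\lambda\cap Q_\mu\normaleq L_\lambda$, one has $Q_\lambda\cap Q_\nu=Q_\lambda\cap Q_\mu$ for \emph{every} $\nu\in\Delta(\lambda)$, and the same holds at every conjugate vertex. This is what rules out $b$ odd (via $Q_{\lambda'-1}\cap Q_{\lambda'-2}=Q_{\lambda'-1}\cap Q_{\lambda'}\ni Z_\lambda$) and is again essential when bounding the number of non-central chief factors. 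Even with that in hand, uniqueness of the non-central chief factor does not ``follow because $Z_\lambda/C_{Z_\lambda}(L)$ is the natural module'' --- the danger is additional non-central chief factors elsewhere in $Q_\lambda\cap Q_\mu$, and ruling these out requires, when $p=2$, the pushing-up theorems of Stellmacher (\cite{StellmacherPush} and \cite{nil}), and when $p$ is odd, a careful coprime-action analysis showing $[Z_\lambda,Z_{\lambda'}]=\Omega(Z(L_\mu))$ propagates to give $Z_\lambda\le Q_{\lambda'}$, a contradiction. None of this appears in your proposal. As written, your proof establishes at most the first two conclusions of the lemma.
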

\begin{proof}
Suppose that $b=1$. Then $\Omega(Z(S))\le Q_{\lambda}\cap Q_{\mu}=O_p(L)\normaleq G_{\lambda}$ and it follows from the definition of $Z_{\lambda}$ that $Z_{\lambda}\le O_p(L)\le Q_{\mu}$. Thus, we may as well assume that $Z_{\mu}\not\le Q_{\lambda}$. But then $Z_{\mu}$ centralizes $O_p(L)$ and so $O^p(L)$ centralizes $O_p(L)$, a contradiction since $L$ is of characteristic $p$. Thus, we conclude that $b>1$.

Suppose that $(\lambda, \delta)$ is not a critical pair for any $\delta\in\Gamma$. Then there is some $\mu'$ such that $(\mu, \mu')$ is a critical pair and $d(\lambda, \mu')=b-1$. Then $Z_{\mu}\ne \Omega(Z(S))\ne Z_{\lambda}$, $C_{G_{\mu'}}(Z_{\mu'})$ is $p$-closed and $Z_{\mu'}\le Q_{\mu+2}\cap Q_{\lambda}=Q_{\lambda}\cap Q_{\mu}$. But then, $[Z_{\mu}, Z_{\mu'}]=\{1\}$, a contradiction for then $Z_{\mu}\le Q_{\mu'}$. Thus, we may assume $\lambda$ belongs to a critical pair $(\lambda, \lambda')$ with $d(\mu, \lambda')=d(\lambda, \lambda')-1$. Suppose that $b$ is odd. Then $Z_{\lambda}\le Q_{\lambda'-1}$ and $\lambda'-1\in\lambda^G$. But then $Z_{\lambda}\le Q_{\lambda'-1}\cap Q_{\lambda'-2}=Q_{\lambda'-1}\cap Q_{\lambda'}\le Q_{\lambda'}$, a contradiction. Thus, $b$ is even. Moreover, since $C_S(Z_{\lambda})=Q_{\lambda}\in\syl_p(G_{\lambda}^{(1)})$ and $[Z_{\lambda}, Z_{\lambda'}]\ne\{1\}$, $(\lambda', \lambda)$ is also a critical pair. Suppose that $b\geq 4$. Then $V_{\lambda}^{(2)}\le O_p(L)$ and $V_{\lambda}^{(2)}/Z_{\lambda}$ contains a non-central chief factor. Hence, if $O_p(L)$ contains a unique non-central chief factor for $L$ then $b=2$.

Suppose that $O_p(L)$ contains more than one non-central chief factor for $L$ and assume that $p$ is odd. If $b=2$, then $O_2(L)=Q_{\lambda}\cap Q_{\mu}=Z_{\lambda}(Q_{\lambda}\cap Q_{\mu}\cap Q_{\lambda'})$, a contradiction since $O_p(L)$ contains more than one non-central chief factor. Thus, we may assume that $b\geq 4$ and $b$ is even. Set $T_{\lambda}$ to be a Hall $p'$-subgroup of the preimage in $L_{\lambda}$ of $Z(L_{\lambda}/R_{\lambda})$. Note also that since $p$ is odd, we may apply coprime action along with \cref{NatMod} (v) so that $Z_{\lambda}=[Z_{\lambda}, T_{\lambda}]\times C_{Z_{\lambda}}(T_{\lambda})=[Z_\lambda, L_\lambda]\times Z(L_\lambda)$.

Choose $\lambda-1\in\Delta(\lambda)$ such that $\Omega(Z(L_{\lambda-1}))\ne \Omega(Z(L_\mu))$ and set $U=\langle V_\gamma \mid \Omega(Z(L_{\lambda-1}))=\Omega(Z(L_\gamma)), \gamma\in\Delta(\lambda)\rangle$. Let $r\in R_{\lambda}Q_{\lambda-1}\le C_{L_{\lambda}}(\Omega(Z(L_{\lambda-1})))$. Since $r$ is an automorphism of the graph, it follows that for $V_{\gamma}\le U$, $V_{\gamma}^r=V_{\gamma\cdot r}$. But $\Omega(Z(L_{\gamma\cdot r}))=\Omega(Z(L_{\gamma}))^r=\Omega(Z(L_{\lambda-1}))^r=\Omega(Z(L_{\lambda-1}))$ and so $V_{\gamma}^r\le U$ and $U\normaleq R_{\lambda}Q_{\lambda-1}$. Note that if $U\le Q_{\lambda'-2}$ then $U\le Q_{\lambda'-2}\cap Q_{\lambda'-3}=Q_{\lambda'-2}\cap Q_{\lambda'-1}\le Q_{\lambda'-1}$ and so, $U=Z_{\lambda}(U\cap Q_{\lambda'})$. Thus, $Z_{\lambda'}$ centralizes $U/Z_{\lambda}$ and since $L_{\lambda}=\langle R_{\lambda}, Z_{\lambda'}, Q_{\lambda-1}\rangle$, it follows that $O^p(L_{\lambda})$ centralizes $U/Z_{\lambda}$ and so normalizes $V_{\lambda-1}$, a contradiction.

Therefore, $U\not\le Q_{\lambda'-2}$ so that there is some $\lambda-2\in\Delta^{(2)}(\lambda)$ such that $(\lambda-2, \lambda'-2)$ is also a critical pair. Since $Z_\lambda=[Z_\lambda, L_\lambda]\times \Omega(Z(L_\lambda))$, it suffices to prove that $[Z_{\lambda}, Z_{\lambda'}]=\Omega(Z(L_{\mu}))=\Omega(Z(L_{\lambda'-1}))$ and that this holds for any critical pair, since then, as there $\lambda-2\in\Delta(\lambda-1)$ with $(\lambda-2, \lambda'-2)$ a critical pair, $Z_{\lambda}=\Omega(Z(L_{\lambda'-1}))\times \Omega(Z(L_{\lambda'-3}))\times \Omega(Z(L_\lambda))$ which is contained in $Q_{\lambda'}$ since $b>2$. 

Suppose that $Z_{\mu}=\Omega(Z(S))=\Omega(Z(L_{\mu}))$. In particular, $Z(L_{\lambda})=\{1\}$ and $Z_{\lambda}$ is irreducible. Since $Z_{\lambda}$ is a natural $\SL_2(q)$-module, $Z_{\lambda'-1}=[Z_{\lambda}, Z_{\lambda'}]=Z_{\mu}$, as required.

Assume now that $Z_{\mu}\ne \Omega(Z(S))$. Then $Z_{\lambda}=[Z_{\lambda}, T_\lambda]\times C_{Z_{\lambda}}(T_\lambda)$, $[Z_{\lambda}, T_{\lambda}]=[Z_{\lambda}, L_{\lambda}]$ and $C_{Z_{\lambda}}(T_\lambda)=\Omega(Z(L_{\lambda}))$. Moreover, $[Z_{\lambda}, Z_{\lambda'}]=C_{[Z_{\lambda}, L_{\lambda}]}(S)=\Omega(Z(S))\cap [Z_{\lambda}, L_{\lambda}]$. Since $\Omega(Z(S))=\Omega(Z(L_{\lambda}))\times \Omega(Z(L_{\mu}))$ and $T_\lambda$ normalizes $\Omega(Z(L_{\mu}))$, we have that $\Omega(Z(L_{\mu}))\geq[\Omega(Z(L_{\mu})), T_\lambda]=[\Omega(Z(S)), T_\lambda]=\Omega(Z(S))\cap [Z_{\lambda}, L_{\lambda}]$. Comparing orders, we conclude that $\Omega(Z(L_{\mu}))=[\Omega(Z(S)), T_\lambda]=[Z_{\lambda}, Z_{\lambda'}]$. By symmetry, we have that $Z(L_{\lambda'-1})=[Z_{\lambda}, Z_{\lambda'}]$, as required.

Suppose now that $p=2$ and $O_2(L)$ contains more than one non-central chief factor within $O_2(L)$. Choose $1<m<b/2$ minimal such that $V_{\lambda}^{(2m)}\le Q_{\lambda'-2m}$. Notice by the minimal choice of $m$ that $V_{\lambda}^{(2(m-k))}Q_{\lambda'-2(m-k)}\in\syl_p(L_{\lambda'-2(m-k)})$ for all $1\leq k\leq m$. Then $V_{\lambda}^{(2m)}\le Q_{\lambda'-2m}\cap Q_{\lambda'-2m-1}\le Q_{\lambda'-2m+1}$ and, extending further, $V_{\lambda}^{(2m)}=V_{\lambda}^{(2m-2)}(V_{\lambda}^{(2m)}\cap Q_{\lambda'})$. But then, $O^p(L_{\lambda})$ centralizes $V_{\lambda}^{(2m)}/V_{\lambda}^{(2m-2)}$, a contradiction. Thus, no such $m$ exists. Even still an index $q$ subgroup of $V_{\lambda}^{(2k)}/V_{\lambda}^{(2k-2)}$ is centralized by $Z_{\lambda'}$ for all $k<b/2$ and it follows that for all $1<m<b/2$, $V_{\lambda}^{(2m)}/V_{\lambda}^{(2m-2)}$ contains a unique non-central chief factor and this factor is an FF-module for $L_{\lambda}/Q_{\lambda}$. Note that for $R_1, R_2$ the centralizers in $L/O_2(L)$ of distinct non-central chief factors in $V_{\lambda}^{(2m)}$ for $1<m<b/2$, we deduce that $R_1R_2/R_i$ is an odd order normal subgroup of $L_i/R_i\cong \SL_2(q)$ for $i\in\{1,2\}$. Thus, unless $q=2$, we have that $L/O_2(L)C_{L}(V_{\lambda}^{(2m)})\cong\SL_2(q)$ and an application of the three subgroup lemma ensures that $L/O_2(L)\cong\SL_2(q)$. 

Since no non-trivial characteristic subgroup of $Q_{\beta}$ is normal in $L$, we may apply pushing up arguments from \cite[Theorem B]{nil} when $L/O_2(L)\cong\SL_2(q)$. Thus, $Q_{\mu}$ has class $2$ and there is a unique non-central chief factor for $L$ within $O_2(L)$. It is clear that $Z_{\lambda}/Z(L_{\lambda})$ is the unique non-central chief factor for $L$ inside $O_2(L)$ and is isomorphic to the natural module for $L/O_2(L)\cong\SL_2(q)$. Thus, $q=p=2$ and since no non-trivial characteristic subgroup of $Q_{\beta}$ is normal in $L$, we may apply \cite[Theorem 4.3]{GlaubermanIso} to see that $Q_{\mu}$ has nilpotency class $2$ and exponent $4$. Notice that if $b\geq 4$, then $V_{\lambda}^{(2)}$ is contained in $Q_{\mu}$ and $[V_{\lambda}^{(2)},Q_{\mu}]\le \Omega(Z(Q_{\mu}))$. But $\langle (\Omega(Z(Q_{\mu}))^L)\rangle$ is an FF-module for $L/O_2(L)$ by \cref{BasicFF}, and contains $[Z_{\lambda}, L_{\lambda}]$ as its unique non-central chief factor. Thus, it follows that $[V_{\lambda}^{(2)},L]\le Z_{\lambda}$ and $V_{\mu}\normaleq \langle L, G_{\mu}\rangle$, a contradiction. Hence, we conclude that $b=2$ so that $O_2(L)$ contains a unique non-central chief factor, as required.
\end{proof}

\begin{lemma}
Suppose that $Q_\lambda\cap Q_{\mu}\normaleq L_\lambda$. Then $Z_{\mu}\ne \Omega(Z(S))$.
\end{lemma}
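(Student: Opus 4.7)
My plan is to derive a contradiction from assuming $Z_\mu = \Omega(Z(S))$. The preceding sub-lemma's proof already handled this very case along the way: it showed that $Z(L_\lambda) = \{1\}$, that $Z_\lambda$ is an irreducible natural $\SL_2(q)$-module for $L_\lambda/R_\lambda \cong \SL_2(q)$, and that $Z_\mu = [Z_\lambda, Z_{\lambda'}] = C_{Z_\lambda}(S)$ has order exactly $q$. Moreover, $Z_\mu = \Omega(Z(S))$ is symmetric with respect to $\lambda$ and $\lambda'$, and $(\lambda', \lambda)$ is also a critical pair, so by the same analysis $Z_{\lambda'}$ is a natural $\SL_2(q)$-module for $L_{\lambda'}/R_{\lambda'}$ with $Z_\mu = C_{Z_{\lambda'}}(S)$.

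Next, I would observe that $Z_\lambda \le O_p(L) = Q_\lambda \cap Q_\mu$ (using $Z_\lambda \le Q_\lambda$ and $Z_\lambda \le Q_\mu$ from $b = 2$ together with \cref{p-closure2}(i)), and that $O_p(L) \le Q_\lambda$ centralizes $Z_\lambda$, so $L$ acts on $Z_\lambda$ through $L/O_p(L) \cong \SL_2(q)$ naturally. Irreducibility then forces $Z_\lambda$ to realize the unique non-central chief factor of $L$ inside $O_p(L)$ identified in the previous sub-lemma. A similar structural analysis would be run with $\lambda'$ in place of $\lambda$: one first verifies that $Q_{\lambda'} \cap Q_\mu \normaleq L_{\lambda'}$ holds (using that $\Omega(Z(S)) = Z_\mu$ is symmetric in the two sides and that no non-trivial characteristic subgroup of $Q_\mu$ can be normal in $\langle L_{\lambda'}, L_\mu\rangle$), and then the pushing-up group $L' := \langle Q_\mu^{L_{\lambda'}}\rangle$ inherits the same structure, with $Z_{\lambda'}$ realizing the unique non-central chief factor of $L'$ in $O_p(L')$.

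The decisive step is an analysis of $V_\mu$ as an $L_\mu$-module. Because $Z_\lambda, Z_{\lambda'} \le Q_\mu$ and $[Z_\lambda, Z_{\lambda'}] = Z_\mu \le \Omega(Z(S)) \le Z(Q_\mu)$, the product $Z_\lambda Z_{\lambda'}$ has order $q^3$ and nilpotency class at most $2$. By \cref{CommCF}, $V_\mu/[V_\mu, Q_\mu]$ contains a non-central chief factor for $L_\mu$, and I plan to show that $Z_\lambda Q_\mu/Q_\mu$ acts quadratically on this quotient (since $[V_\mu, Z_\lambda] \le Z_\mu$ from the class-$2$ structure and conjugation relations with $G_\mu$-conjugates of $Z_\lambda$) and forms an FF-offender of the correct size. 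Applying \cref{SEFF} then yields $L_\mu/R_\mu \cong \SL_2(q)$ acting naturally. Comparing the two natural $\SL_2(q)$-structures on the $\lambda$- and $\mu$-sides, and using that $V_\mu$ is generated by $L_\mu$-conjugates of $Z_\lambda$ modulo $Z_\mu$, I would force $Z_{\lambda'} \le O_p(L) = Q_\lambda \cap Q_\mu$, directly contradicting $Z_{\lambda'} \not\le Q_\lambda$ which holds because $(\lambda', \lambda)$ is a critical pair (and also independently violating \cref{MaxEssenAmal}).

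The main obstacle will be the final combinatorial and module-theoretic bookkeeping: establishing that the FF-offender identification for $L_\mu$ on $V_\mu/[V_\mu, Q_\mu]$ is clean enough to pin down the isomorphism type $\SL_2(q)$ (and not some other rank one group of different characteristic size), and then matching the two natural modules precisely to obtain the containment $Z_{\lambda'} \le Q_\lambda$. In particular, transferring the pushing-up hypothesis to $\lambda'$ in step two is delicate, as the argument relies on the symmetric role of $\Omega(Z(S))$ under the assumption $Z_\mu = \Omega(Z(S))$; if this transfer cannot be made directly, one would instead argue asymmetrically, using only the action of $L_\mu$ on $V_\mu$ and the fact that $\langle Z_\mu^{L_\lambda}\rangle = Z_\lambda$ by irreducibility to propagate the conjugates of $Z_\mu$ until the resulting $p$-subgroup exceeds $Q_\mu$, again contradicting \cref{MaxEssenAmal}.
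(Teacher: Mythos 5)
Your route through $V_\mu$ as an $L_\mu$-module is genuinely different from what the paper does, and I do not see how to complete it. The paper stays entirely on the $\lambda$ side: with $Z_\mu = \Omega(Z(S))$ giving $Z(L_\lambda) = \{1\}$ and hence $Z_\lambda$ natural, it observes that $[O^p(L_\lambda), Q_\lambda] = Z_\lambda$ forces $\Phi(Q_\lambda) = \{1\}$ (otherwise the irreducibility of $Z_\lambda$ pushes it into $\Phi(Q_\lambda)$ and $O^p(L)$ centralizes $Q_\lambda/\Phi(Q_\lambda)$); then, for $p$ odd or $q=2$, a coprime action decomposition with respect to the preimage $T_\lambda$ of $O_{p'}(\bar{L_\lambda})$ gives $Q_\lambda = Z_\lambda \times C_{Q_\lambda}(T_\lambda)$ with the complement $S$-invariant and intersecting $\Omega(Z(S)) \le Z_\lambda$ trivially, so $Q_\lambda = Z_\lambda$. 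That yields $Z_\mu = Z_\lambda \cap Q_\mu = Q_\lambda \cap Q_\mu$, which is normal in both $L_\lambda$ (by hypothesis) and $L_\mu$ (since $Z_\mu = \Omega(Z(L_\mu))$), contradicting \cref{BasicAmal}(iv). For $q>2$ even it cannot run the coprime argument and instead invokes Stellmacher's pushing-up theorem to conclude $V_\mu \normaleq G$.

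There are three gaps in your plan that I do not think are bookkeeping. First, your FF-offender claim for $L_\mu$ acting on $V_\mu/[V_\mu, Q_\mu]$ rests on "$[V_\mu, Z_\lambda] \le Z_\mu$ from the class-$2$ structure," but $V_\mu$ is generated by all $G_\mu$-conjugates $Z_\delta$ ($\delta\in\Delta(\mu)$), and $[Z_\delta, Z_\lambda] \le Z_\delta \cap Z_\lambda$ need not lie in $Z_\mu$ for an arbitrary $\delta$; the quadratic action is not established. Second, the intended endgame — "matching the two natural $\SL_2(q)$-structures to force $Z_{\lambda'} \le Q_\lambda \cap Q_\mu$" — is stated as a goal, not derived, and you yourself flag it as the main obstacle; without a concrete mechanism this is not a proof. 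Third, your plan makes no case split on $p$ and $q$, whereas the paper's argument bifurcates precisely because the coprime action step fails for $q>2$ even; any complete proof has to address this (the paper needs a separate appeal to \cite{StellmacherPush}), and your $V_\mu$-based route offers no substitute for it.
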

\begin{proof}
We suppose throughout that there is a unique non-central chief factor for $L_{\lambda}$ contained in $Q_{\mu}\cap Q_{\lambda}$ and, as a consequence, that $L/O_p(L)\cong L_{\lambda}/Q_{\lambda}\cong \SL_2(q)$. Additionally, assume that $Z_{\mu}=\Omega(Z(S))=\Omega(Z(L_{\mu}))$. Then $Z(L_{\lambda})=\{1\}$ by \cref{crit pair} (iv). Hence, $Z_{\lambda}$ is the unique non-central chief factor within $Q_{\lambda}\cap Q_{\mu}$. In particular, $Z_\lambda$ is isomorphic to a natural $\SL_2(q)$-module and $[O^p(L_\lambda), Q_{\lambda}]=Z_{\lambda}$.

If $\Phi(Q_{\lambda})\ne \{1\}$, then the irreducibility of $Z_{\lambda}$ implies that $Z_{\lambda}\le \langle (\Phi(Q_{\lambda})\cap \Omega(Z(S)))^{L_{\lambda}}\rangle\le \Phi(Q_{\lambda})$. But then $O^p(L)$ acts trivially on $Q_{\lambda}/\Phi(Q_{\lambda})$, a contradiction by coprime action. Thus, $\Phi(Q_{\lambda})=\{1\}$ and $Q_{\lambda}$ is elementary abelian. If $p$ is odd or $q=2$, then for $T_\lambda$ the preimage in $L_{\lambda}$ of $O_{p'}(\bar{L_{\lambda}})$, we have that $Q_{\lambda}=[Q_{\lambda}, T_\lambda]\times C_{Q_{\lambda}}(T_\lambda)=Z_{\lambda}\times C_{Q_{\lambda}}(T_\lambda)$ is an $S$-invariant decomposition and since $\Omega(Z(S))\le Z_{\lambda}$, we have that $C_{Q_{\lambda}}(T_\lambda)=\{1\}$ and $Q_{\lambda}=Z_{\lambda}$. But then $Z_{\mu}=Z_{\lambda}\cap Q_{\mu}=Q_{\lambda}\cap Q_{\mu}\normaleq L_{\lambda}$, a contradiction.

If $q>2$ is even, then since $S\le N_{G_\mu}(O_2(L))$, we have that $[G:N_{G_\mu}(O_2(L))]$ is odd and applying \cite[Theorem 3]{StellmacherPush}, $V_{\mu}\normaleq G=\langle L, G_{\mu}\rangle$, a contradiction.
\end{proof}

\begin{proposition}\label{push}
Let $S\in\syl_p(G_\lambda\cap G_\mu)$ for $\lambda\in\Gamma$ and $\mu\in\Delta(\lambda)$. Then $Q_\lambda\cap Q_\mu$ is not normal in $L_\lambda$. Moreover, if $Z_\lambda Z_\mu\normaleq L_\lambda$ then $Z_\mu=\Omega(Z(S))\le Z_\lambda$.
\end{proposition}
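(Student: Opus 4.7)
I would argue the first assertion by contradiction. Suppose $Q_\lambda\cap Q_\mu\normaleq L_\lambda$. Since $N_{G_\lambda}(S)\le G_{\lambda,\mu}\le N_G(Q_\mu)$ by \cref{MainHyp}, we have $Q_\lambda\cap Q_\mu\normaleq G_\lambda=L_\lambda N_{G_\lambda}(S)$, so all three preceding lemmas apply. Combining their conclusions, and writing $L:=\langle Q_\mu^{L_\lambda}\rangle$, we obtain the tight structural picture: $L/O_p(L)\cong L_\lambda/Q_\lambda\cong\SL_2(q)$; $O_p(L)=Q_\lambda\cap Q_\mu$ contains the unique non-central $L$-chief factor $Z_\lambda/Z(L_\lambda)$, isomorphic to a natural $\SL_2(q)$-module; $b=2$; there is $\lambda'\in\Delta(\mu)\setminus\{\lambda\}$ such that both $(\lambda,\lambda')$ and $(\lambda',\lambda)$ are critical pairs; and $Z_\mu\ne\Omega(Z(S))$.

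The core of the argument is to wring a contradiction out of this doubly-rigid picture. First, I would exploit the symmetry between $\lambda$ and $\lambda'$: since $G_\mu$ acts transitively on $\Delta(\mu)$, there is $g\in G_\mu$ with $\lambda\cdot g=\lambda'$, and conjugating $Q_\lambda\cap Q_\mu\normaleq L_\lambda$ by $g$ yields $Q_{\lambda'}\cap Q_\mu\normaleq L_{\lambda'}$. Thus the same conclusions hold symmetrically; in particular $Z_{\lambda'}/Z(L_{\lambda'})$ is a natural $\SL_2(q)$-module for $L_{\lambda'}/Q_{\lambda'}$. Next, I would use that $Z_\delta\le Z(Q_\delta)$ for every $\delta\in\Gamma$: indeed $Z_\delta=\langle\Omega(Z(S^h)):h\in G_\delta\rangle$ and $Q_\delta\le S^h$ for each such $h$, so $Q_\delta$ centralizes every generator of $Z_\delta$. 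Since $b=2$ forces $Z_\mu\le Q_\lambda\cap Q_{\lambda'}$, this gives $[Z_\mu,Z_\lambda]=[Z_\mu,Z_{\lambda'}]=1$. The hypothesis $Z_\mu\ne\Omega(Z(S))$ produces $z\in Z_\mu$ with $z\notin Z(S)$; using $S=Q_\lambda Q_\mu$ and $z\in Z(Q_\mu)$, we get $q\in Q_\lambda\setminus Q_\mu$ with $[z,q]\ne 1$. Thus $z\in Q_\lambda$ centralizes $Z_\lambda$ but is not itself centralized by $Q_\lambda$. The main obstacle is now to pin down the natural-module action: via \cref{NatMod}(iii)--(iv) and the symmetric picture at $\lambda'$, the elements of $Q_\lambda$ centralizing both $Z_\lambda$ and $Z_{\lambda'}$ form a subgroup so constrained that no such $z$ can exist, giving the required contradiction.

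For the second claim, suppose $Z_\lambda Z_\mu\normaleq L_\lambda$. Note that $L_\lambda$ cannot be $p$-closed, for otherwise $Q_\lambda\in\syl_p(L_\lambda)$ gives $Q_\lambda\cap Q_\mu=Q_\lambda\normaleq L_\lambda$, contradicting the first assertion. Hence \cref{p-closure}(iii) gives that $L_\lambda$ is transitive on $\Delta(\lambda)$, so $\langle Z_\mu^{L_\lambda}\rangle=\langle Z_\nu\mid \nu\in\Delta(\lambda)\rangle$ and consequently $V_\lambda=Z_\lambda\langle Z_\mu^{L_\lambda}\rangle\le Z_\lambda Z_\mu\le V_\lambda$, whence $V_\lambda=Z_\lambda Z_\mu$. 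If $Z_\mu\ne\Omega(Z(S))$, then $Z_\mu\ne\Omega(Z(L_\mu))$ and \cref{p-closure2}(ii) gives $Q_\mu\in\syl_p(C_{L_\mu}(Z_\mu))$. The non-central chief factor for $L_\lambda$ in $V_\lambda/[V_\lambda,Q_\lambda]$ furnished by \cref{CommCF} is then realized as a section of $Z_\mu/(Z_\lambda\cap Z_\mu)$; applying the FF-module analysis of \cref{BasicFF} and \cref{SEFF} to this factor forces $J(S)\le Q_\mu$, which in turn forces $Q_\lambda\cap Q_\mu$ to be normal in $L_\lambda$, contradicting the first assertion. Therefore $Z_\mu=\Omega(Z(S))$, and as $\Omega(Z(S))\le Z_\lambda$ always, the conclusion $Z_\mu=\Omega(Z(S))\le Z_\lambda$ follows.
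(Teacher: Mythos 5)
Your setup is correct and the reduction to the three preceding lemmas is in the spirit of the paper, but the decisive step for the first assertion is not actually an argument. You write that ``the elements of $Q_\lambda$ centralizing both $Z_\lambda$ and $Z_{\lambda'}$ form a subgroup so constrained that no such $z$ can exist,'' yet this subgroup contains $Q_\lambda\cap Q_{\lambda'}$ (since $Z_\lambda\le Z(Q_\lambda)$ and $Z_{\lambda'}\le Z(Q_{\lambda'})$, by your own observation), and with $b=2$ one has $Z_\mu\le Q_\lambda\cap Q_{\lambda'}$. So membership of $z$ in that subgroup is exactly where you already placed $z$, and it yields no contradiction whatsoever. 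This is the hard part of the proof and it is where the paper has to work: after establishing $Q_\mu=(Q_{\lambda'}\cap Q_\mu\cap Q_\lambda)Z_\lambda Z_{\lambda'}$ and $Q_\lambda\cap Q_\mu=(Q_{\lambda'}\cap Q_\mu\cap Q_\lambda)Z_\lambda$, it computes $\Phi(Q_\lambda\cap Q_\mu)=\Phi(Q_{\lambda'}\cap Q_\mu)=\Phi(Q_{\lambda'}\cap Q_\mu\cap Q_\lambda)=:F\le Z(L)$, identifies $Q_\mu\cap Q_\lambda\cap Q_{\lambda'}$ with the preimage $Z_\mu^*$ of $Z(Q_\mu/F)$, and then for $p$ odd uses a Hall $p'$-subgroup of $G_{\lambda,\mu}\cap L_\lambda$ to show $Q_\lambda$ centralizes $Q_\mu/Z_\mu^*$, while for $p=2$ it counts maximal elementary abelian subgroups of $Q_\mu/F$; in both cases a conjugation argument in $M=\langle Q_\lambda, Q_{\lambda'}, Q_\mu\rangle$ forces $Q_\lambda\cap Q_\mu=Q_{\lambda'}\cap Q_\mu$, which is absurd. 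None of this is visible from your sketch, and no soft ``natural-module constraint'' of the kind you gesture at can substitute for it.

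For the second assertion, the paper's proof is a two-line reduction that you have bypassed in favour of something both longer and gappy. If $Z_\lambda Z_\mu\normaleq L_\lambda$ and $Z_\mu\ne\Omega(Z(S))$, then by \cref{p-closure2}(ii) one has $C_S(Z_\mu)=Q_\mu$, hence $C_{Q_\lambda}(Z_\lambda Z_\mu)=Q_\lambda\cap C_S(Z_\mu)=Q_\lambda\cap Q_\mu$; since $Q_\lambda\normaleq L_\lambda$ and $Z_\lambda Z_\mu\normaleq L_\lambda$ this centralizer is $L_\lambda$-invariant, so $Q_\lambda\cap Q_\mu\normaleq L_\lambda$, directly contradicting the first assertion. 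Your route through \cref{CommCF}, \cref{BasicFF} and \cref{SEFF} contains two unjustified leaps: it is not explained why FF-module recognition on $V_\lambda/[V_\lambda,Q_\lambda]$ should force $J(S)\le Q_\mu$, nor why $J(S)\le Q_\mu$ would in turn imply $Q_\lambda\cap Q_\mu\normaleq L_\lambda$. You should replace this detour with the direct centralizer computation above.
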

\begin{proof}
Suppose that $Z_\lambda Z_\mu\normaleq L_\lambda$ but $Z_\mu\ne\Omega(Z(S))$. By \cref{p-closure2} (ii), we have that $C_S(Z_\mu)=Q_\mu$ and so $C_{Q_\lambda}(Z_\lambda Z_\mu)=Q_\lambda\cap C_S(Z_\mu)=Q_\lambda\cap Q_\mu$ and it follows that $Q_\lambda\cap Q_\mu\normaleq L_\lambda$. Thus, we may suppose that $Q_\lambda\cap Q_\mu\normaleq L_\lambda$, and derive a contradiction to complete the proof.

Under this assumption, $Z_{\lambda}$ contains the unique non-central chief factor for $L$ inside $Q_{\mu}\cap Q_{\lambda}$ and $Z_{\mu}\ne \Omega(Z(S))$. Moreover, $b=2$ and there is $\lambda'\in\Delta(\mu)$ such that $Z_{\lambda}\not\le Q_{\lambda'}$ and $Z_{\lambda'}\not\le Q_{\lambda}$. Since $L_{\lambda}/Q_{\lambda}\cong\SL_2(q_\lambda)$ and $Z_{\lambda}/Z(L_{\lambda})$ is a natural module, we get that $Q_{\mu}=(Q_{\lambda'}\cap Q_{\mu}\cap Q_{\lambda})Z_{\lambda}Z_{\lambda'}$ and $Q_{\lambda}\cap Q_{\mu}=(Q_{\lambda'}\cap Q_{\mu}\cap Q_{\lambda})Z_{\lambda}$. Then $(Q_{\lambda}\cap Q_{\mu})/\Phi(Q_{\lambda'}\cap Q_{\mu}\cap Q_{\lambda})$ is elementary abelian and it follows that $\Phi(Q_{\lambda}\cap Q_{\mu})=\Phi(Q_{\lambda'}\cap Q_{\mu})=\Phi(Q_{\lambda'}\cap Q_{\mu}\cap Q_{\lambda})$. Set $F:=\Phi(Q_{\lambda}\cap Q_{\mu})$. Since $Q_{\lambda}$ contains a unique non-central chief factor for $L_{\lambda}$, we infer that $F$ is centralized by $O^p(L)$ and as $Q_{\mu}$ has class $2$, $F\le Z(L)$. Let $Z_{\mu}^*$ be the preimage in $Q_{\mu}$ of $Z(Q_{\mu}/F)$. Since $F$ is normal in both $G_{\lambda}$ and $G_{\lambda'}$, we have that $Z_{\mu}^*\normaleq \langle G_{\lambda, \mu}, G_{\mu, \lambda'}\rangle$. Moreover, since $Q_{\mu}=(Q_{\lambda'}\cap Q_{\mu}\cap Q_{\lambda})Z_{\lambda}Z_{\lambda'}$, we have that $Q_{\mu}\cap Q_{\lambda}\cap Q_{\lambda'}\le Z_{\mu}^*$. Since $[Z_{\mu}^*,Z_{\lambda}]\le F\le Z(L)$, we have that $Z_{\mu}^*\le Q_{\lambda}$ and by symmetry, $Z_{\mu}^*=Q_{\mu}\cap Q_{\lambda}\cap Q_{\lambda'}$.

Suppose that $p$ is odd and let $H_{\lambda,\mu}$ be a Hall $p'$-subgroup of $G_{\lambda,\mu}\cap L_{\lambda}$. By \cref{SLGen} (vi), $H_{\lambda,\mu}$ is cyclic of order $q_{\lambda}-1$. Furthermore, $H_{\lambda,\mu}$ normalizes $Q_{\mu}, F$ and $Z_{\mu}^*$ and acts non-trivially on $Q_{\mu}/Z_{\mu}^*$. Now, for $t_\lambda$ the unique involution in $H_{\lambda,\mu}$, $t_{\lambda}$ centralizes $Q_{\mu}/Q_{\lambda}\cap Q_{\mu}$ and inverts $Q_{\lambda}\cap Q_{\mu}/Z_{\mu}^*=Z_{\lambda}Z_{\mu}^*/Z_{\mu}^*$. By coprime action, $Q_{\mu}/Z_{\mu}^*=Z_{\lambda}Z_{\mu}^*/Z_{\mu}^*\times C_{Q_{\mu}/Z_{\mu}^*}(t_{\lambda})$ is a $Q_{\mu}$-invariant decomposition. Since $[S, t_{\lambda}]\le Q_{\lambda}\cap Q_{\mu}$ the previous decomposition is $S$-invariant. But then $[Q_{\lambda}, C_{Q_{\mu}/Z_{\mu}^*}(t_{\lambda})]\le (Q_{\mu}\cap Q_{\lambda})/Z_{\mu}^*=Z_{\lambda}Z_{\mu}^*/Z_{\mu}^*$ and we deduce that $Q_{\lambda}$ centralizes $Q_{\mu}/Z_{\mu}^*$. Hence, $Q_{\lambda}$ normalizes $Q_{\lambda'}\cap Q_{\mu}$. Let $M=\langle Q_{\lambda}, Q_{\lambda'}, Q_{\mu}\rangle\le G_{\mu}$. Then there is an $m\in M$ such that $(Q_{\lambda}Q_{\mu})^m=Q_{\lambda'}Q_{\mu}$ and since $Q_{\lambda'}Q_{\mu}$ is the unique Sylow $p$-subgroup of $G_{\mu, \lambda'}$, it follows that $\lambda\cdot m=\lambda'$. But then $(Q_{\lambda}\cap Q_{\mu})^m=Q_{\lambda'}\cap Q_{\mu}$ and as $M$ normalizes $Q_{\lambda'}\cap Q_{\mu}$, we have that $Q_{\mu}\cap Q_{\lambda}=Q_{\lambda}\cap Q_{\mu}$, absurd since $Z_{\lambda}\le Q_{\lambda}\cap Q_{\mu}$. 

Suppose that $p=2$. Since $(Q_{\lambda}\cap Q_{\mu})/F$ and $(Q_{\mu}\cap Q_{\lambda'})/F$ are elementary abelian, by \cite[Lemma 2.29]{parkerSymp}, every involution in $Q_{\mu}/F$ is contained in $(Q_{\lambda}\cap Q_{\mu})/F$ or $(Q_{\mu}\cap Q_{\lambda'})/F$. Indeed, for $A$ any other elementary abelian subgroup of $Q_{\mu}/F$ and $B$ the preimage of $A$ in $Q_{\mu}$, we must have that $B=(B\cap Q_{\lambda})\cup (B\cap Q_{\lambda'})$. If $B\not\le Q_{\lambda}$, then $F\cap Z_{\lambda}=C_{Z_{\lambda}}(B)=Z_{\lambda}\cap B$ and it follows that $B\cap Q_{\lambda}=F$. By symmetry, we have shown that $\mathcal{A}(Q_{\mu}/F)=\{(Q_{\lambda}\cap Q_{\mu})/F, (Q_{\mu}\cap Q_{\lambda'})/F\}$.

Set $M=\langle Q_{\lambda}, Q_{\lambda'}, Q_{\mu}\rangle\le G_{\mu}$ so that $M$ normalizes $Q_{\mu}$, $Z_{\mu}^*$ and $F$. Thus, all elements of $M$ which do not normalize $Q_{\mu}\cap Q_{\lambda}$, conjugate $Q_{\mu}\cap Q_{\lambda}$ to $Q_{\mu}\cap Q_{\lambda}$, and vice versa. Thus all odd order elements normalize $Q_{\mu}\cap Q_{\lambda}$. There is an $m\in M$ such that $(Q_{\lambda}Q_{\mu})^m=Q_{\lambda'}Q_{\mu}$ and since $Q_{\lambda'}Q_{\mu}$ is the unique Sylow $2$-subgroup of $G_{\mu, \lambda'}$, it follows that $\lambda\cdot m=\lambda'$. Since $M=O^p(M)Q_{\lambda}Q_{\mu}$, we may as well choose $m$ of order coprime to $p$. But then $(Q_{\lambda}\cap Q_{\mu})^m=Q_{\lambda'}\cap Q_{\mu}$ and as $m$ normalizes $Q_{\lambda'}\cap Q_{\mu}$, we conclude that $Q_{\mu}\cap Q_{\lambda}=Q_{\lambda}\cap Q_{\mu}$, a final contradiction since $Z_{\lambda}\le Q_{\lambda}\cap Q_{\mu}$. 
\end{proof}

We can now prove a result analogous to \cref{nccf}, instead working ``down" through chief factors. Again, we will apply this lemma often and without reference throughout this work.

\begin{lemma}
Let $\lambda\in\Gamma$ and $\mu\in\Delta(\lambda)$, $b>1$ and $n\geq 2$. If $V_{\lambda}^{(n)}\le Q_{\lambda}$, then $C_{Q_{\lambda}}(V_{\lambda}^{(n-2)})/C_{Q_{\lambda}}(V_{\lambda}^{(n)})$ contains a non-central chief factor for $L_{\lambda}$.
\end{lemma}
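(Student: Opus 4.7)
The plan is to argue by contradiction in the spirit of \cref{nccf}, which we are essentially dualizing. Write $D_k:=C_{Q_\lambda}(V_\lambda^{(k)})$, so that $D_n\le D_{n-2}\le Q_\lambda$, and suppose for contradiction that $O^p(L_\lambda)$ centralizes $D_{n-2}/D_n$. Fix $\mu\in\Delta(\lambda)$ and set $D_\mu:=C_{Q_\lambda}(V_\mu^{(n-1)})$. Using \cref{BasicAmal}(vi), the chain $V_\lambda^{(n-2)}\le V_\mu^{(n-1)}\le V_\lambda^{(n)}$ gives $D_n\le D_\mu\le D_{n-2}$.

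First I would show that $D_\mu\normaleq G_\lambda$. The subgroup $D_\mu$ is normalized by $G_{\lambda,\mu}$ because the latter normalizes both $Q_\lambda$ (as $Q_\lambda\normaleq G_\lambda$) and $V_\mu^{(n-1)}$ (as $V_\mu^{(n-1)}\normaleq G_\mu$). By the standing hypothesis, $O^p(L_\lambda)$ centralizes $D_{n-2}/D_n$ and therefore normalizes every subgroup sandwiched between $D_n$ and $D_{n-2}$; in particular it normalizes $D_\mu$. Since $L_\lambda/O^p(L_\lambda)$ is a $p$-group and $G_{\lambda,\mu}$ contains a Sylow $p$-subgroup of $G_\lambda$, Frattini's argument gives $G_\lambda=O^p(L_\lambda)G_{\lambda,\mu}$, whence $D_\mu\normaleq G_\lambda$.

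Next, I would exploit that $G_\lambda$ acts transitively on $\Delta(\lambda)$: the conjugates of $V_\mu^{(n-1)}$ under $G_\lambda$ exhaust the family $\{V_\nu^{(n-1)}:\nu\in\Delta(\lambda)\}$. As $D_\mu\normaleq G_\lambda$ and $[D_\mu,V_\mu^{(n-1)}]=1$, conjugation yields $[D_\mu,V_\nu^{(n-1)}]=1$ for every $\nu\in\Delta(\lambda)$, and therefore $D_\mu$ centralizes $V_\lambda^{(n)}=\langle V_\nu^{(n-1)}:\nu\in\Delta(\lambda)\rangle$. This forces $D_\mu\le D_n$, and combined with $D_n\le D_\mu$ gives the equality $D_\mu=D_n$.

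The main obstacle is to convert the equality $D_\mu=D_n$ into a contradiction. My expected route is to mirror the end of the proof of \cref{nccf}: from $C_{Q_\lambda}(V_\mu^{(n-1)})=C_{Q_\lambda}(V_\lambda^{(n)})$ and the fact that $Q_\lambda/D_n$ acts faithfully on both $V_\mu^{(n-1)}$ and $V_\lambda^{(n)}$ with identical kernels, one applies the three-subgroup lemma to $O^p(L_\lambda)$, $Q_\lambda$ and $V_\lambda^{(n)}$ to deduce that $O^p(L_\lambda)$ normalizes $V_\mu^{(n-1)}$. Combined with $V_\mu^{(n-1)}\normaleq G_{\lambda,\mu}$ and $G_\lambda=O^p(L_\lambda)G_{\lambda,\mu}$, this gives $V_\mu^{(n-1)}\normaleq G_\lambda$; since also $V_\mu^{(n-1)}\normaleq G_\mu$ by definition, we conclude $V_\mu^{(n-1)}\normaleq\langle G_\lambda,G_\mu\rangle=G$, contradicting the faithfulness of the action of $G$ on $\Gamma$ from \cref{BasicAmal}. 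The delicate part is verifying that the three-subgroup lemma indeed forces $O^p(L_\lambda)$-invariance of $V_\mu^{(n-1)}$ rather than merely $O^p(L_\lambda)$-invariance modulo $V_\lambda^{(n-2)}$, which is where the hypothesis that $V_\lambda^{(n)}\le Q_\lambda$ (and hence commutators of $V_\lambda^{(n)}$ with $Q_\lambda$ remain inside $Q_\lambda$) is essential.
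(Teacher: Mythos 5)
Your opening moves track the paper exactly: you set up the chain $D_n\le D_\mu\le D_{n-2}$, invoke the centralizing assumption to deduce that every subgroup between $D_n$ and $D_{n-2}$ is $O^p(L_\lambda)$-invariant, and conclude via Frattini that $D_\mu\normaleq G_\lambda$. Your next deduction $D_\mu=D_n$ (via transitivity of $G_\lambda$ on $\Delta(\lambda)$) is correct but actually redundant — you already had $D_\mu\normaleq G_\lambda$, and once you know this the contradiction should come from pairing it with $G_\mu$, not from pinning $D_\mu$ down further inside $Q_\lambda$.

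The real gap is the step you yourself flag as ``delicate.'' From $D_\mu=D_n$ (equivalently from $D_\mu\normaleq G_\lambda$) you propose to deduce, via the three-subgroup lemma applied to $O^p(L_\lambda)$, $Q_\lambda$ and $V_\lambda^{(n)}$, that $O^p(L_\lambda)$ normalizes $V_\mu^{(n-1)}$. This does not go through. The centralizing hypothesis lives entirely inside $Q_\lambda$: it tells you nothing directly about the $O^p(L_\lambda)$-orbit of the subgroup $V_\mu^{(n-1)}$, which need not sit between $D_n$ and $D_{n-2}$ at all. The three-subgroup lemma only transports commutator identities; even in favourable cases it would yield a containment like $[V_\lambda^{(n)},O^p(L_\lambda)]\le Z(Q_\lambda)$, which says nothing about whether the particular subgroup $V_\mu^{(n-1)}$ (as opposed to its class modulo $V_\lambda^{(n-2)}$ or its image in some section) is $O^p(L_\lambda)$-invariant. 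So there is no route from the equality of centralizers to normality of $V_\mu^{(n-1)}$ by pure commutator manipulation.

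The paper closes the argument in a fundamentally different way. Rather than aiming at $V_\mu^{(n-1)}\normaleq G$, it aims at the weaker claim $D_\mu\normaleq G$, which is already enough since $Z(Q_\lambda)\le D_n\le D_\mu$ makes $D_\mu$ nontrivial. Having $D_\mu\normaleq G_\lambda$, it suffices to show $D_\mu=C_{Q_\mu}(V_\mu^{(n-1)})$, because the latter is automatically normal in $G_\mu$ (being the centralizer in $Q_\mu$ of a $G_\mu$-normal subgroup). The content lies in proving $C_S(V_\mu^{(n-1)})\le Q_\lambda\cap Q_\mu$ for $S\in\syl_p(G_{\lambda,\mu})$, which the paper obtains from $Z_\lambda\le V_\lambda^{(n-2)}\le V_\mu^{(n-1)}$ together with \cref{p-closure2}(ii) and \cref{BasicVB} (so that centralizing $Z_\lambda$, or $V_\lambda$ in the relevant case, forces containment in $Q_\lambda$), and it must handle the boundary case $n=2$, $Z_\lambda=\Omega(Z(S))$ separately using \cref{push}. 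None of this appears in your sketch, and your $b>1$ hypothesis, which feeds \cref{BasicVB} and \cref{push}, plays no role in your argument — a warning sign that the route you have chosen cannot be completed as stated.
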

\begin{proof}
Observe that as $V_{\lambda}^{(n)}\le Q_{\lambda}$, we have that $Z(Q_{\lambda})\le  C_{Q_{\lambda}}(V_{\lambda}^{(n)})\le C_{Q_{\lambda}}(V_{\mu}^{(n-1)})\le C_{Q_{\lambda}}(V_{\lambda}^{(n-2)})$. In particular, $C_{Q_{\lambda}}(V_{\mu}^{(n-1)})$ is non-trivial. If $C_{Q_{\lambda}}(V_{\lambda}^{(n-2)})/C_{Q_{\lambda}}(V_{\lambda}^{(n)})$ contains only central chief factors for $L_{\lambda}$, $O^p(L_{\lambda})$ centralizes $C_{Q_{\lambda}}(V_{\lambda}^{(n-2)})/C_{Q_{\lambda}}(V_{\lambda}^{(n)})$ and normalizes $C_{Q_{\lambda}}(V_{\mu}^{(n-1)})$. Thus, $C_{Q_{\lambda}}(V_{\mu}^{(n-1)})\normaleq O^p(L_{\lambda})G_{\lambda,\mu}=G_{\lambda}$. In order to force a contradiction, we need only show that $C_{Q_{\lambda}}(V_{\mu}^{(n-1)})=C_{Q_{\mu}}(V_{\mu}^{(n-1)})$. 

Let $S\in\syl_p(G_{\lambda,\mu})$. Since $n\geq 2$, $Z_{\lambda}\le V_{\lambda}^{(n-2)}$ is centralized by $C_S(V_{\mu}^{(n-1)})$ and unless $n=2$ and $V_{\lambda}^{(n-2)}=Z_{\lambda}=\Omega(Z(S))$, applying \cref{p-closure2} (ii) and \cref{BasicVB}, we have that $C_S(V_{\mu}^{(n-1)})\le Q_{\lambda}\cap Q_{\mu}$ and $C_{Q_{\lambda}}(V_{\mu}^{(n-1)})=C_{Q_{\mu}}(V_{\mu}^{(n-1)})$, as desired. If $V_{\lambda}^{(n-2)}=\Omega(Z(S))$, then $V_{\mu}^{(n-1)}=Z_{\mu}$ and $C_S(Z_{\mu})=Q_{\mu}$. But then, $C_{Q_{\lambda}}(Z_{\mu})=Q_{\lambda}\cap Q_{\mu}\normaleq G_{\lambda}$, a contradiction by \cref{push}.
\end{proof}

We will also makes use of the qrc-lemma, although where it is applied there are certainly more elementary arguments which would suffice. In this way, we do not use the lemma in its full capacity and instead, it serves as a way to reduce the length of some of our arguments. This lemma first appeared in \cite{Stellmacherqrc} but only for the prime $2$. We use the extension to all primes presented in \cite[Theorem 3]{Strothqrc}.

\begin{theorem}[qrc Lemma]
Let $(H,M)$ be an amalgam such that both $H, M$ are of characteristic $p$ and contain a common Sylow $p$-subgroup. Set $Q_X:=O_p(X)$ for $X\in\{H,M\}$, $Z=\langle \Omega(Z(S))^H\rangle$ and $V:=\langle Z^M\rangle$. Suppose that $M$ is $p$-minimal and $Q_H=C_S(Z)$. Then one of the following occurs:
\begin{enumerate}
\item $Z\not\le Q_M$;
\item $Z$ is an FF-module for $H/C_H(Z)$;
\item the dual of $Z$ is an FF-module for $H/C_H(Z)$;
\item $Z$ is a 2F-module with quadratic offender and $V$ contains more than one non-central chief factor for $M$; or
\item $M$ has exactly one non-central chief factor in $V$, $Q_H\cap Q_M\normaleq M$, $[V, O^p(M)]\le Z(Q_M)$ and contains some non-trivial $p$-reduced module.
\end{enumerate}
\end{theorem}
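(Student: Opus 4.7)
The plan is to proceed by assuming outcome (i) fails, so $Z \le Q_M$, and hence $V = \langle Z^M\rangle \le Q_M$; the remainder of the proof will distribute the resulting configurations among (ii)--(v). First I would record the standing facts: since $H$ is of characteristic $p$, $\Omega(Z(S)) \le Z(Q_H)$, so $Z \le \Omega(Z(Q_H))$ is elementary abelian, and the hypothesis $Q_H = C_S(Z)$ together with $V \le Q_M \le S$ yields $C_V(Z) = V \cap Q_H$. The natural module-theoretic framework is then to view $A := VQ_H/Q_H$ as a $p$-subgroup of $H/C_H(Z)$ acting faithfully on $Z$, while dually $Z Q_M/Q_M$ acts on the $M$-chief factors inside $V$.

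The main dichotomy is the number of non-central $M$-chief factors in $V$. Suppose first that $V$ contains more than one. I would aim to show the 2F-condition with quadratic offender on $Z$, thereby proving (iv). The key inequality is a symmetric commutator estimate relating $|V/C_V(Z)|$ to $|Z/C_Z(A)|$: since $[Z,V,V\cap Q_H] = 1$ (as $V\cap Q_H = C_V(Z)$), $A$ acts quadratically on $Z$ through its image on each chief factor, and a chief-factor count over the multiple non-central factors of $V$ yields $|Z/C_Z(A)| \le |A|^2$ together with $[Z,A,A]=1$.

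Now assume $V$ has exactly one non-central $M$-chief factor $\bar V$. By $p$-minimality $M/C_M(\bar V)$ acts irreducibly on $\bar V$, with $S$ stabilizing a unique hyperplane. I would split on whether $Q_H\cap Q_M\normaleq M$. If it is \emph{not} normal, then choosing an $M$-conjugate of $V\cap Q_H$ not contained in $Q_H$ produces, via the same commutator bookkeeping as above but now using uniqueness of the non-central chief factor, an offender: depending on which side the offender appears (a $V$-element acting on $Z$, or a $Z$-element acting on $\bar V$ paired with duality), one gets that $Z$ is an FF-module for $H/C_H(Z)$ (outcome (ii)) or that its dual is (outcome (iii)). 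If $Q_H\cap Q_M\normaleq M$, then I would verify (v) directly: the containment $[V,O^p(M)] \le Z(Q_M)$ follows because $[V,O^p(M)]$ is $M$-invariant and lies between successive terms in the chief series of $Q_M$ forced by the one-chief-factor hypothesis and the normality of $Q_H\cap Q_M$, while the existence of a non-trivial $p$-reduced submodule in $[V,O^p(M)]$ is standard from characteristic-$p$ considerations on $Q_M$.

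The main obstacle will be Step 4: producing an offender on either $Z$ or its dual from the non-normality of $Q_H\cap Q_M$, and correctly identifying which of (ii) or (iii) is forced. This hinges on selecting the right $M$-conjugate of $V\cap Q_H$ and tracking its image in $H/C_H(Z)$ versus the induced action on $\bar V$; the duality interchange between FF-offenders on $Z$ and on $Z^*$ is the subtle point. The 2F estimate in Step 2 and the verification of (v) in Step 5 are then essentially bookkeeping once the framework of Step 1 is set up.
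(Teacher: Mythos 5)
The paper does not supply its own proof of the qrc lemma: immediately after the statement it records that the result ``first appeared in \cite{Stellmacherqrc} but only for the prime $2$'' and that the version used is ``the extension to all primes presented in \cite[Theorem 3]{Strothqrc}.'' So there is no internal proof to compare you against; you are being measured against the literature.

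Your sketch does capture the correct top-level architecture: reduce to $Z\le Q_M$, observe $C_V(Z)=V\cap Q_H$ and $[Z,V]\le Z\cap V$, and branch on the number of non-central $M$-chief factors in $V$ and (in the single-factor case) on whether $Q_H\cap Q_M\normaleq M$. That is indeed the skeleton of the Stellmacher--Stroth argument. But as written the sketch has the hard content missing, not merely compressed. In the multi-factor branch you assert that ``a chief-factor count over the multiple non-central factors of $V$ yields $|Z/C_Z(A)|\le|A|^2$,'' but this inequality does not simply fall out of multiplicity: the actual argument requires a careful symmetric estimate pairing the action of $VQ_H/Q_H$ on $Z$ with the action of $ZQ_M/Q_M$ on the chief factors of $V$, and the quadratic offender is not automatic (quadraticity of $VQ_H/Q_H$ on $Z$ uses $[Z,V,V]\le[Z\cap V,V]=1$ together with $Z\cap V\le C_V(Z)$, which is worth stating). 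In the single-factor non-normal branch you acknowledge that ``the main obstacle will be Step 4'' — producing the FF-offender and deciding between $Z$ and its dual — and then do not resolve it; the mechanism you gesture at (``choosing an $M$-conjugate of $V\cap Q_H$ not contained in $Q_H$'') is not forced by $Q_H\cap Q_M\not\normaleq M$ and is not how the offender is manufactured. Finally, in case (v) the claim that $[V,O^p(M)]\le Z(Q_M)$ ``lies between successive terms in the chief series of $Q_M$'' is not an argument; this containment has to be extracted from the normality of $Q_H\cap Q_M$ together with the one-chief-factor hypothesis by an explicit commutator computation, and the existence of a non-trivial $p$-reduced piece requires a separate check. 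In short, the route you propose is the right one, but at each of the three decisive junctures you have a placeholder where a genuine lemma-level argument is needed, and you flag the key one yourself as unsolved.
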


Notice that case (v) of the qrc-lemma is ruled out in our analysis by \cref{push} and in cases (ii) and (iii), \cref{SEFF} implies that $H/C_H(Z)\cong \SL_2(q)$, for $q$ some power of $p$.

We will require some results on FF-modules for weak BN-pairs and other pushing up configurations in subamalgams.

\begin{theorem}\label{JMod}
Suppose that $G$ is an outcome of \hyperlink{MainGrpThm}{Theorem C} where $L_{\alpha}$ and $L_{\beta}$ are $p$-solvable and let $S\in\syl_p(L_{\alpha})\cap \syl_p(L_{\beta})$. Assume that $G=\langle S^G\rangle$ and $V$ is an FF-module for $G$ such that $C_S(V)=\{1\}$. Then $G$ has a weak BN-pair of rank $2$ and is locally isomorphic to one of $\SL_3(p)$, $\Sp_4(p)$, or $\mathrm{G}_2(2)$. Moreover, if $G$ is locally isomorphic to $G_2(2)$, then $G/C_G(V)\cong \mathrm{G}_2(2)$.
\end{theorem}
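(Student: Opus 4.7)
The plan is to proceed by enumerating the candidate amalgams via \hyperlink{MainGrpThm}{Theorem C} and then cutting the list down using the FF-module hypothesis. Since $\bar{L_\alpha}$ and $\bar{L_\beta}$ each possess a strongly $p$-embedded subgroup and are $p$-solvable, \cref{SE1} and \cref{SE2} force each $\bar{L_\delta}$ to have $\bar{S}$ cyclic or generalized quaternion, and hence $\bar{L_\delta}$ is an extension by a $p'$-group of one of $\SL_2(p)$ with $p \leq 3$, $\Sym(3)$, $\SU_3(2)'$, $\SU_3(2)$, $(3 \times 3):2$, or $\Dih(10)$. Scanning the weak BN-pair list in \cref{greenbook}, the only Lie amalgams with both parabolics of this form are those locally isomorphic to $\SL_3(p)$, $\Sp_4(p)$, or $\mathrm{G}_2(p)$ with $p \in \{2,3\}$, together with the exceptional parabolic isomorphism class of $\mathrm{G}_2(2)'$; I would also retain the exceptional outcomes (ii)--(v) of \hyperlink{MainGrpThm}{Theorem C} for the moment.

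Next, I would exploit the FF-module hypothesis. Since $C_S(V) = \{1\}$, the kernel $C_G(V)$ is a $p'$-group, so $G/C_G(V)$ inherits both the amalgam structure and the FF-module $V$. Fixing a Thompson offender $A \in \mathcal{A}(S)$, the conditions $C_S(V) = \{1\}$ and $G = \langle S^G \rangle$ force $J(S)$ not to be normalized by $G$, so $J(S) \not\le Q_\delta$ for some $\delta \in \{\alpha,\beta\}$. Then by \cref{BasicFF}, some non-central $L_\delta$-chief factor $W$ inside $V$ is itself an FF-module for $L_\delta/C_{L_\delta}(W)$, and \cref{SEFF} forces $L_\delta/C_{L_\delta}(W) \cong \SL_2(p^n)$ acting on $W$ as a natural module. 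Combined with the $p$-solvability restriction, this pins $p^n = p$ and $p \leq 3$.

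With this in hand, I would eliminate the non-BN-pair outcomes of \hyperlink{MainGrpThm}{Theorem C}: in each of (ii)--(v), the parabolic structure forces $O_p(G_\delta)$ to consist of quadratic $2$F chief factors whose shapes are those of a natural $\SU_3(2)$-module, a $(3 \times 3):2$ module, or a $2^{1+4}_-.\Alt(5)$ module, and a direct check via the offender descriptions in \cref{Badp2}, \cref{Badp3}, \cref{AltMod}, \cref{334}, together with \cref{pgen}, shows that these cannot support an honest FF-module for the whole of $G$ with $C_S(V) = \{1\}$, because the offender ratio forced on the quadratic $2$F factor is strictly worse than $|A/C_A(V)|$. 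Among the remaining weak BN-pair candidates, $\SL_3(p)$ and $\Sp_4(p)$ carry natural FF-modules for each $p \leq 3$, while $\mathrm{G}_2(2)$ carries the $6$-dimensional natural FF-module; to rule out $\mathrm{G}_2(3)$, I would appeal to its minimal faithful $\GF(3)$-module (of dimension $7$) and verify by an offender-ratio calculation that no elementary abelian subgroup of a Sylow $3$-subgroup realizes $|V/C_V(A)| \leq |A|$ there. For the final clause, when $G$ is locally isomorphic to $\mathrm{G}_2(2)$, the two parabolics of $G/C_G(V)$ determined by the natural $6$-dimensional FF-module match those of $\mathrm{G}_2(2)$ and not of $\mathrm{G}_2(2)'$: the derived subgroup's parabolics at $p=2$ are too small to accommodate faithfully the offender structure acting on both sides of the amalgam, hence $G/C_G(V) \cong \mathrm{G}_2(2)$.

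The main obstacle will be the exclusion of $\mathrm{G}_2(3)$ and the careful bookkeeping for the exceptional outcomes (ii)--(v) of \hyperlink{MainGrpThm}{Theorem C}: ruling out $\mathrm{G}_2(3)$ requires explicit control of the offender lattice in the $7$-dimensional $\GF(3)$-module (or an appeal to a classification of FF-modules for finite groups of Lie type), and comparing the quadratic $2$F structure forced in the exceptional amalgams with the genuine FF action demanded by the hypothesis needs a delicate use of the module descriptions developed in \cref{GrpSec}, including the very restrictive generation results such as \cref{pgen} to preclude the existence of an appropriate FF offender inside $S$.
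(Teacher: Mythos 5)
Your overall plan---enumerate the possible amalgams from \hyperlink{MainGrpThm}{Theorem C} and then cut the list down using the FF-module hypothesis---is sound in outline, but there are two substantive gaps.

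First, your enumeration of the $p$-solvable weak BN-pair candidates from \cref{greenbook} is incomplete. With both $\bar{L_\alpha}$ and $\bar{L_\beta}$ $p$-solvable, the Greenbook/Fan/\cite{F3} list also contains amalgams parabolic isomorphic to $\mathrm{M}_{12}$, $\Aut(\mathrm{M}_{12})$, ${}^2\mathrm{F}_4(2)$, ${}^2\mathrm{F}_4(2)'$ (whose parabolics are $\Sz(2)\cong 5{:}4$ and $\Sym(3)$) and $\mathrm{F}_3$ (whose parabolics at $p=3$ have quotients $\SL_2(3)$ and $\GL_2(3)$), none of which appear on your candidate list or in the target conclusion. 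Each of these has to be shown not to carry an FF-module. The paper does not do this by an offender-ratio calculation at all; it simply cites Chermak's classification of FF-modules for amalgams with a weak BN-pair of rank $2$ (\cite[Theorem A, Theorem B, Corollary 1]{ChermakJ}), which already delivers the complete list $\SL_3(p)$, $\Sp_4(p)$, $\mathrm{G}_2(2)$, \emph{and} the refinement that in the last case $G/C_G(V)\cong\mathrm{G}_2(2)$ rather than $\mathrm{G}_2(2)'$. Your hand-waved argument for that refinement (``the derived subgroup's parabolics at $p=2$ are too small to accommodate faithfully the offender structure'') is not a substitute for this; it is exactly the sort of claim that needs the cited classification behind it.

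Second, for the non-BN-pair case, once one imposes $p$-solvability on both sides only one outcome of \hyperlink{MainGrpThm}{Theorem C} remains, namely the symplectic amalgam with $p=b=2$, $\bar{L_\alpha}\cong\Sym(3)$ and $\bar{L_\beta}\cong(3\times 3){:}2$. You propose to kill this by comparing offender ratios against the quadratic $2F$-module shapes in \cref{Badp2}--\cref{334} and \cref{pgen}. This is not how the paper proceeds, and it is unclear that such a direct check closes. The paper's key move is structural: inside $L_\beta$ one finds a $2$-minimal subgroup $P_\beta$ with $\bar{P_\beta}\cong\Sym(3)$ so that $(L_\alpha,P_\beta,S)$ is locally isomorphic (via \cite{Fan}) to the $\mathrm{M}_{12}$-amalgam, and an FF-module for $G$ restricts to one for $X:=\langle L_\alpha,P_\beta\rangle$, which contradicts \cite[Lemma~3.12]{ChermakJ}. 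That subamalgam reduction is the content of the argument; your proposal never produces it, and without it you are left with unfilled module calculations in precisely the places you yourself flag as the ``main obstacle.''
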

\begin{proof}
If $G$ has a weak BN-pair of rank $2$ then this follows from \cite[Theorem A, Theorem B, Corollary 1]{ChermakJ}. If $G$ does not have a weak BN-pair of rank $2$, comparing with \hyperlink{MainGrpThm}{Theorem C}, we see that $p=b=2$, $L_{\alpha}/Q_{\alpha}\cong \Sym(3)$ and $L_{\beta}/Q_{\beta}\cong (3\times 3):2$. Moreover, there is $P_{\beta}\le L_{\beta}$ such that $P_{\beta}$ contains $S$, $P_{\beta}/Q_{\beta}\cong \Sym(3)$ and $Q_{\beta}$ contains two non-central chief factors for $P_{\beta}$. Indeed, no non-trivial subgroup of $S$ is normalized by both $L_{\alpha}$ and $P_{\beta}$ and by \cite{Fan}, $(L_{\alpha}, P_{\beta}, S)$ is locally isomorphic to $\mathrm{M}_{12}$. Setting $X:=\langle L_{\alpha}, P_{\beta}\rangle$ and applying \cite{ChermakJ}, $V$ is an FF-module for $X$ upon restriction and applying \cite[Lemma 3.12]{ChermakJ}, we have a contradiction.
\end{proof}

\begin{lemma}\label{SubAmal}
Suppose that $G$ is a minimal counterexample to \hyperlink{MainGrpThm}{Theorem C}, $\{\lambda, \delta\}=\{\alpha,\beta\}$ and the following conditions hold:
\begin{enumerate}
\item $L_{\alpha}/R_{\alpha}\cong\SL_2(q)\cong L_{\beta}/R_{\beta}$, and $Z_{\alpha}$ and $V_{\beta}/C_{V_{\beta}}(O^p(L_{\beta}))$ are natural $\SL_2(q)$-modules;
\item there is a non-central chief factor $U/W$ for $G_{\lambda}$ such that, as an $\bar{L_{\lambda}}$-module, $U/W$ is an FF-module, $C_{L_{\lambda}}(U/W)\ne R_\lambda$, and $C_{L_{\lambda}}(U/W)\cap R_\lambda$ normalizes $Q_{\alpha}\cap Q_{\beta}$; and
\item if $q=p$ then $Z(Q_{\alpha})=Z_{\alpha}$ is of order $p^2$ and $Z(Q_{\beta})=Z_{\beta}=\Omega(Z(S))$ is of order $p$.
\end{enumerate}
Then $q\in\{2,3\}$ and one of the following holds:
\begin{enumerate}[label=(\alph*)]
\item there is $H_{\lambda}\le G_{\lambda}$ containing $G_{\alpha,\beta}$ such that $(H_{\lambda}, G_{\delta}, G_{\alpha,\beta})$ is a weak BN-pair of rank $2$, $b\leq 5$ and if $b>3$, then $(H_{\lambda}, G_{\delta}, G_{\alpha,\beta})$ is parabolic isomorphic to $\mathrm{F}_3$ and $V_{\alpha}^{(2)}/Z_{\alpha}$ is not acted on quadratically by $S$;
\item $p=3$, $\lambda=\alpha$, neither $C_{L_{\alpha}}(U/W)$ nor $R_\alpha$ normalizes $Q_{\alpha}\cap Q_{\beta}$ and there does not exist $P_{\alpha}\le L_{\alpha}$ such that $S(C_{L_{\alpha}}(U/W)\cap R_\alpha)\le P_{\alpha}$, $P_{\alpha}$ is $G_{\alpha,\beta}$-invariant, $P_{\alpha}/C_{L_{\alpha}}(U/W)\cap R_\alpha\cong \SL_2(p)$,  $L_{\alpha}=P_{\alpha}R_\alpha=P_{\alpha}C_{L_{\alpha}}(U/W)$ and $Q_{\alpha}\cap Q_{\beta}\not\normaleq P_{\alpha}$;
\item $\lambda=\beta$ and neither $R_{\beta}$ nor $C_{L_{\beta}}(U/W)$ normalizes $V_{\alpha}^{(2)}$; or
\item there is $H_{\lambda}\le G_{\lambda}$ containing $G_{\alpha,\beta}$ such that for $X:=\langle H_{\lambda}, G_{\delta}\rangle$ and $V:=\langle Z_{\beta}^X\rangle$, we have that $V_{\beta}\le V\le S$, $C_S(V)\normaleq X$ and for $\wt X:=X/C_X(V)$, either $\wt X$ is locally isomorphic to $\SL_3(p), \Sp_4(p)$ or $\mathrm{G}_2(2)$; or $p=3$ and there is an involution $x$ in $G_{\alpha,\beta}$ such that $\wt X/\wt{\langle x\rangle}$ is locally isomorphic to $\PSp_4(3)$. Moreover, if $\wt Q_{\mu}$ contains more than one non-central chief factor for $\wt L_{\mu}$ where $\mu\in\{\alpha,\beta\}$, then $\wt Q_{\mu}$ contains two non-central chief factors and $\wt Q_{\nu}$ contains a unique non-central chief factor for $\wt L_{\nu}$ where $\mu\ne \nu\in\{\alpha,\beta\}$, and $\wt X\cong \mathrm{G}_2(2)$.
\end{enumerate}
\end{lemma}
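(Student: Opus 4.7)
The key idea is to replace $L_{\lambda}$ with a smaller subgroup $H_{\lambda}$ detected by the FF-module $U/W$, and then analyze the rank-$2$ subamalgam $(H_{\lambda}, G_{\delta}, G_{\alpha,\beta})$ by induction. First, apply \cref{SEFFFus} to $U/W$ to conclude that $O^{p'}(\bar L_{\lambda}/C_{\bar L_{\lambda}}(U/W)) \cong \SL_2(p^m)$ with $U/W$ (modulo the centralizer) a natural module. Since $C_{L_{\lambda}}(U/W)\ne R_{\lambda}$ by hypothesis, this $\SL_2(p^m)$ quotient is genuinely different from $L_{\lambda}/R_{\lambda} \cong \SL_2(q)$, so both quotients appear as sections of $\bar L_{\lambda}$. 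Comparing with the classification of $\bar L_{\lambda}$ (\cref{SE1}, \cref{SE2}), together with constraints from condition (iii) on the small centers $Z_{\alpha}, Z_{\beta}$ when $q=p$, forces $q\in\{2,3\}$ (the two quadratic quotients cannot coexist in rank-$1$ groups of Lie type of larger characteristic defining field).

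Next, use the inverse image of a Sylow-normalizer–complemented $\SL_2(q)$-subgroup inside $\bar L_{\lambda}/C_{\bar L_{\lambda}}(U/W)$ to build a subgroup $H_{\lambda} \le L_{\lambda}$ with $G_{\alpha,\beta}\le H_{\lambda}$, $S \in \syl_p(H_{\lambda})$, $H_{\lambda}/O_p(H_{\lambda}) \cong \SL_2(q)$, normalized by $G_{\alpha,\beta}$, and with $H_{\lambda}\cdot R_{\lambda} = L_{\lambda}$. Observe that $(H_{\lambda}, G_{\delta}, G_{\alpha,\beta})$ satisfies \cref{MainHyp}: Hypothesis B(i) follows because $N_{G_{\alpha}}(S) = N_{G_{\beta}}(S) \le G_{\alpha,\beta}$ is inherited from $\mathcal{A}$, and Hypothesis B(ii) is inherited from the $\SL_2(q)$ quotients (which are never $p$-solvable with generalized quaternion Sylow, Ree$(3)$, etc.). The hypothesis that $C_{L_{\lambda}}(U/W)\cap R_{\lambda}$ normalizes $Q_{\alpha}\cap Q_{\beta}$ guarantees that $H_{\lambda}$ is, in the relevant pushing-up sense, determined by its action on $U/W$, so that no non-trivial subgroup of $G_{\alpha,\beta}$ is normal in $\langle H_{\lambda}, G_{\delta}\rangle$ unless one of the obstructions in outcomes (b)--(c) is present.

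Now invoke the minimal counterexample hypothesis: since $H_{\lambda}$ is a proper subgroup of $L_{\lambda}$, we have $|H_{\lambda}| + |G_{\delta}| < |G_{\alpha}|+|G_{\beta}|$, so the subamalgam $(H_{\lambda}, G_{\delta}, G_{\alpha,\beta})$ already satisfies one of the conclusions (i)--(v) of \hyperlink{MainGrpThm}{Theorem C}. If it is a weak BN-pair, \cref{greenbook} delivers outcome (a), and the bound $b\le 5$ is read off from the list of rank-$2$ weak BN-pairs (with $b=5$ only in $\mathrm{F}_3$, which in turn forces the failure of quadratic $S$-action on $V_{\alpha}^{(2)}/Z_{\alpha}$ exhibited in the $\mathrm{F}_3$-configuration). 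If the subamalgam is a symplectic amalgam of type (ii)--(v) of \hyperlink{MainGrpThm}{Theorem C}, then the completion $X:=\langle H_{\lambda}, G_{\delta}\rangle$ has $C_S(V)\normaleq X$ where $V := \langle Z_{\beta}^X\rangle$, and a case-by-case inspection (using \cref{JMod} to recognize $\SL_3(p), \Sp_4(p), \mathrm{G}_2(2)$ and a separate treatment of the symplectic amalgam of $\PSp_4(3)$-type that is swept up by the involution $x$) produces outcome (d). The multiplicity statement in outcome (d) about the number of non-central chief factors in $\wt Q_{\mu}$ is then extracted from the explicit classification in \cref{Symp}.

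Finally, the fallback cases occur when the construction of $H_{\lambda}$ fails: either no $G_{\alpha,\beta}$-invariant complement $P_{\alpha}$ to $C_{L_{\alpha}}(U/W)\cap R_{\alpha}$ in $L_{\alpha}$ exists that avoids $Q_{\alpha}\cap Q_{\beta}$ (outcome (b), $\lambda=\alpha$, $p=3$), or $R_{\beta}$ and $C_{L_{\beta}}(U/W)$ both fail to normalize $V_{\alpha}^{(2)}$ (outcome (c), $\lambda=\beta$), in which case the subamalgam construction fails a posteriori and we simply record this obstruction. \emph{The main obstacle} is outcome (d): one has to track the quotient $X/C_X(V)$ carefully to rule out arbitrarily large completions, separate the genuine $\mathrm{G}_2(2)$ case (where $\wt Q_{\mu}$ asymmetry forces the multiplicities stated) from the $\PSp_4(3)$ case that only appears after quotienting by a central involution, and verify that the minimality of the counterexample does not allow any new completions beyond those classified in \cref{Symp} and \cref{greenbook}.
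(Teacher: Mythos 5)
Your high-level outline captures the broad strategy (detect a $G_{\alpha,\beta}$-invariant $\SL_2(q)$-piece $H_\lambda$ of $L_\lambda$ via the extra FF-module $U/W$, form the subamalgam, and invoke minimality), but the case analysis is misorganized in a way that omits the hardest and most substantive part of the proof.

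The key structural point you miss: after forming $X:=\langle H_\lambda, G_\delta\rangle$, there are two genuinely different regimes depending on whether some non-trivial subgroup $K\le G_{\alpha,\beta}$ is normal in $X$. When $K=\{1\}$, the subamalgam $(H_\lambda, G_\delta, G_{\alpha,\beta})$ satisfies Hypothesis B, minimality gives a weak BN-pair (outcome (a)) or a symplectic amalgam, and in the latter case the paper derives a \emph{contradiction} — a symplectic amalgam of the $(3\times3):2$ type with $|S|=2^6$ is incompatible with the module structure. In particular, outcome (d) does \emph{not} arise from the symplectic amalgam branch of Theorem~C as you claim. Rather, outcome (d) arises precisely when $K\neq\{1\}$: then $C_S(V)=O_p(K)\normaleq X$ for $V=\langle Z_\beta^X\rangle$, one passes to $\wt X:=X/C_X(V)$, and the bulk of the proof (the longest and most technical portion) consists of a careful verification — via repeated $A\times B$-lemma applications to the preimages $\mathcal{C}_\lambda$ of $O_{p'}(\wt H_\lambda)$ and $\mathcal{C}_\delta$ — of whether $\wt H_\lambda$ and $\wt G_\delta$ are of characteristic $p$. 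It is exactly in the failure of this characteristic-$p$ condition (when $p=3$) that the central involution $x$ appears, the $\PSp_4(3)$ quotient emerges, and the identity $C_S(V)=C_{Q_\alpha}(V_\alpha^{(2)})$ is extracted. Outcome (c) then falls out of this calculation: since $G_\beta=\langle H_\beta, R_\beta\rangle=\langle H_\beta, C_{L_\beta}(U/W)\rangle$, if either $R_\beta$ or $C_{L_\beta}(U/W)$ normalized $V_\alpha^{(2)}$ one would contradict $C_S(V)\normaleq X$ being the largest normal $p$-subgroup. Your characterization of (c) as "the subamalgam construction fails a posteriori" is therefore inaccurate; the construction succeeds, and (c) is a derived obstruction from the $\wt X$ analysis, not a hypothesis failure.

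Two smaller issues. First, you cite \cref{SEFFFus} (the fusion-system version); the relevant tool is the group-theoretic \cref{SEFF}, applied through hypothesis (i). Second, your explanation of why $q\in\{2,3\}$ ("two quadratic quotients cannot coexist in rank-1 groups of Lie type of larger defining field") is not the actual mechanism: the argument is that $C_{L_\lambda}(U/W)R_\lambda/R_\lambda = Z(L_\lambda/R_\lambda)$ and $C_{L_\lambda}(U/W)R_\lambda/C_{L_\lambda}(U/W) = Z(L_\lambda/C_{L_\lambda}(U/W))$, so $L_\lambda/(C_{L_\lambda}(U/W)\cap R_\lambda)$ would be a perfect central extension of $\PSL_2(q)$ by $C_2\times C_2$, which contradicts the $2$-part of the Schur multiplier of $\PSL_2(q)$ having order $2$ (\cref{SLGen}(vii)) once $q\geq 4$. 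Condition (iii) is then essential for invoking \cref{Badp2} and \cref{Badp3} to pin down the quotient $L_\lambda/(C_{L_\lambda}(U/W)\cap R_\lambda)$ as $(3\times3):2$ or $(Q_8\times Q_8):3$.
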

\begin{proof}
It follows from (i), (ii) and \cref{SEFF} that $L_{\lambda}/C_{L_{\lambda}}(U/W)\cong L_{\lambda}/R_\lambda\cong\SL_2(q)$ and $\syl_p(C_{L_{\lambda}}(U/W))=\syl_p(R_\lambda)=\{Q_\lambda\}$. Thus, $C_{L_{\lambda}}(U/W)R_\lambda/Q_{\lambda}$ is a non-trivial normal $p'$-subgroup of $L_{\lambda}/Q_{\lambda}$. Assume that that $q\geq 4$ and $C_{L_{\lambda}}(U/W)\ne R_\lambda$. Then $C_{L_{\lambda}}(U/W)R_\lambda/C_{L_{\lambda}}(U/W)=Z(L_{\lambda}/C_{L_{\lambda}}(U/W))$ and $C_{L_{\lambda}}(U/W)R_\lambda/R_{\lambda}=Z(L_{\lambda}/R_\lambda)$. In particular, $p$ is odd and $L_{\lambda}/C_{L_{\lambda}}(U/W)\cap R_\lambda$ is isomorphic to a central extension of $\PSL_2(q)$ by an elementary abelian group of order $4$. Since $O^{p'}(L_{\lambda})=L_{\lambda}$ and the $p'$-part of the Schur multiplier of $\PSL_2(q)$ is of order $2$ by \cref{SLGen} (vii), we have a contradiction. Thus, we may assume that $q\in\{2,3\}$ throughout so that $G_{\alpha}$ and $G_{\beta}$ are $p$-solvable and by condition (iii), $Z(Q_{\alpha})=Z_{\alpha}$ is of order $p^2$ and $Z(Q_{\beta})=Z_{\beta}=\Omega(Z(S))$ is of order $p$. By \cref{Badp2} (ii) and \cref{Badp3} (ii), $L_{\lambda}/(C_{L_{\lambda}}(U/W)\cap R_\lambda)\cong (3\times 3):2$ if $p=2$, or $(Q_8\times Q_8):3$ if $p=3$.

Suppose that $p=2$. By \cref{Badp2} (iii), there are $P_1, \dots, P_4\le L_{\lambda}$ such that $S(C_{L_{\lambda}}(U/W)\cap R_\lambda)\le P_i$ and $P_{i}/(C_{L_{\lambda}}(U/W)\cap R_\lambda)\cong\Sym(3)$. Indeed, $C_{L_{\lambda}}(U/W)S$ and $R_\lambda S$ are non-equal and satisfy this condition. Moreover, $P_i$ is $G_{\alpha,\beta}$-invariant for all $i$. Since any two $P_i$ generate $L_{\lambda}$, we may choose $P_{\lambda}=P_j\ne R_{\lambda}S$ such that $Q_{\alpha}\cap Q_{\beta}\not\normaleq P_{\lambda}$ and $O^2(P_\lambda)$ does not centralize $U/W$. Set $H_\lambda:=P_\lambda G_{\alpha,\beta}$, $X:=\langle H_\lambda, G_\delta\rangle$ and $V:=\langle Z_{\beta}^X\rangle$. By (ii) and (iii), we have that $V_{\beta}\le V$.

Suppose that $p=3$. By \cref{Badp3} (iii), there is $P_1,\dots, P_5\le L_{\lambda}$ such that $S(C_{L_{\lambda}}(U/W)\cap R_\lambda)\le P_i$ and $P_{i}/(C_{L_{\lambda}}(U/W)\cap R_\lambda)\cong\SL_2(3)$. Again, $C_{L_{\lambda}}(U/W)S$ and $R_{\lambda}S$ are non-equal and satisfy this condition, and for any $i\ne j$, $L_{\lambda}=\langle P_i, P_j\rangle$. Since $C_{L_{\lambda}}(U/W)S$ and $R_\lambda S$ are $G_{\alpha,\beta}$-invariant there is at least one other $P_i$ which is $G_{\alpha,\beta}$-invariant. Notice that $R_{\beta}S$ normalizes $Q_{\alpha}\cap Q_{\beta}$ and as any two $P_i$ generate, by \cref{push} if $\lambda=\beta$ there is a choice of $P_{\lambda}=P_i$ such that $Q_{\alpha}\cap Q_{\beta}\not\normaleq P_{\lambda}$, $P_{\lambda}$ is $G_{\alpha,\beta}$-invariant and $O^3(P_\lambda)$ does not centralize $U/W$ or $V_{\beta}$. If $\lambda=\alpha$, then unless outcome (b) holds, we may choose $P_{\lambda}=P_j\ne R_\lambda S$ such that $Q_{\alpha}\cap Q_{\beta}\not\normaleq P_{\lambda}$ and $O^3(P_\lambda)$ does not centralize $U/W$. Again, we set $H_\lambda:=P_\lambda G_{\alpha,\beta}$, $X:=\langle H_\lambda, G_\delta\rangle$ and $V:=\langle Z_{\beta}^X\rangle$, remarking that $V_{\beta}\le V$.

For $p=2$ or $3$, $O_p(P_\lambda)=Q_{\lambda}$ and $P_{\lambda}/Q_{\lambda}$ has a strongly $p$-embedded subgroup. Moreover, $P_\lambda$ is of characteristic $p$, $C_S(V)\le C_{\beta}\le Q_{\alpha}\cap Q_{\beta}$ so that $C_S(V)=C_{Q_{\alpha}}(V)=C_{Q_{\beta}}(V)\normaleq X$. If no non-trivial subgroup of $G_{\alpha,\beta}$ is normal in $X$, then $X$ satisfies \cref{MainHyp} and since both $H_{\lambda}$ and $G_\delta$ are $p$-solvable, by minimality, $(H_\lambda, G_{\delta}, G_{\alpha,\beta})$ is a weak BN-pair of rank $2$; or $p=2$, $X$ is a symplectic amalgam, $|S|=2^6$ and exactly one of $\bar{H_{\lambda}}$ or $\bar{G_{\delta}}$ is isomorphic to $(3\times 3):2$. In the latter case, we get that $Q_{\lambda}$ and $Q_{\delta}$ are non-abelian subgroups of order $2^5$ and $\bar{G_{\delta}}$ and $\bar{G_{\lambda}}$ are isomorphic to subgroups of $\GL_4(2)$. Moreover, for some $\gamma\in\{\lambda, \delta\}$, $|Q_{\gamma}/\Phi(Q_{\gamma})|=2^3$ so that $\bar{G_{\gamma}}$ is isomorphic to a subgroup of $\GL_3(2)$. One can check that this implies that $G=X$, a contradiction. If $(H_\lambda, G_{\delta}, G_{\alpha,\beta})$ is a weak BN-pair then we may associate a critical distance to it. Since $\langle (V_{\delta}^{(n)})^{H_{\lambda}}\rangle\le \langle (V_{\delta}^{(n)})^{G_{\lambda}}\rangle$, it follows that the critical distance associated to $(H_\lambda, G_{\delta}, G_{\alpha,\beta})$ is greater than or equal to $b$. Comparing with the results in \cite{Greenbook}, we have that $b\leq 5$ and $b\leq 3$ unless $b=5$, $b$ is equal to the critical distance associated to $(H_\lambda, G_{\delta}, G_{\alpha,\beta})$ and $(H_{\lambda}, G_{\delta}, G_{\alpha,\beta})$ is parabolic isomorphic to $\mathrm{F}_3$. That $V_{\alpha}^{(2)}/Z_{\alpha}$ is not acted on quadratically by $S$ is a consequence of the structure of an $\mathrm{F_3}$-type amalgam.

Hence, we may assume that some non-trivial subgroup of $G_{\alpha,\beta}$ is normal in $X$. Let $K$ be the largest subgroup by inclusion satisfying this condition. Since $S$ is the unique Sylow $p$-subgroup of $G_{\alpha,\beta}$, $K$ normalizes $S$ so that $O_p(K)=S\cap K\normaleq X$. If $O_p(K)=\{1\}$, then $K$ is a $p'$-group which is normal in $G_{\delta}$, impossible since $F^*(G_{\delta})=Q_{\delta}$ is self-centralizing in $G_{\delta}$. Thus, there is a finite $p$-group which is normal in $X$. Since $O_p(K)\normaleq S$, $Z_{\beta}\le O_p(K)$. Then, by definition, $V\le O_p(K)$. Indeed, as $[O_p(K), V]=[O_p(K), \langle Z_{\beta}^X\rangle]=\{1\}$, we conclude that $V\le \Omega(Z(O_p(K)))$ and $O_p(K)\le C_S(V)$. By an earlier observation, $C_S(V)\normaleq X$ so that $C_S(V)=O_p(K)$. Set $\wt X:=X/C_X(V)$ so that $\wt X=\langle \wt H_{\lambda}, \wt G_{\delta}\rangle$ and $\wt H_{\lambda}\cong H_{\lambda}/C_{H_{\lambda}}(V)$ is a finite group. Additionally, $\wt G_{\delta}\cong G_{\delta}/C_{G_{\delta}}(V)$ is a finite group. Since $C_S(V)\in\syl_p(C_{H_{\lambda}}(V)\cap C_{G_{\delta}}(V))$, $C_S(V)\le C_{\beta}$ and $H_{\lambda}$ does not normalize $Q_{\alpha}\cap Q_{\beta}$, we deduce that $\wt Q_{\lambda}=O_p(\wt H_{\lambda})$ and $\wt H_{\lambda}/\wt Q_{\lambda}$ has a strongly $p$-embedded subgroup. Similarly, $\wt Q_{\delta}=O_p(\wt G_{\delta})$ and $\wt G_{\delta}/\wt Q_{\delta}$ has a strongly $p$-embedded subgroup.

In order to show that the triple $(\wt H_{\lambda}, \wt G_{\delta}, \wt {G_{\alpha,\beta}})$ satisfies \cref{MainHyp}, we need to show that $\wt H_{\lambda}$ and $\wt G_{\delta}$ are of characteristic $p$, $\wt {G_{\alpha,\beta}}=\wt H_{\lambda} \cap \wt G_{\delta}=N_{\wt H_{\lambda}}(\wt S)=N_{\wt G_{\delta}}(\wt S)$ and no non-trivial subgroup of $\wt{G_{\alpha,\beta}}$ is normal in both $\wt H_{\lambda}$ and $\wt G_{\delta}$. In the following, we often examine the ``preimage in $H_{\lambda}$" of some subgroup of $\wt H_{\lambda}$, by which we mean the preimage in $H_{\lambda}$ of the isomorphic image in $H_{\lambda}/C_{H_{\lambda}}(V)$.

Notice that if $\wt H_\lambda$ is not of characteristic $p$ then $F^*(\wt H_\lambda)\ne \wt Q_{\lambda}$. Then, as $\wt H_\lambda$ is $p$-solvable, $O_{p'}(\wt H_\lambda)\ne \{1\}$ so that for $\mathcal{C}_\lambda$ the preimage in $H_\lambda$ of $O_{p'}(\wt H_\lambda)$, $[\mathcal{C}_\lambda, Q_{\lambda}, V]=\{1\}$. For $r\in \mathcal{C}_\lambda$ of order coprime to $p$, it follows from the A$\times$B-lemma that if $r$ centralizes $C_{V}(Q_{\lambda})$, then $\wt r=1$. Since $Q_{\lambda}$ is self-centralizing in $S$, we have that $C_{V}(Q_{\lambda})\le Z(Q_{\lambda})$. Similarly, if $\wt G_{\delta}$ is not of characteristic $p$, defining $\mathcal{C}_\delta$ analogously, by the A$\times$B-lemma we need only show $\mathcal{C}_\delta$ centralizes $C_V(Q_\delta)\le Z(Q_{\delta})$.

Suppose that $\lambda=\beta$. Then $|Z(Q_{\beta})|=p$ and so, either $\wt H_{\beta}$ is of characteristic $p$; or $p=3$, $|\wt {\mathcal{C}_\beta}|=2$ and $\mathcal{C}_{\beta}$ acts non-trivially on $Z_{\beta}$. In the latter case, $\wt {\mathcal{C}_\beta}\le Z(\wt{H_\beta})$ so that $[\mathcal{C}_{\beta}, S]\le C_{H_{\beta}}(V)$. Moreover, by coprime action, we have that $V=[V, \mathcal{C}_\beta]\times C_{V}(\mathcal{C}_\beta)$ is an $S$-invariant decomposition and as $\wt{\mathcal{C}_\beta}$ acts non-trivially on $Z_{\beta}$, it follows that $V=[V, \mathcal{C}_\beta]$ is inverted by $\wt{\mathcal{C}_\beta}$. By the Frattini argument, $\mathcal{C}_\beta S=C_{H_{\beta}}(V)S(G_{\alpha,\beta}\cap \mathcal{C}_\beta)$ and we may as well assume that there is $x \in G_{\alpha,\beta}\cap \mathcal{C}_{\beta}$ such that $\wt {\langle x\rangle}=\wt {\mathcal{C}_\beta}$. But then $[x, Q_{\alpha}]\le [x, S]\le C_S(V)$ and as $x\in G_{\alpha,\beta}\le G_{\alpha}$, $\wt G_{\alpha}$ is not of characteristic $p$.

Consider $\mathcal{C}_\alpha$, the preimage in $G_\alpha$ of $O_{p'}(\wt G_\alpha)$. If $\wt{G_\alpha}$ is not of characteristic $p$, then applying the A$\times$B-lemma,  $\mathcal{C}_\alpha\cap C_{G_{\alpha}}(Z_{\alpha})\le C_{G_{\alpha}}(V)$ and $\wt {\mathcal{C}_\alpha}$ is isomorphic to a normal $p'$-subgroup of $\GL_2(p)$.

Suppose that $|\wt {\mathcal{C}_\alpha}|=3$ if $p=2$, or $\wt {\mathcal{C}_\alpha}\cong Q_8$ if $p=3$. Noticing that $[S, C_G(Z_{\alpha})]\le [L_{\alpha}, C_{G_{\alpha}}(Z_{\alpha})]\le R_{\alpha}$, by the Frattini argument, $C_{G_{\alpha}}(Z_{\alpha}) G_{\alpha,\beta}=R_{\alpha} G_{\alpha,\beta}$ and $G_{\alpha}=R_{\alpha} G_{\alpha,\beta} \mathcal{C}_\alpha$. By \cref{push}, since $\mathcal{C}_\alpha G_{\alpha,\beta}$ normalizes $Q_{\alpha}\cap Q_{\beta}$, it remains to prove that $R_{\alpha}$ normalizes $Q_{\alpha}\cap Q_{\beta}$ to get a contradiction. 

Assume that $R_{\alpha}$ does not normalize $Q_{\alpha}\cap Q_{\beta}$ and let $M_{\alpha}:=C_{G_{\alpha}}(Z_{\alpha}) G_{\alpha,\beta}$. Then, $C_{G_{\alpha}}(Z_{\alpha})\not\le G_{\alpha,\beta}$ so that $Q_{\alpha}=O_p(M_{\alpha})$. Reapplying the A$\times$B-lemma yields $\wt {M_{\alpha}\cap \mathcal{C}_{\alpha}}=\{1\}$ if $p=2$ and $|\wt {M_{\alpha}\cap \mathcal{C}_{\alpha}}|\leq 2$ if $p=3$. In the latter case, suppose that $\wt {M_{\alpha}\cap \mathcal{C}_{\alpha}}$ is non-trivial and choose $y\in M_{\alpha}\cap \mathcal{C}_{\alpha}$ with $[y, V]\ne\{1\}$. Indeed, $\wt{\langle  y \rangle}=\wt {M_{\alpha}\cap \mathcal{C}_{\alpha}}$ is central in $\wt {M_{\alpha}}$. It follows that $[y, S]\le C_{M_{\alpha}}(V)$. Now, by the Frattini argument, $(\mathcal{C}_\alpha \cap M_{\alpha})S=C_{M_{\alpha}}(V)S(G_{\alpha,\beta}\cap \mathcal{C}_\alpha)$ and we may as well assume that $y\in G_{\alpha,\beta}$ so that $[y, S]\le C_S(V)$. But then $[y, Q_{\beta}]\le C_X(V)$ and so $\wt {H_{\beta}}$ is not of characteristic $3$. Indeed, we can arrange that $\langle y\rangle C_{H_{\beta}}(V)=\mathcal{C}_\beta$. 

Now, we may form $M_{\alpha}^*:=C_{G_{\alpha}}(Z_{\alpha})(L_{\beta}\cap G_{\alpha,\beta})$ and $H_{\beta}^*:=(H_{\beta}\cap L_{\beta})(M_{\alpha}^*\cap G_{\alpha,\beta})$ and arguing as above, we infer that $\wt {M_{\alpha}^*}$ and $\wt {H_{\beta}^*}$ are both of characteristic $p$. Moreover, by construction and since $R_{\alpha}$ does not normalize $Q_{\alpha}\cap Q_{\beta}$, we deduce that $\wt Q_{\alpha}=O_p(\wt {M_{\alpha}^*})$ and $\wt {M_{\alpha}^*}/\wt Q_{\alpha}$ has a strongly $p$-embedded subgroup. Similarly, $\wt Q_{\beta}=O_p(\wt {H_{\beta}^*})$ and $\wt {H_{\beta}^*}/\wt Q_{\beta}$ also has a strongly $p$-embedded subgroup. Set $Y:=\langle M_{\alpha}^*, H_{\beta}^*\rangle$ and write $G_{\alpha,\beta}^*:=M_{\alpha}^*\cap G_{\alpha,\beta}$

Since $\wt S=\wt Q_{\alpha} \wt Q_{\beta}$, it is easily checked that $\wt {G_{\alpha,\beta}^*}=N_{\wt {M_{\alpha}^*}}(\wt S)=N_{\wt{ H_{\beta}^*}}(\wt S)=\wt {M_{\alpha}^*}\cap \wt {H_{\beta}^*}$. Suppose there exists $K^*\le \wt {G_{\alpha,\beta}^*}$ such that $K^*\normaleq \langle \wt {M_{\alpha}^*}, \wt {H_{\beta}^*}\rangle=\wt Y$. Since $\wt {M_{\alpha}^*}$ and $\wt {H_{\beta}^*}$ are both of characteristic $p$, we may assume that $K^*$ is not a $p'$-group, and since $K^*\le \wt {G_{\alpha,\beta}^*}$, $O_p(K^*)=K^*\cap \wt S\ne\{1\}$. Let $K_{\alpha}$ denote the preimage of $O_p(K^*)$ in $M_{\alpha}^*$ and $K_{\beta}$ denote the preimage of $O_p(K^*)$ in $H_{\beta}^*$. Then, $T_{\alpha}:=Q_{\alpha}\cap K_{\alpha}$ is a normal $p$-subgroup of $M_{\alpha}^*$ and, likewise, $T_{\beta}:=Q_{\beta}\cap K_{\beta}$ is a normal $p$-subgroup of $H_{\beta}^*$. Since $\wt {T_{\alpha}T_{\beta}}=\wt {T_{\alpha}}=\wt {T_{\beta}}$, a comparison of orders yields $T_{\alpha}T_{\beta}=T_{\alpha}=T_{\beta}\normaleq Y$. Moreover, $T_{\alpha}>C_S(V)$ and as $Y$ is normalized by $G_{\alpha,\beta}$, $T_{\alpha}$ is normalized by $G_{\alpha,\beta}$. But now, $G_{\alpha}=G_{\alpha,\beta}\mathcal{C}_{\alpha}M_{\alpha}^*$ and as $\mathcal{C}_{\alpha}$ centralizes $Q_{\alpha}/C_S(V)$, $T_{\alpha}\normaleq \langle G_{\alpha}, H_{\beta}\rangle=X$, a contradiction since $C_S(V)$ is the largest $p$-subgroup of $G_{\alpha,\beta}$ which is normalized by $X$. Hence, the triple $(\wt {M_{\alpha}^*}, \wt {H_{\beta}^*}, \wt {G_{\alpha\beta}^*})$ satisfies \cref{MainHyp}.

Since $C_S(V)\le Q_{\alpha}\cap Q_{\beta}$ and $C_S(V)$ is the largest subgroup of $S$ which is normal in $Y$, we have that $J(S)\not\le C_S(V)$ and a elementary calculation yields that $\Omega(Z(C_S(V)))$ is an FF-module for $\wt Y$. Moreover, by construction, $Y=\langle S^Y\rangle$ and, by minimality and since $\wt {M_{\alpha}^*}$ and $\wt {H_{\beta}^*}$ are $p$-solvable, $\wt Y$ is locally isomorphic to one of $\SL_3(p)$, $\Sp_4(p)$ or $\mathrm{G}_2(2)$. Moreover, $V_{\alpha}^{(2)}\le V$ so that $C_S(V)\le C_{Q_{\alpha}}(V_{\alpha}^{(2)})$. If $\wt Y$ is locally isomorphic to $\SL_3(p)$, then $C_{\beta}$ is the largest normal subgroup of $H_{\beta}$ contained in $Q_{\alpha}\cap Q_{\beta}$, it follows that $C_{\beta}\le C_S(V)\le C_{Q_{\alpha}}(V_{\alpha}^{(2)})$, a contradiction for then $C_{\beta}\normaleq \langle G_{\alpha}, G_{\beta}\rangle$. 

If $\wt Y$ is locally isomorphic to $\Sp_4(p)$, then it follows that $|\wt {C_{\beta}}|\leq p$. We may as well assume that $C_S(V)=C_{Q_{\alpha}}(V_{\alpha}^{(2)})$ has index $p$ in $C_{\beta}$, else we obtain a contradiction as before. Since $C_S(V)\normaleq X$ and $G_{\beta}=\langle H_{\beta}, R_{\beta}\rangle=\langle H_{\beta}, C_{L_{\beta}}(U/W)\rangle$, it follows that neither $R_{\beta}$ nor $C_{L_{\beta}}(U/W)$ normalizes $V_{\alpha}^{(2)}$ and conclusion (c) holds. If $\wt Y\cong \mathrm{G}_2(2)$, then one can calculate in a similar manner that $C_S(V)=C_{Q_{\alpha}}(V_{\alpha}^{(2)})$ and again we retrieve outcome (c). 

Therefore, if $\lambda=\beta$ and $\wt{G_\alpha}$ is not of characteristic $p$, then $p=3$ and $|\wt{\mathcal{C}_{\alpha}}|=2$. Then $[\wt S, \wt{\mathcal{C}_{\alpha}}]=\{1\}$ and, again applying the Frattini argument, we have that $\mathcal{C}_{\alpha}S=C_{G_{\alpha}}(V)S(G_{\alpha,\beta}\cap \mathcal{C}_{\alpha})$. Choose $y\in G_{\alpha,\beta}\cap \mathcal{C}_{\alpha}$ with $[y, V]\ne\{1\}$ so that $\wt{\langle y\rangle}=\wt{\mathcal{C}_{\alpha}}$. Indeed, $[y, S]\le C_S(V)$ and it follows that $\wt{H_{\beta}}$ is not of characteristic $3$. Hence, we have that $\wt{H_{\beta}}$ is not of characteristic $3$ if and only if $\wt {G_{\alpha}}$ is not of characteristic $3$. Moreover, there is $x\in G_{\alpha,\beta}$ such that $\wt{\langle x\rangle}=\wt{\mathcal{C}_{\alpha}}=\wt{\mathcal{C}_{\beta}}$.

If $\wt{G_\alpha}$ is not of characteristic $p$, then set $\widehat X:=\wt X/\wt {\langle x\rangle}$ so that both $\widehat H_{\beta}$ and $\widehat{G_{\alpha}}$ are of characteristic $3$. Moreover, $\widehat L_{\alpha}/ \widehat R_{\alpha}\cong \PSL_2(3)$ and $\widehat{O^{p'}(H_{\beta})}/\widehat{(R_{\beta}\cap O^{p'}(H_{\beta}))}\cong \SL_2(3)$. As in the construction of $\wt Y$ above, it is easily checked that $\hat{G_{\alpha,\beta}}=N_{\hat{G_{\alpha}}}(\hat{S})=N_{\hat{H_{\beta}}}(\hat{S})=\hat{G_{\alpha}}\cap \hat{H_{\beta}}$ and no non-trivial subgroup of $\hat{G_{\alpha,\beta}}$ is normal in $\hat{X}$. Thus, by minimality, the triple $(\hat{G_{\alpha}}, \hat{H_{\beta}}, \hat{G_{\alpha,\beta}})$ is a weak BN-pair. Indeed, $\hat{L_{\alpha}}=O^{3'}(\hat{G_{\alpha}})$ and $\hat{L_{\alpha}}\cong \PSL_2(3)$ or $\SL_2(3)$. If $\hat{L_{\alpha}}\cong \SL_2(3)$, then a Sylow $2$-subgroup of $\wt{L_{\alpha}}$ is of order $16$, and arguing as in \cref{NiceSL2}, we force a contradiction. Thus, $\hat{L_{\alpha}}\cong \PSL_2(3)$ and $\hat{X}$ is locally isomorphic to $\PSp_4(3)$. Then, using that $C_{\beta}$ is the largest normal subgroup of $H_{\beta}$ which is contained in $Q_{\alpha}\cap Q_{\beta}$ and $C_{Q_{\alpha}}(V_{\alpha}^{(2)})$ is the largest subgroup of $C_{\beta}$ normal in $G_{\alpha}$, it follows that $C_S(V)=C_{Q_{\alpha}}(V_{\alpha}^{(2)})\normaleq X$. Since $G_{\beta}=\langle H_{\beta}, R_{\beta}\rangle=\langle H_{\beta}, C_{L_{\beta}}(U/W)\rangle$, it follows that neither $R_{\beta}$ nor $C_{L_{\beta}}(U/W)$ normalizes $V_{\alpha}^{(2)}$ and conclusion (c) holds. Thus, we may as well assume that whenever $\lambda=\beta$, $\wt X$ satisfies \cref{MainHyp} and acts faithfully on $V$.
 
Suppose now that $\lambda=\alpha$ so that $H_\alpha/C_{H_\alpha}(Z_\alpha)$ is isomorphic to a subgroup of $\GL_2(p)$. If $\wt H_{\alpha}$ is not of characteristic $p$ then, by the A$\times$B-lemma, $\mathcal{C}_\alpha\not\le C_{H_\alpha}(Z_\alpha)$ and so $\mathcal{C}_\alpha C_{H_\alpha}(Z_\alpha)/C_{H_\alpha}(Z_\alpha)$ is isomorphic to a normal $p'$-subgroup of $\GL_2(p)$. If $p=2$ or $|\mathcal{C}_\alpha C_{H_\alpha}(Z_\alpha)/C_{H_\alpha}(Z_\alpha)|>2$ and $p=3$, using the Frattini argument it follows that $H_\alpha=C_{P_\alpha}(Z_{\alpha})\mathcal{C}_\alpha G_{\alpha,\beta}=(R_{\alpha}\cap C_{L_{\alpha}}(U/W))\mathcal{C}_\alpha G_{\alpha,\beta}$ which normalizes $Q_{\alpha}\cap Q_{\beta}$, a contradiction. Thus, $p=3$ and $|\wt {\mathcal{C}_\alpha}|=2$ so that $[\mathcal{C}_\alpha, S]\le C_X(V)$. Additionally,  by coprime action, $V=[V, \mathcal{C}_\alpha]\times C_{V}(\mathcal{C}_\alpha)$ and as $\wt {\mathcal{C}_\alpha}$ does not centralize $Z_{\beta}$ we deduce that $V=[V, \mathcal{C}_\alpha]$ is inverted by $\wt {\mathcal{C}_{\alpha}}$. Then, by the Frattini argument, $S\mathcal{C}_\alpha=SC_{H_{\alpha}}(V)(G_{\alpha,\beta}\cap \mathcal{C}_\alpha)$ and we may choose $x\in G_{\alpha,\beta}\cap \mathcal{C}_\alpha$ with $[x, V]\ne\{1\}$ so that $\wt{\langle x\rangle}=\wt {\mathcal{C}_{\alpha}}$ and $[x, S]\le C_S(V)$. It follows that $\wt G_{\beta}$ is not of characteristic $3$.

If $\wt G_{\beta}$ is not of characteristic $p$ then, by the A$\times$B-lemma, $\mathcal{C}_\beta$ does not centralize $Z_{\beta}$. In particular, $p=3$ and $|\wt{\mathcal{C}_\beta}|=2$. Then applying coprime action, $\wt{\mathcal{C}_\beta}$ inverts $V$ and we see that there is $x\in G_{\alpha,\beta}$ with $\wt{\langle x\rangle}=\wt{\mathcal{C}_\alpha}=\wt{\mathcal{C}_{\beta}}$. Hence, $\wt H_{\alpha}$ is of characteristic $p$ if and only if $\wt G_{\beta}$ is of characteristic $p$.

If $\wt{G_{\beta}}$ is not of characteristic $p$, then set $\widehat{X}:=\wt X/\wt {\langle x\rangle}$ so that $\widehat{H_{\alpha}}$ and $\widehat{G_{\beta}}$ are both of characteristic $3$, $\widehat{O^{p'}(H_{\alpha})}/\widehat{O^{p'}(H_{\alpha})\cap R_{\alpha}}\cong \PSL_2(3)$ and $\widehat L_{\beta}/\widehat R_{\beta}\cong \SL_2(3)$. As in the above, it quickly follows that $\hat{X}$ satisfies \cref{MainHyp} and by minimality, the triple $(\hat{H_{\alpha}}, \hat{G_{\beta}}, \hat{G_{\alpha,\beta}})$ is a weak BN-pair of rank $2$. Indeed, $\widehat{O^{p'}(H_{\alpha})}\cong \PSL_2(3)$ and $\widehat{X}$ is locally isomorphic to $\PSp_4(3)$, and the outstanding case in (d) is satisfied. We may as well assume that whenever $\lambda=\alpha$, $\wt X$ has satisfies \cref{MainHyp} and acts faithfully on $V$.

Finally, for either $\lambda=\alpha$ or $\lambda=\beta$, $\wt X$ satisfies \cref{MainHyp} and acts faithfully on $V$. Moreover, since $J(S)\not\le C_S(V)$ an elementary argument (as in the proof of \cref{BasicFF}) implies that $V$ is an FF-module for $\wt X$. By minimality, $\wt X$ satisfies \cref{MainHyp} and since both $\wt H_\lambda$ and $\wt G_\delta$ are $p$-solvable, $\wt{X}$ is determined by \cref{JMod}. Counting the number of non-central chief factors in amalgams locally isomorphic to $\SL_3(p), \Sp_4(p)$ or $\mathrm{G}_2(2)$ (as can be gleaned from \cite{Greenbook}), outcome (d) is satisfied. 
\end{proof}

The hypothesis of \cref{SubAmal} exhibit a common situation we encounter in the work ahead: where $Z_{\beta}=Z(Q_{\beta})$ is of order $p$, and both $Z(Q_{\alpha})=Z_{\alpha}$ and $V_{\beta}/C_{V_{\beta}}(O^p(L_{\beta}))$ are natural $\SL_2(p)$-modules for $L_{\alpha}/R_{\alpha}\cong\SL_2(p)\cong L_{\beta}/R_{\beta}$. Upon first glance, it seems that we have very little control over the action of $R_{\lambda}$ for $\lambda\in\{\alpha,\beta\}$. Throughout this section we strive to force situations in which the full hypotheses of \cref{SubAmal} are satisfied. In applying \cref{SubAmal}, the outcomes there will often force contradictions and the conclusion we draw is that $O^p(R_\lambda)$ centralizes $U/W$. In this situation, \cref{SimExt} becomes a powerful tool in dispelling a large number of cases. Motivated by this, we make the following hypothesis and record a number of lemmas controlling the actions of $R_{\lambda}$ for $\lambda\in\Gamma$.

\begin{hypothesis}\label{CommonHyp}
The following conditions hold:
\begin{enumerate}[label=(\roman*)]
\item $L_{\alpha}/R_{\alpha}\cong\SL_2(q)\cong L_{\beta}/R_{\beta}$, and $Z_{\alpha}$ and $V_{\beta}/C_{V_{\beta}}(O^p(L_{\beta}))$ are natural $\SL_2(q)$-modules; and
\item if $q=p$ then $Z(Q_{\alpha})=Z_{\alpha}$ is of order $p^2$ and $Z(Q_{\beta})=Z_{\beta}=\Omega(Z(S))$ is of order $p$.
\end{enumerate}
\end{hypothesis}

As a first consequence of this hypothesis, we make the following observation, gaining some control over the order of $V_{\beta}$ and the number of non-central chief factors in $V_{\alpha}^{(2)}$.

\begin{lemma}\label{VBGood}
Suppose that $b>2$ and \cref{CommonHyp} is satisfied. Then, for $\lambda\in\alpha^G$ and $\delta\in\Delta(\lambda)$, exactly one of the following occurs:
\begin{enumerate}
\item $|V_{\delta}|=q^3$ and $[V_{\lambda}^{(2)}, Q_{\lambda}]=Z_{\lambda}$; or
\item $C_{V_{\delta}}(O^p(L_{\delta}))\ne Z_{\delta}$ and for $V^\lambda:=\langle C_{V_{\delta}}(O^p(L_{\delta}))^{G_{\lambda}} \rangle$, $[V^\lambda, Q_{\lambda}]=Z_{\lambda}$, both $V_{\lambda}^{(2)}/V^\lambda$ and $V^\lambda/Z_{\lambda}$ contain a non-central chief factor for $L_{\lambda}$, and $V^\lambda V_{\delta}\not\normaleq L_{\delta}$.
\end{enumerate}
Moreover, whenever $Z_{\delta+1}C_{V_{\delta}}(O^p(L_{\delta}))=Z_{\delta-1}C_{V_{\delta}}(O^p(L_{\delta}))$ for $\delta\in \Gamma^G$, we have that $Z_{\delta+1}=Z_{\delta-1}$.
\end{lemma}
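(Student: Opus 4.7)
Working under Hypothesis~\ref{CommonHyp} with $b > 2$, and fixing $\lambda \in \alpha^G$ and $\delta \in \Delta(\lambda)$, I would begin by recording some structural observations. Since the hypothesis places $\delta$ in the ``$\beta$-role,'' $R_\delta = C_{L_\delta}(V_\delta/W_\delta)$ and $Z_\delta = \Omega(Z(L_\delta)) = \Omega(Z(S))$ is centralized by $O^p(L_\delta)$, so $Z_\delta \le W_\delta := C_{V_\delta}(O^p(L_\delta))$. In particular $|V_\delta| = q^2 \cdot |W_\delta|$, so $|V_\delta| = q^3$ exactly when $W_\delta = Z_\delta$ (and $|Z_\delta|=q$, which in the case $q=p$ is forced by Hypothesis~\ref{CommonHyp}(ii)). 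By \cref{VAbelian}, $V_\delta$ is abelian, and $L_\lambda$-invariance of $Z_\lambda$ together with $\Omega(Z(S)) \le Z_\lambda$ forces $Z_{\delta'} \le Z_\lambda$ for every $\delta' \in \Delta(\lambda)$. This makes the dichotomy $W_\delta = Z_\delta$ vs.\ $W_\delta \ne Z_\delta$ mutually exclusive and exhaustive, corresponding to cases (i) and (ii).

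For case (i), for each $\delta' \in \Delta(\lambda)$ the natural-module structure on $V_{\delta'}/Z_{\delta'}$ combined with \cref{MaxEssenAmal} (yielding $Q_\lambda \not\le Q_{\delta'}$, so $Q_\lambda$ acts as a nontrivial Sylow on this quotient) and \cref{NatMod}(iii) gives $[V_{\delta'}, Q_\lambda] Z_{\delta'}/Z_{\delta'} = Z_\lambda Z_{\delta'}/Z_{\delta'}$. Using $V_{\delta'}$ abelian and $Z_{\delta'} \le Z_\lambda$, I would conclude $[V_{\delta'}, Q_\lambda] \le Z_\lambda$ and hence $[V_\lambda^{(2)}, Q_\lambda] \le Z_\lambda$. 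Equality follows because $[V_\lambda^{(2)}, Q_\lambda]$ is a nontrivial ($[Z_\lambda, Q_\lambda] = C_{Z_\lambda}(S) \ne 1$ by \cref{NatMod}) $L_\lambda$-invariant subgroup of the irreducible $L_\lambda$-module $Z_\lambda$.

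For case (ii), conjugating $\Omega(Z(S)) \le W_\delta$ around $L_\lambda$ yields $Z_\lambda \le V^\lambda$. To prove $[V^\lambda, Q_\lambda] = Z_\lambda$, I would combine the containment $Z_\lambda \subseteq [V^\lambda, Q_\lambda]$ (from $L_\lambda$-irreducibility of $Z_\lambda$ and $[Z_\lambda,Q_\lambda]\ne 1$) with the line-counting argument of case (i) applied to each $W_{\delta'} \le V_{\delta'}$, using that $W_{\delta'}$ projects trivially to $V_{\delta'}/W_{\delta'}$. The non-central chief factor in $V^\lambda/Z_\lambda$ exists because $V^\lambda = Z_\lambda$ would force $W_\delta \cap V_\delta \le Z_\lambda$, collapsing $W_\delta$ to $Z_\delta$ and contradicting case (ii); similarly, $V_\lambda^{(2)} = V^\lambda$ would force every natural-module quotient $V_{\delta'}/W_{\delta'}$ into $V^\lambda/W_{\delta'}$, contradicting the $L_{\delta'}$-module structure. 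That $V^\lambda V_\delta \not\normaleq L_\delta$ would be extracted from \cref{BasicAmal}(iii): otherwise the element $V^\lambda V_\delta$ would be normalized jointly by $L_\delta$ and by $G_{\alpha,\beta}$ acting via $G_\lambda$, producing a non-trivial subgroup of $G_{\alpha,\beta}$ normal in $G$.

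For the moreover clause, I would examine the images $Z_{\delta\pm 1} W_\delta/W_\delta$ in the natural $\SL_2(q)$-module $V_\delta/W_\delta$. Each is $G_{\delta, \delta \pm 1}$-invariant, hence $S^{g_\pm}$-invariant, so by irreducibility of the natural module it is either trivial or the unique $S^{g_\pm}$-fixed line of order $q$. If both lines are nontrivial, their equality forces the Borel stabilizers to coincide, hence $\delta + 1 = \delta - 1$ as vertices of $\Gamma$, whence $Z_{\delta+1} = Z_{\delta-1}$; if both are trivial, $Z_{\delta\pm 1} \le W_\delta$, and a combination of \cref{UWNormal} with the chief-factor analysis above produces an element of $R_\delta$ conjugating $Z_{\delta+1}$ to $Z_{\delta-1}$. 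The main obstacle I anticipate is the inclusion $[V^\lambda, Q_\lambda] \le Z_\lambda$ in case (ii): unlike case (i), the generating subgroups $W_{\delta'}$ project trivially to the natural quotients, so the commutator with $Q_\lambda$ must be controlled inside each $W_{\delta'}$ directly, requiring a careful tracking of the intersections $V^\lambda \cap V_{\delta'}$ and the $Q_\lambda$-action thereon.
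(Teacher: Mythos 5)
Your dichotomy between cases (i) and (ii) is set up correctly, and your closure argument for case (i) is serviceable (the paper's version is a one-liner via $[Q_\lambda, V_\lambda^{(2)}]=[Q_\lambda,V_\delta]^{G_\lambda}$, which is a nontrivial $G_\lambda$-invariant subgroup of the irreducible $Z_\lambda$). However, case (ii) and the moreover clause contain genuine gaps.

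For case (ii), you rightly flag $[V^\lambda,Q_\lambda]=Z_\lambda$ as the obstacle, and your proposed ``line-counting inside each $W_{\delta'}$'' does not resolve it. The paper's key input is first showing $[Q_\lambda, C_{V_\delta}(O^p(L_\delta))]\le Z_\delta$: when $q=p$ this uses the coprime decomposition $V_\delta/Z_\delta=[V_\delta/Z_\delta,O^p(L_\delta)]\times C_{V_\delta/Z_\delta}(O^p(L_\delta))$ together with $V_\delta=[V_\delta,O^p(L_\delta)]Z_\lambda$, and when $q>p$ it falls out of $L_\delta=O^p(L_\delta)Q_\delta$. A separate argument (via \cref{push} and $G_{\delta,\lambda}$-irreducibility of $Q_\delta/(Q_\delta\cap Q_\lambda)$) shows this commutator is nontrivial, so its $G_\lambda$-closure equals $Z_\lambda$. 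Your argument for a non-central chief factor in $V^\lambda/Z_\lambda$ only rules out $V^\lambda=Z_\lambda$; you also need to rule out $Z_\lambda<V^\lambda$ with $O^p(L_\lambda)$ centralizing $V^\lambda/Z_\lambda$, which forces $V^\lambda=Z_\lambda C_{V_\delta}(O^p(L_\delta))$ and then contradicts $[Q_\lambda, C_{V_\delta}(O^p(L_\delta))]\ne\{1\}$. The same objection applies to your treatment of $V_\lambda^{(2)}/V^\lambda$, where the paper's argument is a careful chain culminating in $|V_\delta|=q^3$. Finally, your proof that $V^\lambda V_\delta\not\normaleq L_\delta$ via \cref{BasicAmal}(iii) does not work: $V^\lambda V_\delta$ is not normalized by $G_\lambda$ (only $V^\lambda$ is), so there is no joint normality to invoke. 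The paper instead computes $W:=[Q_\delta, V^\lambda V_\delta]\le Z_\lambda Z_\mu$, shows $W=Z_\delta$, and derives a contradiction with the non-central chief factor already established in $V^\lambda/Z_\lambda$.

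For the moreover clause, your dichotomy collapses. Both $Z_{\delta+1}$ and $Z_{\delta-1}$ project nontrivially to $V_\delta/C_{V_\delta}(O^p(L_\delta))$ — otherwise $V_\delta=\langle Z_{\delta+1}^{L_\delta}\rangle$ would lie inside $C_{V_\delta}(O^p(L_\delta))$ — so your ``both trivial'' branch is vacuous. In the nontrivial branch, your claim that coinciding Borel stabilizers forces $\delta+1=\delta-1$ as vertices is incorrect: when $R_\delta>Q_\delta$ the map $\mu\mapsto Q_\mu R_\delta/R_\delta$ from $\Delta(\delta)$ to Borels of $L_\delta/R_\delta\cong\SL_2(q)$ is typically not injective, so two distinct vertices can give the same Borel (the point of \cref{SimExt}, and of this clause, is precisely to handle $\delta+1\ne\delta-1$). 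The idea you place in your vacuous branch — conjugate by an element of $R_\delta$ — is actually the right one for the real branch: from $Q_{\delta+1}R_\delta=Q_{\delta-1}R_\delta$ one picks $r\in R_\delta$ of order coprime to $p$ with $Q_{\delta+1}^rQ_\delta=Q_{\delta-1}Q_\delta$, and since $O^p(R_\delta)$ centralizes $Q_\delta/C_\delta$, one gets $Q_{\delta+1}\cap Q_\delta=Q_{\delta-1}\cap Q_\delta$ and hence $Z_{\delta+1}=Z(Q_{\delta+1}\cap Q_\delta)\cap V_\delta=Z_{\delta-1}$.
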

\begin{proof}
Suppose first that $|V_{\delta}|=q^3$. Then $[Q_{\lambda}, V_{\lambda}^{(2)}]=[Q_{\lambda}, V_{\delta}]^{G_{\lambda}}=Z_{\lambda}$ and (i) holds. Hence, we assume for the remainder of the proof that $C_{V_{\delta}}(O^p(L_{\delta}))\ne Z_{\delta}$. 

Suppose that $q=p\in\{2,3\}$. Then by coprime action, $V_{\delta}/Z_{\delta}=[V_{\delta}/Z_{\delta}, O^p(L_{\delta})]\times C_{V_{\delta}/Z_{\delta}}(O^p(L_{\delta}))$ and $C_{V_{\delta}/Z_{\delta}}(O^p(L_{\delta}))=C_{V_{\delta}}(O^p(L_{\delta}))/Z_{\delta}$. We infer from the definition of $V_\delta$ that $V_\delta=[V_\delta,  O^p(L_{\delta})]Z_\lambda$. In particular, $[Q_{\lambda}, C_{V_{\delta}}(O^p(L_{\delta}))]\le [V_\delta,  O^p(L_{\delta})]\cap C_{V_{\delta}}(O^p(L_{\delta}))=Z_\delta$. In all other cases of $q$ and $p$, we have that $L_\delta=O^p(L_{\delta})Q_\delta$, so that $[Q_{\lambda}, C_{V_{\delta}}(O^p(L_{\delta}))]\le [L_\delta, C_{V_{\delta}}(O^p(L_{\delta}))]\le Z_\delta$.

Note that $[Q_{\lambda}, C_{V_{\delta}}(O^p(L_{\delta}))]\ne \{1\}$, else as $Q_{\delta}/(Q_{\delta}\cap Q_{\lambda})$ is $G_{\delta, \lambda}$-irreducible, we would either have that $Q_{\lambda}\cap Q_{\delta}=C_{Q_{\delta}}(C_{V_{\delta}}(O^p(L_{\delta})))\normaleq L_{\delta}$, a contradiction by \cref{push}; or that $C_{V_{\delta}}(O^p(L_{\delta}))\le Z(Q_{\delta})$. In the latter case, since $Z(Q_{\delta})=Z_\delta$ when $q=p$ and since $L_\delta=O^p(L_{\delta})Q_\delta$ otherwise, we deduce that $C_{V_{\delta}}(O^p(L_{\delta}))=Z_\delta$, another contradiction.

Let $V^\lambda:=\langle C_{V_{\delta}}(O^p(L_{\delta}))^{G_{\lambda}}\rangle$. Since $\{1\}\ne [Q_\lambda, C_{V_{\delta}}(O^p(L_{\delta})]\le Z_{\lambda}$, we deduce that $[Q_{\lambda}, V^\lambda]=Z_{\lambda}<V^\lambda\le V_{\lambda}^{(2)}$. Suppose that $V^\lambda/Z_{\lambda}$ does not contain a non-central chief factor for $L_{\lambda}$. Then $L_{\lambda}$ normalizes $Z_{\lambda}C_{V_{\delta}}(O^p(L_{\delta}))$ so that $V^\lambda=Z_{\lambda}C_{V_{\delta}}(O^p(L_{\delta}))$ and $[Q_{\lambda}, Z_{\lambda}C_{V_{\delta}}(O^p(L_{\delta}))]\normaleq L_{\lambda}$. But $[Q_{\lambda}, Z_{\lambda}C_{V_{\delta}}(O^p(L_{\delta}))]\le Z_{\delta}$ and we deduce that $Q_{\lambda}$ centralizes $C_{V_{\delta}}(O^p(L_{\delta}))$, a contradiction. Thus, $V^\lambda/Z_{\lambda}$ contains a non-central chief factor for $L_{\lambda}$. 

Assume that $V_{\lambda}^{(2)}/V^\lambda$ does not contain a non-central chief factor for $L_\lambda$. Similarly to above, we deduce in this case that $V_{\lambda}^{(2)}=V^\lambda V_\delta$ so that $Z_{\lambda}\le [V_{\lambda}^{(2)}, Q_\lambda]\le [V^\lambda, Q_\lambda][V_\delta, Q_\lambda]\le Z_{\lambda}C_{V_{\delta}}(O^p(L_{\delta}))$. Indeed, $[V_{\lambda}^{(2)}, Q_\lambda]=Z_{\lambda}([V_{\lambda}^{(2)}, Q_\lambda]\cap C_{V_{\delta}}(O^p(L_{\delta})))$. But now, $[Q_\lambda, [V_{\lambda}^{(2)}, Q_\lambda]\cap C_{V_{\delta}}(O^p(L_{\delta}))]$ is a normal subgroup of $L_\delta$ contained in $[Q_\lambda, C_{V_{\delta}}(O^p(L_{\delta}))]\le Z_\delta$. Hence, $[V_{\lambda}^{(2)}, Q_\lambda]\cap C_{V_{\delta}}(O^p(L_{\delta}))$ is centralized by $Q_\lambda$. But $[V_{\lambda}^{(2)}, Q_\lambda]\cap C_{V_{\delta}}(O^p(L_{\delta}))\normaleq L_\delta=SO^p(L_\delta)$ so that $[L_\delta, [V_{\lambda}^{(2)}, Q_\lambda]\cap C_{V_{\delta}}(O^p(L_{\delta}))]=[\langle {Q_\lambda}^{L_{\delta}}\rangle, [V_{\lambda}^{(2)}, Q_\lambda]\cap C_{V_{\delta}}(O^p(L_{\delta}))]=\{1\}$ and $[V_{\lambda}^{(2)}, Q_\lambda]\cap C_{V_{\delta}}(O^p(L_{\delta}))=Z_\delta$. Hence, $[V_{\lambda}^{(2)}, Q_\lambda]=Z_\lambda$ so that $[V_\delta, Q_\lambda]\le Z_\lambda$. Hence $Z_\lambda Z_\mu\normaleq L_\delta=\langle Q_{\lambda}, Q_\mu, R_\delta\rangle$ for any $\mu\in\Delta(\delta)$ with $Q_\lambda R_\delta\ne Q_\mu R_\delta$. By the definition of $V_\delta$, $V_\delta=Z_\lambda Z_\mu$ is of order $q^3$ and $C_{V_{\delta}}(O^p(L_{\delta}))=Z_\delta$, a contradiction. Thus, $V_{\lambda}^{(2)}/V^\lambda$ contains a non-central chief factor for $L_\lambda$.

Suppose that $Z_{\lambda}C_{V_{\delta}}(O^p(L_{\delta}))=Z_{\mu}C_{V_{\delta}}(O^p(L_{\delta}))$ for some $\mu\in\Delta(\delta)$. In particular, $Z_{\lambda}C_{V_{\delta}}(O^p(L_{\delta}))$ is normalized by $Q_{\lambda}R_{\delta}$ and $Q_{\mu}R_{\delta}$. If $Q_{\lambda}R_{\delta}\ne Q_{\mu}R_{\delta}$ then $Z_{\lambda}C_{V_{\delta}}(O^p(L_{\delta}))\normaleq L_{\delta}=\langle Q_{\lambda}, Q_{\mu}, R_{\delta}\rangle$, and from the definition of $V_{\delta}$, $V_{\delta}=Z_{\lambda}C_{V_{\delta}}(O^p(L_{\delta}))$ is centralized, modulo $Z_{\delta}$, by $Q_{\lambda}$, a contradiction by \cref{BasicVB}. Thus, $Q_{\lambda}R_{\delta}=Q_{\mu}R_{\delta}$. Then, there is $r\in R_{\delta}$ such that $Q_{\lambda}^rQ_{\delta}=(Q_{\lambda}Q_{\delta})^r=Q_{\mu}Q_{\delta}=Q_{\mu}Q_{\delta}$ and we may as well pick $r$ of order coprime to $p$. Moreover, since $O^p(R_{\delta})$ centralizes $Q_{\delta}/C_{\delta}$, it follows that $Q_{\lambda}\in\syl_p(Q_{\lambda}O^p(R_{\delta}))$. But then $Q_{\mu}\in\syl_p(Q_{\lambda}O^p(R_{\delta}))$. Since $r$ centralizes $Q_{\delta}/C_{\delta}$ we conclude that $Q_{\lambda}\cap Q_{\delta}=Q_{\mu}\cap Q_{\delta}$. Therefore, $Z_\lambda=Z(Q_{\lambda}\cap Q_{\delta})\cap V_\delta=Z(Q_{\mu}\cap Q_{\delta})\cap V_\delta=Z_\mu$. 

Suppose for a contradiction that $V^\lambda V_{\delta}\normaleq L_{\delta}$ and choose $\mu\in\Delta(\mu)$ such that $Z_\lambda\ne Z_\mu$ and $Q_{\delta}=(Q_{\lambda}\cap Q_{\delta})(Q_{\mu}\cap Q_{\delta})$. In particular, $Z_\lambda C_{V_{\delta}}(O^p(L_{\delta}))\ne Z_\mu C_{V_{\delta}}(O^p(L_{\delta}))$. Moreover, as $V^\lambda V_{\delta}\normaleq L_{\delta}$, $V^\lambda V_\delta=V^{\mu}V^\delta$. Now, \[Z_{\delta}\le [Q_{\delta}, V^{\lambda} V_{\delta}]=[Q_{\lambda}\cap Q_{\delta}, V^{\lambda}V^\delta][Q_{\mu}\cap Q_{\delta}, V^{\mu}V^\delta]\le Z_{\lambda}Z_{\mu}\] and $[Q_{\delta}, V^{\lambda} V_{\delta}]\normaleq L_{\delta}$. Set $W:=[Q_{\delta}, V^{\lambda} V_{\delta}]$. If $W\not\le C_{V_{\delta}}(O^p(L_{\delta}))$, then $V_\delta=WC_{V_{\delta}}(O^p(L_{\delta}))$ and we deduce that $|W|\geq q^3$. Since $W\le Z_\lambda Z_\mu$ we conclude that $W=Z_\lambda Z_\mu\normaleq L_\delta$ and has order $q^3$. By definition, $V_\delta=W$, a contradiction since $C_{V_{\delta}}(O^p(L_{\delta}))\ne Z_\delta$. Hence, $W\le C_{V_{\delta}}(O^p(L_{\delta}))$. Then $W\le (Z_\lambda Z_\mu)\cap C_{V_{\delta}}(O^p(L_{\delta}))$ and since $Z_\lambda C_{V_{\delta}}(O^p(L_{\delta}))\ne Z_\mu C_{V_{\delta}}(O^p(L_{\delta}))$ we deduce that $Z_\delta\le W\le (Z_\lambda Z_\mu)\cap C_{V_{\delta}}(O^p(L_{\delta}))=Z_\delta$ so that $W=Z_\delta$. But then $Q_\delta$ centralizes $V^\lambda/Z_\lambda$, a contradiction since $V^\lambda/Z_\lambda$ contains a non-central chief factor for $L_\lambda$.
\end{proof}

\begin{lemma}\label{GoodAction1} 
Suppose that $b>3$ and \cref{CommonHyp} is satisfied. If $Z_{\alpha}$, $V^\alpha/Z_{\alpha}$ and $V_{\alpha}^{(2)}/V^{\alpha}$ are FF-modules or trivial modules for $\bar{L_{\alpha}}$, then $R_{\alpha}=C_{L_{\alpha}}(V_{\alpha}^{(2)})Q_{\alpha}$. 
\end{lemma}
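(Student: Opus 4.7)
First, I would establish that $Q_\alpha\in\syl_p(R_\alpha)$. Since $Z_\alpha$ is a non-central chief factor for $L_\alpha$ contained in $Q_\alpha$ (a natural $\SL_2(q)$-module for $L_\alpha/R_\alpha$ by \cref{CommonHyp}), this is immediate from \cref{p-closure2}(iv). Consequently $\bar R_\alpha := R_\alpha/Q_\alpha$ is a $p'$-group, and Schur--Zassenhaus provides a Hall $p'$-complement $K_\alpha$ of $R_\alpha$ with $R_\alpha = K_\alpha Q_\alpha$. Since $C_{L_\alpha}(V_\alpha^{(2)})$ is normal in $L_\alpha$ and any two Hall $p'$-complements of $R_\alpha$ are $R_\alpha$-conjugate, the desired equality $R_\alpha = C_{L_\alpha}(V_\alpha^{(2)})Q_\alpha$ is equivalent to the single statement $K_\alpha\le C_{L_\alpha}(V_\alpha^{(2)})$. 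So I aim to show that $K_\alpha$ centralizes $V_\alpha^{(2)}$.

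Filtering $V_\alpha^{(2)}$ by the $L_\alpha$-invariant chain $1\le Z_\alpha\le V^\alpha\le V_\alpha^{(2)}$ and refining to a chief series for $L_\alpha$, iterated coprime action of the $p'$-group $K_\alpha$ on the $p$-group $V_\alpha^{(2)}$ reduces the task to showing that $K_\alpha$ centralizes each of the three factors $Z_\alpha$, $V^\alpha/Z_\alpha$ and $V_\alpha^{(2)}/V^\alpha$. Centralization of $Z_\alpha$ is immediate from $K_\alpha\le R_\alpha=C_{L_\alpha}(Z_\alpha)$. For each of $V^\alpha/Z_\alpha$ and $V_\alpha^{(2)}/V^\alpha$, if the factor is a trivial $\bar L_\alpha$-module the claim is automatic; otherwise it is a non-trivial FF-module for $\bar L_\alpha$ and \cref{SEFFFus} yields $\bar L_\alpha/C_{\bar L_\alpha}(U/W)\cong \SL_2(p^n)$ with $C_{\bar L_\alpha}(U/W)$ a $p'$-group, so the task for that factor reduces to $\bar R_\alpha\le C_{\bar L_\alpha}(U/W)$.

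To show $\bar R_\alpha\le C_{\bar L_\alpha}(U/W)$ I would argue by contradiction using \cref{SubAmal}. Suppose $\bar R_\alpha\not\le C_{\bar L_\alpha}(U/W)$ for $U/W$ equal to $V^\alpha/Z_\alpha$ or $V_\alpha^{(2)}/V^\alpha$. Then $C_{L_\alpha}(U/W)\ne R_\alpha$, and using that both $C_{L_\alpha}(U/W)$ and $R_\alpha$ are normal in $L_\alpha$ together with a Frattini-type argument to place a Hall $p'$-complement of their intersection inside $G_{\alpha,\beta}$ (which normalizes $Q_\alpha\cap Q_\beta$), one verifies that $C_{L_\alpha}(U/W)\cap R_\alpha$ normalizes $Q_\alpha\cap Q_\beta$. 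Hence the full hypothesis of \cref{SubAmal} is in force with $\lambda=\alpha$ and $\delta=\beta$. Each of its four conclusions must then be ruled out using $b>3$ together with \cref{CommonHyp} and \cref{VBGood}: outcome (a) forces $(H_\alpha,G_\beta,G_{\alpha,\beta})$ to be parabolic isomorphic to $\mathrm{F}_3$ with $V_\alpha^{(2)}/Z_\alpha$ not acted on quadratically by $S$, contradicting that $U/W$ is an FF-module for $\bar L_\alpha$; outcome (b) imposes non-normalization conditions incompatible with the $G_{\alpha,\beta}$-invariance of $C_{L_\alpha}(U/W)$; outcome (c) requires $\lambda=\beta$ and so does not arise; and outcome (d) produces a subamalgam locally isomorphic to $\SL_3(p)$, $\Sp_4(p)$, $\mathrm{G}_2(2)$ or $\PSp_4(3)$, each of critical distance at most $3$, contradicting $b>3$.

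The main obstacle I expect is the case analysis in outcome (d) of \cref{SubAmal}: ruling out each of the four subamalgam types will require combining the bound $b>3$ with the fine structural information supplied by \cref{VBGood}, in particular that $V^\alpha V_\beta\not\normaleq L_\beta$ and that $V^\alpha/Z_\alpha$ and $V_\alpha^{(2)}/V^\alpha$ each contain a non-central chief factor, which together obstruct the chief factor counts arising in the weak BN-pairs of rank two of that list. A subsidiary technical point is the verification that $C_{L_\alpha}(U/W)\cap R_\alpha$ normalizes $Q_\alpha\cap Q_\beta$; this should follow from \cref{p-closure2}(iv) together with the Frattini argument $L_\alpha=R_\alpha N_{L_\alpha}(K_\alpha)=Q_\alpha N_{L_\alpha}(K_\alpha)$ and the fact that $G_{\alpha,\beta}=N_{L_\alpha}(S)$ contains a Hall $p'$-complement of $R_\alpha$ via a suitable choice of $K_\alpha$.
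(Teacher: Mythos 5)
Your opening reduction is sound: since $Z_\alpha$ is a natural $\SL_2(q)$-module for $L_\alpha/R_\alpha$, indeed $Q_\alpha\in\syl_p(R_\alpha)$, and by coprime action the task reduces to showing $R_\alpha$ acts trivially on each of $Z_\alpha$, $V^\alpha/Z_\alpha$ and $V_\alpha^{(2)}/V^\alpha$. But the rest of your argument departs from what actually works. The paper's proof splits on whether $|V_\beta|=q^3$ (equivalently $V^\alpha=Z_\alpha$) or not, and applies \cref{SubAmal} only in the former case. In the $|V_\beta|\ne q^3$ case it instead argues directly: if $C_{L_\alpha}(U/W)\ne R_\alpha$ then (after a Schur-multiplier argument forcing $q\in\{2,3\}$ and an appeal to \cref{Badp3}(ii) to kill an intermediate case) $L_\alpha=R_\alpha C_{L_\alpha}(U/W)S$, whence $C_{L_\alpha}(U/W)$ normalizes $Z_\alpha C_{V_\beta}(O^p(L_\beta))$ and so normalizes $Z_\beta=[Z_\alpha C_{V_\beta}(O^p(L_\beta)),Q_\alpha]$, contradicting \cref{BasicAmal}. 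You do not see this route and instead try to force everything through \cref{SubAmal}.

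This creates two concrete gaps. First, your contradiction to outcome (a) relies on ``$U/W$ is an FF-module, hence quadratic, contradicting $V_\alpha^{(2)}/Z_\alpha$ not quadratic.'' That inference is only valid when $V_\alpha^{(2)}/Z_\alpha$ \emph{is} the single FF-factor (the $|V_\beta|=q^3$ case). When $|V_\beta|\ne q^3$, \cref{VBGood} gives two non-central chief factors inside $V_\alpha^{(2)}/Z_\alpha$; with both $V^\alpha/Z_\alpha$ and $V_\alpha^{(2)}/V^\alpha$ natural modules the overall action of $S$ on $V_\alpha^{(2)}/Z_\alpha$ is merely cubic, so there is no conflict with outcome (a). Second, your dismissal of outcome (d) via ``critical distance of the subamalgam is at most $3$, contradicting $b>3$'' is wrong: the quotient amalgam $\wt X=X/C_X(V)$ produced there need not have critical distance equal to $b$, since $X$ is a proper sub-configuration and the quotient collapses structure. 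The paper's actual disposal of (d) is a delicate chief-factor count forcing $\wt X\cong\mathrm{G}_2(2)$, followed by a conjugation argument with an element $\tau\in C_{L_\alpha}(V_\alpha^{(2)}/Z_\alpha)\setminus C_{L_\alpha}(V_\alpha^{(2)})$ that shows $C_\beta\normaleq G_\alpha$, violating \cref{push}; outcome (b) is similarly ruled out by an explicit conjugation, not by the vague invariance incompatibility you assert.
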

\begin{proof}
Of the configurations described in \hyperlink{MainGrpThm}{Theorem C} which satisfy $b>2$, all satisfy $R_{\alpha}=Q_{\alpha}$ and so we may assume throughout that $G$ is a minimal counterexample to \hyperlink{MainGrpThm}{Theorem C} such that $R_{\alpha}\ne C_{L_{\alpha}}(V_{\alpha}^{(2)})Q_{\alpha}$. 

Suppose first that $|V_{\beta}|\ne q^3$ so that $V^{\alpha}/Z_{\alpha}$ contains a non-central chief factor for $L_{\alpha}$. Since $L_{\alpha}/R_{\alpha}\cong \SL_2(q)$ and $Q_{\alpha}\in\syl_p(R_{\alpha})$, $|S/Q_{\alpha}|=q$ and by \cref{SEFF}, $L_{\alpha}/C_{L_{\alpha}}(V^\alpha/Z_{\alpha})\cong L_{\alpha}/C_{L_{\alpha}}(V_{\alpha}^{(2)}/V^\alpha)\cong \SL_2(q)$. Thus, if $C_{L_{\alpha}}(V^\alpha/Z_{\alpha})\ne R_{\alpha}$, a standard calculation yields that $q\in\{2,3\}$. Moreover, if $p=3$ and $R_{\alpha}C_{L_{\alpha}}(V^\alpha/Z_{\alpha})S< L_{\alpha}$, then $|R_{\alpha}C_{L_{\alpha}}(V^\alpha/Z_{\alpha})/R_{\alpha}|=2$, $|L_{\alpha}/C_{L_{\alpha}}(V^\alpha)Q_{\alpha}|=2^4.3$ and \cref{Badp3} (ii) gives a contradiction. Hence, if $C_{L_{\alpha}}(V^\alpha/Z_{\alpha})\ne R_{\alpha}$ then $L_{\alpha}=R_{\alpha}C_{L_{\alpha}}(V^\alpha/Z_{\alpha})S$. But now, $C_{L_{\alpha}}(V^\alpha/Z_{\alpha})$ normalizes $Z_{\alpha}C_{V_{\beta}}(O^p(L_{\beta}))$ and so normalizes $[Z_{\alpha}C_{V_{\beta}}(O^p(L_{\beta})), Q_{\alpha}]=Z_{\beta}$, a contradiction for then $Z_{\beta}\normaleq L_{\alpha}$. Thus, $C_{L_{\alpha}}(V^\alpha/Z_{\alpha})=R_{\alpha}$. Similarly, considering $C_{L_{\alpha}}(V_{\alpha}^{(2)}/V^\alpha)$, we have that $V_{\beta}V^\alpha\normaleq C_{L_{\alpha}}(V_{\alpha}^{(2)}/V^\alpha)$ and so $Z_{\alpha}C_{V_{\beta}}(O^p(L_{\beta}))=Z_{\alpha}[V_{\beta},Q_{\alpha}]=[V_{\beta}V^\alpha, Q_{\alpha}]\normaleq C_{L_{\alpha}}(V_{\alpha}^{(2)}/V^\alpha)$. Then $[Z_{\alpha}C_{V_{\beta}}(O^p(L_{\beta})), Q_{\alpha}]=Z_{\beta}$ is normalized by $C_{L_{\alpha}}(V_{\alpha}^{(2)}/V^\alpha)$ and, as above, we conclude that $C_{L_{\alpha}}(V_{\alpha}^{(2)}/V^\alpha)=R_{\alpha}$ and the result holds.

Hence, we may assume that $|V_{\beta}|=q^3$. Since \cref{CommonHyp} is satisfied, $V_{\alpha}^{(2)}/Z_{\alpha}$ is an FF-module and $C_{L_{\alpha}}(V_{\alpha}^{(2)}/Z_{\alpha})\cap R_{\alpha}=C_{L_{\alpha}}(V_{\alpha}^{(2)})Q_\alpha$. Then, $O^p(C_{L_{\alpha}}(V_{\alpha}^{(2)})$ centralizes $Q_{\alpha}/C_{Q_{\alpha}}(V_{\alpha}^{(2)})$ and so normalizes $Q_{\alpha}\cap Q_{\beta}>C_{Q_{\alpha}}(V_{\alpha}^{(2)}$. We apply \cref{SubAmal}, taking $\lambda=\alpha$. As $b>3$ and $V_{\alpha}^{(2)}/Z_{\alpha}$ is an FF-module (so admits quadratic action), outcome (a) does not hold. Since $\lambda=\alpha$ outcome (c) does not hold.

Suppose (d) holds. Then, by construction, $\langle V_{\beta}^{H_{\alpha}}\rangle=\langle V_{\beta}^{G_{\alpha}}\rangle=V_{\alpha}^{(2)}$ from which it follows that $V_{\beta}^{(3)}\le V:=\langle Z_{\beta}^X \rangle$ and the images of both $Q_{\beta}/C_{\beta}$ and $C_{\beta}/C_{Q_{\beta}}(V_{\beta}^{(3)})$ in $\wt L_{\beta}$ contain a non-central chief factor for $\wt L_{\beta}$. By \cref{SubAmal}, $\wt X\cong \mathrm{G}_2(2)$. It follows from the structure of $\mathrm{G}_2(2)$ that $|Q_{\alpha}/C_{\beta}|=2^2$, $|Q_{\alpha}/C_{Q_{\alpha}}(V_{\alpha}^{(2)})|=2^4$ and $|\wt{C_{Q_{\alpha}}(V_{\alpha}^{(2)})}|=2$. Then, $C_S(V)=C_{Q_{\beta}}(V_{\beta}^{(3)})\normaleq X$. By \cref{Badp2} (iii), there are four non-equal subgroups of $L_{\alpha}/C_{L_{\alpha}}(V_{\alpha}^{(2)})Q_{\alpha}\cong(3\times 3):2$ isomorphic to $\Sym(3)$, and so there is $H_{\alpha}^*\ne H_{\alpha}$ such that $S\in H_{\alpha}^*$, $O^2(H_{\alpha}^*)$ acts non-trivially on $V_{\alpha}^{(2)}/Z_{\alpha}$ and $Z_{\alpha}$ and $G_{\alpha}=\langle H_{\alpha}, H_{\alpha}^*\rangle$. If $H_{\alpha}^*$ does not normalize $Q_{\alpha}\cap Q_{\beta}$, then setting $X^*$ for the subgroup of $G$ obtained from employing the method in \cref{SubAmal} with $H_{\alpha}^*$ instead of $H_{\alpha}$, it follows from the work above that $X^*$ also satisfies outcome (d) and for $V^*:=\langle Z_{\beta}^{X^*}\rangle$, $C_S(V)=C_S(V^*)=C_{Q_{\beta}}(V_{\beta}^{(3)})\normaleq \langle H_{\alpha}, H_{\alpha}^*\rangle=G_{\alpha}$, a contradiction. Hence, $H_{\alpha}^*$ normalizes $Q_{\alpha}\cap Q_{\beta}$. Choose $\tau$ in $C_{L_{\alpha}}(V_{\alpha}^{(2)}/Z_{\alpha})\setminus  C_{L_{\alpha}}(V_{\alpha}^{(2)})$. Then $\tau$ normalizes $V_{\beta}$ so normalizes $C_{\beta}=C_{Q_{\alpha}}(V_{\beta})$, and $G_{\alpha}=\langle \tau, H_{\alpha}^*\rangle$. If $\tau$ centralizes $Q_{\alpha}/C_{\beta}$, then $\tau$ normalizes $Q_{\alpha}\cap Q_{\beta}$ so that $G_{\alpha}$ normalizes $Q_{\alpha}\cap Q_{\beta}$, a contradiction by \cref{push}. Thus, $\tau$ acts non-trivially on $Q_{\alpha}/C_{\beta}$ and as $|Q_{\alpha}/C_{\beta}|=4$, $C_{\beta}=(Q_{\alpha}\cap Q_{\beta})\cap (Q_{\alpha}\cap Q_{\beta})^\tau$. Now, $[O^2(H_{\alpha}^*), \tau]\le C_{G_{\alpha}}(V_{\alpha}^{(2)})$ and as $O^2(H_{\alpha}^*)$ normalizes $Q_{\alpha}\cap Q_{\beta}$, $O^2(H_{\alpha}^*)$ normalizes $(Q_{\alpha}\cap Q_{\beta})^\tau$. But then $H_{\alpha}^*$ normalizes $C_{\beta}=Q_{\alpha}\cap Q_{\beta}\cap Q_{\beta}^\tau$ and so $G_{\alpha}=\langle \tau, H_{\alpha}^*\rangle$ normalizes $C_{\beta}$, another contradiction. 

Thus, we may assume that outcome (b) of \cref{SubAmal} holds so that $p=3$ and neither $R_{\alpha}$ nor $C_{L_{\alpha}}(V_{\alpha}^{(2)}/Z_{\alpha})$ normalizes $Q_{\alpha}\cap Q_{\beta}$. Indeed, for the subgroup $H_{\alpha}$ as constructed in \cref{SubAmal}, we have that $Q_{\alpha}\cap Q_{\beta}\normaleq H_{\alpha}$. Now, $C_{L_{\alpha}}(V_{\alpha}^{(2)}/Z_{\alpha})$ normalizes $C_{\beta}$ and we may assume that it acts non-trivially on $Q_{\alpha}/C_{\beta}$ for otherwise $Q_{\alpha}\cap Q_{\beta}\normaleq G_{\alpha}=\langle H_{\alpha}, C_{L_{\alpha}}(V_{\alpha}^{(2)}/Z_{\alpha})\rangle$, a contradiction by \cref{push}. Furthermore, $[O^3(O^{3'}(H_{\alpha})), C_{L_{\alpha}}(V_{\alpha}^{(2)}/Z_{\alpha})]\le C_{L_{\alpha}}(V_{\alpha}^{(2)})G_{\alpha}^{(1)}$ and as $H_{\alpha}$ normalizes $Q_{\alpha}\cap Q_{\beta}$ and $O^3(C_{L_{\alpha}}(V_{\alpha}^{(2)}))$ centralizes $Q_{\alpha}/C_{\beta}$, it follows that for any $r\in C_{L_{\alpha}}(V_{\alpha}^{(2)}/Z_{\alpha})$ of order coprime to $p$ which does not normalize $Q_{\alpha}\cap Q_{\beta}$, $O^3(O^{3'}(H_{\alpha}))$ normalizes $(Q_{\alpha}\cap Q_{\beta})^r$ and $H_{\alpha}$ normalizes $C_{\beta}=Q_{\alpha}\cap Q_{\beta}\cap Q_{\beta}^r$, a final contradiction for then $C_{\beta}\normaleq G_{\alpha}=\langle H_{\alpha}, C_{L_{\alpha}}(V_{\alpha}^{(2)}/Z_{\alpha})\rangle$.
\end{proof}

\begin{lemma}\label{GoodAction2}
Suppose that $b>5$ and \cref{CommonHyp} is satisfied. If $O^p(R_{\alpha})$ centralizes $V_{\alpha}^{(2)}$ and $V_{\alpha}^{(4)}/V_{\alpha}^{(2)}$ contains a unique non-central chief factor which, as a $\mathrm{GF}(p)\bar{L_{\alpha}}$-module, is an FF-module then $O^p(R_{\alpha})$ centralizes $V_{\alpha}^{(4)}$.
\end{lemma}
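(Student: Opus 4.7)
The strategy is to suppose for contradiction that $O^p(R_\alpha)$ fails to centralize $V_\alpha^{(4)}$ and then apply \cref{SubAmal} to derive a contradiction. First I note we may assume $G$ is a minimal counterexample to \hyperlink{MainGrpThm}{Theorem C}, since every non-counterexample configuration with $b > 2$ already satisfies $R_\alpha = Q_\alpha$ and the claim is trivial. Because $Q_\alpha \in \syl_p(R_\alpha)$ by \cref{p-closure2}, the group $R_\alpha/Q_\alpha$ is a $p'$-group acting coprimely on the $p$-group $V_\alpha^{(4)}$; coprime action then shows that triviality on both $V_\alpha^{(2)}$ and $V_\alpha^{(4)}/V_\alpha^{(2)}$ would yield triviality on $V_\alpha^{(4)}$. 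The hypothesis supplies the first, so $O^p(R_\alpha)$ must act nontrivially on $V_\alpha^{(4)}/V_\alpha^{(2)}$. Since $O^p(R_\alpha) \le O^p(L_\alpha)$, it centralizes every central $L_\alpha$-chief factor of $V_\alpha^{(4)}/V_\alpha^{(2)}$, so the only possible culprit is the unique non-central chief factor $U/W$. Setting $M := C_{L_\alpha}(U/W)$, we have $Q_\alpha \le M$ (since $U/W$ is a $\bar{L_\alpha}$-module), $R_\alpha \not\le M$, and \cref{SEFF} applied to the FF-module $U/W$ gives $L_\alpha/M \cong \SL_2(q)$. Combined with $L_\alpha/R_\alpha \cong \SL_2(q)$ from \cref{CommonHyp}(i), the Schur-multiplier argument reproduced at the start of the proof of \cref{SubAmal} forces $q \in \{2,3\}$.

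Next I would invoke \cref{SubAmal} with $\lambda = \alpha$, $\delta = \beta$, and chief factor $U/W$. Hypotheses (i) and (iii) are just \cref{CommonHyp}; for (ii), the only non-trivial point is that $M \cap R_\alpha$ normalizes $Q_\alpha \cap Q_\beta$. Since $Q_\alpha$ is a Sylow $p$-subgroup of $M \cap R_\alpha$ and trivially normalizes $Q_\alpha \cap Q_\beta$, it suffices that $O^p(M \cap R_\alpha) \le O^p(R_\alpha)$ does, and this is immediate because $O^p(R_\alpha)$ centralizes $V_\beta \le V_\alpha^{(2)}$ by hypothesis and hence normalizes $C_\beta = C_{Q_\alpha}(V_\beta)$ and $Q_\alpha \cap Q_\beta$. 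So \cref{SubAmal} applies and one of (a)--(d) must occur. Outcome (a) forces $b \le 5$, contradicting $b > 5$; outcome (b) requires $R_\alpha$ not to normalize $Q_\alpha \cap Q_\beta$, which we have just ruled out; outcome (c) is reserved for $\lambda = \beta$, not our case. Thus outcome (d) must hold.

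The hard part is eliminating (d), in which $\wt X := X/C_X(V)$ is locally isomorphic to one of $\SL_3(p)$, $\Sp_4(p)$, $\mathrm{G}_2(2)$, or (when $p = 3$) a central quotient of $\PSp_4(3)$. My plan mirrors the closing argument of \cref{GoodAction1}, which handled the analogous issue one level below. By \cref{Badp2}(iii) when $p = 2$, or \cref{Badp3}(iii) when $p = 3$, there are several $G_{\alpha,\beta}$-invariant overgroups of $S$ in $L_\alpha$ possessing $\SL_2(q)$-structure modulo $M \cap R_\alpha$, any two of which generate $L_\alpha$. Replacing $H_\alpha$ by each alternative $H_\alpha^*$ and rerunning the \cref{SubAmal} analysis, I expect the following dichotomy: either some $H_\alpha^*$ normalizes $Q_\alpha \cap Q_\beta$, in which case together with the normalizing properties already established for $R_\alpha$ (and, via a short argument, $C_{L_\alpha}(U/W)$) we force $G_\alpha$ to normalize $Q_\alpha \cap Q_\beta$ in violation of \cref{push}; or every $H_\alpha^*$ again produces outcome (d) with its own $V^*$, and tracking the intersection of the various $C_S(V^*)$ through the explicit chief-factor structure of $\wt X$ yields a common non-trivial $p$-subgroup normalized by $\langle H_\alpha, H_\alpha^*\rangle = L_\alpha$ and by $G_\beta$, hence normal in $\langle G_\alpha, G_\beta\rangle = G$, contradicting the faithfulness of the amalgam. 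The main obstacle will be the bookkeeping of chief-factor counts in $\wt Q_\alpha$ and $\wt Q_\beta$: the composition length inherited from $V_\alpha^{(4)}/V_\alpha^{(2)}$ together with the last sentence of \cref{SubAmal}(d) should largely force $\wt X \cong \mathrm{G}_2(2)$, but each of the four locally-isomorphic types distributes the non-central chief factors slightly differently and will require a short separate identification of $C_S(V)$.
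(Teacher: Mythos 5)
Your setup is sound and tracks the paper's proof faithfully up to a point: the minimal-counterexample framing, the reduction via coprime action to nontrivial action of $O^p(R_\alpha)$ on the unique non-central chief factor $U/W$ of $V_\alpha^{(4)}/V_\alpha^{(2)}$, the verification (via the three-subgroup lemma applied to $[O^p(R_\alpha),Q_\alpha,V_\alpha^{(2)}]=\{1\}$) that $R_\alpha$ normalizes $Q_\alpha\cap Q_\beta$, the invocation of \cref{SubAmal} with $\lambda=\alpha$, and the elimination of outcomes (a) (since $b>5$), (b) (since $R_\alpha$ does normalize $Q_\alpha\cap Q_\beta$) and (c) (since $\lambda=\alpha$). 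All of that is essentially what the paper does, give or take a slip of notation ($C_\beta$ is defined as $C_{Q_\beta}(V_\beta)$, not $C_{Q_\alpha}(V_\beta)$; what you actually need is $C_{Q_\alpha}(V_\alpha^{(2)})\le Q_\alpha\cap Q_\beta\le Q_\alpha$ together with triviality of the $O^p(R_\alpha)$-action on $Q_\alpha/C_{Q_\alpha}(V_\alpha^{(2)})$).

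The gap is in the elimination of outcome (d), where your plan goes off the rails. You propose an open-ended analysis varying $H_\alpha$ over several $p$-minimal subgroups and trying to compare the groups $C_S(V^*)$, and you concede yourself that the bookkeeping is unfinished and that each locally-isomorphic type of $\wt X$ would need a separate treatment. That elaboration is unnecessary and would be hard to close. The point you are missing is that \cref{SubAmal}(d) carries a built-in bound on the number of non-central chief factors: at most two in $\wt Q_\mu$, and if two, exactly one in the other $\wt Q_\nu$ with $\wt X\cong\mathrm{G}_2(2)$. In the present setting, since $\langle V_\beta^{H_\alpha}\rangle=V_\alpha^{(2)}$, taking normal closures alternately under $H_\alpha$ and $G_\beta$ shows $V_\beta^{(5)}\le V=\langle Z_\beta^X\rangle$, so $C_S(V)\le C_{Q_\beta}(V_\beta^{(5)})$. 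The unnumbered lemma following \cref{push} (applied with $\lambda=\beta$, $n=3$ and $n=5$) then guarantees that each of $Q_\beta/C_\beta$, $C_\beta/C_{Q_\beta}(V_\beta^{(3)})$ and $C_{Q_\beta}(V_\beta^{(3)})/C_{Q_\beta}(V_\beta^{(5)})$ contains a non-central chief factor for $L_\beta$. That is at least three non-central chief factors in $\wt Q_\beta$, which is outright incompatible with the last sentence of \cref{SubAmal}(d). No further case analysis of $\wt X$ is required; the count alone kills (d), and the proof is complete.
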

\begin{proof}
Since none of the configurations described in \hyperlink{MainGrpThm}{Theorem C} have $b>5$, we may assume that $G$ is a minimal counterexample such that $O^p(R_{\alpha})$ does not centralize $V_{\alpha}^{(4)}/V_{\alpha}^{(2)}$, $V_{\alpha}^{(4)}/V_{\alpha}^{(2)}$ contains a unique non-central chief factor and $O^p(R_{\alpha})$ centralizes $V_{\alpha}^{(2)}$. Since $O^p(R_{\alpha})$ centralizes $V_{\alpha}^{(2)}$, an application of the three subgroup lemma implies that $O^p(R_{\alpha})$ centralizes $Q_{\alpha}/C_{Q_{\alpha}}(V_{\alpha}^{(2)})$ and $C_{Q_{\alpha}}(V_{\alpha}^{(2)})\le Q_{\alpha}\cap Q_{\beta}$, $Q_{\alpha}\cap Q_{\beta}\normaleq R_{\alpha}$. 

We may apply \cref{SubAmal} with $\lambda=\alpha$. Since $b>5$, (a) is not satisfied. Indeed, as $\lambda=\alpha$ and $R_{\alpha}$ normalizes $Q_{\alpha}\cap Q_{\beta}$, we suppose that conclusion (d) is satisfied. For $X$ as constructed in \cref{SubAmal}, we have that $V_{\beta}^{(5)}\le V:=\langle Z_{\beta}^X\rangle$ and the images in $\wt L_{\beta}$ of $Q_{\beta}/C_{\beta}$, $C_{\beta}/C_{Q_{\beta}}(V_{\beta}^{(3)})$ and $C_{Q_{\beta}}(V_{\beta}^{(3)})/C_{Q_{\beta}}(V_{\beta}^{(5)})$ all contain a non-central chief factor for $\wt L_{\beta}$, a contradiction by \cref{SubAmal}.
\end{proof}

\begin{lemma}\label{GoodAction3}
Suppose that $b>3$ and \cref{CommonHyp} is satisfied. If $V_{\beta}^{(3)}/V_{\beta}$ contains a unique non-central chief factor which, as a $\mathrm{GF}(p)\bar{L_{\beta}}$-module, is an FF-module, then $O^p(R_{\beta})$ centralizes $V_{\beta}^{(3)}$.
\end{lemma}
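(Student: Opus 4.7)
The plan is to mirror the strategy of \cref{GoodAction2}, with the caveat that the centralization of $V_\beta$ must now be extracted rather than assumed. I would begin by taking $G$ to be a minimal counterexample to \hyperlink{MainGrpThm}{Theorem C} for which the hypotheses of the lemma hold but $O^p(R_\beta) \not\le C_{L_\beta}(V_\beta^{(3)})$, and letting $U/W$ denote the unique non-central chief factor of $L_\beta$ inside $V_\beta^{(3)}/V_\beta$. Since $O^p(R_\beta)$ centralizes every central chief factor, it must act non-trivially on $U/W$, so $R_\beta \ne C_{L_\beta}(U/W)$; and the standard consequences at the start of \cref{SubAmal} then deliver $L_\beta/C_{L_\beta}(U/W) \cong \SL_2(q)$ with $Q_\beta \in \syl_p(C_{L_\beta}(U/W) \cap R_\beta)$.

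Next I would write $T := C_{L_\beta}(U/W) \cap R_\beta = Q_\beta H$ with $H$ a Hall $p'$-complement, and show that $H$ centralizes $V_\beta^{(3)}$. Under \cref{CommonHyp}, $R_\beta$ centralizes $V_\beta/C_{V_\beta}(O^p(L_\beta))$, and since the remaining $L_\beta$-chief factors in $V_\beta$ are central, coprime action yields $[H, V_\beta] = 1$. Combined with $H \le C_{L_\beta}(U/W)$ centralizing the unique non-central chief factor in $V_\beta^{(3)}/V_\beta$, coprime action applied to the quotient $V_\beta^{(3)}/V_\beta$ gives $[H, V_\beta^{(3)}] = 1$. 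The three subgroup lemma then forces
\[
[H, Q_\beta] \le C_{Q_\beta}(V_\beta^{(3)}) \le C_{Q_\beta}(V_\alpha) = C_\alpha \cap Q_\beta \le Q_\alpha \cap Q_\beta,
\]
using $V_\alpha \le V_\beta^{(3)}$ and \cref{BasicVB}. Since $Q_\beta$ also normalizes $Q_\alpha \cap Q_\beta$, the whole of $T$ does, verifying the third clause of condition (ii) in \cref{SubAmal} with $\lambda = \beta$ (the other hypotheses coming from \cref{CommonHyp}).

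Applying \cref{SubAmal} leaves only outcomes (a), (c), (d), since (b) demands $\lambda = \alpha$. For outcome (a), $b > 3$ forces the constructed subamalgam $(H_\beta, G_\alpha, G_{\alpha,\beta})$ to be parabolic isomorphic to $\mathrm{F}_3$; but inside an $\mathrm{F}_3$-type amalgam at the prime $3$, the chief-factor structure of $V_\beta^{(3)}/V_\beta$ carries strictly more than one non-central chief factor, contradicting the hypothesis. Outcome (d) is ruled out by a chief-factor count parallel to that in \cref{GoodAction2}: setting $V := \langle Z_\beta^X \rangle$, one has $V_\beta^{(3)} \le V$, and the images of $Q_\beta/C_\beta$ and of $C_\beta/C_{Q_\beta}(V_\beta^{(3)})$ in $\wt L_\beta$ each contribute a non-central chief factor, incompatible with the isomorphism types $\SL_3(p)$, $\Sp_4(p)$, $\mathrm{G}_2(2)$, or the $\PSp_4(3)$-variant permitted there.

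The main obstacle will be outcome (c), where neither $R_\beta$ nor $C_{L_\beta}(U/W)$ normalizes $V_\alpha^{(2)}$. I would close this by exploiting that $H$ already centralizes $V_\alpha^{(2)} \le V_\beta^{(3)}$, and by varying the generating subgroup $H_\beta$ of \cref{SubAmal} (as in the closing argument of \cref{GoodAction1}) among the $G_{\alpha,\beta}$-invariant $\SL_2(q)$-type overgroups of $Q_\beta T$ that fail to normalize $Q_\alpha \cap Q_\beta$; this should produce a non-trivial $G_\beta$-invariant subgroup sitting inside $Q_\alpha \cap Q_\beta$, contradicting \cref{push}. Executing this variation cleanly — rather than the routine counts in the other cases — is the primary difficulty.
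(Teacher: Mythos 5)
Your strategy matches the paper's (take a minimal counterexample, apply \cref{SubAmal} with $\lambda=\beta$, rule out the outcomes), and your verification that $C_{L_\beta}(U/W)\cap R_\beta$ normalizes $Q_\alpha\cap Q_\beta$ works, though the paper's route is slicker: applying the three subgroup lemma to $O^p(R_\beta)$, $Q_\beta$, $V_\beta$ gives that $O^p(R_\beta)$ centralizes $Q_\beta/C_\beta$, and since $C_\beta\le Q_\alpha\cap Q_\beta$ and $Q_\beta$ already normalizes $Q_\alpha\cap Q_\beta$, all of $R_\beta$ normalizes $Q_\alpha\cap Q_\beta$. There are gaps, however, in your treatment of each of the three remaining outcomes.

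Outcome (c) is the decisive one, and you already have the pieces to close it, so the variation of $H_\beta$ you propose is unnecessary: $Q_\beta$ is the unique Sylow $p$-subgroup of $C_{L_\beta}(U/W)$, and any Hall $p'$-complement $H'$ of $C_{L_\beta}(U/W)$ centralizes every factor of a chief series of $V_\beta^{(3)}/V_\beta$ (the non-central one $U/W$ by definition, the remaining ones because $L_\beta$ centralizes its central chief factors); coprime action then gives that $H'$ centralizes $V_\beta^{(3)}/V_\beta$ and hence normalizes $V_\alpha^{(2)}$, and since $Q_\beta$ normalizes $V_\alpha^{(2)}$ as well, $C_{L_\beta}(U/W)$ normalizes $V_\alpha^{(2)}$, which is exactly what (c) forbids. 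For outcome (a), ``more than one non-central chief factor'' does not contradict the hypothesis: that hypothesis concerns $L_\beta$-chief factors, and since $H_\beta<L_\beta$, the $H_\beta$-chief factors of $V_\beta^{(3)}/V_\beta$ may well be finer, so an $H_\beta$-count proves nothing; the paper instead pins $S$ down up to isomorphism, shows $\Phi(Q_\beta)=C_\beta$ has index $9$ in $Q_\beta$, and uses faithfulness of $\bar{G_\beta}$ on $Q_\beta/\Phi(Q_\beta)$ to force $G_\beta=H_\beta$, contradicting the properness of $H_\beta$ built into \cref{SubAmal}'s construction. For outcome (d), two non-central chief factors inside $\wt Q_\beta$ is compatible with the $\mathrm{G}_2(2)$ alternative of conclusion (d), so you must also produce two on the $\alpha$ side; the paper does this by noting $V_\alpha^{(4)}\le V$ and locating non-central chief factors for $\wt L_\alpha$ in both $Q_\alpha/C_{Q_\alpha}(V_\alpha^{(2)})$ and $C_{Q_\alpha}(V_\alpha^{(2)})/C_{Q_\alpha}(V_\alpha^{(4)})$.
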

\begin{proof}
Since the only configuration in \hyperlink{MainGrpThm}{Theorem C} which satisfies $b>3$ (where $G$ is parabolic isomorphic to $\mathrm{F}_3$) satisfies $[O^p(R_{\beta}), V_{\beta}^{(3)}]=\{1\}$, we may assume that $G$ is a minimal counterexample such that $O^p(R_{\beta})$ does not centralize $V_{\beta}^{(3)}$. Since $O^p(R_{\beta})$ centralizes $V_{\beta}$, the three subgroup lemma implies that $O^p(R_{\beta})$ centralizes $Q_{\beta}/C_{\beta}$ so that $R_{\beta}$ normalizes $Q_{\alpha}\cap Q_{\beta}$. Thus, the hypotheses of \cref{SubAmal} are satisfied with $\lambda=\beta$. Since $C_{L_{\beta}}(V_{\beta}^{(3)}/V_{\beta})$ normalizes $V_{\alpha}^{(2)}$ and $\lambda=\beta$, conclusions (b) and (c) are not satisfied. As $b>3$, if outcome (a) is satisfied then $b=5$ and $(G_{\alpha}, H_{\beta}, G_{\alpha,\beta})$ is parabolic isomorphic to $\mathrm{F}_3$ and $H_{\beta}/Q_{\beta}\cong \GL_2(3)$. Then $S$ is determined up to isomorphism. Indeed, as $V_{\beta}=\langle Z_{\alpha}^{G_{\beta}}\rangle=\langle Z_{\alpha}^{H_{\beta}}\rangle=Z_3(S)$, $Q_{\beta}=C_S(Z_3(S)/Z(S))$ is uniquely determined in $S$, and so is uniquely determined up to isomorphism. But then one can check (e.g. employing MAGMA) that $\Phi(Q_{\beta})=C_{\beta}$ has index $9$ in $Q_{\beta}$, and as $\bar{G_{\beta}}$ acts faithfully on $Q_{\beta}/\Phi(Q_{\beta})$, $\bar{G_{\beta}}=\bar{H_{\beta}}\cong \GL_2(3)$ and $G_{\beta}=H_{\beta}$, a contradiction.

Hence, we are left with conclusion (d). But then $V_{\alpha}^{(4)}\le V:=\langle Z_{\beta}^X\rangle$ and the images of $Q_{\alpha}/C_{Q_{\alpha}}(V_{\alpha}^{(2)})$ and $C_{Q_{\alpha}}(V_{\alpha}^{(2)})/ C_{Q_{\alpha}}(V_{\alpha}^{(4)})$ in $\wt L_{\alpha}$ both contain a non-central chief factor for $\wt L_{\alpha}$. Moreover, the images of $Q_{\beta}/C_{\beta}$ and $C_{\beta}/C_{Q_{\beta}}(V_{\beta}^{(3)})$ also a contain non-central chief factor for $\wt L_{\beta}$, and we have a contradiction.
\end{proof}

\begin{lemma}\label{GoodAction4}
Suppose that $b>5$ and \cref{CommonHyp} is satisfied. If $V_{\beta}^{(5)}/V_{\beta}^{(3)}$ contains a unique non-central chief factor which, as a $\bar{L_{\beta}}$-module, is an FF-module and $O^p(R_{\beta})$ centralizes $V_{\beta}^{(3)}$, then $[O^p(R_{\beta}), V_{\beta}^{(5)}]=\{1\}$. 
\end{lemma}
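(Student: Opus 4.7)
The plan is to mirror the structure of \cref{GoodAction3}, arguing by minimal counterexample and applying \cref{SubAmal} with $\lambda=\beta$, taking $U/W$ to be the unique non-central chief factor of $L_\beta$ inside $V_\beta^{(5)}/V_\beta^{(3)}$. Since every configuration arising in \hyperlink{MainGrpThm}{Theorem C} has $b\leq 5$, any counterexample must satisfy $b>5$, so taking $G$ of minimal order among such counterexamples is legitimate. The first step will be to establish that $R_\beta$ normalizes $Q_\alpha\cap Q_\beta$: by hypothesis $O^p(R_\beta)$ centralizes $V_\beta^{(3)}\supseteq V_\beta$, and the three subgroup lemma then gives $[O^p(R_\beta),Q_\beta]\leq C_\beta\leq Q_\alpha\cap Q_\beta$; combined with $R_\beta=O^p(R_\beta)Q_\beta$ (valid since $R_\beta/Q_\beta$ is a $p'$-group by \cref{p-closure2} and \cref{BasicVB}), this yields the desired normalization. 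The failure hypothesis forces $O^p(R_\beta)$ not to centralize the unique non-central factor $U/W$, whence $C_{L_\beta}(U/W)\neq R_\beta$ while $C_{L_\beta}(U/W)\cap R_\beta\leq R_\beta$ normalizes $Q_\alpha\cap Q_\beta$, which puts us into the setup of \cref{SubAmal}.

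Ruling out outcomes (a), (b) and (c) should be short. Outcome (a) forces $b\leq 5$ (even the borderline $\mathrm{F}_3$-type configuration only reaches $b=5$); outcome (b) requires $\lambda=\alpha$; and outcome (c) demands that neither $R_\beta$ nor $C_{L_\beta}(U/W)$ normalizes $V_\alpha^{(2)}$. But $V_\alpha^{(2)}\leq V_\beta^{(3)}$ (as any vertex at distance at most $2$ from $\alpha$ lies at distance at most $3$ from $\beta$), so $O^p(R_\beta)$ centralizes $V_\alpha^{(2)}$, while $Q_\beta\leq S\leq G_\alpha$ normalizes $V_\alpha^{(2)}$ because the latter is normal in $G_\alpha$; hence $R_\beta$ itself normalizes $V_\alpha^{(2)}$ and (c) fails.

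The main obstacle is outcome (d), which will be dispatched by a chief factor count. Setting $X:=\langle H_\beta, G_\alpha\rangle$, $V:=\langle Z_\beta^X\rangle\leq S$ and $\wt X:=X/C_X(V)$, I will use that both $G_\alpha$ and $H_\beta$ act transitively on the respective neighborhoods in $\Gamma$ (the latter because $P_\beta/Q_\beta$ has a strongly $p$-embedded subgroup). Iterated alternating conjugation of $Z_\beta$ under $G_\alpha$ and $H_\beta$, together with the fact that $L_\alpha$ acts on $Z_\alpha$ as a natural $\SL_2(q)$-module so that $\langle Z_\beta^{L_\alpha}\rangle=Z_\alpha$ already at the first step, shows $V\supseteq V_\beta^{(5)}$ and hence also $V\supseteq V_\alpha^{(4)}$; since these are normal $p$-subgroups of $G_\alpha$ resp.\ $G_\beta$, they lie in $Q_\alpha$ resp.\ $Q_\beta$.

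With this in hand, \cref{nccf} will provide non-central chief factors for $\wt L_\alpha$ inside $\wt Q_\alpha$ in each of the three layers $Z_\alpha$, $V_\alpha^{(2)}/Z_\alpha$, $V_\alpha^{(4)}/V_\alpha^{(2)}$ (the first being the natural module supplied by \cref{CommonHyp}), and non-central chief factors for $\wt L_\beta$ inside $\wt Q_\beta$ in each of the three layers $V_\beta/Z_\beta$, $V_\beta^{(3)}/V_\beta$, $V_\beta^{(5)}/V_\beta^{(3)}$ (the first coming from the natural quotient $V_\beta/C_{V_\beta}(O^p(L_\beta))$, and using that $Z_\beta$ is itself central for $L_\beta$). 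This totals at least six non-central chief factors across the two sides, whereas \cref{SubAmal}(d) caps the combined count across $\wt Q_\alpha$ and $\wt Q_\beta$ at three (attained only in the $\mathrm{G}_2(2)$-type case with distribution $2+1$). This contradiction will complete the argument.
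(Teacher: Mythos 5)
Your overall strategy (minimal counterexample, showing $R_\beta=O^p(R_\beta)Q_\beta$ normalizes $Q_\alpha\cap Q_\beta$, and applying \cref{SubAmal} with $\lambda=\beta$, ruling out (a) by $b>5$, (b) by $\lambda\ne\alpha$, and (c) because $R_\beta$ visibly normalizes $V_\alpha^{(2)}$) matches the paper, and those preliminary steps are all sound.

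The gap is in your contradiction of outcome (d). You invoke \cref{nccf} to produce non-central chief factors ``inside $\wt Q_\alpha$'' in the layers $Z_\alpha$, $V_\alpha^{(2)}/Z_\alpha$, $V_\alpha^{(4)}/V_\alpha^{(2)}$ (and analogously for $\beta$), but these layers sit \emph{inside} $V$, and \cref{SubAmal}(d) bounds the number of non-central chief factors for $\wt L_\mu$ inside $\wt Q_\mu=Q_\mu/C_{Q_\mu}(V)$. Since $V$ is a normal $p$-subgroup of $X$ contained in $S$ and $G_\alpha\le X$, one has $V\le Q_\alpha$; but then $Z_\alpha\le Z(Q_\alpha)\le C_{Q_\alpha}(V)$, so the image of $Z_\alpha$ in $\wt Q_\alpha$ is already trivial and the chief factor you count there simply does not appear in $\wt Q_\alpha$. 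The same problem threatens the higher layers: $V_\alpha^{(2k)}\cap C_{Q_\alpha}(V)=C_{V_\alpha^{(2k)}}(V)$ is an $L_\alpha$-invariant subgroup that can swallow the very chief factors $\cref{nccf}$ produces, and you give no argument that they survive. So as written you cannot conclude that $\wt Q_\alpha$ (or $\wt Q_\beta$) carries more than two non-central chief factors, and the contradiction with (d) fails. The fix is to count in the dual filtration instead: the unnumbered lemma following \cref{nccf} places one non-central chief factor in each of $Q_\alpha/C_{Q_\alpha}(V_\alpha^{(2)})$, $C_{Q_\alpha}(V_\alpha^{(2)})/C_{Q_\alpha}(V_\alpha^{(4)})$, $C_{Q_\alpha}(V_\alpha^{(4)})/C_{Q_\alpha}(V_\alpha^{(6)})$, and since iterating your alternating-conjugation argument two more steps gives $V_\alpha^{(6)}\le V$ and hence $C_{Q_\alpha}(V)\le C_{Q_\alpha}(V_\alpha^{(6)})$, the quotient $Q_\alpha/C_{Q_\alpha}(V_\alpha^{(6)})$ is an image of $\wt Q_\alpha$, so those three non-central chief factors do survive in $\wt Q_\alpha$, contradicting (d). Also note you stopped your containment argument at $V_\beta^{(5)},V_\alpha^{(4)}\le V$; for the dual count one needs to push on to $V_\alpha^{(6)}\le V$, which is available but not stated.
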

\begin{proof}
Since none of the configurations in \hyperlink{MainGrpThm}{Theorem C} satisfy $b>5$, we may assume the $G$ is a minimal counterexample to \hyperlink{MainGrpThm}{Theorem C} with $[O^p(R_{\beta}), V_{\beta}^{(3)}]=\{1\}$ and $[O^p(R_{\beta}), V_{\beta}^{(5)}]\ne \{1\}$. Since $O^p(R_{\beta})$ centralizes $V_{\beta}$, $O^p(R_{\beta})$ centralizes $Q_{\beta}/C_{\beta}$ so that $R_{\beta}$ normalizes $Q_{\alpha}\cap Q_{\beta}$ we may apply \cref{SubAmal} with $\lambda=\beta$. Since $O^p(R_{\beta})$ normalizes $V_{\alpha}^{(2)}$ and $b>5$, we are in case (d) of \cref{SubAmal}. Then, $V_{\alpha}^{(6)}\le V:=\langle Z_{\beta}^X\rangle$ and the image of $Q_{\alpha}/C_{Q_{\alpha}}(V_{\alpha}^{(6)})$ in $\wt L_{\alpha}$ contains at least three non-central chief factors for $\wt L_{\alpha}$, a contradiction.
\end{proof}
\section{$Z_{\alpha'}\not\le Q_\alpha$}\label{evensec}

Throughout this section, we assume \cref{MainHyp}. In addition, within this section we suppose that $Z_{\alpha'}\not\le Q_\alpha$ for a chosen critical pair $(\alpha, \alpha')$. By \cref{p-closure2} (iv), this condition is equivalent to $[Z_\alpha, Z_{\alpha'}]\ne \{1\}$. Throughout, we set $S\in\syl_p(G_{\alpha,\beta})$ and $q_{\lambda}=|\Omega(Z(S/Q_{\lambda}))|$ for any $\lambda\in\Gamma$.

\begin{lemma}\label{beven}
$(\alpha', \alpha)$ is also a critical pair, $Q_{\alpha'}\in\syl_p(R_{\alpha'})$, $C_{Z_{\alpha}}(Z_{\alpha'})=Z_\alpha\cap Q_{\alpha'}$ and $C_{Z_{\alpha'}}(Z_{\alpha})=Z_{\alpha'}\cap Q_{\alpha}$.
\end{lemma}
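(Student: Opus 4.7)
The plan is to derive all four conclusions essentially as corollaries of Lemmas \ref{p-closure2} and \ref{critpair2}, which have already isolated the main content. First I would observe that the distance $d(\alpha', \alpha) = b$ and the standing hypothesis $Z_{\alpha'} \not\le Q_{\alpha}$ give $b_{\alpha'} \leq b$; by minimality of $b$ this forces $b_{\alpha'} = b$, so $(\alpha', \alpha)$ is itself a critical pair.

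Next I would establish that $Z_{\alpha'} \ne \Omega(Z(L_{\alpha'}))$, since this is the key step for identifying $R_{\alpha'}$. Because $d(\alpha, \alpha'-1) = b-1 < b_{\alpha}$, the definition of $b_{\alpha}$ yields $Z_{\alpha} \le G_{\alpha'-1}^{(1)} \le G_{\alpha'}$, and since $Z_{\alpha}$ is a $p$-group this gives $Z_{\alpha} \le L_{\alpha'}$. If we had $Z_{\alpha'} = \Omega(Z(L_{\alpha'}))$, then $Z_{\alpha}$ would centralize $Z_{\alpha'}$, placing $Z_{\alpha'} \le C_{G_{\alpha}}(Z_{\alpha})$. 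But \cref{p-closure2}(iv), applied to the critical pair $(\alpha, \alpha')$, says $C_{G_{\alpha}}(Z_{\alpha})$ is $p$-closed with Sylow $p$-subgroup $Q_{\alpha}$, so the $p$-group $Z_{\alpha'}$ would lie in $Q_{\alpha}$, contradicting the standing hypothesis. Hence $Z_{\alpha'} \ne \Omega(Z(L_{\alpha'}))$, so $R_{\alpha'} = C_{L_{\alpha'}}(Z_{\alpha'})$, and then the first clause of \cref{p-closure2}(ii) gives $Q_{\alpha'} \in \syl_p(R_{\alpha'})$ at once.

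Finally, the two centralizer identities follow by applying \cref{critpair2}(ii) twice: once as stated, using that $(\alpha', \alpha)$ is critical (so the right-hand side of the biconditional fails) to conclude $C_{Z_{\alpha}}(Z_{\alpha'}) = Z_{\alpha} \cap Q_{\alpha'}$, and once with the roles of $\alpha$ and $\alpha'$ swapped, using that $(\alpha, \alpha')$ is critical, to conclude $C_{Z_{\alpha'}}(Z_{\alpha}) = Z_{\alpha'} \cap Q_{\alpha}$. The only real content is the second paragraph, where one must combine the $b$-minimality inclusion $Z_{\alpha} \le L_{\alpha'}$ with the $p$-closure of $C_{G_{\alpha}}(Z_{\alpha})$ to rule out the degenerate case of $R_{\alpha'}$; everything else is a direct bookkeeping appeal to the preceding lemmas.
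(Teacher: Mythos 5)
Your proof is correct and takes essentially the same route as the paper's much terser argument, which delegates the $Q_{\alpha'}\in\syl_p(R_{\alpha'})$ claim and the symmetric centralizer identity to the remark that $\alpha$ and $\alpha'$ may be interchanged. One small simplification in your second paragraph: once $(\alpha',\alpha)$ is established as a critical pair, the inequality $Z_{\alpha'}\ne\Omega(Z(L_{\alpha'}))$ follows immediately from \cref{crit pair}(iii) applied to that pair (together with $\Omega(Z(T'))\le Z_{\alpha'}$ for $T'\in\syl_p(G_{\alpha'})$), so the detour through the $p$-closure of $C_{G_{\alpha}}(Z_{\alpha})$ --- while valid --- is not needed.
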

\begin{proof}
Since $Z_{\alpha'}\not\le Q_\alpha$ we have that both $(\alpha, \alpha')$ and $(\alpha', \alpha)$ are critical pairs. In particular, all the results we prove in this section hold upon interchanging $\alpha$ and $\alpha'$. By \cref{critpair2}, $C_{Z_{\alpha}}(Z_{\alpha'})=Z_\alpha\cap Q_{\alpha'}$.
\end{proof}

\begin{lemma}\label{SL2}
For $\lambda\in\{\alpha, \alpha'\}$, the following hold:
\begin{enumerate}
    \item $S=Q_{\alpha}Q_{\beta}=Z_{\alpha'}Q_{\alpha}\in\syl_p(G_{\alpha,\beta})$ and $N_{G_{\alpha}}(S)=N_{G_{\beta}}(S)=G_{\alpha,\beta}$;
    \item $Z_\alpha Q_{\alpha'}\in\syl_p(G_{\alpha',\alpha'-1})$; and
    \item $Z_\lambda/\Omega(Z(L_\lambda))$ is natural $\SL_2(q_\lambda)$-module for $L_\lambda/R_\lambda\cong\SL_2(q_\lambda)$, and $q_\alpha=q_{\alpha'}$.
\end{enumerate}
\end{lemma}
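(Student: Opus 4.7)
The plan is to deduce (iii) first and then leverage the rank-one structure to obtain (i) and (ii). By \cref{beven}, $(\alpha',\alpha)$ is also a critical pair, so the hypothesis is symmetric in $\alpha$ and $\alpha'$; I will exploit this throughout.

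For (iii), I first note that \cref{critpair2}(i), applied symmetrically, forces $Z_\alpha\ne\Omega(Z(L_\alpha))$, so the definition $R_\alpha=C_{L_\alpha}(Z_\alpha)$ applies with $Q_\alpha\in\syl_p(R_\alpha)$ by \cref{p-closure2}(ii), and \cref{spelemma2} transfers a strongly $p$-embedded subgroup to $L_\alpha/R_\alpha$. The key step is to identify $V:=Z_\alpha/\Omega(Z(L_\alpha))$ as a faithful FF-module for $L_\alpha/R_\alpha$ with offender the image of $Z_{\alpha'}$, and then invoke \cref{SEFF}. Since $Z_{\alpha'}\le G_\beta^{(1)}$ and $Q_\beta$ is the unique Sylow of $G_\beta^{(1)}$ by \cref{p-closure2}(i), one has $Z_{\alpha'}\le Q_\beta\le S$. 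By \cref{beven}, $C_{Z_\alpha}(Z_{\alpha'})=Z_\alpha\cap Q_{\alpha'}$, and a standard coprime-action argument identifies the kernel of the $L_\alpha$-action on $V$ with $R_\alpha$, so that the image of $Z_{\alpha'}$ in $L_\alpha/R_\alpha$ has order $|Z_{\alpha'}/(Z_{\alpha'}\cap Q_\alpha)|$. After using the $\alpha\leftrightarrow\alpha'$ symmetry to arrange $|Z_\alpha/(Z_\alpha\cap Q_{\alpha'})|\le|Z_{\alpha'}/(Z_{\alpha'}\cap Q_\alpha)|$, the FF-module inequality is verified, and \cref{SEFF} yields $L_\alpha/R_\alpha\cong\SL_2(q_\alpha)$ with $V$ a natural $\SL_2(q_\alpha)$-module. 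The equality $q_\alpha=q_{\alpha'}$ follows because $\alpha,\alpha'$ are $G$-conjugate.

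Granted (iii), part (i) is forced by the natural-module data in \cref{NatMod}: any non-trivial $s\in S/Q_\alpha$ satisfies $|V/C_V(s)|=q_\alpha$, so the offender $Z_{\alpha'}Q_\alpha/Q_\alpha$ must exhaust the Sylow $S/Q_\alpha$, whence $S=Z_{\alpha'}Q_\alpha\le Q_\beta Q_\alpha\le S$. The identifications $N_{G_\alpha}(S)=N_{G_\beta}(S)=G_{\alpha,\beta}$ are immediate from \cref{MainHyp}(i) and \cref{BasicAmal}(ii). Part (ii) follows from applying (i) to the critical pair $(\alpha',\alpha)$, whose corresponding ``$\beta$'' is $\alpha'-1$. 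The main technical obstacle is the offender analysis inside (iii): one must verify that $Z_{\alpha'}R_\alpha/R_\alpha$ is elementary abelian (hence a genuine FF-offender) and that $C_{Z_{\alpha'}}(V)=Z_{\alpha'}\cap Q_\alpha$. Elementary-abelianness of this image becomes transparent only a posteriori once (iii) is established, so the offender argument must be arranged to extract just enough control from \cref{spelemma2} and the commutator relations to apply \cref{SEFF} without circularity; this is the most delicate point in the proof.
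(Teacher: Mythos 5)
Your proposal is correct and follows essentially the same route the paper takes: arrange by symmetry that $|Z_\alpha Q_{\alpha'}/Q_{\alpha'}|\le|Z_{\alpha'}Q_\alpha/Q_\alpha|$, verify that $Z_{\alpha'}$ is an FF-offender, invoke \cref{SEFF} to obtain $L_\alpha/R_\alpha\cong\SL_2(q_\alpha)$ with natural action, and then use the natural-module data to force $Z_{\alpha'}Q_\alpha=S$, with (ii) and the equality $q_\alpha=q_{\alpha'}$ following by conjugacy. The ``delicate point'' you flag at the end is not actually an obstacle: $Z_{\alpha'}$ is elementary abelian (it lies inside $\Omega(Z(Q_{\alpha'}))$), so its image in $L_\alpha/R_\alpha$ is automatically elementary abelian, and the identification $C_{Z_{\alpha'}}(Z_\alpha)=Z_{\alpha'}\cap Q_\alpha$ is exactly \cref{beven} applied to the critical pair $(\alpha',\alpha)$, so there is no circularity. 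One small simplification worth making: apply \cref{SEFF} to $Z_\alpha$ itself, which is a faithful $L_\alpha/R_\alpha$-module by the definition of $R_\alpha$, rather than to $Z_\alpha/\Omega(Z(L_\alpha))$, and then read off the natural-module conclusion; this avoids having to argue faithfulness of the quotient module beforehand, which is what the paper does. (Also, the citation should be \cref{crit pair}(iii) rather than \cref{critpair2}(i) for $Z_\alpha\ne\Omega(Z(L_\alpha))$.)
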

\begin{proof}
Without loss of generality, assume that $|Z_{\alpha}Q_{\alpha'}/Q_{\alpha'}|\leq |Z_{\alpha'}Q_{\alpha}/Q_\alpha|$. By \cref{beven}, we have that 
\begin{align*}
|Z_\alpha/C_{Z_\alpha}(Z_{\alpha'})|&=|Z_\alpha/Z_\alpha\cap Q_{\alpha'}|=|Z_{\alpha}Q_{\alpha'}/Q_{\alpha'}|\\
&\leq|Z_{\alpha'}Q_{\alpha}/Q_\alpha|=|Z_{\alpha'}/Z_{\alpha'}\cap Q_{\alpha}|=|Z_{\alpha'}/C_{Z_{\alpha'}}(Z_{\alpha})|.
\end{align*}
Thus, $Z_{\alpha'}$ is a non-trivial offender on $Z_\alpha$, and $Z_\alpha$ is an FF-module for $L_{\alpha}/R_{\alpha}$. Since $L_{\alpha}/R_{\alpha}$ has a strongly $p$-embedded subgroup by \cref{spelemma2}, using \cref{SEFF} we conclude that $L_\alpha/R_{\alpha}\cong\SL_2(q)$ and $Z_\alpha/\Omega(Z(L_{\alpha}))$ is a natural $\SL_2(q)$-module.

Since $L_\alpha/R_\alpha\cong\SL_2(q)$ and $Z_{\alpha}/\Omega(Z(L_{\alpha}))$ is a natural $\SL_2(q)$-module, we infer that $q=|Z_\alpha/C_{Z_\alpha}(Z_{\alpha'})|\leq|Z_{\alpha'}/C_{Z_{\alpha'}}(Z_{\alpha})|=|Z_{\alpha'}Q_{\alpha}/Q_{\alpha}|\leq q$. In particular, by a symmetric argument, $Z_{\alpha'}/\Omega(Z(L_{\alpha'}))$ is also a natural module for $L_{\alpha'}/R_{\alpha'}\cong\SL_2(q)$. It follows immediately that $Z_{\alpha'}Q_{\alpha}\in\syl_p(G_{\alpha,\beta})$ and $Z_\alpha Q_{\alpha'}\in\syl_p(G_{\alpha', \alpha'-1})$. Since $Z_{\alpha'}\le Q_{\beta}$, we have that $S=Q_{\alpha}Q_{\beta}$ and since $G_{\alpha, \beta}=G_{\alpha}\cap G_{\beta}$ normalizes $Q_{\alpha}Q_{\beta}$, the result holds.
\end{proof}

In the following proposition, we divide the analysis of the case $[Z_{\alpha}, Z_{\alpha'}]\ne\{1\}$ into two subcases. The remainder of this section is split into two subsections dealing with each of these subcases individually.

\begin{proposition}\label{BetaCenterIII}
One of the following holds:
\begin{enumerate}
    \item $b$ is even and $Z_\beta=\Omega(Z(S))=\Omega(Z(L_\beta))$; or
    \item $Z_{\beta}\ne \Omega(Z(S))$ and for all $\lambda\in\Gamma$, $Z_\lambda/\Omega(Z(L_\lambda))$ is a natural $\SL_2(q_\lambda)$-module for $L_\lambda/R_\lambda$.
    \end{enumerate}
\end{proposition}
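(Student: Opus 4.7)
The plan is to case split on the parity of $b$ and transport the natural $\SL_2(q)$-module structure from \cref{SL2}(iii) across the two $G$-orbits on $\Gamma$. Since $\Gamma$ is bipartite with respect to $\alpha^G$ and $\beta^G$, $b$ odd means $\alpha'\in\beta^G$ while $b$ even means $\alpha'\in\alpha^G$. Either way, \cref{SL2}(iii) applied to both $\alpha$ and $\alpha'$ (the second via \cref{beven}), together with edge-transitivity, shows that $Z_\lambda/\Omega(Z(L_\lambda))$ is a natural $\SL_2(q_\lambda)$-module for every $\lambda\in\alpha^G\cup(\alpha')^G$.

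If $b$ is odd, the above already covers all of $\Gamma$. To finish I would rule out $Z_\beta=\Omega(Z(S))$: if equality held, $S$ would centralize $Z_\beta$ and so $S\le C_{L_\beta}(Z_\beta)=R_\beta$, whence $L_\beta=R_\beta$, contradicting the strongly $p$-embedded structure of $L_\beta/R_\beta$ supplied by \cref{spelemma2}. Hence $Z_\beta\ne\Omega(Z(S))$ and outcome (ii) holds.

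If $b$ is even, the natural-module property is a priori only known on $\alpha^G$, and the analysis splits at $\beta$. If $Z_\beta=\Omega(Z(L_\beta))$, the chain $\Omega(Z(L_\beta))\le\Omega(Z(S))\le Z_\beta$ collapses and yields outcome (i). Otherwise $R_\beta=C_{L_\beta}(Z_\beta)$ by definition, and the same centralizer argument as in the $b$ odd case forces $Z_\beta\ne\Omega(Z(S))$.

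The main obstacle is to show, under the residual hypothesis ($b$ even and $Z_\beta\ne\Omega(Z(L_\beta))$), that $Z_\beta/\Omega(Z(L_\beta))$ is itself a natural $\SL_2(q_\beta)$-module. The plan is to construct a critical-pair configuration centred at $\beta$: set $b_\beta:=\min\{d(\beta,\mu):Z_\beta\not\le Q_\mu\}$, which is finite because $\bigcap_\lambda Q_\lambda=\{1\}$ while $Z_\beta\ne\{1\}$, and satisfies $b_\beta\ge b$. An analogue of \cref{beven} should show that $b_\beta$ is even (so $\mu\in\beta^G$) and that a suitable $\mu$ at distance $b_\beta$ also satisfies $Z_\mu\not\le Q_\beta$; the offender calculation of \cref{SL2} then applies verbatim and \cref{SEFF} forces $L_\beta/R_\beta\cong\SL_2(q_\beta)$ with $Z_\beta/\Omega(Z(L_\beta))$ natural. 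The hard part is precisely this symmetry step, where one has to exclude the possibility that the minimum $b_\beta$ is realised only by some $\mu\in\alpha^G$ with $Z_\mu\le Q_\beta$; I expect this to follow from the natural-module structure already established on $\alpha^G$-vertices together with the restricted list of $L_\beta/R_\beta$ given by \cref{SE1} and \cref{SE2}.
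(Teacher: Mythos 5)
Your opening reduction is correct: \cref{SL2}(iii) together with \cref{beven} and conjugacy gives the natural-module structure on $\alpha^G\cup(\alpha')^G$, and the parity split is the right framing. The problem in your $b$-odd discussion is that the centralizer argument is circular. If $Z_\beta=\Omega(Z(S))$, then taking normal closures in $G_\beta$ (and using $L_\beta=O^{p'}(G_\beta)\le\langle S^{G_\beta}\rangle$) gives $[Z_\beta, L_\beta]=\{1\}$, so $Z_\beta=\Omega(Z(L_\beta))$; but that is exactly the branch of the definition of $R_\beta$ in which $R_\beta=C_{L_\beta}(V_\beta/C_{V_\beta}(O^p(L_\beta)))$ rather than $C_{L_\beta}(Z_\beta)$, and by \cref{BasicVB} one then has $Q_\beta\in\syl_p(R_\beta)$, so $S\not\le R_\beta$ and no contradiction arises. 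The identity $R_\beta=C_{L_\beta}(Z_\beta)$ presupposes $Z_\beta\ne\Omega(Z(L_\beta))$, which is precisely what you are trying to establish. For $b$ odd the correct argument is shorter anyway: $Z_{\alpha'}/\Omega(Z(L_{\alpha'}))$ is a nonzero natural module by \cref{SL2}(iii), so $Z_{\alpha'}\ne\Omega(Z(L_{\alpha'}))$, and $\alpha'\in\beta^G$ then gives $Z_\beta\ne\Omega(Z(L_\beta))$, hence $Z_\beta\ne\Omega(Z(S))$ by the equivalence just described.

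The second gap is the one you flag yourself: when $b$ is even and $Z_\beta\ne\Omega(Z(S))$, the $b_\beta$ plan leaves the decisive symmetry step open. The paper does not introduce $b_\beta$; it shifts the given critical pair outward by one edge. If $V_\alpha\le Q_{\alpha'-1}$, then $V_\alpha\le Z_\alpha Q_{\alpha'}\in\syl_p(L_{\alpha'})$ and $[V_\alpha, Z_{\alpha'}]\le Z_\alpha$, whence $Z_\alpha Z_\beta\normaleq L_\alpha$, contradicting \cref{push}. Hence there is $\alpha-1\in\Delta(\alpha)$ with $Z_{\alpha-1}\not\le Q_{\alpha'-1}$, giving a critical pair $(\alpha-1,\alpha'-1)$ between $\beta^G$-vertices. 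Since $Z_{\alpha-1}\ne\Omega(Z(S))\ne Z_{\alpha'-1}$ (conjugates of the assumption on $Z_\beta$), \cref{p-closure2}(ii) gives $Q_{\alpha'-1}\in\syl_p(C_{L_{\alpha'-1}}(Z_{\alpha'-1}))$ and so $[Z_{\alpha-1}, Z_{\alpha'-1}]\ne\{1\}$, after which \cref{SL2} applied to this new pair gives the natural-module structure on $\beta^G$. (For $b=1$ one simply takes $\beta$ in place of $\alpha'$ in \cref{SL2}.)
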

\begin{proof}
Notice that if $Z_\beta=\Omega(Z(S))$ then $\{1\}=[Z_\beta, S]^{G_\beta}=[Z_\beta, \langle S^{G_\beta}\rangle]=[Z_\beta, L_\beta]$ so that $Z_\beta=\Omega(Z(L_\beta))$. Since $Z_{\alpha'}$ is not centralized by $Z_{\alpha}\le L_{\alpha'}$, it follows immediately in this case that $b$ is even.

Suppose that $Z_{\beta}\ne \Omega(Z(S))$. If $b=1$, the result follows immediately from \cref{SL2} replacing $\alpha'$ by $\beta$ and so we may assume that $b>1$. Assume that $V_{\alpha}\le Q_{\alpha'-1}$. In particular, $V_{\alpha}\le Z_{\alpha}Q_{\alpha'}\in\syl_p(L_{\alpha'})$ by \cref{SL2}. Thus, $[V_{\alpha}, Z_{\alpha'}]\le [Z_\alpha, Z_{\alpha'}]\le Z_\alpha$ so that $[V_{\alpha}, O^p(L_\alpha)]\le Z_{\alpha}$ and $Z_{\alpha}Z_{\beta}\normaleq L_{\alpha}$, a contradiction by \cref{push}. Hence, there is $\alpha-1\in\Delta(\alpha)$ with $Z_{\alpha-1}\not\le Q_{\alpha'-1}$. Then $(\alpha-1, \alpha'-1)$ is a critical pair and since $Z_{\alpha}\ne \Omega(Z(S))\ne Z_{\beta}$, by \cref{p-closure2} (ii), we conclude that $[Z_{\alpha-1}, Z_{\alpha'-1}]\ne\{1\}$ and \cref{SL2} gives the result.
\end{proof}

\subsection{$Z_{\beta}\ne \Omega(Z(S))$}

We first consider the case where $[Z_{\alpha}, Z_{\alpha'}]\ne\{1\}$ and $Z_{\beta}\ne \Omega(Z(S))$. Under these hypotheses, and using the symmetry in $\alpha$ and $\alpha'$, it is not hard to show that every $\gamma\in\Gamma$ belongs to some critical pair. The main work in this subsection is then to show that $R_\gamma=Q_\gamma$ and $\bar{L_{\gamma}}\cong\SL_2(q)$, for then, all examples we obtain arise from weak BN-pairs of rank $2$ and $G$ is determined by \cite{Greenbook}.

As was made apparent in \cref{SubAmal}, there is a clear distinction between the cases where $p\in\{2,3\}$ and $p\geq 5$ due to the solvability of $\SL_2(p)$ when $p\in\{2,3\}$. Throughout this subsection, and the subsections to come, this dichotomy will become a prominent theme.

\begin{lemma}\label{BetaCenter}
Suppose that $Z_{\beta}\ne \Omega(Z(S))$, $b>1$ and for $\lambda\in\{\alpha, \beta\}$, $Z_\lambda/\Omega(Z(L_\lambda))$ is a natural $\SL_2(q_\lambda)$-module for $L_\lambda/R_\lambda$. Then the following hold:
\begin{enumerate}
    \item $V_{\alpha}\not\le Q_{\alpha'-1}$ and there is a critical pair $(\alpha-1, \alpha'-1)$ with $[Z_{\alpha-1}, Z_{\alpha'-1}]\ne\{1\}$ and $V_{\alpha-1}\not\le Q_{\alpha'-2}$;
    \item $V_{\lambda}/Z_{\lambda}$ and $Z_{\lambda}$ are FF-modules for $\bar{L_{\lambda}}$;
    \item $q_{\alpha}=q_{\beta}$; and
    \item unless $q_{\lambda}\in\{2,3\}$, $R_{\lambda}=C_{L_{\lambda}}(V_{\lambda}/Z_{\lambda})$ and $L_{\lambda}/C_{L_{\lambda}}(V_{\lambda})Q_{\lambda}\cong \SL_2(q_\lambda)$. 
\end{enumerate}
\end{lemma}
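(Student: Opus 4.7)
The plan for (i) is to argue by contradiction. Suppose $V_\alpha \le Q_{\alpha'-1}$; then since \cref{SL2}(ii) places $Q_{\alpha'-1}$ inside the Sylow $p$-subgroup $Z_\alpha Q_{\alpha'}$ of $G_{\alpha',\alpha'-1}$, we have $V_\alpha \le Z_\alpha Q_{\alpha'}$. Combined with $Z_{\alpha'} \le \Omega(Z(Q_{\alpha'}))$ and \cref{crit pair}(ii), this forces $[V_\alpha, Z_{\alpha'}] \le Z_\alpha$. Because $\bar Z_{\alpha'}$ is a full Sylow $p$-subgroup of $\bar L_\alpha \cong \SL_2(q_\alpha)$, its $L_\alpha$-conjugates generate $\bar L_\alpha$, and so we obtain $[V_\alpha, O^p(L_\alpha)] \le Z_\alpha$. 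The subgroup $Z_\beta Z_\alpha/Z_\alpha$, being $S$-invariant inside $V_\alpha/Z_\alpha$, is then $L_\alpha$-invariant, whence $Z_\alpha Z_\beta \normaleq L_\alpha$. By \cref{push} this forces $Z_\beta = \Omega(Z(S)) \le Z_\alpha$, contradicting the hypothesis. Hence $V_\alpha \not\le Q_{\alpha'-1}$. The minimality of $b$ rules out $\beta$ as the relevant neighbor, so there exists $\alpha-1 \in \Delta(\alpha) \setminus \{\beta\}$ with $Z_{\alpha-1} \not\le Q_{\alpha'-1}$, and since $d(\alpha-1, \alpha'-1) = b$ in the tree, $(\alpha-1, \alpha'-1)$ is a critical pair. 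Since $Z_{\alpha-1}$ is $G_\alpha$-conjugate to $Z_\beta$ and $Z_\beta \ne \Omega(Z(S))$, we can invoke \cref{p-closure2}(iv) to deduce $[Z_{\alpha-1}, Z_{\alpha'-1}] \ne \{1\}$. Applying the first part of the proof verbatim to $(\alpha-1, \alpha'-1)$ then yields $V_{\alpha-1} \not\le Q_{\alpha'-2}$.

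For (ii), the FF-structure on $Z_\lambda$ itself is essentially the offender argument already appearing in \cref{SL2}, so the real work is $V_\lambda/Z_\lambda$. My plan is to use the shifted critical pair from (i) to manufacture an offender: the conclusion $V_{\alpha-1} \not\le Q_{\alpha'-2}$ supplies a $p$-subgroup sitting inside $V_{\alpha-1}$ which fails to centralize $Z_{\alpha'-2}$, and after conjugating back into a Sylow of $L_\alpha$ (respectively $L_\beta$) via the tree automorphisms provided by $G$-transitivity on $\Gamma^G$, this yields a quadratic offender on $V_\lambda/Z_\lambda$ with index bound $|V_\lambda/C_{V_\lambda}(A)| \le |A|$ by matching orders via the natural-module structure already in hand. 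With the offender in place, \cref{SEFF}, applicable because $\bar L_\lambda$ carries a strongly $p$-embedded subgroup by \cref{spelemma2}, identifies the non-central chief factor in $V_\lambda/Z_\lambda$ as a natural $\SL_2(q_\lambda)$-module.

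Finally, for (iii) I would compare the $G_{\alpha,\beta}$-invariant subgroup $Z_\beta Z_\alpha / Z_\alpha \le V_\alpha/Z_\alpha$, of order $|Z_\beta/Z_\beta\cap Z_\alpha| = q_\beta$ by the natural-module structure on $Z_\beta$, with the unique $S$-invariant line of order $q_\alpha$ in a natural $\SL_2(q_\alpha)$-module, forcing $q_\alpha = q_\beta$. For (iv), \cref{SEFF} applied to the FF-module $V_\lambda/Z_\lambda$ furnishes $\bar L_\lambda / \bar C_{L_\lambda}(V_\lambda/Z_\lambda) \cong \SL_2(q_\lambda) \cong \bar L_\lambda$; for $q_\lambda \notin \{2,3\}$ the group $\SL_2(q_\lambda)$ is quasisimple and admits no proper self-quotient, so $\bar C_{L_\lambda}(V_\lambda/Z_\lambda) = 1$, i.e.\ $R_\lambda = C_{L_\lambda}(V_\lambda/Z_\lambda)$. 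Since $C_{L_\lambda}(V_\lambda)Q_\lambda \le R_\lambda$ and the gap between them consists only of elements acting trivially on $V_\lambda/Z_\lambda$ but via the natural-module structure on $Z_\lambda$, one concludes $L_\lambda/C_{L_\lambda}(V_\lambda)Q_\lambda \cong \SL_2(q_\lambda)$. The main obstacle I foresee is in (ii), where the conjugacy transport from the shifted critical pair $(\alpha-1,\alpha'-1)$ back to a usable offender on $V_\lambda/Z_\lambda$ inside $L_\lambda$ requires careful bookkeeping of vertex types along the path from $\alpha$ to $\alpha'$, so that the resulting elementary abelian offender lies in the correct Sylow and the quadratic index bound can be verified.
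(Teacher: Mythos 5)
Your argument for part (i) is correct and is essentially the paper's own: it reproduces the contradiction from the proof of Proposition~\ref{BetaCenterIII} (namely $[V_\alpha, Z_{\alpha'}]\le Z_\alpha$ forces $Z_\alpha Z_\beta\normaleq L_\alpha$, which Proposition~\ref{push} rules out), then locates the shifted pair $(\alpha-1,\alpha'-1)$ and iterates. The only cosmetic issue is that the nontriviality of $[Z_{\alpha-1},Z_{\alpha'-1}]$ really comes from Lemma~\ref{critpair2}(ii) via $Z_{\alpha'-1}\ne\Omega(Z(L_{\alpha'-1}))$ (a property of the acted-upon $Z_{\alpha'-1}$, not of $Z_{\alpha-1}$), rather than from Lemma~\ref{p-closure2}(iv).

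Parts (ii) and (iii) have a genuine gap, and it is the one you yourself flag. To verify that a candidate subgroup $A$ (say $Z_{\alpha'}Q_\alpha/Q_\alpha$, of order $q_\alpha$) is an FF-offender on $V_\alpha/Z_\alpha$, you must bound $|V_\alpha/C_{V_\alpha/Z_\alpha}(A)|$ by $|A|=q_\alpha$; but the computation only gives $|V_\alpha/C_{V_\alpha/Z_\alpha}(A)|\le q_{\alpha'-1}$, and $q_{\alpha'-1}$ is $q_\beta$ or $q_\alpha$ depending on the parity of $b$. Thus the FF-offender check secretly requires the comparison of $q_\alpha$ and $q_\beta$ that is precisely the content of (iii); ``matching orders via the natural-module structure already in hand'' does not break this circularity. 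The paper resolves it by splitting on the parity of $b$: when $b$ is odd, $\alpha'$ is conjugate to $\beta$ and $q_\alpha=q_{\alpha'}=q_\beta$ is immediate from Lemma~\ref{SL2}(iii); when $b$ is even, it first extracts $q_\alpha\le q_\beta$ from the pair $(\alpha,\alpha')$ via Theorem~\ref{SEFF}, then extracts $q_\beta\le q_\alpha$ from the shifted pair $(\alpha-1,\alpha'-1)$, and only afterwards concludes the FF-module structure. Your order-comparison idea for (iii) using the $G_{\alpha,\beta}$-invariant subgroup $Z_\beta Z_\alpha/Z_\alpha$ is an interesting alternative, but it already presupposes that the non-central chief factor of $V_\alpha/Z_\alpha$ is a natural $\SL_2(q_\alpha)$-module --- i.e.\ it presupposes the conclusion of (ii) --- so it cannot be used to bootstrap (ii).

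Part (iv) contains an error. You write $\bar L_\lambda/\bar C_{L_\lambda}(V_\lambda/Z_\lambda)\cong\SL_2(q_\lambda)\cong\bar L_\lambda$ and invoke ``no proper self-quotient,'' but the second isomorphism is not given: the hypothesis is only $L_\lambda/R_\lambda\cong\SL_2(q_\lambda)$, and $\bar R_\lambda$ is a possibly nontrivial $p'$-group, so $\bar L_\lambda$ is in general a proper extension of $\SL_2(q_\lambda)$. The whole point of (iv) is to rule out $R_\lambda\ne C_{L_\lambda}(V_\lambda/Z_\lambda)$, and one cannot do this by assuming $\bar R_\lambda=1$. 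The paper's argument is the right one: if $R_\lambda\ne C_{L_\lambda}(V_\lambda/Z_\lambda)$, both are normal with $p$-part $Q_\lambda$ and both quotients are $\SL_2(q_\lambda)$, so $L_\lambda/(R_\lambda\cap C_{L_\lambda}(V_\lambda/Z_\lambda))$ is a central extension of $\PSL_2(q_\lambda)$ by an elementary abelian group of order $4$, contradicting the fact (Lemma~\ref{SLGen}(vii)) that the $2$-part of the Schur multiplier of $\PSL_2(q)$ has order $2$ when $q\ge 4$.
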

\begin{proof}
By the proof of \cref{BetaCenterIII}, we have that $V_\alpha\not\le Q_{\alpha'-1}$. In particular, there is some $\alpha-1\in\Delta(\alpha)$ such that $(\alpha-1, \alpha'-1)$ is a critical pair with $[Z_{\alpha-1}, Z_{\alpha'-1}]\ne\{1\}$. By a similar reasoning, $V_{\alpha-1}\not\le Q_{\alpha'-2}$ else we arrive at a similar contradiction to the above. Hence (i) holds.

Suppose first that $b$ is odd. Then, by \cref{SL2}, \cref{BetaCenterIII} and as $\alpha'$ is conjugate to $\beta$, $L_{\beta}/R_{\beta}\cong \SL_2(q_\beta)$ and $q_{\beta}=q_{\alpha'}=q_{\alpha}$ and (iii) holds in this case. Now suppose that $b$ is even so $\alpha'-1$ is conjugate to $\beta$. Then, we observe that $V_{\alpha}\cap Q_{\alpha'-1}=Z_{\alpha}(V_{\alpha}\cap Q_{\alpha'})$ has index at most $q_{\beta}$ in $V_{\alpha}$ and is centralized, modulo $Z_{\alpha}$, by $Z_{\alpha'}$. Furthermore, since $Z_{\alpha}Z_{\beta}\not\normaleq L_{\alpha}$, it follows from \cref{p-closure} (iii) that $Q_{\alpha}\in\syl_p(C_{L_{\alpha}}(V_{\alpha}/Z_{\alpha}))$ and by \cref{SEFF}, we have that $q_{\alpha}\leq q_{\beta}$. But then $(\alpha-1, \alpha'-1)$ is also a critical pair with $V_{\alpha-1}\cap Q_{\alpha'-2}=Z_{\alpha-1}(V_{\alpha-1}\cap Z_{\alpha'-1})$ a subgroup of $V_{\alpha-1}$ of index at most $q_{\alpha}$ and applying the same reasoning as before alongside \cref{p-closure} (iii), we deduce that $Q_{\beta}\in\syl_p(C_{L_{\beta}}(V_{\beta}/Z_{\beta}))$ and using \cref{SEFF} we see that $q_{\alpha-1}=q_{\beta}\leq q_{\alpha}$. Thus, $q_{\alpha}=q_{\beta}$ and in either casem we have that $V_{\lambda}/Z_{\lambda}$ is an FF-module for $\bar{L_{\lambda}}$ for all $\lambda\in\Gamma$, and (ii) and (iii) hold.

It remains to prove (iv). By \cref{SEFF}, for all $\lambda\in\Gamma$, $L_{\lambda}/C_{L_{\lambda}}(V_{\lambda}/Z_{\lambda})\cong L_{\lambda}/R_{\lambda}\cong \SL_2(q_\lambda)$. Suppose that $q_{\lambda}\not\in\{2,3\}$ and assume that $C_{L_{\lambda}}(V_{\lambda}/Z_{\lambda})\ne R_{\lambda}$. Since $\{Q_{\lambda}\}=\syl_p(C_{L_{\lambda}}(V_{\lambda}/Z_{\lambda}))=\syl_p(R_{\lambda})$, we infer that $\bar{R_{\lambda}C_{L_{\lambda}}(V_{\lambda}/Z_{\lambda})}$ is a group of order coprime to $p$ and we see immediately that $p$ is odd, $C_{L_{\lambda}}(V_{\lambda}/Z_{\lambda})R_{\lambda}/R_{\lambda}=Z(L_{\lambda}/R_{\lambda})$ and $C_{L_{\lambda}}(V_{\lambda}/Z_{\lambda})R_{\lambda}/C_{L_{\lambda}}(V_{\lambda}/Z_{\lambda})=Z(L_{\lambda}/C_{L_{\lambda}}(V_{\lambda}/Z_{\lambda}))$. Thus, $L_{\lambda}/(C_{L_{\lambda}}(V_{\lambda}/Z_{\lambda})\cap R_{\lambda})$ is isomorphic to a central extension of $\PSL_2(q_\lambda)$ by an elementary abelian group of order $4$. Since $L_{\lambda}=O^{p'}(L_{\lambda})$ and the $2$-part of the Schur multiplier of $\PSL_2(q)$ is of order $2$ by \cref{SLGen} (vii) when $p$ is odd, we have a contradiction. Thus, we shown that, unless $q_{\lambda}\in\{2,3\}$, $C_{L_{\lambda}}(V_{\lambda}/Z_{\lambda})= R_{\lambda}$ and (iv) is proved.
\end{proof}

\begin{lemma}\label{BetaCenterI}
Suppose that for $Z_{\beta}\ne \Omega(Z(S))$ and for $\lambda\in\{\alpha, \beta\}$, $Z_\lambda/\Omega(Z(L_\lambda))$ is a natural $\SL_2(q_\lambda)$-module for $L_\lambda/R_\lambda$. Then $b\leq 2$.
\end{lemma}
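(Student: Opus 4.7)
The plan is to assume for contradiction that $b\ge 3$ and use the symmetry between $\alpha$ and $\alpha'-1$ together with the natural module structure on both sides to force an incompatibility. By \cref{BetaCenter}(i), we may choose $\alpha-1\in \Delta(\alpha)$ so that $(\alpha-1,\alpha'-1)$ is again a critical pair with $[Z_{\alpha-1},Z_{\alpha'-1}]\ne \{1\}$. \cref{SL2} then supplies the full natural $\SL_2(q)$-module structure on $Z_{\alpha'-1}/\Omega(Z(L_{\alpha'-1}))$ as well, so in particular $V_{\alpha'-1}/Z_{\alpha'-1}$ is an FF-module (by \cref{BetaCenter}(ii)) that is completely symmetric in form to $V_\alpha/Z_\alpha$.

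The key geometric fact is that since $b\ge 3$, every $\gamma\in \Delta(\alpha)$ satisfies $d(\gamma,\alpha'-2)\le b-1<b$, so each $Z_\gamma\le Q_{\alpha'-2}$ and hence $V_\alpha\le Q_{\alpha'-2}\le G_{\alpha'-1}$, while $V_\alpha\not\le Q_{\alpha'-1}$ by \cref{BetaCenter}(i). Since $S/Q_{\alpha'-1}$ is elementary abelian of order $q$ (by \cref{SL2} applied to the critical pair $(\alpha-1,\alpha'-1)$), we deduce $|V_\alpha Q_{\alpha'-1}/Q_{\alpha'-1}|=q$, giving a full offender of natural module type on $V_{\alpha'-1}/Z_{\alpha'-1}$. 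By the symmetric argument $V_{\alpha'-1}\le G_\alpha$, $V_{\alpha'-1}\not\le Q_\alpha$, and $V_{\alpha'-1}$ provides a full offender on $V_\alpha/Z_\alpha$. Using $[V_\alpha,Q_\alpha]\le Z_\alpha$ (since $Q_\alpha$ acts trivially on the natural module $V_\alpha/Z_\alpha$) together with $Z_{\alpha'-1}\le Q_\alpha$, and symmetrically $[V_{\alpha'-1},Q_{\alpha'-1}]\le Z_{\alpha'-1}$ with $Z_\alpha\le Q_{\alpha'-1}$, we arrive at $[V_\alpha,V_{\alpha'-1}]\le Z_\alpha\cap Z_{\alpha'-1}$.

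The final step is to exhibit an explicit element of $[V_\alpha,V_{\alpha'-1}]$ lying outside $Z_{\alpha'-1}$, contradicting the previous containment. Working inside the natural module $Z_\alpha/\Omega(Z(L_\alpha))$, the element $Z_{\alpha'}Q_\alpha/Q_\alpha$ acts as a non-trivial offender (since $(\alpha,\alpha')$ is critical), so $[Z_\alpha,Z_{\alpha'}]$ projects onto the unique $N_G(S)$-invariant line of order $q$ in the natural module summand, a line whose preimage is determined by the condition $Z_\beta\ne \Omega(Z(S))$. One then shows, using the natural module structure, that this line is not contained in $Z_{\alpha'-1}$: indeed $Z_{\alpha'-1}\cap Z_\alpha$ is centralized by $Z_{\alpha'}$ (as $Z_{\alpha'}$ centralizes $Z_{\alpha'-1}$) and hence lies in $C_{Z_\alpha}(Z_{\alpha'})=Z_\alpha\cap Q_{\alpha'}$ by \cref{beven}, which is the complementary hyperplane to the offender line. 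This contradicts the containment $[Z_\alpha,Z_{\alpha'}]\le [V_\alpha,V_{\alpha'-1}]\le Z_\alpha\cap Z_{\alpha'-1}$.

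The most delicate step is the final one, where we must carefully exploit the hypothesis $Z_\beta\ne \Omega(Z(S))$ to disentangle the natural module component of $Z_\alpha$ from its $\Omega(Z(L_\alpha))$ part, and keep track of which $N_G(S)$-invariant subgroups of $Z_\alpha$ can lie in $Z_{\alpha'-1}$. In particular, one must be sure that the offender identification in Step~3 is really forcing commutator into the non-central natural module line rather than the trivial part, which is where \cref{BetaCenter}(iv) (or the small-$q$ variant handled directly in cases $q\in\{2,3\}$) is required to ensure that $R_\alpha$ acts as expected on $V_\alpha/Z_\alpha$.
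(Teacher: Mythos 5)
Your structural setup is sound: under $b\ge 3$, the containments $V_\alpha\le Q_{\alpha'-2}$, $V_{\alpha'-1}\le Q_\beta$, $V_\alpha\not\le Q_{\alpha'-1}$, $V_{\alpha'-1}\not\le Q_\alpha$ all check out, and both groups do land inside the relevant vertex stabilizers so that the mutual normalization holds. The gap is in the commutator inclusion
\[
[V_\alpha,V_{\alpha'-1}]\le Z_\alpha\cap Z_{\alpha'-1}.
\]
What you actually have is $[V_\alpha,Z_{\alpha'-1}]\le[V_\alpha,Q_\alpha]\le Z_\alpha$ and $[Z_\alpha,V_{\alpha'-1}]\le[V_{\alpha'-1},Q_{\alpha'-1}]\le Z_{\alpha'-1}$. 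Neither statement controls $[v,w]$ when $v\in V_\alpha\setminus Z_\alpha$ and $w\in V_{\alpha'-1}\setminus Z_{\alpha'-1}$; those elements lie outside the stated kernels of action. Indeed, your claim is inconsistent with the sentence immediately before it: if $V_{\alpha'-1}Q_\alpha/Q_\alpha$ is a nontrivial offender on the natural module $V_\alpha/Z_\alpha$, then by definition $[V_\alpha,V_{\alpha'-1}]\not\le Z_\alpha$, so the containment you want to contradict in your final step is already false for the reason you yourself supplied. All you can honestly deduce from the normalization facts is $[V_\alpha,V_{\alpha'-1}]\le V_\alpha\cap V_{\alpha'-1}$, and there is no reason this intersection should lie in $Z_\alpha\cap Z_{\alpha'-1}$.

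The paper's actual argument avoids any such commutator inclusion. It fixes a vertex $\mu\in\Delta(\alpha'-1)$ with $Z_{\mu,\alpha'-1}\ne Z_{\alpha'-1,\alpha'-2}$, constructs the subgroup $U_{\alpha'-1,\mu}=\langle Z_\delta : Z_{\delta,\alpha'-1}=Z_{\mu,\alpha'-1}\rangle$ and shows it is normalized by $R_{\alpha'-1}Q_\mu$ (this is the same graph-automorphism trick used in \cref{UWNormal}). It then argues $U_{\alpha'-1,\mu}\not\le Q_\alpha$ (otherwise the contradiction $Z_\mu Z_{\alpha'-1}\normaleq L_{\alpha'-1}$ is forced via \cref{push}), does the same construction at $\alpha'$ to produce $U_{\alpha',\mu^*}$, and at the end sets $R:=[Z_\beta,Z_{\alpha'+1}]$ for a critical pair $(\alpha'+1,\beta)$ produced along the way. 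The payoff is that $R$ is centralized simultaneously by $L_{\alpha'}=\langle Q_{\alpha'+1},R_{\alpha'},Z_\alpha\rangle$ and (using abelianness of $V_\alpha$ since $b>2$) by $L_{\alpha'-1}=\langle V_\alpha,R_{\alpha'-1},Q_{\alpha'}\rangle$, making $R$ normal in $\langle L_{\alpha'},L_{\alpha'-1}\rangle$ and hence trivial, a contradiction. To salvage your outline you would need to replace the false commutator inclusion with some such normality argument, since a direct bound on $[V_\alpha,V_{\alpha'-1}]$ of the type you propose is not available.
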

\begin{proof}
Assume throughout that $b>2$ so that $V_{\lambda}$ is abelian for all $\lambda\in\Gamma$. For $\delta\in \Gamma$ and $\nu\in\Delta(\delta)$, set $S_{\delta,\nu}\in\syl_p(G_{\delta, \nu})$ and $Z_{\delta, \nu}:=\Omega(Z(S_{\delta,\nu}))$. Choose $\mu\in\Delta(\alpha'-1)$ such that $Z_{\mu, \alpha'-1}\ne Z_{\alpha'-1, \alpha'-2}$. Thus, $Z_{\alpha'-1}=Z_{\mu, \alpha'-1}Z_{\alpha'-1, \alpha'-2}$. Then, using \cref{BetaCenter} (i), as $V_{\alpha}\not\le Q_{\alpha'-1}$ and $V_{\alpha}$ centralizes $Z_{\alpha'-1, \alpha'-2}$, we have that $L_{\alpha'-1}=\langle Q_{\mu}, R_{\alpha'-1}, V_{\alpha}\rangle$. 

Set $U_{\alpha'-1, \mu}:=\langle Z_{\delta} \mid Z_{\mu, \alpha'-1}=Z_{\delta, \alpha'-1}, \delta\in\Delta(\alpha'-1)\rangle$. Let $r\in R_{\alpha'-1}Q_{\mu}$. Since $r$ is an automorphism of the graph, it follows that for $Z_{\delta}$ with $Z_{\mu, \alpha'-1}=Z_{\delta, \alpha'-1}$ and $\delta\in\Delta(\alpha'-1)$, we have that $Z_{\delta}^r=Z_{\delta\cdot r}$ and $\{\delta, \alpha'-1\}\cdot r=\{\delta\cdot r, \alpha'-1\}$. Since $S_{\delta, \alpha'-1}$ is the unique Sylow $p$-subgroup of $G_{\delta, \alpha'-1}$, it follows that $Z_{\delta, \alpha'-1}^r=Z_{\delta\cdot r, \alpha'-1}$. Since $R_{\alpha'-1}Q_{\mu}$ normalizes $Z_{\delta, \alpha'-1}$, we have that $Z_{\delta\cdot r, \alpha'-1}=Z_{\mu, \alpha'-1}$ so that $Z_{\delta\cdot r}\le U_{\alpha'-1,\mu}$. Thus, $U_{\alpha'-1,\mu}\normaleq R_{\alpha'-1}Q_{\mu}$.

Suppose that $U_{\alpha'-1, \mu}\le Q_{\alpha}$. By \cref{BetaCenter} (i), there is $\alpha-1\in\Delta(\alpha)$ such that $Z_{\alpha-1}\not\le Q_{\alpha'-1}$ and $Z_{\alpha'-1}\not\le Q_{\alpha-1}$. Moreover, we have that $L_{\alpha'-1}=\langle Q_{\mu}, R_{\alpha'-1}, Z_{\alpha-1}\rangle$. Then, $U_{\alpha'-1,\mu}=Z_{\alpha'-1}(U_{\alpha'-1,\mu}\cap Q_{\alpha-1})$ is centralized, modulo $Z_{\alpha'-1}$, by $Z_{\alpha-1}$ so that $U_{\alpha'-1,\mu}\normaleq L_{\alpha'-1}=\langle Q_{\mu}, R_{\alpha'-1}, Z_{\alpha-1}\rangle$. Since $Z_{\alpha-1}$ centralizes $U_{\alpha'-1,\mu}/Z_{\alpha'-1}$, $O^p(L_{\alpha'-1})$ centralizes $U_{\alpha'-1,\mu}/Z_{\alpha'-1}$ and $Z_{\mu}Z_{\alpha'-1}\normaleq L_{\alpha'-1}$, a contradiction by \cref{push}. Thus, $U_{\alpha'-1,\mu}\not\le Q_{\alpha}$.

Hence, there is $\delta\in\Delta(\alpha'-1)$ with $Z_{\delta, \alpha'-1}=Z_{\mu, \alpha'-1}\ne Z_{\alpha'-1, \alpha'-2}$, $L_{\alpha'-1}=\langle Q_{\delta}, R_{\alpha'-1}, V_{\alpha}\rangle$ and $(\alpha,\delta)$ a critical pair. We may as well assume that $\delta=\alpha'$ and $Z_{\alpha', \alpha'-1}\ne Z_{\alpha'-1, \alpha'-2}$. By \cref{beven}, \cref{BetaCenter} applies to $\alpha'$ in place of $\alpha$. Then $V_{\alpha'}\not\le Q_{\beta}$ and there is $\alpha'+1\in\Delta(\alpha')$ with $(\alpha'+1, \beta)$ a critical pair satisfying $Z_{\alpha'+1}\not\le Q_{\beta}$ and $Z_{\beta}\not\le Q_{\alpha'+1}$. Choose $\mu^*\in\Delta(\alpha')$ such that $Z_{\mu^*, \alpha'}\ne Z_{\alpha', \alpha'-1}$ so that $Z_{\alpha'}=Z_{\mu^*, \alpha'}Z_{\alpha', \alpha'-1}$. Then, as $Z_{\alpha}\not\le Q_{\alpha'}$ and $Z_{\alpha}$ centralizes $Z_{\alpha', \alpha'-1}$, we have that $L_{\alpha'}=\langle Z_{\alpha}, Q_{\mu^*}, R_{\alpha'}\rangle$. Forming $U_{\alpha', \mu^*}$ in an analogous way to $U_{\alpha'-1,\mu}$, we see that $U_{\alpha',\mu^*}\normaleq R_{\alpha'}Q_{\mu^*}$ and $U_{\alpha',\mu^*}\not\le Q_{\beta}$. Thus, there is some $\delta^*$ with $Z_{\delta^*, \alpha'}\ne Z_{\alpha', \alpha'-1}$, $L_{\alpha'}=\langle Q_{\delta^*}, R_{\alpha'}, Z_{\alpha}\rangle$ and $(\beta,\delta^*)$ a critical pair. We may as well label $\delta^*=\alpha'+1$ so that $L_{\alpha'}=\langle Z_{\alpha}, Q_{\alpha'+1}, R_{\alpha'}\rangle$ and $Z_{\alpha'+1, \alpha'}\ne Z_{\alpha', \alpha'-1}$ 

Now, let $R:=[Z_{\beta}, Z_{\alpha'+1}]\le Z_{\beta}\cap Z_{\alpha'+1}$. Then $R$ is centralized by $Z_{\beta}Q_{\alpha'+1}\in\syl_p(G_{\alpha'+1,\alpha'})$ so that $R\le Z_{\alpha'+1, \alpha'}$. Since $b>1$, $Z_{\alpha}$ centralizes $R\le Z_{\beta}$ and so $R$ is centralized by $L_{\alpha'}=\langle Q_{\alpha'+1}, R_{\alpha'}, Z_{\alpha}\rangle$ and $R\le Z(L_{\alpha'})\le Z_{\alpha', \alpha'-1}$. But $R\le Z_{\beta}\le V_{\alpha}$ and since $b>2$, $V_{\alpha}$ is abelian so centralizes $R$. In particular, $R$ is centralized by $L_{\alpha'-1}=\langle V_{\alpha}, R_{\alpha'-1}, Q_{\alpha'}\rangle$. But then $R\normaleq \langle L_{\alpha'}, L_{\alpha'-1}\rangle$, a final contradiction. Hence, $b\leq 2$.
\end{proof}

\begin{proposition}\label{G23}
Suppose that $Z_{\beta}\ne \Omega(Z(S))$, $b=2$ and for $\lambda\in\{\alpha, \beta\}$, $Z_\lambda/\Omega(Z(L_\lambda))$ is a natural $\SL_2(q_{\lambda})$-module for $L_\lambda/R_\lambda\cong\SL_2(q_{\lambda})$. Then $p=3$ and $G$ is locally isomorphic to $H$ where $F^*(H)\cong\mathrm{G}_2(3^n)$.
\end{proposition}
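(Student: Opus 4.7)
The plan is to show that $\mathcal{A}$ is a weak BN-pair of rank $2$ with $b = 2$, then apply \cref{greenbook} and use the condition $Z_\beta \ne \Omega(Z(S))$ together with $b = 2$ to isolate the $\mathrm{G}_2(3^n)$-type. By \cref{BetaCenter} we already have $q_\alpha = q_\beta = q$, $L_\lambda/R_\lambda \cong \SL_2(q)$ and $V_\lambda/Z_\lambda$ an FF-module for $\bar L_\lambda$ for both $\lambda \in \{\alpha,\beta\}$; the missing piece for \cref{WBDef} is $R_\lambda = Q_\lambda$.

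To obtain $R_\lambda = Q_\lambda$, I would split according to whether $q \in \{2,3\}$. When $q \notin \{2,3\}$, \cref{BetaCenter}(iv) already gives $R_\lambda = C_{L_\lambda}(V_\lambda/Z_\lambda)$, and the A$\times$B-lemma applied to any $p'$-element of $R_\lambda$ acting on the extension $V_\lambda$ of the two natural $\SL_2(q)$-modules $Z_\lambda$ and $V_\lambda/Z_\lambda$ forces the $p'$-part of $\bar R_\lambda$ to be trivial, leaving $R_\lambda = Q_\lambda$ since $\syl_p(R_\lambda) = \{Q_\lambda\}$. When $q \in \{2,3\}$, I would verify \cref{CommonHyp} with $U/W = V_\lambda/Z_\lambda$ (condition (i) is \cref{BetaCenter}, and (ii) follows from the natural-module structure combined with $b = 2$) and then invoke \cref{SubAmal}: outcome (a) is the desired weak BN-pair conclusion; outcomes (b) and (c) are excluded using the $\alpha/\alpha'$-symmetry (both $(\alpha,\alpha')$ and $(\alpha',\alpha)$ are critical by \cref{beven}), which forces $R_\lambda$ to normalize $Q_\alpha \cap Q_\beta$ on both sides; and outcome (d) would embed our amalgam in a strictly larger weak BN-pair $\wt X$ of the types classified in \cref{JMod}, but the critical distances of those larger BN-pairs all exceed our $b = 2$, contradicting the minimal-distance reading of $b$ when transferred to $\wt X$.

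With $R_\lambda = Q_\lambda$ now in hand, $\mathcal{A}$ satisfies \cref{WBDef} and by \cref{greenbook} it is locally isomorphic to $Y \in \bigwedge^0$ or to one of the listed sporadic parabolic isomorphism classes. The condition $Z_\beta \ne \Omega(Z(S))$ rules out every symplectic-type amalgam, where by definition $\Omega(Z(S)) = \Omega(Z(O^{p'}(G_\beta)))$ and hence $Z_\beta = \Omega(Z(S))$, leaving only the types in which both $Z_\alpha$ and $Z_\beta$ receive non-trivial Levi action. Computing $b$ in each remaining case—$b = 1$ for $\PSL_3(q)$, $b = 2$ for $\mathrm{G}_2(3^n)$, $b = 4$ for $\mathrm{G}_2(q)$ in characteristic $\ne 3$ (by the Chevalley commutator formula, where the change is caused by the vanishing of the $G_2$ structure constants divisible by $3$), and $b \ge 3$ for $\PSU_5(q)$, ${}^3D_4(q)$, ${}^2F_4(q)$ from the tables in \cite{Greenbook}—together with $b = 2$ forces $F^*(Y) \cong \mathrm{G}_2(3^n)$ and $p = 3$; the sporadic exceptions either share the $\mathrm{G}_2(3)$-type parabolic geometry or contradict $b = 2$. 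The main obstacle I anticipate is the $q \in \{2,3\}$ half of the previous step: excluding outcome (d) of \cref{SubAmal} requires careful reciprocal bookkeeping in $V_\alpha \cap Q_{\alpha'}$ and $V_\beta \cap Q_\alpha$, using the full $\alpha/\alpha'$-symmetry to show that no non-trivial $(3 \times 3){:}2$ or $(Q_8 \times Q_8){:}3$ extension of $\bar L_\lambda$ is compatible with $b = 2$ together with natural $\SL_2(q)$-action on both $Z_\alpha$ and $Z_\beta$.
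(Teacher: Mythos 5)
There is a genuine gap in the $q\in\{2,3\}$ branch. Your plan is to verify \cref{CommonHyp} and then invoke \cref{SubAmal}, but both of these require, when $q=p$, that $Z(Q_\beta)=Z_\beta=\Omega(Z(S))$ has order $p$ (this is \cref{CommonHyp}(ii) and the corresponding hypothesis (iii) of \cref{SubAmal}). The present proposition lives precisely in the subsection where $Z_\beta\ne\Omega(Z(S))$, and since $q\in\{2,3\}$ forces $q=p$, these hypotheses are flatly violated; your claim that ``(ii) follows from the natural-module structure combined with $b=2$'' is simply false. A secondary incompatibility is that \cref{SubAmal}(i) requires $Z_\alpha$ itself, not $Z_\alpha/\Omega(Z(L_\alpha))$, to be a natural $\SL_2(q)$-module, and $\Omega(Z(L_\alpha))$ is not known to vanish here (indeed \cref{crit pair}(iv) yields $Z(L_\alpha)=1$ only under the hypothesis $Z_\beta=\Omega(Z(S))$, the opposite case). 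Even setting this aside, outcome (a) of \cref{SubAmal} only asserts that some proper subamalgam $(H_\lambda, G_\delta, G_{\alpha,\beta})$ is a weak BN-pair, which would not let you conclude that the full amalgam $\mathcal{A}$ is one. The paper instead handles $q\in\{2,3\}$ by arguing towards a contradiction: use \cref{Badp2}/\cref{Badp3} to identify $\bar{L_\delta}\cong(3\times 3){:}2$ or $(Q_8\times Q_8){:}3$, build explicit $G_{\alpha,\beta}$-invariant subgroups $P_\lambda$ with $\bar{P_\lambda}\cong\SL_2(p)$ not normalizing $\Omega(Z(S))$ or $Q_\alpha\cap Q_\beta$, check directly that no non-trivial subgroup of $G_{\alpha,\beta}$ is normal in $\langle H_\alpha, H_\beta\rangle$ so \cref{MainHyp} holds for the subamalgam, apply \cref{greenbook} to pin it down as locally $\mathrm{G}_2(3)$, and then kill the supposed extra $p'$-part of $\bar{L_\delta}$ by computing $\Aut(Q_\lambda)$ via \cite[Lemma~7.8]{parkersem}.

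The $q\notin\{2,3\}$ branch of your proposal is essentially sound in outcome, though the invocation of the $A\times B$-lemma is not the right tool as stated: a $p'$-element of $R_\lambda$ centralizes both $Z_\lambda$ and $V_\lambda/Z_\lambda$ (so $V_\lambda$, by coprime action), but to conclude triviality you need the $b=2$ observation that $V_\alpha(Q_\alpha\cap Q_{\alpha'})=Q_\alpha$ with $O^p(L_\alpha)$ centralizing $Q_\alpha/V_\alpha$, whence $C_{L_\alpha}(V_\alpha)$ is a $p$-group; that is the paper's argument, and it is what makes \cref{BetaCenter}(iv) deliver $\bar{L_\lambda}\cong\SL_2(q)$.
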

\begin{proof}
Since $b>1$, by \cref{BetaCenter} (iii), we have that $q_{\alpha}=q_{\beta}$ and $V_{\alpha}\not\le Q_{\beta}$. But then $Q_{\alpha}=V_{\alpha}(Q_{\alpha}\cap Q_{\alpha'})$ and it follows that $O^p(L_{\alpha})$ centralizes $Q_{\alpha}/V_{\alpha}$. In particular, $V_{\alpha}$ contains all non-central chief factors for $L_{\alpha}$ within $Q_{\alpha}$, and consequently $C_{L_{\alpha}}(V_{\alpha})$ is a $p$-group. By \cref{BetaCenter} (i), there is $\alpha-1\in\Delta(\alpha)$ such that $(\alpha-1,\beta)$ is a critical pair with $[Z_{\alpha-1}, Z_{\beta}]\ne\{1\}$ and applying \cref{BetaCenter} (ii) again, $C_{L_{\alpha-1}}(V_{\alpha-1})$ is a $p$-group. By \cref{BetaCenter} (iv), unless $q_{\alpha}\in\{2,3\}$, we conclude that $\bar{L_{\alpha}}\cong \bar{L_{\beta}}\cong \SL_2(q_\alpha)$ and $G$ has a weak BN-pair of rank $2$. Comparing with \cite{Greenbook}, the result holds.

Hence, we assume that $q_{\alpha}=q_{\beta}\in\{2,3\}$ and for $\lambda\in\{\alpha,\beta\}$, $V_{\lambda}/Z_{\lambda}$ and $Z_{\lambda}$ are FF-modules for $\bar{L_{\lambda}}$. Moreover, for some $\delta\in\{\alpha,\beta\}$, we assume that $C_{L_{\delta}}(V_{\delta}/Z_{\delta})\ne R_{\delta}$ and $\bar{L_\delta}\not\cong \SL_2(p)$. By \cref{Badp2} (ii) and \cref{Badp3} (ii), $\bar{L_{\delta}}\cong (3\times 3):2$ or $(Q_8\times Q_8):3$ for $p=2$ or $3$ respectively. Since $O^p(L_{\delta})$ centralizes $Q_{\delta}/V_{\delta}$ we have that $C_{L_\delta}(V_\delta/Z_{\delta})$ normalizes $Q_{\alpha}\cap Q_{\beta}$. 

If $p=2$, by \cref{Badp2} (iii), we may choose $P_{\alpha}\le L_{\alpha}$ such that $\bar{P_{\alpha}}\cong\Sym(3)$, $\Omega(Z(S))\not\normaleq P_{\alpha}$ and $Q_{\alpha}\cap Q_{\beta}\not\normaleq P_{\alpha}$. If $\bar{L_{\alpha}}\cong\Sym(3)$ then $L_{\alpha}=P_{\alpha}$, and if $\bar{L_{\alpha}}\cong (3\times 3):2$, then as there are two choices for $P_{\alpha}$, both are $G_{\alpha,\beta}$-invariant and neither normalizes $Q_{\alpha}\cap Q_{\beta}$. For such a $P_{\alpha}$, set $H_{\alpha}=P_{\alpha}G_{\alpha,\beta}$. We make an analogous choice for $H_{\beta}\le G_{\beta}$ and observe that $P_{\lambda}=O^{2'}(H_{\lambda})$ for $\lambda\in\{\alpha,\beta\}$.

If $p=3$, by \cref{Badp3} (iii), we may choose $P_{\alpha}\le L_{\alpha}$ such that $\bar{P_{\alpha}}\cong\SL_2(3)$, $\Omega(Z(S))\not\normaleq P_{\alpha}$ and $Q_{\alpha}\cap Q_{\beta}\not\normaleq P_{\alpha}$. If $\bar{L_{\alpha}}\cong\SL_2(3)$ then $L_{\alpha}=P_{\alpha}$, and if $\bar{L_{\alpha}}\cong (Q_8\times Q_8):3$, then there are three choices for $P_{\alpha}$. Since all contain $S$, there is at least one choice such that $P_{\alpha}$ is $G_{\alpha,\beta}$-invariant and does not normalize $Q_{\alpha}\cap Q_{\beta}$. For this $P_{\alpha}$, set $H_{\alpha}=P_{\alpha}G_{\alpha,\beta}$ and choose $H_{\beta}$ in a similar fashion. Again, observe that $P_{\lambda}=O^{2'}(H_{\lambda})$ for $\lambda\in\{\alpha,\beta\}$.

Set $X:=\langle H_{\alpha}, H_{\beta}\rangle$ and suppose that there is $\{1\}\ne Q\le S$ with $Q\normaleq X$. Then $Q\le O_p(H_{\alpha})\cap O_p(H_{\beta})=Q_{\alpha}\cap Q_{\beta}$. Suppose $\Omega(Z(S))\not\le Q$. Then $V_{\beta}=\langle \langle \Omega(Z(S))^{H_{\alpha}}\rangle^{H_{\beta}}\rangle$ centralizes $Q$ and since $Q$ is normal in $H_{\alpha}$, $[O^p(P_{\alpha}), Q]\le [V_{\beta}, Q]^{H_{\alpha}}=\{1\}$. Considering the action of $V_\alpha=\langle \langle \Omega(Z(S))^{H_{\beta}}\rangle^{H_{\alpha}}\rangle$ on $Q$ yields $[O^p(P_{\beta}), Q]=\{1\}$. But $Q\normaleq S$ and so $Q\cap \Omega(Z(S))$ is non-trivial and centralized by $G=\langle H_{\alpha}, R_{\alpha}, H_{\beta}, R_{\beta}\rangle$, a contradiction. Hence, $\Omega(Z(S))\le Q$. But then $Q\ge V_{\beta}=\langle \langle\Omega(Z(S))^{H_{\alpha}}\rangle^{H_{\beta}}\rangle\not\le Q_{\alpha}$, a contradiction. 

Thus, any subgroup of $G_{\alpha,\beta}$ which is normal in $X$ is a $p'$-group. Such a subgroup would be contained in $H_{\lambda}$ and so would centralize $Q_{\lambda}$ for $\lambda\in\{\alpha,\beta\}$. Since $S\le H_{\lambda}\le G_{\lambda}$, we have that $H_{\lambda}$ is of characteristic $p$, $C_{H_{\lambda}}(Q_{\lambda})\le Q_{\lambda}$ and no non-trivial subgroup of $G_{\alpha,\beta}$ is normal in $X$. Moreover, $\bar{P_{\alpha}}\cong\bar{P_{\alpha}}\cong \SL_2(p)$ and $X$ has a weak BN-pair of rank $2$. For $\lambda\in\{\alpha,\beta\}$, since $Q_{\lambda}$ contains precisely two non-central chief factors for $P_{\lambda}$, and neither $P_{\alpha}$ nor $P_{\beta}$ normalizes $\Omega(Z(S))$, by \cite{Greenbook}, $X$ is locally isomorphic to $\mathrm{G}_2(3)$ and $S$ is isomorphic to a Sylow $3$-subgroup of $\mathrm{G}_2(3)$. Then $Q_{\alpha}$ and $Q_{\beta}$ are distinguished up to isomorphism. Noticing that \cite[Lemma 7.8]{parkersem} applies in this situation independent of any fusion system hypothesis, it follows that for $\lambda\in\{\alpha,\beta\}$, $\bar{G_{\lambda}}$ is isomorphic to a subgroup of $\GL_2(3)$, a contradiction to the assumption that $\bar{L_\delta}\not\cong \SL_2(p)$. Thus, we conclude that $G$ has a weak BN-pair of rank $2$ and the result follows upon comparison with \cite{Greenbook}.
\end{proof}

\begin{remark}
The graph automorphism of $\mathrm{G}_2(3)$ normalizes $S\in\syl_3(\mathrm{G}_2(3))$ and fuses $Q_{\alpha}$ and $Q_{\beta}$, and so \cref{MainHyp} only allows for groups locally isomorphic to $\mathrm{G}_2(3^n)$ decorated by field automorphisms.
\end{remark}

\begin{proposition}\label{BetaCenterII}
Suppose that $Z_{\beta}\ne \Omega(Z(S))$ and for $\lambda\in\{\alpha, \beta\}$, $Z_\lambda/\Omega(Z(L_\lambda))$ is a natural $\SL_2(q_\lambda)$-module for $L_\lambda/R_\lambda$. Then $G$ is locally isomorphic to $H$ where $(F^*(H),p)$ is one of $(\PSL_3(p^n), p)$, $(\PSp_4(2^n), 2)$ or $(\mathrm{G}_2(3^n), 3)$.
\end{proposition}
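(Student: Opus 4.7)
The plan is to reduce the result to a combination of the already-established lemmas plus a short additional argument for the previously unhandled case. By Lemma~\ref{BetaCenterI} we have $b \leq 2$. When $b = 2$, Proposition~\ref{G23} immediately delivers $p = 3$ and $G$ locally isomorphic to $H$ with $F^*(H) \cong \mathrm{G}_2(3^n)$, which is one of the listed conclusions. So the remaining work is entirely in the case $b = 1$, where $(\alpha,\beta)$ is itself a critical pair.

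In the $b = 1$ case, I would first record the consequences of Lemma~\ref{SL2} (applied with $\alpha' = \beta$): namely $L_\lambda/R_\lambda \cong \SL_2(q)$ with $Z_\lambda/\Omega(Z(L_\lambda))$ a natural $\SL_2(q)$-module for both $\lambda \in \{\alpha,\beta\}$, with a common value $q = q_\alpha = q_\beta$, together with $S = Q_\alpha Q_\beta = Z_\alpha Q_\beta = Z_\beta Q_\alpha$. To invoke Theorem~\ref{greenbook} it then suffices to verify that $\mathcal{A}$ is a weak BN-pair of rank $2$, which by Definition~\ref{WBDef} reduces to showing that $\bar{L_\lambda}$ is one of the listed rank~$1$ groups. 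Since $Q_\lambda \in \syl_p(R_\lambda)$ by Lemma~\ref{p-closure2}, $\bar{R_\lambda}$ is a normal $p'$-subgroup of $\bar{L_\lambda}$, so the task becomes the identification of the perfect central extension $\bar{L_\lambda}$ of $\bar{L_\lambda}/\bar{R_\lambda} \cong \SL_2(q)$.

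For $q \geq 4$ the Schur-multiplier argument of Lemma~\ref{BetaCenter}(iv) transplants verbatim: a non-central $\bar{R_\lambda}$-supplement would yield a perfect central extension of $\PSL_2(q)$ by an elementary abelian group of order $4$, contradicting Lemma~\ref{SLGen}(vii). Hence $\bar{L_\lambda} \in \{\SL_2(q), \PSL_2(q)\}$ as required. The genuinely harder case is $q \in \{2, 3\}$, where $\SL_2(q)$ is solvable and by Lemmas~\ref{Badp2}, \ref{Badp3} the automizer $\bar{L_\lambda}$ could a priori be $(3\times 3):2$ or $(Q_8 \times Q_8):3$. Here I would repeat the sub-amalgam argument used at the end of the proof of Proposition~\ref{G23}: select $G_{\alpha,\beta}$-invariant $P_\lambda \leq L_\lambda$ with $\bar{P_\lambda} \cong \SL_2(p)$ such that neither $P_\alpha$ nor $P_\beta$ normalizes $Q_\alpha \cap Q_\beta$, form $X = \langle P_\alpha G_{\alpha,\beta}, P_\beta G_{\alpha,\beta}\rangle$, and show using $V_\alpha$ and $V_\beta$ that no non-trivial subgroup of $G_{\alpha,\beta}$ is normal in $X$. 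The triple $(P_\alpha G_{\alpha,\beta}, P_\beta G_{\alpha,\beta}, G_{\alpha,\beta})$ therefore satisfies Hypothesis~\ref{MainHyp} and by minimality of $G$ carries a weak BN-pair; Theorem~\ref{greenbook} combined with the fact that both chief factors are natural $\SL_2(p)$-modules then forces $X$ locally isomorphic to $\PSL_3(p)$ (any $p$) or $\PSp_4(2)$ ($p=2$), whence the Lie-theoretic description at $S$ recovers $\bar{L_\lambda} \cong \SL_2(p)$ and rules out the $(3\times 3):2$ and $(Q_8\times Q_8):3$ automizers.

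With the weak BN-pair established, Theorem~\ref{greenbook} classifies $\mathcal{A}$ up to local isomorphism. The hypothesis that both $Z_\alpha$ and $Z_\beta$ are natural $\SL_2(q)$-modules (with $b = 1$) singles out of the Greenbook list precisely the amalgams of type $\PSL_3(p^n)$ for any $p$ and $\PSp_4(2^n)$ for $p = 2$ (where the graph automorphism of $\PSp_4(2^n)$ restores the symmetry between the two parabolics); together with the $\mathrm{G}_2(3^n)$ conclusion from the $b = 2$ case this exhausts the statement. The principal obstacle is the small-field reduction in the third paragraph: the failure of $\SL_2(p)$ to be simple for $p \in \{2, 3\}$ opens the door to the exotic quadratic $2$F-module automizers of Lemmas~\ref{Badp2} and~\ref{Badp3}, and only the minimality of $G$ together with the Greenbook classification of the sub-amalgam suffices to eliminate them.
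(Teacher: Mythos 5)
Your reduction to $b\leq 2$ via \cref{BetaCenterI} and the disposal of $b=2$ by \cref{G23} exactly matches the paper. The $b=1$ case is where the approaches diverge, and your route has a genuine gap.

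The paper's proof of the $b=1$ case is shorter and more direct than what you propose: since $Q_\lambda = Z_\lambda(Q_\alpha\cap Q_\beta)$ with $Z_\lambda$ central in $Q_\lambda$ and elementary abelian, one gets $\Phi(Q_\alpha) = \Phi(Q_\alpha\cap Q_\beta) = \Phi(Q_\beta)$, a subgroup normal in both $G_\alpha$ and $G_\beta$, hence trivial, so $Q_\alpha$ and $Q_\beta$ are elementary abelian. A coprime-action decomposition $Q_\lambda = [Q_\lambda, R_\lambda]\times C_{Q_\lambda}(R_\lambda)$ then has $\Omega(Z(S))$ in the second factor while $[Q_\lambda,R_\lambda]\normaleq S$, so $[Q_\lambda,R_\lambda]=\{1\}$ and characteristic $p$ forces $R_\lambda = Q_\lambda$. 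This gives $\bar{L_\lambda}\cong\SL_2(q)$ immediately, with no Schur multiplier or sub-amalgam arguments and no appeal to minimality.

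Your argument instead tries to identify $\bar{L_\lambda}$ as a perfect central extension of $\SL_2(q)$, but the claim that ``the Schur-multiplier argument of \cref{BetaCenter}(iv) transplants verbatim'' does not hold up. That lemma produces a fours-group central extension because there are \emph{two} normal subgroups $R_\lambda$ and $C_{L_\lambda}(V_\lambda/Z_\lambda)$, each of index prime to $p$ and each yielding an $\SL_2(q)$-quotient, whose intersection has a central fours group above it. In the $b=1$ setting you have only $R_\lambda$; there is no analogue of $C_{L_\lambda}(V_\lambda/Z_\lambda)$ to play off against it, and you never establish that $\bar{R_\lambda}$ is central in $\bar{L_\lambda}$ — which is precisely what would be needed to invoke \cref{SLGen}(vii). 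Without that, $\bar{L_\lambda}$ could a priori be a non-split or even non-central extension of $\SL_2(q)$ by the normal $p'$-group $\bar{R_\lambda}$, and the conclusion $\bar{L_\lambda}\in\{\SL_2(q),\PSL_2(q)\}$ does not follow. Similarly, your $q\in\{2,3\}$ sub-amalgam argument imports a minimality hypothesis on $G$ that the paper's proof of this proposition neither assumes nor needs. The missing idea is the elementary-abelian structure of $Q_\alpha$ and $Q_\beta$ forced by $b=1$, which collapses the entire problem to a one-line coprime-action computation.
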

\begin{proof}
By \cref{BetaCenterI} and \cref{G23}, we may suppose that $b=1$. Then, $Z_{\alpha}\not\le Q_{\beta}$, $Z_{\beta}\not\le Q_\alpha$, $Q_{\alpha}=Z_{\alpha}(Q_{\alpha}\cap {Q_\beta})$ and $Q_{\beta}=Z_{\beta}(Q_{\alpha}\cap {Q_\beta})$. In particular, $\Phi(Q_\alpha)=\Phi(Q_{\alpha}\cap Q_\beta)=\Phi(Q_\beta)$ is trivial and so both $Q_\alpha$ and $Q_\beta$ are elementary abelian. For $\lambda\in\{\alpha, \beta\}$, by coprime action we have that $Q_\lambda=[Q_\lambda, R_\lambda]\times C_{Q_{\lambda}}(R_\lambda)$ is an $S$-invariant decomposition. But $\Omega(Z(S))\le Z_\lambda\le C_{Q_{\lambda}}(R_\lambda)$ and since $[Q_\alpha, R_\lambda]\normaleq S$, we must have that $[Q_{\alpha}, R_{\lambda}]=\{1\}$. It follows that $R_\lambda$ centralizes $Q_\lambda$ and, as $G_{\lambda}$ is of characteristic $p$, $Q_\lambda=R_\lambda$. Thus, $G$ has a weak BN-pair of rank $2$ and is determined by \cite{Greenbook}, hence the result.
\end{proof}

\begin{remark}
Similarly to the $\mathrm{G}_2(3^n)$ example, the graph automorphisms for $\PSL_3(p^n)$ and $\PSp_4(2^n)$ fuse $Q_{\alpha}$ and $Q_{\beta}$ and are not permitted by the hypothesis.
\end{remark}
\subsection{$Z_{\beta}=\Omega(Z(S))$}\label{ZBEQZS}

Given \cref{BetaCenterIII}, we may assume in this subsection that $b$ is even and $Z_{\beta}=\Omega(Z(S))$. The general aim will be to demonstrate that $b=2$ and $\bar{L_{\alpha}}\cong\SL_2(q)$ for then, it will quickly follow that the amalgam in question is symplectic and we may apply the classification in \cite{parkerSymp}. We are able to show that, in all the cases considered, $b=2$. However, at the end of this section we uncover a configuration where $R_{\alpha}\ne Q_{\alpha}$.

\begin{lemma}\label{ineq}
Let $\alpha-1\in\Delta(\alpha)\setminus \{\beta\}$ with $Z_{\alpha-1}\ne Z_\beta$. Then $\Omega(Z(L_\alpha))=\{1\}$, $Z_{\alpha}=Z_\beta\times Z_{\alpha-1}$ is a natural $\SL_2(q_\alpha)$-module, $Q_{\beta}\in\syl_p(R_{\beta})$ and $[Z_\alpha, Z_{\alpha'}]=Z_{\alpha'-1}=Z_{\alpha}\cap Q_{\alpha'}=Z_\beta=[V_\beta, Q_\beta]$.
\end{lemma}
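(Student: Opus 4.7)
The plan is to first prove $\Omega(Z(L_\alpha))=\{1\}$ and then read off the remaining statements from the natural module structure. In this subsection $Z_\beta=\Omega(Z(S))=\Omega(Z(L_\beta))$ and $b$ is even, so the neighbour $\alpha-1\in\Delta(\alpha)$ is in the $G$-orbit of $\beta$ and analogously $Z_{\alpha-1}=\Omega(Z(L_{\alpha-1}))$. By \cref{SL2}, $Z_\alpha/\Omega(Z(L_\alpha))$ is a natural $\SL_2(q_\alpha)$-module for $L_\alpha/R_\alpha\cong\SL_2(q_\alpha)$, in which $Z_\beta$ and $Z_{\alpha-1}$ project into the one-dimensional $\mathrm{GF}(q_\alpha)$-spaces fixed by the Sylow subgroups $S/Q_\alpha$ and $T/Q_\alpha$ respectively, where $T\in\syl_p(G_{\alpha,\alpha-1})$. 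Since $\alpha-1\ne\beta$, these Sylows are distinct parabolics in $L_\alpha/R_\alpha$, and the hypothesis $Z_{\alpha-1}\ne Z_\beta$ rules out the two fixed lines coinciding. Consequently their images span the whole natural module and intersect trivially, giving $Z_\alpha=Z_\beta Z_{\alpha-1}\Omega(Z(L_\alpha))$ and $Z_\beta\cap Z_{\alpha-1}=\Omega(Z(L_\alpha))$.

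Setting $Z:=\Omega(Z(L_\alpha))$, I would then observe that $Z\le Z_\beta=\Omega(Z(L_\beta))\le Z(L_\beta)$, so $Z$ is centralized (hence normalized) by both $L_\alpha$ and $L_\beta$. Combined with $Z\le\Omega(Z(S))\le G_{\alpha,\beta}$, part (iv) of \cref{BasicAmal} forces $Z=\{1\}$. The direct product $Z_\alpha=Z_\beta\times Z_{\alpha-1}$ and its identification as the natural $\SL_2(q_\alpha)$-module are then immediate. For $Q_\beta\in\syl_p(R_\beta)$ I would apply \cref{BasicVB}: this gives $R_\beta\cap S\le Q_\beta\le R_\beta$, hence $R_\beta\cap S=Q_\beta$, and any Sylow of $R_\beta$ is $G_\beta$-conjugate into $R_\beta\cap S=Q_\beta$ and so has order at most $|Q_\beta|$.

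For the chain of equalities, the key input is that $Z_{\alpha'}Q_\alpha/Q_\alpha=S/Q_\alpha$ by \cref{SL2}, so $Z_{\alpha'}$ acts on $Z_\alpha$ as the full Sylow $S/Q_\alpha$ of $L_\alpha/R_\alpha$. Combined with \cref{beven} this yields $Z_\alpha\cap Q_{\alpha'}=C_{Z_\alpha}(Z_{\alpha'})=C_{Z_\alpha}(S)=Z_\beta$, and the natural-module identity $[V,S]=C_V(S)$ gives $[Z_\alpha,Z_{\alpha'}]=Z_\beta$. Since $V_\beta$ is generated by $L_\beta$-conjugates of $Z_\alpha$ on each of which $Q_\beta$ acts as a Sylow with fixed space the central $Z_\beta$, the equality $[V_\beta,Q_\beta]=Z_\beta$ follows similarly. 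The remaining identification $Z_{\alpha'-1}=Z_\beta$ combines the inclusion $Z_\beta=[Z_\alpha,Z_{\alpha'}]\le C_{Z_{\alpha'}}(Z_\alpha)$ (using $Z_\beta\le Z_\alpha$ and that $Z_\alpha$ is abelian by the first part) with the analogous fixed-space description of $Z_{\alpha'-1}$ inside $Z_{\alpha'}$ and the order comparison $|Z_{\alpha'-1}|=|Z_\beta|=q_\alpha$ coming from $G$-conjugacy.

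The main obstacle will be the final identification $Z_{\alpha'-1}=Z_\beta$ as literal subgroups rather than up to $\Omega(Z(L_{\alpha'}))$: the natural-module calculation at $\alpha'$ only controls things modulo $\Omega(Z(L_{\alpha'}))$. I would discharge this either by re-running the first paragraph's argument symmetrically at $\alpha'$, using any neighbour $\alpha'+1\in\Delta(\alpha')\setminus\{\alpha'-1\}$ with $Z_{\alpha'+1}\ne Z_{\alpha'-1}$ to conclude $\Omega(Z(L_{\alpha'}))=\{1\}$, or by directly confirming $\Omega(Z(L_{\alpha'}))\le Z_{\alpha'-1}$ via the definition $Z_{\alpha'-1}=\Omega(Z(Z_\alpha Q_{\alpha'}))$ and then matching orders with $Z_\beta$.
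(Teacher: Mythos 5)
Your argument is essentially the paper's: establish $Z(L_\alpha)=\{1\}$ by amalgam normality (you cite \cref{BasicAmal}~(iv) directly; the paper routes this through \cref{crit pair}~(iv), which is proved the same way), read the natural $\SL_2(q_\alpha)$-module structure off \cref{SL2}, and compute the commutator chain. Your closing worry about $\Omega(Z(L_{\alpha'}))$ is unnecessary: the containment $Z_\beta=[Z_\alpha,Z_{\alpha'}]\le Z_\alpha\cap Z_{\alpha'}\le C_{Z_{\alpha'}}(Z_\alpha Q_{\alpha'})=\Omega(Z(Z_\alpha Q_{\alpha'}))\cap Z_{\alpha'}=Z_{\alpha'-1}$ already holds without invoking the natural-module structure of $Z_{\alpha'}$, and the order comparison $|Z_\beta|=|Z_{\alpha'-1}|$ then forces equality.
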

\begin{proof}
Since $L_\beta$ is transitive on $\Delta(\beta)$ and centralizes $Z_\beta=\Omega(Z(S))$, by \cref{crit pair} (iv), we have that $Z(L_\alpha)=\{1\}$. Then, by \cref{SL2}, $Z_{\alpha}$ is a natural $\SL_2(q_\alpha)$-module for $L_{\alpha}/R_{\alpha}\cong \SL_2(q_\alpha)$.

Now, $[Z_{\alpha}, S]=[Z_{\alpha}, Z_{\alpha'}Q_\alpha]=[Z_{\alpha}, Z_{\alpha'}]=\Omega(Z(S))=Z_\beta$. Thus, $[V_{\beta}, Q_{\beta}]=[\langle Z_{\alpha}^{G_{\beta}}\rangle, Q_{\beta}]=Z_{\beta}\le C_{V_{\beta}}(O^p(L_{\beta}))$ and so $Q_{\beta}\le R_{\beta}$. By \cref{BasicVB}, we have that $Q_{\beta}\in\syl_p(R_{\beta})$.

By considering $[Z_{\alpha'}, Z_{\alpha}Q_{\alpha'}]$ and again employing \cref{SL2}, we deduce that, for $T\in\syl_p(G_{\alpha', \alpha'-1})$, $[Z_{\alpha'}, Z_{\alpha}]=\Omega(Z(T))=Z_{\alpha'-1}$. Then $Z_{\beta}=Z_{\alpha'-1}\le Q_{\alpha'}$ and it follows immediately that $Z_{\beta}=Z_{\alpha}\cap Q_{\alpha'}$. By properties of natural $\SL_2(q_\alpha)$-modules, $Z_{\alpha}=Z_\beta\times Z_\beta^x=Z_\beta\times Z_{\beta\cdot x}$ for $x\in L_\alpha\setminus G_{\alpha,\beta}R_{\alpha}$. In particular, we may choose $\alpha-1\in\Delta(\alpha)$ conjugate to $\beta$ by an element of $L_\alpha\setminus G_{\alpha,\beta}R_{\alpha}$ so that $Z_{\alpha}=Z_\beta \times Z_{\alpha-1}$. 
\end{proof}

\begin{lemma}\label{EvenLemma}
Suppose that $b>2$. Then the following hold:
\begin{enumerate}
    \item $[V_{\alpha}^{(2)}, V_{\alpha}^{(2)}]=\Phi(V_{\alpha}^{(2)})\le Z_{\alpha}$;
    \item $[V_{\alpha}^{(2)}, V_{\alpha'-1}, V_{\alpha'-1}]\le Z_{\alpha}$;
    \item $V_{\beta}/Z_{\beta}$ is a faithful quadratic module for $L_{\beta}/R_{\beta}$; and
    \item there is $\alpha-1\in\Delta(\alpha)$ such that $Z_{\alpha-1}\ne Z_{\beta}$, $V_{\alpha-1}\le Q_{\alpha'-2}$, $V_{\alpha-1}\not\le Q_{\alpha'-1}$ and $[V_{\alpha'-1}\cap Q_{\alpha}\cap Q_{\alpha-1}, V_{\alpha-1}]=\{1\}$.
\end{enumerate}
\end{lemma}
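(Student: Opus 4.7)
The argument proceeds by carefully controlling commutators $[Z_\mu, Z_{\mu'}]$ for vertices $\mu, \mu'$ at bounded graph distance, exploiting: $b > 2$ even (so $V_\lambda$ is abelian by \cref{VAbelian} and $V_\alpha^{(2)} \le Q_\alpha$); the identification $Z_\beta = \Omega(Z(S)) = \Omega(Z(L_\beta))$; and the natural $\SL_2(q_\alpha)$-module decomposition $Z_\alpha = Z_\beta \times Z_{\alpha-1}$ from \cref{ineq}. I would prove (iii) first, since the commutator bound there propagates into the other parts.

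For (iii), using $V_\beta = \langle Z_\mu : \mu \in \Delta(\beta)\rangle$ and $Z_{\alpha'} \le Q_\beta \le S$ (from $d(\alpha',\beta) = b-1 < b$), I would show $[V_\beta, Z_{\alpha'}] \le Z_\beta$. For each generator $Z_\mu$, either $Z_\mu \le Q_{\alpha'}$ (trivial commutator) or $(\mu, \alpha')$ is a critical pair with geodesic passing through $\beta$, and \cref{ineq} applied to this conjugate critical pair yields $[Z_\mu, Z_{\alpha'}] = Z_\beta$. Then $[V_\beta, Z_{\alpha'}, Z_{\alpha'}] \le [Z_\beta, Z_{\alpha'}] = 1$ since $Z_\beta = \Omega(Z(S))$ and $Z_{\alpha'} \le S$. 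Non-triviality of the action is witnessed by $Z_\alpha \le V_\beta$ with $[Z_\alpha, Z_{\alpha'}] \ne 1$, so $Z_{\alpha'}$ acts quadratically and non-trivially on $V_\beta/Z_\beta$; faithfulness of the $L_\beta/R_\beta$-action amounts to the identification $C_{V_\beta}(O^p(L_\beta)) = Z_\beta$, which follows from the $\SL_2(q_\beta)$-structure of \cref{SL2} combined with $Z_\beta = \Omega(Z(L_\beta))$.

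Part (i) uses the same commutator estimate on generators $Z_\mu, Z_{\mu'}$ of $V_\alpha^{(2)}$, now exploiting $d(\mu,\mu') \le 4$. If $d(\mu,\mu') < b$, the commutator is trivial by minimality of $b$ and the fact that $Z_\mu \le \Omega(Z(Q_{\mu'}))$. Otherwise $b = 4$ and $d(\mu,\mu') = 4$, so (since $\Gamma$ is a tree) the geodesic passes through $\alpha$, and the argument of \cref{ineq} places $[Z_\mu, Z_{\mu'}]$ in a $G_\alpha$-conjugate of $Z_\beta$, hence in $Z_\alpha = \langle Z_\beta^{G_\alpha}\rangle$. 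The equality $[V_\alpha^{(2)}, V_\alpha^{(2)}] = \Phi(V_\alpha^{(2)})$ then follows because $V_\alpha^{(2)}$ is generated by the elementary abelian subgroups $Z_\mu$, so its abelianisation is elementary abelian. Part (ii) is derived from (i) by case analysis: by (i) the commutator action of $V_{\alpha'-1}$ on $V_\alpha^{(2)}/Z_\alpha$ factors through an abelian quotient, and using the Hall--Witt identity together with $V_{\alpha'-1}$ being abelian (from $b > 2$), the double commutator $[V_\alpha^{(2)}, V_{\alpha'-1}, V_{\alpha'-1}]$ is controlled by iterating the critical-pair commutator estimate of (iii) along each generator $Z_\nu \le V_{\alpha'-1}$.

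For (iv), first $V_\alpha \not\le Q_{\alpha'-1}$: otherwise $V_\alpha$ would centralise $Z_{\alpha'} \le \Omega(Z(Q_{\alpha'-1}))$, contradicting $[Z_\alpha, Z_{\alpha'}] \ne 1$. Hence there is $\alpha-1 \in \Delta(\alpha)$ with $Z_{\alpha-1} \not\le Q_{\alpha'-1}$; since $Z_\beta \le Q_{\alpha'-1}$ forces $\alpha-1 \ne \beta$, and distinct neighbours of $\alpha$ give distinct $1$-dimensional subspaces in the natural module $Z_\alpha$ by \cref{ineq}, we obtain $Z_{\alpha-1} \ne Z_\beta$ and $V_{\alpha-1} \not\le Q_{\alpha'-1}$. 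The containment $V_{\alpha-1} \le Q_{\alpha'-2}$ is established by observing $Z_\alpha \le Q_{\alpha'-2}$ (from $d(\alpha,\alpha'-2) = b-2 < b$), and ruling out critical pairs $(\mu, \alpha'-2)$ for $\mu \in \Delta(\alpha-1)\setminus\{\alpha\}$ by refining the choice of $\alpha-1$ within the $L_\alpha$-orbit on neighbours of $\alpha$ distinct from $\beta$, using the natural-module structure on $Z_\alpha$ to show that the set of admissible $\alpha-1$ is non-empty. Finally $[V_{\alpha'-1} \cap Q_\alpha \cap Q_{\alpha-1}, V_{\alpha-1}] = 1$ follows because elements of the intersection lie in $Q_\alpha \cap Q_{\alpha-1}$ and hence centralise each generator $Z_\mu \le \Omega(Z(Q_\mu))$ of the abelian group $V_{\alpha-1}$. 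The principal obstacle is the existence argument in (iv): simultaneously arranging all four conditions for a single $\alpha-1$ requires a delicate combination of the natural-module action on $Z_\alpha$ and an accounting of which auxiliary critical pairs can arise near $\alpha'-2$.
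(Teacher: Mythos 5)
Your proof of part (iii) has a fatal gap, and it is the part you chose to do first and on which you claim the rest depends. You show $[V_\beta, Z_{\alpha'}]\le Z_\beta$, so $Z_{\alpha'}$ acts \emph{trivially} on $V_\beta/Z_\beta$, and then assert that $[Z_\alpha,Z_{\alpha'}]\ne 1$ witnesses non-trivial quadratic action. This is a non-sequitur: by \cref{ineq} we have $[Z_\alpha,Z_{\alpha'}]=Z_\beta$, which is exactly the subgroup you have quotiented out, so the action of $Z_{\alpha'}$ on $Z_\alpha Z_\beta/Z_\beta$ is trivial, consistent with your own bound $[V_\beta,Z_{\alpha'}]\le Z_\beta$ but contradicting your conclusion. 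Structurally this had to fail: $Z_{\alpha'}\le Q_\beta$ (as $d(\alpha',\beta)=b-1<b$) and $Q_\beta\in\syl_p(R_\beta)$ centralizes $V_\beta/C_{V_\beta}(O^p(L_\beta))$ by \cref{ineq}, so $Z_{\alpha'}$ has trivial image in $L_\beta/R_\beta$ and cannot be a quadratic witness for that quotient. A non-trivial quadratic element must come from $S\setminus Q_\beta$; the paper achieves this by working at $\alpha'-1$ (conjugate to $\beta$) and showing $V_\alpha^{(2)}\cap Q_{\alpha'-2}$ acts quadratically on $V_{\alpha'-1}/Z_{\alpha'-1}$ using part (i), with non-triviality coming from $V_\alpha^{(2)}\cap Q_{\alpha'-2}\not\le Q_{\alpha'-1}$ (which is tied to the existence claim in part (iv)).

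Part (iv) is also not a proof. The assertion that distinct neighbours of $\alpha$ give distinct one-dimensional subspaces of $Z_\alpha$ is false (the fibres of $\Delta(\alpha)\to\{\,1\text{-spaces of }Z_\alpha\,\}$ are $R_\alpha$-orbits, generally non-trivial), though the weaker conclusion $Z_{\alpha-1}\ne Z_\beta$ does follow directly from $Z_\beta\le Q_{\alpha'-1}$. The existence of a single $\alpha-1$ simultaneously satisfying $V_{\alpha-1}\le Q_{\alpha'-2}$ and $V_{\alpha-1}\not\le Q_{\alpha'-1}$ is precisely what needs an argument, and ``refining the choice within the $L_\alpha$-orbit'' does not supply one; the paper's method is to fix $Z_{\alpha-1}\ne Z_\beta$, form $U=\langle V_\lambda\mid \lambda\in\Delta(\alpha),\,Z_\lambda=Z_{\alpha-1}\rangle$, and show $U\le Q_{\alpha'-2}$ (else \cref{ineq} at a critical pair $(\alpha-2,\alpha'-2)$ forces $Z_\alpha\le V_{\alpha'-1}$, impossible for $b\ge 4$) and $U\not\le Q_{\alpha'-1}$ (else $U\normaleq L_\alpha=\langle R_\alpha,Z_{\alpha'},Q_{\alpha-1}\rangle$ and $V_{\alpha-1}\normaleq\langle G_\alpha,G_{\alpha-1}\rangle$). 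Your final claim that membership in $Q_\alpha\cap Q_{\alpha-1}$ forces centralization of every generator $Z_\mu$ of $V_{\alpha-1}$ is also unjustified: $Q_\alpha\cap Q_{\alpha-1}\not\le Q_\mu$ for $\mu\in\Delta(\alpha-1)\setminus\{\alpha\}$, so you cannot invoke $Z_\mu\le\Omega(Z(Q_\mu))$.
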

\begin{proof}
Notice that \[[V_{\alpha}^{(2)}, V_{\alpha}^{(2)}]=[V_{\alpha}^{(2)}, V_{\beta}]^{G_{\alpha}}\le [Q_{\beta}, V_{\beta}]^{G_{\alpha}}=\langle Z_{\beta}^{G_{\alpha}}\rangle=Z_{\alpha}.\] Since $V_{\alpha}^{(2)}$ is generated by $V_{\lambda}$ for $\lambda\in\Delta(\alpha)$ and $V_{\lambda}$ is elementary abelian, it follows that $V_{\alpha}^{(2)}/[V_{\alpha}^{(2)}, V_{\alpha}^{(2)}]$ is elementary abelian and (i) holds. Moreover, we have that $[V_{\alpha}^{(2)}, V_{\alpha'-1}, V_{\alpha'-1}]\le [V_{\alpha}^{(2)}, V_{\alpha'-2}^{(2)}, V_{\alpha'-1}]\le [Q_{\alpha'-1}, V_{\alpha'-1}]=Z_{\alpha'-1}=Z_{\beta}$, (ii) holds and since $Z_{\alpha}\le [V_{\alpha}^{(2)}, Q_{\alpha}]$, $V_{\alpha}^{(2)}/[V_{\alpha}^{(2)}, Q_{\alpha}]$ is a quadratic module for $\bar{L_{\alpha}}$. Furthermore, $[V_{\alpha'-1}, V_{\alpha}^{(2)}\cap Q_{\alpha'-2}, V_{\alpha}^{(2)}\cap Q_{\alpha'-2}]\le V_{\alpha'-1}\cap Z_{\alpha}=Z_{\alpha'-1}$ and so $V_{\alpha'-1}/Z_{\alpha'-1}$ is a quadratic module for $\bar{L_{\alpha'-1}}$, and (iii) holds.

Set $U:=\langle V_{\lambda}\mid Z_{\lambda}=Z_{\alpha-1}, \lambda\in\Delta(\alpha)\rangle$ for a fixed subgroup $Z_{\alpha-1}\ne Z_{\beta}$. If $U\not\le Q_{\alpha'-2}$, then up to relabeling, there is some $\alpha-2\in\Delta^{(2)}(\alpha)$ with $(\alpha-2, \alpha'-2)$ a critical pair and $Z_{\alpha-1}\ne Z_{\beta}$. But then $Z_{\alpha}=Z_{\alpha-1}\times Z_{\beta}\le V_{\alpha'-1}$, a contradiction since $b\geq 4$. Suppose that $U\le Q_{\alpha'-1}$ so that $[Z_{\alpha'}, U]=[Z_{\alpha'}, Z_{\alpha}(U\cap Q_{\alpha'})]\le Z_{\alpha}\le U$. Let $r\in R_{\alpha}Q_{\alpha-1}$. Since $r$ is an automorphism of the graph, it follows that for $V_{\delta}\le U$, we have that $V_{\delta}^r=V_{\delta\cdot r}$. But $Z_{\delta\cdot r}=Z_{\delta}^r=Z_{\alpha-1}^r=Z_{\alpha-1}$ and so $V_{\delta}^r\le U$. Then $U\normaleq L_{\alpha}=\langle R_{\alpha}, Z_{\alpha'}, Q_{\alpha-1}\rangle$. Since $Z_{\alpha'}$ centralizes $U/Z_{\alpha}$, $[O^p(L_{\alpha}), V_{\alpha-1}]\le [O^p(L_{\alpha}), U]=Z_{\alpha}\le V_{\alpha-1}$. In particular, $V_{\alpha-1}\normaleq \langle G_{\alpha}, G_{\alpha-1}\rangle$, a contradiction. Thus we may assume that $U\not\le Q_{\alpha'-1}$ and we may choose $V_{\alpha-1}\not\le Q_{\alpha'-1}$ with $Z_{\alpha-1}\ne Z_{\beta}$. and (iv) holds.
\end{proof}

\begin{lemma}\label{EvenNat1}
Suppose that $b>2$ and $m_p(S/Q_{\alpha})>1$. Then either $L_\beta/R_\beta\cong\SL_2(q)\cong L_{\alpha}/R_\alpha$ and both $Z_{\alpha}$ and $V_\beta/C_{V_{\beta}}(O^p(L_{\beta}))$ are natural modules; or $q_{\beta}>q_{\alpha}$, $[O^p(R_{\alpha}), V_{\alpha}^{(2)}]\ne\{1\}$ and $p=2$.
\end{lemma}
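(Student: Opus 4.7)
The plan is to exploit the faithful quadratic action of $\bar{L_\beta}$ on $V_\beta/Z_\beta$ established in \cref{EvenLemma}(iii), combined with recognition theorems for groups with strongly $p$-embedded subgroups acting on quadratic $2$F-modules. By \cref{SL2} and \cref{ineq} we know $\bar{L_\alpha}\cong \SL_2(q_\alpha)$ with $Z_\alpha$ a natural module and $R_\alpha\cap S=Q_\alpha$, so $S/Q_\alpha$ is elementary abelian of order $q_\alpha$. The hypothesis $m_p(S/Q_\alpha)>1$ thus amounts to $q_\alpha\geq p^2$, and the offender I will produce will be non-cyclic, placing us in the scope of \cref{2FRecog}.

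First I would extract a quadratic offender on $V_\beta/Z_\beta$. Using \cref{EvenLemma}(iv), choose $\alpha-1\in\Delta(\alpha)$ with $Z_{\alpha-1}\ne Z_\beta$, $V_{\alpha-1}\le Q_{\alpha'-2}$, $V_{\alpha-1}\not\le Q_{\alpha'-1}$, and $[V_{\alpha'-1}\cap Q_\alpha\cap Q_{\alpha-1},V_{\alpha-1}]=\{1\}$. Then $V_{\alpha-1}$ acts on $V_{\alpha'-1}/Z_\beta$ with $[V_{\alpha'-1},V_{\alpha-1},V_{\alpha-1}]\le [Z_{\alpha'-1},V_{\alpha-1}]=\{1\}$, hence quadratically. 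Since $\alpha'-1$ is a $G$-conjugate of $\beta$, conjugating this picture to $\beta$ yields a quadratic offender $A\le S/Q_\beta$ on $V_\beta/Z_\beta$. Tracking the image of the direct factor $Z_{\alpha-1}\le Z_\alpha$ of order $q_\alpha$ through the conjugation, I would show $|A|\geq q_\alpha\geq p^2$ and $|V_\beta/C_{V_\beta}(A)|\leq |A|^2$, so that $V_\beta/Z_\beta$ is a bona fide quadratic $2$F-module for $\bar{L_\beta}$, which has a strongly $p$-embedded subgroup by \cref{spelemma2}.

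Then $\bar{L_\beta}$ and its action on $W:=V_\beta/C_{V_\beta}(O^p(L_\beta))$ are identified via \cref{SEFF} and \cref{2FRecog}: $O^{p'}(\bar{L_\beta})$ is one of $\SL_2(q_\beta)$, $\Sz(q_\beta)$, or $\SU_3(q_\beta)$ and $W$ is a direct sum of natural modules. I would eliminate the Suzuki and unitary possibilities using \cref{SUMod}, \cref{SzMod}, \cref{GreenbookQuad}, together with the constraints that $Z_\alpha\le V_\beta$ has order $q_\alpha^2$ with $Z_\beta$ of order $q_\alpha$ central in $S$, which forces a chief factor structure incompatible with the $\SU_3$- or $\Sz$-natural modules except possibly when $p=2$ and $\bar{L_\beta}/O_{2'}(\bar{L_\beta})\cong \SU_3(2^n)$; this residual case I would exclude by appealing to the strongly $p$-embedded hypothesis on $N_{\bar{L_\beta}}(\bar S)$ combined with \cref{SpeMod2}. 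Thus $\bar{L_\beta}\cong \SL_2(q_\beta)$, and by \cref{SL2ModRecog} the chief factor $W$ is either a natural $\SL_2(q_\beta)$-module or a natural $\Omega_4^-(q_\beta^{1/2})$-module.

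In the natural $\SL_2(q_\beta)$-case, matching $|Z_\beta|=q_\alpha$ against the centralizer of a Sylow in a natural module gives $q_\beta=q_\alpha$, yielding the first conclusion of the lemma. In the $\Omega_4^-$-case we have $q_\beta=q_\alpha^2>q_\alpha$, and \cref{NotQuad} forces $p=2$ (since $\PSL_2(q_\beta)$ admits no non-trivial quadratic modules when $p$ is odd). To derive the remaining assertion $[O^p(R_\alpha),V_\alpha^{(2)}]\ne\{1\}$, I would argue by contradiction: if $O^p(R_\alpha)$ centralized $V_\alpha^{(2)}$, then a three subgroup lemma calculation shows that $R_\alpha$ normalizes $Q_\alpha\cap Q_\beta$, so the hypotheses of \cref{SubAmal} hold with $\lambda=\alpha$; inspecting its conclusions (using that $W$ must appear as a factor in the resulting subamalgam) forces $W$ to be a sum of natural $\SL_2(q_\alpha)$-modules for $\bar{L_\beta}$, contradicting the $\Omega_4^-$-description. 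The main obstacle will be the uniform exclusion of the $\SU_3(2^n)$ and $\Sz(2^n)$ configurations when the offender bound $|A|\geq q_\alpha$ is only marginally effective: here the tight interplay between the natural-module embedding of $Z_\alpha$ into $V_\beta$ and the internal chief-factor structure of the $\SU_3$- and $\Sz$-natural modules (\cref{SUMod}, \cref{SzMod}) has to be exploited delicately.
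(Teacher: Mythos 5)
Your plan is genuinely different from the paper's: the paper works on the $\alpha$-side, first establishing $O^p(R_\alpha)$ centralizes $V_\alpha^{(2)}$ using quadratic action of $Z_{\alpha'}Q_\alpha/Q_\alpha$ (which has order exactly $q_\alpha$) on $V_\alpha^{(2)}/Z_\alpha$, and only then identifying $V_\beta/C_{V_\beta}(O^p(L_\beta))$ via an intersection argument and \cref{SEFF}. You instead try to identify $\bar{L_\beta}$ directly from a quadratic offender on $V_{\alpha'-1}$. Unfortunately this route has several concrete gaps.

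First, the offender bound $|A|\geq q_\alpha$ is not justified. The offender $A=V_{\alpha-1}Q_{\alpha'-1}/Q_{\alpha'-1}$ lives in $S/Q_{\alpha'-1}$, which has order $q_{\alpha'-1}=q_\beta$ (not $q_\alpha$). You propose to bound $|A|$ below by "tracking $Z_{\alpha-1}$", but $Z_{\alpha-1}\le Z_\alpha\le Q_{\alpha'-1}$ because $d(\alpha,\alpha'-1)=b-1<b$, so $Z_{\alpha-1}$ is in the kernel of the map to $S/Q_{\alpha'-1}$ and contributes nothing. Without a lower bound on $|A|$, neither \cref{SEQuad}/\cref{GreenbookQuad} (which needs $A\le\Omega(Z(T/Q_{\alpha'-1}))$ for a specific $T$) nor \cref{2FRecog} (which needs $|A|>q_\beta^{1/2}$) applies. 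In contrast, the paper's offender $Z_{\alpha'}Q_\alpha/Q_\alpha$ automatically equals all of $S/Q_\alpha$ by \cref{SL2}.

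Second, your dichotomy "\cref{2FRecog} applies, so $\bar{L_\beta}\cong\SL_2(q_\beta)$ and then \cref{SL2ModRecog} gives natural $\SL_2$ or natural $\Omega_4^-$" is internally inconsistent. The conclusion of \cref{2FRecog} explicitly asserts that $V/C_V(G)$ is a direct sum of natural modules, and the proof rules out $\Omega_4^-$ precisely because the offender hypothesis $|A|>p^{n/2}$ excludes it. Hence if \cref{2FRecog} applies you land in the first alternative of the lemma, and the $\Omega_4^-$ configuration arises only when the $2$F-offender hypotheses \emph{fail} — which is precisely the hard $q_\beta>q_\alpha$ case that your argument does not actually treat.

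Third, the appeal to \cref{SubAmal} in the $\Omega_4^-$ case is circular. \cref{SubAmal} requires \cref{CommonHyp}, whose condition (i) demands that $L_\alpha/R_\alpha\cong\SL_2(q)\cong L_\beta/R_\beta$ with both $Z_\alpha$ and $V_\beta/C_{V_\beta}(O^p(L_\beta))$ natural $\SL_2(q)$-modules over the \emph{same} $q$. In your putative $\Omega_4^-$ configuration one has $q_\beta>q_\alpha$ and $V_\beta/C_{V_\beta}(O^p(L_\beta))$ is not a natural $\SL_2$-module, so the hypotheses of \cref{SubAmal} (and of \cref{GoodAction1}, \cref{SimExt} as packaged by \cref{CommonHyp}) are simply unavailable. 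What the paper actually does is the contrapositive: it shows directly that if $p$ is odd, or $p=2$ and $q_\beta\leq q_\alpha$, then $O^p(R_\alpha)$ centralizes $V_\alpha^{(2)}$, and that under this centralizing assumption a local intersection argument plus \cref{SEFF} produces the first alternative. The residual case, left entirely alone, is exactly the second alternative, so no separate contradiction step is needed.
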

\begin{proof}
Assume that $p$ is an odd prime. Using that $m_p(S/Q_{\alpha})\geq 2$ and $V_{\alpha}^{(2)}/Z_{\alpha}$ admits quadratic action by $V_{\alpha'-1}$, we deduce from \cref{SEQuad} and a standard argument involving the Schur multiplier of $\PSL_2(q)$, that $R_{\alpha}=C_{L_{\alpha}}(V_{\alpha}^{(2)})Q_{\alpha}$. Assume now that $p=2$ and $q_{\beta}\leq q_{\alpha}$. Then as $V_{\alpha}^{(2)}$ is elementary abelian, $V_{\alpha}^{(2)}\cap Q_{\alpha'-2}\cap Q_{\alpha'-1}$ has index at most $q_{\alpha}^2$ and is centralized, modulo $Z_{\alpha}$, by $Z_{\alpha'}$. Applying \cref{2FRecog}, we deduce in this case also that $R_{\alpha}=C_{L_{\alpha}}(V_{\alpha}^{(2)})Q_{\alpha}$.

More generally, assume that $R_{\alpha}=C_{L_{\alpha}}(V_{\alpha}^{(2)})Q_{\alpha}$. An application of \cref{SimExt} to $Z_{\alpha'-1}=Z_{\beta}$ yields immediately that $b>4$ so that $V_{\alpha}^{(2)}$ is elementary abelian. Furthermore, $V_{\alpha}^{(2)}\le Q_{\alpha'-2}$ for otherwise there is $\lambda\in\Delta(\alpha)$ with $(\lambda-1, \alpha'-2)$ a critical pair. But then by \cref{ineq} we deduce that $Z_{\lambda}=Z_{\alpha'-3}$ and since $Z_{\alpha}\ne Z_{\alpha'-2}$, we must have that $Z_{\lambda}=Z_{\beta}$ and \cref{SimExt} yields that $V_{\lambda}=V_{\beta}\le Q_{\alpha'-2}$, a contradiction. 

NOw, $V_{\alpha-1}\cap Q_{\alpha'-1}=Z_{\alpha}(V_{\alpha-1}\cap Q_{\alpha'})$ so that $L_{\alpha}=\langle Z_{\alpha'}, Q_{\alpha-1}, R_{\alpha}\rangle$ normalizes $V_{\alpha-1}\cap Q_{\alpha'-1}$. Since $L_{\alpha}$ acts transitively on the neighbors of $\alpha$, $V_{\alpha-1}\cap V_{\beta}\le V_{\alpha-1}\cap Q_{\alpha'-1}\le \bigcap\limits_{\lambda\in\Delta(\alpha)} V_\lambda$. By conjugacy, $\bigcap\limits_{\mu\in\Delta(\alpha'-2)} V_\mu$ has index $|V_{\alpha-1}Q_{\alpha'-1}/Q_{\alpha'-1}|$ in $V_{\alpha'-1}$ and is centralized by $V_{\alpha-1}$. By \cref{SEFF} and conjugacy, $V_\beta/C_{V_{\beta}}(O^p(L_{\beta}))$ is a natural module for $L_\beta/R_\beta\cong\SL_2(q_\beta)$ and as $Z_{\alpha}C_{V_{\beta}}(O^p(L_{\beta}))/C_{V_{\beta}}(O^p(L_{\beta}))\cong Z_{\alpha}/Z_{\beta}$ has order $q_\alpha$, and $Z_{\alpha}C_{V_{\beta}}(O^p(L_{\beta}))/C_{V_{\beta}}(O^p(L_{\beta}))$ is $G_{\alpha,\beta}$-invariant, we deduce that $q_\alpha=q_\beta$ and the proof is complete.
\end{proof}

\sloppy{In the following lemma and proposition, we retain the definition of $U$ from \cref{EvenLemma}. That is, $U:=\langle V_{\lambda}\mid Z_{\lambda}=Z_{\alpha-1}, \lambda\in\Delta(\alpha)\rangle$ for a fixed subgroup $Z_{\alpha-1}\ne Z_{\beta}$ where $\alpha-1\in\Delta(\alpha)$. Furthermore, we define $\mathcal{U}:=[U, Q_{\alpha}; i]Z_{\alpha}$ with $i$ chosen minimally so that $[U, Q_{\alpha}; i+1]\le Z_{\alpha}$.}

\begin{lemma}\label{EvenLemma1}
Suppose that $b>2$. If $V_{\beta}/C_{V_{\beta}}(O^p(L_{\beta}))$ is not a natural module for $L_{\beta}/R_{\beta}\cong \SL_2(q_\alpha)$ then the following hold:
\begin{enumerate}
\item $Z(Q_{\alpha})=Z_{\alpha}$;
\item $\mathcal{U}=[V_{\lambda}, Q_{\alpha}; i]\normaleq G_{\alpha}$ for all $\lambda\in\Delta(\alpha)$; and
\item $[\mathcal{U}, Q_{\alpha}]=Z_{\alpha}$.
\end{enumerate}
\end{lemma}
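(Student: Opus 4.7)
I plan to establish the three conclusions in the order (iii), (ii), (i), with the hypothesis that $V_\beta/C_{V_\beta}(O^p(L_\beta))$ is not a natural module entering only at the final step.

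For (iii), the containment $[\mathcal{U}, Q_\alpha] \le Z_\alpha$ is essentially by construction: expanding
\[
[\mathcal{U}, Q_\alpha] \le [U, Q_\alpha; i+1]\cdot [Z_\alpha, Q_\alpha],
\]
the first factor lies in $Z_\alpha$ by the minimal choice of $i$ and $[Z_\alpha, Q_\alpha] = Z_\beta$ by \cref{ineq}. For the reverse inclusion I would exploit that $U$, and hence $\mathcal{U}$, is $R_\alpha Q_{\alpha-1}$-invariant (cf.\ \cref{UWNormal} and the proof of \cref{EvenLemma}(iv)) together with the structure of $Z_\alpha = Z_\beta \times Z_{\alpha-1}$ as a natural $\SL_2(q_\alpha)$-module for $L_\alpha/R_\alpha$: since $R_\alpha$ centralises $Z_\alpha$ and the image of $Q_{\alpha-1}$ in $L_\alpha/R_\alpha$ is a Sylow $p$-subgroup, the only $R_\alpha Q_{\alpha-1}$-invariant subgroups of $Z_\alpha$ are $\{1\}$, $Z_{\alpha-1}$, and $Z_\alpha$. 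The minimal choice of $i$ combined with \cref{EvenLemma}(ii) should then rule out the collapse $[U, Q_\alpha; i+1] \le Z_\beta$ and force $[\mathcal{U}, Q_\alpha] = Z_\alpha$.

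For (ii), by \cref{UWNormal} $U$ is $R_\alpha Q_{\alpha-1}$-invariant, so $\mathcal{U}$ inherits that invariance. Transitivity of $L_\alpha$ on $\Delta(\alpha)$ combined with $Q_\alpha \normaleq L_\alpha$ gives, for every $\lambda \in \Delta(\alpha)$, some $g \in L_\alpha$ with $[V_\lambda, Q_\alpha; i] = [V_{\alpha-1}, Q_\alpha; i]^g$. Using (iii) and the $R_\alpha Q_{\alpha-1}$-invariance these conjugates all coincide with $\mathcal{U}$ modulo $Z_\alpha$ and have their $Q_\alpha$-commutator pinned inside $Z_\alpha$, which upgrades the $R_\alpha Q_{\alpha-1}$-invariance to full $L_\alpha$-invariance. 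Since $G_\alpha = L_\alpha N_{G_\alpha}(S)$ and $\mathcal{U}$ is patently $N_{G_\alpha}(S)$-invariant, the normality $\mathcal{U} \normaleq G_\alpha$ follows, and simultaneously one reads off $\mathcal{U} = [V_\lambda, Q_\alpha; i]$ for every $\lambda \in \Delta(\alpha)$.

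For (i), suppose for contradiction that $Z(Q_\alpha) > Z_\alpha$ and set $W := V_\beta \cap Z(Q_\alpha) = C_{V_\beta}(Q_\alpha)$, which is $G_{\alpha,\beta}$-invariant and contains $Z_\alpha$. The subgroup $\langle W^{L_\beta}\rangle$ is $L_\beta$-invariant, lies in $V_\beta$ and contains $Z_\alpha$, so equals $V_\beta$. Since $S = Q_\alpha Q_\beta$, the image of $Q_\alpha$ in $\bar{L_\beta}$ is a Sylow $p$-subgroup, hence \cref{SplitMod} yields $V_\beta = [V_\beta, O^p(L_\beta)] C_{V_\beta}(O^p(L_\beta))$. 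Viewing the image of $W$ in $V_\beta/C_{V_\beta}(O^p(L_\beta))$ as an FF-offender with $|Z_\alpha/Z_\beta| = q_\alpha = |S/Q_\beta|$ and applying \cref{SEFF} would force $V_\beta/C_{V_\beta}(O^p(L_\beta))$ to be a natural $\SL_2(q_\alpha)$-module, contradicting the hypothesis. The main obstacle I anticipate is ensuring that the image of $W$ is genuinely a nontrivial offender, i.e.\ that $W \not\le C_{V_\beta}(O^p(L_\beta))$ and that its index-bound survives passage to the quotient; I expect to split into the two cases of \cref{VBGood}, using (iii) (together with the $|V_\beta| = q_\alpha^3$ alternative and \cref{push}) to dispose of the degenerate situation in which $C_{V_\beta}(O^p(L_\beta))$ already contains $W$.
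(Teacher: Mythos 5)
Your plan reverses the logical dependency that makes the paper's argument work, and this is not a cosmetic choice: part (i) is a prerequisite for parts (ii) and (iii), not a consequence of them.

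Recall that $Z_\alpha = \langle \Omega(Z(S))^{G_\alpha}\rangle \le Z(Q_\alpha)$ always, since $\Omega(Z(S)) \le Z(Q_\alpha)$ and $Z(Q_\alpha) \normaleq G_\alpha$. In particular $[Z_\alpha, Q_\alpha] = \{1\}$; your assertion ``$[Z_\alpha, Q_\alpha] = Z_\beta$ by \cref{ineq}'' is a misreading of that lemma, which gives $[V_\beta, Q_\beta] = Z_\beta$, a different pair. Consequently $[\mathcal{U}, Q_\alpha] = [U, Q_\alpha; i+1]$, and the minimal choice of $i$ only guarantees $[U, Q_\alpha; i] \not\le Z_\alpha$, not $[U, Q_\alpha; i] \not\le Z(Q_\alpha)$. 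If $Z(Q_\alpha) > Z_\alpha$, nothing prevents $[U, Q_\alpha; i]$ from lying inside $Z(Q_\alpha)\setminus Z_\alpha$, which forces $[\mathcal{U}, Q_\alpha] = \{1\} \ne Z_\alpha$. This is precisely why the paper proves (i) first: $Z(Q_\alpha)=Z_\alpha$ is the only thing that guarantees $[\mathcal{U}, Q_\alpha]$ is nontrivial, after which the $G_\alpha$-closure $W := \langle \mathcal{U}^{G_\alpha}\rangle$ has $[W, Q_\alpha] = [\mathcal{U}, Q_\alpha]^{G_\alpha}$ a nontrivial $G_\alpha$-invariant subgroup of the irreducible module $Z_\alpha$, hence $Z_\alpha$. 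Your (iii)-first plan has no replacement for this.

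Even granting nontriviality, the reduction to $\{1\}, Z_{\alpha-1}, Z_\alpha$ needs invariance under the full Borel $N_{L_\alpha}(Q_{\alpha-1}Q_\alpha)$ (cf.\ \cref{NatMod}(vii)), not merely under $R_\alpha Q_{\alpha-1}$ as you write; for $q_\alpha > p$ the latter group leaves many subgroups of $Z_\alpha$ invariant. And even with the full Borel, ruling out $[\mathcal{U}, Q_\alpha] = Z_{\alpha-1}$ is not accomplished by ``the minimal choice of $i$ combined with \cref{EvenLemma}(ii)'' — that lemma item is a quadraticity statement about $V_{\alpha'-1}$ acting on $V_\alpha^{(2)}$ and bears no obvious relation to which Borel-invariant piece of $Z_\alpha$ the commutator lands in. The paper sidesteps this entirely by passing to the genuinely $G_\alpha$-invariant object $W$ and deducing $\mathcal{U} = W$ from the absence of non-central $L_\alpha$-chief factors in $W/Z_\alpha$, a fact it proves by contradiction using the ``not a natural module'' hypothesis several times through \cref{SEFF}, \cref{SpeMod2} and \cref{EvenNat1}. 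Your plan to invoke the hypothesis only at (i) therefore also misses content needed for (ii)/(iii).

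Finally, the argument you sketch for (i) does not parse: $W := V_\beta \cap Z(Q_\alpha)$ is a subgroup of the module $V_\beta$, not of the acting group $L_\beta$, so its image in $V_\beta/C_{V_\beta}(O^p(L_\beta))$ cannot be an ``FF-offender'' — \cref{SEFF} requires a $p$-subgroup of $L_\beta$ with the FF inequality, and you never exhibit one. Nor does $Z(Q_\alpha) > Z_\alpha$ obviously force $W > Z_\alpha$. The paper's proof of (i) proceeds along the critical path, studying the action of $Z(Q_\alpha)$ on $V_{\alpha'-1}$ (where it is an actual $p$-subgroup of $L_{\alpha'-1}$ acting quadratically) and then, in the alternative case $Z(Q_\alpha) \le Q_{\alpha'-1}$, using exponent and centraliser arguments in $S = Z_{\alpha'}Q_\alpha$. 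You would need to recover an argument of that shape; the $V_\beta$-based route as stated is a dead end.
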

\begin{proof}
Assume that $Z(Q_{\alpha})\not\le Q_{\alpha'-1}$. In particular, $V_{\alpha'-1}\cap Q_{\alpha}$ is centralized by $Z(Q_{\alpha})$ and $Z(Q_{\alpha})$ acts quadratically on $V_{\alpha'-1}$ so that $Z(Q_{\alpha})Q_{\alpha'-1}/Q_{\alpha'-1}$ is elementary abelian. We may assume that $m_p(S/Q_{\alpha})>1$ else we have a contradiction to the initial assumptions by \cref{SEFF}. If $Z(Q_{\alpha})\cap Q_{\alpha'-1}$ has index strictly less than $q_\alpha$ in $Z(Q_{\alpha})$ then $O^p(L_{\alpha})$ centralizes $Z(Q_{\alpha})/Z_{\alpha}$ and we deduce that $S$ centralizes $Z(Q_{\alpha})/Z_{\alpha}$. In particular, $[V_{\alpha'-1}, Z(Q_{\alpha})]\le Z_{\alpha}\cap V_{\alpha'-1}=Z_{\beta}=Z_{\alpha'-1}$, a contradiction since $Z(Q_{\alpha})\not\le Q_{\alpha'-1}$. Hence, $q_\alpha\leq |Z(Q_{\alpha})Q_{\alpha'-1}/Q_{\alpha'-1}|$ and \cref{SEFF} provides a contradiction to the initial assumption. Therefore, $Z(Q_{\alpha})\le Q_{\alpha'-1}$. Then $O^p(L_{\alpha})$ centralizes $Z(Q_{\alpha})/Z_{\alpha}$ and the irreducibility of $L_{\alpha}$ on $Z_{\alpha}$ yields that $Z(Q_{\alpha})$ is elementary abelian. But then $Z(Q_\alpha)\cap Q_{\alpha'}$ is centralized by $S=Z_{\alpha'}Q_{\alpha}$ so that $Z(Q_\alpha)\cap Q_{\alpha'}\le \Omega(Z(S))=Z_{\beta}\le Z_{\alpha}$. Thus, $Z(Q_{\alpha})=Z_{\alpha}$.

Set $\mathcal{U}:=[U, Q_{\alpha}; i]Z_{\alpha}$ with $i$ chosen minimally so that $[U, Q_{\alpha}; i+1]\le Z_{\alpha}$. In particular, since $Z(Q_{\alpha})=Z_{\alpha}$, $[\mathcal{U}, Q_{\alpha}]$ is non-trivial. Let $W:=\langle \mathcal{U}^{G_{\alpha}}\rangle$. Then $[W, Q_{\alpha}]=[\mathcal{U}, Q_{\alpha}]^{G_{\alpha}}=Z_{\alpha}$. Moreover, if $W/Z_{\alpha}$ contains no non-central chief factors for $L_{\alpha}$, then $W=\mathcal{U}=[V_{\alpha-1}, Q_{\alpha}; i]$ is normalized by $G_{\alpha}=O^p(L_{\alpha})G_{\alpha,\alpha-1}$ and the result holds.

Hence, we may assume that $W/Z_{\alpha}$ contains a non-central chief factor for $L_{\alpha}$. Suppose that $|(W\cap Q_{\alpha'-2})Q_{\alpha'-1}/Q_{\alpha'-1}|\leq p$. By \cref{SEFF}, $|WQ_{\alpha'-2}/Q_{\alpha'-2}|\geq q_{\alpha}/p$. But now, $[V_{\alpha'-2}^{(2)}\cap Q_{\alpha}, W]\le Z_{\alpha}\cap V_{\alpha'-2}^{(2)}=Z_{\alpha'-1}$ so that either $q_\alpha=p$ and $W\le Q_{\alpha'-2}$, or $V_{\alpha'-2}^{(2)}/Z_{\alpha'-2}$ contains a unique non-central chief factor for $L_{\alpha'-2}$ by \cref{SpeMod2}. In the latter case, by conjugacy and applying \cref{CommCF}, we conclude that $\mathcal{U}=U$, $W=V_{\alpha}^{(2)}$ and $[V_{\alpha}^{(2)}, Q_{\alpha}]=Z_{\alpha}$. Since $q_\alpha\leq q_\beta$ by \cref{EvenNat1} and $[Q_{\alpha}, V_{\beta}]=Z_{\alpha}$, we deduce that $V_{\beta}/Z_{\beta}$ is dual to an FF-module for $L_{\beta}/R_{\beta}\cong \SL_2(q_\beta)$ and by \cref{SEFF}, we have a contradiction to the initial hypothesis. If $q_\alpha=p$, then $V_{\alpha'-1}\cap Q_{\alpha}$ has index $p$  in $V_{\alpha'-1}$ and is centralized, modulo $Z_{\alpha'-1}$, by $W$ and we have a contradiction by \cref{SEFF}. 

Thus, $|(W\cap Q_{\alpha'-2})Q_{\alpha'-1}/Q_{\alpha'-1}|\geq p^2$. As before, we have that $V_{\alpha'-1}\cap Q_{\alpha}$ has index $q_\alpha$ in $V_{\alpha'-1}$ and is centralized, modulo $Z_{\alpha'-1}$, by $W$. In particular, \cref{SEFF} yields $q_\alpha>|(W\cap Q_{\alpha'-2})Q_{\alpha'-1}/Q_{\alpha'-1}|\geq p^2$ and \cref{EvenNat1} gives that $q_\beta>q_\alpha$ and $p=2$. But now, a contradiction is provided by \cref{SpeMod2}.
\end{proof}

\begin{proposition}\label{EvenNat}
Suppose that $b>2$. Then $L_\beta/R_\beta\cong\SL_2(q)\cong L_{\alpha}/R_\alpha$ and both $Z_{\alpha}$ and $V_\beta/C_{V_{\beta}}(O^p(L_{\beta}))$ are natural modules.
\end{proposition}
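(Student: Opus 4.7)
My plan is to argue by contradiction, so suppose $V_\beta/C_{V_\beta}(O^p(L_\beta))$ is not a natural module for $L_\beta/R_\beta \cong \SL_2(q_\beta)$. The first paragraph of work will be to pin down the coarse structure using results already in hand. By \cref{EvenNat1}, if $m_p(S/Q_\alpha) > 1$ then we must have $p = 2$, $q_\beta > q_\alpha$, and $[O^p(R_\alpha), V_\alpha^{(2)}] \neq \{1\}$; the borderline case $q_\alpha = p$ (so $m_p(S/Q_\alpha) = 1$) has to be handled separately, and I expect to dispose of it quickly using that $V_{\alpha'-1} \cap Q_\alpha$ has index $p$ in $V_{\alpha'-1}$ and is centralized modulo $Z_{\alpha'-1}$ by $\mathcal{U}$, which together with \cref{SEFF} forces $V_\beta/C_{V_\beta}(O^p(L_\beta))$ to be a natural module after all. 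So the core of the proof will concentrate on $p = 2$ with $q_\beta > q_\alpha$.

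In the second paragraph I will invoke \cref{EvenLemma1}, which applies precisely under our contradictory hypothesis and yields three crucial pieces of information: $Z(Q_\alpha) = Z_\alpha$, the group $\mathcal{U} = [V_\lambda, Q_\alpha; i]Z_\alpha$ is normal in $G_\alpha$ independently of the choice of $\lambda \in \Delta(\alpha)$ with $Z_\lambda \neq Z_\beta$, and $[\mathcal{U}, Q_\alpha] = Z_\alpha$. The point is that $\mathcal{U}/Z_\alpha$ is an $L_\alpha$-module on which $Q_\alpha$ acts trivially, so it factors through the $\SL_2(q_\alpha)$-quotient $L_\alpha/R_\alpha$ (modulo the action of $R_\alpha/Q_\alpha$). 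Combined with the fact that $\mathcal{U} \leq V_\alpha^{(2)}$ and so is acted on quadratically by $V_{\alpha'-1}$ thanks to \cref{EvenLemma}(ii), we obtain tight numerical control on the relevant commutator quotients.

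The third paragraph will close the argument. I plan to use \cref{SimExt} in the style of \cref{EvenNat1}: since $Z_{\alpha'-1} = Z_\beta$ and $\mathcal{U}$ is normalized by $O^p(R_\alpha)$, any failure of $\mathcal{U} \not\leq Q_{\alpha'-2}$ would produce a critical pair with $Z_\lambda = Z_\beta$ and a $V_\lambda$ violating \cref{SimExt}; hence $\mathcal{U} \leq Q_{\alpha'-2}$, and in fact $\mathcal{U} \leq Q_{\alpha'-1}$ by \cref{EvenLemma}(iv). Then $[\mathcal{U}, V_{\alpha'-1}] \leq Z_\alpha \cap V_{\alpha'-1} = Z_\beta$, so $\mathcal{U}$ centralizes $V_{\alpha'-1}/Z_\beta$. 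Since $\mathcal{U} Q_{\alpha'-1}/Q_{\alpha'-1}$ can be computed precisely from $[\mathcal{U}, Q_\alpha] = Z_\alpha$ and \cref{ineq}, an application of \cref{SEFF} (or \cref{2FRecog} in the $p=2$, $\Sz/\SU_3$ regime forced by \cref{SpeMod2}) to the $L_{\alpha'-1}$-module $V_{\alpha'-1}/Z_{\alpha'-1}$ contradicts either the rank inequality $q_\beta > q_\alpha$ or the non-natural hypothesis itself.

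The hardest step to execute cleanly will be the last one, particularly in the $p=2$ case where $L_\beta/R_\beta$ could a priori be $\SL_2(2^n)$ with a module of the wrong size, or a Suzuki or unitary group imported from \cref{SpeMod2}. The reason this is delicate is that one must simultaneously rule out natural modules of the wrong characteristic-field size (requiring the $q_\alpha$-versus-$q_\beta$ comparison) and the non-FF quadratic modules of $\Sz(q)$ and $\mathrm{(P)SU}_3(q)$; I expect \cref{SpeMod2} combined with the precise structure of $\mathcal{U}$ from \cref{EvenLemma1} to give the required numerical contradiction in each sub-case, but the bookkeeping will be the main technical hurdle.
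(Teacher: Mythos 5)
Your broad outline agrees with the paper's at the first step---invoke \cref{EvenNat1} and \cref{EvenLemma1} to reduce to $p=2$, $q_\beta>q_\alpha$---but both branches of your plan contain serious unverified claims, and the closing arguments you propose are not the ones the paper uses.

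For the ``borderline'' rank-one case: your plan is to show $V_{\alpha'-1}\cap Q_\alpha$ (index $p$) is centralized mod $Z_{\alpha'-1}$ by $\mathcal{U}$ and then apply \cref{SEFF}. This does not work. From \cref{EvenLemma1} you only know $[\mathcal{U},Q_\alpha]=Z_\alpha$, hence $[\mathcal{U},V_{\alpha'-1}\cap Q_\alpha]\le Z_\alpha$, which is strictly bigger than $Z_{\alpha'-1}=Z_\beta=Z_\alpha\cap Q_{\alpha'}$; you give no argument that the commutator actually lands in $Q_{\alpha'}$. Moreover, \cref{SEFF} needs $\mathcal{U}$ to have non-trivial image in $S/Q_{\alpha'-1}$, which you never check and in fact explicitly contradict in your last paragraph by asserting $\mathcal{U}\le Q_{\alpha'-1}$. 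And even if these held, an FF-offender would force a natural module and then \cref{EvenLemma1} itself would be vacuous---the whole point of this case is that the offender available ($V_{\alpha-1}$, with $Z_{\alpha-1}\ne Z_\beta$, from \cref{EvenLemma}(iv)) centralizes only an index-$p^2$ subgroup of $V_{\alpha'-1}$, so one is in the quadratic $2$F regime. The paper therefore appeals to \cref{Quad2F} and \cref{pgen} to pin $L_{\alpha'-1}/R_{\alpha'-1}$ down to $\Sz(2)$, $\Dih(10)$, $(3\times 3){:}2$ or $(3\times 3){:}4$, and then rules these out one by one using the detailed module information in \cref{SzMod}, \cref{334} and \cref{Badp2} together with \cref{EvenLemma1}(ii)--(iii). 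That case analysis occupies two substantial paragraphs and is not a quick \cref{SEFF} deduction.

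For the main case ($p=2$, $q_\beta>q_\alpha$): your appeal to \cref{SimExt} ``in the style of \cref{EvenNat1}'' is unjustified here. In \cref{EvenNat1} that application was preceded by establishing $R_\alpha=C_{L_\alpha}(V_\alpha^{(2)})Q_\alpha$, which supplies the hypothesis that $O^p(R_\alpha)$ normalizes $V_\beta$. But in the present situation the second alternative of \cref{EvenNat1} holds, which gives $[O^p(R_\alpha),V_\alpha^{(2)}]\ne\{1\}$, so that hypothesis is precisely what you do \emph{not} have; you assert instead that ``$\mathcal{U}$ is normalized by $O^p(R_\alpha)$'', but \cref{SimExt} needs $O^p(R_\alpha)$ to normalize $V_\beta$ (i.e.\ $V_{\delta-1}^{(i-1)}$ in its notation), not $\mathcal{U}$. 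Finally, your closing step is internally inconsistent: you deduce $\mathcal{U}\le Q_{\alpha'-1}$ (citing \cref{EvenLemma}(iv), which actually says $V_{\alpha-1}\not\le Q_{\alpha'-1}$, not anything about $\mathcal{U}$) and then want to ``compute $\mathcal{U}Q_{\alpha'-1}/Q_{\alpha'-1}$'' to feed into \cref{SEFF} or \cref{2FRecog}---but that quotient is trivial under the containment you just asserted, so there is no offender. The paper's actual contradiction is obtained differently: it first rules out $\Sz$ and $\PSU_3$ for $\bar{L_\beta}$ via \cref{SpeMod2}, then exploits the $G_{\alpha,\beta}$-irreducibility of $S/Q_\beta$ to show $C_{Q_\alpha}(\mathcal{U})=Q_\alpha\cap Q_{\alpha-1}\cap Q_\beta\normaleq G_\alpha$, proves $V_\alpha^{(2)}\le Q_{\alpha'-2}$ and $Z_{\alpha'-1}\ne Z_{\alpha'-3}$ via the critical-pair argument of \cref{EvenLemma}, and finally uses normality of $[V_{\alpha'-1},Q_{\alpha'-2};i]$ in $G_{\alpha'-2}$ (applying \cref{EvenLemma1} at $\alpha'-2$) to force $V_\alpha^{(2)}\le Q_{\alpha'-1}$, a contradiction.
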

\begin{proof}
Assume throughout that $V_{\beta}/C_{V_{\beta}}(O^p(L_{\beta}))$ is not a natural module for $L_{\beta}/R_{\beta}\cong \SL_2(q_\alpha)$. Then we may use the results proved in \cref{EvenNat1} and \cref{EvenLemma1}. Suppose first that $m_p(S/Q_{\beta})=1$. Then by \cref{EvenNat1}, $m_p(S/Q_{\alpha})=1$. Since $V_{\alpha'-1}\cap Q_{\alpha}\cap Q_{\alpha-1}$ has index $p^2$ in $V_{\alpha'-1}$ and is centralized by $V_{\alpha-1}$, $L_{\alpha'-1}/R_{\alpha'-1}$ and $V_{\alpha'-1}/C_{V_{\alpha'-1}}(O^p(L_{\alpha'-1}))$ are determined by \cref{Quad2F}. Since \sloppy{$Z_{\alpha'}C_{V_{\alpha'-1}}(O^p(L_{\alpha'-1}))/C_{V_{\alpha'-1}}(O^p(L_{\alpha'-1}))$} has order $p$ and is $G_{\alpha',\alpha'-1}$-invariant, and $V_{\alpha'-1}=\langle Z_{\alpha'}^{L_{\alpha'-1}}\rangle$, by \cref{pgen} we have that $L_{\alpha'-1}/R_{\alpha'-1}\cong\Sz(2), \Dih(10)$, $(3\times 3):2$ or $(3\times 3):4$. In particular, using coprime action, it follows that for $V:=V_{\alpha'-1}/Z_{\alpha'-1}$, $V=[V, O^2(L_{\alpha'-1})]\times C_V(O^2(L_{\alpha'-1}))$ where $[V, O^2(L_{\alpha'-1})]$ is irreducible and $|C_V(O^2(L_{\alpha'-1}))|\leq 2$.

Assume that $L_{\alpha'-1}/R_{\alpha'-1}\cong\Sz(2)$ or $(3\times 3):4$. Then, by \cref{SzMod} (iii) and \cref{334} (iii), $[V, Q_{\alpha'-2}; 3]\ne \{1\}=[V, Q_{\alpha'-2}; 4]$ and, by conjugacy, we infer that $[V_{\alpha-1}, Q_{\alpha}; 4]\le Z_{\alpha-1}$. Then, \cref{EvenLemma1} implies that $\mathcal{U}=[V_{\alpha-1}, Q_{\alpha}, Q_{\alpha}]$ and $Z_{\alpha}=[V_{\alpha-1}, Q_{\alpha}; 3]$. Moreover, we deduce that $[U, Q_{\alpha}]\not\le Q_{\alpha'-1}$, else $[U, Q_{\alpha}]=Z_{\alpha}([U, Q_{\alpha}]\cap Q_{\alpha'})$ is centralized, modulo $Z_{\alpha}$, by $Z_{\alpha'}$ from which we have that $[V_{\alpha-1}, Q_{\alpha}]\normaleq L_{\alpha}$. But then, by conjugacy, $[V_{\alpha'-1}, Q_{\alpha'-2}]=[V_{\alpha'-3}, Q_{\alpha'-2}]$ is centralized by $V_{\alpha-1}$, contradicting \cref{SzMod} (ii) and \cref{334} (ii). If $[V_{\alpha}^{(2)}, Q_{\alpha}]\not\le Q_{\alpha'-2}$, then as $\Phi(V_{\alpha}^{(2)})\le Z_{\alpha}\le Q_{\alpha'-1}$, $V_{\alpha}^{(2)}\cap Q_{\alpha'-2}=[U, Q_{\alpha}](V_{\alpha}^{(2)}\cap Q_{\alpha'-2}\cap Q_{\alpha'-1})$ so that $V_{\alpha}^{(2)}=[V_{\alpha}^{(2)}, Q_{\alpha}](V_{\alpha}^{(2)}\cap Q_{\alpha'})$ and $V_{\alpha}^{(2)}/[V_{\alpha}^{(2)}, Q_{\alpha}]$ is centralized by $O^p(L_{\alpha})$, a contradiction by \cref{CommCF}. Thus, as $\Phi(U)\le \Phi(V_{\alpha}^{(2)})\le Z_{\alpha}\le Q_{\alpha'-1}$, $U[V_{\alpha}^{(2)}, Q_{\alpha}]=[V_{\alpha}^{(2)}, Q_{\alpha}](U[V_{\alpha}^{(2)}, Q_{\alpha}]\cap Q_{\alpha'})$ and $U[V_{\alpha}^{(2)}, Q_{\alpha}]\normaleq L_{\alpha}$. In particular, $V_{\alpha}^{(2)}=V_{\alpha-1}[V_{\alpha}^{(2)}, Q_{\alpha}]$ from which it follows that $[Q_{\alpha-1}, V_{\alpha}^{(2)}]\le [V_{\alpha}^{(2)}, Q_{\alpha}]$ and $O^p(L_{\alpha})$ centralizes $V_{\alpha}^{(2)}/[V_{\alpha}^{(2)}, Q_{\alpha}]$, and a contradiction is again provided by \cref{CommCF}.

Assume that $L_{\alpha'-1}/R_{\alpha'-1}\cong\Dih(10)$ or $(3\times 3):2$. Then, applying \cref{SzMod} (ii) and \cref{Badp2} (v), and using that $P/Q_{\alpha-1}=\Omega(P/Q_{\alpha-1})$ where $P\in\syl_2(G_{\alpha,\alpha-1})$, $[V_{\alpha-1}, Q_{\alpha}, Q_{\alpha}]\le Z_{\alpha-1}$, $[V_{\alpha-1}, Q_{\alpha}]\not\le Z_{\alpha}$ and \cref{EvenLemma1} gives that $\mathcal{U}=[V_{\alpha-1}, Q_{\alpha}]\normaleq G_{\alpha}$. But then $Z_{\alpha}=[V_{\alpha-1}, Q_{\alpha}, Q_{\alpha}]\le Z_{\alpha-1}$, another contradiction.

Hence by \cref{EvenNat1} we have that $q_\beta>q_\alpha$ and $p=2$. Assume that $\bar{L_\beta}/O_{2'}(\bar{L_{\beta}})\cong \Sz(q_\beta)$ or $\PSU_3(q_\beta)$. Since $V_{\alpha'-1}$ is elementary abelian, $V_{\alpha'-1}\cap Q_{\alpha}\cap Q_{\alpha-1}$ has index at most $q_\alpha q_\beta$ and is centralized by $V_{\alpha-1}$. Then by \cref{SpeMod2}, $|V_{\alpha-1}Q_{\alpha'-1}/Q_{\alpha'-1}|=2$ and $q_\alpha^3\geq q_\beta$ and $q_\alpha^2\geq q_\beta$ respectively. But now, $[V_{\alpha-1}\cap Q_{\alpha'-1}, x]\le Z_{\alpha}$ for some $x\in (V_{\alpha'-1}\cap Q_{\alpha})\setminus Q_{\alpha-1}$ from which it follows that $x$ centralizes a subgroup of index at most $pq_\alpha\le q_\beta$ in $V_{\alpha-1}$. Applying \cref{SpeMod2}, we have a contradiction. Hence, $\bar{L_\beta}/O_{2'}(\bar{L_{\beta}})\cong \PSL_2(q_\beta)$.

Now, $[Q_{\alpha}\cap Q_{\beta}, \mathcal{U}]\le Z_{\beta}$ so that $Q_{\alpha}\cap Q_{\beta}\le C_{Q_{\alpha}}(\mathcal{U}/Z_{\beta})\normaleq R_{\alpha}G_{\alpha,\beta}$. Since $\bar{L_\beta}/O_{2'}(\bar{L_{\beta}})\cong \PSL_2(q_\beta)$, $C_{Q_{\alpha}}(\mathcal{U}/Z_{\beta})\le Q_{\beta}$ as $S/Q_{\beta}\cong Q_{\alpha}/Q_{\alpha}\cap Q_{\beta}$ is irreducible under the action of $G_{\alpha,\beta}$ and $Q_{\alpha}$ does not centralize $\mathcal{U}/Z_{\beta}$. More generally, for any $\lambda, \mu\in\Delta(\alpha)$, if $Z_{\lambda}=Z_{\mu}$ then $Q_{\lambda}\cap Q_{\alpha}=Q_{\mu}\cap Q_{\alpha}$. Additionally, we have that $Q_{\alpha}\cap Q_{\alpha-1}\cap Q_{\beta}=C_{Q_{\alpha}}(\mathcal{U})\normaleq G_{\alpha}$ so that $[C_{Q_{\alpha}}(\mathcal{U}), V_{\alpha}^{(2)}]=[C_{Q_{\alpha}}(\mathcal{U}), V_{\beta}]^{G_{\alpha}}=Z_{\alpha}$.

Suppose that $[V_{\alpha}^{(2)}, V_{\alpha'-3}]\ne \{1\}$. Then there is a critical pair $(\lambda+1, \mu)$ for some $\lambda\in\Delta(\alpha)$ and $\mu\in\Delta(\alpha'-3)$ and by an argument in \cref{EvenLemma}, we deduce that $Z_\lambda=Z_{\beta}=Z_{\alpha'-1}=Z_{\alpha'-3}$. But then, $U\le Q_{\alpha'-3}\cap Q_{\alpha'-2}=Q_{\alpha'-2}\cap Q_{\alpha'-1}$, a contradiction by \cref{EvenLemma}. Indeed, this also proves that $Z_{\alpha'-1}\ne Z_{\alpha'-3}$ and $V_{\alpha}^{(2)}\le Q_{\alpha'-2}$. By \cref{EvenLemma1}, we have that $[V_{\alpha'-1}, Q_{\alpha'-2};i]\normaleq G_{\alpha'-2}$ where $i$ is the integer defining $\mathcal{U}$. Moreover, since $Z_{\alpha'-1}\ne Z_{\alpha'-3}$, $C_{Q_{\alpha'-2}}([V_{\alpha'-1}, Q_{\alpha'-2};i])=Q_{\alpha'-1}\cap Q_{\alpha'-3}$. But $[V_{\alpha'-1}, Q_{\alpha'-2};i]\le V_{\alpha'-3}$ is centralized by $V_{\alpha}^{(2)}$, a contradiction since $V_{\alpha}^{(2)}\not\le Q_{\alpha'-1}$.
\end{proof}

Before continuing, observe that we may now assume that whenever $b>2$ and $q=p$, both $L_{\alpha}/R_{\alpha}$ and $L_{\beta}/R_{\beta}$ are isomorphic to $\SL_2(p)$. Throughout this section, under these conditions and given a module $V$ on which $\bar{L_{\gamma}}$ acts, for any $\gamma\in\Gamma$, we will often utilize coprime action. By this, we mean that when $p\geq 5$, taking $T_\gamma$ to be the preimage in $\bar{L_{\gamma}}$ of $Z(L_{\gamma}/R_{\gamma})$, we have that $V=[V, T]\times C_V(T)$. Indeed, if $V$ is an FF-module for $\bar{L_{\gamma}}$, then this leads to a splitting $V=[V, \bar{L_{\gamma}}]\times C_V(\bar{L_{\gamma}})$. If $p\in\{2,3\}$, since $\bar{L_{\gamma}}$ is solvable, we automatically have the conclusion $V=[V, O^p(\bar{L_{\gamma}})]\times C_V(O^p(\bar{L_{\gamma}}))$. Without explaining this each time it is used, we will generally just refer to ``coprime action'' and hope that it is clear in each instance where the conclusions we draw come from.

\begin{lemma}\label{EvenZ}
Suppose that $b>2$ and $q=p$. Then $Z_{\beta}=Z(Q_{\beta})$ and $Z_{\alpha}=Z(Q_{\alpha})$.
\end{lemma}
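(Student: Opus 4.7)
The strategy is to argue by contradiction for each equality, exploiting the precise natural-module structure supplied by \cref{EvenNat} together with the pushing-up obstruction of \cref{push}. First I would record the easy inclusions $Z_\alpha\le Z(Q_\alpha)$ and $Z_\beta\le Z(Q_\beta)$: since $Z_\alpha\normaleq G_\alpha$ is an elementary abelian $p$-subgroup of $Q_\alpha$ and $L_\alpha/R_\alpha\cong\SL_2(p)$ acts irreducibly on the natural module $Z_\alpha$ by \cref{EvenNat}, the commutator $[Q_\alpha,Z_\alpha]$---being a normal $L_\alpha$-submodule of $Z_\alpha$---is either trivial or the whole of $Z_\alpha$; nilpotence of the $Q_\alpha$-action rules out the latter. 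The inclusion $Z_\beta\le Z(Q_\beta)$ is immediate from $Z_\beta=\Omega(Z(L_\beta))$.

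For $Z_\beta=Z(Q_\beta)$, suppose $W:=Z(Q_\beta)>Z_\beta$. Because $W$ centralizes $Q_\beta\ge V_\beta\ge Z_\alpha$ and $[Z_\alpha,S]=Z_\beta$ forces $C_S(Z_\alpha)=Q_\alpha$ (both consequences of \cref{ineq}), we have $W\le Q_\alpha\cap Q_\beta$. The intersection $W\cap Z_\alpha$ is $G_{\alpha,\beta}$-invariant, and the only $G_{\alpha,\beta}$-submodules of the natural $\SL_2(p)$-module $Z_\alpha$ containing $Z_\beta$ are $Z_\beta$ and $Z_\alpha$; the alternative $W\cap Z_\alpha=Z_\alpha$ places $Z_\alpha$ in $Z(Q_\alpha)\cap Z(Q_\beta)=Z(S)$, contradicting $|Z_\alpha|=p^2>p=|\Omega(Z(S))|$. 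Hence $W\cap Z_\alpha=Z_\beta$. Then $WV_\beta$ is abelian (using \cref{VAbelian} to see $V_\beta$ is abelian, together with $[W,V_\beta]=1$) and normal in $L_\beta$ with $[WV_\beta,Q_\beta]=[V_\beta,Q_\beta]=Z_\beta$ by \cref{ineq}. Thus $WV_\beta/Z_\beta$ is an abelian $L_\beta/Q_\beta$-module, with the $p'$-quotient $R_\beta/Q_\beta$ acting coprimely (using $Q_\beta\in\syl_p(R_\beta)$ from \cref{ineq}). By \cref{SEFF}, any non-central $L_\beta$-chief factor in $W/(W\cap V_\beta)$ beyond the natural module already carried by $V_\beta/C_{V_\beta}(O^p(L_\beta))$ must itself be natural; then, mimicking the final paragraphs of \cref{EvenLemma1}, the enlarged module forces $C_{Q_\beta}(WV_\beta)=Q_\alpha\cap Q_\beta$ to be normal in $L_\beta$, contradicting \cref{push}.

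For $Z_\alpha=Z(Q_\alpha)$, suppose $Y:=\Omega(Z(Q_\alpha))>Z_\alpha$. Using $R_\alpha=C_{L_\alpha}(Z_\alpha)$, the coprime action of a Hall $p'$-complement in $R_\alpha/Q_\alpha$ decomposes $Y$ as an $L_\alpha$-module; since $\Omega(Z(L_\alpha))=\{1\}$ by \cref{ineq}, $Y$ has no non-trivial $L_\alpha$-fixed points, so $Y/Z_\alpha$ carries a non-central $L_\alpha$-chief factor. Any element of $Z_{\alpha'}\setminus Q_\alpha$ projects to an element of order $p$ in $S/Q_\alpha$, centralizes $Y\cap Q_{\alpha'}$ (as $Y\le Z(Q_\alpha)$ gives $[Y,Z_{\alpha'}]\le[Q_\alpha,Z_{\alpha'}]\cap Y$ with controlled image), and thereby provides a quadratic offender on the chief factor; \cref{SEFF} then identifies it as a natural $\SL_2(p)$-module. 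A final pushing-up argument, parallel to the one for $\beta$ but with $V_\alpha Y$ in place of $V_\beta W$, produces a subgroup equal to $Q_\alpha\cap Q_\beta$ normal in $L_\alpha$, again contradicting \cref{push}.

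The principal obstacle is rigorously certifying that the extra $L_\lambda$-chief factors inside $Z(Q_\lambda)/Z_\lambda$ must be natural $\SL_2(p)$-modules: the coprime $R_\lambda/Q_\lambda$-action could in principle contribute non-natural summands that sidestep the FF-module hypothesis of \cref{SEFF}. Careful bookkeeping---identifying which summands are centralized by $R_\lambda/Q_\lambda$ and which carry the quadratic offender coming from $Z_{\alpha'}$ (respectively $V_{\alpha'-1}$)---will be needed before the pushing-up conclusion from \cref{push} can be applied and the contradiction closed off.
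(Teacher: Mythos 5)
Your proposal takes a genuinely different route from the paper, and while its outline contains some ideas that overlap with the paper's argument, it has gaps that would need to be filled.

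The paper's proof is fundamentally graph-theoretic. For $Z(Q_\alpha)=Z_\alpha$ it first uses the \emph{minimality of $b$} (together with $b$ even) to place $Z(Q_\alpha)\le Q_{\alpha'-2}$, then rules out $Z(Q_\alpha)\not\le Q_{\alpha'-1}$ by a quadratic-action commutator argument, and only \emph{then} writes $Z(Q_\alpha)=Z_\alpha\bigl(Z(Q_\alpha)\cap Q_{\alpha'}\bigr)$. That factorization is exactly what makes $Z_{\alpha'}$ act trivially on $Z(Q_\alpha)/Z_\alpha$ (since $Z_{\alpha'}\le Z(Q_{\alpha'})$ centralizes $Z(Q_\alpha)\cap Q_{\alpha'}$), and the coprime action then finishes. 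Your write-up attempts to get the same conclusion without ever establishing $Z(Q_\alpha)\le Q_{\alpha'-1}$; without this containment you have no control on $|Y/(Y\cap Q_{\alpha'})|$, so there is no quadratic offender of bounded size to feed into \cref{SEFF}. The parenthetical justification you offer — ``$Y\le Z(Q_\alpha)$ gives $[Y,Z_{\alpha'}]\le[Q_\alpha,Z_{\alpha'}]\cap Y$'' — does not yield the claim that $Z_{\alpha'}$ centralizes $Y\cap Q_{\alpha'}$; the correct reason is $Z_{\alpha'}\le Z(Q_{\alpha'})$, but even with that observed, you still need the index bound. This is the first concrete gap.

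The second gap is in the $\beta$-half, and it is more serious. Your approach assumes that $W/(W\cap V_\beta)$ carries a non-central $L_\beta$-chief factor and then wants to classify it via \cref{SEFF}; but nothing forces such a chief factor to exist when $W=Z(Q_\beta)>Z_\beta$. Indeed the paper's own proof of the $\beta$-equality (it is phrased for $Z(Q_{\alpha'-1})$, with $\alpha'-1\in\beta^G$) does not reduce to chief factors at all: it first proves $Z(Q_\alpha)=Z_\alpha$ (note the \emph{order} — this is then used, via self-centralizing $Q_\alpha$, to conclude $Z(S)$ has exponent $p$), then selects the vertex $\alpha-1$ from \cref{EvenNat} with $Z_{\alpha-1}\ne Z_\beta$ and $V_{\alpha-1}\not\le Q_{\alpha'-1}$, and derives $[Z(Q_{\alpha'-1}),V_{\alpha-1}]=\{1\}$ by a direct commutator calculation. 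That gives a full Sylow $p$-subgroup of $L_{\alpha'-1}$ centralizing $Z(Q_{\alpha'-1})$, whence $Z(Q_{\alpha'-1})\le Z(L_{\alpha'-1})$ and so equals $Z_{\alpha'-1}$. Your proposal, by proving the $\beta$-statement first, forgoes exactly the information that makes this work, and the appeal to ``mimicking the final paragraphs of \cref{EvenLemma1}'' to show $C_{Q_\beta}(WV_\beta)=Q_\alpha\cap Q_\beta$ is not substantiated — that equality is simply $C_\beta=Q_\alpha\cap Q_\beta$, which fails in general. You already flag in your last paragraph that the chief-factor bookkeeping is the weak point; the deeper issue is that the graph-theoretic containments supplied by minimality of $b$ are what make the chief-factor and commutator analysis land, and these are absent from your outline.
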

\begin{proof}
By minimality of $b$, and using that $b$ is even, we infer that $Z(Q_{\alpha})\le Q_{\lambda}$ for all $\lambda\in\Delta^{(b-2)}(\alpha)$. In particular, $Z(Q_{\alpha})\le Q_{\alpha'-2}$. If $Z(Q_{\alpha})\not\le Q_{\alpha'-1}$ then as  $[Z(Q_{\alpha}), V_{\alpha'-1}, V_{\alpha'-1}]\le [V_{\alpha'-1}, V_{\alpha'-1}]=\{1\}$ and $[Z(Q_{\alpha}), V_{\alpha'-1}]$ is centralized by $V_{\alpha'-1}Q_{\alpha}\in\syl_p(L_{\alpha})$ and has exponent $p$. Thus, $[Z(Q_{\alpha}), V_{\alpha'-1}]\le \Omega(Z(S))=Z_{\beta}=Z_{\alpha'-1}$, a contradiction for otherwise $O^p(L_{\alpha'-1})$ centralizes $V_{\alpha'-1}$. Thus, $Z(Q_{\alpha})\le Q_{\alpha'-1}$ so that $Z(Q_{\alpha})=Z_{\alpha}(Z(Q_{\alpha})\cap Q_{\alpha'})$, $Z_{\alpha'}$ centralizes $Z(Q_{\alpha})/Z_{\alpha}$ and $O^p(L_{\alpha})$ centralizes $Z(Q_{\alpha})/Z_{\alpha}$. Since $Z_{\beta}\le Z_{\alpha}$ an application of coprime action yields $Z(Q_{\alpha})=[Z(Q_{\alpha}), O^p(L_{\alpha})]=Z_{\alpha}$, as desired. As a consequence, using that $Q_{\alpha}$ is self-centralizing, $Z(S)$ has exponent $p$.

Let $\alpha-1\in\Delta(\alpha)$ such that $Z_{\alpha-1}\ne Z_{\beta}$, $V_{\alpha-1}\le Q_{\alpha'-2}$ and $V_{\alpha-1}\not\le Q_{\alpha'-1}$, as chosen in \cref{EvenNat}. By minimality of $b$, and using that $b$ is even, we have that $Z(Q_{\alpha'-1})\le Q_{\lambda}$ for all $\lambda\in\Delta^{(b-1)}(\alpha)$. In particular, $Z(Q_{\alpha'-1})\le Q_{\alpha}$. 

If $Z(Q_{\alpha'-1})\not\le Q_{\alpha-1}$ then $Z(Q_{\alpha'-1})Q_{\alpha-1}\in\syl_p(L_{\alpha-1})$. Again, using minimality of $b$, we infer that $Z(Q_{\alpha-1})\le Q_{\alpha'-2}$ so that $[Z(Q_{\alpha'-1}), Z(Q_{\alpha-1})]\le Z(Q_{\alpha'-1})\cap Z(Q_{\alpha-1})$. Thus, $[Z(Q_{\alpha'-1}), Z(Q_{\alpha-1})]$ is centralized by $Z(Q_{\alpha'-1})Q_{\alpha-1}\in\syl_p(L_{\alpha-1})$. Then, $[Z(Q_{\alpha'-1}), Z(Q_{\alpha-1})]\le Z_{\alpha-1}$ and as $Z_{\alpha-1}\not\le Z(Q_{\alpha'-1})$, $[Z(Q_{\alpha'-1}), Z(Q_{\alpha-1})]=\{1\}$ and $Z(Q_{\alpha-1})$ is centralized by $Z(Q_{\alpha'-1})Q_{\alpha-1}\in\syl_p(L_{\alpha-1})$. But then $Z(Q_{\alpha-1})=Z_{\alpha-1}$ and by conjugacy, $Z(Q_{\alpha'-1})=Z_{\alpha'-1}\le Z_{\alpha'-2}\le Q_{\alpha-1}$, a contradiction.

Thus, $Z(Q_{\alpha'-1})\le Q_{\alpha-1}$ and so, $[Z(Q_{\alpha'-1}), V_{\alpha-1}]\le Z_{\alpha-1}\cap Z(Q_{\alpha'-1})$. Since $Z_{\alpha-1}$ does not centralize $Z_{\alpha'}$, we deduce that $[Z(Q_{\alpha'-1}), V_{\alpha-1}]=\{1\}$. But then $Z(Q_{\alpha'-1})$ is centralized by $V_{\alpha-1}Q_{\alpha'-1}\in\syl_p(L_{\alpha'-1})$ and $Z(Q_{\alpha'-1})=Z_{\alpha'-1}$, as required.
\end{proof}

Combining \cref{EvenNat} and \cref{EvenZ}, we now satisfy \cref{CommonHyp}. Thus, whenever $b$ and the non-central chief factors in $V_{\lambda}^{(n)}$ satisfy the necessary requirements for $\lambda\in\{\alpha,\beta\}$ and various values of $n$, we may freely apply the results contained between \cref{VBGood} and \cref{GoodAction4}.

\begin{lemma}\label{GoodVA}
Suppose that $b>2$. Then $|V_{\beta}|=q^3$ and $[V_{\alpha}^{(2)}, Q_{\alpha}]=Z_{\alpha}$.
\end{lemma}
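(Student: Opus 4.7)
The plan is to invoke Lemma \ref{VBGood} with $\lambda=\alpha$, $\delta=\beta$, which asserts that exactly one of its two alternatives holds. Conclusion (i) of \ref{VBGood} is precisely the desired statement $|V_{\beta}|=q^3$ and $[V_{\alpha}^{(2)},Q_{\alpha}]=Z_{\alpha}$, so I aim for a contradiction under the assumption that conclusion (ii) holds. Under (ii), setting $C:=C_{V_{\beta}}(O^p(L_{\beta}))$ and $V^{\alpha}:=\langle C^{G_{\alpha}}\rangle$, I have $C>Z_{\beta}$, $[V^{\alpha},Q_{\alpha}]=Z_{\alpha}$, both $V^{\alpha}/Z_{\alpha}$ and $V_{\alpha}^{(2)}/V^{\alpha}$ carry non-central chief factors for $L_{\alpha}$, and $V^{\alpha}V_{\beta}\not\trianglelefteq L_{\beta}$. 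Observe also that by \cref{EvenNat} and \cref{EvenZ} we are in the situation of \cref{CommonHyp}, and since $b$ is even with $b>2$ we have $b\geq 4>3$, so the machinery of \cref{GoodAction1}--\cref{GoodAction4} and \cref{SimExt} is at our disposal.

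First I will establish that $V^{\alpha}/Z_{\alpha}$ and $V_{\alpha}^{(2)}/V^{\alpha}$ are FF-modules (or trivial) for $\bar{L_{\alpha}}$. For $V^{\alpha}/Z_{\alpha}$, the identity $[V^{\alpha},Q_{\alpha}]=Z_{\alpha}$ forces $Q_{\alpha}$ to act trivially on $V^{\alpha}/Z_{\alpha}$, so the action factors through $\bar{L_{\alpha}}$ whose Sylow $p$-image $\overline{Z_{\alpha'}Q_{\alpha}}$ has order $q$ by \cref{SL2}. Because $V^{\alpha}\cap Q_{\alpha'-1}$ is centralized modulo $Z_{\alpha}$ by $Z_{\alpha'}$, I obtain $|V^{\alpha}/(V^{\alpha}\cap Q_{\alpha'-1})Z_{\alpha}|\leq q$; combined with the FF-hypothesis recogniser \cref{SEFF}, this forces the unique non-central chief factor of $V^{\alpha}/Z_{\alpha}$ to be a natural $\SL_2(q)$-module for $L_{\alpha}/R_{\alpha}$. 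A parallel count for $V_{\alpha}^{(2)}/V^{\alpha}$, using that $V_{\beta}V^{\alpha}/V^{\alpha}$ is a $G_{\alpha,\beta}$-invariant quotient of the natural module $V_{\beta}/C$, yields the same conclusion there.

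Having set these sections up as FF- or trivial modules for $\bar{L_{\alpha}}$, \cref{GoodAction1} applies and gives $R_{\alpha}=C_{L_{\alpha}}(V_{\alpha}^{(2)})Q_{\alpha}$, so $O^p(R_{\alpha})$ centralises $V_{\alpha}^{(2)}$ and in particular normalises every $V_{\mu}$ with $\mu\in\Delta(\alpha)$. The hypotheses of \cref{SimExt} with $\delta=\alpha$ are then met (using \cref{EvenNat} for $Q_{\alpha-1}Q_{\alpha}\in\syl_p(L_{\alpha})$ and $L_{\alpha}/R_{\alpha}\cong\SL_2(q)$ being $2$-generated by its Sylow $p$-subgroups), and I conclude that whenever two neighbours $\mu_1,\mu_2\in\Delta(\alpha)$ of $\alpha$ satisfy $Z_{\mu_1}=Z_{\mu_2}$ we must have $V_{\mu_1}=V_{\mu_2}$.

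The final step is to contradict this rigidity using the failure of $V^{\alpha}V_{\beta}\trianglelefteq L_{\beta}$. I will produce two distinct neighbours $\mu_1,\mu_2\in\Delta(\beta)$ whose associated $V_{\mu_1},V_{\mu_2}$ both contain $C$ but disagree, invoking the ``moreover'' clause of \cref{VBGood}: if $Z_{\mu_1}C=Z_{\mu_2}C$ then $Z_{\mu_1}=Z_{\mu_2}$. Translating through the transitive action of $L_{\beta}$ on $\Delta(\beta)$, the failure of $V^{\alpha}V_{\beta}$ to be $L_{\beta}$-normal will produce $\mu_1,\mu_2\in\Delta(\beta)$ with $Z_{\mu_1}=Z_{\mu_2}$ but $V_{\mu_1}\neq V_{\mu_2}$, contradicting the consequence of \cref{SimExt}. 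The hardest step is the first one: verifying the FF-property of $V^{\alpha}/Z_{\alpha}$ and $V_{\alpha}^{(2)}/V^{\alpha}$ really only requires the index bound on $(V^{\alpha}\cap Q_{\alpha'-1})Z_{\alpha}$ and the triviality of $C/Z_{\beta}$ as an $L_{\beta}$-module, but care is needed in tracking how the conjugates $C^g$, $g\in G_{\alpha}$, intersect $Q_{\alpha'-1}$, and this is where most of the detailed work will lie.
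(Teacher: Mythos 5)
Your overall strategy---assume conclusion (ii) of \cref{VBGood} and derive a contradiction---is reasonable, and the intermediate goal of showing $O^p(R_{\alpha})$ centralises $V_{\alpha}^{(2)}$ via \cref{GoodAction1} and then applying \cref{SimExt} is also what the paper's proof does in one of its branches. However, there are two genuine gaps.

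First, the FF-module argument in step~1 tacitly assumes $V^{\alpha}\le Q_{\alpha'-2}$ (or at least $V^{\alpha}\le Q_{\alpha'-1}$) in order to get the index bound $|V^{\alpha}/(V^{\alpha}\cap Q_{\alpha'-1})Z_{\alpha}|\leq q$ and the claim that $Z_{\alpha'}$ centralises $V^{\alpha}\cap Q_{\alpha'-1}$ modulo $Z_{\alpha}$. For $b>2$ this containment is not automatic: $V^{\alpha}\le V_{\alpha}^{(2)}$, and whether $V_{\alpha}^{(2)}\le Q_{\alpha'-2}$ is precisely the case distinction the paper makes at the outset. When $V_{\alpha}^{(2)}\not\le Q_{\alpha'-2}$ your index bound does not hold as stated, so the FF-module conclusion is not justified and \cref{GoodAction1} cannot be invoked on those grounds.

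Second, and more seriously, the final contradiction step misapplies \cref{SimExt}. You correctly invoke \cref{SimExt} with $\delta=\alpha$, which gives rigidity only for pairs of neighbours of $\alpha$: if $Z_{\mu_1}=Z_{\mu_2}$ with $\mu_1,\mu_2\in\Delta(\alpha)$, then $V_{\mu_1}=V_{\mu_2}$ (provided $O^p(R_{\alpha})$ normalises the relevant subgroups). But the contradiction you then try to reach compares $\mu_1,\mu_2\in\Delta(\beta)$. That requires \cref{SimExt} with $\delta=\beta$, which needs the hypothesis that $O^p(R_{\beta})$ normalises the appropriate $V_{\mu_1}^{(i-1)}$. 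Nothing in the argument controls $R_{\beta}$; establishing that $O^p(R_{\alpha})$ centralises $V_{\alpha}^{(2)}$ says nothing about $O^p(R_{\beta})$. So the claimed contradiction with the failure of $V^{\alpha}V_{\beta}\normaleq L_{\beta}$ does not follow. By contrast, the paper's proof, when $V_{\alpha}^{(2)}\not\le Q_{\alpha'-2}$, exploits the resulting critical pair $(\alpha-2,\alpha'-2)$ and \cref{ineq} to pin down $Z_{\alpha-1}=Z_{\beta}$, and then derives the contradiction from \cref{SimExt} applied at $\alpha$ (which is the legitimate location), yielding $V_{\alpha-1}=V_{\beta}\le Q_{\alpha'-2}$, against the existence of the critical pair. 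When $V_{\alpha}^{(2)}\le Q_{\alpha'-2}$ the paper gets the conclusion immediately by counting non-central chief factors, without going through \cref{GoodAction1}. You will want to adopt that case distinction; the intrinsic argument via $V^{\alpha}V_{\beta}\not\normaleq L_{\beta}$ does not connect to \cref{SimExt} in the way you suggest.
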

\begin{proof}
If $V_{\alpha}^{(2)}\le Q_{\alpha'-2}$, then $Z_{\alpha}(V_{\alpha}^{(2)}\cap Q_{\alpha'})$ has index $q$ in $V_{\alpha}^{(2)}$ so that $V_{\alpha}^{(2)}/Z_{\alpha}$ has a unique non-central chief factor. Then the result holds by \cref{VBGood}. Thus, we suppose that $V_{\alpha}^{(2)}\not\le Q_{\alpha'-2}$. Then there is $\alpha-2$ such that $(\alpha-2, \alpha'-2)$ is a critical pair and by \cref{ineq}, we have that $Z_{\alpha-1}=Z_{\alpha'-3}$. Since $b>2$ and $Z_{\beta}Z_{\alpha-1}\le Z_{\alpha}\cap Z_{\alpha'-2}$, it follows that $Z_{\beta}=Z_{\alpha-1}=Z_{\alpha'-3}=Z_{\alpha'-1}$. If $|V_{\beta}|\ne q^3$, then by \cref{GoodAction1} since $Z_{\alpha}(V_{\alpha}^{(2)}\cap Q_{\alpha'-2}\cap Q_{\alpha'-1})$ has index at most $q^2$ in $V_{\alpha}^{(2)}$, $O^p(R_{\alpha})$ centralizes $V_{\alpha}^{(2)}$. By \cref{SimExt}, $Z_{\alpha-2}\le V_{\alpha-1}=V_{\beta}\le Q_{\alpha'-2}$, a contradiction. 
\end{proof}

\begin{lemma}\label{bneq 4}
$b\ne 4$.
\end{lemma}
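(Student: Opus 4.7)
\textbf{Proof plan for \cref{bneq 4}.} Suppose for a contradiction that $b=4$. Under the running hypothesis we have from \cref{EvenNat}, \cref{EvenZ} and \cref{GoodVA} that \cref{CommonHyp} holds, $|V_\beta|=q^3$, $[V_\alpha^{(2)},Q_\alpha]=Z_\alpha$, and $Z_\beta=Z(Q_\beta)$, $Z_\alpha=Z(Q_\alpha)$. Since $b$ is even, $\alpha'-2$ lies in the $G$-orbit of $\alpha$, so the symmetric statements hold after the relabelling $\alpha\leftrightarrow\alpha'-2$, $\beta\leftrightarrow\alpha'-1$. In particular, $|V_{\alpha'-1}|=q^3$ and $V_{\alpha'-1}=Z_{\alpha'-2}Z_{\alpha'}$, while $V_\beta=Z_\alpha Z_{\alpha+2}=Z_\alpha Z_{\alpha'-2}$.

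The first step is to locate the generators of $V_\beta^{(3)}$ relative to $Q_\alpha$. For each $\mu\in\Delta^{(3)}(\beta)$ one has $d(\mu,\alpha)\le 4$, with equality only when $\mu$ lies in the $L_{\alpha'-1}$-orbit of $\alpha'$; by the minimality of $b$ one then has $Z_\mu\le Q_\alpha$ unless $\mu\in (\alpha')^{G_{\alpha'-1}}$. In particular, $V_{\alpha+2}^{(2)}\subseteq V_\beta^{(3)}$ contains $V_{\alpha'-1}\supseteq Z_{\alpha'}$, and since $(\alpha,\alpha')$ is a critical pair, $Z_{\alpha'}\not\le Q_\alpha$. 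Hence $V_\beta^{(3)}\not\le Q_\alpha$, and by \cref{nccf} the quotient $V_\beta^{(3)}/V_\beta$ carries a non-central chief factor for $L_\beta$. Because $V_\beta^{(3)}/V_\beta$ is centralized modulo $V_\beta$ by $Z_\alpha\le Q_{\alpha'-1}$ (and so $V_\alpha^{(2)}\cap V_\beta^{(3)}/V_\beta$ is quadratically acted on by elements coming from $V_{\alpha'-2}^{(2)}\cap Q_\beta$), this factor admits a natural-module-type offender built from $Z_{\alpha'-2}$.

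The key step is then to identify this chief factor as an FF-module (in fact, a natural $\SL_2(q)$-module) for $\bar L_\beta\cong\SL_2(q)$, so that \cref{GoodAction3} applies and yields $[O^p(R_\beta),V_\beta^{(3)}]=\{1\}$. This is carried out by showing that the index of $(V_\beta^{(3)}\cap Q_\alpha)/V_\beta$ inside $V_\beta^{(3)}/V_\beta$ equals $q$, using the fact that $V_\beta^{(3)}\cap Q_\alpha$ is centralized modulo $V_\beta$ by $Z_\alpha$ and that $V_{\alpha+2}^{(2)}/V_\beta$ accounts for exactly a natural module worth of non-central action (the contribution of $Z_{\alpha'}$ modulo $V_\beta$). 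Once $O^p(R_\beta)$ centralizes $V_\beta^{(3)}$, we are in a position to apply \cref{SimExt} to the two neighbours $\alpha,\alpha+2$ of $\beta$ with the same central image (noting $Z_\beta=Z(L_\beta)$ and that $Q_\alpha Q_\beta\in\syl_p(L_\beta)$), which forces $V_\alpha^{(2)}=V_{\alpha+2}^{(2)}$.

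The contradiction follows: $V_{\alpha+2}^{(2)}$ contains $Z_{\alpha'}\not\le Q_\alpha$, while $V_\alpha^{(2)}\normaleq G_\alpha$ is a $p$-subgroup of $G_\alpha$ and in particular $V_\alpha^{(2)}\le Q_\alpha$, which is impossible. The main obstacle in the argument is the verification that $V_\beta^{(3)}/V_\beta$ really has a single non-central chief factor of natural-module type, rather than multiple factors or a more exotic quadratic $2$F-module; this is the technical heart of the proof and is handled by a careful count of indices using the natural-module structure of $Z_\alpha$ together with $|V_\beta|=q^3$, followed by an invocation of \cref{SEFF} (which rules out dual-FF behaviour because $Z_{\alpha'}$ genuinely escapes $Q_\alpha$).
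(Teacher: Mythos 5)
Your approach --- passing to $V_\beta^{(3)}/V_\beta$ as a $\bar{L_\beta}$-module and closing via \cref{GoodAction3} and \cref{SimExt} --- is different in spirit from the paper's, which analyses $V_\alpha^{(2)}/Z_\alpha$ as an $\bar{L_\alpha}$-module and splits on whether $V_\alpha^{(2)}\le Q_{\alpha'-2}$. However, there are two concrete errors. First, an FF-offender for $V_\beta^{(3)}/V_\beta$ must live in $\bar{L_\beta}=L_\beta/Q_\beta$, but both $Z_\alpha$ and $Z_{\alpha'-2}=Z_{\alpha+2}$ lie in $Q_\beta$ (since $d(\alpha,\beta)=d(\alpha+2,\beta)=1<b$) and so project trivially into $\bar{L_\beta}$; the only available candidate offender is $Q_\alpha Q_\beta/Q_\beta$ of order $q$, and then you would need $Q_\alpha$ --- not merely $Z_\alpha=Z(Q_\alpha)$ --- to centralize a subgroup of index $q$ in $V_\beta^{(3)}/V_\beta$. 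Using $[Q_\alpha,V_\alpha^{(2)}]=Z_\alpha\le V_\beta$ from \cref{GoodVA}, this would follow if $V_\beta^{(3)}\cap Q_\alpha=V_\alpha^{(2)}$, but establishing this (equivalently, uniqueness of the non-central chief factor) is precisely where the difficulty is concentrated: when $V_\alpha^{(2)}\not\le Q_{\alpha'-2}$ the subgroup $V_\alpha^{(2)}$ is non-abelian, and the paper treats $V_\alpha^{(2)}/Z_\alpha$ as a faithful quadratic $2$F-module for $\bar{L_\alpha}$, invokes \cref{Quad2F} and \cref{pgen} to pin down $\bar{L_\alpha}\cong(3\times 3):2$ with $p=2$, and then runs a sub-amalgam minimal-counterexample argument. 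Your outline asserts the single-natural-chief-factor conclusion without confronting this case at all.

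Second, the concluding \cref{SimExt} application is invalid: with $\delta=\beta$, $\delta-1=\alpha$, $\delta+1=\alpha+2$, \cref{SimExt} requires $Z_\alpha=Z_{\alpha+2}$, but $V_\beta=Z_\alpha Z_{\alpha+2}$ has order $q^3$ while $|Z_\alpha|=|Z_{\alpha+2}|=q^2$, so $Z_\alpha\ne Z_{\alpha+2}$ (they are the distinct subgroups $V_\beta\cap Q_\alpha$ and $V_\beta\cap Q_{\alpha+2}$). So \cref{SimExt} does not yield $V_\alpha^{(2)}=V_{\alpha+2}^{(2)}$. In the easy case the paper instead applies \cref{SimExt} at $\delta=\alpha'-2=\alpha+2$ exploiting $Z_\beta=Z_{\alpha'-1}$ (which does hold, by \cref{ineq}), after first deducing from \cref{GoodAction1} that $O^p(R_\alpha)$ centralizes $V_\alpha^{(2)}\supseteq V_\beta$; this gives $V_\beta=V_{\alpha'-1}\le Q_{\alpha'}$, contradicting $Z_\alpha\le V_\beta$ and $Z_\alpha\not\le Q_{\alpha'}$.
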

\begin{proof}
Since none of the conclusions of \hyperlink{MainGrpThm}{Theorem C} satisfy $b=4$, we may suppose that $G$ is a minimal counterexample with $b=4$. Suppose that $V_{\alpha}^{(2)}\le Q_{\alpha'-2}$. Then $V_{\alpha}^{(2)}\cap Q_{\alpha'-1}=Z_{\alpha}(V_{\alpha}^{(2)}\cap Q_{\alpha'})$ is an index $q$ subgroup of $V_{\alpha}^{(2)}$ which is centralized, modulo $Z_{\alpha}$, by $Z_{\alpha'}$. Thus, $V_{\alpha}^{(2)}/Z_{\alpha}$ is an FF-module for $\bar{L_{\alpha}}$. Then \cref{GoodAction1} implies that $O^p(R_{\alpha})$ centralizes $V_{\alpha}^{(2)}$ and since $Z_{\alpha'-1}=Z_{\beta}$, \cref{SimExt} implies that $Z_{\alpha}\le V_{\beta}=V_{\alpha'-1}\le Q_{\alpha'}$, a contradiction. We have a similar contradiction if $V_{\alpha}^{(2)}\cap Q_{\alpha'-2}\le Q_{\alpha'-1}$.

Thus, $V_{\alpha}^{(2)}\not\le Q_{\alpha'-2}$ and $V_{\alpha}^{(2)}\cap Q_{\alpha'-2}\not\le Q_{\alpha'-1}$. In particular, $V_{\alpha}^{(2)}$ is non-abelian and $Z_{\alpha}\le \Phi(Q_{\alpha})$. Suppose that $r\in L_{\alpha}$ is of order coprime to $p$ and centralizes $V_{\alpha}^{(2)}$. Then, by the three subgroup lemma, $r$ centralizes $Q_{\alpha}/C_{Q_{\alpha}}(V_{\alpha}^{(2)})$. Since $C_{Q_{\alpha}}(V_{\alpha}^{(2)})\le Q_{\alpha'-2}$ and $V_{\alpha}^{(2)}\cap Q_{\alpha'-2}\not\le Q_{\alpha'-1}$, we have that $Z_{\alpha'}$ centralizes $C_{Q_{\alpha}}(V_{\alpha}^{(2)})V_{\alpha}^{(2)}/V_{\alpha}^{(2)}$ so that $O^p(L_{\alpha})$ centralizes $C_{Q_{\alpha}}(V_{\alpha}^{(2)})V_{\alpha}^{(2)}/V_{\alpha}^{(2)}$. By coprime action, $r$ centralizes $Q_{\alpha}$, and so $r=1$. Thus, every $p'$-element of $L_{\alpha}$ acts faithfully on $V_{\alpha}^{(2)}/\Phi(V_{\alpha}^{(2)})$.

Now, $Z_{\alpha}(V_{\alpha}^{(2)}\cap \dots \cap Q_{\alpha'})$ has index at most $q^2$ in $V_{\alpha}^{(2)}$ so that $V_{\alpha}^{(2)}/Z_{\alpha}$ is a $2$F-module for $\bar{L_{\alpha}}$. Furthermore, \[[V_{\alpha}^{(2)}, V_{\alpha'-1}, V_{\alpha'-1}]\le [V_{\alpha}^{(2)}, V_{\alpha'-2}^{(2)}, V_{\alpha'-1}]\le [Q_{\alpha'-1}, V_{\alpha'-1}]=Z_{\alpha'-1}=Z_{\beta}\] and $V_{\alpha}^{(2)}/Z_{\alpha}$ is a faithful quadratic $2$F-module for $\bar{L_{\alpha}}$. Then $\bar{L_{\alpha}}$ is determined by \cref{SEFF}, \cref{Quad2F} and \cref{2FRecog} and since $\bar{L_{\alpha}}$ has a quotient isomorphic to $\SL_2(q)$, we have that $\bar{L_{\alpha}}\cong \SL_2(q), \SU_3(2)', (3\times 3):2$ or $(Q_8\times Q_8):3$. Notice that $V_{\beta}/Z_{\alpha}$ is of order $q$ and is not contained in $C_{V_{\alpha}^{(2)}/Z_{\alpha}}(O^p(L_{\alpha}))$. Setting $V:=V_{\alpha}^{(2)}/Z_{\alpha}$ there is a $G_{\alpha,\beta}$-invariant subgroup of $V/C_V(O^p(L_{\alpha}))$ of order $q$ which generates $V$ and by \cref{pgen}, we have that $\bar{L_{\alpha}}\cong (3\times 3):2$. Moreover, since $V_{\alpha}^{(2)}/Z_{\alpha}$ contains two non-central chief factors for $L_{\alpha}$, for $U_\alpha:=[V_{\alpha}^{(2)}, L_{\alpha}]$, we have that $Z_{\alpha'-2}=Z_{\alpha'-1}[U_\alpha\cap Q_{\alpha'-2}, V_{\alpha'-1}]\le U_\alpha$ so that $V_{\beta}\le U_\alpha$, $V_{\alpha}^{(2)}=U_\alpha$ and $|V_{\alpha}^{(2)}/Z_{\alpha}|=2^4$.

Let $P_{\alpha}\le L_{\alpha}$ with $S\le P_{\alpha}$, $P_{\alpha}/Q_{\alpha}\cong \Sym(3)$, $L_{\alpha}=P_{\alpha}R_{\alpha}$ and $O_3(\bar{P_{\alpha}})\normaleq \bar{L_{\alpha}}$. Then $P_{\alpha}$ is $G_{\alpha,\beta}$-invariant and upon showing that no non-trivial subgroup of $S$ is normalized by both $P_{\alpha}$ and $G_{\beta}$, then triple $(P_{\alpha}G_{\alpha,\beta}, G_{\beta}, G_{\alpha,\beta})$ satisfies \cref{MainHyp}. To this end, suppose that $Q$ is non-trivial subgroup of $S$ normalized by $P_{\alpha}$ and $G_{\beta}$. Then $Z_{\beta}\le Q$ so that $Z_{\beta}\le \Omega(Z(Q))$. Taking consecutive normal closure, we deduce that $V_{\beta}\le \Omega(Z(Q))$ and $\Omega(Z(Q))/Z_{\alpha}$ contains some of the non-central $L_{\alpha}$-chief factors contained in $V_{\alpha}^{(2)}/Z_{\alpha}$. Write $W$ for the preimage in $V_{\alpha}^{(2)}$ of some non-central chief factor contained in $\Omega(Z(Q))\cap V_{\alpha}^{(2)}/Z_{\alpha}$, noting that by the definition of $V_{\alpha}^{(2)}$, $W\cap V_{\beta}=Z_{\alpha}$. However, $WV_{\beta}\le \Omega(Z(Q))$ and $[W, V_{\beta}]=\{1\}$ so that $W\le Q_{\alpha'-2}$ and $[W, V_{\alpha'-2}]\le Z_{\alpha'-2}\cap W=Z_{\beta}=Z_{\alpha'-1}$ and $W=Z_\alpha(W\cap Q_{\alpha'})$. Then $W$ contains no non-central chief factor for $L_{\alpha}$, a contradiction. Thus, $Q=\{1\}$ and $(P_{\alpha}G_{\alpha,\beta}, G_{\beta}, G_{\alpha,\beta})$  satisfies \cref{MainHyp}. Assuming that $G$ is a minimal counterexample to \hyperlink{MainGrpThm}{Theorem C}, we conclude that $P_{\alpha}/Q_{\alpha}\cong \Sym(3)\cong \bar{L_{\beta}}$ and $(P_{\alpha}G_{\alpha,\beta}, G_{\beta}, G_{\alpha,\beta})$ is a weak BN-pair of rank $2$. By \cite{Greenbook}, $|S|\leq 2^7$ and since $|V_{\alpha}^{(2)}|=2^6$ and $Q_{\alpha}/V_{\alpha}^{(2)}$ contains a non-central chief factor for $L_{\alpha}$, we have a contradiction.
\end{proof}

\begin{lemma}\label{bge8}
Suppose that $b>2$. Then the following hold:
\begin{enumerate}
\item $V_{\alpha}^{(2)}\le Q_{\alpha'-2}$ but $V_{\alpha}^{(2)}\not\le Q_{\alpha'-1}$;
\item $[V_{\alpha}^{(2)}, Q_{\alpha}]=Z_{\alpha}$ and $|V_{\beta}|=q^3$;
\item $O^p(R_{\alpha})$ centralizes $V_{\alpha}^{(2)}$ and $V_{\alpha}^{(2)}/Z_{\alpha}$ is a faithful FF-module for $L_{\alpha}/R_{\alpha}\cong\SL_2(q)$;
\item $b\geq 8$; and
\item $Z_{\alpha'-2}\le V_{\alpha}^{(2)}\le Z(V_{\alpha}^{(4)})$.
\end{enumerate}
\end{lemma}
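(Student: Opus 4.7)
The plan is to establish the five conclusions in an order that differs slightly from the listing, since later parts build on earlier ones. I would first dispatch (ii) as an immediate invocation of \cref{GoodVA}: under $b>2$ that lemma yields $|V_\beta|=q^3$ and $[V_\alpha^{(2)},Q_\alpha]=Z_\alpha$. In particular $V_\beta/Z_\beta$ is a natural $\SL_2(q)$-module for $L_\beta/R_\beta$, and $V_\alpha^{(2)}/Z_\alpha$ becomes an $L_\alpha/R_\alpha$-module on which $Q_\alpha$ is trivial. Setting $V^\alpha:=\langle C_{V_\beta}(O^p(L_\beta))^{G_\alpha}\rangle=\langle Z_\beta^{G_\alpha}\rangle=Z_\alpha$, I would verify that $V_\alpha^{(2)}/Z_\alpha$ is a (faithful) FF-module for $L_\alpha/R_\alpha\cong \SL_2(q)$ with offender $V_{\alpha'-1}Q_\alpha/Q_\alpha$ of order $q$, and then invoke \cref{GoodAction1} (using $V^\alpha/Z_\alpha$ trivial and $Z_\alpha$ natural) to conclude (iii), namely $R_\alpha=C_{L_\alpha}(V_\alpha^{(2)})Q_\alpha$, so $O^p(R_\alpha)$ centralizes $V_\alpha^{(2)}$.

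For (i), the first containment $V_\alpha^{(2)}\le Q_{\alpha'-2}$ I would prove by contradiction, mirroring the scheme in \cref{GoodVA}. If $V_\alpha^{(2)}\not\le Q_{\alpha'-2}$ there is a critical pair $(\alpha-2,\alpha'-2)$ with $\alpha-2\in\Delta(\alpha-1)$. Applying \cref{ineq} to this new critical pair gives $Z_{\alpha-1}=Z_{\alpha'-3}$; combined with $Z_\beta,Z_{\alpha-1}\le Z_\alpha\cap Z_{\alpha'-2}$ and $b>2$ forcing the intersection to reduce, one deduces $Z_{\alpha-1}=Z_\beta=Z_{\alpha'-3}=Z_{\alpha'-1}$. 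Since (iii) gives $O^p(R_\alpha)$ centralizing $V_{\alpha-1}$, I would invoke \cref{SimExt} with $\delta=\alpha$ to obtain $V_{\alpha-1}=V_\beta$. But $V_\beta\le Q_{\alpha'-2}$ by the minimality of $b$ (each generator $Z_\lambda$ of $V_\beta$ is at distance at most $b-1$ from $\alpha'-2$), hence $Z_{\alpha-2}\le V_{\alpha-1}=V_\beta\le Q_{\alpha'-2}$, contradicting criticality of $(\alpha-2,\alpha'-2)$. For the second part, if $V_\alpha^{(2)}\le Q_{\alpha'-1}$ then $V_\alpha^{(2)}\le Q_{\alpha'-1}\le R_{\alpha'-1}$ would centralize $V_{\alpha'-1}/Z_\beta$, yielding $[V_\alpha^{(2)},Z_{\alpha'}]\le Z_\beta\le Z_\alpha$; but $V_\alpha^{(2)}/Z_\alpha$ contains a non-central chief factor for $L_\alpha$ by \cref{CommCF} (applied to $n=2$), on which the Sylow $Z_{\alpha'}Q_\alpha/Q_\alpha$ of $L_\alpha/R_\alpha\cong\SL_2(q)$ must act nontrivially, a contradiction.

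For (iv), $b$ is even by \cref{BetaCenterIII} and $b\ne 4$ by \cref{bneq 4}, so the only case to exclude is $b=6$. Using (iii) and \cref{GoodAction2} (applicable since $b=6>5$), once one verifies that $V_\alpha^{(4)}/V_\alpha^{(2)}$ carries a unique FF-chief factor (by an FF/offender analysis parallel to that used in (ii), one level further out), we conclude that $O^p(R_\alpha)$ centralizes $V_\alpha^{(4)}$. Appealing to the conjugate version of (i) applied to the critical pair $(\alpha',\alpha)$ gives $V_{\alpha'}^{(2)}\le Q_{\alpha+2}$ but $V_{\alpha'}^{(2)}\not\le Q_\beta$, and then a further application of \cref{SimExt} at a suitable vertex on the critical path (using that $Z_{\alpha'-3}=Z_{\alpha'-1}=Z_\beta$ allows matching up of neighbouring $V$'s) produces an equality $V_{\alpha-1}^{(2)}=V_\beta^{(2)}$ or analogous, eventually forcing $V_\alpha^{(2)}\le Q_{\alpha'-1}$ and contradicting the second half of (i). This is the main obstacle of the proof: the propagation must be set up so that the right chief factor count in $V_\alpha^{(4)}$ is available to feed \cref{GoodAction2,GoodAction3,GoodAction4}.

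Finally, for (v), the inclusion $V_\alpha^{(2)}\le Z(V_\alpha^{(4)})$ is immediate from (iv): if $b\geq 8$, then for every $\mu\in\Delta^{(2)}(\alpha)$ and $\delta\in\Delta^{(4)}(\alpha)$ we have $d(\mu,\delta)\leq 6<b$, so $Z_\mu\le Q_\delta$ by minimality; since $Z_\delta\le\Omega(Z(Q_\delta))$ (as $Z_\delta$ is generated by $\Omega(Z(S^g))$'s which lie in $Z(Q_\delta)$), it follows that $[Z_\mu,Z_\delta]=1$, giving $[V_\alpha^{(2)},V_\alpha^{(4)}]=1$. The containment $Z_{\alpha'-2}\le V_\alpha^{(2)}$ then follows from the symmetric application of the preceding parts at $\alpha'-2\in\alpha^G$, combined with the fact that $Z_{\alpha'-2}=\langle Z_\beta^{G_{\alpha'-2}}\rangle$ together with the structural identification (via \cref{SimExt}-type equalities) of the required $G_{\alpha'-2}$-conjugates of $Z_\beta$ as lying inside $V_\alpha^{(2)}$.
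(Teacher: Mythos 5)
Your proposal has two genuine problems, and I'll flag them concretely.

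\textbf{Circularity in the order (iii) before (i).} You want to establish the FF-module structure of $V_{\alpha}^{(2)}/Z_{\alpha}$ (hence (iii) via \cref{GoodAction1}) before proving (i), using the offender $V_{\alpha'-1}Q_{\alpha}/Q_{\alpha}$. But the bound you need — that $V_{\alpha'-1}$ centralizes a subgroup of index at most $q$ in $V_{\alpha}^{(2)}/Z_{\alpha}$ — rests on knowing that $V_{\alpha}^{(2)}\le Q_{\alpha'-2}$, so that $V_{\alpha}^{(2)}\cap Q_{\alpha'-1}=Z_\alpha(V_{\alpha}^{(2)}\cap Q_{\alpha'})$ has index at most $q$ in $V_{\alpha}^{(2)}$. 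Without part (i), $V_{\alpha}^{(2)}$ need not lie in $Q_{\alpha'-2}$ (indeed ruling this out is the content of (i)), and one has no a priori control of $|V_{\alpha}^{(2)} : V_{\alpha}^{(2)}\cap Q_{\alpha'-1}|$. The paper's proof handles this by deriving the FF-module conclusion \emph{inside each of the two subcases of the proof of (i)} — in the subcase $V_{\alpha}^{(2)}\not\le Q_{\alpha'-2}$ it first establishes $V_{\alpha}^{(2)}\cap Q_{\alpha'-2}\le Q_{\alpha'-1}$ before invoking \cref{GoodAction1}, and in the subcase $V_{\alpha}^{(2)}\le Q_{\alpha'-2}$ it uses precisely the index-$q$ fact described above. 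Your scheme only supplies the offender in the second subcase.

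\textbf{The elimination of $b=6$ in (iv) is not carried out.} You yourself flag this as ``the main obstacle,'' and what you sketch does not reach a contradiction. You propose to feed $V_{\alpha}^{(4)}/V_{\alpha}^{(2)}$ into \cref{GoodAction2} and then use \cref{SimExt} on ``$Z_{\alpha'-3}=Z_{\alpha'-1}=Z_\beta$.'' But the paper's argument for $b=6$ actually \emph{shows} $Z_{\alpha'-1}\ne Z_{\alpha'-3}$ (via \cref{SimExt}: otherwise $Z_{\alpha'}\le V_{\alpha'-1}=V_{\alpha'-3}\le Q_\alpha$, an immediate contradiction), so your proposed equality is false. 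The correct route is different: since $Z_\beta=Z_{\alpha'-1}\ne Z_{\alpha'-3}$ and $b=6$, one deduces $Z_{\alpha'-2}=Z_{\alpha+2}$; then, working at $\alpha'-1$ with $V_{\alpha'-1}^{(3)}$, one shows $V_{\alpha'-1}^{(3)}/V_{\alpha'-1}$ has a unique non-central chief factor that is an FF-module and applies \cref{GoodAction3} (not \cref{GoodAction2}), and finally \cref{SimExt} applied to $Z_{\alpha+2}=Z_{\alpha'-2}$ gives $Z_{\alpha'}\le V_{\alpha'-2}^{(2)}=V_{\alpha+2}^{(2)}\le Q_\alpha$, a contradiction. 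Your version also propagates the hand-wave ``produces an equality $V_{\alpha-1}^{(2)}=V_\beta^{(2)}$ or analogous, eventually forcing $V_\alpha^{(2)}\le Q_{\alpha'-1}$,'' which is not a proof.

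Parts (ii) and (v) are essentially fine — (ii) is indeed just \cref{GoodVA}, and $V_{\alpha}^{(2)}\le Z(V_{\alpha}^{(4)})$ for $b\ge 8$ is the distance argument you give. But the paper's argument for $Z_{\alpha'-2}\le V_{\alpha}^{(2)}$ is much shorter than yours: it follows directly from $V_{\alpha}^{(2)}\not\le Q_{\alpha'-1}$, since then $Z_{\alpha'-2}=[V_{\alpha}^{(2)},V_{\alpha'-1}]Z_{\alpha'-1}\le V_{\alpha}^{(2)}$.
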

\begin{proof}
By \cref{bneq 4}, we have that $b>4$ so that $V_{\alpha}^{(2)}$ is abelian. Moreover, (ii) holds by \cref{GoodVA}. Suppose first that $V_{\alpha}^{(2)}\not\le Q_{\alpha'-2}$ so that there is a critical pair $(\alpha-2, \alpha'-2)$ such that $[Z_{\alpha-2}, Z_{\alpha'-2}]=Z_{\alpha-1}=Z_{\alpha'-3}$. Since $b>2$, $Z_{\alpha}\ne Z_{\alpha'-2}$ and $Z_{\alpha-1}=Z_{\beta}$. Now, $[V_{\alpha}^{(2)}\cap Q_{\alpha'-2}, V_{\alpha'-1}]\le Z_{\alpha'-2}\cap V_{\alpha}^{(2)}$. Since $V_{\alpha}^{(2)}$ is abelian and $V_{\alpha}^{(2)}\not\le Q_{\alpha'-2}$, $Z_{\alpha'-2}\not\le V_{\alpha}^{(2)}$. But $Z_{\alpha'-1}\le V_{\alpha}^{(2)}$ and so it follows that $[V_{\alpha}^{(2)}\cap Q_{\alpha'-2}, V_{\alpha'-1}]\le Z_{\alpha'-1}$ and $V_{\alpha}^{(2)}\cap Q_{\alpha'-2}\le Q_{\alpha'-1}$. Then $V_{\alpha}^{(2)}/Z_{\alpha}$ is an FF-module and by \cref{GoodAction1}, $O^p(R_{\alpha})$ centralizes $V_{\alpha}^{(2)}$. But then by \cref{SimExt}, $Z_{\alpha-2}\le V_{\alpha-1}=V_{\beta}\le Q_{\alpha'-2}$, a contradiction since $(\alpha-2, \alpha'-2)$ is a critical pair.

Thus, $V_{\alpha}^{(2)}\le Q_{\alpha'-2}$. If $V_{\alpha}^{(2)}\le Q_{\alpha'-1}$, then $V_{\alpha}^{(2)}=Z_{\alpha}(V_{\alpha}^{(2)}\cap Q_{\alpha'})$ and $O^p(L_{\alpha})$ would centralize $V_{\alpha}^{(2)}/Z_{\alpha}$, a contradiction, and so (i) holds. Now, it follows that $V_{\alpha}^{(2)}/Z_{\alpha}$ is an FF-module and by \cref{GoodAction1}, $O^p(R_{\alpha})$ centralizes $V_{\alpha}^{(2)}$ and (iii) holds.

Since $V_{\alpha}^{(2)}\not\le Q_{\alpha'-1}$, we infer that $Z_{\alpha'-2}=[V_{\alpha}^{(2)}, V_{\alpha'-1}]Z_{\alpha'-1}\le V_{\alpha}^{(2)}$. If $b\geq 8$, then $V_{\alpha}^{(2)}\le Z(V_{\alpha}^{(4)})$ and (v) holds, and so we may assume that $b=6$ for the remainder of the proof. Notice that if $Z_{\alpha'-1}=Z_{\alpha'-3}$, it follows from \cref{SimExt} that $Z_{\alpha'}\le V_{\alpha'-1}=V_{\alpha'-3}\le Q_{\alpha}$, a contradiction. Since $Z_{\beta}=Z_{\alpha'-1}\ne Z_{\alpha'-3}$ and $b=6$, we have that $Z_{\alpha'-2}=Z_{\alpha+2}$. Let $\alpha-1\in\Delta(\alpha)$ such that $V_{\alpha-1}\not\le Q_{\alpha'-1}$ and $Z_{\alpha-1}\ne Z_{\beta}$, chosen as in \cref{EvenNat}. We have that $V_{\alpha'-1}^{(3)}\le Q_{\alpha+2}$ since $V_{\alpha'-1}^{(3)}$ centralizes $Z_{\alpha+2}=Z_{\alpha'-2}\le V_{\alpha'-1}$. Then $V_{\alpha'-1}^{(3)}\cap Q_{\beta}=V_{\alpha'-1}(V_{\alpha'-1}^{(3)}\cap Q_{\alpha})$ and \[[V_{\alpha-1}, V_{\alpha'-1}^{(3)}\cap Q_{\alpha}]\le [V_{\alpha}^{(2)}, V_{\alpha'-1}^{(3)}\cap Q_{\alpha}]\le Z_{\alpha}\cap V_{\alpha'-1}^{(3)}=Z_\beta=Z_{\alpha'-1}.\] In particular, $V_{\alpha'-1}^{(3)}/V_{\alpha'-1}$ contains a unique non-central chief factor $L_{\alpha'-1}$ which, as a $\mathrm{GF}(p)\bar{L_{\alpha'-1}}$-module is isomorphic to a natural $\SL_2(q)$-module. Thus, we may apply \cref{GoodAction3} so that $O^p(R_{\alpha'-1})$ acts trivially on $V_{\alpha'-1}^{(3)}$. Since $Z_{\alpha+2}=Z_{\alpha'-2}$, it follows from \cref{SimExt} that $Z_{\alpha'}\le V_{\alpha'-2}^{(2)}=V_{\alpha+2}^{(2)}\le Q_{\alpha}$, an obvious contradiction. Thus, $b\geq 8$ and the lemma holds.
\end{proof}

\begin{lemma}\label{b=2}
$b=2$.
\end{lemma}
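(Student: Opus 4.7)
We argue by contradiction, supposing $b>2$. Combining \cref{bneq 4} with \cref{bge8}(iv), we reduce at once to $b\geq 8$, so all of the structural conclusions of \cref{bge8} apply, and \cref{VAbelian} yields that $V_{\beta}^{(3)}$ is abelian (and more generally $V_{\alpha}^{(4)}$ is abelian whenever $b\geq 10$). Combined with \cref{EvenNat} and \cref{EvenZ}, \cref{CommonHyp} is satisfied, so we may freely invoke \cref{VBGood} through \cref{GoodAction4}.

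The first step is to pin down the chief factor structure of $V_{\beta}^{(3)}/V_{\beta}$. Since $V_{\beta}\leq V_{\alpha}^{(2)}$ and $V_{\alpha}^{(2)}/Z_{\alpha}$ is a natural $\SL_2(q)$-module for $\bar{L_{\alpha}}$ with $V_{\beta}/Z_{\alpha}=C_{V_{\alpha}^{(2)}/Z_{\alpha}}(Q_{\beta})$, we have $[V_{\alpha}^{(2)},Q_{\beta}]\leq V_{\beta}$, while $[V_{\alpha}^{(2)},Q_{\alpha}]=Z_{\alpha}\leq V_{\beta}$ from \cref{bge8}(ii). Hence $V_{\alpha}^{(2)}/V_{\beta}$ is a $G_{\alpha,\beta}$-invariant subgroup of $V_{\beta}^{(3)}/V_{\beta}$ of order $q$, centralized by $S=Q_{\alpha}Q_{\beta}$. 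Since $V_{\beta}^{(3)}=\langle (V_{\alpha}^{(2)})^{G_{\beta}}\rangle V_{\beta}$, the whole quotient $V_{\beta}^{(3)}/V_{\beta}$ is generated as an $L_{\beta}$-module by the $G_{\beta}$-translates of this $G_{\alpha,\beta}$-invariant line. Plan: show via \cref{pgen} (or a direct argument with $V_{\alpha'-1}$ acting quadratically on a conjugate of $V_{\beta}^{(3)}$ through the critical pair $(\alpha,\alpha')$, giving a quadratic offender) that $V_{\beta}^{(3)}/V_{\beta}$ contains a unique non-central $\bar{L_{\beta}}$-chief factor and that this factor is an FF-module, hence natural by \cref{SEFF}. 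Applying \cref{GoodAction3} then yields $[O^p(R_{\beta}),V_{\beta}^{(3)}]=\{1\}$.

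Next, a dual analysis on the $\alpha$-side exploits $Z_{\alpha'-2}\leq V_{\alpha}^{(2)}\leq Z(V_{\alpha}^{(4)})$ and the symmetric fact $Z_{\alpha+2}\leq V_{\alpha'}^{(2)}$ obtained from interchanging $\alpha$ and $\alpha'$: the quotient $V_{\alpha}^{(4)}/V_{\alpha}^{(2)}$ is generated by $G_{\alpha}$-conjugates of the $G_{\alpha,\beta}$-invariant image of $V_{\alpha+2}^{(2)}$, and the same template as above shows it has a unique non-central FF chief factor. Then \cref{GoodAction2} delivers $[O^p(R_{\alpha}),V_{\alpha}^{(4)}]=\{1\}$. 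If $b>8$, this propagation is iterated: \cref{GoodAction4} extends the trivial $O^p(R_{\beta})$-action to $V_{\beta}^{(5)}$, and \cref{GoodAction2} again to $V_{\alpha}^{(6)}$, and so on, producing trivial action of $O^p(R_{\gamma})$ on all $V_{\gamma}^{(n)}$ for $n$ strictly less than $b/2$.

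To close, the aim is to transport $Z_{\beta}$-type data across the chain from $\alpha$ to $\alpha'$ using the above $R_{\gamma}$-invariance together with \cref{SimExt}, modelled on the endgame of \cref{bge8} in the $b=6$ case. Concretely, the $O^p(R_{\gamma})$-normalization of the successive $V_{\gamma}^{(n)}$ forces the vertex pair $(\alpha,\alpha')$ to collapse under an $R$-action, eventually yielding $Z_{\alpha'}\leq Q_{\alpha}$ and contradicting criticality of $(\alpha,\alpha')$. The principal obstacle is the first step: controlling the chief factor structure of $V_{\beta}^{(3)}/V_{\beta}$ tightly enough to invoke \cref{GoodAction3}, since once $O^p(R_{\beta})$ acts trivially on $V_{\beta}^{(3)}$ the subsequent propagation and the \cref{SimExt}-based contradiction follow a pattern already established at $b=6$ in the proof of \cref{bge8}.
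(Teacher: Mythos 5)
Your overall outline correctly assembles the relevant machinery (reduction to $b\geq 8$, the abelianness facts, \cref{CommonHyp}, the GoodAction lemmas, and \cref{SimExt}), and you are right that the endgame is an application of \cref{SimExt} to force a collapse contradicting criticality. However, the central step is a genuine gap, not merely an unfinished computation, and the surrounding strategy diverges from what actually works. You need the hypothesis of \cref{GoodAction3} (resp.\ \cref{GoodAction2}), namely that $V_{\beta}^{(3)}/V_{\beta}$ (resp.\ $V_{\alpha}^{(4)}/V_{\alpha}^{(2)}$) contains a \emph{unique} non-central chief factor for $\bar{L_{\beta}}$ which is an FF-module; your claim that this holds unconditionally because the quotient is generated by $G_{\beta}$-translates of a fixed $G_{\alpha,\beta}$-invariant line is not correct. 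A module generated by an $S$-invariant line of order $q$ can perfectly well have several non-central chief factors (e.g.\ $[V_{\beta}^{(3)},Q_{\beta}]V_{\beta}/V_{\beta}$ and $V_{\beta}^{(3)}/[V_{\beta}^{(3)},Q_{\beta}]V_{\beta}$ could both carry one, by \cref{CommCF}), and neither \cref{pgen} nor a bare quadratic-offender observation resolves this. The paper pins this structure down only \emph{after} splitting on whether $V_{\alpha}^{(4)}\leq Q_{\alpha'-4}$; in each branch the relevant containment information produces a specific equality ($Z_{\alpha'-3}=Z_{\alpha'-5}$ or $Z_{\alpha'-2}=Z_{\alpha'-4}$) and a factorization of $V_{\alpha}^{(4)}\cap Q_{\alpha'-4}$ or $V_{\alpha'-1}^{(3)}\cap Q_{\beta}$ that forces the uniqueness of the non-central chief factor.

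There are two further problems downstream. First, your iterative ``propagation'' up the chain to $V_{\gamma}^{(n)}$ for all $n<b/2$ is not available: the paper supplies \cref{GoodAction2} (for $V_{\alpha}^{(4)}$), \cref{GoodAction3} (for $V_{\beta}^{(3)}$), and \cref{GoodAction4} (for $V_{\beta}^{(5)}$), but nothing beyond those; no analogue exists for $V_{\alpha}^{(6)}$, $V_{\beta}^{(7)}$, etc., and in fact the paper never needs one. Second, your closing sentence is doing all the hard work: ``the $R_{\gamma}$-normalization forces the pair to collapse'' is precisely the content of \cref{SimExt}, but \cref{SimExt} requires a $Z$-equality $Z_{\delta-1}=Z_{\delta+1}$ as an explicit hypothesis, and merely knowing that $O^p(R_{\gamma})$ normalizes the $V_{\gamma}^{(n)}$ produces no such equality. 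The case split in the paper exists exactly to manufacture the needed $Z$-equality before \cref{SimExt} is invoked; an argument that skips the case split and hopes to extract the equality afterwards does not have the inputs to do so.
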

\begin{proof}
We may suppose that $b\geq 8$ by \cref{bge8}. Suppose first that $V_{\alpha}^{(4)}\not\le Q_{\alpha'-4}$. Since $Z_{\alpha'-3}\le Z_{\alpha'-2}\le Z(V_{\alpha}^{(4)})$ is centralized by $V_{\alpha}^{(4)}$, it follows that $Z_{\alpha'-3}=Z_{\alpha'-5}$ and by \cref{SimExt}, we have that $V_{\alpha'-3}=V_{\alpha'-5}$. Now, $[V_{\alpha}^{(4)}\cap Q_{\alpha'-4}, V_{\alpha'-3}]=[V_{\alpha}^{(4)}\cap Q_{\alpha'-4}, V_{\alpha'-5}]\le Z_{\alpha'-5}=Z_{\alpha'-3}$ and so $V_{\alpha}^{(4)}\cap Q_{\alpha'-4}\le Q_{\alpha'-3}$. Since $V_{\alpha}^{(4)}$ centralizes $Z_{\alpha'-2}$, we deduce that $V_{\alpha}^{(4)}\cap Q_{\alpha'-4}=V_{\alpha}^{(2)}(V_{\alpha}^{(4)}\cap Q_{\alpha'-4}\cap Q_{\alpha'-1})$ and so $V_{\alpha}^{(4)}/V_{\alpha}^{(2)}$ contains a unique non-central chief factor for $L_{\alpha}$. Now, by \cref{GoodAction2} and \cref{SimExt}, since $Z_{\alpha'-3}=Z_{\alpha'-5}$ we conclude that $Z_{\alpha'}\le V_{\alpha'-3}^{(3)}=V_{\alpha'-5}^{(3)}\le Q_{\alpha}$, a contradiction.

Therefore, we continue assuming that $V_{\alpha}^{(4)}\le Q_{\alpha'-4}$. Then $V_{\alpha}^{(4)}\cap Q_{\alpha'-3}$ centralizes $Z_{\alpha'-2}$ and we may assume that $V_{\alpha}^{(4)}\not\le Q_{\alpha'-3}$, else $V_{\alpha}^{(4)}=V_{\alpha}^{(2)}(V_{\alpha}^{(2)}\cap Q_{\alpha'-1})$ and $O^p(L_{\alpha})$ centralizes $V_{\alpha}^{(4)}/V_{\alpha}^{(2)}$. Since $|V_{\alpha'-3}|=q^3$, $V_{\alpha}^{(4)}\not\le Q_{\alpha'-3}$ and $V_{\alpha}^{(4)}$ centralizes $Z_{\alpha'-2}$, by \cref{BasicVB} $V_{\alpha'-3}\ne Z_{\alpha'-2}Z_{\alpha'-4}$ and so, $Z_{\alpha'-2}=Z_{\alpha'-4}$. If $O^p(R_{\beta})$ centralizes $V_{\beta}^{(3)}$ then applying \cref{SimExt} to $Z_{\alpha'-2}=Z_{\alpha'-4}$ yields $Z_{\alpha'}\le V_{\alpha'-2}^{(2)}=V_{\alpha'-4}^{(2)}\le Q_{\alpha}$, a contradiction. Thus, to obtain a final contradiction, by \cref{GoodAction3}, it suffices to show that $V_{\alpha'-1}^{(3)}/V_{\alpha'-1}$ contains a unique non-central chief factor for $L_{\alpha'-1}$ which, as a $\mathrm{GF}(p)\bar{L_{\alpha'-1}}$-module, is an FF-module.

By the symmetry in the hypothesis of $(\alpha, \alpha')$ and $(\alpha', \alpha)$, we may assume that $Z_{\alpha+2}=Z_{\alpha+4}$. Let $\alpha-1\in\Delta(\alpha)$ such that $V_{\alpha-1}\not\le Q_{\alpha'-1}$ and $Z_{\alpha-1}\ne Z_{\beta}$, as in \cref{EvenNat}. Then $V_{\alpha'-1}^{(3)}$ centralizes $Z_{\alpha+2}$ so that $V_{\alpha'-1}^{(3)}\le Q_{\alpha+2}$, $V_{\alpha'-1}^{(3)}\cap Q_{\beta}=V_{\alpha'-1}(V_{\alpha'-1}^{(3)}\cap Q_{\alpha})$ and \[[V_{\alpha-1}, V_{\alpha'-1}^{(3)}\cap Q_{\alpha}]\le [V_{\alpha}^{(2)}, V_{\alpha'-1}^{(3)}\cap Q_{\alpha}]\le Z_{\alpha}\cap V_{\alpha'-1}^{(3)}=Z_\beta=Z_{\alpha'-1}.\] In particular, either $O^p(L_{\alpha'-1})$ centralizes $V_{\alpha'-1}^{(3)}/V_{\alpha'-1}$ or $V_{\alpha'-1}^{(3)}/V_{\alpha'-1}$ contains a unique non-central chief factor for $L_{\alpha'-1}$, and the result holds.
\end{proof}

\begin{proposition}\label{5Symp}
Suppose $p\geq 5$. Then $R_{\alpha}=Q_{\alpha}$, $G$ is a symplectic amalgam and one of the following holds:
\begin{enumerate}
\item $G$ is locally isomorphic to $H$ where $F^*(H)\cong\mathrm{G}_2(p^n)$;
\item $G$ is locally isomorphic to $H$ where $F^*(H)\cong{}^3\mathrm{D}_4(p^n)$; 
\item $p=5$, $|S|=5^6$, $Q_{\beta}\cong 5^{1+4}_+$ and $\bar{L_{\beta}}\cong 2^{1+4}_-.5$;
\item $p=5$, $|S|=5^6$, $Q_{\beta}\cong 5^{1+4}_+$ and $\bar{L_{\beta}}\cong 2^{1+4}_-.\Alt(5)$;
\item $p=5$, $|S|=5^6$, $Q_{\beta}\cong 5^{1+4}_+$ and $\bar{L_{\beta}}\cong 2\cdot\Alt(6)$; or
\item $p=7$, $|S|=7^6$, $Q_{\beta}\cong 7^{1+4}_+$ and $\bar{L_{\beta}}\cong 2\cdot\Alt(7)$.
\end{enumerate}
\end{proposition}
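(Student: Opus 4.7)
The plan is as follows. By \cref{b=2}, $b=2$, so $\alpha'\in\Delta^{(2)}(\alpha)$. Combining \cref{BetaCenterIII}, \cref{ineq}, and \cref{p-closure2}(ii), we have $L_\alpha/R_\alpha\cong \SL_2(q_\alpha)$ with $Z_\alpha$ a natural $\SL_2(q_\alpha)$-module, $\Omega(Z(L_\alpha))=\{1\}$, $Z_\beta=\Omega(Z(S))=\Omega(Z(L_\beta))=[V_\beta,Q_\beta]$, and both $Q_\alpha\in\syl_p(R_\alpha)$ and $Q_\beta\in\syl_p(R_\beta)$. In particular, $\bar{R_\alpha}:=R_\alpha/Q_\alpha$ is a normal $p'$-subgroup of $\bar{L_\alpha}$ with quotient $\SL_2(q_\alpha)$.

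The central step is to show that $R_\alpha=Q_\alpha$, using the hypothesis $p\geq 5$. The argument mirrors that of \cref{BetaCenter}(iv): by \cref{spelemma2}, $\bar{L_\alpha}$ has a strongly $p$-embedded subgroup, and since $\bar{L_\alpha}=O^{p'}(\bar{L_\alpha})$, a coprime-action argument applied to the action of $\bar{L_\alpha}$ on $\bar{R_\alpha}$ shows that $\bar{R_\alpha}\le Z(\bar{L_\alpha})$. Hence $\bar{L_\alpha}$ is a perfect $p'$-central extension of $\SL_2(q_\alpha)$; since $p\geq 5$ forces $q_\alpha\geq 5$, \cref{SLGen}(vii) shows that the only such extensions are $\SL_2(q_\alpha)$ and $\PSL_2(q_\alpha)$. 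Comparing orders with the quotient $\bar{L_\alpha}/\bar{R_\alpha}\cong \SL_2(q_\alpha)$ then forces $\bar{R_\alpha}=\{1\}$, i.e.\ $R_\alpha=Q_\alpha$.

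With this in hand, I would verify the five defining axioms of a symplectic amalgam. Axiom (i) is immediate from the previous step; axiom (iii) is \cref{MainHyp}(i); axiom (iv) is \cref{BetaCenterIII}; and axiom (v) follows from $b=2>1$ (giving $Z_\alpha\le Q_\beta$) together with $Z_{\alpha'}\not\le Q_\alpha$ providing the required $x\in G_\beta$ from the critical pair. For axiom (ii), writing $W=\langle((Q_\alpha\cap Q_\beta)^{G_\alpha})^{G_\beta}\rangle$, note that $Z_\alpha\le Q_\alpha\cap Q_\beta$ gives $V_\beta\le W$; combining with the transitivity of $L_\beta$ on $\Delta(\beta)$ and $L_\beta=O^{p'}(L_\beta)$ then yields $G_\beta=G_{\alpha,\beta}W$ and $O^p(L_\beta)\le W$.

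Finally, \cref{Symp} delivers one of its six conclusions, which I would filter by $p\geq 5$. The weak-BN-pair case (i) of \cref{Symp} restricts the Lie type to $\mathrm{G}_2(p^n)$ or ${}^3\mathrm{D}_4(p^n)$, giving our conclusions (i) and (ii); the exceptional cases (ii)--(v) of \cref{Symp} occur only at $p\in\{2,5\}$, so only the three $p=5$ configurations survive and give our (iii)--(v); case (vi) of \cref{Symp} is exactly our (vi). The principal obstacle of the plan is the first step: ruling out a non-trivial normal $p'$-subgroup in $\bar{L_\alpha}$ in the absence of the abelian-$V_\alpha^{(2)}$ arguments that were available when $b>2$. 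The key technical input is the triviality of the $p'$-Schur multiplier of $\PSL_2(p^n)$ for $p\geq 5$ and $p^n\geq 5$ recorded in \cref{SLGen}(vii).
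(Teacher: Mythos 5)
Your verification of the symplectic-amalgam axioms and the filtering of the conclusions of \cref{Symp} by the hypothesis $p\geq 5$ are correct and essentially match the paper's. The gap is in the central step, $R_\alpha=Q_\alpha$, which is precisely where the hypothesis $p\geq 5$ does its work in the paper and where your plan does not use it.

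The coprime-action argument you sketch for establishing $\bar{R_\alpha}\le Z(\bar{L_\alpha})$ is the one in \cref{p'-index2}, not \cref{BetaCenter}(iv), which treats a different configuration and concludes nothing of the sort. That argument needs two ingredients neither of which is available here: (a) an elementary abelian $p$-subgroup of $\bar{S}$ of rank at least $2$, hence $m_p(\bar{L_\alpha})\geq 2$; and (b) that $N_{\bar{L_\alpha}}(\bar{S})$ is itself strongly $p$-embedded, so that the centralizers $C_{\bar{R_\alpha}}(a)$ appearing in the coprime decomposition all land in a common Sylow normalizer. When $q_\alpha=p$, which is a live possibility (it is the generic $\mathrm{G}_2(p)$-type configuration), $\bar{S}$ is cyclic of order $p$ and (a) fails outright. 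For (b), \cref{MainHyp}(ii) only guarantees $N_{\bar{G_i}}(\bar{S})$ strongly $p$-embedded in the $\Ree(3)$, $p$-solvable, and generalized-quaternion cases; in the generic rank-one-Lie-type case you only know \emph{some} strongly $p$-embedded subgroup exists, which is not enough to run the argument. As a sanity check, in the analogous situation for $p=2$ (see \cref{b2ii}) the paper produces a configuration with $\bar{L_\alpha}\cong\SU_3(2)'$ and $R_\alpha\ne Q_\alpha$; here $\bar{L_\alpha}=O^{2'}(\bar{L_\alpha})$ has a strongly $2$-embedded subgroup yet $\bar{R_\alpha}$, of order $9$, is not central, so any argument resting solely on strong $p$-embedding and $\bar{L_\alpha}=O^{p'}(\bar{L_\alpha})$ cannot be correct as stated. (Also, \cref{spelemma2} is cited in the wrong direction: it transports a strongly $p$-embedded subgroup from $\bar{L_\delta}$ down to $L_\delta/R_\delta$, not the reverse; that $\bar{L_\alpha}$ has one is just part of \cref{MainHyp}(ii) via \cref{spelemma1}(iii).)

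The paper's actual route is cubic action. With $b=2$ one has $S=Z_{\alpha'}Q_\alpha$ and $Q_\alpha\cap Q_\beta=Z_\alpha(Q_\alpha\cap Q_\beta\cap Q_{\alpha'})$, and since $Z_{\alpha'}\le Q_\beta$ this gives $[Q_\alpha,Z_{\alpha'},Z_{\alpha'},Z_{\alpha'}]=\{1\}$, so $Z_{\alpha'}$ acts cubically on $Q_\alpha/\Phi(Q_\alpha)$. Now \cref{CubicAction}, the Hall--Higman-type classification that genuinely uses $p\geq 5$, forces $\bar{L_\alpha}\cong\mathrm{(P)SL}_2(p^r)$ or $\mathrm{(P)SU}_3(p^r)$ (the $p=5$ exceptional cases $3\cdot\Alt(6)$ and $3\cdot\Alt(7)$ being excluded by the module-size constraint), and comparing with the known quotient $\SL_2(q_\alpha)$ gives $\bar{L_\alpha}\cong\SL_2(q_\alpha)$ directly and uniformly in $q_\alpha$. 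If you wish to retain the Schur-multiplier framework, you would first need centrality of $\bar{R_\alpha}$; the cubic-action input is what supplies it.
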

\begin{proof}
By \cref{b=2}, we have that $b=2$. Note that $Q_{\alpha}\cap Q_\beta=Z_{\alpha}(Q_\alpha\cap Q_\beta\cap Q_{\alpha'})$. Since $Z_{\alpha'}\le Q_\beta$, it follows that $[Q_{\alpha}, Z_{\alpha'}, Z_{\alpha'}, Z_{\alpha'}]=\{1\}$. Then by \cref{CubicAction} applied to $Q_{\alpha}/\Phi(Q_{\alpha})$, we have that $\bar{L_{\alpha}}\cong\SL_2(q)$.

We now intend to show that the amalgam is symplectic. We immediately satisfy condition (i) in the definition of a symplectic amalgam. We have that $W:=\langle (Q_\alpha\cap Q_\beta)^{L_\alpha}\rangle\not\le Q_\beta$, for otherwise $W=Q_\alpha\cap Q_\beta\normaleq L_\alpha$, a contradiction by \cref{push}. Therefore, by \cref{p-closure} (iii), we have that $G_\beta=\langle W^{L_\beta} \rangle N_{G_\beta}(S)$, satisfying condition (ii). From our hypothesis, we automatically satisfy condition (iii). By \cref{BetaCenterIII}, we satisfy condition (iv). Since $b=2$ and $d(\alpha,\beta)=1$, we have that $Z_\alpha\le Q_\beta$. Moreover, by hypothesis and the symmetry between $\alpha$ and $\alpha'$ we have that $Z_\alpha\not\le Q_{\alpha'}=Q_\alpha^x$ for some $x\in G_\beta$. Hence, $G$ is a symplectic amalgam and the result holds by \cref{Symp}.
\end{proof}

Thus, we have reduced to the case where $b=2$ and $p\in\{2,3\}$. Since \cref{EvenNat} only applied to the cases where $b\geq 4$, we have no knowledge of the structure of $\bar{L_{\beta}}$ or $V_{\beta}$. As intimated earlier, we attempt to show that $R_{\alpha}=Q_{\alpha}$ and apply the results in \cite{parkerSymp}.

\begin{lemma}\label{b=2lemma}
Suppose that $b=2$ and $p\in\{2,3\}$. Then the following hold:
\begin{enumerate}
    \item $[V_{\beta}, Q_{\beta}]=Z_{\beta}\le Z_{\alpha}\le \Phi(Q_{\alpha})$ and $Q_{\alpha}/\Phi(Q_{\alpha})$ is a faithful quadratic module for $\bar{L_{\alpha}}$;
    \item if $q_\alpha>p=2$ and $R_\alpha\ne Q_\alpha$ then $q_\beta>q_\alpha^2$ except perhaps when $\bar{L_{\beta}}/O_{2'}(\bar{L_{\beta}})\cong \PSU_3(q_\beta)$ in which case $q_\beta>q_\alpha$;
    \item if $p=3$ and $R_{\alpha}\ne Q_{\alpha}$ then $q_\alpha=3$; and
    \item $\Omega(Z(Q_{\alpha}))=Z_{\alpha}$.
\end{enumerate}
\end{lemma}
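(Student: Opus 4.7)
The plan is to prove parts (i)--(iv) in order, leveraging the natural module structure of $Z_\alpha$ given by \cref{ineq} together with the $b=2$ commutator calculus. The first identity $[V_\beta,Q_\beta]=Z_\beta$ in (i) is immediate from \cref{ineq}, and $Z_\beta\le Z_\alpha$ follows from $Z_\beta=\Omega(Z(S))\le\langle\Omega(Z(S))^{G_\alpha}\rangle=Z_\alpha$. For $Z_\alpha\le\Phi(Q_\alpha)$, I would note that $Z_{\alpha'}\le V_\beta\cap Q_\beta$ (since $d(\alpha',\beta)=1<b$ forces $Z_{\alpha'}\le Q_\beta$, and $\alpha'\in\Delta(\beta)$ gives $Z_{\alpha'}\le V_\beta$), so $[Q_\alpha,Z_{\alpha'}]\le V_\beta\cap Q_\alpha$ and then $[Q_\alpha,Z_{\alpha'},Z_{\alpha'}]\le[V_\beta,Q_\beta]=Z_\beta$. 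Using that $Z_\alpha=\langle Z_\beta^{L_\alpha}\rangle$ is an irreducible $\bar{L_\alpha}$-module and $\Phi(Q_\alpha)$ is $L_\alpha$-invariant, the element $[Q_\alpha,Z_{\alpha'}]\cdot\Phi(Q_\alpha)$ generates a subgroup whose $L_\alpha$-closure contains $Z_\alpha$; since commutators $[Q_\alpha,Q_\alpha]\le\Phi(Q_\alpha)$ and $Z_\alpha$ cannot sit as a non-trivial submodule of $Q_\alpha/\Phi(Q_\alpha)$ disjoint from $\Phi(Q_\alpha)$ without contradicting this commutator identity, we obtain $Z_\alpha\le\Phi(Q_\alpha)$. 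The quadratic action on $Q_\alpha/\Phi(Q_\alpha)$ then follows immediately from the same commutator chain, and faithfulness of $\bar{L_\alpha}$ on $Q_\alpha/\Phi(Q_\alpha)$ is \cref{burnside}.

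For part (iv), the inclusion $Z_\alpha\le\Omega(Z(Q_\alpha))$ is clear since $Z_\alpha$ is a natural $\SL_2(q_\alpha)$-module (hence elementary abelian) contained in $Z(Q_\alpha)$. For the reverse inclusion, I would suppose $Z_\alpha<\Omega(Z(Q_\alpha))$ and examine $\Omega(Z(Q_\alpha))\cap Q_{\alpha'}$: this subgroup is centralized by both $Q_\alpha$ and $Z_{\alpha'}$, hence by $S=Z_{\alpha'}Q_\alpha$, placing it inside $\Omega(Z(S))=Z_\beta$. Then $\Omega(Z(Q_\alpha))=Z_\alpha\cdot(\Omega(Z(Q_\alpha))\cap Q_{\alpha'})=Z_\alpha$ using that $|Z_{\alpha'}Q_\alpha:Q_\alpha|=q_\alpha=|Z_\alpha:Z_\beta|$ and $Z_\alpha$ is the natural module (so any $\bar{L_\alpha}$-normal extension above $Z_\alpha$ which intersects $Q_{\alpha'}$ trivially modulo $Z_\beta$ must coincide with $Z_\alpha$).

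For parts (ii) and (iii), the strategy is to exploit the quadratic action of $Z_\alpha/Z_\beta$ on $V_\beta/Z_\beta$ (quadratic since $[V_\beta,Z_\alpha]\le[V_\beta,Q_\beta]=Z_\beta$) together with $|Z_\alpha/Z_\beta|=q_\alpha$, and apply the minimal-faithful-module bounds of \cref{SpeMod2} and \cref{SpeModOdd} to $V_\beta/Z_\beta$ as a module for $\bar{L_\beta}/O_{p'}(\bar{L_\beta})$. For (ii), the cases $\PSL_2(q_\beta),\Sz(q_\beta),\PSU_3(q_\beta)$ produce lower bounds $|V_\beta/C_{V_\beta}(Z_\alpha)|\ge q_\beta^{2/3},q_\beta^{4/3},q_\beta^{3/2}$ respectively by \cref{SpeMod2}, while the quadratic-action upper bound $|V_\beta/C_{V_\beta}(Z_\alpha)|\le q_\alpha^{2}$ (or $q_\alpha$ when we further refine using $|Z_\alpha/Z_\beta|=q_\alpha$ acts as an FF-offender on $V_\beta/Z_\beta$) gives $q_\beta>q_\alpha^2$ except in the $\PSU_3$ case where the arithmetic only yields $q_\beta>q_\alpha$; the hypothesis $R_\alpha\ne Q_\alpha$ is used via the Schur-multiplier argument from the proof of \cref{BetaCenter}(iv) to exclude the tight natural-module configuration. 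For (iii), the same Schur-multiplier dichotomy applied at $p=3$ shows that $q_\alpha>3$ forces $L_\alpha/R_\alpha\cap C_{L_\alpha}(V_\beta/Z_\beta)$ to be a perfect central extension of $\PSL_2(q_\alpha)$ by a non-cyclic $2$-group, which violates the known $2$-part of the Schur multiplier of $\PSL_2(q_\alpha)$ being cyclic of order $2$, forcing $R_\alpha=Q_\alpha$ and contradicting the hypothesis.

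The main obstacle will be part (i), and in particular the inclusion $Z_\alpha\le\Phi(Q_\alpha)$: although $[Q_\alpha,Z_{\alpha'},Z_{\alpha'}]\le Z_\beta\le Z_\alpha$ is easy, pinning down $Z_\alpha$ inside $\Phi(Q_\alpha)$ rather than merely inside $Z(Q_\alpha)$ requires an $\bar{L_\alpha}$-module argument that rules out a natural-module direct summand in $Q_\alpha/\Phi(Q_\alpha)$, which is the step most likely to require a careful refinement using \cref{push} or a coprime-action decomposition of $Q_\alpha$.
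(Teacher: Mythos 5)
Parts (ii) and (iii) rest on a misreading of the geometry and cannot be salvaged as written. You propose treating $Z_\alpha/Z_\beta$ as a quadratic offender on $V_\beta/Z_\beta$ and comparing the resulting upper bound in $q_\alpha$ against the minimal-module lower bounds of \cref{SpeMod2} and \cref{SpeModOdd} for $\bar{L_\beta}/O_{p'}(\bar{L_\beta})$. But $b=2$ forces $Z_\alpha\le Q_\beta$ (as $d(\alpha,\beta)=1<b$), so $Z_\alpha$ has trivial image in $\bar{L_\beta}=L_\beta/Q_\beta$ and in $L_\beta/R_\beta$, and since $[V_\beta,Z_\alpha]\le Z_\beta$ it acts \emph{trivially}, not quadratically, on $V_\beta/Z_\beta$. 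The estimates of \cref{SpeMod2} and \cref{SpeModOdd} concern $p$-elements of the quotient group carrying the strongly $p$-embedded subgroup; an element of $Q_\beta$ contributes nothing, so your two bounds never meet. Moreover $L_\alpha$ does not normalize $V_\beta$, so the subgroup $C_{L_\alpha}(V_\beta/Z_\beta)$ you invoke for the Schur-multiplier step is undefined. The paper's proof works on the other side of the amalgam: with (i) in hand, $Q_\alpha/\Phi(Q_\alpha)$ is a faithful quadratic module for $\bar{L_\alpha}$ whose quadratic offender $V_\beta Q_\alpha/Q_\alpha\cong S/Q_\alpha$ (of order $q_\alpha$) has centralizer containing $\Phi(Q_\alpha)(Q_\alpha\cap Q_\beta)/\Phi(Q_\alpha)$; the index of the latter is read off from $\Phi(Q_\alpha)Q_\beta/Q_\beta=\Phi(S/Q_\beta)$, and \cref{2FRecog} then forces $\bar{L_\alpha}\cong\SL_2(q_\alpha)$, hence $R_\alpha=Q_\alpha$, whenever $q_\beta$ is small. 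The roles of $\alpha$ and $\beta$ are the reverse of what you assign.

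Parts (i) and (iv) contain gaps you flagged but did not fill. For $Z_\alpha\le\Phi(Q_\alpha)$ in (i): if $\Phi(Q_\alpha)\ne1$ then $\Phi(Q_\alpha)\cap\Omega(Z(S))\ne1$ (since $\Phi(Q_\alpha)\normaleq S$), so $\Phi(Q_\alpha)\cap Z_\alpha\ne1$ and irreducibility of $Z_\alpha$ under $L_\alpha$ finishes. The genuine content is the case $Q_\alpha$ elementary abelian, where the paper splits by whether $R_\alpha=Q_\alpha$ (handled by the symplectic amalgam classification) or $R_\alpha\ne Q_\alpha$ (coprime action gives an $S$-invariant decomposition $Q_\alpha=[Q_\alpha,R_\alpha]\times C_{Q_\alpha}(R_\alpha)$; since $Z_\beta\le Z_\alpha\le C_{Q_\alpha}(R_\alpha)$, the other factor misses $\Omega(Z(S))$ and hence vanishes, forcing $R_\alpha=Q_\alpha$). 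You allude to a coprime-action refinement in your closing paragraph but never assemble this dichotomy, and it is not optional. For (iv): your identity $\Omega(Z(Q_\alpha))\cap Q_{\alpha'}=Z_\beta$ is correct, but to conclude $\Omega(Z(Q_\alpha))=Z_\alpha$ you need $|\Omega(Z(Q_\alpha)):\Omega(Z(Q_\alpha))\cap Q_{\alpha'}|\le q_\alpha$, which requires $\Omega(Z(Q_\alpha))Q_{\alpha'}$ to sit inside a $p$-group of order $|S|$. That is not automatic: $Q_{\alpha'}$ need not lie in $S$, since the Sylow $p$-subgroup of $G_{\alpha',\beta}$ is in general a different $G_\beta$-conjugate of $S$. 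The paper first establishes $\Omega(Z(Q_\alpha))\le Q_\beta$ by a separate contradiction argument (via \cref{NotQuad}, \cref{SEFF} and parts (ii)--(iii)); only then does $\Omega(Z(Q_\alpha))Q_{\alpha'}\le Q_\beta Q_{\alpha'}\in\syl_p(G_{\alpha',\beta})$ yield the index bound. That preliminary reduction cannot be skipped.
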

\begin{proof}
That $[V_{\beta}, Q_{\beta}]=Z_{\beta}\le Z_{\alpha}$ is contained in \cref{ineq}. Then $[Q_{\alpha}, V_{\beta}, V_{\beta}]\le Z_{\beta}\le Z_{\alpha}$, and so to prove (i) it suffices to show that $Z_{\alpha}\le \Phi(Q_{\alpha})$. Indeed, since $Z_{\alpha}$ is irreducible under the action of $L_{\alpha}$, if $\Phi(Q_{\alpha})$ is non-trivial, then (i) holds. So assume that $Q_{\alpha}$ is elementary abelian. If $R_\alpha=Q_\alpha$ then $\mathcal{A}$ is a symplectic amalgam and (i) holds comparing with \cite{parkerSymp}, so we assume that $R_\alpha\ne Q_\alpha$. Then applying coprime action, we have that $Q_{\alpha}=[Q_{\alpha}, R_{\alpha}]\times C_{Q_{\alpha}}(R_{\alpha})$ is an $S$-invariant decomposition. But $Z_{\beta}\le Z_{\alpha}\le C_{Q_{\alpha}}(R_{\alpha})$ from which it follows that $Q_{\alpha}=C_{Q_{\alpha}}(R_{\alpha})$ and $R_{\alpha}=Q_{\alpha}$, a contradiction. Hence (i).

Now, if $p=3$ then since $Q_{\alpha}/\Phi(Q_{\alpha})$ is a quadratic module, applying \cref{SEQuad} we deduce that either $R_{\alpha}=Q_{\alpha}$ or $|S/Q_\alpha|=3$ and (iii) holds. Suppose that $p=2<q_\alpha$. If $m_2(S/Q_{\beta})=1$, then $\Phi(Q_{\alpha})(Q_{\alpha}\cap Q_{\beta})$ has index at most $4$ in $Q_{\alpha}$ and applying \cref{SEQuad} we deduce that $\bar{L_{\alpha}}\cong \PSL_2(4)$. Hence, if $R_\alpha\ne Q_\alpha$, then $m_2(S/Q_\beta)>1$. If $\bar{L_{\beta}}/O_{2'}(\bar{L_{\beta}})\cong \Sz(q_\beta)$ or $\PSL_2(q_\beta)$, then $\Phi(Q_{\alpha})(Q_{\alpha}\cap Q_{\beta})$ has index $q_\beta$ in $Q_\alpha$. Applying \cref{2FRecog}, if $R_\alpha\ne Q_\alpha$, then we have that $q_\beta>q_\alpha^2$. If $\bar{L_{\beta}}/O_{2'}(\bar{L_{\beta}})\cong \PSU_3(q_\beta)$ then $\Phi(Q_{\alpha})(Q_{\alpha}\cap Q_{\beta})$ has index $q_\beta^2$ in $Q_\alpha$ and applying \cref{2FRecog}, if $R_\alpha\ne Q_\alpha$ then $q_\beta>q_\alpha$. Hence, (ii) holds.

Assume that $\Omega(Z(Q_{\alpha}))\not \le Q_\beta$. Since $\Omega(Z(Q_{\alpha}))$ is $G_{\alpha,\beta}$-invariant and elementary abelian, we have that $[Q_{\beta}, \Omega(Z(Q_{\alpha})), \Omega(Z(Q_{\alpha}))]=\{1\}$ and applying \cref{NotQuad} when $\bar{L_{\beta}}/O_{3'}(\bar{L_{\beta}})\cong \Ree(q_\beta)$ we deduce in all cases that $|\Omega(Z(Q_{\alpha}))Q_{\beta}/Q_{\beta}|=q_\beta$. But now, by (ii) and (iii), $q_\beta\geq q_\alpha$ and since $Q_{\beta}\cap Q_{\alpha}$ is a subgroup of $Q_{\beta}$ of index $q_\alpha$ which is centralized by $\Omega(Z(Q_{\alpha}))$, applying \cref{SEFF} the only possibility is that $\bar{L_{\beta}}\cong \SL_2(p)$. But then, $\Phi(Q_{\alpha})(Q_{\alpha}\cap Q_{\beta})$ has index $p$ in $Q_{\alpha}$ and \cref{SEFF} implies that $\bar{L_{\alpha}}\cong\SL_2(p)$. Therefore, $\mathcal{A}$ is symplectic and \cite[Lemma 5.3]{parkerSymp} yields a contradiction.

Hence, $\Omega(Z(Q_{\alpha}))\le Q_{\beta}$ and $\Omega(Z(Q_{\alpha}))=Z_{\alpha}(\Omega(Z(Q_{\alpha}))\cap Q_{\alpha'})$. Moreover, \sloppy{$\Omega(Z(Q_{\alpha}))\cap Q_{\alpha'}$} is centralized by $S=Z_{\alpha'}Q_{\alpha}$ so that $\Omega(Z(Q_{\alpha}))\cap Q_{\alpha'}=\Omega(Z(S))=Z_{\beta}\le Z_{\alpha}$. Hence, (iv) holds. 
\end{proof}

For the remainder of this section, we set $L:=\langle V_{\beta}, V_{\beta}^x\rangle Q_{\alpha}$ with $x\in L_{\alpha}$ chosen such that $Z_{\beta}^x\ne Z_{\beta}$ and $x^2\in G_{\alpha,\beta}$. In particular, $LR_{\alpha}=L_{\alpha}$. We write $\alpha-1:=\beta^x$.

Note that if $p=2$ then we may choose $x\in T\setminus Q_\alpha$ for some appropriate $T\in\syl_2(L_{\alpha})\setminus \{S\}$. In particular, we can arrange that $x\in L$. If $p=3$ then applying \cref{b=2lemma}, we have that $L_{\alpha}/R_{\alpha}\cong \SL_2(3)$. Moreover, since $[Q_{\alpha}, V_{\beta}, V_{\beta}]\le Z_{\alpha}$, every non-central chief factor $U/V$ within $Q_{\alpha}$ for $L_{\alpha}$ has $L_{\alpha}/C_{L_{\alpha}}(U/V)\cong \SL_2(3)$. By \cref{Badp3}, we deduce that whenever $C_{L_{\alpha}}(U/V)\ne R_{\alpha}$, we have that $L_{\alpha}/C_{L_{\alpha}}(U/V)\cap R_{\alpha}\cong (Q_8\times Q_3):3$. Extending this argument yields a subgroup $U\le \bar{L_{\alpha}}$ with $U\cong Q_8$ and $O^p(\bar{L_{\alpha}})\cong U\times \bar{R_{\alpha}}$. Then for any $x$ of order $4$ in the preimage of $U$ in $L_{\alpha}$, $x^2\in G_{\alpha,\beta}$ and $Z_{\beta}^x\ne Z_{\beta}$, as required.

\begin{lemma}\label{VBStrucI}
Suppose that $b=2$ and $p\in\{2,3\}$. Then the following hold:
\begin{enumerate}
    \item $Q_{\beta}\cap Q_{\alpha}\cap Q_{\alpha-1}$ is elementary abelian;
    \item if $V_{\beta}\cap Q_{\alpha}\cap Q_{\alpha-1}=V_{\alpha-1}\cap Q_{\alpha}\cap Q_{\beta}=V_{\beta}\cap Q_{\alpha}\cap Q_{\alpha-1}$;
    \item $V_{\beta}\cap Q_{\alpha}\not\le Q_{\alpha-1}$ and $V_{\alpha-1}\cap Q_{\alpha}\not\le Q_{\beta}$; and
    \item if $V<V_{\beta}$ with $V\normaleq G_{\beta}$ then $V\cap Q_{\alpha}\cap Q_{\alpha-1}=Z_{\beta}$.
\end{enumerate}
\end{lemma}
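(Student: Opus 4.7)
The plan is to establish parts (iii), (i), (ii), (iv) in that order, leveraging the involution $x$ on $\Delta(\alpha)$ which interchanges $\beta$ and $\alpha-1$ (since $x^2\in G_{\alpha,\beta}$) together with the key structural inputs: $[V_\beta, Q_\beta]=Z_\beta$ and $[V_{\alpha-1}, Q_{\alpha-1}]=Z_{\alpha-1}$ (from \cref{ineq}), $Z_\alpha=Z_\beta\times Z_{\alpha-1}$ (from \cref{ineq}), and $\Omega(Z(Q_\alpha))=Z_\alpha$ with $Q_\alpha/\Phi(Q_\alpha)$ a faithful quadratic module (from \cref{b=2lemma}).

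For part (iii), I would assume $V_\beta\cap Q_\alpha\le Q_{\alpha-1}$ and derive a contradiction. Conjugating by $x$ (which swaps $\beta$ and $\alpha-1$ while fixing $\alpha$) gives $V_{\alpha-1}\cap Q_\alpha\le Q_\beta$. Then the commutator $[V_\beta\cap Q_\alpha,\, V_{\alpha-1}\cap Q_\alpha]$ sits simultaneously in $[Q_\beta, V_\beta]=Z_\beta$ and in $[Q_{\alpha-1}, V_{\alpha-1}]=Z_{\alpha-1}$, hence is trivial. Thus $U:=(V_\beta\cap Q_\alpha)(V_{\alpha-1}\cap Q_\alpha)$ is a $G_{\alpha,\beta}$-invariant abelian subgroup of $Q_\alpha\cap Q_\beta\cap Q_{\alpha-1}$ containing $Z_\alpha$. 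The $L_\alpha$-closure $\langle U^{L_\alpha}\rangle$ sits in $V_\alpha^{(2)}$ and is centralized (mod $Z_\alpha$) by $V_\beta$ acting on $Q_\alpha/\Phi(Q_\alpha)$; I would then use \cref{b=2lemma}(i) together with \cref{push} (or the irreducibility of $Z_\alpha$ under $L_\alpha$) to produce a proper $G_\alpha$-normal subgroup of $Q_\alpha\cap Q_\beta$, contradicting \cref{push}.

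For part (i), the key is that $[W, V_\beta]\le Z_\beta$ and $[W, V_{\alpha-1}]\le Z_{\alpha-1}$, and $V_\beta V_{\alpha-1}$ generates a large portion of $Q_\alpha$ modulo $Z_\alpha$. I would use these commutator controls, together with the fact that $W\le Q_\alpha$ and $\Omega(Z(Q_\alpha))=Z_\alpha$, to show $[W,W]\le Z_\alpha$ and then $[W,W]=\{1\}$ by restricting further via $W\le Q_\beta\cap Q_{\alpha-1}$. The exponent-$p$ claim then follows from $W^p\le\Omega(Z(Q_\alpha))\cap W\le Z_\alpha\cap Q_\beta\cap Q_{\alpha-1}$ and direct analysis of the natural $\SL_2(q_\alpha)$-structure of $Z_\alpha$. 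Part (ii) is then essentially symmetry: the $x$-conjugation maps $V_\beta\cap Q_\alpha\cap Q_{\alpha-1}$ to $V_{\alpha-1}\cap Q_\alpha\cap Q_\beta$, and (i) combined with the abelian-commuting argument from (iii) forces the two intersections to coincide inside the common elementary abelian $W$.

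The main obstacle is part (iv). Given $V\lneq V_\beta$ with $V\normaleq G_\beta$, the subgroup $V\cap Q_\alpha$ is $G_{\alpha,\beta}$-invariant and $\langle(V\cap Q_\alpha)^{L_\beta}\rangle\le V$. The hard part is showing that if $V\cap Q_\alpha\cap Q_{\alpha-1}$ strictly contains $Z_\beta$, then this $L_\beta$-closure is already all of $V_\beta$, contradicting $V\lneq V_\beta$. Without detailed information on $\bar{L_\beta}$ at this stage, I would argue by chief factors: part (iii) guarantees elements of $V_\beta\cap Q_\alpha$ not in $Q_{\alpha-1}$, and combining a $G_{\alpha,\beta}$-invariant proper overgroup of $Z_\beta$ in $V\cap Q_\alpha\cap Q_{\alpha-1}$ with the transitivity of $L_\beta$ on neighbours of $\beta$ should produce enough translates to recover all of $V_\beta$. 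The delicate point is to rule out the case where $V/Z_\beta$ could be a small $G_\beta$-chief factor in $V_\beta/Z_\beta$ supported entirely in $Q_\alpha\cap Q_{\alpha-1}$; I expect this to require the explicit use of part (iii) and the non-quadratic action afforded by \cref{b=2lemma}(i).
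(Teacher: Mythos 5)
Your proposal diverges significantly from the paper's argument and, in the form you outline it, has genuine gaps. The paper proves the parts in the order (i), (ii), (iii), (iv): (i) is a direct argument showing $\Phi(Q_\beta\cap Q_\alpha\cap Q_{\alpha-1})=\{1\}$ by noting $O^p(L)$ centralizes $(Q_\beta\cap Q_\alpha\cap Q_{\alpha-1})/Z_\alpha$ and exploiting $L$-irreducibility of $Z_\alpha$; (ii) is immediate from $x$-conjugation; (iii) \emph{uses} (ii) to show that $V_\beta\cap Q_\alpha=V_{\alpha-1}\cap Q_\alpha$ is $L$-normal and then derives the contradiction by coprime action (not \cref{push}); and (iv) uses (i) together with $L$-normality of $(V\cap Q_\alpha\cap Q_{\alpha-1})Z_\alpha$ and $\Omega(Z(Q_\alpha))=Z_\alpha$.

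Two concrete problems with your route. First, in your proof of (iii), the closing step ``produce a proper $G_\alpha$-normal subgroup of $Q_\alpha\cap Q_\beta$, contradicting \cref{push}'' does not yield a contradiction: \cref{push} asserts precisely that $Q_\alpha\cap Q_\beta$ \emph{itself} is not normal in $L_\alpha$, and says nothing to exclude proper $L_\alpha$-normal subgroups of $Q_\alpha\cap Q_\beta$ (indeed such subgroups exist, e.g.\ $Z_\alpha$). The paper's actual contradiction in (iii) is obtained by showing that the $L$-invariant series $\{1\}\normaleq Z_\alpha\normaleq V_\beta\cap Q_\alpha\normaleq Q_\alpha$ is stabilized by $O^p(L)$, so any $p'$-element of $O^p(L)$ centralizes $Q_\alpha/Z_\alpha$ and hence, since $Z_\alpha\le\Phi(Q_\alpha)$ by \cref{b=2lemma}(i), centralizes $Q_\alpha$ by \cref{burnside}; this forces $L/Q_\alpha$ to be a $p$-group, which is false. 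That coprime-action mechanism, not \cref{push}, is the missing idea. Second, your claim in (iii) that $U=(V_\beta\cap Q_\alpha)(V_{\alpha-1}\cap Q_\alpha)$ is abelian only addresses the mixed commutator; to know each factor $V_\beta\cap Q_\alpha$ is abelian (its derived subgroup a priori only lies in $Z_\beta$) you would appeal to (i), but in your ordering (i) has not yet been established, so the argument is circular as stated. Finally, you flag (iv) as unresolved yourself; the decisive point there, which your sketch does not reach, is that $[Q_\alpha,V\cap Q_\alpha\cap Q_{\alpha-1}]$ is $L$-normal and contained in $Z_\alpha$, so by irreducibility of $L$ on $Z_\alpha$ it is either trivial or all of $Z_\alpha$; in the latter case $Z_\alpha\le V$ forces $V=V_\beta$, and in the former case $V\cap Q_\alpha\cap Q_{\alpha-1}\le\Omega(Z(Q_\alpha))=Z_\alpha$ (using \cref{b=2lemma}(iv)) and $G_{\alpha,\beta}$-irreducibility of $Z_\alpha/Z_\beta$ pins the intersection down to $Z_\beta$.
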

\begin{proof}
Since $[Q_{\beta}, V_{\beta}]=Z_{\beta}\le Z_{\alpha}$, $(Q_{\beta}\cap Q_{\alpha}\cap Q_{\alpha-1})/Z_{\alpha}$ is centralized by $O^p(L)$. Since $S=V_{\beta}Q_{\alpha}$ normalizes $Q_{\beta}\cap Q_{\alpha}\cap Q_{\alpha-1}$, if $Q_{\beta}\cap Q_{\alpha}\cap Q_{\alpha-1}$ is not elementary abelian then $Z_{\beta}\cap \Phi(Q_{\beta}\cap Q_{\alpha}\cap Q_{\alpha-1})$ is non-trivial and the construction of $L$ yields that $Z_{\alpha}\le \Phi(Q_{\beta}\cap Q_{\alpha}\cap Q_{\alpha-1})$, a contradiction. Thus, $Q_{\beta}\cap Q_{\alpha}\cap Q_{\alpha-1}$ is elementary abelian, completing the proof of (i).

By the choice of $x$, we have that $V_{\beta}^x=V_{\alpha-1}$ and $V_{\alpha-1}^x=V_{\beta}$. Moreover, $Z_{\alpha}\le V_{\beta}\cap Q_{\alpha}\cap Q_{\alpha-1}$ and $x$ normalizes $V_{\beta}\cap Q_{\alpha}\cap Q_{\alpha-1}$ so that $V_{\beta}\cap Q_{\alpha}\cap Q_{\alpha-1}=(V_{\beta}\cap Q_{\alpha}\cap Q_{\alpha-1})^x=V_{\alpha-1}\cap Q_{\alpha}\cap Q_{\beta}$ and (ii) holds.

Suppose that $V_{\beta}\cap Q_{\alpha}\le Q_{\alpha-1}$. Then, by (ii), $V_{\beta}\cap Q_{\alpha}=V_{\alpha-1}\cap Q_{\alpha}\normaleq L$ and $V_{\beta}$ centralizes the $L$-invariant series $\{1\}\normaleq Z_{\alpha}\normaleq V_{\beta}\cap Q_{\alpha}\normaleq Q_{\alpha}$. Since $O_p(L)=Q_{\alpha}$ and $V_{\beta}\not\le Q_{\alpha}$, an application of coprime action yields a contradiction. The action of $L$ implies also that $V_{\alpha-1}\cap Q_{\alpha}\not\le Q_{\beta}$.

Let $V$ be a $p$-group normal in $G_{\beta}$ and contained in $V_{\beta}$. Since $G_{\alpha,\beta}$ acts irreducibly on $Z_{\beta}$, $Z_{\beta}\le V$. Moreover, $V\cap Q_{\alpha}\cap Q_{\alpha-1}$ is elementary abelian, and $(V\cap Q_{\alpha}\cap Q_{\alpha-1})Z_{\alpha}\normaleq L$. Then $[Q_{\alpha}, V\cap Q_{\alpha}\cap Q_{\alpha-1}]=[Q_{\alpha}, (V\cap Q_{\alpha}\cap Q_{\alpha-1})Z_{\alpha}]\normaleq L$. If $[Q_{\alpha}, V\cap Q_{\alpha}\cap Q_{\alpha-1}]\cap Z_{\beta}$ is non-trivial, then by the construction of $L$, $Z_{\alpha}\le [Q_{\alpha}, V\cap Q_{\alpha}\cap Q_{\alpha-1}]\le V$, a contradiction. Thus, $[Q_{\alpha}, V\cap Q_{\alpha}\cap Q_{\alpha-1}]=\{1\}$ and $Z_{\beta}\le V\cap Q_{\alpha}\cap Q_{\alpha-1}\le \Omega(Z(Q_{\alpha}))=Z_{\alpha}$. If $V\cap Z_{\alpha}>Z_{\beta}$, then since $G_{\alpha,\beta}$ acts irreducibly on $Z_{\alpha}/Z_{\beta}$ and $V\normaleq G_{\alpha,\beta}$, we have that $Z_{\alpha}\le V$ and by the definition of $V_{\beta}$, $V=V_{\beta}$. This completes the proof.
\end{proof}

We finally determine some of the structural properties of $V_{\beta}$ and its chief factors in a general setting.

\begin{lemma}\label{VBStrucII}
Suppose that $b=2$ and $p\in\{2,3\}$. Assume that $R_\alpha \ne Q_\alpha$. Then the following hold:
\begin{enumerate}
    \item $R_{\beta}=Q_{\beta}$ and $O^p(L_\beta)$ centralizes $Q_{\beta}/V_{\beta}$;
    \item if $V<V_{\beta}$ with $V\normaleq G_{\beta}$ then $V\le C_{V_{\beta}}(O^p(L_{\beta}))$; and 
    \item $Z(V_{\beta})=C_{V_{\beta}}(O^p(L_{\beta}))$ and $V_{\beta}/Z(V_{\beta})$ is $\bar{G_{\beta}}$-irreducible.
\end{enumerate}
\end{lemma}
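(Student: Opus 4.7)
The plan is to prove (i), (ii), (iii) sequentially, exploiting \cref{ineq}, \cref{b=2lemma}, and the sharp constraint \cref{VBStrucI}(iv) that any proper $G_\beta$-normal subgroup $V$ of $V_\beta$ satisfies $V \cap Q_\alpha \cap Q_{\alpha-1} = Z_\beta$.

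For (i), the inclusion $Q_\beta \le R_\beta$ is immediate from $[V_\beta, Q_\beta] = Z_\beta \le \Omega(Z(L_\beta)) \le C_{V_\beta}(O^p(L_\beta))$ by \cref{ineq}. Since $Q_\beta \in \syl_p(R_\beta)$ by \cref{ineq}, $R_\beta/Q_\beta$ is a $p'$-group. Supposing $R_\beta > Q_\beta$ for contradiction, pick a Hall $p'$-complement $T \le R_\beta$ and note that $[V_\beta, V_\beta] \le [Q_\beta, V_\beta] = Z_\beta$, so $V_\beta/Z_\beta$ is abelian. Coprime action gives $V_\beta/Z_\beta = C_{V_\beta/Z_\beta}(T) \cdot [V_\beta, T]Z_\beta/Z_\beta$ with $[V_\beta, T]Z_\beta \le C_{V_\beta}(O^p(L_\beta))Z_\beta$. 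A Frattini argument $L_\beta = Q_\beta N_{L_\beta}(T)$ arranges $T$ to be $G_{\alpha,\beta}$-invariant, and then the $L_\beta$-normal closure of the preimage in $V_\beta$ of $C_{V_\beta/Z_\beta}(T)$ produces a proper $G_\beta$-normal subgroup of $V_\beta$ whose intersection with $Q_\alpha \cap Q_{\alpha-1}$ strictly exceeds $Z_\beta$, contradicting \cref{VBStrucI}(iv). The degenerate case $[V_\beta, T] \le Z_\beta$ is handled via the $A\times B$-lemma: $T$ centralises $V_\beta \supseteq Z_\alpha$, and a further Frattini argument transports $T$ into $L_\alpha$ where $R_\alpha = C_{L_\alpha}(Z_\alpha)$ absorbs $T$, contradicting $T \ne 1$ via the $\SL_2$-rigidity of $Z_\alpha$ under $\bar{L_\alpha}$. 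For the second claim, that $O^p(L_\beta)$ centralises $Q_\beta/V_\beta$: a putative non-central $L_\beta$-chief factor $U/W$ with $V_\beta \le W < U \le Q_\beta$ would, via $L_\beta/R_\beta \cong \SL_2(q_\beta)$ and \cref{SEFF}, be a natural $\SL_2(q_\beta)$-module, and the $G_{\alpha,\beta}$-invariant line in $U/W$ combined with $V_\beta$ produces a normal subgroup of $L_\beta$ contradicting \cref{push}.

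For (ii), suppose $V < V_\beta$ is $G_\beta$-normal with $V \not\le C_{V_\beta}(O^p(L_\beta))$. By (i) and \cref{SEFF}, $V_\beta/C_{V_\beta}(O^p(L_\beta))$ is a natural $\SL_2(q_\beta)$-module for $L_\beta/R_\beta$ and is $\bar{G_\beta}$-irreducible, so $VC_{V_\beta}(O^p(L_\beta)) = V_\beta$. Thus $V$ surjects onto the two ``weight lines'' of the natural module, giving $VQ_\alpha = V_\beta Q_\alpha$ and $VQ_{\alpha-1} = V_\beta Q_{\alpha-1}$; applying \cref{VBStrucI}(iv) to both $V$ and to $C_{V_\beta}(O^p(L_\beta))$ and comparing orders via the two projections forces $V = V_\beta$, a contradiction.

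For (iii), $V_\beta$ is non-abelian: \cref{VBStrucI}(iii) gives $V_\beta \cap Q_\alpha \not\le Q_{\alpha-1}$, and using the involution $x$ that swaps $V_\beta$ and $V_{\alpha-1}$ in the construction of $L$ yields $[V_\beta, V_\beta] \ne 1$, so $Z(V_\beta) < V_\beta$. Then (ii) gives $Z(V_\beta) \le C_{V_\beta}(O^p(L_\beta))$. For the reverse, the three-subgroup lemma applied to $V_\beta$, $O^p(L_\beta)$ and $C_{V_\beta}(O^p(L_\beta))$ yields $[V_\beta, O^p(L_\beta), C_{V_\beta}(O^p(L_\beta))] = 1$, and combined with the decomposition $V_\beta = [V_\beta, O^p(L_\beta)] \cdot C_{V_\beta}(O^p(L_\beta))$ this reduces to showing $C_{V_\beta}(O^p(L_\beta))$ is abelian; its derived subgroup lies in $Z_\beta$ and is $G_\beta$-normal, hence either $1$ (giving the claim) or $Z_\beta$, and the latter is ruled out using the explicit natural $\SL_2(q_\beta)$-module structure on $V_\beta/C_{V_\beta}(O^p(L_\beta))$. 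Irreducibility of $V_\beta/Z(V_\beta)$ under $\bar{G_\beta}$ is immediate from (ii) applied to any proper $G_\beta$-invariant overgroup of $Z(V_\beta) = C_{V_\beta}(O^p(L_\beta))$. The main obstacle is part (i), where forcing $R_\beta = Q_\beta$ requires carefully coupling coprime action on the class-$2$ group $V_\beta$ with the rigidity from \cref{VBStrucI}(iv), and handling the degenerate case demands transporting a $p'$-element across the amalgam into $R_\alpha$, delicate because $L_\beta \cap L_\alpha$ lies inside $G_{\alpha,\beta}$.
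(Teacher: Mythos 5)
Your (ii), (iii), and the second half of (i) rely on $L_\beta/R_\beta\cong\SL_2(q_\beta)$ and on $V_\beta/C_{V_\beta}(O^p(L_\beta))$ being a natural $\SL_2(q_\beta)$-module. Neither is available here: \cref{EvenNat}, which would give this, is proved only for $b>2$, and the text directly before \cref{b=2lemma} states that for $b=2$ we have ``no knowledge of the structure of $\bar{L_\beta}$ or $V_\beta$''; in \cref{b2ii} the cases $\bar{L_\beta}/O_{p'}(\bar{L_\beta})\cong\PSL_2(q_\beta),\PSU_3(q_\beta),\Sz(q_\beta),\Ree(q_\beta)$ all genuinely occur. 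In the $\Sz$ and $\PSU_3$ cases the module $V_\beta/C_{V_\beta}(O^p(L_\beta))$ is at best $2$F, so \cref{SEFF} does not apply, and your ``weight line'' projection argument for (ii) and the appeal to natural-module structure in (iii) both collapse. The paper's proof of (ii) is exactly this missing content: \cref{VBStrucI}(iv) bounds $|V/\Phi(V)Z_\beta|$ above by a quantity depending on the type of $\bar{L_\beta}$, and a contradiction is extracted by playing this off against the constraints in \cref{b=2lemma} and the lower bounds on faithful modules from \cref{SpeMod2} and \cref{SpeModOdd}. That case analysis over possible $\bar{L_\beta}$ is the heart of the lemma, not part (i).

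For (i) you have over-complicated the argument. Parts (i) and (ii) of \cref{VBStrucI} yield the commutator identity $[Q_\beta\cap Q_\alpha, V_{\alpha-1}\cap Q_\alpha]\le V_{\alpha-1}\cap Q_\alpha\cap Q_\beta=V_\beta\cap Q_\alpha\cap Q_{\alpha-1}\le V_\beta$; with $Q_\beta=V_\beta(Q_\beta\cap Q_\alpha)$ this shows $V_{\alpha-1}\cap Q_\alpha\le C_{L_\beta}(Q_\beta/V_\beta)$, which is not $p$-closed since $V_{\alpha-1}\cap Q_\alpha\not\le Q_\beta$ by \cref{VBStrucI}(iii), so $O^p(L_\beta)\le C_{L_\beta}(Q_\beta/V_\beta)$ by \cref{p-closure}, and a short coprime-action chain kills $R_\beta/Q_\beta$. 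Your Hall-complement/Frattini route bypasses this, and the ``degenerate case'' --- transporting $T\le L_\beta$ into $L_\alpha$ to be absorbed by $R_\alpha$ --- is too vague to assess: there is no given homomorphism carrying $T$ across the amalgam. Likewise for (iii), once (ii) holds, $V_\beta=[V_\beta,O^p(L_\beta)]\le O^p(L_\beta)$ gives $C_{V_\beta}(O^p(L_\beta))\le C_{V_\beta}(V_\beta)=Z(V_\beta)$ in one line, so your three-subgroup detour and the need to rule out $[C_{V_\beta}(O^p(L_\beta)),C_{V_\beta}(O^p(L_\beta))]=Z_\beta$ are both unnecessary (and the latter appeal is unjustified for the reason above).
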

\begin{proof}
By \cref{VBStrucI}, we have that $[Q_{\beta}\cap Q_{\alpha}, V_{\alpha-1}\cap Q_{\alpha}]\le V_{\beta}$ and since $Q_{\beta}=V_{\beta}(Q_{\beta}\cap Q_{\alpha})$ and $V_{\alpha-1}\cap Q_{\alpha}\not\le Q_{\beta}$, we infer that $O^p(L_{\beta})$ centralizes $Q_{\beta}/R_{\beta}$ and $R_{\beta}=Q_{\beta}$.

Let $V<V_{\beta}$ with $V\normaleq G_{\beta}$ and assume that $V$ contains a non-central chief factor for $L_{\beta}$. By \cref{VBStrucI}, we have that $V\cap Q_{\alpha}\cap Q_{\alpha-1}=Z_{\beta}$ has index at most $q_\alpha r_\beta$ in $V$, where $r_\beta=|(V_{\beta}\cap Q_{\alpha})Q_{\alpha-1}/Q_{\alpha-1}|$.

Suppose first that $m_p(S/Q_{\beta})=1$. By \cref{b=2lemma}, we have that $q_\alpha=p$ so that $\Phi(V)Z_\beta$ has index at most $p^3$ in $L_{\beta}$. In particular, unless $p=2$ and $S/Q_{\beta}$ is generalized quaternion, $\Phi(V)Z_\beta$ has index $p^2$ in $V$ and we deduce by \cref{p-closure} that $L_{\beta}/C_{L_{\beta}}(V/\Phi(V))\cong \SL_2(p)$ and $Q_{\beta}\in\syl_p(C_{L_{\beta}}(V/\Phi(V)))$. Even if $p=2$ and $S/Q_{\beta}$ is generalized quaternion, applying \cref{p-closure}, we have that $Q_{\beta}\in\syl_p(C_{L_{\beta}}(V/\Phi(V)))$ and $L_{\beta}/C_{L_{\beta}}(V/\Phi(V))$ is isomorphic to a subgroup of $\GL_3(2)$. In this latter case, $S/Q_{\beta}$ is isomorphic to a subgroup of $\Dih(8)$, a contradiction. Thus, $|S/Q_{\beta}|=p$, $Q_{\alpha}\cap Q_{\beta}$ has index $p$ in $Q_{\alpha}$ and $R_{\alpha}=Q_\alpha$, a contradiction to the initial assumption.

Suppose now that $m_p(S/Q_{\beta})>1$. Since $R_\alpha \ne Q_\alpha$, we may exploit the bounds in \cref{b=2lemma}. Applying \cref{p-closure}, we have that $\bar{C_{L_{\beta}}(V/\Phi(V))}\le O_{p'}(\bar{L_{\beta}})$ so that $L_{\beta}/C_{L_{\beta}}(V/\Phi(V))$ has a strongly $p$-embedded subgroup. If $\bar{L_{\beta}}/O_{2'}(\bar{L_{\beta}})\cong \Sz(q_\beta)$, then $\Phi(V)Z_\beta$ has index at most $q_\alpha q_\beta^2$ in $V$. Since $q_\alpha<q_\beta$ by \cref{b=2lemma}, we deduce that $|V/\Phi(V)|< q_\beta^3$, a contradiction by \cref{SpeMod2}. If $\bar{L_{\beta}}/O_{3'}(\bar{L_{\beta}})\cong \Ree(q_\beta)$ or $\bar{L_{\beta}}/O_{p'}(\bar{L_{\beta}})\cong \PSU_3(q_\beta)$, then $\Phi(V)Z_\beta$ has index at most $q_\alpha q_\beta^3$ in $V$. Since $q_\alpha<q_\beta$ by \cref{b=2lemma}, we deduce that $|V/\Phi(V)|< q_\beta^5$, and a contradiction is provided by \cref{SpeMod2} and \cref{SpeModOdd}. Finally, if $S/Q_{\beta}$ is elementary abelian, then $\Phi(V)Z_\beta$ has index at most $q_\alpha q_\beta$ in $V$. Using that $q_\alpha<q_\beta$ by \cref{b=2lemma}, we conclude that $|V/\Phi(V)|< q_\beta^2$, and a contradiction is provided by \cref{SpeMod2} and \cref{SpeModOdd}.

To complete the proof, we need only deduce (iii). By (ii), we have that $V_{\beta}=[V_{\beta}, O^p(L_{\beta})]\le O^p(L_{\beta})$ and $C_{V_{\beta}}(O^p(L_{\beta}))\le Z(V_{\beta})$. Moreover, since $V_{\beta}$ is non-abelian, (ii) also implies that $Z(V_{\beta})\le C_{V_{\beta}}(O^p(L_{\beta}))$ and there are no proper subgroups of $V_{\beta}$ which are normal in $G_{\beta}$ and properly contain $C_{V_{\beta}}(O^p(L_{\beta}))$. Then (iii) follows immediately from these observations. 
\end{proof}

We complete the $b=2$ case over the next two propositions. The first proposition deals with the case where $m_p(S/Q_{\beta})=1$.

\begin{proposition}\label{b2i}
Suppose that $p\in\{2,3\}$, $b=2$ and $m_p(S/Q_{\beta})=1$. Then $R_\alpha=Q_{\alpha}$, $|S|\leq 2^6$, $\mathcal{A}=\mathcal{A}(G_\alpha, G_\beta, G_{\alpha,\beta})$ is a symplectic amalgam and one of the following holds:
\begin{enumerate}
\item $G$ has a weak BN-pair of rank $2$ and $G$ is locally isomorphic to $H$ where $F^*(H)\cong\mathrm{G}_2(2)'$; or
\item $p=2$, $|S|=2^6$, $Q_{\beta}\cong 2^{1+4}_+$ and $\bar{L_{\beta}}\cong (3\times 3):2$.
\end{enumerate}
\end{proposition}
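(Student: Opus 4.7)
The plan is to split the argument into two halves: first establishing $R_\alpha = Q_\alpha$ (so that the amalgam is symplectic), and then sifting the classification in \cref{Symp} against $p\in\{2,3\}$ and $m_p(S/Q_\beta)=1$.

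For the first half, I would assume $R_\alpha \ne Q_\alpha$ and derive a contradiction. Under this assumption, \cref{VBStrucII} gives $R_\beta = Q_\beta$, $V_\beta$ is non-abelian, $Z(V_\beta)=C_{V_\beta}(O^p(L_\beta))$, and $V_\beta/Z(V_\beta)$ is $\bar{G_\beta}$-irreducible; since $V_\beta=\langle Z_\alpha^{G_\beta}\rangle$ is non-abelian, $Z_\alpha\not\le Z(V_\beta)$. I would then choose $V\le V_\beta$ minimal subject to being $G_\beta$-invariant and containing a non-central $L_\beta$-chief factor (or $V=V_\beta$ itself if $V_\beta/Z(V_\beta)$ is the only such factor). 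Applying the bound $V\cap Q_\alpha\cap Q_{\alpha-1}\le C_{V_\beta}(O^p(L_\beta))$ from \cref{VBStrucI}(iv), the identity $|S/Q_\alpha|=q_\alpha$ from \cref{SL2}, and the restrictions on $q_\alpha$ from \cref{b=2lemma}(ii)(iii), one shows that $\Phi(V)C_{V_\beta}(O^p(L_\beta))$ has index at most $p^3$ in $V$. Reading this against the minimal module dimensions in \cref{SpeMod2} and \cref{SpeModOdd} for a group with strongly $p$-embedded subgroup acting faithfully on $V/\Phi(V)C_{V_\beta}(O^p(L_\beta))$ — exactly as in the $m_p(S/Q_\beta)=1$ branch of the proof of \cref{VBStrucII} — forces $|S/Q_\beta|=p$ and $\bar{L_\beta}/C_{\bar{L_\beta}}(V/\Phi(V)C_{V_\beta}(O^p(L_\beta)))\cong \SL_2(p)$. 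Then $Q_\alpha\cap Q_\beta$ has index $p$ in $Q_\alpha$, and repeating the coprime action argument of \cref{b=2lemma}(i) on the elementary abelian quotient (which contains $\Omega(Z(S))\le Z_\alpha$) yields $R_\alpha=Q_\alpha$, the desired contradiction.

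Given $R_\alpha=Q_\alpha$, I would verify the five clauses of a symplectic amalgam: (i) is \cref{SL2}; (iii) is part of \cref{MainHyp}; (iv) is the ambient hypothesis $Z_\beta=\Omega(Z(S))=\Omega(Z(L_\beta))$; (v) follows because $b=2$ puts $Z_\alpha\le Q_\beta$ while a $G_\beta$-conjugate of $Z_\alpha$ lies outside $Q_\alpha$ (since $[Z_\alpha,Z_{\alpha'}]\ne\{1\}$); and (ii) follows from \cref{push} together with \cref{p-closure}(iii) applied to $W=\langle(Q_\alpha\cap Q_\beta)^{L_\alpha}\rangle$, exactly as in the proof of \cref{5Symp}. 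Then \cref{Symp} applies.

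To conclude, I would filter the outcomes of \cref{Symp} under $p\in\{2,3\}$, $m_p(S/Q_\beta)=1$, and $Z_\beta=\Omega(Z(S))=\Omega(Z(L_\beta))$. Among the weak-BN-pair cases in \cref{Symp}(i): ${}^3\mathrm{D}_4(p^n)$ has $m_p(S/Q_\beta)\ge 2$; $\mathrm{G}_2(p^n)$ with $p\in\{2,3\}$ forces $n=1$, and the case $(p,n)=(3,1)$ is excluded because then $Z_\beta\ne\Omega(Z(S))$ by \cref{G23}, leaving only $\mathrm{G}_2(2)'$; and for $\mathrm{J}_2$ and $\Aut(\mathrm{J}_2)$ a direct check shows $\Omega(Z(L_\beta))\ne\Omega(Z(S))$ on the rank-one parabolic, violating clause (iv) of the symplectic hypothesis (alternatively, $|S|=2^7$ is incompatible with the forthcoming bound $|S|\le 2^6$). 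This leaves $\mathrm{G}_2(2)'$, giving outcome (i). Among the remaining symplectic amalgams in \cref{Symp}(ii)--(vi), only case (ii) — namely $p=2$, $|S|=2^6$, $Q_\beta\cong 2^{1+4}_+$, $\bar{L_\beta}\cong (3\times 3):2$ — satisfies both $p\in\{2,3\}$ and $m_p(S/Q_\beta)=1$, giving outcome (ii). In both surviving cases $|S|\le 2^6$ as claimed.

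The hard part is the first half: reproducing, with just $m_p(S/Q_\beta)=1$ in hand, the counting argument of \cref{VBStrucII} strongly enough to collapse $|S/Q_\beta|$ to $p$. The filtering in the third paragraph is essentially bookkeeping once \cref{Symp} is available.
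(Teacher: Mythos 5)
The first half of your proposal has a genuine gap. You choose $V\le V_\beta$ minimal subject to being $G_\beta$-invariant and containing a non-central $L_\beta$-chief factor, and then invoke the bound from \cref{VBStrucI}(iv). But \cref{VBStrucI}(iv) only applies to \emph{proper} $G_\beta$-invariant $V<V_\beta$, and \cref{VBStrucII}(iii) — which you have just established under $R_\alpha\ne Q_\alpha$ — says $V_\beta/Z(V_\beta)$ is $\bar{G_\beta}$-irreducible, so the only non-central chief factor sits at the top and your minimal $V$ is forced to be $V_\beta$ itself, where no such containment $V_\beta\cap Q_\alpha\cap Q_{\alpha-1}\le C_{V_\beta}(O^p(L_\beta))$ is available (only elementary abelianness from \cref{VBStrucI}(i)). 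Compounding this, \cref{SpeMod2} and \cref{SpeModOdd} both assume $m_p>1$, so they cannot be read against a group with $m_p(S/Q_\beta)=1$; the $m_p(S/Q_\beta)=1$ branch inside the proof of \cref{VBStrucII} that you cite actually works with \cref{p-closure} and an embedding into $\GL_3(2)$, not with \cref{SpeMod2}/\cref{SpeModOdd}. Finally, your closing appeal to ``the coprime action argument of \cref{b=2lemma}(i)'' to extract $R_\alpha=Q_\alpha$ from $|S/Q_\beta|=p$ is misattributed: that argument is specific to $Q_\alpha$ elementary abelian, and the actual mechanism here would be the FF-module recognition \cref{SEFF} on $Q_\alpha/\Phi(Q_\alpha)$.

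The paper's argument for $R_\alpha\ne Q_\alpha$ is genuinely different: it splits on whether $S/Q_\beta$ is cyclic or generalized quaternion. In the cyclic case $\Phi(Q_\alpha)(Q_\alpha\cap Q_\beta)$ has index $p$ in $Q_\alpha$ and $V_\beta$ is an offender, so \cref{SEFF} forces $\bar{L_\alpha}\cong\SL_2(p)$, contradiction. In the quaternion case (so $p=2$) the same quotient is a quadratic $2F$-module and \cref{Quad2F} pins $\bar{L_\alpha}$ down to $(3\times 3):2$ or $\SU_3(2)'$; one then shows $V_\beta$ is extraspecial of order at most $2^5$, and since $R_\beta=Q_\beta$ the group $\bar{L_\beta}$, which has generalized quaternion Sylow $2$-subgroups, must act faithfully on $V_\beta$ — impossible by Winter's theorem on $\Aut$ of extraspecial groups. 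Your module-dimension counting never engages with the quaternion case, which is exactly where the real content lies. The second half of your proposal — verifying the five symplectic clauses and filtering the outputs of \cref{Symp} against $p\in\{2,3\}$ and $m_p(S/Q_\beta)=1$ — is sound and in the same spirit as the paper's (terse) final sentence, apart from the circular reference to $|S|\le 2^6$ in your alternative elimination of $\mathrm{J}_2$.
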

\begin{proof}
If $R_{\alpha}=Q_{\alpha}$, then $\bar{L_{\alpha}}\cong\SL_2(q_\alpha)$ and similarly to \cref{5Symp}, $\mathcal{A}$ is a symplectic amalgam and the result holds after comparing with the tables listed in \cite{parkerSymp} and an application of \cite{Greenbook} and \cite{Fan}. Hence, we assume throughout that $\bar{L_{\alpha}}\not\cong\SL_2(q)$ and $R_{\alpha}\ne Q_{\alpha}$, and so we may use the results in \cref{b=2lemma} to \cref{VBStrucII}.

If $S/Q_{\beta}$ is cyclic then $\Phi(Q_{\alpha})(Q_{\alpha}\cap Q_{\beta})$ is an index $p$ subgroup of $Q_{\alpha}$ and since $V_{\beta}\not\le Q_{\alpha}$ and $[V_{\beta}, Q_{\alpha}\cap Q_{\beta}]\le\Phi(Q_{\alpha})$, it follows that $Q_{\alpha}/\Phi(Q_{\alpha})$ contains a unique non-central chief factor for $L_{\alpha}$ which is isomorphic to an FF-module for $\bar{L_{\alpha}}\cong\SL_2(p)$, a contradiction. Hence, we may assume that $p=2$ and $S/Q_{\beta}$ is generalized quaternion. But then, $\Phi(Q_{\alpha})(Q_{\alpha}\cap Q_{\beta})$ has index at most $4$ in $Q_{\alpha}$ and we deduce that $L_{\alpha}/R_{\alpha}\cong \Sym(3)$ and $Q_{\alpha}/\Phi(Q_{\alpha})$ is a faithful, quadratic $2$F-module for $\bar{L_{\alpha}}$. Applying \cref{Quad2F} with the stipulation that $L_{\alpha}/R_{\alpha}\cong\Sym(3)$ and $R_{\alpha}\ne Q_{\alpha}$, we have that $\bar{L_{\alpha}}\cong (3\times 3):2$ or $\SU_3(2)'$.

Now, $[V_{\beta}, V_{\beta}]=Z_{\beta}\le Q_{\alpha-1}$ and so $(V_{\beta}\cap Q_{\alpha})Q_{\alpha-1}/Q_{\alpha-1}$ is abelian and since $m_p(S/Q_{\beta})=1$, $(V_{\beta}\cap Q_{\alpha})Q_{\alpha-1}/Q_{\alpha-1}$ is cyclic. By coprime action, $V_{\beta}/Z_{\beta}=[V_{\beta}/Z_{\beta}, O^2(L_{\beta})]\times C_{V_{\beta}/Z_{\beta}}(O^2(L_{\beta}))$. Since $[V_{\beta}/Z_{\beta}, O^2(L_{\beta})]$ contains a non-central chief factor for $L_{\beta}$ and is normal in $G_{\beta}$, we conclude by \cref{VBStrucI} that $V_{\beta}/Z_{\beta}=[V_{\beta}/Z_{\beta}, O^2(L_{\beta})]$ and $C_{V_{\beta}}(O^2(L_{\beta}))=Z_{\beta}$. Similarly, we deduce that $Z_{\beta}=Z(V_{\beta})=\Phi(V_{\beta})=[V_{\beta}, V_{\beta}]$ and $V_{\beta}$ is an extraspecial $2$-group. Since $m_2(S/Q_{\beta})=1$, we have that $|(V_{\beta}\cap Q_{\alpha})Q_{\alpha-1}/Q_{\alpha-1}|=2$ and $V_{\beta}\cap Q_{\alpha}\cap V_{\alpha-1}$ is an elementary abelian subgroup of index $4$ in $V_{\beta}$. Thus, $|V_{\beta}|\leq 2^5$. Since $R_{\beta}=Q_{\beta}$ by \cref{VBStrucII}, $\bar{L_{\beta}}$ acts faithfully on $V_{\beta}$ and has generalized quaternion Sylow $2$-subgroups. Comparing with \cite{Winter}, we have a contradiction.
\end{proof}

\begin{proposition}\label{b2ii}
Suppose that $p\in\{2,3\}$, $b=2$ and $m_p(S/Q_{\beta})>1$. Then one of the following holds:
\begin{enumerate}
\item $R_{\alpha}=Q_{\alpha}$, $\mathcal{A}$ is a weak BN-pair of rank $2$, and either $\mathcal{A}$ is locally isomorphic to $H$ where $(F^*(H),p)$ is $(\mathrm{G}_2(2^n), 2)$ or $({}^3\mathrm{D}_4(p^a), p)$, or $p=2$ and $\mathcal{A}$ is parabolic isomorphic to $\mathrm{J}_2$ or $\Aut(\mathrm{J}_2)$; or 
\item $p=2$, $|S|=2^9$, $\bar{L_{\beta}}\cong\Alt(5)$, $Q_{\beta}\cong 2^{1+6}_+$, $V_{\beta}=O^2(L_{\beta})$, $V_{\beta}/Z_{\beta}$ is a natural $\Omega_4^-(2)$-module for $\bar{L_{\beta}}$, $\bar{L_{\alpha}}\cong \SU_3(2)'$, $Q_{\alpha}$ is a special $2$-group of shape $2^{2+6}$ and $Q_{\alpha}/Z_{\alpha}$ is a natural $\SU_3(2)$-module.
\end{enumerate}
\end{proposition}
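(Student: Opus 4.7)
The plan is to split the argument along $R_\alpha=Q_\alpha$ versus $R_\alpha\ne Q_\alpha$ and in each case pin down $\bar{L_\alpha}$, $\bar{L_\beta}$, the orders of $V_\beta$, $Q_\beta$, and $S$, so as to match either the weak BN-pair tables or the exceptional configuration in (ii).

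First I would dispose of the case $R_\alpha=Q_\alpha$. Under this assumption, $\bar{L_\alpha}\cong\SL_2(q_\alpha)$ (since $Q_\alpha/\Phi(Q_\alpha)$ admits the quadratic offender $Z_{\alpha'}$ and is faithful, one applies \cref{SEFF} just as in the proof of \cref{5Symp}). Verifying the five defining conditions of a symplectic amalgam exactly as in \cref{5Symp}, I invoke \cref{Symp} and filter the list there using $m_p(S/Q_\beta)>1$: the only surviving outcomes with $\bar{L_\beta}$ of rank $1$ Lie type are the weak BN-pair entries of type $\mathrm{G}_2(q)$, ${}^3\mathrm{D}_4(q)$, $\mathrm{G}_2(2)'$, $\mathrm{J}_2$ and $\Aut(\mathrm{J}_2)$, which are the outcomes listed in (i). (Note $\mathrm{G}_2(2)'$ is excluded here because it belongs to the $m_p(S/Q_\beta)=1$ branch, already treated in \cref{b2i}.)

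Now assume $R_\alpha\ne Q_\alpha$; the goal is to force outcome (ii). By \cref{b=2lemma}(ii),(iii) we have either $p=3$, $q_\alpha=3$, or $p=2$ with $q_\beta>q_\alpha^2$ (or $q_\beta>q_\alpha$ if $\bar{L_\beta}/O_{2'}(\bar{L_\beta})\cong\PSU_3(q_\beta)$). By \cref{VBStrucII}, $R_\beta=Q_\beta$, $Z(V_\beta)=C_{V_\beta}(O^p(L_\beta))$, and $V_\beta/Z(V_\beta)$ is an irreducible $\bar{L_\beta}$-module. Since $Z_\alpha\le \Phi(Q_\alpha)\le V_\beta\cap Z(V_\beta)$, the element $x\in Q_\alpha\setminus Q_\beta$ from $V_{\alpha-1}\cap Q_\alpha$ acts quadratically on $V_\beta/Z_\beta$ via \cref{VBStrucI}. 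Applying \cref{SEQuad} and \cref{Quad2F} (using $m_p(S/Q_\beta)>1$ to force the non-solvable branches), together with \cref{SpeMod2}/\cref{SpeModOdd} applied to $V_\beta/\Phi(V_\beta)$ and the bounds from \cref{b=2lemma}, I first exclude $p=3$: with $q_\alpha=3$ the available quadratic $2$F-modules for a group with a strongly $3$-embedded subgroup of $p$-rank at least $2$ are too large relative to $|V_\beta/Z_\beta|$, which is cut down by \cref{VBStrucI}(ii)--(iv) to at most $q_\alpha\cdot|S/Q_\beta|$, giving a numerical contradiction. The same bookkeeping rules out $\Sz(q_\beta)$, $\PSU_3(q_\beta)$, and $\PSL_2(q_\beta)$ with $q_\beta\ge 8$, leaving only $\bar{L_\beta}\cong\PSL_2(4)\cong\Alt(5)$ with $V_\beta/Z_\beta$ a natural $\Omega_4^-(2)$-module (the quadratic $2$F-module listed in \cref{Quad2F}(iii)).

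Once $\bar{L_\beta}$ and $V_\beta/Z_\beta$ are identified, I read off $|V_\beta|=2^5$ and, using $Z(V_\beta)=Z_\beta$ together with the fact that $V_\beta/Z_\beta$ is nondegenerately acted on by $S/Q_\beta\cong 2^2$, conclude that $V_\beta$ is extraspecial of plus type. Then \cref{VBStrucII}(i) gives $O^2(L_\beta)$ centralizes $Q_\beta/V_\beta$, and applying \cref{SpeMod2} to $Q_\beta/V_\beta$ (which is a faithful $\bar{L_\beta}$-module once we quotient by the central part) pins down $Q_\beta\cong 2^{1+6}_+$ with $V_\beta=O^2(L_\beta)$. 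For the $\alpha$ side, the structure of $S$ now forces $|S/Q_\beta|=4$, hence $|S|=2^9$ and $|Q_\alpha|=2^8$. Since $Z_\alpha=\Omega(Z(Q_\alpha))$ has order $4$ (being a natural $\SL_2(2)$-module from \cref{ineq}) and $Q_\alpha/Z_\alpha$ carries the quadratic action of $Z_{\alpha'}$ giving $|V_\beta Q_\alpha/Q_\alpha|=2$, another application of \cref{Quad2F} identifies $\bar{L_\alpha}\cong\SU_3(2)'$ with $Q_\alpha/Z_\alpha$ a natural $\SU_3(2)$-module. Finally, $[Q_\alpha,Q_\alpha]\le Z_\alpha$ (from $[Q_\alpha,V_\beta,V_\beta]\le Z_\beta$ and the action of $L_\alpha$) shows $Q_\alpha$ is special of shape $2^{2+6}$, giving (ii).

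The hard part is eliminating the intermediate possibilities for $\bar{L_\beta}$ (particularly $\PSU_3(q_\beta)$ and $\Sz(q_\beta)$ for small $q_\beta$): the inequalities from \cref{b=2lemma}(ii) are delicate and need to be combined carefully with the very tight bound on $|V_\beta/\Phi(V_\beta)|$ coming from \cref{VBStrucI}, since a naive count only uses $|V_\beta/(V_\beta\cap Q_\alpha\cap Q_{\alpha-1})|\le q_\alpha\cdot|(V_\beta\cap Q_\alpha)Q_{\alpha-1}/Q_{\alpha-1}|$ and this must be played against the strong lower bounds of \cref{SpeMod2} for each rank $1$ Lie type candidate. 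Equally subtle is recognizing that, in the surviving $\Alt(5)$ case, $V_\beta$ really is extraspecial (rather than merely having an extraspecial quotient) — this requires using $Z(V_\beta)=C_{V_\beta}(O^p(L_\beta))$ from \cref{VBStrucII}(iii) together with the absence of $L_\beta$-invariant subgroups of $V_\beta$ strictly between $Z_\beta$ and $Z(V_\beta)$, which is forced by $|Z(V_\beta)|=2$.
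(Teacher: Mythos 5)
Your outline of the two branches ($R_\alpha=Q_\alpha$ via symplectic amalgams, $R_\alpha\ne Q_\alpha$ aiming for outcome (ii)) matches the paper, and the endpoint of the identification ($\bar{L_\beta}\cong\PSL_2(4)$, $V_\beta/Z_\beta$ a natural $\Omega_4^-(2)$-module, $Q_\alpha/Z_\alpha$ a natural $\SU_3(2)$-module) is correct. But the core of the $R_\alpha\ne Q_\alpha$ branch is missing.

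The load-bearing step in the paper's argument is establishing $Z(V_\beta)\le Q_{\alpha-1}$, from which $Z(V_\beta)=\Phi(V_\beta)=Z_\beta$ has order $q_\alpha$ and $V_\beta$ is semi-extraspecial. Only after this is the abelian-subgroup bound $p^r\le q_\alpha r_\beta$ available, and it is exactly this bound that powers the elimination of $\PSU_3(q_\beta)$, $\Sz(q_\beta)$, $\Ree(q_\beta)$ and large $\PSL_2(q_\beta)$. Your proposal uses a bound on $|V_\beta/Z_\beta|$ of the form $q_\alpha\cdot|S/Q_\beta|$ that you attribute to \cref{VBStrucI}(ii)--(iv), but those items control $V_\beta\cap Q_\alpha\cap Q_{\alpha-1}$ and give no bound on $|V_\beta/Z_\beta|$ unless you already know $V_\beta$ is semi-extraspecial with small center — which is the thing to be shown. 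You acknowledge this issue in your last paragraph, but the justification offered (``the absence of $L_\beta$-invariant subgroups of $V_\beta$ strictly between $Z_\beta$ and $Z(V_\beta)$, which is forced by $|Z(V_\beta)|=2$'') is circular: $|Z(V_\beta)|=2$ is precisely the conclusion you're trying to reach. The paper instead argues by contradiction, assuming $Z(V_\beta)\not\le Q_{\alpha-1}$ and ruling out each candidate for $\bar{L_\beta}/O_{p'}(\bar{L_\beta})$ (including the $\PSL_2(p^2)$/$\Omega_4^-(p)$ situation via \cref{A6Cohom}) one by one.

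Three further problems. First, the assertion $\Phi(Q_\alpha)\le V_\beta\cap Z(V_\beta)$ is not justified and is false in general: there is no reason for $\Phi(Q_\alpha)\le V_\beta$, and $Z_\alpha\not\le Z(V_\beta)=C_{V_\beta}(O^p(L_\beta))$ since $V_\beta=\langle Z_\alpha^{G_\beta}\rangle$ is nonabelian. Second, the early exclusion of $p=3$ by numerics alone does not go through. The case $p=3$ survives the $2$F-bookkeeping all the way to $\bar{L_\beta}\cong\PSL_2(9)$ with $V_\beta/Z_\beta$ a natural $\Omega_4^-(3)$-module; the paper rules this out only by a separate argument — examining $C_{V_\beta}(i)$ and $[V_\beta,i]$ for an involution $i\in Z(K)$ with $K\in\syl_2(L_\beta)\cong\Dih(8)$, and deriving a contradiction from the quaternion Sylow $2$-subgroups of $\SL_2(3)$. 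Third, a small error: in the surviving case $V_\beta\cong 2^{1+4}_-$ (minus type), not plus type.
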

\begin{proof}
Suppose first that $R_{\alpha}=Q_{\alpha}$. Then, as in \cref{5Symp}, $\mathcal{A}$ is a symplectic amalgam and the result follows from \cref{Symp}. Indeed, the amalgams presented in \cite{parkerSymp} satisfying the above hypothesis are either weak BN-pairs of rank $2$ (and (i) holds by \cite{Greenbook}); or $\mathcal{A}_{42}$ when $p=2$. In the latter case, $\PSp_6(3)$ is listed as an example completion. But comparing with the list of maximal subgroups in \cite{atlas}, for $G\cong \PSp_6(3)$, $\bar{L_{\alpha}}\cong 2^{2+6}: \SU_3(2)'$ and from the perspective of this work, $R_{\alpha}\ne Q_{\alpha}$. Either way, we assume throughout this proof that $R_{\alpha}\ne Q_{\alpha}$ with the goal of showing that $G$ has ``the same'' structural properties as $\mathcal{A}_{42}$ in \cite{parkerSymp} in order to satisfy outcome (ii). Thus, we may apply the results in \cref{b=2lemma} and \cref{VBStrucII}. We set $r_\beta:=|(V_{\beta}\cap Q_{\alpha})Q_{\alpha-1}/Q_{\alpha-1}|$.

We aim to show that $Z(V_{\beta})\le Q_{\alpha-1}$ so that $[V_{\beta}, V_{\beta}]=\Phi(V_{\beta})=Z(V_{\beta})=Z_{\beta}$ is of order $q_\alpha$ and $V_{\beta}/Y$ is an extraspecial group, for $Y$ any maximal subgroup of $Z_{\beta}$. In the language of Beisiegel \cite{beisiegel}, $V_{\beta}$ is a semi-extraspecial group. Towards this goal, we suppose that $Z(V_{\beta})\not\le Q_{\alpha-1}$. Then the action of $L$ implies that $Z(V_{\alpha-1})\not\le Q_{\beta}$. Set $V:=V_{\beta}/Z(V_{\beta})$ throughout, and let $Z\le Z(V_{\alpha-1})$ such that $Z_{\alpha-1}\le Z$, $|Z/Z_{\alpha-1}|=p$ and $Z\not\le Q_{\beta}$. Since $L_{\alpha-1}$ centralizes $Z(V_{\alpha-1})/Z_{\alpha-1}$ we have that $Z\normaleq L_{\alpha-1}$. Then $Z$ centralizes an index $p$ subgroup of $V_{\beta}\cap Q_{\alpha}$, and so centralizes an index $pq_\alpha$ subgroup of $V_{\beta}$. 

Assume that $O_{p'}(\bar{L_{\beta}})\ne\{1\}$. Then by the irreducibility of $V$ and since $R_{\beta}=Q_{\beta}$, we have that $V=[V, O_{p'}(\bar{L_{\beta}})]$ and by \cref{VBStrucI}, we have that $V_{\beta}/Z_{\beta}=[V_{\beta}/Z_{\beta}, O_{p'}(\bar{L_{\beta}})]$. Since $Z(V_{\beta})/Z_{\beta}\le C_{V_{\beta}/Z_{\beta}}(O^p(L_{\beta}))\le C_{V_{\beta}/Z_{\beta}}(O_{p'}(\bar{L_{\beta}}))$, we infer by coprime action that $Z(V_{\beta})=Z_{\beta}$, as desired. Hence, we restrict our analysis to the case where $O_{p'}(\bar{L_{\beta}})=\{1\}$.

Suppose that $\bar{L_{\beta}}\cong \mathrm{M}_{11}$ or $\PSL_3(4)$ and $p=3$. Then $q_\alpha=3$ and $Z$ centralizes an index $9$ subgroup of $V_{\beta}$. If $\bar{L_{\beta}}\cong \mathrm{M}_{11}$ then there is $x\in L_{\beta}$ such that for $J:=\langle Z, Z^x, Q_{\beta}\rangle$, $\bar{J}\cong \PSL_2(11)$ and $J$ centralizes a subgroup of $V$ of index at most $3^4$. Since $11$ does not divide $|\GL_4(3)|$, $J$ centralizes $V$, a contradiction since $\bar{J}$ contains a non-trivial $3$-element. If $\bar{L_{\beta}}\cong\PSL_3(4)$, then there is $x\in L_{\beta}$ such that $L_{\beta}=\langle Z, Z^x, Q_{\beta}\rangle$ so that $|V|\leq 3^4$, a contradiction by \cref{SpeModOdd}.

If $p=3$ and $\bar{L_{\beta}}\cong \Ree(q_{\beta})$ then $Z$ centralizes an index $9$ subgroup of $V_{\beta}$ and \cref{SpeModOdd} gives a contradiction. If $p=2$ and $\bar{L_{\beta}}\cong \Sz(q_\beta)$ for $n\geq 3$ then $Z$ centralizes an index $2q_\alpha$ subgroup of $V_{\beta}$ and \cref{SpeMod2} gives a contradiction.

If $\bar{L_{\beta}}\cong \PSU_3(q_\beta)$ then $Z$ centralizes an index $pq_\alpha$ subgroup of $V_{\beta}$. Applying \cref{SpeMod2} and \cref{SpeModOdd}, we deduce that $\bar{L_{\beta}}\cong\SU_3(3)$. Then $\bar{L_{\beta}}$ is generated by only three conjugates of $\bar{Z}$, $|V|\leq p^6$ and $V$ is a natural $\SU_3(3)$-module for $\bar{L_\beta}$. But then, comparing with \cref{SUMod} and using that $V_{\beta}\cap Q_{\alpha}$ is a $G_{\alpha, \beta}$-invariant subgroup of index $3$, we have a contradiction.

Finally, if $\bar{L_{\beta}}\cong \PSL_2(q_\beta)$ then $Z$ centralizes an index $pq_\alpha$ subgroup of $V_{\beta}$. Then, since $q_\alpha<q_\beta^2$, \cref{SpeMod2} and \cref{SpeModOdd} yield that $q_\alpha=p$ and $|V|\leq p^6$. Since $V_{\beta}\cap Q_{\alpha}$ is a $G_{\alpha,\beta}$-subgroup of index $p$ in $V_{\beta}$, applying \cref{SL2ModRecog} we deduce that $V$ is a natural $\Omega_4^-(p)$-module for $\bar{L_{\beta}}\cong \PSL_2(p^2)$. Moreover, we have that $|Z(V_{\beta})Q_{\alpha-1}/Q_{\alpha-1}|=p$, $|Z(V_{\beta})/Z_{\beta}|=p$ and $|V_{\beta}/Z_{\beta}|=p^5$. Since $V_{\beta}=[V_{\beta}, L_{\beta}]$, applying \cref{A6Cohom}, we deduce that $p=3$ and $[V_{\beta}/Z_{\beta}, S, S]$ is $2$-dimensional as a $\mathrm{GF}(3)$-module. But then $Z_{\alpha}Z(V_{\beta})=[V_{\beta}, S, S]=[V_{\beta}, V_{\alpha-1}\cap Q_{\alpha}, V_{\alpha-1}\cap Q_{\alpha}]\le V_{\alpha-1}$, a contradiction since $Z(V_{\beta})\not\le Q_{\alpha-1}$.

Thus, $Z(V_{\beta})\le Q_{\alpha-1}$ and by a previous observation, $Z(V_{\beta})=Z_{\beta}=\Phi(V_{\beta})$ is of order $q_\alpha$ and $V_{\beta}$ is a semi-extraspecial group. Moreover, $V_{\beta}\cap Q_{\alpha}\cap Q_{\alpha-1}$ has index $q_\alpha r_{\beta}$ in $V_{\beta}$ and is elementary abelian. Then for $Z$ a maximal subgroup of $Z_{\beta}$, we set $|V_{\beta}/Z|=p^{2r+1}$ so that $|V_{\beta}/Z_{\beta}|=p^{2r}$. Then $|(V_{\beta}\cap Q_{\alpha}\cap Q_{\beta})/Z|=p^{2r+1}/q_{\alpha}r_{\beta}$ and since the maximal abelian subgroups of $V_{\beta}/Z$ have order $p^{r+1}$, we deduce that $p^{2r+1}/q_\alpha r_{\beta}\leq p^{r+1}$ and $p^r\leq q_\alpha r_{\beta}$.

Suppose that $\bar{L_{\beta}}/O_{p'}(\bar{L_{\beta}})\cong\mathrm{(P)SU}_3(q_\beta)$ so that $r_{\beta}\leq q_{\beta}$ when $p=2$ and $r_{\beta}\leq q_{\beta}^2$ when $p=3$. In particular, $2^r\leq q_{\alpha}q_{\beta}< q_{\beta}^3$ when $p=2$ and $3^r\leq 3q_{\beta}^2$ when $p=3$. Thus, $|V_{\beta}/Z_{\beta}|<q_{\beta}^6$ when $p=2$ and $|V_{\beta}/Z_{\beta}|\leq 3^2q_{\beta}^4$ when $p=3$. Then by \cref{SpeMod2} and \cref{SpeModOdd}, we conclude that $q_{\beta}=3=p$, $\bar{L_{\beta}}/O_{3'}(\bar{L_{\beta}})\cong\mathrm{(P)SU}_3(3)$ and $|V_{\beta}/Z_{\beta}|=3^6$. A calculation in $\Sp_6(3)$ promises that $\bar{L_{\beta}}\cong \PSU_3(3)$ and $V_{\beta}/Z_{\beta}$ is a natural module for $\bar{L_{\beta}}$. But then $V_{\beta}\cap Q_{\alpha}$ is a $G_{\alpha,\beta}$-invariant subgroup of index $3$, and we have a contradiction by \cref{SUMod} (iii).

Suppose that $\bar{L_{\beta}}/O_{3'}(\bar{L_{\beta}})\cong\Ree(q_\beta)$. Then $3^r\leq r_{\beta}3\leq 3q_{\beta}^2$ and so $|V_{\beta}/Z_{\beta}|\leq 3^2q_{\beta}^4< \mathrm{max}(q_{\beta}^6, 3^7)$. Then \cref{SpeModOdd} provides a contradiction. If $\bar{L_{\beta}}O_{2'}(\bar{L_{\beta}})\cong\Sz(q_\beta)$, then $r_\beta\le q_\beta$ and so $2^r\leq q_\alpha q_\beta$ and $|V_{\beta}/Z_{\beta}|\leq q_{\alpha}^2q_{\beta}^2<q_{\beta}^4$. Then \cref{SpeMod2} provides a contradiction.

Hence, we may suppose that $S/Q_{\beta}$ is elementary abelian of order $p^n$ and $n>1$. Then $|V_{\beta}/Z_{\beta}|\leq q_{\alpha}^2q_{\beta}^2\le q_{\beta}^3$. In particular, it follows from \cref{SpeMod2} and \cref{SpeModOdd} that $V_{\beta}/Z_{\beta}$ contains a unique non-central chief factor for $L_{\beta}$ and so $V_{\beta}/Z_{\beta}$ is an irreducible $\bar{L_{\beta}}$-module. If $q_\beta>p^2$ then \cref{q^3module} yields that $\bar{L_{\beta}}\cong \SL_2(q_\beta)$ or $\PSL_2(q_{\beta})$. Furthermore, since $q_\alpha^2<q_\beta$, we infer that $V_{\beta}/Z_{\beta}$ is a triality module and $q_\alpha^3=q_\beta$. Then for $K_\alpha$ a critical subgroup of $Q_{\alpha}$, we have that $[V_{\beta}, K_\alpha, K_\alpha, K_\alpha]=\{1\}$. By \cref{trialitydescription}, this yields $K_\alpha\le Q_{\beta}$ so that $K_\alpha=Z_{\alpha}(K_\alpha\cap Q_{\alpha'})$, $[O^p(L_{\alpha}), K_\alpha]=Z_{\alpha}$ and $R_{\alpha}=Q_{\alpha}$, a contradiction.

Hence, we may suppose that $S/Q_{\beta}$ is elementary abelian of order $p^2$ so that $q_{\alpha}=p$, $V_{\beta}$ is an extraspecial group and $|V_{\beta}/Z_{\beta}|\in\{p^4, p^6\}$. In particular, applying \cite{Winter}, if $p=3$ then $\bar{L_{\beta}}$ is isomorphic to a subgroup of $\Sp_6(3)$ and if $p=2$, then $\bar{L_{\beta}}$ is isomorphic to a subgroup of $\PSL_4(2)$ or $\PSU_4(2)$. We deduce in both cases that $\bar{L_{\beta}}\cong\SL_2(p^2)$ or $\PSL_2(p^2)$ and $V_{\beta}/Z_{\beta}$ is described by \cref{SL2ModRecog}. Since $V_{\beta}\cap Q_{\alpha}$ is a $G_{\alpha\beta}$-invariant subgroup of index $p$ containing $[V_{\beta}, S]$ \cref{SL2ModRecog} implies that $V_{\beta}/Z_{\beta}$ is a natural $\Omega_4^-(p)$-module and $|V_{\beta}|=p^5$. Now, as $L_{\beta}/C_{\beta}$ embeds in the automorphism group of $V_{\beta}$, we infer that $Q_{\beta}=V_{\beta}C_{\beta}$. Moreover, using \cite{Winter}, if $p=2$ then $\bar{L_{\beta}}\cong \Out(V_{\beta})\cong\Omega_4^-(2)$ and $V_{\beta}\cong Q_8\circ D_8\cong 2^{1+4}_-$; and if $p=3$ then $V_{\beta}$ has exponent $3$.

Suppose that $p=3$ and let $K\in \syl_2(L_{\beta})$. Since $\bar{L_{\beta}}\cong\PSL_2(9)$, $K\cong \Dih(8)$. Letting $1\ne i\in Z(K)$, we have that $|C_{V_{\beta}/Z_{\beta}}(i)|=9$ and by coprime action $V_{\beta}=C_{V_{\beta}}(i)[V_{\beta}, i]$. Since $[V_{\beta}, V_{\beta}]\le C_{V_{\beta}}(i)$ it follows from the three subgroup lemma that $[[V_{\beta}, i], C_{V_{\beta}}(i)]=\{1\}$ and since $|[V_{\beta}, i]|\leq 3^3$, it follows that $Z_{\beta}=C_{V_{\beta}}(i)\cap [V_{\beta}, i]$ and $C_{V_\beta}(i)\cong [V_{\beta}, i]\cong 3^{1+2}_+$. Since $i\le Z(K)$, $K$ normalizes $[V_{\beta}, i]$ and since $Z_{\beta}=Z(L_{\beta})$, $K$ acts trivially on $Z_{\beta}=Z([V_{\beta}, i])$ and by \cite{Winter}, $K$ embeds into $\Sp_2(3)\cong \SL_2(3)$. But $\SL_2(3)$ has quaternion Sylow $2$-subgroups, a contradiction.

Thus, we have shown that $p=2$. Now, $Z_{\alpha}\not\le C_{\beta}$ and so $Z_{\beta}=C_{\beta}\cap Q_{\alpha-1}$ has index at most $4$ in $C_{\beta}$ and $|C_{\beta}|\leq 8$. Since $Z(C_{\beta})$ is centralized by $L_{\beta}=O^2(L_{\beta})C_{\beta}$ and $Q_{\alpha}$ is self centralizing, $Z(C_{\beta})\le Z(Q_{\alpha})=Z_{\alpha}$. Thus, $Z(C_{\beta})=Z_{\beta}$ and as $|C_{\beta}|\leq 8$, either $C_{\beta}=Z_{\beta}$, or $C_{\beta}\cong Q_8$ or $\Dih(8)$. If $C_{\beta}=Z_{\beta}$ then we have that $Q_{\beta}=V_{\beta}\cong 2^{1+4}_-$, $|S|=2^7$ and $|Q_{\alpha}|=2^6$. Since $Z_{\alpha}\le \Phi(Q_{\alpha})$ and $R_{\alpha}\ne Q_{\alpha}$, we have that $Z_{\alpha}=\Phi(Q_{\alpha})$ and $Q_{\alpha}/Z_{\alpha}$ is a faithful quadratic $2$F-module for $\bar{L_{\alpha}}$. As $L_{\alpha}/R_{\alpha}\cong \Sym(3)$, it follows that $\bar{L_{\alpha}}\cong (3\times 3):2$. Now, for every subgroup $Z$ of $Z_{\alpha}$ of order $2$, is easy to check that $Q_{\alpha}/Z$ is an extraspecial group. In the language of Beisiegel \cite{beisiegel}, $Q_{\alpha}$ is an ultraspecial $2$-group of order $2^6$. Checking in MAGMA utilizing the Small Groups library, the automorphism groups of all such groups have $3$-part at most $9$. Since there is $r\in (L_{\beta}\cap G_{\alpha,\beta})$ a $3$-element centralizing $Z_{\alpha}$ by \cref{Omega4} (v), $r\in G_{\alpha}\setminus L_{\alpha}$ and a Sylow $3$-subgroup of $\bar{G_{\alpha}}$ has order at least $27$, and as $\bar{G_{\alpha}}$ acts faithfully on $Q_{\alpha}$, we have a contradiction.

Thus, $C_{\beta}$ is non-abelian of order $8$. Furthermore, $|S|=2^9$ and if $Q_{\alpha}/Z_{\alpha}$ is a natural $\SU_3(2)$-module for $\bar{L_{\alpha}}\cong \SU_3(2)'$, then since $C_{\beta}$ is $G_{\alpha,\beta}$-invariant, there is a $3$-element in $L_{\alpha}\cap G_{\alpha,\beta}$ which acts non-trivially on $C_{\beta}$ so that $C_{\beta}\cong Q_8$ and $Q_{\beta}=2^{1+6}_+$. Thus, to complete the proof, it suffices to show that $Q_{\alpha}/Z_{\alpha}$ is a natural $\SU_3(2)$-module. Now, $Q_{\alpha}\cap Q_{\beta}=Z_{\alpha}(Q_{\alpha}\cap Q_{\alpha'})$ has index $4$ in $Q_{\alpha}$ and, modulo $Z_{\alpha}$, is centralized by $Z_{\alpha'}$. It is clear that $Z_{\alpha'}$ acts quadratically on $Q_{\alpha}/Z_{\alpha}$ and, since $Z_{\alpha}\le \Phi(Q_{\alpha})$ and $R_{\alpha}\ne Q_{\alpha}$, $\bar{L_{\alpha}}$ is determined by \cref{Quad2F}. Since $L_{\alpha}/R_{\alpha}\cong \Sym(3)$, we need only rule out the case where $\bar{L_{\alpha}}\cong (3\times 3):2$.

Assume that $\bar{L_{\alpha}}\cong (3\times 3):2$ and $|C_{\beta}|=8$. Observe that $Q_{\alpha}=(Q_{\alpha}\cap Q_{\beta})(Q_{\alpha}\cap Q_{\alpha-1})=(V_{\beta}\cap Q_{\alpha})(V_{\alpha-1}\cap Q_{\beta})(Q_{\beta}\cap Q_{\alpha}\cap Q_{\alpha-1})$. Then, $V_{\beta}\cap Q_{\alpha}\cap Q_{\alpha-1}=V_{\alpha-1}\cap Q_{\alpha}\cap Q_{\beta}=Z_{\alpha}$, and it follows that $Z_{\alpha}=\Phi(Q_{\alpha})$. By coprime action, we have that $Q_{\alpha}/Z_{\alpha}=[Q_{\alpha}/Z_{\alpha}, O^2(L_{\alpha})]\times C_{Q_{\alpha}/Z_{\alpha}}(O^2(L_{\alpha}))$ where $|[Q_{\alpha}/Z_{\alpha}, O^2(L_{\alpha})]|=2^4$. Taking $Q_{\alpha}^*$ to be the preimage in $Q_{\alpha}$ of $[Q_{\alpha}/Z_{\alpha}, O^2(L_{\alpha})]$, form $S^*=V_{\beta}Q_{\alpha}^*$ and $L_{\lambda}^*=\langle (S^*)^{L_{\lambda}}$ for $\lambda\in\{\alpha,\beta\}$. It is clear that $S^*\in\syl_2(L_{\lambda}^*)$, $V_{\beta}=O_2(L_{\beta}^*)$ and $Q_{\alpha}^*=O_2(L_{\alpha}^*)$, and $L_{\lambda}^*/O_2(L_{\lambda}^*)\cong \bar{L_{\lambda}}$ for $\lambda\in\{\alpha,\beta\}$. Then for $K$ a Hall $2'$-subgroup of $G_{\alpha,\beta}$, we conclude that  $(L_{\alpha}^*K, L_{\beta}^*K, S^*K)$ satisfies \cref{MainHyp} and since $G$ is a minimal counterexample, comparing with \hyperlink{MainGrpThm}{Theorem C}, we have a contradiction.
\end{proof}

In summary, in this section we have proved the following:

\begin{theorem}\label{Mainbeven}
Suppose that $\mathcal{A}=\mathcal{A}(G_\alpha, G_\beta, G_{\alpha,\beta})$ is an amalgam satisfying \cref{MainHyp}. If $Z_{\alpha'}\not\le Q_{\alpha}$, then one of the following holds:
\begin{enumerate}
\item $\mathcal{A}$ is a weak BN-pair of rank $2$;
\item $\mathcal{A}$ is a symplectic amalgam; or
\item $p=2$, $|S|=2^9$, $\bar{L_{\beta}}\cong\PSL_2(4)$, $Q_{\beta}\cong 2^{1+6}_-$, $V_{\beta}=O^2(L_{\beta})$, $V_{\beta}/Z_{\beta}$ is a natural $\Omega_4^-(2)$-module for $\bar{L_{\beta}}$, $\bar{L_{\alpha}}\cong \SU_3(2)'$, $Q_{\alpha}$ is a special $2$-group of shape $2^{2+6}$ and $Q_{\alpha}/Z_{\alpha}$ is a natural $\SU_3(2)$-module.
\end{enumerate}
\end{theorem}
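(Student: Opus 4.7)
The plan is to deduce the theorem as a bookkeeping summary of the propositions established through Section~\ref{evensec}. Invoke \cref{BetaCenterIII} first: either (a) $b$ is even and $Z_\beta = \Omega(Z(S)) = \Omega(Z(L_\beta))$, or (b) $Z_\beta \ne \Omega(Z(S))$ and both $Z_\lambda/\Omega(Z(L_\lambda))$ are natural $\SL_2(q_\lambda)$-modules for $\lambda \in \{\alpha, \beta\}$. These two cases correspond exactly to the two subsections, and the whole argument reduces to collecting their outputs.

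In case (b), apply \cref{BetaCenterII} directly: $G$ is locally isomorphic to a group $H$ with $F^*(H) \in \{\PSL_3(p^n), \PSp_4(2^n), \mathrm{G}_2(3^n)\}$. Each of these is a weak BN-pair of rank $2$ (they are among the groups in $\bigwedge^0$), so conclusion (i) of the theorem holds.

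In case (a), the main work is to reduce to $b = 2$ and then apply the appropriate classification. First invoke \cref{b=2} to conclude $b = 2$. With $b = 2$ in hand, split on the prime: if $p \ge 5$ apply \cref{5Symp}, which gives that $\mathcal{A}$ is a symplectic amalgam (outcome (ii) or, in the Lie-type cases, a weak BN-pair covered by (i)); if $p \in \{2,3\}$, further split according to whether $m_p(S/Q_\beta) = 1$ or $m_p(S/Q_\beta) > 1$, applying \cref{b2i} in the first instance and \cref{b2ii} in the second. The output of \cref{b2i} gives either a weak BN-pair locally isomorphic to $\mathrm{G}_2(2)'$ (outcome (i)) or a symplectic amalgam with $|S| = 2^6$ (outcome (ii)); the output of \cref{b2ii} gives either a weak BN-pair locally isomorphic to $\mathrm{G}_2(2^n)$ or ${}^3\mathrm{D}_4(p^a)$ (or parabolic isomorphic to $\mathrm{J}_2$ or $\Aut(\mathrm{J}_2)$, also weak BN-pairs), a symplectic amalgam, or the exceptional configuration listed in outcome (iii).

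There is no real obstacle here since every substantive reduction has already been carried out in the section: the only task is to check that the enumerated conclusions across \cref{BetaCenterII}, \cref{5Symp}, \cref{b2i}, and \cref{b2ii} fit cleanly into the three listed outcomes (weak BN-pair, symplectic amalgam, or the special $2^{2+6}/2^{1+6}_-$ configuration). The one point that requires a brief verification is that the amalgams appearing in \cref{b2i} and \cref{b2ii} which are labelled in \cite{parkerSymp} are indeed symplectic in the sense of our definition, and that the amalgam in outcome (iii) is \emph{not} symplectic (since there $R_\alpha \ne Q_\alpha$ and the hypothesis $O^{p'}(G_1)/O_p(G_1) \cong \SL_2(p^n)$ fails), so that it is genuinely separate from outcome (ii). With that check, the three outcomes of the theorem exhaust all possibilities.
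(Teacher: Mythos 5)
Your proposal is correct and follows the same decomposition the paper itself intends: \cref{BetaCenterIII} splits the analysis into the two subsections, with \cref{BetaCenterII} disposing of the $Z_\beta\ne\Omega(Z(S))$ branch as weak BN-pairs, and the chain \cref{b=2}, \cref{5Symp}, \cref{b2i}, \cref{b2ii} exhausting the $Z_\beta=\Omega(Z(S))$ branch, so the theorem is just the bookkeeping collation you describe. One detail worth flagging (an inconsistency in the paper, not a gap in your argument): \cref{b2ii}(ii) records $Q_\beta\cong 2^{1+6}_+$, whereas outcome (iii) of the theorem statement reads $Q_\beta\cong 2^{1+6}_-$, and indeed $2^{1+4}_-\circ Q_8\cong 2^{1+6}_+$, so the sign in the theorem should be corrected.
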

\section{$Z_{\alpha'}\le Q_\alpha$}\label{oddsec}

We now begin the second half of our analysis, where $Z_{\alpha'}\le Q_{\alpha}$ so that $[Z_{\alpha}, Z_{\alpha'}]=\{1\}$. Throughout, we set $S\in\syl_p(G_{\alpha,\beta})$ and $q_{\lambda}=|\Omega(Z(S/Q_{\lambda}))|$ for any $\lambda\in\Gamma$.

\begin{lemma}\label{bodd}
The following hold:
\begin{enumerate}
\item $Z_\beta=\Omega(Z(S))=\Omega(Z(L_\beta))$ and $b$ is odd; and
\item $Z(L_\alpha)=\{1\}$.
\end{enumerate}
\end{lemma}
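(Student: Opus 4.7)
The plan is to exploit the fact that, in the regime $Z_{\alpha'}\le Q_\alpha$, the subgroup $Z_\alpha$ automatically centralizes $Z_{\alpha'}$. Since $G_\alpha$ is of characteristic $p$, one has $C_S(Q_\alpha)\le Q_\alpha$, which forces $\Omega(Z(S))\le Z(Q_\alpha)$; conjugating by $G_\alpha$ preserves both $Q_\alpha$ and $Z(Q_\alpha)$, so $Z_\alpha\le Z(Q_\alpha)$. Combined with the standing hypothesis $Z_{\alpha'}\le Q_\alpha$, this yields $[Z_\alpha,Z_{\alpha'}]=\{1\}$. Moreover, $Z_\alpha\le G_{\alpha'}$ by \cref{crit pair}\,(ii), and being a $p$-group it sits inside $L_{\alpha'}=O^{p'}(G_{\alpha'})$, whence $Z_\alpha\le C_{L_{\alpha'}}(Z_{\alpha'})$.

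I next apply the dichotomy of \cref{p-closure2}\,(ii) at the vertex $\alpha'$. The first alternative $Q_{\alpha'}\in\syl_p(C_{L_{\alpha'}}(Z_{\alpha'}))$ would force the $p$-subgroup $Z_\alpha$ into the unique normal Sylow $Q_{\alpha'}$ of that centralizer, contradicting the criticality of $(\alpha,\alpha')$; hence the second alternative $Z_{\alpha'}=\Omega(Z(L_{\alpha'}))$ must hold. By \cref{critpair2}\,(i) this forces $\alpha$ and $\alpha'$ to lie in distinct $G$-orbits, so $\alpha'\in\beta^G$ and $b$ is odd. Transporting the equality $Z_{\alpha'}=\Omega(Z(L_{\alpha'}))$ back to $\beta$ by a conjugating element yields $Z_\beta=\Omega(Z(L_\beta))$; since $Z(L_\beta)\le C_{G_\beta}(Q_\beta)\le Q_\beta$ and centralizes $S$, we obtain $\Omega(Z(L_\beta))\le \Omega(Z(S))\le Z_\beta$, so the three groups coincide, establishing (i).

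For (ii), I note that $\Omega(Z(S))=Z_\beta=\Omega(Z(L_\beta))$ is centralized by $L_\beta$, and that $L_\beta$ acts transitively on $\Delta(\beta)$ by the Frattini argument together with $N_{G_\beta}(S)\le G_{\alpha,\beta}$ from \cref{MainHyp}\,(i). Hence \cref{crit pair}\,(iv) applies immediately to deliver $Z(L_\alpha)=\{1\}$. The argument is essentially formal; the only conceptual step is recognising that the asymmetry between $Z_\alpha\not\le Q_{\alpha'}$ and $Z_{\alpha'}\le Q_\alpha$ turns $Z_\alpha$ into a $p$-element of $C_{L_{\alpha'}}(Z_{\alpha'})$ which can escape $Q_{\alpha'}$ only when $Z_{\alpha'}$ already lies in $Z(L_{\alpha'})$, at which point \cref{critpair2}\,(i) pins down the parity of $b$.
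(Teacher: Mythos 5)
Your proof is correct and follows essentially the same route as the paper: establish $[Z_\alpha,Z_{\alpha'}]=\{1\}$, invoke the dichotomy in \cref{p-closure2}\,(ii) to conclude $Z_{\alpha'}=\Omega(Z(L_{\alpha'}))$, deduce that $\alpha$ and $\alpha'$ are not $G$-conjugate so $b$ is odd, and finish part (ii) via \cref{crit pair}\,(iv). The only cosmetic difference is that you route the non-conjugacy step through \cref{critpair2}\,(i) rather than citing \cref{crit pair}\,(iii) directly, but \cref{critpair2}\,(i) is itself proved from exactly that pair of lemmas, so the underlying argument is identical.
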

\begin{proof}
Since $Z_{\alpha'}\le Q_{\alpha}$ we have that $\{1\}=[Z_{\alpha}, Z_{\alpha'}]$. Then, for $T\in\syl_p(G_{\alpha', \alpha'-1})$, as $Z_{\alpha}\not\le Q_{\alpha'}$, $Q_{\alpha'}<C_T(Z_{\alpha'})$ and by \cref{p-closure2} (ii), we get that $Z_{\alpha'}=\Omega(Z(T))=\Omega(Z(L_{\alpha'})$. By \cref{crit pair} (iii), $Z_{\alpha}\not\le \Omega(Z(L_{\alpha}))$ and so $\alpha$ and $\alpha'$ are not conjugate. Thus, $\alpha'$ is conjugate to $\beta$, $b$ is odd and $Z_{\beta}=\Omega(Z(S))=\Omega(Z(L_\beta)))$. Since $L_\beta$ acts transitively on $\Delta(\beta)$, by \cref{crit pair} (iv), we conclude that $Z(L_\alpha)=\{1\}$.
\end{proof}

\begin{lemma}\label{b>1}
Suppose that $b>1$. Then $V_\beta$ is abelian, $\{1\}\ne [V_\beta, V_{\alpha'}]\le V_{\alpha'}\cap V_\beta$ and $V_{\beta}$ acts quadratically on $V_{\alpha'}$.
\end{lemma}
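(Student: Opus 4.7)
The plan is to obtain the three assertions in sequence, with the abelianness of $V_\beta$ feeding directly into quadratic action via a mutual-normalization argument.

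First, since $b$ is odd by \cref{bodd}(i) and $b > 1$, we have $b \ge 3$. Applying \cref{VAbelian} with $n = 1$ gives that $V_\beta = V_\beta^{(1)}$ is abelian.

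Next, I would show that $V_\beta$ and $V_{\alpha'}$ normalize each other, from which $[V_\beta, V_{\alpha'}] \le V_\beta \cap V_{\alpha'}$ is immediate. For $\lambda \in \Delta(\beta)$, the triangle inequality in the tree gives $d(\lambda, \alpha'-1) \le 1 + (b-2) = b-1 < b$, so by minimality of $b$ we have $Z_\lambda \le G_{\alpha'-1}^{(1)}$; since $\alpha' \in \Delta(\alpha'-1)$, this forces $Z_\lambda$ to fix $\alpha'$, so $V_\beta \le G_{\alpha'}$ and $V_\beta$ normalizes $V_{\alpha'}$. For $\mu \in \Delta(\alpha')$, either $\mu = \alpha'-1$ (and $d(\mu,\beta) = b-2 < b$ gives $Z_\mu \le G_\beta^{(1)} \le G_\beta$ directly), or $\mu \ne \alpha'-1$, in which case the path from $\mu$ to the geodesic vertex $\alpha+2$ has length $1 + (b-2) = b-1 < b$, so $Z_\mu \le G_{\alpha+2}^{(1)}$; since $\beta$ is a neighbour of $\alpha+2$ (using $b \ge 3$), this forces $Z_\mu \le G_\beta$. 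Hence $V_{\alpha'} \le G_\beta$ and $V_{\alpha'}$ normalizes $V_\beta$.

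For the nontriviality, I would argue by contradiction: if $V_\beta$ centralized $V_{\alpha'}$, then the $p$-subgroup $Z_\alpha \le V_\beta$ would lie in $C_{G_{\alpha'}}(V_{\alpha'})$. Since $\alpha'$ is $G$-conjugate to $\beta$, \cref{bodd}(i) yields $Z_{\alpha'} = \Omega(Z(L_{\alpha'}))$, so \cref{BasicVB} applies at $\alpha'$ and gives $C_T(V_{\alpha'}) = C_{\alpha'} \le Q_{\alpha'}$ for any $T \in \syl_p(G_{\alpha'})$. Embedding $Z_\alpha$ into such a Sylow subgroup then forces $Z_\alpha \le Q_{\alpha'}$, contradicting the criticality of $(\alpha, \alpha')$. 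Thus $[V_\beta, V_{\alpha'}] \ne \{1\}$. Finally, since $V_\beta$ is abelian and $[V_\beta, V_{\alpha'}] \le V_\beta \cap V_{\alpha'} \le V_\beta$, we conclude $[V_{\alpha'}, V_\beta, V_\beta] \le [V_\beta, V_\beta] = \{1\}$, establishing quadratic action. The whole argument is really just tree bookkeeping together with one invocation of \cref{BasicVB}; I do not anticipate any serious obstacle.
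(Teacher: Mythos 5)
Your proof is correct and follows essentially the same route as the paper's: $V_\beta$ abelian via \cref{VAbelian} (the paper leaves this implicit), mutual normalization from the minimality of $b$ and the tree structure of $\Gamma$, nontriviality via \cref{BasicVB} applied at $\alpha'$, and quadratic action immediate from abelianness. The only cosmetic difference is that you split the verification of $V_{\alpha'}\le G_\beta$ into two cases, whereas the paper simply observes $V_{\alpha'}\le Q_{\alpha+2}$ in one stroke.
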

\begin{proof}
Since $Z_{\alpha}\le V_\beta$ and $Z_\alpha\not\le Q_{\alpha'}$ it follows that that $V_\beta\not\le C_{L_{\alpha'}}(V_{\alpha'})$. By minimality of $b$, $V_\beta\le Q_{\alpha'-1}\le L_{\alpha'}$ and so $\{1\}\ne[V_\beta, V_{\alpha'}]\le V_{\alpha'}$. Again, by minimality of $b$, $V_{\alpha'}\le Q_{\alpha+2}\le L_\beta$ and so $[V_\beta, V_{\alpha'}]\le V_{\alpha'}\cap V_\beta$. Since $V_{\beta}$ is abelian, $[V_{\alpha'}, V_{\beta}, V_{\beta}]=\{1\}$, completing the proof.
\end{proof}

\begin{lemma}
Suppose that $b>1$ and $p$ is an odd prime. Then either
\begin{enumerate}
\item $L_{\beta}/R_{\beta}Q_{\beta}\cong \SL_2(p^n)$ where $p$ is any odd prime;
\item $L_{\beta}/R_{\beta}Q_{\beta}\cong\mathrm{(P)SU}_3(p^n)$ where $p$ is any odd prime; or
\item $L_{\beta}/C_{L_{\beta}}(U/V)\cong \SL_2(3)$, $2\cdot \Alt(5)$ or $2^{1+4}_-.\Alt(5)$ for every non-trivial irreducible composition factor $U/W$ of $V_\beta/C_{V_{\beta}}(O^p(L_{\beta}))$, $p=3$ and $|S/Q_{\beta}|=3$.
\end{enumerate}
\end{lemma}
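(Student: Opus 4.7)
The plan is to apply \cref{SEQuad} to the action of $L_{\alpha'}/R_{\alpha'}$ on the faithful module $\bar V_{\alpha'}:=V_{\alpha'}/C_{V_{\alpha'}}(O^p(L_{\alpha'}))$, exhibiting $V_\beta$ as a non-trivial quadratic $p$-subgroup, and then transferring the conclusion to $L_\beta$ via the conjugacy of $\beta$ and $\alpha'$ guaranteed by \cref{bodd}. To set the stage for \cref{SEQuad}, I first note that $L_{\alpha'}/Q_{\alpha'}$ is a $\mathcal K$-group with a strongly $p$-embedded subgroup by \cref{MainHyp}, and that \cref{BasicVB} applied at $\alpha'$ gives $Q_{\alpha'}\in\syl_p(R_{\alpha'})$. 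Hence the image of $R_{\alpha'}$ in $L_{\alpha'}/Q_{\alpha'}$ is a normal $p'$-subgroup, and \cref{spelemma2} (applied in this slightly broader form, or simply via \cref{spelemma1}(iv)) shows that $L_{\alpha'}/R_{\alpha'}$ still has a strongly $p$-embedded subgroup. By construction $L_{\alpha'}/R_{\alpha'}$ acts faithfully on $\bar V_{\alpha'}$, which is non-trivial since \cref{CommCF} forces $O^p(L_{\alpha'})$ to act non-trivially on $V_{\alpha'}/[V_{\alpha'},Q_{\alpha'}]$.

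Next, I exhibit the quadratic subgroup. By \cref{b>1}, $V_\beta$ and $V_{\alpha'}$ are both abelian, $V_\beta\le Q_{\alpha'-1}\le L_{\alpha'}$, and $\{1\}\ne[V_\beta,V_{\alpha'}]\le V_\beta\cap V_{\alpha'}$; since $V_\beta$ is abelian, $[V_{\alpha'},V_\beta,V_\beta]=\{1\}$, so $V_\beta$ acts quadratically on $V_{\alpha'}$ and hence on $\bar V_{\alpha'}$. Furthermore, $V_\beta\cap R_{\alpha'}$ is a $p$-subgroup of $R_{\alpha'}$ sitting inside a Sylow $p$-subgroup of $G_{\alpha'}$, so by \cref{BasicVB} we have $V_\beta\cap R_{\alpha'}\le Q_{\alpha'}$. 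As $Z_\alpha\le V_\beta$ and $(\alpha,\alpha')$ is a critical pair (so $Z_\alpha\not\le Q_{\alpha'}$), we conclude $V_\beta\not\le R_{\alpha'}$, and the image $V_\beta R_{\alpha'}/R_{\alpha'}$ is a non-trivial quadratic $p$-subgroup of $L_{\alpha'}/R_{\alpha'}$.

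\Cref{SEQuad} now yields that $L_{\alpha'}/R_{\alpha'}$ equals the normal closure of this quadratic subgroup and is either (a) $\SL_2(p^n)$, or (b) $\mathrm{(P)SU}_3(p^n)$, or (c) $p=3$ with a Sylow $p$-subgroup of order $3$ and with $L_{\alpha'}/C_{L_{\alpha'}}(U/W)$ isomorphic to one of $\SL_2(3)$, $2\cdot\Alt(5)$ or $2^{1+4}_-.\Alt(5)$ for every non-trivial irreducible composition factor $U/W$ of $\bar V_{\alpha'}$. Since \cref{bodd} places $\beta$ and $\alpha'$ in the same $G$-orbit, choosing $g\in G$ with $\beta^g=\alpha'$ transports the entire situation to $L_\beta/R_\beta$ acting on $V_\beta/C_{V_\beta}(O^p(L_\beta))$, producing the same three alternatives.

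Finally, I pass from $L_\beta/R_\beta$ to $L_\beta/R_\beta Q_\beta$ by showing $Q_\beta\le R_\beta$. In cases (a) and (b) this is automatic because the quasisimple groups $\SL_2(p^n)$ and $\mathrm{(P)SU}_3(p^n)$ have $O_p=\{1\}$, so the normal $p$-subgroup $Q_\beta R_\beta/R_\beta$ is trivial. In case (c), writing $S\in\syl_p(L_\beta)$, \cref{BasicVB} gives $S\cap R_\beta\le Q_\beta$, so $|S/(S\cap R_\beta)|=|S/Q_\beta|\cdot|Q_\beta R_\beta/R_\beta|=3$; since $L_\beta/Q_\beta$ has a strongly $p$-embedded subgroup we have $|S/Q_\beta|\ge p=3$, forcing $|S/Q_\beta|=3$, $|Q_\beta R_\beta/R_\beta|=1$, and hence $Q_\beta\le R_\beta$. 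In all three cases $R_\beta Q_\beta=R_\beta$, matching outcomes (i)--(iii) of the statement. The only delicate step is the verification that $V_\beta\not\le R_{\alpha'}$; once this is in hand, the proof is a direct appeal to \cref{SEQuad} plus a short bookkeeping argument.
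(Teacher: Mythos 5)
Your proof is correct and follows the paper's one-line argument exactly: apply \cref{SEQuad} to the quadratic action supplied by \cref{b>1} and transfer to $\beta$ via the conjugacy in \cref{bodd}, with the supporting verifications (faithfulness and a strongly $p$-embedded subgroup in $L_{\alpha'}/R_{\alpha'}$ via \cref{spelemma2}, nontriviality of $V_\beta R_{\alpha'}/R_{\alpha'}$ using $Z_\alpha\le V_\beta\not\le Q_{\alpha'}$, and $Q_\beta\le R_\beta$ so that $L_\beta/R_\beta Q_\beta=L_\beta/R_\beta$) spelled out rather than left implicit. One small caveat: the parenthetical fallback to \cref{spelemma1}(iv) alone would not suffice, since $\bar{R_{\alpha'}}$ need not coincide with $O_{p'}(\bar{L_{\alpha'}})$, but \cref{spelemma2} applies as stated with no broadening needed, and your primary route through it is sound.
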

\begin{proof}
Since $[V_{\alpha'}, V_{\beta}, V_{\beta}]=\{1\}$, this follows immediately applying \cref{SEQuad} since $\beta$ is conjugate to $\alpha'$.
\end{proof}

The following lemma is part of the proof of the qrc lemma, which we recreate for the purposes of this work.

\begin{lemma}\label{SL2implies}
Suppose that $b>1$ and $V_{\beta}/C_{V_{\beta}}(O^p(L_{\beta}))$ is an irreducible module for $O^p(L_{\beta})R_{\beta}/R_{\beta}$. Then $Z_{\alpha}$ is a natural module for $L_{\alpha}/R_{\alpha}\cong \SL_2(q_\alpha)$.
\end{lemma}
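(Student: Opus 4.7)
Since $b>1$, by \cref{bodd} $b$ is odd and $\alpha'$ lies in the $G$-orbit of $\beta$, so the irreducibility hypothesis on $V_{\beta}/C_{V_{\beta}}(O^p(L_{\beta}))$ translates by conjugation to: $\bar W := V_{\alpha'}/C_{V_{\alpha'}}(O^p(L_{\alpha'}))$ is an irreducible module for $\bar{L_{\alpha'}} := L_{\alpha'}/R_{\alpha'}$. By \cref{b>1}, $V_{\beta}$ and $V_{\alpha'}$ are abelian, and $V_{\beta}$ acts quadratically and nontrivially on $V_{\alpha'}$ with $[V_{\alpha'},V_{\beta}] \le V_{\beta} \cap V_{\alpha'}$. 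The strategy is to show that $V_{\beta}$ induces an FF-offender on the irreducible module $\bar W$, apply \cref{SEFF} to pin down the structure of $\bar{L_{\alpha'}}$ and $\bar W$, and then transfer this information back to $Z_{\alpha}$.

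First I would set $A := V_{\beta} R_{\alpha'}/R_{\alpha'}$ and verify that $A$ is a nontrivial elementary abelian $p$-subgroup of $\bar{L_{\alpha'}}$ acting faithfully and quadratically on $\bar W$. Nontriviality of $A$ reduces to $V_{\beta} \not\le R_{\alpha'}$, which follows from $Z_{\alpha} \le V_{\beta}$, $Z_{\alpha} \not\le Q_{\alpha'}$ (criticality of $(\alpha,\alpha')$), and $R_{\alpha'} \cap S \le Q_{\alpha'}$ by \cref{BasicVB} (applicable since $Z_{\alpha'} = \Omega(Z(L_{\alpha'}))$ by \cref{bodd}). Faithfulness of $A$ on $\bar W$ is by definition of $R_{\alpha'}$.

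Next I would establish the FF-offender inequality $|\bar W/C_{\bar W}(A)| \le |A|$. The commutator map $V_{\alpha'} \times V_{\beta} \to V_{\beta} \cap V_{\alpha'}$ is $\mathrm{GF}(p)$-bilinear, and the irreducibility of $\bar W$ together with the fact that the image of $V_{\beta} \cap V_{\alpha'}$ in $\bar W$ is invariant under the pointwise stabilizer of the path from $\alpha$ to $\alpha'$ pin down this image; a dimension count exploiting the quadratic action then yields the inequality. Consequently, \cref{SEFF} gives $\bar{L_{\alpha'}} \cong \SL_2(q)$ and $\bar W$ is the natural $\SL_2(q)$-module, and by conjugacy $\bar{L_{\beta}} \cong \SL_2(q)$ with $V_{\beta}/C_{V_{\beta}}(O^p(L_{\beta}))$ the natural module.

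To conclude I would transfer this structural information to $Z_{\alpha}$. Since $Z_{\alpha} \le V_{\beta}$ is $G_{\alpha,\beta}$-invariant and contains $Z_{\beta} = \Omega(Z(S))$, and since the $G_{\alpha,\beta}$-invariant subgroups of the natural $\SL_2(q)$-module are classified by \cref{NatMod}(vii), the image of $Z_{\alpha}$ in $V_{\beta}/C_{V_{\beta}}(O^p(L_{\beta}))$ must be all of $V_{\beta}/C_{V_{\beta}}(O^p(L_{\beta}))$ (the alternative $Z_{\alpha} = Z_{\beta}$ is excluded by \cref{crit pair}(iii)). Using that $\bar{L_{\alpha}}$ has a strongly $p$-embedded subgroup and acts faithfully on $Z_{\alpha}$ (as $Z(L_{\alpha})=\{1\}$ by \cref{bodd}) and that $Z_{\alpha}$ has the appropriate size and fixed-point count inherited from the natural module structure on $V_{\beta}$, I identify $\bar{L_{\alpha}} \cong \SL_2(q_{\alpha})$ and $Z_{\alpha}$ as the natural module by \cref{SL2ModRecog} together with \cref{SEFF} applied to a suitable offender inside $L_{\alpha}$. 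The hard part will be the FF-offender verification on $\bar W$ — the technical heart of the qrc-lemma-style argument — which requires simultaneous control of $V_{\beta} \cap V_{\alpha'}$ under the actions of both $V_{\beta}$ and $V_{\alpha'}$; a secondary subtlety is producing the final offender inside $L_{\alpha}$ to identify $\bar{L_{\alpha}}$ itself, rather than merely the module size.
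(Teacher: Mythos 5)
Your approach diverges from the paper's in a way that opens a genuine gap. The paper never tries to produce an FF-offender on $V_{\alpha'}/C_{V_{\alpha'}}(O^p(L_{\alpha'}))$; instead it works with $Q := Q_{\beta}\cap O^p(L_{\beta})$ acting on $Z_{\alpha}$ and shows $Z_{\alpha}$ is \emph{dual} to an FF-module. The lever the paper pulls — and that you are missing — is the observation that for any $z\in Z_{\alpha}\setminus C_{V_{\beta}}(O^p(L_{\beta}))$, irreducibility forces $\langle z^{O^p(L_{\beta})}\rangle C_{V_{\beta}}(O^p(L_{\beta}))=V_{\beta}$, and since $[z,Q]\le C_{V_{\beta}}(O^p(L_{\beta}))$ and $Q\normaleq L_{\beta}$, one gets $[Z_{\alpha},Q]=[z,Q]$ for a single element $z$. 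This immediately yields $|[Z_{\alpha},Q]|=|Q/C_Q(z)|\le |Q/C_Q(Z_{\alpha})|$, i.e.\ the dual-FF condition, and \cref{SEFF} finishes.

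Your proposed FF-offender bound $|\bar W/C_{\bar W}(A)|\le |A|$ for $A=V_{\beta}R_{\alpha'}/R_{\alpha'}$ does not follow from the quadratic action plus irreducibility: the natural $\Sz(q)$- and $\SU_3(q)$-modules are irreducible and admit quadratic offenders yet are not FF-modules, and nothing in the lemma's hypotheses rules these out a priori (note the lemma makes no assumption like $C_{V_{\beta}}(V_{\alpha'})=V_{\beta}\cap Q_{\alpha'}$, which \cref{SL2VlQ} and \cref{SL2VnQ} add separately). The "dimension count exploiting quadratic action" you allude to would need exactly the single-element identity $[Z_{\alpha},Q]=[z,Q]$ to close, at which point you may as well run the argument directly on $Z_{\alpha}$ as the paper does. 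A secondary problem is that even granted $\bar{L_{\alpha'}}\cong\SL_2(q)$ with $\bar W$ natural, your final transfer to "$Z_{\alpha}$ is natural for $\bar{L_{\alpha}}$" is left unspecified and would in effect require re-deriving the dual-FF bound on $Z_{\alpha}$ anyway.
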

\begin{proof}
Let $Q:=Q_{\beta}\cap O^p(L_{\beta})$ so that, by \cref{push}, $Q\not\le Q_{\alpha}$. In particular, $Q$ acts non-trivially on $Z_{\alpha}$ and centralizes $C_{V_{\beta}}(O^p(L_{\beta}))$. Since $V_{\beta}/C_{V_{\beta}}(O^p(L_{\beta}))$ is irreducible, $C_{Z_{\alpha}}(Q)=Z_{\alpha}\cap C_{V_{\beta}}(O^p(L_{\beta}))$. Moreover, $[Z_{\alpha}, Q, Q]=\{1\}$.

Let $z\in Z_{\alpha}\setminus (Z_{\alpha}\cap C_{V_{\beta}}(O^p(L_{\beta})))$ and let $W_\beta:=\langle z^{O^p(L_{\beta})}\rangle$. Since $V_{\beta}/C_{V_{\beta}}(O^p(L_{\beta}))$ is irreducible, we have that $V_{\beta}=W_{\beta}C_{V_{\beta}}(O^p(L_{\beta}))$ so that $[Z_{\alpha}, Q]\le [V_{\beta}, Q]=[W_\beta, Q]$. Moreover, since $[z, Q]\le C_{V_{\beta}}(O^p(L_{\beta}))$, we have that $[z, Q]=[z, Q]^{O^p(L_{\beta})}=[W_\beta, Q]$ from which we conclude that $[Z_{\alpha}, Q]\le [W_{\beta}, Q]=[z, Q]\le [Z_{\alpha}, Q]$.

Now, there is a surjective homomorphism $\theta: Q\to [z, Q]$ and with kernel $C_Q(z)\geq C_Q(Z_{\alpha})$. In particular, $|[Z_{\alpha}, Q]|=|[z, Q]|=|Q/C_Q(z)|\leq |Q/C_Q(Z_\alpha)|$. Indeed, $Z_{\alpha}$ is dual to an FF-module for $L_{\alpha}/R_{\alpha}$. Hence, \cref{SEFF} yields that $Z_{\alpha}$ is a natural module for $L_{\alpha}/R_{\alpha}\cong \SL_2(q_\alpha)$, as desired.
\end{proof}

As in \cref{ZBEQZS}, throughout this section, we intend to control the action of $O^p(R_{\alpha})$ and $O^p(R_{\beta})$ using the methods in \cref{VBGood}-\cref{GoodAction4} in the expectation of applying \cref{SimExt} or \cite{Greenbook} to force contradictions. In the following lemmas, we demonstrate that we satisfy \cref{CommonHyp}, required for the application of these lemmas. Also, as in \cref{ZBEQZS}, whenever $L_{\alpha}/R_{\alpha}\cong L_{\beta}/R_{\beta}\cong\SL_2(p)$, we will often make a generic appeal to coprime action, utilizing that $L_{\lambda}$ is solvable when $p=2$ for $\lambda\in\{\alpha,\beta\}$, and that there is a central involution $t_{\lambda}\in L_{\lambda}/R_{\lambda}$ which acts fixed point freely on natural modules when $p$ is odd.

\begin{lemma}\label{ZQ=Z}
Suppose that $C_{V_\beta}(V_{\alpha'})=V_\beta \cap Q_{\alpha'}$, $V_{\alpha'}\le Q_{\beta}$, $q_\alpha=q_\beta=p$ and $Z_{\alpha}$ is a natural module for $L_{\alpha}/R_{\alpha}\cong \SL_2(p)$. Then $Z_{\alpha}=Z(Q_{\alpha})$ and $Z_{\beta}=Z(Q_{\beta})$.
\end{lemma}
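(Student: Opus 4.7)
The plan is to mirror \cref{EvenZ}, adapted for $b$ odd, establishing $Z(Q_\alpha) = Z_\alpha$ first and then deducing $Z(Q_\beta) = Z_\beta$ by a symmetric argument that exploits $Z_\beta = \Omega(Z(L_\beta))$ from \cref{bodd}.

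First I would show $Z(Q_\alpha) \le Q_{\alpha'}$. By the same minimality-of-$b$ argument as in \cref{EvenZ}, now with $b$ odd so that $\alpha' - 1 \in \alpha^G$ lies at distance $b-1$, the characteristic subgroup $Z(Q_\alpha) \normaleq G_\alpha$ satisfies $Z(Q_\alpha) \le Q_{\alpha'-1}$. Since $Q_{\alpha'-1} \le G_{\alpha'-1}^{(1)} \le G_{\alpha'}$, we have $Z(Q_\alpha) \le G_{\alpha'}$; and since this section's blanket hypothesis gives $Z_{\alpha'} \le Q_\alpha$, the centrality of $Z(Q_\alpha)$ in $Q_\alpha$ forces $[Z(Q_\alpha), Z_{\alpha'}] = 1$, i.e.\@ $Z(Q_\alpha) \le C_{L_{\alpha'}}(Z_{\alpha'}) = R_{\alpha'}$. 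By \cref{p-closure2}(i), $Q_{\alpha'}$ is the unique (normal) Sylow $p$-subgroup of $R_{\alpha'}$, so the $p$-group $Z(Q_\alpha)$ lies in $Q_\alpha \cap Q_{\alpha'}$.

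Next, to deduce $Z(Q_\alpha) = Z_\alpha$: the quotient $Z(Q_\alpha)/Z_\alpha$ is an $L_\alpha$-module, and by hypothesis $Z_\alpha$ is natural for $L_\alpha/R_\alpha \cong \SL_2(p)$ with $Z_\beta = \Omega(Z(S)) = C_{Z_\alpha}(S)$ of order $p$. Using coprime action (via the central involution of $\SL_2(p)$ for $p$ odd, or by direct analysis of the solvable case for $p \in \{2,3\}$), $Z(Q_\alpha)$ admits an $L_\alpha$-invariant decomposition off $Z_\alpha$; any non-trivial complement would be an $S$-normal $p$-subgroup avoiding $Z_\beta = \Omega(Z(S))$, which is impossible. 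For $Z(Q_\beta) = Z_\beta$, the natural module structure of $Z_\alpha$ gives $C_{Z_\alpha}(Q_\beta) = Z_\beta$, so $Z(Q_\beta) \cap Z_\alpha = Z_\beta$; combined with the equality $Z(Q_\alpha) = Z_\alpha$ just established this forces $Z(Q_\beta) \cap Q_\alpha \le Z_\alpha$, and the defining identity $Z_\beta = \Omega(Z(L_\beta))$ — which says $L_\beta$ itself centralizes $Z_\beta$ — together with an $L_\beta$-invariance and fixed-point argument parallel to the $\alpha$-case yields $Z(Q_\beta) = Z_\beta$.

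The main obstacle I anticipate is handling the coprime-action step cleanly when $p \in \{2,3\}$, where $\SL_2(p)$ is solvable and the extension $Z(Q_\alpha)$ of $Z_\alpha$ by its $L_\alpha$-module quotient may not split. I expect this to be resolved by first passing to $\Omega(Z(Q_\alpha))$ — which is elementary abelian and still $L_\alpha$-invariant — so that the splitting and $\Omega(Z(S))$-meeting arguments apply to it verbatim, and then eliminating higher-exponent contributions via a short Frattini-style reduction using that $Q_\alpha$ is of characteristic $p$ and $Z_\alpha$ is elementary abelian.
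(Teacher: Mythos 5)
Your very first step is wrong, and it is wrong for a reason that the paper's setup makes unavoidable: you claim $Z(Q_\alpha) \le Q_{\alpha'}$, but this cannot hold. Since $\Omega(Z(S)) \le Z(Q_\alpha)$ and $Z(Q_\alpha) \normaleq G_\alpha$ (as $Q_\alpha \normaleq G_\alpha$), we always have $Z_\alpha = \langle \Omega(Z(S))^{G_\alpha} \rangle \le Z(Q_\alpha)$. But $(\alpha,\alpha')$ is a critical pair, so $Z_\alpha \not\le Q_{\alpha'}$ by \cref{p-closure2}(iii), hence $Z(Q_\alpha) \not\le Q_{\alpha'}$. The step that produces this false containment is also faulty: you write $C_{L_{\alpha'}}(Z_{\alpha'}) = R_{\alpha'}$, but since $b$ is odd $\alpha' \in \beta^G$, so $Z_{\alpha'} = \Omega(Z(L_{\alpha'}))$ by \cref{bodd}, and the notation $R_{\alpha'} = C_{L_{\alpha'}}(Z_{\alpha'})$ applies only when $Z_{\alpha'} \ne \Omega(Z(L_{\alpha'}))$. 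Here $C_{L_{\alpha'}}(Z_{\alpha'}) = L_{\alpha'}$, which gives you nothing, and $R_{\alpha'}$ instead means $C_{L_{\alpha'}}(V_{\alpha'}/C_{V_{\alpha'}}(O^p(L_{\alpha'})))$.

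The correct move, which the paper makes, is to stop at $Z(Q_\alpha) \le Q_{\alpha'-1}$ and observe $Z(Q_\alpha) = Z_\alpha \,(Z(Q_\alpha) \cap Q_{\alpha'})$ (using that $Z_\alpha Q_{\alpha'}$ is a Sylow $p$-subgroup of $L_{\alpha'}$, which follows from $Z_\alpha$ being a natural $\SL_2(p)$-module of order $p^2$ and $q_{\alpha'} = q_\beta = p$). One then splits on whether $[V_{\alpha'}, Z(Q_\alpha) \cap Q_{\alpha'}]$ is trivial: if trivial, $O^p(L_\alpha)$ centralizes $Z(Q_\alpha)/Z_\alpha$ and coprime action forces $Z(Q_\alpha) = Z_\alpha$; if nontrivial, that commutator is $Z_{\alpha'}$, which is then centralized by $V_{\alpha'}Q_\alpha \in \syl_p(L_\alpha)$, forcing $Z_{\alpha'} = Z_\beta$ and a contradiction. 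Notice the hypothesis $V_{\alpha'} \le Q_\beta$ is used here precisely to know $V_{\alpha'}Q_\alpha \in \syl_p(L_\alpha)$ — your proposal never invokes this hypothesis at all, which is another symptom of the gap. The second half of your plan, for $Z(Q_\beta) = Z_\beta$, is similarly underdeveloped: the paper works with $Z(Q_{\alpha'})$ and needs a delicate case split on whether $Z(Q_{\alpha'}) \le Q_\beta$ and whether $Z(Q_{\alpha'}) \le Q_\alpha$, exploiting the already-established $Z(S) = Z_\beta$; your "$L_\beta$-invariance and fixed-point argument parallel to the $\alpha$-case" is too vague to identify with that argument and does not address the genuinely different geometry around a $\beta$-type vertex.
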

\begin{proof}
Suppose that $V_{\alpha'}\le Q_{\beta}$. We aim to show that if the conclusion of the lemma fails to hold then $R=Z_{\beta}=Z_{\alpha'}$ for then, as $V_{\beta}\not\le Q_{\alpha'}$, $O^p(L_{\alpha'})$ centralizes $V_{\alpha'}$, a contradiction.

Suppose that $V_{\alpha'}\le Q_{\beta}$ and $Z_{\alpha}\ne Z(Q_{\alpha})$. By minimality of $b$, and using that $b$ is odd, we have that $Z_{\lambda}\le Q_{\alpha}$ and $Z(Q_{\alpha})\le Q_{\lambda}$ for all $\lambda\in\Delta^{(b-1)}(\alpha)$. In particular, $Z(Q_{\alpha})\le Q_{\alpha'-1}$ and $Z(Q_{\alpha})=Z_{\alpha}(Z(Q_{\alpha})\cap Q_{\alpha'})$. If $[V_{\alpha'}, Z(Q_{\alpha})\cap Q_{\alpha'}]=\{1\}$, it follows that $O^p(L_{\alpha})$ centralizes $Z(Q_{\alpha})/Z_{\alpha}$ and an application of coprime action, observing that $Z_{\beta}\le Z_{\alpha}=[Z(Q_{\alpha}), O^p(L_{\alpha})]$, gives a contradiction. If $[V_{\alpha'}, Z(Q_{\alpha})\cap Q_{\alpha'}]\ne\{1\}$, then $Z_{\alpha'}=[V_{\alpha'}, Z(Q_{\alpha})\cap Q_{\alpha'}]\le Z(Q_{\alpha})$ and so $Z_{\alpha'}$ is centralized by $V_{\alpha'}Q_{\alpha}\in\syl_p(L_{\alpha})$ from which it follows that $Z_{\alpha'}=Z_{\beta}$, a contradiction. Thus, $Z_{\alpha}=Z(Q_{\alpha})$. Since $Z(S)\le Z(Q_{\alpha})$ we conclude that $Z(S)=\Omega(Z(S))=Z_{\beta}$ is of exponent $p$.

Since $Z_{\lambda}\le Q_{\alpha'}$ for all $\lambda\in\Delta^{(b-2)}(\alpha')$, again using the minimality of $b$ and that $b$ is odd, we argue that $Z(Q_{\alpha'})\le Q_{\alpha+2}$. If $Z(Q_{\alpha'})\not\le Q_{\beta}$ then, as $Z(S)=Z_{\beta}$, $\{1\}\ne [Z(Q_{\alpha'}), Z(Q_{\beta})]\le Z(Q_{\alpha'})\cap Z(Q_{\beta})$, for otherwise $Z(Q_{\beta})$ is centralized by $Z(Q_{\alpha'})Q_{\beta}\in\syl_p(L_{\beta})$ and the result holds. Then, $[Z(Q_{\alpha'}), Z(Q_{\beta})]$ is centralized by $Z(Q_{\alpha'})Q_{\beta}\in\syl_p(L_{\beta})$ and since $Z(S)=Z_{\beta}$, $[Z(Q_{\alpha'}), Z(Q_{\beta})]=Z_{\beta}$. Moreover, since $[Z(Q_{\alpha'}), Z(Q_{\beta})]\ne\{1\}$, $Z(Q_{\beta})\not\le Q_{\alpha'}$, and by a similar reasoning, $[Z(Q_{\alpha'}), Z(Q_{\beta})]=Z_{\alpha'}$. But then $Z_{\beta}=Z_{\alpha'}$, a contradiction. Hence, $Z(Q_{\alpha'})\le Q_{\beta}$.

Observe that $Z(Q_{\alpha'})\not\le Q_{\alpha}$, else $Z(Q_{\alpha'})$ is centralized by $Z_{\alpha}Q_{\alpha'}\in\syl_p(L_{\alpha'})$ and $Z(Q_{\alpha'})=Z_{\alpha'}$, as desired. Then $Z_{\beta}=[Z(Q_{\alpha'}), Z_{\alpha}]\le \Omega(Z(Q_{\alpha'}))$ so that $Z_{\beta}$ is centralized by $Z_{\alpha}Q_{\alpha'}\in\syl_p(L_{\alpha'})$ and $Z_{\beta}=Z_{\alpha'}$, again a contradiction. Therefore, if $V_{\alpha'}\le Q_{\beta}$, we have shown that $Z(Q_{\beta})=Z_{\beta}$.
\end{proof}

\begin{lemma}
Suppose that $C_{V_\beta}(V_{\alpha'})=V_\beta \cap Q_{\alpha'}$, $V_{\alpha'}\not\le Q_{\beta}$, $q_\alpha=q_\beta=p$ and $Z_{\alpha}$ is a natural module for $L_{\alpha}/R_{\alpha}\cong \SL_2(p)$. Then $Z_{\alpha}=Z(Q_{\alpha})$ and $Z_{\beta}=Z(Q_{\beta})$.
\end{lemma}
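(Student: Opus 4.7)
The plan is to mirror the proof of the preceding lemma, which handled the case $V_{\alpha'}\le Q_\beta$, and adapt each step carefully to the present situation $V_{\alpha'}\not\le Q_\beta$. The conclusion and hypotheses are the same except for this inclusion, so most of the structural arguments transfer, but two places require genuine attention: the verification that certain products form Sylow $p$-subgroups, and the loss of the clean containment $V_{\alpha'}\le Q_\beta$ in the analysis of $Z(Q_\beta)$.

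First I would establish $Z_\alpha=Z(Q_\alpha)$. Assume for contradiction $Z_\alpha<Z(Q_\alpha)$. By minimality of $b$ and $b$ odd (as deduced in \cref{bodd}), the same parity/distance argument used in the preceding lemma gives $Z(Q_\alpha)\le Q_{\alpha'-1}$, and then $Z(Q_\alpha)=Z_\alpha(Z(Q_\alpha)\cap Q_{\alpha'})$ since $S/Q_{\alpha'}$ has order $q_\alpha=p$ and $Z_\alpha$ covers it. I then split on $[V_{\alpha'},Z(Q_\alpha)\cap Q_{\alpha'}]$: if it is trivial, then $O^p(L_\alpha)$ centralizes $Z(Q_\alpha)/Z_\alpha$, and since $Z_\alpha$ is a natural $\SL_2(p)$-module so $Z_\alpha=[Z(Q_\alpha),O^p(L_\alpha)]$, coprime action (applied as in \cref{ZBEQZS}) gives a contradiction via $Z_\beta\le Z_\alpha$. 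If it is non-trivial, then $Z_{\alpha'}=[V_{\alpha'},Z(Q_\alpha)\cap Q_{\alpha'}]\le Z(Q_\alpha)$; here I use that $V_{\alpha'}\not\le Q_\beta$ implies $V_{\alpha'}\not\le Q_\alpha$ (otherwise the natural $\SL_2(p)$ structure on $Z_\alpha$ forces $V_{\alpha'}Q_\alpha/Q_\alpha$ trivial, contradicting $V_{\alpha'}\not\le Q_\beta$ after conjugation), so $V_{\alpha'}Q_\alpha=S$ is a Sylow $p$-subgroup of $L_{\alpha'}$, centralizing $Z_{\alpha'}$, and therefore $Z_{\alpha'}=\Omega(Z(S))=Z_\beta$, the desired contradiction. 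This yields $Z_\alpha=Z(Q_\alpha)$, hence $Z(S)\le Z(Q_\alpha)=Z_\alpha$ has exponent $p$ and $Z(S)=Z_\beta$.

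Next I prove $Z_\beta=Z(Q_\beta)$. Since $\alpha'$ is conjugate to $\beta$ under $G$, it is equivalent to show $Z_{\alpha'}=Z(Q_{\alpha'})$. Again by the minimality and parity argument, $Z(Q_{\alpha'})\le Q_{\alpha+2}$ (interpreting $Q_{\alpha+2}=Q_\alpha$ if $b=1$). If $Z(Q_{\alpha'})\le Q_\beta$, the analysis of the preceding lemma applies verbatim: $[Z(Q_{\alpha'}),Z_\alpha]\le Z_{\alpha'}\cap Z(Q_{\alpha'})$ is centralized by $Z_\alpha Q_{\alpha'}\in\syl_p(L_{\alpha'})$ and, using $Z(S)=Z_\beta$, forces $Z(Q_{\alpha'})=Z_{\alpha'}$. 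If $Z(Q_{\alpha'})\not\le Q_\beta$, I form $[Z(Q_{\alpha'}),Z(Q_\beta)]\le Z(Q_{\alpha'})\cap Z(Q_\beta)$; this is centralized by $Z(Q_{\alpha'})Q_\beta\in\syl_p(L_\beta)$, so is either trivial (in which case $Z(Q_\beta)$ itself is centralized by this Sylow, giving $Z(Q_\beta)=Z_\beta$ and hence $Z(Q_{\alpha'})=Z_{\alpha'}$ after conjugation) or equals $Z_\beta$; in the latter case, applying the symmetric computation from the other side gives $[Z(Q_{\alpha'}),Z(Q_\beta)]=Z_{\alpha'}$, whence $Z_\beta=Z_{\alpha'}$, a contradiction.

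The main obstacle is the step requiring that $V_{\alpha'}Q_\alpha$ be a Sylow $p$-subgroup of $L_{\alpha'}$: in the preceding lemma this was essentially immediate from $V_{\alpha'}\le Q_\beta$ and the structural equalities it produced, whereas here it must be argued from $V_{\alpha'}\not\le Q_\beta$ together with the natural $\SL_2(p)$-module structure on $Z_\alpha$ and the hypothesis $C_{V_\beta}(V_{\alpha'})=V_\beta\cap Q_{\alpha'}$ (via \cref{SL2implies} and \cref{b>1}). Once this is in hand, the two-case commutator dichotomy runs exactly as before, and the proof of $Z_\beta=Z(Q_\beta)$ reduces to the same centre-of-Sylow calculation used in the preceding lemma, with the additional alternative $Z(Q_{\alpha'})\not\le Q_\beta$ absorbed by the symmetric argument sketched above.
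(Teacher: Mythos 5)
Your proposal attempts to re-run the proof of \cref{ZQ=Z} verbatim, and the step you flag at the end as the ``main obstacle'' is in fact a fatal gap, not a detail to be patched. In the case $V_{\alpha'}\le Q_{\beta}$, the key to both commutator sub-cases of \cref{ZQ=Z} is that $V_{\alpha'}\le Q_{\beta}\le G_{\alpha}$, so $V_{\alpha'}Q_{\alpha}$ is a subgroup of $L_{\alpha}$ and, being Sylow, controls the action of $L_{\alpha}$ on $Z(Q_{\alpha})/Z_{\alpha}$. Once $V_{\alpha'}\not\le Q_{\beta}$, this inclusion fails outright: by minimality of $b$ one has $V_{\alpha'}\le Q_{\alpha+2}$, so $V_{\alpha'}$ sits inside the Sylow $p$-subgroup $T'=Q_{\alpha+2}Q_{\beta}$ of $L_{\beta}$ through the vertex $\alpha+2$, and $T'\cap S=Q_{\beta}$; since $V_{\alpha'}\not\le Q_{\beta}$, it follows that $V_{\alpha'}\not\le S$ and hence $V_{\alpha'}\not\le G_{\alpha}$ at all (any $p$-subgroup of $L_{\beta}\cap G_{\alpha}$ lies in $S$). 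So $V_{\alpha'}Q_{\alpha}$ is not even a subgroup of $G_{\alpha}$, let alone a Sylow $p$-subgroup of $L_{\alpha}$, and there is nothing for the commutator dichotomy to bite on. Your suggested remedy --- deducing $V_{\alpha'}\not\le Q_{\alpha}$ from $V_{\alpha'}\not\le Q_{\beta}$, and appealing to \cref{SL2implies} and \cref{b>1} --- establishes noncontainment in $Q_{\alpha}$ but says nothing about containment in $G_{\alpha}$, which is what the argument actually needs and which is false. (You also write $L_{\alpha'}$ where $L_{\alpha}$ is intended, but even with that corrected the argument does not run.)

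The paper deals with this by taking a genuinely different route. Rather than trying to force a Sylow of $L_{\alpha}$ out of $V_{\alpha'}$, it conditions on $Z(Q_{\alpha'})\le Q_{\beta}\cap Q_{\alpha}$ and introduces $Y^{\beta}:=\langle Z(Q_{\lambda})\mid Z_{\lambda}=Z_{\alpha},\ \lambda\in\Delta(\beta)\rangle$. By the graph-automorphism argument of \cref{UWNormal} this is normalized by $R_{\beta}Q_{\alpha}$, which is a subgroup you genuinely have access to inside $L_{\beta}$. It then shows $Z(Q_{\alpha})$ is elementary abelian and splits on $[Y^{\beta}\cap Q_{\alpha'},V_{\alpha'}]$: the nontrivial case is eliminated by observing that $Y^{\beta}$-quadratic action on $Z(Q_{\alpha'+1})$, together with \cref{DirectSum} and coprime action, forces $Z(Q_{\alpha'+1})=Z_{\alpha'+1}$ and hence $Y^{\beta}\le V_{\beta}$; the trivial case gives normality of $Y^{\beta}V_{\beta}$ in $L_{\beta}=\langle V_{\alpha'},R_{\beta},Q_{\alpha}\rangle$, whereupon the three subgroups lemma together with $[Q_{\beta},O^p(L_{\beta})]\not\le Q_{\alpha}$ transfers the centralizing back to $O^p(L_{\alpha})$. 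None of this is a cosmetic variant of your proposal; it is how the paper sidesteps the fact that $V_{\alpha'}$ leaves $G_{\alpha}$ as soon as it leaves $Q_{\beta}$.
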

\begin{proof}
Suppose that $V_{\alpha'}\not\le Q_{\beta}$ and $Z(Q_{\alpha'})\le Q_{\beta}$. In addition, assume first that $Z(Q_{\alpha'})\le Q_{\alpha}$ so that $Z(Q_{\alpha'})$ is centralized by $Z_{\alpha}Q_{\alpha'}\in\syl_p(L_{\alpha'})$. Set $Y^\beta:=\langle Z(Q_{\lambda}) \mid Z_{\lambda}=Z_{\alpha}, \lambda\in\Delta(\beta)\rangle$ and let $r\in R_{\beta}Q_{\alpha}$. Since $r$ is a graph automorphism, for $\lambda\in\Delta(\beta)$ such that $Z_{\lambda}=Z_{\alpha}$, $Z(Q_{\lambda})^r=Z(Q_{\lambda\cdot r})$. But now, $Z_{\lambda\cdot r}=Z_{\lambda}^r=Z_{\alpha}^r=Z_{\alpha}$ and so $Z(Q_{\lambda})^r\le Y^{\beta}$. Thus, $Y^\beta\normaleq R_{\beta}Q_{\alpha}$. Now, observe that by minimality of $b$, and using that $b$ is odd, $V_{\delta}\le Q_{\lambda}$ and $Z(Q_{\lambda})\le Q_{\delta}$ for all $\lambda\in\Delta(\beta)$ with $Z_{\lambda}=Z_{\alpha}$ and $\delta\in\Delta^{(b-1)}(\lambda)$ by \cref{BasicVB}. In particular, $Z(Q_{\alpha})\le Y^\beta\le Q_{\alpha'-1}$. Thus, $Z(Q_{\alpha})=Z_{\alpha}(Z(Q_{\alpha})\cap Q_{\alpha'})$ and $Y^\beta=Z_{\alpha}(Y^\beta\cap Q_{\alpha'})$.  

Since $Z(Q_{\alpha})\cap Q_{\alpha'}$ is a maximal subgroup of $Z(Q_{\alpha})$ not containing $Z_{\alpha}$, we must have that $Z_{\alpha}\not\le \Phi(Z(Q_{\alpha}))$. But then, by the irreducibility of $Z_{\alpha}$ under the action of $G_{\alpha}$, $Z_{\beta}\cap \Phi(Z(Q_{\alpha}))=\Omega(Z(S))\cap \Phi(Z(Q_{\alpha}))=\{1\}$ so that $\Phi(Z(Q_{\alpha}))=\{1\}$ and $Z(Q_{\alpha})=\Omega(Z(Q_{\alpha}))$ is elementary abelian.

Assume first that $[Y^\beta\cap Q_{\alpha'},V_{\alpha'}]=Z_{\alpha'}$ so that $Y^{\beta}\not\le V_{\beta}$ and there is some $\alpha'+1\in\Delta(\alpha')$ with $Y^\beta\cap Q_{\alpha'}\not\le Q_{\alpha'+1}$. Again, using the minimality of $b$ and that $b$ is odd, we deduce that $Z(Q_{\alpha'+1})\le Q_{\alpha+2}$. Write $Y_{\beta}=\langle Z(Q_{\alpha})^{G_{\beta}}\rangle$ so that $Y^\beta\le Y_\beta\normaleq G_{\beta}$ and, as $b>2$, $Y_\beta$ is abelian. Then $Z(Q_{\alpha'+1})$ normalizes $Y_\beta$, $[Z(Q_{\alpha'+1}), Y^\beta\cap Q_{\alpha'}, Y^\beta\cap Q_{\alpha'}]\le [Z(Q_{\alpha'+1}), Y_\beta, Y_\beta]=\{1\}$ and $Z(Q_{\alpha'+1})$ is quadratic module for $\bar{L_{\alpha'+1}}$. Moreover, by coprime action, $Z(Q_{\alpha'+1})=[Z(Q_{\alpha'+1}), R_{\alpha'+1}]\times C_{Z(Q_{\alpha'+1})}(R_{\alpha'+1})$ is invariant under $T\in\syl_p(G_{\alpha',\alpha'+1})$ and as $Z_{\alpha'}\le Z_{\alpha'+1}\le C_{Z(Q_{\alpha'+1})}(R_{\alpha'+1})$, we infer that $Z(Q_{\alpha'+1})=C_{Z(Q_{\alpha'+1})}(R_{\alpha'+1})$ and $Z(Q_{\alpha'+1})$ is a faithful module for $L_{\alpha'+1}/R_{\alpha'+1}\cong \SL_2(p)$. But then by \cref{DirectSum}, $Z(Q_{\alpha'+1})$ is a direct sum of natural $\SL_2(p)$-modules. Now, since $[Z(Q_{\alpha'+1}), Y^\beta\cap Q_{\alpha'}]$ is of exponent $p$ and centralized by $(Y^\beta\cap Q_{\alpha'})Q_{\alpha'+1}\in\syl_p(G_{\alpha',\alpha'+1})$, we have that $[Z(Q_{\alpha'+1}), Y^\beta\cap Q_{\alpha'}]=Z_{\alpha'}$ is of order $p$ from which it follows that $Z(Q_{\alpha'+1})$ contains a unique summand. Hence, $Z(Q_{\alpha'+1})=Z_{\alpha'+1}$ and by conjugacy, $Z_{\alpha}=Z(Q_{\alpha})$. But then $Y^\beta\le V_{\beta}$, and we have a contradiction.

Suppose now that $[Y^\beta\cap Q_{\alpha'}, V_{\alpha'}]=\{1\}$. Then $[V_{\alpha'}, Y^\beta]\le V_{\beta}$ and, as $Z_{\alpha}\ne Z_{\alpha+2}$, we conclude that $Y^\beta V_{\beta}\normaleq L_{\beta}=\langle V_{\alpha'}, R_{\beta}, Q_{\alpha}\rangle$. But $V_{\alpha'}$ centralizes $Y^\beta V_{\beta}/V_{\beta}$ so that $O^p(L_{\beta})$ centralizes $Y^\beta V_{\beta}/V_{\beta}$ and it follows that $Y^\beta V_{\beta}=Z(Q_{\alpha})V_{\beta}\normaleq L_{\beta}$. Then $[Z(Q_{\alpha}), Q_{\beta}]\normaleq L_{\beta}$ and since $Q_{\alpha}\cap Q_{\beta}$ centralizes $[Z(Q_{\alpha}), Q_{\beta}]$ and $Q_{\alpha}\cap Q_{\beta}\not\normaleq L_{\beta}$ by \cref{push}, we must have that $[Z(Q_{\alpha}), Q_{\beta}]\le Z(S)$ and $[Z(Q_{\alpha}), Q_{\beta}, L_{\beta}]=\{1\}$. Now, $[O^p(L_{\beta}), Z(Q_{\alpha}), Q_{\beta}]\le [V_{\beta}, Q_{\beta}]=Z_{\beta}$ and by the three subgroup lemma $[Q_{\beta}, O^p(L_{\beta}), Z(Q_{\alpha})]\le Z_{\beta}\le Z_{\alpha}$. Since $[Q_{\beta}, O^p(L_{\beta})]\not\le Q_{\alpha}$, it follows that $O^p(L_{\alpha})$ centralizes $Z(Q_{\alpha})/Z_{\alpha}$ and coprime action yields $Z(Q_{\alpha})=[Z(Q_{\alpha}), O^p(L_{\alpha})]\times C_{Z(Q_{\alpha})}(O^p(L_{\alpha}))$. But $Z_{\beta}\le Z_{\alpha}=[Z(Q_{\alpha}), O^p(L_{\alpha})]$ and $Z(Q_{\alpha})=Z_{\alpha}$. Since $Z(Q_{\alpha'})\le Z(T)$, for $T\in\syl_p(L_{\alpha'}\cap L_{\alpha'-1})$, we have that $Z(Q_{\alpha'})=Z_{\alpha'}$ and $Z(Q_{\alpha})=Z_{\alpha}$, as required.
\end{proof}

Thus, throughout this section, whenever we assume the necessary values of $b$, we are able to apply \cref{VBGood} through \cref{GoodAction4}. That the hypotheses of these lemmas are satisfied will often be left implicit in proofs.

We now provide some generic results in the case $C_{V_\beta}(V_{\alpha'})=V_\beta \cap Q_{\alpha'}$. These will also be useful for certain inductive arguments in the case $C_{V_\beta}(V_{\alpha'})<V_\beta \cap Q_{\alpha'}$ later.

\begin{lemma}\label{SL2VlQ}
Suppose that $b>1$, $C_{V_\beta}(V_{\alpha'})=V_\beta \cap Q_{\alpha'}$ and $V_{\alpha'}\le Q_\beta$. Then $Q_{\beta}\in\syl_p(R_{\beta})$, $L_{\alpha}/R_\alpha\cong\SL_2(p^n)\cong L_\beta/R_{\beta}$, both $Z_\alpha$ and $V_\beta/C_{V_{\beta}}(O^p(L_{\beta}))$ are natural $\SL_2(p^n)$-modules and $Z_{\alpha}Q_{\alpha'}\in\syl_p(L_{\alpha'})$. Moreover, $[Q_\beta, V_\beta]=Z_\beta=[V_{\alpha'}, V_\beta]\le V_{\alpha'}\cap V_\beta$.
\end{lemma}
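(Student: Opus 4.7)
The plan is to use the hypothesis to manufacture a quadratic FF-offender on $V_\beta$, apply the FF-module classification to pin down $L_\beta/R_\beta$, and then invoke Lemma~\ref{SL2implies} to transfer the conclusion to the $\alpha$-side. First, I would observe that by Lemma~\ref{b>1}, $V_\beta$ (and hence $V_{\alpha'}$, since $\alpha' \in \beta^G$) is abelian, and $[V_\beta, V_{\alpha'}, V_{\alpha'}] \leq [V_\beta \cap V_{\alpha'}, V_{\alpha'}] = \{1\}$, so $V_{\alpha'}$ acts quadratically on $V_\beta$. Under quadratic action the commutator induces a non-degenerate bilinear pairing $V_\beta/C_{V_\beta}(V_{\alpha'}) \times V_{\alpha'}/C_{V_{\alpha'}}(V_\beta) \to [V_\beta, V_{\alpha'}]$, so combining with the hypothesis $C_{V_\beta}(V_{\alpha'}) = V_\beta \cap Q_{\alpha'}$, we get
\[ |V_\beta/(V_\beta \cap Q_{\alpha'})| \;=\; |V_{\alpha'}/C_{V_{\alpha'}}(V_\beta)| \;=\; |[V_\beta, V_{\alpha'}]|. \]

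Next, since $V_{\alpha'} \leq Q_\beta \leq L_\beta$, $V_{\alpha'}R_\beta/R_\beta$ is a $p$-subgroup of $L_\beta/R_\beta$; by the definition of $R_\beta$ (for $Z_\beta = \Omega(Z(L_\beta))$, $b>1$) the group $R_\beta$ centralizes $V_\beta/C_{V_\beta}(O^p(L_\beta))$, so $V_{\alpha'} \cap R_\beta$ centralizes that quotient and $V_{\alpha'}R_\beta/R_\beta$ acts faithfully on it. The equality above, together with the containment $C_{V_\beta}(V_{\alpha'})C_{V_\beta}(O^p(L_\beta))/C_{V_\beta}(O^p(L_\beta))$ inside the $V_{\alpha'}$-centralizer on the quotient, shows that $V_{\alpha'}R_\beta/R_\beta$ is a quadratic FF-offender on $V_\beta/C_{V_\beta}(O^p(L_\beta))$. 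Applying Theorem~\ref{SEFFFus} yields $L_\beta/R_\beta \cong \SL_2(q_\beta)$ with $V_\beta/C_{V_\beta}(O^p(L_\beta))$ a natural $\SL_2(q_\beta)$-module. In particular this module is irreducible, so Lemma~\ref{SL2implies} gives $L_\alpha/R_\alpha \cong \SL_2(q_\alpha)$ with $Z_\alpha$ a natural module. Lemma~\ref{BasicVB} then immediately gives $Q_\beta \in \syl_p(R_\beta)$.

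To equate $q_\alpha$ and $q_\beta$, I would match Sylow indices: by the natural module structure on the $\beta$-side, $|V_\beta Q_{\alpha'}/Q_{\alpha'}| = q_\beta$, while the natural module structure on the $\alpha$-side gives $|Z_\alpha Q_{\alpha'}/Q_{\alpha'}| = q_\alpha$; since $Z_\alpha \leq V_\beta$ and both sit in $S/Q_{\alpha'}$, a dimension count forces the equality $Z_\alpha Q_{\alpha'} = V_\beta Q_{\alpha'}$ and $q_\alpha = q_\beta$. Because $V_\beta \not\le Q_{\alpha'}$ (else $V_\beta$ would centralize $Z_{\alpha'}$), the coset $V_\beta Q_{\alpha'}/Q_{\alpha'}$ meets $S/Q_{\alpha'} \in \syl_p(L_{\alpha'}/Q_{\alpha'})$ in the full Sylow, so $Z_\alpha Q_{\alpha'} \in \syl_p(L_{\alpha'})$. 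Finally, $Q_\beta \leq R_\beta$ forces $[Q_\beta, V_\beta] \leq C_{V_\beta}(O^p(L_\beta))$; the natural-module structure combined with the $G_{\alpha,\beta}$-irreducibility of $Z_\beta$ in $V_\beta$ pins $[Q_\beta, V_\beta]$ down to $Z_\beta$, and $[V_{\alpha'}, V_\beta]$ is a nontrivial $G_{\alpha,\beta}$-invariant subgroup of $V_\beta \cap V_{\alpha'}$ which again must coincide with $Z_\beta$ by the same irreducibility.

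The main technical obstacle is the FF-offender verification in the first two paragraphs: one must carefully pass from the ``raw'' pair $(V_\beta, V_{\alpha'})$ to the module $V_\beta/C_{V_\beta}(O^p(L_\beta))$ acted on by $V_{\alpha'}R_\beta/R_\beta$ without losing the offender inequality, which requires unraveling the convention for $R_\beta$ in this setup and confirming that $V_{\alpha'} \cap R_\beta$ does indeed centralize the relevant quotient. Once that bookkeeping is settled the remainder is a direct application of the FF-module classification (Theorem~\ref{SEFFFus}) followed by Lemma~\ref{SL2implies}.
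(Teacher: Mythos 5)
Your argument breaks down at the ``non-degenerate bilinear pairing'' step. Quadratic action does make the commutator map $V_\beta/C_{V_\beta}(V_{\alpha'})\times V_{\alpha'}/C_{V_{\alpha'}}(V_\beta)\to[V_\beta,V_{\alpha'}]$ bilinear and non-degenerate, but non-degeneracy only yields injections of each factor into $\Hom(\cdot,[V_\beta,V_{\alpha'}])$; unless $[V_\beta,V_{\alpha'}]$ has order $p$ this does \emph{not} force the two factors to have the same order, and it certainly does not force either to equal $|[V_\beta,V_{\alpha'}]|$. So the displayed chain of equalities
\[
|V_\beta/(V_\beta\cap Q_{\alpha'})|=|V_{\alpha'}/C_{V_{\alpha'}}(V_\beta)|=|[V_\beta,V_{\alpha'}]|
\]
is unjustified; in fact proving the first equality is essentially equivalent to proving that both actions are natural-module offenders, which is the content of the lemma, not a general-nonsense input.

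Even granting that equality, your offender inequality runs the wrong way: you need $|V_\beta/C_{V_\beta}(V_{\alpha'})|\le|V_{\alpha'}R_\beta/R_\beta|$, but since $C_{V_{\alpha'}}(V_\beta)\le V_{\alpha'}\cap R_\beta$ the equality would give $|V_\beta/C_{V_\beta}(V_{\alpha'})|=|V_{\alpha'}/C_{V_{\alpha'}}(V_\beta)|\ge|V_{\alpha'}/(V_{\alpha'}\cap R_\beta)|$, which is the opposite inequality. The paper avoids this by first noting $V_{\alpha'}\not\le Q_\alpha$ (else $Z_\alpha$ would centralize $V_{\alpha'}$, forcing $Z_\alpha\le Q_{\alpha'}$, a contradiction), identifying $C_{Z_\alpha}(V_{\alpha'})=Z_\alpha\cap Q_{\alpha'}$ and $C_{V_{\alpha'}}(Z_\alpha)=V_{\alpha'}\cap Q_\alpha$, and then comparing $r_\alpha:=|Z_\alpha Q_{\alpha'}/Q_{\alpha'}|$ with $r_{\alpha'}:=|V_{\alpha'}Q_\alpha/Q_\alpha|$: if $r_{\alpha'}<r_\alpha$ then $Z_\alpha$ is a strict over-offender on a non-central $L_{\alpha'}$-chief factor of $V_{\alpha'}/[V_{\alpha'},Q_{\alpha'}]$, contradicting the natural-module classification of \cref{SEFF}; if $r_\alpha\le r_{\alpha'}$ then $V_{\alpha'}$ is an FF-offender on $Z_\alpha$, forcing $Z_\alpha$ natural for $L_\alpha/R_\alpha\cong\SL_2(r_\alpha)$ and (since a natural module has offenders of size exactly $r_\alpha=|S/Q_\alpha|$) $r_\alpha=r_{\alpha'}$. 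Only then do the module identifications at $\alpha'$ and $\beta$ follow. You would need to rebuild your argument along these two-sided offender lines rather than via the pairing.
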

\begin{proof}
Suppose that $C_{V_\beta}(V_{\alpha'})=V_\beta\cap Q_{\alpha'}$ and $V_{\alpha'}\le Q_\beta$. Note, that if $V_{\alpha'}\le Q_{\alpha}$, then $[Z_\alpha, V_{\alpha'}]=\{1\}$ and $Z_{\alpha}\le Q_{\alpha'}$, a contradiction. Additionally, $[Z_\alpha, V_{\alpha'}, V_{\alpha'}]\le [V_\beta, V_{\alpha'}, V_{\alpha'}]=\{1\}$ and it follows that both $Z_\alpha$ and $V_{\alpha'}$ admit quadratic action. Notice that $Z_{\alpha}\cap Q_{\alpha'}=C_{Z_{\alpha}}(V_{\alpha'})$ and that $V_{\alpha'}\cap Q_{\alpha}=C_{V_{\alpha'}}(Z_{\alpha})$, and set $r_{\alpha}=|Z_{\alpha}Q_{\alpha'}/Q_{\alpha'}|$ and $r_{\alpha'}=|V_{\alpha'}Q_{\alpha}/Q_{\alpha}|$. 

Assume first that $r_{\alpha'}<r_{\alpha}$. Then for $V:=V_{\alpha'}/[V_{\alpha'}, Q_{\alpha'}]$, $V$ contains a non-central chief factor for $L_{\alpha'}$ by \cref{CommCF} so that $Q_{\alpha'}\in\syl_p(C_{L_{\alpha'}}(V))$. Now, we have that $|V/C_V(Z_{\alpha})|\leq r_{\alpha'}<r_{\alpha}=|Z_{\alpha}/C_{Z_{\alpha}}(V)|$ and we obtain a contradiction by \cref{SEFF}.

Suppose now that $r_{\alpha}\leq r_{\alpha'}$. Then $|Z_{\alpha}/C_{Z_{\alpha}}(V_{\alpha'})|=r_{\alpha}\leq r_{\alpha'}=|V_{\alpha'}/C_{V_{\alpha'}}(Z_{\alpha})|$ and $Z_{\alpha}$ is an FF-module for $L_{\alpha}/R_{\alpha}\cong \SL_2(r_{\alpha})$, and applying \cref{SEFF} acknowledging that $Z(L_{\alpha})=\{1\}$, $Z_{\alpha}$ is a natural module. But then $r_{\alpha'}\leq r_{\alpha} \leq r_{\alpha'}$ so that $r_{\alpha}=r_{\alpha'}$, $[Z_{\alpha'-1}, Q_{\alpha'}]=[V_{\alpha'}, Q_{\alpha'}]=Z_{\alpha'}$ is of order $r_\alpha$ and $V_{\alpha'}/Z_{\alpha'}$ is an FF-module for $L_{\alpha'}/R_{\alpha'}\cong \SL_2(r_{\alpha'})$. In particular, $q_{\alpha'}=q_\beta=q_\alpha$. By \cref{BasicVB}, $Q_{\beta}\in\syl_p(R_{\beta})$ and as $\{1\}\ne[V_{\alpha'}, V_{\beta}]\le [Q_{\beta}, V_{\beta}]$, we conclude that $Z_{\beta}=[V_{\alpha'}, V_{\beta}]\le V_{\alpha'}\cap V_{\beta}$, and finally applying \cref{SEFF}, the result holds.
\end{proof}

\begin{lemma}\label{SL2VnQ}
Suppose that $b>1$, $C_{V_\beta}(V_{\alpha'})=V_\beta \cap Q_{\alpha'}$ and $V_{\alpha'}\not\le Q_\beta$. Then $Q_{\beta}\in\syl_p(R_{\beta})$, $L_{\alpha}/R_\alpha\cong\SL_2(p^n)\cong L_\beta/R_{\beta}$, both $Z_\alpha$ and $V_\beta/C_{V_{\beta}}(O^p(L_{\beta}))$ are natural $\SL_2(p^n)$-modules, $Z_{\alpha}Q_{\alpha'}\in\syl_p(L_{\alpha'})$ and $V_{\alpha'}Q_{\beta}\in\syl_p(L_{\beta})$.
\end{lemma}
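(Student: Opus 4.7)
The plan is to mimic the proof of \cref{SL2VlQ}, adapting the argument to handle the absence of the containment $V_{\alpha'}\le Q_\beta$. First I would observe that $V_{\alpha'}\not\le Q_\alpha$: otherwise, since $Z_\alpha\le Z(Q_\alpha)$, we would have $Z_\alpha\le C_{V_\beta}(V_{\alpha'})=V_\beta\cap Q_{\alpha'}$, contradicting $Z_\alpha\not\le Q_{\alpha'}$. By \cref{b>1}, both $Z_\alpha$ and $V_{\alpha'}$ admit quadratic action on each other. Set $r_\alpha:=|Z_\alpha Q_{\alpha'}/Q_{\alpha'}|>1$ and $r_{\alpha'}:=|V_{\alpha'}Q_\alpha/Q_\alpha|>1$.

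Next I would run the FF-module analysis on $V:=V_{\alpha'}/[V_{\alpha'},Q_{\alpha'}]$, viewed as an $L_{\alpha'}$-module. By \cref{CommCF} and \cref{BasicVB}, $V$ contains a non-central chief factor and $Q_{\alpha'}$ is the (normal, hence unique) Sylow $p$-subgroup of $C_{L_{\alpha'}}(V)$. Combined with the hypothesis $C_{V_\beta}(V_{\alpha'})=V_\beta\cap Q_{\alpha'}$ and the inclusion $Z_\alpha\le V_\beta$, this yields $C_{Z_\alpha}(V)=Z_\alpha\cap Q_{\alpha'}$, so that $|Z_\alpha/C_{Z_\alpha}(V)|=r_\alpha$; meanwhile, $V_{\alpha'}\cap Q_\alpha\le C_{V_{\alpha'}}(Z_\alpha)$ gives $|V/C_V(Z_\alpha)|\le r_{\alpha'}$. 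Crucially, none of these steps require $V_{\alpha'}\le Q_\beta$. If $r_{\alpha'}<r_\alpha$, then $Z_\alpha$ is a strict offender on the faithful module $V$ for $L_{\alpha'}/C_{L_{\alpha'}}(V)$, which has a strongly $p$-embedded subgroup by \cref{spelemma2}, contradicting \cref{SEFF}. Hence $r_\alpha\le r_{\alpha'}$, and $V_{\alpha'}$ is an offender on $Z_\alpha$; applying \cref{SEFF} together with \cref{bodd}(ii) yields $L_\alpha/R_\alpha\cong\SL_2(r_\alpha)$ and $Z_\alpha$ natural. The transvection-subgroup bound in $\SL_2(r_\alpha)$ forces $r_{\alpha'}\le r_\alpha$, hence $r_\alpha=r_{\alpha'}$, and so $Z_\alpha Q_{\alpha'}\in\syl_p(L_{\alpha'})$.

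The hardest part, and the main obstacle, is deducing the conclusions about $V_\beta$ without the analogue $V_{\alpha'}\le Q_\beta$ of the ambient hypothesis of \cref{SL2VlQ}. The containment $Q_\beta\in\syl_p(R_\beta)$ is immediate from \cref{BasicVB}. To establish $L_\beta/R_\beta\cong\SL_2(r_\alpha)$ with $V_\beta/C_{V_\beta}(O^p(L_\beta))$ natural, I would run the dual FF-module analysis on the module $W:=V_\beta/C_{V_\beta}(O^p(L_\beta))$ for $L_\beta$, taking $V_{\alpha'}$ as the offender. The bound $|W/C_W(V_{\alpha'})|\le |V_\beta/(V_\beta\cap Q_{\alpha'})|\le r_\alpha$ follows from the hypothesis and the Sylow bound in $L_{\alpha'}/Q_{\alpha'}$. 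For the offender size, I would exploit the natural $\SL_2(r_\alpha)$-module structure of $Z_\alpha$ just established: since $Z_\alpha\le V_\beta$ is natural for $L_\alpha/R_\alpha$, any element of $V_{\alpha'}$ centralizing $V_\beta$ centralizes $Z_\alpha$, so that $C_{V_{\alpha'}}(V_\beta)\le C_{V_{\alpha'}}(Z_\alpha)=V_{\alpha'}\cap Q_\alpha$ (the latter equality using that $V_{\alpha'}\cap R_\alpha\le Q_\alpha$ since $Q_\alpha$ is the unique Sylow $p$-subgroup of $R_\alpha$, together with faithfulness of the natural action). This yields $|V_{\alpha'}/C_{V_{\alpha'}}(V_\beta)|\ge r_{\alpha'}=r_\alpha$, so $V_{\alpha'}$ is an FF-offender on $W$; applying \cref{SEFF} and \cref{spelemma2} delivers $L_\beta/R_\beta\cong\SL_2(r_\alpha)$ with $W$ natural. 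Finally, $V_{\alpha'}Q_\beta\in\syl_p(L_\beta)$ is immediate, since $|V_{\alpha'}Q_\beta/Q_\beta|=r_\alpha$ matches the Sylow size in $L_\beta/Q_\beta$.
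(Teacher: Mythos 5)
Your first half—establishing $L_\alpha/R_\alpha\cong\SL_2(p^n)$ with $Z_\alpha$ natural and forcing $r_\alpha=r_{\alpha'}$—is correct, and your observation that this portion of the \cref{SL2VlQ} argument never uses $V_{\alpha'}\le Q_\beta$ is accurate. The difficulty, as you yourself flag, is entirely in deducing the conclusions on the $\beta$-side, and it is there that your patch breaks down.

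There are two interlinked gaps. First, the bound $|V_\beta/(V_\beta\cap Q_{\alpha'})|\le r_\alpha$ via ``the Sylow bound in $L_{\alpha'}/Q_{\alpha'}$'' is circular: a Sylow $p$-subgroup of $L_{\alpha'}/Q_{\alpha'}$ has order $q_{\alpha'}=q_\beta$, and the identification $q_{\alpha'}=r_\alpha$ (equivalently $q_\alpha=q_\beta$, since $\alpha$ and $\beta$ are not conjugate) is precisely the assertion $Z_\alpha Q_{\alpha'}\in\syl_p(L_{\alpha'})$ you are trying to prove, not something you have. Second, and more seriously, the offender computation on $W:=V_\beta/C_{V_\beta}(O^p(L_\beta))$ has a direction error. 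Since $R_\beta=C_{L_\beta}(W)$ and $Q_\beta\in\syl_p(R_\beta)$, the relevant offender size for \cref{SEFF} is $|V_{\alpha'}R_\beta/R_\beta|=|V_{\alpha'}/(V_{\alpha'}\cap Q_\beta)|$. You instead lower-bound $|V_{\alpha'}/C_{V_{\alpha'}}(V_\beta)|\ge r_\alpha$; but $C_{V_{\alpha'}}(V_\beta)=V_{\alpha'}\cap C_\beta\le V_{\alpha'}\cap Q_\beta$, so this lower-bounds a quantity that is an \emph{upper} bound for the true offender size and gives nothing. The missing equality $C_{V_{\alpha'}}(V_\beta)=V_{\alpha'}\cap Q_\beta$ is exactly the content of the proposition the paper proves \emph{after} \cref{SL2VnQ} using \cref{SL2VnQ} and \cref{NotNatural}, so importing it here would be circular. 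The paper's actual proof avoids both problems by a much longer argument: it case-splits on whether $|V_\beta Q_{\alpha'}/Q_{\alpha'}|$ and $|V_{\alpha'}Q_\beta/Q_\beta|$ equal $p$ or are at least $p^2$, uses quadratic-module recognition (\cref{SEQuad}, \cref{GreenbookQuad}, \cref{2FRecog}) and minimal-module bounds (\cref{SpeMod2}, \cref{SpeModOdd}, \cref{q^3module}, \cref{SL2ModRecog}), and a coprime-action argument via \cref{GLS2p'} to control $O_{p'}(\bar{L_{\alpha'}})$.
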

\begin{proof}
Assume throughout that $C_{V_\beta}(V_{\alpha'})=V_\beta \cap Q_{\alpha'}$ and $V_{\alpha'}\not\le Q_\beta$. Suppose first that $|V_{\beta}/C_{V_\beta}(V_{\alpha'})|=|V_{\beta}Q_{\alpha'}/Q_{\alpha'}|=p$. Then by \cref{SEFF}, $V_{\beta}/C_{V_{\beta}}(O^p(L_{\beta}))$ is a natural $\SL_2(p)$-module for $L_{\beta}/R_{\beta}\cong \SL_2(p)$. Since $Q_{\alpha}\cap Q_{\beta}\not\normaleq L_{\beta}$ by \cref{push}, $Q_{\beta}\cap O^p(L_{\beta})\not\le Q_{\alpha}$ and $Z_{\alpha}\cap C_{V_\beta}(O^p(L_{\beta}))$ is centralized by $Q_{\beta}\cap O^p(L_{\beta})$. Now, $V_{\beta}\ne Z_{\alpha}C_{V_{\beta}}(O^p(L_{\beta}))$, for otherwise $Q_{\alpha}$ centralizes $V_{\beta}/C_{V_{\beta}}(O^p(L_{\beta}))$ and $O^p(L_{\beta})$ centralizes $V_{\beta}$, and so $Z_{\alpha}\cap C_{V_\beta}(O^p(L_{\beta}))$ has index $p$ in $Z_{\alpha}$. Thus, $Z_{\alpha}$ is an FF-module and by \cref{SEFF}, using that $Z(L_{\alpha})=\{1\}$, $Z_{\alpha}$ is a natural $\SL_2(p)$-module for $L_{\alpha}/R_{\alpha}\cong \SL_2(p)$. Then, $[Q_{\beta}, V_{\beta}]=[Q_{\beta}, Z_{\alpha}]^{G_{\beta}}=Z_{\beta}\le C_{V_{\beta}}(O^p(L_{\beta}))$ and by \cref{BasicVB}, $Q_{\beta}\in\syl_p(R_{\beta})$ and the result holds. 

Thus, $|V_{\beta}Q_{\alpha'}/Q_{\alpha'}|\geq p^2$ and as $V_{\beta}$ is elementary abelian, $m_p(S/Q_{\beta})\geq 2$. Since $V_{\beta}$ acts quadratically on $V_{\alpha'}$ we infer, using \cref{SEQuad} and \cref{GreenbookQuad} when $p$ is odd, that $\bar{L_{\beta}}/O_{p'}(\bar{L_{\beta}})$ is a rank $1$ group of Lie type but not a Ree group, $V_{\beta}Q_{\alpha'}/Q_{\alpha'}\le \Omega(Z(T/Q_{\alpha'}))$ where $T\in\syl_p(G_{\alpha', \alpha'-1})$, and $q_\beta\geq p^2$.

Assume first that $|V_{\alpha'}Q_{\beta}/Q_{\beta}|=p$. Then a subgroup of index at most $p r_\alpha$ is centralized by $Z_{\alpha}$, where $r_\alpha:=|(V_{\alpha'}\cap Q_{\beta})Q_{\alpha}/Q_{\alpha}|\ne\{1\}$. Applying \cref{SEFF} we have that if $|Z_{\alpha}Q_{\alpha'}/Q_{\alpha'}|\geq pr_\alpha$ then $|Z_{\alpha}Q_{\alpha'}/Q_{\alpha'}|= pr_\alpha$, $V_{\alpha'}/C_{V_{\alpha'}}(O^p(L_{\alpha'}))$ is natural module for $L_{\alpha'}/R_{\alpha'}\cong \SL_2(q_\alpha')$ and the results holds upon applying \cref{SL2implies}. Hence, we may assume that $|Z_{\alpha}Q_{\alpha'}/Q_{\alpha'}|\leq r_\alpha$. Moreover, since $Z_{\alpha}\cap Q_{\alpha'}$ is centralized by $V_{\alpha'}\cap Q_{\beta}$, applying \cref{SEFF} we deduce that $r_\alpha= |Z_{\alpha}Q_{\alpha'}/Q_{\alpha'}|\leq q_\beta$ and $Z_{\alpha}$ is a natural module for $L_{\alpha}/R_{\alpha}\cong \SL_2(r_\alpha)$. Since $|V_{\beta}Q_{\alpha'}/Q_{\alpha'}|\geq p^2$, we ascertain that $V_{\alpha'}$ contains a unique non-central chief factor for $L_{\alpha'}$ so that by \cref{CommCF}, $[V_{\alpha'}, Q_{\alpha'}]\le C_{V_{\alpha'}}(O^p(L_{\alpha'}))$ and $Q_{\beta}\in R_{\beta}$.

If $r_\alpha=p$ then $Z_\alpha$ is a natural module for $L_{\alpha}/R_{\alpha}\cong \SL_2(p)$. Moreover, $V_{\alpha'}/C_{V_{\alpha'}}(O^p(L_{\alpha'}))$ is a $2$F-module determined by \cref{Quad2F}. Since $|V_{\beta}Q_{\alpha'}/Q_{\alpha'}|\geq p^2$ and $V_{\beta}$ acts quadratically on $V_{\alpha'}$, the only possibility is that $V_{\alpha'}/C_{V_{\alpha'}}(O^p(L_{\alpha'}))$ is a natural $\SL_2(p^2)$-module, a contradiction by \cref{NatMod}.

Suppose now that $r_\alpha>p$. If $r_\alpha=p^2$ then a subgroup of index at most $p^3$ of $V_{\alpha'}$ is centralized by $Z_{\alpha}$ and by \cref{SEFF}, \cref{SpeMod2} and \cref{SpeModOdd}, we deduce that $\bar{L_{\alpha'}}/O_{p'}(\bar{L_{\alpha'}})\cong\PSL_2(q_\beta)$ where $q_\beta\in\{p^2, p^3\}$. But then, \cref{2FRecog} yields that $V_{\alpha'}/C_{V_{\alpha'}}(O^p(L_{\alpha'}))$ is a natural $\SL_2(q_\beta)$-module for $L_{\alpha'}/R_{\alpha'}\cong \SL_2(q_\beta)$. Since $Z_{\alpha}C_{V_{\beta}}(O^p(L_{\beta}))/C_{V_{\beta}}(O^p(L_{\beta}))$ is $G_{\alpha, \beta}$-invariant subgroup of order $r_\alpha=p^2$, we see by \cref{NatMod} that $q_\alpha=q_\beta=p^2$, as required. Hence, $r_\alpha\geq p^3$. Let $A\le Z_{\alpha}$ of index $p$ such that $Z_{\alpha}\cap Q_{\alpha'}<A$. Since $r_\alpha\geq p^3$, there is also $B\le Z_{\alpha}$ of index $p$ in $A$ strictly containing $Z_{\alpha}\cap Q_{\alpha'}$. Furthermore, since $Z_{\alpha}$ is a natural $\SL_2(r_\alpha)$-module, $C_{V_{\alpha'}\cap Q_{\beta}}(B)=V_{\alpha'}\cap Q_{\beta}\cap Q_{\alpha}$. Since $|V_{\alpha'}Q_{\beta}/Q_{\beta}|=p$, without loss of generality and writing $V:=V_{\alpha'}/Z_{\alpha'}$, either $C_V(A)=C_V(Z_{\alpha})$, or $C_V(A)=C_V(B)$ for every $Z_{\alpha}\cap Q_{\alpha'}<B\le A$ with $[A:B]=p$. We apply \cref{GLS2p'} so that $O_{p'}(\bar{L_{\alpha'}})$ is generated by centralizers of subgroups similar to $A$, or subgroups similar to $B$ for a fixed subgroup $A$. In particular, by the previous observations, $O_{p'}(\bar{L_{\alpha'}})$ normalizes $C_V(A)$. Then writing $H:=\langle (AQ_{\alpha'}/Q_{\alpha'})^{O_{p'}(\bar{L_{\alpha'}})}\rangle$, we have that  $[H, C_V(A)]=\{1\}$. Moreover, $V=[V, O_{p'}(H)]\times C_V(O_{p'}(H))$ is an $AQ_{\alpha'}/Q_{\alpha'}$-invariant decomposition and we conclude that $O_{p'}(H)$ centralizes $V_{\alpha'}$. Thus, $O_{p'}(L_{\alpha'}/R_{\alpha'})$ normalizes $AQ_{\alpha'}/Q_{\alpha'}$ and $L_{\alpha'}/R_{\alpha'}$ is a central extension of a Lie type group.

Since $V_{\alpha'}\cap Q_{\beta}\cap Q_{\alpha}$ has index at most $pq_\beta$ in $V_{\alpha'}$ and is centralized by $Z_{\alpha}$, we apply \cref{GreenbookQuad} to deduce that $L_{\alpha'}/R_{\alpha'}\cong \SL_2(q_{\alpha'})$. In particular, since $L_{\alpha'}=\langle Z_{\alpha}, Z_{\alpha}^x, R_{\alpha'}\rangle$ for some $x\in L_{\alpha'}$, we deduce that $C_{V_{\alpha'}}(O^p(L_{\alpha'}))$ has index at most $p^2 r_\alpha^2\leq p^2 q_\beta^2 \leq q_\beta^3$. Since $q_\beta/p\leq r_\alpha\geq p^2$, appealing to \cref{SL2ModRecog}, we deduce that $V_{\alpha'}/C_{V_{\alpha'}}(O^p(L_{\alpha'}))$ is a natural module for $L_{\alpha'}/R_{\alpha'}\cong \SL_2(q_{\alpha'})$. Then \cref{NatMod} yields that $q_\alpha'=q_\beta=q_\alpha$. 

Assume now that $|V_{\alpha'}Q_{\beta}/Q_{\beta}|\geq p^2$. Since $[V_{\beta}\cap Q_{\alpha'}, V_{\alpha'}]=\{1\}$, \cref{SpeMod2} and \cref{SpeModOdd} yield $\bar{L_{\beta}}/O_{p'}(\bar{L_{\beta}})\cong \PSL_2(q_\beta)$ and $V_{\beta}Q_{\alpha'}\in\syl_p(L_{\alpha'})$. An application of \cref{2FRecog} reveals that either $L_{\beta}/R_{\beta}\cong \SL_2(q_\beta)$ and $V_{\beta}/C_{V_{\beta}}(O^p(L_{\beta}))$ is a natural module; or $q_\beta\geq p^4$. 
Set $A\le V_{\alpha'}$ such that $V_{\alpha'}\cap Q_{\beta}< A$ and $|AQ_{\beta}/Q_{\beta}|p=|V_{\alpha'}Q_{\beta}/Q_{\beta}|$. Write $V:=V_{\beta}/Z_{\beta}$. Suppose that $C_{V}(A)>(V_{\beta}\cap Q_{\alpha'})/Z_{\beta}$ so that by \cref{SpeMod2} and \cref{SpeModOdd}, $|AQ_{\beta}/Q_{\beta}|=p$. Hence, $|V_{\alpha'}Q_{\beta}/Q_{\beta}|=p^2$ and $Z_{\alpha}$ centralizes an index $p^2r_\alpha$ subgroup of $V_{\alpha'}$. If $r_\alpha\geq p^3$, then \cref{2FRecog} gives $V_{\alpha'}/C_{V_{\alpha'}}(O^p(L_{\alpha'}))$ is a natural module for $L_{\alpha'}/R_{\alpha'}\cong \SL_2(q_\beta)$. If $r_\alpha=p^2$ then \cref{SpeMod2} and \cref{SpeModOdd} imply that $q_\beta=p^4$ and by \cref{Quad2F} that $C_V(A)$ has index $p^3$ in $V$. By conjugacy, there is $x\in Z_{\alpha}\setminus (Z_{\alpha}\cap Q_{\alpha'})$ such that $C_{V_{\alpha'}/Z_{\alpha'}}(x)$ has index $p^3$ in $V_{\alpha'}/Z_{\alpha'}$. Let $P_{\alpha'}=\langle Z_{\alpha}, Z_{\alpha}^x, R_{\alpha'}\rangle$ with $x\in L_{\alpha'}$ chosen such that $\bar{L_{\alpha'}}=\bar{P_{\alpha'}}O_{p'}(\bar{L_{\alpha'}})$. Then $|V_{\alpha'}/C_{V_{\alpha'}}(P_{\alpha'})|\leq q_\beta^2$ so that by \cref{q^3module}, we have that $P_{\alpha'}R_{\alpha'}/R_{\alpha'}\cong \SL_2(p^4)$ and $V_{\alpha'}/C_{V_{\alpha'}}(P_{\alpha'})$ is described by \cref{SL2ModRecog}. Since $V_{\beta}$ acts quadratically on $V_{\alpha'}$ and $C_{V_{\alpha'}/Z_{\alpha'}}(x)$ has index $p^3$ in $V_{\alpha'}/Z_{\alpha'}$, we have a contradiction. Finally, if $r_\alpha=p$ then for $x\in V_{\beta}\setminus (V_{\beta}\cap Q_{\alpha'})$, with $[x, A]\le Z_{\beta}$, then the commutation homomorphism $\phi:A/Z_{\alpha'}\to A/Z_{\alpha'}$ such that $aZ_{\alpha'}\phi=[x, a]Z_{\alpha'}$ has image contained in $Z_{\beta}Z_{\alpha'}/Z_{\alpha'}$ of order at most $p$ from which it follows that $x$ centralizes a subgroup of $V_{\alpha'}/Z_{\alpha'}$ of order at most $p^2$. Hence, $V_{\alpha'}/Z_{\alpha'}$ is determined by \cref{Quad2F} and since $q_\beta\geq p^4$, we have a contradiction.

Thus, $C_{V}(A)=(V_{\beta}\cap Q_{\alpha'})/Z_{\beta}$ for all $A\le V_{\alpha'}$ of index $p$ such that $V_{\beta}\cap Q_{\alpha}< A$. We again apply the coprime action argument from \cref{GLS2p'} in a similar manner as before so that $O_{p'}(L_{\beta}/R_{\beta})$ normalizes $V_{\alpha'}R_{\beta}/R_{\beta}$ and $L_{\beta}/R_{\beta}\cong \SL_2(q_\beta)$. Since $V_{\beta}$ acts quadratically on $V_{\alpha'}$, applying \cref{DirectSum}, we deduce that $V_{\beta}/C_{V_{\beta}}(O^p(L_{\beta}))$ is a natural module for $L_{\beta}/R_{\beta}$. Then \cref{NatMod} yields $q_\alpha=q_\beta$, as desired.
\end{proof}

We now prove the ``converse" to the above two statements.

\begin{lemma}\label{NotNatural}
If $b>1$ and $V_{\beta}/C_{V_\beta}(O^p(L_\beta))$ is a natural module for $L_{\beta}/R_{\beta}\cong\SL_2(q_\beta)$, then $C_{V_\beta}(V_{\alpha'})=V_\beta \cap Q_{\alpha'}$.
\end{lemma}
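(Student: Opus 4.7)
The plan is to establish the stated equality in three stages: a setup combined with the easy inclusion, an order-counting argument for the reverse, and verification of two auxiliary claims which I expect to form the main obstacle. First I would invoke \cref{bodd} to see that $b$ is odd; combined with $b>1$, this gives $b\ge 3$. By the minimality of $b$, each generator $Z_\mu$ of $V_\beta$ satisfies $d(\mu,\alpha'-1)\le b-1$, so $V_\beta\le Q_{\alpha'-1}\le L_{\alpha'}$, and symmetrically $V_{\alpha'}\le Q_\beta\le L_\beta$. Applying \cref{b>1} at $\alpha'$, $V_{\alpha'}$ is an abelian and hence a $p$-subgroup normal in $G_{\alpha'}$, so $V_{\alpha'}\le O_p(G_{\alpha'})=Q_{\alpha'}$. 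The hypothesis that $V_\beta/C_{V_\beta}(O^p(L_\beta))$ is natural makes it irreducible as an $\bar{L_\beta}$-module, and \cref{SL2implies} then gives $Z_\alpha$ as a natural $\SL_2(q_\alpha)$-module for $L_\alpha/R_\alpha$; conjugacy of $\alpha'$ to $\beta$ delivers $V_{\alpha'}/C_{V_{\alpha'}}(O^p(L_{\alpha'}))$ as natural for $L_{\alpha'}/R_{\alpha'}\cong\SL_2(q_\beta)$, and in particular $q_\alpha=q_\beta$. The forward inclusion $C_{V_\beta}(V_{\alpha'})\le V_\beta\cap Q_{\alpha'}$ is then immediate: any $x\in C_{V_\beta}(V_{\alpha'})$ centralizes the natural quotient of $V_{\alpha'}$ and hence lies in $R_{\alpha'}$, and since $V_\beta$ is a $p$-group contained in a Sylow $T$ of $L_{\alpha'}$, \cref{BasicVB} gives $x\in V_\beta\cap R_{\alpha'}\le T\cap R_{\alpha'}\le Q_{\alpha'}$.

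For the reverse inclusion, the plan is an order count. On the one hand, $|V_\beta:V_\beta\cap Q_{\alpha'}|=|V_\beta Q_{\alpha'}/Q_{\alpha'}|$ is bounded by $q_\beta$, the order of the Sylow $p$-subgroup $S/Q_{\alpha'}$ of $L_{\alpha'}/Q_{\alpha'}$ (using that $R_{\alpha'}/Q_{\alpha'}$ is a $p'$-group and $L_{\alpha'}/R_{\alpha'}\cong\SL_2(q_\beta)$). On the other hand, the natural-module structure on $V_\beta/C_{V_\beta}(O^p(L_\beta))$ shows that the image $V_{\alpha'}R_\beta/R_\beta$ lies in the Sylow $Q_\beta R_\beta/R_\beta$ of $\bar{L_\beta}\cong\SL_2(q_\beta)$, and whenever this image is non-trivial it centralizes a subgroup of exact order $q_\beta$ on the natural quotient. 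Provided that $V_{\alpha'}$ additionally centralizes the kernel $C_{V_\beta}(O^p(L_\beta))$, I would deduce $|V_\beta:C_{V_\beta}(V_{\alpha'})|=q_\beta$, which together with the forward inclusion forces the sought equality.

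The hard part will be justifying the two auxiliary claims that underpin the reverse-inclusion count. Non-triviality of the image of $V_{\alpha'}$ in $\bar{L_\beta}$ I would establish by contradiction: if $V_{\alpha'}\le R_\beta$ then $[V_\beta,V_{\alpha'}]\le C_{V_\beta}(O^p(L_\beta))$, and the natural-module computation of $[V_{\alpha'},V_\beta]$ performed on the $\alpha'$ side (which forces this commutator to meet the fixed line of the natural module non-trivially) combined with $[V_\beta,V_{\alpha'}]\ne\{1\}$ from \cref{b>1} will yield a size contradiction. The centralization of the kernel by $V_{\alpha'}$ is the more delicate step; my approach would be to verify that $V_{\alpha'}$ lies inside $O^p(L_\beta)$---which for $q_\beta\ge 4$ follows from the perfection of $\SL_2(q_\beta)$ together with the containment of the image of $V_{\alpha'}$ in a Sylow of $\bar{L_\beta}$---with a separate analysis of the small cases $q_\beta\in\{2,3\}$ where $\SL_2(q_\beta)$ is not perfect and one must instead argue directly from the abelianness of $V_\beta$ and the $G_{\alpha,\beta}$-invariance of $C_{V_\beta}(O^p(L_\beta))$.
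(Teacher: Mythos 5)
There is a genuine gap, and it sits exactly where you flag "the more delicate step." Before that, two smaller misstatements: the symmetric version of $V_\beta\le Q_{\alpha'-1}$ is $V_{\alpha'}\le Q_{\alpha+2}$ (the neighbour of $\beta$ towards $\alpha'$), not $V_{\alpha'}\le Q_\beta$; whether $V_{\alpha'}\le Q_\beta$ is in fact a genuine case distinction in this subsection (compare \cref{SL2VlQ} and \cref{SL2VnQ}). Correspondingly, in the reverse-inclusion count the Sylow of $\bar{L_\beta}$ carrying the image of $V_{\alpha'}$ should be $Q_{\alpha+2}R_\beta/R_\beta$; since $Q_\beta\normaleq L_\beta$ and $O_p(\bar{L_\beta})=\{1\}$ we have $Q_\beta\le R_\beta$, so $Q_\beta R_\beta/R_\beta$ is trivial.

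The real problem is the claim that $V_{\alpha'}\le O^p(L_\beta)$ for $q_\beta\ge4$. Perfection of $\SL_2(q_\beta)\cong L_\beta/R_\beta$ gives only $O^p(L_\beta)R_\beta=L_\beta$; the quotient $L_\beta/O^p(L_\beta)$ is then a (possibly nontrivial) $p$-group isomorphic to a section of $R_\beta$, and a $p$-subgroup contained in a Sylow $p$-subgroup of $L_\beta$ has no reason to fall inside $O^p(L_\beta)$. What one gets cheaply (this is the observation the paper makes, citing the computation in \cref{VBGood}) is merely $[V_{\alpha'},C_{V_\beta}(O^p(L_\beta))]\le Z_\beta$ — not triviality. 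The paper's proof is by contradiction from $C_{V_\beta}(V_{\alpha'})<V_\beta\cap Q_{\alpha'}$; it observes that if $C_{V_\beta}(O^p(L_\beta))$ centralizes $V_{\alpha'}$ the contradiction is immediate (since $V_\beta\cap Q_{\alpha'}=C_{V_\beta}(O^p(L_\beta))Z_{\alpha+2}$ then centralizes $V_{\alpha'}$), and then spends the remainder of a long argument — a delicate case analysis on the relative positions of $Z_{\alpha'}$, $Z_\beta$ and $Z_{\alpha+3}$, controlling the chief factors in $V^{\alpha'+1}$ and $V_{\alpha'+1}^{(2)}$ via \cref{VBGood}, \cref{GoodAction1}, \cref{GoodAction3} and \cref{SimExt} — eliminating the case where $C_{V_\beta}(O^p(L_\beta))$ acts nontrivially on $V_{\alpha'}$. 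That is precisely the case your order count assumes away, so the proposal does not give a shorter route: the crucial step is missing rather than easy.
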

\begin{proof}
Applying \cref{SL2implies}, we have that $Z_{\alpha}$ is a natural module for $L_{\alpha}/R_{\alpha}\cong \SL_2(q_\alpha)$ and $q_\alpha=q_\beta$. Throughout, we assume that $C_{V_\beta}(V_{\alpha'})<V_\beta \cap Q_{\alpha'}$ so that $[V_{\beta}\cap Q_{\alpha'}, V_{\alpha'}]=Z_{\alpha'}$. We may suppose that $C_{V_{\beta}}(O^p(L_{\beta}))$ acts non-trivially on $V_{\alpha'}$ for otherwise $V_{\beta}\cap Q_{\alpha'}=C_{V_{\beta}}(O^p(L_{\beta}))Z_{\alpha+2}$ centralizes $V_{\alpha'}$. As in the proof of \cref{VBGood}, we have that $[V_{\alpha'}, C_{V_{\beta}}(O^p(L_{\beta}))]\le [L_{\beta}, C_{V_{\beta}}(O^p(L_{\beta}))]\le Z_{\beta}$.

Let $\alpha'+1\in\Delta(\alpha')$ with $[V_{\beta}\cap Q_{\alpha'}, Z_{\alpha'+1}]\ne \{1\}$. Indeed, we have that $[V_{\beta}\cap Q_{\alpha'}, Z_{\alpha'+1}]=Z_{\alpha'}$. We set $V^{\alpha'+1}:=\langle C_{V_{\alpha'}}(O^p(L_{\alpha'}))^{G_{\alpha'+1}}\rangle$ throughout. Note that if $|V_{\beta}|=q_\beta^3$ then $V_{\beta}\cap Q_{\alpha'}=Z_{\alpha+2}$ centralizes $V_{\alpha'}$, a contradiction. Hence, by \cref{VBGood}, both $V_{\alpha'+1}^{(2)}/V^{\alpha'+1}$ and $V^{\alpha'+1}/Z_{\alpha'+1}$ contains non-central chief factors for $L_{\alpha'+1}$.

Assume first that $|Z_{\alpha'+1}Q_{\beta}/Q_{\beta}|< q_{\beta}$. Then, $Z_{\beta}=[V_{\beta}\cap Q_{\alpha'}, Z_{\alpha'+1}\cap Q_{\beta}]=Z_{\alpha'}$. Indeed, since $V_{\beta}\not\le Q_{\alpha'}$, $|V_{\alpha'}Q_{\beta}/Q_{\beta}|=q_\beta$. But then, since $Z_{\alpha'-1}\le Q_{\beta}$, we must have that $C_{V_{\alpha'}}(O^p(L_{\alpha'}))\not\le Q_{\beta}$, and $[C_{V_{\alpha'}}(O^p(L_{\alpha'})), V_{\beta}]\le Z_{\alpha'}=Z_{\beta}$, a contradiction. Hence, $Z_{\alpha'+1}Q_{\beta}\in\syl_p(L_{\beta})$. 

Note that if $C_{V_{\beta}}(O^p(L_{\beta}))\not\le Q_{\alpha'}$, then $[V_{\alpha'}, C_{V_{\beta}}(O^p(L_{\beta}))]\le Z_{\beta}$ and as $|[V_{\alpha'}, C_{V_{\beta}}(O^p(L_{\beta}))]|\geq q_\beta$, $[V_{\alpha'}, C_{V_{\beta}}(O^p(L_{\beta}))]=Z_{\beta}\le V_{\alpha'}$ and $Z_{\alpha'}\cap Z_{\beta}=\{1\}$. If $C_{V_{\beta}}(O^p(L_{\beta}))\le Q_{\alpha'}$ then as $C_{V_{\beta}}(O^p(L_{\beta}))$ is non-trivial on $V_{\alpha'}$, $Z_{\alpha'}=[C_{V_{\beta}}(O^p(L_{\beta})), Z_{\alpha'+1}]=Z_{\beta}$.

Suppose that $Z_{\alpha+3}\ne Z_{\beta}$. If $b=3$, then $Z_{\alpha'}\cap Z_{\beta}=\{1\}$. Then $[V_{\alpha'}, C_{V_{\beta}}(O^p(L_{\beta}))]\le Z_{\beta}\le Z_{\alpha'-1}$. Moreover, $[Q_{\alpha'-1}Q_{\alpha'}, V_{\alpha'}]=[\langle C_{V_{\beta}}(O^p(L_{\beta}))^{G_{\alpha', \alpha'-1}}\rangle Q_{\alpha'}, V_{\alpha'}]\le Z_{\alpha'-1}$ so that $Z_{\alpha'-1}Z_{\alpha'-1}^g$ is of order $q_\beta^3$ and normalized by $L_{\alpha'}=\langle Q_{\alpha'-1}, Q_{\alpha'-1}^g, R_{\alpha'}\rangle$ for some appropriately chosen $g\in L_{\alpha'}$. But then, by definition, $|V_{\alpha'}|=|V_{\beta}|=q_\beta^3$ and $V_{\beta}\cap Q_{\alpha'}=Z_{\alpha'-1}$ centralizes $V_{\alpha'}$, a contradiction. 

Hence, $b>3$ so that $V_{\alpha'}^{(3)}$ centralizes $Z_{\beta}\le V_{\alpha'}$. Then $V_{\alpha'}^{(3)}$ centralizes $Z_{\alpha+2}=Z_{\alpha+3}Z_{\beta}$ and either $V_{\alpha'}^{(3)}=V_{\alpha'}(V_{\alpha'}^{(3)}\cap Q_{\beta})$, or $V_{\alpha'}^{(3)}\not\le Q_{\alpha+3}$ and $Z_{\alpha+4}C_{V_{\alpha+3}}(O^p(L_{\alpha+3}))=Z_{\alpha+2}C_{V_{\alpha+3}}(O^p(L_{\alpha+3}))$. The former case yields a contradiction since $V_{\beta}\not\le Q_{\alpha'}$ and $[V_{\beta}, V_{\alpha'}^{(3)}]\le V_{\alpha'}$. In the latter case, we still have that $[V_{\beta}, V_{\alpha'}^{(3)}\cap Q_{\alpha+3}]\le V_{\alpha'}$ so that $V_{\alpha'}^{(3)}/V_{\alpha'}$ contains a unique non-central chief factor for $L_{\alpha'}$, which as $\bar{L_{\alpha'}}$-module, is an FF-module. By \cref{VBGood} we have that $Z_{\alpha+2}=Z_{\alpha+4}$ and \cref{GoodAction3} with \cref{SimExt} implies that $Z_{\alpha}\le V_{\alpha+2}^{(2)}=V_{\alpha+4}^{(2)}\le Q_{\alpha'}$, a contradiction.

Therefore, for the remainder of this proof, we may assume that $Z_{\alpha+3}=Z_{\beta}$. Assume first that $Z_{\alpha'}=Z_{\beta}$. Then $V_{\alpha'+1}^{(2)}\cap Q_{\alpha+3}\cap Q_{\alpha+2}$ is centralized, modulo $Z_{\alpha'+1}$, by $V_{\beta}\cap Q_{\alpha'}$. If $b=3$ and $V^{\alpha'+1}\not\le Q_{\alpha'}$, then since $V^{\alpha'+1}$ is $G_{\alpha'+1, \alpha'}$ invariant, $V^{\alpha'+1}Q_{\alpha'}\in\syl_p(G_{\alpha'+1, \alpha'})$. But then, $[V^{\alpha'+1}, V_{\alpha'}]\le Z_{\alpha'+1}$ so that $Z_{\alpha'+1}Z_{\alpha'+1}^g$ is normalized by $L_{\alpha'}=\langle Q_{\alpha'+1}, Q_{\alpha'+1}^g, R_{\alpha'}\rangle$ for some appropriate $g\in L_{\alpha'}$, and $|V_{\alpha'}|=|Z_{\alpha'+1}Z_{\alpha'+1}^g|=q_\beta^3$, a contradiction. Hence, $V^{\alpha'+1}\le Q_{\alpha'}$. Then $V^{\alpha'+1}\cap Q_{\alpha'-1}$ is centralized, modulo $Z_{\alpha'+1}$, by $V_{\beta}\cap Q_{\alpha'}$. But then adapting \cref{GoodAction1}, $O^p(R_{\alpha'+1})$ centralizes $V^{\alpha'+1}$ and an adaptation of \cref{SimExt} to $Z_{\alpha'}=Z_{\beta}$ yields that $Z_{\alpha'-1}C_{V_{\alpha'}}(O^p(L_{\alpha'}))=Z_{\alpha'+1}C_{V_{\beta}}(O^p(L_{\beta}))=V_{\beta}\cap Q_{\alpha'}$, a contradiction. Hence, $b>3$.

Note that $V_{\alpha+3}\le Q_{\alpha'+1}$ so that $[V_{\alpha+3}, V^{\alpha'+1}]\le Z_{\alpha'+1}\cap V_{\alpha+3}=Z_{\alpha'}=Z_{\alpha+3}$. In particular, $V^{\alpha'+1}\le Q_{\alpha+3}$. Assume that $C_{Z_{\alpha+2}}(V^{\alpha'+1})>Z_{\alpha+3}$. Then $V^{\alpha'+1}\le Q_{\alpha+2}$ so that $V_{\beta}\cap Q_{\alpha'}$ centralizes $V^{\alpha'+1}/Z_{\alpha'+1}$, a contradiction. Since $b>3$ and $V_{\alpha'+1}^{(2)}$ is abelian, $Z_{\alpha+2}\cap V_{\alpha'+1}^{(2)}=Z_{\alpha+3}=Z_{\alpha'}$. Set $V^{\alpha+2}=\langle Z_{\alpha+2}C_{V_{\alpha+3}}(O^p(L_{\alpha+3}))^{G_{\alpha+2}}\rangle$. Then $[V_{\alpha'}, V^{\alpha+2}]\le V_{\alpha'}\cap Z_{\alpha+2}=Z_{\alpha'}$ and $V^{\alpha+2}\le Q_{\alpha'}$. Now, $V^{\alpha'+1}\cap Q_{\alpha+2}$ is centralized, modulo $Z_{\alpha'}$, by $V^{\alpha+2}$ and $V^{\alpha+2}\cap Q_{\alpha'+1}$ is centralized, modulo $Z_{\alpha+3}$, by $V^{\alpha'+1}$. it follows from \cref{SEFF}, using that $\alpha'+1, \alpha+2\in\alpha^G$, that $V^{\alpha}/Z_{\alpha}$ is an FF-module for $\bar{L_{\alpha}}$ so that $V^{\alpha'+1}Q_{\alpha+2}\in\syl_p(L_{\alpha+2})$ and $V^{\alpha+2}Q_{\alpha'+1}\in\syl_p(L_{\alpha'+1})$. But now $V_{\alpha'+1}^{(2)}\cap Q_{\alpha'}$ is centralized, modulo $V^{\alpha'+1}$, by $V^{\alpha+2}$ so that by \cref{SEFF}, $V_{\alpha}^{(2)}/V^{\alpha}$ is also an FF-module for $\bar{L_{\alpha}}$. By \cref{GoodAction1}, $O^p(R_{\alpha})$ centralizes $V_{\alpha}^{(2)}$ and \cref{SimExt} applied to $Z_{\beta}=Z_{\alpha+3}$ yields that $V_{\beta}=V_{\alpha+3}\le Q_{\alpha'}$, a contradiction.

Assume now that $Z_{\alpha'}\cap Z_{\beta}=\{1\}$ so that $b>3$. In particular, since $V^{\alpha'+1}\cap V_{\alpha'}=Z_{\alpha'+1}C_{V_{\alpha'}}(O^p(L_{\alpha'}))$, it follows that $Z_{\beta}\cap V^{\alpha'+1}=\{1\}$. Since $V_{\alpha+3}\le Q_{\alpha'+1}$, we have that $[V_{\alpha'+1}^{(2)}\cap Q_{\alpha+3}, V_{\alpha+3}]\le V^{\alpha'+1}\cap Z_{\alpha+3}=\{1\}$ and $V_{\alpha'+1}^{(2)}\cap Q_{\alpha+3}\le Q_{\alpha+2}$. But now, $V^{\alpha'+1}\cap Q_{\alpha+3}$ is centralized, modulo $Z_{\alpha'+1}$, by $V_{\beta}\cap Q_{\alpha'}$ and we have that $V^{\alpha'+1}Q_{\alpha+3}\in\syl_p(L_{\alpha+3})$. Moreover, $[V^{\alpha'+1}, V_{\alpha+3}]\le V_{\alpha+3}\cap Z_{\alpha'+1}=Z_{\alpha'}$. Then, $Z_{\alpha'}Z_{\alpha'}^gZ_{\alpha+3}$ is normalized by $L_{\alpha+3}=\langle V_{\alpha'+1}, V_{\alpha'+1}^g, R_{\alpha+3}\rangle$ for some appropriate $g\in L_{\alpha+3}$. Hence, $|[V_{\alpha+3}, O^p(L_{\alpha+3})]|=q_\beta^3$ and $V_{\alpha+3}=[V_{\alpha+3}, O^p(L_{\alpha})]Z_{\alpha+2}$ has order either $q_\beta^3$ or $q_\beta^4$. Either way, we have that $[V_{\alpha'}, V_{\beta}]Z_{\alpha+2}$ has index $q_\beta$ in $V_{\beta}$, from which it follows that $V_{\beta}\cap Q_{\alpha'}=[V_{\alpha'}, V_{\beta}]Z_{\alpha+2}$ centralizes $V_{\alpha'}$, a final contradiction.
\end{proof}

\begin{proposition}
Suppose that $b>1$, $C_{V_\beta}(V_{\alpha'})=V_\beta \cap Q_{\alpha'}$ and $V_{\alpha'}\not\le Q_\beta$. Then $C_{V_{\alpha'}}(V_{\beta})=V_{\alpha'}\cap Q_{\beta}$.
\end{proposition}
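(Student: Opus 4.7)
The plan is to deduce the statement by applying \cref{NotNatural} to a suitably chosen alternate critical pair, exploiting the symmetry between the two ends of the critical path. Since $V_{\alpha'}\not\le Q_\beta$ while minimality of $b$ forces $Z_{\alpha'-1}\le Q_\beta$ (using $d(\alpha'-1,\beta)=b-2<b$, valid since $b>1$ is odd and so $b\ge 3$), I would first select $\alpha'+1\in\Delta(\alpha')\setminus\{\alpha'-1\}$ with $Z_{\alpha'+1}\not\le Q_\beta$; such a vertex must exist because $V_{\alpha'}=\langle Z_\lambda\mid\lambda\in\Delta(\alpha')\rangle\not\le Q_\beta$. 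By construction $d(\alpha'+1,\beta)=b$, so $(\alpha'+1,\beta)$ is itself a critical pair, with $\alpha'$ now playing the role of the vertex adjacent to the first coordinate along the geodesic.

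Next, I would verify the module-theoretic hypothesis required by \cref{NotNatural} for the pair $(\alpha'+1,\beta)$. Applying \cref{SL2VnQ} to the original critical pair $(\alpha,\alpha')$ under the standing assumptions $b>1$, $C_{V_\beta}(V_{\alpha'})=V_\beta\cap Q_{\alpha'}$ and $V_{\alpha'}\not\le Q_\beta$ yields that $V_\beta/C_{V_\beta}(O^p(L_\beta))$ is a natural $\SL_2(p^n)$-module for $L_\beta/R_\beta\cong\SL_2(p^n)$. Since $\alpha'\in\beta^G$, conjugating by any $g\in G$ with $\beta\cdot g=\alpha'$ transports this structure to $V_{\alpha'}/C_{V_{\alpha'}}(O^p(L_{\alpha'}))$, which is therefore also a natural $\SL_2(p^n)$-module for $L_{\alpha'}/R_{\alpha'}\cong\SL_2(p^n)$.

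With the hypothesis in hand, I would invoke \cref{NotNatural} for the critical pair $(\alpha'+1,\beta)$. Under the section's labelling convention, the distinguished neighbour of the first coordinate (that is, the new ``$\beta$'') is precisely $\alpha'$, so the natural-module hypothesis is exactly what was just established. The conclusion of \cref{NotNatural}, translated from the new labels back to the original ones, reads $C_{V_{\alpha'}}(V_\beta)=V_{\alpha'}\cap Q_\beta$, which is the desired statement.

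The only genuine subtlety, and the point I would check most carefully, is that \cref{NotNatural} is valid for an arbitrary critical pair rather than only the pair fixed at the outset of the section. Its proof draws only on \cref{SL2implies}, \cref{VBGood}, \cref{GoodAction1}--\cref{GoodAction4} and \cref{SimExt}, each of which is stated uniformly in terms of intrinsic data associated to a critical pair of distance $b$. The choice of initial labelling is therefore purely cosmetic, and the relabelling argument transports the proof verbatim.
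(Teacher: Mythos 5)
Your proof is correct and follows essentially the same route as the paper: exhibit $\alpha'+1\in\Delta(\alpha')$ making $(\alpha'+1,\beta)$ a critical pair, transfer the natural-module structure from $V_\beta$ to $V_{\alpha'}$ via \cref{SL2VnQ} and conjugation $\beta\mapsto\alpha'$, then invoke \cref{NotNatural} with the roles of $\beta$ and $\alpha'$ interchanged. Your added remark that \cref{NotNatural} and its supporting lemmas are stated intrinsically for any critical pair, so the relabelling is harmless, correctly addresses the one point worth checking and matches the paper's implicit reasoning.
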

\begin{proof}
Since $V_{\alpha'}\not\le Q_{\beta}$, there is $\alpha'+1\in\Delta(\alpha')$ with $Z_{\alpha'+1}\not\le Q_{\beta}$ and $(\alpha'+1, \beta)$ a critical pair. Moreover, by \cref{SL2VnQ} and since $\alpha'\in \beta^G$, $V_{\alpha'}/C_{V_{\alpha'}}(O^p(L_{\alpha'}))$ is a natural module for $L_{\alpha'}/R_{\alpha'}\cong \SL_2(q_\alpha')$. Then \cref{NotNatural} applied to $(\alpha'+1, \alpha')$ in place of $(\alpha, \alpha')$ gives the result.
\end{proof}

\subsection{$C_{V_\beta}(V_{\alpha'})<V_{\beta}\cap Q_{\alpha'}$}

The hypothesis for this subsection is $b>1$ and $C_{V_\beta}(V_{\alpha'})<V_{\beta}\cap Q_{\alpha'}$. Notice by \cref{b>1} this this condition is equivalent to $[V_{\beta}\cap Q_{\alpha'}, V_{\alpha'}]\ne\{1\}$. By \cref{SL2VlQ}, \cref{SL2VnQ} and \cref{NotNatural}, throughout this section, whenever $(\alpha^*, {\alpha^*}')$ is a critical pair, we have that $[V_{\alpha^*+1}\cap Q_{{\alpha^*}'}, V_{{\alpha^*}'}]\ne\{1\}$ and $V_{{\alpha^*}'}/C_{{\alpha^*}'}(O^p(L_{{\alpha^*}'}))$ is never a natural $\SL_2(q_{{\alpha^*}'})$-module.

The aim of this subsection will be to recognize amalgams of type ${}^2\mathrm{F}_4(2^n)$ and ${}^2\mathrm{F}_4(2)'$ via the identifications provided in \cite{Greenbook}.

\begin{proposition}\label{VnotB1}
Suppose that $b>1$ and $C_{V_\beta}(V_{\alpha'})<V_\beta \cap Q_{\alpha'}$. Then $V_{\alpha'}\not\le Q_{\beta}$.
\end{proposition}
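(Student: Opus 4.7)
The plan is to argue by contradiction; assume $V_{\alpha'}\le Q_\beta$ and derive a contradiction with the standing subsection hypothesis $C_{V_\beta}(V_{\alpha'})<V_\beta\cap Q_{\alpha'}$. Since $b$ is odd and $b>1$ by \cref{bodd}, we have $b\geq 3$, and so by the minimality of $b$ the containment $V_\beta\le Q_\alpha$ holds while $V_\beta\not\le Q_{\alpha'}$ (because $Z_\alpha\le V_\beta$ and $Z_\alpha\not\le Q_{\alpha'}$). A quick check shows $V_{\alpha'}\not\le Q_\alpha$: otherwise $V_{\alpha'}$ would centralize $Z_\alpha\le V_\beta$ by \cref{b>1}, forcing $Z_\alpha\le C_{L_{\alpha'}}(V_{\alpha'})\cap S\le Q_{\alpha'}$, contradicting the critical-pair choice. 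By \cref{b>1}, $V_{\alpha'}$ acts quadratically on $V_\beta$ and $[V_\beta,V_{\alpha'}]\le V_\beta\cap V_{\alpha'}$.

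Next I intend to mirror the argument of \cref{SL2VlQ}. Writing $r_\alpha:=|Z_\alpha Q_{\alpha'}/Q_{\alpha'}|$ and $r_{\alpha'}:=|V_{\alpha'}Q_\alpha/Q_\alpha|$, both at least $p$, the quadratic relation $[Z_\alpha,V_{\alpha'},V_{\alpha'}]=\{1\}$ is immediate since $V_\beta$ is abelian. The remaining task is to produce the FF-offender inequality $|Z_\alpha/C_{Z_\alpha}(V_{\alpha'})|\le|V_{\alpha'}/C_{V_{\alpha'}}(Z_\alpha)|$; in \cref{SL2VlQ} this follows at once from the equality $C_{V_\beta}(V_{\alpha'})=V_\beta\cap Q_{\alpha'}$, and here it must be extracted from the weaker hypothesis $V_{\alpha'}\le Q_\beta$. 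The key observation is that $V_{\alpha'}\le Q_\beta$ means the image of $V_{\alpha'}$ in $L_\beta/Q_\beta$ is trivial, so $[V_\beta,V_{\alpha'}]\le C_{V_\beta}(O^p(L_\beta))$, and symmetrically $[V_\beta\cap Q_{\alpha'},V_{\alpha'}]\le C_{V_{\alpha'}}(O^p(L_{\alpha'}))$ since $V_\beta\cap Q_{\alpha'}\le Q_{\alpha'}$. Combining these containments with the standard commutator-map estimates on the abelian quadratic pair $(Z_\alpha,V_{\alpha'})$ yields the offender inequality, so \cref{SEFF} together with $Z(L_\alpha)=\{1\}$ from \cref{bodd} identifies $Z_\alpha$ as a natural $\SL_2(q_\alpha)$-module for $L_\alpha/R_\alpha\cong\SL_2(q_\alpha)$. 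The remainder of the proof of \cref{SL2VlQ} then carries over unchanged and produces $q_\alpha=q_\beta$, $L_\beta/R_\beta\cong\SL_2(q_\beta)$, and $V_\beta/C_{V_\beta}(O^p(L_\beta))$ a natural $\SL_2(q_\beta)$-module.

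Finally, applying \cref{NotNatural} to this natural $\SL_2(q_\beta)$-module structure forces $C_{V_\beta}(V_{\alpha'})=V_\beta\cap Q_{\alpha'}$, directly contradicting the subsection hypothesis and completing the proof. The main obstacle is the middle step: deducing the FF-offender inequality without the full hypothesis of \cref{SL2VlQ}. I expect the argument to succeed because any ``excess'' of $V_\beta\cap Q_{\alpha'}$ over $C_{V_\beta}(V_{\alpha'})$ only strengthens $V_{\alpha'}$ as an offender, so the conclusion of \cref{SEFF} can still be forced — although some care will be needed to track whether this excess lies inside $Z_\alpha$ or strictly in its complement within $V_\beta$, and to verify that in either subcase the quadratic FF-dichotomy still delivers the natural $\SL_2$ conclusion.
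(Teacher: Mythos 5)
Your proposal founders at its central step. You assert that $V_{\alpha'}\le Q_\beta$ forces $[V_\beta,V_{\alpha'}]\le C_{V_\beta}(O^p(L_\beta))$, on the grounds that ``the image of $V_{\alpha'}$ in $L_\beta/Q_\beta$ is trivial.'' That inference requires $Q_\beta$ to centralize $V_\beta/C_{V_\beta}(O^p(L_\beta))$, equivalently $Q_\beta\le R_\beta$, and this is \emph{not} automatic: by \cref{CommCF}, $V_\beta/[V_\beta,Q_\beta]$ always contains a non-central chief factor, but $[V_\beta,Q_\beta]$ can contain further non-central chief factors whenever $V_\beta$ carries more than one. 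In fact, in the regime you are analysing this failure is guaranteed: if $Z_\alpha$ were a natural $\SL_2(q_\alpha)$-module one gets $[V_\beta,Q_\beta]=[Z_\alpha,Q_\beta]^{L_\beta}=Z_\beta$, whence $Z_{\alpha'}=[V_\beta\cap Q_{\alpha'},Z_\lambda]\le Z_\beta$ for suitable $\lambda\in\Delta(\alpha')$, forcing $V_\beta\le Q_{\alpha'}$ — absurd. So $Z_\alpha$ is \emph{not} natural, and the contrapositive of \cref{SL2implies} then says $V_\beta/C_{V_\beta}(O^p(L_\beta))$ is \emph{not} an irreducible $O^p(L_\beta)R_\beta/R_\beta$-module, i.e.\ $V_\beta$ carries at least two non-central chief factors. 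That is exactly the situation in which $[V_\beta,Q_\beta]\not\le C_{V_\beta}(O^p(L_\beta))$ can and does occur, so the step is circular: you are assuming a consequence of the natural-module conclusion you are aiming for. The symmetric claim $[V_\beta\cap Q_{\alpha'},V_{\alpha'}]\le C_{V_{\alpha'}}(O^p(L_{\alpha'}))$ has the same defect.

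Even granting the containment, the offender inequality in the style of \cref{SL2VlQ} needs $[Z_\alpha,V_{\alpha'}]=[z,V_{\alpha'}]$ for $z\in Z_\alpha\setminus C_{V_\beta}(O^p(L_\beta))$, which rests on the irreducibility of $V_\beta/C_{V_\beta}(O^p(L_\beta))$ — precisely what is lost here. The paper's actual argument embraces this: having established at least two non-central chief factors in $V_\beta$, it takes $\alpha'+1\in\Delta(\alpha')$ with $Z_{\alpha'+1}\not\le Q_\alpha$, uses the failure of \cref{SEFF} on both $Z_\alpha$ and $Z_{\alpha'+1}$ to force the strict chain of orders $|(Z_\alpha\cap Q_{\alpha'})Q_{\alpha'+1}/Q_{\alpha'+1}|<|Z_{\alpha'+1}Q_\alpha/Q_\alpha|<|Z_\alpha/(Z_\alpha\cap Q_{\alpha'}\cap Q_{\alpha'+1})|$, pins down $\bar{L_\alpha}$ and $\bar{L_\beta}$ modulo $O_{p'}$ via quadratic-action classifications (\cref{SEQuad}, \cref{GreenbookQuad}), and then plays the lower bounds of \cref{SpeMod2}/\cref{SpeModOdd} against one another to reach $q_\alpha\geq q_\beta^2$ and then the reverse inequality, ending in $p^2<q_\beta^2\le q_\alpha\le p^2$ (or, in the remaining subcase, $\cref{2FRecog}$ contradicting that $Z_\alpha$ is not natural). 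Some such quantitative route appears unavoidable; the hypothesis of \cref{SL2VlQ} cannot simply be traded for $V_{\alpha'}\le Q_\beta$.
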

\begin{proof}
Suppose that $V_{\alpha'}\le Q_{\beta}$. Note that if $Z_{\alpha}$ is a natural $\SL_2(q_\alpha)$-module for $L_{\alpha}/R_{\alpha}$, then $[V_{\beta}, Q_{\beta}]=[Z_{\alpha}, Q_{\beta}]^{L_{\beta}}=Z_{\beta}$. Moreover, for $\lambda\in\Delta(\alpha')$ with $V_{\beta}\cap Q_{\alpha'}\not\le Q_{\lambda}$, $Z_{\alpha'}=[V_{\beta}\cap Q_{\alpha'}, Z_{\lambda}]\le [V_{\beta}, Q_{\beta}]=Z_{\beta}$ so that $Z_{\alpha'}=Z_{\beta}=[V_{\alpha'}, V_{\beta}]$, a contradiction since $V_{\beta}\not\le Q_{\alpha'}$. Hence, by \cref{SL2implies}, $V_{\beta}/C_{V_{\beta}}(O^p(L_{\beta}))$ contains at least two non-central chief factors.

Let $\alpha'+1\in\Delta(\alpha')$ with $Z_{\alpha'+1}\not\le Q_{\alpha}$. Since neither $Z_{\alpha'+1}$ nor $Z_{\alpha}$ are FF-modules, we have that \[|(Z_{\alpha}\cap Q_{\alpha'})Q_{\alpha'+1}/Q_{\alpha'+1}|<|Z_{\alpha'+1}Q_{\alpha}/Q_{\alpha}|<|Z_{\alpha}/(Z_{\alpha}\cap Q_{\alpha'}\cap Q_{\alpha'+1})|.\] In particular, $|Z_{\alpha}Q_{\alpha'}/Q_{\alpha'}|>p$ and as $Z_{\alpha}$ acts quadratically on $V_{\alpha'}$, by conjugacy and applying \cref{SEQuad}, $\bar{L_{\beta}}/O_{p'}(\bar{L_{\beta}})\cong \PSL_2(q_\beta), \PSU_3(q_\beta)$ or $\Sz(q_\beta)$. If $p=2$, then $Z_{\alpha}Q_{\alpha'}/Q_{\alpha'}\le \Omega(Z(T/Q_{\alpha'}))$ and if $p$ is odd, then applying \cref{SEQuad} and \cref{GreenbookQuad}, we again conclude that $Z_{\alpha}Q_{\alpha'}/Q_{\alpha'}\le \Omega(Z(T/Q_{\alpha'}))$ for $T\in\syl_p(G_{\alpha', \alpha'-1})$. Hence $p<|Z_{\alpha}Q_{\alpha'}/Q_{\alpha'}|\leq q_\beta$ and by \cref{SEFF} and \cref{NotNatural}, we have that $p<|Z_{\alpha}Q_{\alpha'}/Q_{\alpha'}|<|V_{\alpha'}Q_{\alpha}/Q_{\alpha}|$. Arguing as before, $p<|V_{\alpha'}Q_{\alpha}/Q_{\alpha}|\leq q_\alpha$ and $\bar{L_{\alpha}}/O_{p'}(\bar{L_{\alpha}})\cong \PSL_2(q_\alpha), \PSU_3(q_\alpha)$ or $\Sz(q_\alpha)$. 

Since $Z_{\alpha}$ centralizes a subgroup of $V_{\alpha'}$ of index at most $q_\alpha$ and $V_{\alpha'}$ contains at least two non-central chief factors for $L_{\alpha'}$, applying \cref{SpeMod2} and \cref{SpeModOdd}, we deduce that $q_\alpha \geq q_\beta^2$. By a similar reasoning, if $|(Z_{\alpha}\cap Q_{\alpha'})Q_{\alpha'+1}/Q_{\alpha'+1}|=p$, then \cref{SpeMod2} and \cref{SpeModOdd} yields $q_\beta p\geq q_\alpha$ so that $q_\alpha p^2\geq q_\beta^2 p^2\geq q_\alpha^2$ and $p^2< q_\beta^2\leq q_\alpha\leq p^2$, a contradiction.
Then $|(Z_{\alpha}\cap Q_{\alpha'})Q_{\alpha'+1}/Q_{\alpha'+1}|>p$ and applying \cref{SpeMod2} and \cref{SpeModOdd} we conclude that $|Z_{\alpha'+1}Q_{\alpha}/Q_{\alpha}|=q_\alpha$, $\bar{L_{\alpha}}/O_{p'}(\bar{L_{\alpha}})\cong \PSL_2(q_\alpha)$ and $S=Z_{\alpha'+1}Q_{\alpha}$. Then $Z_{\beta}$ has index at most $|Z_{\alpha}/(Z_{\alpha}\cap Q_{\alpha'}\cap Q_{\alpha'+1})|\leq q_\alpha q_\beta\leq q_\alpha^{\frac{3}{2}}$ in $Z_{\alpha}$. Applying \cref{2FRecog}, we deduce that $Z_{\alpha}$ is a natural module for $L_{\alpha}/R_{\alpha}\cong \SL_2(q_\alpha)$, a contradiction.
\end{proof}

For the remainder of this section, we fix $\alpha'+1\in\Delta(\alpha')$ such that $V_{\beta}\cap Q_{\alpha'}\not\le Q_{\alpha'+1}$. Note also that $[Z_{\alpha'+1}, V_{\beta}\cap Q_{\alpha'}, V_{\beta}\cap Q_{\alpha'}]=\{1\}$ so that both $Z_{\alpha'+1}$ and $V_{\alpha'}$ admit non-trivial quadratic action. Throughout, we set $R:=[V_{\beta}\cap Q_{\alpha'}, V_{\alpha'}]$.

The following lemma, along with its proof, appeared earlier as \cref{Quad2F} and \cref{pgen} where the necessary additional hypothesis there follow from \cref{MainHyp}. We recall it here as it will be applied liberally throughout this subsection.

\begin{lemma}\label{NewQuad2F}
For $\gamma\in\Gamma$, $G:=\bar{L_{\gamma}}$ and $S\in\syl_p(G)$, assume that $V$ is a faithful $\mathrm{GF}(p)G$-module with $C_V(O^p(G))=\{1\}$ and $V=\langle C_V(S)^G\rangle$. If there is a $p$-element $1\ne x\in G$ such that $[V, x,x]=\{1\}$ and $|V/C_V(x)|=p^2$ then, setting $L:=\langle x^G\rangle$, one of the following holds:
\begin{enumerate}
\item $p$ is odd, $G=L\cong\mathrm{(P)SU}_3(p)$ and $V$ is the natural module;
\item $p$ is arbitrary, $G\cong\SL_2(p^2)$ and $V$ is the natural module;
\item $p=2$, $G=L\cong \PSL_2(4)$ and $V$ is a natural $\Omega_4^-(2)$-module;
\item $p=3$, $G=L\cong 2\cdot\Alt(5)$ or $2^{1+4}_-.\Alt(5)$ and $V$ is the unique irreducible quadratic $2$F-module of dimension $4$;
\item $p$ is arbitrary, $G=L\cong\SL_2(p)$ and $V$ is the direct sum of two natural $\SL_2(p)$-modules;
\item $p=2$, $L\cong\SU_3(2)'$, $G$ is isomorphic to a subgroup of $\SU_3(2)$ which contains $\SU_3(2)'$ and $V$ is a natural $\SU_3(2)$-module viewed as an irreducible $\mathrm{GF}(2)G$-module by restriction;
\item $p=2$, $L\cong \Dih(10)$, $G\cong\Dih(10)$ or $\Sz(2)$ and $V$ is a natural $\Sz(2)$-module viewed as an irreducible $\mathrm{GF}(2)G$-module by restriction;
\item $p=3$, $G=L\cong (Q_8\times Q_8):3$ and $V=V_1\times V_2$ where $V_i$ is a natural $\SL_2(3)$-module for $G/C_G(V_i)\cong\SL_2(3)$;
\item $p=2$, $G=L\cong(3\times 3):2$ and $V=V_1\times V_2$ where $V_i$ is a natural $\SL_2(2)$-module for $G/C_G(V_i)\cong\Sym(3)$; or
\item $p=2$, $L\cong(3\times 3):2$, $G\cong (3\times 3):4$, $V$ is irreducible as a $\mathrm{GF}(2)G$-module and $V|_L=V_1\times V_2$ where $V_i$ is a natural $\SL_2(2)$-module for $L/C_L(V_i)\cong\Sym(3)$. 
\end{enumerate}
Moreover, if $V$ is generated by a $N_G(S)$-invariant subspace of order $p$ then $(G, V)$ satisfies outcome (iii), (vii) (ix) or (x).
\end{lemma}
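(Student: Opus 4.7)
The plan is to observe that this lemma is essentially the amalgam-theoretic incarnation of \cref{Quad2F} and \cref{pgen}, so the proof reduces to verifying that the hypotheses of those two results hold in our setting. First, I would note that under \cref{MainHyp}, $\bar{G_\gamma} = G_\gamma/O_p(G_\gamma)$ has a strongly $p$-embedded subgroup, with the additional exceptional requirement on $N_{\bar{G_\gamma}}(\bar{S})$ when $\bar{G_\gamma}/O_{3'}(\bar{G_\gamma}) \cong \Ree(3)$, when $\bar{G_\gamma}$ is $p$-solvable, or when $\bar{S}$ is generalized quaternion. Since $\bar{L_\gamma} = O^{p'}(\bar{G_\gamma})$, an application of \cref{spelemma1}(iii) transfers the strongly $p$-embedded structure down to $G = \bar{L_\gamma}$, and the exceptional hypotheses on $N_G(S)$ needed for \cref{Quad2F} follow routinely from those on $N_{\bar{G_\gamma}}(\bar{S})$. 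The $\mathcal{K}$-group assumption is inherited from the local $\mathcal{CK}$-system context in which we operate.

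Second, with the hypotheses of \cref{Quad2F} now in hand, the remaining conditions in the statement of the lemma ($G = O^{p'}(G)$ by construction, $V$ faithful with $C_V(O^p(G)) = \{1\}$, $V = \langle C_V(S)^G\rangle$, and the existence of a quadratic $p$-element $x$ with $|V/C_V(x)| = p^2$) are exactly the input required by that proposition. A direct appeal to \cref{Quad2F} then produces the ten outcomes (i)-(x) listed in the statement, with no further casework required.

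For the moreover statement, the additional hypothesis that $V$ is generated as a $\mathrm{GF}(p)G$-module by an $N_G(S)$-invariant subspace of order $p$ is exactly the hypothesis of \cref{pgen}. Consulting the module structure in each of the ten cases via \cref{SUMod}(iii), \cref{NatMod}(vi), \cref{AltMod}, \cref{NatGen}, and \cref{Badp3}(vi), one eliminates outcomes (i), (ii), (iv), (v), (vi), and (viii) (in these cases no such generating subspace of order $p$ exists, either because the socle under $N_G(S)$ is too large or because $N_G(S)$ acts irreducibly on a subspace of order larger than $p$), leaving only outcomes (iii), (vii), (ix), and (x).

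Since the substantive module-theoretic and strongly $p$-embedded classification work has already been completed in \cref{Quad2F} and \cref{pgen}, there is no genuine obstacle here; the only bookkeeping to be careful about is the passage from $\bar{G_\gamma}$ to $\bar{L_\gamma} = O^{p'}(\bar{G_\gamma})$, ensuring the strongly $p$-embedded hypotheses and their exceptional refinements descend correctly. This is routine and was already implicit in our standing setup, so the lemma is immediate.
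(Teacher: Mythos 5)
Your proposal is correct and follows exactly the route the paper takes: the paper's own ``proof'' is a one-sentence cross-reference asserting that this is \cref{Quad2F} together with \cref{pgen}, with the additional hypotheses there following from \cref{MainHyp}, and you have simply spelled out the (routine) transfer of the strongly $p$-embedded and exceptional hypotheses from $\bar{G_\gamma}$ down to $\bar{L_\gamma}=O^{p'}(\bar{G_\gamma})$ via \cref{spelemma1}. The only minor slip is the phrase ``inherited from the local $\mathcal{CK}$-system context'': in the purely amalgam-theoretic part of the paper there is no fusion system present, and the $\mathcal{K}$-group hypothesis on $\bar{L_\gamma}$ is simply a standing (and in the paper largely implicit) assumption on the finite groups $G_i$, but this does not affect the substance of the argument.
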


\begin{lemma}\label{FirstReduction}
Suppose that $b>1$, $C_{V_\beta}(V_{\alpha'})<V_\beta \cap Q_{\alpha'}$ and $Z_{\alpha}$ is not a natural $\SL_2(q_\alpha)$-module. Then $Z_{\alpha'+1}\not\le Q_{\beta}$ and either:
\begin{enumerate}
\item $|Z_{\alpha'+1}Q_{\beta}/Q_{\beta}|>p$; or
\item $L_{\beta}/R_{\beta}\cong \SL_2(p), (3\times 3):2, (3\times 3):4$ or $(Q_8\times Q_8):3$, $Q_{\beta}\in\syl_p(R_{\beta})$, $[V_{\beta}, Q_{\beta}]\le C_{V_{\beta}}(O^p(L_{\beta}))$, $V_{\beta}/C_{V_{\beta}}(O^p(L_{\beta}))$ is described in \cref{NewQuad2F} and $q_\alpha=p$.
\end{enumerate}
\end{lemma}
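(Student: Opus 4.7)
The starting observation is that, since $Z_\alpha$ is not a natural $\SL_2(q_\alpha)$-module, the contrapositive of \cref{SL2implies} forces $V_\beta/C_{V_\beta}(O^p(L_\beta))$ to be reducible as a module for $O^p(L_\beta)R_\beta/R_\beta$; equivalently, $V_\beta/C_{V_\beta}(O^p(L_\beta))$ carries at least two non-central chief factors for $L_\beta$. This observation will ultimately rule out every irreducible outcome produced by \cref{NewQuad2F}.

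Next I aim to show $Z_{\alpha'+1}\not\le Q_\beta$. Suppose otherwise. Since $Z_{\alpha'+1}\le V_{\alpha'}$ and $V_{\alpha'}$ is abelian, \cref{b>1} yields $[V_\beta,Z_{\alpha'+1}]\le V_\beta\cap V_{\alpha'}$ and $[V_\beta,Z_{\alpha'+1},Z_{\alpha'+1}]=1$, so $Z_{\alpha'+1}$ acts quadratically on $V_\beta$. The choice of $\alpha'+1$, namely that $V_\beta\cap Q_{\alpha'}\not\le Q_{\alpha'+1}$, provides $v\in V_\beta\cap Q_{\alpha'}\setminus Q_{\alpha'+1}$ with $[v,Z_{\alpha'+1}]\ne 1$, so the action is nontrivial. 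A careful index count using $[V_\beta,Z_{\alpha'+1}]\le V_\beta\cap V_{\alpha'}$ together with the commutator structure of $R=[V_\beta\cap Q_{\alpha'},V_{\alpha'}]$ then exhibits $V_\beta/C_{V_\beta}(O^p(L_\beta))$ as an FF-module for $\bar L_\beta$; \cref{SEFF} combined with \cref{SL2implies} forces $Z_\alpha$ to be a natural module, contradicting the hypothesis.

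With $Z_{\alpha'+1}\not\le Q_\beta$ established, suppose further that $|Z_{\alpha'+1}Q_\beta/Q_\beta|=p$. Pick $z\in Z_{\alpha'+1}\setminus Q_\beta$; then $\bar z=zQ_\beta$ has order $p$ in $\bar L_\beta$ and acts quadratically on $V_\beta$ as above. Using $C_{V_\beta}(\bar z)\supseteq V_\beta\cap Q_{\alpha'+1}$ and $|V_\beta Q_{\alpha'+1}/Q_{\alpha'+1}|\le p$ (otherwise we land in outcome (i)), one computes $|V_\beta/C_{V_\beta}(\bar z)|\le p^2$. Applying \cref{NewQuad2F} to $\bar L_\beta$ acting on $V_\beta/C_{V_\beta}(O^p(L_\beta))$, the outcomes (i)--(iv), (vi), (vii) yield modules irreducible for $O^p(\bar L_\beta)R_\beta/R_\beta$ and are excluded by Step~1, leaving outcomes (v), (viii), (ix), (x), which give exactly the four groups $\SL_2(p),\ (Q_8\times Q_8){:}3,\ (3\times 3){:}2,\ (3\times 3){:}4$ claimed. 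The ancillary assertions $Q_\beta\in\syl_p(R_\beta)$ and $[V_\beta,Q_\beta]\le C_{V_\beta}(O^p(L_\beta))$ follow from \cref{BasicVB} and coprime action applied to the explicit module descriptions, while $q_\alpha=p$ is forced by the $G_{\alpha,\beta}$-invariance of the image of $Z_\alpha$ in $V_\beta/C_{V_\beta}(O^p(L_\beta))$.

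The main obstacle will be in Step~2: accurately bounding $|V_\beta\cap V_{\alpha'}|$ and $|V_\beta/C_{V_\beta}(Z_{\alpha'+1})|$ so as to produce a genuine FF-offender demands delicate commutator bookkeeping (tracking how $R$ sits inside both $Z_{\alpha'}$ and $V_\beta$), and likely a subcase split according to $|Z_{\alpha'+1}Q_\beta/Q_\beta|$ and $|V_\beta Q_{\alpha'+1}/Q_{\alpha'+1}|$. A secondary subtlety, in Step~3, is verifying $q_\alpha=p$ against each of the four possible module structures; this is essentially routine once \cref{NewQuad2F} has been applied, using the $G_{\alpha,\beta}$-invariance of $Z_\alpha C_{V_\beta}(O^p(L_\beta))/C_{V_\beta}(O^p(L_\beta))$ and inspection of the irreducible summands in each case.
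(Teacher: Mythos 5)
Your Step~2, which tries to establish $Z_{\alpha'+1}\not\le Q_\beta$ directly, has a genuine gap. You assume $Z_{\alpha'+1}\le Q_\beta$, observe that $Z_{\alpha'+1}$ acts quadratically and nontrivially on $V_\beta$, and then claim an index count will exhibit $V_\beta/C_{V_\beta}(O^p(L_\beta))$ as an FF-module for $\bar L_\beta$. But an FF-offender for $\bar L_\beta$ must be a nontrivial $p$-subgroup of $L_\beta/Q_\beta$, and the very hypothesis you are arguing against makes $Z_{\alpha'+1}Q_\beta/Q_\beta$ trivial. The quadratic action of $Z_{\alpha'+1}$ on $V_\beta$ is then just the internal action of a subgroup of $Q_\beta$ and carries no information about $\bar L_\beta$ at all; you never identify which subgroup of $\bar L_\beta$ is supposed to be the offender, nor explain how the assumption $Z_{\alpha'+1}\le Q_\beta$ would produce the needed index bound for some other candidate such as $V_{\alpha'}Q_\beta/Q_\beta$. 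As written, the contradiction does not materialize.

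The paper's proof is structurally different and does not isolate $Z_{\alpha'+1}\not\le Q_\beta$ as a self-contained step. It first shows $V_{\alpha'}\cap Q_\beta\not\le Q_\alpha$, then introduces the four indices $r_\beta=|Z_{\alpha'+1}Q_\beta/Q_\beta|$, $r_\alpha=|(Z_{\alpha'+1}\cap Q_\beta)Q_\alpha/Q_\alpha|$, $r_{\alpha'}=|Z_\alpha Q_{\alpha'}/Q_{\alpha'}|$, $r_{\alpha'+1}=|(Z_\alpha\cap Q_{\alpha'})Q_{\alpha'+1}/Q_{\alpha'+1}|$, and runs a case analysis (via \cref{SpeMod2}, \cref{SpeModOdd}, \cref{2FRecog}, \cref{SEFF}) under $r_\beta\le p$ to force $r_\alpha=p$ and ultimately $r_\beta=p$. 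Crucially, the FF- and $2$F-module machinery is applied to the action of $Z_{\alpha'+1}\cap Q_\beta$ on $Z_\alpha$ inside $\bar L_\alpha$, not to the action of $Z_{\alpha'+1}$ on $V_\beta$ inside $\bar L_\beta$; that is where \cref{NewQuad2F} enters and $q_\alpha=p$ is forced. Only once $q_\beta=p$ is secured does the analysis of $V_\beta$ itself begin, including a subcase you skip entirely: ruling out the possibility that $[V_\beta,Q_\beta]$ contains a non-central chief factor (via a contradiction against a natural $\SU_3(p)$-module structure for $Z_\alpha$) before concluding $[V_\beta,Q_\beta]\le C_{V_\beta}(O^p(L_\beta))$ and $Q_\beta\in\syl_p(R_\beta)$. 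Your Step~3 also slides from ``$|Z_{\alpha'+1}Q_\beta/Q_\beta|=p$'' to ``$|V_\beta Q_{\alpha'+1}/Q_{\alpha'+1}|\le p$, otherwise we land in outcome (i)''; these are indices in different local groups and neither implies the other, and obtaining the second one is precisely the content of the $r_\alpha=p$ analysis you have not carried out.
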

\begin{proof}
Note that if $m_p(S/Q_{\beta})>1$ then as $V_{\alpha'}$ acts quadratically on $V_{\beta}$ and vice versa, we have that $\bar{L_{\beta}}/O_{p'}(\bar{L_{\beta}})\cong \PSL_2(q_\beta), \PSU_3(q_\beta)$ or $\Sz(q_\beta)$. As in \cref{VnotB1}, we conclude that $|V_{\alpha'}Q_{\beta}/Q_{\beta}|\leq q_\beta$. We assume throughout that $Z_{\alpha}$ is not a natural module for $L_{\alpha}/R_{\alpha}\cong \SL_2(q_\alpha)$ so that \cref{SL2implies} implies that $V_{\beta}/C_{V_{\beta}}(O^p(L_{\beta}))$ contains at least two non-central chief factors for $L_{\beta}$ whenever $q_\beta>3$.

Assume first that $V_{\alpha'}\cap Q_{\beta}\le Q_{\alpha}$. Applying \cref{SEFF} and \cref{NotNatural}, we deduce that $p<|V_{\alpha'}Q_{\beta}/Q_{\beta}|\leq q_\beta$ and $m_p(S/Q_{\beta})>1$. Applying \cref{SpeMod2} and \cref{SpeModOdd}, since $V_{\alpha'}$ contains at least two non-central chief factors for $L_{\alpha'}$, we deduce that $q_\beta\geq q_\beta^{\frac{4}{3}}$, a contradiction. Hence, $V_{\alpha'}\cap Q_{\beta}\not\le Q_{\alpha}$ and $q_\alpha\geq q_\beta^{\frac{1}{3}}$. 

Taking \cref{SpeMod2} and \cref{SpeModOdd} further whenever $|Z_{\alpha}Q_{\alpha'}/Q_{\alpha'}|>p$, we have that $q_\alpha\geq q_\beta$. Hence, with any condition on the order of $|Z_{\alpha}Q_{\alpha'}/Q_{\alpha'}|$, we have by \cref{SEFF} that there is $\lambda\in\Delta(\alpha')$ with $Z_{\alpha}\cap Q_{\alpha'}\not\le Q_{\alpha'+1}$. We may as well assume that $\lambda=\alpha'+1$. Define $r_\beta:=|Z_{\alpha'+1}Q_{\beta}/Q_{\beta}|$, $r_\alpha:=|(Z_{\alpha'+1}\cap Q_{\beta})Q_{\alpha}/Q_{\alpha}|$, $r_{\alpha'}=|Z_{\alpha}Q_{\alpha'}/Q_{\alpha'}|$ and $r_{\alpha'+1}:=|(Z_{\alpha}\cap Q_{\alpha'})Q_{\alpha'+1}/Q_{\alpha'+1}|$ so that $r_\lambda\le q_\lambda$ for $\lambda\in\{\alpha, \beta, \alpha', \alpha'+1\}$.

Assume that $r_\beta\leq p$. Then by \cref{SEFF}, since $Z_{\alpha}$ is not a natural $\SL_2(p)$-module, $r_\alpha\geq p$. Suppose that $r_\alpha>p$ so that $\bar{L_{\alpha}}/O_{p'}(\bar{L_{\alpha}})\cong \PSL_2(q_\alpha), \PSU_3(q_\alpha)$ or $\Sz(q_\alpha)$. Assume first that $r_{\alpha'+1}=p$. Applying \cref{SEFF}, we have that $p<r_{\alpha'}\leq q_\beta$ so that $q_\beta\leq q_\alpha$. Applying \cref{SpeMod2} and \cref{SpeModOdd}, we conclude that $\bar{L_{\alpha}}/O_{p'}(\bar{L_{\alpha}})\cong\PSL_2(q_\alpha)$ and $q_\alpha/p \leq r_{\alpha'}\leq q_\beta\leq q_\alpha$. If $q_\alpha=p^2$ then applying \cref{2FRecog} to $Z_{\alpha}$ yields that $Z_{\alpha}$ is a natural module for $L_{\alpha}/R_{\alpha}\cong \SL_2(q_\alpha)$. Hence, we either have that $q_\alpha=q_\beta>p^2$, or $p^2\leq q_\alpha/p=r_{\alpha'}=q_\beta$. Applying \cref{2FRecog} to non-central chief factors contained in $V_{\alpha'}$, we deduce that there are exactly two non-central chief factors for $L_{\alpha'}$ within $V_{\alpha'}$ and both, as $\bar{L_{\alpha'}}$-modules, are FF-modules. Hence, applying \cref{DirectSum} and \cref{2FRecog}, if $[V_{\beta}, Q_{\beta}]$ contains no non-central chief factors for $L_{\beta}$, then $V_{\beta}/C_{V_\beta}(O^p(L_{\beta}))$ is a direct sum of two natural modules for $L_{\beta}/R_{\beta}\cong \SL_2(q_\beta)$. But then, $Z_{\beta}=Z_{\alpha}\cap C_{V_{\beta}}(O^p(L_{\beta}))$ has index $q_\beta^2\leq q_\alpha^2$ in $Z_{\alpha}$ and \cref{2FRecog} implies that $q_\beta=q_\alpha$ and $L_{\alpha}/R_{\alpha}\cong \SL_2(q_\alpha)$ and $Z_{\alpha}$ is a direct sum of two natural modules, a contradiction since an index $pr_\alpha<q_\alpha^2$ subgroup of $Z_{\alpha'+1}$ is centralized by $Z_{\alpha}\cap Q_{\alpha'}\not\le Q_{\alpha'+1}$. Hence, $[V_{\beta}, Q_{\beta}]$ contains a non-central chief factor for $L_{\beta}$ and $Q_{\beta}$ is not quadratic on $Z_\alpha$. Since $(V_{\alpha'}\cap Q_{\beta})Q_{\alpha}$ acts quadratically on $Z_{\alpha}$ and $Z_\beta=\Omega(Z(S))$, we must have that $(V_{\alpha'}\cap Q_{\beta})Q_{\alpha}<S$ and $r_\alpha'=q_\beta=q_\alpha/p$. Applying \cref{2FRecog}, we deduce that $|Z_{\alpha}/C_{Z_{\alpha}}(V_{\alpha'}\cap Q_{\beta})|>q_\alpha^2/p^2$. But then we may form $L^{\alpha}=\langle Z_{\alpha'+1}\cap Q_{\beta}, (Z_{\alpha'+1}\cap Q_{\beta})^x, Q_{\alpha}\rangle$ with $x\in L_{\alpha}$ chosen such that $\bar{L^{\alpha}}/O_{p'}(\bar{L^{\alpha}})\cong \bar{L_{\alpha}}/O_{p'}(\bar{L_{\alpha}})$. Since $r_\alpha'p=q_\alpha$, we have that $|Z_{\alpha}/C_{Z_{\alpha}}(L^\alpha)|\leq q_\alpha^2$. Note that $V_{\alpha'}\cap Q_{\beta}\le L^\alpha$ and $[Z_{\alpha}, V_{\alpha'}\cap Q_{\beta}]\le C_{Z_{\alpha}}(V_{\alpha'}\cap Q_{\beta})$ from which it follows that $|[Z_{\alpha}, V_{\alpha'}\cap Q_{\beta}]C_{Z_{\alpha}}(L^\alpha)/C_{Z_{\alpha}}(L^\alpha)|<p^2$. Hence, $|Z_{\alpha}/C_{Z_{\alpha}}(L^\alpha)|$ is dual to an FF-module for $\bar{L^\alpha}$ and applying \cref{SEFF}, since $q_\alpha>p$, we have a contradiction.

Thus, if $r_\beta\leq p$ and $r_\alpha>p$, we may assume that $r_{\alpha'+1}>p$. Then applying \cref{SpeMod2} and \cref{SpeModOdd}, we infer have that $q_\alpha/p\leq r_\alpha$, $\bar{L_{\alpha}}/O_{p'}(\bar{L_{\alpha}})\cong \PSL_2(q_\alpha)$ and $Z_{\alpha}$ is irreducible under the action of $L_{\alpha}$. Since $r_\alpha r_\beta\leq r_\alpha p\leq q_\alpha p$ and $r_{\alpha'+1}\geq p^2$, \cref{2FRecog} applied to the action of $Z_{\alpha}\cap Q_{\alpha'}$ on $Z_{\alpha'+1}$ yields that $q_\alpha>p^2$. Hence, $r_\alpha>q_{\alpha}^{\frac{1}{2}}$ and \cref{2FRecog} yields $r_\alpha^2\leq r_{\alpha'+1}r_{\alpha'}$. If $r_{\alpha'+1}\leq q_{\alpha}^{\frac{1}{2}}$ then as $r_{\alpha'+1}\geq p^2$, it follows that $r_{\alpha'}=q_\alpha=p^4$, $r_\alpha=q_\alpha/p$ and $r_{\alpha'+1}=p^2$. But then, \cref{2FRecog} on the action of $Z_{\alpha'+1}\cap Q_{\beta}$ on $Z_{\alpha}$ implies that $Z_{\alpha}$ is direct sum of two natural $\SL_2(p^4)$-modules. Since $r_\alpha r_\beta\leq q_\alpha$, this is a contradiction. Hence, $r_{\alpha'+1}>q_{\alpha}^{\frac{1}{2}}$ so that $q_{\alpha}p<r_{\alpha'+1}^2$ and \cref{2FRecog} again gives, $r_{\alpha'+1}^2\leq r_\alpha r_\beta\lq q_\alpha p$, another contradiction.

Hence, we deduce that if $r_\beta\leq p$ then $r_\alpha=p$. Again applying \cref{SEFF}, $r_\beta=p$, $Z_{\alpha'+1}\not\le Q_{\beta}$ and $Z_{\alpha}\cap Q_{\alpha'}$ has a quadratic $2$F-action $Z_{\alpha'+1}$. Applying \cref{NewQuad2F}, we conclude that $q_\alpha=p$ or $L_{\alpha}/R_{\alpha}\cong \PSL_2(4)$ and $Z_{\alpha}$ is a natural $\Omega_4^-(2)$-module for $L_{\alpha}/R_{\alpha}\cong \PSL_2(4)$. Since $V_{\alpha'}\cap Q_{\beta}$ acts quadratically on $Z_{\alpha}$, in either case we deduce that $|(V_{\alpha'}\cap Q_{\beta})Q_{\alpha}/Q_{\alpha}|=p$. But then $Z_{\alpha}$ centralizes a subgroup of index at most $q_\beta p$ in $V_{\alpha'}$, and since $V_{\alpha'}$ contains at least two non-central chief factors for $L_{\alpha'}$, we have that $q_\beta p\geq q_\beta^{\frac{4}{3}}$, $q_\beta\leq p^3$ and $V_{\alpha'}$ contains exactly two non-central chief factors. If $q_\beta=p^3$, then it follows by \cref{SEFF} that both non-central chief factors are quadratic $2$F-modules, a contradiction by \cref{NewQuad2F}. If $q_\beta=p^2$, then an index $p$ subgroup of one of the non-central chief factors is centralized and by \cref{SEFF} we have a contradiction. Hence, $q_\beta=p$.

Suppose that $[V_{\beta}, Q_{\beta}]$ contains a non-central chief factor for $L_{\beta}$. Then $[Z_{\alpha}, Q_{\beta}]\not\le Z_{\beta}$ and $|S/Q_{\alpha}|>p$. Moreover, by \cref{CommCF}, $V_{\beta}/[V_{\beta}, Q_{\beta}]$ also contains a non-central chief factor and by \cref{SEFF}, both $V_{\beta}/[V_{\beta}, Q_{\beta}]C_{V_{\beta}}(O^p(L_{\beta}))$ and $[V_{\beta}, Q_{\beta}]C_{V_{\beta}}(O^p(L_{\beta}))/C_{V_{\beta}}(O^p(L_{\beta}))$ are natural $\SL_2(p)$-modules. In particular, by \cref{Badp2}, \cref{Badp3} and \cref{SEQuad} when $p\geq 5$, we have that $L_{\beta}/R_{\beta}\cong \SL_2(p), (3\times 3):2$ or $(Q_8\times Q_8):3$ so that $S=Q_{\alpha}Q_{\beta}$. Furthermore, $(Q_\beta\cap O^p(L_\beta))Q_{\alpha}$ centralizes an index $p$ subgroup of $[Z_\alpha, S]$. Appealing to \cref{NewQuad2F} for the structure of $Z_{\alpha}$, we deduce that $Z_{\alpha}$ is a natural $\SU_3(p)$-module for $L_{\alpha}/R_{\alpha}$ which is isomorphic to a subgroup $\SU_3(p)$ containing $\SU_3(p)'$. But $Z_{\alpha}[V_{\beta}, Q_{\beta}]C_{V_{\beta}}(O^p(L_{\beta}))/[V_{\beta}, Q_{\beta}]C_{V_{\beta}}(O^p(L_{\beta}))$ has order $p$ since $V_{\beta}/[V_{\beta}, Q_{\beta}]C_{V_{\beta}}(O^p(L_{\beta}))$ is a natural $\SL_2(p)$-module so that there is $G_{\alpha,\beta}$-invariant subgroup of $Z_{\alpha}$ of index $p$, a contradiction. Hence, $[V_{\beta}, Q_{\beta}]\le C_{V_{\beta}}(O^p(L_{\beta}))$, $Q_{\beta}\in\syl_p(R_{\beta})$ and $V_{\beta}/C_{V_{\beta}}(O^p(L_{\beta}))$ is described in \cref{NewQuad2F}. Since $V_{\beta}/C_{V_{\beta}}(O^p(L_{\beta}))$ contains two non-central chief factors for $O^p(L_{\beta})$, we have that $L_{\beta}/R_{\beta}\cong \SL_2(p), (3\times 3):2, (3\times 3):4$ or $(Q_8\times Q_8):3$. Applying \cref{SEFF}, we have that $Z_{\alpha}\cap C_{V_{\beta}}(O^p(L_{\beta}))$ is centralized by $Q_{\beta}\cap O^p(L_{\beta})$ and has index $p^2$ in $Z_{\alpha}$ so that $L_{\alpha}/R_{\alpha}\not\cong \PSL_2(4)$ and $q_\alpha=p$.
\end{proof}

\begin{lemma}\label{SecondReduction}
Suppose that $b>1$, $C_{V_\beta}(V_{\alpha'})<V_\beta \cap Q_{\alpha'}$ and $Z_{\alpha}$ is not a natural $\SL_2(q_\alpha)$-module. If $|Z_{\alpha'+1}Q_{\beta}/Q_{\beta}|>p$ then $Q_{\beta}\in\syl_p(R_{\beta})$, $[V_{\beta}, Q_{\beta}]\le C_{V_{\beta}}(O^p(L_{\beta}))$, $L_{\beta}/R_{\beta}\cong \SL_2(q_\alpha)\cong L_{\alpha}/R_{\alpha}$, $q_\alpha>p$ and both $V_{\beta}/C_{V_{\beta}}(O^p(L_{\beta}))$ and $Z_\alpha$ are a direct sum of two natural modules.
\end{lemma}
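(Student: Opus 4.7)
The plan is to follow the same style of numerical argument as in \cref{FirstReduction}, but now under the stronger assumption $r_\beta := |Z_{\alpha'+1}Q_\beta/Q_\beta| > p$. As in the proof of \cref{FirstReduction}, I introduce the parameters $r_\alpha := |(Z_{\alpha'+1}\cap Q_\beta)Q_\alpha/Q_\alpha|$, $r_{\alpha'} := |Z_\alpha Q_{\alpha'}/Q_{\alpha'}|$, $r_{\alpha'+1} := |(Z_\alpha\cap Q_{\alpha'})Q_{\alpha'+1}/Q_{\alpha'+1}|$, and recall from the hypothesis and \cref{SL2implies}, \cref{NotNatural} that neither $Z_\alpha$ nor $V_{\alpha'}/C_{V_{\alpha'}}(O^p(L_{\alpha'}))$ is a natural $\SL_2$-module, so each such module contains at least two non-central chief factors. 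In particular $m_p(S/Q_\beta)>1$ and $m_p(S/Q_\alpha)>1$, and since $V_{\alpha'}$ (respectively $V_\beta$) admits non-trivial quadratic action from $V_\beta \cap Q_{\alpha'}$ (respectively $Z_{\alpha'+1}\cap Q_\beta$), the groups $\bar{L_\beta}/O_{p'}(\bar{L_\beta})$ and $\bar{L_{\alpha}}/O_{p'}(\bar{L_{\alpha}})$ are rank $1$ Lie type but not Ree, by \cref{SEQuad} and \cref{GreenbookQuad}.

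First I would sharpen the bounds. The subgroup $Z_\alpha\cap Q_{\alpha'}\cap Q_{\alpha'+1}$ is centralized by $Z_{\alpha'+1}\cap Q_\beta$ and has index at most $r_{\alpha'}r_{\alpha'+1}$ in $Z_\alpha$; symmetrically, $Z_{\alpha'+1}\cap Q_\beta\cap Q_\alpha$ is centralized by $Z_\alpha\cap Q_{\alpha'}$ with index at most $r_\beta r_\alpha$ in $Z_{\alpha'+1}$. Combining \cref{SpeMod2}, \cref{SpeModOdd} and \cref{2FRecog} with the fact that neither module is natural forces these bounds to be tight, giving $r_\alpha=q_\alpha$, $r_{\alpha'+1}=q_{\alpha'+1}=q_\alpha$, and similarly $r_\beta = q_\beta$, $r_{\alpha'}=q_\beta$. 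In particular the quadratic offenders $Z_\alpha\cap Q_{\alpha'}$ and $Z_{\alpha'+1}\cap Q_\beta$ have size at least $q_\alpha$ and at least $q_\beta$, and by \cref{SEQuad}, \cref{GreenbookQuad} this rules out the $\Sz$ and $\PSU_3$ cases, yielding $\bar{L_\alpha}/O_{p'}(\bar{L_\alpha})\cong \PSL_2(q_\alpha)$ and $\bar{L_\beta}/O_{p'}(\bar{L_\beta})\cong \PSL_2(q_\beta)$, and $q_\alpha=q_\beta$.

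With $L_\alpha/R_\alpha$ and $L_\beta/R_\beta$ isomorphic to $\SL_2(q)$ for $q=q_\alpha=q_\beta$, I would then argue that $V_\beta/C_{V_\beta}(O^p(L_\beta))$ contains exactly two non-central chief factors, each an FF-module, hence each a natural $\SL_2(q)$-module by \cref{SEFF}. Since $V_\beta$ carries a quadratic offender coming from $V_{\alpha'}$, \cref{DirectSum} implies that $[V_\beta/C_{V_\beta}(O^p(L_\beta)), O^p(L_\beta)]$ is a direct sum of two natural modules, and by comparing fixed-point dimensions via \cref{NatMod} one concludes $V_\beta/C_{V_\beta}(O^p(L_\beta))$ itself equals this direct sum. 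The corresponding statement for $Z_\alpha$ follows by the fully symmetric argument with $(\alpha,\alpha')$ and $(\alpha',\alpha+b-1)$ swapped. That $q_\alpha>p$ comes from ruling out the exceptional small cases of \cref{NewQuad2F} and \cref{SEFF}, since a direct sum of two natural $\SL_2(p)$-modules would force either $L_\alpha/R_\alpha \cong \SL_2(p)$ with $\bar{L_\alpha}$ among the groups in \cref{NewQuad2F} incompatible with $r_\alpha=p=q$, or $|Z_\alpha Q_{\alpha'}/Q_{\alpha'}|=p$, against the analogue of our hypothesis.

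Finally, to obtain $Q_\beta\in\syl_p(R_\beta)$ and $[V_\beta,Q_\beta]\le C_{V_\beta}(O^p(L_\beta))$, I would note that since $V_\beta/C_{V_\beta}(O^p(L_\beta))$ is a sum of two natural modules for $\SL_2(q)$, the commutator $[V_\beta,Q_\beta]$ lies in $C_{V_\beta}(O^p(L_\beta))$ by \cref{NatMod}(iii); \cref{BasicVB} then gives $Q_\beta\in\syl_p(R_\beta)$. The main obstacle I anticipate is the bookkeeping in the first two paragraphs: a delicate interplay of the four parameters $r_\alpha,r_\beta,r_{\alpha'},r_{\alpha'+1}$ must be handled to squeeze every inequality coming from \cref{SpeMod2}, \cref{SpeModOdd} and \cref{2FRecog} until one forces $\PSL_2(q)$ on both sides with equal $q$, while repeatedly using that neither $Z_\alpha$ nor $V_{\alpha'}/C_{V_{\alpha'}}(O^p(L_{\alpha'}))$ is natural to rule out the ``FF-module'' endpoints of these numerical inequalities.
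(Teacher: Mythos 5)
Your outline parallels the paper's proof in its choice of auxiliary parameters $r_\alpha, r_\beta, r_{\alpha'}, r_{\alpha'+1}$ and its reliance on \cref{SpeMod2}, \cref{SpeModOdd}, \cref{2FRecog}, \cref{DirectSum} and \cref{NatMod}, but the middle section contains a genuine gap that is not repaired by the cited lemmas.

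The assertion that the bounds ``become tight, giving $r_\alpha=q_\alpha$, $r_{\alpha'+1}=q_\alpha$, $r_\beta=q_\beta$, $r_{\alpha'}=q_\beta$'' is not derived, and I do not believe it is correct as stated. The paper's own numerical analysis of this situation (in the branch where $V_\beta/C_{V_\beta}(O^p(L_\beta))$ is not a clean sum of two naturals) produces very different relations, for instance $r_\alpha = r_{\alpha'+1} = p$ with $r_\beta = r_{\alpha'} = q_\alpha = q_\beta$, or all four equal to $q_\alpha^{1/2}$, and each of those branches is then pushed to a contradiction by a targeted module argument. You cannot shortcut to all-equalities-to-$q_\lambda$ and use that to pick out $\PSL_2$. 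Moreover, having the offender $Z_{\alpha'+1}\cap Q_\beta$ fill all of $\Omega(Z(S/Q_\alpha))$ does not by itself rule out $\Sz(q_\alpha)$ or $\PSU_3(q_\alpha)$ via \cref{SEQuad} and \cref{GreenbookQuad}: both of those groups have a central quadratic subgroup of order $q_\alpha$ acting on their natural modules, so the size of the offender is not the obstruction. The paper eliminates $\Sz$ by inspecting $[Z_\alpha, Q_\beta, Q_\beta\cap O^p(L_\beta)]$ and eliminates $\SU_3$ by computing $[Z_\alpha\cap Q_{\alpha'}, Z_{\alpha'+1}\cap Q_\beta]$ and forcing $Z_\beta=Z_{\alpha'}$, which contradicts \cref{NotNatural}. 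Those commutator computations are what actually does the work, and they are absent from your outline.

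The deeper structural omission is the case division on whether $[V_\beta, Q_\beta]$ contains a non-central chief factor. Your Step 4 (``I would then argue that $V_\beta/C_{V_\beta}(O^p(L_\beta))$ contains exactly two non-central chief factors, each an FF-module'') quietly assumes the answer. If $[V_\beta, Q_\beta]$ did contain a non-central chief factor, then $V_\beta/C_{V_\beta}(O^p(L_\beta))$ could well be a nonsplit extension of two natural modules rather than a direct sum, the hypothesis $[V_\beta, Q_\beta]\le C_{V_\beta}(O^p(L_\beta))$ of the conclusion would fail, and \cref{DirectSum} would not apply. The paper isolates this subcase first and drives it to a contradiction via the action on $Z_\alpha$; only after that exclusion does one land in the good subcase where $Q_\beta\le R_\beta$, $[V_\beta,Q_\beta]\le C_{V_\beta}(O^p(L_\beta))$, and the application of \cref{2FRecog} with $Z_\alpha$ not natural singles out the two-natural-summands description. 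Your argument needs this exclusion to be made explicit; as written, the jump from ``quadratic offender plus $\SL_2(q)$'' to ``direct sum of two naturals'' is not available.
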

\begin{proof}
As in the proof of \cref{FirstReduction}, we set $r_\beta:=|Z_{\alpha'+1}Q_{\beta}/Q_{\beta}|$, $r_\alpha:=|(Z_{\alpha'+1}\cap Q_{\beta})Q_{\alpha}/Q_{\alpha}|$, $r_{\alpha'}=|Z_{\alpha}Q_{\alpha'}/Q_{\alpha'}|$ and $r_{\alpha'+1}:=|(Z_{\alpha}\cap Q_{\alpha'})Q_{\alpha'+1}/Q_{\alpha'+1}|$ so that $r_\lambda\le q_\lambda$ for $\lambda\in\{\alpha, \beta, \alpha', \alpha'+1\}$. Since $r_\beta>p$, we have that $m_p(S/Q_{\beta})>1$ and $\bar{L_{\beta}}/O_{p'}(\bar{L_{\beta}})\cong \PSL_2(q_\beta), \PSU_3(q_\beta)$ or $\Sz(q_\beta)$. Furthermore, as intimated in \cref{FirstReduction}, since $r_{\alpha'}>p$, we have that $q_\alpha \geq q_\beta$.

Suppose first that $L_{\beta}/R_{\beta}\cong \SL_2(q_\beta)$ and $V_{\beta}$ contains exactly two non-central chief factors, both of which, as $\bar{L_{\beta}}$-modules, are natural $\SL_2(q_\beta)$-modules. By \cref{CommCF}, we infer that $[V_{\beta}, Q_{\beta}, Q_{\beta}]$ contains no non-central chief factors for $L_{\beta}$ and that $[Z_{\alpha}, Q_{\beta}, Q_{\beta}]\le C_{V_{\beta}}(O^p(L_{\beta}))$. 

Assume that $[V_{\beta}, Q_{\beta}]$ contains no non-central chief factors for $L_{\beta}$. Then $Z_{\alpha}\cap C_{V_{\beta}}(O^p(L_{\beta}))$ has index $q_\beta^2$ in $Z_{\alpha}$ and is centralized by $Q_{\beta}\cap O^p(L_{\beta})$. Since $q_\alpha\geq q_\beta$, using \cref{2FRecog} and observing that $[Z_{\alpha}, Q_{\beta}, Q_{\beta}\cap O^p(L_{\beta})]\ne\{1\}$ when $Z_{\alpha}$ is a natural $\Sz(q_\alpha)$-module, we may assume that $Z_{\alpha}$ is a natural $\SU_3(q_\alpha)$-module and $q_\alpha=q_\beta$. But then, $[Z_{\alpha}\cap Q_{\alpha'}, Z_{\alpha'+1}\cap Q_{\beta}]$ has order $q_\alpha^2$ and is contained in $Z_{\beta}\cap Z_{\alpha'}$ from which it follows that $Z_{\beta}=Z_{\alpha'}$. But then $[V_{\beta}\cap Q_{\alpha'}, V_{\alpha'}]=Z_{\alpha'}=Z_{\beta}$, a contradiction by \cref{SEFF} and \cref{NotNatural}.

Thus, $[V_{\beta}, Q_{\beta}]$ contains a non-central chief factor for $L_{\beta}$. Note that $[Z_{\alpha}, V_{\alpha'}\cap Q_{\beta}]Z_{\beta}$ is normalized by $L_{\beta}=\langle V_{\alpha'}, Q_{\alpha}, R_{\beta}\rangle$ and we deduce that $[Z_{\alpha}, V_{\alpha'}\cap Q_{\beta}]Z_{\beta}\le C_{V_{\beta}}(O^p(L_{\beta}))$. Indeed, we ascertain that $[Z_{\alpha}, V_{\alpha'}\cap Q_{\beta}, Q_{\beta}\cap O^p(L_{\beta})]=\{1\}$ and if $\bar{L_{\alpha}}/O_{p'}(\bar{L_{\alpha}})\cong \PSL_2(q_\alpha)$, then taking the closure of this commutator under the action of $G_{\alpha,\beta}$ yields $[Z_{\alpha}, S, S]=\{1\}$ and $[Z_{\alpha}, Q_{\beta}]\le Z_{\beta}$, a contradiction. Hence, $\bar{L_{\alpha}}/O_{p'}(\bar{L_{\alpha}})\not\cong \PSL_2(q_\alpha)$.

Without loss of generality, we may assume that $r_{\alpha}\leq r_{\alpha'+1}$ and an index $q_\beta r_{\alpha}$ subgroup of $Z_{\alpha'+1}$ is centralized by $Z_{\alpha}\cap Q_{\alpha'}$. Applying \cref{SpeMod2} and \cref{SpeModOdd}, we infer that $r_{\alpha}=r_{\alpha'+1}=p$ and $q_\alpha=q_\beta=r_\beta=r_{\alpha'}$; or $q_\alpha=q_\beta=r_\alpha=r_\beta=r_{\alpha'}=r_{\alpha'+1}$. In the former case, we have that an index $pq_\beta<q_\alpha^2$ subgroup of $Z_{\alpha}$ is centralized by $X:=\langle Z_{\alpha'+1}, Q_{\alpha}, O^p(R_{\beta})\rangle$ and $|(X\cap Q_{\beta})Q_{\alpha}/Q_{\alpha}|>p$. Then applying \cref{SpeMod2} and \cref{SpeModOdd} yields a contradiction.

In the latter case, we have by \cref{2FRecog}, $Z_{\alpha}$ is a natural module for $L_{\alpha}/R_{\alpha}\cong \SU_3(q_\alpha)$ or $\Sz(q_\alpha)$. If $Z_{\alpha}$ is a natural $\SU_3(q_\alpha)$-module then $|Z_{\alpha}/[Z_{\alpha}, Q_{\beta}]|=q_{\alpha}^2$ and since $V_{\beta}/[V_{\beta}, Q_{\beta}]$ contains a unique non-central chief factor which is a natural $\SL_2(q_\beta)$-module, we have a contradiction. Hence, $Z_{\alpha}$ is a natural $\Sz(q_\alpha)$-module. Now, as $[Z_{\alpha}, Q_{\beta}]=Z_{\alpha}\cap Q_{\alpha'}$ and $[Z_{\alpha'+1}, Q_{\alpha'}]=Z_{\alpha'+1}\cap Q_{\beta}$, we have that $[Z_{\alpha}\cap Q_{\alpha'}, Z_{\alpha'+1}\cap Q_{\beta}]=Z_{\alpha'}=Z_{\beta}$. But then $[V_{\beta}\cap Q_{\alpha'}, V_{\alpha'}]=Z_{\alpha'}=Z_{\beta}$, a contradiction by \cref{SEFF} and \cref{NotNatural}. Hence, we assume for the rest of the proof that $V_{\beta}$ either contains more than two non-central chief factors for $L_{\beta}$, or that if $V_{\beta}$ contains only two non-central chief factors for $L_{\beta}$ then at least one of them, as  $\bar{L_{\beta}}$-module, is not an FF-module.

Note that if $r_\beta>q_\beta^{\frac{1}{2}}$ then as a subgroup of $V_{\beta}$ of index $q_\alpha q_\beta$ is centralized by $Z_{\alpha'+1}$ and $V_{\beta}$ contains at least two non-central chief factors, applying \cref{SpeMod2} and \cref{SpeModOdd}, a subgroup of a non-central $L_{\beta}$-chief factor of index at most $q_\alpha$ is centralized by $Z_{\alpha'+1}$. Applying \cref{2FRecog}, we infer that $r_\beta^2\leq q_\alpha$ with equality yielding $|(V_{\beta}\cap Q_{\alpha'})Q_{\alpha'+1}|=q_\alpha$. Likewise, if $r_\beta\leq q_\beta^{\frac{1}{2}}$ then as $q_\beta\leq q_\alpha$, $r_\beta^2\leq q_\alpha$. Hence, $p^4\leq r_\beta^2\leq q_\alpha$. In a similar manner, we also have that $r_{\alpha'}^2\leq q_{\alpha}$.

If $r_{\alpha'+1}>p$, then \cref{SpeMod2} and \cref{SpeModOdd} imply that $r_{\alpha}r_\beta\geq q_\alpha$ and $\bar{L_{\alpha'+1}}/O_{p'}(\bar{L_{\alpha'+1}})\cong \PSL_2(q_\alpha)$. Moreover, $r_\alpha>p$ and $r_{\alpha'}r_{\alpha'+1}\geq q_\alpha$. Instead assuming that $r_{\alpha}>p$ yields a similar result. In fact, by \cref{2FRecog}, using that $Z_{\alpha}$ is not a natural module, we have that $r_{\alpha'}=r_{\alpha'+1}=r_\beta=r_\alpha=q_\alpha^{\frac{1}{2}}$ or $r_\alpha>q_\alpha^{\frac{1}{2}}<r_{\alpha'+1}$. In the latter case, since $Z_{\alpha}$ is not a natural $\SL_2(q_\alpha)$-module, applying \cref{2FRecog}, \cref{SpeMod2} and \cref{SpeModOdd}, we deduce that $r_{\alpha}^2\leq r_{\alpha'}r_{\alpha'+1}$ and $r_{\alpha'+1}^2\leq q_\alpha r_\beta$. Then $r_{\alpha}^4\leq r_{\alpha'}^2r_{\alpha'+1}^2\leq r_{\alpha'}^2 r_\alpha r_\beta$ so that $r_{\alpha}^3\leq r_{\alpha'}^2 r_\beta$. But then, $q_\alpha^{\frac{3}{2}}<r_\alpha^3\leq r_{\alpha'}^2 r_\beta\leq q_\alpha^{\frac{3}{2}}$, a contradiction. Hence, $r_{\alpha'}=r_{\alpha'+1}=r_\beta=r_\alpha=q_\alpha^{\frac{1}{2}}$. Note that by \cref{SpeMod2} and \cref{SpeModOdd}, if $q_\alpha=q_\beta$, then since $V_{\alpha'}$ contains two non-central chief factors for $L_{\alpha'}$, $S=(V_{\alpha'}\cap Q_{\beta})Q_{\alpha}$ acts quadratically on $Z_\alpha$. If $q_\alpha>q_\beta$, then applying \cref{2FRecog} since $r_\beta>q_{\beta}^{\frac{1}{2}}$, again we have that $S=(V_{\alpha'}\cap Q_{\beta})Q_{\alpha}$ acts quadratically on $Z_\alpha$.

Assume that $r_\alpha=r_{\alpha'+1}=p$. Then an index $r_\beta p\leq q_\alpha^{\frac{1}{2}}p$ subgroup of $Z_{\alpha}$ is centralized by $Z_{\alpha'+1}\cap Q_{\beta}$, and we deduce by \cref{SpeMod2} and \cref{SpeModOdd} that $\bar{L_{\alpha}}/O_{p'}(\bar{L_{\alpha}})\cong \PSL_2(q_\alpha)$ and $q_{\alpha}\leq p^6$.
Note that if $r_\beta^2<q_\alpha$ then $r_\beta^2p\leq q_\alpha$ and since $q_{\alpha}^{2/3}\leq r_\beta p$ by \cref{SpeMod2} and \cref{SpeModOdd}, we have that $q_{\alpha}^{2/3}r_\beta\leq r_\beta^2p\leq q_\alpha$ so that $r_\beta\leq q_{\alpha}^{1/3}$ and $q_\alpha\leq p^3$. But $r_\beta^2\geq p^4>q_\alpha$, a contradiction. Hence, $r_\beta^2=q_\alpha$. If $q_\beta<q_\alpha$ then $r_\beta>q_\beta^{\frac{1}{2}}$ and by an earlier observation, $S=(V_{\alpha'}\cap Q_{\beta})Q_{\alpha}$ acts quadratically on $Z_\alpha$. If $q_\beta=q_\alpha$ then $r_{\beta}>p$ and $V_{\beta}$ contains two non-central chief factors for $L_{\beta}$, applying \cref{SpeMod2} and \cref{SpeModOdd}, $(V_{\beta}\cap Q_{\alpha'})Q_{\alpha'+1}\in\syl_p(L_{\alpha'+1})$ and conjugating to $\alpha$, $S$ acts quadratically on $Z_{\alpha}$.

In all cases, $[Z_{\alpha}, Q_{\beta}]\leq [Z_{\alpha}, S]\le Z_{\beta}$ and $\bar{L_{\alpha}}/O_{p'}(\bar{L_{\alpha}})\cong \PSL_2(q_\alpha)$. Furthermore, forming $L^{\alpha}$ from $Q_{\alpha}$ and $d$ conjugates of $ Z_{\alpha'+1}\cap Q_{\beta}$ where $d=2$ when $r_{\alpha}>p$ and $d=3$ when $r_\alpha=p$, we have that $\bar{L^{\alpha}}/O_{p'}(\bar{L^{\alpha}})\cong \bar{L_{\alpha}}/O_{p'}(\bar{L_{\alpha}})$, $S\in\syl_p(L^\alpha)$ and $|Z_{\alpha}/C_{Z_{\alpha}}(L^\alpha)|\leq q_\alpha^2$ when $d=2$ and $r_\beta^3p^3$ when $d=3$. Since $O^p(L^{\alpha})$ acts non-trivially on $Z_{\alpha}$, $C_{Z_{\alpha}}(L^\alpha)<C_{Z_{\alpha}}(L^\alpha)[Z_{\alpha}, Q_{\beta}]\le Z_{\beta}$ and applying \cref{SEFF}, we have that $|[Z_{\alpha}, Q_{\beta}]C_{Z_{\alpha}}(L^\alpha)/C_{Z_{\alpha}}(L^\alpha)|\geq q_\alpha$. But then $Z_{\beta}$ has index at most $q_\alpha$ when $d=2$ and $r_\beta^3p^3/q_\alpha<q_\alpha^2$ and by \cref{2FRecog}, we deduce that $Z_{\alpha}$ is a natural $\SL_2(q_\alpha)$-module for $L_{\alpha}/R_{\alpha}\cong \SL_2(q_\alpha)$, a contradiction.
\end{proof}

\begin{lemma}\label{IdenNat}
Suppose that $b>1$ and $C_{V_\beta}(V_{\alpha'})<V_\beta \cap Q_{\alpha'}$. Then $Z_{\alpha'+1}\not\le Q_{\beta}$, $Q_{\beta}\in\syl_p(R_{\beta})$, $[V_{\beta}, Q_{\beta}]\le C_{V_{\beta}}(O^p(L_{\beta}))$ and either:
\begin{enumerate}
\item $Z_{\alpha}$ is a natural module for $L_{\alpha}/R_{\alpha}\cong \SL_2(q_\alpha)$ and $q_\alpha=q_\beta$;
\item $L_{\beta}/R_{\beta}\cong \SL_2(q_\alpha)\cong L_{\alpha}/R_{\alpha}$, $q_\alpha>p$ and both $V_{\beta}/C_{V_{\beta}}(O^p(L_{\beta}))$ and $Z_\alpha$ are a direct sum of two natural modules; or
\item $L_{\beta}/R_{\beta}\cong \SL_2(p), (3\times 3):2, (3\times 3):4$ or $(Q_8\times Q_8):3$, $V_{\beta}/C_{V_{\beta}}(O^p(L_{\beta}))$ is described in \cref{NewQuad2F} and $q_\alpha=p$.
\end{enumerate}
\end{lemma}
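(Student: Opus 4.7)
The argument I would run splits according to whether $Z_\alpha$ is a natural $\SL_2(q_\alpha)$-module for $L_\alpha/R_\alpha$. If $Z_\alpha$ is not natural, the result is obtained by chaining the two prior reduction lemmas: \cref{FirstReduction} delivers either outcome (iii) of the present lemma verbatim (its case (ii), which records all three common conclusions $Z_{\alpha'+1}\not\le Q_\beta$, $Q_\beta\in\syl_p(R_\beta)$, $[V_\beta,Q_\beta]\le C_{V_\beta}(O^p(L_\beta))$), or pushes us into the regime $|Z_{\alpha'+1}Q_\beta/Q_\beta|>p$; in the latter subcase \cref{SecondReduction} immediately supplies outcome (ii) of the present lemma together with the same three common conclusions.

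For the remainder of the proof I would assume $Z_\alpha$ is a natural $\SL_2(q_\alpha)$-module. The inclusion $Q_\beta\in\syl_p(R_\beta)$ is immediate from \cref{BasicVB} and \cref{bodd}. Since $Q_\beta\not\le Q_\alpha$ by \cref{push}, the image of $Q_\beta$ in $L_\alpha/R_\alpha\cong\SL_2(q_\alpha)$ lies inside a Sylow $p$-subgroup and acts on the natural module via $[Z_\alpha,Q_\beta]=Z_\beta$. Taking $L_\beta$-closure, and using $Q_\beta\normaleq L_\beta$, $V_\beta=\langle Z_\alpha^{L_\beta}\rangle$ and $Z_\beta\le Z(L_\beta)$, then gives
\[
[V_\beta,Q_\beta]=[Z_\alpha,Q_\beta]^{L_\beta}\le Z_\beta\le C_{V_\beta}(O^p(L_\beta)),
\]
which is the second common conclusion.

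To place us in outcome (i), it remains to establish $Z_{\alpha'+1}\not\le Q_\beta$ and $q_\alpha=q_\beta$. Since $b$ is odd, $\alpha'+1\in\alpha^G$, so by conjugacy $Z_{\alpha'+1}$ is a natural $\SL_2(q_\alpha)$-module for $L_{\alpha'+1}/R_{\alpha'+1}$ with $C_{Z_{\alpha'+1}}(S)=Z_{\alpha'}$. I would proceed by contradiction: if $Z_{\alpha'+1}\le Q_\beta$, then since $V_\beta\normaleq G_\beta$ and both $V_\beta,Z_{\alpha'+1}\le Q_\beta$, the containment $[V_\beta,Q_\beta]\le Z_\beta$ just proved forces $[V_\beta,Z_{\alpha'+1}]\le Z_\beta$. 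The defining condition $V_\beta\cap Q_{\alpha'}\not\le Q_{\alpha'+1}$ for the chosen $\alpha'+1$ then yields $\{1\}\ne[V_\beta\cap Q_{\alpha'},Z_{\alpha'+1}]\le Z_\beta\cap Z_{\alpha'+1}$, and the irreducibility of the natural $\SL_2(q_\alpha)$-module $Z_{\alpha'+1}$ forces $Z_\beta=Z_{\alpha'}$. But then $[V_\beta\cap Q_{\alpha'},V_{\alpha'}]\le Z_\beta=Z_{\alpha'}$, which, together with the hypothesis $C_{V_\beta}(V_{\alpha'})<V_\beta\cap Q_{\alpha'}$, is incompatible with the combination of \cref{SL2implies} and \cref{NotNatural}. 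Hence $Z_{\alpha'+1}\not\le Q_\beta$; the identity $q_\alpha=q_\beta$ then follows by matching the Sylow image $|Z_{\alpha'+1}Q_\beta/Q_\beta|$ inside $L_\beta/R_\beta$ against the natural-module value $q_\alpha$, using the conjugacy-transfer once more.

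The hard part will be the third paragraph, and specifically the step forcing $Z_\beta=Z_{\alpha'}$ and then extracting the contradiction from \cref{NotNatural}. The bookkeeping is delicate because $V_\beta\cap Q_{\alpha'}$ and $Z_{\alpha'+1}$ live in different residue structures (one a $G_\beta$-normal section, the other a $G_{\alpha'+1}$-normal section), and the mutual normalization required to make sense of $[V_\beta\cap Q_{\alpha'},Z_{\alpha'+1}]$ as a subgroup of $Z_{\alpha'+1}$ has to be traced inside $L_{\alpha'}$ using both containments $V_\beta\cap Q_{\alpha'}\le Q_{\alpha'}$ and $Z_{\alpha'+1}\le Q_{\alpha'}$.
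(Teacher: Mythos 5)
Your opening paragraph matches the paper exactly: the non-natural case is handled by chaining \cref{FirstReduction} and \cref{SecondReduction}, and both lemmas deliver the three common conclusions together with outcomes (ii) and (iii). Your reduction of $[V_\beta,Q_\beta]\le Z_\beta\le C_{V_\beta}(O^p(L_\beta))$ in the natural case is also sound, and the argument that $\{1\}\ne[V_\beta\cap Q_{\alpha'},Z_{\alpha'+1}]\le Z_\beta$ yields $Z_{\alpha'}\le Z_\beta$ and hence $Z_{\alpha'}=Z_\beta$ (better called an order comparison than ``irreducibility'', since $|Z_{\alpha'}|=q_\alpha=|Z_\beta|$ and neither is invariant under the other's parabolic) is the same computation the paper runs — although the paper runs it under the weaker assumption $Z_{\alpha'+1}\cap Q_\beta\not\le Z_{\alpha'}$, which is the form actually needed: your version only rules out $Z_{\alpha'+1}\le Q_\beta$, whereas the paper's delivers the sharper identity $Z_{\alpha'+1}\cap Q_\beta=Z_{\alpha'}$ and therefore $|Z_{\alpha'+1}Q_\beta/Q_\beta|=q_\alpha\le q_\beta$.

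The serious gap is the final sentence of your second paragraph, where you claim that $q_\alpha=q_\beta$ ``follows by matching the Sylow image $|Z_{\alpha'+1}Q_\beta/Q_\beta|$ inside $L_\beta/R_\beta$ against the natural-module value $q_\alpha$, using the conjugacy-transfer once more.'' This is not a step; it is the entire remaining content of the lemma, and it does not fall out of any matching argument. Once $|Z_{\alpha'+1}Q_\beta/Q_\beta|=q_\alpha$ is known, nothing forces that image to be a full Sylow $p$-subgroup of $L_\beta/R_\beta$ (the subgroup $Z_{\alpha'+1}Q_\beta/Q_\beta$ is not $G_{\alpha,\beta}$-invariant, so irreducibility of $S/Q_\beta$ under $G_{\alpha,\beta}$ cannot be invoked, and a priori one could have $q_\alpha<q_\beta$). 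The paper's proof devotes its remaining several pages precisely to excluding $q_\alpha<q_\beta$, splitting on $b\ge 7$, $b=5$, $b=3$, on whether $V_{\alpha'}\le Q_\beta$, and on the isomorphism type of $\bar L_\beta$, and using \cref{SpeMod2}, \cref{SpeModOdd}, \cref{2FRecog}, \cref{SL2ModRecog}, \cref{q^3module}, \cref{A6Cohom}, coprime-action splittings and the nearly-quadratic classification. Your proposal elides all of this; you correctly flagged the $Z_\beta=Z_{\alpha'}$ contradiction as the ``hard part,'' but in fact that step is short — the hard part is the missing inequality $q_\alpha\ge q_\beta$.
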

\begin{proof}
By \cref{FirstReduction} and \cref{SecondReduction}, we may assume that $Z_{\alpha}$ is a natural $\SL_2(q_\alpha)$-module for $L_{\alpha}/R_{\alpha}$ and so it remains to show that $q_\alpha=q_\beta$. Assume that $Z_{\alpha'+1}\cap Q_{\beta}\not\le Z_{\alpha'}$. Then $Z_{\alpha'}\le [Z_{\alpha'+1}\cap Q_{\beta}, V_{\beta}\cap Q_{\alpha'}]\le [V_{\beta}, Q_{\beta}]=Z_{\beta}$. But then $[V_{\alpha'}\cap Q_{\beta}, V_{\beta}]\le Z_{\alpha'}$ and we force a contradiction by \cref{NotNatural}. More generally, we deduce that $Z_{\beta}\ne Z_{\alpha'}$ in all cases. 

Hence, $Z_{\alpha'+1}\cap Q_{\beta}=Z_{\alpha'}$ so that $|Z_{\alpha'+1}Q_{\beta}/Q_{\beta}|=q_{\alpha}\leq q_\beta$. Hence, to complete the proof we assume that $q_\alpha<q_\beta$. Since $Z_{\alpha}$ centralizes a subgroup of index at most $q_\beta q_\alpha$ in $V_{\alpha'}$, we deduce by \cref{SpeMod2} and \cref{SpeModOdd} that either $\bar{L_{\alpha'}}/O_{p'}(\bar{L_{\alpha'}})\cong \PSL_2(q_{\alpha'})$, or $q_\alpha=p$ and $\bar{L_{\alpha'}}/O_{p'}(\bar{L_{\alpha'}})\cong \PSU_3(p^2)$ or $\Sz(8)$. Note that $|V_{\alpha'}Q_{\beta}/Q_{\beta}|\geq |Z_{\alpha'+1}Q_{\beta}/Q_{\beta}|=q_\alpha$ and so either $|V_{\alpha'}Q_{\beta}/Q_{\beta}|\geq p^2$ or, since $q_\beta>q_\alpha$ and applying \cref{NewQuad2F}, $L_{\alpha'}/R_{\alpha'}\cong \PSL_2(4)$ and $V_{\alpha'}/C_{V_{\alpha'}}(O^p(L_{\alpha'}))$ is isomorphic to a natural $\Omega_4^-(2)$-module.

Suppose that $b\geq 7$. If $|V_{\alpha'}Q_{\beta}/Q_{\beta}|=p$, then $V_{\lambda}/C_{V_{\lambda}}(O^2(L_{\lambda}))$ is isomorphic to a natural $\Omega_4^-(2)$-module for all $\lambda\in\beta^G$, and $q_\beta=4$. Since $V_{\beta}^{(3)}$ is elementary abelian, $V_{\beta}^{(3)}$ acts quadratically on $V_{\alpha'-2}$ and $V_{\beta}^{(3)}\cap Q_{\alpha'-2}\cap Q_{\alpha'-1}$ acts quadratically on $V_{\alpha'}$ so that $V_{\beta}(V_{\beta}^{(3)}\cap \dots Q_{\alpha'})$ has index at most $4$ in $V_{\beta}^{(3)}$. Since $q_\beta=4$, we conclude that $V_{\beta}^{(3)}\not\le Q_{\alpha'-2}$ and $V_{\beta}\cap Q_{\alpha'-2}\not\le Q_{\alpha'-1}$. Since $Z_{\alpha'}\le V_{\beta}$ is centralized by $V_{\beta}^{(3)}$, we must have that $Z_{\alpha'}=Z_{\alpha'-2}$. Furthermore, there is $\beta-3\in\Delta^{(3)}(\beta)$ with $(\beta-3, \alpha'-2)$ a critical pair. By \cref{SL2VlQ} and \cref{SL2VnQ}, $(\beta-3, \alpha'-2)$ satisfies the same conditions as $(\alpha, \alpha')$, and applying the same reasoning to this critical pair, and iterating backwards further through critical pairs, we deduce that $Z_{\alpha'}=Z_{\alpha'-2}=\dots=Z_{\beta}$, a contradiction. Hence, if $b \geq 7$, then $|V_{\alpha'}Q_{\beta}/Q_{\beta}|\geq p^2$. By a similar argument, and using the symmetry in the critical pairs $(\alpha, \alpha')$ and $(\alpha'+1, \beta)$, we deduce that $|V_{\beta}Q_{\alpha'}/Q_{\alpha'}|\geq p^2$. Furthermore, if $\bar{L_{\beta}}/O_{p'}(\bar{L_{\beta}})\not\cong \PSL_2(q_{\alpha'})$, then $V_{\beta}(V_{\beta}^{(3)}\cap Q_{\alpha'})$ has index at most $q_\beta q_\alpha<q_\beta^2$ in $V_{\beta}^{(3)}$ and applying \cref{SpeMod2}, we have a contradiction. Hence, $\bar{L_{\beta}}/O_{p'}(\bar{L_{\beta}})\cong \PSL_2(q_{\alpha'})$. 

If $V_{\beta}^{(3)}\le Q_{\alpha'-2}$ then applying \cref{SpeMod2} and \cref{SpeModOdd} via the action of $V_{\alpha'}$ on $V_{\beta}^{(3)}/V_{\beta}$, using that $Z_{\alpha'}\le V_{\beta}$, we must have that $q_\alpha q_\beta/|V_{\beta}Q_{\alpha'}/Q_{\alpha'}|\geq q_\beta$. But then, $q_\alpha\leq |V_{\beta}Q_{\alpha'}/Q_{\alpha'}|\leq q_\alpha$ so that $q_\alpha\geq p^2$. Applying \cref{SpeMod2} and \cref{SpeModOdd} via the action of $Z_{\alpha'+1}$ on $V_{\beta}$, we have that $q_\alpha^2\geq q_\beta>q_\alpha$. Let $P_{\beta}$ be a $p$-minimal subgroup of $L_{\beta}$ containing $Q_{\alpha+2}$ such that $\bar{L_{\beta}}=\bar{P_{\beta}}O_{p'}(\bar{L_{\beta}})$. Set $V^P:=\langle Z_{\alpha}^{P_{\beta}}\rangle/Z_{\beta}$ so that $P_\beta$ non-trivially on $V^P$. Still, $Z_{\alpha'+1}$ is a quadratic $2F$-offender on the module $V^P/C_{V^p}(P_\beta)$ and applying \cite[Theorem 1]{ChermakSmall}, using that $|Z_{\alpha}C_{V^p}(P_\beta)/C_{V^p}(P_\beta)|=q_\alpha<q_\beta$, we have that $V^P/C_{V^p}(P_\beta)$ is a natural $\Omega_4^-(q_\alpha)$-module for $\bar{P_{\beta}}\cong \PSL_2(q_\beta)$ and $q_\beta=q_\alpha^2$. By conjugacy, and using that $V_{\beta}^{(3)}\cap Q_{\alpha'-1}$ acts quadratically on $V_{\alpha'}$, we have that $V_{\beta}(V_{\beta}^{(3)}\cap Q_{\alpha'-1})$ has index $q_\alpha$ in $V_{\beta}^{(3)}$ and is centralized, modulo $V_{\beta}$, by $V_{\alpha'}$, a contradiction by \cref{SpeMod2} and \cref{SpeModOdd}. Thus, $V_{\beta}^{(3)}\not\le Q_{\alpha'-2}$.

Note that $(\beta-3, \alpha'-2)$ is also a critical pair which satisfies the same conditions as the critical pair $(\alpha, \alpha')$. Indeed, iterating backward through critical pair, upon relabeling we may assume that $Z_{\alpha'}\ne Z_{\alpha'-2}$, for otherwise $Z_{\alpha'}=\dots=Z_{\beta}$, yielding a contradiction. Since $Z_{\alpha'}\le V_{\beta}$, $V_{\beta}^{(3)}$ centralizes $Z_{\alpha'-1}$.

Assume that $L_{\beta}/R_{\beta}\not\cong \SL_2(q_\beta)$ so that by \cref{SEQuad} we have that $p=2$. Note that, for $Z_{\alpha'}<A\le Z_{\alpha'+1}$, $[V_{\beta}\cap Q_{\alpha'}, A]=Z_{\alpha'}\ne Z_{\beta}$ so that for $V:=V_{\beta}/Z_{\beta}$ and $C_A$ the preimage in $V_{\beta}$ of $C_V(A)$, $C_A\cap Q_{\alpha'}\le Q_{\alpha'+1}$. If there is $Z_{\alpha'}<B\le Z_{\alpha'+1}$ with $C_V(A)=C_V(B)$ for all $Z_{\alpha'}<A$ with $[B:A]=p$, then applying coprime action, we have that $O_{p'}(\bar{L_{\beta}})$ normalizes $C_V(B)$. Then writing $H:=\langle (BQ_{\beta})/Q_{\beta})^{O_{p'}(\bar{L_{\beta}})}\rangle$, we have that  $[H, C_V(B)]=\{1\}$. Moreover, $V=[V, O_{p'}(H) ]\times C_{V}(O_{p'}(H)$ is a $BQ_{\beta}/Q_{\beta}$-invariant decomposition and we conclude that $O_{p'}(H)$ centralizes $V$. Thus, $O_{p'}(L_{\beta}/R_{\beta})$ normalizes $BQ_{\beta}/Q_{\beta}$ and $L_{\beta}/R_{\beta}$ is a central extension of $\PSL_2(q_\beta)$, a contradiction since $L_{\beta}/R_{\beta}\not\cong \SL_2(q_\beta)$ and $p=2$.

Hence, we deduce that either $q_\alpha=p$ or for every $Z_{\alpha'}<A<B\le Z_{\alpha'+1}$ with $|A/Z_{\alpha'}|=p=|B/A|$ we have that $C_V(A)$ has index at most $r_{\alpha'}p$ in $V$, where $r_{\alpha'}=|V_{\beta}Q_{\alpha'}/Q_{\alpha'}|$, and $C_V(B)$ has index at most $r_{\alpha'}p^2$ in $V$. In the former case, we select $\lambda\in\Delta(\alpha')$ with $Z_\lambda\not\le Q_{\beta}$ and $|Z_{\alpha'+1}Z_{\lambda}Q_{\beta}/Q_{\beta}|=p^2$. Then $Z_{\alpha'+1}Z_\lambda$ centralizes a subgroup of $V_{\beta}$ of index $r_{\alpha'}p^2$. In either case, applying \cref{SpeMod2}, we deduce that $r_{\alpha'}p^2\geq q_\beta$. Form $X:=\langle B, B^x, Q_{\beta}\rangle$ for such a $B$ and some appropriately chosen $x\in L_{\beta}$ with $\bar{X}O_{2'}(\bar{L_{\beta}})=\bar{L_{\beta}}$. Then $|V/C_V(X)|\leq r_{\alpha'}^2 2^4$ and applying \cref{q^3module}, we deduce that either $XR_{\beta}/R_{\beta}\cong \PSL_2(q_\beta)$ and the unique non-central $X$-chief factor within $V$ is determined by \cref{SL2ModRecog}, or $q_\beta=r_{\alpha'}\leq p^3$ and $q_\alpha\leq p^2$. In the first case, since $V_{\beta}^{(3)}$ acts quadratically on $V_{\alpha'-2}$, $V_{\beta}^{(3)}\cap Q_{\alpha'-2}$ acts quadratically on $V_{\alpha'}$, and $V_{\beta}^{(3)}$ contains a non-central chief factor for $L_{\beta}$, we deduce that the unique non-central $X$-chief factor within $V$ is a natural $\SL_2(q_{\beta})$-module. Then for $T\in\syl_2(X)$, we have that $[V, t]C_V(X)=[V, y]C_V(X)$ for all $x,y\in T\setminus Q_{\beta}$ and we conclude that $C_V(A)=C_V(B)$ where $A$ and $B$ are subgroups of $Z_{\alpha'+1}$ defined previously, a contradiction. Hence, $q_\beta=r_{\alpha'}\leq p^3$ and $q_\alpha\leq p^2$. If $q_\alpha=p^2$ then $q_\beta=p^3$ and $V_{\alpha'}=Z_{\alpha'+1}Z_\lambda(V_{\alpha'}\cap Q_{\beta})$ for some $\lambda\in\Delta(\alpha')$ with $Z_\lambda\cap Z_{\alpha'+1}=Z_{\alpha'}$. But then $|Z_{\alpha'+1}Z_{\lambda}|=p^6$ so that $|Z_{\alpha'+1}Z_{\lambda}\cap Q_{\alpha'}|=p^3$. Finally, $|Z_{\alpha'+1}(Z_{\alpha'+1}Z_{\lambda}\cap Q_{\alpha'})|=p^5$ from which it follows that $|Z_{\alpha'+1}\cap Q_{\alpha'}|=p^3$, a contradiction. Hence, $q_{\alpha}=p$ so that $V_{\alpha'}$ centralizes a subgroup of index at most $q_\beta^2$ in $V_{\beta}/Z_{\beta}$. Indeed, applying \cref{2FRecog} we may assume that $V_{\alpha'}$ centralizes subgroup of index exactly $q_\beta^2$ in $V_{\beta}/Z_{\beta}$. Let $P_{\beta}$ be a $2$-minimal subgroup of $L_{\beta}$ containing $Q_{\alpha+2}$ such that $\bar{L_{\beta}}=\bar{P_{\beta}}O_{2'}(\bar{L_{\beta}})$. Set $V^P:=\langle Z_{\alpha}^{P_{\beta}}\rangle/Z_{\beta}$ so that $P_\beta$ non-trivially on $V^P$. Still, $V_{\alpha'}$ is a quadratic $2F$-offender on the module $V^P/C_{V^p}(P_\beta)$ and applying \cite[Theorem 1]{ChermakSmall}, using that $|Z_{\alpha}C_{V^p}(P_\beta)/C_{V^p}(P_\beta)|=p<q_\beta$ and a Sylow $2$-subgroup of $P_\beta/Q_\beta$ acts quadratically on $V^P$, we have a contradiction.

Thus, $L_{\beta}/R_{\beta}\cong \SL_2(q_\beta)$. Assume that $r_{\alpha'}^2 q_\alpha^2\leq q_\beta^3$ so that $V_{\beta}/C_{V_{\beta}}(O^p(L_{\beta}))$ is determined by \cref{SL2ModRecog}. Since $V_{\beta}$ acts quadratically on $V_{\alpha'}$, by \cref{NotNatural}, it follows that $r_{\alpha'}\leq q_{\beta}^{\frac{1}{2}}$. But $q_\alpha\leq r_{\alpha'}$ from which it follows that $q_\alpha=r_{\alpha'}=q_{\beta}^{\frac{1}{2}}$ and $V_{\beta}/C_{V_{\beta}}(O^2(L_{\beta}))$ is a natural $\Omega_4(q_\alpha)$-module. But then, since $V_{\beta}^{(3)}$ acts quadratically on $V_{\alpha'-2}$ and $V_{\beta}^{(3)}\cap Q_{\alpha'-2}$ is quadratic on $V_{\alpha'}$, we have that $V_{\beta}(V_{\beta}^{(3)}\cap Q_{\alpha'})$ has index $q_\alpha$ in $V_{\beta}^{(3)}$ and is centralized, modulo $V_{\beta}$, by $V_{\alpha'}$ and \cref{SpeMod2} yields a contradiction. Hence, $r_{\alpha'}^2 q_{\alpha}^2>q_\beta^3$. In particular, $r_{\alpha'}>q_{\beta}^{\frac{1}{2}}$ and writing $P_\beta=\langle V_{\alpha'}, V_{\alpha'}^g, C_{L_{\beta}}(V_{\beta}^{(3)}/V_\beta), Q_{\beta}\rangle$, we have that $P_\beta$ has an $\SL_2(q_\beta)$ quotient and centralizes an subgroup of $V_{\beta}^{(3)}/[V_{\beta}^{(3)}, Q_{\beta}]V_{\beta}$ of index strictly less than $q_\beta^3$. Then \cref{q^3module} implies that $P_\beta/C_{P_\beta}(V_{\beta}^{(3)}Q_{\beta}\cong \SL_2(q_\beta)$ and \cref{SL2ModRecog} yields that $V_{\beta}^{(3)}/V_{\beta}$ contains a unique non-central chief factor which, as a $\bar{P_{\beta}}$-module, is a natural $\SL_2(q_\beta)$-module. It follows that all elements of order $p$ in $S/Q_{\beta}$ has the same centralizer on $V_{\beta}^{(3)}/[V_{\beta}^{(3)}, Q_{\beta}]V_{\beta}$ and applying coprime action and arguing as above, we get that $L_{\beta}/C_{L_{\beta}}(V_{\beta}^{(3)})Q_{\beta}\cong \SL_2(q_\beta)$. Since $V_{\beta}^{(3)}\not\le Q_{\alpha'-2}$ yet centralizes $Z_{\alpha'-1}$, we deduce that $Z_{\alpha'-1}=Z_{\alpha'-3}$ and and application of \cref{SimExt} implies that $V_{\alpha'}\le V_{\alpha'-1}^{(2)}=V_{\alpha'-3}^{(2)}\le Q_{\beta}$, a contradiction.

Suppose now that $b=5$. Since $Z_{\alpha'}\ne Z_{\beta}$, without loss of generality, we have that $Z_{\beta}\ne Z_{\alpha'-2}$. But $Z_{\beta}=[V_{\beta}, V_{\alpha'}\cap Q_{\beta}]\le [V_{\alpha'-2}^{(3)}, V_{\alpha'-2}^{(3)}]$ from which we deduce that $Z_{\alpha+2}\le [V_{\alpha'-2}^{(3)}, V_{\alpha'-2}^{(3)}]$ and ultimately $V_{\alpha'-2}\le [V_{\alpha'-2}^{(3)}, V_{\alpha'-2}^{(3)}]\le \Phi(V_{\alpha'-2}^{(3)})$. By conjugation, $V_{\beta}\le [V_{\beta}^{(3)}, V_{\beta}^{(3)}]$. If $[V_{\beta}^{(3)}, Q_{\beta}]/V_{\beta}$ does not contain a non-central chief factor for $L_{\beta}$, then $[V_{\beta}^{(3)}, Q_{\beta}]=[V_{\lambda}^{(2)}, Q_{\beta}]V_{\beta}$ for all $\lambda\in\Delta(\beta)$ and as $V_{\lambda}^{(2)}$ is elementary abelian, $[V_{\beta}^{(3)}, Q_{\beta}]\le Z(V_{\beta}^{(3)})$. But then, by the three subgroups lemma, $[V_{\beta}^{(3)}, V_{\beta}^{(3)}, Q_{\beta}]=\{1\}$, a contradiction since $[V_{\beta}, Q_{\beta}]\ne \{1\}$. Hence, by \cref{CommCF}, $V_{\beta}^{(3)}/V_{\beta}$ contains at least two non-central chief factors for $L_{\beta}$. Note that if $|V_{\beta}Q_{\alpha'}/Q_{\alpha'}|=p$, then $V_{\lambda}/C_{V_{\lambda}}(O^2(L_{\lambda}))$ is isomorphic to a natural $\Omega_4^-(2)$-module for all $\lambda\in\beta^G$. Since $V_{\alpha'}^{(3)}$ is quadratic on $V_{\alpha'-2}$, we get that $V_{\alpha'}(V_{\alpha'}^{(3)}\cap Q_{\beta})$ has index at most $8$ in $V_{\alpha'}^{(3)}$ and is centralized, modulo $V_{\alpha'}$, by $V_{\beta}$. Applying \cref{SEFF} since $q_\beta=4$ and $V_{\beta}^{(3)}/V_{\beta}$ contains at least two non-central chief factors for $L_{\beta}$, this is a contradiction. Hence, $|V_{\beta}Q_{\alpha'}/Q_{\alpha'}|\geq p^2$ and by a similar argument, $|V_{\alpha'}Q_{\beta}/Q_{\beta}|\geq p^2$. Using that $V_{\alpha'}^{(3)}\cap Q_{\alpha'-2}\le Q_{\alpha+2}$, we deduce that $V_{\beta}$ centralizes a subgroup fo $V_{\alpha'}^{(3)}/V_{\alpha'}$ of index strictly less than $q_\beta^2$ and applying \cref{SpeMod2} and \cref{SpeModOdd}, and using that $V_{\alpha'}^{(3)}$ contains at least two non-central chief factors, we have a contradiction. 

Suppose that $|V_{\alpha'}Q_{\beta}/Q_{\beta}|\geq p^2$ and $b=3$. If, in addition, $b=3$ and $\bar{L_{\beta}}/O_{p'}(\bar{L_{\beta}})\not\cong \PSL_2(q_\beta)$ then $q_\alpha=p$ and since both $Q_{\beta}/C_{\beta}$ and $V_{\beta}$ contain a non-central chief factor for $L_{\beta}$, applying \cref{SpeMod2} and \cref{SpeModOdd}, we have that $\bar{L_{\beta}}/O_{p'}(\bar{L_{\beta}})\cong \PSU_3(p^2)$, $O^p(L_{\beta})$ centralizes $C_{\beta}/V_{\beta}$ and both $Q_{\beta}/C_{\beta}$ and $V_{\beta}$ contain a unique non-central chief factor for $L_{\beta}$. If $p$ is odd, then as $V_{\beta}$ is a quadratic module, we deduce that $L_{\beta}/R_{\beta}\cong \SU_3(p^2)$ and $V_{\beta}/C_{V_{\beta}}(O^p(L_{\beta}))$ is a natural module. But then, since $q_\alpha=p$, $Z_{\alpha}C_{V_{\beta}}(O^p(L_{\beta}))$ is a $G_{\alpha,\beta}$-invariant subgroup of order $p$, a contradiction by \cref{SUMod}. Hence, $p=2$. In particular, all involutions in $V_{\alpha'}Q_{\beta}/Q_{\beta}$ are conjugate and so have the same size centralizer of $V_{\beta}/Z_{\beta}$. By \cref{SpeMod2}, we have that $(V_{\beta}\cap Q_{\alpha'}\cap Q_{\alpha'+1})/Z_{\beta}=C_{V_{\beta}}(Z_{\alpha'+1})/Z_{\beta}=C_{V_{\beta}/Z_{\beta}}(Z_{\alpha'+1})$ has index $8$ in $V_{\beta}/Z_{\beta}$. Indeed, for some other $\lambda\in\Delta(\alpha')$ such that $V_{\alpha'}=Z_{\alpha'+1}Z_{\lambda}(V_{\alpha'}\cap Q_{\beta})$, we have that $C_{V_{\beta}/Z_{\beta}}(V_{\alpha'})=(V_{\beta}\cap Q_{\alpha'}\cap Q_{\alpha'+1}\cap Q_{\lambda})/Z_{\beta}$ which has index at most $16$ in $V_{\beta}/Z_{\beta}$ and applying \cref{2FRecog}, we conclude that $V_{\beta}/C_{V_{\beta}}(O^2(L_{\beta}))$ is a natural $\SU_3(4)$-module for $L_{\beta}/R_{\beta}$. Then using the $G_{\alpha, \beta}$-invariance of $Z_{\alpha}$ along with \cref{SUMod} using that $q_\alpha=2$ gives a contradiction.

If $\bar{L_{\beta}}/O_{p'}(\bar{L_{\beta}})\cong \PSL_2(q_\beta)$ and $q_\alpha=p$, then since $|V_{\alpha'}Q_{\beta}/Q_{\beta}|\geq p^2$ and $V_{\alpha'}$ centralizes $(Q_{\beta}\cap Q_{\alpha'})/V_{\beta}$, applying \cref{SpeMod2} and \cref{SpeModOdd}, we must have that $|V_{\beta}Q_{\alpha'}/Q_{\alpha'}|=p$. As above, using \cref{NewQuad2F} since $Z_{\alpha'+1}$ centralizes an index $p^2$ subgroup of $V_{\beta}$, this implies that $L_{\beta}/R_{\beta}\cong \PSL_2(4)$ and $V_{\beta}/C_{V_{\beta}}(O^p(L_{\beta}))$ is isomorphic to a natural $\Omega_4^-(2)$-module. But $V_{\alpha'}Q_{\beta}\in\syl_2(L_{\beta})$ and $V_{\alpha'}$ acts quadratically on $V_{\beta}$, a contradiction.

Finally, if $\bar{L_{\beta}}/O_{p'}(\bar{L_{\beta}})\cong \PSL_2(q_\beta)$ and $q_\alpha>p$, then $Z_{\alpha'+1}$ centralizes a subgroup of $Q_{\beta}$ of index at most $q_\alpha^2 q_\beta$ and as $Q_{\beta}/C_{\beta}$ and $V_{\beta}$ contain non-central chief factors for $L_{\beta}$, we deduce that $q_\alpha^2\geq q_\beta$ and $O^p(L_{\beta})$ centralizes $C_{\beta}/V_{\beta}$. Furthermore, for any $r\in L_{\beta}$ of order coprime to $p$, if $[r, Q_{\beta}, V_{\beta}]=\{1\}$, then $[r, V_{\beta}]\le Z(Q_{\beta})$ by the three subgroups lemma, from which it follows that $[r, Q_{\beta}]=\{1\}$, $r=1$ and $C_{L_{\beta}}(Q_{\beta}/C_{\beta})\le Q_{\beta}$. Similarly, if $[r, V_{\beta}]=\{1\}$ then $[r, Q_{\beta}, V_{\beta}]=\{1\}$, $[r, Q_{\beta}]=\{1\}$ and $R_{\beta}=Q_{\beta}$. Assume that $Z_{\alpha'+1}$ centralizes a subgroup of index strictly less than $q_\alpha^2$ in $Q_{\beta}/C_{\beta}$ or $V_{\beta}$. Applying \cref{2FRecog}, we infer that $\bar{L_{\beta}}\cong \SL_2(q_\beta)$ and applying \cref{SpeMod2} and \cref{SpeModOdd}, we have that both $Q_{\beta}/C_{\beta}$ and $V_{\beta}$ contain a unique non-central chief factor for $\bar{L_{\beta}}$. Hence, $V_{\beta}/C_{V_{\beta}}(O^p(L_{\beta}))$ is irreducible for $\bar{L_{\beta}}$. Since $Z_{\alpha}C_{V_{\beta}}(O^p(L_{\beta}))/C_{V_{\beta}}(O^p(L_{\beta})$ is $G_{\alpha,\beta}$-invariant, centralized by $S$ and of order $q_\alpha$, by Smith's theorem \cite[Theorem 2.8.11]{GLS3}, it follows that $Z_{\alpha}C_{V_{\beta}}(O^p(L_{\beta}))/C_{V_{\beta}}(O^p(L_{\beta}))=C_{V_{\beta}/C_{V_{\beta}}(O^p(L_{\beta}))}(S)$ and $\GF(q_\alpha)$ is the largest field extension $K$ of $\GF(p)$ for which $V_{\beta}/C_{V_{\beta}}(O^p(L_{\beta}))$ may be written as a $K\bar{L_{\beta}}$-module. Indeed, we deduce that $q_\alpha^2=q_\beta$ and $|V_\beta/C_{V_{\beta}}(O^p(L_{\beta}))|<q_\beta^2 q_{\alpha}^2\leq q_\beta^3$. Since $Z_{\alpha'+1}$ centralizes a subgroup of index strictly less than $q_\alpha^2$ in $Q_{\beta}/C_{\beta}$, we have a contradiction by \cref{SpeMod2} and \cref{SpeModOdd}.

Hence, $C_{\beta}(Q_{\beta}\cap \dots \cap Q_{\alpha'+1})$ has index at least $q_\alpha^2$ in $Q_{\beta}$ and $V_{\beta}\cap \dots \cap Q_{\alpha'+1}$ has index at least $q_\alpha^2$ in $V_{\beta}$. Since $Z_{\alpha'+1}$ centralizes a subgroup of $Q_{\beta}$ of index at most $q_\alpha^2 q_\beta$ and $q_\alpha^2\geq q_\beta$, we deduce that $q_\alpha^2=q_\beta$. Indeed, since $[V_{\alpha'}, Q_{\beta}\cap Q_{\alpha'}]\le V_{\beta}$, we must have that $|V_{\alpha'}Q_{\beta}/Q_{\beta}|=|V_{\beta}Q_{\alpha'}/Q_{\alpha'}|=q_\alpha$ and $(V_{\beta}\cap Q_{\alpha'})Q_{\alpha'+1}\in\syl_p(L_{\alpha'+1})$. Assume that $q_\alpha=p^2$ and $C_{V_{\beta}/Z_{\beta}}(x)>(V_{\beta}\cap Q_{\alpha'+1})/Z_{\beta}$ for all $x\in Z_{\alpha'+1}\setminus Z_{\alpha'}$. Then for $C_x$ the preimage in $V_{\beta}$ of $C_{V_{\beta}/Z_{\beta}}(x)$, we must have that $[C_x\cap Q_{\alpha'}, Z_{\alpha'}\langle x\rangle]\le Z_{\beta}\cap Z_{\alpha'}=\{1\}$ so that $C_x\cap Q_{\alpha'}\le Q_{\alpha'+1}$. If $V_{\beta}=C_x(V_{\beta}\cap Q_{\alpha'})$ then using $q_\beta=p^4$ and applying \cref{NewQuad2F}, we have a contradiction. Thus, $|C_xQ_{\alpha'}/Q_{\alpha'}|=p$ and $x$ centralizes a subgroup of index $p^3$ in $V_{\beta}/Z_{\beta}$. Let $P_{\beta}$ be a $p$-minimal subgroup of $L_{\beta}$ containing $Q_{\alpha'-1}$ such that $\bar{L_{\beta}}=\bar{P_{\beta}}O_{p'}(\bar{L_{\beta}})$. Set $V^P:=\langle Z_{\alpha}^{P_{\beta}}\rangle/Z_{\beta}$ so that $P_\beta$ non-trivially on $V^P$. Still, $Z_{\alpha'+1}$ is a $2F$-offender on the module $V^P/C_{V^p}(P_\beta)$ and applying \cite[Theorem 1]{ChermakSmall} and using that $x$ centralizes a subgroup of index at most $p^3<q_\beta$ in $V^P$, we obtain a contradiction. 

If $q_\alpha>p^2$ then by \cref{SpeMod2} and \cref{SpeModOdd} we must have that $C_{V_{\beta}/Z_{\beta}}(A)=(V_{\beta}\cap Q_{\alpha'+1})/Z_{\beta}$ for all $A\le Z_{\alpha'+1}$ such that $Z_{\alpha'}\le A$ and $|Z_{\alpha'+1}/A|=p$. Hence, for $q_\alpha\geq p^2$ and $C:=V_{\beta}\cap Q_{\alpha'}\cap Q_{\alpha'+1}$, applying \cref{GLS2p'}, we deduce that $C$ is normalized by $\bar{Z_{\alpha'+1}}O_{p'}(\bar{L_{\beta}})$. Then writing $H:=\langle (\bar{Z_{\alpha'+1}})^{O_{p'}(\bar{L_{\beta}})}\rangle$, we have that  $[H, C]\le Z_{\beta}$. Moreover, $V_{\beta}/C_{V_{\beta}}(O^p(L_{\beta}))=[V_{\beta}/C_{V_{\beta}}(O^p(L_{\beta})), O_{p'}(H)]\times C_{V_{\beta}/C_{V_{\beta}}(O^p(L_{\beta}))}(O_{p'}(H))$ is a $\bar{Z_{\alpha'+1}}O_{p'}(\bar{L_{\beta}})$-invariant decomposition and we conclude that $O_{p'}(H)$ centralizes $V_{\beta}$. Thus, $O_{p'}(L_{\beta}/R_{\beta})$ normalizes $\bar{Z_{\alpha'+1}}$ and $\bar{L_{\beta}}$ is a central extension of $\PSL_2(q_\beta)$ with a faithful quadratic module. Thus, $\bar{L_{\beta}}$ and we deduce that $|V_{\beta}/C_{V_{\beta}}(O^p(L_{\beta}))|\leq q_\beta^3$. Applying \cref{SL2ModRecog}, we have that $V_\beta/C_{V_{\beta}}(O^p(L_{\beta}))$ is a natural $\Omega_4^-(q_\alpha)$-module for $\bar{L_{\beta}}\cong \PSL_2(q_\beta)$ and $|V_{\beta}Q_{\alpha'}/Q_{\alpha'}|=q_\alpha$. Since $V_{\beta}$ admits quadratic action, we deduce that $p=2$.

Thus, we have reduced to the case where $b=3$, $p=2$, $L_{\beta}/R_{\beta}\cong \PSL_2(q_\beta)$ and $V_{\beta}/C_{V_{\beta}}(O^2(L_{\beta})$ is a natural $\Omega_4^-(q_\alpha)$-module. If $[V_{\alpha'-1}^{(2)}, Q_{\alpha'-1}]/Z_{\alpha'-1}$ does not contain a non-central chief factor for $L_{\alpha'-1}$, then $[V_{\beta}, Q_{\alpha'-1}]=[V_{\alpha'}, Q_{\alpha'-1}]\le V_{\alpha'}\cap V_{\beta}$ is centralized by $V_{\beta}$, a contradiction to the structure of $V_{\alpha'}/C_{V_{\alpha'}}(O^p(L_{\alpha'-1}))$. Hence, by conjugation and \cref{CommCF}, both $V_{\alpha}^{(2)}/[V_{\alpha}^{(2)}, Q_{\alpha}]$ and $[V_{\alpha}^{(2)}, Q_{\alpha}]/Z_{\alpha}$ contain a non-central chief factor for $L_{\alpha}$. 

If $Z_{\alpha'}\le [V_{\alpha}^{(2)}, Q_{\alpha}]$ then $Z_{\alpha'-1}\le [V_{\alpha}^{(2)}, Q_{\alpha}]$ and since $L_{\beta}/R_{\beta}\cong \PSL_2(q_\beta)$ is $2$-transitive on the set $\{Z_\lambda \mid \lambda\in\Delta(\beta)\}$, we have that $V_{\beta}=Z_{\alpha}\langle Z_{\alpha'-1}^S\rangle\le [V_{\alpha}^{(2)}, Q_{\alpha}]$, a contradiction since $[V_{\alpha}^{(2)}, Q_{\alpha}]<V_{\alpha}^{(2)}$. Thus, since $[V_{\alpha}^{(2)}, Q_{\alpha}]\le \Phi(Q_{\alpha})\le Q_{\beta}$ and $[V_{\alpha'}, Q_{\alpha'-1}]C_{V_{\alpha'}}(O^p(L_{\alpha'}))=V_{\alpha'}\cap Q_{\beta}$, we have that $[V_{\alpha'}\cap Q_{\beta}, [V_{\alpha}^{(2)}, Q_{\alpha}]\cap Q_{\alpha'-1}]\le Z_{\alpha'-1}\cap [V_{\alpha}^{(2)}, Q_{\alpha}]=Z_{\beta}$ so that $[V_{\alpha}^{(2)}, Q_{\alpha}]/Z_{\alpha}$ contains a unique non-central chief factor which, as a $\bar{L_{\alpha}}$-module, is an FF-module. Hence, $[V_{\alpha}^{(2)}, Q_{\alpha}, Q_{\alpha}]=[V_{\beta}, Q_{\alpha}, Q_{\alpha}]Z_{\alpha}$. Indeed, either $[V_{\beta}, Q_{\alpha}, Q_{\alpha}]\le Z(Q_{\alpha})\cap V_{\beta}$ or $Z_{\alpha}\le [V_{\beta}, Q_{\alpha}, Q_{\alpha}]$. In the former case, we observe that $Z(Q_{\alpha})\cap C_{V_{\beta}}(O^p(L_{\beta}))$ is normal in $G_{\beta}$ and centralized by $Q_{\alpha}\cap Q_{\beta}$. By the irreducibility under the action of $G_{\alpha,\beta}$ of $Q_\beta/(Q_{\alpha}\cap Q_{\beta})\cong S/Q_{\alpha}$, by \cref{push} we deduce that $S$ centralizes $Z(Q_{\alpha})\cap C_{V_{\beta}}(O^p(L_{\beta}))$ so that $[V_{\beta}, Q_{\alpha}, Q_{\alpha}]=Z_{\beta}$, a contradiction to the structure of a natural $\Omega_4^-(2)$-module. Hence, $Z_{\alpha}\le [V_{\beta}, Q_{\alpha}, Q_{\alpha}]$. BY \cref{A6Cohom}, $V_{\beta}/Z_{\beta}=[V_{\beta}/Z_{\beta}, O^2(L_{\beta})]\times C_{V_{\beta}/Z_{\beta}}(O^2(L_{\beta}))$ so that $Z_{\alpha}/Z_{\beta}\le [V_{\beta}/Z_{\beta}, O^2(L_{\beta}), Q_{\alpha}, Q_{\alpha}]$ and by the definition of $V_{\beta}$, we have that $C_{V_{\beta}}(O^2(L_{\beta}))=Z_{\beta}$ and $V_{\beta}/Z_{\beta}$ is irreducible. 

Since $G_{\alpha,\beta}\cap L_{\beta}$ acts irreducibly on $[V_{\beta}, Q_{\alpha}]/[V_{\beta}, Q_{\alpha}, Q_{\alpha}]$, writing $C^\alpha$ to be the preimage in $V_{\alpha}^{(2)}$ of $C_{[V_{\alpha}^{(2)}, Q_{\alpha}]/Z_{\alpha}}(O^p(L_{\alpha}))$, it follows that either $[V_{\alpha}^{(2)}, Q_{\alpha}]=[V_{\beta}, Q_{\alpha}]C^\alpha$ or $[V_{\beta}, Q_{\alpha}]\le C^\alpha$. In the former case, we have that $Q_{\beta}$ centralizes $[V_{\alpha}^{(2)}, Q_{\alpha}]/C^\alpha$, an obvious contradiction; whereas in the latter case, conjugating from $\alpha$ to $\alpha+2$, we have that $[V_{\beta}, Q_{\alpha+2}]=[V_{\alpha'}, Q_{\alpha+2}]$ is centralized by $V_{\beta}$, a contradiction to the structure of $V_{\alpha'}/C_{V_{\alpha'}}(O^p(L_{\alpha'-1}))$ by \cref{Omega4}. This completes the proof.
\end{proof}

\begin{lemma}\label{ZaStruc}
Suppose that $C_{V_\beta}(V_{\alpha'})<V_\beta\cap Q_{\alpha'}$, $q_\alpha=p$ and $Z_{\alpha}$ is not a natural module for $L_{\alpha}/R_{\alpha}\cong\SL_2(q_\alpha)$. Then the following hold:
\begin{enumerate}
\item $S=Q_{\alpha}Q_{\beta}$;
\item $|S/Q_{\alpha}|=p$;
\item $L_{\alpha}/R_{\alpha}\in\{\SL_2(p), \SU_3(2)', \Dih(10), (3\times 3):2, (Q_8\times Q_8):3, 2\cdot\Alt(5), 2^{1+4}_-.\Alt(5)\}$;
\item $R=[Q_{\alpha'}, V_{\alpha'}]\le Z_{\alpha'}$;
\item $Q_{\beta}\in\syl_p(R_{\beta})$;
\item $|Z_{\alpha}/Z_{\beta}|=p^2$; and 
\item unless $L_{\alpha}/R_{\alpha}\cong\SU_3(2)'$ and $R<Z_{\alpha'}$, we have that $R=Z_{\alpha'}$ and $Z_{\alpha}=Z_{\beta}\times Z_{\alpha-1}$ for some $\alpha-1\in\Delta(\alpha-1)$.
\end{enumerate}
\end{lemma}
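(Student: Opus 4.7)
The plan is to begin by invoking \cref{IdenNat}. Since $Z_\alpha$ is assumed not to be a natural $\SL_2(q_\alpha)$-module, outcomes (i) and (ii) of \cref{IdenNat} are excluded, so outcome (iii) must hold. This immediately delivers (v), fixes $q_\alpha = p$, records $[V_\beta, Q_\beta]\le C_{V_\beta}(O^p(L_\beta))$, restricts $L_\beta/R_\beta$ to a short list, and describes $V_\beta/C_{V_\beta}(O^p(L_\beta))$ via \cref{NewQuad2F}. It also secures $Z_{\alpha'+1}\not\le Q_\beta$, which will be essential for extracting a quadratic offender on $Z_\alpha$.

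The heart of the proof is to apply \cref{NewQuad2F} in the other direction, to the pair $(\bar{L_\alpha}, Z_\alpha)$. From the analysis in \cref{FirstReduction}(ii), one has $|(V_{\alpha'}\cap Q_\beta)Q_\alpha/Q_\alpha|=p$. Combining this with the hypothesis that $Z_\alpha$ is not a natural $\SL_2(p)$-module (so that \cref{SEFF} blocks $|Z_\alpha/C_{Z_\alpha}(x)|=p$), one verifies that $V_{\alpha'}\cap Q_\beta$ produces a nontrivial quadratic element $\bar{x}\in\bar{L_\alpha}$ with $[Z_\alpha,x,x]=1$ and $|Z_\alpha/C_{Z_\alpha}(x)|=p^2$; here \cref{bodd} gives $Z(L_\alpha)=\{1\}$, so the hypotheses of \cref{NewQuad2F} are met. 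The resulting classification must then be filtered: the natural $\SL_2(p^2)$ case is incompatible with $q_\alpha = p$; the $\mathrm{(P)SU}_3(p)$ case for $p$ odd is eliminated using \cref{SUMod} together with the $G_{\alpha,\beta}$-invariance of the subgroup $Z_\alpha\cap C_{V_\beta}(O^p(L_\beta))$, which conflicts with the submodule structure of a natural $\SU_3(p)$-module; and the $\PSL_2(4)$, $\Sz(2)$, and $(3\times 3):4$ possibilities are ruled out by comparing the chief factor data for $V_\beta$ (controlled by \cref{NatMod}, \cref{Omega4}, \cref{SzMod}, \cref{334}) against the action that would be forced on $Z_\alpha$. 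This establishes (iii); then (ii) and (i) are read off, as $|S/Q_\alpha| = p$ in each surviving case.

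With $(\bar{L_\alpha}, Z_\alpha)$ determined, (vi) is immediate: in each of the surviving modules (described in \cref{NatMod}, \cref{SUMod}, \cref{SzMod}, \cref{AltMod}, \cref{334}, \cref{Badp2}, and \cref{Badp3}) one has $|C_{Z_\alpha}(S)| = p$, and since $Z_\beta = \Omega(Z(S))\le Z_\alpha$ is nontrivial and centralized by $S$, the equality $Z_\beta = C_{Z_\alpha}(S)$ forces $|Z_\alpha/Z_\beta| = p^2$. For (iv), one applies the containment $[V_\beta, Q_\beta]\le C_{V_\beta}(O^p(L_\beta))$ coming from \cref{IdenNat}(iii) and conjugates to the critical pair $(\alpha'+1, \alpha')$; by \cref{SL2VlQ}, \cref{SL2VnQ}, and \cref{NotNatural} the analogous structure holds at $\alpha'$, and extracting $R = [Q_{\alpha'}, V_{\alpha'}]$ from the module data places it inside $Z_{\alpha'}$. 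Finally, (vii) proceeds by a direct case inspection: in each surviving case other than $\SU_3(2)'$, the module $Z_\alpha$ splits, as an $N_{L_\alpha}(S)$-module, as $Z_\beta\times C$ for an $N_{L_\alpha}(S)$-invariant complement $C$ of order $p$, which by transitivity of $L_\alpha$ on $\Delta(\alpha)$ is $Z_{\alpha-1}$ for some $\alpha-1\in\Delta(\alpha)$; the identification $R=Z_{\alpha'}$ in these cases then follows by computing the commutator $[Q_{\alpha'}, V_{\alpha'}]$ explicitly in the decomposed module.

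The main obstacle is the case-by-case elimination needed to pass from the full list in \cref{NewQuad2F} down to the specific list in (iii): in several candidates, notably $\mathrm{(P)SU}_3(p)$ for $p$ odd and $(3\times 3):4$ for $p=2$, the quadratic offender data and the strongly $p$-embedded hypothesis are a priori consistent, and the elimination requires matching the submodule structure of $Z_\alpha$ against the chief factor structure of $V_\beta$ forced by \cref{IdenNat}(iii). A secondary subtlety is confirming $|Z_\alpha/C_{Z_\alpha}(x)| = p^2$ rather than a strictly larger value; this requires carefully combining the bounds of \cref{FirstReduction}, \cref{SpeMod2}, and \cref{SpeModOdd} with the non-naturality hypothesis, and is the step most likely to require separate handling for small primes.
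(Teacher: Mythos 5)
Your opening move is correct and matches the paper: invoke \cref{IdenNat} to land in case (iii) of that lemma, observe that $Z_\alpha$ becomes a quadratic $2F$-module via a quadratic element of order $p$ in $\bar{L_\alpha}$, and apply \cref{NewQuad2F} to classify $(\bar{L_\alpha}, Z_\alpha)$. However, the filtering step — passing from the full list in \cref{NewQuad2F} to conclusion (iii) of the lemma — is where your argument breaks down, and it is precisely the hard part of the paper's proof.

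You claim the $\mathrm{(P)SU}_3(p)$ case (with $p$ odd) is eliminated by \cref{SUMod} together with the $G_{\alpha,\beta}$-invariance of $Z_\alpha\cap C_{V_\beta}(O^p(L_\beta))$ (a subgroup of index $p^2$ in $Z_\alpha$, centralized by $Q_\beta\cap O^p(L_\beta)$). But this does not create a conflict: in a natural $\SU_3(p)$-module $V$, \cref{SUMod} shows that $[V,S]$ is exactly an $N_G(S)$-invariant subgroup of index $p^2$, and $[V,S]=C_V(Z(S))$ is centralized by the nontrivial $p$-subgroup $Z(S)$. So the invariant-subgroup-of-index-$p^2$ data is entirely consistent with the $\SU_3(p)$ picture. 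The paper's proof at this point runs very differently: it first argues $S=Q_\alpha Q_\beta$ via a minimal-counterexample reduction forming index-$2$ subamalgams $(G_\alpha^*, G_\beta^*, KQ_\alpha Q_\beta)$ to rule out the $(3\times3){:}4$ shape for $\bar{L_\beta}$; then, assuming $|S/Q_\alpha|>p$, it forms $Q_\beta^* := \langle(Q_\alpha\cap Q_\beta)^{G_\beta}\rangle$, observes that if $S=Q_\beta^*Q_\alpha$ then $S$ centralizes $[Z_\alpha,Q_\beta]$, contradicting cubicity of the $\SU_3(p)$-module, and if $S\ne Q_\beta^*Q_\alpha$ then $Q_\beta^*Q_\alpha/Q_\alpha$ is a proper invariant subgroup forcing $\bar{L_\alpha}\cong\mathrm{(P)SU}_3(p)$ or $\SU_3(2)'.2$; in the $p=2$ subcase another subamalgam and minimality are invoked, and in the $p$ odd subcase a genuinely graph-theoretic argument is required, separately for $b>3$ (using $V_\alpha^{(2)}$, \cref{SimExt}, the Schur multiplier of $\SU_3(p)$) and $b=3$ (using the structure of $Z_{\alpha'-1}$). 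None of this machinery appears in your sketch, and without it there is no way to pass from the \cref{NewQuad2F} list to conclusions (i)--(iii).

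The same gap affects your elimination of $\Sz(2)$ and $(3\times3){:}4$: the paper uses the specific commutator data $Q_\beta\cap O^p(L_\beta)$ centralizing $[Z_\alpha,Q_\beta]$ against \cref{SzMod} and \cref{334}, not a generic ``chief factor comparison.'' Your derivations of (vi), (iv) and (vii) once (iii) is secured are plausible in outline, and notably your observation that (iv) should be obtained by conjugating the structure at $\beta$ to $\alpha'$ is a reasonable way to spell out what the paper leaves to ``\cref{NewQuad2F} and MAGMA.'' You also rightly flag the difficulty of nailing down $|Z_\alpha/C_{Z_\alpha}(x)|=p^2$; the paper gets this from the analysis inside \cref{FirstReduction}, so it is available before \cref{ZaStruc}. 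The decisive missing ingredient is the minimal-counterexample/subamalgam technique, which the paper needs at two places and which a purely module-theoretic filter cannot replace.
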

\begin{proof}
Since this result holds in all the relevant cases in \hyperlink{ThmC}{Theorem C}, we may assume that $G$ is a minimal counterexample to the lemma. We assume throughout that $Z_{\alpha}$ is not a natural module for $L_{\alpha}/R_{\alpha}\cong\SL_2(q_\alpha)$ and so $q_\alpha=p$ and $Z_{\alpha}$ is a quadratic $2$F-module determined by \cref{NewQuad2F}. By \cref{IdenNat}, we have that $L_{\beta}/R_{\beta}\cong \SL_2(p), (3\times 3):2, (3\times 3):4$ or $(Q_8\times Q_8):3$ and $V_{\beta}/C_{V_{\beta}}(O^p(L_{\beta}))$ is the associated $2F$-module described in \cref{NewQuad2F}.

Suppose first that $S\ne Q_{\alpha}Q_{\beta}$ so that $L_{\beta}/R_{\beta}\cong (3\times 3):4$. Moreover, since $Z_{\alpha}$ is also a quadratic $2$F-module determined by \cref{NewQuad2F}, $L_{\alpha}/R_{\alpha}\cong (3\times 3):4, \Sz(2), \SU_3(2)'.2$ or $\SU_3(2)$. For $\mu\in\{\alpha, \beta\}$, let $O_\mu$ be the preimage in $L_{\mu}$ of $O_{2'}(\bar{L_{\mu}})$ and $L_{\mu}^*:=O_\mu Q_{\alpha}Q_{\beta}$. Then $L_{\mu}^*\normaleq L_{\mu}$ and $L_{\mu}^*$ has index $2$ in $L_{\mu}$. Set $K$ to be a Hall $2'$-subgroup of $G_{\alpha,\beta}$ and set $G_{\mu}^*:=L_{\mu}^*K$. Then $G_{\mu}^*$ has index $2$ in $G_{\mu}$, and is normal in $G_{\mu}$. Moreover, for $X=\langle G_{\alpha}^*, G_{\beta}^*\rangle$, $X$ is normalized by $G_{\alpha,\beta}$ and $G=\langle X, G_{\alpha,\beta}\rangle$. Thus, any subgroup of $S$ which is normal in $X$ is also normal in $G$ and so is trivial. Hence, any subgroup of $G_{\alpha,\beta}\cap X$ which is normal in $X$ is a $2'$-group and we can arrange that it is contained in $K\le G_{\mu}^*$, a contradiction since $G_{\mu}^*$ is of characteristic $2$. Thus, the amalgam $(G_{\alpha}^*, G_{\beta}^*, KQ_{\alpha}Q_{\beta})$ satisfies \cref{MainHyp}. Since $G_{\alpha}^*$ and $G_{\beta}^*$ are solvable, by minimality, $(G_{\alpha}^*, G_{\beta}^*, KQ_{\alpha}Q_{\beta})$ is a weak BN-pair; or $X$ is a symplectic amalgam with $|S|=2^6$. In all cases, for some $\mu\in\{\alpha, \beta\}$, we infer that $\bar{L_{\mu}^*}\cong\Sym(3)$. But then, it follows that $\bar{L_{\mu}}\cong \Sym(3)\times R$, where $R$ is a $2$-group, a contradiction since $m_p(S/Q_{\mu})=1$. Hence, $S=Q_{\alpha}Q_{\beta}$ and (i) is proved.

Suppose that $|S/Q_{\alpha}|>p$. Since $S=Q_{\alpha}Q_{\beta}$ and $Q_{\beta}\cap O^p(L_{\beta})$ centralizes $[Z_{\alpha}, Q_{\beta}]$, $L_{\alpha}/R_{\alpha}\not\cong\Sz(2)$ or $(3\times 3):4$. Set $Q_\beta^*:=\langle (Q_{\alpha}\cap Q_{\beta})^{G_{\beta}}\rangle$. Then $Q_{\beta}^*$ centralizes $[Z_{\alpha}, Q_{\beta}]$. If $S=Q_{\beta}^*Q_{\alpha}$ then $S$ centralizes $[Z_{\alpha}, Q_{\beta}]$. However, since $|S/Q_{\alpha}|>p$, comparing with the list in \cref{NewQuad2F}, we have a contradiction. So $Q_{\beta}^*< Q_{\beta}$ and $Q_{\beta}^*Q_{\alpha}<S$. Then, $Q_{\beta}^*Q_{\alpha}$ is a proper $G_{\alpha,\beta}$-invariant subgroup of $S/Q_{\alpha}$, from which it follows that $L_{\alpha}/R_{\alpha}\cong\mathrm{(P)SU}_3(p)$ or $\SU_3(2)'.2$. Since $Q_{\beta}^*$ centralizes $[Z_{\alpha}, Q_{\beta}]$, $|Q_{\beta}^*Q_{\alpha}/Q_{\alpha}|=p$. 

If $p=2$, then as $m_2(S/Q_{\beta})=1$, $L_{\beta}$ is solvable. Set $L_{\beta}^*:=C_{L_{\beta}}([V_{\beta}, Q_{\beta}])$. Then, $Q_{\beta}^*\le O_2(L_{\beta}^*)$ and since $O_2(L_{\beta}^*)$ is $G_{\alpha,\beta}$-invariant and centralizes $[Z_{\alpha}, Q_{\beta}]$, $|O_2(L_{\beta}^*)Q_{\alpha}/Q_{\alpha}|=2$ and $Q_{\beta}^*=O_2(L_{\beta}^*)$. Moreover, $S^*:=Q_{\alpha}Q_{\beta}^*=S\cap L_{\beta}^*\in\syl_2(L_{\beta}^*)$. Setting $L_{\alpha}^*:=\langle (S^*)^{G_{\alpha}} \rangle$, we have that $L_{\alpha}^*\normaleq G_{\alpha}$ and $S^*\in\syl_2(L_{\alpha}^*)$. For $\mu\in\{\alpha, \beta\}$, set $G_{\mu}^*:=L_{\mu}^*K$, where $K$ is a Hall $2'$-subgroup of $G_{\alpha,\beta}$. Then the amalgam $X:=(G_{\alpha}^*, G_{\beta}^*, S^*K)$ satisfies \cref{MainHyp} and by the minimality of $G$, we have a contradiction.

Thus, $p$ is odd and $L_{\alpha}/R_{\alpha}\cong\mathrm{(P)SU}_3(p)$ and $L_{\beta}/R_{\beta}\cong \SL_2(p)$ or $(Q_8\times Q_8):3$. Suppose first that $b>3$. Then $V_{\alpha}^{(2)}$ is elementary abelian so that $V_{\alpha}^{(2)}\cap Q_{\alpha'}$ acts quadratically on $Z_{\alpha'+1}\cap Q_{\beta}$. In particular, $V_{\alpha}^{(2)}\cap Q_{\alpha'-2}\cap Q_{\alpha'-1}=Z_{\alpha}(V_{\alpha}^{(2)}\cap Q_{\alpha'+1})$, $V_{\alpha}^{(2)}\not\le Q_{\alpha'-2}$ and $V_{\alpha}^{(2)}\cap Q_{\alpha'-2}\not\le Q_{\alpha'-1}$ so that $V_{\alpha}^{(2)}/Z_{\alpha}$ contains a unique non-central chief factor for $L_{\alpha}$ which is also a natural $\SU_3(p)$-module. Then an argument on the Schur multiplier of $\SU_3(p)$ yields that $O^p(R_{\alpha})$ centralizes $V_{\alpha}^{(2)}$. Indeed, $Z_{\alpha'}\ne Z_{\alpha'-2}$ else by \cref{SimExt} $V_{\alpha'}=V_{\alpha'-2}\le Q_{\beta}$. But now, $L_{\alpha'-1}=\langle V_{\alpha}^{(2)}\cap Q_{\alpha'-2}, Q_{\alpha'}, R_{\alpha'-1}\rangle$ centralizes $Z_{\alpha'}=[V_{\beta}\cap Q_{\alpha'}, Z_{\alpha'+1}]\le V_{\beta}$, so that $Z_{\alpha'}\le Z(L_{\alpha'+1})=\{1\}$, a contradiction. 

Suppose now that $b=3$. Note that if $Z_{\beta}=Z(R_{\alpha'-1}Q_{\beta})$ and so either $Z_{\alpha'}=Z_{\beta}$ and $R_{\alpha'-1}Q_{\beta}=R_{\alpha'-1}Q_{\alpha}$, or $Z_{\alpha'}\cap Z_{\beta}$ is centralized by $L_{\alpha'-1}=\langle Q_{\alpha'}, Q_{\beta}, R_{\alpha'-1}\rangle$. In the former case, we have that $[V_{\beta}\cap Q_{\alpha'}, Z_{\alpha'+1}]=Z_{\alpha'}=Z_{\beta}$, a contradiction by \cref{NotNatural}. In the latter case, $Z_{\alpha'}\cap Z_{\beta}=\{1\}$ so that $[Z_{\alpha'+1}\cap Q_{\beta}, Z_{\alpha}\cap Q_{\alpha'}]\le Z_{\alpha'}\cap Z_{\beta}=\{1\}$ and an index $p$ subgroup of $Z_{\alpha}$ is centralized by $Z_{\alpha'+1}\cap Q_{\beta}$. Hence, $Z_{\alpha}$ is an FF-module, a contradiction.

Thus, $|S/Q_{\alpha}|=p$ and, as $Q_{\alpha}\cap Q_{\beta}\not\normaleq L_{\beta}$ by \cref{push}, $Q_{\beta}=(Q_{\alpha}\cap Q_{\beta})(Q_{\gamma}\cap Q_{\beta})$ for some $\gamma\in\Delta(\beta)$. Thus, $L_{\beta}=\langle Q_{\gamma}\mid \gamma\in\Delta(\beta)\rangle$ centralizes $[V_{\beta}, Q_{\beta}]$ and $[V_{\beta}, Q_{\beta}]\le Z_{\beta}$. The remaining properties follow from \cref{NewQuad2F} and may be checked in MAGMA.
\end{proof}

\begin{lemma}\label{VA2NotNat}
Suppose that $C_{V_\beta}(V_{\alpha'})<V_\beta\cap Q_{\alpha'}$ and $V_{\alpha}^{(2)}/Z_{\alpha}$ contains a unique non-central chief factor $U/V$ for $L_{\alpha}$. Then $U/V$ is not an FF-module for $\bar{L_{\alpha}}$. 
\end{lemma}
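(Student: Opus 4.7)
The plan is to assume for contradiction that the unique non-central chief factor $U/V$ inside $V_\alpha^{(2)}/Z_\alpha$ is an FF-module for $\bar{L_\alpha}$ and to derive a contradiction by applying \cref{SubAmal} with $\lambda=\alpha$. The contradiction will come from the fact that each of the four outcomes of \cref{SubAmal} is incompatible with the defining hypothesis of this subsection, namely $C_{V_\beta}(V_{\alpha'})<V_\beta\cap Q_{\alpha'}$.

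First I would verify that \cref{CommonHyp} is in force. By \cref{IdenNat} we are in one of the three cases (i)--(iii) there, and in each case the structure of $Z_\alpha$ together with $Z_{\beta}=\Omega(Z(S))$ (from \cref{bodd}), the results \cref{ZQ=Z} and \cref{ZaStruc}, and the identification of $V_\beta/C_{V_\beta}(O^p(L_\beta))$ together establish the clauses of \cref{CommonHyp}. In cases (i) and (iii), the hypothesis of \cref{IdenNat} directly ties $Z(Q_\alpha)$ and $Z(Q_\beta)$ to $Z_\alpha$ and $Z_\beta$, while case (ii) is handled by the direct-sum structure of $Z_\alpha$ and a standard centralizer argument.

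Next, from \cref{SEFF} applied to $U/V$ I get $L_\alpha/C_{L_\alpha}(U/V)\cong\SL_2(q_\alpha)$. Since $U/V$ sits above $Z_\alpha$ in $V_\alpha^{(2)}$ and is distinct from $Z_\alpha/Z_\beta$ as a chief factor, $C_{L_\alpha}(U/V)\ne R_\alpha$. Moreover, $C_{L_\alpha}(U/V)\cap R_\alpha$ centralizes both $Z_\alpha$ and $U/V$, hence acts trivially on $Q_\alpha/C_{Q_\alpha}(V_\alpha^{(2)})$ by the three subgroups lemma, and therefore normalizes $Q_\alpha\cap Q_\beta$ since $C_{Q_\alpha}(V_\alpha^{(2)})\le Q_\alpha\cap Q_\beta$. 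Thus all three hypotheses of \cref{SubAmal} are met with $\lambda=\alpha$.

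Now \cref{SubAmal} yields one of four configurations. Outcome (c) cannot hold since $\lambda=\alpha\ne\beta$, and outcome (b) is incompatible with the structure that $V_\beta/C_{V_\beta}(O^p(L_\beta))$ is a natural $\SL_2(q)$-module (or one of the explicit modules from \cref{NewQuad2F}), together with the fact that $O^p(R_\alpha)$-centralization of chief factors was controlled in \cref{GoodAction1}. Outcome (a) places $(H_\alpha,G_\beta,G_{\alpha,\beta})$ inside a weak BN-pair of rank $2$; by the classification in \cref{greenbook}, together with the structure of $V_\beta$ inside such amalgams, we would get $C_{V_\beta}(V_{\alpha'})=V_\beta\cap Q_{\alpha'}$, contradicting our subsection hypothesis. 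The main obstacle will be outcome (d): here we build $X=\langle H_\alpha,G_\beta\rangle$ acting faithfully on $V=\langle Z_\beta^X\rangle$ with $\wt X$ locally isomorphic to $\SL_3(p)$, $\Sp_4(p)$, $\mathrm{G}_2(2)$ or $\PSp_4(3)$. In each of these the number of non-central $\wt L_\alpha$-chief factors in $\wt Q_\alpha$ and the number of non-central $\wt L_\beta$-chief factors in $\wt Q_\beta$ are explicitly determined. Using that $V_\alpha^{(2)}\le V$ and that $V_\alpha^{(2)}/Z_\alpha$ has exactly one non-central chief factor by assumption, together with the counts forced by $V_\beta$ being a $2F$-module rather than a natural module when $C_{V_\beta}(V_{\alpha'})<V_\beta\cap Q_{\alpha'}$, we will exceed the allowed number of chief factors on the $\wt L_\beta$-side in every case, yielding the desired contradiction.
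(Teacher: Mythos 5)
There is a fatal obstruction to the plan, right at the step where you assert that \cref{CommonHyp} can be verified. \cref{CommonHyp} (and with it hypothesis (i) of \cref{SubAmal}) demands that $V_\beta/C_{V_\beta}(O^p(L_\beta))$ be a \emph{natural} $\SL_2(q)$-module for $L_\beta/R_\beta\cong\SL_2(q)$. But the standing hypothesis of this entire subsection, namely $C_{V_\beta}(V_{\alpha'})<V_\beta\cap Q_{\alpha'}$, is precisely the negation of the conclusion of \cref{NotNatural}, and the paper records at the opening of the subsection that under this hypothesis $V_{\alpha'}/C_{V_{\alpha'}}(O^p(L_{\alpha'}))$ is \emph{never} a natural $\SL_2(q_{\alpha'})$-module. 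Since $\alpha'\in\beta^G$, the same holds for $V_\beta/C_{V_\beta}(O^p(L_\beta))$. \cref{IdenNat} confirms this concretely: its outcome (ii) gives a direct sum of two natural modules and outcome (iii) gives the explicit $2F$-modules of \cref{NewQuad2F}, and neither is a single natural module. So \cref{SubAmal} (and indeed the whole package of \cref{VBGood}, \cref{GoodAction1}--\cref{GoodAction4}, \cref{SimExt}) is structurally unavailable throughout this subsection; none of those lemmas is ever invoked here in the paper, and your attempt to do so cannot get off the ground.

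The paper's actual argument is far more elementary and bypasses \cref{SubAmal} entirely. Assuming $U/V$ is an FF-module, it sets $C$ to be the preimage in $V_\alpha^{(2)}$ of $C_{V_\alpha^{(2)}/Z_\alpha}(O^p(L_\alpha))$ and uses \cref{SEFF} to see that $V_\alpha^{(2)}/C$ is a natural $\SL_2(q_\alpha)$-module, so $V_\beta C/C$ has order $q_\alpha$ and $V_\beta\cap C$ is an $L_\alpha$-invariant subgroup of $V_\beta$ of index $q_\alpha$. Conjugating, $L_{\alpha+2}$ likewise normalizes an index-$q_\alpha$ subgroup of $V_\beta$, and transitivity forces $V_\beta\cap V_{\alpha+3}$ to have index $q_\alpha$ in $V_\beta$. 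Combined with $V_\beta\not\le Q_{\alpha'}$ this gives $V_\beta\cap Q_{\alpha'}=V_\beta\cap V_{\alpha+3}\le C_{\alpha'}$, hence $[V_\beta\cap Q_{\alpha'},V_{\alpha'}]=\{1\}$, directly contradicting the subsection hypothesis. You should look for an argument along these lines rather than one that routes through the \cref{CommonHyp} machinery.
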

\begin{proof}
Suppose that $U/V$ is an FF-module for $\bar{L_{\alpha}}$. BY \cref{IdenNat}, $q_\alpha=q_\beta$. By \cref{CommCF}, $V_{\alpha}^{(2)}/[V_{\alpha}^{(2)}, Q_{\alpha}]$ contains a non-central chief factor for $L_{\alpha}$. Set $C$ to be the preimage in $V_{\alpha}^{(2)}$ of $C_{V_{\alpha}^{(2)}/Z_{\alpha}}(O^p(L_{\alpha}))$. Then $[V_{\alpha}^{(2)}, Q_{\alpha}]\le C$ and since $U/V$ is an FF-module and $|S/Q_{\alpha}|=p$, by \cref{SEFF}, $V_{\alpha}^{(2)}/C$ is isomorphic to a natural $\SL_2(q_\alpha)$-module. In particular, as $V_{\beta}\not\le C$, $V_{\beta}C/C$ is of order $q_\alpha$ for otherwise $Q_{\beta}$ centralizes $V_{\alpha}^{(2)}/C$. But now, $V_{\beta}\cap C$ has index $q_\alpha$ in $V_{\beta}$ and is normalized by $L_{\alpha}$. By conjugacy, an index $q_\alpha$ subgroup of $V_{\beta}$ is normalized by $L_{\alpha+2}$, and by transitivity, this subgroup is contained in $V_{\alpha+3}$ so that $V_{\beta}\cap V_{\alpha+3}$ is of index $q_\alpha$ in $V_{\beta}$. But then, as $V_{\beta}\not\le Q_{\alpha'}$, $V_{\beta}\cap Q_{\alpha'}=V_{\beta}\cap V_{\alpha+3}=V_{\beta}\cap C_{\alpha'}$ and $[V_{\beta}\cap Q_{\alpha'}, V_{\alpha'}]=\{1\}$, contradicting the initial assumption.
\end{proof}

\begin{lemma}\label{IsNatMod}
Suppose that $C_{V_\beta}(V_{\alpha'})<V_\beta\cap Q_{\alpha'}$. Then $Z_{\alpha}$ is a natural $\SL_2(q_\alpha)$-module.
\end{lemma}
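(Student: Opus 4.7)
The plan is to argue by contradiction, assuming $Z_{\alpha}$ is not a natural $\SL_2(q_{\alpha})$-module. By \cref{IdenNat}, we are then in outcome (ii) or (iii) of that lemma, and we treat these two cases separately. In outcome (ii) we have $q_{\alpha}>p$, $L_{\alpha}/R_{\alpha}\cong \SL_2(q_{\alpha})\cong L_{\beta}/R_{\beta}$, and both $Z_{\alpha}$ and $V_{\beta}/C_{V_{\beta}}(O^p(L_{\beta}))$ are direct sums of two natural modules; in outcome (iii) we have $q_{\alpha}=p$ and $Z_{\alpha}$ and $V_{\beta}/C_{V_{\beta}}(O^p(L_\beta))$ are among the small quadratic $2F$-modules of \cref{NewQuad2F}. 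The overall strategy is then to exploit the hypothesis $C_{V_{\beta}}(V_{\alpha'})<V_{\beta}\cap Q_{\alpha'}$, i.e.\ $[V_{\beta}\cap Q_{\alpha'},V_{\alpha'}]=Z_{\alpha'}\ne\{1\}$, to produce an impossible action on $Z_{\alpha}$ or on $V_{\alpha}^{(2)}$.

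For outcome (ii), I would analyse the action of the quadratic offender $V_{\alpha'}$ on the direct sum $V_{\beta}/C_{V_{\beta}}(O^p(L_{\beta}))$. Because this module is the sum of two natural modules for $\SL_2(q_{\alpha})$ with $q_{\alpha}>p$, \cref{NatMod} forces $|V_{\alpha'}Q_{\beta}/Q_{\beta}|=q_{\alpha}$ and $C_{V_{\beta}}(V_{\alpha'})=(V_{\beta}\cap Q_{\alpha'})C_{V_{\beta}}(O^p(L_\beta))$; together with \cref{NatGen} applied to the $G_{\alpha,\beta}$-invariant subgroup $Z_{\alpha'}$ of order $q_{\alpha}$ in $V_{\beta}$, this then forces $Z_{\alpha'}\le C_{V_{\beta}}(O^p(L_\beta))$ and hence $Z_{\alpha'}=Z_{\beta}$, contradicting $C_{V_{\beta}}(V_{\alpha'})<V_{\beta}\cap Q_{\alpha'}$ by \cref{NotNatural}. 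Symmetrically, applying the same analysis to $Z_{\alpha}$ viewed under the action of $Z_{\alpha'+1}$ provides a backup route to the same contradiction.

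For outcome (iii), I would feed the structural data from \cref{ZaStruc} into an analysis of $V_{\alpha}^{(2)}/Z_{\alpha}$. By \cref{ZaStruc}(vi),(vii) we know $|Z_{\alpha}/Z_{\beta}|=p^2$, $R=Z_{\alpha'}$ in all but one subcase, and $Z_{\alpha}=Z_{\beta}\times Z_{\alpha-1}$ generically. Using $b$ odd together with \cref{VBGood}, one then shows that $V_{\alpha}^{(2)}\le Q_{\alpha'-1}$ with $V_{\alpha}^{(2)}\cap Q_{\alpha'}$ of index $p$ in $V_{\alpha}^{(2)}$ centralising $Z_{\alpha'}$ modulo $Z_{\alpha}$; this forces $V_{\alpha}^{(2)}/Z_{\alpha}$ to contain a unique non-central $L_{\alpha}$-chief factor which is an FF-module for $\bar{L_{\alpha}}$, directly contradicting \cref{VA2NotNat}. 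The exceptional $\SU_3(2)'$ subcase with $R<Z_{\alpha'}$ needs separate handling: here I would use \cref{SubAmal} with $\lambda=\beta$ and the detailed knowledge of $V_{\beta}/C_{V_{\beta}}(O^p(L_\beta))$ from \cref{NewQuad2F}, followed by the minimality of $G$ as a counterexample to \hyperlink{MainGrpThm}{Theorem C}, to close out.

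The main obstacle will be outcome (iii), because the list of candidate pairs $(\bar{L_{\alpha}},Z_{\alpha})$ from \cref{NewQuad2F} is long and each small configuration (notably $\SU_3(2)'$, $(3\times 3){:}2$ and $\Dih(10)$ at $p=2$, and $(Q_8\times Q_8){:}3$ and $2^{1+4}_-.\Alt(5)$ at $p=3$) has to be killed either by the FF-module contradiction via \cref{VA2NotNat} or by the subamalgam reduction in \cref{SubAmal}; tracking through the $b=3$ versus $b>3$ dichotomy and verifying that the offender $V_{\alpha'}\cap Q_{\beta}$ always witnesses quadratic FF-action on the relevant chief factor will be the delicate step. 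Outcome (ii) should by comparison be a fairly direct consequence of the rigidity of a direct sum of two natural $\SL_2(q)$-modules together with \cref{NotNatural}.
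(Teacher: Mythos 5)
Your overall scaffolding is right — you invoke \cref{IdenNat} to split into the non-natural cases and you correctly see that \cref{VA2NotNat}, \cref{ZaStruc} and \cref{NotNatural} are the levers — but the mechanism you propose for pulling them does not match what actually happens, and I think it would not survive being written out.

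The most serious problem is in your treatment of outcome (iii) (and, implicitly, of the $b>3$ regime in general). You claim that one shows $V_{\alpha}^{(2)}\le Q_{\alpha'-1}$ with $V_{\alpha}^{(2)}\cap Q_{\alpha'}$ of index $p$, and then derives a unique FF chief factor contradicting \cref{VA2NotNat}. But the actual deduction from \cref{VA2NotNat} runs the opposite way: assuming $Z_{\alpha}$ is not natural, one uses \cref{VA2NotNat} (together with the quadratic action of $Z_{\alpha'+1}\cap Q_{\beta}$ on $V_{\alpha}^{(2)}$, which is elementary abelian since $b>3$) to conclude that $V_{\alpha}^{(2)}\not\le Q_{\alpha'-2}$ and $V_{\alpha}^{(2)}\cap Q_{\alpha'-2}\not\le Q_{\alpha'-1}$. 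This non-containment is not the end of the proof but its engine: from $V_{\alpha}^{(2)}\not\le Q_{\alpha'-2}$ one produces a fresh critical pair $(\alpha-2,\alpha'-2)$ satisfying the same hypotheses, one shows $Z_{\alpha'}=Z_{\alpha'-2}$ (because $V_{\alpha}^{(2)}\cap Q_{\alpha'-2}\not\le Q_{\alpha'-1}$ yet centralizes $Z_{\alpha'}Z_{\alpha'-2}$, forcing the two to coincide in the decomposition $Z_{\alpha'-1}=Z_{\alpha'}\times Z_{\alpha'-2}$), and iterating one obtains $Z_{\alpha'}=\dots=Z_{\beta}$, whence $R=Z_{\beta}$ and $V_{\beta}/C_{V_{\beta}}(O^p(L_{\beta}))$ is natural, contradicting \cref{NotNatural}. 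This ``march down the critical path'' is the heart of the argument for $b>3$ and is entirely absent from your proposal; without it you have no way to reduce to $b=3$.

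Your outcome (ii) sketch is also unconvincing. The assertion that \cref{NatMod} ``forces'' $C_{V_{\beta}}(V_{\alpha'})=(V_{\beta}\cap Q_{\alpha'})C_{V_{\beta}}(O^p(L_{\beta}))$ would, if true, contradict the hypothesis $C_{V_{\beta}}(V_{\alpha'})<V_{\beta}\cap Q_{\alpha'}$ in a single line, which is a strong signal that something is missing: \cref{NatMod} controls the action modulo $C_{V_{\beta}}(O^p(L_{\beta}))$ but says nothing by itself about how $V_{\alpha'}\cap Q_{\beta}$ acts below that subgroup. Likewise $Z_{\alpha'}$ has order $q_{\alpha}^2$ here (not $q_{\alpha}$), so the intended application of \cref{NatGen} doesn't parse. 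Finally, the paper does not split outcomes (ii) and (iii) of \cref{IdenNat} into separate tracks the way you do: after singling out the genuinely exceptional $\SU_3(2)'$ subcase (which it dispatches by a bespoke computation with $X_{\alpha'-1}$ for $b>3$ and by the failure of $Q_{\alpha}/\Phi(Q_{\alpha})$ to carry the right FF/2F-structure for $b=3$, not by \cref{SubAmal}), it runs the two cases together using the common decomposition $Z_{\alpha}=Z_{\beta}\times Z_{\alpha-1}$, and in the residual $b=3$ case closes by observing that $Z_{\alpha'}\cap Z_{\beta}$ must be trivial or everything, each alternative leading to a \cref{NotNatural} or \cref{SEFF} contradiction. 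Your proposal does not reach either of these endgames.
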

\begin{proof}
Suppose that $b>3$ and $Z_{\alpha}$ is not a natural $\SL_2(q_\alpha)$-module. By \cref{IdenNat}, we have that $q_\alpha=q_\beta$. If $q_\alpha=p$ then $Z_{\alpha}$ is as described in \cref{ZaStruc}, whereas if $q_\alpha>p$ then $Z_{\alpha}$ is a direct sum of two natural $\SL_2(q_\alpha)$-modules. Since $V_{\alpha}^{(2)}$ is elementary abelian and $Z_{\alpha}\cap Q_{\alpha'}\not\le Q_{\alpha'+1}$, $V_{\alpha}^{(2)}\cap Q_{\alpha'-2}\cap Q_{\alpha'-1}=Z_{\alpha}(V_{\alpha}^{(2)}\cap Q_{\alpha'+1})$ is centralized, modulo $Z_{\alpha}$, by $Z_{\alpha'+1}\cap Q_{\beta}\not\le Q_{\alpha}$. By \cref{VA2NotNat}, we conclude that $V_{\alpha}^{(2)}\not\le Q_{\alpha'-2}$ and $V_{\alpha}^{(2)}\cap Q_{\alpha'-2}\not\le Q_{\alpha'-1}$.

Suppose first that $L_{\alpha}/R_{\alpha}\cong\SU_3(2)'$ so that $q_\alpha=q_\beta=p$. Then $R=[V_{\beta}\cap Q_{\alpha'}, V_{\alpha'}]=[V_{\alpha'}, Q_{\alpha'}]$ is of order $4$ and strictly contained in $Z_{\alpha'}$. Moreover, since $b>3$, $R$ is centralized by $X_{\alpha'-1}:=\langle V_{\alpha}^{(2)}\cap Q_{\alpha'-2}, R_{\alpha'-1}, Q_{\alpha'}\rangle$ and so either $Q_{\alpha'}Q_{\alpha'-1}$ is conjugate to $Q_{\alpha'}Q_{\alpha'-2}$ by an element of $R_{\alpha'-1}$; or $X_{\alpha'-1}/C_{X_{\alpha'-1}}(Z_{\alpha'-1})\cong\Sym(3)$. In the latter case, it follows that $R$ is invariant under the action of a subgroup of index $3$ in $L_{\alpha'-1}$, a contradiction to structure of $Z_{\alpha'-1}$. In the former case, it follows that $[V_{\alpha'}, Q_{\alpha'}]=[V_{\alpha'-2}, Q_{\alpha'-2}]$ and since $V_{\alpha}^{(2)}\not\le Q_{\alpha'-2}$, we may iterate backwards through critical pairs $(\alpha-2k, \alpha'-2k)$ for $k\geq 0$ so that $R=[V_{\alpha'}, Q_{\alpha'}]=[V_{\beta}, Q_{\beta}]\le Z_{\beta}$ and so an index $p$ subgroup of $V_{\beta}/Z_{\beta}$ is centralized by $V_{\alpha'}$. We have a contradiction by \cref{NotNatural}. 

Now, for all values of $q_\alpha$, we have that $Z_{\alpha'}=H\le V_{\beta}$. Then $V_{\alpha}^{(2)}\cap Q_{\alpha'-2}$ is not contained in $Q_{\alpha'-1}$ and centralizes $Z_{\alpha'}Z_{\alpha'-2}$. It follows that $Z_{\alpha'}=Z_{\alpha'-2}$. Moreover, since $V_{\alpha}^{(2)}\not\le Q_{\alpha'-2}$ there is some $\alpha-2$, with $(\alpha-2, \alpha'-2)$ a critical pair. By \cref{NotNatural}, we may assume that $(\alpha-2, \alpha'-2)$ satisfies the same hypothesis as $(\alpha, \alpha')$. Iterating through critical pairs, we conclude that $Z_{\alpha'}=\dots=Z_{\beta}$. But then $R=[V_{\beta}\cap Q_{\alpha'}, V_{\alpha'}]=Z_{\beta}$ and $V_{\beta}/C_{V_{\beta}}(O^p(L_{\beta}))$ is a natural $\SL_2(q_\alpha)$-module for $L_{\beta}/R_{\beta}$, a contradiction by \cref{NotNatural}. Hence if $Z_{\alpha}$ is not a natural $\SL_2(q_\alpha)$-module then $b=3$.

Suppose that $L_{\alpha}/R_{\alpha}\cong\SU_3(2)'$, $b=3$ and $Z_{\alpha}$ is the restriction of a natural $\SU_3(2)$-module. Since $Q_{\alpha}$ is non-abelian, by the irreducibility of $Z_{\alpha}$, $Z_{\alpha}\le \langle (Z_{\beta}\cap \Phi(Q_{\alpha}))^{G_{\alpha}}\rangle\le \Phi(Q_{\alpha})$. If $|S/Q_{\beta}|=2$, then $Q_{\alpha}\cap Q_{\beta}\cap Q_{\alpha'-1}=Z_{\alpha}(Q_{\alpha}\cap \dots \cap Q_{\alpha'+1})$ and since $Q_{\alpha}/\Phi(Q_{\alpha})$ is not an FF-module, $\bar{L_{\alpha}}\cong\SU_3(2)'$ and $Q_{\alpha}/\Phi(Q_{\alpha})$ contains a unique non-central chief factor, $U/V$ say. Moreover, $U/V$ is isomorphic to $Z_{\alpha}$ and $U\not\le Q_{\beta}$. But $U\cap Q_{\beta}$ is $G_{\alpha,\beta}$-invariant subgroup of index $2$ in $U$, a contradiction.

Applying \cref{NewQuad2F}, we see that $L_{\beta}/R_{\beta}\cong (3\times 3):4$. Now $V_{\beta}(Q_{\beta}\cap Q_{\alpha'-1}\cap Q_{\alpha'})$ has index at most $4$ in $Q_{\beta}$ and since $|S/Q_{\beta}|\ne 2$, no non-central chief factor is an FF-module for $\bar{L_{\beta}}$ and so $Q_{\beta}/V_{\beta}$ contains a unique non-central chief factor for $\bar{L_{\beta}}$, and this chief factor lies in $Q_{\beta}/C_{\beta}$. Then, an application of the three subgroup lemma implies that $R_{\beta}=Q_{\beta}$. Thus, $\bar{L_{\beta}}\cong (3\times 3):4$. However, from the structure of $Z_{\alpha}$, we conclude that $Z_{\beta}=Z_{\alpha}\cap C_{V_{\beta}}(O^p(L_{\beta}))$ has index $4$ in $Z_{\alpha}$ so that a subgroup of order $4$ of $V_{\beta}/C_{V_{\beta}}(O^2(L_{\beta}))$ is centralized by $S=Q_{\alpha}Q_{\beta}$, contradicting the structure of the $2$F-module associated to $(3\times 3):4$. Hence, $L_{\alpha}/R_{\alpha}\not\cong\SU_3(2)'$.

By \cref{ZaStruc} when $q_\alpha=p$, we may now assume that $Z_{\alpha}=Z_{\beta}\times Z_{\alpha-1}$ for some $\alpha-1\in\Delta(\alpha)$. Then $[V_{\beta}\cap Q_{\alpha'}, V_{\alpha'}\cap Q_{\beta}]\le Z_{\alpha'}\cap Z_{\beta}$. But $Z_{\alpha'}Z_{\beta}\le Z_{\alpha'-1}$ and by \cref{ZaStruc}, either $Z_{\alpha'}=Z_{\beta}$, or $Z_{\alpha'}\cap Z_{\beta}=\{1\}$. If $Z_{\alpha'}=Z_{\beta}$, then $R=[V_{\beta}\cap Q_{\alpha'}, V_{\alpha'}]=Z_{\beta}$ and so $V_{\beta}/C_{V_{\beta}}(O^p(L_{\beta}))$ is a natural $\SL_2(q_\alpha)$-module, a contradiction by \cref{NotNatural}. Hence, $[Z_{\alpha}\cap Q_{\alpha'}, Z_{\alpha'+1}\cap Q_{\beta}]\le [V_{\beta}\cap Q_{\alpha'}, V_{\alpha'}\cap Q_{\beta}]\le Z_{\alpha'}\cap Z_{\beta}=\{1\}$ and by \cref{SEFF}, $Z_{\alpha}$ is an FF-module, a final contradiction.
\end{proof}

\begin{lemma}\label{ZaNat}
Suppose that $C_{V_\beta}(V_{\alpha'})<V_\beta\cap Q_{\alpha'}$. Then $q_\alpha=q_\beta$ and for $V:=V_{\beta}/C_{V_{\beta}}(O^p(L_{\beta}))$ either:
\begin{enumerate}
\item $V$ is a natural $\Sz(q_\beta)$-module for $L_{\beta}/R_{\beta}\cong\Sz(q_\beta)$;
\item $V$ is a natural $\Sz(2)$-module for $L_{\beta}/R_{\beta}\cong\Dih(10)$; or
\item $V$ is a $2$F-module for $L_{\beta}/R_{\beta}\cong (3\times 3):2$ or $(3\times 3):4$.
\end{enumerate}
\end{lemma}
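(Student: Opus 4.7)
The plan is to proceed from the reductions already in place and eliminate all remaining configurations for $V:=V_\beta/C_{V_\beta}(O^p(L_\beta))$ that are not on the list. First I would invoke \cref{IdenNat} to obtain that $q_\alpha=q_\beta$, $Q_\beta\in\syl_p(R_\beta)$, $[V_\beta,Q_\beta]\le C_{V_\beta}(O^p(L_\beta))$, and that one of the three structural cases of that lemma holds. Combined with \cref{IsNatMod}, which under the present hypothesis forces $Z_\alpha$ to be a natural $\SL_2(q_\alpha)$-module, case (ii) of \cref{IdenNat} is immediately ruled out since in that case $Z_\alpha$ would be a direct sum of two natural modules. Moreover, by \cref{NotNatural} applied to the hypothesis $C_{V_\beta}(V_{\alpha'})<V_\beta\cap Q_{\alpha'}$, $V$ cannot be a natural $\SL_2(q_\beta)$-module for $L_\beta/R_\beta\cong\SL_2(q_\beta)$. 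This leaves cases (i) and (iii) of \cref{IdenNat} to analyse.

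In case (iii), where $q_\alpha=p$ and $L_\beta/R_\beta\in\{\SL_2(p),(3\times 3){:}2,(3\times 3){:}4,(Q_8\times Q_8){:}3\}$, the module $V$ is described by \cref{NewQuad2F}. Here the aim is to rule out the two alternatives not listed in the conclusion, namely $L_\beta/R_\beta\cong\SL_2(p)$ (with $V$ a direct sum of two natural $\SL_2(p)$-modules) and $L_\beta/R_\beta\cong(Q_8\times Q_8){:}3$. The approach is to exploit that $Z_\alpha\le V_\beta$ is an indecomposable natural $\SL_2(q_\alpha)$-module (not a direct sum), and to use \cref{NatGen} together with \cref{Badp3}(vi) to compute the $G_\beta$-normal closure of the $G_{\alpha,\beta}$-invariant subgroup $Z_\beta\le Z_\alpha\cap C_{V_\beta}(O^p(L_\beta))$. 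This computed closure should force $V$ to contain a natural $\SL_2$-submodule of wrong cardinality relative to $|Z_\alpha Z(V_\beta)/Z(V_\beta)|=q_\alpha$, producing the contradiction. The $\Sz(2)$-type description in (ii) of the lemma will emerge from the $(3\times 3){:}2\cong\Sz(2)$-like analysis of the two remaining subcases, specifically when $L_\beta/R_\beta\cong\Dih(10)$ arises through an embedding into the structure described in \cref{NewQuad2F}.

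In case (i) of \cref{IdenNat} the quadratic action of $V_{\alpha'}$ on $V_\beta$ (from \cref{b>1}, together with $C_{V_\beta}(V_{\alpha'})<V_\beta\cap Q_{\alpha'}$) combined with the strongly $p$-embedded subgroup of $L_\beta/R_\beta$ and \cref{SEQuad}, \cref{NotQuad} and \cref{GreenbookQuad} forces $L_\beta/R_\beta$ to lie among the rank~$1$ Lie type groups in characteristic $p$, and since $V$ is not a natural $\SL_2(q_\beta)$-module by \cref{NotNatural}, the only surviving candidates are $\SU_3(q_\beta)$ and $\Sz(q_\beta)$ with $V$ the corresponding natural module. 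The bulk of the work here will be to eliminate the $\SU_3(q_\beta)$ alternative: using \cref{SUMod} to read off $C_V(S)=[V,Z(S)]=[V,S,S]$, and the fact that $Z_\alpha C_{V_\beta}(O^p(L_\beta))/C_{V_\beta}(O^p(L_\beta))$ is a $G_{\alpha,\beta}$-invariant subgroup of order $q_\alpha$ identified with a $\bar{L_\beta}$-irreducible chief factor in the natural $\SU_3$-module, one obtains that $Z_\alpha$ has order inconsistent with being a natural $\SL_2(q_\alpha)$-module, contradicting \cref{IsNatMod}. When $L_\beta/R_\beta\cong\Sz(q_\beta)$ an application of \cref{SzMod} together with the $G_{\alpha,\beta}$-invariance of $Z_\alpha$ verifies $V$ is a natural $\Sz$-module, giving conclusion (i) (and (ii) in the degenerate case $q_\beta=2$ with $L_\beta/R_\beta\cong\Dih(10)$).

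The main obstacle will be the $\SU_3(q_\beta)$ elimination in case (i): both natural $\SU_3$-modules and natural $\Sz$-modules admit the same kind of quadratic action by a center-like subgroup, so the usual quadratic-module tests cannot distinguish them. The proof must instead leverage the precise compatibility required between the $G_{\alpha,\beta}$-orbit structure on $V_\beta$ (dictated by $Z_\alpha$ being natural $\SL_2$) and the layer structure of $\SU_3$'s natural module as recorded in \cref{SUMod}. A secondary difficulty will be handling the small cases $q_\alpha\in\{2,3\}$ in case (iii), where several exceptional isomorphisms conspire and one may need to invoke \cref{NewQuad2F} more delicately, possibly with direct computation via \cref{Badp2} and \cref{Badp3}, to separate $(3\times 3){:}2$ from $(3\times 3){:}4$ inside the amalgam.
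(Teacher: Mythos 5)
Your proposal has the right ingredients (\cref{NewQuad2F}, \cref{NotNatural}, \cref{NatGen}, \cref{SUMod}) and the right target, but the case split is misorganized in a way that introduces real gaps.

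First, cases (ii) \emph{and} (iii) of \cref{IdenNat} are both incompatible with \cref{IsNatMod}: in the proof of \cref{IdenNat}, both cases are derived (via \cref{FirstReduction} and \cref{SecondReduction}) under the standing hypothesis that $Z_\alpha$ is \emph{not} a natural $\SL_2(q_\alpha)$-module. Once \cref{IsNatMod} tells you $Z_\alpha$ \emph{is} natural, you are forced into case (i) of \cref{IdenNat}; case (iii) is not a live alternative. Consequently, the list $L_\beta/R_\beta\in\{\SL_2(p),(3\times 3){:}2,(3\times 3){:}4,(Q_8\times Q_8){:}3\}$ that your ``case (iii)'' paragraph works from is not the correct list in the present setting. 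It was derived in \cref{FirstReduction} from the $2$-non-central-chief-factor structure of $V$ that one gets when $Z_\alpha$ is not natural, and it is simply the wrong starting point here. In particular, if you rule out $\SL_2(p)$ and $(Q_8\times Q_8){:}3$ from that list, you end up with $\{(3\times 3){:}2,(3\times 3){:}4\}$ and never produce the $\Sz(2)$ and $\Dih(10)$ possibilities that appear in conclusions (i)--(ii) of the lemma — your vague remark about ``$(3\times 3){:}2\cong\Sz(2)$-like analysis'' (an isomorphism that is false, as these groups have different orders) does not repair this. The paper instead stays in case (i) of \cref{IdenNat}, splits by $q_\beta=p$ versus $q_\beta>p$, and for $q_\beta=p$ applies \cref{NewQuad2F} afresh (so all ten outcomes are a priori possible), then invokes the ``Moreover'' clause of \cref{NewQuad2F} — that is, \cref{pgen}, whose proof uses \cref{SUMod}(iii), \cref{NatMod}(vi), \cref{AltMod}, \cref{NatGen}, and \cref{Badp3}(vi) — using the fact that $V$ is generated as a $G_\beta$-module by the $G_{\alpha,\beta}$-invariant subspace $Z_\alpha C_{V_\beta}(O^p(L_\beta))/C_{V_\beta}(O^p(L_\beta))$ of order $p$. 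This narrows to outcomes (iii), (vii), (ix), (x), and then $\PSL_2(4)$ is excluded since it has $q=4\ne p$, leaving exactly $\Sz(2),\Dih(10),(3\times 3){:}2,(3\times 3){:}4$.

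Second, in your $q_\beta>p$ paragraph (your ``case (i)''), after invoking quadratic action and \cref{SEQuad}/\cref{NotQuad}/\cref{GreenbookQuad} you assert that the only surviving candidates are $\SU_3(q_\beta)$ and $\Sz(q_\beta)$ with their natural modules, and eliminate $\SL_2(q_\beta)$ via \cref{NotNatural}. But \cref{NotNatural} only rules out $V$ being a \emph{single} natural $\SL_2(q_\beta)$-module; \cref{2FRecog} leaves open the possibility that $L_\beta/R_\beta\cong\SL_2(q_\beta)$ with $V$ a \emph{direct sum of two} natural modules, and this alternative has to be excluded separately. The paper does this with \cref{NatGen}: if $V$ were such a direct sum, the $G_{\alpha,\beta}$-invariant subgroup $Z_\alpha C/C$ of order $q_\beta$ would have $G_\beta$-closure of order $q_\beta^2$, but $V=\langle Z_\alpha^{L_\beta}\rangle C/C$ has order $q_\beta^4$, a contradiction. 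You mention \cref{NatGen} only in your (misplaced) ``case (iii)'' paragraph and never apply it here, leaving a genuine gap. (Your narrowing to ``rank $1$ Lie type groups in characteristic $p$'' is also not delivered cleanly by \cref{SEQuad}, \cref{NotQuad}, \cref{GreenbookQuad} alone, particularly at $p=2$; the paper uses the purpose-built \cref{2FRecog} instead.) The $\SU_3(q_\beta)$ elimination you sketch via \cref{SUMod}(iii) is essentially the paper's argument and is sound.
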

\begin{proof}
Suppose first that $q_\beta=p$. By \cref{NotNatural}, $V$ is not an FF-module and so, as $V$ is a quadratic $2$F-module, the structure of $V$ and $L_{\beta}/R_{\beta}$ follows from \cref{NewQuad2F}. Since $Z_{\alpha}C_{V_{\beta}}(O^p(L_{\beta}))/C_{V_{\beta}}(O^p(L_{\beta}))$ is of order $p$, $G_{\alpha,\beta}$-invariant and $V_{\beta}=\langle Z_{\alpha}^{L_{\beta}}\rangle$, by \cref{NewQuad2F}, we conclude that $L_{\beta}/R_{\beta}\cong\Sz(2), \Dih(10)$, $(3\times 3):2$ or $(3\times 3):4$.

Hence, we assume that $q_\alpha=q_\beta>p$ so that $\bar{L_{\beta}}/O_{p'}(\bar{L_{\beta}})\cong \PSL_2(q_\beta), \PSU_3(q_\beta)$ or $\Sz(q_\beta)$. Then $V_{\beta}\cap Q_{\beta}\cap Q_{\alpha}$ has index at most $q_\beta^2$ and is centralized by $Z_{\alpha}$. Applying \cref{2FRecog} and \cref{NotNatural}, we deduce that $L_{\alpha'}/R_{\alpha'}\cong \SL_2(q_\beta), \mathrm{(P)SU}_3(q_\beta)$ or $\Sz(q_\beta)$ and $V_{\beta}/C_{V_{\beta}}(O^2(L_{\beta}))$ is either a direct sum of two natural $\SL_2(q_\beta)$-modules, or the associated natural module in the latter two cases. Moreover, $Z_{\alpha}C_{V_{\beta}}(O^2(L_{\beta}))/C_{V_{\beta}}(O^2(L_{\beta}))\cong Z_{\alpha}/Z_{\beta}$ has order $q_\alpha=q_\beta$ in $V_{\beta}/C_{V_{\beta}}(O^2(L_{\beta}))$ and is $G_{\alpha,\beta}$-invariant. Then \cref{NatGen} and \cref{SUMod} imply that $L_{\beta}/R_{\beta}\cong \Sz(q_\beta)$ and $V_{\beta}/C_{V_{\beta}}(O^2(L_{\beta}))$ is a natural module.
\end{proof}

\begin{lemma}\label{nb=3}
Suppose that $C_{V_\beta}(V_{\alpha'})<V_\beta\cap Q_{\alpha'}$. Then $b=3$.
\end{lemma}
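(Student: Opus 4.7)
The plan is to argue by contradiction, assuming $b \geq 5$ (recall $b$ is odd by \cref{bodd} and $b > 1$ is a standing hypothesis of this subsection). By \cref{IsNatMod}, $Z_\alpha$ is a natural $\SL_2(q_\alpha)$-module for $L_\alpha/R_\alpha \cong \SL_2(q_\alpha)$, and by \cref{ZaNat}, $q_\alpha = q_\beta$ while $V_\beta/C_{V_\beta}(O^p(L_\beta))$ is one of the non-$\SL_2$-natural modules listed there (a natural $\Sz(q_\beta)$-module, a natural $\Sz(2)$-module for $\Dih(10)$, or a $2$F-module for $(3\times 3):2$ or $(3\times 3):4$). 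Also, since $b > 4$, $V_\alpha^{(2)}$ is abelian by \cref{VAbelian}, and by \cref{IdenNat}, $Q_\beta \in \syl_p(R_\beta)$ with $[V_\beta, Q_\beta] \le C_{V_\beta}(O^p(L_\beta))$.

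The key move is to examine whether $V_\alpha^{(2)} \le Q_{\alpha'-2}$, and further whether $V_\alpha^{(2)} \cap Q_{\alpha'-2} \le Q_{\alpha'-1}$. Because $V_\alpha^{(2)}$ is abelian, $V_{\alpha'}$ acts quadratically on $V_\alpha^{(2)}/Z_\alpha$, and the natural module structure of $Z_\alpha$ implies $Z_\alpha \cap Q_{\alpha'}$ has index $q_\alpha$ in $Z_\alpha$ and is centralized by $V_{\alpha'} \cap Q_\beta$. If $V_\alpha^{(2)} \le Q_{\alpha'-2}$ but $V_\alpha^{(2)} \not\le Q_{\alpha'-1}$, then $V_\alpha^{(2)} \cap Q_{\alpha'-1} = Z_\alpha(V_\alpha^{(2)} \cap Q_{\alpha'})$ has index $q_\alpha$ in $V_\alpha^{(2)}$ and is centralized mod $Z_\alpha$ by $Z_{\alpha'}$, so $V_\alpha^{(2)}/Z_\alpha$ contains a unique non-central $L_\alpha$-chief factor which, as $\bar{L_\alpha}$-module, is an FF-module; this is forbidden by \cref{VA2NotNat}.

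Hence we must have $V_\alpha^{(2)} \not\le Q_{\alpha'-2}$, so there exists $\alpha - 2 \in \Delta^{(2)}(\alpha)$ with $(\alpha-2, \alpha'-2)$ a critical pair. Since $V_\alpha^{(2)}$ is abelian and contains $Z_{\alpha'-1}$ via $V_\beta$, the commutator $[V_\alpha^{(2)} \cap Q_{\alpha'-2}, V_{\alpha'-1}]$ is forced into $V_\alpha^{(2)} \cap Z_{\alpha'-1}$, and an analysis of the natural module action forces $Z_{\alpha'} = Z_{\alpha'-2}$ (paralleling the device used in the proof of \cref{IsNatMod}). By \cref{SL2VlQ}, \cref{SL2VnQ}, and \cref{NotNatural}, the critical pair $(\alpha-2, \alpha'-2)$ falls under the same subsection hypothesis as $(\alpha, \alpha')$, so we may iterate the argument backwards through critical pairs $(\alpha-2k, \alpha'-2k)$ for $k = 0,1,\dots,(b-1)/2$, concluding $Z_{\alpha'} = Z_{\alpha'-2} = \dots = Z_\beta$.

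But now $[V_\beta \cap Q_{\alpha'}, V_{\alpha'}] \le Z_{\alpha'} = Z_\beta$, which combined with \cref{IdenNat} forces $V_\beta/C_{V_\beta}(O^p(L_\beta))$ to be a natural $\SL_2(q_\alpha)$-module for $L_\beta/R_\beta$, directly contradicting \cref{NotNatural} (equivalently, contradicting the classification in \cref{ZaNat}). The main obstacle in executing this plan is the iteration step: verifying cleanly that each intermediate critical pair satisfies the full hypothesis $C_{V_{\beta^*}}(V_{{\alpha^*}'}) < V_{\beta^*} \cap Q_{{\alpha^*}'}$ and that the equalities $Z_{\alpha'-2k} = Z_{\alpha'-2(k+1)}$ propagate, given the several possible module structures on $V_\beta/C_{V_\beta}(O^p(L_\beta))$ afforded by \cref{ZaNat} which each need to be ruled out in parallel.
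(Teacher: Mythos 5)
Your overall architecture is right — assume $b\geq 5$, reduce to $V_\alpha^{(2)}\not\le Q_{\alpha'-2}$, propagate $Z_{\alpha'}=Z_{\alpha'-2}=\cdots=Z_\beta$ backwards through critical pairs, then hit \cref{NotNatural} — and this is indeed the shape of the paper's argument. However, there is a genuine gap in the step that is supposed to dispose of the sub-case $V_\alpha^{(2)}\le Q_{\alpha'-2}$ and $V_\alpha^{(2)}\not\le Q_{\alpha'-1}$.

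You claim that $V_\alpha^{(2)}\cap Q_{\alpha'-1}$ ``is centralized mod $Z_\alpha$ by $Z_{\alpha'}$'' and that this forces $V_\alpha^{(2)}/Z_\alpha$ to have a unique non-central $L_\alpha$-chief factor which is an FF-module, so that \cref{VA2NotNat} gives a contradiction. But in this subsection $[Z_\alpha,Z_{\alpha'}]=\{1\}$, so by \cref{p-closure2}(iv) we have $Z_{\alpha'}\le C_S(Z_\alpha)=Q_\alpha$. Thus $Z_{\alpha'}$ has \emph{trivial} image in $\bar{L_\alpha}$ and cannot serve as an FF-offender on any $\bar{L_\alpha}$-section of $V_\alpha^{(2)}$. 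Nothing in your argument produces a quadratic or offending subgroup outside $Q_\alpha$, so there is no way to invoke \cref{SEFF} or \cref{VA2NotNat} here. (Compare the proof of \cref{IsNatMod}, where the offender used to set up the \cref{VA2NotNat} application is $Z_{\alpha'+1}\cap Q_\beta$, explicitly noted there to be $\not\le Q_\alpha$.) So the sub-case $V_\alpha^{(2)}\le Q_{\alpha'-2}$, $V_\alpha^{(2)}\not\le Q_{\alpha'-1}$ is not eliminated by your argument; it is genuinely possible, and the paper treats it by a different device: one chooses $\alpha-1\in\Delta(\alpha)$ with $V_{\alpha-1}\not\le Q_{\alpha'-1}$, uses \cref{SEFF} on the module $V_{\alpha-1}/C_{V_{\alpha-1}}(O^p(L_{\alpha-1}))$ (which, being conjugate to $V_\beta$, is not an FF-module by \cref{ZaNat}) to force $Z_{\alpha'-1}\le Q_{\alpha-1}$, then pins down $Z_{\alpha'-2}=Z_{\alpha-1}=Z_\beta$ directly and derives the \cref{NotNatural} contradiction.

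A second, related omission: before one can apply \cref{2FRecog}/\cref{SEFF}-style index arguments to $V_\beta^{(3)}$, one must separate off the case $|S/Q_\beta|=2$ (which forces $q_\alpha=q_\beta=p=2$ and $\bar{L_\beta}$ non-cyclic with dihedral/cyclic Sylow $2$-subgroup); these recognition lemmas simply are not available when the module dimensions are that small. The paper does exactly this split, running the $V_\beta^{(3)}$ argument only when $|S/Q_\beta|\ne 2$, and falling back to the more delicate $V_\alpha^{(2)}$ analysis (including the $\alpha-1$ device above) when $|S/Q_\beta|=2$. Your proposal attempts a uniform treatment through $V_\alpha^{(2)}$ without this case division, and it is precisely the small-$q_\beta$ case that causes your shortcut to break.
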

\begin{proof}
Suppose that $b>3$. Since $V_{\beta}$ is centralized by $V_{\beta}^{(3)}$ we deduce that $[V_{\alpha'}, V_{\beta}, V_{\beta}^{(3)}]=\{1\}$, $V_{\beta}^{(3)}\cap Q_{\alpha'-2}\cap Q_{\alpha'-1}=V_{\beta}(V_{\beta}^{(3)}\cap\dots\cap Q_{\alpha'})$ and $[V_{\beta}^{(3)}\cap\dots\cap Q_{\alpha'}, V_{\alpha'}]=[V_{\alpha'}, Q_{\alpha'}]=Z_{\alpha'}=R\le V_{\beta}$.

Assume that $|S/Q_{\beta}|\ne 2$. Since $V_{\beta}^{(3)}\cap Q_{\alpha'-2}\cap Q_{\alpha'-1}$ has index at most $q_\beta^2$ in $V_{\beta}^{(3)}$, applying \cref{2FRecog} when $q_\beta>2$ and \cref{SEFF} when $|S/Q_{\beta}|\ne p$, we conclude that $V_{\beta}^{(3)}\not\le Q_{\alpha'-2}$ and $(V_{\beta}^{(3)}\cap Q_{\alpha'-2})Q_{\alpha'-1}\in\syl_p(L_{\alpha'-1})$. Then $Z_{\alpha'}$ is centralized by $\langle Q_{\alpha'}, R_{\alpha'-1}, (V_{\beta}^{(3)}\cap Q_{\alpha'-2})\rangle$ and we conclude that $R_{\alpha'-1}Q_{\alpha'-2}=R_{\alpha'-1}Q_{\alpha'}$ and $Z_{\alpha'}=Z_{\alpha'-2}$. Since $V_{\beta}^{(3)}\not\le Q_{\alpha'-2}$ there is a critical pair $(\beta-3, \alpha'-2)$ satisfying the same hypothesis as $(\alpha, \alpha')$ by \cref{SL2VlQ} and \cref{SL2VnQ}, and iterating back through critical pairs, we conclude that $Z_{\alpha'}=\dots=Z_{\beta}$, $[V_{\beta}\cap Q_{\alpha'}, V_{\alpha'}]=R=Z_{\alpha'}=Z_{\beta}$ and $V_{\beta}/C_{V_{\beta}}(O^2(L_{\beta}))$ is a natural $\SL_2(q_\beta)$-module, a contradiction by \cref{NotNatural}.

Hence, $|S/Q_{\beta}|=2$ and we may assume by the above argument that either $V_{\beta}^{(3)}\le Q_{\alpha'-2}$ and $Z_{\alpha'}=Z_{\alpha'-2}$, or $V_{\beta}^{(3)}\cap Q_{\alpha'-2}\le Q_{\alpha'-1}$. Repeating the proof of \cref{VA2NotNat}, we deduce that if $V_{\alpha}^{(2)}/Z_{\alpha}$ contains a unique non-central chief factor for $L_{\alpha}$, then it is not an FF-module for $\bar{L_{\alpha}}$, and so we have that either $V_{\alpha}^{(2)}\le Q_{\alpha'-2}$, $V_{\alpha}^{(2)}\not\le Q_{\alpha'-1}$ and $Z_{\alpha'}=Z_{\alpha'-2}$, or $V_{\alpha}^{(2)}\not\le Q_{\alpha'-2}$ and $V_{\alpha}^{(2)}\cap Q_{\alpha'-2}\le Q_{\alpha'-1}$. 

In the former case, there is $\alpha-1\in\Delta(\alpha)$ with $V_{\alpha-1}\le Q_{\alpha'-2}$ and $V_{\alpha-1}\not\le Q_{\alpha'-1}$. Hence, an index $2$ subgroup of $V_{\alpha-1}$ is centralized by $Z_{\alpha'-1}$ and \cref{SEFF} yields that $Z_{\alpha'-1}\le Q_{\alpha-1}$. But then $Z_{\alpha'-2}=[Z_{\alpha'-1}, V_{\alpha-1}]=Z_{\alpha-1}$ and since $b>3$ we have that $Z_{\alpha'}=Z_{\alpha'-2}=Z_{\alpha-1}=Z_{\beta}$. But then, as above, \cref{NotNatural} gives a contradiction. Hence, $V_{\alpha}^{(2)}\not\le Q_{\alpha'-2}$ and so there is a critical pair $(\alpha-2, \alpha'-2)$. Applying \cref{SL2VlQ} and \cref{SL2VnQ}, $(\alpha-2, \alpha'-2)$ satisfies the same properties as $(\alpha, \alpha')$ and we can iterate far back enough through critical pairs so that we find a value $m$ such that for all $k\geq m$, $(\alpha-2k, \alpha'-2k)$ is a critical pair and $(\alpha'-3-2k, \alpha'-2k+b-3)$ is a critical pair. In particular, $Z_{\alpha'-1-2k}\ne Z_{\alpha'-3-2k}$ so that $L_{\alpha'-2-2k}=\langle V_{\alpha-2k}^{(2)}, Q_{\alpha'-1-2k}, R_{\alpha'-2-2k}\rangle$. Since $Z_{\alpha'-2k}Z_{\alpha'-2-2k}$ is normalized by $L_{\alpha'-2-2k}$, we deduce that $Z_{\alpha'-2k}=Z_{\alpha'-2-2k}$ for all $k\geq m$ and iterating even further back through the critical pairs, we have that $Z_{\alpha'-2k}=Z_{\beta-2k}$ and again, \cref{NotNatural} provides a contradiction.
\end{proof}

\begin{lemma}\label{nb=3ii}
Suppose that $C_{V_\beta}(V_{\alpha'})<V_\beta\cap Q_{\alpha'}$ and $b=3$. Then $q_\alpha=q_\beta$, $L_{\alpha}/R_{\alpha}\cong\SL_2(q_\alpha)$, $Z_{\alpha}$ is natural $\SL_2(q_\alpha)$-module, $O^2(L_{\beta})$ centralizes $C_{\beta}/V_{\beta}$ and one of the following holds:
\begin{enumerate}
\item $\bar{L_{\beta}}\cong\Sz(q_\beta)$ and $V_{\beta}/Z_{\beta}$ is a natural module $\Sz(q_\beta)$-module; or
\item $\bar{L_{\beta}}\cong (3\times 3):4$ and $V_{\beta}/Z_{\beta}$ is an irreducible $2$F-module.
\end{enumerate}
\end{lemma}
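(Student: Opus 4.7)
The plan is to assemble the structural information already extracted from the preceding lemmas, then rule out the two unwanted possibilities for $\bar{L_\beta}$, and finally derive the action of $O^2(L_\beta)$ on $C_\beta/V_\beta$. By \cref{IsNatMod}, $Z_\alpha$ is a natural $\SL_2(q_\alpha)$-module; applying \cref{SEFF} to this FF-module gives $L_\alpha/R_\alpha \cong \SL_2(q_\alpha)$, establishing one of the claimed structural properties. Then \cref{ZaNat} produces $q_\alpha = q_\beta$ together with three candidates for $(\bar{L_\beta}, V)$, where $V := V_\beta/C_{V_\beta}(O^p(L_\beta))$, namely the natural $\Sz(q_\beta)$-module, the natural $\Sz(2)$-module restricted to $\Dih(10)$, or a quadratic $2$F-module for $(3\times 3){:}2$ or $(3\times 3){:}4$.

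To identify $C_{V_\beta}(O^p(L_\beta))$ with $Z_\beta$, I will observe that $\overline{Z_\alpha}$, the image of $Z_\alpha$ in $V$, is $G_{\alpha,\beta}$-invariant and centralized by $S$, while $\langle \overline{Z_\alpha}^{L_\beta}\rangle = V$ by the definition of $V_\beta$. Comparing $|\overline{Z_\alpha}|$ with $|C_V(S)|$ in each of the structures from \cref{ZaNat}, together with \cref{NatGen} and the irreducibility data in \cref{NewQuad2F}, forces $Z_\alpha \cap C_{V_\beta}(O^p(L_\beta)) = Z_\beta$, whence the normal closure calculation gives $C_{V_\beta}(O^p(L_\beta)) = Z_\beta$. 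In particular $V_\beta/Z_\beta \cong V$ and is a natural $\Sz(q_\beta)$-module in case (i) and an irreducible $2$F-module in the case $\bar{L_\beta} \cong (3\times 3){:}4$.

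The hard part will be ruling out $\bar{L_\beta} \cong \Dih(10)$ and $\bar{L_\beta} \cong (3\times 3){:}2$. For the $\Dih(10)$ case, $|S/Q_\beta| = 2$ and $V_{\alpha'}Q_\beta \in \syl_2(L_\beta)$; since $\alpha'+1$ is conjugate to $\alpha$ and $(\alpha'+1,\beta)$ is a critical pair satisfying the same hypothesis as $(\alpha,\alpha')$ by \cref{SL2VlQ} and \cref{SL2VnQ}, the role of $\bar{L_{\alpha'+1}}/R_{\alpha'+1} \cong \SL_2(q_\beta) = \SL_2(2)$ acting on $V_\beta$ forces symmetry which is incompatible with the asymmetric $\Dih(10)$-structure. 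For the $(3\times 3){:}2$ case, exploit the decomposition $V = V_1 \times V_2$ to obtain $L_\beta$-invariant subgroups $W_1, W_2 \le V_\beta$ with $W_i/Z_\beta$ of order $4$ and $W_1 \cap W_2 = Z_\beta$; the relation $R = [V_\beta \cap Q_{\alpha'}, V_{\alpha'}] \not\le Z_\beta$ (from $C_{V_\beta}(V_{\alpha'}) < V_\beta \cap Q_{\alpha'}$, together with \cref{NotNatural}) forces an involution in $S \setminus Q_\beta$ to interchange $W_1/Z_\beta$ and $W_2/Z_\beta$; but no element of $(3\times 3){:}2$ can swap the two natural summands of its quadratic $2$F-module, so $\bar{L_\beta}$ must in fact contain $(3\times 3){:}4$, and by \cref{NewQuad2F} we are in that case.

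Finally, to establish $[O^2(L_\beta), C_\beta] \le V_\beta$, I will first note that $V_\beta$ is abelian by \cref{VAbelian} since $b = 3 > 2$, so $V_\beta \le C_\beta$. A three subgroup lemma argument using $[V_\beta, C_\beta] = 1$ and $[O^2(L_\beta), V_\beta] \le V_\beta$ shows $[C_\beta, O^2(L_\beta)] \le C_\beta$, so it suffices to examine noncentral $L_\beta$-chief factors in $C_\beta/V_\beta$. Supposing such a factor $A/B$ exists, I would compare $|A/B|$ with the known minimal faithful module dimensions for sections of $\bar{L_\beta}$ (using \cref{SpeMod2}) and the concrete fact that $\bar{L_\beta}$ acts faithfully on $V_\beta/Z_\beta$ of order $q_\beta^4$ (Sz case) respectively $2^4$ (in the $(3\times 3){:}4$ case); an application of the $A{\times}B$-lemma or \cref{push} applied to an appropriately chosen $L_\beta$-normal subgroup strictly between $V_\beta$ and $C_\beta$ will deliver the required contradiction, completing the proof.
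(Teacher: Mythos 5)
Your opening citations are fine: \cref{IsNatMod} gives that $Z_\alpha$ is a natural module, \cref{SEFF} then gives $L_\alpha/R_\alpha\cong\SL_2(q_\alpha)$, and \cref{ZaNat} supplies $q_\alpha=q_\beta$ together with the short list of candidates for $(\bar{L_\beta},V)$. The trouble starts after that.

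Your plan for ruling out $\Dih(10)$ and $(3\times 3){:}2$ is where the proof would break down. Both of those possibilities have $|S/Q_\beta|=2$, and the paper dispatches them uniformly with a crisp argument you have missed: since $C_\beta\le Q_{\alpha'-1}$ and $C_\beta=V_\beta(C_\beta\cap Q_{\alpha'})$ when $|S/Q_\beta|=2$, and since $[V_{\alpha'},Q_{\alpha'}]=Z_{\alpha'}\le V_\beta$, one gets immediately that $O^2(L_\beta)$ centralizes $C_\beta/V_\beta$. The three-subgroup lemma then forces any odd-order element of $R_\beta$ to centralize all of $Q_\beta$, so $R_\beta=Q_\beta$. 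With $R_\beta=Q_\beta$ established, $Q_\beta/V_\beta$ has a unique non-central chief factor and it is a faithful FF-module for $\bar{L_\beta}$; \cref{SEFF} forces $\bar{L_\beta}\cong\Sym(3)$, contradicting $\bar{L_\beta}\cong\Dih(10)$ or $(3\times 3){:}2$. Your proposed ``symmetry'' argument for $\Dih(10)$ is too vague to be checkable, and your $(3\times 3){:}2$ argument ends incoherently: from a contradiction inside the assumption $\bar{L_\beta}\cong(3\times 3){:}2$ you cannot conclude that $\bar{L_\beta}$ ``contains'' $(3\times 3){:}4$ --- a group of order $18$ cannot contain one of order $36$; you can only conclude that the assumed isomorphism type is impossible.

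The same observation exposes the second problem. You treat ``$O^2(L_\beta)$ centralizes $C_\beta/V_\beta$'' as a separate goal to be attacked at the end via module-dimension bounds and the $A\times B$-lemma, but it is in fact a one-line observation (the same one as above: $C_\beta\le Q_{\alpha'-1}$, $V_\beta(C_\beta\cap Q_{\alpha'})$ has index at most $q_\beta$ in $C_\beta$, and $[Q_{\alpha'},V_{\alpha'}]=Z_{\alpha'}\le V_\beta$), and moreover it is the engine that drives the entire proof: it is what gives $R_\beta=Q_\beta$, which is what lets \cref{SEFF} and the irreducibility computations bite. By postponing and overcomplicating it, you lose the $R_\beta=Q_\beta$ conclusion, and without that you cannot rigorously pass from ``$L_\beta/R_\beta$ is one of these groups'' to ``$\bar{L_\beta}$ is one of these groups'' or pin down $C_{V_\beta}(O^p(L_\beta))=Z_\beta$; the paper handles the latter by a coprime-action decomposition of $V_\beta/Z_\beta$ when $q_\beta=2$ and by a direct count of $|Q_\beta/\Phi(Q_\beta)V_\beta|$ together with \cref{push} when $q_\beta>2$, neither of which appears in your sketch.
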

In particular, $L_{\beta}$ is $2$-minimal in either case.
\begin{proof}
Suppose that $|S/Q_{\beta}|=2$ so that $L_{\beta}/R_{\beta}\cong\Dih(10)$ or $(3\times 3):2$. Then $C_{\beta}\le Q_{\alpha'-1}$ and $C_{\beta}=V_{\beta}(C_{\beta}\cap Q_{\alpha'})$. Since $[V_{\alpha'}, Q_{\alpha'}]=Z_{\alpha'}\le V_{\beta}$, we deduce that $O^2(L_{\beta})$ centralizes $C_{\beta}/V_{\beta}$. Then for $r\in R_{\beta}$ of odd order, if $[r, Q_{\beta}, V_{\beta}]=\{1\}$ then $[r, V_{\beta}, Q_{\beta}]=\{1\}$ by the three subgroup lemma, and so $r$ centralizes $Q_{\beta}$. But now, $Q_{\beta}\cap Q_{\alpha'-1}=V_{\beta}(Q_{\beta}\cap Q_{\alpha'})$, and so $Q_{\beta}/V_{\beta}$ contains a unique non-central chief factor for $L_{\beta}$, which is a faithful FF-module for $\bar{L_{\beta}}$, and $\bar{L_{\beta}}\cong \Sym(3)$ by \cref{SEFF}, a contradiction.

Thus, $|S/Q_{\beta}|=4$ or $q_\beta>2$ and by \cref{SEFF}, no non-central chief factor within $Q_{\beta}$ is an FF-module for $\bar{L_{\beta}}$. Since $C_{\beta}\le Q_{\alpha'-1}$, $V_{\beta}(C_{\beta}\cap Q_{\alpha'})$ has index at most $q_\beta$ in $C_{\beta}$ and since $[Q_{\alpha'}, V_{\alpha'}]=Z_{\alpha'}\le V_{\beta}$, $V_{\alpha'}$ centralizes $C_{\beta}/V_{\beta}$ so that $O^2(L_{\beta})$ centralizes $C_{\beta}/V_{\beta}$. Now, applying the three subgroup lemma, any $2'$-element of $R_{\beta}$ centralizes $Q_{\beta}/C_{\beta}$ and $V_{\beta}$ so centralizes $Q_{\beta}$, and we deduce that $R_{\beta}=Q_{\beta}$. By \cref{ZaNat}, $\bar{L_{\beta}}\cong \Sz(q_\beta)$ or $(3\times 3):4$ and $V_{\beta}/C_{V_{\beta}}(O^2(L_{\beta}))$ is a natural module or an irreducible $2F$-module described in \cref{NewQuad2F}. 

Assume that $q_\beta=2$ so that $L_{\beta}$ is solvable. Applying coprime action, we have that $V_{\beta}/Z_{\beta}=[V_{\beta}/Z_{\beta}, O^2(L_{\beta})]\times C_{V_{\beta}/Z_{\beta}}(O^2(L_{\beta}))$ where $[V_{\beta}/Z_{\beta}, O^2(L_{\beta})]$ is irreducible of dimension $4$. Letting $V^\beta$ be the preimage in $V_{\beta}$ of $[V_{\beta}/Z_{\beta}, O^2(L_{\beta})]$, we must have that $[V^\beta\cap Q_{\alpha'}, V_{\alpha'}]=Z_{\alpha'}\le V^\beta$ so that $Z_{\alpha'-1}=Z_{\alpha'}\times Z_{\beta}\le V^\beta$. But then, by definition, $V^\beta=V_{\beta}$ and $V_{\beta}/Z_{\beta}$ is irreducible of dimension $4$. Since $Q_{\beta}/C_{\beta}$ is dual to $V_{\beta}/Z_{\beta}$, $Q_{\beta}/C_{\beta}$ is also irreducible of dimension $4$.

Assume that $q_\beta>2$. Note that $V_{\beta}(Q_{\beta}\cap Q_{\alpha'-1}\cap Q_{\alpha'})$ has index $q_\beta^2$ in $Q_{\beta}$ and centralized, modulo $V_{\beta}$, by $V_{\alpha'}$. Since $L_{\beta}=\langle V_{\alpha'}, V_{\alpha'}^g, Q_{\beta}\rangle$ for some appropriately chosen $g\in L_{\beta}$, we deduce that $|Q_{\beta}/\Phi(Q_{\beta})V_{\beta}|\leq q_\beta^4$. Furthermore, since $\Phi(Q_{\beta})V_{\beta}\le Q_{\alpha'-1}\cap Q_{\beta}\not\normaleq L_{\beta}$ by \cref{push}, $Q_{\beta}/\Phi(Q_{\beta})V_{\beta}$ is irreducible. But now, $\Phi(Q_{\beta})V_{\beta}\le Q_{\beta}\bigcap\limits_{\lambda\in\Delta(\beta)} Q_\lambda\le C_\beta$ so that $|Q_{\beta}/C_{\beta}|=q_\beta^4$ and $Q_{\beta}=(Q_{\beta}\cap O^2(L_{\beta}))C_{\beta}$ and $L_{\beta}$ centralizes $C_{V_{\beta}}(O^2(L_{\beta}))$. Hence, $V_{\beta}/Z_{\beta}$ is a natural module and the result holds.
\end{proof}

\begin{proposition}\label{F42}
Suppose that $C_{V_\beta}(V_{\alpha'})<V_\beta \cap Q_{\alpha'}$ and $b>1$. If $q_\beta>2$ then $G$ is locally isomorphic to ${}^2\mathrm{F}_4(q_\beta)$.
\end{proposition}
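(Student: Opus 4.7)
The plan is to deduce \cref{F42} from the structural results already established, culminating in an application of \cref{greenbook}. By \cref{nb=3}, $b=3$, and by \cref{nb=3ii} (together with $q_\beta>2$, which forces $q_\beta=2^{2n+1}\geq 8$ by \cref{SzGen}), we have $q_\alpha=q_\beta$, $L_\alpha/R_\alpha\cong\SL_2(q_\alpha)$ with $Z_\alpha$ a natural module, $L_\beta/R_\beta\cong\Sz(q_\beta)$ with $R_\beta=Q_\beta$ and $V_\beta/Z_\beta$ a natural $\Sz(q_\beta)$-module, and $L_\beta$ is $2$-minimal over $S$. The task is then to identify the triple $\mathcal{A}=(G_\alpha,G_\beta,G_{\alpha,\beta})$ as a weak BN-pair of rank $2$ with parabolic quotients $\SL_2(q_\beta)$ and $\Sz(q_\beta)$, and observe that ${}^2\mathrm{F}_4(q_\beta)$ is the unique entry in $\bigwedge^0$ realizing such an amalgam.

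First, I would upgrade $L_\alpha/R_\alpha\cong\SL_2(q_\alpha)$ to $\bar{L_\alpha}=L_\alpha/Q_\alpha\cong\SL_2(q_\alpha)$. By \cref{p-closure2}, $Q_\alpha\in\syl_p(R_\alpha)$, so $\bar{R_\alpha}=R_\alpha/Q_\alpha$ is a $2'$-group normal in $\bar{L_\alpha}$. Since $L_\alpha=O^{2'}(L_\alpha)$, the quotient $\bar{L_\alpha}$ is perfect, and $\bar{L_\alpha}/\bar{R_\alpha}\cong\SL_2(q_\alpha)$ is simple (as $q_\alpha\geq 8$ is even). The standard Schur multiplier argument, as used in \cref{BetaCenter}(iv), together with the fact that the $2'$-part of the Schur multiplier of $\PSL_2(2^n)\cong\SL_2(2^n)$ is trivial for all $n\geq 2$, then forces $\bar{R_\alpha}=\{1\}$, so $R_\alpha=Q_\alpha$ and $\bar{L_\alpha}\cong\SL_2(q_\alpha)$.

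Next, I would verify that $\mathcal{A}$ satisfies \cref{WBDef}, taking $G_i^\ast:=L_i$ for $i\in\{\alpha,\beta\}$. Condition (i) is immediate: $O_p(G_i)=Q_i\leq L_i$, and the Frattini argument combined with $N_{G_i}(S)\leq G_{\alpha,\beta}$ yields $G_i=L_i G_{\alpha,\beta}$. For condition (ii), since $L_i$ is not contained in $G_{\alpha,\beta}$ and both $\SL_2(q_\beta)$ and $\Sz(q_\beta)$ have the property that the normalizer of a Sylow $p$-subgroup is a proper (maximal) subgroup, the image of $L_i\cap G_{\alpha,\beta}$ in $\bar{L_i}$ coincides with $N_{\bar{L_i}}(\bar{S})$, giving $L_i\cap G_{\alpha,\beta}=N_{L_i}(S)$. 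Condition (iii) follows from the previous paragraph and \cref{nb=3ii}, since both $\SL_2(2^n)$ and $\Sz(2^n)$ appear in the admissible list.

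Having established that $\mathcal{A}$ is a weak BN-pair of rank $2$, I would apply \cref{greenbook}. The parabolic isomorphism types listed in outcome (ii) of that theorem ($\mathrm{G}_2(2)'$, $\mathrm{J}_2$, $\mathrm{M}_{12}$, $\mathrm{F}_3$, and their extensions) all have specific, small Sylow $p$-subgroup orders and parabolic quotients inconsistent with having an $\Sz(q_\beta)$ quotient for $q_\beta\geq 8$, so $\mathcal{A}$ falls under outcome (i) and is locally isomorphic to some $Y\in\bigwedge^0$. Inspecting the list $\bigwedge$, only the groups $Y$ with $F^\ast(Y)\cong{}^2\mathrm{F}_4(2^n)$ admit parabolic quotients $\SL_2(q)$ and $\Sz(q)$ (these being the Levi factors of the two maximal parabolics of types $A_1$ and ${}^2B_2$). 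Matching the field of definition with our identification $q_\beta=2^{2n+1}$, we conclude that $G$ is locally isomorphic to ${}^2\mathrm{F}_4(q_\beta)$. The main obstacle in the plan is the Schur multiplier step promoting $L_\alpha/R_\alpha\cong\SL_2(q_\alpha)$ to $\bar{L_\alpha}\cong\SL_2(q_\alpha)$; the remaining verifications are largely bookkeeping against the structure assembled in \cref{nb=3ii} and the definitions.
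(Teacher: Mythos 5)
The main issue is the step promoting $L_\alpha/R_\alpha\cong\SL_2(q_\alpha)$ to $\bar{L_\alpha}\cong\SL_2(q_\alpha)$. You argue that since $\bar{R_\alpha}$ is a normal $2'$-subgroup of the perfect group $\bar{L_\alpha}$ with $\bar{L_\alpha}/\bar{R_\alpha}\cong\SL_2(q_\alpha)$ simple, a Schur multiplier argument forces $\bar{R_\alpha}=\{1\}$. But a Schur multiplier argument only applies once you know $\bar{R_\alpha}\le Z(\bar{L_\alpha})$, and nothing you have said establishes this. A perfect group of the form $\SL_2(q)\ltimes K$, with $K$ a nontrivial $2'$-module on which $\SL_2(q)$ acts without fixed points, has $O^{2'}=$ the whole group, has a strongly $2$-embedded subgroup (lifting the Borel), and has $K$ as a non-central normal $2'$-subgroup with simple quotient $\SL_2(q)$; nothing in the hypotheses rules such an $\bar{L_\alpha}$ out a priori. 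Your appeal to \cref{BetaCenter}(iv) is also misapplied: there, centrality is established because the $p'$-subgroups in question are normal in the \emph{known} groups $L_\lambda/R_\lambda\cong\SL_2(q)$ and $L_\lambda/C_{L_\lambda}(V_\lambda/Z_\lambda)\cong\SL_2(q)$, whose normal $p'$-subgroups are forced to be central by the structure of $\SL_2(q)$. Here $\bar{R_\alpha}$ is normal in the unknown group $\bar{L_\alpha}$, which is precisely what one is trying to identify. Likewise \cref{NiceSL2}, which is the paper's clean Schur-multiplier shortcut, requires the extra hypothesis $R_\delta\le G_{\delta,\lambda}$ (so that $\bar{R_\delta}\le Z(\bar{L_\delta})$ by \cref{p-closure2}(i)), and you have not verified $R_\alpha\le G_{\alpha,\beta}$.

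In the paper's actual proof, establishing $R_\alpha=Q_\alpha$ is where the real work lies. It first uses $2$-transitivity of $\bar{L_\beta}\cong\Sz(q_\beta)$ on $\Delta(\beta)$ to pin down $[V_\alpha^{(2)},Q_\alpha]\cap Z_{\alpha'}=\{1\}$, then analyses $V:=[V_\alpha^{(2)},Q_\alpha]$ and $C:=C_{Q_\alpha}(V)$ to show, case-by-case on $|VC_\beta/C_\beta|$ and via \cref{SEFF}, that all non-central chief factors of $L_\alpha$ in $CV$ are FF-modules, hence $O^2(R_\alpha)$ centralizes $CV$. It then closes with the three-subgroup lemma and coprime action to conclude $[O^2(R_\alpha),Q_\alpha]=\{1\}$, whence $R_\alpha=Q_\alpha$. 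Once that is in hand, your remaining bookkeeping (weak BN-pair verification, $L_\beta$ being $2$-minimal from \cref{nb=3ii}, matching against $\bigwedge^0$ via \cref{greenbook}) is sound and agrees with the paper's endgame. So the gap is localized but essential: you need an argument, not an appeal to Schur multipliers, to show $\bar{R_\alpha}$ is trivial.
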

\begin{proof}
We have that $b=3$, $L_{\alpha}/R_{\alpha}\cong\SL_2(q_\alpha)$, $Z_{\alpha}$ is natural $\SL_2(q_\alpha)$-module, $\bar{L_{\beta}}\cong\Sz(q_\beta)$ and $V_{\beta}/Z_{\beta}$ is a natural module $\Sz(q_\beta)$-module. Assume that $Z_{\alpha'-1}\cap [V_{\beta}, Q_{\alpha}]>Z_{\beta}$. Then, by the $2$-transitivity of $\bar{L_{\beta}}$ on the neighbours of $\beta$, $Z_{\alpha}\cap [V_\beta, Q_{\alpha'-1}]>Z_{\beta}$. Since $[V_{\beta}, Q_{\alpha'-1}]\le Q_{\alpha'}$, this is a contradiction. Hence, $V_{\beta}=Z_{\alpha'}[V_{\beta}, Q_{\alpha}]$ so that $[V_{\alpha}^{(2)}, Q_{\alpha}]\cap Z_{\alpha'}=\{1\}$.

For the remainder of this proof, set $V:=[V_{\alpha}^{(2)}, Q_{\alpha}]$ and $C:=C_{Q_{\alpha}}(V)$. Then $C$ centralizes $[V_{\beta}, Q_{\alpha}]$ so that $C\le Q_{\beta}$. Moreover, $C\cap Q_{\alpha'-1}$ centralizes $V_{\beta}\le Z_{\alpha'}V$ so that $C\cap Q_{\alpha'-1}=C\cap C_{\beta}$. From the structure of $Q_{\beta}/C_{\beta}$ as an $\bar{L_{\beta}}$-module, we infer that $C_{\beta}=V_{\beta}(C_{\beta}\cap Q_{\alpha'})$. Hence, $C\cap Q_{\alpha'-1}=Z_{\alpha}(C\cap Q_{\alpha'})$.

Now, $V\ge [V_{\alpha}^{(2)}, Q_{\alpha}\cap Q_{\beta}]\not\le C_{\beta}$. In particular, $V\not\le Q_{\alpha'-1}$, else $V\le Q_{\alpha}\cap Q_{\beta}\cap Q_{\alpha'-1}$ and as $\bar{L_{\beta}}$ acts $2$-transitively on neighbours of $\beta$, this would imply that $V\le Q_{\beta}\cap \bigcap\limits_{\lambda\in\Delta(\beta)} Q_\lambda=C_{\beta}$, a contradiction. Hence, $Z_{\alpha'}\cap C=Z_{\alpha'}\cap V=\{1\}$ and applying \cref{SEFF}, we have that $C/Z_{\alpha}$ contains a unique non-central chief factor which is an FF-module for $\bar{L_{\alpha}}$ so that $O^2(R_{\alpha})$ centralizes $C$. Assume that $|VC_{\beta}/C_{\beta}|> q_\beta$. Then $[V, Q_{\alpha}]\not\le C_{\beta}$ and as $[V_{\beta}, Q_{\alpha}, Q_{\alpha}]\le C_{\beta}$, $[V, Q_{\alpha}]/Z_{\alpha}$ also contains a non-central chief factor for $\bar{L_{\alpha}}$. Then both $V/[V, Q_{\alpha}]$ and $[V, Q_{\alpha}]/Z_{\alpha}$ are FF-modules for $\bar{L_{\alpha}}$ and $O^2(R_{\alpha})$ centralizes $V$. Assume that $|VC_{\beta}/C_{\beta}|=q_\beta$ so that $V\cap Q_{\alpha'-1}=V\cap C_\beta$ and we deduce that $V\cap Q_{\alpha'-1}=Z_{\alpha}(V\cap Q_{\alpha'})$ and $V/Z_{\alpha}$ contains at most one non-central chief factor and, if it exists, it is an FF-module for $\bar{L_{\alpha}}$ so that $O^2(R_{\alpha})$ centralizes $V$. Hence, $O^2(R_\alpha)$ centralizes $CV$.

Finally, by the three subgroups lemma $[O^2(R_{\alpha}), Q_\alpha, V]=\{1\}$ so that $[O^2(R_\alpha), Q_\alpha]\le C$ and coprime action gives $[O^2(R_\alpha), Q_\alpha]=\{1\}$ and $R_\alpha=Q_\alpha$. Then $G$ has a weak BN-pair of rank $2$ and comparing with \cite{Greenbook}, the result holds.
\end{proof}

\begin{proposition}
Suppose that $C_{V_\beta}(V_{\alpha'})<V_\beta \cap Q_{\alpha'}$ and $b>1$. Then $G$ is locally isomorphic to ${}^2\mathrm{F}_4(2^n)$ or ${}^2\mathrm{F}_4(2)'$.
\end{proposition}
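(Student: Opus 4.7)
The plan is as follows. By \cref{F42}, if $q_\beta>2$ then $G$ is locally isomorphic to ${}^2\mathrm{F}_4(q_\beta)$, so the entire task reduces to handling $q_\beta=2$. Under this additional hypothesis, \cref{nb=3} gives $b=3$ and \cref{nb=3ii} together with \cref{ZaNat} forces $L_\alpha/R_\alpha\cong\SL_2(2)\cong\Sym(3)$ with $Z_\alpha$ a natural module, $Q_\beta\in\syl_p(R_\beta)$, $O^2(L_\beta)$ centralizing $C_\beta/V_\beta$, and $\bar{L_\beta}$ being one of the solvable groups left on the list (the $(3\times 3):4$ possibility being the principal one, with any residual Sz$(2)$ configurations to be eliminated or recognized alongside). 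The goal is then to show $R_\alpha=Q_\alpha$, so that $\mathcal{A}$ is a weak BN-pair of rank $2$, and invoke \cref{greenbook} together with Fan's work \cite{Fan} to identify the resulting amalgam as the unique parabolic amalgam of ${}^2\mathrm{F}_4(2)'$.

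My first step would be to mirror the structural setup of \cref{F42} essentially verbatim: I set $V:=[V_\alpha^{(2)},Q_\alpha]$ and $C:=C_{Q_\alpha}(V)$, and first observe that $V_\beta=Z_{\alpha'}[V_\beta,Q_\alpha]$ — this uses only the $G_{\alpha,\beta}$-invariance coming from the $L_\beta/R_\beta$-action on $V_\beta/C_{V_\beta}(O^2(L_\beta))$ as in \cref{NewQuad2F}, combined with the fact that $Z_{\alpha'-1}\cap[V_\beta,Q_\alpha]>Z_\beta$ would, by the transitivity of $\bar{L_\beta}$ on the two involved lines, force $Z_\alpha\cap[V_\beta,Q_{\alpha'-1}]>Z_\beta$ and hence the contradiction $Z_\alpha\le Q_{\alpha'}$. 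Consequently $Z_{\alpha'}\cap V=\{1\}$ and $C\le Q_\beta$ with $C\cap Q_{\alpha'-1}=Z_\alpha(C\cap Q_{\alpha'})$.

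Next I apply \cref{SEFF} to the factors of $V$ and $C$ above $Z_\alpha$, exactly as in \cref{F42}: each non-central $L_\alpha$-factor in $CV/Z_\alpha$ is an FF-module (the index bounds $|VC_\beta/C_\beta|\le q_\beta$ for the $(3\times 3):4$ case are established by the same decomposition of $V_\beta$ and the transitivity properties gleaned from \cref{nb=3ii}). Hence $O^2(R_\alpha)$ centralizes $CV$. The three subgroup lemma combined with $[O^2(R_\alpha),Q_\alpha,V]=\{1\}$ forces $[O^2(R_\alpha),Q_\alpha]\le C$, and then coprime action within $Q_\alpha$ (using that $Q_\alpha$ is self-centralizing and $L_\alpha/R_\alpha\cong\Sym(3)$ acts coprimely on $Q_\alpha/C$) forces $[O^2(R_\alpha),Q_\alpha]=\{1\}$, hence $R_\alpha=Q_\alpha$.

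The hard part I anticipate is verifying that the amalgam produced really is parabolic isomorphic to the Lie amalgam of ${}^2\mathrm{F}_4(2)'$ rather than being a new example. Once $R_\alpha=Q_\alpha$ and $R_\beta=Q_\beta$, the triple $(G_\alpha,G_\beta,G_{\alpha,\beta})$ satisfies \cref{WBDef} with $\bar{L_\alpha}\cong\SL_2(2)$ and $\bar{L_\beta}\cong\Sz(2)$ (or $(3\times 3):4$, which is isomorphic to $\SU_3(2)$'s derived normalizer structure and needs to be matched to one of the weak BN-pair types), so \cref{greenbook} gives a short list of possibilities. Comparing the orders $|S|$, the structure of $V_\beta$ as a natural Sz$(2)$-module (or its $2$F analogue), and the nontrivial commutator relations $[V_\beta\cap Q_{\alpha'},V_{\alpha'}]\ne\{1\}$ imposed by our running hypothesis — which rule out all $\mathrm{G}_2(2^n)$-, $\PSL_3$-, $\PSp_4$-, and $\mathrm{M}_{12}$-type amalgams — leaves only the ${}^2\mathrm{F}_4(2^n)$ family at $n=1$, namely the amalgam locally isomorphic to ${}^2\mathrm{F}_4(2)'$. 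This last elimination step, together with ruling out any spurious $(3\times 3):4$ configuration not arising inside ${}^2\mathrm{F}_4(2)'$, is where the argument is most delicate and where I would expect to lean on the explicit parabolic structure tabulated in \cite{Greenbook} and on Fan's classification \cite{Fan}.
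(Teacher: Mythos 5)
Your proposal takes a genuinely different route from the paper. You try to push the coprime-action/Thompson-factorization argument of \cref{F42} through the case $q_\beta=2$ in order to conclude $R_\alpha=Q_\alpha$ directly, and then quote \cref{greenbook} and \cite{Fan}. The paper does something else entirely: it observes that the two remaining candidates for $\bar{L_\beta}$ ($\Sz(2)$ and $(3\times 3){:}4$) are $2$-minimal by \cref{nb=3ii}, splits on whether $L_\alpha$ is $2$-minimal, cites \cite{Hayashi} to handle the $2$-minimal case, and for the non-$2$-minimal case chooses a $2$-minimal $P_\alpha\le L_\alpha$ via McBride's lemma, forms $X=\langle P_\alpha, L_\beta(G_{\alpha,\beta}\cap P_\alpha)\rangle$, and runs a minimal-counterexample argument on $X/K$ where $K$ is the largest normal $2$-subgroup of $X$ lying in $S$. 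This sidesteps the need to control $R_\alpha$ directly and, crucially, sidesteps the $(3\times 3){:}4$ obstruction by delegating it to Hayashi.

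There are two concrete gaps in your outline, both concentrated on the $(3\times 3){:}4$ case that you flag as ``delicate'' but then gloss over.

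First, the claim that the structural part of \cref{F42} transfers ``essentially verbatim'' is not justified, because that proof uses that $\bar{L_\beta}$ is $2$-transitive on $\Delta(\beta)$ in an essential way: to get from $V\le Q_\alpha\cap Q_\beta\cap Q_{\alpha'-1}$ to $V\le C_\beta$, one needs the $G_{\alpha,\beta}$-orbit of $\alpha'-1=\alpha+2$ to sweep out enough neighbours that the intersection of the corresponding $Q_\lambda\cap Q_\beta$ already collapses to $C_\beta$. For $\Sz(q)$ ($q>2$) the stabilizer of $\alpha$ is transitive on the remaining neighbours and this works. For $(3\times 3){:}4$ there are $9$ neighbours, $\bar{G_{\alpha,\beta}}$ has order $4$, and $Q_\lambda\cap Q_\beta$ has index only $|S/Q_\alpha|=2$ in $Q_\beta$ while $C_\beta$ has index $2^4$: the single orbit of $\alpha+2$ under $G_{\alpha,\beta}$ does not force $V\le C_\beta$, and neither does the ``any two neighbours can be swapped'' observation you use in the first step. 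Some replacement argument is needed and none is offered.

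Second, and independently, even if you establish $R_\alpha=Q_\alpha$, the resulting amalgam with $\bar{L_\beta}\cong(3\times 3){:}4$ is \emph{not} a weak BN-pair of rank $2$ in the sense of \cref{WBDef}: $(3\times 3){:}4$ has order $36$ and no normal subgroup $G_\beta^*$ with $G_\beta^*/O_2(G_\beta)$ on the list of \cref{WBDef} (note $5\nmid 36$, so $\Sz(2)$ is out, and $\PSU_3(2)$, $\SU_3(2)$ have the wrong order and Sylow $2$-structure). So the final step ``invoke \cref{greenbook} together with \cite{Fan}'' cannot be applied to that configuration. The paper's appeal to \cite{Hayashi} is precisely what settles this case, and your plan offers no substitute for it. Until both of these points are addressed, the $(3\times 3){:}4$ branch is unresolved and the proof is incomplete.
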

\begin{proof}
By \cref{F42}, \cref{nb=3} and \cref{nb=3ii}, we have that $b=3$, $L_{\alpha}/R_{\alpha}\cong\Sym(3)$, $Z_{\alpha}$ is natural $\SL_2(2)$-module and either $\bar{L_{\beta}}\cong\Sz(2)$ or $\bar{L_{\beta}}\cong (3\times 3):4$. Suppose first that $L_{\alpha}$ is also $2$-minimal group. Then the amalgam is determined in \cite{Hayashi}, $G$ has a weak BN-pair of rank $2$ and the result follows by \cite{Greenbook} and \cite{Fan}. Hence, to complete the proof, we assume that $L_{\alpha}$ is not $2$-minimal and derive a contradiction. We may choose $P_{\alpha}< L_{\alpha}$ such that $P_{\alpha}$ is $2$-minimal. Better, by McBride's lemma (\cref{McBride}), we may choose $P_{\alpha}$ such that $P_{\alpha}\not\le R_{\alpha}$ and $L_{\alpha}=P_{\alpha}R_{\alpha}$. Moreover, we may assume that $G$ is a minimal counterexample to \hyperlink{ThmC}{Theorem C}. Form $X:=\langle P_{\alpha}, L_{\beta}(G_{\alpha,\beta}\cap P_{\alpha})\rangle$ and let $K$ be the largest subgroup of $S$ which is normal in $X$.

If $K=\{1\}$, then it follows that any non-trivial normal subgroup of $X$ which is contained in $G_{\alpha,\beta}\cap P_{\alpha}$ is a $2'$-group, a contradiction for then $Q_{\lambda}$ is not self centralizing in $G_{\lambda}$, where $\lambda\in\{\alpha,\beta\}$. Thus, no non-trivial normal subgroup of $G_{\alpha,\beta}\cap P_{\alpha}$ is normal in $X$ and the triple $(P_{\alpha}, L_{\beta}(G_{\alpha,\beta}\cap P_{\alpha}), G_{\alpha,\beta}\cap P_{\alpha})$ satisfies \cref{MainHyp}. Then, by minimality and comparing with the list of amalgams in \hyperlink{ThmC}{Theorem C}, it follows that $X$ is locally isomorphic to ${}^2\mathrm{F}_4(2)$ or ${}^2\mathrm{F}_4(2)'$. In particular, $P_{\alpha}/Q_{\alpha}\cong\Sym(3)$, $G_{\beta}/Q_{\beta}\cong\Sz(2)$ and $S$ is isomorphic to a Sylow $2$-subgroup of ${}^2\mathrm{F}_4(2)$ or ${}^2\mathrm{F}_4(2)'$. But then $2^2\leq |Q_{\alpha}/\Phi(Q_{\alpha})|\leq 2^3$ and so, $\bar{L_{\alpha}}$ is isomorphic to a subgroup of $\GL_3(2)$ which has a strongly $2$-embedded subgroup. An elementary calculation, which may be performed in MAGMA, yields $\bar{L_{\alpha}}\cong\bar{P_{\alpha}}\cong\Sym(3)$ and $L_{\alpha}$ is $2$-minimal, a contradiction.

Thus, $K\ne\{1\}$ and since $P_{\alpha}$ does not centralize $Z_{\beta}$ and $K\normaleq S$, we deduce that $Z_{\alpha}\le K$ and so $V_{\beta}\le K$. Moreover, since $K\le Q_{\alpha}\cap Q_{\beta}$ and $K\normaleq L_{\beta}$, $K\le C_{\beta}$. If $\Phi(K)\ne\{1\}$ then $Z_{\beta}\le \Phi(K)$ and arguing as above, $V_{\beta}\le \Phi(K)$. But then $O^2(L_{\beta})$ centralizes $K/\Phi(K)$, a contradiction. Thus, $K$ is elementary abelian and since $C_S(K)\le C_{\beta}$, $C_S(K)=C_{Q_{\alpha}}(K)=C_{Q_{\beta}}(K)\normaleq X$ and $C_S(K)=K$. 

Suppose that there is $r\in P_{\alpha}$ such that $[r, Q_{\alpha}]\le K$. If $r$ centralizes $C_{K}(Q_{\alpha})$, then by the A$\times$B-lemma, $r$ centralizes $K$. But then $r$ centralizes $Q_{\alpha}$, and so $r$ is trivial. Now, since $Q_{\alpha}$ is self centralizing in $S$, $C_{K}(Q_{\alpha})\le Z(Q_{\alpha})$. But $V_{\alpha'}\cap Q_{\alpha}$ is of index $4$ in $V_{\alpha'}$, contains $Z_{\alpha'-1}$ and is centralized by $Z(Q_{\alpha})$ from which it follows that $Z(Q_{\alpha})=Z_{\alpha}(Z(Q_{\alpha})\cap Q_{\alpha'})$. Since $Z_{\alpha'}\not\le Z(Q_{\alpha})$, otherwise $Z_{\alpha'-1}=Z_{\alpha'}\times Z_{\beta}$ would be normalized by $L_{\beta}=\langle Q_{\beta}, Q_{\alpha}, Q_{\alpha'-1}\rangle$, it follows that $V_{\alpha'}\cap Q_{\beta}$ centralizes $Z(Q_{\alpha})/Z_{\alpha}$ and so $O^2(L_{\alpha})$ centralizes $Z(Q_{\alpha})/Z_{\alpha}$. Since $Z_{\beta}\le Z_{\alpha}=[Z(Q_{\alpha}), O^2(L_{\alpha})]$, it follows from coprime action that $Z(Q_{\alpha})=Z_{\alpha}$. Hence, for $r$ of odd order such that $[r, Q_{\alpha}]\le K$, we have that $r\not\le R_{\alpha}$ and it follows that $r$ is of order $3$ and $\langle r\rangle K/K=O_{2'}(P_{\alpha}/K)$. Then, by coprime action and as $r$ acts non-trivially on $Z_{\alpha}$, we have that $K=[K, r]$. But now, as $K$ is elementary abelian and contains $V_{\beta}$, it follows that $K\cap Q_{\alpha'}\cap Q_{\alpha'+1}$ is has index $p^2$ in $K$ and is centralized by $Z_{\alpha'+1}\cap Q_{\beta}\not\le Q_{\alpha}$. In particular, $K$ contains at most two non-central chief factors for $P_{\alpha}$ and $K$ is acted upon quadratically $V_{\alpha'}\cap Q_{\beta}$. Note that $K/[K, Q_{\alpha}]$ is not centralized by $r$, and neither is $[K, Q_{\alpha}]$. But then $[K, Q_{\alpha}]\le Z(Q_{\alpha})=Z_{\alpha}$ and $K/[K, Q_{\alpha}]$ is an FF-module, absurd for then the action of $r$ implies that $2^5=|V_{\beta}|<|K|=2^4$. Thus, $P_{\alpha}/K$ is of characteristic $2$.

Suppose that there is $s\in L_{\beta}(P_{\alpha}\cap G_{\alpha,\beta})$ such that $[s, Q_{\beta}]\le K$. Since $L_{\beta}/Q_{\beta}\cong\Sz(2)$ it follows that $L_{\beta}(P_{\alpha}\cap G_{\alpha,\beta})/Q_{\beta}=L_{\beta}/Q_{\beta}\times O_{2'}((P_{\alpha}\cap G_{\alpha,\beta})/Q_{\beta})$. Since $K\le C_{\beta}$ and $Q_{\beta}/C_{\beta}$ is an irreducible module for $\bar{L_{\beta}}$, $s\not\le L_{\beta}$. Hence, $s$ centralizes $S/Q_{\beta}$ and so centralizes $S/K$. Then $s\in P_{\alpha}$ and centralizes $Q_{\alpha}/K$, and by the previous paragraph, $s=1$. Thus, $L_{\beta}(P_{\alpha}\cap G_{\alpha,\beta})/K$ is of characteristic $2$. Moreover, no subgroup of $S$ properly containing $K$ is normal in $X$ and since $P_{\alpha}/K$ is of characteristic $2$, it follows that no non-trivial subgroup of $(G_{\alpha,\beta}\cap P_{\alpha})/K$ is normal in $X/K$. Then the triple $(P_{\alpha}/K, (L_{\beta}(G_{\alpha,\beta}\cap P_{\alpha}))/K, (G_{\alpha,\beta}\cap P_{\alpha})/K)$ satisfies \cref{MainHyp}. By minimality and since $L_{\beta}/Q_{\beta}\cong\Sz(2)$, $X/K$ is locally isomorphic to ${}^2\mathrm{F}_4(2)$ or ${}^2\mathrm{F}_4(2)'$. But there is only one non-central chief factor in $Q_{\beta}/K$ for $L_{\beta}$, and we have a contradiction.
\end{proof}

\subsection{$C_{V_{\beta}}(V_{\alpha'})=V_{\beta}\cap Q_{\alpha'}$}

We continue with the analysis of the case $[Z_{\alpha}, Z_{\alpha'}]=\{1\}$, this time with the additional assumptions that $b>1$ and $[V_{\beta}\cap Q_{\alpha'}, V_{\alpha'}]=\{1\}$. Recall from \cref{SL2VlQ} and \cref{SL2VnQ} that this hypothesis implies that $q:=q_\alpha=q_\beta$, $L_{\alpha}/R_{\alpha}\cong L_{\beta}/R_{\beta}\cong\SL_2(q)$ and $Z_{\alpha}$ and $V_{\beta}/C_{V_{\beta}}(O^p(L_{\beta}))$ are natural $\SL_2(q)$-modules.

Throughout this subsection, we fix the notation $V^\lambda:=\langle (C_{V_{\mu}}(O^p(L_{\mu})))^{G_{\lambda}}\rangle$ whenever $\lambda\in\alpha^G$, $\mu\in\Delta(\lambda)$ and $|V_{\beta}|\ne q^3$, and we remark that when $|V_{\beta}|\ne q^3$ and $b>3$, for $\gamma\in\beta^G$ and some fixed $\delta\in\Delta(\gamma)$, the subgroup $\langle V^{\mu}\mid Z_{\mu}=Z_{\delta}, \mu\in\Delta(\gamma)\rangle$ is normal in $R_{\gamma}Q_{\delta}$ by essentially the same argument as \cref{UWNormal}. Throughout, we set $R:=[V_{\alpha'}, V_{\beta}]$ so that $R\le Z_{\alpha+2}C_{V_{\beta}}(O^p(L_{\beta}))\cap Z_{\alpha'-1}C_{V_{\alpha'}}(O^p(L_{\alpha'}))\le V_{\beta}\cap V_{\alpha'}$ and, in particular, if $|V_{\beta}|=q^3$, then $R\le Z_{\alpha+2}\cap Z_{\alpha'-1}$. By \cref{NotNatural}, we may assume in this section that every critical pair $(\alpha, \alpha')$ satisfies the condition $C_{V_{\beta}}(V_{\alpha'})=V_{\beta}\cap Q_{\alpha'}$. We reiterate that whenever we assume the necessary values of $b$, we are able to apply \cref{VBGood} through \cref{GoodAction4}. That the hypotheses of these lemmas are satisfied will often be left implicit in proofs. 

The first goal in the analysis of the case $C_{V_\beta}(V_{\alpha'})=V_\beta \cap Q_{\alpha'}$ will be to show that $b\leq 5$. Then the methods for $b=5$ differ slightly from the techniques employed for larger values of $b$ and so, for the most part, we treat the case when $b=5$ independently from the the other cases. The case when $b=3$ is different again and so this case is also treated separately.

The following lemma is also valid whenever $b=3$ but, as mentioned above, since the techniques we apply when $b=3$ are somewhat disparate from the rest of this subsection, we only prove it here whenever $b>3$.

\begin{lemma}\label{VA1}
Suppose that $C_{V_\beta}(V_{\alpha'})=V_\beta \cap Q_{\alpha'}$ and $b>3$. If $V_{\alpha}^{(2)}\le Q_{\alpha'-2}$ and $V_{\alpha'}\le Q_{\beta}$ then $R=Z_{\beta}\le Z_{\alpha'-1}$, $|V_{\beta}|=q^3$, $V_{\alpha}^{(2)}/Z_{\alpha}$ is an FF-module for $\bar{L_{\alpha}}$ and one of the following holds:
\begin{enumerate}
\item $V_{\alpha}^{(2)}\le Q_{\alpha'-1}$ and $[V_{\alpha}^{(2)}\cap Q_{\alpha'}, V_{\alpha'}]=Z_{\alpha'}\le V_{\alpha}^{(2)}$; or
\item $V_{\alpha}^{(2)}\not\le Q_{\alpha'-1}$, $[V_{\alpha}^{(2)}\cap Q_{\alpha'}, V_{\alpha'}]=\{1\}$, $R=Z_{\beta}=Z_{\alpha'-2}$ and $Z_{\alpha'-1}=Z_{\alpha'}\times Z_{\beta}$.
\end{enumerate}
\end{lemma}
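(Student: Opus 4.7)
First I would invoke \cref{SL2VlQ}, whose hypotheses hold verbatim (since $b>3$, $C_{V_\beta}(V_{\alpha'})=V_\beta\cap Q_{\alpha'}$ and $V_{\alpha'}\le Q_\beta$), to extract that $L_\alpha/R_\alpha\cong L_\beta/R_\beta\cong \SL_2(q)$, that $Q_\beta\in\syl_p(R_\beta)$, that $Z_\alpha$ and $V_\beta/C_{V_\beta}(O^p(L_\beta))$ are natural $\SL_2(q)$-modules, and that $Z_\beta=[V_{\alpha'}, V_\beta]\le V_{\alpha'}\cap V_\beta$; the identity $R=Z_\beta$ is then immediate. To locate $Z_\beta$ inside $V_{\alpha'}$, I would use that $V_\beta\le Q_{\alpha'-1}$ acts on the natural module $V_{\alpha'}/C_{V_{\alpha'}}(O^p(L_{\alpha'}))$ as (a subgroup of) the opposite unipotent to $Q_{\alpha'}$, so the image of $[V_{\alpha'}, V_\beta]$ lies in the $q$-line $Z_{\alpha'-1}C_{V_{\alpha'}}(O^p(L_{\alpha'}))/C_{V_{\alpha'}}(O^p(L_{\alpha'}))$; once $|V_{\alpha'}|=q^3$ is proved below (so $C_{V_{\alpha'}}(O^p(L_{\alpha'}))=Z_{\alpha'}$ by \cref{VBGood}(i)), this reduces, modulo the case choice below, to $Z_\beta\le Z_{\alpha'-1}$.

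To prove $|V_\beta|=q^3$, I would assume for a contradiction the alternative (ii) of \cref{VBGood} applied to $(\alpha,\beta)$, so $V^\alpha:=\langle C_{V_\beta}(O^p(L_\beta))^{G_\alpha}\rangle$ satisfies $V^\alpha V_\beta\not\normaleq L_\beta$. However $V_{\alpha'}\le Q_\beta$ gives $[V_\beta, V_{\alpha'}]=Z_\beta\le C_{V_\beta}(O^p(L_\beta))\le V^\alpha$, and the hypothesis $V_\alpha^{(2)}\le Q_{\alpha'-2}$ lets me push $[V_{\alpha'}, V^\alpha]$ into $V_\beta$; together with the generation $L_\beta=\langle V_{\alpha'}, R_\beta, Q_\alpha\rangle$ (and the automatic normality of $V^\alpha V_\beta$ under $R_\beta$ and $Q_\alpha$), this forces $V^\alpha V_\beta\normaleq L_\beta$, a contradiction. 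Thus $|V_\beta|=q^3$, $C_{V_\beta}(O^p(L_\beta))=Z_\beta$, and by conjugacy $|V_{\alpha'}|=q^3$ and $C_{V_{\alpha'}}(O^p(L_{\alpha'}))=Z_{\alpha'}$.

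With this in hand, I would split on whether $V_\alpha^{(2)}\le Q_{\alpha'-1}$. In Case (i), $V_\alpha^{(2)}\le Q_{\alpha'-1}$: abelianness of $V_\alpha^{(2)}$ (since $b>3$) and $V_\alpha^{(2)}\cap Q_{\alpha'}\le Q_{\alpha'}$ give $[V_\alpha^{(2)}\cap Q_{\alpha'}, V_{\alpha'}]\le Z_{\alpha'}\cap V_\alpha^{(2)}$; since the alternative $V_\alpha^{(2)}\le Q_{\alpha'}$ would force $[V_\alpha^{(2)}, V_{\alpha'}]=1$ and produce a subgroup normal in $\langle G_\alpha, G_{\alpha'}\rangle$ via \cref{push}, the non-trivial action instead gives $Z_{\alpha'}\le V_\alpha^{(2)}$ and $[V_\alpha^{(2)}\cap Q_{\alpha'}, V_{\alpha'}]=Z_{\alpha'}$. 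In Case (ii), $V_\alpha^{(2)}\not\le Q_{\alpha'-1}$: picking $v\in V_\alpha^{(2)}\setminus Q_{\alpha'-1}$, abelianness plus $V_\alpha^{(2)}\le Q_{\alpha'-2}$ lets me analyze $[v, V_{\alpha'-1}]\le V_\alpha^{(2)}\cap Z_{\alpha'-2}$; combining with $[V_\alpha^{(2)}\cap Q_{\alpha'-1}, V_{\alpha'}]\le V_{\alpha'}\cap V_\alpha^{(2)}=Z_\beta$ and the natural $\SL_2(q)$-module structure of $V_{\alpha'-1}/Z_{\alpha'-1}$ forces $R=Z_\beta=Z_{\alpha'-2}$ and the direct sum decomposition $Z_{\alpha'-1}=Z_{\alpha'}\times Z_\beta$, together with $[V_\alpha^{(2)}\cap Q_{\alpha'}, V_{\alpha'}]=\{1\}$ (since $V_\alpha^{(2)}\cap Q_{\alpha'}\le Q_{\alpha'}\cap Q_{\alpha'-1}$ already centralizes $V_{\alpha'}$ modulo $Z_{\alpha'}\cap Z_\beta=\{1\}$). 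In either case, $V_{\alpha'}$ acts as a $q$-offender on $V_\alpha^{(2)}/Z_\alpha$ with $|V_\alpha^{(2)}/C_{V_\alpha^{(2)}}(V_{\alpha'})|\le q=|V_{\alpha'}Q_\alpha/Q_\alpha|$, so \cref{SEFF} yields that $V_\alpha^{(2)}/Z_\alpha$ is an FF-module for $\bar L_\alpha$. The main difficulty will be the normality argument in the second paragraph: using the restrictive placement hypotheses $V_\alpha^{(2)}\le Q_{\alpha'-2}$ and $V_{\alpha'}\le Q_\beta$ to constrain $V^\alpha V_\beta$ tightly enough to contradict the non-normality supplied by \cref{VBGood}(ii).
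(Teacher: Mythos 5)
Your first paragraph (invoking \cref{SL2VlQ}, and then transporting the data to $\alpha'$ by conjugacy) is fine and matches the paper's setup. The trouble is in the second paragraph, which is where the real work lives, and it contains a genuine gap.

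You want to rule out alternative (ii) of \cref{VBGood} by showing $V^\alpha V_\beta\normaleq L_\beta$. The observation $[V_{\alpha'},V^\alpha]\le V^\alpha$ (from $V_{\alpha'}\le Q_\beta\le G_\alpha$ and $V^\alpha\normaleq G_\alpha$) does give that $V_{\alpha'}$ normalizes $V^\alpha V_\beta$ — you do not need to land the commutator in $V_\beta$ specifically. But the normality claim then hinges on the generation $L_\beta=\langle V_{\alpha'},R_\beta,Q_\alpha\rangle$, and this fails under the present hypothesis: here $V_{\alpha'}\le Q_\beta$, and by \cref{SL2VlQ} we have $Q_\beta\in\syl_p(R_\beta)$, so $V_{\alpha'}\le R_\beta$. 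Hence $\langle V_{\alpha'},R_\beta,Q_\alpha\rangle=R_\beta Q_\alpha$, which has image a single Sylow $p$-subgroup of $L_\beta/R_\beta\cong\SL_2(q)$ and so is a proper subgroup of $L_\beta$. The generation $L_\beta=\langle V_{\alpha'},R_\beta,Q_\alpha\rangle$ is precisely what is available when $V_{\alpha'}\not\le Q_\beta$ (the hypothesis of \cref{VB1}, \cref{VB2}, \dots), not here. In addition, $R_\beta$ need not normalize $V^\alpha$ itself — the subsection's preamble only records $R_\beta Q_\alpha$-invariance for $\langle V^\mu\mid Z_\mu=Z_\alpha,\,\mu\in\Delta(\beta)\rangle$, which is a larger group — so even the $R_\beta$-invariance in your sketch needs care.

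In contrast, the paper treats $|V_\beta|=q^3$ by a different route. When $V_\alpha^{(2)}\le Q_{\alpha'-1}$ it shows directly, using $Z_{\alpha'+1}$ for an $\alpha'+1\in\Delta(\alpha')$ with $Z_{\alpha'+1}\not\le Q_\alpha$, that $V_\alpha^{(2)}/Z_\alpha$ has a unique non-central chief factor (an FF-module), which contradicts \cref{VBGood}(ii) outright. When $V_\alpha^{(2)}\not\le Q_{\alpha'-1}$, it assumes $|V_\beta|>q^3$ and then splits on whether $V^\alpha\le Q_{\alpha'-1}$; the harder subcase requires establishing that $Z_\alpha$, $V^\alpha/Z_\alpha$ and $V_\alpha^{(2)}/V^\alpha$ are all FF-modules, then invoking \cref{GoodAction1} (to kill $O^p(R_\alpha)$ on $V_\alpha^{(2)}$) and \cref{SimExt} (applied to $Z_{\alpha'}=Z_{\alpha'-2}$) to force $V_{\alpha'}=V_{\alpha'-2}\le Q_\alpha$, a contradiction. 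None of this is captured by the normality sketch, and I don't see how to salvage your shortcut without effectively reconstructing these steps. A minor further point: in your Case (i) the ``alternative $V_\alpha^{(2)}\le Q_{\alpha'}$'' cannot arise at all, since $Z_\alpha\le V_\alpha^{(2)}$ and $Z_\alpha\not\le Q_{\alpha'}$, so the appeal to \cref{push} there is moot.
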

\begin{proof}
Suppose first that $V_{\alpha}^{(2)}\le Q_{\alpha'-1}$. Then $V_{\alpha}^{(2)}=Z_{\alpha}(V_{\alpha}^{(2)}\cap Q_{\alpha'})$ and since $V_{\alpha}^{(2)}/Z_{\alpha}$ contains a non-central chief factor for $L_{\alpha}$, $[V_{\alpha}^{(2)}\cap Q_{\alpha'}, V_{\alpha'}]=Z_{\alpha'}\not\le Z_{\alpha}$. Then, for $\alpha'+1\in\Delta(\alpha')$ with $Z_{\alpha'+1}\not\le Q_{\alpha}$ it follows that $[Z_{\alpha'+1}, V_{\alpha}^{(2)}\cap Q_{\alpha'}\cap Q_{\alpha'+1}]=\{1\}$ and $V_{\alpha}^{(2)}/Z_{\alpha}$ contains a unique non-central chief factor which is an FF-module for $\bar{L_{\alpha}}$. Then by \cref{VBGood}, $|V_{\beta}|=q^3$, $[V_{\alpha}^{(2)}, Q_{\alpha}]=Z_{\alpha}$ and $Z_{\beta}=R\le Z_{\alpha'-1}\cap Z_{\alpha+2}$.

Suppose now that $V_{\alpha}^{(2)}\not\le Q_{\alpha'-1}$ and $|V_{\beta}|>q^3$. Then by \cref{VBGood}, both $V^\alpha/Z_{\alpha}$ and $V_{\alpha}^{(2)}/V^\alpha$ contain a non-central chief factor for $L_{\alpha}$. If $V^\alpha\not\le Q_{\alpha'-1}$, then $V^\alpha(V_{\alpha}^{(2)}\cap Q_{\alpha'})$ has index strictly less than $q$ in $V_{\alpha}^{(2)}$ and so, we have that $[V_{\alpha'}, V_{\alpha}^{(2)
}\cap Q_{\alpha'}]=Z_{\alpha'}\le V_{\alpha}^{(2)}$ and $Z_{\alpha'}\not\le V^\alpha$. In particular, $[V^{\alpha}\cap Q_{\alpha'}, V_{\alpha'}]=\{1\}$ and since $V^{\alpha}/Z_{\alpha}$ contains a non-central chief factor, we deduce that $V^{\alpha}Q_{\alpha'-1}\in\syl_p(L_{\alpha'-1})$. Since $b>3$, $V_{\alpha}^{(2)}$ is elementary abelian and $V_{\alpha}^{(2)}\not\le Q_{\alpha'-1}$, $Z_{\alpha'}=C_{Z_{\alpha'-1}}(V_{\alpha}^{(2)})=Z_{\alpha'-2}=[V^\alpha, Z_{\alpha'-1}]\le V^\alpha$, a contradiction. 

Thus, if $V_{\alpha}^{(2)}\not\le Q_{\alpha'-1}$ and $|V_{\beta}|>q^3$ then $V^{\alpha}\le Q_{\alpha'-1}$ and since $V^\alpha/Z_{\alpha}$ contains a non-central chief factor, it follows that $[V^\alpha\cap Q_{\alpha'}, V_{\alpha'}]=Z_{\alpha'}\le V^\alpha$, $(V^{\alpha}\cap Q_{\alpha'})Q_{\alpha'+1}\in\syl_p(L_{\alpha'+1})$ for any $\alpha'+1\in\Delta(\alpha')$ with $Z_{\alpha'+1}\not\le Q_{\alpha}$ and $V^\alpha/Z_{\alpha}$ is an FF-module for $\bar{L_{\alpha}}$. Since $V_{\alpha}^{(2)}\not\le Q_{\alpha'-1}$ and $V_{\alpha}^{(2)}$ is abelian, $Z_{\alpha'}=C_{Z_{\alpha'-1}}(V_{\alpha}^{(2)})=Z_{\alpha'-2}$, $(V_{\alpha}^{(2)}\cap Q_{\alpha'-1})/V^\alpha$ is centralized by $V_{\alpha'}$ and $V_{\alpha}^{(2)}/V^{\alpha}$ is also an FF-module for $\bar{L_{\alpha}}$. Then, applying \cref{GoodAction1} and \cref{SimExt} to $Z_{\alpha'}=Z_{\alpha'-2}$, we conclude that $V_{\alpha'}=V_{\alpha'-2}\le Q_{\alpha}$, a contradiction.

Thus, we assume now that $|V_{\beta}|=q^3$ and $Z_{\beta}=R\le Z_{\alpha'-1}\cap Z_{\alpha+2}$. Indeed, $Z_{\beta}=C_{Z_{\alpha'-1}}(V_{\alpha}^{(2)})=Z_{\alpha'-2}$ intersects $Z_{\alpha'}$ trivially, and $Z_{\alpha'-1}=Z_{\alpha'}\times Z_{\beta}$. If $[V_{\alpha}^{(2)}\cap Q_{\alpha'}, V_{\alpha'}]=Z_{\alpha'}\le V_{\alpha}^{(2)}$, then $Z_{\alpha'-1}=Z_{\beta}\times Z_{\alpha'}$ is centralized by $V_{\alpha}^{(2)}$ and $V_{\alpha}^{(2)}\le Q_{\alpha'-1}$, a contradiction. Thus, $[V_{\alpha}^{(2)}\cap Q_{\alpha'}, V_{\alpha'}]=\{1\}$, $(V_{\alpha}^{(2)}\cap Q_{\alpha'-1})/Z_{\alpha}$ is centralized by $V_{\alpha'}$ and $V_{\alpha}^{(2)}/Z_{\alpha}$ is an FF-module for $\bar{L_{\alpha}}$.
\end{proof}

\begin{lemma}\label{VB1}
Suppose that $C_{V_\beta}(V_{\alpha'})=V_\beta \cap Q_{\alpha'}$ and $b>5$. If $V_{\alpha'}\not\le Q_{\beta}$ and $V_{\alpha}^{(2)}\le Q_{\alpha'-2}$, then $|V_{\beta}|=q^3$.
\end{lemma}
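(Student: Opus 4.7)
The plan is to argue by contradiction: suppose $|V_\beta| \neq q^3$. By \cref{SL2VnQ} we have $V_\beta/C_{V_\beta}(O^p(L_\beta))$ is a natural $\SL_2(q)$-module and $q_\alpha = q_\beta = q$, so $|V_\beta| > q^3$ is equivalent to $C_{V_\beta}(O^p(L_\beta)) \neq Z_\beta$. Case (ii) of \cref{VBGood} then produces $V^\alpha$ with $Z_\alpha < V^\alpha \le V_\alpha^{(2)}$, $[V^\alpha, Q_\alpha] = Z_\alpha$, and both $V^\alpha/Z_\alpha$ and $V_\alpha^{(2)}/V^\alpha$ containing non-central chief factors for $L_\alpha$. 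Moreover $V_\alpha^{(2)}$ is abelian by \cref{VAbelian}, since $b > 5$. The structure of the argument then parallels the proof of \cref{VA1}, with the crucial observation that the key steps there only needed $V_{\alpha'} \not\le Q_\alpha$ (which holds automatically because $(\alpha,\alpha')$ is critical) rather than $V_{\alpha'} \le Q_\beta$.

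I would first handle the sub-case $V_\alpha^{(2)} \le Q_{\alpha'-1}$. Here $V_\alpha^{(2)} = Z_\alpha(V_\alpha^{(2)} \cap Q_{\alpha'})$, so $[V_\alpha^{(2)} \cap Q_{\alpha'}, V_{\alpha'}] = Z_{\alpha'}$. Since $Z_{\alpha'} \not\le Q_\alpha$ forces $V_{\alpha'} \not\le Q_\alpha$, I may choose $\alpha'+1 \in \Delta(\alpha')$ with $Z_{\alpha'+1} \not\le Q_\alpha$. Then $V_\alpha^{(2)} \cap Q_{\alpha'} \cap Q_{\alpha'+1}$ is centralized by $Z_{\alpha'+1}$, forcing $V_\alpha^{(2)}/Z_\alpha$ to contain a unique non-central chief factor for $L_\alpha$ which, as a $\bar L_\alpha$-module, is an FF-module. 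But this is incompatible with \cref{VBGood}(ii), so \cref{VBGood}(i) holds and $|V_\beta| = q^3$, a contradiction.

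For the sub-case $V_\alpha^{(2)} \not\le Q_{\alpha'-1}$, I first show $V^\alpha \le Q_{\alpha'-1}$: if not, $V^\alpha(V_\alpha^{(2)} \cap Q_{\alpha'})$ has index strictly less than $q$ in $V_\alpha^{(2)}$, giving $[V_\alpha^{(2)} \cap Q_{\alpha'}, V_{\alpha'}] = Z_{\alpha'}$ with $Z_{\alpha'} \not\le V^\alpha$, whence $[V^\alpha \cap Q_{\alpha'}, V_{\alpha'}] = \{1\}$; combined with abelian-ness of $V_\alpha^{(2)}$ and $b > 5$ this yields $Z_{\alpha'} = Z_{\alpha'-2} = [V^\alpha, Z_{\alpha'-1}] \le V^\alpha$, a contradiction. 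Thus $V^\alpha \le Q_{\alpha'-1}$, which gives $[V^\alpha \cap Q_{\alpha'}, V_{\alpha'}] = Z_{\alpha'} \le V^\alpha$, making $V^\alpha/Z_\alpha$ an FF-module; and since $V_\alpha^{(2)}$ is abelian but not contained in $Q_{\alpha'-1}$, one obtains $Z_{\alpha'} = C_{Z_{\alpha'-1}}(V_\alpha^{(2)}) = Z_{\alpha'-2}$ and $(V_\alpha^{(2)} \cap Q_{\alpha'-1})/V^\alpha$ centralized by $V_{\alpha'}$, so $V_\alpha^{(2)}/V^\alpha$ is also an FF-module for $\bar L_\alpha$.

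With $Z_\alpha$, $V^\alpha/Z_\alpha$ and $V_\alpha^{(2)}/V^\alpha$ all FF-modules, \cref{GoodAction1} yields $R_\alpha = C_{L_\alpha}(V_\alpha^{(2)})Q_\alpha$, so $O^p(R_\alpha)$ centralizes $V_\alpha^{(2)}$. Conjugating, $O^p(R_{\alpha'-1})$ centralizes $V_{\alpha'-1}^{(2)} \supseteq V_{\alpha'-2}$, so the hypotheses of \cref{SimExt} are met at $\delta = \alpha'-1$ with the equality $Z_{\alpha'-2} = Z_{\alpha'}$, and we conclude $V_{\alpha'-2} = V_{\alpha'}$. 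But $V_{\alpha'-2} \le Q_\alpha$ by minimality of $b$ (every $\gamma \in \Delta(\alpha'-2)$ satisfies $d(\gamma,\alpha) \le b-1$), so $V_{\alpha'} \le Q_\alpha$, contradicting $Z_{\alpha'} \not\le Q_\alpha$. The main obstacle will be verifying in sub-case B that the FF-module structure on $V^\alpha/Z_\alpha$ and $V_\alpha^{(2)}/V^\alpha$ can be derived without the hypothesis $V_{\alpha'} \le Q_\beta$ of \cref{VA1}; the resolution is that these deductions rest only on the abelian-ness of $V_\alpha^{(2)}$, the containment $V_\alpha^{(2)} \le Q_{\alpha'-2}$, and the existence of $\alpha'+1 \in \Delta(\alpha')$ with $Z_{\alpha'+1} \not\le Q_\alpha$, all of which hold unconditionally under our hypotheses.
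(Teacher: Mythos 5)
There is a genuine gap. Your whole strategy is to transplant the proof of \cref{VA1} to the case $V_{\alpha'}\not\le Q_\beta$, and at the end you assert the deductions there ``rest only on the abelian-ness of $V_\alpha^{(2)}$, the containment $V_\alpha^{(2)}\le Q_{\alpha'-2}$, and the existence of $\alpha'+1\in\Delta(\alpha')$ with $Z_{\alpha'+1}\not\le Q_\alpha$.'' That claim is false: the hypothesis $V_{\alpha'}\le Q_\beta$ is used essentially and silently throughout \cref{VA1}, precisely to place $V_{\alpha'}$ (and its generators $Z_{\alpha'+1}$) inside $Q_\beta\le G_\alpha$. This is what makes $V_{\alpha'}$ normalize the $G_\alpha$-normal subgroups $V^\alpha$ and $V_\alpha^{(2)}$, which in turn is the only reason one can conclude $[V_{\alpha'},V_\alpha^{(2)}\cap Q_{\alpha'}]\le V_\alpha^{(2)}$ and hence $Z_{\alpha'}\le V_\alpha^{(2)}$ (and likewise $[V_{\alpha'},V^\alpha\cap Q_{\alpha'}]\le V^\alpha$). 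Your sub-case B step ``$Z_{\alpha'}\not\le V^\alpha$, whence $[V^\alpha\cap Q_{\alpha'},V_{\alpha'}]=\{1\}$'' needs exactly this containment of the commutator in $V^\alpha$. Similarly, treating $Z_{\alpha'+1}$ as an FF-module offender on $V_\alpha^{(2)}/Z_\alpha$ (sub-case A) needs $Z_{\alpha'+1}\le G_\alpha\setminus Q_\alpha$, and again this was guaranteed in \cref{VA1} only via $Z_{\alpha'+1}\le V_{\alpha'}\le Q_\beta\le S\le G_\alpha$.

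In the setting of \cref{VB1} one only has $V_{\alpha'}\le Q_{\alpha+2}$, and $Q_{\alpha+2}$ lies in $G_\beta$ but not in $G_\alpha$; a generator $Z_{\alpha'+1}$ with $Z_{\alpha'+1}\not\le Q_\alpha$ can fall outside $Q_\beta$ (indeed $V_{\alpha'}\cap Q_\beta=Z_{\alpha'-1}$, so as soon as $Z_{\alpha'+1}\ne Z_{\alpha'-1}$ it does), and then there is no reason for $Z_{\alpha'+1}$ to normalize $V^\alpha$ or $V_\alpha^{(2)}$. This is not a bookkeeping issue but the reason the paper's proof of \cref{VB1} looks so different: it replaces $V^\alpha$ and $V_\alpha^{(2)}$ by the auxiliary subgroups $U^\beta=\langle V^\lambda\mid\lambda\in\Delta(\beta),\,Z_\lambda=Z_\alpha\rangle$ and $U^{\alpha'}$, together with $U_\beta:=\langle (V^{\mu+1})^{L_\mu}\rangle$, which by \cref{UWNormal} carry normalization by $R_\beta Q_\alpha$ (resp.\ $R_{\alpha'}Q_{\alpha'+1}$), and this built-in normalization is what allows the commutator arguments to go through. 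Your outline also has a minor slip worth flagging: you write ``$Z_{\alpha'}\not\le Q_\alpha$ forces $V_{\alpha'}\not\le Q_\alpha$'', but the standing hypothesis of this whole section is $Z_{\alpha'}\le Q_\alpha$; the correct reason $V_{\alpha'}\not\le Q_\alpha$ is that $Z_\alpha\le Z(Q_\alpha)$ would otherwise centralize $V_{\alpha'}$, contradicting $Z_\alpha\not\le Q_{\alpha'}$.
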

\begin{proof}
Suppose throughout that $|V_{\beta}|\ne q^3$ so that both $V^\alpha/Z_{\alpha}$ and $V_{\alpha}^{(2)}/V^\alpha$ contain a non-central chief factor for $L_{\alpha}$. Choose $\alpha'+1\in\Delta(\alpha')$ with $Z_{\alpha'+1}\not\le Q_{\beta}$. In particular, $(\alpha'+1, \beta)$ is a critical pair and we may assume that $C_{V_{\alpha'}}(V_{\beta})=V_{\alpha'}\cap Q_{\beta}$. Set $U^\beta:=\langle V^\lambda\mid \lambda\in\Delta(\beta), Z_{\lambda}=Z_{\alpha}\rangle$ so that $R_{\beta}Q_{\alpha}$ normalizes $U^\beta$ by \cref{UWNormal}. Setting $U^{\alpha'}:=\langle V^{\mu}\mid \mu\in\Delta(\alpha'), Z_{\mu}=Z_{\alpha'+1}\rangle$, it follows similarly that $U^{\alpha'}\normaleq R_{\alpha'}Q_{\alpha'+1}$. Throughout, for $\mu\in\beta^G$, we set $U_{\mu}:=\langle (V^{\mu+1})^{L_{\mu}}\rangle$ where $\mu+1\in\Delta(\mu)$. In particular, $U^\beta\le U_\beta\normaleq L_{\beta}$. Note that for $\lambda\in\Delta(\beta)$ whenever $Z_{\alpha}=Z_{\lambda}$, we have that $Q_{\alpha}R_{\beta}=Q_{\lambda}R_{\beta}$. Furthermore, $Q_{\alpha}O^p(R_{\beta})=C_{L_{\beta}}(Z_{\alpha})=C_{L_{\beta}}(Z_{\lambda})=Q_{\lambda}O^p(R_{\beta})$, $Q_{\alpha}\in\syl_p(Q_\lambda O^p(R_{\beta}))$, $Q_\alpha$ is conjugate to $Q_\lambda$ by an element of $O^p(R_\beta)$ and since $O^p(R_{\beta})$ centralizes $Q_{\beta}/C_{\beta}$, $Q_{\alpha}\cap Q_{\beta}=Q_{\lambda}\cap Q_{\beta}$. In particular, $[Q_{\alpha}\cap Q_{\beta}, U^\beta]=Z_{\alpha}$.

We will use throughout that if $R\le Z_{\alpha'-1}$, then $Z_{\alpha'-1}Z_{\alpha'-1}^g$ is normalized by $L_{\alpha'}=\langle V_{\beta}, (V_{\beta})^g, R_{\alpha'}\rangle$ for some suitable $g\in L_{\alpha'}$. Then, from the definition of $V_{\alpha'}$, we conclude that $V_{\alpha'}=Z_{\alpha'-1}Z_{\alpha'-1}^g$ is of order $q^3$, as required. A similar conclusion follows if $R\le Z_{\alpha+2}$.

Suppose first that $U^{\beta}\not\le Q_{\alpha'-2}$ and so there is some $\lambda\in\Delta(\beta)$ with $V^{\lambda}\not\le Q_{\alpha'-2}$ and $Z_{\lambda}=Z_{\alpha}$. In particular, since $V_{\alpha'-2}\le Q_{\lambda}$ and $Z_{\alpha}\cap Q_{\alpha'}=Z_{\beta}$, we deduce that $[V_{\alpha'-2}, V^{\lambda}]=Z_{\beta}\le V_{\alpha'-2}$ and $Z_{\alpha'-2}\cap Z_{\beta}=\{1\}$. If, in addition, $U_{\alpha'-2}\not\le Q_{\beta}$, then there is $\delta\in\Delta(\alpha'-2)$ with $V^\delta\not\le Q_{\beta}$ and $[V^\delta, V_{\beta}]\le Z_{\delta}$. In particular, it follows that $R\le [V^\delta, V_{\beta}]\le Z_{\delta}$ and since $R\not\le Z_{\alpha'-1}$, otherwise $|V_{\alpha'}|=q^3$, it follows that $Z_{\alpha'-2}<R Z_{\alpha'-2}\le Z_{\delta}$ and $RZ_{\alpha'-2}$ centralizes $V^{\lambda}$. But $V^{\lambda}\not\le Q_{\alpha'-2}$ and since $V^\lambda$ centralizes $Z_{\alpha'-3}C_{V_{\alpha'-2}}(O^p(L_{\alpha'-2}))$ we have that $Z_{\delta}=Z_{\alpha'-3}$ by \cref{VBGood}. But now, $RZ_{\alpha'-2}\le V_{\alpha'}$ and it follows that $Z_{\alpha'-3}\le V_{\alpha'-2}\cap V_{\alpha'}$ and again by \cref{VBGood}, we conclude that $R\le Z_{\alpha'-3}=Z_{\alpha'-1}$, a contradiction.

If $U_{\alpha'-2}\le Q_{\beta}$, then for any $\delta\in\Delta(\alpha'-2)$, $[V^\delta, V_{\beta}]\le Z_{\delta}$ by \cref{VBGood}. If $[V^\delta, V_{\beta}]\ne \{1\}$ for some $\delta$ then $Z_{\beta}=[V^\delta, V_{\beta}]\le Z_{\delta}$, $[V^\lambda, V_{\alpha'-2}]= Z_{\beta}\le Z_{\delta}$ and $|V_{\alpha'-2}|=q^3$, a contradiction. Thus, $[U_{\alpha'-2}, V_{\beta}]=\{1\}$ and $U_{\alpha'-2}\le Q_{\lambda}$ so that $[U_{\alpha'-2}, V^{\lambda}]=Z_{\lambda}\cap U_{\alpha'-2}=Z_{\alpha}\cap U_{\alpha'-2}\le Z_{\beta}\le V_{\alpha'-2}$ by \cref{VBGood}, and $V^\lambda$ centralizes $U_{\alpha'-2}/V_{\alpha'-2}$. But then $O^p(L_{\alpha'-2})$ centralizes $U_{\alpha'-2}/V_{\alpha'-2}$, a contradiction by \cref{VBGood}, for then $V^{\alpha'-1}V_{\alpha'-2}\normaleq L_{\alpha'-2}$. Thus, $U^\beta\le Q_{\alpha'-2}$. Notice that the hypothesis $V_{\alpha}^{(2)}\le Q_{\alpha'-2}$ is not involved in the above arguments and so we may repeat the above arguments to conclude also that $U^{\alpha'}\le Q_{\alpha+3}$.

Assume now that $U^{\beta}\le Q_{\alpha'-2}$ but $U^\beta\not\le Q_{\alpha'-1}$. Then, as $Z_{\alpha'-1}\le Q_{\alpha}$, it follows by \cref{VBGood} that $Z_{\alpha'-2}=[U^\beta, Z_{\alpha'-1}]\le Z_{\alpha}$ and $Z_{\alpha'-2}=Z_{\beta}=Z_{\alpha}\cap Q_{\alpha'}$. Then $[V^{\alpha'-1}, V_{\beta}]\le Z_{\alpha'-1}\cap V_{\beta}$ and since $V_{\beta}U^\beta\le V_{\beta}^{(3)}$ is abelian, it follows that $[V^{\alpha'-1}, V_{\beta}]\le Z_{\alpha'-2}=Z_{\beta}$ and $V^{\alpha'-1}\le Q_{\beta}$. If $V^{\alpha'-1}\le Q_{\lambda}$ for some $\lambda\in\Delta(\beta)$ with $Z_{\lambda}=Z_{\alpha}$ and $V^\lambda\not\le Q_{\alpha'-1}$, then $[V^{\alpha'-1}, V^\lambda]\le Z_{\lambda}\cap Q_{\alpha'}=Z_{\beta}=Z_{\alpha'-2}\le Z_{\alpha'-1}$, a contradiction since $V^\lambda\not\le Q_{\alpha'-1}$. Therefore $V^{\alpha'-1}\not\le Q_{\lambda}$ and as \[[V^{\lambda}\cap Q_{\alpha'-1}, V^{\alpha'-1}]\le Z_{\alpha'-1}\cap V^{\lambda}=C_{Z_{\alpha'-1}}(U^\beta)=Z_{\alpha'-2}=Z_{\beta}\le Z_{\alpha}=Z_{\lambda},\] $V^{\lambda}/Z_{\lambda}$ is an FF-module for $\bar{L_{\lambda}}$ and $V^\lambda Q_{\alpha'-1}\in\syl_p(L_{\alpha'-1})$. Moreover, $V_{\lambda}^{(2)}\cap Q_{\alpha'-2}=V^\lambda(V_{\lambda}^{(2)}\cap Q_{\alpha'-1})$, $V_{\lambda}^{(2)}/V^{\lambda}$ is also an FF-module for $\bar{L_{\lambda}}$ and $V_{\lambda}^{(2)}Q_{\alpha'-2}\in \syl_p(L_{\alpha'-2})$. Then \cref{GoodAction1} implies that $O^p(R_{\lambda})$ centralizes $V_{\lambda}^{(2)}$.

By \cref{SimExt}, $Z_{\alpha+3}\ne Z_{\beta}=Z_{\alpha'-2}$ and so $V_{\alpha'}^{(3)}\cap Q_{\alpha+3}$ centralizes $Z_{\alpha+2}=Z_{\alpha+3}\times Z_{\alpha'-2}$ and $V_{\alpha'}^{(3)}\cap Q_{\alpha+3}=V_{\alpha'}(V_{\alpha'}^{(3)}\cap Q_{\beta})$. Since $Z_{\beta}\le V_{\alpha'}$, have that $V_{\alpha'}^{(3)}Q_{\alpha+3}\in\syl_p(L_{\alpha+3})$, $V_{\alpha'}^{(3)}/V_{\alpha'}$ contains a unique non-central chief factor for $L_{\alpha'}$ which, as a $\bar{L_{\alpha'}}$-module, is itself an FF-module and, by \cref{GoodAction3} and conjugacy, $O^p(R_{\beta})$ centralizes $V_{\beta}^{(3)}$. By \cref{SimExt}, $Z_{\alpha}=Z_{\lambda}$ implies that $V^\alpha=V^{\lambda}=U^\beta$ and $V_{\alpha}^{(2)}=V_{\lambda}^{(2)}$. But $V_{\alpha}^{(2)}\le Q_{\alpha'-2}$, and this is a contradiction.

Thus, we may assume for the remainder of this proof that $U^\beta\le Q_{\alpha'-1}$. If $[U^\beta\cap Q_{\alpha'}, V_{\alpha'}]\le V_{\beta}U^\beta$, then $V_{\alpha'}$ normalizes $V_{\beta}U^\beta$ and so $U_\beta=V_{\beta}U^\beta\normaleq L_{\beta}=\langle V_{\alpha'}, R_{\beta}, Q_{\alpha}\rangle$. Since $U_\beta /V_{\beta}$ contains a non-central chief factor for $L_{\beta}$, we have that $Z_{\alpha'}\le U_\beta$, $Z_{\alpha'}\not\le V_{\beta}$ and $U^\beta\cap Q_{\alpha'}\not\le Q_{\alpha'+1}$. 

Assume first that $Z_{\alpha'}=Z_{\alpha'-2}$ and $q=p$. Then, $(U^\beta\cap Q_{\alpha'})Q_{\alpha'+1}\in\syl_p(L_{\alpha'+1})$. Now, $U^{\alpha'}\le Q_{\alpha+3}$ and if $U^{\alpha'}\not\le Q_{\alpha+2}$, then $Z_{\alpha+3}=[U^{\alpha'}, Z_{\alpha+2}]\le Z_{\alpha'+1}$ from which we deduce that $Z_{\alpha'}=Z_{\alpha+3}\le V_{\beta}$. But then, $Z_{\alpha'+1}$ centralizes $U^\beta V_\beta/V_\beta$ and we deduce that $V^\alpha V_\beta\normaleq L_{\beta}$, a contradiction by \cref{VBGood}. Hence, $U^{\alpha'}\le Q_{\alpha+2}$ so that $V^{\alpha'+1}=Z_{\alpha'+1}(V^{\alpha'+1}\cap Q_{\beta})$. If $V^{\alpha'+1}\cap Q_{\beta}\le Q_{\alpha}$, then $[V^{\alpha'+1}\cap Q_{\beta}, U^\beta\cap Q_{\alpha'}]\le Z_{\alpha}\cap V^{\alpha'+1}$ and since $V^{\alpha+1}/Z_{\alpha'+1}$ contains a non-central chief factor we must have that $Z_{\beta}\le V^{\alpha'+1}$ and $V^{\alpha'+1}/Z_{\alpha'+1}$ is (dual to) an FF-module. But then $U^{\alpha'}V_{\alpha'}\normaleq L_{\alpha'}=\langle V_{\beta}, Q_{\alpha'+1}, R_{\alpha'}\rangle$. Since $Q_{\alpha'}\cap Q_{\alpha'+1}\le C_{Q_{\alpha'}}(U^{\alpha'}V_{\alpha'}/V_{\alpha'})$, we infer by \cref{push} that $[Q_{\alpha'}, U_{\alpha'}]=[Q_{\alpha'}, U^{\alpha'}V_{\alpha'}]\le V_{\alpha'}$. But then $[V^{\alpha'+1}\cap Q_{\beta}, U^\beta\cap Q_{\alpha'}]\le Z_{\alpha}\cap V_{\alpha'}$ so that $Z_{\beta}\le V_{\alpha'}$ and $V_{\beta}$ centralizes $U^{\alpha'}V_{\alpha'}/V_{\alpha'}$, a contradiction by \cref{VBGood}.

Hence, $V^{\alpha'+1}\cap Q_{\beta}\not\le Q_{\alpha}$. In particular, $Z_{\beta}=[Z_{\alpha}, V^{\alpha'+1}\cap Q_{\beta}]\le U_{\alpha'}$. Since $V^{\alpha}/Z_{\alpha}$ contains a non-central chief factor and $[V^{\alpha}\cap Q_{\alpha'+1}, V^{\alpha'+1}\cap Q_{\beta}]\le Z_{\alpha'}\cap V^{\alpha'+1}=\{1\}$, we must have that $V^{\alpha}\cap Q_{\alpha'}\not\le Q_{\alpha'+1}$. Furthermore, since $Z_{\alpha'}=Z_{\alpha'-2}\not\le V^{\alpha}$, we have that $[V_{\alpha}^{(2)}, Z_{\alpha'-1}]=\{1\}$ and $V_{\alpha}^{(2)}=V^{\alpha'+1}(V_{\alpha}^{(2)}\cap Q_{\alpha'+1})$. Hence, $Z_{\alpha'}\le V_{\alpha}^{(2)}$, $V_{\alpha}^{(2)}/V^\alpha$ is an FF-module for $\bar{L_{\alpha}}$ and by \cref{GoodAction1}, $O^p(R_{\alpha})$ centralizes $V_{\alpha}^{(2)}$. Then, \cref{SimExt} applied to $Z_{\alpha'}=Z_{\alpha'-2}$ yields $V_{\alpha'}=V_{\alpha'-2}\le Q_{\beta}$, a contradiction.

Assume now that $Z_{\alpha'}=Z_{\alpha'-2}$ and $q>p$. Since $Z_{\alpha'}\not\le V_\beta$ and $[U_\beta, C_{\beta}]\le V_{\beta}$, we have that $U_\beta\cap Q_{\alpha'-2}\le Q_{\alpha'-1}$ and as $Z_{\alpha'+1}$ acts quadratically on $U_\beta$, we deduce by \cref{2FRecog} that $O^p(R_{\beta})$ centralizes $U_\beta$. Since $[V_{\beta}^{(3)}\cap Q_{\alpha'}, V_{\alpha'}]\le Z_{\alpha'}\le U_\beta$, we also deduce by \cref{2FRecog} that $O^p(R_{\beta})$ centralizes $V_{\beta}^{(3)}$. In particular, $U^\beta=V^\alpha$ and by \cref{SimExt} and \cref{VBGood}, $Z_{\alpha+2}C_{V_{\alpha+3}}(O^p(L_{\alpha+3}))\ne Z_{\alpha+4}C_{V_{\alpha+3}}(O^p(L_{\alpha+3}))$.

Note that $[V_{\alpha}^{(2)}, Z_{\alpha'-1}]\le Z_{\alpha'}\cap V^\alpha=\{1\}$ so that $V_{\alpha}^{(2)}=V_{\beta}(V_{\alpha}^{(2)}\cap Q_{\alpha'})$. Since $Z_{\alpha'}\le U_\beta$, we deduce that $V_{\beta}^{(3)}=V_{\alpha}^{(2)}U_\beta\normaleq \langle V_{\alpha'}, Q_{\alpha}, R_{\beta}\rangle$. In particular, by conjugation, $V_{\alpha'-2}^{(3)}=U_{\alpha'-2}V_{\alpha'-3}^{(2)}$ so that $R=[V_{\beta}, V_{\alpha'}]\le [V_{\beta}, U_{\alpha'-2}]$. Hence, there is $\delta\in\Delta(\alpha'-2)$ such that $Z_\delta\ne Z_{\alpha'-3}$ and $[V_\beta, V^\delta]\le V_{\beta}\cap Z_{\delta}\not\le Z_{\alpha'-2}$.

Since $U_\beta$ centralizes $Z_{\alpha'-3}(V_{\beta}\cap Z_\delta)$ we infer that $V_{\beta}^{(3)}\le Q_{\alpha'-2}$ and since $U_\beta\le Q_{\alpha'-1}$, we have that $V_{\beta}^{(3)}=V_{\alpha}^{(2)} U_\beta=V_{\beta}(V_{\beta}^{(3)}\cap Q_{\alpha'})$. Then $U_\beta/V_{\beta}$ contains a unique non-central chief factor for $L_{\beta}$, $Z_{\alpha'}\le U_\beta$ and $Z_{\alpha'}\cap V_{\beta}=\{1\}$. Since $U_\beta/V_{\beta}$ contains a unique non-central chief factor for $L_{\beta}$, writing $C$ for the preimage in $U_\beta$ of $C_{U_\beta/V_{\beta}}(O^p(L_{\beta}))$, we have that $U_\beta=CV^\alpha V^{\alpha+2}$, $Z_{\alpha'}\cap C=\{1\}$ and $V^\alpha \cap V^{\alpha+2}\le C$.

Note that $[C\cap Q_{\alpha'}, Z_{\alpha'+1}]=\{1\}$ and $[V^{\alpha+2}\cap Q_{\alpha'}, Z_{\alpha'+1}]\le Z_{\alpha+2}\cap Z_{\alpha'}=\{1\}$ from which we deduce that $U_\beta=V_{\beta}V^\alpha (U_\beta\cap Q_{\alpha'+1})$ and $(V^{\alpha}\cap Q_{\alpha'})Q_{\alpha'+1}\in\syl_p(L_{\alpha'+1})$. Then $V_{\alpha}^{(2)}=V^{\alpha}(V_{\alpha}^{(2)}\cap V^{\alpha'+1})$ and we also infer by a similar reasoning that $S=(V^{\alpha'+1}\cap Q_{\beta})Q_{\alpha}$. Since both $V_{\alpha}^{(2)}/V^{\alpha}$ and $V^{\alpha}/Z_{\alpha}$ contain a non-central chief factor for $L_{\alpha}$, we deduce that both $V_{\alpha}^{(2)}/V^{\alpha}$ and $V^{\alpha}/Z_{\alpha}$ are FF-modules for $\bar{L_{\alpha}}$ and applying \cref{GoodAction1}, $O^p(R_{\alpha})$ centralizes $V_{\alpha}^{(2)}$. Then \cref{SimExt} applied to $Z_{\alpha'}=Z_{\alpha'-2}$ provides a contradiction as in the $q=p$ case.

Hence, we have shown that if $|V_{\beta}|>q^3$ then $Z_{\alpha'}\ne Z_{\alpha'-2}$. Note that if $R\le V_{\alpha'-2}$, then as $R\not\le Z_{\alpha'-1}$, $RZ_{\alpha'-1}\le V_{\alpha'}\cap V_{\alpha'-2}\le Z_{\alpha'-1}C_{V_{\alpha'}}(O^p(L_{\alpha'}))$. Then $[R, Q_{\alpha'}]\le Z_{\alpha'}$ and $[R, Q_{\alpha'-2}]\le Z_{\alpha'-2}$ and so $RZ_{\alpha'-1}\normaleq L:=\langle Q_{\alpha'}, Q_{\alpha'-2}\rangle$. But now, $[R, Q_{\alpha'-1}\cap Q_{\alpha'}]=[R, V_{\beta}(Q_{\alpha'-1}\cap Q_{\alpha'})]=[RZ_{\alpha'-1}, Q_{\alpha'-1}]\normaleq L$ and since $R\le V_{\alpha'}$, $[RZ_{\alpha'-1}, Q_{\alpha'-1}]\le Z_{\alpha'}$ and $R\le \Omega(Z(Q_{\alpha'-1}))\cap V_{\alpha'}$. Assume $\Omega(Z(Q_{\alpha'-1}))\cap C_{V_{\alpha'}}(O^p(L_{\alpha'}))>Z_{\alpha'}$. Then $Q_{\alpha'-1}\cap Q_{\alpha'}\le C_{Q_{\alpha'}}(\Omega(Z(Q_{\alpha'-1}))\cap C_{V_{\alpha'}}(O^p(L_{\alpha'})))\normaleq G_{\alpha'}$ and \cref{push} yields a contradiction. Hence, $\Omega(Z(Q_{\alpha'-1}))\cap V_{\alpha'}=Z_{\alpha'-1}$, a contradiction since $R\not\le Z_{\alpha'-1}$. Thus, $R\not\le V_{\alpha'-2}$.

Now, if $q>p$ then $V_{\beta}^{(3)}\cap Q_{\alpha'-2}$ centralizes $Z_{\alpha'-1}$ so that $V_{\beta}(V_{\beta}^{(3)}\cap Q_{\alpha'+1})$ has index at most $q^2$ in $V_{\beta}^{(3)}$ and is centralized, modulo $V_{\beta}$, by $Z_{\alpha'+1}$. Furthermore, $Z_{\alpha'+1}$ acts quadratically on $V_{\beta}^{(3)}$ and by \cref{2FRecog} we deduce that $O^p(R_{\beta})$ centralizes $V_{\beta}^{(3)}$. Then by \cref{SimExt}, $Z_{\alpha'-1}\ne Z_{\alpha'-3}$ for otherwise $V_{\alpha'}\le V_{\alpha'-1}^{(2)}=V_{\alpha'-3}^{(2)}\le Q_{\alpha'}$. Hence, by \cref{VBGood}, $Z_{\alpha'-1}C_{V_{\alpha'-2}}(O^p(L_{\alpha'-2}))\ne Z_{\alpha'-3}C_{V_{\alpha'-2}}(O^p(L_{\alpha'-2}))$ and since $V_{\beta}^{(3)}$ centralizes $Z_{\alpha'-1}$, $V_{\beta}^{(3)}=V_{\beta}(V_{\beta}^{(3)}\cap Q_{\alpha'})$ and $V_{\beta}^{(3)}/V_{\beta}$ contains a unique non-central chief factor for $L_{\beta}$ which, as a $\bar{L_{\beta}}$-module, is an FF-module. Since $V^\alpha V_{\beta}\not\normaleq L_{\beta}$, we deduce that $U_\beta/V_{\beta}$ contains the non-central chief factor and $V_{\beta}^{(3)}=U_\beta V_{\alpha}^{(2)}$. But then, by conjugacy, $V_{\alpha'}\le V_{\alpha'-2}^{(3)}=V_{\alpha'-3}^{(2)}U_{\alpha'-2}$ and since $V_{\beta}$ centralizes $V_{\alpha'-3}^{(2)}$, $R=[V_{\beta}, V_{\alpha'}]\le [V_{\beta}, U_{\alpha'-2}]\le V_{\alpha'-2}$, a contradiction.

If $q=p$, then the above observations yield that $V_{\beta}^{(3)}/U_\beta$ contains a non-central chief factor for $L_{\beta}$. We deduce that $U_\beta\le Q_{\alpha'-2}$ and as $Z_{\alpha'-1}=Z_{\alpha'}\times Z_{\alpha'-2}$ is centralized by $U_\beta$, $U_\beta\le Q_{\alpha'-1}$. Then, as $V^{\alpha}V_{\beta}\not\normaleq L_{\beta}$ by \cref{VBGood}, $U_{\beta}/V_{\beta}$ contains a unique non-central chief factor and $(U_\beta\cap Q_{\alpha'}) Q_{\alpha'+1}\in\syl_p(L_{\alpha'+1})$. Moreover, by a similar argument, $V_{\beta}^{(3)}\cap Q_{\alpha'-2}\le Q_{\alpha'-1}$, $V_{\beta}^{(3)}Q_{\alpha'-2}\in\syl_p(L_{\alpha'-2})$ and as $V_{\beta}^{(3)}$ centralizes $Z_{\alpha'-1}$, $Z_{\alpha'-1}=Z_{\alpha'-3}$ by \cref{VBGood}. Hence, to force a contradiction via \cref{SimExt} as before, we need only show that $O^p(R_{\beta})$ centralizes $V_{\beta}^{(3)}$. Note that both $V_{\beta}^{(3)}/U_{\beta}$ and $U_\beta/V_{\beta}$ contain exactly one non-central chief factor for $L_{\beta}$ and in both cases, the non-central chief factor is an FF-module for $\bar{L_{\beta}}$.

Set $R_1:=C_{L_{\beta}}(U_{\beta}/V_{\beta})$ and $R_2:=C_{L_{\beta}}(V_{\beta}^{(3)}/U_{\beta})$. Since the non-central chief factor within $V_{\beta}^{(3)}/U_{\beta}$ is an FF-module, it follows that either $R_2Q_{\beta}=R_{\beta}$; or $L_{\beta}=\langle R_2, R_{\beta}, S\rangle$ and $q\in\{2,3\}$ by \cref{Badp2} (ii), (iii) and \cref{Badp3} (ii), (iii). In the former case, since $V_{\alpha}^{(2)}\le Q_{\alpha'-1}$, $V_{\beta}^{(3)}=V_{\alpha}^{(2)}U_\beta\normaleq L_{\beta}=\langle V_{\alpha'}, R_{\beta}, Q_{\alpha}\rangle$ and $V_{\beta}^{(3)}\le Q_{\alpha'-2}$, a contradiction. In the latter case, $V_{\alpha}^{(2)}U_{\beta}\normaleq R_2S$ and if $[C_{\beta}, V_{\alpha}^{(2)}U_{\beta}]\le V_{\beta}$, then $[C_{\beta}, V_{\alpha}^{(2)}U_{\beta}]$ is centralized by $O^p(R_{\beta})$ and so $[C_{\beta}, V_{\alpha}^{(2)}U_{\beta}]\normaleq L_{\beta}=\langle R_2, R_{\beta}, S\rangle$. Thus, $[C_{\beta}, V_{\beta}^{(3)}]=[C_{\beta}, V_{\alpha}^{(2)}U_{\beta}]\le V_{\beta}$ and by conjugacy, $R\le [V_{\alpha'-2}^{(3)}, V_{\beta}]\le [V_{\alpha'-2}^{(3)}, C_{\alpha'-2}]\le V_{\alpha'-2}$, a contradiction. Thus, $[C_{\beta}, V_{\alpha}^{(2)}]\le V^{\alpha}$ but $[C_{\beta}, V_{\alpha}^{(2)}]\not\le V_{\beta}$. If $R_1Q_{\beta}=R_2Q_{\beta}$ then, assuming that $G$ is a minimal counterexample to \hyperlink{MainGrpThm}{Theorem C}, we may apply \cref{SubAmal} with $\lambda=\beta$. Since $b>5$, $R_1Q_{\beta}$ normalizes $V_{\alpha}^{(2)}$ and $\lambda=\beta$, conclusion (d) holds. Then, $V_{\alpha}^{(4)}\le V:=\langle Z_{\beta}^X\rangle$ and the images of $Q_{\alpha}/C_{Q_{\alpha}}(V_{\alpha}^{(2)})$ and $C_{Q_{\alpha}}(V_{\alpha}^{(2)})/C_{Q_{\alpha}}(V_{\alpha}^{(4)})$ resp. $Q_{\beta}/C_{\beta}$ and $C_{\beta}/C_{Q_{\beta}}(V_{\beta}^{(3)})$ contain a non-central chief factor for $\wt L_{\alpha}$ resp. $\wt L_{\beta}$, and we have a contradiction.

Thus, we may assume that $R_1Q_{\beta}\ne R_2Q_{\beta}\ne R_{\beta}$ and again by \cref{Badp2} (iii) and \cref{Badp3} (iii), we deduce that $L_{\beta}=\langle R_1, R_2, S\rangle$. Then $V_{\alpha}^{(2)}U_\beta\normaleq R_2S$ so that $V^{\alpha}V_{\beta}\ge [C_{\beta}, V_{\alpha}^{(2)}U_\beta]V_{\beta}\normaleq R_2S$. Furthermore, as $O^p(R_1)$ centralizes $U_{\beta}/V_{\beta}$, $[C_{\beta}, V_{\alpha}^{(2)}U_\beta]V_{\beta}\normaleq R_1S$ so that $[C_{\beta}, V_{\alpha}^{(2)}U_\beta]V_{\beta}\normaleq L_{\beta}$. Since $V^{\alpha}V_{\beta}\not\normaleq L_{\beta}$, we may assume that $[C_{\beta}, V_{\alpha}^{(2)}]V_{\beta}< V^{\alpha}V_{\beta}$. Note that $[V^{\alpha'+1}\cap Q_{\alpha+3}, Z_{\alpha+2}]\le Z_{\alpha+3}\cap Z_{\alpha'}=\{1\}$ since $Z_{\alpha'}\not\le V_{\beta}$. Then, if $V^{\alpha'+1}\cap Q_{\beta}\le Q_{\alpha}$, we have that $[V^{\alpha'+1}\cap Q_{\beta}, U^\beta\cap Q_{\alpha'}]\le Z_{\beta}\cap V^{\alpha'+1}$. By \cref{SEFF}, by conjugacy, either $V^\alpha/Z_{\alpha}$ is an FF-module, or $V^{\alpha'+1}\not\le Q_{\alpha+3}$ and $Z_{\beta}\le V^{\alpha'+1}$. In the latter case, $V_{\alpha'+1}^{(2)}=V^{\alpha'+1}(V_{\alpha'+1}^{(2)}\cap Q_{\alpha})$ and as $Z_{\beta}\le V^{\alpha'+1}$, we have a contradiction since $V_{\alpha'+1}^{(2)}/V^{\alpha'+1}$ contains a non-central chief factor. Now, $V^{\alpha}/Z_{\alpha}$ is an FF-module generated by $C_{V_{\beta}}(O^p(L_{\beta}))/Z_{\alpha}$ of order $p$ so that by \cref{SEFF}, $p^2\leq |V^{\alpha}/Z_{\alpha}|\leq p^3$ and $p^4\leq |V^{\alpha}|\leq p^5$. Hence, $p^5\leq |V^{\alpha}V_{\beta}|\leq p^6$, accordingly. But now, as $[C_{\beta}, V_{\alpha}^{(2)}U_\beta]V_{\beta}>V_{\beta}$, $|[C_{\beta}, V_{\alpha}^{(2)}U_\beta]V_{\beta}|\leq p^5$ and as $[C_{\beta}, V_{\alpha}^{(2)}]V_{\beta}< V^{\alpha}V_{\beta}$, we get that $|V^{\alpha}|=p^5$, $|V^{\alpha}V_{\beta}|=p^6$ and $[Q_{\beta}, V^\alpha]\not\le Z_{\alpha}C_{V_{\beta}}(O^p(L_{\beta}))$.

Writing $C^\alpha$ for the preimage in $V^\alpha$ of $C_{V^{\alpha}/Z_{\alpha}}(O^p(L_{\alpha}))$, we have that $|C^\alpha|=p^3$, $C^\alpha\cap V_{\beta}=Z_{\alpha}$,  $|Q_{\alpha}/C_{Q_{\alpha}}(C^\alpha)|\leq p^2$ and a calculation using the three subgroup lemma yields $[R_{\alpha}, Q_{\alpha}]\le C_{Q_{\alpha}}(C^\alpha)$. Since $Z(Q_{\alpha})=Z_{\alpha}$, calculating in $\GL_3(p)$, we infer that $Q_{\alpha}/C_{Q_{\alpha}}(C^\alpha)$ is a non-central chief factor of order $p^2$ for $L_{\alpha}$. Hence, $Q_{\alpha}/C_{Q_{\alpha}}(C^\alpha)$ is a natural $\SL_2(p)$ module for $L_{\alpha}/R_{\alpha}$.

Now, by \cref{CommCF}, $U_{\beta}/([U_{\beta}, Q_{\beta}]V_{\beta})$ contains the unique non-central chief factor within $U_{\beta}/V_{\beta}$ and so $O^p(L_{\beta})$ centralizes $[U_\beta,Q_{\beta}]V_{\beta}/V_{\beta}$. Thus, $[V^{\alpha}, Q_{\beta}]V_{\beta}\normaleq L_{\beta}$ from which it follows that $Z_{\alpha}\ge[V^{\alpha}, Q_{\beta}, Q_{\beta}]\normaleq L_{\beta}$ and $[V^\alpha, Q_{\beta}, Q_{\beta}]=Z_{\beta}$. But $C^\alpha\le Z_{\alpha}C_{V_{\beta}}(O^p(L_{\beta}))[V^\alpha, Q_{\beta}]$ so that $[Q_{\beta}, C^\alpha]=Z_{\beta}$. In particular, $C_{Q_{\alpha}}(C^\alpha)\le Q_{\beta}$ for otherwise $Z_{\beta}=[C^\alpha, Q_{\alpha}\cap Q_{\beta}]=[C^\alpha, Q_{\alpha}]\normaleq L_{\alpha}$, a contradiction. 

If $V^{\alpha'-1}\not\le Q_{\beta}$, then $RZ_{\beta}\le [V^{\alpha'-1}, V_{\beta}]Z_{\beta}\le Z_{\alpha'-1}Z_{\beta}$. Then, as $R\not\le Z_{\alpha'-1}$, we get that $Z_{\beta}\le RZ_{\alpha'-1}\le V_{\alpha'}$. If $V^{\alpha'-1}\le Q_{\beta}$ but $V_{\beta}\not\le C_{\beta}$, we deduce that $Z_{\beta}=[V^{\alpha'-1}, V_{\beta}]\le Z_{\alpha'-1}$. In either case, since $O^p(R_{\alpha})$ centralizes $V_{\alpha}^{(2)}$, by \cref{SimExt}, $Z_{\beta}\ne Z_{\alpha+3}$ and so $V_{\alpha'}^{(3)}$ centralizes $Z_{\alpha+2}=Z_{\beta}Z_{\alpha+3}$. But then $V_{\alpha'}^{(3)}\cap Q_{\alpha+3}=V_{\alpha'}(V_{\alpha'}^{(3)}\cap Q_{\beta})$ and since $Z_{\beta}\le Z_{\alpha'-1}\le V_{\alpha'}$, $V_{\alpha'}^{(3)}/V_{\alpha'}$ contains a unique non-central chief factor, a contradiction. Thus, $[V_{\beta}, V^{\alpha'-1}]=\{1\}$ and $V_{\beta}\le C_{Q_{\alpha'-1}}(C^{\alpha'-1})\le Q_{\alpha'}$, a final contradiction.
\end{proof}

\begin{lemma}\label{VB2}
Suppose that $C_{V_\beta}(V_{\alpha'})=V_\beta \cap Q_{\alpha'}$ and $b>5$. If $V_{\alpha'}\not\le Q_{\beta}$ and $V_{\alpha}^{(2)}\le Q_{\alpha'-2}$, then $Z_{\alpha'-1}\le V_{\beta}^{(3)}\le Q_{\alpha'-1}$, $Z_{\alpha'}\not\le V_{\alpha}^{(2)}$, $V_{\beta}^{(3)}/V_{\beta}$ contains a unique non-central chief factor for $\bar{L_{\beta}}$ which, as a $\bar{L_{\beta}}$-module, is an FF-module and $O^p(R_{\beta})$ centralizes $V_{\beta}^{(3)}$.
\end{lemma}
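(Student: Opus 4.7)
The plan is to build on \cref{VB1}, which under the present hypotheses forces $|V_{\beta}|=q^{3}$. Thus $V_{\beta}/Z_{\beta}$ is a natural $\SL_{2}(q)$-module and $R=[V_{\beta},V_{\alpha'}]\le V_{\beta}\cap V_{\alpha'}\le Z_{\alpha+2}\cap Z_{\alpha'-1}$. Since $b>5$ and $b$ is odd, $b\ge 7$, so by \cref{VAbelian} the subgroup $V_{\beta}^{(3)}$ is abelian. Using $V_{\alpha'}\not\le Q_{\beta}$, pick $\alpha'+1\in\Delta(\alpha')$ with $Z_{\alpha'+1}\not\le Q_{\beta}$; by \cref{SL2VlQ} and \cref{SL2VnQ}, the critical pair $(\alpha'+1,\beta)$ satisfies the same structural hypotheses as $(\alpha,\alpha')$, and \cref{NotNatural} forbids $V_{\beta}$ (and $V_{\alpha'}$) from being a natural module.

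First I would establish the containment $V_{\beta}^{(3)}\le Q_{\alpha'-1}$. Suppose for contradiction that some $\delta\in\Delta^{(3)}(\beta)$ satisfies $Z_{\delta}\not\le Q_{\alpha'-1}$. Because $V_{\beta}^{(3)}$ is abelian, $[V_{\beta}^{(3)},V_{\alpha'-2}]\le V_{\beta}^{(3)}\cap V_{\alpha'-2}\cap Z_{\alpha'-2}$, so one extracts $Z_{\alpha'-2}\le V_{\beta}^{(3)}$ and hence $Z_{\alpha'-2}\le V_{\beta}^{(3)}\cap V_{\alpha'-2}$. Feeding this into \cref{VBGood} forces the equality $Z_{\delta}=Z_{\beta}$, and then iterating backwards through critical pairs $(\alpha-2k,\alpha'-2k)$ as in \cref{VB1} collapses the chain $Z_{\beta}=Z_{\alpha+2}=\dots=Z_{\alpha'-2}$. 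Applying \cref{SimExt} to one of these equalities propagates $V_{\beta}\le V_{\alpha'-2}\le Q_{\alpha}$, contradicting $V_{\alpha'}\not\le Q_{\beta}$.

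Next I would show $Z_{\alpha'-1}\le V_{\beta}^{(3)}$ and pin down the chief factor structure. Once $V_{\beta}^{(3)}\le Q_{\alpha'-1}$, we can write $V_{\beta}^{(3)}\cap Q_{\alpha'-2}\cap Q_{\alpha'-1}=V_{\beta}(V_{\beta}^{(3)}\cap\dots\cap Q_{\alpha'})$, and, using $[V_{\beta}^{(3)}\cap Q_{\alpha'},V_{\alpha'}]\le Z_{\alpha'}\le V_{\beta}$ together with the $G_{\alpha,\beta}$-irreducibility of $V_{\beta}/Z_{\beta}$, deduce that $V_{\beta}^{(3)}/V_{\beta}$ contains a unique non-central chief factor for $\bar{L_{\beta}}$. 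Because $V_{\beta}^{(3)}Q_{\alpha'-2}\in\syl_{p}(L_{\alpha'-2})$, an application of \cref{SEFF} (via the offender produced by $Z_{\alpha'+1}\cap Q_{\beta}$ on that chief factor) identifies it as an FF-module for $\bar{L_{\beta}}$. The inclusion $Z_{\alpha'-1}\le V_{\beta}^{(3)}$ is then forced by commutating $V_{\beta}^{(3)}\cap Q_{\alpha'-2}$ with $V_{\alpha'-1}$, since the resulting element of $Z_{\alpha'-2}$ must lie in $V_{\beta}^{(3)}$ and, together with $Z_{\beta}$, generates $Z_{\alpha'-1}$. With all four hypotheses of \cref{GoodAction3} now in place, $O^{p}(R_{\beta})$ centralizes $V_{\beta}^{(3)}$.

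Finally, for $Z_{\alpha'}\not\le V_{\alpha}^{(2)}$, I would argue by contradiction: if $Z_{\alpha'}\le V_{\alpha}^{(2)}$ then $Z_{\alpha'-1}=Z_{\alpha'}Z_{\beta}\le V_{\alpha}^{(2)}$, which combined with $V_{\alpha}^{(2)}\le Q_{\alpha'-2}$ and the structure of $V_{\alpha'-2}$ from \cref{VBGood} forces $Z_{\alpha'}=Z_{\alpha'-2}$. Then \cref{SimExt}, applied using the centralization of $V_{\beta}^{(3)}$ by $O^{p}(R_{\beta})$ just obtained, propagates $V_{\alpha'}=V_{\alpha'-2}\le Q_{\beta}$, contradicting the hypothesis. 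The main obstacle is the first step: a vertex $\delta\in\Delta^{(3)}(\beta)$ can sit at distance up to $b+2$ from $\alpha'-1$, which is \emph{outside} the a priori radius of control given by the critical distance, so $Z_{\delta}\le Q_{\alpha'-1}$ must be forced indirectly through the abelian structure of $V_{\beta}^{(3)}$ and a careful iteration through the chain of critical pairs back towards $\beta$.
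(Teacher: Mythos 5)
The proposal is not correct: the crucial step showing $V_{\beta}^{(3)}\le Q_{\alpha'-1}$ is gapped, and there are some smaller errors besides.

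The most serious issue is the first step. You acknowledge that $\delta\in\Delta^{(3)}(\beta)$ can sit outside the radius of control, but your proposed remedy — that $[V_{\beta}^{(3)},V_{\alpha'-2}]\le V_{\beta}^{(3)}\cap V_{\alpha'-2}\cap Z_{\alpha'-2}$, from which ``one extracts'' $Z_{\alpha'-2}\le V_{\beta}^{(3)}$ and then ``iterating backwards collapses the chain'' — does not go through. The containment $[V_{\beta}^{(3)},V_{\alpha'-2}]\le Z_{\alpha'-2}$ is not justified (it is essentially equivalent to what you are trying to prove: that $V_{\beta}^{(3)}$ acts trivially enough on $V_{\alpha'-2}$), and the iteration is waved rather than performed. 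In the paper's argument the containment $V_{\beta}^{(3)}\le Q_{\alpha'-1}$ is not approached directly. Instead one first eliminates $V_{\alpha}^{(2)}\not\le Q_{\alpha'-1}$, then runs a case analysis on whether $V_{\beta}^{(3)}\cap Q_{\alpha'-2}\le Q_{\alpha'-1}$. The benign subcase ($V_{\beta}^{(3)}\le Q_{\alpha'-2}$) delivers the lemma, while the two remaining subcases are ruled out through several pages of work involving the $W^{\beta}$ construction from \cref{UWNormal}, forced equalities like $Z_{\alpha'-1}=Z_{\alpha'-3}$ combined with \cref{SimExt} and \cref{GoodAction3}, quadratic action arguments that reduce to $q=p$ and $p\in\{2,3\}$, the module recognition of \cref{pgen}, and analysis of the centralizers $R_1$, $R_2$ of successive chief factors to rule out $\Dih(10)$ and $(3\times 3):2$. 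None of this is replaceable by a short iteration along the critical path.

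Two smaller but genuine errors. First, your claim that ``\cref{NotNatural} forbids $V_{\beta}$ (and $V_{\alpha'}$) from being a natural module'' inverts the lemma: \cref{NotNatural} lives in the other subsection and says that if $V_{\beta}/C_{V_{\beta}}(O^p(L_{\beta}))$ \emph{is} natural, then $C_{V_{\beta}}(V_{\alpha'})=V_{\beta}\cap Q_{\alpha'}$; in the present subsection \cref{SL2VlQ} and \cref{SL2VnQ} actively establish that $V_{\beta}/C_{V_{\beta}}(O^p(L_{\beta}))$ \emph{is} natural. Second, your closing argument for $Z_{\alpha'}\not\le V_{\alpha}^{(2)}$ rests on the decomposition $Z_{\alpha'-1}=Z_{\alpha'}Z_{\beta}$, but $Z_{\beta}$ is not known to lie in $Z_{\alpha'-1}$ (the relevant relation is $Z_{\alpha'-1}=Z_{\alpha'}R$ with $R=[V_{\beta},V_{\alpha'}]$, and $R\ne Z_{\beta}$). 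The actual argument is much simpler: if $Z_{\alpha'}\le V_{\alpha}^{(2)}$, then $V_{\alpha}^{(2)}$ is normalized by $L_{\beta}=\langle V_{\alpha'},Q_{\alpha},R_{\beta}\rangle$, a contradiction.

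Your overall scaffold — reduce to $|V_{\beta}|=q^{3}$ via \cref{VB1}, use abelianness of $V_{\beta}^{(3)}$, produce an FF-module once the containment in $Q_{\alpha'-1}$ is secured, then invoke \cref{GoodAction3} — is the right shape, but the heart of the lemma is precisely the containment you skip.
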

\begin{proof}
By \cref{VB1}, $|V_{\beta}|=q^3$ so that $R=[V_{\beta}, V_{\alpha'}]\le Z_{\alpha'-1}\cap Z_{\alpha+2}$. Suppose first that $V_{\alpha}^{(2)}\not\le Q_{\alpha'-1}$. Then $Z_{\alpha'-2}=[V_{\alpha}^{(2)}, V_{\alpha'-2}]\le Z_{\alpha}\cap Q_{\alpha'-2}$, so that $Z_{\beta}=Z_{\alpha'-2}$. But $Z_{\beta}\cap  R=\{1\}$ and $Z_{\alpha'-1}=R\times Z_{\beta}\le V_{\beta}$, a contradiction since $V_{\alpha}^{(2)}$ is abelian. Thus, we may assume throughout that $V_{\alpha}^{(2)}\le Q_{\alpha'-1}$. 

Suppose that $V_{\beta}^{(3)}\cap Q_{\alpha'-2}\le Q_{\alpha'-1}$. If $V_{\beta}^{(3)}\le Q_{\alpha'-2}$, then $V_{\beta}^{(3)}=V_{\beta}(V_{\beta}^{(3)}\cap Q_{\alpha'})$. Since $O^p(L_{\beta})$ does not centralize $V_{\beta}^{(3)}/V_{\beta}$, $Z_{\alpha'}=[V_{\beta}^{(3)}\cap Q_{\alpha'}, V_{\alpha'}]\le V_{\beta}^{(3)}$. Even still, $V_{\beta}^{(3)}/V_{\beta}$ contains a unique non-central chief factor for $\bar{L_{\beta}}$ which is an FF-module and by \cref{GoodAction3}, $O^p(R_{\beta})$ centralizes $V_{\beta}^{(3)}$. If $Z_{\alpha'}\le V_{\alpha}^{(2)}$ or $[V_{\alpha}^{(2)}\cap Q_{\alpha'}, V_{\alpha'}]=\{1\}$, then $V_{\alpha}^{(2)}\normaleq L_{\beta}=\langle V_{\alpha'}, Q_{\alpha}, R_{\beta}\rangle$, a contradiction. The lemma follows in this case so we may assume that $V_{\beta}^{(3)}\not\le Q_{\alpha'-2}$ and $Z_{\alpha'}=[V_{\beta}^{(3)}\cap Q_{\alpha'}, V_{\alpha'}]\le V_{\beta}^{(3)}$.

Continuing under the assumption that $V_{\beta}^{(3)}\not\le Q_{\alpha'-2}$ and $V_{\beta}^{(3)}\cap Q_{\alpha'-2}\le Q_{\alpha'-1}$, since $Z_{\alpha'-1}=Z_{\alpha'}R\le V_{\beta}^{(3)}$ and $b>5$, we deduce that $Z_{\alpha'-1}=Z_{\alpha'-3}$, otherwise $V_{\beta}^{(3)}$ centralizes $V_{\alpha'-2}$. By \cref{SimExt}, $O^p(R_{\beta})$ does not centralize $V_{\beta}^{(3)}$ and so by \cref{GoodAction3}, either $V_{\beta}^{(3)}/V_{\beta}$ contains more than one non-central chief factor, or a non-central chief factor within $V_{\beta}^{(3)}/V_{\beta}$ is not an FF-module. Hence, we infer that $Z_{\alpha'-1}=[V_{\beta}^{(3)}\cap Q_{\alpha'-2}, V_{\alpha'}]\not\le V_{\beta}$. Moreover, since $b>5$, $[V_{\beta}^{(3)}, Z_{\alpha'+1}, Z_{\alpha'+1}]\le [V_{\beta}^{(3)}, V_{\alpha'-2}^{(3)}, V_{\alpha'-2}^{(3)}]=\{1\}$ and $V_{\beta}^{(3)}$ admits quadratic action. In particular, if $p\geq 5$ then the Hall--Higman theorem implies that $O^p(R_{\beta})$ centralizes $V_{\beta}^{(3)}$ and so $p=2$ or $3$. Moreover, we may apply \cref{SEQuad} when $p=3$ is odd and \cref{2FRecog} when $p=2$ to deduce that $q=p$.

Notice that $Z_{\alpha'-1}=Z_{\alpha'-3}\le V_{\beta}^{(3)}\le Z(V_{\beta}^{b-4})$. Suppose that $b>7$ and let $n\leq \frac{b-5}{2}$ be chosen minimally such that $V_{\beta}^{(2n+1)}\le Q_{\alpha'-2n}$. Since $V_{\beta}^{(3)}\not\le Q_{\alpha'-2}$, if such an $n$ exists then $n\geq 2$. Notice $V_{\beta}^{(5)}$ centralizes $Z_{\alpha'-3}\le V_{\beta}^{(3)}$ so that either $Z_{\alpha'-3}= Z_{\alpha'-5}\le V_{\beta}^{(3)}$ or $V_{\beta}^{(5)}\le Q_{\alpha'-4}$ and $n=2$. Extending through larger subgroups, it is clear that for a minimally chosen $n$, $Z_{\alpha'-1}=Z_{\alpha'-3}=\dots=Z_{\alpha'-2n+1}\le V_{\beta}^{(3)}$ is centralized by $V_{\beta}^{(2n+1)}$ so that $V_{\beta}^{(2n+1)}\le Q_{\alpha'-2n+1}$. Then $V_{\beta}^{(2n+1)}=V_{\beta}^{(2(n-1)+1)}(V_{\beta}^{(2n+1)}\cap Q_{\alpha'-2n+2})$. Moreover, $Z_{\alpha'-1}=\dots=Z_{\alpha'-2n+1}$, $V_{\beta}^{(2n+1)}\cap Q_{\alpha'-2a}\le Q_{\alpha'-2a+1}$ and $V_{\beta}^{(2n+1)}\cap Q_{\alpha'-2a}=V_{\beta}^{(2(a-2)+1)}(V_{\beta}^{(2n+1)}\cap Q_{\alpha'-2a+2})$ from which it follows that $V_{\beta}^{(2n+1)}=V_{\beta}^{(2(n-1)+1)}(V_{\beta}^{(2n+1)}\cap Q_{\alpha'})$ so that $O^p(L_{\beta})$ centralizes $V_{\beta}^{(2n+1)}/V_{\beta}^{(2(n-1)+1)}$, a contradiction. Thus, no such $n$ exists for $n\leq \frac{b-5}{2}$ and it follows that $V_{\beta}^{(b-4)}\not\le Q_{\alpha'-b+5}=Q_{\alpha+5}$ and $Z_{\alpha'-1}=\dots=Z_{\alpha+6}=Z_{\alpha+4}$. If $b=7$, then $Z_{\alpha'-1}=Z_{\alpha'-3}=Z_{\alpha+4}$ by definition. Since $Z_{\alpha'-1}\not\le V_{\beta}$, to obtain a contradiction, we need only show that $Z_{\alpha+2}=Z_{\alpha+4}$. 

If $Z_{\beta}$ is centralized by $V_{\alpha'}^{(3)}$, then $V_{\alpha'}^{(3)}$ centralizes $Z_{\alpha+2}=R\times Z_{\beta}$ and if $Z_{\alpha+2}\ne Z_{\alpha+4}$, then $V_{\alpha'}^{(3)}$ centralizes $V_{\alpha+3}$ and $V_{\alpha'}^{(3)}=V_{\alpha'}(V_{\alpha'}^{(3)}\cap Q_{\beta})$ so that $V_{\alpha'}^{(3)}/V_{\alpha'}$ contains a unique non-central chief factor which is an FF-module and by \cref{GoodAction3}, $O^p(R_{\alpha'})$ centralizes $V_{\alpha'}^{(3)}$. By conjugacy, $O^p(R_{\beta})$ centralizes $V_{\beta}^{(3)}$, a contradiction. Thus, $V_{\alpha'}^{(3)}$ does not centralize $Z_{\beta}$. Since $V_{\alpha'}^{(3)}$ centralizes $Z_{\alpha+3}\times R\le Z_{\alpha+2}$, we deduce that $R=Z_{\alpha+3}$. Furthermore, since $b>5$ and $V_{\alpha'}^{(3)}$ is abelian, $V_{\alpha'}^{(3)}\cap Q_{\alpha+3}\cap Q_{\alpha+2}\cap Q_{\beta}\le C_{\beta}$.

Now, $V_{\beta}\le C_{\alpha'-2}$ and since $[Q_{\lambda}, V_{\lambda}^{(2)}]=Z_{\lambda}$ for all $\lambda\in\Delta(\alpha'-2)$, we have that $R\le [V_{\beta}, V_{\alpha'-2}^{(3)}]\le Z_{\alpha+2}\cap V_{\alpha'-2}$. If $Z_{\alpha+2}\le V_{\alpha'-2}$, then $Z_{\alpha+2}=Z_{\alpha'-3}=Z_{\alpha'-1}\le V_{\beta}$, a contradiction and so $[V_{\beta}, V_{\alpha'-2}^{(3)}]=R$ and $[V_{\beta}, V_{\alpha'-2}^{(3)}\cap Q_{\beta}]=R\cap Z_{\beta}=\{1\}$. Then $V_{\alpha'-2}^{(3)}\cap Q_{\beta}\le C_{\beta}$ so that $[V_{\beta}^{(3)}, V_{\alpha'-2}^{(3)}\cap Q_{\beta}]\le V_{\beta}\cap V_{\alpha'-2}^{(3)}$. Since $b>5$, $V_{\beta}\not\le V_{\alpha'-2}^{(3)}$ and since $R\le V_{\alpha'-2}$, $Z_{\alpha+2}\le V_{\alpha'-2}^{(3)}$ and $Z_{\beta}\le V_{\alpha'-2}^{(3)}$ but $Z_{\beta}\not\le V_{\alpha'-2}$. If $b>7$, $V_{\alpha'}^{(3)}$ centralizes $Z_{\beta}$, a contradiction by the above. 

Thus, we assume that $b=7$, $V_{\beta}^{(3)}\not\le Q_{\alpha'-2}$, $V_{\beta}^{(3)}\cap Q_{\alpha'-2}\le Q_{\alpha'-1}$, $Z_{\alpha'-1}=Z_{\alpha'-3}\ne Z_{\alpha+2}$ and $[Z_{\beta}, V_{\alpha'}^{(3)}]\ne\{1\}$. Set $W^\beta=\langle V_{\delta}^{(2)}\mid Z_{\delta}=Z_{\alpha}, \delta\in\Delta(\beta)\rangle$ so that $[C_{\beta}, W^{\beta}]=[C_{\beta}, V_{\alpha}^{(2)}]\le Z_{\alpha}$. Then $[W^{\beta}, V_{\alpha'-2}]\le Z_{\alpha'-3}\cap Z_{\alpha}$ and by \cref{UWNormal}, $W^\beta\normaleq R_{\beta}Q_{\alpha}$. If $Z_{\beta}\le Z_{\alpha'-3}=Z_{\alpha'-1}$, then $Z_{\alpha'-1}=Z_{\beta}\times R=Z_{\alpha+2}\le V_{\beta}$, a contradiction. Thus, $W^{\beta}=V_{\beta}(W^{\beta}\cap Q_{\alpha'})$. If $[W^{\beta}\cap Q_{\alpha'}, V_{\alpha'}]\le W^{\beta}$, then $V_{\alpha}^{(2)}\le W^{\beta}\normaleq L_{\beta}=\langle V_{\alpha'}, Q_{\alpha}, R_{\beta}\rangle$ and $V_{\beta}^{(3)}=W^{\beta}\le Q_{\alpha'-2}$, a contradiction. Thus, $W^{\beta}\cap Q_{\alpha'}\not\le Q_{\alpha'+1}$ for some $\alpha'+1\in\Delta(\alpha')$ and since $Z_{\alpha'+1}Z_{\alpha'-1}=V_{\alpha'}\not\le Q_{\beta}$, $(\alpha'+1, \beta)$ is a critical pair.

Since $V_{\alpha+3}\le Q_{\alpha'+1}$, $[V_{\alpha'+1}^{(2)}\cap Q_{\alpha+3}, V_{\alpha+3}]\le Z_{\alpha'+1}\cap Z_{\alpha+3}=Z_{\alpha'+1}\cap R=\{1\}$ and $V_{\alpha'+1}^{(2)}\cap Q_{\alpha+3}=Z_{\alpha'+1}(V_{\alpha'+1}^{(2)}\cap C_{\beta})$. Furthermore, $[V_{\alpha'+1}^{(2)}\cap C_{\beta}, W^{\beta}\cap Q_{\alpha'}]\le V_{\alpha'+1}^{(2)}\cap Z_{\alpha}$ and since $Z_{\beta}\not\le V_{\alpha'}^{(3)}$, we have that $W^{\beta}\cap Q_{\alpha'}$ centralizes $(V_{\alpha'+1}^{(2)}\cap Q_{\alpha+3})/Z_{\alpha'+1}$. Thus, $V_{\alpha'+1}^{(2)}\not\le Q_{\alpha+3}$ and $V_{\alpha'+1}^{(2)}/Z_{\alpha'+1}$ is an FF-module. By \cref{GoodAction1}, $O^p(R_{\alpha})$ centralizes $V_{\alpha}^{(2)}$ and since $V_{\beta}^{(3)}$ does not centralize $V_{\alpha'-2}$, it follows from \cref{SimExt} that $Z_{\alpha'-2}\ne Z_{\alpha'-4}=R$. 

Suppose that $[V_{\beta}^{(3)}, Q_{\beta}]V_{\beta}/V_{\beta}$ contains a non-central chief factor for $L_{\beta}$. In particular, $[Q_{\beta}, V_{\alpha}^{(2)}]\not\le V_{\beta}$, and since $V_{\alpha}^{(2)}/Z_{\alpha}$ is an FF-module, $|V_{\alpha}^{(2)}|=p^5$. The non-central chief factor within $[V_{\beta}^{(3)}, Q_{\beta}]V_{\beta}/V_{\beta}$, $U/V$ say, is an FF-module for $\bar{L_{\beta}}$ and $L_{\beta}/C_{L_{\beta}}(U/V)\cong\SL_2(p)$. Set $R_1:=C_{L_{\beta}}(U/V)$ and $R_2:=C_{L_{\beta}}(V_{\beta}^{(3)}/([V_{\beta}^{(3)}, Q_{\beta}]V_{\beta}))$, noticing that also $L_{\beta}/R_2\cong \SL_2(p)$. If $R_1 \ne R_{\beta}$, and employing \cref{Badp3} (iii) when $p=3$, we conclude that $L_{\beta}=\langle R_1, R_{\beta}, S\rangle$. Similarly, if $R_2\ne R_{\beta}$ then $L_{\beta}=\langle R_2, R_{\beta}, S\rangle$. 

Suppose that $R_1\ne R_{\beta}$. Then $[V_{\alpha}^{(2)}, Q_{\beta}]V_{\beta}\normaleq R_1$ and $[V_{\alpha}^{(2)}, Q_{\beta}, Q_{\beta}]\le V_{\beta}$ so that $[V_{\alpha}^{(2)}, Q_{\beta}, Q_{\beta}]\normaleq L_{\beta}=\langle R_1, R_{\beta}, S\rangle$. Since $[V_{\alpha}^{(2)}, Q_{\beta}, Q_{\beta}]\le Z_{\alpha}$, we have that $[V_{\alpha}^{(2)}, Q_{\beta}, Q_{\beta}]=Z_{\beta}$. Setting $C^\alpha$ to be the preimage in $V_{\alpha}^{(2)}$ of $C_{V_{\alpha}^{(2)}/Z_{\alpha}}(O^p(L_{\alpha}))$, we have that $C^\alpha\le V_{\beta}[V_{\alpha}^{(2)}, Q_{\beta}]$ and so $[C^\alpha, Q_{\beta}]=Z_{\beta}$. As in \cref{VB1} (where $C^\alpha$ is defined slightly differently), we see that $|Q_{\alpha}/C_{Q_{\alpha}}(C^\alpha)|=p^2$ and $C_{Q_{\alpha}}(C^\alpha)\le Q_{\beta}$. Now, $V_{\beta}\le Q_{\alpha'-2}$ and so $[V_{\beta}, C^{\alpha'-1}]\le Z_{\alpha'-2}\cap Z_{\alpha+2}=\{1\}$, for otherwise $Z_{\alpha+2}=Z_{\alpha'-1}$. But then, $V_{\beta}\le C_{Q_{\alpha'-1}}(C^{\alpha'-1})\le Q_{\alpha'}$, a contradiction. Thus, $R_1=R_{\beta}$. 

Suppose that $R_2\ne R_{\beta}$. Then $V_{\alpha}^{(2)}[V_{\beta}^{(3)}, Q_{\beta}]\normaleq R_2$ and so $[V_{\alpha}^{(2)}, Q_{\beta}][V_{\beta}^{(3)}, Q_{\beta}, Q_{\beta}]\normaleq L_{\beta}=\langle R_1, R_2, S\rangle$. Since $[V_{\alpha}^{(2)}, Q_{\beta}, Q_{\beta}]\le Z_{\alpha}$, we have that $[V_{\beta}^{(3)}, Q_{\beta}, Q_{\beta}]\le V_{\beta}$ and so $[V_{\alpha}^{(2)}, Q_{\beta}]V_{\beta}\normaleq L_{\beta}$. But then $[V_{\beta}^{(3)}, Q_{\beta}]V_{\beta}=[V_{\alpha}^{(2)}, Q_{\beta}]V_{\beta}$ is centralized by $Q_{\alpha}$, modulo $V_{\beta}$, and so $([V_{\beta}^{(3)}, Q_{\beta}]V_{\beta})/V_{\beta}$ does not contain a non-central chief factor for $L_{\beta}$. Thus, $R_2=R_{\beta}$. But now $O^p(R_{\beta})$ centralizes $V_{\beta}^{(3)}$ and \cref{SimExt} applied to $Z_{\alpha'-1}=Z_{\alpha'-3}$ gives $V_{\alpha'}\le V_{\alpha'-1}^{(2)}=V_{\alpha'-3}^{(2)}\le Q_{\beta}$, a contradiction.

Therefore, we may assume that $([V_{\beta}^{(3)}, Q_{\beta}]V_{\beta})/V_{\beta}$ does not contain a non-central chief factor for $L_{\beta}$ and $[V_{\alpha}^{(2)}, Q_{\beta}]V_{\beta}\normaleq L_{\beta}$. As before, since $[V_{\alpha}^{(2)}, Q_{\beta}, Q_{\beta}]\le Z_{\alpha}$, we have that $[V_{\alpha}^{(2)}, Q_{\beta}, Q_{\beta}]=Z_{\beta}$ and either $|V_{\alpha}^{(2)}|=p^4$; or $[C^\alpha, Q_{\beta}]=Z_{\beta}$ for $C^\alpha$ as defined above. In the latter case, we again see that $V_{\beta}\le C_{Q_{\alpha'-1}}(C^{\alpha'-1})\le Q_{\alpha'}$, a contradiction. Thus, $|V_{\alpha}^{(2)}|=p^4$, $[V_{\alpha}^{(2)}, Q_{\beta}]\le V_{\beta}$ and $[V_{\beta}^{(3)}, Q_{\beta}]=V_{\beta}$. Since $O^p(R_{\beta})$ does not centralize $V_{\beta}^{(3)}$, by \cref{GoodAction3}, $V_{\beta}^{(3)}/V_{\beta}$ is a quadratic $2F$-module for $\bar{L_{\beta}}$. Moreover, since $V_{\alpha}^{(2)}$ generates $V_{\beta}^{(3)}$, is $G_{\alpha,\beta}$-invariant and has order $p$ modulo $V_{\beta}$, comparing with \cref{pgen} and using that $|S/Q_{\beta}|=p$, it follows that $p=2$ and $L_{\beta}/C_{L_{\beta}}(V_{\beta}^{(3)}/V_{\beta})\cong \Dih(10)$ or $(3\times 3):2$.

Now, $C_{L_{\beta}}(V_{\beta}^{(3)}/V_{\beta})$ normalizes $V_{\alpha}^{(2)}$ so that $[V_{\alpha}^{(2)}, C_{\beta}]\le Z_{\alpha}$ is also normalized by $C_{L_{\beta}}(V_{\beta}^{(3)}/V_{\beta})$. Since $R_{\beta}$ normalizes $Z_{\alpha}$, if $L_{\beta}=\langle S, R_{\beta}, C_{L_{\beta}}(V_{\beta}^{(3)}/V_{\beta})\rangle$ then $[V_{\alpha}^{(2)}, C_{\beta}]=Z_{\beta}$ and $[V_{\beta}^{(3)}, C_{\beta}]=Z_{\beta}$. But then $R=[V_{\alpha'}, V_{\beta}]\le [V_{\alpha'-2}^{(3)}, V_{\beta}]=Z_{\alpha'-2}$, a contradiction. Thus  $L_{\beta}/C_{L_{\beta}}(V_{\beta}^{(3)}/V_{\beta})\cong (3\times 3):2$ and $C_{L_{\beta}}(V_{\beta}^{(3)}/V_{\beta})\le R_{\beta}$. Then $V_{\alpha'-1}^{(2)}<\langle (V_{\alpha'-3}^{(2)})^{R_\beta S}\rangle=:W$ and $|W/V_{\beta}|=4$. But now, $[W, V_{\beta}^{(3)}]\le [W, Q_{\alpha'-3}]\le Z_{\alpha'-3}$ and $[V_{\alpha'-2}^{(3)}\cap Q_{\beta}, V_{\beta}]\le Z_{\beta}\cap V_{\alpha'-2}=\{1\}$ and $[V_{\alpha'-2}^{(3)}\cap Q_{\beta}, V_{\beta}^{(3)}]\le V_{\beta}\cap V_{\alpha'-2}^{(3)}=Z_{\alpha+2}\le V_{\alpha'-3}^{(2)}$. Therefore, $[V_{\beta}^{(3)}, V_{\alpha'-2}^{(3)}]\le V_{\alpha'-3}^{(2)}$, a contradiction since $V_{\beta}^{(3)}/V_{\beta}$ is not dual to an FF-module.

Therefore, $V_{\beta}^{(3)}\cap Q_{\alpha'-2}\not\le Q_{\alpha'-1}$. Since $R\le Z_{\alpha'-1}$ and $R\ne Z_{\alpha'}$, it follows that $V_{\beta}^{(3)}$ does not centralize $Z_{\alpha'}$. Hence, as $b>5$ and $V_{\beta}^{(3)}$ is abelian, we conclude that $[V_{\beta}^{(3)}\cap\dots\cap Q_{\alpha'}, V_{\alpha'}]=\{1\}$. In particular, $[V_{\alpha}^{(2)}\cap Q_{\alpha'}, V_{\alpha'}]=\{1\}$ and so $[V_{\alpha'}, V_{\alpha}^{(2)}]=R\le V_{\alpha}^{(2)}$. Additionally, since $V_{\beta}^{(3)}$ centralizes $Z_{\alpha'-2}$, we have that $R=Z_{\alpha'-2}$ intersects $Z_{\beta}$ trivially.

We set $W^\beta=\langle V_{\delta}^{(2)}\mid Z_{\lambda}=Z_{\alpha}, \lambda\in\Delta(\beta)\rangle$ noting that $W^\beta\normaleq R_{\beta}Q_{\alpha}$ by \cref{UWNormal}. For such a $\lambda\in\Delta(\beta)$, $(\lambda, \alpha')$ is a critical pair. Suppose that $V_{\lambda}^{(2)}\not\le Q_{\alpha'-2}$. Then $\{1\}\ne [V_{\lambda-1}, V_{\alpha'-2}]\le Z_{\lambda}\cap Z_{\alpha'-3}=Z_{\alpha}\cap Z_{\alpha'-3}$ so that $Z_{\beta}\le Z_{\alpha'-3}$ and so $Z_{\alpha'-3}=Z_{\alpha'-2}\times Z_{\beta}=Z_{\alpha+2}$. Now, there is $\alpha'+1\in\Delta(\alpha')$ such that $(\alpha'+1, \beta)$ is a critical pair. As in the above steps, if $V_{\alpha'+1}^{(2)}\not\le Q_{\alpha+3}$ then $Z_{\alpha'}=[V_{\alpha'+2}, V_{\alpha+3}]\le V_{\alpha+3}$, a contradiction as $V_{\alpha+3}$ is centralized by $V_{\beta}^{(3)}$. Thus, $V_{\alpha'+1}^{(2)}\le Q_{\alpha+2}$ and since $V_{\alpha'}^{(3)}\cap Q_{\alpha+3}\le Q_{\alpha+2}$, applying the previous results in this proof, $O^p(R_{\alpha'})$ centralizes $V_{\alpha'}^{(3)}$. But then $V_{\alpha}^{(2)}\normaleq L_{\beta}=\langle V_{\alpha'}, Q_{\alpha}, R_{\beta}\rangle$, a contradiction.

Thus, $W^{\beta}\le Q_{\alpha'-2}$, and so every $\lambda\in\Delta(\beta)$ with $Z_{\lambda}=Z_{\alpha}$ provides a critical pair $(\lambda, \alpha')$ satisfying the same hypothesis as $(\alpha, \alpha')$. By an observation in the first paragraph of this proof, this yields that $W^\beta\le Q_{\alpha'-1}$ and $W^{\beta}=V_{\beta}(W^{\beta}\cap Q_{\alpha'})$. Then $V_{\alpha'}$ centralizes $W^\beta/V_{\beta}$ so that $W^{\beta}\normaleq L_{\beta}=\langle V_{\alpha'}, R_{\beta}, Q_{\alpha}\rangle$. Since $V_{\alpha'}$ centralizes $W^{\beta}/V_{\beta}$, it follows that $V_{\alpha}^{(2)}\normaleq L_{\beta}$, a final contradiction.
\end{proof}

\begin{lemma}\label{GoodCent}
Suppose that $C_{V_\beta}(V_{\alpha'})=V_\beta \cap Q_{\alpha'}$ and $b>5$. If $V_{\alpha}^{(2)}\not\le Q_{\alpha'-2}$ and $|V_{\beta}|\ne q^3$, then we may assume that $[V_{\alpha}^{(2)}, Z_{\alpha'-1}]\ne \{1\}$.
\end{lemma}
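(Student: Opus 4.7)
The plan is to argue by replacement: I will assume $[V_\alpha^{(2)}, Z_{\alpha'-1}] = \{1\}$ for the given critical pair $(\alpha,\alpha')$ (so $V_\alpha^{(2)} \le Q_{\alpha'-1}$) and then produce another critical pair $(\alpha^*, \alpha^{*\prime})$ satisfying all the standing hypotheses (those of Section $7.2$ and the present lemma) for which the conclusion $[V_{\alpha^*}^{(2)}, Z_{\alpha^{*\prime}-1}] \ne \{1\}$ holds. Since the outcome of the lemma is phrased as a working assumption, this suffices.

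First I would build the replacement critical pair. Since $V_\alpha^{(2)} \not\le Q_{\alpha'-2}$ and $V_\alpha^{(2)} = \langle V_{\alpha-1} : \alpha-1 \in \Delta(\alpha)\rangle$, pick $\alpha-1 \in \Delta(\alpha)$ with $V_{\alpha-1} \not\le Q_{\alpha'-2}$; expanding $V_{\alpha-1}$ as a product of $Z_{\alpha-2}$, pick $\alpha-2 \in \Delta(\alpha-1)$ with $Z_{\alpha-2} \not\le Q_{\alpha'-2}$. Since the tree geometry gives $d(\alpha-2,\alpha'-2)=b$, the pair $(\alpha-2, \alpha'-2)$ is a critical pair; by \cref{NotNatural} together with the standing assumption of the subsection, it satisfies $C_{V_{\alpha-1}}(V_{\alpha'-2}) = V_{\alpha-1} \cap Q_{\alpha'-2}$, and edge-transitivity plus $\alpha-2 \in \alpha^G$ gives $|V_{\alpha-1}| = |V_\beta| \ne q^3$.

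Next I would verify that the replacement pair lies in the regime of the present lemma, namely $V_{\alpha-2}^{(2)} \not\le Q_{\alpha'-4}$, and then argue that for it the commutator $[V_{\alpha-2}^{(2)}, Z_{\alpha'-3}]$ is nontrivial. For the first condition I would rule out $V_{\alpha-2}^{(2)} \le Q_{\alpha'-4}$ by combining the minimality of $b$ (which forces $V_{\alpha-1} \le Q_{\alpha'-4}$) with the existence of a sibling $\lambda \in \Delta(\alpha-2) \setminus \{\alpha-1\}$ providing a $Z_\mu \not\le Q_{\alpha'-4}$; if no such $\lambda$ exists, the resulting condition $V_{\alpha-2}^{(2)} \le Q_{\alpha'-4}$ can be iterated, walking the critical pair further back in the graph, and I expect a contradiction via \cref{SimExt} and the ``moreover'' clause of \cref{VBGood}, forcing a periodic coincidence of $Z$'s that collapses $|V_\beta|$ to $q^3$ against hypothesis. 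For the second condition, the fact that $V_\alpha^{(2)} \le Q_{\alpha'-1}$ while $Z_{\alpha-2} \not\le Q_{\alpha'-2}$ injects $Z_{\alpha-2}$ into $V_{\alpha-2}^{(2)}$ in a way that does not centralize $Z_{\alpha'-3}$, since doing so would place the commutator $[Z_{\alpha-2}, V_{\alpha'-2}]$ in $Z_{\alpha'-3} \cap Z_{\alpha-2}$ in a position incompatible with the natural-module structure of $Z_{\alpha-2}$ established in \cref{IsNatMod}.

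The main obstacle will be the verification that the replacement pair satisfies $V_{\alpha-2}^{(2)} \not\le Q_{\alpha'-4}$: this is not automatic from the minimality of $b$ applied to the single vertex $\alpha-1$, and requires one to run a secondary critical-pair analysis on the branches of $\Delta(\alpha-2)$ away from $\alpha-1$, or else to set up an iterative descent whose termination invokes the full strength of \cref{VBGood}, \cref{VA1}, and \cref{VB2} to produce the contradiction. All other verifications are routine conjugacy arguments together with the standing hypotheses of the subsection.
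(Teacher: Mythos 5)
Your proposal has the right high-level idea (replace the critical pair and relabel) but diverges from the paper at the two places where the work actually happens, and both diversions create genuine gaps.

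First, you flag the verification that the replacement pair $(\alpha-2,\alpha'-2)$ still satisfies $V_{\alpha-2}^{(2)}\not\le Q_{\alpha'-4}$ as "the main obstacle," suggesting a secondary critical-pair analysis or an iterative descent invoking \cref{VBGood}, \cref{VA1}, \cref{VB2}. In fact this step is free: \cref{VA1} (the case $V_{\alpha'}\le Q_\beta$) together with \cref{VB1} (the case $V_{\alpha'}\not\le Q_\beta$) say precisely that for any critical pair with $|V_\beta|\ne q^3$ and $b>5$, the vertex $V^{(2)}$ is not contained in $Q$ at distance $b-2$. Since $\alpha-2\in\alpha^G$ and the standing hypothesis $C_{V_\beta}(V_{\alpha'})=V_\beta\cap Q_{\alpha'}$ propagates to every critical pair by \cref{SL2VlQ}, \cref{SL2VnQ}, and \cref{NotNatural}, this applies immediately to $(\alpha-2, \alpha'-2)$. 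The correct reference is \cref{VB1}, not \cref{VB2}, and no secondary analysis is needed.

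Second and more seriously, a single replacement does not suffice, and your argument for why $[V_{\alpha-2}^{(2)}, Z_{\alpha'-3}]\ne\{1\}$ does not close. Note that for $b>5$ the subgroup $V_\alpha^{(2)}$ always centralizes $Z_{\alpha'-3}$ (every $Z_\mu$ with $d(\mu,\alpha)\le 2$ has $d(\mu,\alpha'-3)\le b-1<b$, so $Z_\mu\le Q_{\alpha'-3}$ and $Z_{\alpha'-3}=Z(Q_{\alpha'-3})$). Hence if $[V_\alpha^{(2)}, Z_{\alpha'-1}]=\{1\}$ then $V_\alpha^{(2)}$ centralizes $Z_{\alpha'-1}Z_{\alpha'-3}$; unless $Z_{\alpha'-1}=Z_{\alpha'-3}$, this already forces $V_\alpha^{(2)}\le Q_{\alpha'-2}$ by the "moreover" clause of \cref{VBGood}, a contradiction. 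So the failure of the lemma's conclusion at a given critical pair is equivalent to $Z_{\alpha'-1}=Z_{\alpha'-3}$, and a single step back could land on another critical pair with $Z_{\alpha'-3}=Z_{\alpha'-5}$. Your appeal to "the natural-module structure of $Z_{\alpha-2}$" and the containment $[Z_{\alpha-2},V_{\alpha'-2}]\le Z_{\alpha'-3}\cap Z_{\alpha-2}$ is not a contradiction: that intersection is exactly $R$ for the new critical pair, which is nontrivial. The paper resolves this by iterating the critical pair back at least $b$ steps, where it can assert $Z_{\alpha'-2k-1}\ne Z_{\alpha'-2k-3}$ structurally, and only then applies the $Z_{\alpha'-1}\ne Z_{\alpha'-3}$-plus-\cref{VBGood} argument to the relabeled pair. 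Without that "iterate past distance $b$" step, your proposal has no mechanism to rule out an unbounded chain of coincidences $Z_{\alpha'-1}=Z_{\alpha'-3}=\cdots$, and does not prove the lemma.
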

\begin{proof}
Suppose that $|V_{\beta}|\ne q^3$. By \cref{VA1} and \cref{VB1}, we may assume that for any critical pair $(\alpha^*, {\alpha^*}')$, $V_{\alpha^*}^{(2)}\not\le Q_{{\alpha^*}'-2}$. In particular, there is an infinite path $(\alpha', \alpha'-1, \alpha'-2,\dots, \beta, \alpha, \alpha-1, \alpha-2,\dots)$ such that $(\alpha-2k, \alpha'-2k)$ is a critical pair for all $k\geq 0$. For $2k>b$, we have that $Z_{\alpha'-2k-1}\ne Z_{\alpha'-2k-3}$ and so we can arrange that for our chosen critical pair $(\alpha, \alpha')$ we have that $Z_{\alpha'-1}\ne Z_{\alpha'-3}$. If $[V_{\alpha}^{(2)}, Z_{\alpha'-1}]=\{1\}$, then $V_{\alpha}^{(2)}$ centralizes $Z_{\alpha'-1}Z_{\alpha'-3}$ and by \cref{VBGood}, we have a contradiction.
\end{proof}

Notice that by \cref{VA1} and \cref{VB1}, whenever $|V_{\beta}|\ne q^3$ we have that $V_{\lambda}^{(2)}\not\le Q_{\lambda+b-2}$ for any critical pair $(\lambda, \lambda+b)$ with $\lambda\in\alpha^G$. Moreover, as demonstrated in \cref{GoodCent}, we may iterate backwards through critical pairs far enough that the conclusion of \cref{GoodCent} holds for all critical pairs beyond a certain point. The net result of this that whenever $|V_{\beta}|\ne q^3$, we may assume that we have a critical pair $(\alpha, \alpha')$ with $V_{\alpha}^{(2)}\not\le Q_{\alpha'-2}$ and $[V_{\alpha}^{(2)}, Z_{\alpha'-1}]\ne \{1\}$, and for all $k\geq 0$ we also have that $(\alpha-2k, \alpha'-2k)$ is a critical pair with $V_{\alpha-2k}^{(2)}\not\le Q_{\alpha'-2-2k}$ and $[V_{\alpha-2k}^{(2)}, Z_{\alpha'-1-2k}]\ne \{1\}$. We will use this fact in the following two lemmas.

\begin{lemma}\label{b=7i}
Suppose that $C_{V_\beta}(V_{\alpha'})=V_\beta \cap Q_{\alpha'}$ and $b=7$. If $V_{\alpha}^{(2)}\not\le Q_{\alpha'-2}$, then $|V_{\beta}|=q^3$.
\end{lemma}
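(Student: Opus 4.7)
The plan is to assume, for contradiction, that $|V_\beta|\ne q^3$, and then to leverage the ``double thickness'' that this forces on $V_\alpha^{(2)}$ together with the new critical pair produced by $V_\alpha^{(2)}\not\le Q_{\alpha'-2}$. Under the contradiction hypothesis, \cref{VBGood} tells us that both $V^\alpha/Z_\alpha$ and $V_\alpha^{(2)}/V^\alpha$ contain non-central chief factors for $L_\alpha$, and moreover $V^\alpha V_\beta\not\normaleq L_\beta$ and $V^\beta V_\alpha^{(2)}\not\normaleq L_\alpha$. By the remark following \cref{GoodCent}, I may simultaneously arrange that for every $k\ge 0$ the pair $(\alpha-2k,\alpha'-2k)$ is a critical pair with $V_{\alpha-2k}^{(2)}\not\le Q_{\alpha'-2k-2}$, $[V_{\alpha-2k}^{(2)},Z_{\alpha'-1-2k}]\ne\{1\}$, and falling in the same case $C_{V_{\beta-2k}}(V_{\alpha'-2k})=V_{\beta-2k}\cap Q_{\alpha'-2k}$, so the earlier results of the subsection apply uniformly to each pair. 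Since $b=7>4$ every $V_\lambda^{(2)}$ is abelian, and since pairs of vertices within distance $3$ of $\beta$ lie within the critical distance of one another, $V_\beta^{(3)}$ is abelian as well; note also that $V_\alpha^{(2)}\le V_\beta^{(3)}$, so the hypothesis forces $V_\beta^{(3)}\not\le Q_{\alpha'-2}$.

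First I would extract the forward-critical pair. Since $V_\alpha^{(2)}\not\le Q_{\alpha'-2}$, there exists $\alpha-2\in\Delta^{(2)}(\alpha)$ with $Z_{\alpha-2}\not\le Q_{\alpha'-2}$, producing the critical pair $(\alpha-2,\alpha'-2)$; by \cref{SL2VlQ}, \cref{SL2VnQ} and \cref{NotNatural} this pair satisfies the same hypothesis as $(\alpha,\alpha')$, so symmetrically I can also assume $V_{\alpha'}\not\le Q_\beta$ or treat the two sides on equal footing. Using the abelian group $V_\beta^{(3)}$, I would then analyze $[V_\beta^{(3)}\cap Q_{\alpha'-2}, V_{\alpha'-2}]$, together with the containment pattern of $V_\beta^{(3)}$ in $Q_{\alpha'-3},Q_{\alpha'-2},Q_{\alpha'-1}$, to pin down the chief factor structure of $V_\beta^{(3)}/V_\beta$. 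The aim is to show that either $V_\beta^{(3)}/V_\beta$ contains a unique non-central chief factor which is an FF-module (so that \cref{GoodAction3} applies and $O^p(R_\beta)$ centralizes $V_\beta^{(3)}$), or one of the corresponding facts holds at $V_{\alpha'-1}^{(3)}$. In either subcase, combined with the coincidence $Z_{\alpha'-1}=Z_{\alpha'-3}$ (respectively $Z_{\alpha+2}=Z_{\alpha+4}$) forced by the abelian commutator calculations, \cref{SimExt} then forces $V_{\alpha'}\le V_{\alpha'-1}^{(2)}=V_{\alpha'-3}^{(2)}\le Q_\beta$, which contradicts $V_{\alpha'}\not\le Q_\beta$; alternatively it forces $Z_{\alpha'}\le V_\beta$, which by the definition of $V_\beta$ collapses $V_\beta/C_{V_\beta}(O^p(L_\beta))$ to a natural $\SL_2(q)$-module, contradicting \cref{NotNatural}.

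The main obstacle will be the ``mixed'' subcase in which $V_\beta^{(3)}/V_\beta$ contains more than one non-central chief factor for $L_\beta$, so that \cref{GoodAction3} is not directly available. In that case I would mimic the last portion of the proof of \cref{VB2}: write $R_1=C_{L_\beta}(U/V)$ and $R_2=C_{L_\beta}(V_\beta^{(3)}/([V_\beta^{(3)},Q_\beta]V_\beta))$, handle the easy subcases $R_1=R_\beta$ and $R_2=R_\beta$ using the three-subgroup lemma (which forces $[V_\beta^{(3)},Q_\beta,Q_\beta]\le V_\beta$ and feeds back into the earlier collapse argument), and dispose of the remaining ``generic'' subcase by applying \cref{SubAmal} with $\lambda=\beta$. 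Outcome (a) of \cref{SubAmal} is excluded because $b=7>5$, outcome (b) because $\lambda=\beta$, outcome (c) because the relevant subgroups normalize $V_\alpha^{(2)}$, and outcome (d) by counting: for the $V:=\langle Z_\beta^X\rangle$ produced there, the quotients $Q_\alpha/C_{Q_\alpha}(V_\alpha^{(2)})$, $C_{Q_\alpha}(V_\alpha^{(2)})/C_{Q_\alpha}(V_\alpha^{(4)})$ and $Q_\beta/C_\beta$, $C_\beta/C_{Q_\beta}(V_\beta^{(3)})$ all contribute non-central $\wt L_{\alpha}$- respectively $\wt L_\beta$-chief factors, exceeding the count permitted in case (d). This contradiction completes the argument and forces $|V_\beta|=q^3$.
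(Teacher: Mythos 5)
Your outline correctly records the standing consequences of $|V_\beta|\ne q^3$ (via \cref{VBGood}) and identifies several tools that do appear in the paper's argument (\cref{SimExt}, \cref{GoodAction3}, \cref{SubAmal}), but it is a plan rather than a proof, and it diverges from the paper's route in a way that leaves real gaps.

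The paper's proof of this lemma is organized entirely around the auxiliary subgroup $V^\alpha=\langle C_{V_\beta}(O^p(L_\beta))^{G_\alpha}\rangle$ and its relatives $U^\beta=\langle V^\delta\mid Z_\delta=Z_\alpha,\ \delta\in\Delta(\beta)\rangle$, splitting on whether $V^\alpha$ avoids $Q_{\alpha'-2}$, lies in $Q_{\alpha'-2}$ but not $Q_{\alpha'-1}$, or lies in $Q_{\alpha'-1}$ (with a further division on whether $V_{\alpha'}\le Q_\beta$). The first case is killed by an order count: $[V^\alpha,V_{\alpha'-2}]=Z_\beta$ forces $V_{\alpha'-2}=Z_{\alpha'-3}Z_{\alpha'-3}^g$ of order $q^3$, contradicting the standing hypothesis. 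The remaining cases eventually produce $Z$-coincidences that feed into \cref{GoodAction1}/\cref{GoodAction3}, \cref{SimExt}, and finally the conclusion $V^\alpha V_\beta\normaleq L_\beta$, which contradicts \cref{VBGood}(ii). Your sketch never performs this $V^\alpha$ analysis; working only with $V_\beta^{(3)}$, you lose the order-count contradiction of the first case entirely (there is no obvious analogue for $V_\beta^{(3)}$), and you lose access to the decisive contradiction $V^\alpha V_\beta\normaleq L_\beta$.

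Two more concrete problems. First, the coincidences $Z_{\alpha'-1}=Z_{\alpha'-3}$ (or $Z_{\alpha+2}=Z_{\alpha+4}$) that you invoke as ``forced by the abelian commutator calculations'' are in fact the entire content of the argument: in the paper they only emerge in specific subcases after careful commutator work on $V^\alpha$ and $U^\beta$, and they are not generic consequences of the hypotheses. Asserting them without derivation hides the hard part. Second, your proposed fallback contradiction via \cref{NotNatural} is inapplicable here: \cref{NotNatural} gives a conclusion when $C_{V_\beta}(V_{\alpha'})<V_\beta\cap Q_{\alpha'}$, whereas in this subsection we have $C_{V_\beta}(V_{\alpha'})=V_\beta\cap Q_{\alpha'}$, and \cref{SL2VlQ}/\cref{SL2VnQ} already tell us that $V_\beta/C_{V_\beta}(O^p(L_\beta))$ \emph{is} a natural $\SL_2(q)$-module --- so ``collapsing it to a natural module'' is not a contradiction at all. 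The mixed-case branch via \cref{SubAmal} also requires you to exhibit a specific non-central chief factor $U/W$ for $L_\beta$ with $C_{L_\beta}(U/W)\ne R_\beta$ normalizing $Q_\alpha\cap Q_\beta$, which your outline never establishes. Taken together, the proposal would need to be rebuilt around the $V^\alpha$ case division to recover a complete argument.
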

\begin{proof}
Suppose that $b=7$. By \cref{VA1} and \cref{VB1}, we may consider a critical pair $(\alpha, \alpha')$ iterated backwards so that $(\alpha+2, \alpha'+2)$ is also a critical pair. Suppose first that $V^\alpha\not\le Q_{\alpha'-2}$. Then $[V^\alpha, V_{\alpha'-2}]\le Z_{\alpha}$ and so $[V^\alpha, V_{\alpha'-2}]=Z_{\beta}\le V_{\alpha'-2}$. Since $Z_{\alpha+2}\not\le Q_{\alpha'+2}$ and $b>5$, we have that $Z_{\beta}=Z_{\alpha+3}$ intersects $Z_{\alpha'-2}$ trivially. But now, $Z_{\alpha+3}Z_{\alpha+3}^gZ_{\alpha'-2}=Z_{\alpha'-3}Z_{\alpha'-3}^g$ is normalized by $L_{\alpha'-2}=\langle V^\alpha, (V^\alpha)^g, R_{\alpha'-2}\rangle$ for some appropriately chosen $g\in L_{\alpha'-2}$, so that $V_{\alpha'-2}=Z_{\alpha'-3}Z_{\alpha'-3}^g$ is of order $q^3$, a contradiction. Thus, we may assume that $V^\alpha\le Q_{\alpha'-2}$. 

If $V^\alpha\not\le Q_{\alpha'-1}$, then $Z_{\alpha'-2}=[V^\alpha, V_{\alpha'-2}]\le Z_{\alpha}$ and $Z_{\alpha'-2}=Z_{\beta}$. Moreover, for some $\alpha-2\in\Delta^{(2)}(\alpha)$ with $(\alpha-2, \alpha'-2)$ a critical pair, $V_{\alpha-2}^{(2)}$ centralizes $Z_{\alpha'-2}$ and since $V_{\alpha-2}^{(2)}$ does not centralize $Z_{\alpha'-3}$, we deduce that $Z_{\alpha'-2}=Z_{\alpha+3}=Z_{\beta}$. Now, $[V^{\alpha'-1}, V_{\beta}]\le Z_{\alpha'-1}$ and since $V^\alpha$ does not centralize $Z_{\alpha'-1}$, $[V^{\alpha'-1}, V_{\beta}]\le Z_{\alpha'-2}=Z_{\beta}$ and $V^{\alpha'-1}\le Q_{\beta}$. If $V^{\alpha'-1}\le Q_{\alpha}$, then $[V^{\alpha'-1}, V^\alpha]\le Z_{\alpha}$ so that $[V^{\alpha'-1}, V^\alpha]=Z_{\beta}=Z_{\alpha'-2}$ and $V^\alpha$ centralizes $V^{\alpha'-1}/Z_{\alpha'-1}$, a contradiction since $V^{\alpha'-1}/Z_{\alpha'-1}$ contains a non-central chief factor for $L_{\alpha'-1}$. Thus, $S=V^{\alpha'-1}Q_{\alpha}$ and $V_{\alpha'-1}^{(2)}\cap Q_{\beta}=V^{\alpha'-1}(V_{\alpha'-1}^{(2)}\cap Q_{\alpha})$. Since $Z_{\alpha}\not\le V_{\alpha'-1}^{(2)}$, $[V_{\alpha'-1}^{(2)}\cap Q_{\alpha}, V^\alpha]=Z_{\beta}=Z_{\alpha'-2}$ and it follows that $V_{\alpha'-1}^{(2)}/V^{\alpha'-1}$ is an FF-module for $\bar{L_{\alpha'-1}}$. Similarly, $[V^{\alpha'-1}\cap Q_{\alpha}, V^\alpha]=Z_{\alpha'-2}$ and $V^{\alpha'-1}/Z_{\alpha'-1}$ is an FF-module for $\bar{L_{\alpha'-1}}$. Then \cref{GoodAction1} and \cref{SimExt} applied to $Z_{\beta}=Z_{\alpha+3}$ implies that $V_{\beta}=V_{\alpha+3}\le Q_{\alpha'}$, a contradiction.

Thus, $V^\alpha=Z_{\alpha}(V^\alpha\cap Q_{\alpha'})$. Suppose that $V_{\alpha'}\le Q_{\beta}$ and again let $(\alpha-2, \alpha'-2)$ be a critical pair. Since $V^\alpha/Z_{\alpha}$ contains a non-central chief factor, $Z_{\alpha'}\le V^\alpha$ and $Z_{\alpha'}\not\le Z_{\alpha}$. Then $Z_{\alpha'}=Z_{\alpha'-2}$, otherwise $[V_{\alpha}^{(2)}, Z_{\alpha'-1}]=\{1\}$. But now, since $b>5$, $V_{\alpha-2}^{(2)}$ centralizes $Z_{\alpha'-2}\le V^\alpha$ and since $[V_{\alpha-2}^{(2)}, Z_{\alpha'-3}]\ne\{1\}$, it follows that $Z_{\alpha'-2}=Z_{\alpha'-4}=Z_{\alpha+3}$. Since $R=[V_{\alpha'}, V_{\beta}]=Z_{\beta}\le V_{\alpha'}$, as $Z_{\alpha+2}\not\le Q_{\alpha'+2}$, we must have that $Z_{\alpha+3}=Z_{\beta}$ . But then $R=Z_{\beta}=Z_{\alpha'}$, a contradiction.

Finally, we have that $V^\alpha\le Q_{\alpha'-1}$ and $V_{\alpha'}\not\le Q_{\beta}$. Set $U^\beta=\langle V^\delta \mid Z_{\delta}=Z_{\alpha}, \delta\in\Delta(\beta)\rangle$. Then $(\delta, \alpha')$ is a critical pair for all such $\delta\in\Delta(\beta)$ and so $V^\delta\le Q_{\alpha'-1}$ for all such $\delta$. By \cref{UWNormal}, $R_{\beta}Q_{\alpha}$ normalizes $U^\beta$. Now, $U^\beta V_{\beta}=V_{\beta}(U^\beta\cap Q_{\alpha'})$ and either $Z_{\alpha'}\le V_{\beta}^{(3)}$; or $V_{\alpha'}$ centralizes $U^\beta V_{\beta}/V_{\beta}$. In the former case, since $V_{\beta}^{(3)}$ does not centralize $Z_{\alpha'-1}$, $Z_{\alpha'}=Z_{\alpha'-2}$. Iterating backwards through critical pairs, this eventually implies that $Z_{\alpha'}=Z_{\beta}$ and again, $V_{\alpha'}$ centralizes $U^\beta V_{\beta}/V_{\beta}$. Thus, in all cases, $U^\beta V_{\beta}\normaleq L_{\beta}=\langle V_{\alpha'}, R_{\beta}, Q_{\alpha}\rangle$ and since $V_{\alpha'}$ centralizes $U^\beta V_{\beta}/V_{\beta}$, $O^p(L_{\beta})$ centralizes $U^\beta V_{\beta}/V_{\beta}$. Then $V^\alpha V_{\beta}\normaleq L_{\beta}$, a contradiction by \cref{VBGood}. 
\end{proof}

\begin{lemma}\label{VBp3}
Suppose that $C_{V_\beta}(V_{\alpha'})=V_\beta \cap Q_{\alpha'}$ and $b>5$. If $V_{\alpha}^{(2)}\not\le Q_{\alpha'-2}$, then $|V_{\beta}|=q^3$.
\end{lemma}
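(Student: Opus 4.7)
The plan is to assume for contradiction that $b>5$, $V_{\alpha}^{(2)}\not\le Q_{\alpha'-2}$, but $|V_{\beta}|>q^3$. The case $b=7$ is already covered by \cref{b=7i}, so we may assume $b\geq 9$. By \cref{VBGood}, both $V^\alpha/Z_\alpha$ and $V_\alpha^{(2)}/V^\alpha$ contain a non-central chief factor for $L_\alpha$; by \cref{GoodCent} we may iterate backward through critical pairs so that, for our chosen $(\alpha,\alpha')$, we have $[V_\alpha^{(2)},Z_{\alpha'-1}]\ne 1$, $Z_{\alpha'-1}\ne Z_{\alpha'-3}$, and $(\alpha-2k,\alpha'-2k)$ is a critical pair with the same properties for several $k\geq 0$. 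The reduction to $b\geq 9$ is essential because it makes $V_\beta^{(3)}$ and $V_\beta^{(5)}$ abelian and provides the extra room needed for the propagation arguments below.

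The case analysis mirrors \cref{b=7i}, organised around the containment of $V^\alpha$ in $Q_{\alpha'-2}$ and $Q_{\alpha'-1}$. First, if $V^\alpha\not\le Q_{\alpha'-2}$, then $[V^\alpha,V_{\alpha'-2}]\le Z_\alpha\cap V_{\alpha'-2}=Z_\beta$ and, since $b\geq 9>5$, comparison of $Z_\beta=Z_{\alpha+3}$ with $Z_{\alpha'-2}$ forces $Z_\beta\cap Z_{\alpha'-2}=\{1\}$; then $Z_{\alpha'-3}Z_{\alpha'-3}^g$ is normalised by $L_{\alpha'-2}=\langle V^\alpha,(V^\alpha)^g,R_{\alpha'-2}\rangle$, giving $|V_{\alpha'-2}|=q^3$, a contradiction. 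Second, if $V^\alpha\le Q_{\alpha'-2}$ but $V^\alpha\not\le Q_{\alpha'-1}$, then $Z_{\alpha'-2}=[V^\alpha,V_{\alpha'-2}]\le Z_\alpha$, so $Z_{\alpha'-2}=Z_\beta$; iterating backward through $(\alpha-2,\alpha'-2)$, the analogous equation yields $Z_{\alpha'-2}=Z_{\alpha'-4}=Z_{\alpha+3}=Z_\beta$, and a copy of the FF-module identification used in \cref{b=7i} (via \cref{SEFF}), followed by \cref{GoodAction1} and \cref{SimExt} applied to $Z_\beta=Z_{\alpha+3}$, forces $V_\beta=V_{\alpha+3}\le Q_{\alpha'}$, a contradiction.

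The main remaining case, and the principal obstacle, is $V^\alpha\le Q_{\alpha'-1}$, where the argument of \cref{b=7i} breaks down because the subgroup $V_\beta^{(3)}$ now sits inside a longer chain of nested abelian subgroups $V_\beta^{(3)}\le V_\beta^{(5)}\le\dots$. Here I define $U^\beta:=\langle V^\delta\mid \delta\in\Delta(\beta),\,Z_\delta=Z_\alpha\rangle$, which is normal in $R_\beta Q_\alpha$ by \cref{UWNormal}. Since each $(\delta,\alpha')$ with $Z_\delta=Z_\alpha$ is a critical pair to which the first two cases apply, I obtain $U^\beta\le Q_{\alpha'-1}$ and $U^\beta V_\beta=V_\beta(U^\beta\cap Q_{\alpha'})$. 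The dichotomy is then: either $Z_{\alpha'}\le U^\beta V_\beta$, which by the non-triviality of the action of $V_\beta^{(3)}$ on $Z_{\alpha'-1}$ forces $Z_{\alpha'}=Z_{\alpha'-2}$, and iterating gives $Z_{\alpha'}=Z_\beta$ contradicting \cref{NotNatural}; or $V_{\alpha'}$ centralises $U^\beta V_\beta/V_\beta$, whence $V^\alpha V_\beta\trianglelefteq L_\beta=\langle V_{\alpha'},R_\beta,Q_\alpha\rangle$, contradicting \cref{VBGood}.

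The hard part is ensuring, in this last case, that the long nested sequence $V_\beta^{(3)}\le V_\beta^{(5)}\le\dots\le V_\beta^{(b-4)}$ does not provide an escape route: one must run \cref{GoodAction3} and \cref{GoodAction4} at each level to propagate $O^p(R_\beta)$ centralising these subgroups, then feed the resulting equality $Z_{\alpha'-1}=Z_{\alpha'-3}=\cdots$ into \cref{SimExt} to obtain $V_{\alpha'}\le V_{\alpha'-1}^{(2)}=V_{\alpha'-3}^{(2)}\le Q_\beta$, contradicting $V_{\alpha'}\not\le Q_\beta$ (which holds by \cref{VB1} in the present setting $|V_\beta|\ne q^3$). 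This inductive propagation along the path is where the hypothesis $b\geq 9$ is genuinely used, as opposed to merely $b=7$.
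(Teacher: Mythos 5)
Your proposal tries to run the $b=7$ case analysis of \cref{b=7i} verbatim for $b\geq 9$, but the paper's own proof of this lemma takes a genuinely different route: it works first with $V_\beta^{(3)}$ (not $V^\alpha$) to establish the pivotal intermediate equality $Z_{\alpha'-2}=Z_{\alpha'-4}$, handles $b=9$ as a separate subcase, then re-indexes the critical pair to manufacture $Z_{\alpha'}=Z_{\alpha'-2}=\cdots=Z_\beta$, and only at that point returns to the $V^\alpha$ argument. This ordering is not cosmetic.

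There are concrete gaps in your version. In Cases 1 and 2, you rely on the distance coincidence $\alpha+3=\alpha'-4$, which holds when $b=7$ but not for $b\geq 9$. For $b\geq 9$, iterating back through $(\alpha-2,\alpha'-2)$ yields $Z_{\alpha'-4}=Z_{\alpha-1}$, not $Z_{\alpha'-4}=Z_{\alpha+3}$, so the chain ``$Z_{\alpha'-2}=Z_{\alpha'-4}=Z_{\alpha+3}=Z_\beta$'' you claim in Case 2 does not follow from the analogous equations; there is no direct reason for $Z_{\alpha-1}$ and $Z_{\alpha+3}$ to coincide without the $Z$-chain argument the paper first builds. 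In the main case, you explicitly punt on the propagation along $V_\beta^{(3)}\le V_\beta^{(5)}\le\cdots$ (``one must run \cref{GoodAction3} and \cref{GoodAction4} at each level\dots''); this is precisely where the substance of the paper's proof lies, and skipping it leaves the argument incomplete. Two further inaccuracies: you invoke \cref{VB1} to conclude $V_{\alpha'}\not\le Q_\beta$, but that lemma's hypothesis is $V_\alpha^{(2)}\le Q_{\alpha'-2}$, which you have assumed to fail --- in the paper, $V_{\alpha'}\not\le Q_\beta$ is deduced only after establishing $Z_{\alpha'}=Z_\beta$, via the argument that $V_{\alpha'}\le Q_\beta$ would force $R=Z_\beta=Z_{\alpha'}$. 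And by \cref{VAbelian}, $V_\beta^{(5)}$ is abelian only for $b\geq 11$, so your remark that $b\geq 9$ makes both $V_\beta^{(3)}$ and $V_\beta^{(5)}$ abelian is false for $b=9$.
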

\begin{proof}
By \cref{b=7i}, we may assume that $b>7$. In the following, the aim will be to prove that $Z_{\alpha'-2}=Z_{\alpha'-4}$ for then extending far enough backwards along the critical path, by \cref{GoodCent}, we can manufacture a situation in which $(\alpha, \alpha')$ is a critical pair, $Z_{\alpha'-1-2k}\ne Z_{\alpha'-3-2k}$ for all $k\geq 0$ and $Z_{\alpha'}=Z_{\alpha'-2}=\dots=Z_{\alpha+3}=Z_{\beta}$. Throughout we consider a critical pair $(\alpha, \alpha')$ iterated backwards far enough so that $(\alpha+2, \alpha'+2)$ is also a critical pair.

Suppose first that $V_{\beta}^{(3)}\cap Q_{\alpha'-2}\not\le Q_{\alpha'-1}$. Then $Z_{\alpha'-2}=[V_{\beta}^{(3)}\cap Q_{\alpha'-2},  Z_{\alpha'-1}]\le V_{\beta}^{(3)}$ is centralized by $V_{\alpha-2}^{(2)}$ since $b>7$. Since $V_{\alpha-2}^{(2)}$ does not centralizes $Z_{\alpha'-3}$, we have that $Z_{\alpha'-2}=Z_{\alpha'-4}$, as desired. Thus, $V_{\beta}^{(3)}\cap Q_{\alpha'-2}=V_{\beta}(V_{\beta}^{(3)}\cap Q_{\alpha'})$. If $Z_{\alpha'}=[V_{\beta}^{(3)}\cap Q_{\alpha'}, V_{\alpha'}]\le V_{\beta}^{(3)}$ then, as $V_{\beta}^{(3)}$ does not centralize $Z_{\alpha'-1}$, we deduce that $Z_{\alpha'}=Z_{\alpha'-2}\le V_{\beta}^{(3)}$. Similarly to the above, using $b>7$, we have that $Z_{\alpha'-2}=Z_{\alpha'-4}$, as desired. Thus, $[V_{\beta}^{(3)}\cap Q_{\alpha'}, V_{\alpha'}]=\{1\}$.

Suppose that $V_{\alpha'}\le Q_{\beta}$. Then, by the above, $V_{\alpha}^{(2)}\cap Q_{\alpha'-2}=Z_{\alpha}(V_{\alpha}^{(2)}\cap Q_{\alpha'})$ and $[V_{\alpha}^{(2)}\cap Q_{\alpha'}, V_{\alpha'}]=\{1\}$, a contradiction since both $V_{\alpha}^{(2)}/V^\alpha$ and $V^\alpha/Z_{\alpha}$ contain a non-central chief factor. Thus, $V_{\alpha'}\not\le Q_{\beta}$ and $V_{\beta}^{(3)}/V_{\beta}$ contains a unique non-central chief factor which is an FF-module for $\bar{L_{\beta}}$. By \cref{GoodAction3}, $O^p(R_{\beta})$ centralizes $V_{\beta}^{(3)}$. If $V^\alpha\le Q_{\alpha'-2}$, then $V^\alpha V_{\beta}=V_{\beta}(V^\alpha V_{\beta}\cap Q_{\alpha'})$ and it follows that $V^\alpha V_{\beta}\normaleq L_{\beta}=\langle V_{\alpha'}, Q_{\alpha}, R_{\beta}\rangle$, a contradiction by \cref{VBGood}. Therefore, $V^\alpha\not\le Q_{\alpha'-2}$ and since $V_{\alpha'-2}\le Q_{\alpha}$, we have that $[V^\alpha, V_{\alpha'-2}]=Z_{\beta}\ne Z_{\alpha'-2}$. 

Suppose that $b=9$ and consider the critical pair $(\alpha-2, \alpha'-2)$. Then, as $V_{\alpha'-4}\le Q_{\alpha-2}$, we have that $[V^{\alpha-2}, V_{\alpha'-4}]\le Z_{\alpha-1}$. Suppose that $Z_{\alpha-1}=[V^{\alpha-2}, V_{\alpha'-4}]\le V_{\alpha'-4}$. Since $Z_{\alpha}, Z_{\alpha+2}\not\le V_{\alpha'-4}$, we must have that $Z_{\alpha-1}=Z_{\beta}=Z_{\alpha+3}=Z_{\alpha'-6}$. But then, $[V^{\alpha-2}, V_{\alpha'-4}]=Z_{\alpha'-6}$ and $Z_{\alpha'-5}Z_{\alpha'-5}^g\normaleq L_{\alpha'-4}=\langle V^{\alpha-2}, (V^{\alpha-2})^g, R_{\alpha'-4}\rangle$ for some appropriately chosen $g\in L_{\alpha'-4}$. Then $V_{\alpha'-4}=Z_{\alpha'-5}Z_{\alpha'-5}^g$ is of order $q^3$, a contradiction. Thus, $[V^{\alpha-2}, V_{\alpha'-4}]=\{1\}$ so that $V^{\alpha-2}V_{\alpha-1}=V_{\alpha-1}(V^{\alpha-2}V_{\alpha-1}\cap Q_{\alpha'-2})$ and since $V^{\alpha-1} V_{\alpha-1}\not\normaleq L_{\alpha-1}$, it follows that $Z_{\alpha'-2}\le V_{\alpha-1}^{(3)}$. Then $V_{\alpha-2}^{(2)}$ centralizes $Z_{\alpha'-2}$ and so $Z_{\alpha'-2}=Z_{\alpha'-4}$, as desired.

Thus, we may assume that $b>9$. Since $V_{\alpha'}\not\le Q_{\beta}$, there is $\lambda\in\Delta(\alpha')$ such that $(\lambda, \beta)$ is a critical pair with $V_{\beta}\not\le Q_{\alpha'}$ and $V_{\lambda}^{(2)}\not\le Q_{\alpha+3}$. In particular, since $b>5$, $V_{\lambda}^{(2)}$ centralizes $Z_{\beta}\le V_{\alpha'-2}$ and $Z_{\beta}=Z_{\alpha+3}$. Now, $(\alpha-2. \alpha'-2)$ is also a critical pair from which we deduce in a similar fashion that $Z_{\beta}=Z_{\alpha-1}$. By this logic, we could have chosen the original critical pair $(\alpha, \alpha')$ such that $Z_{\alpha'}=\dots=Z_{\beta}$, as desired.

In all cases we have reduced to the case where $Z_{\alpha'-2}=Z_{\alpha'-4}$. By a previous observation we may now assume that $(\alpha, \alpha')$ is a critical pair such that $Z_{\alpha'}=Z_{\alpha'-2}=\dots=Z_{\beta}=Z_{\alpha-1}=\dots$ and $Z_{\alpha'-1-2k}\ne Z_{\alpha'-3-2k}$ for any $k\geq 0$. Now, $[V_{\alpha'-2}, V^\alpha]\le [Q_{\alpha}, V^\alpha]\le Z_{\alpha}$ so that $[V_{\alpha'-2}, V^\alpha]=Z_{\beta}=Z_{\alpha'-2}$ and $V^\alpha\le Q_{\alpha'-2}$. Moreover, $V_{\alpha'}\not\le Q_{\beta}$, otherwise $R=Z_{\beta}=Z_{\alpha'}$ and $O^p(L_{\alpha'})$ centralizes $V_{\alpha'}$. 

Suppose that $V^\alpha\not\le Q_{\alpha'-1}$. Then $V_{\beta}\le Q_{\alpha'-1}$ and so $[V_{\beta}, V^{\alpha'-1}]\le Z_{\alpha'-1}$ and since $V^{\alpha}\not\le Q_{\alpha'-1}$, $[V^{\alpha'-1}, V_{\beta}]=Z_{\alpha'-2}=Z_{\beta}$ and $V^{\alpha'-1}\le Q_{\beta}$. Moreover, $V^{\alpha'-1}\not\le Q_{\alpha}$, else $[V^{\alpha}, V^{\alpha'-1}]=Z_{\beta}=Z_{\alpha'-2}\le Z_{\alpha'-1}$ and $V^{\alpha}\le Q_{\alpha'-1}$. Thus, $[V_{\alpha}^{(2)}\cap Q_{\alpha'-2}, V^{\alpha'-1}]=[V^{\alpha}(V_{\alpha}^{(2)}\cap Q_{\alpha'-1}), V^{\alpha'-1}]\le V^{\alpha} Z_{\alpha'-2}=V^{\alpha}$. It follows that both $V_{\alpha}^{(2)}/V^{\alpha}$ and $V^{\alpha}/Z_{\alpha}$ are FF-modules for $\bar{L_{\alpha}}$ and by \cref{GoodAction1} and \cref{SimExt}, we have that $Z_{\beta}=Z_{\alpha-3}$ implies that $V_{\beta}=V_{\alpha+3}\le Q_{\alpha'}$, a contradiction.

Thus, $V^{\alpha}V_{\beta}=V_{\beta}(V^{\alpha}V_{\beta}\cap Q_{\alpha'})$. As in the $b=7$ case, again set $U^\beta=\langle V^\delta \mid Z_{\lambda}=Z_{\alpha}, \lambda\in\Delta(\beta)\rangle\normaleq R_{\beta}Q_{\alpha}$ so that $(\lambda, \alpha')$ is a critical pair for all such $\lambda$ and, by the above, $V^\lambda\le Q_{\alpha'-1}$. Then, $U^\beta V_{\beta}\normaleq L_{\beta}=\langle V_{\alpha'}, R_{\beta}, Q_{\alpha}\rangle$ and since $V_{\alpha'}$ centralizes $U^\beta V_{\beta}/ V_{\beta}$, $O^p(L_{\beta})$ centralizes $U^{\beta}V_{\beta}/V_{\beta}$ and $V_{\beta} V^\alpha\normaleq L_{\beta}$. A contradiction is provided by \cref{VBGood}.
\end{proof}

As a consequence of \cref{VBp3}, we may assume that whenever $b>5$, we have that $|V_{\beta}|=q^3$.

\begin{lemma}\label{Vna-2}
Suppose that $C_{V_\beta}(V_{\alpha'})=V_\beta \cap Q_{\alpha'}$ and $b>5$. If $V_{\alpha}^{(2)}\not\le Q_{\alpha'-2}$ then either:
\begin{enumerate}
\item $R=Z_{\alpha'-2}\le Z_{\alpha+2}\cap Z_{\alpha'-1}$; or
\item $Z_{\alpha'-1}=Z_{\alpha'-3}=R\times Z_{\beta}$ and $V_{\alpha'}\le Q_{\beta}$.
\end{enumerate}
\end{lemma}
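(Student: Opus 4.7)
The starting point is \cref{VBp3}: together with our hypothesis $V_{\alpha}^{(2)}\not\le Q_{\alpha'-2}$ (and $b>5$), it yields $|V_{\beta}|=q^3$. Consequently, the observation recorded at the opening of this subsection gives $R=[V_{\beta},V_{\alpha'}]\le Z_{\alpha+2}\cap Z_{\alpha'-1}$, and $R\ne\{1\}$ by \cref{b>1}. The hypothesis $V_{\alpha}^{(2)}\not\le Q_{\alpha'-2}$ also supplies an $\alpha-2\in\Delta^{(2)}(\alpha)$ such that $(\alpha-2,\alpha'-2)$ is a critical pair satisfying all the ambient assumptions of this subsection by \cref{SL2VlQ}, \cref{SL2VnQ} and \cref{NotNatural}, and I shall exploit this new pair throughout.

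Recall that $Z_{\alpha'-1}$ is a natural $\SL_2(q)$-module of order $q^2$ containing the two order-$q$ submodules $Z_{\alpha'-2}=\Omega(Z(L_{\alpha'-1}))$ and $Z_\beta=\Omega(Z(S))\le Z_{\alpha'-1}$. The plan is to case-split on whether $R\le Z_{\alpha'-2}$. In the first case, since $R$ is nontrivial and $|Z_{\alpha'-2}|=q$, $R=Z_{\alpha'-2}$, and combining with $R\le Z_{\alpha+2}$ gives conclusion (i). The remainder of the argument is devoted to showing that $R\not\le Z_{\alpha'-2}$ forces conclusion (ii).

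So assume $R\not\le Z_{\alpha'-2}$. The first step is to prove $V_{\alpha'}\le Q_\beta$: if not, choose $\alpha'+1\in\Delta(\alpha')$ with $Z_{\alpha'+1}\not\le Q_\beta$; then $(\alpha'+1,\beta)$ is a critical pair in the symmetric role to $(\alpha,\alpha')$, and running the dual analysis would force the symmetric avatar of $R$ (necessarily equal to our $R$) to land in $Z_\beta$, contradicting $R\not\le Z_{\alpha'-2}$ after transferring along the critical path. Next, I would exploit the critical pair $(\alpha-2,\alpha'-2)$: since $b>5$, $V_{\alpha-2}^{(2)}$ is abelian and centralises any component of $Z_{\alpha'-1}$ lying in $V_{\alpha'-2}$, while it acts non-trivially on $Z_{\alpha'-3}$. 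Tracking the commutators as in the proofs of \cref{VA1} and \cref{VB2}, this pins down $Z_{\alpha'-3}=Z_{\alpha'-1}$. Finally, because $R\le V_{\alpha'}$ and $Z_\beta\le Z_{\alpha'-1}$ with $Z_\beta\not\le [V_{\beta},V_{\alpha'}]$ (using \cref{crit pair}(iv) applied to the structure of $V_{\alpha'}$), the subgroups $R$ and $Z_\beta$ intersect trivially inside $Z_{\alpha'-1}$; since each has order $q$ and $|Z_{\alpha'-1}|=q^2$, we obtain $Z_{\alpha'-1}=R\times Z_\beta$, giving conclusion (ii).

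\textbf{Main obstacle.} The hardest step is establishing $Z_{\alpha'-3}=Z_{\alpha'-1}$: the other ingredients (the dichotomy, the decomposition, and even $V_{\alpha'}\le Q_\beta$) reduce to bookkeeping once this equality is in hand, but it requires the simultaneous interaction of three critical pairs, namely $(\alpha,\alpha')$, $(\alpha-2,\alpha'-2)$, and the auxiliary $(\alpha'+1,\beta)$, together with careful control of how $V_{\alpha-2}^{(2)}$ moves $Z_{\alpha'-3}$ into $Z_{\alpha'-1}$ without collapsing the direct-sum structure. I expect to need to iterate backwards along the critical path in the spirit of \cref{GoodCent}, and to track which submodules of $Z_{\alpha'-1}$ lie in which of $V_\beta$, $V_{\alpha'}$, and $V_{\alpha'-2}$ before drawing the structural conclusion.
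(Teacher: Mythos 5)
Your case split on $R\le Z_{\alpha'-2}$ is the right organizing device, and your treatment of that branch (yielding conclusion~(i)) is correct, modulo noting that $|R|=q$ so that $R\le Z_{\alpha'-2}$ forces equality. In the other branch, however, you have both misdiagnosed where the work lies and left a genuine hole. You flag ``establishing $Z_{\alpha'-3}=Z_{\alpha'-1}$'' as the main obstacle and imagine an elaborate iteration along the critical path; in fact this step is nearly immediate once one makes an observation absent from your sketch, namely that $R\le Z_{\alpha'-3}$. Indeed $R\le V_\beta\le V_\alpha^{(2)}$, and $V_\alpha^{(2)}$ is abelian (\cref{VAbelian}, as $b>4$), lies in $Q_{\alpha'-3}$ by minimality of $b$, but $V_\alpha^{(2)}\not\le Q_{\alpha'-2}$; so $V_\alpha^{(2)}$ centralizes $R\le V_{\alpha'-2}$ while projecting to a nontrivial Sylow $p$-subgroup of $L_{\alpha'-2}/R_{\alpha'-2}$, which forces $R$ into the fixed points $Z_{\alpha'-3}$. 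Since $R\le Z_{\alpha'-1}\cap Z_{\alpha'-3}$, which contains $Z_{\alpha'-2}$, and $|R|=q=|Z_{\alpha'-2}|$, the assumption $R\ne Z_{\alpha'-2}$ immediately gives $Z_{\alpha'-1}=Z_{\alpha'-3}$.

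The step you treat as bookkeeping---$V_{\alpha'}\le Q_\beta$ when $R\ne Z_{\alpha'-2}$---is the substantive one, and your argument for it does not work. You propose to ``run the dual analysis'' on $(\alpha'+1,\beta)$, but this would require the dual hypothesis $V_{\alpha'+1}^{(2)}\not\le Q_{\alpha+3}$, which you neither establish nor may assume; and even granting the conclusion you expect, no contradiction with $R\not\le Z_{\alpha'-2}$ has actually been exhibited. What is required is a direct contradiction under $R\ne Z_{\alpha'-2}$ \emph{and} $V_{\alpha'}\not\le Q_\beta$: one first shows $R\cap Z_\beta=\{1\}$ and $Z_\beta=[V_{\alpha'-2},V_\alpha^{(2)}]\le Z_{\alpha'-3}=Z_{\alpha'-1}$, so $Z_{\alpha'-1}=R\times Z_\beta\le V_\beta$; it follows that $V_\beta^{(3)}/V_\beta$ has a unique non-central chief factor which is an FF-module, whence \cref{GoodAction3} gives $[O^p(R_\beta),V_\beta^{(3)}]=\{1\}$, and then \cref{SimExt} combined with $Z_{\alpha'-1}=Z_{\alpha'-3}$ forces $V_{\alpha'}\le V_{\alpha'-1}^{(2)}=V_{\alpha'-3}^{(2)}\le Q_\alpha$, a contradiction. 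None of this---the centralizer computation in $V_{\alpha'-2}$, the $V_\beta^{(3)}$ chief factor analysis, \cref{GoodAction3}, \cref{SimExt}---appears in your plan, and your appeal to \cref{crit pair}(iv) to get $Z_\beta\ne R$ is a misapplication of that clause (it concerns $Z(L_\alpha)$ vanishing, not the decomposition of $Z_{\alpha'-1}$).
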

\begin{proof}
By \cref{VBp3}, we have that $|V_{\beta}|=q^3$, so that $R=[V_{\alpha'}, V_{\beta}]\le Z_{\alpha'-1}\cap Z_{\alpha+2}$. Moreover, $R$ is centralized by $V_{\alpha}^{(2)}$ and as $V_{\alpha}^{(2)}\not\le Q_{\alpha'-2}$, $R\le Z_{\alpha'-3}$. If $R\ne Z_{\alpha'-2}$, then it follows that $Z_{\alpha'-1}=Z_{\alpha'-3}$.

If $Z_{\alpha'-2}\ne R$ and $V_{\alpha'}\not\le Q_{\beta}$, then as $R\cap Z_{\beta}=\{1\}$ and since $[V_{\alpha'-2}, V_{\alpha}^{(2)}]\le Z_{\alpha}\cap Q_{\alpha'}=Z_{\beta}$, we must have that $Z_{\beta}=[V_{\alpha'-2}, V_{\alpha}^{(2)}]\le Z_{\alpha'-3}=Z_{\alpha'-1}$ and $Z_{\alpha'-1}=R\times Z_{\beta}\le V_{\beta}$. Thus, $V_{\beta}^{(3)}\cap Q_{\alpha'-2}\le Q_{\alpha'-1}$, $V_{\beta}^{(3)}\cap Q_{\alpha'-2}=V_{\beta}(V_{\beta}^{(3)}\cap Q_{\alpha'})$ and since $Z_{\alpha'}\le Z_{\alpha'-1}\le V_{\beta}$, $V_{\beta}^{(3)}/V_{\beta}$ contains a unique non-central chief factor for $L_{\beta}$ which is an FF-module. Then, by \cref{GoodAction3} and \cref{SimExt}, $Z_{\alpha'-1}=Z_{\alpha'-3}$ implies that $V_{\alpha'}\le V_{\alpha'-1}^{(2)}=V_{\alpha'-3}^{(2)}\le Q_{\alpha}$, a contradiction. Hence, if $Z_{\alpha'-2}\ne R$ then $V_{\alpha'}\le Q_{\beta}$.
\end{proof}

\begin{lemma}\label{GoodCritPair}
Suppose that $C_{V_\beta}(V_{\alpha'})=V_\beta \cap Q_{\alpha'}$ and $b>5$. Then there exists a critical pair $(\alpha^*, {\alpha^*}')$ such that $V_{\alpha^*}^{(2)}\le Q_{{\alpha^*}'-2}$.
\end{lemma}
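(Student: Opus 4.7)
The plan is to argue by contradiction. Suppose that every critical pair $(\alpha^*, \alpha^{*\prime})$ violates the conclusion, so $V_{\alpha^*}^{(2)}\not\le Q_{\alpha^{*\prime}-2}$. Mimicking the argument in \cref{GoodCent}, the hypothesis $V_\alpha^{(2)}\not\le Q_{\alpha'-2}$ produces a vertex $\alpha-2\in\Delta^{(2)}(\alpha)$ with $(\alpha-2,\alpha'-2)$ itself a critical pair; iterating yields an infinite backward chain $(\alpha-2k,\alpha'-2k)_{k\ge 0}$ of critical pairs all satisfying the standing assumption. By \cref{VBp3}, $|V_{\beta_k}|=q^3$ for every $k$, where $\beta_k$ is the neighbour of $\alpha-2k$ on the path. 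Consequently $R_k:=[V_{\beta_k},V_{\alpha'-2k}]$ has order $q$ and is contained in $Z_{\alpha'-2k-1}\cap Z_{(\alpha-2k)+2}$. Applying \cref{Vna-2} to each pair, one of the following must hold: either \textbf{(i)} $R_k=Z_{\alpha'-2k-2}$; or \textbf{(ii)} $Z_{\alpha'-2k-1}=Z_{\alpha'-2k-3}=R_k\times Z_{\beta_k}$ and $V_{\alpha'-2k}\le Q_{\beta_k}$.

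For alternative (ii): if (ii) holds for some $k$, the key observation is that the equality $Z_{\alpha'-2k-1}=Z_{\alpha'-2k-3}$ puts us in position to apply \cref{SimExt} at the vertex $\alpha'-2k-2$. To do so I must first verify its hypotheses, notably that $O^p(R_{\alpha'-2k-2})$ normalises $V_{\alpha'-2k-3}$; this is achieved via \cref{IdenNat} together with \cref{GoodAction3}, after checking that $V_{\alpha'-2k-3}^{(3)}/V_{\alpha'-2k-3}$ contains a unique non-central chief factor which is an FF-module for $\bar{L_{\alpha'-2k-3}}$, using that $|V_{\beta_k}|=q^3$. The conclusion $V_{\alpha'-2k-1}=V_{\alpha'-2k-3}$ then forces $Z_{\alpha'-2k}\le V_{\alpha'-2k-1}=V_{\alpha'-2k-3}\le Q_{\alpha-2k}$, contradicting that $(\alpha-2k,\alpha'-2k)$ is a critical pair.

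For alternative (i): if (i) holds for every $k$, then since $R_k\le Z_{(\alpha-2k)+2}$ and both groups have order $q$, we deduce $Z_{\alpha'-2k-2}=Z_{(\alpha-2k)+2}$ for every $k$. This produces a cascade of coincidences $Z_\lambda=Z_\mu$ for pairs of vertices along the path, and in particular yields, for suitably chosen $\delta$ far back along the chain, a vertex with $Z_{\delta-1}=Z_{\delta+1}$. Since $|V_{\beta_k}|=q^3$ throughout, \cref{VBGood} forces $[V_\delta^{(2)},Q_\delta]=Z_\delta$ and $V_\delta^{(2)}/Z_\delta$ is an FF-module for $\bar{L_\delta}$; invoking \cref{GoodAction1} then shows $O^p(R_\delta)$ centralises $V_\delta^{(2)}$. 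Applying \cref{SimExt} with $i=2$ now gives $V_{\delta-1}^{(1)}=V_{\delta+1}^{(1)}$, and propagating this equality along the path (using $b>5$) ultimately forces $V_\beta\le Q_{\alpha'}$, contradicting $Z_\alpha\le V_\beta$ with $Z_\alpha\not\le Q_{\alpha'}$.

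The main obstacle will be the bookkeeping required to apply \cref{SimExt} cleanly: in both alternatives one must verify that $Q_\delta\in\syl_p(R_\delta)$, that $L_\delta/R_\delta$ is generated by any two distinct Sylow $p$-subgroups, and that $O^p(R_\delta)$ normalises the relevant $V_\delta^{(i-1)}$. These facts follow from \cref{IdenNat} and \cref{GoodAction1}--\cref{GoodAction4}, but pinpointing which non-central chief factors lie in $V_\lambda^{(i)}/V_\lambda^{(i-2)}$ at each step—and hence which instance of \cref{GoodAction1}--\cref{GoodAction4} applies—will be delicate. Additional care is needed to choose $k$ large enough that the vertex $\delta$ lies far enough from $\alpha,\alpha'$ that $Z_{\delta-1}\ne Z_{\delta-3}$ ceases to obstruct the argument, so that the structural conclusions of \cref{VA1}, \cref{VB1}, and \cref{VB2} may be freely invoked.
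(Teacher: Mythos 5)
Your proposal has two genuine gaps, one in each branch of the case analysis.

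\textbf{The error in alternative (i).} You assert that since $R_k\le Z_{(\alpha-2k)+2}$ and ``both groups have order $q$,'' one deduces $Z_{\alpha'-2k-2}=Z_{(\alpha-2k)+2}$. But the vertex $\alpha'-2k-2$ is conjugate to $\beta$ while $\alpha-2k+2$ is conjugate to $\alpha$. Since $Z_\alpha$ is a natural $\SL_2(q)$-module of order $q^2$ (with $Z(L_\alpha)=\{1\}$) and $Z_\beta=\Omega(Z(S))$ has order $q$, these two groups have orders $q$ and $q^2$ respectively and cannot be equal. What the containment $R_k=Z_{\alpha'-2k-2}\le Z_{\alpha-2k+2}$ actually yields — and what the paper exploits — is that $R_k$ equals the $q$-order subgroup $Z_{\alpha-2k+2}\cap Q_{\alpha'-2k+2}=Z_{\alpha-2k+3}=Z_{\beta_{k-1}}$ (using that $(\alpha-2(k-1),\alpha'-2(k-1))$ is critical and $Z_{\alpha-2(k-1)}$ is a natural module), and then that $R_k=R_{k+1}$ for large $k$, forcing $Z_\beta=Z_{\alpha-1}$ and hence $[V_\beta,L_\beta]\le Z_\beta$, contradicting that $V_\beta/C_{V_\beta}(O^p(L_\beta))$ is a natural $\SL_2(q)$-module. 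Your sketch of the endgame (a coincidence $Z_{\delta-1}=Z_{\delta+1}$ followed by \cref{SimExt} and a propagation giving $V_\beta\le Q_{\alpha'}$) rests on the false equality above and does not reach the actual contradiction.

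\textbf{The error in alternative (ii).} You derive $V_{\alpha'-2k-1}=V_{\alpha'-2k-3}$ and conclude that $Z_{\alpha'-2k}\le V_{\alpha'-2k-3}\le Q_{\alpha-2k}$ contradicts $(\alpha-2k,\alpha'-2k)$ being a critical pair. But throughout Section~7 the standing hypothesis is $Z_{\alpha'}\le Q_\alpha$; the critical-pair condition is $Z_{\alpha-2k}\not\le Q_{\alpha'-2k}$, and the reverse containment $Z_{\alpha'-2k}\le Q_{\alpha-2k}$ is automatic (since $\alpha'-2k\in\beta^G$, $Z_{\alpha'-2k}=\Omega(Z(S'))$ for $S'\in\syl_p(G_{\alpha'-2k})$). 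So you have not produced a contradiction. To make this branch work you need, as in the proof of \cref{Vna-2}, to track $V_{\alpha'}$ itself (a larger group than $Z_{\alpha'}$), get $V_{\alpha'-2k}\le V_{\alpha'-2k-1}^{(2)}=V_{\alpha'-2k-3}^{(2)}\le Q_{\alpha-2k}$ via \cref{SimExt} with $i=3$, and then combine with the already-known nontrivial commutator $[V_{\beta_k},V_{\alpha'-2k}]\ne\{1\}$ and the assumption $V_{\alpha'-2k}\not\le Q_{\beta_k}$; merely showing $Z_{\alpha'-2k}\le Q_{\alpha-2k}$ gives nothing. (Also note \cref{GoodCent}, which you cite to build the backward chain, has the hypothesis $|V_\beta|\ne q^3$, which is incompatible with \cref{VBp3} here; the chain should just be constructed directly from the contradiction hypothesis.)
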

\begin{proof}
Assume otherwise. Since $V_{\alpha}^{(2)}\not\le Q_{\alpha'-2}$, there is another critical pair $(\alpha-2, \alpha'-2)$ and we may assume recursively, that there is a path $(\alpha',\alpha'-1, \dots, \alpha, \alpha-1, \alpha-2, \alpha-3,\dots)$ such that $(\alpha-2k, \alpha'-2k)$  is a critical pair satisfying $V_{\alpha-2k}^{(2)}\not\le Q_{\alpha'-2k-2}$ for all $k\geq 0$. Set $R_k:=[V_{\alpha-2k+1}, V_{\alpha'-2k}]$ for each critical pair $(\alpha-2k, \alpha'-2k)$. In particular, $R=R_0$. 

Choose $k\geq (b-1)/2$ and suppose that $Z_{\alpha'-2k-1}=Z_{\alpha'-2k-3}$. Then as $k\geq (b-1)/2$, $2k+3\geq b+2$ and so, by assumption, $(\alpha'-2k-3, \alpha'-2k-3+b)$ is a critical pair, a contradiction. Thus, for $k \geq (b-1)/2$, we may assume that for every critical pair $(\alpha-2k, \alpha'-2k)$, we have that $R_k=Z_{\alpha'-2k-2}\le Z_{\alpha-2k+2}$. Now, if $R_k\ne Z_{\alpha-2k+3}$, then $Z_{\alpha-2k+2}\cap Q_{\alpha'-2k+2}>Z_{\alpha-2k+3}$ a contradiction as $k\geq 1$ and $(\alpha-2k+2, \alpha'-2k+2)$ is a critical pair. Thus, we may assume that $Z_{\alpha'-2k-2}=Z_{\alpha-2k+3}$ for sufficiently large $k$. Then, $R_k=R_{k+1}$ for otherwise $R_k R_{k+1}\le Z_{\alpha-2k+2}\cap Q_{\alpha'-2k+2}>Z_{\alpha-2k+3}$ since $b>5$. In particular, $Z_{\beta-2k}=Z_{\alpha-1-2k}$ and $(\alpha-(b-1)-2k, \beta-2k)$ is a critical pair with $R_{\frac{b-1}{2}-k}=Z_{\beta-2k-2}=Z_{\alpha-1-2k}=Z_{\beta-2k}$. But then $O^p(L_{\beta-2k})$ centralizes $V_{\beta-2k}/Z_{\beta-2k}$, a contradiction.
\end{proof}

We aim to show that $b\leq 5$, and by \cref{GoodCritPair}, we can fix some pair $(\alpha, \alpha')$ with $V_{\alpha}^{(2)}\le Q_{\alpha'-2}$. We start with the case where $V_{\alpha'}\le Q_{\beta}$. 

\begin{lemma}\label{VA41}
Suppose that $C_{V_\beta}(V_{\alpha'})=V_\beta \cap Q_{\alpha'}$ and $b>5$. Assume that $V_{\alpha'}\le Q_{\beta}$ and $V_{\alpha}^{(2)}\le Q_{\alpha'-2}$. Then $V_{\alpha}^{(2)}\le Q_{\alpha'-1}$.
\end{lemma}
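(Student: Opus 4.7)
Suppose for contradiction that $V_\alpha^{(2)}\not\le Q_{\alpha'-1}$. The hypotheses of the present lemma place us exactly in the setting of \cref{VA1}, whose conclusion (i) is precisely what we are trying to prove. Thus conclusion (ii) of \cref{VA1} must hold, yielding $R=Z_\beta=Z_{\alpha'-2}$, $Z_{\alpha'-1}=Z_{\alpha'}\times Z_\beta$, $[V_\alpha^{(2)}\cap Q_{\alpha'},V_{\alpha'}]=\{1\}$, $|V_\beta|=q^3$, and $V_\alpha^{(2)}/Z_\alpha$ is an FF-module for $\bar{L_\alpha}$. Because $|V_\beta|=q^3$, the quotient $V_\beta/C_{V_\beta}(O^p(L_\beta))$ is a natural $\SL_2(q)$-module of order $q^2$, forcing $C_{V_\beta}(O^p(L_\beta))=Z_\beta$ and hence $V^\alpha=\langle Z_\beta^{G_\alpha}\rangle=Z_\alpha$.

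Consequently, in the chain $Z_\alpha\le V^\alpha\le V_\alpha^{(2)}$, the three relevant modules $Z_\alpha$, $V^\alpha/Z_\alpha$, and $V_\alpha^{(2)}/V^\alpha$ are respectively a natural $\SL_2(q)$-module (hence an FF-module), trivial, and an FF-module for $\bar{L_\alpha}$. Since $b>5>3$ and \cref{CommonHyp} holds in this subsection by \cref{SL2VlQ}, the hypotheses of \cref{GoodAction1} are satisfied and we conclude that $R_\alpha=C_{L_\alpha}(V_\alpha^{(2)})Q_\alpha$; in particular $O^p(R_\alpha)$ centralizes $V_\alpha^{(2)}$, and via the three subgroups lemma $R_\alpha$ normalizes $Q_\alpha\cap Q_\beta$.

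The heart of the proof is to turn the equality $Z_\beta=Z_{\alpha'-2}$ into a contradiction. The natural tool is \cref{SimExt}, which, once $O^p(R_\delta)$ normalizes the relevant $V$-subgroup, upgrades an equality $Z_{\delta-1}=Z_{\delta+1}$ of $Z$-groups at two neighbours of $\delta$ to an equality $V_{\delta-1}^{(i-1)}=V_{\delta+1}^{(i-1)}$. The plan is to apply this at $\delta=\alpha$: using that $L_\alpha/R_\alpha\cong\SL_2(q)$ acts as $\SL_2(q)$ on $Z_\alpha$ and that $R_\alpha Q_\beta=R_\alpha Q_\lambda$ whenever $Z_\lambda=Z_\beta$ (by the orbit structure of $R_\alpha$ on $\Delta(\alpha)$), one finds a neighbour $\lambda\in\Delta(\alpha)$ distinct from $\beta$ with $Z_\lambda=Z_\beta=Z_{\alpha'-2}$; together with the fact that $O^p(R_\alpha)$ centralizes $V_\alpha^{(2)}$, \cref{SimExt} with $i=2$ then gives $V_\beta=V_\lambda$. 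Combining this with the abelianness of $V_\beta^{(3)}$ (which holds since $b>6$) and the corresponding normalizer statement at $\alpha'-1$ (whose decomposition $Z_{\alpha'-1}=Z_{\alpha'}\times Z_\beta$ is completely analogous to the decomposition of $Z_\alpha$), one iterates the coincidence along the critical path to force $V_\beta=V_{\alpha'-2}$, or, equivalently, the existence of a neighbour of $\beta$ at distance $b-3\ge 4$ in the tree, which is absurd.

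The main obstacle, and the only delicate point, is making the propagation rigorous: although $\beta$ and $\alpha'-2$ sit at graph distance $b-3>2$, one never applies \cref{SimExt} directly to them. Instead one must chain a sequence of \cref{SimExt}-type applications at $\alpha, \alpha+2,\dots,\alpha'-1$, each time arranging that the required centralizer condition on $O^p(R_\delta)$ is inherited from the centralizer action of $O^p(R_\alpha)$ on $V_\alpha^{(2)}$. I expect this to go through because the natural-module structure on $Z_\alpha$ and $V_\beta/Z_\beta$ gives two-transitive actions at every vertex of the critical path, leaving enough room to supply the matching neighbour $\lambda$ at each step, but this chaining is the technical heart of the argument.
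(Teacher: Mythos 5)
Your first paragraph is correct and mirrors the paper's setup: applying \cref{VA1} and noting that option (i) is exactly the claim forces option (ii), which gives $R=Z_\beta=Z_{\alpha'-2}$, $Z_{\alpha'-1}=Z_{\alpha'}\times Z_\beta$, $|V_\beta|=q^3$ and $V_\alpha^{(2)}/Z_\alpha$ an FF-module; together with \cref{GoodAction1} this yields that $O^p(R_\alpha)$ centralizes $V_\alpha^{(2)}$. The difficulty starts with the second and third paragraphs.

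The plan you outline does not close the argument, and at one point goes in the wrong direction. You propose to \emph{find} a neighbour $\lambda\in\Delta(\alpha)$ distinct from $\beta$ with $Z_\lambda=Z_\beta$ and feed it into \cref{SimExt}. But the existence of such a $\lambda$ is exactly the $Z_{\delta-1}=Z_{\delta+1}$ \emph{hypothesis} of \cref{SimExt}; it is not supplied by ``the orbit structure of $R_\alpha$ on $\Delta(\alpha)$''. Indeed, if $\bar{R_\alpha}$ acts trivially on $\Delta(\alpha)$ (for example when $R_\alpha=Q_\alpha$, which is perfectly consistent with what has been established so far), the proof of \cref{VBGood} shows that $\lambda\mapsto Z_\lambda$ is injective on $\Delta(\alpha)$, so no such $\lambda\neq\beta$ exists and your argument does not start. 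What the paper does instead is pick $\alpha-1\in\Delta(\alpha)$ with $V_{\alpha-1}\not\le Q_{\alpha'-1}$ (which exists because $V_\alpha^{(2)}\not\le Q_{\alpha'-1}$ while $V_\beta\le Q_{\alpha'-1}$). In the sub-case $Z_{\alpha'-1}\le Q_{\alpha-1}$ one \emph{deduces} $Z_{\alpha-1}=Z_\beta$, and only then does \cref{SimExt} apply, giving $V_{\alpha-1}=V_\beta\le Q_{\alpha'-1}$, contradicting the choice of $\alpha-1$. The complementary and dominant case, where $(\alpha'-1,\alpha-1)$ is a critical pair and $Z_{\alpha-1}\ne Z_\beta$, is exactly what you have not addressed.

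That remaining case is the bulk of the paper's proof and it is not a clean iteration of \cref{SimExt}. It branches on whether $Z_{\alpha'-3}=Z_{\alpha'-5}$, then on whether $V_{\alpha-1}^{(3)}\le Q_{\alpha'-4}$, produces auxiliary critical pairs $(\alpha-4,\alpha'-4)$, analyses $[V_{\alpha'-4},V_{\alpha-3}]$, and in a later branch establishes that $V_\alpha^{(4)}/V_\alpha^{(2)}$ contains a unique non-central chief factor so that \cref{GoodAction2} applies, before a further split into $b=7$, $b=9$ and $b>9$. None of this appears in your sketch. Your proposed contradiction (``$V_\beta=V_{\alpha'-2}$, or, equivalently, the existence of a neighbour of $\beta$ at distance $b-3\ge 4$ in the tree, which is absurd'') is also not right as stated: an equality of subgroups $V_\beta=V_{\alpha'-2}$ does not encode a graph-distance statement, and the contradictions the paper actually reaches are of a different kind (e.g.\ $Z_\alpha\le Q_{\alpha'}$, or $O^p(L_\alpha)$ centralizing a quotient it must act non-trivially on). Since you yourself flag the ``chaining'' as the technical heart of the argument and leave it open, the proof as written has a genuine gap precisely where the real work lies.
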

\begin{proof}
Suppose for a contradiction that $V_{\alpha}^{(2)}\not\le Q_{\alpha'-1}$. Then, as $R\le Z_{\alpha'-1}$, we conclude that $R=Z_{\alpha'-2}$. Let $\alpha-1\in\Delta(\alpha)$ such that $V_{\alpha-1}\not\le Q_{\alpha'-1}$. If $Z_{\alpha'-1}\le Q_{\alpha-1}$, then $Z_{\alpha'-2}=[V_{\alpha-1}, Z_{\alpha'-1}]=Z_{\alpha-1}$ from which it follows that $Z_{\alpha-1}=Z_{\beta}$. Then, recalling that $O^p(R_{\alpha})$ centralizes $V_{\alpha}^{(2)}$ by \cref{VA1}, by \cref{SimExt} we have that $V_{\alpha-1}=V_{\beta}\le Q_{\alpha'-1}$, a contradiction. Thus, $(\alpha'-1, \alpha-1)$ is a critical pair and $Z_{\alpha-1}\ne Z_{\beta}$. 

Note that if $Z_{\alpha'-2}=Z_{\alpha'-4}$, then $Z_{\alpha'-1}\le V_{\alpha'-2}=V_{\alpha'-4}$ is centralized by $V_{\alpha}^{(2)}$, a contradiction. Thus, $Z_{\alpha'-3}$ is centralized by $V_{\alpha}^{(4)}$ and either $Z_{\alpha'-3}=Z_{\alpha'-5}$ or $V_{\alpha'-4}=Z_{\alpha'-3}Z_{\alpha'-5}$ and $V_{\alpha}^{(4)}\le Q_{\alpha'-3}$. Assume that $Z_{\alpha'-3}=Z_{\alpha'-5}$. Notice that $Z_{\alpha'-1}\le V_{\alpha'-3}^{(2)}$ and $[V_{\alpha}^{(2)}, V_{\alpha'-5}^{(2)}]=\{1\}$, and so by \cref{SimExt} and \cref{GoodAction3}, there is not a unique non-central chief factor within $V_{\alpha-1}^{(3)}/V_{\alpha-1}$ which is an FF-module. Suppose that $V_{\alpha-1}^{(3)}\le Q_{\alpha'-4}$. Then $V_{\alpha-1}^{(3)}\le Q_{\alpha'-3}$ and $V_{\alpha-1}^{(3)}\cap Q_{\alpha'-2}=V_{\alpha-1}(V_{\alpha-1}^{(3)}\cap Q_{\alpha'-1})$ has index $q$ in $V_{\alpha-1}^{(3)}$ and is centralized, modulo $V_{\alpha-1}$, by $Z_{\alpha'-1}$, yielding a contradiction. Thus, there is $\alpha-4\in\Delta^{(3)}(\alpha-1)$ such that $(\alpha-4, \alpha'-4)$ is a critical pair. Then $\{1\}\ne [V_{\alpha'-4}, V_{\alpha-3}]\le Z_{\alpha-2}\cap Z_{\alpha'-5}$. If $[V_{\alpha'-4}, V_{\alpha-3}]\ne Z_{\alpha-1}$ then, as $b>5$, $Z_{\alpha-1}<Z_{\alpha-1}[V_{\alpha'-4}, V_{\alpha-3}]\le Z_{\alpha-2}\cap Q_{\alpha'-1}$, a contradiction. Thus, again as $b>5$, $Z_{\alpha}=[V_{\alpha'-4}, V_{\alpha-3}]\times Z_{\beta}\le Q_{\alpha'}$, another contradiction.

Hence, $Z_{\alpha'-3}\ne Z_{\alpha'-5}$ and $V_{\alpha}^{(4)}\le Q_{\alpha'-3}$. It follows that $Z_{\alpha'-2}\le [V_{\alpha}^{(4)}, V_{\alpha'-2}]\le Z_{\alpha'-3}$. If $Z_{\alpha'-2}=[V_{\alpha}^{(4)}, V_{\alpha'-2}]$, then $V_{\alpha}^{(4)}=V_{\alpha}^{(2)}(V_{\alpha}^{(4)}\cap Q_{\alpha'})$ and since $Z_{\alpha'}\not\le V_{\alpha}^{(4)}$, otherwise $V_{\alpha}^{(2)}$ centralizes $Z_{\alpha'-1}=Z_{\alpha'}\times R$, it follows that $V_{\alpha'}$ centralizes $V_{\alpha}^{(4)}/V_{\alpha}^{(2)}$, a contradiction. Thus, $[V_{\alpha}^{(4)}, V_{\alpha'-2}]=Z_{\alpha'-3}$. Since $V_{\alpha}^{(4)}\cap Q_{\alpha'-2}=V_{\alpha}^{(2)}(V_{\alpha}^{(4)}\cap Q_{\alpha'})$, we have that $V_{\alpha}^{(4)}/V_{\alpha}^{(2)}$ contains a unique non-central chief factor and by \cref{GoodAction2}, $O^p(R_{\alpha})$ centralizes $V_{\alpha}^{(4)}$. Furthermore, since $V_{\alpha-1}^{(3)}\not\le Q_{\alpha'-2}$, otherwise $Z_{\alpha'-1}$ centralizes $V_{\alpha-1}^{(3)}/V_{\alpha-1}$, we may suppose that $Z_{\alpha'-3}=[V_{\alpha-1}^{(3)}, V_{\alpha'-2}]$.
 
Suppose first that $b>9$. Then, $V_{\alpha}^{(6)}$ centralizes $Z_{\alpha'-3}\le V_{\alpha-1}^{(3)}$ and so centralizes $Z_{\alpha'-4}Z_{\alpha'-6}$. If $Z_{\alpha'-4}=Z_{\alpha'-6}$, then by \cref{SimExt} we have that $Z_{\alpha'-1}\le V_{\alpha'-4}^{(3)}=V_{\alpha'-6}^{(3)}$ is centralized by $V_{\alpha}^{(2)}$, a contradiction. Thus, $V_{\alpha}^{(6)}$ centralizes $Z_{\alpha'-5}$ and so either $Z_{\alpha'-5}=Z_{\alpha'-7}$ or $V_{\alpha}^{(6)}$ centralizes $Z_{\alpha'-3}Z_{\alpha'-5}Z_{\alpha'-7}=V_{\alpha'-6}V_{\alpha'-4}$. In the latter case, $V_{\alpha}^{(6)}=V_{\alpha}^{(4)}(V_{\alpha}^{(6)}\cap Q_{\alpha'-2})$ and since $Z_{\alpha'}\not\le V_{\alpha}^{(6)}$, we conclude that $O^p(L_{\alpha})$ centralizes $V_{\alpha}^{(6)}/V_{\alpha}^{(4)}$, a contradiction. Thus, $Z_{\alpha'-5}=Z_{\alpha'-7}$ and as $Z_{\alpha'-1}\le V_{\alpha'-5}^{(4)}$ and $V_{\alpha}^{(2)}$ centralizes $V_{\alpha'-7}^{(4)}$, by \cref{SimExt}, \cref{GoodAction3} and \cref{GoodAction4}, we need only show that both $V_{\beta}^{(5)}/V_{\beta}^{(3)}$ and $V_{\beta}^{(3)}/V_{\beta}$ contain a unique non-central chief factor which is an FF-module for $\bar{L_{\beta}}$. We may prove it for any $\lambda\in\beta^G$ and, following the steps in an earlier part of this proof, we infer that $V_{\beta}^{(3)}/V_{\beta}$ satisfies the required condition. By the steps above, $V_{\alpha-1}^{(3)}\not\le Q_{\alpha'-2}$. Then, as $V_{\alpha'-4}=Z_{\alpha'-3}Z_{\alpha'-7}$ is centralized by $V_{\alpha-1}^{(5)}$, $V_{\alpha-1}^{(5)}\cap Q_{\alpha'-6}=V_{\alpha-1}^{(3)}(V_{\alpha-1}^{(3)}\cap Q_{\alpha'-2})$ and since $V_{\alpha'-2}\not\le Q_{\alpha-1}$ and $Z_{\alpha'-2}\le V_{\alpha-1}^{(3)}$, $V_{\alpha-1}^{(5)}/V_{\alpha-1}^{(3)}$ contains a unique non-central chief factor and satisfies the required conditions. This provides the contradiction.

Suppose that $b=7$. Then $C_{Q_{\alpha}}(V_{\alpha}^{(4)})\le Q_{\alpha+4}=Q_{\alpha'-3}$. Thus, $V_{\alpha}^{(4)}C_{Q_{\alpha}}(V_{\alpha}^{(4)})=V_{\alpha}^{(4)}(V_{\alpha}^{(4)}C_{Q_{\alpha}}(V_{\alpha}^{(4)})\cap Q_{\alpha'})$ and since $Z_{\alpha'}\not\le C_{Q_{\alpha}}(V_{\alpha}^{(2)})\ge C_{Q_{\alpha}}(V_{\alpha}^{(4)})$, $O^p(L_{\alpha})$ centralizes $V_{\alpha}^{(4)}C_{Q_{\alpha}}(V_{\alpha}^{(4)})/V_{\alpha}^{(4)}$. Then for $r\in O^p(R_{\alpha})$ of order coprime to $p$, $[r, Q_{\alpha}, V_{\alpha}^{(4)}]=\{1\}$ by the three subgroup lemma and so $[Q_{\alpha}, r]=[Q_{\alpha}, r,r,r]\le [C_{Q_{\alpha}}(V_{\alpha}^{(4)}), r, r]\le [V_{\alpha}^{(4)}, r]=\{1\}$ so that $R_{\alpha}=Q_{\alpha}$ and $\bar{L_{\alpha}}\cong\SL_2(q)$. We may assume that $V_{\alpha-1}^{(3)}\le Q_{\alpha'-4}$, $V_{\alpha-1}^{(3)}\not\le Q_{\alpha'-2}$ and $O^p(R_{\alpha-1})$ centralizes $V_{\alpha-1}^{(3)}$. Moreover, $Z_{\alpha'-3}=[V_{\alpha'-2}, V_{\alpha-1}^{(3)}]\le V_{\alpha-1}^{(3)}$ and so $Z_{\alpha'-3}$ is centralized by $C_{Q_{\alpha-1}}(V_{\alpha-1}^{(3)})$. Since $Z_{\alpha'-3}\ne Z_{\alpha+2}$, otherwise by \cref{SimExt}, $Z_{\alpha}\le V_{\alpha+2}^{(2)}=V_{\alpha'-3}^{(2)}\le Q_{\alpha'}$, we have that $C_{Q_{\alpha-1}}(V_{\alpha-1}^{(3)})$ centralizes $V_{\alpha+3}$. It follows that $C_{Q_{\alpha-1}}(V_{\alpha-1}^{(3)})=V_{\alpha-1}^{(3)}(C_{Q_{\alpha-1}}(V_{\alpha-1}^{(3)})\cap Q_{\alpha'-2})$ and so $O^p(L_{\alpha-1})$ centralizes $C_{Q_{\alpha-1}}(V_{\alpha-1}^{(3)})/V_{\alpha-1}^{(3)}$. Now, letting $r\in O^p(R_{\alpha-1})$ of order coprime to $p$, $[r, Q_{\alpha-1}, V_{\alpha-1}^{(3)}]=\{1\}$ by the three subgroup lemma and $[Q_{\alpha-1}, r]=[Q_{\alpha-1}, r,r,r]=[C_{Q_{\alpha-1}}(V_{\alpha-1}^{(3)}), r,r]=[V_{\alpha-1}^{(3)}, r]=\{1\}$ so that $R_{\alpha-1}=Q_{\alpha-1}$ and $\bar{L_{\alpha-1}}\cong\SL_2(q)$. Thus, $G$ has a weak BN-pair of rank $2$ and by \cite{Greenbook}, no examples exist.

Suppose that $b=9$. Then $C_{Q_{\alpha}}(V_{\alpha}^{(4)})\le Q_{\alpha+4}=Q_{\alpha'-5}$. Moreover, $Z_{\alpha'-5}\ne Z_{\alpha'-3}\le V_{\alpha}^{(4)}$ so that $C_{Q_{\alpha}}(V_{\alpha}^{(4)})\le Q_{\alpha'-3}$ and $C_{Q_{\alpha}}(V_{\alpha}^{(4)})=V_{\alpha}^{(4)}(C_{Q_{\alpha}}(V_{\alpha}^{(4)})\cap Q_{\alpha'-2})$ and it follows that $O^p(L_{\alpha})$ centralizes $C_{Q_{\alpha}}(V_{\alpha}^{(4)})/V_{\alpha}^{(4)}$. As in the $b=7$ case, we get that $\bar{L_{\alpha}}\cong\SL_2(q)$. Since $Z_{\alpha'-3}\le V_{\alpha-1}^{(3)}$, $Z_{\alpha'-4}$ is centralized by $C_{Q_{\alpha-1}}(V_{\alpha-1}^{(3)})$ and $Z_{\alpha'-6}=Z_{\alpha+3}$ is centralized by $C_{Q_{\alpha-1}}(V_{\alpha-1}^{(3)})$ from which it follows that $C_{Q_{\alpha-1}}(V_{\alpha-1}^{(3)})$ centralizes $Z_{\alpha+4}=Z_{\alpha'-5}$. Continuing as above, we see that $C_{Q_{\alpha-1}}(V_{\alpha-1}^{(3)})=V_{\alpha-1}^{(3)}(C_{Q_{\alpha-1}}(V_{\alpha-1}^{(3)})\cap Q_{\alpha'-2})$ and $O^p(L_{\alpha-1})$ centralizes $C_{Q_{\alpha-1}}(V_{\alpha-1}^{(3)})/V_{\alpha-1}^{(3)}$ and an application of the three subgroup lemma and coprime action yields that $\bar{L_{\alpha-1}}\cong\SL_2(q)$ and $G$ has a weak BN-pair of rank $2$. By \cite{Greenbook}, no examples exist and the proof is complete.
\end{proof}

\begin{lemma}\label{VA4NQ}
Suppose that $C_{V_\beta}(V_{\alpha'})=V_\beta \cap Q_{\alpha'}$ and $b>5$. If $V_{\alpha'}\le Q_{\beta}$ then $V_{\alpha}^{(4)}\not\le Q_{\alpha'-4}$.
\end{lemma}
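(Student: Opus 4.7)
\textbf{Proof plan for \cref{VA4NQ}.} I proceed by contradiction and assume $V_{\alpha}^{(4)}\le Q_{\alpha'-4}$. The plan is to leverage the structural information already established for the critical pair $(\alpha,\alpha')$ in order to force enough control on the $L_{\alpha}$-chief factors in $V_{\alpha}^{(4)}/V_{\alpha}^{(2)}$ to apply \cref{GoodAction2}, then play that off against \cref{SimExt} and equalities $Z_\lambda=Z_\mu$ to generate a weak BN-pair of rank $2$ that is excluded by \cite{Greenbook}. Specifically, I first record the consequences of the current hypothesis: by \cref{VBp3} we have $|V_{\beta}|=q^3$, by \cref{VA41} we have $V_{\alpha}^{(2)}\le Q_{\alpha'-1}$, and then \cref{VA1}(i) yields $Z_{\alpha'}\le V_{\alpha}^{(2)}$, $R=Z_{\beta}\le Z_{\alpha'-1}$, $V_{\alpha}^{(2)}/Z_{\alpha}$ is an FF-module for $\bar{L_{\alpha}}$, and (via \cref{GoodAction1}) $O^{p}(R_{\alpha})$ centralizes $V_{\alpha}^{(2)}$.

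Next I analyze how $V_{\alpha}^{(4)}$ sits relative to $Q_{\alpha'-3}$ and $Q_{\alpha'-2}$, mirroring the bookkeeping at the start of \cref{VA41}. Under the assumption $V_{\alpha}^{(4)}\le Q_{\alpha'-4}$, I split on whether $V_{\alpha}^{(4)}\le Q_{\alpha'-3}$; if so, I compute $V_{\alpha}^{(4)}\cap Q_{\alpha'-2}=V_{\alpha}^{(2)}(V_{\alpha}^{(4)}\cap Q_{\alpha'})$ and use that $Z_{\alpha'}\le V_{\alpha}^{(2)}$ to conclude $V_{\alpha}^{(4)}/V_{\alpha}^{(2)}$ has a unique non-central $L_{\alpha}$-chief factor and that this factor is an FF-module for $\bar{L_{\alpha}}$; otherwise the commutator $Z_{\alpha'-3}=[V_{\alpha}^{(4)},V_{\alpha'-3}]\le V_{\alpha}^{(2)}$ would force $Z_{\alpha'-3}=Z_{\alpha'-2}$ or $Z_{\alpha'-3}=Z_{\alpha'-1}$, and by iterating back through the critical path (using \cref{GoodCritPair} to shift if necessary) I can arrange both paths to feed \cref{GoodAction2}, yielding that $O^{p}(R_{\alpha})$ centralizes $V_{\alpha}^{(4)}$.

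With $V_{\alpha}^{(4)}$ centralized by $O^{p}(R_{\alpha})$, I apply \cref{SimExt} to whichever equality $Z_{\alpha'-1-2k}=Z_{\alpha'-3-2k}$ appears in the chain, exactly as in \cref{VA41}; combined with a coprime-action/three-subgroups-lemma argument on $C_{Q_{\alpha}}(V_{\alpha}^{(4)})\le Q_{\alpha+4}\cap Q_{\alpha'-3}$, this will force $R_{\alpha}=Q_{\alpha}$ and hence $\bar{L_{\alpha}}\cong \SL_{2}(q)$. The symmetric analysis for $V_{\alpha-1}^{(3)}$ or $V_{\alpha-1}^{(4)}$, together with \cref{GoodAction3} and \cref{GoodAction4} applied one level up, will force $R_{\alpha-1}=Q_{\alpha-1}$ and $\bar{L_{\alpha-1}}\cong\SL_{2}(q)$, so that $G$ has a weak BN-pair of rank $2$ with critical distance $b>5$, contradicting \cref{greenbook} (the only weak BN-pair with $b>3$ appearing in that list is the $\mathrm{F}_{3}$-type amalgam, for which $b=5$). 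The main obstacle is the delicate case analysis needed to rule out the intermediate possibility $V_{\alpha}^{(4)}\not\le Q_{\alpha'-3}$ without generating $|V_{\beta}|>q^{3}$ or collapsing $Z_{\alpha'-1}$ into $V_{\beta}$; this requires careful tracking of which $Z_\lambda$ coincide along the critical path and of the commutator $[V_{\alpha}^{(4)},V_{\alpha'-3}]$ inside $V_{\alpha}^{(2)}$, parallel to the bookkeeping carried out in \cref{VA41} and \cref{Vna-2}.
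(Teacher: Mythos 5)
Your opening reductions (invoking \cref{VBp3}, \cref{VA41}, \cref{VA1}, and \cref{GoodAction1} to get $|V_{\beta}|=q^3$, $V_{\alpha}^{(2)}\le Q_{\alpha'-1}$, $Z_{\alpha'}\le V_{\alpha}^{(2)}$, and $O^p(R_{\alpha})$ centralizing $V_{\alpha}^{(2)}$) match the paper exactly, and the general plan of driving towards \cref{GoodAction2} and then \cref{SimExt} is the right shape. However, there is a genuine gap in the endgame. You propose that once $O^p(R_{\alpha})$ centralizes $V_{\alpha}^{(4)}$, a three-subgroups-lemma argument on $C_{Q_{\alpha}}(V_{\alpha}^{(4)})\le Q_{\alpha+4}\cap Q_{\alpha'-3}$ forces $R_{\alpha}=Q_{\alpha}$, and then a symmetric argument forces $R_{\alpha-1}=Q_{\alpha-1}$, yielding a weak BN-pair that is excluded by \cite{Greenbook}. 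The inclusion $C_{Q_{\alpha}}(V_{\alpha}^{(4)})\le Q_{\alpha+4}$ does hold in general (iteratively via \cref{p-closure2}(ii) and \cref{BasicVB}), but $C_{Q_{\alpha}}(V_{\alpha}^{(4)})\le Q_{\alpha'-3}$ is only automatic when $b=7$ (where $\alpha+4=\alpha'-3$) and requires extra work already when $b=9$ (where $\alpha+4=\alpha'-5$ and you need $Z_{\alpha'-3}\ne Z_{\alpha'-5}$ to push further). For $b>9$ you give no mechanism to bridge $\alpha+4$ to $\alpha'-3$, so the three-subgroups-lemma step does not launch. Moreover, the step from $O^p(R_{\alpha-1})$ centralizing $V_{\alpha-1}^{(3)}$ to $R_{\alpha-1}=Q_{\alpha-1}$ is not routine and needs its own positioning argument, which you omit.

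More fundamentally, you have misidentified where the real difficulty lies. You flag the case $V_{\alpha}^{(4)}\not\le Q_{\alpha'-3}$ as the main obstacle, but the paper's case division is on $Z_{\alpha'-1}=Z_{\alpha'-3}$ versus $Z_{\alpha'-1}\ne Z_{\alpha'-3}$, and in the latter case the whole lemma is dispatched in a few lines exactly as you expect (\cref{GoodAction2} plus \cref{SimExt} applied to $Z_{\alpha'-2}=Z_{\alpha'-4}$). The hard case is $Z_{\alpha'-1}=Z_{\alpha'-3}$: there \cref{GoodAction2} still applies, but \cref{SimExt} cannot be applied at $\alpha'-2$ because one must first establish that $O^p(R_{\beta})$ does \emph{not} centralize $V_{\beta}^{(3)}$ (and hence $R=Z_{\beta}$), which blocks the quick contradiction. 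The paper's treatment of $b>7$ in this case is a substantial combinatorial argument tracking which $Z_{\lambda}$ coincide along the critical path and analyzing subgroups like $W^{\alpha'-2}$ and $V_{\alpha'-2}^{(3)}$, eventually contradicting the choice of $\alpha-4$; it does not run through $R_{\alpha}=Q_{\alpha}$ at all. Your plan never engages with the scenario where $O^p(R_{\beta})$ fails to centralize $V_{\beta}^{(3)}$, so the proposal is not a complete proof even in outline.
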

\begin{proof}
By \cref{VA41}, we may suppose that $V_{\alpha}^{(2)}\le Q_{\alpha'-1}$. Note that by \cref{VA1}, $Z_{\alpha'-1}=Z_{\alpha'}\times Z_{\beta}\le V_{\alpha}^{(2)}\le Z(V_{\alpha}^{(4)})$. Suppose that $V_{\alpha}^{(4)}\le Q_{\alpha'-4}$ throughout. If $Z_{\alpha'-1}\ne Z_{\alpha'-3}$, then $V_{\alpha}^{(4)}\cap Q_{\alpha'-3}=V_{\alpha}^{(2)}(V_{\alpha}^{(4)}\cap Q_{\alpha'})$ and since $Z_{\alpha'}\le V_{\alpha}^{(2)}$, $V_{\alpha}^{(4)}$ does not centralize $Z_{\alpha'-3}$. But $Z_{\alpha'-2}\le Z_{\alpha'-1}$ so that $Z_{\alpha'-2}Z_{\alpha'-4}$ is centralized by $V_{\alpha}^{(4)}$ and $Z_{\alpha'-2}=Z_{\alpha'-4}$. Now, both $V_{\alpha}^{(4)}/V_{\alpha}^{(2)}$ and $V_{\alpha}^{(2)}/Z_{\alpha}$ contain unique non-central chief factors and by \cref{GoodAction1} and \cref{GoodAction2}, we deduce that $O^p(R_{\alpha})$ centralizes $V_{\alpha}^{(4)}$. Therefore, applying \cref{SimExt} to $Z_{\alpha'-2}=Z_{\alpha'-4}$, we conclude that $V_{\alpha'}\le V_{\alpha'-2}^{(3)}=V_{\alpha'-4}^{(3)}$ is centralized by $Z_{\alpha}$, a contradiction.

Thus, $Z_{\alpha'-1}=Z_{\alpha'-3}$ and $V_{\alpha}^{(4)}\not\le Q_{\alpha'-2}$. In particular, it follows again by \cref{GoodAction2} that $O^p(R_{\alpha})$ centralizes $V_{\alpha}^{(4)}$ and so, similarly to the above, $Z_{\alpha'-2}\ne Z_{\alpha'-4}$. Moreover, by \cref{SimExt}, since $V_{\alpha'}\le V_{\alpha'-1}^{(2)}$ and $V_{\alpha'-3}^{(2)}\le Q_{\alpha}$, $O^p(R_{\beta})$ does not centralize $V_{\beta}^{(3)}$. In particular, $Z_{\alpha'-1}\cap Z_{\alpha+2}=R=Z_\beta$ for otherwise $V_{\alpha'}^{(3)}\cap Q_{\alpha+3}\le Q_{\alpha+2}$, $[V_{\alpha'-3}^{(3)}\cap Q_{\alpha+3}, V_{\beta}]\le Z_{\alpha+2}=Z_{\alpha'-1}\le V_{\alpha'}$ and $V_{\alpha'}^{(3)}/V_{\alpha'}$ contains a unique non-central chief factor which is an FF-module, and we would have a contradiction by \cref{GoodAction3}.

Suppose first that $b=7$. Then $Z_{\beta}Z_{\alpha+3}\le Z_{\alpha+2}\cap Z_{\alpha'-3}$ and so either $Z_{\beta}=Z_{\alpha+3}$ or $Z_{\alpha'-1}=Z_{\alpha'-3}=Z_{\alpha+2}$. The latter case yields an immediate contradiction, while in the former case, \cref{SimExt} implies that $V_{\beta}=V_{\alpha+3}\le Q_{\alpha'}$, another contradiction. Thus, we may assume $b>7$ throughout.

Assume that for $\alpha-4\in\Delta^{(4)}(\alpha)$, whenever $Z_{\alpha-4}\not\le Q_{\alpha'-2}$ we conclude that $Z_{\beta}=Z_{\alpha-1}$. Choose $\delta\in\Delta(\alpha)$ such that $Z_{\delta}\ne Z_{\beta}$ so that $V_{\delta}^{(3)}\le Q_{\alpha'-2}$. Moreover, $V_{\delta}^{(3)}$ centralizes $Z_{\alpha'-1}\le V_{\alpha}^{(2)}$ and $[V_{\delta}^{(3)}, V_{\alpha'}]=[V_{\alpha}^{(2)}, V_{\alpha'}][V_{\delta}^{(3)}\cap Q_{\alpha'}, V_{\alpha'}]\le V_{\alpha}^{(2)}$. Thus, $V_{\delta}^{(3)}\normaleq L_{\alpha}=\langle V_{\alpha'}, R_{\alpha}, Q_{\delta}\rangle$, a contradiction. Thus, we may assume that there exists $\alpha-4\in\Delta^{(4)}(\alpha)$ with $Z_{\alpha-4}\not\le Q_{\alpha'-2}$ and $Z_{\beta}\ne Z_{\alpha-1}$.

Suppose that $V_{\alpha'-2}\not\le Q_{\alpha-1}$. Since $V_{\alpha}^{(2)}\le Q_{\alpha'-2}$, it follows that $Z_{\alpha'-2}=[V_{\alpha}^{(2)}, V_{\alpha'-2}]=Z_{\beta}$. Moreover, there is $\lambda\in\Delta(\alpha'-2)$ such that $(\lambda, \alpha-1)$ is a critical pair with $V_{\alpha-1}\le Q_{\alpha'-2}$. If $V_{\lambda}^{(2)}\le Q_{\beta}$, then by \cref{VA41} $V_{\lambda}^{(2)}\le Q_{\alpha}$ and $Z_{\alpha}\le V_{\lambda}^{(2)}$, a contradiction since $b>5$. Thus, $V_{\lambda}^{(2)}\not\le Q_{\beta}$ and $(\lambda+2, \beta)$ is also a critical pair. Moreover, $\{1\}\ne [V_{\beta}, V_{\lambda+1}]\le Z_{\alpha+2}\cap Z_{\lambda}$. Since $Z_{\lambda}\not\le Q_{\alpha-1}$ and $Z_{\alpha'-2}\le V_{\alpha}^{(2)}$, it follows that $[V_{\beta}, V_{\lambda+1}]=Z_{\alpha'-2}=Z_{\beta}$. But then $V_{\lambda+1}\le Q_{\beta}$, a contradiction. Thus, $V_{\alpha'-2}\le Q_{\alpha-1}$ and $[V_{\alpha'-2}, V_{\alpha-1}]=\{1\}$, otherwise $Z_{\alpha-1}=[V_{\alpha'-2}, V_{\alpha-1}]=Z_{\alpha'-2}$ and since $Z_{\alpha}\not\le V_{\alpha'-2}$, $Z_{\alpha-1}=Z_{\beta}$, a contradiction. Therefore, $V_{\alpha'-2}\le Q_{\alpha-2}$. 

Suppose that $[V_{\alpha'-2}, V_{\alpha-3}]=Z_{\beta}$ so that $Z_{\alpha'-2}\cap Z_{\beta}=\{1\}$. As $Z_{\beta}\le Z_{\alpha-2}$ and $Z_{\beta}\cap Z_{\alpha-1}=\{1\}$, $Z_{\alpha}=Z_{\alpha-2}$. Immediately, we have that $[V_{\alpha}^{(2)}, V_{\alpha'-2}]\le Z_{\alpha'-2}\cap Z_{\alpha}=\{1\}$ so that $V_{\alpha}^{(2)}\le C_{\alpha'-2}$. 

Choose $\lambda\in\Delta(\alpha'-2)$ such that $Z_\lambda\ne Z_{\alpha'-1}$ and set $W^{\alpha'-2}:=\langle V_{\delta}^{(2)}\mid Z_{\delta}=Z_{\lambda}, \delta\in\Delta(\alpha'-2)\rangle$. Then, for $\delta\in\Delta(\alpha'-2)$ with $Z_{\delta}=Z_{\lambda}$, since $V_{\alpha}^{(2)}\le C_{\alpha'-2}$, we have that $[V_\beta, V_{\delta}^{(2)}]\le Z_{\delta}\cap Z_{\alpha+2}$. Since $Z_{\alpha+2}\le Z(V_{\alpha}^{(4)})$, $Z_{\delta}\cap Z_{\alpha+2}\le Z_{\alpha'-2}$, otherwise $V_{\alpha}^{(4)}$ centralizes $V_{\alpha'-2}=Z_{\delta}Z_{\alpha'-1}$. But now $[V_\beta, V_{\delta}^{(2)}]$ is either trivial or equal to $Z_{\beta}$ and since $Z_{\alpha'-2}\ne Z_{\beta}$, we have that $[V_\beta, V_{\delta}^{(2)}]=\{1\}$. Now, $[V_{\alpha}^{(2)}, V_{\lambda}^{(2)}]\le Z_{\lambda}\cap Z_{\alpha}$ and for a similar reason as before, $[V_{\alpha}^{(2)}, V_{\lambda}^{(2)}]=\{1\}$. It follows that $W^{\alpha'-2}\le Q_{\alpha-2}$ and $Z_{\beta}\le [W^{\alpha'-2}, V_{\alpha-3}]\le Z_{\alpha-2}=Z_{\alpha}$. Since $Z_{\alpha}\not\le V_{\alpha'-2}^{(3)}$, we have that $[W^{\alpha'-2}, V_{\alpha-3}]=Z_{\beta}\le V_{\alpha'-2}$ and $V_{\alpha-3}$ centralizes $W^{\alpha'-2}/V_{\alpha'-2}$. But now, by \cref{UWNormal}, $W^{\alpha'-2}\normaleq L_{\alpha'-2}=\langle V_{\alpha-3}, R_{\alpha'-2}, Q_{\lambda}\rangle$. Since $V_{\alpha-3}$ centralizes $W^{\alpha'-2}/V_{\alpha'-2}$, it follows that $V_{\lambda}^{(2)}\normaleq L_{\alpha'-2}$, a contradiction.

Suppose now that $Z_{\beta}\ne [V_{\alpha'-2}, V_{\alpha-3}]\le Z_{\alpha-2}\cap Z_{\alpha'-3}$. Then $Z_{\alpha}\ne Z_{\alpha-2}$, else $Z_{\beta}<Z_{\beta}[V_{\alpha'-2}, V_{\alpha-3}]\le Z_{\alpha}\cap Z_{\alpha'-3}$, an obvious contradiction. Still, $Z_{\alpha}<Z_{\alpha}[V_{\alpha'-2}, V_{\alpha-3}]\le V_{\alpha-1}\cap Z_{\alpha}Z_{\alpha'-1}$.As $V_{\beta}\le C_{\alpha'-2}$, it follows that $Z_{\beta}\le [V_{\beta}, V_{\alpha'-2}^{(3)}]\le Z_{\alpha+2}\cap V_{\alpha'-2}$. Since $Z_{\alpha+2}\cap Z_{\alpha'-1}=Z_{\beta}$, $Z_{\alpha+2}\cap V_{\alpha'-2}=Z_{\beta}$, otherwise $Z_{\alpha'-1}<Z_{\alpha'-1}(Z_{\alpha+2}\cap Z_{\alpha+2}\le V_{\alpha}^{(2)}$ would be centralized by $V_{\alpha}^{(4)}$. Thus, $[V_{\beta}, V_{\alpha'-2}^{(3)}]=Z_{\beta}$ and $V_{\alpha'-2}^{(3)}\le Q_{\beta}$.

Then $V_{\alpha'-2}^{(3)}\cap Q_{\alpha}$ centralizes $Z_{\alpha}[V_{\alpha'-2}, V_{\alpha-3}]\le V_{\alpha-1}$ and so $V_{\alpha'-2}^{(3)}\cap Q_{\alpha}\le Q_{\alpha-2}$. Then $[V_{\alpha'-2}, V_{\alpha-3}]\le [V_{\alpha'-2}^{(3)}\cap Q_{\alpha}, V_{\alpha-3}]\le Z_{\alpha-2}$.  If $[V_{\alpha'-2}^{(3)}\cap Q_{\alpha}, V_{\alpha-3}]=[V_{\alpha'-2}, V_{\alpha-3}]$, then $V_{\alpha'-2}^{(3)}/V_{\alpha'-2}$ contains a unique non-central chief factor which is an FF-module. By \cref{GoodAction3}, $O^p(R_{\alpha'-2})$ centralizes $V_{\alpha'-2}^{(3)}$ and \cref{SimExt} applied to $Z_{\alpha'-1}=Z_{\alpha'-3}$ implies that $V_{\alpha'}\le V_{\alpha'-1}^{(2)}=V_{\alpha'-3}^{(2)}\le Q_{\alpha}$, a contradiction. Thus, $Z_{\alpha-1}\le Z_{\alpha-2}\le V_{\alpha'-2}^{(3)}$ and since $b>5$, we have that $Z_{\beta}=Z_{\alpha-1}$, a final contradiction by the choice of $\alpha-4$.
\end{proof}

By \cref{VA4NQ}, whenever $b>5$ and $V_{\alpha'}\le Q_{\beta}$, we may assume that there is a critical pair $(\alpha-4, \alpha'-4)$. In the following lemma, we let $(\alpha-4, \alpha'-4)$ be such a pair and and investigate the action of $V_{\alpha'-4}$ on $V_{\alpha-3}$ and vice versa.

\begin{lemma}\label{TricAction}
Suppose that $C_{V_\beta}(V_{\alpha'})=V_\beta \cap Q_{\alpha'}$ and $b>5$. If $V_{\alpha'}\le Q_{\beta}$ then $b>7$, $Z_{\alpha}\ne Z_{\alpha-2}$, $O^p(R_{\beta})$ centralizes $V_{\beta}^{(3)}$ and setting $R^\dagger:=[V_{\alpha'-4}, V_{\alpha-3}]$, either:
\begin{enumerate}
\item $R^\dagger=Z_{\alpha-1}=Z_{\beta}$; or
\item $R^\dagger\ne Z_{\alpha-1}$.
\end{enumerate}
\end{lemma}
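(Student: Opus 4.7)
By \cref{VA41} we have $V_\alpha^{(2)}\le Q_{\alpha'-1}$, and then by \cref{VA1}(i), $Z_{\alpha'-1}=Z_\beta\times Z_{\alpha'}\le V_\alpha^{(2)}\le Z(V_\alpha^{(4)})$ and $|V_\beta|=q^3$; moreover \cref{VA4NQ} gives $\alpha-4\in\Delta^{(4)}(\alpha)$ with $(\alpha-4,\alpha'-4)$ a critical pair, so $R^\dagger=[V_{\alpha'-4},V_{\alpha-3}]$ is well-defined and, since $b>5$ makes $V_{\alpha-3}$ and $V_{\alpha'-4}$ abelian, $R^\dagger\le Z_{\alpha-3}\cap Z_{\alpha'-5}$ by the usual containment argument (both $V_{\alpha-3}\le Q_{\alpha'-4}$ and $V_{\alpha'-4}\le Q_{\alpha-3}$ hold by minimality of $b$).

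\emph{Step 1: $b>7$.} I would assume $b=7$ and derive a contradiction. Here $\alpha'-4=\alpha+3$, so the critical pair $(\alpha-4,\alpha+3)$ satisfies the hypothesis of the lemma upon relabeling. In particular I would apply \cref{VA41} and \cref{VA1}(i) to this pair, yielding $V_{\alpha-4}^{(2)}\le Q_{\alpha+2}$ with $Z_{\alpha+2}=Z_{\alpha-3}\times Z_{\alpha+3}\le V_{\alpha-4}^{(2)}$. Combined with $Z_\beta\le Z_{\alpha'-1}\le V_\alpha^{(2)}$ and the fact that $V_\alpha^{(4)}$ centralizes $Z_{\alpha'-1}$ (as it lies in $Z(V_\alpha^{(4)})$), a standard three-subgroup-lemma / commutator chase along the path forces either $Z_\beta=Z_{\alpha+3}$ or $Z_{\alpha-3}=Z_\beta$; in the first case, \cref{SimExt} applied to the pair $(\beta,\alpha+3)$ gives $V_\beta=V_{\alpha+3}\le Q_{\alpha'}$, contradicting $V_{\alpha'}\not\le Q_\beta^{?}$ via the initial critical pair setup, and in the second case a symmetric SimExt contradiction arises. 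Hence $b>7$.

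\emph{Step 2: $Z_\alpha\ne Z_{\alpha-2}$ and $O^p(R_\beta)$ centralizes $V_\beta^{(3)}$.} Suppose $Z_\alpha=Z_{\alpha-2}$. By \cref{GoodAction1}, \cref{VA1}(i) ensures that $O^p(R_\alpha)$ centralizes $V_\alpha^{(2)}$ (since $V_\alpha^{(2)}/Z_\alpha$ is an FF-module there); then \cref{SimExt} forces $V_{\alpha-2}^{(2)}=V_\alpha^{(2)}$, and tracing this back along the path with $b>7$ eventually yields $V_{\alpha-4}\le Q_{\alpha-3}\cap V_\alpha^{(2)}$, which combined with the critical pair $(\alpha-4,\alpha'-4)$ gives $Z_{\alpha-4}\le Q_{\alpha'-4}$, a contradiction. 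Thus $Z_\alpha\ne Z_{\alpha-2}$. For the chief factor statement, I would verify that $V_\beta^{(3)}\cap Q_{\alpha'-2}\le Q_{\alpha'-1}$ (using $V_\alpha^{(2)}\le Q_{\alpha'-1}$ and abelianness of $V_\beta^{(3)}$) and that $[V_\beta^{(3)}\cap Q_{\alpha'},V_{\alpha'}]\le Z_{\alpha'}\le V_\beta^{(3)}$, so $V_\beta^{(3)}/V_\beta$ contains a unique non-central $L_\beta$-chief factor isomorphic to a natural $\SL_2(q)$-module for $\bar{L_\beta}$. Then \cref{GoodAction3} delivers $[O^p(R_\beta),V_\beta^{(3)}]=\{1\}$.

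\emph{Step 3: the dichotomy.} This is essentially tautological once we show that $R^\dagger=Z_{\alpha-1}$ forces $Z_{\alpha-1}=Z_\beta$. Here I would use that $R^\dagger\le Z_{\alpha-3}\cap Z_{\alpha'-5}$, so if $R^\dagger=Z_{\alpha-1}$ then $Z_{\alpha-1}\le Z_{\alpha-3}$, hence $Z_{\alpha-1}=Z_{\alpha-3}$ (since both are $1$-dimensional over $\GF(q)$ in the natural-module sense). Applying \cref{SimExt} at the $(\alpha-2)$-level together with the fact that $O^p(R_{\alpha-2})$ centralizes $V_{\alpha-2}^{(2)}$ (the symmetric GoodAction statement) then forces $V_{\alpha-1}=V_{\alpha-3}$, and combined with $Z_{\alpha-1}\le Z_{\alpha'-5}$ and the structural coincidence $Z_{\alpha'-1}=Z_\beta\times Z_{\alpha'}$ from \cref{VA1}, the only way the commutator can land in $Z_{\alpha-1}$ is if $Z_{\alpha-1}=Z_\beta$.

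\emph{Main obstacle.} The hardest step is Step~1, because ruling out $b=7$ requires a careful simultaneous analysis of the two critical pairs $(\alpha,\alpha')$ and $(\alpha-4,\alpha'-4)$, whose paths overlap heavily. The commutator structure of $V_\alpha^{(4)}$ across the middle of the short path must be untangled, and avoiding a circular appeal to \cref{VA4NQ} or \cref{VA41} (whose conclusions rely on $b$ being large enough to begin with) is delicate. The dichotomy in Step~3, while conceptually tautological, also hides a subtle identification that requires the full strength of Steps~1 and~2.
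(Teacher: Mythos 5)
Your plan is organised sensibly, and you correctly identify the $b=7$ exclusion as the hardest step, but several supporting claims are wrong and the hard step is not actually argued. The opening containment $R^\dagger\le Z_{\alpha-3}\cap Z_{\alpha'-5}$, and the claim that this follows because ``both $V_{\alpha-3}\le Q_{\alpha'-4}$ and $V_{\alpha'-4}\le Q_{\alpha-3}$ hold by minimality of $b$,'' is false: $(\alpha-4,\alpha'-4)$ is a critical pair, so $Z_{\alpha-4}\le V_{\alpha-3}\not\le Q_{\alpha'-4}$, and since $d(\alpha'-4,\alpha-3)=b-1$ a neighbour of $\alpha'-4$ can be at distance $b$ from $\alpha-3$, so $V_{\alpha'-4}\le Q_{\alpha-3}$ is not automatic either. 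What one actually obtains (from $V_{\alpha-3}\le Q_{\alpha'-5}$, $V_{\alpha'-4}\le Q_{\alpha-2}$, and natural-module commutator structure) is the weaker $R^\dagger\le Z_{\alpha'-5}\cap Z_{\alpha-2}$, together with $R^\dagger\cap Z_{\alpha'-4}=\{1\}$ from the standing hypothesis applied to $(\alpha-4,\alpha'-4)$. Step~3 leans on $R^\dagger\le Z_{\alpha-3}$ and so does not go through as written; the closing identification (``the only way the commutator can land in $Z_{\alpha-1}$ is if $Z_{\alpha-1}=Z_\beta$'') is an assertion, not an argument. Step~1 relabels $(\alpha-4,\alpha'-4)$ and invokes \cref{VA41} and \cref{VA1} on it, but those require $V_{\alpha'-4}\le Q_{\alpha-3}$ and $V_{\alpha-4}^{(2)}\le Q_{\alpha'-6}$, precisely the facts not in hand; and the promised ``standard commutator chase'' bears little resemblance to the actual exclusion of $b=7$, which runs through a lengthy case division on $R=Z_{\alpha'-2}$, $Z_{\alpha+2}=Z_{\alpha'-3}$, whether $q\in\{2,3\}$, etc., using \cref{SEQuad}, \cref{2FRecog}, \cref{Badp3}, and even \cref{SubAmal}.

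There is also an ordering gap in Step~2. To apply \cref{SimExt} at $\delta=\alpha-1$ and conclude $V_{\alpha-2}^{(2)}=V_\alpha^{(2)}$ from $Z_\alpha=Z_{\alpha-2}$, one needs $O^p(R_{\alpha-1})$ to normalize $V_{\alpha-2}^{(2)}$, which by conjugacy is precisely the statement that $O^p(R_\beta)$ centralizes $V_\beta^{(3)}$ --- the conclusion you propose to prove afterwards. The \cref{GoodAction1} fact, that $O^p(R_\alpha)$ centralizes $V_\alpha^{(2)}$, is about the wrong vertex class and does not close this gap. The paper works in the opposite order: within each branch of the $R^\dagger$ dichotomy it first shows, using the sharp structure around $\alpha'-4$ (not $\beta$), that $V_{\alpha'-4}^{(3)}/V_{\alpha'-4}$ has a unique non-central FF chief factor, so \cref{GoodAction3} and conjugacy give $[O^p(R_\beta),V_\beta^{(3)}]=\{1\}$, and only then rules out $Z_\alpha=Z_{\alpha-2}$ via \cref{SimExt}; the $b=7$ exclusion is deferred to the very end, with ``subject to $b>7$'' carried through the whole case analysis.
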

\begin{proof}
By \cref{VA41}, $V_{\alpha}^{(2)}\le Q_{\alpha'-1}$, $Z_{\alpha'-1}=Z_{\alpha'}\times Z_{\beta}\le V_{\alpha}^{(2)}\le Z(V_{\alpha}^{(4)})$, $V_{\alpha}^{(4)}\not\le Q_{\alpha'-4}$ and there is a critical pair $(\alpha-4, \alpha'-4)$. Set $R^\dagger:=[V_{\alpha'-4}, V_{\alpha-3}]\le Z_{\alpha'-5}\cap Z_{\alpha-2}$. By assumption $R^\dagger\cap Z_{\alpha'-4}=\{1\}$.

Suppose first that $R^\dagger=Z_{\alpha-1}\le Z_{\alpha'-5}$. Then, as $b>5$, $Z_{\alpha-1}=Z_{\beta}$ so that by \cref{SimExt}, $V_{\alpha-1}=V_{\beta}$. Then $[V_{\alpha'-4}^{(3)}, V_{\alpha-1}]=[V_{\alpha'-4}^{(3)}, V_{\beta}]=\{1\}$ and so $V_{\alpha'-4}^{(3)}\le Q_{\alpha-2}$. Moreover, $V_{\alpha'-4}\not\le Q_{\alpha-3}$, else $Z_{\alpha-3}=R^\dagger=Z_{\alpha-1}$ and by \cref{SimExt}, $V_{\alpha-3}=V_{\alpha-1}\le Q_{\alpha'-4}$, a contradiction as $(\alpha-4, \alpha'-4)$ is a critical pair. Then $V_{\alpha'-4}(V_{\alpha'-4}^{(3)}\cap Q_{\alpha-3}\cap Q_{\alpha-4})$ is an index $q$ subgroup of $V_{\alpha'-4}^{(3)}$ which is centralized, modulo $V_{\alpha'-4}$, by $Z_{\alpha-4}$ and so, $V_{\alpha'-4}^{(3)}/V_{\alpha'-4}$ contains a unique non-central chief factor and by \cref{GoodAction3}, and conjugacy, $O^p(R_{\beta})$ centralizes $V_{\beta}^{(3)}$. In particular, applying \cref{SimExt}, if $Z_{\alpha}=Z_{\alpha-2}$ then $Z_{\alpha-4}\le V_{\alpha-2}^{(2)}=V_{\alpha}^{(2)}\le Q_{\alpha'-4}$, a contradiction. Hence, subject to proving $b>7$, (i) holds.

Assume now that $R^\dagger\ne Z_{\alpha-1}$ so that $Z_{\alpha-1}R^\dagger\le Z_{\alpha-2}$ properly contains $Z_{\alpha-1}$ and is centralized by $V_{\alpha'-4}^{(3)}$. If $Z_{\alpha}=Z_{\alpha-2}$ then $Z_{\beta}R^\dagger \le Z_{\alpha}\cap Q_{\alpha'}$ and we deduce that $R^\dagger=Z_{\beta}\le V_{\alpha'-4}$. Now, $V_{\alpha'-4}^{(3)}$ centralizes $Z_{\alpha}=Z_{\alpha-2}$ so that $V_{\alpha'-4}^{(3)}\cap Q_{\alpha-1}\le Q_{\alpha-2}$ and $[V_{\alpha'-4}^{(3)}\cap Q_{\alpha-1}, V_{\alpha-3}]\le Z_{\alpha-2}\cap V_{\alpha'-4}^{(3)}$. Indeed, unless $b=7$, $Z_{\alpha-2}\cap V_{\alpha'-4}^{(3)}\le Z_{\alpha}\cap Q_{\alpha'}=Z_{\beta}\le V_{\alpha'-4}$ and $V_{\alpha'-4}^{(3)}/V_{\alpha'-4}$ contains a unique non-central chief factor. By \cref{GoodAction3}, $O^p(R_{\beta})$ centralizes $V_{\beta}^{(3)}$ and \cref{SimExt} applied to $Z_{\alpha}=Z_{\alpha-2}$ gives a contradiction as above.

If $R^\dagger\ne Z_{\alpha-1}$ and $Z_{\alpha}\ne Z_{\alpha-2}$ then it follows that $V_{\alpha'-4}^{(3)}$ centralizes $V_{\alpha-1}$ and $V_{\alpha'-4}^{(3)}\cap Q_{\alpha-3}\cap Q_{\alpha-4}$ is an index $q^2$ subgroup of $V_{\alpha'-4}^{(3)}$ centralized by $Z_{\alpha-4}$. Hence, $V_{\alpha'-4}^{(3)}$ contains only two non-central chief factors for $L_{\alpha'-4}$, one in $V_{\alpha'-4}$ and one in $V_{\alpha'-4}^{(3)}/V_{\alpha'-4}$. Moreover, both non-central chief factors are FF-modules for $\bar{L_{\alpha'-4}}$ and by \cref{GoodAction3}, and conjugacy, we have that $O^p(R_{\beta})$ centralizes $V_{\beta}^{(3)}$ and again, subject to proving $b>7$, (ii) holds.

Thus, to complete the proof it remains to prove that $b>7$, so assume that $b=7$ for the remainder of the proof. Suppose first that $R=Z_{\beta}=Z_{\alpha'-2}$. Since $Z_{\beta}\ne Z_{\alpha+3}=Z_{\alpha'-4}$, for otherwise by \cref{SimExt}, $V_{\beta}=V_{\alpha+3}\le Q_{\alpha'}$, we may assume that $Z_{\alpha+2}=Z_{\beta}\times Z_{\alpha+3}=Z_{\alpha'-2}\times Z_{\alpha'-4}=Z_{\alpha'-3}$. If $O^p(R_{\beta})$ centralizes $V_{\beta}^{(3)}$ then \cref{SimExt} applied to $Z_{\alpha+2}=Z_{\alpha'-3}$ implies that $Z_{\alpha}\le V_{\alpha+2}^{(2)}=V_{\alpha'-3}^{(2)}\le Q_{\alpha'}$, a contradiction.

But now, $V_{\alpha'}^{(3)}\cap Q_{\alpha+3}$ centralizes $Z_{\alpha+2}=Z_{\alpha'-3}$ and $[V_{\alpha'}^{(3)}\cap Q_{\alpha+3}\cap Q_{\beta}, V_{\beta}]\le Z_{\beta}=Z_{\alpha'-2}\le V_{\alpha'}$. Moreover, $[V_{\alpha'}^{(3)}, V_{\beta}, V_{\beta}]\le [V_{\alpha'}^{(3)}, V_{\alpha+3}^{(3)}, V_{\alpha+3}^{(3)}]=\{1\}$ and applying \cref{SEQuad} and \cref{2FRecog}, we deduce that $O^p(R_{\beta})$ centralizes $V_{\beta}^{(3)}$ unless perhaps, $q\in\{2,3\}$.

Now, $[V_{\alpha'}^{(3)}\cap Q_{\alpha+3}, V_{\beta}]\le Z_{\alpha+2}\cap V_{\alpha'}^{(3)}$. In particular, we deduce that $Z_{\alpha'-3}\ne Z_{\alpha'-1}$ for otherwise $V_{\alpha'}^{(3)}/V_{\alpha'}$ contains a unique non-central chief factor for $L_{\alpha'}$ and by \cref{GoodAction3}, $O^p(R_{\alpha'})$ centralizes $V_{\alpha'}^{(3)}$. But then, recalling from \cref{VA1} that $Z_{\alpha'-1}\le V_{\alpha}^{(2)}$, we have that $V_{\alpha'-2}=Z_{\alpha'-1}Z_{\alpha'-3}=Z_{\alpha'-1}Z_{\alpha+2}\le V_{\alpha}^{(2)}$. Since $V_{\alpha'-2}\le Q_{\alpha'}$, $Z_{\alpha}\not\le V_{\alpha'-2}$ and so $Z_{\alpha}V_{\alpha'-2}$ is a subgroup of $V_{\alpha}^{(2)}$ of order $p^4$. Now, $V_{\alpha}^{(2)}/Z_{\alpha}$ is a FF-module for $\bar{L_{\alpha}}$ and $V_{\beta}/Z_{\alpha}$ has order $p$ and generates $V_{\alpha}^{(2)}/Z_{\alpha}$, we infer that $p^4\leq |V_{\alpha}^{(2)}|\leq p^5$. If $|V_{\alpha}^{(2)}|=p^4$, then $[V_{\alpha}^{(2)}, V_{\alpha'}]=[V_{\alpha'-2}Z_{\alpha}, V_{\alpha'}]=Z_{\beta}$, a contradiction by \cref{VA1}. Thus, $|V_{\alpha}^{(2)}|=p^5$ and the preimage of $C_{V_{\alpha}^{(2)}/Z_{\alpha}}(O^p(L_{\alpha}))$ in $V_{\alpha}^{(2)}$, which we write as $C^{\alpha}$, has order $p^3$. By the action of $Q_{\beta}$ on $V_{\alpha}^{(2)}$, we must have that $C^{\alpha}V_{\beta}\le [V_{\alpha}^{(2)}, Q_{\beta}]V_{\beta}$. Moreover, since $Z_{\alpha}=Z(Q_{\alpha})$, we must have that $[Q_{\alpha}, C^{\alpha}]=Z_{\alpha}$

If $[V_{\beta}^{(3)}, Q_{\beta}]V_{\beta}/V_{\beta}$ is centralized by $O^p(L_{\beta})$ then we have that $C^{\alpha}V_{\beta}\normaleq L_{\beta}$. But then $Z_{\beta}\le [C^{\alpha}V_{\beta}, Q_{\beta}]\le Z_{\alpha}$ so that $[C^{\alpha}V_{\beta}, Q_{\beta}]=Z_{\beta}$. Then, we deduce that $C_{Q_{\alpha}}(C^{\alpha})\le Q_{\beta}$ for otherwise $Z_{\alpha}=[Q_{\alpha}, C^{\alpha}]=[Q_{\alpha}\cap Q_{\beta}, C^{\alpha}]\le Z_{\beta}$, a contradiction. But now, as $C^{\alpha'-1}V_{\alpha'-2}\normaleq L_{\alpha'-2}$, $V_{\beta}$ centralizes $C^{\alpha'-1}\le C^{\alpha'-3}V_{\alpha'-2}$ so that $V_{\beta}\le C_{Q_{\alpha'-1}}(C^{\alpha'-1})\le Q_{\alpha'}$, a contradiction. 

Thus, $[V_{\beta}^{(3)}, Q_{\beta}]V_{\beta}/V_{\beta}$ contains a non-central chief factor for $L_{\beta}$. Moreover, since $V_{\alpha'}^{(3)}\cap Q_{\alpha+3}\le Q_{\alpha+2}$, an index $p^2$ subgroup of $V_{\alpha'}^{(3)}/ V_{\alpha'}$ is centralized by $Z_{\alpha}$ and we conclude that $V_{\beta}^{(3)}/V_{\beta}$ contains two non-central chief factors for $L_{\beta}$, one in $V_{\beta}^{(3)}/[V_{\beta}^{(3)}, Q_{\beta}]V_{\beta}$ by \cref{CommCF} and one in $[V_{\beta}^{(3)}, Q_{\beta}]V_{\beta}/V_{\beta}$, and both are FF-modules for $\bar{L_{\beta}}$. Notice that $[V_{\alpha}^{(2)}, Q_{\beta}, Q_{\beta}]\le Z_{\alpha}$ so that $[V_{\beta}^{(3)}, Q_{\beta}, Q_{\beta}]\le V_{\beta}$ and write $R_1:=C_{L_{\beta}}([V_{\beta}^{(3)}, Q_{\beta}]V_{\beta}/V_{\beta})$ and $R_2:=C_{L_{\beta}}(V_{\beta}^{(3)}/[V_{\beta}^{(3)}, Q_{\beta}]V_{\beta})$ so that $L_{\beta}/R_1\cong L_{\beta}/R_2\cong L_{\beta}/R_{\beta}\cong \SL_2(p)$. Indeed, either $p\in\{2,3\}$ and $L_{\beta}=\langle R_1, R_2, S\rangle$ by \cref{Badp3} (ii) or $R_1=R_2$. In the former case, we have that $V_{\alpha}^{(2)}[V_{\beta}^{(3)}, Q_{\beta}]V_{\beta}\normaleq R_2S$ so that $[V_{\alpha}^{(2)}[V_{\beta}^{(3)}, Q_{\beta}]V_{\beta}, Q_{\beta}]V_{\beta}=[V_{\alpha}^{(2)}, Q_{\beta}]V_{\beta}\normaleq R_2S$. But $[V_{\alpha}^{(2)}, Q_{\beta}]V_{\beta}\normaleq R_1S$ so that $[V_{\beta}^{(3)}, Q_{\beta}]V_{\beta}=[V_{\alpha}^{(2)}, Q_{\beta}]V_{\beta}\normaleq L_{\beta}$, impossible as then $[V_{\beta}^{(3)}, Q_{\beta}]V_{\beta}/V_{\beta}$ is centralized by $Q_{\alpha}$, and so centralized by $O^p(L_{\beta})$. Thus, $R_1=R_2$ and as $O^p(R_{\beta})$ does not centralize $V_{\beta}^{(3)}$ and $R_{\beta}$ normalizes $Q_{\alpha}\cap Q_{\beta}$, we satisfy the hypothesis of \cref{SubAmal} with $\lambda=\beta$. Since $b=7$, outcome of \cref{SubAmal} holds and we have that $V_{\alpha}^{(4)}\le \langle Z_{\beta}^X\rangle\le Z(O_p(X))$. In particular, $V_{\alpha}^{(4)}$ is abelian, and by conjugacy $V_{\alpha'}, Z_{\alpha}\le V_{\alpha'-3}^{(4)}$, impossible since $[Z_{\alpha}, V_{\alpha'}]\ne \{1\}$.

Thus, we have that $Z_{\alpha'-2}\ne Z_{\beta}$ and $Z_{\alpha'-2}<Z_{\alpha'-2}Z_{\beta}\le Z_{\alpha'-1}$. If $Z_{\alpha'-2}\not\le Z_{\alpha+2}$, then $Z_{\alpha+2}<Z_{\alpha+2}Z_{\alpha'-2}\le V_{\alpha+3}=V_{\alpha'-4}$ and $Z_{\alpha+2}Z_{\alpha'-2}\le V_{\alpha}^{(2)}$ is centralized by $V_{\alpha}^{(4)}$, a contradiction by \cref{VA4NQ}. But then, since $Z_{\alpha'-2}Z_{\alpha'-4}\le Z_{\alpha+2}\cap Z_{\alpha'-3}$, we either get that $Z_{\alpha+2}=Z_{\alpha'-3}$ or $Z_{\alpha'-2}=Z_{\alpha'-4}$. In the former case, since $Z_{\beta}Z_{\alpha'-2}\le Z_{\alpha'-3}\cap Z_{\alpha'-1}$, we conclude that $Z_{\alpha'-1}=Z_{\alpha'-3}=Z_{\alpha+2}$. Then, $[V_{\alpha'}^{(3)}\cap Q_{\alpha+3}, V_{\beta}]\le Z_{\alpha+2}\le V_{\alpha'}$ and by \cref{GoodAction3}, $O^p(R_{\alpha'})$ centralizes $V_{\alpha'}^{(3)}$. Applying \cref{SimExt} gives $V_{\beta}\le V_{\alpha+2}^{(2)}\le V_{\alpha'-1}^{(2)}\le Q_{\alpha'}$, a contradiction. Hence, $Z_{\alpha+2}\cap Z_{\alpha'-3}=Z_{\alpha'-2}$ and $Z_{\alpha'-2}=Z_{\alpha'-4}$. In particular, $Z_{\alpha+2}=Z_{\beta}\times Z_{\alpha+3}=Z_{\beta}\times Z_{\alpha'-2}=Z_{\alpha'-1}$ and $[V_{\alpha'}^{(3)}\cap Q_{\alpha+3}, V_{\beta}]\le Z_{\alpha+2}\le V_{\alpha'}$ and by \cref{GoodAction3}, $O^p(R_{\beta})$ centralizes $V_{\beta}^{(3)}$. Indeed, $Z_{\alpha}\ne Z_{\alpha-2}$ and $Z_{\alpha'-1}\ne Z_{\alpha'-3}$, else by \cref{SimExt}, $Z_{\alpha-4}\le V_{\alpha-2}^{(2)}=V_{\alpha}^{(2)}\le Q_{\alpha'-4}$ and $V_{\alpha'}\le V_{\alpha'-1}^{(2)}=V_{\alpha'-3}^{(2)}\le Q_{\alpha}$ respectively. 

We will show that whenever $(\alpha-4, \alpha'-4)$ is a critical pair, we have that $Z_{\beta}=Z_{\alpha-1}$. Choose $\alpha-4$ such that $Z_{\alpha-4}\not\le Q_{\alpha'-4}$. By the above, since $Z_{\alpha}\ne Z_{\alpha-2}$, assuming $Z_{\beta}\ne Z_{\alpha-1}$, we deduce that (ii) holds and $R^\dagger:=[V_{\alpha-3}, V_{\alpha'-4}]\not\le Z_{\alpha}$. But $R^\dagger\le Z_{\alpha+2}\le V_{\beta}$ so that $Z_{\alpha}R^\dagger\le V_{\beta}\cap V_{\alpha-1}$ and we deduce that $V_{\beta}=Z_{\alpha}Z_{\alpha-2}=V_{\alpha-1}$. Then, if $Z_{\beta}\ne Z_{\alpha-1}$, $V_{\beta}\normaleq L_{\alpha}=\langle Q_{\beta}, Q_{\alpha-1}, R_{\alpha}\rangle$, a contradiction. Therefore, we have shown that whenever $Z_{\alpha-4}\not\le Q_{\alpha'-4}$, $Z_{\beta}=Z_{\alpha-1}$.

Choose $\delta\in\Delta(\alpha)$ such that $Z_{\delta}\ne Z_{\beta}$ so that $V_{\delta}^{(3)}\le Q_{\alpha'-4}$. Suppose that $V_{\delta}^{(3)}\not\le Q_{\alpha'-3}$. There is $\delta-2\in\Delta^{(2)}(\delta)$ such that $Z_{\alpha'-4}=[V_{\delta-2}, Z_{\alpha'-3}]\le Z_{\delta-1}$ and since $Z_{\alpha'-2}=Z_{\alpha'-4}=Z_{\alpha+3}$, $Z_{\alpha'-2}\le V_{\beta}\cap V_{\delta}$. If $Z_{\alpha'-2}\le Z_{\alpha}$, then $Z_{\alpha}=Z_{\beta}\times Z_{\alpha'-2}=Z_{\alpha'-1}$, a clear contradiction. Thus, $V_{\beta}=Z_{\alpha'-2}Z_{\alpha}=V_{\delta}$. But $Z_{\beta}\ne Z_{\delta}$ so that $V_{\beta}\normaleq L_{\alpha}=\langle Q_{\beta}, Q_{\delta}, R_{\alpha}\rangle$, a contradiction. 

Hence, $V_{\delta}^{(3)}\le Q_{\alpha'-3}$ and since $Z_{\alpha'-3}\ne Z_{\alpha'-1}=Z_{\alpha+2}$, $V_{\delta}^{(3)}$ centralizes $V_{\alpha'-2}$ and $V_{\delta}^{(3)}\le Q_{\alpha'-1}$. Setting $W^{\alpha}:=\langle V_{\lambda}^{(3)}\mid Z_{\lambda}=Z_{\delta}, \lambda\in\Delta(\alpha)\rangle$, we have that $W^{\alpha}=V_{\alpha}^{(2)}(W^{\alpha}\cap Q_{\alpha'})$ and as $Z_{\alpha'}\le V_{\alpha}^{(2)}$, $V_{\alpha'}$ centralizes $W^{\alpha}/V_{\alpha}^{(2)}$. Moreover, since $R_{\alpha}Q_{\delta}$ normalizes $W^{\alpha}$ by \cref{UWNormal},  $W^{\alpha}\normaleq L_{\alpha}=\langle V_{\alpha'}, Q_{\delta}, R_{\alpha}\rangle$. Since $V_{\alpha'}$ centralizes $W^{\alpha}/V_{\alpha}^{(2)}$, $O^p(L_{\alpha})$ centralizes $W^{\alpha}/V_{\alpha}^{(2)}$ and $V_{\delta}^{(3)}\normaleq L_{\alpha}$, a final contradiction. Hence, $b>7$, completing the proof.
\end{proof}

\begin{lemma}\label{VnotB}
Suppose that $C_{V_\beta}(V_{\alpha'})=V_\beta \cap Q_{\alpha'}$ and $b>5$. Then $V_{\alpha'}\not\le Q_{\beta}$.
\end{lemma}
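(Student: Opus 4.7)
\textbf{Proof proposal for \cref{VnotB}.} The plan is to argue by contradiction, assuming $V_{\alpha'}\le Q_\beta$, and then to push up the chain of $V_\alpha^{(2k)}$ until an application of \cref{SimExt} forces a collapse that contradicts the assumption that $(\alpha-4,\alpha'-4)$ is a critical pair. First I would collect the structural consequences already in hand: by \cref{VA41}, $V_\alpha^{(2)}\le Q_{\alpha'-1}$ and $Z_{\alpha'-1}=Z_{\alpha'}\times Z_\beta\le V_\alpha^{(2)}\le Z(V_\alpha^{(4)})$; by \cref{VBp3}, $|V_\beta|=q^3$, so $R=[V_{\alpha'},V_\beta]\le Z_\beta\cap Z_{\alpha'-1}$ and \cref{VA1} forces $R=Z_\beta$; by \cref{VA4NQ}, $V_\alpha^{(4)}\not\le Q_{\alpha'-4}$ so there is a genuine critical pair $(\alpha-4,\alpha'-4)$; and by \cref{TricAction} we have $b>7$, $Z_\alpha\ne Z_{\alpha-2}$, $O^p(R_\beta)$ centralizes $V_\beta^{(3)}$, and one of (i) $R^\dagger=Z_{\alpha-1}=Z_\beta$ or (ii) $R^\dagger\ne Z_{\alpha-1}$ holds for $R^\dagger=[V_{\alpha'-4},V_{\alpha-3}]$.

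Next I would exploit the action of $V_\alpha^{(4)}$ on $V_{\alpha'-4}$. Since $V_\alpha^{(4)}\not\le Q_{\alpha'-4}$, pick a critical pair so that $[V_\alpha^{(4)},V_{\alpha'-4}]\le Z_\alpha$ and trace how $V_\alpha^{(4)}$ centralizes $Z_{\alpha'-1}Z_{\alpha'-3}$: if $Z_{\alpha'-3}\ne Z_{\alpha'-5}$ then $V_\alpha^{(4)}\le Q_{\alpha'-3}$ and one checks $V_\alpha^{(4)}=V_\alpha^{(2)}(V_\alpha^{(4)}\cap Q_{\alpha'-2})$, forcing $V_\alpha^{(4)}/V_\alpha^{(2)}$ to contain a unique non-central chief factor of FF-type for $\bar L_\alpha$. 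Then \cref{GoodAction2} yields that $O^p(R_\alpha)$ centralizes $V_\alpha^{(4)}$. At this point \cref{SimExt} applied to any identification $Z_\mu=Z_{\mu+2}$ along the path (which must exist because $b>7$ gives room to iterate backwards through critical pairs with the same structural properties, as in \cref{GoodCritPair} and \cref{TricAction}) will produce $V_\mu=V_{\mu+2}$, eventually forcing $V_{\alpha'}$ into $Q_\beta\cap Q_\alpha$ or $V_{\alpha-4}$ into $Q_{\alpha'-4}$, contradicting the criticality of $(\alpha-4,\alpha'-4)$. The case split between (i) and (ii) of \cref{TricAction} should then be resolved by mirroring the argument used in \cref{VA4NQ}: case (i) immediately gives $V_{\alpha-1}=V_\beta$ via \cref{SimExt} and renders $V_{\alpha'-4}^{(3)}$ too small once one identifies its chief factors, while case (ii) produces a second non-central chief factor in $V_{\alpha'-4}^{(3)}/V_{\alpha'-4}$ of FF-type which, combined with \cref{GoodAction3} and conjugacy, gives $O^p(R_\beta)$ centralizing $V_\beta^{(3)}$ and a symmetric repetition of the $V_\alpha^{(4)}$ argument at $V_{\alpha'}^{(4)}$.

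Finally I would close the argument by running through these \cref{SimExt} invocations systematically, chasing $V_\alpha^{(6)}$, $V_\beta^{(5)}$ when $b>9$ using \cref{GoodAction4} to extend the centralizer action of $O^p(R_\beta)$ one level up. The expectation is that in each subcase of the path-analysis at distances $3,5,7$ from the critical pair, the hypothesis $V_{\alpha'}\le Q_\beta$ combined with $R=Z_\beta$ produces enough abelianness in $V_\beta^{(3)}$ and $V_\alpha^{(4)}$ that repeated applications of \cref{SimExt} force $V_{\alpha'}\le V_{\alpha'-1}^{(2)}=V_{\alpha'-3}^{(2)}\le Q_\alpha$, contradicting $Z_\alpha\not\le Q_{\alpha'}$.

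The main obstacle I expect is bookkeeping in the interplay between the two cases of \cref{TricAction} and the precise identifications among $Z_{\alpha+2},Z_{\alpha'-2},Z_{\alpha-1},Z_{\alpha+3},Z_{\alpha'-3}$, together with verifying that the relevant quotients of $V_\alpha^{(2k)}$ and $V_\beta^{(2k+1)}$ truly contain \emph{unique} FF-type non-central chief factors so that \cref{GoodAction1}--\cref{GoodAction4} apply; subtle exceptional configurations (for example when $q\in\{2,3\}$, or when one of the $W^\beta$-type subgroups constructed as in \cref{UWNormal} fails to lie in $Q_{\alpha'-2}$) will likely need ad hoc treatment in the style of the arguments in \cref{VB1} and \cref{VA4NQ}.
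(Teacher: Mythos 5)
The strategic outline matches the paper's proof — accumulate the preconditions from \cref{VA41}, \cref{VBp3}, \cref{VA4NQ}, \cref{TricAction}, exploit the action of $V_\alpha^{(4)}$ on $V_{\alpha'-4}$, push $O^p(R_\alpha)$ up via \cref{GoodAction2}, and fall back on \cref{SimExt} — but the execution has concrete gaps that would derail a detailed write-up.

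First, the pivotal dichotomy is misindexed. To conclude $V_\alpha^{(4)}\le Q_{\alpha'-3}$ you need $V_\alpha^{(4)}$ to centralize $Z_{\alpha'-3}$; since $V_\alpha^{(4)}$ already centralizes $Z_{\alpha'-1}\ge Z_{\alpha'-2}$ and centralizes $Z_{\alpha'-4}$ (every critical pair $(\mu,\alpha'-4)$ in this subsection still has $[Z_\mu,Z_{\alpha'-4}]=1$ because $Z_{\alpha'-4}=\Omega(Z(L_{\alpha'-4}))$), the factorization $Z_{\alpha'-3}=Z_{\alpha'-2}\times Z_{\alpha'-4}$ requires $Z_{\alpha'-2}\ne Z_{\alpha'-4}$, not $Z_{\alpha'-3}\ne Z_{\alpha'-5}$. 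The latter compares two $q^2$-dimensional natural modules sharing $Z_{\alpha'-4}$ and does not give the needed containment.

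Second, the preliminary reduction $Z_{\alpha'-1}\ne Z_{\alpha'-3}$ — which the paper establishes up front (via $O^p(R_\beta)$ centralizing $V_\beta^{(3)}$ and \cref{SimExt} giving $V_{\alpha'-1}^{(2)}=V_{\alpha'-3}^{(2)}\le Q_\alpha$, contradicting $Z_\alpha\not\le Q_{\alpha'}$) — appears in your write-up only as the speculative endgame, which is the wrong place. Once you are already deep in the $V_\alpha^{(4)}$ analysis you cannot retroactively force this identity; you need it from the start so that $V_{\alpha'-2}=Z_{\alpha'-1}Z_{\alpha'-3}$.

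Third, your treatment of the \cref{TricAction} dichotomy does not match the logic actually required. In the subcase $Z_{\alpha'-2}\ne Z_{\alpha'-4}$, after $O^p(R_\alpha)$ centralizes $V_\alpha^{(4)}$ one first \emph{rules out} $Z_\beta=Z_{\alpha-1}$ (otherwise $V_{\alpha-1}^{(3)}=V_\beta^{(3)}\le Q_{\alpha'-4}$, killing the criticality of $(\alpha-4,\alpha'-4)$), which confines you to \cref{TricAction} case (ii) and then forces the specific analysis of $R^\dagger=[V_{\alpha'-4},V_{\alpha-3}]$ and the chief factors of $V_{\alpha'-4}^{(3)}/V_{\alpha'-4}$. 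Your sketch instead treats cases (i) and (ii) symmetrically and claims case (i) "immediately gives $V_{\alpha-1}=V_\beta$", but that option is precisely the one that has been eliminated here. In the complementary subcase $Z_{\alpha'-2}=Z_{\alpha'-4}$, the argument goes through $V_{\alpha'-2}=V_{\alpha'-4}$ and then up to $V_\alpha^{(6)}$ — \cref{GoodAction4} and $V_\beta^{(5)}$, which you invoke, do not actually appear and would not obviously deliver the contradiction.
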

\begin{proof}
Since $V_{\alpha'}\le Q_{\beta}$, by \cref{TricAction}, we may assume that $b>7$ throughout. Recall from \cref{VA1} that $Z_{\alpha'-1}\le V_{\alpha}^{(2)}\le Z(V_{\alpha}^{(4)})$. Notice that by \cref{TricAction}, we have that $O^p(R_{\beta})$ centralizes $V_{\beta}^{(3)}$ and by \cref{SimExt}, if $Z_{\alpha'-1}=Z_{\alpha'-3}$ then $V_{\alpha'}\le V_{\alpha'-1}^{(2)}=V_{\alpha'-3}^{(2)}\le Q_{\alpha}$, a contradiction. Hence, we may assume that $Z_{\alpha'-1}\ne Z_{\alpha'-3}$ throughout the remainder of the proof. We fix $\alpha-4\in\Delta^{(4)}(\alpha)$ with $(\alpha-4, \alpha'-4)$ a critical pair.

Suppose first that $Z_{\alpha'-2}\ne Z_{\alpha'-4}$ so that $Z_{\alpha'-3}=Z_{\alpha'-2}\times Z_{\alpha'-4}$ is centralized by $V_{\alpha}^{(4)}$. Then, $V_{\alpha'-2}=Z_{\alpha'-1}Z_{\alpha'-3}$ is centralized by $V_{\alpha}^{(4)}$ so $V_{\alpha}^{(4)}\cap Q_{\alpha'-4}=V_{\alpha}^{(2)}(V_{\alpha}^{(4)}\cap Q_{\alpha'})$ and since $Z_{\alpha'}\le V_{\alpha}^{(2)}$, it follows from \cref{GoodAction2} that $O^p(R_{\alpha})$ centralizes $V_{\alpha}^{(4)}$. In particular, we deduce that $Z_{\beta}\ne Z_{\alpha-1}$, otherwise by \cref{SimExt} we have that $V_{\alpha-3}\le V_{\alpha-1}^{(3)}=V_{\beta}^{(3)}\le Q_{\alpha'-4}$, a contradiction. Furthermore, as $V_{\alpha}^{(4)}\not\le Q_{\alpha'-4}$ we have that $Z_{\alpha'-3}=Z_{\alpha'-5}$. 

By \cref{TricAction}, $Z_{\alpha}\ne Z_{\alpha-2}$, $O^p(R_{\beta})$ centralizes $V_{\beta}^{(3)}$ and as $Z_{\alpha-1}\ne Z_{\beta}$, and again setting $R^\dagger:=[V_{\alpha'-4}, V_{\alpha-3}]$, we have that $Z_{\alpha-1}<R^\dagger Z_{\alpha-1}\le Z_{\alpha-2}$ and $R^\dagger Z_{\alpha-1}$ is centralized by $V_{\alpha'-4}^{(3)}$. Thus, $V_{\alpha'-4}^{(3)}\le Q_{\alpha-2}$. 
Notice that, as $b>7$, if $Z_{\alpha-2}\le V_{\alpha'-4}^{(3)}$ then $Z_{\alpha-1}\le V_{\alpha'-4}^{(3)}\le Q_{\alpha'}$ and we conclude that $Z_{\alpha-1}=Z_{\beta}$, a contradiction. Thus, $Z_{\alpha-2}\not\le V_{\alpha'-4}^{(3)}$.

If $V_{\alpha'-4}\not\le Q_{\alpha-3}$ then $R^\dagger\ne Z_{\alpha-3}$ and $V_{\alpha'-4}^{(3)}=V_{\alpha'-4}(V_{\alpha'-4}^{(3)}\cap Q_{\alpha-3})$. Then $Z_{\alpha-3}=[V_{\alpha-3}, (V_{\alpha'-4}^{(3)}\cap Q_{\alpha-3})]$ for otherwise, $O^p(L_{\alpha'-4})$ centralizes $V_{\alpha'-4}^{(3)}/V_{\alpha'-4}$. But then $Z_{\alpha-2}=R^\dagger\times Z_{\alpha-3}\le V_{\alpha'-4}^{(3)}$, a contradiction. Thus, $V_{\alpha'-4}\le Q_{\alpha-3}$,  $R^\dagger=Z_{\alpha-3}$ and $Z_{\alpha-3}\le [V_{\alpha'-4}^{(3)}, V_{\alpha-3}]\le Z_{\alpha-2}\cap V_{\alpha'-4}^{(3)}=Z_{\alpha-3}$ so that $[V_{\alpha'-4}^{(3)}, V_{\alpha-3}]=Z_{\alpha-3}$ and $V_{\alpha'-4}^{(3)}=V_{\alpha'-4}(V_{\alpha'-4}^{(3)}\cap Q_{\alpha-4})$. But then $O^p(L_{\alpha'-4})$ centralizes $V_{\alpha'-4}^{(3)}/V_{\alpha'-4}$, another contradiction.

Therefore, $Z_{\alpha'-2}=Z_{\alpha'-4}$ and by \cref{SimExt}, $V_{\alpha'-2}=V_{\alpha'-4}$ so that $V_{\alpha}^{(4)}\cap Q_{\alpha'-4}\cap Q_{\alpha'-3}\le Q_{\alpha'-2}$. Since $Z_{\alpha'-1}$ is centralized by $V_{\alpha}^{(4)}$, $V_{\alpha}^{(4)}\cap Q_{\alpha'-4}\cap Q_{\alpha'-3}=V_{\alpha}^{(2)}(V_{\alpha}^{(4)}\cap Q_{\alpha'})$. If $V_{\alpha}^{(4)}/V_{\alpha}^{(2)}$ contains a unique non-central chief factor which is an FF-module for $\bar{L_{\alpha}}$, then by \cref{SimExt}, $V_{\alpha'}\le V_{\alpha'-2}^{(3)}=V_{\alpha'-4}^{(3)}\le Q_{\alpha}$, a contradiction. Thus, $V_{\alpha}^{(4)}\not\le Q_{\alpha'-4}$ and $V_{\alpha}^{(4)}\cap Q_{\alpha'-4}\not\le Q_{\alpha'-3}$.

Since $b>7$, $Z_{\alpha'-4}=Z_{\alpha'-2}\le Z_{\alpha'-1}\le V_{\alpha}^{(2)}\le Z(V_{\alpha}^{(6)})$. If $Z_{\alpha'-4}=Z_{\alpha'-6}$, then by \cref{SimExt}, $V_{\alpha'-4}=V_{\alpha'-6}$ is centralized by $V_{\alpha}^{(4)}$, a contradiction. Thus, $Z_{\alpha'-5}Z_{\alpha'-7}$ is centralized by $V_{\alpha}^{(6)}$. If $Z_{\alpha'-5}\ne Z_{\alpha'-7}$ then $V_{\alpha}^{(6)}\le Q_{\alpha'-5}$ and $V_{\alpha}^{(6)}=V_{\alpha}^{(4)}(V_{\alpha}^{(6)}\cap Q_{\alpha'})$. But then $O^p(L_{\alpha})$ centralizes $V_{\alpha}^{(6)}/V_{\alpha}^{(4)}$, and we have a contradiction. Thus, $Z_{\alpha'-5}=Z_{\alpha'-7}$. But now, as $O^p(R_{\beta})$ centralizes $V_{\beta}^{(3)}$ by \cref{TricAction}, by \cref{SimExt} we have that $V_{\alpha'-4}\le V_{\alpha'-5}^{(2)}=V_{\alpha'-7}^{(2)}$ is centralized by $V_{\alpha}^{(4)}$, a final contradiction. 
\end{proof}

\begin{lemma}\label{bnot7}
Suppose that $C_{V_\beta}(V_{\alpha'})=V_\beta \cap Q_{\alpha'}$. Then $b\leq 7$.
\end{lemma}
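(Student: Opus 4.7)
The plan is to assume for contradiction that $b > 7$ and push the FF-module/\cref{SimExt} machinery one step further than what was done in \cref{VnotB}. By \cref{GoodCritPair} we may fix a critical pair $(\alpha,\alpha')$ with $V_\alpha^{(2)}\le Q_{\alpha'-2}$, and by \cref{VnotB} we have $V_{\alpha'}\not\le Q_\beta$. Then \cref{VB2} delivers the structural package we need: $Z_{\alpha'-1}\le V_\beta^{(3)}\le Q_{\alpha'-1}$, $Z_{\alpha'}\not\le V_\alpha^{(2)}$, the quotient $V_\beta^{(3)}/V_\beta$ contains a unique non-central chief factor which is an FF-module for $\bar{L_\beta}$, and $O^p(R_\beta)$ centralizes $V_\beta^{(3)}$. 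Together with \cref{SimExt}, the last fact already forces $Z_{\alpha'-1}\ne Z_{\alpha'-3}$, since otherwise $V_{\alpha'}\le V_{\alpha'-1}^{(2)}=V_{\alpha'-3}^{(2)}\le Q_\alpha$, a contradiction to the fact that $(\alpha,\alpha')$ is critical.

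The next step is to exploit the extra room afforded by $b>7$ to analyse $V_\beta^{(5)}$. Since $\alpha'-3\in\Delta^{(b-4)}(\beta)$ and $b>7$ gives $b-4\ge 4$, we have $Z_{\alpha'-3}\le V_\beta^{(5)}$, and likewise $V_\beta^{(5)}\le Q_{\alpha'-3}$ by minimality of $b$ (together with $V_\beta^{(3)}\le Q_{\alpha'-1}$ to control the outermost layer). I would then show, in the same spirit as \cref{VB2}, that $V_\beta^{(5)}/V_\beta^{(3)}$ also contains a unique non-central chief factor for $L_\beta$ which, as a $\bar{L_\beta}$-module, is an FF-module: the key computation is $[V_\beta^{(5)}\cap Q_{\alpha'-3}\cap Q_{\alpha'-2},V_{\alpha'-2}]\le Z_{\alpha'-3}\le V_\beta^{(3)}$, coupled with $V_\beta^{(3)}(V_\beta^{(5)}\cap Q_{\alpha'})$ being of small index in $V_\beta^{(5)}$, followed by an application of \cref{SEFF}/\cref{CommCF} to rule out alternative configurations.

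Once this is established, \cref{GoodAction4} yields $[O^p(R_\beta),V_\beta^{(5)}]=\{1\}$. Applying \cref{SimExt} now to the pair $Z_{\alpha'-3}$, $V_\beta^{(5)}$ in place of $Z_{\alpha'-1}$, $V_\beta^{(3)}$, we see that $Z_{\alpha'-3}=Z_{\alpha'-5}$ would force $V_{\alpha'-2}\le V_{\alpha'-3}^{(2)}=V_{\alpha'-5}^{(2)}\le Q_{\alpha'-1}$ (combined with the $V_\beta^{(3)}\le Q_{\alpha'-1}$ we already know), which together with $Z_{\alpha'-1}\le V_\beta^{(3)}\le Q_{\alpha'-1}$ and the transitivity of $L_{\alpha'-2}$ gives a subgroup of $V_{\alpha'-2}$ normal in a parabolic that contradicts \cref{push}. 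Hence $Z_{\alpha'-3}\ne Z_{\alpha'-5}$.

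The main obstacle, as always in this section, is controlling the exact location of $Z_{\alpha'-k}$ relative to $V_\beta^{(k+2)}$ and ensuring that the non-central chief factor in $V_\beta^{(5)}/V_\beta^{(3)}$ is genuinely unique and FF; the delicate point is ruling out the situation where $V_\beta^{(5)}/V_\beta^{(3)}$ splits into two chief factors (handled by a direct conjugacy argument using $R_\beta$-invariance, as in the analogous step of the proof of \cref{VnotB}). With $Z_{\alpha'-1}\ne Z_{\alpha'-3}\ne Z_{\alpha'-5}$ in hand, iterating once more (using $b>7$ guarantees $\alpha'-5$ still lies well away from $\alpha$) gives a $V_\beta^{(3)}$-configuration that must centralize $V_{\alpha'-4}$, forcing $V_{\alpha'-4}=Z_{\alpha'-3}Z_{\alpha'-5}$ to be abelian of order $q^3$ in a manner incompatible with $V_\beta^{(3)}\le Q_{\alpha'-1}$ together with $Z_{\alpha'}\not\le V_\alpha^{(2)}$. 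This closes out the contradiction and establishes $b\le 7$.
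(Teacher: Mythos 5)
There is a genuine gap, and it is in the very first step of the proposed $V_\beta^{(5)}$-analysis. You assert that $V_\beta^{(5)}\le Q_{\alpha'-3}$ ``by minimality of $b$'' and that $Z_{\alpha'-3}\le V_\beta^{(5)}$ ``since $\alpha'-3\in\Delta^{(b-4)}(\beta)$.'' Neither claim follows. For the first: a vertex $\mu$ with $d(\mu,\beta)=5$ on the far side of $\beta$ from $\alpha'$ has $d(\mu,\alpha'-4)=5+(b-5)=b$ and $d(\mu,\alpha'-3)=5+(b-4)=b+1$, so $Z_\mu\le Q_{\alpha'-4}$ (let alone $Q_{\alpha'-3}$) is \emph{not} guaranteed by minimality of $b$; the paper's proof is forced to split on whether $V_\beta^{(5)}\le Q_{\alpha'-4}$ or not, and the branch $V_\beta^{(5)}\not\le Q_{\alpha'-4}$ occupies a substantial portion of the argument. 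For the second: $\alpha'-3\in\Delta^{(5)}(\beta)$ requires $b-4\le 5$, i.e.\ $b\le 9$; if $b\ge 11$ (still consistent with $b>7$) then $\alpha'-3$ lies outside $\Delta^{(5)}(\beta)$ and any inclusion $Z_{\alpha'-3}\le V_\beta^{(5)}$ has to be \emph{derived}, in the same spirit as $Z_{\alpha'-1}\le V_\beta^{(3)}$ was derived in \cref{VB2} rather than read off from distances.

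Because the computation $[V_\beta^{(5)}\cap Q_{\alpha'-3}\cap Q_{\alpha'-2},V_{\alpha'-2}]\le Z_{\alpha'-3}$ is the hinge of your ``unique FF chief factor in $V_\beta^{(5)}/V_\beta^{(3)}$'' claim, and that hinge turns precisely on the unverified containment $V_\beta^{(5)}\le Q_{\alpha'-3}$, the remainder of the argument (the \cref{GoodAction4}/\cref{SimExt} iteration and the final $V_{\alpha'-4}=Z_{\alpha'-3}Z_{\alpha'-5}$ contradiction) does not get off the ground. The overall template --- iterating FF-module recognition up the ladder $V_\beta^{(3)}$, $V_\beta^{(5)}$ and deploying \cref{SimExt} at each rung --- is the right instinct and is what the paper does, but the actual proof also has to track delicate equalities among the $Z_{\alpha'-2k}$ (e.g.\ establishing $Z_{\alpha'-2}=Z_{\alpha'-4}$ and handling $Z_{\alpha'-4}=Z_{\alpha'-6}$ vs.\ $Z_{\alpha'-4}\ne Z_{\alpha'-6}$ separately) and to move along the critical path through pairs such as $(\alpha-4,\alpha'-4)$ and $(\alpha'+1,\beta)$ in a way your sketch elides entirely.
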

\begin{proof}
By \cref{VnotB}, we have that $V_{\alpha'}\not\le Q_{\beta}$. Applying \cref{VB2}, we have that $Z_{\alpha'-1}\le V_{\beta}^{(3)}\le Q_{\alpha'-1}$ and $O^p(R_{\beta})$ centralizes $V_{\beta}^{(3)}$. In particular, if $Z_{\alpha'-1}=Z_{\alpha'-3}$, then $V_{\alpha'}\le V_{\alpha'-1}^{(2)}=V_{\alpha'-3}^{(2)}$ is centralized by $Z_{\alpha}$, a contradiction. Hence, $V_{\alpha'-2}=Z_{\alpha'-1}Z_{\alpha'-3}$. Suppose throughout that $b>7$.

Suppose first that $V_{\beta}^{(5)}\le Q_{\alpha'-4}$. Then, $V_{\beta}^{(5)}\cap Q_{\alpha'-3}$ centralizes $V_{\alpha'-2}$ and so $V_{\beta}^{(5)}\cap Q_{\alpha'-3}=V_{\beta}^{(3)}(V_{\beta}^{(5)}\cap Q_{\alpha'})$. Since $Z_{\alpha'}\le V_{\beta}^{(3)}$, $V_{\beta}^{(5)}\not\le Q_{\alpha'-3}$. Moreover by \cref{GoodAction4}, we have that $O^p(R_{\beta})$ centralizes $V_{\beta}^{(5)}$ and so $V_{\alpha}^{(4)}\not\le Q_{\alpha'-3}$, else $V_{\alpha}^{(4)}\normaleq L_{\beta}=\langle V_{\alpha'}, Q_{\alpha}, R_{\beta}\rangle$. Thus, there is $\alpha-4\in\Delta^{(4)}(\alpha)$ such that $Z_{\alpha'-4}=[Z_{\alpha-4}, Z_{\alpha'-3}]$ and since $Z_{\alpha'-2}\le Z_{\alpha'-1}\le V_{\beta}^{(3)}$, we deduce that $Z_{\alpha'-2}=Z_{\alpha'-4}$. 

Suppose that $Z_{\alpha'-3}\not\le Q_{\alpha-3}$. Then $(\alpha'-3, \alpha-3)$ is a critical pair with $V_{\alpha-3}\le Q_{\alpha'-4}$. By \cref{VnotB}, $V_{\alpha'-3}^{(2)}\not\le Q_{\alpha-1}$ and either $Z_{\alpha}=Z_{\alpha-2}$ or $Z_{\alpha-1}=[V_{\alpha'-4}, V_{\alpha-3}]=Z_{\alpha'-4}$. In the former case it follows from \cref{SimExt} that $Z_{\alpha-4}\le V_{\alpha-2}^{(2)}=V_{\alpha}^{(2)}\le Q_{\alpha'-3}$, a contradiction. In the latter case, we have that $Z_{\beta}=Z_{\alpha-1}=Z_{\alpha'-4}=Z_{\alpha'-2}$. Then $R\cap Z_{\alpha'-2}=\{1\}$, so that $Z_{\alpha'}\le Z_{\alpha'-1}=R\times Z_{\alpha'-2}\le V_{\beta}$ and $V_{\alpha'}$ centralizes $V_{\beta}^{(3)}/V_{\beta}$, a contradiction.

Thus, $Z_{\alpha'-3}\le Q_{\alpha-3}$ and $Z_{\alpha'-4}=Z_{\alpha-3}$. If $Z_{\alpha-3}\le Z_{\alpha}$, then $Z_{\alpha-3}=Z_{\beta}=Z_{\alpha'-4}=Z_{\alpha'-2}$. But then $R\cap Z_{\alpha'-2}=\{1\}$ and $Z_{\alpha'-1}=R\times Z_{\beta}$ so that $Z_{\alpha'-1}\le V_{\beta}$ and $V_{\alpha'}$ centralizes $V_{\beta}^{(3)}/V_{\beta}$, a contradiction. Thus, $V_{\alpha-1}=Z_{\alpha}Z_{\alpha-3}$ is centralized by $V_{\alpha'-3}^{(2)}$ so that $V_{\alpha'-3}^{(2)}\le Q_{\alpha-2}$. Then, $Z_{\alpha-3}\le [V_{\alpha'-3}^{(2)}, V_{\alpha-3}]\le Z_{\alpha-2}$ and since $V_{\alpha-3}$ does not centralize $V_{\alpha'-3}^{(2)}/Z_{\alpha'-3}$, we may assume that $Z_{\alpha-2}\le V_{\alpha'-3}^{(2)}$. Still, $[V_{\alpha'-3}^{(2)}\cap Q_{\alpha-3}, V_{\alpha-3}]\le Z_{\alpha'-3}$ and it follows from \cref{GoodAction1} then $O^p(R_{\alpha})$ centralizes $V_{\alpha}^{(2)}$. Since $Z_{\alpha'-2}=Z_{\alpha'-4}$, \cref{SimExt} implies that $V_{\alpha'-2}=V_{\alpha'-4}$. Moreover, since $V_{\alpha'-4}$ is not centralized by $V_{\beta}^{(5)}$, but $Z_{\alpha'-1}Z_{\alpha'-5}\le V_{\alpha'-4}$ is centralized, it follows that $Z_{\alpha'-1}=Z_{\alpha'-5}$.

Now, if $Z_{\alpha'-4}=Z_{\alpha'-6}$ then \cref{SimExt} implies that $Z_{\alpha'-3}\le V_{\alpha'-4}=V_{\alpha'-6}$ is centralized by $V_{\alpha}^{(4)}$, a contradiction. Thus $Z_{\alpha'-5}=Z_{\alpha'-4}\times Z_{\alpha'-6}$ is centralized by $V_{\alpha-4}^{(2)}$ since $Z_{\alpha'-4}=Z_{\alpha-3}$. Moreover, $Z_{\alpha'-5}\ne Z_{\alpha'-7}$, otherwise \cref{SimExt} implies that $Z_{\alpha'-3}\le V_{\alpha'-5}^{(2)}=V_{\alpha'-7}^{(2)}$ is centralized by $V_{\alpha}^{(4)}$. Hence, $V_{\alpha-4}^{(2)}$ centralizes $V_{\alpha'-6}$ and $V_{\alpha-4}^{(2)}\le Q_{\alpha'-5}$. If $V_{\alpha-4}^{(2)}\le Q_{\alpha'-4}$, then $V_{\alpha-4}^{(2)}=Z_{\alpha-4}(V_{\alpha-4}^{(2)}\cap Q_{\alpha'-3})$ is centralized, modulo $Z_{\alpha-4}$, by $Z_{\alpha'-3}$ so that $O^p(L_{\alpha-4})$ centralizes $V_{\alpha-4}^{(2)}/Z_{\alpha-4}$, a contradiction. Then $V_{\alpha-4}^{(2)}\not\le Q_{\alpha'-4}$ and $[V_{\alpha-4}^{(2)}, V_{\alpha'-4}]\not\le Z_{\alpha'-4}$. Since $Z_{\alpha'-4}=Z_{\alpha-3}\le V_{\alpha-4}^{(2)}$, we assume that $Z_{\alpha'-5}\le V_{\alpha-4}^{(2)}$. 

Now, $V_{\alpha-4}^{(4)}$ centralizes $Z_{\alpha'-6}\le Z_{\alpha'-5}$ and either $Z_{\alpha'-6}=Z_{\alpha'-8}$; or $V_{\alpha-4}^{(4)}$ centralizes $Z_{\alpha'-5}Z_{\alpha'-7}$. In the latter case, we may assume that $Z_{\alpha'-5}\ne Z_{\alpha'-7}$ for the same reason as above, and so either $V_{\alpha-4}^{(4)}\le Q_{\alpha'-5}$ and $O^p(L_{\alpha-4})$ centralizes $V_{\alpha-4}^{(4)}/V_{\alpha-4}^{(2)}$, a contradiction; or $Z_{\alpha'-7}=Z_{\alpha'-9}$, $O^p(R_{\beta})$ centralizes $V_{\beta}^{(5)}$ and $Z_{\alpha'-3}\le V_{\alpha'-7}^{(4)}=V_{\alpha'-9}^{(4)}$ is centralized by $V_{\alpha}^{(4)}$, another contradiction. Thus, $Z_{\alpha'-6}=Z_{\alpha'-8}$ so that $V_{\alpha'-6}=V_{\alpha'-8}$. Suppose that $V_{\alpha-4}^{(4)}\le Q_{\alpha'-8}$. Then $[V_{\alpha-4}^{(4)}\cap Q_{\alpha'-7}, V_{\alpha'-6}]=[V_{\alpha-4}^{(4)}\cap Q_{\alpha'-7}, V_{\alpha'-8}]=Z_{\alpha'-8}=Z_{\alpha'-6}$ and $V_{\alpha-4}^{(4)}\cap Q_{\alpha'-7}\le Q_{\alpha'-6}$. But $V_{\alpha-4}^{(4)}\cap Q_{\alpha'-7}$ centralizes $Z_{\alpha'-5}$ so that $V_{\alpha-4}^{(4)}\cap Q_{\alpha'-7}=V_{\alpha-4}^{(2)}(V_{\alpha-4}^{(4)}\cap Q_{\alpha'-4})$ and by \cref{GoodAction2}, $O^p(R_{\alpha-4})$ centralizes $V_{\alpha-4}^{(4)}$. But now, \cref{SimExt} applied to $Z_{\alpha'-2}=Z_{\alpha'-4}$ implies that $V_{\alpha'}\le V_{\alpha'-2}^{(3)}=V_{\alpha'-4}^{(3)}\le Q_{\alpha}$, a contradiction.

Thus, we have shown that there is a critical pair $(\alpha-8, \alpha'-8)$, $Z_{\alpha'-2}=Z_{\alpha'-4}$, $Z_{\alpha'-6}=Z_{\alpha'-8}$ and $V_{\alpha'-6}=V_{\alpha'-8}$. Since $Z_{\alpha'-5}Z_{\alpha'-9}\le V_{\alpha'-8}$ is centralized by $V_{\alpha-4}^{(4)}$, we get that $Z_{\alpha'-1}=Z_{\alpha'-5}=Z_{\alpha'-9}$. We claim that the pair $(\alpha-8, \alpha'-8)$ satisfies the same initial hypothesis as $(\alpha, \alpha')$. By \cref{VnotB}, $V_{\alpha-8}^{(2)}\not\le Q_{\alpha'-10}$. But $Z_{\alpha'-9}=Z_{\alpha'-5}\le V_{\alpha-4}^{(2)}$ is centralized by $V_{\alpha-8}^{(2)}$ since $b>7$, so that $Z_{\alpha'-9}=Z_{\alpha'-11}$. Then applying \cref{SimExt} gives $V_{\alpha'-8}\le V_{\alpha'-9}^{(2)}=V_{\alpha'-11}^{(2)}$ is centralized by $V_{\alpha-4}^{(4)}$, a contradiction. 

Suppose now that $V_{\beta}^{(5)}\not\le Q_{\alpha'-4}$. Since $Z_{\alpha'-2}\le Z_{\alpha'-1}$ is centralized by $V_{\beta}^{(5)}$, it follows that either $Z_{\alpha'-2}=Z_{\alpha'-4}$; or $Z_{\alpha'-3}=Z_{\alpha'-5}$. In the latter case, we have that $V_{\beta}^{(5)}\cap Q_{\alpha'-4}$ centralizes $V_{\alpha'-2}$ so that $V_{\beta}^{(5)}\cap Q_{\alpha'-4}=V_{\beta}^{(3)}(V_{\beta}^{(5)}\cap \dots \cap Q_{\alpha'})$ and \cref{GoodAction4} implies that $O^p(R_{\beta})$ centralizes $V_{\beta}^{(5)}$. But then \cref{SimExt} applied to $Z_{\alpha'-3}=Z_{\alpha'-5}$ gives $V_{\alpha'}\le V_{\alpha'-3}^{(4)}=V_{\alpha'-5}^{(4)}\le Q_{\alpha}$, a contradiction. Thus, $Z_{\alpha'-2}=Z_{\alpha'-4}$. If $O^p(R_{\alpha})$ centralizes $V_{\alpha}^{(2)}$, then using \cref{SimExt} and $Z_{\alpha'-2}=Z_{\alpha'-4}$, we have that $Z_{\alpha'-1}Z_{\alpha'-5}\le V_{\alpha'-4}$ is centralized by $V_{\beta}^{(5)}$ and we conclude that $Z_{\alpha'-1}=Z_{\alpha'-5}$.

We have demonstrated, regardless of the hypothesis on $V_{\beta}^{(5)}$, that $Z_{\alpha'-2-4k}=Z_{\alpha'-4-4k}$ for $k\geq 0$, and there are suitable critical pairs to iterate upon (either $(\beta-4k-1, \alpha'-4k)$ or $(\alpha-8k, \alpha'-8k)$). Suppose that $b=9$. Applying the above, we infer that $Z_{\alpha'-2}=Z_{\alpha'-4}$ and $Z_{\alpha'-6}=Z_{\alpha+3}=Z_{\beta}$. Since $V_{\alpha'}\not\le Q_{\beta}$, there is a critical pair $(\alpha'+1, \beta)$ with $V_{\beta}\not\le Q_{\alpha'}$. Moreover, $V_{\alpha'+1}^{(2)}\le Q_{\alpha+3}$, else by \cref{Vna-2}, $R=Z_{\alpha+3}=Z_{\beta}$, a clear contradiction. Thus, $(\alpha'+1, \beta)$ satisfies the same hypothesis as $(\alpha, \alpha')$. But then $Z_{\alpha'-6}=Z_{\alpha+3}=Z_{\alpha+5}=Z_{\alpha'-4}$ so that $Z_{\alpha'-2}=\dots=Z_{\beta}$. But then $R\ne Z_{\alpha'-2}$, $Z_{\alpha'-1}=Z_{\alpha'-2}\times R=Z_{\alpha+2}$ and $[V_{\alpha'}, V_{\beta}^{(3)}]=Z_{\alpha'-1}\le V_{\beta}$, a contradiction for then $O^p(L_{\beta})$ centralizes $V_{\beta}^{(3)}/V_{\beta}$. In fact, this applies whenever $b=4k+1$ for $k\geq 2$ but we will only require this when $b=9$. 

Suppose that $V_{\beta}^{(5)}\not\le Q_{\alpha'-4}$ and $b>7$. Then $b\geq 11$ and $V_{\beta}^{(7)}$ centralizes $Z_{\alpha'-4}\le Z_{\alpha'-1}\le V_{\beta}^{(3)}$ and so, unless $Z_{\alpha'-4}=Z_{\alpha'-6}$, $[V_{\beta}^{(7)}, Z_{\alpha'-5}]=\{1\}$. Notice that if $Z_{\alpha'-5}=Z_{\alpha'-7}$, then \cref{SimExt} implies that $V_{\alpha'-4}\le V_{\alpha'-5}^{(2)}=V_{\alpha'-7}^{(2)}$ is centralized by $V_{\beta}^{(5)}$, a contradiction. Thus, $V_{\beta}^{(7)}$ centralizes $V_{\alpha'-6}=Z_{\alpha'-5}Z_{\alpha'-7}$. But then $V_{\beta}^{(7)}=V_{\beta}^{(5)}(V_{\beta}^{(7)}\cap Q_{\alpha'-4})$ and $V_{\beta}^{(5)}\cap Q_{\alpha'-4}\le Q_{\alpha'-3}$, otherwise $V_{\beta}^{(7)}=V_{\beta}^{(5)}(V_{\beta}^{(7)}\cap Q_{\alpha'})$ so that $O^p(L_{\beta})$ centralizes $V_{\beta}^{(7)}/V_{\beta}^{(5)}$, another contradiction. Then, $V_{\beta}^{(5)}\cap Q_{\alpha'-4}=V_{\beta}^{(3)}(V_{\beta}^{(5)}\cap \dots \cap Q_{\alpha'})$ and \cref{GoodAction4} implies that $O^p(R_{\beta})$ centralizes $V_{\beta}^{(5)}$. In particular, $V_{\alpha}^{(4)}\not\le Q_{\alpha'-4}$ for otherwise $V_{\alpha}^{(4)}\normaleq L_{\beta}=\langle V_{\alpha'}, Q_{\alpha}, R_{\beta}\rangle$, a contradiction.

We have shown that, if $b>7$ and $O^p(R_{\alpha})$ centralizes $V_{\alpha}^{(2)}$ then $Z_{\alpha'-1}=Z_{\alpha'-1-4k}$ for all $k\geq 0$. Moreover, we can arrange that $\alpha$ lies along the infinite path $(\alpha',\alpha'-1,\dots, \alpha'-5, \dots)$; or for some critical pair $(\alpha^*, {\alpha^*}')$ we have that $Z_{{\alpha^*}'-2}=Z_{{\alpha^*}'-4}=Z_{{\alpha^*}'-6}$ and $V_{\beta^*}^{(5)}\not\le Q_{{\alpha^*}'-4}$. In this latter case, \cref{SimExt} implies that $V_{{\alpha^*}'-4}=V_{{\alpha^*}'-6}$ and $V_{\beta^*}^{(5)}$ centralizes $V_{{\alpha^*}'-4}$, a clear contradiction. Now, since $Z_{\alpha}\ne Z_{\alpha'-1}$, $Z_{\alpha'-1}=Z_{\alpha+2}=Z_{\alpha-2}$. But then $[V_{\beta}^{(3)}, V_{\alpha'}]=Z_{\alpha'-1}\le V_{\beta}$ and $O^p(L_{\beta})$ centralizes $V_{\beta}^{(3)}/V_{\beta}$, a contradiction. In particular, if we ever arrive at a critical pair $(\alpha^*, {\alpha^*}')$ such that $V_{\alpha^*}^{(4)}\le Q_{{\alpha^*}'-4}$, then $O^p(R_{\alpha})$ centralizes $V_{\alpha}^{(2)}$ and we have a contradiction. Thus, whenever $b>7$, we may assume that for every critical pair $(\alpha^*, {\alpha^*}')$, we have that $V_{{\alpha^*}'}\not\le Q_{\beta^*}$, $V_{\beta^*}^{(3)}\le Q_{{\alpha^*}'-1}$, $V_{\beta^*}^{(5)}\not\le Q_{{\alpha^*}'-4}$ and $Z_{{\alpha^*}'-2}=Z_{{\alpha^*}'-4}$. Also, whenever $Z_{{\alpha^*}'-4}\ne Z_{{\alpha^*}'-6}$, $V_{\alpha^*}^{(4)}\not\le Q_{{\alpha^*}'-4}$ and $V_{\alpha^*-4}^{(2)}\le Q_{{\alpha^*}'-6}$

Suppose that $Z_{\alpha'-4}\ne Z_{\alpha'-6}$ so that there is a critical pair $(\alpha-4, \alpha'-4)$ and $O^p(R_{\beta})$ centralizes $V_{\beta}^{(5)}$. We may also assume that $O^p(R_{\alpha})$ does not centralize $V_{\alpha}^{(2)}$. Since $V_{\alpha'}\not \le Q_{\beta}$, there is ${\alpha'+1}\in\Delta(\alpha')$ such that $({\alpha'+1}, \beta)$ is a critical pair. Suppose that $V_{{\alpha'+1}}^{(2)}$ centralizes $Z_{\beta}$. Since $Z_{\alpha+2}=Z_{\beta}\times R\ne Z_{\alpha+4}$, we have that $V_{{\alpha'+1}}^{(2)}$ centralizes $V_{\alpha+3}$ and $V_{{\alpha'+1}}^{(2)}=Z_{{\alpha'+1}}(V_{{\alpha'+1}}^{(2)}\cap Q_{\beta})$. In particular, $V_{{\alpha'+1}}^{(2)}\cap Q_{\beta}\not\le Q_{\alpha}$, otherwise $V_{{\alpha'+1}}^{(2)}$ is normalized by $L_{\alpha'}=\langle V_{\beta}, Q_{{\alpha'+1}}, R_{\alpha'}\rangle$. But now, $[V_{\alpha}^{(2)}\cap Q_{\alpha'}\cap Q_{{\alpha'+1}}, V_{{\alpha'+1}}^{(2)}\cap Q_{\beta}]\le Z_{{\alpha'+1}}\cap V_{\alpha}^{(2)}$ and since $Z_{\alpha'}\not\le V_{\alpha}^{(2)}$ by \cref{VB2}, we infer that $[V_{\alpha}^{(2)}\cap Q_{\alpha'}\cap Q_{{\alpha'+1}}, V_{{\alpha'+1}}^{(2)}\cap Q_{\beta}]=\{1\}$ and $V_{\alpha}^{(2)}/Z_{\alpha}$ is an FF-module for $\bar{L_{\alpha}}$, a contradiction by \cref{GoodAction1}. Thus, it suffices to prove that $Z_{\beta}$ is centralized by $V_{\alpha'}^{(3)}$. Since $V_{\alpha}^{(4)}\not\le Q_{\alpha'-4}$, $\{1\}\ne [V_{\alpha-3}, V_{\alpha'-4}]\le Z_{\alpha-2}\cap V_{\alpha'-4}$. If $[V_{\alpha-3}, V_{\alpha'-4}]=Z_{\alpha-1}$, then $Z_{\alpha-1}=Z_{\beta}\le V_{\alpha'-4}$, for otherwise $Z_{\alpha}\le Q_{\alpha'}$. Since $b>7$, this leads to a contradiction. Thus, $Z_{\alpha-1}<[V_{\alpha-3}, V_{\alpha'-4}]Z_{\alpha-1}\le Z_{\alpha-2}\ne Z_{\alpha}$ and $V_{\alpha'-4}^{(3)}$ centralizes $V_{\alpha-1}=Z_{\alpha-2}Z_{\alpha}$. Thus, since $V_{\alpha'-4}^{(3)}/V_{\alpha'-4}$ contains a non-central chief factor, $[V_{\alpha-3}, V_{\alpha'-4}]<[V_{\alpha-3}, V_{\alpha'-4}^{(3)}]\le Z_{\alpha-2}$ so that $Z_{\alpha-2}\le V_{\alpha'-4}^{(3)}$. In particular, $Z_{\alpha-1}\le V_{\alpha'-4}^{(3)}$ and since $b>7$, we have that $Z_{\alpha-1}=Z_{\beta}\le V_{\alpha'-4}^{(3)}$. Since $b>9$, $V_{\alpha'}^{(3)}$ centralizes $V_{\alpha'-4}^{(3)}$ so that $V_{{\alpha'+1}}^{(2)}$ centralizes $Z_{\beta}$, as required.

Thus, we have shown that whenever $b>7$ and $V_{\alpha'}\not\le Q_{\beta}$, $Z_{\alpha'-2}=Z_{\alpha'-4}=Z_{\alpha'-6}$ and there is a critical pair $(\beta-5, \alpha'-4)$. Then, as  $[V_{\beta-4}, V_{\alpha'-4}]\ne Z_{\alpha'-6}$ and $Z_{\alpha'-5}\ne Z_{\alpha'-7}$, $V_{\beta-5}^{(2)}\le Q_{\alpha'-6}$ and by \cref{VnotB}, we have that $V_{\alpha'-4}\not\le Q_{\beta-4}$. In particular, $(\beta-5, \alpha'-4)$ satisfies the same hypothesis as $(\alpha, \alpha')$ and applying the same methodology as above, we infer that $Z_{\alpha'-6}=Z_{\alpha'-8}=Z_{\alpha'-10}$. Applying this iteratively, we deduce that $Z_{\alpha'-2}=\dots=Z_{\beta}$. In particular, $Z_{\alpha'-2}=Z_{\beta}\ne R\le Z_{\alpha'-1}\cap Z_{\alpha+2}$ so that $Z_{\alpha'-1}=Z_{\alpha+2}$. But then $[V_{\beta}^{(3)}, V_{\alpha'}]=Z_{\alpha'-1}=Z_{\alpha+2}\le V_{\beta}$ and $O^p(L_{\beta})$ centralizes $V_{\beta}^{(3)}/V_{\beta}$, a final contradiction.
\end{proof}

\begin{lemma}
Suppose that $C_{V_\beta}(V_{\alpha'})=V_\beta \cap Q_{\alpha'}$. Then $b\ne 7$. 
\end{lemma}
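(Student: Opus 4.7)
The plan is to assume $b=7$ for contradiction and exploit the substantial structural information already established, running a strategy parallel to (but tighter than) the proof of \cref{bnot7}. Since we have already established $b\leq 7$ in \cref{bnot7}, I will assume $b=7$ throughout. By \cref{VnotB} we have $V_{\alpha'}\not\le Q_\beta$, and by \cref{VB2} we have $Z_{\alpha'-1}\le V_\beta^{(3)}\le Q_{\alpha'-1}$ with $O^p(R_\beta)$ centralizing $V_\beta^{(3)}$. Since $V_{\alpha'}\not\le Q_\beta$, $Z_{\alpha'-1}\neq Z_{\alpha'-3}$ (otherwise \cref{SimExt} forces $V_{\alpha'}\le V_{\alpha'-1}^{(2)}=V_{\alpha'-3}^{(2)}\le Q_\alpha$, contradicting the definition of a critical pair together with $V_{\alpha'}\not\le Q_\beta$), so $V_{\alpha'-2}=Z_{\alpha'-1}Z_{\alpha'-3}$.

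First I would invoke \cref{GoodCritPair} (which is valid for $b>5$) to produce, after relabelling along the infinite backward critical path, a critical pair $(\alpha,\alpha')$ with $V_\alpha^{(2)}\le Q_{\alpha'-2}$, and I would then apply \cref{VA1} to conclude $|V_\beta|=q^3$, $R=Z_\beta\le Z_{\alpha'-1}$, and either $V_\alpha^{(2)}\le Q_{\alpha'-1}$ with $Z_{\alpha'}\le V_\alpha^{(2)}$, or $Z_{\alpha'-1}=Z_{\alpha'}\times Z_\beta$ and $V_{\alpha'}\le Q_\beta$ (the latter is immediately ruled out by \cref{VnotB}). So $V_\alpha^{(2)}\le Q_{\alpha'-1}$ and $Z_{\alpha'}\le V_\alpha^{(2)}\le Z(V_\alpha^{(4)})$.

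Next, since $b=7$ we have $\alpha'-4=\alpha+3$ and $V_\beta^{(5)}$ contains $V_{\alpha'-2}$. I would analyse $V_\beta^{(5)}$ and $V_\alpha^{(4)}$ in parallel to the $b>7$ argument: if $V_\beta^{(5)}\le Q_{\alpha'-4}$, then $V_\beta^{(5)}\cap Q_{\alpha'-3}$ centralises $V_{\alpha'-2}$ (because $Z_{\alpha'-1}\le V_\beta^{(3)}$), which together with \cref{GoodAction4} forces $O^p(R_\beta)$ to centralise $V_\beta^{(5)}$; coupled with $V_\alpha^{(4)}\not\le Q_{\alpha'-4}$ (otherwise $V_\alpha^{(4)}\normaleq L_\beta$, contradicting \cref{push} or \cref{VBGood}), this produces a critical pair $(\alpha-4,\alpha'-4)$ that can be subjected to the dichotomy $Z_{\alpha'-2}=Z_{\alpha'-4}$ versus $Z_{\alpha'-2}\ne Z_{\alpha'-4}$. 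In both sub-cases one applies \cref{SimExt} to equalities $Z_{\alpha'-1-2k}=Z_{\alpha'-3-2k}$ forced by the centralisation of $V_{\alpha'-2}$ (or of $V_{\alpha'-4}$) by $V_\alpha^{(4)}$, to collapse $Z_\beta$ onto a class that violates the critical-pair condition. If on the other hand $V_\beta^{(5)}\not\le Q_{\alpha'-4}$, then exactly as in the final display of \cref{bnot7} (which is valid here because $b\geq 7$), $Z_{\alpha'-2}\le Z_{\alpha'-1}$ is centralised by $V_\beta^{(5)}$ and one derives $Z_{\alpha'-2}=Z_{\alpha'-4}$, so that \cref{SimExt} applied to $V_{\alpha'-2}=V_{\alpha'-4}$ yields $V_{\alpha'}\le V_{\alpha'-2}^{(3)}=V_{\alpha'-4}^{(3)}\le Q_\alpha$, a contradiction.

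The main obstacle will be the sub-case of the first branch where $Z_{\alpha'-2}=Z_{\alpha'-4}$: here $b=7$ leaves only one "step" to play with (the backward critical pair $(\alpha-4,\alpha'-4)$), so chains of equalities of the form $Z_{\alpha'-1-2k}=Z_{\alpha'-3-2k}$ cannot be iterated as freely as in $b>7$. The resolution will be to run the analysis of $R^\dagger=[V_{\alpha'-4},V_{\alpha-3}]$ from \cref{TricAction} (which was proved only for $b>7$ but whose $b=7$ analogue is exactly what is needed) to force $Z_{\alpha-1}=Z_\beta$ or $R^\dagger\not\le Z_\alpha$; combined with $Z_{\alpha'-1}=Z_{\alpha'-3}$ being excluded, this collapses $V_\beta=V_{\alpha+3}\le Q_{\alpha'}$ via \cref{SimExt}, contradicting $Z_\beta\not\le Q_{\alpha'}$ for the critical pair $(\alpha'+1,\beta)$ (whose existence follows from $V_{\alpha'}\not\le Q_\beta$). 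This finishes the proof.
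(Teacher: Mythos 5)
The central difficulty is that your step~4 invokes \cref{VA1}, but \cref{VA1} has $V_{\alpha'}\le Q_{\beta}$ as a \emph{hypothesis}, not merely a possible conclusion. You have already deduced $V_{\alpha'}\not\le Q_{\beta}$ from \cref{VnotB} (which holds for $b>5$, hence for the relabelled critical pair from \cref{GoodCritPair} as well), so \cref{VA1} simply cannot be applied; you cannot ``rule out the latter alternative'' of that lemma because the whole lemma is vacuous here. Concretely, the conclusion you extract, ``$V_{\alpha}^{(2)}\le Q_{\alpha'-1}$ with $Z_{\alpha'}\le V_{\alpha}^{(2)}$,'' directly contradicts \cref{VB2}, which you invoked one sentence earlier and which asserts $Z_{\alpha'}\not\le V_{\alpha}^{(2)}$. (The byproduct $|V_{\beta}|=q^3$ is salvageable via \cref{VB1}/\cref{VBp3}, but the rest of the chain is not.) Similarly, your later appeal to the $b=7$ ``analogue'' of \cref{TricAction} is problematic: that lemma's hypothesis ``$V_{\alpha'}\le Q_{\beta}$'' is again ruled out by \cref{VnotB}, and the paper's \cref{TricAction} proof in fact devotes its second half precisely to eliminating the case $b=7$ \emph{when} $V_{\alpha'}\le Q_{\beta}$; it does not supply an analogue for the complementary case.

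The paper's own proof proceeds quite differently. It begins with a push-up argument that is special to $b=7$: since $V_{\alpha'-2}=Z_{\alpha'-1}Z_{\alpha'-3}\le V_{\beta}^{(3)}$ (possible only because $d(\beta,\alpha'-3)\le 3$) and $V_{\beta}^{(3)}$ is abelian, one gets $C_{Q_\beta}(V_\beta^{(3)})=V_\beta^{(3)}(C_{Q_\beta}(V_\beta^{(3)})\cap Q_{\alpha'})$, hence $O^p(L_\beta)$ centralizes $C_{Q_\beta}(V_\beta^{(3)})/V_\beta^{(3)}$; combining this with $O^p(R_\beta)$ centralizing $V_\beta^{(3)}$ (from \cref{VB2}) and the three-subgroup lemma yields $R_\beta=Q_\beta$ and $\bar{L_\beta}\cong\SL_2(q)$. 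The remainder of the proof then carefully analyses the critical pair $(\alpha'+1,\beta)$ to force $Z_{\alpha'-2}=Z_{\alpha'-4}$, $V_{\alpha'-2}=V_{\alpha'-4}$, and ultimately a contradiction via normality of $V_{\alpha}^{(2)}=V_{\beta}[C_\beta,V_\lambda^{(3)}V_\beta^{(3)}]$ under $L_\beta$. If you want to salvage your route, the first thing you would need to establish is the $R_\beta=Q_\beta$ reduction, rather than trying to replay the \cref{VA1}/\cref{TricAction} machinery outside its range of validity.
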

\begin{proof}
By \cref{VnotB} and \cref{bnot7}, we have that $V_{\alpha'}\not\le Q_{\beta}$ and $b=7$. Since $V_{\alpha'-2}=Z_{\alpha'-1}Z_{\alpha'-3}\le V_{\beta}^{(3)}$ and $V_{\beta}^{(3)}$ is abelian, we have that $C_{Q_{\beta}}(V_{\beta}^{(3)})=V_{\beta}^{(3)}(C_{Q_{\beta}}(V_{\beta}^{(3)})\cap Q_{\alpha'})$ and since $Z_{\alpha'}\le V_{\beta}^{(3)}$, $O^p(L_{\beta})$ centralizes $C_{Q_{\beta}}(V_{\beta}^{(3)})/V_{\beta}^{(3)}$. In particular, $O^p(R_{\beta})$ centralizes $C_{Q_{\beta}}(V_{\beta}^{(3)})$. But now, by the three subgroup lemma, for $r\in O^p(R_{\beta})$ of order coprime to $p$, $[r, Q_{\beta}, V_{\beta}^{(3)}]=\{1\}$ and $r$ centralizes $Q_{\beta}$. Thus, $R_{\beta}=Q_{\beta}$ and $\bar{L_{\beta}}\cong\SL_2(q)$.

Let $\alpha'+1\in\Delta(\alpha')$ such that $Z_{\alpha'+1}\not\le Q_{\beta}$. Then, $V_{\alpha}^{(2)}\cap Q_{\alpha'}\not\le Q_{\alpha'+1}$, for otherwise $V_{\alpha'}$ normalizes $V_{\alpha}^{(2)}$, a contradiction for then $L_{\beta}=\langle V_{\alpha'}, Q_{\alpha}, Q_{\beta}\rangle$ normalizes $V_{\alpha}^{(2)}$. Notice that $[V_{\alpha'+1}^{(2)}, V_{\alpha+3}]\le Z_{\alpha+4}\cap Z_{\alpha'+1}$. Since $(\alpha'+1, \beta)$ is a critical pair, we have that $Z_{\alpha+4}\cap Z_{\alpha'+1}=Z_{\alpha'-3}\cap Z_{\alpha'+1}\le Z_{\alpha'}$. But if $Z_{\alpha'}\le Z_{\alpha'-3}$, since $Z_{\alpha'-1}\ne Z_{\alpha'-3}$, we deduce that $Z_{\alpha'}=Z_{\alpha'-2}\ne R$. Then $R\ne Z_{\alpha+3}$ for otherwise $Z_{\alpha'-1}=Z_{\alpha'-2}R=Z_{\alpha'-2}Z_{\alpha'-4}=Z_{\alpha'-3}$. Thus, $V_{\alpha'+1}^{(2)}$ centralizes $Z_{\alpha+3}R$, $Z_{\alpha+3}<Z_{\alpha+3}R$ and since $Z_{\alpha+2}\ne Z_{\alpha+4}$, we deduce that $[V_{\alpha'+1}^{(2)}, V_{\alpha+3}]=\{1\}$. Thus, whether $Z_{\alpha'}\le Z_{\alpha+4}$ or not, $V_{\alpha'+1}^{(2)}\le Q_{\alpha+2}$ and $V_{\alpha'+1}^{(2)}=Z_{\alpha'+1}(V_{\alpha'+1}^{(2)}\cap Q_{\beta})$ and since $V_{\alpha'+1}^{(2)}\not\normaleq L_{\alpha'}=\langle V_{\beta}, Q_{\alpha'+1}, Q_{\alpha'}\rangle$, we may assume that $V_{\alpha'+1}^{(2)}\cap Q_{\beta}\not\le Q_{\alpha}$ and $Z_{\beta}\not\le V_{\alpha'+1}^{(2)}$. But now, $[V_{\alpha'+1}^{(2)}\cap Q_{\beta}, V_{\alpha}^{(2)}\cap Q_{\alpha'}\cap Q_{\alpha'+1}]\le Z_{\alpha'+1}\cap V_{\alpha}^{(2)}$. If $Z_{\alpha'}\cap V_{\alpha}^{(2)}\ne\{1\}$, then since $Z_{\alpha'}V_{\alpha}^{(2)}\normaleq L_{\beta}=\langle V_{\alpha'}, Q_{\alpha}\rangle$, we deduce that $V_{\alpha}^{(2)}$ has index strictly less than $q$ in $V_{\beta}^{(3)}$ and is centralized, modulo $V_{\beta}$, by $Q_{\alpha}$, a contradiction by \cref{SEFF} since $V_{\beta}^{(3)}/V_{\beta}$ contains a non-central chief factor for $L_{\beta}$. Hence, $[V_{\alpha'+1}^{(2)}\cap Q_{\beta}, V_{\alpha}^{(2)}\cap Q_{\alpha'}\cap Q_{\alpha'+1}]=\{1\}$ and $V_{\alpha}^{(2)}/Z_{\alpha}$ is an FF-module for $\bar{L_{\alpha}}$. Then by \cref{GoodAction1}, $O^p(R_{\alpha})$ centralizes $V_{\alpha}^{(2)}$. 

It follows from the arguments above, that if $Z_{\alpha+3}=R\ne Z_{\alpha'-2}$, then $Z_{\alpha'-1}=Z_{\alpha'-3}$ and we have a contradiction. Similarly, $Z_{\alpha'-2}=R\ne Z_{\alpha+3}$ yields $Z_{\alpha+2}=Z_{\alpha+4}$, another contradiction. Suppose that $Z_{\alpha+3}\ne R\ne Z_{\alpha'-2}$. In particular, $R\not\le Z_{\alpha'-3}$. But now, $Z_{\alpha'-3}<RZ_{\alpha'-3}\le V_{\alpha'-2}\cap V_{\alpha'-4}$ and $V_{\alpha'-2}=V_{\alpha'-4}$. If $Z_{\alpha'-2}\ne Z_{\alpha'-4}$ then $L_{\alpha'-3}=\langle R_{\alpha'-3}, Q_{\alpha'-2}, Q_{\alpha'-4}\rangle$ normalizes $V_{\alpha'-2}$, a contradiction. Thus, $Z_{\alpha'-2}=Z_{\alpha'-4}=Z_{\alpha+3}$ so that $Z_{\alpha'-1}=RZ_{\alpha'-2}=RZ_{\alpha+3}=Z_{\alpha+2}\le V_{\beta}$ from which it follows that $V_{\alpha'}$ centralizes $V_{\beta}^{(3)}/V_{\beta}$, a contradiction. Thus, $R=Z_{\alpha'-2}=Z_{\alpha'-4}=Z_{\alpha+3}$ and by \cref{SimExt}, we conclude that $V_{\alpha'-2}=V_{\alpha'-4}$.

We may assume that $V_{\alpha}^{(4)}$ does not centralize $Z_{\alpha'-3}$, for otherwise $V_{\alpha}^{(4)}$ centralizes $V_{\alpha'-2}=V_{\alpha'-4}=Z_{\alpha'-3}Z_{\alpha+2}$, $V_{\alpha}^{(4)}=V_{\beta}^{(3)}(V_{\alpha}^{(4)}\cap Q_{\alpha'})$ and $V_{\alpha}^{(4)}\normaleq L_{\beta}=\langle V_{\alpha'}, Q_{\alpha}\rangle$. Choose $\alpha-4\in\Delta^{(4)}(\alpha)$ such that $[Z_{\alpha-4}, Z_{\alpha'-3}]\ne\{1\}$. If $Z_{\alpha'-3}\le Q_{\alpha-3}$, then $Z_{\alpha-3}=[Z_{\alpha-4}, Z_{\alpha'-3}]=Z_{\alpha+3}$ so that $Z_{\beta}\cap Z_{\alpha-3}=\{1\}$. Now, $Z_{\alpha-3}\le V_{\alpha-1}\cap V_{\beta}$ and
since $Z_{\alpha-3}\cap Z_{\alpha}\le Q_{\alpha'}\cap Z_{\alpha}=Z_{\beta}$, we deduce that $V_{\alpha-1}=Z_{\alpha}Z_{\alpha-3}=V_{\beta}$. Since $O^p(R_{\alpha})$ centralizes $V_{\alpha}^{(2)}$, $Z_{\alpha-1}=Z_{\beta}$, for otherwise $V_{\beta}\normaleq L_{\alpha}=\langle R_{\alpha}, Q_{\alpha-1}, Q_{\beta}\rangle$. 

If $Z_{\alpha'-3}\not\le Q_{\alpha-3}$ then $(\alpha'-3, \alpha-3)$ is a critical pair. By \cref{VnotB}, we may assume that either $(\alpha'-3, \alpha-3)$ satisfies the same hypothesis as $(\alpha, \alpha')$, in which case $Z_{\alpha-1}=Z_{\beta}$; or $V_{\alpha'-3}^{(2)}\not\le Q_{\alpha-1}$ and by \cref{Vna-2}, either $[V_{\alpha'-4}, V_{\alpha-3}]=Z_{\alpha-1}\le Z_{\alpha+2}$, and again $Z_{\alpha-1}=Z_{\beta}$, or $Z_{\alpha-2}=Z_{\alpha}$, and by \cref{SimExt}, we have a contradiction. 

Thus, we have shown that whenever there is $Z_{\alpha-4}$ such that $Z_{\alpha-4}$ does not centralizes $Z_{\alpha'-3}$, we have $Z_{\alpha-1}=Z_{\beta}$. Choose $\lambda\in\Delta(\alpha)$ such that $Z_{\lambda}\ne Z_{\beta}$ so that $V_{\lambda}^{(3)}$ centralizes $Z_{\alpha'-3}$. Then $V_{\lambda}^{(3)}$ centralizes $V_{\alpha'-4}=V_{\alpha'-2}$ so that $V_{\lambda}^{(3)}=V_{\beta}(V_{\lambda}^{(3)}\cap Q_{\alpha'})$. Then, $V_{\lambda}^{(3)}V_{\beta}^{(3)}\normaleq L_{\beta}=\langle Q_{\alpha}, V_{\alpha'}\rangle$. In particular, $[C_{\beta}, V_{\lambda}^{(3)}V_{\beta}^{(3)}]$ is a normal subgroup $L_{\beta}$ contained in $[C_{\beta}, V_{\beta}^{(3)}][Q_{\alpha}, V_{\lambda}^{(3)}]$.  Noticing that $[V_{\alpha'+1}^{(2)}\cap Q_{\beta}, V_{\alpha}^{(2)}]=[V_{\alpha'+1}\cap Q_{\beta}, V_{\beta}(V_{\alpha}^{(2)}\cap Q_{\alpha'})]=Z_{\beta}R=Z_{\alpha+2}$, we have that $[S, V_{\alpha}^{(2)}]\le V_{\beta}$ and $|V_{\alpha}^{(2)}|=q^4$. But then $[Q_{\beta}, V_{\beta}^{(3)}]=V_{\beta}$ and since $[V_{\alpha'}, V_{\beta}^{(3)}]=Z_{\alpha'-1}\le V_{\alpha+3}\le V_{\alpha+2}^{(2)}$, we must have that $|V_{\beta}^{(3)}|=q^5$ and $[Q_{\alpha}, V_{\beta}^{(3)}]=V_{\alpha}^{(2)}$. Thus, $V_{\beta}\not\ge[C_{\beta}, V_{\lambda}^{(3)}V_{\beta}^{(3)}]\le V_{\alpha}^{(2)}$ and it follows that $V_{\alpha}^{(2)}=V_{\beta}[C_{\beta}, V_{\lambda}^{(3)}V_{\beta}^{(3)}]\normaleq L_{\beta}$, a contradiction.
\end{proof}

Combining all the results in this subsection thus far, we have the following result.

\begin{proposition}
Suppose that $C_{V_\beta}(V_{\alpha'})=V_\beta \cap Q_{\alpha'}$. Then $b\leq 5$.
\end{proposition}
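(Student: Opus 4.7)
The plan is to observe that this proposition is essentially a packaging of the two immediately preceding results, which together exhaust the odd values of $b$ up to~$5$. First I would recall from \cref{bodd} that under the standing hypothesis of this section we have $Z_{\alpha'} \le Q_\alpha$, $Z_\beta = \Omega(Z(S)) = \Omega(Z(L_\beta))$, and $b$ is odd; so the only possible values of $b$ are $1, 3, 5, 7, 9, \ldots$.

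Next I would apply \cref{bnot7}, which says that under the hypothesis $C_{V_\beta}(V_{\alpha'}) = V_\beta \cap Q_{\alpha'}$ we have $b \leq 7$. Since $b$ is odd, this leaves only the possibilities $b \in \{1, 3, 5, 7\}$. Then I would invoke the lemma immediately preceding this proposition, which rules out $b = 7$ under the same hypothesis. Combining these eliminations gives $b \in \{1, 3, 5\}$, that is $b \leq 5$.

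Since the substantive work has already been done in \cref{VnotB1} through the $b \neq 7$ lemma (controlling successive critical pairs, extracting FF-module structure on $V_\beta^{(3)}/V_\beta$, forcing $\bar{L_\beta} \cong \SL_2(q)$ via the three subgroups lemma and coprime action, and running the iterative ``$Z_{\alpha'-2k} = Z_{\alpha'-2k-2}$'' argument along the critical path), there is no real obstacle here. The only care needed is to note the statement of the proposition has no explicit lower bound on $b$: the case $b = 1$ is vacuous, since $1 \leq 5$, and the arguments of \cref{bnot7} and the $b \neq 7$ lemma are only used to exclude $b \in \{7, 9, 11, \ldots\}$. Thus the proof reduces to a one-line citation of the two prior lemmas.
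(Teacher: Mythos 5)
Your proposal is correct and matches the paper's intent exactly: the paper introduces the proposition with the phrase ``Combining all the results in this subsection thus far'' and supplies no proof, because it is precisely the combination of $b$ odd (\cref{bodd}), $b\leq 7$ (\cref{bnot7}), and the immediately preceding $b\neq 7$ lemma. Your extra remark that $b=1$ needs no attention is accurate but unnecessary, as the claim is an upper bound only.
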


In conjunction with the results in earlier sections, we have now proved that \cref{MainHyp} implies that $b\leq 5$. In the next lemmas and proposition, we show this bound is tight by witnessing an example with $b=5$. In \cite{Greenbook} and \cite{F3}, this configuration is shown to be parabolic isomorphic to $\mathrm{F}_3$. In our case, it is demonstrated in \cite{ExoSpo} that this leads to an exotic fusion system. The presence of this fusion system may go some way to explaining why it is so difficult to uniquely determine $\mathrm{F}_3$ from a purely $3$-local perspective.

\begin{lemma}\label{b=5i}
Suppose that $C_{V_\beta}(V_{\alpha'})=V_\beta \cap Q_{\alpha'}$ and $b=5$. Then $V_{\alpha'}\not \le Q_{\beta}$.
\end{lemma}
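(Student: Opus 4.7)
I will argue by contradiction: assume $V_{\alpha'}\le Q_\beta$, so that $R=[V_{\alpha'},V_\beta]\le V_\beta\cap V_{\alpha'}\le Z_{\alpha+2}\cap Z_{\alpha'-1}$. Since $b=5>3$, the general structural results \cref{VA1}, \cref{NotNatural}, \cref{SL2VlQ} and \cref{SL2VnQ} are all available, together with the ``hard'' lemmas \cref{GoodAction1} and \cref{SimExt}. I split on whether $V_\alpha^{(2)}\le Q_{\alpha'-2}$.

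First suppose $V_\alpha^{(2)}\le Q_{\alpha'-2}$. Then \cref{VA1} applies and yields $|V_\beta|=q^3$, $R=Z_\beta\le Z_{\alpha'-1}$, together with a dichotomy: either (i) $V_\alpha^{(2)}\le Q_{\alpha'-1}$ with $[V_\alpha^{(2)}\cap Q_{\alpha'},V_{\alpha'}]=Z_{\alpha'}\le V_\alpha^{(2)}$, or (ii) $V_\alpha^{(2)}\not\le Q_{\alpha'-1}$ with $R=Z_\beta=Z_{\alpha'-2}$ and $Z_{\alpha'-1}=Z_{\alpha'}\times Z_\beta$. In each branch $V_\alpha^{(2)}/Z_\alpha$ is an FF-module for $\bar L_\alpha$ (using \cref{SEFF} together with the index bound forced by the displayed commutator identity), so \cref{GoodAction1} gives that $O^p(R_\alpha)$ centralizes $V_\alpha^{(2)}$. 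In branch (ii), the equality $Z_\beta=Z_{\alpha'-2}$ together with \cref{SimExt} (applied along the critical path, since $\alpha'-2=\alpha+3$) forces $V_\beta=V_{\alpha'-2}\le Q_{\alpha'}$, contradicting that $(\alpha,\alpha')$ is a critical pair. In branch (i) a parallel computation identifies $Z_\beta$ with some $Z_{\alpha'-1}$-adjacent subgroup along the path, and \cref{SimExt} again collapses $V_\beta$ into $Q_{\alpha'}$.

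The harder case is $V_\alpha^{(2)}\not\le Q_{\alpha'-2}$, where there is a new critical pair $(\alpha-2,\alpha'-2)$. By \cref{SL2VlQ}, \cref{SL2VnQ} and \cref{NotNatural} this new pair still satisfies $C_{V_{\alpha-1}}(V_{\alpha'-2})=V_{\alpha-1}\cap Q_{\alpha'-2}$, so one of the two subsidiary hypotheses of the current subsection applies to it. If $V_{\alpha'-2}\le Q_{\alpha-1}$ the same analysis as above runs with $(\alpha,\alpha')$ replaced by $(\alpha-2,\alpha'-2)$, so iterating (mimicking the bookkeeping of \cref{GoodCritPair}) eventually forces $Z_{\alpha'-2k}=Z_{\alpha'-2k-2}$ for suitable $k$, which via \cref{SimExt} produces the same contradiction $V_{\alpha-2k+1}\le Q_{\alpha'-2k}$. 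If instead $V_{\alpha'-2}\not\le Q_{\alpha-1}$, then $(\alpha'-2,\alpha-1)$ is a critical pair with $V_{{\alpha^*}'}\not\le Q_{\beta^*}$, and I would exploit the structural conclusions of \cref{Vna-2} applied to it (in the form $R^\dagger=Z_{\alpha-1}$ or $Z_{\alpha}=Z_{\alpha-2}$) to force $Z_\beta$ to coincide with another $Z$-subgroup along the path, once more concluding by \cref{SimExt}.

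The main obstacle will be the second case, since for $b=5$ there is very little ``room'' to iterate backwards through critical pairs before the path collapses onto itself; the delicate point is to keep track of which of $Z_\beta$, $Z_{\alpha+3}$, $Z_{\alpha'-2}$ and $R$ coincide and to match these coincidences to one of the $Z_\lambda=Z_\mu$ hypotheses of \cref{SimExt}. Once such a coincidence is produced, \cref{SimExt} uniformly supplies the contradiction that $V_\beta\le Q_{\alpha'}$, violating the criticality of $(\alpha,\alpha')$.
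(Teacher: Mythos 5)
The first half of your plan — assume $V_{\alpha'}\le Q_\beta$, invoke \cref{VA1} when $V_\alpha^{(2)}\le Q_{\alpha'-2}$, deduce $|V_\beta|=q^3$ and $O^p(R_\alpha)$ centralizes $V_\alpha^{(2)}$ via \cref{GoodAction1}, and use \cref{SimExt} to rule out $Z_\beta=Z_{\alpha'-2}$ — tracks the paper's first paragraph correctly. But even here you gloss the "parallel computation" for branch (i): the paper's route from $Z_\beta\neq Z_{\alpha'-2}$ is to note that both $Z_\beta$ and $Z_{\alpha'-2}$ lie in the order-$q^2$ groups $Z_{\alpha'-1}\cap Z_{\alpha+2}$, forcing $Z_{\alpha'-1}=Z_{\alpha+2}$, and then to invoke \cref{GoodAction3} (not \cref{GoodAction1}) to get $O^p(R_{\alpha'})$ centralizing $V_{\alpha'}^{(3)}$ so that a second application of \cref{SimExt} (with $Z_{\alpha'-1}=Z_{\alpha+2}$) yields $V_\beta\le V_{\alpha'-1}^{(2)}\le Q_{\alpha'}$. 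You never invoke \cref{GoodAction3} at all, and without it the final \cref{SimExt} application is unavailable.

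The genuine gap is in your treatment of $V_\alpha^{(2)}\not\le Q_{\alpha'-2}$. You propose to iterate backwards through critical pairs in the style of \cref{GoodCritPair} and \cref{VBp3}, and to invoke \cref{Vna-2} when the new critical pair has $V_{{\alpha^*}'}\not\le Q_{\beta^*}$. But \cref{Vna-2} is stated (and proved) only under the hypothesis $b>5$, and \cref{GoodCritPair} and \cref{VBp3} are likewise in the $b>5$ subcase; none of them are available when $b=5$. You acknowledge this yourself ("very little room to iterate"), but that candor does not supply a replacement argument. The paper handles this case by splitting further on $|V_\beta|=q^3$ versus $|V_\beta|\ne q^3$: in the former, a direct commutator calculation $[V_{\alpha'-2},V_\alpha^{(2)}]\le Z_\alpha\cap Z_{\alpha+2}=Z_\beta=Z_{\alpha'-2}$ forces $V_\alpha^{(2)}\le Q_{\alpha'-2}$ after all; in the latter, the argument brings in $V^\alpha$ (defined only when $|V_\beta|\neq q^3$), shows both $V_\alpha^{(2)}/V^\alpha$ and $V^\alpha/Z_\alpha$ are FF-modules, applies \cref{GoodAction1} in that form, and deploys the "Moreover" clause of \cref{VBGood} plus an order count of $Z_{\alpha'-1}Z_{\alpha'-1}^g$ in $L_{\alpha'}$ to finish. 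You do not introduce $V^\alpha$ or \cref{VBGood}, and without them your iteration idea has no terminating argument at $b=5$.
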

\begin{proof}
Assume that $V_{\alpha'}\le Q_{\beta}$. If $V_{\alpha}^{(2)}\le Q_{\alpha'-2}$, then it follows from \cref{VA1} that $|V_{\beta}|=q^3$ and $O^p(R_{\alpha})$ centralizes $V_{\alpha}^{(2)}$. Then, $Z_{\beta}=R\le Z_{\alpha'-1}$ and since $V_{\beta}\ne V_{\alpha'-2}$, by \cref{SimExt}, we may assume that $Z_{\beta}\ne Z_{\alpha'-2}$ so that $Z_{\alpha'-1}=Z_{\alpha+2}$. But now, $V_{\alpha'}^{(3)}\cap Q_{\alpha'-2}\le Q_{\alpha+2}$ so that $[V_{\alpha'}^{(3)}\cap Q_{\alpha'-2}, V_{\beta}]\le Z_{\alpha+2}=Z_{\alpha'-1}\le V_{\alpha'}$. By \cref{GoodAction3}, $O^p(R_{\alpha'})$ centralizes $V_{\alpha'}^{(3)}$ and \cref{SimExt} applied to $Z_{\alpha'-1}=Z_{\alpha+2}$ implies that $V_{\beta}\le V_{\alpha+2}^{(2)}=V_{\alpha'-1}^{(2)}\le Q_{\alpha'}$, a contradiction.

Suppose now that $V_{\alpha'}\le Q_{\beta}$, $|V_{\beta}|=q^3$ and $V_{\alpha}^{(2)}\not\le Q_{\alpha'-2}$. If $Z_{\beta}=R\ne Z_{\alpha'-2}$ then, as above, $Z_{\alpha'-1}=Z_{\alpha+2}$ and $[V_{\alpha'}^{(3)}\cap Q_{\alpha'-2}, V_{\beta}]\le Z_{\alpha+2}=Z_{\alpha'-1}\le V_{\alpha'}$. Then $O^p(R_{\alpha'})$ centralizes $V_{\alpha'}^{(3)}$ and \cref{SimExt} provides a contradiction. Thus, $Z_{\beta}=Z_{\alpha'-2}\ne Z_{\alpha'}$. But now, $[V_{\alpha'-2}, V_{\alpha}^{(2)}]\le Z_{\alpha}\cap Z_{\alpha+2}=Z_{\beta}=Z_{\alpha'-2}$ and $V_{\alpha}^{(2)}\le Q_{\alpha'-2}$, a contradiction.

Thus, if $V_{\alpha'}\le Q_{\beta}$ then $|V_{\beta}|\ne q^3$. Notice that if $Z_{\alpha'-2}=Z_{\beta}$, then $Z_{\beta}Z_{\beta}^gZ_{\alpha'}=Z_{\alpha'-1}Z_{\alpha'-1}^g$ is of order $q^3$ and normalized by $L_{\alpha'}=\langle V_{\beta}, V_{\beta}^g, R_{\alpha'}\rangle$, for some appropriately chosen $g\in L_{\alpha'}$, a contradiction. Now, if $Z_{\alpha'-2}\le V^\alpha$, then $V_{\beta}=Z_{\alpha+2}Z_{\alpha}C_{V_{\beta}}(O^p(L_{\beta}))\le V^\alpha$. But then $V^\alpha=V_{\alpha}^{(2)}$ and we have a contradiction. Since $[Q_{\alpha}, V_{\alpha}^{(2)}]\le V^\alpha$ and $V_{\alpha'-2}\le Q_{\alpha}$, it follows that $V_{\alpha}^{(2)}\cap Q_{\alpha'-2}$ centralizes $V_{\alpha'-2}$ and $V_{\alpha}^{(2)}\cap Q_{\alpha'-2}\le Q_{\alpha'-1}$. Since both $V_{\alpha}^{(2)}/V^\alpha$ and $V^\alpha/Z_{\alpha}$ have non-central chief factors, $[V_{\alpha}^{(2)}\cap Q_{\alpha'-2}\cap Q_{\alpha'}, V_{\alpha'}]=Z_{\alpha'}\le V_{\alpha}^{(2)}$ and both $V_{\alpha}^{(2)}/V^\alpha$ and $V^\alpha/Z_{\alpha}$ are FF-modules for $\bar{L_{\alpha}}$. Then by \cref{GoodAction1}, we have that $O^p(R_{\alpha})$ centralizes $V_{\alpha}^{(2)}$ and by \cref{SimExt}, $Z_{\alpha'}\ne Z_{\alpha'-2}$ and $Z_{\alpha'-1}\le V_{\alpha}^{(2)}$. Since $V_{\alpha}^{(2)}\not\le Q_{\alpha'-2}$, and $V_{\alpha}^{(2)}$ centralizes $Z_{\alpha'-1}Z_{\alpha+2}$. By \cref{VBGood}, we deduce that $Z_{\alpha'-1}=Z_{\alpha+2}$. But now $[V_{\beta}, V_{\alpha'}]=Z_{\beta}\le Z_{\alpha'-1}$ and $Z_{\alpha'-1}Z_{\alpha'-1}^g$ is of order $q^3$ and normalized by $L_{\alpha'}=\langle V_{\beta}, V_{\beta}^g, R_{\alpha'}\rangle$, a final contradiction. 
\end{proof}

\begin{lemma}\label{b=5ii}
Suppose that $C_{V_\beta}(V_{\alpha'})=V_\beta \cap Q_{\alpha'}$ and $b=5$. Then $|V_{\beta}|=q^3$, $R=Z_{\alpha'-2}\ne Z_{\beta}\ne Z_{\alpha'}\ne R$ and $Z_{\alpha'-1}\ne Z_{\alpha+2}$.
\end{lemma}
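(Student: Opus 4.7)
The plan is to establish this lemma as the concrete rigidification of the $b=5$, $[V_\beta\cap Q_{\alpha'},V_{\alpha'}]=\{1\}$ configuration, building on the structural data already gathered and mirroring (but simplifying) the approach used in \cref{VA1}, \cref{VB1} and \cref{VB2} for the larger-$b$ cases. The starting point is that by \cref{b=5i} we have $V_{\alpha'}\not\le Q_\beta$; consequently there exists $\alpha'+1\in\Delta(\alpha')$ with $Z_{\alpha'+1}\not\le Q_\beta$, so that $(\alpha'+1,\beta)$ is a critical pair. Via \cref{NotNatural} this new critical pair again satisfies $C_{V_{\alpha'}}(V_\beta)=V_{\alpha'}\cap Q_\beta$, and then \cref{SL2VnQ} applied to both $(\alpha,\alpha')$ and $(\alpha'+1,\beta)$ gives the symmetric structure: $L_\lambda/R_\lambda\cong\SL_2(q)$, $Z_\alpha$ is natural for $\bar L_\alpha$, $V_\beta/C_{V_\beta}(O^p(L_\beta))$ is natural for $\bar L_\beta$, and $V_{\alpha'}/C_{V_{\alpha'}}(O^p(L_{\alpha'}))$ is natural for $\bar L_{\alpha'}$. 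Since $b=5>2$, $V_\beta$ and $V_{\alpha'}$ are abelian by \cref{VAbelian}, and $V_\alpha^{(2)}$ is abelian since $b=5>4$.

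Next I would show $|V_\beta|=q^3$. Suppose not; then \cref{VBGood} forces both $V^\alpha/Z_\alpha$ and $V_\alpha^{(2)}/V^\alpha$ to contain non-central $L_\alpha$-chief factors, and correspondingly $V^\beta V_\beta\not\normaleq L_\beta$. I would first rule out $V_\alpha^{(2)}\le Q_{\alpha'-2}$ by emulating the proof of \cref{VA1}: if $V_\alpha^{(2)}\le Q_{\alpha'-1}$ one shows $V_\alpha^{(2)}/Z_\alpha$ is an FF-module with $[V_\alpha^{(2)},Q_\alpha]=Z_\alpha$, then \cref{VBGood} forces $|V_\beta|=q^3$, contradicting the assumption; in the remaining sub-case $V_\alpha^{(2)}\not\le Q_{\alpha'-1}$ one gets $Z_{\alpha'-1}=Z_{\alpha'}\times Z_\beta$, and since $b=5$ this collapses to $Z_\beta=Z_{\alpha+3}$, yielding a contradiction via \cref{SimExt}. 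Having ruled out $V_\alpha^{(2)}\le Q_{\alpha'-2}$, the argument from \cref{VB1} applied with $b=5$ (the abelian structure $V_\beta^{(3)}$, quadratic action of $V_{\alpha'}$, the subgroup $U^\beta=\langle V^\delta\mid Z_\delta=Z_\alpha,\,\delta\in\Delta(\beta)\rangle\normaleq R_\beta Q_\alpha$, and the dichotomy $U^\beta\le Q_{\alpha'-1}$ versus $U^\beta\not\le Q_{\alpha'-1}$) forces $V^\alpha V_\beta\normaleq L_\beta$, contradicting \cref{VBGood}. Hence $|V_\beta|=q^3$, which in particular gives $R\le Z_{\alpha+2}\cap Z_{\alpha'-1}$.

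With $|V_\beta|=q^3$ in hand, the remaining assertions are essentially bookkeeping in the critical path $\alpha,\beta,\alpha+2,\alpha'-2,\alpha'-1,\alpha'$. For $R\ne Z_\beta$ and $R\ne Z_{\alpha'}$: if $R=Z_\beta$ then $V_\beta/C_{V_\beta}(O^p(L_\beta))$ is a natural module that is centralized modulo $Z_\beta$ by $V_{\alpha'}\cap Q_\beta$ with $|V_{\alpha'}Q_\beta/Q_\beta|>1$, contradicting \cref{NotNatural}; the case $R=Z_{\alpha'}$ is symmetric via $(\alpha'+1,\beta)$. For $Z_\beta\ne Z_{\alpha'}$: were they equal, then as $b=5$ we would have $V_\beta$ and $V_{\alpha'}$ sharing their full center subgroups, forcing $R=Z_\beta=Z_{\alpha'}$, which is excluded. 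The equality $R=Z_{\alpha'-2}$ then falls out: $Z_{\alpha'-2}\le Z_{\alpha+2}\cap Z_{\alpha'-1}$ together with the fact (from $|V_\beta|=q^3$ and the analogous $|V_{\alpha'-2}|=q^3$ which one verifies en route) that $Z_{\alpha+2}\cap Z_{\alpha'-1}$ is exactly $Z_{\alpha'-2}$ (since otherwise $V_{\alpha'-2}$ would contain $Z_\beta Z_{\alpha'}$, forcing $|V_{\alpha'-2}|>q^3$) pins down $R=Z_{\alpha'-2}$.

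Finally, $Z_{\alpha'-1}\ne Z_{\alpha+2}$ is obtained by showing the contrary forces a contradiction via \cref{GoodAction3} and \cref{SimExt}: if $Z_{\alpha'-1}=Z_{\alpha+2}$, then $[V_{\beta}^{(3)}\cap Q_{\alpha'},V_{\alpha'}]\le Z_{\alpha+2}=Z_{\alpha'-1}\le V_{\alpha'}$, so $V_\beta^{(3)}/V_\beta$ contains a unique FF non-central chief factor; \cref{GoodAction3} then gives $O^p(R_\beta)$ centralizes $V_\beta^{(3)}$, and \cref{SimExt} applied to $Z_{\alpha'-1}=Z_{\alpha+2}$ yields $V_\beta\le V_{\alpha+2}^{(2)}=V_{\alpha'-1}^{(2)}\le Q_{\alpha'}$, contradicting $V_\beta\not\le Q_{\alpha'}$ (which holds since $Z_\alpha\le V_\beta$ and $Z_\alpha\not\le Q_{\alpha'}$). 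The main obstacle I anticipate is the first paragraph's reduction $|V_\beta|=q^3$: the ambient lemmas \cref{VA1}, \cref{VB1}, \cref{VB2} all require $b>3$ (or $b>5$) together with $V_{\alpha'}\le Q_\beta$, and here neither hypothesis aligns, so one has to carefully rerun the qrc/$U^\beta$-normality machinery tailored to $b=5$ with $V_{\alpha'}\not\le Q_\beta$, keeping track of when the symmetry between $(\alpha,\alpha')$ and $(\alpha'+1,\beta)$ can be invoked and when it cannot.
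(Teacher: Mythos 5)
Your plan correctly isolates what has to be shown and gets the bookkeeping broadly right, but the pivotal reduction $|V_\beta|=q^3$ — which is nearly the entire content of the paper's proof — is not actually carried out, and the route you sketch would not go through. You propose to emulate \cref{VA1} and \cref{VB1}, yet both are unavailable here for reasons you yourself flag: \cref{VA1} assumes $V_{\alpha'}\le Q_\beta$, which \cref{b=5i} has just excluded, and \cref{VB1} assumes $b>5$. This is not a cosmetic mismatch. The proof of \cref{VB1} leans essentially on abelianness of $V_\beta^{(3)}$ and on controlling $V_\alpha^{(4)}$, both of which require $b>6$ via \cref{VAbelian}; at $b=5$ the groups $V_\lambda^{(3)}$ are in general non-abelian (indeed $[V_\beta^{(3)},V_\beta^{(3)}]=Z_\beta\ne\{1\}$ in the realised $\mathrm{F}_3$ configuration, cf.\ \cref{b=5iv}), so the $U^\beta$-based normality argument of \cref{VB1} cannot be ``rerun'' at $b=5$. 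The paper's actual reduction, by contrast, is a dedicated case analysis which your sketch does not anticipate: assuming $|V_\beta|\ne q^3$, it first rules out $Z_{\alpha'}=Z_{\alpha'-2}$ using $U^\beta:=\langle V^\lambda\mid\lambda\in\Delta(\beta),\,Z_\lambda=Z_\alpha\rangle$ and \cref{UWNormal}, \cref{GoodAction1}, \cref{SimExt}; then in general position it shows $V^\alpha=Z_\alpha(V^\alpha\cap Q_{\alpha'})$, introduces $U_\beta:=\langle (V^\alpha)^{G_\beta}\rangle$, derives $V_\beta^{(3)}=V_\alpha^{(2)}U_\beta\normaleq L_\beta$, and extracts $R\le V_\beta\cap V_{\alpha'-2}\le Z_{\alpha+2}C_{V_\beta}(O^p(L_\beta))\cap Z_{\alpha+2}C_{V_{\alpha'-2}}(O^p(L_{\alpha'-2}))$, forcing $|V_\beta|=q^3$ after all and closing the contradiction.

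Two smaller inaccuracies in the bookkeeping. The citation of \cref{NotNatural} for $R\ne Z_\beta$ is misdirected: that lemma's conclusion is precisely the standing hypothesis of this subsection, so it yields nothing new. The correct reason (and what the paper means by ``$V_{\alpha'}\not\le Q_\beta$ \ldots so that $Z_{\alpha'}\ne R\ne Z_\beta$'') is that $R=Z_\beta\le C_{V_\beta}(O^p(L_\beta))$ would make $V_{\alpha'}$ act trivially on the faithful natural $\bar{L_\beta}$-module $V_\beta/C_{V_\beta}(O^p(L_\beta))$, while $V_{\alpha'}\not\le Q_\beta$ and $Q_\beta\in\syl_p(R_\beta)$ by \cref{SL2VnQ}. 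Similarly, your justification of $Z_\beta\ne Z_{\alpha'}$ via ``sharing their full center subgroups'' does not produce the contradiction; the correct derivation, once $|V_\beta|=q^3$, $R=Z_{\alpha'-2}\ne Z_\beta$, $R\ne Z_{\alpha'}$ and $Z_{\alpha+2}\ne Z_{\alpha'-1}$ are in hand, is an order count: $Z_\beta=Z_{\alpha'}$ would give $Z_{\alpha+2}=Z_\beta Z_{\alpha'-2}=Z_{\alpha'}Z_{\alpha'-2}=Z_{\alpha'-1}$, contradicting the last of those inequalities.
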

\begin{proof}
By \cref{b=5i}, we have that $V_{\alpha'}\not\le Q_{\beta}$ for all critical pairs $(\alpha,\alpha')$, so that $Z_{\alpha'}\ne R\ne Z_{\beta}$. Fix ${\alpha'+1}\in\Delta(\alpha')$ such that $Z_{{\alpha'+1}}\not\le Q_{\beta}$. In particular, $({\alpha'+1}, \beta)$ is a critical pair satisfying the same hypothesis as $(\alpha, \alpha')$. Assume first that $|V_{\beta}|=q^3$. Then $R\le Z_{\alpha'-1}\cap Z_{\alpha+2}$. If $Z_{\alpha'-1}=Z_{\alpha+2}$ then $V_{\beta}^{(3)}\cap Q_{\alpha'-2}\le Q_{\alpha'-1}$,  $[V_{\beta}^{(3)}\cap Q_{\alpha'-2}, V_{\alpha'}]\le Z_{\alpha'-1}\le V_{\beta}$ and it follows that $V_{\beta}^{(3)}/V_{\beta}$ contains a unique non-central chief factor for $L_{\beta}$ which, as a $\bar{L_{\beta}}$-module, is an FF-module. Then, \cref{GoodAction3} and \cref{SimExt} applied to $Z_{\alpha'-1}=Z_{\alpha+2}$ gives $V_{\alpha'}\le V_{\alpha'-1}^{(2)}=V_{\alpha+2}^{(2)}\le Q_{\beta}$, a contradiction. Now, $R\le Z_{\alpha'-1}\cap Z_{\alpha+2}$ and so $R= Z_{\alpha'-2}$, otherwise $Z_{\alpha'-1}=Z_{\alpha+2}$. Hence, we may assume that $|V_{\beta}|\ne q^3$ for the remainder of the proof.

Suppose first that $Z_{\alpha'}=Z_{\alpha'-2}$. Then $[V^\alpha, V_{\alpha'-2}]\le Z_{\alpha}\cap V_{\alpha'-2}\le Z_{\beta}\le Z_{\alpha+2}$. In particular, if $V^{\alpha}\not\le Q_{\alpha'-2}$ then $Z_{\beta}\ne Z_{\alpha'-2}$ and $Z_{\beta}Z_{\beta}^gZ_{\alpha'-2}=Z_{\alpha+2}Z_{\alpha+2}^g$ is of order $q^3$ and normalized by $L_{\alpha'-2}=\langle V^\alpha, (V^\alpha)^g, R_{\alpha'-2}\rangle$ for some appropriately chosen $g\in L_{\alpha'-2}$, a contradiction. Thus, $V^\alpha\le Q_{\alpha'-2}$. Then, either $V^\alpha=Z_{\alpha}(V^\alpha \cap Q_{\alpha'})$, or $V^{\alpha}\not\le Q_{\alpha'-1}$ and $Z_{\beta}=[V^\alpha, Z_{\alpha'-1}=Z_{\alpha'-2}$. Assume that $V^\lambda\le Q_{\alpha'-1}$ for all $\lambda\in\Delta(\beta)$ with $Z_{\lambda}=Z_{\alpha}$. Forming $U^\beta:=\langle V^\lambda\mid \lambda\in\Delta(\beta), Z_{\lambda}=Z_{\alpha}\rangle$ so that $R_{\beta}Q_{\alpha}$ normalizes $U^\beta$ by \cref{UWNormal}, we have that $U^\beta=Z_{\alpha}(U^\beta\cap Q_{\alpha'})$ and as $Z_{\alpha'}=Z_{\alpha'-2}\le V_{\beta}$, $[V_{\alpha'}, U^\beta V_{\beta}]\le V_{\beta}$ so that $U^\beta V_{\beta}\normaleq L_{\beta}=\langle V_{\alpha'}, Q_{\alpha}, R_{\beta}\rangle$ and $O^p(L_{\beta})$ centralizes $U^\beta V_{\beta}/V_{\beta}$. But then, $V^\alpha V_{\beta}\normaleq L_{\beta}$, a contradiction by \cref{VBGood}. Thus, we deduce that $Z_{\alpha'}=Z_{\alpha'-2}=Z_{\beta}$ and $V^\lambda\not\le Q_{\alpha'-1}$ for some $\lambda\in\Delta(\beta)$ with $Z_{\lambda}=Z_{\alpha}$. Then, using that $[V^{\alpha'-1}, V_{\beta}]\le Z_{\alpha'-1}$ and $V_{\beta}$ is centralized by $V^\lambda$, it follows that $[V^{\alpha'-1}, V_{\beta}]\le Z_{\alpha'-2}=Z_{\beta}$ so that $V^{\alpha'-1}\le Q_{\beta}$. Then $[V^{\alpha'-1}\cap Q_{\lambda}, V^\lambda]\le Z_\lambda=Z_{\alpha}$ and since $Z_{\alpha}\not\le V_{\alpha'-1}^{(2)}$, we have that $[V^{\alpha'-1}\cap Q_{\lambda}, V^\lambda]\le Z_{\beta}\le Z_{\alpha'-1}$. Since $V^{\alpha'-1}/Z_{\alpha'-1}$ contains a non-central chief factor, $S=V^{\alpha'-1}Q_{\alpha}$ and $V^{\alpha'-1}/Z_{\alpha'-1}$ is an FF-module for $\bar{L_{\alpha'-1}}$. Then $V_{\alpha'-1}^{(2)}\cap Q_{\beta}=V^{\alpha'-1}(V_{\alpha'-1}^{(2)}\cap Q_{\alpha})$ and since $Z_{\alpha}\not\le V_{\alpha'-1}^{(2)}$, $V_{\alpha'-1}^{(2)}/V^{\alpha'-1}$ is also an FF-module for $\bar{L_{\alpha'-1}}$. Then \cref{GoodAction1} and \cref{SimExt} applied to $Z_{\beta}=Z_{\alpha'-2}=Z_{\alpha'}$ gives $V_{\alpha'}=V_{\beta}$, a contradiction. Hence, we have shown that $Z_{\alpha'}\ne Z_{\alpha'-2}$.

Since $(\alpha'+1, \beta)$ is also critical pair, we deduce that $Z_{\beta}\ne Z_{\alpha'-2}\ne Z_{\alpha'}$ arguing as above. As in \cref{b=5i}, this implies that $Z_{\alpha'-2}\not\le V^\alpha$ so that $V_{\alpha}^{(2)}\cap Q_{\alpha'-2}\le Q_{\alpha'-1}$. Moreover, $[V^\alpha, V_{\alpha'-2}]\le Z_{\alpha}\cap V_{\alpha'-2}=Z_{\beta}$ and if $V^\alpha\not\le Q_{\alpha'-2}$, then $Z_{\beta}Z_{\beta}^gZ_{\alpha'-2}=Z_{\alpha+2}Z_{\alpha+2}^g$ is of order $q^3$ and normalized by $L_{\alpha'-2}=\langle V^\alpha, (V^\alpha)^g, R_{\alpha'-2}\rangle$ for some appropriately chosen $g\in L_{\alpha'-2}$, a contradiction. Thus, $V^\alpha=Z_{\alpha}(V^\alpha\cap Q_{\alpha'})$. Set $U_{\beta}:=\langle (V^\alpha)^{G_{\beta}}\rangle$. Then $[U_{\beta}, V_{\alpha'-2}]\le [U_\beta, C_{\beta}]\cap V_{\alpha'-2}\le V_{\alpha'-2}\cap V_{\beta}$. Assume that $U_\beta\not\le Q_{\alpha'-2}$ and so there is some $\beta-3\in\Delta^{(3)}(\beta)$ with $(\beta-3, \alpha'-2)$ a critical pair. Indeed, we must have that $V_{\beta}\cap V_{\alpha'-2}\le Z_{\alpha+2}C_{V_{\beta}}(O^p(L_{\beta}))\cap Z_{\alpha+2}C_{V_{\alpha'-2}}(O^p(L_{\alpha'-2}))$. But then $Z_{\beta}\ge [Q_{\alpha+2}, V_{\alpha'-2}\cap V_{\beta}]\le Z_{\alpha'-2}$ and we deduce that $V_{\alpha'-2}\cap V_{\beta}\le \Omega(Z(Q_{\alpha+2}))\cap V_{\beta}=Z_{\alpha+2}$. But then, $Z_{\alpha+2}Z_{\alpha+2}^g$ is of order $q^3$ and normalized by $L_{\alpha'-2}=\langle U_\beta, U_\beta^g, R_{\alpha'-2}\rangle$, for some appropriate $g\in L_{\alpha'-2}$, another contradiction. Thus, $U_\beta\le Q_{\alpha'-2}$ and $[U_\beta, V_{\alpha'-2}]\le Z_{\alpha'-2}$. 

Suppose that $U_\beta\not\le Q_{\alpha'-1}$ so that $V^{\mu}\not\le Q_{\alpha'-1}$ for some $\mu\in\Delta(\beta)$. By a similar argument to the above, we also have that $[V^{\alpha'-1}, V_{\beta}]=\{1\}$ and $V^{\alpha'-1}\le Q_{\mu}$.  Then $\{1\}\ne [V^\mu, V^{\alpha'-1}]\le Z_{\mu}\cap V^{\alpha'-1}$. Notice that $Z_{\beta}\not\le V^{\alpha'-1}$ for otherwise $V_{\alpha'-2}=Z_{\alpha'-1}Z_{\alpha+2}C_{V_{\alpha'}}(O^p(L_{\alpha'-2}))\le V^{\alpha'-1}$. Thus, $Z_{\mu}=[V^{\alpha'-1}, V^{\mu}]\times Z_{\beta}$ centralizes $V_{\alpha'}$ and since $R\ne \{1\}$, it follows that $Z_{\mu}=Z_{\alpha+2}$. Since $Z_{\alpha'-2}\le V^{\alpha'-1}$ and $Z_{\beta}\not \le V^{\alpha'-1}$, we have that $[V^\mu, V^{\alpha'-1}]=Z_{\alpha'-2}\le Z_{\alpha'-1}$, a contradiction since $V^{\mu}\not\le Q_{\alpha'-1}$. Thus, $U_\beta\le Q_{\alpha'-1}$. Since $V^{\alpha}V_{\beta}\not\normaleq L_{\beta}$, $U_\beta/V_{\beta}$ contains a non-central chief factor for $L_{\beta}$ and we conclude that $[U_{\beta}\cap Q_{\alpha'}, V_{\alpha'}]=Z_{\alpha'}\le U_{\beta}$ and $Z_{\alpha'}\cap V_{\beta}=\{1\}$. But now, $Z_{\alpha'-2}\ne Z_{\alpha'}$ and $V_{\alpha'-2}=Z_{\alpha'-1}Z_{\alpha+2}C_{V_{\alpha'-2}}(O^p(L_{\alpha'-2}))\le U_\beta$. 

Suppose that $[V_{\alpha}^{(2)}, Z_{\alpha'-1}]\ne \{1\}$. Then there is $\alpha-1\in\Delta(\alpha)$ such that $[V_{\alpha-1}, Z_{\alpha'-1}]\ne\{1\}$. If $Z_{\alpha'-1}\le Q_{\alpha-1}$, then $Z_{\alpha-1}=[Z_{\alpha'-1}, V_{\alpha-1}]\le [V_{\alpha'-2}, V_{\alpha}^{(2)}]$. Since $Z_{\alpha}\not\le V_{\alpha'-2}$, it follows that $Z_{\alpha-1}=Z_{\beta}$. Then $V_{\alpha-1}\not\le Q_{\alpha'-2}$ since $Z_{\alpha'-2}\ne Z_{\beta}$. But then $[V_{\alpha'-2}, V_{\alpha}^{(2)}]\le Z_{\beta}Z_{\alpha'-2}$ and if $V_{\alpha}^{(2)}\not\le Q_{\alpha'-2}$, then $Z_{\alpha+2}Z_{\alpha+2}^g$ is of order $q^3$ and normalized by $L_{\alpha'-2}=\langle V_{\alpha}^{(2)}, (V_{\alpha}^{(2)})^g, R_{\alpha'-2}\rangle$ for some appropriately chosen $g\in L_{\alpha'-2}$, a contradiction. Thus, $Z_{\alpha'-1}\not\le Q_{\alpha-1}$ and $(\alpha'-1, \alpha-1)$ is a critical pair. 

Since $Z_{\alpha'-2}\ne Z_{\beta}$, $(\alpha'-1, \alpha-1)$ satisfies the same hypothesis as $(\alpha, \alpha')$ and so we see that $V_{\beta}\le U_{\alpha'-2}$. But then $R=[V_{\beta}, V_{\alpha'}]\le [U_{\alpha'-2}, C_{\alpha'-2}]\le V_{\alpha'-2}$ and $R\le V_{\beta}\cap V_{\alpha'-2}\le Z_{\alpha+2}C_{V_{\beta}}(O^p(L_{\beta}))\cap Z_{\alpha+2}C_{V_{\alpha'-2}}(O^p(L_{\alpha'-2}))$. Similarly to before, this implies that $|V_{\beta}|=q^3$, and we have a contradiction. 

Thus, $[V_{\alpha}^{(2)}, Z_{\alpha'-1}]=\{1\}$ and since $Z_{\alpha'-1}\ne Z_{\alpha'-3}$, it follows that $V_{\alpha}^{(2)}$ centralizes $V_{\alpha'-2}$ and $V_{\alpha}^{(2)}\le Q_{\alpha'-1}$. In particular, this holds for any $\lambda\in\Delta(\beta)$ with $Z_{\lambda}=Z_{\alpha}$. Forming $W^\beta:=\langle V_{\lambda}^{(2)}\mid Z_{\lambda}=Z_{\alpha}, \lambda\in\Delta(\beta)\rangle$, we have that $W^\beta U_{\beta}/U_{\beta}$ is centralized by $V_{\alpha'}$, and by \cref{UWNormal}, normalized by $R_{\beta}Q_{\alpha}$. But then $W^\beta U_\beta\normaleq L_{\beta}=\langle V_{\alpha'}, R_{\beta}, Q_{\alpha}\rangle$ and since $V_{\alpha'}$ centralizes $W^{\beta}U_{\beta}/U_{\beta}$ we deduce that $V_{\beta}^{(3)}=V_\alpha^{(2)} U_{\beta}\normaleq L_{\beta}$. Now, $R=[V_{\beta}, V_{\alpha'}]\le [V_{\beta}, V_{\alpha'-1}^{(2)}U_{\alpha'-2}]=[V_{\beta}, V_{\alpha+2}^{(2)}U_{\alpha'-2}]=[V_{\beta}, U_{\alpha'-2}]$ and since $V_{\beta}\le C_{\alpha'-2}$, it follows that $R\le V_{\beta}\cap V_{\alpha'-2}$, which again implies that $|V_{\beta}|=q^3$, a final contradiction.
\end{proof}

For the remainder of the analysis when $b=5$, we set $W^\beta:=\langle V_{\lambda}^{(2)}\mid\lambda\in\Delta(\beta)\,,Z_{\lambda}=Z_{\alpha}\rangle\normaleq R_{\beta}Q_{\alpha}$.
 
\begin{lemma}\label{NOCF2}
Suppose that $C_{V_\beta}(V_{\alpha'})=V_\beta \cap Q_{\alpha'}$ and $b=5$. Then $O^p(L_{\beta})$ centralizes $[V_{\beta}^{(3)}, Q_{\beta}]V_{\beta}/V_{\beta}$.
\end{lemma}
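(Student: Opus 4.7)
The plan is to argue by contradiction: assume $O^p(L_\beta)$ acts non-trivially on $[V_\beta^{(3)}, Q_\beta]V_\beta/V_\beta$, identify a non-central chief factor $U/W$ inside it which is forced to be an FF-module for $\bar{L_\beta}\cong \SL_2(q)$, and then apply \cref{SubAmal} with $\lambda=\beta$ to rule out every possible outcome, using that $b=5$ and the structural information of \cref{b=5i} and \cref{b=5ii}.

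First I would collect the geometric data for $b=5$: by \cref{b=5ii} we have $|V_\beta|=q^3$, $R=Z_{\alpha'-2}\ne Z_\beta\ne Z_{\alpha'}\ne R$, $Z_{\alpha'-1}\ne Z_{\alpha+2}$, and by \cref{b=5i}, $V_{\alpha'}\not\le Q_\beta$. Hypothesis (i) of \cref{SubAmal} is then automatic from $|V_\beta|=q^3$: $Z_\alpha$ and $V_\beta/C_{V_\beta}(O^p(L_\beta))=V_\beta/Z_\beta$ are natural $\SL_2(q)$-modules for $L_\alpha/R_\alpha\cong L_\beta/R_\beta\cong\SL_2(q)$. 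If $q=p$, hypothesis (iii) of \cref{SubAmal} follows from the facts that $Z(Q_\beta)=Z_\beta$ and $Z(Q_\alpha)=Z_\alpha$ (which, under \cref{CommonHyp}, are now in force for every critical pair in our $b=5$ situation by the same style of argument used earlier in the subsection; note in particular that by \cref{ZaStruc} and the conclusions of \cref{b=5ii}, $Z_\alpha$ is a natural $\SL_2(q_\alpha)$-module, so all the hypotheses of \cref{CommonHyp} are satisfied).

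Next, assume for contradiction that $O^p(L_\beta)$ does not centralize $[V_\beta^{(3)}, Q_\beta]V_\beta/V_\beta$. Pick a non-central chief factor $U/W$ for $L_\beta$ inside this quotient. Since $V_\beta^{(3)}$ is generated by the $V_\lambda^{(2)}$ for $\lambda\in\Delta(\beta)$ and each such $V_\lambda^{(2)}$ satisfies $[V_\lambda^{(2)}, Q_\lambda]=Z_\lambda$ by \cref{VBGood}, the commutators $[V_\lambda^{(2)}, Q_\beta]$ are controlled and we can bound $|U/W|$: exploiting $|V_\beta|=q^3$ together with the abelianness of $V_\lambda^{(2)}$ and the fact that $V_{\alpha'}\le L_\beta$ with $[V_\beta^{(3)}\cap Q_{\alpha'}, V_{\alpha'}]\le V_{\beta}$, we produce an offender of order $q$ on $U/W$, so $U/W$ is an FF-module for $\bar{L_\beta}\cong\SL_2(q)$. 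Consequently $L_\beta/C_{L_\beta}(U/W)\cong\SL_2(q)$; moreover $C_{L_\beta}(U/W)\cap R_\beta$ centralizes $Q_\beta/C_\beta$ (by a three-subgroup-lemma step using $[C_{L_\beta}(U/W)\cap R_\beta,V_\beta]=1$ and $O^p(R_\beta)$'s action on $Q_\beta$), hence normalizes $Q_\alpha\cap Q_\beta$. Thus the hypothesis of \cref{SubAmal} holds with $\lambda=\beta$.

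Finally, I rule out each outcome of \cref{SubAmal}. Outcome (b) is impossible since $\lambda=\beta$. Outcome (c) asserts that neither $R_\beta$ nor $C_{L_\beta}(U/W)$ normalizes $V_\alpha^{(2)}$, which contradicts the fact that $V_\alpha^{(2)}\normaleq G_\alpha$ so is invariant under $R_\beta\le G_{\alpha,\beta}$'s relevant pieces (concretely, both $R_\beta$ and $C_{L_\beta}(U/W)$ are contained in subgroups that stabilise $V_\alpha^{(2)}$ because $[V_\alpha^{(2)}, C_\beta]\le V_\beta$). Outcome (a) gives a weak BN-pair of rank $2$ with critical distance at least $5$; combined with $\lambda=\beta$ and \cite{Greenbook}, the only possibility is parabolic isomorphic to $\mathrm{F}_3$, and then \cref{SubAmal}(a) further requires $V_\alpha^{(2)}/Z_\alpha$ not to be acted on quadratically by $S$, whereas in our setup $V_\beta\cap Q_{\alpha'}$ (hence $S$ via the relations $S=V_{\alpha'}Q_\alpha$, $V_\beta\le Q_\alpha$) does act quadratically on $V_\alpha^{(2)}/Z_\alpha$ because $[V_\alpha^{(2)}, V_\beta\cap Q_{\alpha'}, V_\beta\cap Q_{\alpha'}]\le [V_\beta, V_\beta]=1$; this contradiction eliminates (a). Outcome (d) produces a quotient $\wt X$ locally isomorphic to $\SL_3(p)$, $\Sp_4(p)$, $\mathrm{G}_2(2)$, or (when $p=3$) $\PSp_4(3)$ in which $V_\alpha^{(4)}\le \langle Z_\beta^X\rangle$ is abelian; transporting back to the amalgam via $b=5$ and the conjugate critical pair $(\alpha'+1,\beta)$, this forces $[Z_\alpha,V_{\alpha'}]=1$, contradicting that $(\alpha,\alpha')$ is a critical pair.

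The main obstacle will be the careful bookkeeping in step two, identifying the offender on $U/W$ and verifying it has the right size to invoke \cref{SEFF}/\cref{SEFFFus}, and the quadratic-action computation in outcome (a) of \cref{SubAmal} since an $\mathrm{F}_3$-type amalgam genuinely exists in the fusion-theoretic setting and must be distinguished from the present configuration on the basis of this quadratic action.
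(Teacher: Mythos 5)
Your proposal takes a genuinely different route from the paper. The paper proves \cref{NOCF2} by a short, self-contained commutator computation using the subgroup $W^\beta=\langle V_\lambda^{(2)}\mid \lambda\in\Delta(\beta),\ Z_\lambda=Z_\alpha\rangle$: one first shows $[W^\beta,Q_\beta]\le Q_{\alpha'-2}$ and, tracking against $Z_{\alpha'-1}$, $Z_{\alpha'-2}$ and $R=Z_{\alpha'-2}$ as pinned down by \cref{b=5ii}, arrives at $[W^\beta,Q_\beta]V_\beta\normaleq L_\beta=\langle V_{\alpha'},Q_\alpha,R_\beta\rangle$ and hence at $[[V_\beta^{(3)},Q_\beta]V_\beta,V_{\alpha'}]\le V_\beta$; in the remaining case ($Z_{\alpha'}\le [V_\beta^{(3)},Q_\beta]V_\beta$) a similar computation relative to the vertex $\lambda$ with $V_\lambda^{(2)}\not\le Q_{\alpha'-2}$ gives a contradiction. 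No appeal to \cref{SubAmal}, to FF-module recognition, or to \cite{Greenbook} is made. Your plan instead tries to manufacture a non-central chief factor $U/W$ inside $[V_\beta^{(3)},Q_\beta]V_\beta/V_\beta$, show it is an FF-module, and feed it into \cref{SubAmal} with $\lambda=\beta$; this is much heavier machinery.

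There are concrete obstructions to your route that I do not see how to repair. First, \cref{SubAmal} requires $C_{L_\lambda}(U/W)\ne R_\lambda$ as part of hypothesis (ii). At the point where \cref{NOCF2} is proved, $q$ can be any power of $p$, and when $q>3$ the Schur-multiplier argument used at the start of \cref{SubAmal}'s own proof shows there is only one normal subgroup of $L_\beta$ containing $Q_\beta$ as a Sylow with quotient $\SL_2(q)$, namely $R_\beta$ itself; thus $C_{L_\beta}(U/W)=R_\beta$ is forced and \cref{SubAmal} simply cannot be invoked. Even for $q\le 3$ the case $C_{L_\beta}(U/W)=R_\beta$ (which is exactly the case where the desired conclusion would fail if not ruled out by other means) is not covered by your plan. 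Second, your dismissal of outcome (a) rests on showing that $S$ acts quadratically on $V_\alpha^{(2)}/Z_\alpha$ using $V_\beta\cap Q_{\alpha'}$. But $V_\beta\le Q_\alpha$ (since $d(\beta,\alpha)=1<b-2$), so $V_\beta\cap Q_{\alpha'}$ has trivial image in $S/Q_\alpha$; the computation $[V_\alpha^{(2)},V_\beta\cap Q_{\alpha'},V_\beta\cap Q_{\alpha'}]=1$ is vacuous (indeed $V_\alpha^{(2)}$ is abelian since $b=5>4$ and $V_\beta\cap Q_{\alpha'}\le V_\beta\le V_\alpha^{(2)}$) and says nothing about the $S$-action. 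Third, the inclusion $[V_\beta^{(3)}\cap Q_{\alpha'},V_{\alpha'}]\le V_\beta$ that your offender argument leans on is not correct: what one gets is $[V_\beta^{(3)}\cap Q_{\alpha'},V_{\alpha'}]\le [Q_{\alpha'},V_{\alpha'}]=Z_{\alpha'}$, and $Z_{\alpha'}\not\le V_\beta$ (indeed $Z_{\alpha'}\ne R=Z_{\alpha'-2}$ and $d(\alpha',\beta)=4$). Since the $\mathrm{F}_3$-type amalgam is a genuine outcome of this subsection and lies precisely in the regime you are trying to exclude, these gaps are not cosmetic: the SubAmal route does not distinguish the assumed bad case from the real $\mathrm{F}_3$ configuration, whereas the paper's direct $W^\beta$ computation does.
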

\begin{proof}
Suppose that $[V_{\beta}^{(3)}, Q_{\beta}]V_{\beta}/V_{\beta}$ contains a non-central chief factor for $L_{\beta}$. In addition, assume that $Z_{\alpha'}\not\le [V_{\beta}^{(3)}, Q_{\beta}]V_{\beta}$. Notice that $[W^\beta, Q_{\beta}]= [W^\beta, (Q_{\alpha}\cap Q_{\beta})][W^\beta, (Q_{\beta}\cap Q_{\alpha+2})]\le Z_{\alpha}[Q_{\alpha+2}, Q_{\alpha+2}]\le Q_{\alpha'-2}$. Now, $[W^\beta\cap Q_{\alpha'-2}, Z_{\alpha'-1}]\le Z_{\alpha'-2}\cap [W^\beta, Z_{\alpha'-1}]\le Z_{\alpha'-2}\cap Z_{\alpha}=\{1\}$ and so $[W^\beta, Q_{\beta}, V_{\alpha'}]\le Z_{\alpha'-1}\cap [V_{\beta}^{(3)}, Q_{\beta}]\le Z_{\alpha'-2}\le V_{\beta}$. In particular, it follows that $[W^\beta, Q_{\beta}]V_{\beta}\normaleq L_{\beta}=\langle V_{\alpha'}, Q_{\alpha}, R_{\beta}\rangle$ and $[W^\beta, Q_{\beta}]V_{\beta}=[V_{\beta}^{(3)}, Q_{\beta}]V_{\beta}$. But then, $[[V_{\beta}^{(3)}, Q_{\beta}]V_{\beta}, V_{\alpha'}]\le V_{\beta}$, a contradiction since $[V_{\beta}^{(3)}, Q_{\beta}]V_{\beta}/V_{\beta}$ contains a non-central chief factor. 

Thus, $Z_{\alpha'}\le [V_{\beta}^{(3)}, Q_{\beta}]V_{\beta}$. But then $V_{\alpha'-2}=Z_{\alpha'}Z_{\alpha+2}\le [V_{\beta}^{(3)}, Q_{\beta}]V_{\beta}$. If $W^\beta\le Q_{\alpha'-2}$ then $[W^\beta, Z_{\alpha'-1}]\le Z_{\alpha'-2}\cap Z_{\alpha}=\{1\}$ so that $W^\beta [V_{\beta}^{(3)}, Q_\beta]$ is normalized by $L_{\beta}=\langle V_{\alpha'}, Q_{\alpha}, R_{\beta}\rangle$ so that $V_{\beta}^{(3)}=W^\beta [V_{\beta}^{(3)}, Q_\beta]$. But then $V_{\alpha'}$ centralizes $V_{\beta}^{(3)}/[V_{\beta}^{(3)}, Q_\beta]$, a contradiction by \cref{CommCF}. Hence, there is $\lambda\in\Delta(\beta)$ with $(\lambda, \alpha')$ critical and $V_{\lambda}^{(2)}\not\le Q_{\alpha'-2}$. Thus, $(\lambda-2, \alpha'-2)$ is a critical pair so that by \cref{b=5ii}, $V_{\alpha'-2}\not\le Q_{\lambda-1}$. Now, for all $\mu\in\Delta(\beta)$ with $Z_\mu\ne Z_{\lambda}$, $[V_{\mu}^{(2)}, Q_{\beta}]V_{\beta}=[V_{\mu}^{(2)}, Q_\mu\cap Q_\beta][V_{\mu}^{(2)}, Q_{\lambda}\cap Q_{\beta}]\le Z_\mu [Q_{\lambda}, Q_{\lambda}]\le Q_{\lambda-1}$. If $Z_\mu=Z_\lambda$ then $V_{\mu}^{(2)}\le Q_{\lambda-1}$ for otherwise there is $\mu-2\in\Delta^{(2)}(\mu)$ with $(\mu-2, \lambda-1)$ a critical pair, and as $Z_\mu=Z_\lambda$ this contradicts \cref{b=5ii}. But then, $V_{\alpha'-2}\le [V_{\beta}^{(3)}, Q_{\beta}]V_{\beta}\le Q_{\lambda-1}$, a final contradiction.
\end{proof}

\begin{lemma}\label{b=5iv}
Suppose that $C_{V_\beta}(V_{\alpha'})=V_\beta \cap Q_{\alpha'}$ and $b=5$. Then $[V_{\beta}^{(3)}, Q_{\beta}]V_{\beta}\le Z(V_{\beta}^{(3)})$, $V_{\alpha'}$ acts quadratically on $V_{\beta}^{(3)}/V_{\beta}$, $p\in\{2,3\}$ and for $V:=V_{\alpha}^{(2)}/Z_{\alpha}$ either:
\begin{enumerate}
\item $S$ acts quadratically on $V$;
\item $V=[V, R_{\alpha}]$; or 
\item $V=C_V(R_{\alpha})$.
\end{enumerate}
Moreover, $\bar{L_{\beta}}\cong\SL_2(q)$.
\end{lemma}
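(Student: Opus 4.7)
The plan is to establish the four assertions in the order stated, with each feeding into the next. For the containment $[V_{\beta}^{(3)}, Q_{\beta}]V_{\beta}\le Z(V_{\beta}^{(3)})$, I would start from \cref{NOCF2}, which says $O^p(L_{\beta})$ centralizes $[V_{\beta}^{(3)}, Q_{\beta}]V_{\beta}/V_{\beta}$, so in particular $[V_{\beta}^{(3)}, Q_{\beta}]V_{\beta}\normaleq L_{\beta}$. Since $V_\beta$ has order $q^3$ and, by \cref{b=5ii} and \cref{SL2VnQ}, is a natural $\SL_2(q)$-module for $L_\beta/R_\beta$, the normal subgroup $[V_{\beta}^{(3)}, Q_{\beta}]V_{\beta}$ is centralized modulo $V_\beta$ by $L_\beta$. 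It then suffices to show the generators $V_\lambda$ (for $\lambda\in\Delta(\beta)$) of $V_\beta^{(3)}$ centralize $[V_{\beta}^{(3)}, Q_{\beta}]V_{\beta}$. This reduces, via a direct expansion of $[V_\lambda, V_\mu, Q_\beta]$ with $V_\lambda, V_\mu$ abelian, to an application of the three subgroup lemma combined with the fact that $[V_\lambda, Q_\beta]\le V_\beta$ for each $\lambda\in\Delta(\beta)$ by conjugation from $Z_\alpha$, plus the centrality of $V_\beta$ modulo itself.

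Given centrality, quadratic action of $V_{\alpha'}$ on $V_{\beta}^{(3)}/V_{\beta}$ is immediate: $V_{\alpha'}\le Q_{\beta}$ by minimality of $b$, so $[V_\beta^{(3)}, V_{\alpha'}]\le [V_\beta^{(3)}, Q_\beta]\le Z(V_\beta^{(3)})V_\beta$, and iterating gives $[V_\beta^{(3)}, V_{\alpha'}, V_{\alpha'}]\le [Z(V_\beta^{(3)})V_\beta, V_{\alpha'}]\le V_\beta$. The restriction $p\in\{2,3\}$ I would obtain by applying \cref{SEQuad} to the group $\bar{L_\beta}$ acting on the non-central chief factor of $V_\beta^{(3)}/V_\beta$ guaranteed by \cref{CommCF}: by \cref{NotNatural} this factor cannot itself be a natural $\SL_2(q)$-module, so if $p\ge 5$ then \cref{SEQuad} and \cref{DirectSum} together force the structure to collapse into a direct sum of natural modules and deliver a contradiction with the existence of the additional non-natural chief factor.

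For the trichotomy on $V:=V_\alpha^{(2)}/Z_\alpha$, I would apply coprime action via a Hall $p'$-complement of $R_\alpha/Q_\alpha$ inside $\bar{L_\alpha}$, yielding the $G_{\alpha,\beta}$-invariant decomposition $V=[V,R_\alpha]\times C_V(R_\alpha)$. If either factor equals $V$, we obtain conclusion (ii) or (iii) directly. Otherwise both factors are proper non-trivial $\bar L_\alpha$-submodules, and I would convert the quadratic action of $V_{\alpha'}$ on $V_\beta^{(3)}/V_\beta$ (by conjugation across $L_\alpha$, using $V_\beta\le V_\alpha^{(2)}\le V_\beta^{(3)}$) into quadratic action of $S$ on $V$, giving (i). Finally, $\bar{L_\beta}\cong\SL_2(q)$ amounts to showing $R_\beta=Q_\beta$: by the three subgroup lemma applied to any $r\in O^p(R_\beta)$ of order coprime to $p$, the centrality $[V_\beta^{(3)},Q_\beta]\le Z(V_\beta^{(3)})$ combined with $[r,V_\beta]=\{1\}$ forces $[r,Q_\beta]$ to centralize $V_\beta^{(3)}$; an argument analogous to the closing paragraph of \cref{nb=3ii} then yields $[r,Q_\beta]=\{1\}$ and hence $r=1$ as $L_\beta$ is of characteristic $p$. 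The main obstacle I expect is the step forcing $p\in\{2,3\}$, since the precise structure of chief factors in $V_\beta^{(3)}/V_\beta$ is not yet pinned down and invoking \cref{SEQuad} requires bookkeeping to identify which modules actually appear as composition factors.
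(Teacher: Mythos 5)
Your plan diverges from the paper's in both order and method, and several steps have genuine gaps.

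For the first assertion you claim ``$[V_\lambda, Q_\beta]\le V_\beta$ for each $\lambda\in\Delta(\beta)$'' and feed this into a three-subgroup-lemma expansion. That containment is not available (the generators of $V_\beta^{(3)}$ are the $V_\lambda^{(2)}$, and $[V_\lambda^{(2)},Q_\beta]$ need not lie in $V_\beta$). What the paper actually extracts from \cref{NOCF2} is that $[V_\lambda^{(2)},Q_\beta]V_\beta\normaleq L_\beta$ for each $\lambda\in\Delta(\beta)$, hence equals $[V_\beta^{(3)},Q_\beta]V_\beta$; since this sits inside the elementary abelian group $V_\lambda^{(2)}$ for every $\lambda$, each generator centralizes it and the containment in $Z(V_\beta^{(3)})$ follows.

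Your quadraticity argument then breaks at the final inclusion: from $[V_\beta^{(3)},V_{\alpha'}]\le Z(V_\beta^{(3)})$ you conclude $[Z(V_\beta^{(3)})V_\beta, V_{\alpha'}]\le V_\beta$, but you have no control over $[Z(V_\beta^{(3)}), V_{\alpha'}]$ --- the centre can be strictly larger than $[V_\beta^{(3)},Q_\beta]V_\beta$. The paper first uses the three subgroup lemma to establish $[V_\beta^{(3)},V_\beta^{(3)}]=Z_\beta$ (nonabelian, derived group exactly $Z_\beta$ by conjugating $V_\beta V_{\alpha'}\le V_{\alpha'-2}^{(3)}$), and only then obtains quadraticity via $[V_\beta^{(3)},V_{\alpha'},V_{\alpha'}]\le[V_\beta^{(3)},V_{\alpha'-2}^{(3)},V_{\alpha'-2}^{(3)}]\le[V_{\alpha'-2}^{(3)},V_{\alpha'-2}^{(3)}]=Z_{\alpha'-2}\le V_\beta$. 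You need that intermediate derived-subgroup computation.

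Your route to $p\in\{2,3\}$ is wrong. \cref{NotNatural} says that if $V_\beta/C_{V_\beta}(O^p(L_\beta))$ is a natural module then $C_{V_\beta}(V_{\alpha'})=V_\beta\cap Q_{\alpha'}$; in this subsection the conclusion holds (indeed \cref{SL2VlQ} and \cref{SL2VnQ} show the hypothesis holds too), so it places no constraint on the non-central chief factor of $V_\beta^{(3)}/V_\beta$. The paper instead proves $\bar L_\beta\cong\SL_2(q)$ \emph{before} the trichotomy, sets $Q:=Q_\beta\cap O^p(L_\beta)$, shows $[V,Q,Q]\le V_\beta/Z_\alpha$ so that $Q$ acts cubically on $V$ in every case of the trichotomy, and only then applies the Hall--Higman theorem (with the extra step that $O^p(R_\alpha)$ also centralizes $C_{Q_\alpha}(V_\alpha^{(2)})$ and hence all of $Q_\alpha$, yielding a weak BN-pair and a contradiction for $p\ge 5$ via \cite{Greenbook}). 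Your proposed ordering --- $p\in\{2,3\}$ before the trichotomy and before $\bar L_\beta\cong\SL_2(q)$ --- breaks this chain of dependencies, and no self-contained argument replaces it.

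Finally, the derivation of $R_\beta=Q_\beta$ is far more delicate than your sketch suggests: the paper first shows $C_{L_\beta}(V_\beta^{(3)})$ is a $p$-group, that $O^p(L_\beta)$ centralizes $C_\beta/V_\beta^{(3)}$, that $O^p(R_\beta)$ centralizes $Q_\beta/V_\beta^{(3)}$, and then splits into two cases depending on whether $[V_\beta^{(3)},O^p(R_\beta)]\le Q_{\alpha'-2}$, in each case invoking \cref{GoodAction3} to force $O^p(R_\beta)$ to centralize $V_\beta^{(3)}$ before the three subgroup lemma closes the argument. Your claim that $[V_\beta^{(3)},Q_\beta]\le Z(V_\beta^{(3)})$ together with $[r,V_\beta]=\{1\}$ makes $[r,Q_\beta]$ centralize $V_\beta^{(3)}$ does not follow as stated, and referencing ``an argument analogous to \cref{nb=3ii}'' does not supply the missing intermediate facts.
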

\begin{proof}
Since $O^p(L_{\beta})$ centralizes $[V_{\beta}^{(3)}, Q_{\beta}]V_{\beta}/V_{\beta}$, $[V_{\lambda}^{(2)}, Q_{\beta}]V_{\beta}\normaleq L_{\beta}$ for any $\lambda\in\Delta(\beta)$. It follows that $[V_{\beta}^{(3)}, Q_{\beta}]V_{\beta}=[V_{\lambda}^{(2)}, Q_{\beta}]V_{\beta}$ for any $\lambda\in\Delta(\beta)$. But $V_{\lambda}^{(2)}$ is elementary abelian so that  $[V_{\beta}^{(3)}, Q_{\beta}]V_{\beta}\le Z(V_{\beta}^{(3)})$. Moreover, $[V_{\beta}^{(3)}, Q_{\beta}, V_{\beta}^{(3)}]=\{1\}$ and it follows from the three subgroup lemma that $[V_{\beta}^{(3)}, V_{\beta}^{(3)}, Q_{\beta}]=\{1\}$ so that $[V_{\beta}^{(3)}, V_{\beta}^{(3)}]\le Z(Q_{\beta})\cap V_{\beta}=Z_{\beta}$. Since $V_{\beta}V_{\alpha'}\le V_{\alpha'-2}^{(3)}$, it follows by conjugacy that $V_{\beta}^{(3)}$ is non-abelian and so $[V_{\beta}^{(3)}, V_{\beta}^{(3)}]=Z_{\beta}$. Then $[V_{\beta}^{(3)}, V_{\alpha'}, V_{\alpha'}]\le [V_{\beta}^{(3)}, V_{\alpha'-2}^{(3)}, V_{\alpha'-2}^{(3)}]\le Z_{\alpha'-2}\le V_{\beta}$, and $V_{\beta}^{(3)}/V_{\beta}$ admits quadratic action.

Now, $C_{Q_{\beta}}(V_{\beta}^{(3)})$ centralizes $V_{\alpha'-2}\le V_{\beta}^{(3)}$ and since $Z_{\alpha'}\not\le Z(V_{\beta}^{(3)})$, else $O^p(L_{\beta})$ centralizes $V_{\beta}^{(3)}/Z(V_{\beta}^{(3)})$ and $V_{\beta}^{(3)}$ is abelian, we have that $V_{\alpha'}$ centralizes $C_{Q_{\beta}}(V_{\beta}^{(3)})/V_{\beta}$ and so $O^p(L_{\beta})$ centralizes $C_{Q_{\beta}}(V_{\beta}^{(3)})V_{\beta}^{(3)}/V_{\beta}^{(3)}$. An application of the three subgroups lemma and coprime action yields that $O^p(C_{L_{\beta}}(V_{\beta}^{(3)}))$ centralizes $Q_{\beta}$ so that $C_{L_{\beta}}(V_{\beta}^{(3)})$ is a $p$-group. Moreover, $C_{\beta}=V_{\beta}^{(3)}(C_{\beta}\cap Q_{\alpha'-2})$ and $[C_{\beta}\cap Q_{\alpha'-2}, V_{\alpha'-2}^{(3)}]\le [V_{\alpha'-2}^{(3)}, Q_{\alpha'-2}]\le V_{\alpha+2}^{(2)}\le V_{\beta}^{(3)}$ so that $O^p(L_{\beta})$ centralizes $C_{\beta}/V_{\beta}^{(3)}$. But then $O^p(R_{\beta})$ centralizes $Q_{\beta}/V_{\beta}^{(3)}$. Indeed, $V_{\beta}^{(3)}/Z_{\beta}=[V_{\beta}^{(3)}/Z_{\beta}, O^p(R_{\beta})]\times C_{V_{\beta}^{(3)}/Z_{\beta}}(O^p(R_{\beta}))$. Therefore, $[O^p(R_{\beta}), V_{\beta}^{(3)}, Q_{\beta}]\le Z_{\beta}$ by the three subgroup lemma, and \sloppy{$[V_{\beta}^{(3)}, O^p(R_{\beta}), C_{V_{\beta}^{(3)}}(O^p(R_{\beta}))]=\{1\}$.}

If $[V_{\beta}^{(3)}, O^p(R_{\beta})]\not\le Q_{\alpha'-2}$, then $[V_{\alpha'-2}^{(3)}\cap Q_{\beta}, [V_{\beta}, O^p(R_{\beta})]]\le Z_{\beta}\le V_{\alpha'-2}$ and we deduce that $V_{\alpha'-2}^{(3)}/V_{\alpha'-2}$ contains a unique non-central chief factor which is an FF-module for $\bar{L_{\alpha'-2}}$. Then \cref{GoodAction3} implies that $O^p(R_{\beta})$ centralizes $V_{\beta}^{(3)}$ and an application of the three subgroup lemma yields that $O^p(R_{\beta})$ centralizes $Q_{\beta}$ and $\bar{L_{\beta}}\cong \SL_2(q)$. If $[V_{\beta}, O^p(R_{\beta})]\le Q_{\alpha'-2}$ then $Z_{\alpha'-2}\le V_{\beta}\le C_{V_{\beta}^{(3)}}(O^p(R_{\beta}))$ so that $[V_{\beta}, O^p(R_{\beta})]\le C_{\alpha'-2}$. If $Z_{\alpha'}\le [V_{\beta}, O^p(R_{\beta})]$, then $C_{V_{\beta}^{(3)}}(O^p(R_{\beta}))$ centralizes $V_{\alpha'-2}=Z_{\alpha+2}Z_{\alpha'}$ and $V_{\beta}^{(3)}\le C_{\alpha'-2}$, a contradiction. Thus, $[V_{\beta}, O^p(R_{\beta}), V_{\alpha'-2}^{(3)}]\le V_{\alpha'-2}\cap [V_{\beta}, O^p(R_{\beta})]=Z_{\beta}$ so that $O^p(L_{\beta})$ centralizes $[V_{\beta}, O^p(R_{\beta})]$. Hence, $O^p(R_{\beta})$ centralizes $V_{\beta}^{(3)}$ and the three subgroup lemma yields that $R_{\beta}=Q_{\beta}$ and $\bar{L_{\beta}}\cong \SL_2(q)$. 

Writing $Q:=Q_{\beta}\cap O^p(L_{\beta})$, we have that $[V_{\alpha}^{(2)}, Q, Q]\le [V_{\beta}^{(3)}, Q, Q]\le V_{\beta}$. By coprime action, and setting $V:=V_{\alpha}^{(2)}/Z_{\alpha}$, we have that $V=[V, R_{\alpha}]\times C_V(R_{\alpha})$ and either $V_{\beta}/Z_{\alpha}\le [V, R_{\alpha}]$ or $Q$ acts quadratically on $[V, R_{\alpha}]$. Similarly, either $V_{\beta}/Z_{\alpha}\le C_V(R_{\alpha})$ or $Q$ acts quadratically on $C_V(R_{\alpha})$. Since both $[V, R_{\alpha}]$ and $C_V(R_{\alpha})$ are normalized by $G_{\alpha}$, and $V_{\beta}/Z_{\alpha}$ generates $V$, we have shown that either $Q$ acts quadratically on $V$, $V=[V, R_{\alpha}]$ or $V=C_V(R_{\alpha})$. 

In all cases, $Q$ acts cubically on $V$ and so if $p\geq 5$ the Hall-Higman theorem yields that $O^p(R_{\alpha})$ centralizes $V_{\alpha}^{(2)}$. Since $Q$ centralizes $C_{\beta}/V_{\beta}^{(3)}$, $[C_{Q_{\alpha}}(V_{\alpha}^{(2)}), Q, Q]\le [C_{\beta}, Q, Q]\le [V_{\beta}^{(3)}, Q]\le V_{\alpha}^{(2)}$ and a standard argument implies that $O^p(R_{\alpha})$ centralizes $C_{Q_{\alpha}}(V_{\alpha}^{(2)})$ and a final application of the three subgroup lemma yields that $O^p(R_{\alpha})$ centralizes $Q_{\alpha}$, $G$ has a weak BN-pair of rank $2$ and \cite{Greenbook} provides a contradiction. Hence, $p\in\{2,3\}$.
\end{proof}

\begin{proposition}
Suppose that $C_{V_\beta}(V_{\alpha'})=V_\beta \cap Q_{\alpha'}$ and $b=5$. Then $p=3$ and $G$ is parabolic isomorphic to $\mathrm{F}_3$.
\end{proposition}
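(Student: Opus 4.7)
My plan is to leverage the subamalgam machinery developed in \cref{SubAmal} together with the structural data already assembled. From \cref{SL2VnQ}, \cref{NotNatural}, and the preceding analysis we have $L_\alpha/R_\alpha \cong L_\beta/R_\beta \cong \SL_2(q)$ with $Z_\alpha$ and $V_\beta/C_{V_\beta}(O^p(L_\beta))$ natural modules, and by \cref{b=5ii} we have $|V_\beta|=q^3$. By \cref{b=5iv}, $\bar{L_\beta}\cong\SL_2(q)$, $p\in\{2,3\}$, and the natural offender $Q_\beta\cap O^p(L_\beta)$ acts cubically on $V_\alpha^{(2)}/Z_\alpha$ in one of three distinguished ways. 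The first task will be to establish that \cref{CommonHyp} is satisfied in its entirety, which requires showing $q=p$, $Z(Q_\alpha)=Z_\alpha$ of order $p^2$, and $Z(Q_\beta)=Z_\beta$ of order $p$; these follow (roughly) by applying the coprime action argument encoded in \cref{ZQ=Z}-style reasoning, using the already-established fact that $V_{\alpha'}\not\le Q_\beta$ and iterating the equality $Z_\beta=Z(Q_\beta)$ down through the critical path.

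With \cref{CommonHyp} in hand, I will apply \cref{SubAmal} to the triple $(G_\alpha, G_\beta, G_{\alpha,\beta})$, taking $\lambda = \beta$ and choosing for $U/W$ the non-central chief factor $V_\beta^{(3)}/([V_\beta^{(3)},Q_\beta]V_\beta)$ which is an FF-module by \cref{NOCF2} and \cref{b=5iv}. One then rules out outcomes (b), (c), (d) of \cref{SubAmal} in turn. Outcome (b) does not apply since $\lambda = \beta$. Outcome (c) is excluded because $O^p(R_\beta)$ and $C_{L_\beta}(U/W)$ both normalize $V_\alpha^{(2)}$: by \cref{b=5iv}, $\bar{L_\beta}\cong\SL_2(q)$ so $O^p(R_\beta)Q_\beta = R_\beta$. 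Outcome (d) must be eliminated by examining the resulting finite quotient $\wt X$: since $b=5$, the image $V$ contains $V_\beta^{(5)}$, and the number of non-central chief factors produced in $\wt Q_\alpha$ and $\wt Q_\beta$ would exceed what $\SL_3(p)$, $\Sp_4(p)$, $\mathrm{G}_2(2)$, or $\PSp_4(3)$ (the candidates permitted by \cref{JMod}) can accommodate when $b = 5$.

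This forces outcome (a) of \cref{SubAmal}, yielding a weak BN-pair subamalgam $(H_\beta, G_\alpha, G_{\alpha,\beta})$ whose critical distance is at least $b=5$. Since $b>3$, \cref{SubAmal}(a) explicitly states the subamalgam is parabolic isomorphic to $\mathrm{F}_3$. This instantly forces $p=3$ and $q=3$, and pins down the isomorphism types of $\bar H_\beta$ and $\bar G_\alpha$ as those of the two maximal parabolics of $\mathrm{F}_3$. To upgrade this to the full amalgam being parabolic isomorphic to $\mathrm{F}_3$, I must identify $H_\beta$ with $G_\beta$ (i.e.\ show $R_\beta$-action contributes nothing extra). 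Since $\bar{L_\beta}\cong\SL_2(3)$ by \cref{b=5iv} and $H_\beta$ already realises the full parabolic of $\mathrm{F}_3$, the quotient $G_\beta/H_\beta$ is a $p'$-group acting on $Q_\beta$, and a coprime-action argument on $Q_\beta$ (together with $Z(Q_\beta)=Z_\beta$) shows this quotient is trivial; similarly for $G_\alpha$. The size $|S|$ is then forced to agree with the Sylow $3$-subgroup of $\mathrm{F}_3$.

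The main obstacle is verifying that we genuinely land in outcome (a) of \cref{SubAmal}, and in particular excluding outcome (d), since that demands a careful chief-factor count for $\wt X$ against the possibilities in \cref{JMod}; here the critical distance $b=5$ is the essential combinatorial input that forces too many non-central chief factors to fit into any $\SL_3(p)$, $\Sp_4(p)$, $\mathrm{G}_2(2)$, or $\PSp_4(3)$ configuration. A subsidiary difficulty is the $p=2$ half of the argument: one must check that the cubic (but non-quadratic) action options (ii) and (iii) of \cref{b=5iv} are not themselves realised in characteristic $2$ by any subamalgam short of the full $\mathrm{F}_3$ configuration, which is what ultimately rules out $p=2$.
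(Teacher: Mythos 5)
Your plan to apply \cref{SubAmal} with $\lambda=\beta$ cannot get off the ground: condition (ii) of \cref{SubAmal} requires a non-central chief factor $U/W$ for $G_\lambda$ with $C_{L_\lambda}(U/W)\ne R_\lambda$, but by \cref{b=5iv} we already have $\bar{L_\beta}\cong\SL_2(q)$, i.e. $R_\beta=Q_\beta$. For any faithful FF-module chief factor $U/W$ of $\bar{L_\beta}\cong\SL_2(q)$ the kernel of the action is exactly $Q_\beta$ (the centre of $\SL_2(q)$ acts non-trivially on a natural module), so $C_{L_\beta}(U/W)=Q_\beta=R_\beta$ and the hypothesis fails identically. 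This is not a technical nuisance that can be patched; the reason $R_\beta=Q_\beta$ is already pinned down in \cref{b=5iv} is precisely to remove any room for manoeuvre on the $\beta$ side, and the remaining degree of freedom lives entirely in $R_\alpha/Q_\alpha$. You also need $G$ to be a minimal counterexample to Theorem~C before \cref{SubAmal} even applies; that assumption has to be set up explicitly, not taken for granted.

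The paper's argument has a different shape that you should absorb. It is a proof by contradiction against $R_\alpha>Q_\alpha$: if $R_\alpha=Q_\alpha$ then both parabolics have $\SL_2(q)$-quotients and \cite{Greenbook} (via the weak BN-pair classification) identifies the amalgam as $\mathrm{F}_3$-type directly. When $R_\alpha>Q_\alpha$, rather than routing through \cref{SubAmal}, the paper exploits \cref{NOCF2} and \cref{b=5iv} to show $V_\beta^{(3)}/A_\beta$ is a direct sum of at most two natural $\SL_2(q)$-modules (via \cref{DirectSum}), splits into cases according to the number of summands and the quadraticity of $S$ on $V:=V_\alpha^{(2)}/Z_\alpha$, and in each branch constructs a genuinely smaller subamalgam $X=\langle G_\beta, R_\alpha S\rangle$ inside $G$. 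It then locates the largest $X$-normal subgroup $K\le S$, shows that the quotient triple $(G_\beta/K, R_\alpha G_{\alpha,\beta}/K, G_{\alpha,\beta}/K)$ still satisfies \cref{MainHyp}, and invokes minimality of $G$ to force this quotient to be one of the listed configurations — only to derive a contradiction from the chief-factor structure of $Q_\beta$ in every case. In the final branch (single natural summand, $S$ not quadratic on $V$), the crucial extra ingredient is that $V$ is an irreducible nearly quadratic $\bar{L_\alpha}$-module, so \cref{nearquad} pins down $L_\alpha/C_{L_\alpha}(V)\cong\PSL_2(q)$ and then $q=3$; this is a step your plan has no analogue of. In short: the key technical work is killing off $R_\alpha>Q_\alpha$ by hand, and the passage to $\mathrm{F}_3$ then happens through \cite{Greenbook}, not through outcome~(a) of \cref{SubAmal}.
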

\begin{proof}
We assume throughout that $R_{\alpha}>Q_{\alpha}$, for otherwise $G$ has a weak BN-pair of rank $2$ and the proposition holds by \cite{Greenbook}. Note that $[V_{\beta}^{(3)}\cap Q_{\alpha'-2}, V_{\alpha'-2}]\le Z_{\alpha'-2}\cap [V_{\beta}^{(3)}, V_{\beta}^{(3)}]=Z_{\alpha'-2}\cap Z_{\beta}=\{1\}$ so that $V_{\beta}^{(3)}\cap Q_{\alpha'-2}\le C_{\alpha'-2}$. Then $V_{\beta}^{(3)}\cap Q_{\alpha'-2}=V_{\beta}(V_{\beta}^{(3)}\cap Q_{\alpha'})$ so that a subgroup of index at most $q^2$ in $V_{\beta}^{(3)}$ is centralized, modulo $V_{\beta}$, by $V_{\alpha'}$. In particular, $V_{\beta}^{(3)}/[V_{\beta}^{(3)}, Q_{\beta}]V_{\beta}$ is a quadratic $2$F-module for $\bar{L_{\beta}}\cong \SL_2(q)$. Write $A_\beta$ to be the preimage in $V_{\beta}^{(3)}$ of $C_{V_{\beta}^{(3)}/[V_{\beta}^{(3)}, Q_{\beta}]V_{\beta}}(O^p(L_{\beta}))$ so that $V_{\beta}^{(3)}/A_\beta$ is a direct sum of natural $\SL_2(q)$-modules by \cref{DirectSum}. Indeed, it follows that $[V_{\alpha}^{(2)}, Q]=A_\beta$ from which we deduce that $C_{V_{\beta}^{(3)}}(V_{\alpha}^{(2)})=V_{\alpha}^{(2)}$. Then, $[C_{Q_{\alpha}}(V_{\alpha}^{(2)}), Q]\le [C_{\beta}, Q]\cap C_{Q_{\alpha}}(V_{\alpha}^{(2)})\le C_{V_{\beta}^{(3)}}(V_{\alpha}^{(2)})\le V_{\alpha}^{(2)}$. By the three subgroups lemma, $p'$-order elements of $L_{\alpha}$ acts faithfully on $V_{\alpha}^{(2)}$, and by assumption we have that $V\ne C_V(R_\alpha)$. By \cref{b=5iv}, either $V$ is a quadratic module, or $V=[V, R_{\alpha}]$.

Assume that $V_{\beta}^{(3)}/A_\beta$ is a direct sum of two natural $\SL_2(q)$-modules. Write $V_1=\langle V_{\alpha'-2}^{L_{\beta}}\rangle A_{\beta}$ so that by \cref{NatGen}, $|V_1/A_\beta|=q^2$. Set $V_2 \normaleq L_{\beta}$ such that $A_\beta<V_2<V_{\beta}^{(3)}$ and $V_2\cap V_1=A_\beta$. If $V_1\not\le Q_{\alpha'-2}$, then $V_{\alpha'-2}^{(3)}$ centralizes a subgroup of $V_{\beta}^{(3)}/V_1$ of index strictly less than $q$, a contradiction. Hence, $V_1\le Q_{\alpha'-2}$ so that $V_1\le C_{\alpha'-2}$ and as $A_\beta\le Z(V_{\beta}^{(3)})$, $V_1$ is elementary abelian. Note that there is $V_\lambda\not\le V_1$ for some $\lambda\in\Delta(\alpha+2)$ and we may as well assume that $V_2=\langle V_\lambda^{L_{\lambda}}\rangle A_\beta$ and $V_2$ is also elementary abelian. Furthermore, $[V_2\cap Q_{\alpha'-2}, V_{\alpha'-2}^{(3)}]\le V_{\beta}$ and $V_{\beta}^{(3)}=V_2(V_{\beta}^{(3)}\cap Q_{\alpha'-2})$. Since $[V_{\alpha'-1}^{(2)}, V_2, V_2]=\{1\}$ and $[V_2, Q_{\alpha'-2}]Q_{\alpha'-1}\in\syl_p(L_{\alpha'-1})$, it follows that a Sylow $p$-subgroup of $L_{\alpha'-1}$ acts quadratically on $V_{\alpha'-1}^{(2)}/Z_{\alpha'-1}$ and, by conjugation, $[V_{\alpha}^{(2)}, Q_\beta, Q_\beta]\le Z_{\alpha}$ so that $V_{\alpha}^{(2)}/Z_\alpha$ is a quadratic $2$F-module for $\bar{L_{\alpha}}$. Applying \cref{2FRecog} whenever $q>p$ and \cref{Quad2F} otherwise, we deduce that since $\bar{L_{\alpha}}\not\cong \SL_2(q)$, $q\in\{2,3\}$ and by \cref{pgen}, $L_{\alpha}/C_{L_{\alpha}}(V_{\alpha}^{(2)}/Z_{\alpha})\cong \SL_2(p)$, $\Dih(10)$ or $(3\times 3):2$. Applying \cref{Badp3} when $q=3$ yields that $\bar{L_{\alpha}}\cong (Q_8\times Q_8):3$. But then, there is $t\in L_{\alpha}\cap G_{\alpha,\beta}$ an involution with $[t, L_{\alpha}]\le Q_{\alpha}$ and, since $\bar{L_{\beta}}\cong\SL_2(3)$, we can choose $t$ such that $[t, L_{\beta}]\le Q_{\beta}$, a contradiction by \cref{BasicAmal} (v). Hence, $q=2$. 

If $L_{\alpha}\ne R_{\alpha}C_{L_{\alpha}}(V_{\alpha}^{(2)}/Z_{\alpha})S$ then $C_{L_{\alpha}}(V_{\alpha}^{(2)}/Z_{\alpha})\le R_{\alpha}$ and $\bar{L_{\alpha}}\cong (3\times 3):2$. Since $\bar{L_{\beta}}\cong \Sym(3)$ in this case, we may apply \cite[Theorem B]{ChermakTri} and since $(3\times 3):2$ is not a homomorphic image of $\Sym(4)$, we force a contradiction. Hence, whenever $V_{\beta}^{(3)}/A_\beta$ is a direct sum of two natural $\SL_2(q)$-modules for $\bar{L_{\beta}}$, we have that $L_{\alpha}=R_{\alpha}C_{L_{\alpha}}(V_{\alpha}^{(2)}/Z_{\alpha})S$.

Suppose that $V_{\beta}^{(3)}/A_\beta$ is a natural $\SL_2(q)$-module and $V_{\alpha}^{(2)}\cap A_\beta$ has index $q$ in $V_{\alpha}^{(2)}$. Assume further that $V$ admits quadratic action by $S$ so that $V_{\alpha}^{(2)}\cap A_\beta=[V_{\alpha}^{(2)}, Q_{\beta}]$ has index $q$ in $V_{\alpha}^{(2)}$ and is centralized, modulo $Z_{\alpha}$, by $S$. Then by \cref{SEFF}, we deduce that $L_{\alpha}/C_{L_{\alpha}}(V)\cong \SL_2(q)$. Using \cref{Badp2}, \cref{Badp3} and an argument utilizing the Schur multiplier of $\PSL_2(q)$ when $q=3^n>3$, we conclude that $q=p$ and $\bar{L_{\alpha}}\cong (3\times 3):2$ or $(Q_8\times Q_8):3$ when $p=2$ or $3$. Indeed, $L_{\alpha}=R_{\alpha}C_{L_{\alpha}}(V)S$ and $R_{\alpha}S/Q_{\alpha}\cong \SL_2(p)$ in each case.

Whether or not $V_{\beta}^{(3)}/A_\beta$ is a sum of one or two natural modules, form $X:=\langle G_{\beta}, R_{\alpha}S\rangle$ and let $K$ be the largest subgroup of $S$ which is normalized by $X$. Then $Z_{\beta}=\langle Z_{\beta}^X\rangle\le Z(K)$ and by construction, $Z_{\alpha}\not\le K$, otherwise $V_{\beta}^{(3)}\le \langle Z_{\alpha}^X\rangle\le Z(K)$ is abelian, a contradiction. Even still, $[K, Z_{\alpha}]=\{1\}$ and taking normal closures under $X$, we deduce that $K\le C_{Q_{\beta}}(V_{\beta}^{(3)})$. But $O^p(L_{\beta})$ centralizes $C_{Q_{\beta}}(V_{\beta}^{(3)})/V_{\beta}$ and so $L_{\beta}/K$ is of characteristic $p$. Assume that $R_{\alpha}S/K$ is not of characteristic $p$ so that $O^p(R_{\alpha}S)$ acts non-trivially on $K$. Since $Z_{\alpha}\not\le K$, $K$ is not self-centralizing and we may assume that $C_S(K)\le Q_{\alpha}$ and $C_S(K)\not\le Q_{\beta}$. If $C_S(K)^x\cap Q_{\beta}\not\le Q_{\alpha}$ for some $x\in L_{\beta}$, then $[C_S(K)^x\cap Q_{\beta}, K]=\{1\}$ so that $[O^p(R_{\alpha}S), K]\le [\langle (C_S(K)^x\cap Q_{\beta})^{R_{\alpha}S}\rangle, K]=\{1\}$, a contradiction. Thus, $\langle (C_S(K)\cap Q_{\beta})^{L_{\beta}}\rangle\le Q_{\alpha}$ and so $[O^2(L_{\beta}), Q_{\beta}]\le [\langle C_S(K)^{L_{\beta}}\rangle, Q_{\beta}]\le \langle (C_S(K)\cap Q_{\beta})^{L_{\beta}}\rangle\le Q_{\alpha}$ and $Q_{\alpha}\cap Q_{\beta}\normaleq L_{\beta}$, a contradiction by \cref{push}. Thus, the triple $(G_{\beta}/K, R_{\alpha}G_{\alpha,\beta}/K, G_{\alpha,\beta}/K)$ satisfies \cref{MainHyp} and assuming that $G$ is a minimal counterexample to \hyperlink{MainGrpThm}{Theorem C}, and using that $Q_{\beta}$ contains three non-central chief factors for $\bar{L_{\beta}}$, we conclude that $(G_{\beta}/K, R_{\alpha}G_{\alpha,\beta}/K, G_{\alpha,\beta}/K)$ is parabolic isomorphic to $\mathrm{F}_3$. But now, $Z_{\alpha}/K=\Omega(Z(S/K))$ so that $V_{\alpha}^{(2)}/K$ is a natural $\SL_2(3)$-module for $\bar{P_{\alpha}}$ and $V_{\beta}^{(3)}/V_{\beta}K$ is not a quadratic module for $\bar{L_{\beta}}$, a contradiction.

Hence, $S$ does not act quadratically on $V$, $V_{\beta}^{(3)}/A_\beta$ is a natural $\SL_2(q)$-module for $\bar{L_{\beta}}$ and we deduce that $q>2$. Let $Z_{\alpha}<U< V_{\alpha}^{(2)}$ with $U\normaleq L_{\alpha}$. Since $U<V_{\alpha}^{(2)}$, $U\cap V_{\beta}=Z_{\alpha}$. Thus, if $U/Z_{\alpha}$ contains a non-central chief factor for $L_{\alpha}$, by \cref{SEFF} we deduce that $V_{\alpha}^{(2)}=U(V_{\alpha}^{(2)}\cap A_\beta)$. But then $[V_{\alpha}^{(2)}, Q, Q]=[U, Q, Q]\le V_{\beta}\cap U=Z_{\alpha}$, a contradiction since $V$ is not quadratic. Hence, $U/Z_{\alpha}\le C_V(O^p(L_{\alpha}))\le C_V(R_{\alpha})=\{1\}$ and $V$ is an irreducible faithful module for $L_{\alpha}/C_{L_{\alpha}}(V)$.

For $v\in V_{\alpha}^{(2)}\setminus A_\beta$, we have that $[v, Q_{\beta}]V_{\beta}\normaleq O^p(L_{\beta})$. But for $R$ a Hall $p'$-subgroup of $L_{\beta}\cap G_{\alpha,\beta}$, we have that $V_{\alpha}^{(2)}=\langle v(V_{\alpha}^{(2)}\cap A_\beta)^{R}\rangle$ so that $[V_{\alpha}^{(2)}, Q_{\beta}]=[v(V_{\alpha}^{(2)}\cap A_\beta), Q_{\beta}]=[v, Q_{\beta}]V_{\beta}$ and $V$ is a nearly quadratic module for $\bar{L_{\alpha}}$. An appeal to \cref{nearquad} yields that $L_{\alpha}/C_{L_{\alpha}}(V)\cong \mathrm{(P)SL}_2(q)$. Since $L_{\alpha}/R_{\alpha}\cong \SL_2(q)$, applying \cref{GoodAction1} and \cref{Badp3} (iii) when $q=3$ and an argument utilizing the Schur multiplier of $\PSL_2(q)$ when $q>3$, we deduce that $q=3$, $\bar{L_{\alpha}}\cong (2^2\times Q_8):3$ where $\bar{C_{L_{\alpha}}(V)}\cong Q_8$ and $\bar{R_{\alpha}}\cong 2^2$, and $V$ is an irreducible $2$F-module for $L_{\alpha}/C_{L_{\alpha}}(V)\cong \PSL_2(3)$.

Let $K$ be the largest subgroup of $S$ normalized by both $L_{\beta}$ and $R_{\alpha}S$. Suppose that $K\ne\{1\}$. Then $Z_{\alpha}\not\le K$, for otherwise, $Z_{\alpha}\le Z(K)$ and taking respective normal closures yields $V_{\beta}^{(3)}\le Z(K)$ is abelian, a contradiction. Since $Z_{\alpha}$ centralizes $K$, we infer that $[K, V_{\beta}^{(3)}]=\{1\}$ and $K\le C_{Q_{\beta}}(V_{\beta}^{(3)})$. Since coprime elements of $O^3(L_{\beta})$ act faithfully $Q_{\beta}/C_{\beta}$, we conclude that $L_{\beta}/K$ is of characteristic $3$. Assume that $R_{\alpha}S/K$ is not of characteristic $3$ so that $O^3(L)$ acts non-trivially on $K$. Since $Z_{\alpha}\not\le K$, $K$ is not self-centralizing and we may assume that $C_S(K)\le Q_{\alpha}$ and $C_S(K)\not\le Q_{\beta}$. If $C_S(K)^x\cap Q_{\beta}\not\le Q_{\alpha}$ for some $x\in L_{\beta}$, then $[C_S(K)^x\cap Q_{\beta}, K]=\{1\}$ so that $[O^3(L), K]\le [\langle (C_S(K)^x\cap Q_{\beta})^{R_{\alpha}S}\rangle, K]=\{1\}$, a contradiction. Thus, $\langle (C_S(K)\cap Q_{\beta})^{L_{\beta}}\rangle\le Q_{\alpha}$ and so $[O^3(L_{\beta}), Q_{\beta}]\le [\langle C_S(K)^{L_{\beta}}\rangle, Q_{\beta}]\le \langle (C_S(K)\cap Q_{\beta})^{L_{\beta}}\rangle\le Q_{\alpha}$ and $Q_{\alpha}\cap Q_{\beta}\normaleq L_{\beta}$, a contradiction by \cref{push}. Thus, the triple $(L_{\beta}/K, R_{\alpha}G_{\alpha\beta}/K, G_{\alpha,\beta}/K)$ satisfies \cref{MainHyp} and assuming that $G$ is a minimal counterexample to \hyperlink{MainGrpThm}{Theorem C}, and using that $Q_{\beta}$ contains three non-central chief factors for $\bar{L_{\beta}}$ and $O^{3'}(R_{\alpha}S/Q_{\alpha})\cong \PSL_2(3)$, we have a final contradiction.
\end{proof}

Now, we may assume that $b=3$. Unfortunately, most of the techniques introduced earlier in this section are not applicable in this setting and so the methodology for this case is different from the rest of this subsection. The aim throughout will be to show that $R_{\beta}=Q_{\beta}$ and $R_{\alpha}=Q_{\alpha}$ for then an appeal to \cite{Greenbook} yields $p=2$ and $G$ is parabolic isomorphic to $\mathrm{M}_{12}$ or $\Aut(\mathrm{M_{12}})$.

\begin{lemma}\label{b=3i}
Suppose that $C_{V_\beta}(V_{\alpha'})=V_\beta \cap Q_{\alpha'}$ and $b=3$. Then $R_{\beta}=Q_{\beta}$, $\bar{L_{\beta}}\cong\SL_2(q)$ and $O^p(L_{\beta})$ centralizes $C_{\beta}/V_{\beta}$. 
\end{lemma}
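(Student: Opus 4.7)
The strategy is to reduce all three conclusions to a single structural fact, namely that $[V_{\alpha'}, C_\beta]\le V_\beta$, and then close with the three subgroup lemma and coprime action. Note that by \cref{SL2VlQ} and \cref{SL2VnQ} we already have $Q_\beta\in\syl_p(R_\beta)$ and $L_\beta/R_\beta\cong\SL_2(q)$, so the statement $\bar{L_\beta}\cong\SL_2(q)$ is equivalent to $R_\beta=Q_\beta$; and once $O^p(L_\beta)$ is shown to centralize $C_\beta/V_\beta$, the containment $R_\beta\le Q_\beta$ will drop out by standard three-subgroup-lemma/coprime-action arguments.

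As a preparatory step, since $Z_\alpha\le V_\beta$, every element of $C_\beta$ centralizes $Z_\alpha$; by \cref{p-closure2}~(iv), $C_S(Z_\alpha)=Q_\alpha$, and hence $C_\beta\le Q_\alpha\cap Q_\beta$. The heart of the argument is then to prove $[V_{\alpha'}, C_\beta]\le V_\beta$. Here I would exploit the natural module structure guaranteed by \cref{SL2VlQ}/\cref{SL2VnQ}: we have $[V_\beta, V_{\alpha'}]\le V_\beta\cap V_{\alpha'}$ and $[C_\beta, V_\beta]=1$, and the three subgroup lemma applied to the triple $(V_{\alpha'}, C_\beta, V_\beta)$ gives that $[V_{\alpha'}, C_\beta]$ centralizes $V_\beta$. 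Combined with the containment $C_\beta\le Q_\alpha\cap Q_\beta$ from the previous paragraph and the symmetric natural module structure at $\alpha'$ (so that $V_\beta$ acts on $V_{\alpha'}$ as a Sylow $p$-subgroup of $\SL_2(q)$ with centralizer of index $q$), this should pin $[V_{\alpha'}, C_\beta]$ inside $V_\beta$. The same three-subgroup-lemma calculation with $Q_\beta$ in place of $V_{\alpha'}$ (using $[Q_\beta, V_\beta]=Z_\beta\le V_\beta$) gives $[Q_\beta, C_\beta]\le V_\beta$, and since $O^p(L_\beta)$ is generated by $L_\beta$-conjugates of $V_{\alpha'}$ together with $Q_\beta\cap O^p(L_\beta)$, this yields the desired centralization of $C_\beta/V_\beta$ by $O^p(L_\beta)$.

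The finish is standard. For any $p'$-element $r\in R_\beta$, coprime action applied to $V_\beta/C_{V_\beta}(O^p(L_\beta))$ (on which $r$ acts trivially as $r\in R_\beta$) together with a further coprime decomposition on $C_{V_\beta}(O^p(L_\beta))$ shows $r$ centralizes $V_\beta$; combined with the centralization of $Q_\beta/C_\beta$ (dual to $V_\beta$) and of $C_\beta/V_\beta$ just established, the A$\times$B-lemma forces $r$ to centralize $Q_\beta$. Since $G_\beta$ is of characteristic $p$, $r=1$, so $R_\beta=Q_\beta$ and $\bar{L_\beta}\cong\SL_2(q)$. The main obstacle I anticipate is the precise bookkeeping in the central step when $V_{\alpha'}\not\le Q_\beta$: in that case $V_{\alpha'}$ does not a priori normalize $C_\beta$, and the three-subgroup-lemma calculation must first be carried out on $V_{\alpha'}\cap Q_\beta$ and then propagated to all of $V_{\alpha'}$ using that $V_{\alpha'}\cap Q_\beta$ already generates $V_{\alpha'}$ modulo $C_{V_{\alpha'}}(O^p(L_{\alpha'}))$ in the natural module structure at $\alpha'$.
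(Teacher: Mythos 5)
The three-subgroup-lemma computations you propose as the heart of the argument are both vacuous. Consider the triple $(V_{\alpha'}, C_\beta, V_\beta)$: since $C_\beta\normaleq G_\beta$ and $V_{\alpha'}\le Q_{\alpha+2}\le G_\beta$, we already know $[V_{\alpha'}, C_\beta]\le C_\beta$, and $C_\beta$ centralizes $V_\beta$ \emph{by definition}. So the conclusion ``$[V_{\alpha'}, C_\beta]$ centralizes $V_\beta$'' adds nothing; it does not ``pin $[V_{\alpha'}, C_\beta]$ inside $V_\beta$,'' and you do not supply a mechanism that does. The same remark applies to the variant with $Q_\beta$ in place of $V_{\alpha'}$: it yields $[Q_\beta, C_\beta]\le C_\beta$, again a tautology, not the claimed $[Q_\beta, C_\beta]\le V_\beta$.

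There is also a structural problem with working at $\beta$: when $V_{\alpha'}\le Q_\beta$, the group $\langle V_{\alpha'}^{L_\beta}\rangle$ sits inside $Q_\beta$ and does not generate $O^p(L_\beta)$, so ``$O^p(L_\beta)$ is generated by $L_\beta$-conjugates of $V_{\alpha'}$ together with $Q_\beta\cap O^p(L_\beta)$'' fails exactly in the case you do \emph{not} flag as the obstacle. The paper sidesteps both problems by transferring to the conjugate vertex $\alpha'$: there one always has $V_\beta\not\le Q_{\alpha'}$ (because $Z_\alpha\le V_\beta$ and $Z_\alpha\not\le Q_{\alpha'}$ for the critical pair), so $\langle V_\beta^{L_{\alpha'}}\rangle\ge O^p(L_{\alpha'})$ unconditionally. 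The needed containment $[C_{\alpha'}, V_\beta]\le V_{\alpha'}$ is then established by a case split on $V_{\alpha'}\le Q_\beta$: when $V_{\alpha'}\le Q_\beta$ one first forces $|V_{\alpha'}|=q^3$ (two conjugates of $Z_{\alpha+2}$ normalized by $L_{\alpha'}$), after which $C_{\alpha'}\le Q_{\alpha+2}$ gives $[C_{\alpha'},V_\beta]\le Z_{\alpha+2}\le V_{\alpha'}$; when $V_{\alpha'}\not\le Q_\beta$ one uses the factorization $C_{\alpha'}=V_{\alpha'}(C_{\alpha'}\cap Q_\beta)$ and a direct commutator estimate. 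Neither of these replacements for your three-subgroup-lemma step appears in your proposal, and without one of them the argument does not get off the ground.
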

\begin{proof}
Suppose first that $V_{\alpha'}\le Q_{\beta}$. Then $R:=[V_{\beta}, V_{\alpha'}]=Z_{\beta}\le Z_{\alpha+2}=Z_{\alpha'-1}$ so that $L_{\beta}=\langle V_{\beta}, V_{\beta}^g, R_{\alpha'}\rangle$ normalizes $Z_{\alpha+2}Z_{\alpha+2}^g$ for some suitably chosen $g\in L_{\alpha'}$. Thus, $|V_{\alpha'}|=q^3=|V_{\beta}|$. Moreover, since $V_{\beta}\not\le Q_{\alpha'}$ and $[C_{\alpha'}, V_{\beta}]\le Z_{\alpha+2}\le V_{\alpha'}$, we deduce that $O^p(L_{\alpha'})$ centralizes $C_{\alpha'}/V_{\alpha'}$. Then for any $r\in R_{\alpha'}$ of order coprime to $p$, we have that $[r, Q_{\alpha'}]\le C_{\alpha'}$ by the three subgroup lemma so that $[r, Q_{\alpha'}]=\{1\}$ by coprime action. Hence, $R_{\alpha'}=Q_{\alpha'}$, $\bar{L_{\alpha'}}\cong \SL_2(q)$ and in this case the result holds by conjugacy.

If $V_{\alpha'}\not\le Q_{\beta}$, then $C_{\alpha'}=V_{\alpha'}(C_{\alpha'}\cap Q_{\beta})$ so that $[V_{\beta}, C_{\alpha'}]\le RZ_{\beta}\le V_{\alpha'}$ and again, $O^p(L_{\alpha'})$ centralizes $C_{\alpha'}/V_{\alpha'}$. As above, this implies that $R_{\alpha'}=Q_{\alpha'}$, $\bar{L_{\alpha'}}\cong \SL_2(q)$ and the result holds by conjugacy.
\end{proof}

\begin{lemma}\label{b=3iia}
Suppose that $C_{V_\beta}(V_{\alpha'})=V_\beta \cap Q_{\alpha'}$ and $b=3$. Then $|V_{\beta}|=q^3$, $Q_{\beta}/C_{\beta}$ is a natural module for $\bar{L_{\beta}}\cong \SL_2(q)$, and there exists a critical pair $(\alpha, \alpha')$ with $Z_{\alpha'}\cap Z_{\beta}=\{1\}$.
\end{lemma}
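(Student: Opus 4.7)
The proof proceeds in three stages, one per assertion. Throughout we use \cref{b=3i} together with the preceding analysis (\cref{SL2VlQ}, \cref{SL2VnQ}, \cref{NotNatural}), giving $R_\beta=Q_\beta$, $\bar{L_\beta}\cong\SL_2(q)$, $O^p(L_\beta)$ centralizing $C_\beta/V_\beta$, and $V_\beta/C_{V_\beta}(O^p(L_\beta))$ being a natural $\SL_2(q)$-module.

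For $|V_\beta|=q^3$, which I expect to be the main obstacle, I would argue by contradiction. Suppose $C:=C_{V_\beta}(O^p(L_\beta))$ properly contains $Z_\beta$. Invoking \cref{VBGood} with $\delta=\beta$ and $\lambda=\alpha$, the subgroup $V^\alpha:=\langle C^{G_\alpha}\rangle\le V_\alpha^{(2)}$ satisfies $[V^\alpha,Q_\alpha]=Z_\alpha$, both $V^\alpha/Z_\alpha$ and $V_\alpha^{(2)}/V^\alpha$ contain non-central chief factors for $L_\alpha$, and crucially $V^\alpha V_\beta\not\normaleq L_\beta$. To force a contradiction, the plan is to apply \cref{SubAmal} with $\lambda=\alpha$: the hypothesis on chief factors is met because $V_{\alpha'}$ acts quadratically on $V_\beta$ by \cref{b>1} and the assumed equality $C_{V_\beta}(V_{\alpha'})=V_\beta\cap Q_{\alpha'}$ forces the relevant quotients of $V_\alpha^{(2)}/Z_\alpha$ to be FF- or $2$F-modules. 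Each outcome (a)--(d) of \cref{SubAmal} must then be ruled out using $b=3$: (a) is excluded by \cite{Greenbook}'s classification of weak BN-pairs of rank $2$ with critical distance $3$; (b) requires a specific $p=3$ parabolic structure inconsistent with $L_\alpha/R_\alpha\cong\SL_2(q)$; and (c), (d) fail by counting non-central chief factors in $V_\beta^{(3)}$ and $V_\alpha^{(4)}$ respectively. The delicate part will be verifying the FF/$2$F-module status of both chief factors of $V_\alpha^{(2)}/Z_\alpha$ simultaneously, and handling outcome (d), where one must track the action of $V_{\alpha'}$ on $V^\alpha$ via the embedding into a larger sub-amalgam.

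With $|V_\beta|=q^3$ established, $V_\beta/Z_\beta$ is natural of order $q^2$, and I would deduce the natural module structure of $Q_\beta/C_\beta$ via the bilinear commutator pairing $V_\beta/Z_\beta\times Q_\beta/C_\beta\to Z_\beta$. Concretely, I would first argue that $[Q_\beta,V_\beta]=Z_\beta$: this commutator is a non-trivial (by faithfulness of $Q_\beta/C_\beta$ on $V_\beta$) $G_{\alpha,\beta}$-invariant subgroup of $Z_\beta$, and since $Z_\beta=C_{Z_\alpha}(S)$ is $\GF(q)$-one-dimensional with the Hall $p'$-complement of $G_{\alpha,\beta}\cap L_\beta$ acting irreducibly, we get $[Q_\beta,V_\beta]=Z_\beta$. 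The pairing is then non-degenerate, so $Q_\beta/C_\beta$ embeds as an $L_\beta$-module in $\Hom_{\GF(q)}(V_\beta/Z_\beta,Z_\beta)$ of order $q^2$; by \cref{CommCF} it contains a non-central chief factor for $L_\beta$, and $L_\beta/Q_\beta$-irreducibility of the natural module forces equality, giving $Q_\beta/C_\beta$ natural via self-duality.

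For the existence of a critical pair with $Z_{\alpha'}\cap Z_\beta=\{1\}$, I would argue by contradiction, assuming $W:=Z_{\alpha'}\cap Z_\beta\ne\{1\}$ for every critical pair. Since $Z_\beta=\Omega(Z(L_\beta))$ and $Z_{\alpha'}=\Omega(Z(L_{\alpha'}))$, the subgroup $W$ is centralized by $\langle L_\beta,L_{\alpha'}\rangle$. Varying $\alpha'$ over its $L_{\alpha+2}$-orbit of critical partners and taking the (still non-trivial) common intersection $W^{*}:=\bigcap_{\alpha'}W$ produces a subgroup centralized additionally by $L_{\alpha+2}$; iterating this edge-by-edge along the tree would yield a non-trivial subgroup of $S$ normalized by all vertex stabilizers in the amalgam, hence by $G$ itself, contradicting \cref{BasicAmal}(iii). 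The technical point here is showing that $W^{*}$ remains non-trivial as $\alpha'$ varies, which I would establish using that each $W$ contains $[V_\beta,V_{\alpha'}]$ in the case $V_{\alpha'}\le Q_\beta$ and exploiting that such common commutator subgroups are $G_{\alpha,\beta}$-invariant.
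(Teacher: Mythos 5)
Your approach diverges substantially from the paper's. The paper proves all three assertions by a short, direct commutator analysis with no appeal to \cref{SubAmal}: it splits into the cases $V_{\alpha'}\le Q_{\beta}$ (already handled in \cref{b=3i}), $V_{\alpha'}\not\le Q_{\beta}$ with $Z_{\alpha'}\ne Z_{\beta}$ (where $Q_{\alpha'-1}=V_{\alpha'}V_{\beta}(Q_{\alpha'-1}\cap Q_{\alpha'}\cap Q_{\beta})$ and $V_{\alpha'}\cap V_{\beta}\le\Omega(Z(Q_{\alpha'-1}))$ give $|V_{\beta}|=q^3$ immediately), and finally the case that \emph{every} critical pair has $V_{\alpha'}\not\le Q_{\beta}$ and $Z_{\alpha'}=Z_{\beta}$, which is eliminated by picking $\lambda\in\Delta(\alpha+2)$ with $Z_{\lambda}\cap Z_{\beta}=\{1\}$ and showing $C_{\beta}\cap C_{\lambda}$ is elementary abelian, forcing $Z_{\alpha'-1}\le[C_{\lambda},C_{\lambda}]=R\normaleq L_{\lambda}$ and hence $V_{\lambda}\le R$, a contradiction. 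The module structure of $Q_{\beta}/C_{\beta}$ then falls out of $C_{\beta}=Q_{\alpha}\cap Q_{\alpha+2}$ having index $q^2$.

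The genuine problem is in your third stage. You assume $W=Z_{\alpha'}\cap Z_{\beta}\ne\{1\}$ for every critical pair, note $W$ is centralized by $\langle L_{\beta},L_{\alpha'}\rangle$, and then claim that intersecting over the $L_{\alpha+2}$-orbit of critical partners gives a non-trivial subgroup $W^{*}$ \emph{centralized by} $L_{\alpha+2}$. This does not follow. By the natural module structure of $Z_{\alpha+2}$ (or, equivalently, of $Z_{\alpha'-1}$), the condition $Z_{\alpha'}\cap Z_{\beta}\ne\{1\}$ forces $Z_{\alpha'}=Z_{\beta}$, so under your standing hypothesis $W^{*}=Z_{\beta}$ for every $\alpha'$ in the orbit. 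But $Z_{\beta}$ is a proper $G_{\alpha,\beta}$-invariant subgroup of the natural $L_{\alpha+2}$-module $Z_{\alpha+2}$ and is most certainly \emph{not} centralized (or even normalized) by $L_{\alpha+2}$ --- indeed $Z_{\beta}\normaleq L_{\alpha+2}$ would immediately contradict \cref{BasicAmal}(iii). So the intersection argument does not produce anything fixed by $L_{\alpha+2}$, and the proposed tree-iteration never gets started. The paper instead exploits the \emph{non}-normality of $Z_{\beta}$ in $L_{\alpha+2}$ to find the ``orthogonal'' neighbour $\lambda$ and then works with $C_{\beta}\cap C_{\lambda}$, which is the opposite move.

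Your first stage also has an unfilled hole. To invoke \cref{SubAmal} you need a specific non-central chief factor $U/W$ of $G_{\alpha}$ which is an FF-module with $C_{L_{\alpha}}(U/W)\ne R_{\alpha}$ and with $C_{L_{\alpha}}(U/W)\cap R_{\alpha}$ normalizing $Q_{\alpha}\cap Q_{\beta}$; you gesture at ``FF- or $2$F-modules'' without producing such a factor or verifying these conditions (and \cref{SubAmal} is stated under a minimal-counterexample hypothesis, which would have to be dealt with first by checking the conclusion for the $b=3$ examples). It is likely simpler, and is what the paper does, to avoid \cref{SubAmal} here entirely: the case $V_{\alpha'}\not\le Q_{\beta}$, $Z_{\alpha'}\ne Z_{\beta}$ yields $|V_{\beta}|=q^3$ in a few lines directly from the natural module structure, with no sub-amalgam analysis required.
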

\begin{proof}
As described in \cref{b=3i}, the proof when $V_{\alpha'}\le Q_{\beta}$ is straightforward. Suppose that $V_{\alpha'}\not\le Q_{\beta}$ and $Z_{\alpha'}\ne Z_{\beta}$. Since $Z_{\alpha'-1}$ is a natural module for $L_{\alpha'-1}/R_{\alpha'-1}\cong \SL_2(q)$, we have that $Z_{\beta}\cap Z_{\alpha'}=\{1\}$. Moreover, since $V_{\alpha'}\not\le Q_{\beta}$, we deduce that $Q_{\alpha'-1}=V_{\alpha'}V_{\beta}(Q_{\alpha'-1}\cap Q_{\alpha'}\cap Q_{\beta})$. Then $[V_{\alpha'}\cap V_{\beta}, Q_{\alpha'-1}]\le Z_{\alpha'}\cap Z_{\beta}=\{1\}$ so that $V_{\alpha'}\cap V_{\beta}\le \Omega(Z(Q_{\alpha'-1}))$. Note that by \cref{push}, $Q_{\beta}\cap O^p(L_{\beta})\not\le Q_{\alpha'-1}$ so that $(Q_{\beta}\cap O^p(L_{\beta}))Q_{\alpha'-1}\in\syl_p(G_{\beta, \alpha'-1})$ and $\Omega(Z(Q_{\alpha'-1}))\cap C_{V_{\beta}}(O^p(L_{\beta}))=Z_{\beta}$. Hence, $[V_{\alpha'}, V_{\beta}]\le V_{\alpha'}\cap V_{\beta}=Z_{\alpha'-1}$ and we conclude that $|V_{\beta}|=q^3$.

Thus, we may assume that for every critical pair $(\alpha, \alpha')$, we have that $V_{\alpha'}\not\le Q_{\beta}$ and $Z_{\alpha'}=Z_{\beta}$. Since $Z_{\beta}\not\normaleq L_{\alpha+2}$, there is $\lambda\in\Delta(\alpha+2)$ such that $Z_{\lambda}\cap Z_{\beta}=\{1\}$. Moreover, by  assumption, $V_{\lambda}\le Q_{\beta}$ and $V_{\beta}\le Q_{\lambda}$ so that $[V_{\lambda}, V_{\beta}]\le Z_{\beta}\cap Z_{\lambda}=\{1\}$. Then, $[C_{\beta}\cap Q_{\lambda}, V_{\lambda}] \le [C_{\beta}, C_{\beta}]\cap Z_{\lambda}$. If $Z_{\lambda}\cap \Phi(C_{\beta})\ne\{1\}$, then $Z_{\alpha+2}=Z_{\lambda}\times Z_{\beta}\le \Phi(C_{\beta})$ and $V_{\beta}\le \Phi(C_{\beta})$. But $O^p(L_{\beta})$ centralizes $C_{\beta}/V_{\beta}$, a contradiction by coprime action. Therefore, $C_{\beta}\cap Q_{\lambda}=C_{\beta}\cap C_{\lambda}$ is of index at most $q$ in $C_{\beta}$ and $C_{\lambda}$. By the same reasoning, $C_{\alpha'}\cap Q_{\lambda}=C_{\alpha'}\cap C_{\lambda}$ and since $V_{\alpha'}\le C_{\lambda}$ and $V_{\alpha'}\not\le C_{\beta}$, $C_{\beta}\not\le Q_{\lambda}$, $C_{\beta}\cap C_{\lambda}$ is proper in $C_{\beta}$, and $C_{\lambda}=V_{\alpha'}V_{\beta}(C_{\lambda}\cap C_{\alpha'}\cap C_{\beta})$.

Since $V_{\beta}V_{\lambda}\le C_{\beta}\cap C_{\lambda}$, we have that $C_{\beta}\cap C_{\lambda}\normaleq \langle Q_{\alpha+2}, O^p(L_{\lambda}), O^p(L_{\beta})\rangle=\langle L_{\beta}, L_{\lambda}\rangle$. If $C_{\lambda}\cap C_{\beta}$ is not elementary abelian then $Z_{\alpha+2}=Z_{\beta}\times Z_{\lambda}\le \Phi(C_{\beta}\cap C_{\lambda})\le \Phi(C_{\beta})$ and $V_{\beta}\le \Phi(C_{\beta})$, a contradiction for then $O^p(L_{\beta})$ centralizes $C_{\beta}/V_{\beta}$. Hence, $C_{\beta}\cap C_{\lambda}$ is elementary abelian. Then $\Omega(Z(C_{\lambda}))=C_{\lambda}\cap C_{\beta}\cap C_{\alpha'}$ and $C_{\lambda}=V_{\beta}V_{\alpha'}\Omega(Z(C_{\lambda}))$. But then $Z_{\lambda}\le [C_{\lambda}, C_{\lambda}]=[V_{\beta}, V_{\alpha'}]=R$. By the structure of $Z_{\alpha'-1}$, there is $\mu\in\Delta(\alpha'-1)$ such that $Z_{\mu}\cap Z_{\beta}=\{1\}$ and $Z_{\alpha'-1}=Z_\mu \times Z_\lambda$. Repeating the above argument, we deduce that $Z_{\alpha'-1}\le R$. But since $R=[C_{\lambda}, C_{\lambda}]\normaleq L_{\lambda}$, we deduce that $V_\lambda\le R$, a clear contradiction. Thus, there is a pair $(\alpha, \alpha')$ satisfying the required properties and we deduce that $|V_{\beta}|=q^3$ in all cases. In particular, $C_{\beta}=Q_{\alpha}\cap Q_{\alpha+2}$ has index $q^2$ in $Q_{\beta}$ and $Q_{\beta}/C_{\beta}$ is a natural $\SL_2(q)$-module for $\bar{L_{\beta}}$.
\end{proof}

For the remainder of this section, we arrange that $(\alpha, \alpha')$ is a critical pair with $Z_{\alpha'}\ne Z_{\beta}$.

\begin{lemma}\label{b=3iib}
Suppose that $C_{V_\beta}(V_{\alpha'})=V_\beta \cap Q_{\alpha'}$ and $b=3$. Then $V_{\alpha}^{(2)}\not\le Q_{\beta}$, $V_{\alpha}^{(2)}\cap Q_{\beta}\not\le C_{\beta}$, $\Phi(V_{\alpha}^{(2)})=Z_{\alpha}$ and $C_{L_{\alpha}}(V_{\alpha}^{(2)}/Z_{\alpha})=Q_{\alpha}$
\end{lemma}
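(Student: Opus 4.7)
The plan is to prove the four assertions in sequence, leveraging the structural data already established in \cref{b=3i} and \cref{b=3iia}: $\bar{L_\beta}\cong\SL_2(q)$ with $Q_\beta/C_\beta$ a natural module, $|V_\beta|=q^3$ with $V_\beta/Z_\beta$ a natural module, $\bar{L_\alpha}\cong\SL_2(q)$ with $Z_\alpha$ a natural module, and the existence of a critical pair $(\alpha,\alpha')$ with $Z_{\alpha'}\cap Z_\beta=\{1\}$. Two background facts I will use throughout are that $V_\beta$ is abelian (since $V_\beta\le Q_\alpha$ by minimality of $b$, $Z_\alpha\le\Omega(Z(Q_\alpha))$ centralizes $V_\beta$, and $V_\beta=\langle Z_\alpha^{L_\beta}\rangle$), and that $Q_\beta$ acts trivially on the irreducible module $V_\beta/Z_\beta$, so $[Q_\beta,V_\beta]\le Z_\beta$ and $C_\beta=Q_\alpha\cap Q_{\alpha+2}$.

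For $V_\alpha^{(2)}\not\le Q_\beta$, I use the tree geometry. By the edge-transitivity of $G$ together with the $2$-transitivity of $\bar{L_\beta}\cong\SL_2(q)$ on $\Delta(\beta)$, $G$ is transitive on oriented distance-$3$ paths in $\Gamma$. Conjugating the critical pair $(\alpha,\alpha')$ produces, for any $\lambda\in\Delta(\alpha)\setminus\{\beta\}$, some $\nu\in\Delta(\lambda)\setminus\{\alpha\}$ for which $(\nu,\beta)$ is a critical pair; hence $Z_\nu\not\le Q_\beta$, so $V_\lambda\not\le Q_\beta$, and therefore $V_\alpha^{(2)}\not\le Q_\beta$. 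For $V_\alpha^{(2)}\cap Q_\beta\not\le C_\beta$, with Step 1 established, the quotient $V_\alpha^{(2)}Q_\beta/Q_\beta$ is a non-trivial $G_{\alpha,\beta}$-invariant subgroup of the irreducible $G_{\alpha,\beta}$-module $S/Q_\beta$, hence of order exactly $q$. If $V_\alpha^{(2)}\cap Q_\beta\le C_\beta$, then $V_\alpha^{(2)}\cap Q_\beta$ centralizes $V_\beta$, so the commutator $[V_\alpha^{(2)},V_\beta]$ is driven entirely by coset representatives of $V_\alpha^{(2)}/(V_\alpha^{(2)}\cap Q_\beta)$ acting on the natural module $V_\beta/Z_\beta$ as a full Sylow $p$-subgroup of $\bar{L_\beta}$. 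Taking the $L_\alpha$-closure of $[V_\alpha^{(2)},V_\beta]$ yields $[V_\alpha^{(2)},V_\alpha^{(2)}]\ge Z_\alpha$, and then an analysis of the resulting $L_\alpha$-invariant subgroup structure together with \cref{push} (applied to the $G_{\alpha,\beta}$-invariant subgroup $V_\alpha^{(2)}\cap Q_\beta$, which would now be normal in $L_\beta$-like setting) forces a contradiction.

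For $\Phi(V_\alpha^{(2)})=Z_\alpha$: from Step 2, commutators of elements of $V_\alpha^{(2)}\setminus Q_\beta$ with $V_\beta$ produce $Z_\beta\le[V_\alpha^{(2)},V_\alpha^{(2)}]\le\Phi(V_\alpha^{(2)})$; since $\Phi(V_\alpha^{(2)})$ is $L_\alpha$-invariant and $Z_\alpha$ is the unique minimal $L_\alpha$-submodule of $V_\alpha^{(2)}$ containing $Z_\beta$ (by irreducibility of $Z_\alpha$ as an $\SL_2(q)$-module), we obtain $Z_\alpha\le\Phi(V_\alpha^{(2)})$. The reverse inclusion follows by showing $V_\alpha^{(2)}/Z_\alpha$ is elementary abelian: it is generated by the images of the abelian groups $V_\lambda$ for $\lambda\in\Delta(\alpha)$, each containing $Z_\alpha$ in its centre, and cross-commutators between distinct $V_\lambda$ and $V_\mu$ lie in an $L_\alpha$-invariant subgroup of $Z(Q_\alpha)\cap V_\alpha^{(2)}$ which must equal $Z_\alpha$ (since $Z_\alpha$ is the unique non-central $L_\alpha$-chief factor in $Z(Q_\alpha)$ by \cref{p-closure2}). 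Finally, $V_\alpha^{(2)}/Z_\alpha$ contains a non-central chief factor for $L_\alpha$ by \cref{nccf}, so $C_{L_\alpha}(V_\alpha^{(2)}/Z_\alpha)\lneq L_\alpha$; it clearly contains $Q_\alpha$, and if the containment were strict, its image in $\bar{L_\alpha}\cong\SL_2(q)$ would be a non-trivial proper normal subgroup, hence central of order at most $2$, which cannot act trivially on a natural-module chief factor except in the small cases $q\in\{2,3\}$ that would then contradict \cref{b=3i}. The main obstacle will be Step 2, where the precise bookkeeping of the commutator $[V_\alpha^{(2)},V_\beta]$ and its $L_\alpha$-closure requires careful exploitation of the natural-module duality between $V_\beta/Z_\beta$ and $Q_\beta/C_\beta$.
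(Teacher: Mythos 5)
Your Step 1 works, though by a different route than the paper: instead of chasing a critical pair through the transitivity on oriented paths of length three, the paper assumes $V_{\alpha}^{(2)}\le Q_{\beta}$, conjugates to get $V_{\mu}^{(2)}\le Q_{\alpha'}$ for all $\mu\in\Delta(\alpha')$, and then derives $V_{\beta}\le V_{\alpha'-1}^{(2)}\le Q_{\alpha'}$, contradicting $Z_{\alpha}\le V_{\beta}$, $Z_{\alpha}\not\le Q_{\alpha'}$. Both are fine; yours is a little more elaborate to make rigorous.

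Steps 2 and 3 are not complete. For Step 2 your sketch computes $[V_{\alpha}^{(2)},V_{\beta}]$ and says the $L_{\alpha}$-closure gives $[V_{\alpha}^{(2)},V_{\alpha}^{(2)}]\ge Z_{\alpha}$, followed by "an analysis \ldots forces a contradiction" -- but $[V_{\alpha}^{(2)},V_{\alpha}^{(2)}]=Z_{\alpha}$ is in fact \emph{true} (it's your Step 3), so no contradiction lies in that direction, and the appeal to \cref{push} is not specified enough to see where it would bite. The clean argument is the one the paper uses: $Q_{\beta}/C_{\beta}$ is a faithful natural $\bar{L_{\beta}}$-module and, by Step 1, $V_{\alpha}^{(2)}Q_{\beta}/Q_{\beta}=S/Q_{\beta}$, so $[V_{\alpha}^{(2)},Q_{\beta}]\not\le C_{\beta}$; since $[V_{\alpha}^{(2)},Q_{\beta}]\le V_{\alpha}^{(2)}\cap Q_{\beta}$, you are done. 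In Step 3 your claim that the cross-commutators $[V_{\lambda},V_{\mu}]$ lie in $Z(Q_{\alpha})\cap V_{\alpha}^{(2)}$ has no justification offered, and I don't see a direct one -- the inclusion $[V_{\alpha}^{(2)},V_{\alpha}^{(2)}]\le Z_{\alpha}$ should instead be obtained from $[V_{\alpha}^{(2)},V_{\beta}]\le[Q_{\alpha},V_{\beta}]\le Z_{\alpha}$ (a computation in the natural module $V_{\beta}/Z_{\beta}$) followed by normal closure, $[V_{\alpha}^{(2)},V_{\alpha}^{(2)}]=\langle[V_{\alpha}^{(2)},V_{\beta}]^{G_{\alpha}}\rangle\le Z_{\alpha}$; non-triviality then forces equality by irreducibility of $Z_{\alpha}$.

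The serious problem is Step 4. You write that $C_{L_{\alpha}}(V_{\alpha}^{(2)}/Z_{\alpha})$'s ``image in $\bar{L_{\alpha}}\cong\SL_2(q)$'' is a proper normal subgroup. But $\bar{L_{\alpha}}\cong\SL_2(q)$, i.e.\ $R_{\alpha}=Q_{\alpha}$, is \emph{not} available at this stage of the argument -- only $L_{\alpha}/R_{\alpha}\cong\SL_2(q)$ is, and pinning down $R_{\alpha}$ is exactly what \cref{b=3min} and the later $b=3$ lemmas are still struggling to do. Nor do you know $R_{\alpha}\le C_{L_{\alpha}}(V_{\alpha}^{(2)}/Z_{\alpha})$, so you cannot reduce to $L_{\alpha}/R_{\alpha}$ either. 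The paper's argument sidesteps this entirely: take a $p'$-element $r\in C_{L_{\alpha}}(V_{\alpha}^{(2)}/Z_{\alpha})$; coprime action together with $\Phi(V_{\alpha}^{(2)})=Z_{\alpha}$ shows $r$ centralizes $V_{\alpha}^{(2)}$, the three-subgroup lemma then gives $[r,Q_{\alpha}]\le C_{Q_{\alpha}}(V_{\alpha}^{(2)})$, and a further computation using $C_{Q_{\alpha}}(V_{\alpha}^{(2)})\le C_{\beta}$, $Q:=Q_{\beta}\cap O^p(L_{\beta})\not\le Q_{\alpha}$ and $[Q,C_{Q_{\alpha}}(V_{\alpha}^{(2)})]\le Z_{\alpha}$ forces $r$ to centralize $Q_{\alpha}/Z_{\alpha}$ and hence all of $Q_{\alpha}$, so $r=1$. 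You need some version of that argument, not an appeal to a structure of $\bar{L_{\alpha}}$ that has not yet been established.
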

\begin{proof}
Note that if $V_{\alpha}^{(2)}\le Q_{\beta}$, then $V_{\lambda}^{(2)}\le Q_{\beta}$ for all $\lambda\in\Delta(\beta)$. Since $\alpha'$ is conjugate to $\beta$, we have that $V_{\mu}^{(2)}\le Q_{\alpha'}$ for all $\mu\in\Delta(\alpha')$. But then, $V_{\beta}\le V_{\alpha'-1}^{(2)}\le Q_{\alpha'}$, a contradiction. Now, as $Q_{\alpha}\cap Q_{\beta}\not\normaleq L_{\beta}$ by \cref{push}, we have that $O^p(L_{\beta})$ does not centralize $Q_{\beta}/C_{\beta}$. In particular, $V_{\alpha}^{(2)}\cap Q_{\beta}\ge [V_{\alpha}^{(2)}, Q_{\beta}]\not\le C_{\beta}$.

Since $V_{\alpha}^{(2)}$ is non-abelian, otherwise by conjugacy $V_{\alpha'}\le V_{\alpha+2}^{(2)}$ centralizes $V_{\beta}$, we have that $[V_{\alpha}^{(2)}, V_{\alpha}^{(2)}]=Z_{\alpha}$. But now, $V_{\alpha}^{(2)}$ is generated by $V_{\delta}$ for $\delta\in\Delta(\alpha)$ so that $V_{\alpha}^{(2)}/Z_{\alpha}$ is elementary abelian and $\Phi(V_{\alpha}^{(2)})=Z_{\alpha}$.

Let $r\in C_{L_{\alpha}}(V_{\alpha}^{(2)}/Z_\alpha)$ have order coprime to $p$. Then by coprime action, $r$ centralizes $V_{\alpha}^{(2)}$. Applying the three subgroup lemma, we have that $r$ centralizes $Q_{\alpha}/C_{Q_{\alpha}}(V_{\alpha}^{(2)})$. But $C_{Q_{\alpha}}(V_{\alpha}^{(2)})\le C_{\beta}$ so that for $Q:=Q_{\beta}\cap O^p(L_{\beta})$, $Q\not\le Q_{\alpha}$ and $[Q, C_{Q_{\alpha}}(V_{\alpha}^{(2)})]\le C_{Q_{\alpha}}(V_{\alpha}^{(2)})\cap V_{\beta}=Z_{\alpha}$ and we deduce that $r$ centralizes $Q_{\alpha}/Z_{\alpha}$ and $r=1$. Hence, $C_{L_{\alpha}}(V_{\alpha}^{(2)}/Z_{\alpha})=Q_{\alpha}$
\end{proof}

\begin{proposition}\label{b=3min}
Suppose that $C_{V_\beta}(V_{\alpha'})=V_\beta \cap Q_{\alpha'}$ and $b=3$. Then either $[V_{\alpha}^{(2)}, O^p(L_{\alpha})]=V_{\alpha}^{(2)}=Q_{\alpha}$, $Z(Q_{\alpha})=Z_{\alpha}$, $C_{V_{\alpha}^{(2)}/Z_{\alpha}}(O^p(L_{\alpha}))=\{1\}$, $L_{\beta}=\langle V_{\alpha}^{(2)}, V_{\alpha+2}^{(2)}\rangle$, $C_{\beta}=V_{\alpha}^{(2)}\cap V_{\alpha+2}^{(2)}$ and $\Phi(C_{\beta})\le Z_{\beta}$; or $G$ is parabolic isomorphic to $\Aut(\mathrm{M}_{12})$.
\end{proposition}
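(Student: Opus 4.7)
The strategy is to exploit the rigid structure established by \cref{b=3i}, \cref{b=3iia}, and \cref{b=3iib}---namely $\bar{L_\beta}\cong\SL_2(q)$ with $V_\beta/Z_\beta$ a natural $\SL_2(q)$-module of order $q^2$, $|V_\beta|=q^3$, $Q_\beta/C_\beta$ a natural $\SL_2(q)$-module, $\Phi(V_\alpha^{(2)})=Z_\alpha$, $V_\alpha^{(2)}\cap Q_\beta\not\le C_\beta$, and $C_{L_\alpha}(V_\alpha^{(2)}/Z_\alpha)=Q_\alpha$---in order to pin $\bar{L_\alpha}$ down and then identify the ambient amalgam. First I would show that $V:=V_\alpha^{(2)}/Z_\alpha$ is a faithful quadratic $2$F-module for $\bar{L_\alpha}$, using $V_\alpha^{(2)}\cap Q_\beta$ as a quadratic offender (its commutators with $V_\alpha^{(2)}$ lie in $V_\beta\cap Z_\alpha=Z_\beta$), with offender index controlled by $|V_\alpha^{(2)}Q_\beta/Q_\beta|$. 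Applying \cref{SEFF}, \cref{Quad2F}, and \cref{2FRecog} then forces $\bar{L_\alpha}\cong\SL_2(q)$ in the generic case, with a short list of small-group exceptions when $q\in\{2,3\}$.

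Next I would invoke \cref{SubAmal} with $\lambda=\alpha$ and $U/W=V_\alpha^{(2)}/Z_\alpha$, noting that $C_{L_\alpha}(U/W)\cap R_\alpha=Q_\alpha$ automatically normalizes $Q_\alpha\cap Q_\beta$. Outcomes (b) and (c) of \cref{SubAmal} are ruled out by the parameters ($p\neq 3$, or wrong critical distance, or $\lambda$ not on the correct side), and outcome (d) would force an extra non-central $\widetilde{L_\beta}$-chief factor in $\widetilde{Q}_\beta$ beyond the single $Q_\beta/C_\beta$ factor we already have, contradicting the $\bar{L_\beta}$-chief structure determined by \cref{b=3i}. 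So only outcome (a) is available, giving a weak BN-pair sub-amalgam $(H_\alpha,G_\beta,G_{\alpha,\beta})$ whose critical distance, by the bound in \cref{SubAmal} together with $b=3$, must be exactly $3$. By \cref{greenbook} and \cite{Fan}, such an amalgam with parabolic shape $\SL_2(q)$ over $\SL_2(q)$ at the prime $2$ is parabolic isomorphic to $\mathrm{M}_{12}$ or $\Aut(\mathrm{M}_{12})$.

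Finally I would distinguish the two outcomes. If $H_\alpha=L_\alpha$, then $G$ itself has a weak BN-pair of rank $2$; in the $\Aut(\mathrm{M}_{12})$ case we are immediately done, and in the $\mathrm{M}_{12}$ case a direct inspection of the $2$-local structure (together with \cref{noppower}) confirms all conditions listed in (a). If $H_\alpha<L_\alpha$, then the proper containment $H_\alpha<L_\alpha$ combined with $R_\alpha$ normalizing the sub-amalgam data promotes the $\mathrm{M}_{12}$-type sub-amalgam to an $\Aut(\mathrm{M}_{12})$-type amalgam in $G$, giving the second alternative. The detailed equalities in (a)---$V_\alpha^{(2)}=Q_\alpha$ (faithful action of $\bar{L_\alpha}$ on $V$ combined with \cref{p-closure}(iv) rules out any proper $L_\alpha$-invariant overgroup of $V_\alpha^{(2)}$ in $Q_\alpha$), $Z(Q_\alpha)=Z_\alpha$ (else $\Omega(Z(Q_\alpha))\cap Q_{\alpha'}>Z_\beta$ yields an unwanted $G_{\alpha,\beta}$-invariant subgroup), $C_{V_\alpha^{(2)}/Z_\alpha}(O^p(L_\alpha))=\{1\}$ (from the natural module structure on $Z_\alpha$ plus faithfulness), $L_\beta=\langle V_\alpha^{(2)},V_{\alpha+2}^{(2)}\rangle$ and $C_\beta=V_\alpha^{(2)}\cap V_{\alpha+2}^{(2)}$ (an order count using $|V_\beta|=q^3$ and $C_\beta$ of index $q^2$ in $Q_\beta$), and $\Phi(C_\beta)\le Z_\beta$ (from $\Phi(V_\alpha^{(2)})=Z_\alpha$ and $C_\beta\le V_\alpha^{(2)}\cap V_{\alpha+2}^{(2)}$)---are then routine module-theoretic consequences.

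The hard part will be rigorously ruling out outcome (d) of \cref{SubAmal} across all small exceptional cases for $\bar{L_\alpha}$. In particular, verifying that none of $\widetilde X$ locally isomorphic to $\SL_3(p)$, $\Sp_4(p)$, $\mathrm{G}_2(2)$, or the $\PSp_4(3)$-variant is compatible with the chief-factor count $|V_\beta|=q^3$, $|Q_\beta/C_\beta|=q^2$, and the requirement that $V_\alpha^{(2)}/Z_\alpha$ be a $2$F-module rather than a larger quadratic module, demands a careful chief-factor bookkeeping in $\widetilde Q_\beta$ which, while in principle routine, is the most delicate portion of the argument.
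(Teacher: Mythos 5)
Your proposal does not match the paper's argument and contains a genuine gap that would sink the approach.

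The immediate problem is your proposed quadratic offender. You write that $V_\alpha^{(2)}\cap Q_\beta$ acts quadratically on $V := V_\alpha^{(2)}/Z_\alpha$ because $[V_\alpha^{(2)}\cap Q_\beta, V_\alpha^{(2)}]\le V_\beta\cap Z_\alpha$. But $V_\alpha^{(2)}\le Q_\alpha$, so $V_\alpha^{(2)}\cap Q_\beta\le Q_\alpha$; and \cref{b=3iib} gives $C_{L_\alpha}(V)=Q_\alpha$, so $V_\alpha^{(2)}\cap Q_\beta$ centralizes $V$ and is not an offender at all. The actual candidate is $Q_\beta Q_\alpha/Q_\alpha$, and the crux of the matter in this part of the paper is precisely that this subgroup is \emph{not} in general quadratic on $V$: the two lemmas that follow this proposition (\cref{b=3minii} and \cref{b=3q=p}) and the final proposition of the subsection are devoted to handling the non-quadratic possibility, via nearly quadratic modules and \cref{nearquad}, Hall--Higman for $p=3$, and a careful analysis of $[V, Q_\beta, Q_\beta]$. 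Assuming quadraticity up front and invoking \cref{2FRecog} therefore begs the question.

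The invocation of \cref{SubAmal} also does not go through as stated. That lemma's hypothesis (ii) requires $U/W$ to be an \emph{FF}-module for $\bar{L_\lambda}$ (not merely $2$F), hypothesis (iii) requires $Z(Q_\alpha)=Z_\alpha$ of order $p^2$ — one of the very facts this proposition is establishing — and \cref{SubAmal} itself sits inside \cref{CommonHyp}, whose condition (i) demands that $V_\beta/C_{V_\beta}(O^p(L_\beta))$ be a natural $\SL_2(q)$-module; none of this has been secured here. The paper's proof instead proceeds by a different mechanism entirely: after establishing $Z(Q_\alpha)=Z_\alpha$ and $V_\alpha^{(2)}=[V_\alpha^{(2)},O^p(L_\alpha)]$ by direct arguments (using that $C^\alpha\le Z(Q_\alpha)$ forces a contradiction), it sets $L_\beta^*=\langle V_\alpha^{(2)},V_{\alpha+2}^{(2)}\rangle$, shows $L_\beta=L_\beta^*C_\beta$ with $L_\beta^*\normaleq L_\beta$ and $O^p(L_\beta)\le L_\beta^*$, builds $S^*=S\cap L_\beta^*$ and $L_\alpha^*=\langle (S^*)^{G_\alpha}\rangle$ using Gaschütz' theorem to split, verifies that the triple $(G_\alpha^*,G_\beta^*,G_{\alpha,\beta}^*)$ satisfies \cref{MainHyp}, and invokes minimality of $G$ to force the sub-amalgam into the list of \hyperlink{MainGrpThm}{Theorem C}, ultimately contradicting the choice of $G$. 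If you want to salvage a proof along your lines, you would need to first control the action of $Q_\beta$ on $V$ without assuming quadraticity — that is the content of the lemmas you would be skipping.
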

\begin{proof}
Note that the configurations in \hyperlink{MainGrpThm}{Theorem C} with $b=3$ (that is, the amalgams which are parabolic isomorphic to $\mathrm{M}_{12}$ and $\Aut(\mathrm{M}_{12})$) satisfy the conclusion of the lemma. Hence, assuming that $G$ does not satisfy the conclusion of the lemma, we may also suppose that $G$ is a minimally chosen counterexample to \hyperlink{MainGrpThm}{Theorem C}.

Since $Z(Q_{\alpha})$ centralizes $V_{\beta}$, $Z(Q_{\alpha})\le C_{\beta}$ and $Q_{\beta}\cap O^p(L_{\beta})$ centralizes $Z(Q_{\alpha})/Z(Q_{\alpha})\cap V_{\beta}$ where $Z(Q_{\alpha})\cap V_{\beta}=Z_{\alpha}$. Hence, $L_{\alpha}$ centralizes $Z(Q_{\alpha})/Z_\alpha$. Indeed, $Z(Q_{\alpha})$ is elementary abelian, for otherwise $L_{\alpha}$ would centralize $Z(Q_{\alpha})/\Phi(Z(Q_{\alpha}))$. Now, $Z(Q_{\alpha})=Z_\alpha(Z(Q_{\alpha})\cap Q_{\alpha'})$ and so we need only demonstrate that $Z(Q_{\alpha})\cap Q_{\alpha'}=Z_{\beta}$ to show that $Z(Q_{\alpha})=Z_{\alpha}$

For this, we observe that $Z(Q_{\alpha})\cap Q_{\alpha'}$ has exponent $p$ and if $Z(Q_{\alpha})\cap Q_{\alpha'}\le C_{\alpha'}$, then $Z(Q_{\alpha})\cap Q_{\alpha'}$ is centralized by $\langle V_{\alpha'}, Q_{\alpha}\rangle$. If $V_{\alpha'}\le Q_{\beta}$, then $S=V_{\alpha'}Q_{\alpha}$ centralizes $Z(Q_{\alpha})\cap Q_{\alpha'}=\Omega(Z(S))=Z_{\beta}$ whereas if $V_{\alpha'}\not\le Q_{\beta}$, then $L_{\beta}=\langle V_{\alpha'}, Q_{\alpha}\rangle$ centralizes $Z(Q_{\alpha})\cap Q_{\alpha'}=\Omega(Z(L_{\beta}))=Z_{\beta}$. Thus, we may assume that $Z(Q_{\alpha})\cap Q_{\alpha'}\not\le C_{\alpha'}$. But $Z(Q_{\alpha})V_{\beta}\normaleq L_{\beta}$ so that $Z(Q_{\alpha})V_{\beta}=Z(Q_{\alpha+2})V_{\beta}$ and $Z(Q_{\alpha})\cap Q_{\alpha'}\le Z(Q_{\alpha+2})V_{\beta}\cap Q_{\alpha'}=Z(Q_{\alpha+2})(V_{\beta}\cap Q_{\alpha'})\le C_{\alpha'}$, a contradiction. Therefore, $Z(Q_{\alpha})=Z_{\alpha}$.

Since $Q_{\alpha}=V_{\alpha}^{(2)}C_{\beta}$, $Q_{\beta}\cap O^p(L_{\beta})$ centralizes $Q_{\alpha}/V_{\alpha}^{(2)}$ and $[Q_{\alpha}, O^p(L_{\alpha})]\le [V_{\alpha}^{(2)}, O^p(L_{\alpha})]$. Furthermore, if $V_{\beta}\cap [V_{\alpha}^{(2)}, O^p(L_{\alpha})]>Z_{\alpha}$, then by the $G_{\alpha,\beta}$-irreducibility of $V_{\beta}/Z_{\alpha}$, $V_{\beta}\le [V_{\alpha}^{(2)}, O^p(L_{\alpha})]=V_{\alpha}^{(2)}$. Otherwise, we have that $C_{V_{\alpha}^{(2)}/Z_{\alpha}}(O^p(L_{\alpha}))$ is non-trivial. Write $C^\alpha$ for the preimage in $V_{\alpha}^{(2)}$ of this group. Note that, by definition, $V_{\alpha}^{(2)}=[V_{\alpha}^{(2)}, O^p(L_{\alpha})]V_{\beta}$ so that $[V_{\alpha}^{(2)}, Q_{\beta}]=[V_{\alpha}^{(2)}, O^p(L_{\alpha}), Q_{\beta}]$. In particular, since $C^\alpha$ is $G_{\alpha,\beta}$-invariant, if $C^\alpha\not\le C_{\beta}$ then $V_{\alpha}^{(2)}\cap Q_{\beta}=(C^\alpha\cap Q_{\beta})(V_{\alpha}^{(2)}\cap C_{\beta}$ and $[V_{\alpha}^{(2)}\cap Q_{\beta}, Q_{\beta}]\le C^\alpha([V_{\alpha}^{(2)}, O^p(L_{\alpha})]\cap V_{\beta})\le C^\alpha$ and by \cref{SEFF}, $R_{\alpha}=Q_{\alpha}$. Then \cite{Greenbook} yields a contradiction. Hence, $C^\alpha\le C_{\beta}$ so that $\{1\}=[V_\beta, C^\alpha]^{G_{\alpha}}=[V_{\alpha}^{(2)}, C^\alpha]$. Then, by the three subgroups lemma and using that $[Q_{\alpha}, O^p(L_{\alpha})]\le V_{\alpha}^{(2)}$, $[Q_{\alpha}, C^\alpha, O^p(L_{\alpha})]=\{1\}$ and since $[Z_{\alpha}, O^p(L_{\alpha})]\ne \{1\}$, $C^\alpha\le Z(Q_{\alpha})=Z_{\alpha}$, a contradiction. Thus, $V_{\alpha}^{(2)}=[V_{\alpha}^{(2)}, O^p(L_{\alpha})]$.

Set $L_{\beta}^*:=\langle V_{\alpha}^{(2)}, V_{\alpha+2}^{(2)}\rangle$ so that $L_{\beta}=L_{\beta}^*C_{\beta}$, $L_{\beta}^*\normaleq L_{\beta}$ and $O^p(L_{\beta})\le L_{\beta}^*$. Since $V_{\alpha}^{(2)}\cap Q_{\beta}\cap Q_{\alpha+2}=V_{\alpha}^{(2)}\cap C_{\beta}\normaleq G_{\beta}$, we have $V_{\alpha}^{(2)}\cap Q_{\beta}\cap Q_{\alpha+2}\le V_{\alpha+2}^{(2)}$ so that $V_{\alpha}^{(2)}\cap Q_{\beta}\cap Q_{\alpha+2}=V_{\alpha}^{(2)}\cap V_{\alpha+2}^{(2)}=V_{\alpha+2}^{(2)}\cap Q_{\beta}\cap Q_{\alpha}$. Indeed, it follows that $C_{\beta}\cap L_{\beta}^*=V_{\alpha}^{(2)}\cap V_{\alpha+2}^{(2)}$, $\Phi(C_{\beta}\cap L_{\beta}^*)\le Z_{\alpha}\cap Z_{\alpha+2}=Z_{\beta}$ and $V_{\alpha}^{(2)}$ has index $q$ in a Sylow $p$-subgroup of $L_{\beta}^*$. We note that $L_{\beta}^*<L_{\beta}$ for otherwise the conclusion of the lemma holds, and we have a contradiction since $G$ was an assumed minimal counterexample.

Since $Q_{\alpha}=V_{\alpha}^{(2)}(Q_{\alpha}\cap C_{\beta})$ and $[Q_{\beta}\cap O^p(L_{\beta}), C_{\beta}]\le V_{\beta}\le V_{\alpha}^{(2)}$ we deduce that $O^p(L_{\alpha})$ centralizes $Q_{\alpha}/V_{\alpha}^{(2)}$. Moreover, $S/V_{\alpha}^{(2)}=Q_{\alpha}/V_{\alpha}^{(2)}\rtimes (V_{\alpha+2}^{(2)}\cap Q_{\beta})V_{\alpha}^{(2)}/V_{\alpha}^{(2)}$ and by Gaschutz' theorem \cite[(3.3.2)]{kurz}, $L_{\alpha}/V_{\alpha}^{(2)}$ splits over $Q_{\alpha}/V_{\alpha}^{(2)}$. Thus, writing $S^*=S\cap L_{\beta}^*=V_{\alpha}^{(2)}(V_{\alpha+2}^{(2)}\cap Q_{\beta})$, we have that $S^*=V_{\alpha}^{(2)}(Q_{\beta}\cap O^p(L_{\beta}))\normaleq G_{\alpha,\beta}$ and $S^*\in\syl_p(\langle (S^*)^{G_{\alpha}}\rangle$.

Set $L_{\alpha}^*:=\langle (S^*)^{G_{\alpha}}\rangle$ so that $V_{\alpha}^{(2)}=O_p(L_{\alpha}^*)$. Since $S^*\normaleq G_{\alpha,\beta}$ and $L_{\lambda}^*=O^p(L_{\lambda})S^*$, $L_\lambda^*$ is normalized by $G_{\alpha,\beta}$ and $N_{L_{\mu}^*}(S^*)$ for $\{\lambda, \mu\}=\{\alpha, \beta\}$. Indeed, since no non-trivial subgroup of $G_{\alpha,\beta}$ is normal in $G$, no non-trivial subgroup $G_{\alpha,\beta}^*:=N_{L_{\alpha}^*}(S^*)N_{L_{\beta}^*}(S^*)$ is normal in $\langle L_{\alpha}^*, L_{\beta}^*\rangle$. Writing $G_{\lambda}^*=L_{\lambda}^*G_{\alpha,\beta}^*$, and using that $L_{\lambda}/Q_{\lambda}\cong L_{\lambda}^*/O_p(L_{\lambda}^*)$, for $\lambda\in\{\alpha,\beta\}$ the tuple $(G_{\alpha}^*, G_{\beta}^*, G_{\alpha,\beta}^*)$ satisfies \cref{MainHyp}. Since $L_{\beta}^*<L_{\beta}$ and $G$ was an assumed minimal counterexample, we conclude that $(G_{\alpha}^*, G_{\beta}^*, G_{\alpha,\beta}^*)$ is one of the configurations listed in \hyperlink{MainGrpThm}{Theorem C}. In particular, $p=2$, $G_{\lambda}/Q_\lambda\cong G_{\lambda}^*/O_2(G_{\lambda}^*)\cong \Sym(3)$ for $\lambda\in\{\alpha,\beta\}$ and $(G_{\alpha}^*, G_{\beta}^*, G_{\alpha,\beta}^*)$ is parabolic isomorphic to one of $\mathrm{M}_{12}$ or $\Aut(\mathrm{M}_{12})$. But then, $G$ has a weak BN-pair of rank $2$ so itself is captured in \hyperlink{MainGrpThm}{Theorem C} and satisfies the conclusion of the lemma. This contradiction completes the proof.
\end{proof}

\begin{lemma}\label{b=3minii}
Suppose that $C_{V_\beta}(V_{\alpha'})=V_\beta \cap Q_{\alpha'}$, $b=3$ and $G$ is not parabolic isomorphic to $\mathrm{M}_{12}$ or $\Aut(\mathrm{M}_{12})$. Then $[Q_{\beta}, C_{\beta}]=V_{\beta}$.
\end{lemma}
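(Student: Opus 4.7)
The plan is to show $[Q_\beta, C_\beta] = V_\beta$ by proving both containments; the containment $[Q_\beta, C_\beta] \le V_\beta$ will follow from Lemma b=3i and module-theoretic considerations, and the reverse is forced by the commutator structure inside $Q_\alpha$. First I would observe that $V_\beta$ is abelian: since $b = 3$ and $d(\alpha, \alpha+2) = 2 < b$ we have $Z_\alpha \le Q_{\alpha+2}$, and by \cref{b=3min} (the non-$\Aut(\mathrm{M}_{12})$ alternative) $Z(Q_{\alpha+2}) = Z_{\alpha+2}$, so $[Z_\alpha, Z_{\alpha+2}] = \{1\}$ and $V_\beta = Z_\alpha Z_{\alpha+2}$ is abelian. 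Thus $V_\beta \le C_{Q_\beta}(V_\beta) = C_\beta$. As an $L_\beta$-module, $V_\beta$ has $Z_\beta$ as a trivial chief factor of order $q$ and $V_\beta/Z_\beta$ is a natural $\SL_2(q)$-module for $\bar{L_\beta} \cong \SL_2(q)$ (using \cref{b=3i}), so the $L_\beta$-submodules of $V_\beta$ are exactly $\{1\}$, $Z_\beta$, $V_\beta$.

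Next, since $C_\beta \le Q_\lambda$ for every $\lambda \in \Delta(\beta)$ and each $Q_\lambda$ has $[Q_\lambda, Q_\lambda] \le \Phi(Q_\lambda) = Z_\lambda$ by \cref{b=3iib}, I get $[Q_\lambda \cap Q_\beta, C_\beta] \le Z_\lambda$. Since $(Q_\alpha \cap Q_\beta)/C_\beta$ and $(Q_{\alpha+2} \cap Q_\beta)/C_\beta$ are distinct $1$-dimensional subspaces of the natural module $Q_\beta/C_\beta$ (they are the stabilizers of the distinct points $\alpha, \alpha+2 \in \Delta(\beta)$), together they span $Q_\beta/C_\beta$, giving $Q_\beta = (Q_\alpha \cap Q_\beta)(Q_{\alpha+2} \cap Q_\beta)$. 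Hence $[Q_\beta, C_\beta] \le Z_\alpha Z_{\alpha+2} = V_\beta$, confirming one direction and showing $[Q_\beta, C_\beta] \in \{1, Z_\beta, V_\beta\}$. The case $[Q_\beta, C_\beta] = \{1\}$ is impossible: it would force $V_\beta \le C_\beta \le Z(Q_\beta)$, so $V_\beta \le Z(Q_\beta) \cap V_\beta = C_{V_\beta}(Q_\beta) = Z_\beta$, contradicting $|V_\beta| = q^3 > q = |Z_\beta|$.

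It remains to rule out $[Q_\beta, C_\beta] = Z_\beta$, and this is the key step. The strategy is to produce $x \in (Q_\alpha \cap Q_\beta) \setminus C_\beta$ and $c \in C_\beta$ with $[x,c] \in Z_\alpha \setminus Z_\beta$. By \cref{b=3iib}, $V_\alpha^{(2)} \cap Q_\beta = Q_\alpha \cap Q_\beta \not\le C_\beta$, so such $x$ exists; it has nontrivial image in $\bar{L_{\alpha+2}}$ (a Sylow $p$-element), so $[x, Z_{\alpha+2}] = Z_\beta$ by the natural module structure, giving $Z_\beta \le [x, C_\beta] \le Z_\alpha$. Assume for contradiction that $[x, C_\beta] = Z_\beta$; by $G_{\alpha,\beta}$-conjugation, $[Q_\alpha \cap Q_\beta, C_\beta] \le Z_\beta$, and by symmetric reasoning applied at $\alpha+2$, $[Q_{\alpha+2} \cap Q_\beta, C_\beta] \le Z_\beta$, so $[Q_\beta, C_\beta] \le Z_\beta$. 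Then the three subgroups lemma applied with $A = B = Q_\beta$, $C = C_\beta$ and the assumption gives $[[Q_\beta, Q_\beta], C_\beta] = \{1\}$. The main obstacle is to derive a contradiction from this strong centralization property: I expect to argue, using the detailed decomposition $L_\beta = \langle V_\alpha^{(2)}, V_{\alpha+2}^{(2)}\rangle$ from \cref{b=3min} together with the structure $\Phi(C_\beta) \le Z_\beta$, that some proper $L_\beta$-invariant subgroup of $Q_\beta$ properly containing $C_\beta$ is normalized by both $L_\beta$ and $L_\alpha$. This yields a non-trivial subgroup of $G_{\alpha,\beta}$ normal in $\langle G_\alpha, G_\beta \rangle$, contradicting \cref{BasicAmal}(iii); alternatively, one constructs a smaller amalgam satisfying \cref{MainHyp} and violates minimality of $G$ as a counterexample to \hyperlink{MainGrpThm}{Theorem C}, analogously to the reduction in the proof of \cref{b=3min}.
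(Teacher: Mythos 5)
Your containment $[Q_\beta, C_\beta]\le V_\beta$ is correctly established, and your route (decomposing $Q_\beta=(Q_\alpha\cap Q_\beta)(Q_{\alpha+2}\cap Q_\beta)$ and using $\Phi(Q_\lambda)=Z_\lambda$ from \cref{b=3iib} and \cref{b=3min}) is a valid alternative to the paper's computation via $Q_\beta=C_\beta[Q_\beta,O^p(L_\beta)]$. The exclusion of $[Q_\beta,C_\beta]=\{1\}$ is essentially sound, though the claimed equality $C_{V_\beta}(Q_\beta)=Z_\beta$ is asserted without justification; it is cleaner to note directly that $[V_\beta,Q_\beta]\geq [Z_\alpha,Q_\beta]\ne\{1\}$, since $Z_\alpha\le C_\beta=C_{Q_\beta}(V_\beta)$ would force $C_\beta=Q_\beta$, contradicting $|Q_\beta/C_\beta|=q^2$. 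Likewise, the assertion that the $L_\beta$-submodules of $V_\beta$ are exactly $\{1\},Z_\beta,V_\beta$ needs a word: a complement to $Z_\beta$ is not a priori excluded, so to pin $[Q_\beta,C_\beta]$ down to $\{Z_\beta,V_\beta\}$ one should first record $Z_\beta=[Z_\alpha,Q_\beta]\le[Q_\beta,C_\beta]$.

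The substantive gap, as you acknowledge, is that the exclusion of $[Q_\beta,C_\beta]=Z_\beta$ is missing, and this is where all the work in the paper's proof lies. The paper's route is module-theoretic: from $[Q_\beta,C_\beta]=Z_\beta$ it follows that for any $v\in V_\alpha^{(2)}\setminus Q_\beta$ the subgroup $[v,Q_\beta]C_\beta$ equals $[V_\alpha^{(2)},Q_\beta]C_\beta$ and has index $q$ in $V_\alpha^{(2)}$; this is precisely the statement that $V:=V_\alpha^{(2)}/Z_\alpha$ is a nearly quadratic module for $\bar{L_\alpha}$ under the offender $Q_\beta Q_\alpha/Q_\alpha$. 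The proof then argues by cases: if $Q_\beta$ is quadratic on $V$ one applies \cref{SEFF} and \cite{Greenbook}; if $V$ is reducible one applies \cref{SEFF}, \cref{Badp3} and a Schur-multiplier argument to reach a contradiction with \cref{BasicAmal}(v); in the remaining case $V$ is an irreducible faithful nearly quadratic module, and \cref{nearquad} together with $L_\alpha/R_\alpha\cong\SL_2(q)$ forces $R_\alpha=Q_\alpha$, whence \cite{Greenbook} applies. Your derived identity $[[Q_\beta,Q_\beta],C_\beta]=\{1\}$ is consistent with all of this but by itself feeds into none of the module classification machinery, and neither of your sketched alternatives is clearly viable: any non-trivial $p$-subgroup of $G_{\alpha,\beta}$ normal in both $L_\alpha$ and $L_\beta$ is already excluded by \cref{BasicAmal}(iv) without reference to the $[Q_\beta,C_\beta]=Z_\beta$ hypothesis, and the sub-amalgam route of \cref{b=3min} has been exhausted -- its non-$\Aut(\mathrm{M}_{12})$ conclusion is exactly $L_\beta=\langle V_\alpha^{(2)},V_{\alpha+2}^{(2)}\rangle$, so there is no proper sub-parabolic at $\beta$ to descend to.
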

\begin{proof}
Note that $Q_{\beta}=C_{\beta}[Q_{\beta}, O^p(L_{\beta})]$ so that $[Q_{\beta}, C_{\beta}]\le [C_{\beta}, C_{\beta}][O^p(L_{\beta}), C_{\beta}]=V_{\beta}$. Assume that $C_{Q_{\beta}}(C_\beta/Z_{\alpha})>V_{\alpha}^{(2)}\cap Q_{\beta}$ and set $V:=V_{\alpha}^{(2)}/Z_{\alpha}$. Then as $C_{Q_{\beta}}(C_\beta/Z_{\alpha})$ is invariant under $G_{\alpha.\beta}$, we have that $[Q_{\beta}, C_{\beta}]\le Z_{\alpha}$ and $[Q_{\beta}, C_{\beta}]=Z_{\beta}$. Then for $v\in V_{\alpha}^{(2)}\setminus Q_{\beta}$, we have that $[v, Q_{\beta}]C_{\beta}=[V_{\alpha}^{(2)}, Q_{\beta}]C_\beta$ has index $q$ in $V_{\alpha}^{(2)}$. In particular, if $Q_{\beta}$ acts quadratically on $V$, then an index $q$ subgroup of $V$ is centralized by $Q_\beta$ and a combination of \cref{SEFF} and \cite{Greenbook} yields a contradiction.

If $V$ is not irreducible for $\bar{L_{\alpha}}$, then for some $Z_{\alpha}<U<V_{\alpha}^{(2)}$ with $U\normaleq L_{\alpha}$, since $V_{\alpha}^{(2)}=[V_{\alpha}^{(2)}, O^p(L_{\alpha})]$ and $C_V(O^p(L_{\alpha}))=\{1\}$, $U$ is non-trivial. Note that if $U\not\le Q_{\beta}$, then $V_{\alpha}^{(2)}\cap Q_{\beta}=[U, Q_{\beta}]C_{\beta}$ and so a $UC_{\beta}$ has index strictly less than $q$ in $V_{\alpha}^{(2)}$, a contradiction by \cref{SEFF}. But then $U\le Q_{\beta}$ and by \cref{SEFF}, $|UC_{\beta}/C_{\beta}|=q$ and $V_{\alpha}^{(2)}\cap Q_{\beta}=[U, Q_{\beta}]C_{\beta}$. Then by \cref{SEFF}, both $V_{\alpha}^{(2)}$ and $U/Z_{\alpha}$ are natural modules for $\bar{L_{\alpha}}$. Since $Q_{\beta}$ is not quadratic on $V$, $q>2$ and using a standard argument involving the Schur multiplier of $\PSL_2(q)$, we conclude by \cref{Badp3} that $\bar{L_{\alpha}}\cong (Q_\times Q_8):3$. But then, there is $t\in L_{\alpha}\cap G_{\alpha,\beta}$ an involution with $[t, L_{\alpha}]\le Q_{\alpha}$ and, since $\bar{L_{\beta}}\cong\SL_2(3)$, we can choose $t$ such that $[t, L_{\beta}]\le Q_{\beta}$, a contradiction by \cref{BasicAmal} (v). Hence, if $Q_{\beta}$ does not act quadratically on $V$ then $V$ is a faithful irreducible nearly quadratic module for $\bar{L_{\alpha}}$. Comparing with the list in \cref{nearquad}, using that $L_{\alpha}/R_{\alpha}\cong \SL_2(q)$, we conclude that $R_\alpha=Q_\alpha$ and \cite{Greenbook} yields a contradiction. Hence, $[Q_{\beta}, C_{\beta}]=V_{\beta}$.
\end{proof}

\begin{lemma}\label{b=3q=p}
Suppose that $C_{V_\beta}(V_{\alpha'})=V_\beta \cap Q_{\alpha'}$, $b=3$ and $G$ is not parabolic isomorphic to $\mathrm{M}_{12}$ or $\Aut(\mathrm{M}_{12})$. Then $q=p$.
\end{lemma}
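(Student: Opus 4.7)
The first step is to identify the structure of $L_{\alpha}$. Since $V_{\beta}/C_{V_{\beta}}(O^p(L_{\beta})) = V_{\beta}/Z_{\beta}$ is an irreducible natural $\SL_2(q)$-module for $\bar{L_{\beta}}$, \cref{SL2implies} applies to give $L_{\alpha}/R_{\alpha} \cong \SL_2(q_\alpha)$ with $Z_{\alpha}$ a natural $\SL_2(q_\alpha)$-module. Since $|Z_{\alpha}| = q^2$ (as $Z_{\alpha}/Z_{\beta}$ is the $S$-invariant $\GF(q)$-line of the natural $\bar{L_{\beta}}$-module on $V_{\beta}/Z_{\beta}$), we conclude $q_\alpha = q$. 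If $R_{\alpha} = Q_{\alpha}$, then $G$ has a weak BN-pair of rank $2$, and comparing with \cite{Greenbook} for $b = 3$ forces $p = 2$ and $G$ parabolic isomorphic to $\mathrm{M}_{12}$ or $\Aut(\mathrm{M}_{12})$, contradicting our hypothesis. Hence I may assume $R_{\alpha} > Q_{\alpha}$, and throughout I may also assume, by minimality, that $G$ is a minimal counterexample to \hyperlink{MainGrpThm}{Theorem C}.

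Now assume for a contradiction that $q > p$. The approach is to apply \cref{SubAmal} with $\lambda = \alpha$ and a suitable non-central chief factor $U/W$ of $G_{\alpha}$ inside $V_{\alpha}^{(2)}/Z_{\alpha}$. Hypothesis (i) of \cref{SubAmal} holds by the previous paragraph and \cref{b=3i}--\cref{b=3iia}; hypothesis (iii) is vacuous since we are assuming $q > p$. For hypothesis (ii), \cref{b=3iib} gives $C_{L_{\alpha}}(V_{\alpha}^{(2)}/Z_{\alpha}) = Q_{\alpha}$, and since $R_{\alpha} > Q_{\alpha}$, we have $Q_{\alpha} \neq R_{\alpha}$ and $C_{L_{\alpha}}(V_{\alpha}^{(2)}/Z_{\alpha}) \cap R_{\alpha} = Q_{\alpha}$ normalizes $Q_{\alpha} \cap Q_{\beta}$ trivially. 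The crucial point is to produce a non-central $\bar{L_{\alpha}}$-chief factor within $V_{\alpha}^{(2)}/Z_{\alpha}$ which is an FF-module. Here I observe that $V_{\alpha}^{(2)}/Z_{\alpha}$ is generated (as an $L_{\alpha}$-module) by the $G_{\alpha,\beta}$-invariant subgroup $V_{\beta}/Z_{\alpha}$ of order $q$, and by \cref{SLGen}(iv),(v), $\bar{L_{\alpha}} \cong \SL_2(q)$ is generated by at most three conjugates of any non-trivial $p$-element, so $|V_{\alpha}^{(2)}/Z_{\alpha}| \leq q^3$.

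Applying \cref{SL2ModRecog} to each non-central $\bar{L_{\alpha}}$-composition factor of $V_{\alpha}^{(2)}/Z_{\alpha}$, each such factor is either a natural $\SL_2(q)$-module, a natural $\Omega_3(q)$-module, a natural $\Omega_4^-(q^{1/2})$-module, or a triality module. The last two cases are excluded because the centralizer of $S$ in such a module has order a proper divisor of $q$, yet $V_{\beta}/Z_{\alpha}$ projects to a $G_{\alpha,\beta}$-invariant subgroup of order $q$ in each composition factor it survives in. Similarly the $\Omega_3$ case is ruled out since $V_{\beta}/Z_{\alpha}$ is of order exactly $q$ and such a factor would require $|V_{\alpha}^{(2)}/Z_{\alpha}| \geq q^3$ together with constraints on the action of $Q_\beta$ that force $[V_\alpha^{(2)},Q_\beta,Q_\beta]\le Z_\alpha$; reconciling this with the identification $[V_{\alpha}^{(2)},V_{\alpha}^{(2)}]=Z_{\alpha}$ of \cref{b=3min} produces a contradiction. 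Thus every non-central chief factor of $V_{\alpha}^{(2)}/Z_{\alpha}$ is a natural $\SL_2(q)$-module and hence an FF-module for $\bar{L_{\alpha}}$, so hypothesis (ii) of \cref{SubAmal} is verified.

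With \cref{SubAmal} applicable, since $b = 3$, outcome (a) forces $G$ to have a weak BN-pair of rank $2$ (with $b \leq 3$), again placing us in \cite{Greenbook} and yielding $q = p = 2$; outcome (b) demands $p = 3$; outcome (c) is excluded since $\lambda = \alpha$; and outcome (d) produces an $X \le G$ with $\wt X$ locally isomorphic to $\SL_3(p)$, $\Sp_4(p)$, $\mathrm{G}_2(2)$, or a $\PSp_4(3)$-type extension, forcing $q \leq 3$ via comparison of Lie-rank data and chief factor counts in $\wt Q_\alpha$ and $\wt Q_\beta$. In all outcomes, $q \in \{2,3\}$, hence $q = p$, contradicting $q > p$. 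The main obstacle will be the verification that $V_{\alpha}^{(2)}/Z_{\alpha}$ contains an FF-module composition factor despite potentially being reducible, which requires a careful case analysis via \cref{SL2ModRecog} exploiting the presence of the $G_{\alpha,\beta}$-invariant subgroup $V_{\beta}/Z_{\alpha}$ of order precisely $q$ together with the commutator identity $[Q_{\alpha},Q_{\alpha}] = Z_{\alpha}$ from \cref{b=3min}.
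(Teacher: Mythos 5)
Your proposal and the paper's proof take genuinely different routes, and unfortunately yours has a serious gap in the verification of hypothesis (ii) of \cref{SubAmal}.

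The key problems are concentrated in your third paragraph. You write that ``$\bar{L_{\alpha}} \cong \SL_2(q)$ is generated by at most three conjugates of any non-trivial $p$-element, so $|V_{\alpha}^{(2)}/Z_{\alpha}| \leq q^3$.'' But you have already (correctly) reduced to the case $R_{\alpha} > Q_{\alpha}$, which precisely means $\bar{L_{\alpha}} = L_{\alpha}/Q_{\alpha} \not\cong \SL_2(q)$; only the quotient $L_{\alpha}/R_{\alpha}$ is $\SL_2(q)$. So the premise of this sentence contradicts your running hypothesis. And even when $\bar{L_{\alpha}}\cong\SL_2(q)$, the three-conjugate \emph{group} generation of $\SL_2(q)$ does not translate into a bound on the $\GF(p)$-dimension of a cyclic $\bar{L_{\alpha}}$-module: $V_{\alpha}^{(2)}/Z_{\alpha}$ is generated by the $q+1$ conjugates $V_{\lambda}/Z_{\alpha}$, $\lambda\in\Delta(\alpha)$, each of order $q$, and nothing in \cref{SLGen} bounds the order of their product below $q^{q+1}$. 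Without the bound $|V_{\alpha}^{(2)}/Z_{\alpha}|\leq q^3$, the appeal to \cref{SL2ModRecog} collapses (and in any case \cref{SL2ModRecog} is stated for modules over $\SL_2(p^n)$, not over the bigger group $\bar{L_{\alpha}}$ with its nontrivial $O_{p'}$-part). Consequently you never establish that some non-central chief factor of $V_{\alpha}^{(2)}/Z_{\alpha}$ is an FF-module for $\bar{L_{\alpha}}$, which is what hypothesis (ii) of \cref{SubAmal} requires. In fact the paper's own analysis (in the proof of \cref{b=3q=p} and the proposition that follows it) treats precisely the possibility that $V := V_{\alpha}^{(2)}/Z_{\alpha}$ is an irreducible $2$F-module for $\bar{L_{\alpha}}$ that is \emph{not} an FF-module, in which case there is no chief factor satisfying (ii) and your reduction cannot get started.

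By way of comparison, the paper's argument is local and does not invoke \cref{SubAmal} at all. It picks $x\in Q_{\beta}\setminus V_{\alpha}^{(2)}$, analyses the commutation map $V_{\alpha}^{(2)}/Z_{\alpha}\to V_{\alpha}^{(2)}/Z_{\alpha}$, $v\mapsto[v,x]$, and splits into cases according to whether the fixed-point sets $C_V(B)$ for $B\leq Q_{\beta}/Q_{\alpha}$ stabilise or drop along a chain. The stable case gives $O_{p'}(\bar{L_{\alpha}})\leq Z(\bar{L_{\alpha}})$ via \cref{GLS2p'} and coprime action, hence a weak BN-pair and a contradiction with \cite{Greenbook} when $q>p$; the unstable case produces cubic action of $Q_{\beta}$ on $V$, which forces $p\leq3$ via \cref{CubicAction}, and then quadratic-action considerations eliminate $p=2$, after which a final coprime-action argument eliminates $p=3$. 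If you want to repair your approach you will first need to either establish the FF-module chief factor directly (which seems at least as hard as the paper's argument), or abandon \cref{SubAmal} and engage with the commutator and fixed-point structure as the paper does.
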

\begin{proof}
Let $x\in Q_{\beta}\setminus (Q_{\beta}\cap V_{\alpha}^{(2)})$. Write $C$ for the preimage in $C_{\beta}$ of $C_{C_{\beta}/Z_{\alpha}}(x)$ so that, as $V_{\beta}\le C$, $C\normaleq L_{\beta}$. Then, $C$ is normalized by $L_{\beta}\cap G_{\alpha,\beta}$ so that $[C, Q_\beta]=[C, V_{\alpha}^{(2)}\cap Q_{\beta}][C, \langle x^{L_{\beta}\cap G_{\alpha,\beta}}\rangle]\le Z_{\alpha}$ so that $[C, Q_{\beta}]=Z_{\beta}$. In particular, $C_{C_{\beta}/Z_{\alpha}}(x)<C_\beta/Z_\alpha$ and $C/Z_{\beta}=Z(Q_\beta/Z_{\beta})$. Set $V:=V_{\alpha}^{(2)}/Z_{\alpha}$ and let $C_x$ be the preimage in $V_{\alpha}^{(2)}$ of $C_{V}(x)$ so that $C_x\cap C_{\beta}=C$. By the action of $S$ on $Q_{\beta}/C_{\beta}$, we have that $C_x\le Q_{\beta}$ so that $|C_x/C|\leq q$. Note that if $q=p$, then $C_{V}(x)=C_{V}(Q_{\beta})$.

Suppose that $q>p$ and there is $A\le S/Q_{\alpha}$ with $|A|\geq p^2$ and $C_{V}(A)=C_{V}(B)$ for all $B\le A$ with $[A:B]=p$. Using \cref{GLS2p'}, we have that $O_{p'}(\bar{L_{\alpha}})=\langle C_{O_{p'}(\bar{L_{\alpha}})}(B) \mid [A:B]=p\rangle$ so that $C_{V}(A)$ is normalized by $AO_{p'}(\bar{L_{\alpha}})$. Set $H_\alpha=\langle A^{AO_{p'}(\bar{L_{\alpha}})}\rangle$ so that $[H_\alpha, C_{V}(A)]=\{1\}$. Then, by coprime action, we get that $V=[V, O_{p'}(H_{\alpha})]\times C_{V}(O_{p'}(H_{\alpha}))$ is an $A$-invariant decomposition and as $C_{V}(A)\le C_{V}(O_{p'}(H_{\alpha}))$, it follows that $O_{p'}(H_\alpha)$ centralizes $V$ so that $O_{p'}(H_{\alpha})=\{1\}$ and $H_\alpha=A\normaleq A O_{p'}(\bar{L_{\alpha}})$. Then, $[\langle A^{\bar{L_{\alpha}}}\rangle, O_{p'}(\bar{L_{\alpha}})]=[A, O_{p'}(\bar{L_{\alpha}})]=\{1\}$ and we have that $O_{p'}(\bar{L_{\alpha}})\le Z(\bar{L_{\alpha}})$. Then \cite{Greenbook} gives a contradiction since $q>p$.

Hence, if $q>p$, then there is a maximal subgroup $A$ of $Q_\beta$ with $C_V(A)>C_V(Q_\beta)\ge C/Z_{\alpha}$. But then, for $C_A$ the preimage in $V_{\alpha}^{(2)}$ of $C_V(A)$, we have that $[C_A, V_{\alpha}^{(2)}, A]\le [Z_{\alpha}, A]=Z_{\beta}$, $[A, C_A, V_{\alpha}^{(2)}]=\{1\}$ and by the three subgroups lemma, $[V_{\alpha}^{(2)}, A, C_A]\le Z_{\beta}$. Since $([V_{\alpha}^{(2)}, A]\cap C_{\beta})V_{\beta}\normaleq L_{\beta}$, we have that $Z_{\beta}= [([V_{\alpha}^{(2)}, A]\cap C_{\beta})V_{\beta}, C_AC_\beta]^{L_{\beta}}=[([V_{\alpha}^{(2)}, A]\cap C_{\beta})V_{\beta}, \langle C_A^{L_{\beta}}\rangle C_\beta]=[([V_{\alpha}^{(2)}, A]\cap C_{\beta})V_{\beta}, Q_{\beta}]$. In particular, $([V_{\alpha}^{(2)}, A]\cap C_{\beta})V_{\beta}\le C$. 

It follows that $[V_{\alpha}^{(2)}, Q_{\beta}]\cap C_{\beta}\le C$ so that $[V, Q_{\beta}, Q_{\beta}, Q_{\beta}]=\{1\}$ and by \cref{CubicAction}, if $p\geq 5$ then $R_{\alpha}=Q_{\alpha}$. In this scenario, \cite{Greenbook} provides a contradiction. Hence, $p\leq 3$. Moreover, $[V_{\alpha}^{(2)}, Q_{\beta}]C=[V_{\alpha}^{(2)}, y]C$ for all $y\in Q_{\beta}\setminus (V_{\alpha}^{(2)}\cap Q_{\beta})$. Indeed, $[V, y, y]=[V, Q_{\beta}, y]$. Hence if $y$ is quadratic on $V$, then $[V, Q_{\beta}, Q_{\beta}]=\{1\}$ and $C_V(Q_{\beta})=C_V(A)$ for all maximal subgroups of $A$ of $Q_{\beta}$ which contain $V_{\alpha}^{(2)}\cap Q_{\beta}$, a contradiction. Since involutions in $S/Q_{\alpha}$ always act quadratically on $V$ when $p=2$, we conclude that $p=3$.

Now, $C_V(x)>C_V(B)>\dots>C_V(A)>C_V(Q_{\beta})\geq C/Z_{\alpha}$ for a maximal chain of subgroups $\langle x\rangle<B<\dots <A<Q_\beta$ and we deduce that $C_xC$ has index at most $3$ in $[V_{\alpha}^{(2)}, Q_{\beta}]C$. Then, the commutation map $\theta:[V_{\alpha}^{(2)}, Q_{\beta}]C/Z_\alpha\to [V_{\alpha}^{(2)}, Q_{\beta}]C/Z_\alpha$ such that $[v, q]cZ_\alpha\theta=[[v, q]cZ_\alpha, x]$ has image $[[V_{\alpha}^{(2)}, Q_{\beta}], x]Z_\alpha/Z_{\alpha}$ and kernel $C_x/Z_{\alpha}$. Hence, $|[V, Q_{\beta}, x]|=3$. Note that $[V, Q_{\beta}, x]$ is normalized by $S=V_{\alpha}^{(2)}Q_{\beta}$ and the three subgroups lemma yields that $[V, Q_{\beta}, x]=[V, x, Q_{\beta}]$. But using $[V_{\alpha}^{(2)}, Q_{\beta}]C=[V_{\alpha}^{(2)}, x]C$, we have that $[V, x, Q_{\beta}]=[V, Q_{\beta}, Q_{\beta}]$ has order $p$ and $[V, Q_{\beta}, Q_{\beta}]=[V, y, y]$ for all $y\in Q_{\beta}\setminus (V_{\alpha}^{(2)}\cap Q_{\beta})$. By coprime action, $O_{p'}(\bar{L_{\alpha}})=\langle C_{O_{p'}(\bar{L_{\alpha}})}(yQ_\alpha) \mid y\in Q_{\beta}\setminus (V_{\alpha}^{(2)}\cap Q_{\beta})\rangle$ so that $O_{p'}(\bar{L_{\alpha}})\bar{G_{\alpha,\beta}}$ normalizes $[V, Q_{\beta}, Q_{\beta}]$.

Form $H=\langle \bar{S}^{O_{3'}(\bar{L_{\alpha}})\bar{G_{\alpha,\beta}}}\rangle$ so that $O_{p'}(H)\le \bar{R_{\alpha}}$ since $q>3$. By coprime action, $V=[V, O_{3'}(H)]\times C_V(O_{3'}(H))$ and we write $V_1, V_2$ for the preimage in $V_{\alpha}^{(2)}$ of $[V, O_{3'}(H)]$, $C_V(O_{3'}(H))$ respectively. Note that $[V, Q_{\beta}, Q_{\beta}]\le C_V(O_{3'}(H))$ so that $[V_1, Q_\beta, Q_\beta]\le Z_{\alpha}$. Assume first that $V_1\not\le Q_{\beta}$. Since $V_1$ is $G_{\alpha,\beta}$-invariant, $V_{\alpha}^{(2)}=V_1C_{\beta}$ so that $[V_{\alpha}^{(2)}, Q_{\beta}, Q_{\beta}]=[V_1, Q_{\beta}, Q_{\beta}]Z_{\beta}\le Z_{\alpha}$, a contradiction. Suppose now that $V_1\le Q_{\beta}$ but $V_1\not\le C_{\beta}$. Since $V_{\alpha}^{(2)}\not\le Q_{\beta}$, $V_2\not\le Q_{\beta}$ and $V_{\alpha}^{(2)}=V_2C_{\beta}$. Then $V_1\le [V_2, Q_{\beta}]C_{\beta}$ so that $[V_1, Q_{\beta}]\le V_1\cap [V_2, Q_{\beta}, Q_{\beta}]V_{\beta}\le V_{\beta}$. But then $[V_{\alpha}^{(2)}\cap Q_{\beta}, Q_{\beta}]=[V_1, Q_{\beta}][C_{\beta}, Q_{\beta}]=V_{\beta}$ so that $\Phi(Q_{\beta})=V_{\beta}$. Since $p=3$, $Q_{\beta}/V_{\beta}$ splits under the action of the central involution of $\bar{L_{\beta}}$ and writing $Q:=Q_{\beta}\cap O^3(L_{\beta})$, $Q\cap C_{\beta}=V_{\beta}$ and $[V, Q, Q]\le V_{\beta}/Z_{\alpha}$. Thus, $[V, Q, Q]=\{1\}$, a contradiction since $Q\not\le V_{\alpha}^{(2)}$. Finally, we have that $V_1\le C_{\beta}$ so that $[V_1, V_{\beta}]=\{1\}$. But then $[V_{\alpha}^{(2)}, O_{p'}(H), V_{\beta}]\le [V_1, V_{\beta}]=\{1\}$, $[V_{\alpha}^{(2)}, V_{\beta}, O_{3'}(H)]=[Z_{\alpha}, O_{3'}(H)]=\{1\}$ and the three subgroup lemma yields that $[V_{\beta}, O_{3'}(H)]\le Z(V_{\alpha}^{(2)}=Z_{\alpha}$. But then $V_{\alpha}^{(2)}=V_2C_\beta$ so that $[V_{\alpha}^{(2)}, Q_{\beta}]\le V_2$ and $O_{3'}(H)$ centralizes $V$. Hence, $O_{3'}(H)=\{1\}$, $\bar{S}\normaleq H$, $[\bar{S}, O_{3'}(H)]=\{1\}$ and $\bar{L_{\alpha}}$ is central extension of $\PSL_2(q)$ by a $3'$-group. Since $q>3$, we have that $R_{\alpha}=Q_{\alpha}$ and \cite{Greenbook} provides a contradiction.
\end{proof}

\begin{proposition}
Suppose that $C_{V_\beta}(V_{\alpha'})=V_\beta \cap Q_{\alpha'}$ and $b=3$. Then $p=2$ and $G$ is parabolic isomorphic to $\mathrm{M}_{12}$ or $\Aut(\mathrm{M}_{12})$.
\end{proposition}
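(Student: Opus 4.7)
I begin by assuming for contradiction that $G$ is neither parabolic isomorphic to $\mathrm{M}_{12}$ nor to $\Aut(\mathrm{M}_{12})$. Then \cref{b=3min} forces the generic case: $V_\alpha^{(2)} = Q_\alpha$, $Z(Q_\alpha) = Z_\alpha$, $L_\beta = \langle V_\alpha^{(2)}, V_{\alpha+2}^{(2)}\rangle$, $C_\beta = V_\alpha^{(2)}\cap V_{\alpha+2}^{(2)}$, $\Phi(C_\beta)\le Z_\beta$ and $C_{V_\alpha^{(2)}/Z_\alpha}(O^p(L_\alpha)) = \{1\}$; combined with \cref{b=3i}, \cref{b=3iia}, \cref{b=3iib}, \cref{b=3q=p} and \cref{b=3minii}, this yields $q = p$, $\bar{L_\beta}\cong \SL_2(p)$, $|V_\beta| = p^3$ with $V_\beta/Z_\beta$ natural, $Q_\beta/C_\beta$ natural of order $p^2$, $[Q_\beta, C_\beta] = V_\beta$, $O^p(L_\beta)$ centralising $C_\beta/V_\beta$, and $\bar{L_\alpha}$ acting faithfully on $V := V_\alpha^{(2)}/Z_\alpha$ with $\bar{L_\alpha}/\bar{R_\alpha}\cong \SL_2(p)$. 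The target is to derive a contradiction from this data.

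Fix $y\in Q_\beta\setminus(Q_\alpha\cap Q_\beta)$, so that $\langle y\rangle Q_\alpha = S$. Using the natural-module structure of $Q_\beta/C_\beta$, which identifies $(Q_\alpha\cap Q_\beta)/C_\beta$ as the unique $G_{\alpha,\beta}$-invariant line, together with $[Q_\beta, C_\beta] = V_\beta$ and $[V_\beta, Q_\beta]\le Z_\beta\le Z_\alpha$ (coming from $V_\beta/Z_\beta$ being natural for $\bar{L_\beta}$), I deduce $[Q_\alpha\cap Q_\beta, Q_\beta] = V_\beta$, and in turn $[V, y]\le (Q_\alpha\cap Q_\beta)/Z_\alpha$, $[V, y, y]\le V_\beta/Z_\alpha$ of order $p$, and $[V, y, y, y] = \{1\}$. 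Hence $y$ acts cubically on $V$, and since $\langle y\rangle Q_\alpha = S$, this is the generic action of $\bar{S}$ on $V$.

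The heart of the argument is now to force $R_\alpha = Q_\alpha$. For $p\ge 5$ this is immediate from \cref{CubicAction} applied to the faithful action of the $\mathcal{K}$-group $\bar{L_\alpha}$ (a minimal counterexample hypothesis puts us inside the $\mathcal{K}$-class): the constraint $|\bar{L_\alpha}|_p = p$ forces $n = 1$ in any Lie-type factor and eliminates $\mathrm{(P)SU}_3$ entirely, while $\bar{L_\alpha}/\bar{R_\alpha}\cong \SL_2(p)$ and the simplicity of $\PSL_2(p)$, together with direct inspection of the quotients of $3\cdot\Alt(6), 3\cdot\Alt(7)$ at $p=5$, leaves $\bar{L_\alpha}\cong \SL_2(p)$ as the only survivor. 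For $p = 2$, the element $y$ acts quadratically automatically, so \cref{Quad2F} together with \cref{pgen} (exploiting that $V$ is generated as an $\bar{L_\alpha}$-module by the $\bar{G_{\alpha,\beta}}$-invariant subgroup $V_\beta/Z_\alpha$ of order $p$) restricts $\bar{L_\alpha}$ to $\{\PSL_2(4), \Dih(10), \Sz(2), (3\times 3){:}2, (3\times 3){:}4\}$; the quotient $\bar{L_\alpha}/\bar{R_\alpha}\cong\Sym(3)$ leaves only $\Sym(3)$ and $(3\times 3){:}2$, and the latter is eliminated by applying \cref{SubAmal} with $\lambda = \alpha$ to a non-central chief factor of $V$ witnessing the action of $\bar{R_\alpha}$: minimality of $G$ as a counterexample to \hyperlink{MainGrpThm}{Theorem C} forces the resulting subamalgam into the classified list, contradicting the accumulated structure. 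For $p = 3$, Hall--Higman at the Fermat prime $3$ alone is insufficient, so I combine the cubic action with \cref{nearquad}, with the constraint $\Phi(C_\beta)\le Z_\beta$ (which tightly limits the $p'$-action on $Q_\alpha$ via the three-subgroup lemma), and again with \cref{SubAmal} to rule out $\bar{R_\alpha}\ne 1$.

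With $R_\alpha = Q_\alpha$ established, $\bar{L_\alpha}\cong\SL_2(p)\cong\bar{L_\beta}$ and $G$ admits a weak BN-pair of rank $2$. Invoking \cref{greenbook} and matching against our constraints ($b = 3$, $Z_\beta = \Omega(Z(S))$, both parabolics of type $\SL_2(p)$, the specific natural-module structure of $Z_\alpha$ and $V_\beta/Z_\beta$, and $C_{V_\beta}(V_{\alpha'}) = V_\beta\cap Q_{\alpha'}$), the only compatible outcome occurs at $p = 2$ with $G$ parabolic isomorphic to $\mathrm{M}_{12}$ or $\Aut(\mathrm{M}_{12})$, contradicting the assumption and completing the proof. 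The main obstacle is the step forcing $R_\alpha = Q_\alpha$ at the small primes $p\in\{2,3\}$: in each case the quadratic/cubic classification admits solvable candidates like $(3\times 3){:}2$ at $p=2$, or central extensions of $\SL_2(3)$ at $p=3$, that must be eliminated individually by sub-amalgam constructions exploiting the detailed structural data $\Phi(C_\beta)\le Z_\beta$ and $C_\beta = V_\alpha^{(2)}\cap V_{\alpha+2}^{(2)}$ established in \cref{b=3min}.
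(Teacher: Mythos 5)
Your proposal has the right overall shape — derive cubic action of $Q_\beta$ on $V:=V_\alpha^{(2)}/Z_\alpha$, then use \cref{CubicAction}, \cref{nearquad} and sub-amalgam constructions to force $R_\alpha=Q_\alpha$ — but there is a genuine gap at the very first step.

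You deduce $[Q_\alpha\cap Q_\beta, Q_\beta]=V_\beta$ from $[Q_\beta, C_\beta]=V_\beta$ together with the natural-module structure of $Q_\beta/C_\beta$ and $V_\beta/Z_\beta$. This does not follow. The facts you cite show that $C_\beta/V_\beta\le Z(Q_\beta/V_\beta)$, but $Q_\alpha\cap Q_\beta$ properly contains $C_\beta$ (with index $q$), and nothing in \cref{b=3i}--\cref{b=3q=p} controls $[Q_\alpha\cap Q_\beta, y]$ modulo $V_\beta$ for $y\in Q_\beta\setminus Q_\alpha$. What you are really asserting is $Q_\beta'\le V_\beta$, which is essentially $\Phi(Q_\beta)=V_\beta$ (since $\Phi(C_\beta)\le Z_\beta$ from \cref{b=3min}, one has $\Phi(Q_\beta)=\Phi(Q)V_\beta$ for $Q:=[Q_\beta,O^p(L_\beta)]$). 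Establishing $\Phi(Q_\beta)=V_\beta$ is precisely the bulk of the paper's proof of this proposition: the case $Q/V_\beta$ non-abelian is eliminated by showing $Q/V_\beta$ is then extraspecial of exponent $p$ (so $p$ is odd), that $p=5$ fails by computing in $\SL_4(5)$, and that $p=3$ fails via Hall--Higman, the bound $|Q_\beta/Z_\beta|\in\{3^5,5^5\}$, and a detailed case-by-case elimination in the Small Groups Library comparing $Q_\beta/Z_\lambda$ for various $\lambda$ and the automorphism groups of the resulting groups. Without this you do not know that $y$ acts cubically (you only get quartic action a priori: $[V,y]\le (Q_\alpha\cap Q_\beta)/Z_\alpha$, $[V,y,y]\le C_\beta/Z_\alpha$, $[V,y,y,y]\le V_\beta/Z_\alpha$, $[V,y,y,y,y]=\{1\}$), and the rest of your argument — \cref{CubicAction} for $p\ge5$, \cref{Quad2F} and \cref{pgen} for $p=2$, \cref{nearquad} for $p=3$ — cannot get started.

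Once $\Phi(Q_\beta)=V_\beta$ is in hand, your treatment of $p\ge5$, $p=2$ and $p=3$ tracks the paper's argument reasonably closely (the paper's $p=2$ case uses \cref{Badp3}(iii) to find $P_\alpha\le L_\alpha$ with $P_\alpha/Q_\alpha\cong\Sym(3)$ and then a minimality argument rather than \cref{SubAmal}, and its $p=3$ case explicitly constructs the nearly quadratic module via the commutation map; these are minor packaging differences). But as written, the opening computation that launches everything is unjustified and hides the hardest part of the proof.
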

\begin{proof}
By \cref{b=3q=p}, we have that $q=p$. For $Q:=[Q_{\beta}, O^p(L_{\beta})]$, we have that $|Q/[Q,Q]V_{\beta}|=p^2$ by coprime action using the central involution of $\bar{L_{\beta}}$ when $p\geq 5$, and the $p$-solvability of $L_{\beta}$ otherwise. Then, since $Q_{\beta}=QC_{\beta}$ and $[Q, C_{\beta}]\le V_{\beta}$, we have that $\Phi(Q_{\beta})=\Phi(Q)V_{\beta}$ and $\Phi(Q)V_{\beta}=C_{\beta}\cap Q$. Note that by \cite[Lemma 2.73]{parkerSymp}, we have that $[Q,Q]V_{\beta}/V_{\beta}$ has order at most $p$ and $Q/V_{\beta}$ is either elementary abelian or an extraspecial group. 

Assume that $Q/V_{\beta}$ is non-abelian. Then $(V_{\alpha}^{(2)}\cap Q)/V_{\beta}$ is elementary abelian of index $p$ in $Q/V_{\beta}$ and it follows that $Q/V_\beta$ is dihedral when $p=2$; and is extraspecial of exponent $p$ when $p$ is odd. Since $\bar{L_{\beta}}$ acts faithfully on $Q/V_{\beta}$, we conclude that $p$ is odd. Furthermore, since $Q/Z_{\beta}$ has class at most $3$, applying the Hall-Higman theorem to the action of $Q$ of $V$, we deduce that either $p\in\{3,5\}$, or $R_{\alpha}=Q_{\alpha}$. In the latter case, \cite{Greenbook} yields a contradiction.

As in the proof of \cref{b=3q=p}, we form $C_x$ to be the preimage in $V_{\alpha}^{(2)}$ of $C_{V_{\alpha}^{(2)}/Z_{\alpha}}(Q_\beta)$ and we deduce that $C_x\le V_{\alpha}^{(2)}\cap Q_{\beta}$. Furthermore, $[C_xC_{\beta}, Q_{\beta}]=V_{\beta}$ and as $Q_{\beta}/V_{\beta}$ is non-abelian, $[V_{\alpha}^{(2)}\cap Q_{\beta}, Q_{\beta}]>V_{\beta}$ and $C_x\le C_{\beta}$. Then by the proof of \cref{b=3q=p}, $C_x=C<C_{\beta}$ where $C/Z_{\beta}=Z(Q_{\beta}/Z_{\beta})$. Indeed, $C_x$ has index at least $p^3$ in $V_{\alpha}^{(2)}$ Moreover, the commutation homomorphism $\theta:(V_{\alpha}^{(2)}\cap Q_{\beta})/Z_\alpha\to (V_{\alpha}^{(2)}\cap Q_{\beta})/Z_\alpha$ such that $vZ_\alpha\theta=[vZ_\alpha, x]$ has image contained in $\Phi(Q_\beta)V_{\beta}/Z_{\alpha}$ which has order $p^2$ and kernel $C_x$ from which we deduce that $C_x=C$ has index $p$ in $C_{\beta}$.

Since $p$ is odd, and $V_{\alpha}^{(2)}$ has class $2$ and is generated by elements of order $p$, \cite[Lemma 10.14]{GLS2} yields that $V_{\alpha}^{(2)}=\Omega(V_{\alpha}^{(2)})$ is of exponent $p$. In particular, $(V_{\lambda}^{(2)}\cap Q_{\beta})/Z_{\beta}$ is of exponent $p$ for all $\lambda\in\Delta(\beta)$ and we deduce that $\Omega(Q_{\beta}/Z_{\beta})=Q_{\beta}/Z_{\beta}$. Since $|Q/Z_{\alpha}|\in\{3^4, 5^4\}$ and $|Q/Z_{\beta}|\in\{3^5, 5^5\}$, we will make liberal use of the Small Groups library in MAGMA to derive a contradiction when $\Phi(Q_{\beta})>V_\beta$.

Assume first that $Z(Q/Z_{\alpha})=V_{\beta}/Z_{\alpha}$. Then $\Phi(Q)/Z_{\beta}\not\le Z(Q_{\beta}/Z_{\beta})$ so that $C_\beta=\Phi(Q)C$. Write $D=C_{C}(O^p(L_{\beta}))$. Then $C=V_{\beta}D$ and $[Q_{\beta}, D]=[QC, D]=D'$. Note that $D\le Q_{\alpha'}$ and $[D, V_{\alpha'}]\le Z_{\alpha'}\cap D=\{1\}$ so that $D\le C_{\beta}\cap C_{\alpha'}$. In particular, $\Phi(D)\le \Phi(C_{\beta})\cap \Phi(C_{\alpha'})=Z_{\beta}\cap Z_{\alpha'}=\{1\}$ and $D$ is centralized by $V_{\alpha'}Q_{\beta}$. Since $D$ is of exponent $p$, $D=Z_{\beta}$, $C=V_{\beta}$, $|C_{\beta}/V_{\beta}|=p$ and $|Q|=|Q_{\beta}|=p^6$. If $p=5$, then $\bar{L_{\alpha}}$ is a subgroup of $\SL_4(5)$ with a strongly $5$-embedded subgroup and some quotient by a $5'$-group isomorphic to $\SL_2(5)$. Computing in MAGMA, we have that $R_{\alpha}=Q_{\alpha}$, and a contradiction follows from \cite{Greenbook}. 

Therefore, if $Z(Q/Z_{\alpha})=V_{\beta}/Z_{\alpha}$ then $p=3$. Now, $|\Phi(Q_\beta)Z_{\alpha}/Z_{\alpha}|=9$ and we deduce that $V_{\beta}/Z_{\alpha}<\Phi(Q_\beta)Z_{\alpha}/Z_{\alpha}=\Phi(Q_\beta/Z_{\alpha})$ and $Q_\beta/Z_{\alpha}$ has maximal class. Moreover, $(V_{\alpha}^{(2)}\cap Q_\beta)/Z_{\alpha}$ is elementary abelian of index $p$ in $Q_\beta/Z_{\alpha}$ so that $Q_\beta/Z_{\alpha}$ is isomorphic to $3 \wr 3=SmallGroup(3^4, 7)$. Furthermore, we infer that $V_{\beta}/Z_{\beta}=Z(Q_\beta/Z_{\beta})$ has order $9$, $\Phi(Q_\beta/Z_{\beta})$ has order $27$, $\Omega(Q_\beta/Z_{\beta})=Q_\beta/Z_{\beta}$ and $Q_\beta/Z_\lambda\cong Q_\beta/Z_{\alpha}$ for any $\lambda\in\Delta(\beta)$. Calculating in MAGMA, the only possibility is $Q_\beta/Z_{\beta}\cong SmallGroup(3^5, 3)$. Note that $L_{\beta}/C_{Q_{\beta}}(Q/Z_{\beta})$ embeds a subgroup of $\Aut(Q/Z_{\beta})$. Even better, $L_{\beta}/C_{Q_{\beta}}(Q/Z_{\beta})$ embeds as a subgroup of the normal closure of a Sylow $3$-subgroup in $\Aut(Q/Z_{\beta})$. But in this case, $\Aut(Q/Z_{\beta})$ has normal Sylow $3$-subgroup, a contradiction.

Assume now that $Z(Q/Z_{\alpha})=V_{\beta}\Phi(Q)/Z_{\alpha}$ has index $p^2$ in $Q/Z_{\alpha}$. In particular, $Q$ has class $2$ and so acts cubically on $V_{\alpha}^{(2)}/Z_{\alpha}$. By \cref{CubicAction}, either $p=3$ or $R_{\alpha}=Q_{\alpha}$. In the latter case, \cite{Greenbook} gives a contradiction. Note that $Z_{\alpha}\ge [\Phi(Q)V_{\beta}, Q]\normaleq L_{\beta}$ so that $\Phi(Q)V_{\beta}/Z_{\beta}=Z(Q/Z_{\beta})$. If $V_{\beta}\le \Phi(Q)$ then $\Phi(Q)/Z_{\beta}=\Phi(Q/Z_{\beta})=Z(Q/Z_{\beta})$ has order $27$, and $\Omega(Q/Z_{\beta})=Q/Z_{\beta}$. No such group exists. 
Hence, $\Phi(Q)\cap V_{\beta}=Z_{\beta}$ so that $\Phi(Q/Z_{\beta})=3$, $Z(Q/Z_{\beta})=27$ and $Q/Z_{\beta}\cong SmallGroup(3^5, 62)$. Then $Q/Z_{\alpha}\cong SmallGroup(81,12)$ has four distinct elementary abelian subgroups of order $3^3$. Since one of these groups is $(V_{\alpha}^{(2)}\cap Q)/Z_{\alpha}$ which is normalized by $L_{\beta}\cap G_{\alpha,\beta}$, at least one other is also normalized by $L_{\beta}\cap G_{\alpha,\beta}$. But $Q/\Phi(Q)V_{\beta}\cong Q_{\beta}/C_{\beta}$ is a natural $\SL_2(3)$-module for $\bar{L_{\beta}}$ and, as such, has a unique subspace of order $3$ normalized by $\bar{L_{\beta}\cap G_{\alpha,\beta}}$, a contradiction. 

Thus, $\Phi(Q_{\beta})=V_{\beta}$ and setting $V:=V_{\alpha}^{(2)}/Z_{\alpha}$, $Q_{\beta}$ acts cubically on $V$ and \cref{CubicAction} yields that $p\in\{2,3\}$ or $R_{\alpha}=Q_{\alpha}$. In the latter case, \cite{Greenbook} provides a contradiction. We again recall the notations $C$ and $C_x$ from \cref{b=3q=p}. We may assume that $R_{\alpha}\ne Q_{\alpha}$ otherwise the result holds by \cite{Greenbook}. In particular, $V$ is not an FF-module by \cref{SEFF}. Then, forming the commutation homomorphism $\theta:(V_{\alpha}^{(2)}\cap Q_{\beta})/Z_\alpha\to (V_{\alpha}^{(2)}\cap Q_{\beta})/Z_\alpha$ such that $vZ_\alpha\theta=[vZ_\alpha, x]$ has image $V_{\beta}/Z_{\alpha}$ and kernel $C_x/Z_{\alpha}$. Thus, $C_x$ has index $p$ in $V_{\alpha}^{(2)}\cap Q_{\beta}$ and index $p^2$ in $V_{\alpha}^{(2)}$. Note that $C_x\ne C_{\beta}$ for otherwise $[Q_{\beta}, C_{\beta}]=[\langle x\rangle(V_{\alpha}^{(2)}\cap Q_{\beta}), C_{\beta}]\le Z_{\alpha}$, a contradiction by \cref{b=3minii}. Indeed, $V_{\alpha}^{(2)}\cap Q_{\beta}=C_xC_{\beta}$.

Since $V=[V, O^p(L_{\alpha})]$, if $V$ is not irreducible for $\bar{L_{\alpha}}$ then by \cref{SEFF}, $V$ contains two non-central chief factors for $\bar{L_{\alpha}}$ and both are natural $\SL_2(p)$-modules. By \cref{Badp2} and \cref{Badp3}, we conclude that $\bar{L_{\alpha}}\cong (3\times 3):2$ or $(Q_8\times Q_8):3$ and $|V|=p^4$. If $x$ acts quadratically on $V$ then both $V$ and $\bar{L_{\alpha}}$ are described by \cref{Quad2F}. Since $R_{\alpha}\ne Q_{\alpha}$ we either have that $\bar{L_{\alpha}}\cong (3\times 3):2$ or $\SU_3(2)'$ and $p=2$, or $p=3$ and $\bar{L_{\alpha}}\cong (Q_8\times Q_8):3$. 

If $\bar{L_{\alpha}}\cong \SU_3(2)'$ or $(Q_8\times Q_8):3$ then there is $t\in L_{\alpha}\cap G_{\alpha,\beta}$ non-trivial with $[t, L_{\alpha}]\le Q_{\alpha}$ and, since $\bar{L_{\beta}}\cong\SL_2(p)$, we can choose $t$ such that $[t, L_{\beta}]\le Q_{\beta}$, a contradiction by \cref{BasicAmal} (v). Hence, $p=2$, $\bar{L_{\alpha}}\cong (3\times 3):2$ and as $V=[V, O^2(L_{\alpha})]$, $|V|=2^4$ and $|S|=2^7$. By \cref{Badp3}, there is $P_{\alpha}\le L_{\alpha}$ such that $P_{\alpha}/Q_{\alpha}\cong \Sym(3)$, $L_{\alpha}=P_{\alpha}R_{\alpha}$ and we may choose $P_{\alpha}$ such that neither $V_{\beta}$ nor $C_{\beta}$ are normal in $P_{\alpha}$. It follows that no subgroup of $S$ is normal in both $P_{\alpha}$ and $L_{\beta}$ so that $(P_{\alpha}, L_{\beta}, S)$ satisfies \cref{MainHyp}. Since we could have chosen $G$ minimally as a counterexample, and as $|S|=2^7$, we deduce that $(P_{\alpha}, L_{\beta}, S)$ is parabolic isomorphic to $\Aut(\mathrm{M}_{12})$. But then one can calculate, e.g. using MAGMA, that $|\Aut(Q_{\alpha})|_3=3$, a contradiction.

Hence, we have that $V$ is irreducible, $x$ is not quadratic on $V$ and $p=3$. But then $[V_{\alpha}^{(2)}, x]\not\le C_x$ and we deduce that $V_{\alpha}^{(2)}\cap Q_{\beta}=[V_{\alpha}^{(2)}, x]C_x$. Then, $V$ is a nearly quadratic module for $\bar{L_{\alpha}}$ and an appeal to \cref{nearquad} yields a contradiction. This completes the proof.
\end{proof}

\subsection{$b=1$}

From this point on, restating \cref{bodd}, we may assume the following:

\begin{itemize}
\item $b=1$ so that $Z_\alpha\not\le Q_\beta$;
\item $\Omega(Z(S))=Z_\beta=\Omega(Z(L_\beta))$; and
\item $Z(L_\alpha)=\{1\}$.
\end{itemize}

\begin{proposition}\label{betadeduct}
Suppose that $p$ is odd. Then $\bar{L_{\beta}}\cong\SL_2(p^n)$ or $\mathrm{(P)SU}_3(p^n)$, or $|S/Q_{\beta}|=3$.
\end{proposition}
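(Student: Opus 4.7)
The plan is to invoke the classifications \cref{SE1} and \cref{SE2} (available to us via the $\mathcal K$-hypothesis) applied to $\bar L_\beta/O_{p'}(\bar L_\beta)$, which has a strongly $p$-embedded subgroup by \cref{MainHyp} (ii) and \cref{spelemma1}. The target is to eliminate every entry on those lists except $\mathrm{(P)SL}_2(p^n)$ and $\mathrm{(P)SU}_3(p^n)$, allowing the sole exception $|\bar S|=3$ (which forces $p=3$).

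First, I will identify an appropriate faithful module for $\bar L_\beta/\bar R_\beta$: by \cref{BasicVB}, \cref{p-closure2}(ii), and \cref{CommCF}, $\bar R_\beta$ is a $p'$-group and $\bar L_\beta/\bar R_\beta$ acts faithfully on a non-central chief factor of $L_\beta$ inside $V_\beta/[V_\beta,Q_\beta]$. Crucially, $b=1$ supplies the subgroup $Z_\alpha\not\le Q_\beta$ whose image in $\bar S$ is a non-trivial $\bar G_{\alpha,\beta}$-invariant elementary abelian $p$-subgroup, and $V_\beta=\langle Z_\alpha^{L_\beta}\rangle$. This ``small generating'' constraint combined with the bounds of \cref{SpeModOdd} rules out most of the sporadic and alternating candidates immediately: for $\Alt(2p)$ with $p>3$, $\Sz(32){:}5$, ${}^2\mathrm F_4(2)'$, $\mathrm{McL}$, $\mathrm{Fi}_{22}$, $\mathrm J_4$, and for $\PSL_3(4)$ and $\mathrm{M}_{11}$ at $p=3$, the minimal faithful $\mathrm{GF}(p)$-module dimensions exceed what $V_\beta$ can produce once the generation by $\bar Z_\alpha$ is taken into account.

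When $m_p(\bar S)=1$, \cref{SE1} forces $\bar S$ cyclic and $\bar L_\beta$ either $p$-solvable or of simple type. For $p\geq 5$ the case $|\bar S|=p$ yields $\bar L_\beta\cong\mathrm{(P)SL}_2(p)$ directly (a conclusion), while $|\bar S|>p$ is pushed into the $\PSL_2(p^n)$ family by the faithful module bounds; for $p=3$, $|\bar S|=p$ is exactly the exceptional $|S/Q_\beta|=3$ clause, and $|\bar S|>p$ is handled analogously.

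The main obstacle will be the uniform elimination of the Ree groups $\Ree(3^{2a+1})$ with $a\geq 1$, since simple module-dimension bounds are not decisive there. Here I plan to exploit \cref{NotQuad} (no $p$-subgroup acts non-trivially quadratically on an irreducible $\Ree$-module), combined with the observation that the $\bar G_{\alpha,\beta}$-invariant subgroup $\bar Z_\alpha\le \bar S$ would have to act non-quadratically on every irreducible composition factor of $V_\beta$; together with \cref{SpeModOdd} and the additional hypothesis in \cref{MainHyp} that $N_{\bar L_\beta}(\bar S)$ is strongly $p$-embedded when $\bar L_\beta/O_{3'}(\bar L_\beta)\cong\Ree(3)$, this should deliver the required contradiction.
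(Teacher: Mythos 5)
Your proposal misses the one-line argument the paper actually uses, and the alternative route you sketch has a concrete structural flaw. Since $Z_\alpha\normaleq S$, we have $[Q_\beta,Z_\alpha]\le Z_\alpha$, and since $Z_\alpha$ is elementary abelian this gives $[Q_\beta,Z_\alpha,Z_\alpha]=\{1\}$ directly. As $G_\beta$ has characteristic $p$, Burnside's lemma (\cref{burnside}) shows $Q_\beta/\Phi(Q_\beta)$ is a faithful $\bar{L_\beta}$-module, and $Z_\alpha\not\le Q_\beta$ means $\bar{Z_\alpha}$ is a non-trivial quadratic $p$-subgroup. Feeding this into \cref{SEQuad} immediately produces the trichotomy in the statement. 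Your idea to apply quadratic-action arguments appears only at the very end, confined to the Ree case, whereas it is the engine that drives the whole proof.

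The module you propose to work with, $V_\beta/[V_\beta,Q_\beta]$, is problematic in the $b=1$ setting. Here $Z_\alpha\not\le Q_\beta$, so $V_\beta=\langle Z_\lambda\mid\lambda\in\Delta(\beta)\rangle\not\le Q_\beta$, and the whole framework of non-central chief factors inside $Q_\beta$ of the form $V_\beta/[V_\beta,Q_\beta]$ does not make sense; \cref{CommCF} explicitly requires $V_\lambda^{(n)}\le Q_\lambda$. Worse, $R_\beta$ is simply undefined when $b=1$ and $Z_\beta=\Omega(Z(L_\beta))$: neither bullet of \cref{BasicAmalNot} applies. The lemma \cref{BasicVB}, which you cite to conclude $\bar R_\beta$ is a $p'$-group, has $b>1$ in its hypothesis. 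So the setup of your faithful module collapses before the classification step. Finally, the proposed elimination of $\Alt(2p)$ by module-dimension bounds is not sound: \cref{SpeModOdd}(iv) gives only a lower bound $|V|\geq p^5$ and the generation of $V_\beta$ by conjugates of $Z_\alpha$ does not bound the dimension of $Q_\beta/\Phi(Q_\beta)$ (the module the argument must ultimately control); what actually kills $\Alt(2p)$ here is the absence of quadratic elements, which is exactly what \cref{SEQuad} packages.
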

\begin{proof}
Since $Q_{\beta}/\Phi(Q_{\beta})$ is a faithful $\bar{L_{\beta}}$ module and $[Q_{\beta}, Z_{\alpha}, Z_{\alpha}]=\{1\}$, the result follows immediately from \cref{SEQuad}.
\end{proof}

\begin{proposition}
Suppose that $p\geq 5$. Then $G$ has a weak BN-pair of rank $2$ and is locally isomorphic to $H$ where $F^*(H)=\PSp_4(p^n)$, $\PSU_4(p^n)$ or $\PSU_5(p^n)$.
\end{proposition}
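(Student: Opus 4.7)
The plan is to combine the quadratic-action analysis already in place with the classification of weak BN-pairs of rank $2$, identifying $\bar{L_\alpha}$ and $\bar{L_\beta}$ as rank $1$ Lie type groups in characteristic $p$, promoting this to $R_\alpha=Q_\alpha$ and $R_\beta=Q_\beta$, and then narrowing down via \cref{greenbook}. First, since $Z_\alpha\le\Omega(Z(Q_\alpha))$, we have $[Q_\beta,Z_\alpha,Z_\alpha]\le[Q_\alpha\cap Q_\beta,Z_\alpha]=\{1\}$, so $Z_\alpha$ is a non-trivial quadratic subgroup of $\bar{L_\beta}$ acting on the faithful module $Q_\beta/\Phi(Q_\beta)$. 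By \cref{betadeduct}, we deduce $\bar{L_\beta}\cong\SL_2(p^n)$ or $\mathrm{(P)SU}_3(p^n)$, since the exceptional alternative $|S/Q_\beta|=3$ forces $p=3$ in view of \cref{SEQuad}.

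Next, I apply \cref{CriticalAmalgam} to produce some $\lambda\in\{\alpha,\beta\}$, a faithful $\bar{G_\lambda}$-module $V$, and a $p$-subgroup $C\le \bar{G_\lambda}$ acting cubically on $V$. When $\lambda=\beta$ this is consistent with what we already know; when $\lambda=\alpha$, \cref{CubicAction} (using $p\geq5$) identifies $\bar{L_\alpha}$ as $\mathrm{(P)SL}_2(p^n)$ or $\mathrm{(P)SU}_3(p^n)$, unless $p=5$ and $\bar{L_\alpha}\cong 3\cdot\Alt(6)$ or $3\cdot\Alt(7)$, which I eliminate by noting that in these small exceptional cases $|S\cap L_\alpha/Q_\alpha|=5$ and there is no room for a quadratic action of $Z_\alpha$ on $Q_\beta/\Phi(Q_\beta)$ of the form forced in the previous paragraph. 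A symmetric argument (or, alternatively, an application of \cref{CriticalFusion}) handles the remaining vertex. To promote $\bar{L_\delta}\cong\SL_2(p^n)$ or $\mathrm{(P)SU}_3(p^n)$ to $R_\delta=Q_\delta$, I argue by coprime action exactly as in \cref{BetaCenter}(iv) and \cref{NiceSL2}: a non-trivial $\bar{R_\delta}$ would be a $p'$-central quotient of $\bar{L_\delta}$, and the $p'$-part of the Schur multiplier of $\mathrm{PSL}_2(p^n)$ or $\mathrm{PSU}_3(p^n)$ (bounded by $(p^n-1,2)$ and $(p^n+1,3)$ respectively via \cref{SLGen}(vii) and \cref{SU3Gen}(ix)) together with $L_\delta=O^{p'}(L_\delta)$ forces $R_\delta=Q_\delta$.

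With $R_\alpha=Q_\alpha$, $R_\beta=Q_\beta$, and the correct structure of $\bar{L_\alpha}$, $\bar{L_\beta}$, the amalgam $\mathcal{A}$ satisfies Definition \ref{WBDef} and is a weak BN-pair of rank $2$. By \cref{greenbook}, $G$ is then locally isomorphic to $Y$ for some $Y\in\bigwedge^0$; the parabolic-isomorphism exceptions in \cref{greenbook} all occur at $p\in\{2,3\}$ and are incompatible with $p\geq5$. It remains to identify which $Y$ satisfy the hypothesis $b=1$ together with $Z_{\alpha'}\le Q_\alpha$ and $Z_\beta=\Omega(Z(L_\beta))$. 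The groups $\PSL_3(p^n)$, $\mathrm{G}_2(p^n)$ and ${}^3\mathrm{D}_4(p^n)$ at odd primes have $[Z_\alpha,Z_{\alpha'}]\ne\{1\}$ and thus have already been captured in the analysis of \cref{evensec} (see \cref{BetaCenterII} and \cref{5Symp}); the remaining groups of Lie type in $\bigwedge^0$ with characteristic $p\geq5$ are precisely $\PSp_4(p^n)$, $\PSU_4(p^n)$, and $\PSU_5(p^n)$, giving the listed conclusion. The hard part is really the identification of $\bar{L_\alpha}$: the quadratic action of $Z_\alpha$ is naturally supported on $Q_\beta$, not on a module for $\bar{L_\alpha}$, and producing the required cubic action for $\bar{L_\alpha}$ relies on carefully extracting the critical subgroup of $Q_\alpha$ as in the proof of \cref{CriticalAmalgam} and then matching it against the explicit list in \cref{CubicAction}.
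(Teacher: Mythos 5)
Your overall strategy — use quadratic action and \cref{betadeduct} to pin down $\bar{L_\beta}$, then cubic action and \cref{CubicAction} to pin down $\bar{L_\alpha}$, and finish via \cref{greenbook} — is the one the paper follows, but two steps in your execution have genuine gaps.

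The first gap is the production of cubic action on $Q_\alpha/\Phi(Q_\alpha)$. Invoking \cref{CriticalAmalgam} or \cref{CriticalFusion} only guarantees cubic action at \emph{some} $\lambda\in\{\alpha,\beta\}$. If that $\lambda$ happens to be $\beta$, you learn nothing new about $\bar{L_\alpha}$, and the ``symmetric argument'' you appeal to is not available: the roles of $\alpha$ and $\beta$ are genuinely asymmetric here ($Z_\beta=\Omega(Z(L_\beta))$ while $Z(L_\alpha)=\{1\}$ and $Z_\alpha$ carries a non-central chief factor). What one must do instead — and this is what the paper does — is take a critical subgroup $K_\beta$ of $Q_\beta$ and show directly that $K_\beta\not\le Q_\alpha$. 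The argument: if $K_\beta\le Q_\alpha$, then $Z_\alpha\le Z(Q_\alpha)$ centralizes $K_\beta$; since by \cref{betadeduct} $\bar{L_\beta}$ is $\SL_2(p^n)$ or $\mathrm{(P)SU}_3(p^n)$ and in particular not $p$-solvable for $p\geq 5$, \cref{p-closure}(i) gives $O^p(L_\beta)\le\langle Z_\alpha^{L_\beta}\rangle$, which then centralizes $K_\beta$, contradicting the coprime-automorphism-faithfulness of critical subgroups. Once $K_\beta\not\le Q_\alpha$, the fact that $K_\beta$ normalizes $Q_\alpha$ and has nilpotency class at most $2$ gives $[Q_\alpha,K_\beta,K_\beta,K_\beta]\le[K_\beta,K_\beta,K_\beta]=\{1\}$. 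Note also that your final paragraph speaks of ``extracting the critical subgroup of $Q_\alpha$'' — this is the wrong object: a critical subgroup of $Q_\alpha$ cannot give cubic action on a $\bar{L_\alpha}$-module; you need the critical subgroup of $Q_\beta$ as an outside actor.

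The second gap is the elimination of the $p=5$ exceptional cases $\bar{L_\alpha}\cong 3\cdot\Alt(6)$ or $3\cdot\Alt(7)$. Your stated reason, that there is ``no room for a quadratic action of $Z_\alpha$ on $Q_\beta/\Phi(Q_\beta)$,'' does not hold up: $Z_\alpha Q_\beta/Q_\beta$ has order $p$, and a quadratic cyclic group of order $p$ is entirely compatible with $\bar{L_\beta}\cong\SL_2(p^n)$ or $\SU_3(p^n)$ acting on $Q_\beta/\Phi(Q_\beta)$. The actual obstruction lives in $Z_\alpha$ as a $\bar{L_\alpha}$-module: \cref{CubicAction} forces a composition factor of order at least $5^6$, so $|Z_\alpha|\geq 5^6$ and $R_\alpha=Q_\alpha$; on the other hand, since $3\cdot\Alt(6)$ and $3\cdot\Alt(7)$ admit no non-trivial quadratic modules (\cref{SEQuad}), one can locate $\alpha+2\in\Delta(\beta)$ with $Z_\alpha\cap Q_\beta\not\le Q_{\alpha+2}$ and, after ruling out the degenerate quadratic configuration, $Z_{\alpha+2}\cap Q_\beta\not\le Q_\alpha$. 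As $|S/Q_\alpha|=5$ and $\bar{L_\alpha}$ is generated by two conjugate Sylow $5$-subgroups, the faithful module $Z_\alpha$ is then bounded above by $5^4$, a contradiction. This numerical squeeze is the content of the elimination and cannot be replaced by the remark you make.

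A smaller point: the ``promotion to $R_\delta=Q_\delta$'' step is both unnecessary and incorrectly argued. Definition~\ref{WBDef} requires only the existence of $G_i^*\normaleq G_i$ with $G_i^*/O_p(G_i)$ of the listed shape; since \cref{CubicAction} already tells you $\bar{L_\delta}$ itself is $\mathrm{(P)SL}_2(p^r)$ or $\mathrm{(P)SU}_3(p^r)$, taking $G_\delta^*=L_\delta$ works with no further promotion. Moreover, the Schur multiplier argument you sketch does not rule out $\bar{R_\delta}$ being exactly $Z(\SL_2(p^n))$ — that possibility is simply harmless for the weak BN-pair conclusion.
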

\begin{proof}
Let $K_{\beta}$ be a critical subgroup of $Q_{\beta}$. By \cref{CriticalSubgroup}, $O^p(L_{\beta})$ acts faithfully on $K_\beta/\Phi(K_{\beta})$. Assume that $K_{\beta}\le Q_{\alpha}$. Since $\bar{L_{\beta}}\cong \SL_2(p^n)$ or $\mathrm{(P)SU}_3(p^n)$, we have that $[K_{\beta}, O^p(L_{\beta})]\le [K_{\beta}, \langle Z_{\alpha}^{L_{\beta}}\rangle]=\{1\}$, a contradiction. Hence, $K_\beta\not\le Q_\alpha$, $[Q_{\alpha}, K_{\beta}, K_{\beta}, K_{\beta}]=\{1\}$ and $K_{\beta}$ acts cubically on $Q_{\alpha}$. 

Since $Q_{\alpha}/\Phi(Q_{\alpha})$ is a faithful $\bar{L_{\alpha}}$-module which admits cubic action, we may apply \cref{CubicAction} so that $\bar{L_{\alpha}}\cong\mathrm{(P)SL}_2(p^r)$ or $\mathrm{(P)SU}_3(p^r)$, or $p=5$ and $\bar{L_{\alpha}}\cong 3\cdot\Alt(6)$ or $3\cdot\Alt(7)$ and for $W$ some composition factor of $Q_{\alpha}/\Phi(Q_{\alpha})$, $|W|\geq 5^6$. If $\bar{L_{\alpha}}\cong\mathrm{(P)SL}_2(p^r)$ or $\mathrm{(P)SU}_3(p^r)$ then $G$ has a weak BN-pair of rank $2$ and by \cite{Greenbook}, $G$ is locally isomorphic to $H$ where $F^*(H)=\PSp_4(p^{n+1})$, $\PSU_4(p^n)$ or $\PSU_5(p^n)$ for $n\geq 1$. Thus it remains to check that $\bar{L_{\alpha}}\not\cong 3\cdot\Alt(6)$ or $3\cdot\Alt(7)$ and so assume that $p=5$ and $|S/Q_{\alpha}|=5$. Since $Q_{\beta}$ is not centralized by $Z_{\alpha}$, $\bar{L_{\beta}}\cong\SL_2(5)$ and $Q_{\beta}$ contains exactly one non-central chief factor for $L_{\beta}$, which is isomorphic to a natural $\SL_2(5)$-module. Since $Z(L_{\alpha})=\{1\}$, $Z_{\alpha}$ contains a non-central chief factor for $L_{\alpha}$ and admits cubic action, $Z_{\alpha}$ is also a faithful $\bar{L_{\alpha}}$-module and $|Z_{\alpha}|\geq 5^6$, so that $R_{\alpha}=Q_{\alpha}$.

Suppose that $Z_{\alpha}\cap Q_{\beta}\le Q_{\lambda}$ for all $\lambda\in\Delta(\beta)$. Since $L_{\beta}=\langle Z_{\lambda}, Q_{\beta} \mid \lambda\in\Delta(\beta)\rangle$, it follows that $Z_{\alpha}\cap Q_{\beta}$ is centralized by $O^p(L_{\beta})$. Since $Q_{\alpha}\cap Q_{\beta}\not\normaleq L_{\beta}$, $O^p(L_{\beta})\cap Q_{\beta}\not\le Q_{\alpha}$ and so $[Z_{\alpha}, Q_{\beta}, Q_{\beta}\cap O^p(L_{\beta})]=\{1\}$ and $Z_{\alpha}$ is a quadratic module, a contradiction to \cref{SEQuad}. Thus, $Z_{\alpha}\cap Q_{\beta}\not\le Q_{\alpha+2}$ for some $\alpha+2\in\Delta(\beta)$ and $Z_{\alpha}\cap Q_{\beta}\cap Q_{\alpha+2}$ has index at most $25$ in $Z_{\alpha}$. If $Z_{\alpha+2}\cap Q_{\beta}\le Q_{\alpha}$ then $[Z_{\alpha+2}, Z_{\alpha}, Z_{\alpha}]=\{1\}$ and so, $Z_{\alpha}\cap Q_{\beta}$ acts quadratically on $Z_{\alpha+2}$ and since $\alpha+2$ is conjugate to $\alpha$, we have a contradiction. Thus, $Z_{\alpha+2}\cap Q_{\beta}\not\le Q_{\alpha}$. But now, $\bar{L_{\alpha}}$ is generated by two conjugates of $(Z_{\alpha+2}\cap Q_{\beta})Q_{\alpha}/Q_{\alpha}$, and as an index $25$ subgroup of $Z_{\alpha}$ is centralized by $Z_{\alpha+2}\cap Q_{\beta}$ and $Z(L_{\alpha})=\{1\}$, we have that $|Z_{\alpha}|\leq 5^4$, a contradiction.
\end{proof}

Given the above proposition, we suppose that $p\in \{2,3\}$ for the remainder of this subsection. We introduce some notation specific to the case where $b=1$.

\begin{itemize}
\item $F_\beta$ is a normal subgroup of $G_\beta$ which satisfies $[F_\beta, O^p(L_{\beta})]\ne \{1\}$ and is minimal by inclusion with respect to adhering to these conditions.
\item $W_\beta:=\langle(Z_{\alpha}\cap Q_{\beta})^{G_{\beta}}\rangle$.
\item $D_{\beta}:=C_{Q_{\beta}}(O^p(L_{\beta}))$.
\end{itemize}

\begin{lemma}\label{NCCFZA}
The following hold:
\begin{enumerate}
\item $F_{\beta}\not\le Q_{\alpha}$;
\item $F_{\beta}=[F_{\beta}, O^p(L_{\beta})]\le O^p(L_{\beta})$; and
\item for any $p$-subgroup $U\normaleq L_{\alpha}$ with $U\not\le Q_{\beta}$, $[F_{\beta}, Q_{\beta}]\le U$.
\end{enumerate}
\end{lemma}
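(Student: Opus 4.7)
My plan is to treat the three parts in turn: (i) by a direct application of \cref{p-closure}; (ii) by combining the minimality of $F_\beta$ with the three subgroups lemma and the structure of $\bar{L_\beta}$; and (iii) by bootstrapping from (ii) via the normal closure $\langle U^{L_\beta}\rangle$ in $L_\beta$. I expect the main obstacle to lie in (iii), where the descent from containment in $\langle U^{L_\beta}\rangle$ down to containment in $U$ itself requires careful commutator calculus.

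For (i), I assume $F_\beta \le Q_\alpha$ and derive a contradiction. Under this assumption, $F_\beta$ is a normal $p$-subgroup of $G_\beta$, hence $F_\beta \le Q_\alpha \cap Q_\beta$. Since $Z_\alpha$ is the $G_\alpha$-normal closure of $\Omega(Z(S))$, it lies inside the characteristic subgroup $\Omega(Z(Q_\alpha))$ and so centralizes $F_\beta$; normality of $F_\beta$ in $G_\beta$ transfers this to $\langle Z_\alpha^{L_\beta}\rangle$, which fails to be $p$-closed since $Z_\alpha \not\le Q_\beta$ (as $b = 1$). Then \cref{p-closure} yields $O^p(L_\beta) \le \langle Z_\alpha^{L_\beta}\rangle$, contradicting $[F_\beta, O^p(L_\beta)] \ne \{1\}$.

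For (ii), the inclusion $F_\beta \le O^p(L_\beta)$ is automatic from the equality $F_\beta = [F_\beta, O^p(L_\beta)]$, since the commutator always lies in the normal subgroup $O^p(L_\beta)$. To establish the equality, I note that $[F_\beta, O^p(L_\beta)]$ is normal in $G_\beta$, contained in $F_\beta$, and non-trivial by construction; if it were properly contained in $F_\beta$, the minimality would force $[[F_\beta, O^p(L_\beta)], O^p(L_\beta)] = \{1\}$. The three subgroups lemma then gives $[[O^p(L_\beta), O^p(L_\beta)], F_\beta] = \{1\}$, and invoking the structural results \cref{SE1}, \cref{SE2} and \cref{Ree3}, one reads off that $\bar{L_\beta}$ is essentially perfect in every admissible case, so $[O^p(L_\beta), O^p(L_\beta)] \cdot Q_\beta = O^p(L_\beta)$. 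This collapses $[F_\beta, O^p(L_\beta)]$ to $[F_\beta, Q_\beta]$, which is now a normal $p$-subgroup of $G_\beta$ centralized by $O^p(L_\beta)$; a final application of the $A\times B$-lemma then contradicts $[F_\beta, O^p(L_\beta)] \ne \{1\}$.

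For (iii), I first observe that $U \le O_p(L_\alpha) = Q_\alpha \le S \le L_\beta$, and $Q_\beta \le L_\alpha$ (as $Q_\beta$ is a $p$-subgroup of $G_\alpha$ sitting in its Sylow), so $Q_\beta$ normalizes $U$ and $[U, Q_\beta] \le U$. Since $U \not\le Q_\beta$, $\langle U^{L_\beta}\rangle$ fails to be $p$-closed, and \cref{p-closure} yields $O^p(L_\beta) \le \langle U^{L_\beta}\rangle$; by (ii), $F_\beta \le \langle U^{L_\beta}\rangle$. I then write elements of $F_\beta$ as products of $L_\beta$-conjugates of elements of $U$, use the Hall-Witt identity to shift commutators with $Q_\beta$ into $[U, Q_\beta] \le U$ and its $L_\beta$-conjugates, and finally exploit the $G_\beta$-normality of $[F_\beta, Q_\beta]$ together with the $L_\alpha$-invariance of $U$ to descend to $[F_\beta, Q_\beta] \le U$. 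The hard part will be executing this last descent cleanly, since the intermediate containment $[F_\beta, Q_\beta] \le \langle U^{L_\beta}\rangle$ is easy but does not by itself suffice.
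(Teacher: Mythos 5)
Your route differs from the paper's in all three parts, and the gap that matters is in (ii). After reducing via the three subgroups lemma to $[(O^p(L_\beta))', F_\beta]=\{1\}$, you need $(O^p(L_\beta))'Q_\beta = O^p(L_\beta)$, and you read this off from \cref{SE1} as ``essential perfection''. This fails precisely when $\bar{L_\beta}$ is $p$-solvable, which is very much live in the $b=1$ analysis with $p\in\{2,3\}$: for instance $\bar{L_\beta}\cong\Sym(3)$ ($p=2$) or $\bar{L_\beta}\cong\SL_2(3)$ ($p=3$), both of which appear in the conclusions of this subsection. In these cases $O^p(\bar{L_\beta})=O_{p'}(\bar{L_\beta})$ is a nontrivial solvable $p'$-group with proper derived subgroup, so the image of $(O^p(L_\beta))'Q_\beta$ in $\bar{L_\beta}$ is proper in $O^p(\bar{L_\beta})$ and the collapse you want does not occur. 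The paper's argument is perfection-free: minimality gives $[F_\beta,O^p(L_\beta),O^p(L_\beta)]=\{1\}$, so every $p'$-element of $O^p(L_\beta)$ acts trivially on both factors of the chain $F_\beta\ge[F_\beta,O^p(L_\beta)]\ge\{1\}$, hence (coprime action) trivially on $F_\beta$; since $O^p(L_\beta)$ is generated by its $p'$-elements, this forces $[F_\beta,O^p(L_\beta)]=\{1\}$, the desired contradiction.

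Your (i) has the cousin of the same issue: $O^p(L_\beta)\le\langle Z_\alpha^{L_\beta}\rangle$ is conclusion (i) of \cref{p-closure}, which explicitly presupposes $L_\beta$ is not $p$-solvable, whereas the $p$-solvable case only yields $K\le NQ_\beta$. (A uniform repair: $F_\beta/(F_\beta\cap D_\beta)$ is, by minimality, a non-central chief factor for $L_\beta$, so \cref{p-closure}(iv) gives $Q_\beta\in\syl_p(C_{L_\beta}(F_\beta/(F_\beta\cap D_\beta)))$; if $F_\beta\le Q_\alpha$ then $Z_\alpha\le\Omega(Z(Q_\alpha))$ centralizes $F_\beta$, so $Z_\alpha Q_\beta$ is a $p$-subgroup of that centralizer strictly containing $Q_\beta$, a contradiction.) Finally, (iii) is, as you say, unfinished, and the Hall--Witt route is the wrong tool. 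The missing move is to set $H_\beta:=\langle(U\cap F_\beta)^{G_\beta}\rangle\normaleq G_\beta$ and let minimality of $F_\beta$ do the descent: either $H_\beta\le D_\beta$ (which leads quickly to $[F_\beta,L_\beta]\le D_\beta$ and a contradiction) or $H_\beta=F_\beta$; in the latter case $[U\cap F_\beta,Q_\beta]\le[F_\beta,Q_\beta]\le D_\beta$ is centralized by $O^p(L_\beta)$, hence normal in $G_\beta$, and its $G_\beta$-normal closure equals $[H_\beta,Q_\beta]=[F_\beta,Q_\beta]$, giving $[F_\beta,Q_\beta]=[U\cap F_\beta,Q_\beta]\le[U,Q_\beta]\le U$.
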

\begin{proof}
We have that $[F_{\beta}, O^p(L_{\beta})]\le O^p(L_{\beta})$ and by coprime action $[F_{\beta}, O^p(L_{\beta}), O^p(L_{\beta})]=[F_{\beta}, O^p(L_{\beta})]$. By minimality of $F_{\beta}$, $F_{\beta}=[F_{\beta}, O^p(L_{\beta})]$. If $F_{\beta}\le Q_{\alpha}$, then $[F_{\beta}, S]$ is strictly contained in $F_{\beta}$ and normalized by $G_{\beta}=\langle Z_{\alpha}^{L_{\beta}}\rangle G_{\alpha,\beta}$ and, by minimality, $[F_{\beta}, S]\le D_{\beta}$. But then $[F_{\beta}, L_{\beta}]\le D_{\beta}$, a contradiction.

Let $H_{\beta}:=\langle (U\cap F_{\beta})^{G_{\beta}}\rangle\normaleq G_{\beta}$ for $U\normaleq L_{\alpha}$ with $U\not\le Q_{\beta}$. By minimality of $F_{\beta}$, either $H_{\beta}=F_{\beta}$ or $H_{\beta}\le D_{\beta}$. Suppose the latter. Then $[F_{\beta}, U]\le F_{\beta}\cap U\le H_{\beta}\le D_{\beta}$ so that $[F_{\beta}, \langle U^{G_{\beta}}\rangle]\le D_{\beta}$. Now, $F_{\beta}=[F_{\beta}, O^p(L_{\beta})]\le [F_{\beta}, \langle U^{G_{\beta}}\rangle G_{\alpha,\beta}]\le D_{\beta}[F_{\beta}, G_{\alpha,\beta}]$. Then, by minimality of $F_{\beta}$, $F_{\beta}/F_{\beta}\cap D_{\beta}$ is an irreducible $\bar{G_{\alpha,\beta}}$-module so that $[S, F_{\beta}]\le D_{\beta}$. As above, this implies that $[F_{\beta}, L_{\beta}]\le D_{\beta}$, a contradiction. Thus, $H_{\beta}=F_{\beta}$. Now, $[U\cap F_{\beta}, Q_{\beta}]\le [F_{\beta}, Q_{\beta}]\le D_{\beta}$ and so $[U\cap F_{\beta}, Q_{\beta}]\normaleq G_{\beta}$. But then $[U\cap F_{\beta}, Q_{\beta}]=[U\cap F_{\beta}, Q_{\beta}]^{G_{\beta}}=[\langle (U\cap F_{\beta})^{G_{\beta}}\rangle, Q_{\beta}]=[F_{\beta}, Q_{\beta}]$ and $U\ge [U\cap F_{\beta}, Q_{\beta}]=[F_{\beta}, Q_{\beta}]$, completing the proof.
\end{proof}

\begin{lemma}\label{mp1} 
Suppose that $m_p(S/Q_{\alpha})=1$ and $p\in\{2,3\}$. Then $p=3$, $\bar{L_{\beta}}\cong\SL_2(3)$, $Z_{\alpha}$ is an irreducible 2$F$-module for $\bar{L_{\alpha}}$ and $Q_{\alpha}$ is elementary abelian.
\end{lemma}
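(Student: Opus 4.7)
The plan is to extract, from the quadratic action of $Z_\alpha$ on $Q_\beta$, enough restrictions on $\bar{L_\beta}$ to force it down to $\SL_2(3)$, and then to read off the remaining structural consequences for $\bar{L_\alpha}$, $Z_\alpha$, and $Q_\alpha$. I will first observe that $[Q_\beta,Z_\alpha]\le Z_\alpha\cap Q_\beta$, which is centralized by $Z_\alpha$ (as $Z_\alpha$ is abelian), so that $Z_\alpha Q_\beta/Q_\beta$ is a non-trivial elementary abelian quadratic offender on every non-central chief factor of $Q_\beta$. Combining \cref{betadeduct}, \cref{SEQuad}, and the Hypothesis B restriction on $\bar{L_\beta}$ then shortlists $\bar{L_\beta}/O_{p'}(\bar{L_\beta})$ to a small collection of rank-$1$ groups of Lie type in characteristic $p$, and an application of \cref{GreenbookQuad} places $Z_\alpha Q_\beta/Q_\beta$ inside $\Omega(Z(\bar S))$.

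Next I plan to rule out $p=2$. The condition $m_2(S/Q_\alpha)=1$ makes $\bar S$ cyclic or generalized quaternion, so by \cref{SE1} together with the Hypothesis~B stipulation on generalized quaternion Sylows, either $\bar{L_\alpha}/O_{2'}(\bar{L_\alpha})$ has a normal $\Omega(\bar S)$ or it is a non-abelian simple group with cyclic Sylow $2$-subgroup — the latter being impossible at the prime $2$. The subgroup $F_\beta$ from \cref{NCCFZA} then supplies an element outside $Q_\alpha$ whose image in $\bar{L_\alpha}$ controls $Z_\alpha$ tightly enough that any non-trivial quadratic action produces either an element of $Z(L_\alpha)=\{1\}$ or a faithful $2F$-module inconsistent with the shortlist from the previous paragraph; a case analysis using \cref{Quad2F} and \cref{pgen} closes this off. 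This is the step I expect to be the main obstacle, because the small exceptional groups at $p=2$ (for instance $\Sz(2)$, $\Dih(10)$, $\SU_3(2)'$, $(3\times 3):2$) must each be treated individually.

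With $p=3$ in hand, the bound $|Z_\alpha Q_\beta/Q_\beta|\le 3$ inherited from $m_3(S/Q_\alpha)=1$ together with the quadratic-module descriptions in \cref{SLGen}, \cref{SU3Gen}, and \cref{ReeGen} eliminates $\mathrm{(P)SU}_3(3^n)$, $\Ree(3^n)$, and $\SL_2(3^n)$ for $n\ge 2$, leaving $\bar{L_\beta}/O_{3'}(\bar{L_\beta})\cong\SL_2(3)$; a Schur-multiplier argument modelled on \cref{NiceSL2} then promotes this to $\bar{L_\beta}\cong\SL_2(3)$.

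Finally, choosing $q\in F_\beta\setminus Q_\alpha$ of order $3$, the non-central $F_\beta$-chief factor is a natural $\SL_2(3)$-module of order $9$, and \cref{NCCFZA} forces $[Z_\alpha,q]\le F_\beta\cap Z_\alpha$, which is centralized by $Z_\alpha$; hence $q$ acts quadratically on $Z_\alpha$ with $|Z_\alpha/C_{Z_\alpha}(q)|\le 9=|\langle q\rangle|^2$, exhibiting $Z_\alpha$ as a faithful quadratic $2F$-module for $\bar{L_\alpha}$. Irreducibility of $Z_\alpha$ as a $\bar{L_\alpha}$-module follows from the minimality clause built into the definition of $F_\beta$ together with $Z(L_\alpha)=\{1\}$. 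The elementary abelian claim then falls out: $\Phi(Q_\alpha)$ is $L_\alpha$-invariant, so if it were non-trivial the irreducibility of $Z_\alpha$ would force $Z_\alpha\le\Phi(Q_\alpha)$, and then coprime action would make $O^3(L_\alpha)$ centralize $Q_\alpha/\Phi(Q_\alpha)$, contradicting $F^{*}(G_\alpha)=Q_\alpha$.
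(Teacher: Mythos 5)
Your approach diverges substantially from the paper's, and the most serious problem is a false claim that would actually derail the later analysis.

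\textbf{The quadratic-action claim is wrong.} In the final paragraph you assert that a chosen $q\in F_\beta\setminus Q_\alpha$ of order $3$ ``acts quadratically on $Z_\alpha$'', arguing that $[Z_\alpha,q]\le F_\beta\cap Z_\alpha$ is centralized by $Z_\alpha$. But $Z_\alpha$ is abelian, so $[Z_\alpha,q]$ is \emph{always} centralized by $Z_\alpha$; what you would need for quadratic action is $[Z_\alpha,q,q]=\{1\}$, i.e.\ that $q$ itself centralizes $[Z_\alpha,q]$, and nothing in the argument delivers this. In fact the paper establishes precisely the opposite: working with $W_\beta=\langle(Z_\alpha\cap Q_\beta)^{G_\beta}\rangle$, it shows $W_\beta$ is a non-abelian group of exponent $p$ with $[Z_\alpha,W_\beta]\not\le\Omega(Z(W_\beta))$, and since $S/Q_\alpha$ has a unique subgroup of order $3$, no $x\in S\setminus Q_\alpha$ acts quadratically on $Z_\alpha$. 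This non-quadratic conclusion is not incidental --- it is exactly what allows \cref{b=1a} to invoke Zalesskii's classification of reflection modules, and it explains why groups such as $O^{3'}(2\wr\Sym(4))$ (which have no quadratic module by \cref{SEQuad}) appear as genuine outcomes. Had $Z_\alpha$ been a quadratic $2F$-module, \cref{SEQuad} would already have pinned $\bar{L_\alpha}$ down to a much shorter list and contradicted the actual conclusion of \cref{b=1a}.

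\textbf{The $p=2$ elimination is missing, not merely difficult.} You acknowledge this step as ``the main obstacle'' and sketch a case-by-case plan, but the paper dispatches it in one stroke through $W_\beta$: because $m_p(S/Q_\alpha)=1$ and $W_\beta$ is generated by $G_\beta$-conjugates of the elementary abelian group $Z_\alpha\cap Q_\beta$, every element of $W_\beta$ lies in $(Z_\lambda\cap W_\beta)\Omega(Z(W_\beta))$, which has exponent $p$; since $W_\beta$ is non-abelian (otherwise $Z_\alpha$ would be a natural FF-module and $W_\beta=Z_\beta$), a non-abelian group of exponent $p$ forces $p\ne 2$. Your route through $Z_\alpha Q_\beta/Q_\beta$ as a quadratic offender on chief factors of $Q_\beta$ never sees this because it does not track $W_\beta Q_\alpha/Q_\alpha$, which is where the hypothesis $m_p(S/Q_\alpha)=1$ actually enters. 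Relatedly, the bound ``$|Z_\alpha Q_\beta/Q_\beta|\le 3$ inherited from $m_3(S/Q_\alpha)=1$'' is not directly available: $Z_\alpha Q_\beta/Q_\beta$ lives in $S/Q_\beta$, whose $p$-rank is unconstrained; the paper obtains the analogous bound by showing $W_\beta/C_{W_\beta}(Z_\alpha)$ has order $p$ and then feeding $Z_\alpha$ into \cref{SEFF} against the natural module.

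\textbf{Irreducibility is also only sketched.} Saying it ``follows from the minimality clause built into the definition of $F_\beta$ together with $Z(L_\alpha)=\{1\}$'' skips the substance. The paper first derives $O^3(L_\alpha)$ centralizes $Q_\alpha/Z_\alpha$ (via $[F_\beta,Q_\alpha]\le Z_\alpha$ from \cref{NCCFZA}), then runs a genuine case analysis on a proper $L_\alpha$-invariant $U<Z_\alpha$: the subcase $C_S(U)\not\le Q_\alpha$ hits $Z(L_\alpha)=\{1\}$, the subcase $U\not\le Q_\beta$ hits $Z(L_\alpha)=\{1\}$ via \cref{SplitMod}, and the remaining subcase produces two FF-modules, forcing $\bar{L_\alpha}\cong\SL_2(3)$ or $(Q_8\times Q_8){:}3$, each eliminated (by \cite{Greenbook} and by \cref{BasicAmal}(v) respectively). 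None of this is captured by an appeal to minimality of $F_\beta$ alone.
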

\begin{proof}
Assume that $m_p(S/Q_{\alpha})=1$. Since $W_{\beta}$ is generated by elements of order $p$ and $m_p(S/Q_{\alpha})=1$, $|W_\beta Q_\alpha/Q_\alpha|=p$ and $Z_\alpha$ centralizes an index $p$ subgroup of $W_\beta$. Since $[Z_{\alpha}, Q_{\beta}]\le W_{\beta}$, $W_{\beta}$ contains all non-central chief factors for $L_{\beta}$ in $Q_{\beta}$ and so, $W_{\beta}/C_{W_{\beta}}(O^p(L_{\beta}))$ is the unique non-central chief factor for $L_{\beta}$ inside $Q_{\beta}$. Moreover, $W_{\beta}/C_{W_{\beta}}(O^p(L_{\beta}))$ is a natural $\SL_2(p)$-module for $\bar{L_{\beta}}\cong\SL_2(p)$ and $L_{\beta}=\langle Q_{\alpha}, Q_{\beta}, Z_{\alpha+2}\rangle$ for some $\alpha+2\in\Delta(\beta)$. Then $Z_{\alpha}\cap Q_{\beta}\le (Z_{\alpha}\cap W_{\beta})(Z_{\alpha+2}\cap W_{\beta})\normaleq L_{\beta}$ and so $W_{\beta}=(Z_{\alpha}\cap W_{\beta})(Z_{\alpha+2}\cap W_{\beta})$.

Suppose first that $W_{\beta}$ is abelian. Then, as $Z_{\alpha}\cap Q_{\beta}\le W_{\beta}$, an index $p$ subgroup of $Z_{\alpha}$ is centralized by $W_{\beta}$ and $Z_{\alpha}$ is a natural $\SL_2(p)$-module. But then $Z_{\alpha}\cap Q_{\beta}=Z_{\beta}$ and $W_{\beta}=Z_{\beta}$, a contradiction.

Since $W_{\beta}$ is non-abelian and $W_{\beta}\cap Q_{\alpha}\cap Q_{\alpha+2}$ has index $p^2$ in $W_{\beta}$, $W_{\beta}\cap Q_{\alpha}\cap Q_{\alpha+2}=\Omega(Z(W_\beta))=W_{\beta}\cap D_{\beta}$. Notice that every element of $W_{\beta}$ lies in $(Z_{\lambda}\cap W_{\beta})\Omega(Z(W_{\beta}))$ for some $\lambda\in\Delta(\beta)$, and that $(Z_{\lambda}\cap W_{\beta})\Omega(Z(W_{\beta}))$ is of exponent $p$, from which it follows that $W_{\beta}$ is of exponent $p$. In particular, since $W_{\beta}$ is not elementary abelian, $p\ne 2$. Therefore, $\Omega(Z(W_{\beta}))$ has index $9$ in $Z_{\alpha}$, $Z_{\alpha}$ is $2$F-module and since $[Z_{\alpha}, W_{\beta}]\not\le \Omega(Z(W_{\beta}))$ and $S/Q_{\alpha}$ has a unique element of order $3$, $Z_{\alpha}$ does not admit quadratic action by any element $x\in S\setminus Q_{\alpha}$.

Now, by minimality of $F_{\beta}$, $\Phi(F_{\beta})\le Q_{\alpha}$ so that $F_{\beta}(Q_{\alpha}\cap Q_{\beta})=W_{\beta}(Q_{\alpha}\cap Q_{\beta})$ since $S/Q_{\alpha}$ has a unique subgroup of order $p$. Then $[F_{\beta}, Z_{\alpha}]=[W_{\beta}, Z_{\alpha}]$. Moreover, $F_{\beta}=[F_{\beta}, O^p(L_{\beta})]\le [F_{\beta}, Z_{\alpha}]^{L_{\beta}}\le W_{\beta}$ and since $F_{\beta}$ contains a non-central chief factor, $W_{\beta}=F_{\beta}Z(W_{\beta})$. Then, since $[F_{\beta}, Q_{\alpha}]=[F_{\beta}, Z_{\alpha}(Q_{\alpha}\cap Q_{\beta})]\le Z_{\alpha}$ by \cref{NCCFZA}, it follows that $O^3(L_{\alpha})$ centralizes $Q_{\alpha}/Z_{\alpha}$. In particular, every $p'$-element of $L_{\alpha}$ acts non-trivially on $Z_{\alpha}$.

Let $U<Z_{\alpha}$ be a non-trivial subgroup of $Z_{\alpha}$ which is normal in $L_{\alpha}$. If $C_S(U)\not\le Q_{\alpha}$, then $O^3(L_{\alpha})$ centralizes $U$ and as $U\normaleq S$, $U\cap Z_{\beta}\ne\{1\}$ and $Z(L_{\alpha})\ne\{1\}$, a contradiction. If $U\not\le Q_{\beta}$, then $Z_{\alpha}=U(Z_{\alpha}\cap Q_{\beta})$ and by \cref{NCCFZA}, it follows that $[F_{\beta}, Z_{\alpha}]\le U$ so that $[O^3(L_{\alpha}), Z_{\alpha}]\le U$ and $C_{Z_{\alpha}}(O^3(L_{\alpha}))\ne\{1\}$ by \cref{SplitMod}. But then $Z(L_{\alpha})\ge Z_{\beta}\cap C_{Z_{\alpha}}(O^3(L_{\alpha}))\ne\{1\}$, a contradiction. Thus, $U\le Q_{\beta}$ and as $Z_{\alpha}$ is $2$F, we may assume that both $Z_{\alpha}/U$ and $U$ are FF-modules for $\bar{L_{\alpha}}$ and by \cref{Badp3} (ii), either $\bar{L_{\alpha}}\cong\SL_2(3)$ or $(Q_8\times Q_8):3$. If $\bar{L_{\alpha}}\cong\SL_2(3)$, then $G$ has a weak BN-pair of rank $2$ and by \cite{Greenbook}, we have a contradiction. If $\bar{L_{\alpha}}\cong (Q_8\times Q_8):3$, since $|\Out(\bar{L_{\beta}})|=2$ and a Hall $3'$-subgroup of $L_{\alpha}\cap G_{\alpha,\beta}$ is isomorphic to an elementary abelian group of order $4$, it follows that that there is an involution $t\in G_{\alpha,\beta}$ such that $[L_{\alpha}, t]\le Q_{\alpha}$ and $[L_{\beta}, t]\le Q_{\beta}$, a contradiction by \cref{BasicAmal} (v).

Thus, we have that $Z_{\alpha}$ is an irreducible $2$F-module. Since $Z_{\alpha}$ is irreducible and $Z_{\alpha}\not\le \Phi(Q_{\alpha})$, $Z_{\alpha}\cap \Phi(Q_{\alpha})=Z_{\beta}\cap \Phi(Q_{\alpha})=\{1\}$ so that $\Phi(Q_{\alpha})=\{1\}$ and $Q_{\alpha}$ is elementary abelian.
\end{proof}

\begin{proposition}\label{b=1a}
Suppose that $m_p(S/Q_{\alpha})=1$ and $p\in\{2,3\}$. Then $p=3$, $Z_{\alpha}=Q_{\alpha}$ is an irreducible $\mathrm{GF}(3)\bar{L_{\alpha}}$-module and one of the following holds:
\begin{enumerate}
\item $G$ has a weak BN-pair of rank $2$ and $G$ is locally isomorphic to $H$ where $F^*(H)\cong\PSp_4(3)$;
\item $|S|=3^5$, $\bar{L_{\alpha}}\cong \Alt(5)$, $Z_{\alpha}$ is the restriction of the permutation module, $\bar{L_{\beta}}\cong\SL_2(3)$ and $Q_{\beta}\cong 3\times 3^{1+2}_+$;
\item $|S|=3^5$, $\bar{L_{\alpha}}\cong O^{3'}(2\wr \Sym(4))$, $Z_{\alpha}$ is a reflection module, $\bar{L_{\beta}}\cong\SL_2(3)$ and $Q_{\beta}\cong 3\times 3^{1+2}_+$; or
\item $|S|=3^6$, $\bar{L_{\alpha}}\cong O^{3'}(2\wr \Sym(5))$, $Z_{\alpha}$ is a reflection module, $\bar{L_{\beta}}\cong\SL_2(3)$ and $Q_{\beta}\cong 3\times 3\times 3^{1+2}_+$.
\end{enumerate}
\end{proposition}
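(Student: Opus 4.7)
By \cref{mp1} we already have $p=3$, $\bar{L_\beta}\cong\SL_2(3)$, $Q_\alpha$ elementary abelian, and $Z_\alpha$ an irreducible $2F$-module for $\bar L_\alpha$ on which no element of $S\setminus Q_\alpha$ acts quadratically. The plan is to first prove $Z_\alpha=Q_\alpha$, then classify the pair $(\bar L_\alpha,Z_\alpha)$, and finally identify $G$ in each surviving case.

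For $Z_\alpha=Q_\alpha$, suppose $Z_\alpha<Q_\alpha$. Since $Q_\alpha$ is elementary abelian and $L_\alpha$-invariant, $Q_\alpha/Z_\alpha$ is a $\bar{L_\alpha}$-module. By \cref{NCCFZA}~(iii) applied with $U=Q_\alpha$, we have $[F_\beta,Q_\beta]\le Q_\alpha$, and the proof of \cref{mp1} shows $O^3(L_\alpha)$ centralises $Q_\alpha/Z_\alpha$; coprime action together with $Z(L_\alpha)=\{1\}$ and $Z_\beta\le Z_\alpha$ then forces $Q_\alpha=Z_\alpha$.

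For the classification of $(\bar L_\alpha,Z_\alpha)$: each nontrivial element $xQ_\alpha\in S/Q_\alpha$ acts cubically but not quadratically on $Z_\alpha$, so the minimal polynomial has degree exactly $3$. Combined with irreducibility and the $2F$-condition, a short computation verifies the nearly quadratic hypothesis for the elements of $S\setminus Q_\alpha$, so that $\bar L_\alpha$ is generated by elementary abelian $3$-subgroups acting nearly quadratically but not quadratically on $Z_\alpha$. Applying \cref{nearquad}, either $\bar L_\alpha/C_{\bar L_\alpha}(Z_\alpha)$ is quasisimple, or it is $\Frob(39)$, or it is $(C_2\wr\Sym(n))'=O^{3'}(C_2\wr\Sym(n))$ for some $n\in\{3,4,5\}$. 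The quasisimple option combined with the strongly $3$-embedded hypothesis, $m_3(S/Q_\alpha)=1$, \cref{SE1}, and the requirement that $Z_\alpha$ is an irreducible non-quadratic $2F$-module, yields $\SL_2(3)$ or $\Alt(5)$ (with $Z_\alpha$ the restriction of the $5$-point permutation module to $\GF(3)$). The case $(C_2\wr\Sym(3))'\cong \PSL_2(3)$ with $Z_\alpha$ a natural $\Omega_3(3)$-module collapses into the $\SL_2(3)$ case. The $\Frob(39)$ case is ruled out because its Hall $3'$-subgroup is cyclic of order $13$, incompatible with the action of a Hall $3'$-subgroup of $L_\alpha\cap G_{\alpha,\beta}$ that must simultaneously support the $\SL_2(3)$-structure on $\bar L_\beta$, invoking \cref{BasicAmal}~(v) to prevent a common central involution. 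In each surviving case, $Z_\alpha=Q_\alpha$ is the natural, permutation, or reflection module of dimension $3,4,4,5$ respectively, giving $|Q_\alpha|\in\{3^3,3^4,3^4,3^5\}$ and $|S|\in\{3^4,3^5,3^5,3^6\}$.

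In the $\SL_2(3)$ case, the triple $(G_\alpha,G_\beta,G_{\alpha,\beta})$ is a weak BN-pair of rank $2$; \cref{greenbook} together with \cref{noppower} and the characteristic $3$ constraint identifies $G$ as locally isomorphic to $\PSp_4(3)$, giving (i). In the remaining three cases $|S|\le 3^6$, and $Q_\beta$ is determined by working out the $\bar L_\beta$-chief factors: the proof of \cref{mp1} shows $W_\beta/D_\beta$ is a natural $\SL_2(3)$-module, and an application of the three subgroups lemma together with $D_\beta=C_{Q_\beta}(O^3(L_\beta))$ forces $Q_\beta/D_\beta$ to contain exactly one such factor, yielding $Q_\beta\cong 3^{1+2}_+$ extended centrally by the $D_\beta$-part. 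A comparison of $|Q_\beta|$ in each case with the order of $S$ pins down $D_\beta$ and gives the stated isomorphism type $3\times 3^{1+2}_+$ or $3\times 3\times 3^{1+2}_+$. The final identification of the amalgam is a bounded-order check that can be performed in \textsc{MAGMA} via \cite{Comp1} (or directly from the Small Groups library).

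The principal obstacle is the middle step: verifying the nearly-quadratic hypothesis rigorously enough to apply \cref{nearquad}, and dispatching the exceptional $\Frob(39)$ possibility. Once $(\bar L_\alpha,Z_\alpha)$ is in hand, the structure of $Q_\beta$ and the identification of $G$ are essentially forced by bounded-order bookkeeping.
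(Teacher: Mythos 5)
Your overall strategy — reach for \cref{nearquad} after verifying that $S/Q_\alpha$ acts nearly quadratically but not quadratically on $Z_\alpha$, and then read off the candidate pairs $(\bar L_\alpha,Z_\alpha)$ — is the alternative route that the paper explicitly flags in the remark immediately following the proposition. The paper's own proof instead takes the involution $t_\beta\in L_\beta\cap G_{\alpha,\beta}$, observes that it acts on $Q_\alpha$ as a reflection (codimension-one fixed space), forms $L^*:=\langle t_\beta^{G_\alpha}\rangle$, and invokes Zalesskii's classification of groups generated by reflections \cite{ZalesskiiReflection} to pin down $\bar{L^*}$ among $G(1,1,5)$, $G(2,1,n)$, $G(2,2,n)$ for $n\in\{4,5\}$; the $|Z_\alpha|=3^3$ case is dispatched separately by hand and the $13:3$ possibility is killed by the action of $t_\beta$. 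Both routes are viable, and the near-quadratic route is arguably conceptually cleaner, but your execution has two genuine gaps that must be filled.

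First, the step $Z_\alpha=Q_\alpha$. Your coprime-action argument proceeds through a non-trivial $3'$-normal subgroup $O_\alpha\le L_\alpha$ whose preimage in $\bar L_\alpha$ acts fixed-point-freely on $Z_\alpha$; this gives $Q_\alpha=[Q_\alpha,O_\alpha]\times C_{Q_\alpha}(O_\alpha)=Z_\alpha\times C_{Q_\alpha}(O_\alpha)$ and the $\Omega(Z(S))\le Z_\alpha$ constraint then kills the second factor. This works precisely when $O_{3'}(\bar L_\alpha)\ne\{1\}$, which holds for $\Frob(39)$, $\PSL_2(3)$ and $O^{3'}(2\wr\Sym(n))$, but fails when $\bar L_\alpha\cong\Alt(5)$: there $O_{3'}(\bar L_\alpha)=Z(\bar L_\alpha)=\{1\}$. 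You cannot therefore establish $Z_\alpha=Q_\alpha$ uniformly \emph{before} classifying $\bar L_\alpha$; for the $\Alt(5)$ case one needs a separate argument (the paper simply cites \cite{StellmacherL25} for the full determination of $G_\alpha$ and $G_\beta$, which includes $Q_\alpha=Z_\alpha$).

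Second, the analysis of the quasisimple branch of \cref{nearquad} is confused. You write that the quasisimple option ``yields $\SL_2(3)$ or $\Alt(5)$'' and then say the $(C_2\wr\Sym(3))'\cong\PSL_2(3)$ case ``collapses into the $\SL_2(3)$ case.'' Neither $\SL_2(3)$ nor $\PSL_2(3)$ is quasisimple (both are soluble), so neither can come out of the quasisimple branch. Outcome (i) of the proposition — $\bar L_\alpha\cong\PSL_2(3)$, $G$ locally isomorphic to $\PSp_4(3)$ — arises from the $(C_2\wr\Sym(3))'$ case of \cref{nearquad}, not the quasisimple one; when the paper mentions $\SL_2(3)$ for $|Z_\alpha|=3^3$, it is as a subgroup of $\GL_3(3)$ that is then eliminated by the Greenbook comparison. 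Moreover, the quasisimple branch requires genuine work: you must rule out every other quasisimple group with a cyclic Sylow $3$-subgroup of order $3$ admitting an irreducible $\GF(3)$-module that is simultaneously $2$F and nearly quadratic (for example $\PSL_2(7)$, $\PSL_2(11)$, $\SL_2(5)$, $\mathrm{SL}_2(9)$, and so on). This is exactly the ``some knowledge of the quasisimple groups which have a Sylow $3$-subgroup of order $3$'' the paper's remark says is needed — it is not a trivial consequence of the hypotheses as written, and should be spelled out before the approach is complete.
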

\begin{proof}
By \cref{mp1}, $Z_{\alpha}$ is the unique non-central chief factor for $L_{\alpha}$ in $Q_{\alpha}$ and $Q_{\alpha}$ is elementary abelian. Moreover, $W_{\beta}/\Omega(Z(W_{\beta}))$ is the unique non-central chief factor for $L_{\beta}$ inside $Q_{\beta}$, and is a natural $\SL_2(3)$-module for $\bar{L_{\beta}}\cong\SL_2(3)$. 

Suppose first that $|Z_{\alpha}|=3^3$. Then $\bar{L_{\alpha}}$ is isomorphic to a subgroup $X$ of $\GL_3(3)$ which has a strongly $3$-embedded subgroup. One can check that the only groups which satisfy $X=O^{3'}(X)$ are $\PSL_2(3)$, $\SL_2(3)$ and $13:3$. In the first two cases, $G$ has a weak BN-pair of rank $2$ and comparing with \cite{Greenbook}, we have that $\bar{L_{\alpha}}\cong\PSL_2(3)$ and $G$ to locally isomorphic to $H$, where $F^*(H)\cong \PSp_4(3)$. Suppose that $\bar{L_{\alpha}}\cong 13:3$ and let $t_{\beta}\in L_{\beta}\cap G_{\alpha,\beta}$ be an involution. Then $t_\beta\in G_{\alpha}$ and writing $\bar{t_{\beta}}:=t_{\beta}Q_{\alpha}/Q_{\alpha}$, $\bar{t_{\beta}}$ acts on $\bar{L_{\alpha}}$ and inverts $\bar{S}=Q_{\beta}Q_{\alpha}/Q_{\alpha}$, a contradiction since any involutary automorphism of $13:3$ centralizes a Sylow $3$-subgroup.

Thus, we may assume that $|Z_{\alpha}|>3^3$. Again, let $t_{\beta}\le G_{\alpha,\beta}\cap L_{\beta}$ be an involution. Then, using coprime action, $[t_{\beta}, Q_{\alpha}]\le W_{\beta}$ and $[t_{\beta}, C_{W_{\beta}}(O^3(L_{\beta}))]=\{1\}$. In particular, it follows that $t_{\beta}$ centralizes an index $3$ subgroup of $Q_{\alpha}$. Let $L^*:=\langle t_{\beta}^{G_{\alpha}}\rangle$ and $\bar{L^*}=L^*Q_{\alpha}/Q_{\alpha}\le \bar{G_{\alpha}}$. Since $\bar{L^*}\normaleq \bar{G_{\alpha}}$, we have that $[\bar{L^*}, \bar{L_{\alpha}}]\le \bar{L^*}$. Note that $t_{\beta}$ inverts $W_{\beta}Q_{\alpha}/Q_{\alpha}\cong W_{\beta}/W_{\beta}\cap Q_{\alpha}$ and so $W_{\beta}Q_{\alpha}/Q_{\alpha}=[W_{\beta}Q_{\alpha}/Q_{\alpha}, t_{\beta}]\le [\bar{L_{\alpha}}, \bar{L^*}]\le \bar{L^*}$. If $\bar{G_{\alpha}}$ is not $3$-solvable, then $\bar{L_{\alpha}}/O_{3'}(\bar{L_{\alpha}})$ is a non-abelian finite simple group and since $\bar{L^*}\normaleq \bar{G_{\alpha}}$, we have that $\bar{L_{\alpha}}\le \bar{L^*}$.

If $\bar{G_{\alpha}}$ is $3$-solvable, let $O_{\alpha}$ be the preimage of $O_{3'}(\bar{L_{\alpha}})$ in $L_{\alpha}$. By coprime action, we have that $Q_{\alpha}=[Q_{\alpha}, O_{\alpha}]\times C_{Q_{\alpha}}(O_{\alpha})$ is an $S$-invariant decomposition. Since $Z_{\alpha}$ is irreducible, we infer that $[Q_{\alpha}, O_{\alpha}]=[Z_{\alpha}, O_{\alpha}]=Z_{\alpha}$ and as $Z_{\beta}\le Z_{\alpha}$, it follows that $C_{Q_{\alpha}}(O_{\alpha})=\{1\}$ and $Q_{\alpha}=Z_{\alpha}$. If $|S/Q_{\alpha}|>3$, then $W_{\beta}\le \Phi(Q_{\beta})(Z_{\alpha}\cap Q_{\beta})$ and it follows from the Dedekind modular law that $W_{\beta}=\Phi(Q_{\beta})(Z_{\alpha}\cap Q_{\beta})\cap W_{\beta}=(Z_{\alpha}\cap Q_{\beta})(\Phi(Q_{\beta})\cap W_{\beta})$. Since $W_{\beta}$ contains all non-central chief factors for $L_{\beta}$ inside $Q_{\beta}$, $\Phi(Q_{\beta})\cap W_{\beta}\le Z(W_{\beta})$ so that $W_{\beta}=(Z_{\alpha}\cap Q_{\beta})Z(W_{\beta})$, a contradiction. Thus, $|S/Q_{\alpha}|=3$ and, again, $\bar{L_{\alpha}}\le \bar{L^*}$.

Since $S/Q_{\alpha}$ does not act quadratically on $Z_{\alpha}$, $\bar{L^*}$ is not generated by transvections and as $|Z_{\alpha}|\geq 3^4$, we may apply the main result of \cite{ZalesskiiReflection}. Using that $S/Q_{\alpha}$ is cyclic, we have that $\bar{L^*}$ is isomorphic to the reduction modulo $3$ of a finite irreducible reflection group of degree $n$ in characteristic $0$, and $3^4\leq |Z_{\alpha}|\leq 3^5$. 

Suppose that there is $t_\alpha\in L^*\cap G_{\alpha,\beta}$ an element of order $4$ with $t_\alpha^2Q_{\alpha}\in Z(\bar{L^*})$. Then $t_\alpha\in G_{\beta}$ and $t_\alpha$ acts on $\bar{L_{\beta}}$. We may assume that $t_\alpha^2$ acts non-trivially on $\bar{L_{\beta}}$ for otherwise $t_\alpha^2Q_{\alpha}$ is centralized by $\bar{L_{\alpha}}$ and $t_\alpha^2Q_{\beta}$ is centralized by $\bar{L_{\beta}}$, a contradiction by \cref{BasicAmal} (v). But $t_\alpha$ normalizes $S/Q_{\beta}$ and so either $t_\alpha$ inverts $S/Q_{\beta}$ or centralizes $S/Q_{\beta}$. In either case, $t_\alpha^2$ centralizes $S/Q_{\beta}$ and by \cref{SLGen} (viii), $t_\alpha^2$ acts trivially on $\bar{L_{\beta}}$, a contradiction.

Upon comparing the groups listed in \cite{ZalesskiiReflection} and the orders of $\GL_4(3)$ and $\GL_5(3)$ we are left with the groups $G(1,1,5)$, $G(2,1,4)$, $G(2,2,4)$, $G(2,1,5)$ and $G(2,2,5)$ (in the Todd-Shepherd enumeration convention) as candidates for $\bar{L^*}$. In particular, $|S/Q_{\alpha}|=3$. If $\bar{L^*}\cong G(1,1,5)\cong\Sym(5)$, then $\bar{L_{\alpha}}\cong\Alt(5)$. Then $G$ is determined in \cite{StellmacherL25} and outcome (ii) follows in this case. Thus, $O_2(\bar{L^*})\ne \{1\}$ and writing $O_{\alpha}$ for the preimage of $O_{2}(\bar{L^*})$ in $G_{\alpha}$, we have by coprime action that $Q_{\alpha}=[Q_{\alpha}, O_{\alpha}]\times C_{Q_{\alpha}}(O_{\alpha})$ and since $Z_{\alpha}$ is irreducible and is the unique non-central chief factor within $Q_{\alpha}$, $Q_{\alpha}=[Q_{\alpha}, O_{\alpha}]=Z_{\alpha}$. In particular, $W_{\beta}=Q_{\beta}$, $|Q_{\beta}|\leq 3^5$ and $Q_{\beta}/Z_{\beta}$ is a natural $\SL_2(3)$-module for $\bar{L_{\beta}}$.

Now, $G(2,1,4)\cong 2\wr \Sym(4)$ and $G(2,2,4)$ is isomorphic to an index $2$ subgroup of $G(2,1,4)$. Therefore, if $|Z_{\alpha}|=3^4$, $\bar{L_{\alpha}}\cong O^{3'}(2\wr \Sym(4))$ and the possible actions of $\bar{L_{\alpha}}$ are determined up to conjugacy in $\GL_4(3)$. Indeed, it follows in this case that $S$ is isomorphic to a Sylow $3$-subgroup of $\Alt(12)$. Furthermore, $Q_{\beta}$ has exponent $3$, is of order $3^4$ and $Z(Q_{\beta})=Z_{\beta}$ is elementary abelian of order $9$. Indeed, $Q_{\beta}\cong 3^{1+2}_+\times 3$.

Finally, $G(2,1,5)\cong 2\wr \Sym(5)$ and $G(2,2,5)$ is isomorphic to an index $2$ subgroup of $G(2,1,5)$. Therefore, if $|Z_{\alpha}|=3^5$, $\bar{L_{\alpha}}\cong O^{3'}(2\wr \Sym(5))$ and the possible actions of $\bar{L_{\alpha}}$ are determined up to conjugacy in $\GL_5(3)$. Indeed, it follows in this case that $S$ is isomorphic to a Sylow $3$-subgroup of $\Alt(15)$. Furthermore, $Q_{\beta}$ has exponent $3$, is of order $3^5$ and $Z(Q_{\beta})=Z_{\beta}$ is elementary abelian of order $27$. Indeed, $Q_{\beta}\cong 3^{1+2}_+\times 3\times 3$.
\end{proof}

\begin{remark}
Note that the modules occurring in the above proposition are some of the nearly quadratic modules described in \cref{nearquad}. Indeed, one could prove the above proposition using this characterization along with some knowledge of the quasisimple groups which have a Sylow $3$-subgroup of order $3$.  
\end{remark}

\begin{lemma}\label{alpha+2}
Assume that $m_p(S/Q_{\alpha})\geq 2$. Then there is $\alpha+2\in\Delta(\beta)$ such that $Z_{\alpha}\cap Q_{\beta}\not\le Q_{\alpha+2}$ and $Z_{\alpha+2}\cap Q_{\beta}\not\le Q_{\alpha}$.
\end{lemma}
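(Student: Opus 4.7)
The plan is to suppose for contradiction that no such $\alpha+2$ exists and derive a contradiction using the hypothesis $m_p(S/Q_\alpha)\ge 2$. Write $X:=Z_\alpha\cap Q_\beta$. By assumption, for every $\gamma\in\Delta(\beta)$ either $X\le Q_\gamma$ (call this set of $\gamma$'s $\mathcal{A}$) or $Z_\gamma\cap Q_\beta\le Q_\alpha$ (call this set $\mathcal{B}$). Both $\mathcal{A}$ and $\mathcal{B}$ are $G_{\alpha,\beta}$-invariant, both contain $\alpha$, and by assumption $\mathcal{A}\cup\mathcal{B}=\Delta(\beta)$. Since $b=1$ forces $Z_\gamma\ne\Omega(Z(S))$ for every $\gamma\in\Delta(\beta)$ (otherwise $Z_\alpha$, being conjugate to $Z_\gamma$ under $L_\beta$, would lie in $Z(S)\le Q_\beta$), \cref{p-closure2}(ii) gives $C_S(Z_\gamma)=Q_\gamma$, so membership in $\mathcal{A}$ is equivalent to $[X,Z_\gamma]=1$ and membership in $\mathcal{B}$ to $[Z_\gamma\cap Q_\beta,Z_\alpha]=1$.

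The next step exploits the $L_\beta$-transitivity on $\Delta(\beta)$: for $g\in L_\beta$ with $\alpha\cdot g=\gamma$ one has $(Z_\alpha\cap Q_\beta)^g=Z_\gamma\cap Q_\beta$, and conjugating by $g^{-1}$ shows $X\le Q_\gamma$ is equivalent to $Z_{\alpha\cdot g^{-1}}\cap Q_\beta\le Q_\alpha$. Thus the map $\gamma\mapsto\alpha\cdot g^{-1}$ (well-defined modulo $G_{\alpha,\beta}$) is an involution $\gamma\mapsto\gamma^*$ on $\Delta(\beta)$ satisfying $\mathcal{A}^*=\mathcal{B}$. Combining this with the hypothesised covering $\mathcal{A}\cup\mathcal{B}=\Delta(\beta)$, I would then show that $W_\beta=\langle X^{G_\beta}\rangle$ centralizes $V_\beta$: for any $\gamma\in\mathcal{B}$, $Z_\gamma\cap Q_\beta\le Q_\alpha$ centralizes $Z_\alpha$, and for $\gamma\in\mathcal{A}\setminus\mathcal{B}$ the involution produces $\gamma^*\in\mathcal{B}\setminus\mathcal{A}$ with the analogous centralization, so symmetrising over the $G_\beta$-orbit of $X$ gives $[W_\beta,Z_\alpha]=1$ and hence $[W_\beta,V_\beta]=1$ after taking $L_\beta$-closure.

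The final step derives the contradiction. Since $\bar{L_\beta}$ has a strongly $p$-embedded subgroup and $\bar{Z_\alpha}$ is non-trivial in $\bar{L_\beta}$, $V_\beta Q_\beta/Q_\beta$ contains $O^p(\bar{L_\beta})$, so the centralization $[W_\beta,V_\beta]=1$ forces every $L_\beta$-chief factor inside $W_\beta$ to be central. On the other hand, $L_\beta$ is of characteristic $p$ so $Q_\beta$ does not centralize $Z_\alpha$, whence $\{1\}\ne[Z_\alpha,Q_\beta]\le X\le W_\beta$; writing $U:=\langle[X,Q_\beta]^{G_\alpha}\rangle\normaleq G_\alpha$ and applying \cref{GLS2p'} together with the rank hypothesis $m_p(S/Q_\alpha)\ge 2$ via coprime action on the $p'$-part of $\bar{L_\alpha}$, I would conclude that $W_\beta$ admits a non-central $L_\beta$-chief factor, contradicting the previous centralization. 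The main obstacle is step two: because $\mathcal{A}$ and $\mathcal{B}$ are only $G_{\alpha,\beta}$-invariant rather than $L_\beta$-invariant, the centralization $[W_\beta,V_\beta]=1$ must be extracted element-by-element via the involution $^*$ and the $G_\beta$-orbit of $X$, rather than setwise.
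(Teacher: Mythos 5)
The $*$-involution, its well-definedness modulo $G_{\alpha,\beta}$, and the equivalence $\gamma\in\mathcal{A}\iff\gamma^*\in\mathcal{B}$ are all correct, and this symmetry is precisely the device the paper itself leans on — but your next step does not follow. From $\mathcal{A}^*=\mathcal{B}$ and $\mathcal{A}\cup\mathcal{B}=\Delta(\beta)$ one cannot conclude $\mathcal{B}=\Delta(\beta)$: the two hypotheses are completely symmetric in $\mathcal{A}$ and $\mathcal{B}$ and are equally compatible with $\mathcal{A}\neq\Delta(\beta)\neq\mathcal{B}$ while $\mathcal{A}\cup\mathcal{B}=\Delta(\beta)$. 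Concretely, for $\gamma\in\mathcal{A}\setminus\mathcal{B}$ the ``analogous centralisation'' you invoke is a property of $Z_{\gamma^*}\cap Q_\beta$ rather than of $Z_\gamma\cap Q_\beta$, and $Z_\gamma\cap Q_\beta$ itself stays uncontrolled; all that the $*$-correspondence yields is that, for every $\gamma$, at least one of $\gamma$ and $\gamma^*$ lies in $\mathcal{B}$. Since $[W_\beta,Z_\alpha]=1$ is exactly the assertion $\mathcal{B}=\Delta(\beta)$, the centralisation you are aiming for — and the chain-stabiliser contradiction built on it — is never reached.

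The paper instead aims lower. The chain-stabiliser argument you gesture at near the end (if $X:=Z_\alpha\cap Q_\beta$ is normal in $G_\beta$ and $Z_\alpha$ centralises both $X$ and $Q_\beta/X$, then $Z_\alpha$ lies in the kernel of the action on that chain, a normal $p$-subgroup of $G_\beta$, forcing $Z_\alpha\le Q_\beta$) is deployed once, to show only that $\mathcal{A}\neq\Delta(\beta)$. One then exhibits a $*$-fixed vertex $\alpha+2\notin\mathcal{A}$: for a $*$-fixed vertex, $\mathcal{A}$-membership is equivalent to $\mathcal{B}$-membership, so $\alpha+2\notin\mathcal{B}$ comes for free. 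It is in manufacturing such a vertex that the structure of $\bar{L_\beta}$ enters. When $q_\beta=p$, one argues via FF-modules: if $\alpha+2\in\mathcal{B}$, then $Z_\alpha\cap Q_\beta$ is an offender on $Z_{\alpha+2}$, \cref{SEFF} forces $Z_\alpha$ to be a natural $\SL_2(p)$-module, and then $X=Z_\beta\le Q_{\alpha+2}$, contradicting $\alpha+2\notin\mathcal{A}$. When $q_\beta\geq p^2$, \cref{betadeduct} (or \cref{SE2} when $p=2$) identifies $\bar{L_\beta}$ as a rank-one group of Lie type, whence one may choose $t\in L_\beta$ with $t^2\in Q_\beta$ and $\alpha+2:=\alpha\cdot t\notin\mathcal{A}$; as $t^2$ fixes $\Delta(\beta)$ pointwise, $\alpha\cdot t^{-1}=\alpha\cdot t$, so $\alpha+2$ is $*$-fixed. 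Your plan never isolates a $*$-fixed vertex outside $\mathcal{A}$ and never invokes the shape of $\bar{L_\beta}$, so it cannot recover this step.
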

\begin{proof}
Suppose that $Z_{\alpha}\cap Q_{\beta}\le Q_{\lambda}$ for all $\lambda\in \Delta(\beta)$. Then $Z_{\alpha}\cap Q_{\beta}$ is centralized by $\langle Z_{\alpha}^{G_{\beta}}\rangle$ and so normalized by $G_{\beta}=\langle Z_{\alpha}^{G_{\beta}}\rangle G_{\alpha,\beta}$. But then, $Z_{\alpha}$ centralizes $Q_{\beta}/Z_{\alpha}\cap Q_{\beta}$ and $Z_{\alpha}\cap Q_{\beta}$, yielding a contradiction. Choose $\alpha+2\in \Delta(\beta)$ such that $Z_{\alpha}\cap Q_{\beta}\not\le Q_{\alpha+2}$. 

Assume first that $q_\beta=p$ so that $Z_{\alpha+2}\cap Q_{\beta}$ has index $p$ in $Z_{\alpha+2}$. If $Z_{\alpha+2}\cap Q_{\beta}\le Q_{\alpha}$ then $Z_{\alpha+2}\cap Q_{\beta}$ is centralized by $Z_{\alpha}\cap Q_{\beta}$ and \cref{SEFF} implies that $L_{\alpha+2}/R_{\alpha+2}\cong \SL_2(p)$. By conjugacy, and since $Z(L_{\alpha})=\{1\}$, we have that $Z_{\alpha}$ is a natural $\SL_2(p)$-module for $L_{\alpha}/R_{\alpha}$ so that $Z_{\alpha}\cap Q_{\beta}=Z_{\beta}\le Q_{\alpha+2}$, a contradiction. Thus $q_\beta\geq p^2$ and by \cref{betadeduct}, we have that $\bar{L_{\beta}}/O_{p'}(\bar{L_{\beta}})$ is isomorphic to a rank $1$ group of Lie type. If $p=2$ then since $L_{\beta}=O^{2'}(L_{\beta})$, there is $T\in\syl_2(L_{\beta})$ and $t\in T$ with $(Z_{\alpha}\cap Q_{\beta})^t\not\le Q_\lambda$ for some $\lambda\in\Delta(\beta)$ and $t^2\in Q_{\beta}$. If $p$ is odd, then $\bar{L_{\beta}}\cong \SL_2(q_\beta)$ or $\SU_3(q_\beta)$ and again there is $t\in L_{\beta}\setminus (L_{\beta}\cap G_{\alpha,\beta})$ with $t^2\in G_{\alpha, \beta}$ and $(Z_{\alpha}\cap Q_{\beta})^t\not\le Q_\lambda$ for some $\lambda\in\Delta(\beta)$. We may as well set $\alpha+2=\alpha^t$. Then $(Z_{\alpha+2}\cap Q_{\beta})\le Q_{\alpha}$ yields $(Z_{\alpha}\cap Q_{\beta})=(Z_{\alpha+2}\cap Q_{\beta})^t\le Q_{\alpha}^t=Q_{\alpha+2}$, a contradiction. Hence, the result.
\end{proof}
 
We write $r_{\alpha}:=|(Z_{\alpha}\cap Q_{\beta})Q_{\alpha+2}/Q_{\alpha+2}|$ and $r_{\alpha+2}:=|(Z_{\alpha+2}\cap Q_{\beta})Q_{\alpha}/Q_{\alpha}|$. Since both $(\alpha, \beta)$ and $(\alpha+2, \beta)$ are critical pairs, without loss of generality, we assume throughout the remainder of this section that $r_{\alpha+2} \geq r_{\alpha}$.

\begin{lemma}\label{alphaaction}
Assume that $m_p(S/Q_{\alpha})\geq 2$ and write $\wt L_{\alpha}:=L_{\alpha}/R_{\alpha}$. Then either
\begin{enumerate}
    \item $r_{\alpha+2}=r_\alpha=p$;
    \item $O_{p'}(\wt L_{\alpha})$ is central in $\wt L_{\alpha}$; or
    \item there is a subgroup $C$ of $\wt L_{\alpha}$ of order $p^2$ such that $C_{Z_{\alpha}}(C)$ has index at most $q_\beta p^2$ in $Z_{\alpha}$.
\end{enumerate}
\end{lemma}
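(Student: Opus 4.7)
The plan is to dispose of (i) by a quick inequality, then to handle the remaining case via a dichotomy on centralizers of subgroups of the form $C \leq \bar A$, where $A := Z_{\alpha+2} \cap Q_\beta$. Since $r_\alpha \leq r_{\alpha+2}$ by our standing convention, the case $r_{\alpha+2} = p$ immediately gives outcome (i), so I would assume $r_{\alpha+2} \geq p^2$. Because $A \leq Z_{\alpha+2}$ is abelian, $[Z_\alpha, A, A] \leq [Z_{\alpha+2}, A] = \{1\}$, so $A$ acts quadratically on $V := Z_\alpha$. As $Z(L_\alpha) = \{1\}$ we have $R_\alpha = C_{L_\alpha}(V)$, and since $R_\alpha \cap S = Q_\alpha$ by \cref{BasicVB}, the image $\bar A$ of $A$ in $\wt L_\alpha$ has order exactly $r_{\alpha+2} \geq p^2$. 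Fix $C \leq \bar A$ with $|C| = p^2$; then $C$ also acts quadratically on $V$, and the remainder of the argument splits on whether $C_V(D) = C_V(C)$ for every $D < C$ of index $p$.

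In the first branch, I would assume that some $D < C$ with $[C : D] = p$ satisfies $C_V(D) > C_V(C)$ and derive outcome (iii). Exploiting the quadratic action, $V/C_V(C)$ embeds into $\Hom(C, [V, C])$, so $|V/C_V(C)| \leq |[V, C]|^2$. I would then bound $|[V, C]|$ by observing that $[V, C] \leq V \cap Z_{\alpha+2}$, a subgroup centralized by both $Q_\alpha$ and $Q_{\alpha+2}$; its image in $S/Q_\beta$ therefore lies in $\Omega(Z(\langle Q_\alpha, Q_{\alpha+2}\rangle Q_\beta/Q_\beta))$, and a Frattini-style argument combined with $L_\beta$-transitivity on $\Delta(\beta)$ identifies this ambient quotient with the relevant portion of $\Omega(Z(S/Q_\beta))$. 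After this identification, the bound $|[V, C]| \leq q_\beta p$ falls out of the intersection with $Z_\beta$, giving $|V/C_V(C)| \leq q_\beta p^2$ and hence (iii).

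In the second branch, I would suppose that $C_V(D) = C_V(C)$ for every $D < C$ of index $p$ and derive outcome (ii) via coprime action. Applying \cref{GLS2p'} yields
\[ O_{p'}(\wt L_\alpha) = \langle C_{O_{p'}(\wt L_\alpha)}(D) \mid D \leq C,\ [C:D] = p\rangle, \]
and the three subgroups lemma applied to each triple $(C_{O_{p'}(\wt L_\alpha)}(D), D, V)$ shows each generator normalizes $C_V(D) = C_V(C)$. Hence $O_{p'}(\wt L_\alpha)$ normalizes $C_V(C)$. Setting $H := \langle C^{O_{p'}(\wt L_\alpha) C}\rangle$, we have $[H, C_V(C)] = \{1\}$; the coprime decomposition $V = [V, O_{p'}(H)] \times C_V(O_{p'}(H))$ is $C$-invariant, and since $C_V(C) \leq C_V(O_{p'}(H))$ and $\wt L_\alpha$ acts faithfully on $V$, this forces $O_{p'}(H) = \{1\}$. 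Then $H$ is a $p$-group, $[O_{p'}(\wt L_\alpha), C] \leq H \cap O_{p'}(\wt L_\alpha) = \{1\}$, and $O_{p'}(\wt L_\alpha)$ centralizes $C$. Combining this with the fact that $\wt L_\alpha / O_{p'}(\wt L_\alpha)$ is a rank $1$ Lie-type group (forced by \cref{SEQuad} and \cref{GreenbookQuad} applied to the quadratic action of $\bar A$), and a Schur-multiplier argument as in \cref{p'-index2}, I would obtain $O_{p'}(\wt L_\alpha) \leq Z(\wt L_\alpha)$.

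The main obstacle will be the quantitative bound $|V/C_V(C)| \leq q_\beta p^2$ in the first branch. The factor of $q_\beta$ comes from the centralized-by-two-Sylows observation, but pinning down the precise contribution and showing that $[V, C]$ does not pick up an unwanted extra factor (giving $q_\beta^2$ instead) will require either a refined choice of $C$ — perhaps not fully inside $\bar A$ — or a careful analysis of how $V \cap Z_{\alpha+2}$, $V \cap Q_\beta$, and $Z_\beta$ fit inside one another.
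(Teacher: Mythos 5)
Your outline shares the two central ingredients with the paper's proof --- passing to the elementary abelian image $\wt A$ of $Z_{\alpha+2}\cap Q_\beta$ in $\wt L_\alpha$, and the dichotomy based on \cref{GLS2p'} and coprime action --- and your Branch~2 (all index-$p$ subgroups of $C$ share a centralizer forces $O_{p'}(\wt L_\alpha)\le Z(\wt L_\alpha)$) is the right argument. But the overall shape of your proof differs from the paper's in a way that loses the quantitative conclusion, and you have already put your finger on exactly where.

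Fixing a single $C\le\wt A$ of order $p^2$ at the outset and trying to bound $|Z_\alpha:C_{Z_\alpha}(C)|$ directly cannot give (iii). The only a priori estimate available is $|Z_\alpha:C_{Z_\alpha}(\wt A)|\le q_\beta r_\alpha$, coming from $Z_\alpha\cap Q_\beta\cap Q_{\alpha+2}\le C_{Z_\alpha}(\wt A)$, and passing from $\wt A$ to the smaller $C$ only weakens it; this is $\le q_\beta p^2$ only when $r_\alpha\le p^2$. The Hom-embedding route does not repair this. First, the exponent comes out wrong ($|[V,C]|^2$ gives at best a $q_\beta^2$), as you yourself observe. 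Second, the inclusion $[Z_\alpha,A_0]\le Z_{\alpha+2}$ on which your bound for $|[V,C]|$ rests is unjustified: since $Z_\alpha\not\le Q_\beta$, $Z_\alpha$ need not lie in $G_{\alpha+2}$ and so need not normalize $Z_{\alpha+2}$. What one actually gets is $[Z_\alpha,A_0]\le Z_\alpha\cap Q_\beta$ and $[Z_\alpha\cap Q_\beta,A_0]\le Z_\alpha\cap Z_{\alpha+2}$, and $Z_\alpha\cap Z_{\alpha+2}\supseteq Z_\beta\neq\{1\}$; so $A_0:=Z_{\alpha+2}\cap Q_\beta$ is only known to act cubically, not quadratically, on $Z_\alpha$, and the embedding $V/C_V(C)\hookrightarrow\Hom(C,[V,C])$ does not apply.

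The paper's proof obtains the bound by not fixing $C$ in advance. It runs your coprime dichotomy recursively from $\wt A$ itself: at each level, either every index-$p$ subgroup $B$ of the current group gives $C_{Z_\alpha}(B)$ equal to the current centralizer --- and then \cref{GLS2p'} with coprime action yields~(ii) --- or one passes to a maximal $B$ with $C_{Z_\alpha}(B)$ strictly larger, cutting $|Z_\alpha:C_{Z_\alpha}(\cdot)|$ by a factor of $p$. Descending from $|\wt A|=r_{\alpha+2}$ to a subgroup $C$ of order $p^2$ divides the starting index $q_\beta r_\alpha$ by $r_{\alpha+2}/p^2$, so that
\[
|Z_\alpha:C_{Z_\alpha}(C)|\le\frac{q_\beta r_\alpha p^2}{r_{\alpha+2}}\le q_\beta p^2,
\]
the last inequality by the standing convention $r_\alpha\le r_{\alpha+2}$. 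This recursion is the missing mechanism; with it in place your Branch~2 argument (applied at every level, and without the Schur-multiplier appendage, which is not needed once one takes the normal closure of $\wt A$) completes the proof.
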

\begin{proof}
Assume throughout that $r_{\alpha+2}>p$. Let $A=(Z_{\alpha+2}\cap Q_{\beta})Q_{\alpha}/Q_{\alpha}$ so that $A$ is elementary abelian of order $r_{\alpha+2} \geq r_\alpha$ and $C_{Z_{\alpha}}(A)$ has at most index $q_\beta r_\alpha$ in $Z_{\alpha}$. Since $r_{\alpha+2}>p$, using \cref{GLS2p'}, we have that $O_{p'}(\wt{L_{\alpha}})=\langle C_{O_{p'}(\wt{L_{\alpha}})}(B) \mid [A:B]=p\rangle$. Assume that $C_{Z_{\alpha}}(A)=C_{Z_{\alpha}}(B)$ for all $B\le A$ with $|A/B|=p$. Then $C_{Z_{\alpha}}(A)$ is normalized by $\wt AO_{p'}(\wt{L_{\alpha}})$. Set $H_\alpha=\langle \wt A^{\wt AO_{p'}(\wt{L_{\alpha}})}\rangle$ so that $[H_\alpha, C_{Z_{\alpha}}(\wt A)]=\{1\}$. Then, by coprime action, we get that $Z_{\alpha}=[Z_{\alpha}, O_{p'}(H_{\alpha})]\times C_{Z_{\alpha}}(O_{p'}(H_{\alpha}))$ is an $A$-invariant decomposition and as $C_{Z_{\alpha}}(A)\le C_{Z_{\alpha}}(O_{p'}(H_{\alpha}))$, it follows that $O_{p'}(H_\alpha)$ centralizes $Z_{\alpha}$ so that $O_{p'}(H_{\alpha})=\{1\}$ and $H_\alpha=\wt A\normaleq \wt A O_{p'}(\wt{L_{\alpha}})$. Then, $[\langle A^{\wt{L_{\alpha}}}\rangle, O_{p'}(\wt{L_{\alpha}})]=[A, O_{p'}(\wt{L_{\alpha}})]=\{1\}$ and we have that $O_{p'}(\wt{L_{\alpha}})\le Z(\bar{L_{\alpha}})$. 

So assume now that $C_{Z_{\alpha}}(A)<C_{Z_{\alpha}}(B)$ for some $B\le A$ with $|A/B|=p$. Applying the same line of reasoning as above to $B$ in place of $A$, and continuing recursively, we descend to a group $C\le A$ of order $p^2$ such that $C_{Z_{\alpha}}(C)$ has index at most $q_\beta r_\alpha p^2/r_{\alpha+2} \leq q_\beta p^2$ in $Z_{\alpha}$, as required.
\end{proof}

\begin{lemma}\label{S/QAAbelian}
Suppose that $m_p(S/Q_{\alpha})\geq 2$. Then $S/Q_{\alpha}$ is elementary abelian and $S=Q_{\alpha}Q_{\beta}$.
\end{lemma}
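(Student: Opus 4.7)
The plan is to first extract a quadratic action on $Z_{\alpha}$ coming from the other vertex $\beta$, apply the classification of groups with a strongly $p$-embedded subgroup admitting a quadratic module, and then rule out the configurations in which $\bar{S}:=S/Q_{\alpha}$ fails to be elementary abelian.

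First I would note that $Z_{\alpha}\le \Omega(Z(Q_{\alpha}))$ is elementary abelian, so for any $\lambda\in\Delta(\beta)$ one has
\[
[Z_{\alpha},Z_{\lambda}\cap Q_{\beta},Z_{\lambda}\cap Q_{\beta}]\le[Z_{\alpha}\cap Z_{\lambda},Z_{\lambda}\cap Q_{\beta}]\le[Z_{\lambda},Z_{\lambda}]=1.
\]
By \cref{alpha+2}, applied to both critical pairs $(\alpha,\beta)$ and $(\alpha+2,\beta)$, we get a nontrivial elementary abelian quadratic offender $\bar{A}:=(Z_{\alpha+2}\cap Q_{\beta})Q_{\alpha}/Q_{\alpha}$ on $Z_{\alpha}$ of order $r_{\alpha+2}\ge r_{\alpha}\ge p$, and a symmetric offender on $Z_{\alpha+2}$.

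Next, combining the hypothesis $m_p(S/Q_{\alpha})\ge 2$ with the strongly $p$-embedded hypothesis on $\bar{L_{\alpha}}$, \cref{SE2} restricts $\bar{L_{\alpha}}/O_{p'}(\bar{L_{\alpha}})$ to a short list. The quadratic action of $\bar A$ on $Z_{\alpha}$, via \cref{SEQuad} and \cref{GreenbookQuad} when $p$ is odd and \cref{Quad2F}/\cref{2FRecog} when $p=2$, then further narrows this to $\PSL_2(p^n)$, $\mathrm{(P)SU}_3(p^n)$, $\Sz(2^n)$, $\Ree(3^n)$, together with the small cases $\mathrm{M}_{11}$ and $\PSL_3(4)$ when $p=3$. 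Of these, only $\mathrm{(P)SU}_3(p^n)$, $\Sz(2^n)$ with $n>1$, and $\Ree(3^n)$ with $n>1$ have non-abelian Sylow $p$-subgroups. The plan is to rule these out using the observation that in each such configuration, \cref{GreenbookQuad} forces every nontrivial elementary abelian quadratic offender to lie inside $\Omega(Z(\bar{S}))=\bar{Z_\beta}$. Thus $\bar A$ and all its conjugates $\bar{Z_{\lambda}\cap Q_{\beta}}$ for $\lambda\in\Delta(\beta)$ would be contained in $\bar{Z_{\beta}}$; since \cref{alpha+2} yields $\lambda\in\Delta(\beta)$ with $Z_{\lambda}\cap Q_{\beta}\not\le Q_{\alpha+2}$, two such offenders generate a subgroup strictly larger than $\Omega(Z(\bar{S}))$, contradicting its abelian structure. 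Together with the size bounds in \cref{alphaaction} and the structure of $Z_{\alpha}$ (which contains $Z_{\beta}\ne\{1\}$ and cannot sit entirely in $C_{Z_{\alpha}}(O^p(L_{\alpha}))$), this eliminates every case with non-abelian Sylow, leaving only those with elementary abelian Sylow.

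Finally, in each surviving configuration, $\bar{Q_{\beta}}=Q_{\beta}Q_{\alpha}/Q_{\alpha}$ is a $\bar{G_{\alpha,\beta}}$-invariant subgroup of the elementary abelian group $\bar{S}$ containing the offender $\bar A$ and all its $(L_{\beta}\cap G_{\alpha,\beta})$-conjugates. If $\bar{Q_{\beta}}<\bar{S}$ were proper, then $N_{\bar{L_{\alpha}}}(\bar{Q_{\beta}})$ would contain $N_{\bar{L_{\alpha}}}(\bar{S})=\bar{G_{\alpha,\beta}}$ (since $\bar{Q_{\beta}}\normaleq\bar{G_{\alpha,\beta}}$), yet $\bar{L_{\alpha}}$ is generated by $\bar S$ together with its $\bar{L_{\alpha}}$-conjugates, and the $\bar{L_{\alpha}}$-closure of $\bar{Q_{\beta}}$ (a proper normal $p$-subgroup of $\bar{L_{\alpha}}$) would then contradict $O_p(\bar{L_{\alpha}})=\{1\}$. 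Hence $\bar{Q_{\beta}}=\bar{S}$, giving $S=Q_{\alpha}Q_{\beta}$. The main obstacle will be the third step: carefully ruling out the $\SU_3$, $\Sz$, $\Ree$ Sylow structures by showing that the multiple conjugate offenders forced into $\Omega(Z(\bar{S}))$ cannot be accommodated, and separately handling the small exceptional pairs such as $\PSU_3(2)$ at $p=2$ where coincidences in the Sylow structure require a direct module-theoretic check rather than the generic argument.
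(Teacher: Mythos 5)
The core problem lies in your third step, where you try to rule out the $\mathrm{SU}_3$, $\mathrm{Sz}$, and $\mathrm{Ree}$ Sylow structures by a single quadratic-offender argument. You claim that the various quadratic offenders $\overline{Z_{\lambda}\cap Q_{\beta}}$ are all forced into $\Omega(Z(\overline{S}))$ by \cref{GreenbookQuad} and then that ``two such offenders generate a subgroup strictly larger than $\Omega(Z(\overline{S}))$.'' But precisely because \cref{GreenbookQuad} places every quadratic element in $\Omega(Z(\overline{S}))$, any collection of quadratic offenders generates a subgroup inside $\Omega(Z(\overline{S}))$, so there is no contradiction to extract here. (You also write $\Omega(Z(\overline{S}))=\overline{Z_{\beta}}$, which is incorrect: $Z_{\beta}\leq Q_{\alpha}$ always, so $\overline{Z_{\beta}}$ is trivial.) The actual content needed to kill these cases is more delicate and genuinely case-dependent, which is what the paper does: it uses $F_{\beta}$ and \cref{NCCFZA} to see that $F_{\beta}\cap Q_{\alpha}$ is centralized by $Z_{\alpha}$, combines this with \cref{SEFF} to bound $q_{\beta}$ against $q_{\alpha}$, then plays the numerical estimates of \cref{SpeMod2}/\cref{SpeModOdd} against the two outcomes of \cref{alphaaction}, uses Smith's theorem (\cite[Theorem 2.8.11]{GLS3}) in the $\mathrm{Sz}$ case to force $q_{\alpha}=q_{\beta}$, and finally invokes \cref{NCCFZA} once more to show $O^{p}(L_{\alpha})$ centralizes $Q_{\alpha}/Z_{\alpha}$ so $R_{\alpha}=Q_{\alpha}$, reducing to a weak BN-pair and killing the configuration against \cite{Greenbook}. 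None of this structure is visible in your sketch, and without it the $\mathrm{SU}_3$, $\mathrm{Sz}$, $\mathrm{Ree}$ cases survive.

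Your final step for $S=Q_{\alpha}Q_{\beta}$ is also incorrect as written: you say the $\bar{L_{\alpha}}$-closure of $\overline{Q_{\beta}}$ is ``a proper normal $p$-subgroup'' contradicting $O_{p}(\bar{L_{\alpha}})=\{1\}$, but the normal closure of a $p$-subgroup is not in general a $p$-group, so this argument has no force. The correct version is simpler: once $\bar{L_{\alpha}}/O_{p'}(\bar{L_{\alpha}})$ is known to be $\PSL_{2}(q_{\alpha})$ (or one of the small exceptional groups), the Hall $p'$-subgroup of $\bar{G_{\alpha,\beta}}$ acts irreducibly on $\overline{S}$ by \cref{SLGen}~(vi), so the $G_{\alpha,\beta}$-invariant subgroup $\overline{Q_{\beta}}\ne\{1\}$ must be all of $\overline{S}$. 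I'd encourage you to make this irreducibility explicit rather than invoking a closure argument that fails.
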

\begin{proof}
Suppose first that $\bar{L_{\alpha}}/O_{2'}(\bar{L_{\alpha}})\cong \Sz(2^n)$. Assume that $q_\beta>q_\alpha$ so that $F_{\beta}\cap Q_{\alpha}$ has index strictly less than $q_\beta$ in $F_{\beta}$ and is centralized by $Z_{\alpha}$. By \cref{SEFF}, we have a contradiction. Thus, if $\bar{L_{\alpha}}/O_{2'}(\bar{L_{\alpha}})\cong \Sz(2^n)$ then we have that $q_\beta\leq q_\alpha$.

Applying \cref{SpeMod2}, we see that in $q_\alpha^{\frac{4}{3}}\leq 2q_\beta \leq 2q_\alpha$ in outcome (i) of \cref{alphaaction} and $q_\alpha^2\leq 2^2 q_\beta \leq  2^2q_\alpha$ in outcome (iii) of \cref{alphaaction}. Hence, we have that $r_{\alpha+2}=r_\alpha=2$ and $q_\alpha=q_\beta=8$ so that, by \cref{SEFF}, $Q_{\beta}/\Phi(Q_{\beta})$ contains a unique non-central chief factor, which is a natural module for $\bar{L_{\beta}}\cong \SL_2(8)$. Hence, $Q_{\beta}=(Q_{\beta}\cap Q_{\alpha})(Q_{\beta}\cap Q_{\alpha+2})\Phi(Q_{\beta})$ so that $Q_{\beta}=(Q_{\beta}\cap Q_{\alpha})(Q_{\beta}\cap Q_{\alpha+2})$. In particular, $(Q_{\alpha}\cap Q_{\beta})Q_{\alpha+2}\in\syl_2(L_{\alpha+2})$. Then $\Phi(Q_{\alpha})Z_{\alpha}(Q_{\alpha}\cap Q_{\beta}\cap Q_{\alpha+2})$ has index $q_\alpha$ in $Q_\alpha$ and $\Phi(Q_{\alpha})(\Phi(Q_{\alpha})Z_{\alpha}\cap Q_{\beta}\cap Q_{\alpha+2})$ has index $q_\alpha$ in $\Phi(Q_{\alpha})Z_{\alpha}$. Applying \cref{SpeMod2}, we have that $O^p(L_\alpha)$ centralizes $Q_\alpha/\Phi(Q_{\alpha})Z_{\alpha}$ and $\Phi(Q_{\alpha})Z_{\alpha}/\Phi(Q_{\alpha})$ so that $O^p(L_\alpha)$ centralizes $Q_{\alpha}/\Phi(Q_{\alpha})$, a contradiction.

Thus, if $\bar{L_{\alpha}}/O_{2'}(\bar{L_{\alpha}})\cong \Sz(2^n)$ then $q_\beta\leq q_\alpha$ and $L_{\alpha}/R_{\alpha}\cong \Sz(q_\alpha)$. Since $Z_{\alpha}\cap Q_{\beta}$ is $G_{\alpha,\beta}$-invariant of order $q_\beta$, we have that $q_\alpha=q_\beta$ by \cite[Smith Theorem 2.8.11]{GLS3}. Then applying \cref{SEFF} we conclude that $Q_{\beta}/\Phi(Q_{\beta})$ contains a unique non-central chief factor, which is a natural module for $\bar{L_{\beta}}\cong \SL_2(q_\beta)$. Now, $Q_{\alpha}=Z_{\alpha}(Q_{\alpha}\cap Q_{\beta})$ and by \cref{NCCFZA}, we deduce that $O^2(L_{\alpha})$ centralizes $Q_{\alpha}/Z_{\alpha}$ so that $R_{\alpha}=Q_{\alpha}$. But then $(G_\alpha, G_\beta, G_{\alpha, \beta})$ is a weak BN-pair of rank $2$, a contradiction by \cite{Greenbook}.

Suppose now that $\bar{L_{\alpha}}/O_{3'}(\bar{L_{\alpha}})\cong \Ree(q_\alpha)$. Assume first that $S\ne Q_{\alpha}Q_{\beta}$ so that $\bar{L_{\beta}}\cong \SU_3(q_{\beta})$ and $Q_{\alpha}Q_{\beta}$ has index $q_\beta^2$ in $S$. Thus, either $q_\alpha=q_\beta$ or $q_\alpha=q_\beta^2$. In the former case, we have that $Q_{\beta}\cap Q_{\alpha}$ has index $q_\beta$ in $Q_{\beta}$ and is centralized by $Z_{\alpha}$. Applying \cref{SEFF} gives a contradiction. In the latter case, we get that $q_\alpha$ is a square, another contradiction since $q_\alpha$ is an odd power of $3$. Hence, $S=Q_{\alpha}Q_{\beta}$.

Now, by \cref{NCCFZA} we have that $[Q_{\beta}, F_{\beta}]\le Z_{\alpha}$ and by the structure of $S/Q_{\alpha}$, we deduce that $F_{\beta}Q_{\alpha}/Q_{\alpha}=Z(S/Q_{\alpha})$ has order $q_\alpha$. Then $F_{\beta}\cap Q_{\alpha}$ is an index $q_\alpha$ subgroup of $F_{\beta}$ which is centralized by $Z_{\alpha}$ and we deduce that $q_\beta\leq q_\alpha$. Applying \cref{SpeModOdd} and \cref{alphaaction}, we see that $\mathrm{max}(q_\alpha^2, 9)\leq 3^2q_\beta \leq 3^2q_\alpha$ and $q_\alpha=q_\beta=3$. However, we now form $H=\langle Z_{\alpha+2}\cap Q_{\beta}, (Z_{\alpha+2}\cap Q_{\beta}), Q_\alpha\rangle$ with $\bar{H}O_{3'}(\bar{L_{\alpha}})/O_{3'}(\bar{L_{\alpha}})\cong \PSL_2(8)$ so that $|Z_{\alpha}/C_{Z_{\alpha}}(H)|\leq 3^6$, a contradiction by \cref{SpeModOdd}. Thus, $L_{\alpha}/R_{\alpha}\cong\Ree(q_{\alpha})$.

Notice that if $\bar{L_{\beta}}\cong \SL_2(q_\beta)$ then applying \cref{NCCFZA} we have that $O^3(L_{\alpha})$ centralizes $Q_{\alpha}/Z_{\alpha}$ and $R_{\alpha}=Q_{\alpha}$. Then $(G_{\alpha}, G_{\beta}, G_{\alpha,\beta})$ is a weak BN-pair of rank $2$ and \cite{Greenbook} provides a contradiction. Thus, $\bar{L_{\beta}}\cong\SU_3(q_{\beta})$. Then $\Phi(Q_{\beta})(Q_{\beta}\cap Q_{\alpha})$ has index $q_\alpha$ in $Q_{\beta}$ and is centralized, modulo $\Phi(Q_{\beta})$, by $Z_{\alpha}$. Applying \cref{SU3Gen} and using that the minimum $\mathrm{GF}(3)$-representation of $\SU_3(3^n)$ is $6n$, we deduce that $q_\alpha\geq q_\beta^2$. But then $\Phi(Q_{\alpha})(Q_{\beta}\cap Q_{\alpha})$ is an index $q_\alpha$ subgroup of $Q_{\alpha}$ which is centralized, modulo $\Phi(Q_{\alpha})$, by $F_{\beta}$ by \cref{NCCFZA}. \cref{SEFF} yields a contradiction in this case.

Finally, suppose that $\bar{L_{\alpha}}/O_{p'}(\bar{L_{\alpha}})\cong \PSU_3(q_\alpha)$. Then $F_{\beta}\cap Q_{\alpha}$ has index $q_{\alpha}$ in $F_{\beta}$ and is centralized by $Z_{\alpha}$ so that by \cref{SEFF}, we deduce that $q_\beta \leq q_\alpha$. Applying \cref{SpeMod2} and \cref{SpeModOdd}, we see that $q_\alpha^{\frac{3}{2}}\leq pq_\beta \leq pq_\alpha$ in outcome (i) of \cref{alphaaction} and $q_\alpha^2\leq p^2 q_\beta \leq  p^2q_\alpha$ in outcome (iii) of \cref{alphaaction}, from which we deduce that $q_\alpha=q_\beta\leq p^2$. Indeed, $Q_{\beta}/\Phi(Q_{\beta})$ is faithful quadratic $2$F-module for $\bar{L_{\beta}}$ and applying \cref{2FRecog}, we have that $\bar{L_{\beta}}\cong \SL_2(q_\beta)$. Then, applying \cref{NCCFZA} we have that $O^3(L_{\alpha})$ centralizes $Q_{\alpha}/Z_{\alpha}$ and $R_{\alpha}=Q_{\alpha}$. Since $O_{p'}(\bar{L_{\alpha}})\ne\{1\}$, using coprime action and that $Z_{\beta}\le Z_{\alpha}$, we deduce that $Q_{\alpha}=Z_{\alpha}$. Moreover, $D_{\beta}\cap F_{\beta}\le Z_{\alpha}$ and since $F_{\beta}/F_{\beta}\cap D_{\beta}$ is natural $\SL_2(q_\beta)$-module, $F_{\beta}=(F_{\beta}\cap Z_{\alpha})(F_{\beta}\cap Z_{\alpha+2})$. Applying \cref{Quad2F} when $q_\alpha=p$ and \cref{2FRecog} when $q_\alpha=p^2$, we deduce that $O_{p'}(\bar{L_{\alpha}}\le Z(\bar{L_{\alpha}})$ and we are in outcome (ii) of \cref{alphaaction}.

Finally, more generally when we are in outcome (ii) of \cref{alphaaction} we have that $L_{\alpha}/R_{\alpha}$ is isomorphic to $\SU_3(q_\alpha)$ or $\PSU_3(q_\alpha)$. If $q_\beta<q_\alpha$ and $\bar{L_{\beta}}/O_{p'}(\bar{L_{\beta}})\not\cong\PSL_2(q_{\beta})$ then $\Phi(Q_{\alpha})(Q_{\alpha}\cap Q_{\beta})$ is an index at most $q_\beta^2$ subgroup of $Q_{\alpha}$ which centralized, modulo $\Phi(Q_{\alpha})$ by $F_{\beta}$ Then \cref{SpeMod2} and \cref{SpeModOdd} provides a contradiction. If $q_{\beta}=q_{\alpha}$ then $F_{\beta}$ is an FF-module for $\bar{L_{\beta}}$ and we deduce that $\bar{L_{\beta}}/O_{2'}(\bar{L_{\beta}})\cong\PSL_2(q_{\beta})$ when $p=2$, and $\bar{L_{\beta}}\cong \SL_2(q_\beta)$ when $p=3$. Thus, in either case, we have that $\bar{L_{\beta}}/O_{2'}(\bar{L_{\beta}})\cong\PSL_2(q_{\beta})$ when $p=2$, and $\bar{L_{\beta}}\cong \SL_2(q_\beta)$ when $p=3$. In particular, applying \cref{NCCFZA}, we see that $O^p(L_{\alpha})$ centralizes $Q_{\alpha}/Z_{\alpha}$, $R_{\alpha}=Q_{\alpha}$ and $\bar{L_{\alpha}}\cong \PSU_3(q_\alpha)$ or $\SU_3(q_\alpha)$. 

If $p=3$, then $(G_{\alpha}, G_{\beta}, G_{\alpha,\beta})$ is a weak BN-pair of rank $2$ and a comparison with \cite{Greenbook} yields the result. If $p=2$ and $q_\alpha=q_\beta$ then $Z_{\alpha}$ acts quadratically on $Q_{\beta}$ and $\Phi(Q_{\beta})(Q_{\alpha}\cap Q_{\beta})$ is an index $q_\beta^2$ subgroup of $Q_{\beta}$ which is centralized by $Z_{\alpha}$, applying \cref{2FRecog} we have that $\bar{L_{\beta}}\cong \PSL_2(q_{\beta})$. If $p=2$ and $q_\beta<q_\alpha$, then since $r_{\alpha+2}>2$, using \cref{SU3Gen} we have that a index $(q_\beta r_\alpha)^3<q_\alpha^6$ subgroup of $Z_{\alpha}$ is centralized by $O^2(L_{\alpha})$ and \ref{SpeMod2} yields a contradiction. Hence, if $p=2$ then $(G_{\alpha}, G_{\beta}, G_{\alpha,\beta})$ is a weak BN-pair of rank $2$ and a comparison with \cite{Greenbook} yields the result.
\end{proof}

\begin{proposition}\label{b=1b}
Suppose that $m_p(S/Q_{\alpha})\geq 2$, $m_p(S/Q_{\beta})\geq 2$ and $p\in\{2,3\}$. Then one of the following holds
\begin{enumerate}
\item $G$ has a weak BN-pair of rank $2$ and $G$ is locally isomorphic to $H$ where $F^*(H)\cong \PSU_4(p^{n+1}), \PSU_5(2^{n+1}), \PSU_5(3^n)$ or $\PSp_4(3^{n+1})$ for $n\geq 1$; or
\item $p=3$, $|S|=3^7$, $\bar{L_{\alpha}}\cong \mathrm{M}_{11}$, $Z_{\alpha}=Q_{\alpha}$ is the ``code'' module for $\bar{L_{\alpha}}$, $\bar{L_{\beta}}\cong \SL_2(9)$ and $Q_{\beta}\cong 3^{1+4}_+$.
\end{enumerate}
Moreover, if $G$ is obtained from a fusion system $\fs$ satisfying \cref{HypFus} then one of the following holds:
\begin{enumerate}
\item $p=2$ and $\fs=\fs_S(G)$ where $F^*(G)\cong\PSU_4(2^n)$ or $\PSU_5(2^n)$ and $n\geq 2$; or
\item $p=3$ and $\fs=\fs_S(G)$ where $F^*(G)\cong\PSp_4(3^{n+1})$, $\PSU_4(3^{n+1})$, $\PSU_5(3^n)$ for $n\geq 1$; or
\item $p=3$ and $\fs=\fs_S(G)$ where $G\cong\mathrm{Co}_3$.
\end{enumerate}
\end{proposition}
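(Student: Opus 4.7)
The plan is to proceed by a careful case analysis of $\bar{L_{\alpha}}/O_{p'}(\bar{L_{\alpha}})$ using \cref{SE2}, building on the structural results already established in \cref{S/QAAbelian}. First I would invoke \cref{S/QAAbelian} to fix $S=Q_{\alpha}Q_{\beta}$ with $S/Q_{\alpha}$ elementary abelian, and then examine what happens for each possible $\bar{L_{\alpha}}/O_{p'}(\bar{L_{\alpha}})$. Since $p\in\{2,3\}$, \cref{SE2} leaves the rank $1$ Lie type possibilities $\PSL_2(p^n)$, $\PSU_3(p^n)$, $\Sz(2^n)$ and $\Ree(3^n)$, together with $\PSL_3(4)$ and $\mathrm{M}_{11}$ when $p=3$. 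The arguments inside the proof of \cref{S/QAAbelian} already dispose of $\Sz$, $\Ree$ and $\PSU_3$ by producing either a direct contradiction or a weak BN-pair structure, and for the $\PSL_2$ cases a parallel argument using \cref{NCCFZA} gives $R_{\alpha}=Q_{\alpha}$, $R_{\beta}=Q_{\beta}$ and hence a weak BN-pair. Running \cref{greenbook} and comparing Sylow structures with the groups in $\bigwedge^0$ then pins the amalgam down to one of $\PSU_4(p^{n+1})$, $\PSU_5(2^{n+1})$, $\PSU_5(3^n)$ or $\PSp_4(3^{n+1})$, giving outcome~(i).

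Next I would handle the two exotic possibilities at $p=3$. If $\bar{L_{\alpha}}/O_{3'}(\bar{L_{\alpha}})\cong\PSL_3(4)$, then a Sylow $3$-subgroup of $\bar{L_{\alpha}}$ is elementary abelian of order $9$ and $|S/Q_{\alpha}|=9$. Applying \cref{alphaaction} together with \cref{SpeModOdd} to the critical pair $(\alpha,\alpha+2)$ forces a very tight bound on $|Z_{\alpha}|$ (at most $3^4$ or so), but a $\PSL_3(4)$-module faithful on such a section does not exist by the minimal dimension of nontrivial modular representations of $\PSL_3(4)$ in characteristic $3$; this eliminates the case. If $\bar{L_{\alpha}}/O_{3'}(\bar{L_{\alpha}})\cong\mathrm{M}_{11}$, I would show by coprime action and the self-centralizing property of $Q_{\alpha}$ that $O_{3'}(\bar{L_{\alpha}})\leq Z(\bar{L_{\alpha}})$ and, since $\mathrm{M}_{11}$ is simple with trivial Schur multiplier of order prime to $3$, that $\bar{L_{\alpha}}\cong\mathrm{M}_{11}$. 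Then $Z_{\alpha}$ is a faithful $\mathrm{GF}(3)\mathrm{M}_{11}$-module on which a Sylow $3$-subgroup acts quadratically-$2F$ by \cref{NCCFZA}; the smallest such module is the $5$-dimensional ``code'' module for $\mathrm{M}_{11}$, and dimension bounds from \cref{SpeModOdd} force $|Z_{\alpha}|=3^5$. Together with \cref{betadeduct} and the fact that a Sylow $3$-subgroup of $\mathrm{M}_{11}$ has order $9$, this pins $\bar{L_{\beta}}\cong\SL_2(9)$ and, using \cref{SEFF} on $Q_{\beta}/\Phi(Q_{\beta})$ and \cref{NCCFZA} applied to $F_{\beta}$, identifies $Q_{\beta}\cong 3^{1+4}_{+}$ with $|S|=3^7$, yielding outcome~(ii).

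For the ``moreover'' part, I would use the fact that whenever $\mathcal{A}$ arises from a fusion system $\fs$ satisfying \cref{HypFus}, the two essential subgroups $E_1$ and $E_2$ are $\Aut_{\fs}(S)$-invariant and therefore $Q_{\alpha}$ and $Q_{\beta}$ are not fused in any completion of $\mathcal{A}$. Within outcome~(i) this rules out groups of Lie type whose graph automorphism swaps the two maximal parabolics, forcing $F^*(G)$ to be one of $\PSU_4(p^{n+1})$, $\PSU_5(2^{n+1})$, $\PSU_5(3^n)$, $\PSp_4(3^{n+1})$ (but not the untwisted $\PSL$ versions paired up by a graph automorphism); a direct application of \cref{WBNFS} identifies $\fs$ with the $p$-fusion category of $F^*(G)$. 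For outcome~(ii), I would verify that $\fs_0$ satisfies $O^3(\fs_0)=\fs_0$ from $S=(S\cap O^3(G_\alpha))(S\cap O^3(G_\beta))$, and then appeal to the classification of saturated fusion systems on a Sylow $3$-subgroup of $\mathrm{Co}_3$ as computed in \cite{Comp1} to identify $\fs$ with the $3$-fusion category of $\mathrm{Co}_3$.

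The main obstacle I expect is the detailed structural work identifying the $\mathrm{M}_{11}$ configuration: one needs to rule out every other possibility for $\bar{L_{\beta}}$ from \cref{betadeduct}, then show $Q_{\beta}$ has exactly the extraspecial structure $3^{1+4}_{+}$, and finally verify that such an amalgam exists and is unique, realised inside $\mathrm{Co}_3$. The ruling-out of $\PSL_3(4)$, while conceptually routine, also requires some care with the explicit module theory in characteristic $3$.
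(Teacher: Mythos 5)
Your overall plan — reduce via \cref{S/QAAbelian}, then work through the candidates for $\bar{L_{\alpha}}$ from \cref{SE2} — is reasonable, but it differs from the paper's organization, which proceeds primarily by comparing $q_{\alpha}$ against $q_{\beta}$ and analyzing $\bar{L_{\beta}}$ first (via \cref{SEFF}, \cref{SpeMod2}, \cref{SpeModOdd}, \cref{2FRecog}), and only afterwards constrains $\bar{L_{\alpha}}$. There are also two genuine gaps in your sketch.

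First, your elimination of $\PSL_3(4)$ is based on a wrong bound. You claim \cref{alphaaction} with \cref{SpeModOdd} forces $|Z_{\alpha}|\leq 3^4$ and that no faithful $\GF(3)$-module of that size exists. Both claims are incorrect: the paper's argument shows $|Z_{\alpha}|\leq 3^8$ (using the fact that $Z_{\alpha}$ is generated by two conjugate $S$-sections and $Z_{\beta}$ has index $3^4$ in $Z_{\alpha}$), and the possible irreducible faithful modules for central extensions of $\PSL_3(4)$ over $\GF(3)$ have size $3^6$ and $3^8$, so modules do exist. Ruling out these cases requires a substantially more delicate argument in the paper: narrowing to $|Z_{\alpha}|=3^6$, showing $Q_{\alpha}=Z_{\alpha}$ by coprime action, identifying $S$ up to isomorphism with a Sylow $3$-subgroup of $\PSp_4(9)$, and then deriving a contradiction from the action of an order-$8$ element $t_{\beta}$ in $L_{\beta}\cap G_{\alpha,\beta}$ on $Z_{\alpha}$.

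Second, you have skipped the central structural step that underpins the whole analysis after the generic $\PSL_2$-reduction. The paper establishes that $Q_{\alpha}$ is elementary abelian, $D_{\beta}=Z_{\beta}$, $Z_{\alpha}$ is irreducible (via \cref{SplitMod} since $Z(L_{\alpha})=\{1\}$), and crucially that $Z_{\alpha}$ is a faithful simple \emph{nearly quadratic} module for $\bar{L_{\alpha}}$: for $z\in Z_{\alpha}\setminus [Z_{\alpha},Q_{\beta}]Z_{\beta}$ one has $[z,Q_{\beta}]Z_{\beta}=[Z_{\alpha},Q_{\beta}]Z_{\beta}$, so that \cref{nearquad} applies. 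This is what forces $\bar{L_{\alpha}}$ to be quasisimple and narrows the candidates to $\mathrm{(P)SL}_2(p^n)$, $\mathrm{M}_{11}$, or a $3'$-central extension of $\PSL_3(4)$. Your proposal reaches a similar list but by implicitly appealing to \cref{SE2} alone, which does not by itself exclude, e.g., the case where $\bar{L_{\alpha}}$ has a nontrivial noncentral $p'$-core; the nearly-quadratic module machinery (together with the coprime-action argument for $O_{p'}(\bar{L_{\alpha}})$) is what closes that gap. Your treatment of the $\mathrm{M}_{11}$ case is also over-ambitious relative to the paper, which simply cites \cite{pap} for the uniqueness of the amalgam rather than re-deriving the structure of $L_{\beta}$ and $Q_{\beta}$ from scratch; but more importantly, you do not distinguish between the code and cocode modules for $\mathrm{M}_{11}$, a distinction which is exactly what separates outcome (ii) here (where $m_p(S/Q_{\beta})\geq 2$ and the module is code) from the parallel conclusions in \cref{b=1c} (where $m_p(S/Q_{\beta})=1$ and the module is cocode).
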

\begin{proof}
We have that $S/Q_{\alpha}$ is elementary abelian by \cref{S/QAAbelian}. Assume first that $q_\alpha>q_\beta$. Since $m_p(S/Q_{\beta})>1$, by the groups listed in \cref{SE2} we deduce that either $q_\alpha>q_\beta\geq p^2$, or $\bar{L_{\beta}}\cong\SU_3(3)$. In the latter case, we have that $\Phi(Q_{\alpha})(Q_{\alpha}\cap Q_{\beta})$ is a subgroup of $Q_{\alpha}$ of index $9$ which, by \cref{NCCFZA}, is centralized, modulo $\Phi(Q_{\alpha})$, by $F_{\beta}$. Since $m_p(S/Q_{\alpha})\geq p^2$, by \cref{SEFF} we have that $\bar{L_{\alpha}}\cong \SL_2(9)$ and $(G_{\alpha}, G_{\beta}, G_{\alpha,\beta})$ is a weak BN-pair of rank $2$. Applying \cite{Greenbook}, $G$ is locally isomorphic to $H$ where $F^*(H)\cong\PSU_5(3)$.

Assume that $q_\alpha>q_{\beta}\geq p^2$ and $\bar{L_{\beta}}/O_{2'}(\bar{L_{\beta}})\cong \Sz(q_\beta)$. Then $\Phi(Q_{\alpha})(Q_{\alpha}\cap Q_{\beta})$ has index $q_\beta$ in $Q_{\alpha}$ and is centralized, modulo $\Phi(Q_{\alpha})$, by $F_{\beta}$. Then \cref{SEFF} yields a contradiction. 

Assume that $q_\alpha>q_{\beta}\geq p^2$ and $\bar{L_{\beta}}/O_{p'}(\bar{L_{\beta}})\cong \PSU_3(q_\beta)$. Then applying \cref{SEFF} to $Q_{\alpha}/\Phi(Q_{\alpha})$ in a similar manner as before, we conclude that $q_\beta^2\geq q_\alpha$. Then \cref{SpeMod2} and \cref{SpeModOdd} yield $q_\alpha=q_\beta^2$ so that $Q_{\alpha}/\Phi(Q_{\alpha})$ contains a unique non-central chief factor for $L_{\alpha}$ which, as an $\bar{L_{\alpha}}$-module, is isomorphic to a natural module for $\bar{L_{\alpha}}\cong \SL_2(q_\alpha)$. Moreover, $\Phi(Q_{\beta})(Q_{\beta}\cap Q_{\alpha})$ has index $q_\beta^2$ in $Q_{\beta}$ and is centralized, modulo $\Phi(Q_{\beta})$, by $Z_{\alpha}$ and so applying \cref{2FRecog}, we have that $\bar{L_{\beta}}\cong \SU_3(q_\beta)$. Therefore, $(G_{\alpha}, G_{\beta}, G_{\alpha,\beta})$ is a weak BN-pair of rank $2$ and comparing with \cite{Greenbook}, $G$ is locally isomorphic to $H$ where $F^*(H)\cong\PSU_5(p^n)$ where $p\in\{2,3\}$ and $p^n>2$.

Note that if $q_\alpha\leq q_\beta$ then \cref{SEFF} implies that $\bar{L_{\beta}}\cong \SL_2(q_\beta)$ and $q_\alpha=q_\beta$. Thus, to complete the proof we assume generally that $q_\beta\leq q_\alpha$ and $\bar{L_{\beta}}/O_{p'}(\bar{L_{\beta}})\cong \PSL_2(q_\beta)$. In particular, since $Q_{\alpha}=Z_{\alpha}(Q_{\alpha}\cap Q_{\beta})$, we have by \cref{NCCFZA} that $O^p(L_{\alpha})$ centralizes $Q_{\alpha}/Z_{\alpha}$ and $R_{\alpha}=Q_{\alpha}$.

Now, if $\bar{L_{\beta}}\not\cong \SL_2(q_{\beta})$ then comparing with \cref{SEQuad} and \cref{2FRecog}, we infer that $p=2$ and $q_\beta^2<q_\alpha$. Since $q_\beta>p$, we have that $q_\alpha>p^4$ and $\bar{L_{\alpha}}/O_{p'}(\bar{L_{\alpha}})\cong \PSL_2(q_\alpha)$. Applying \cref{SpeMod2}, we see that $q_\alpha^{\frac{2}{3}}\leq pq_\beta$ in outcome (i) of \cref{alphaaction} and $q_\alpha\leq p^2 q_\beta$ in outcome (iii) of \cref{alphaaction}, a contradiction. In outcome (ii) of \cref{alphaaction} we have that $\bar{L_{\alpha}}$ is isomorphic to a central extension of $\PSL_2(q_\alpha)$ and we deduce that $|Z_{\alpha}|\leq (q_{\beta} q_{\alpha})^2<q_\alpha^3$. Applying \cref{SL2ModRecog}, and using that $Z_{\alpha}\cap Q_{\beta}$ is a $G_{\alpha, \beta}$-invariant subgroup of $Z_{\alpha}$ of index $q_\beta< q_\alpha^{\frac{1}{2}}$, we deduce that $q_\alpha=q_\beta^3$ and $Z_{\alpha}$ is a triality module for $\bar{L_{\alpha}}$. However, since $F_{\beta}\le O^2(L_{\beta})$ and $F_{\beta}'\le D_{\beta}$, we have that $[Z_{\alpha}, F_{\beta}, F_{\beta}, F_{\beta}]\le [F_{\beta}', F_{\beta}]=\{1\}$, a contradiction to the module structure of $Z_{\alpha}$.  

Thus $\bar{L_{\beta}}\cong \SL_2(q_\beta)$ and applying \cref{DirectSum}, we have that $Q_\beta/D_\beta$ is a direct product of $n$ natural $\SL_2(q_\beta)$-modules and $q_\alpha=q_\beta^n$. In particular, $Z_{\alpha}\cap D_\beta$ has index $q_\beta q_\alpha$ in $Z_\alpha$ and is centralized by $S$. In particular, we have by \cref{SEFF} that either $Z_{\alpha}$ contains a unique non-central chief factor for $L_{\alpha}$, or $Z_{\alpha}$ contains two non-central chief factors and $q_\alpha=q_\beta$. In the latter case, by \cref{SEFF}, both chief factors are natural $\SL_2(q_\alpha)$ modules. Writing $R_1$ and $R_2$ for their centralizers in $L_{\alpha}$, we have that $L_{\alpha}/R_1\cong L_{\alpha}/R_2\cong \SL_2(q_\alpha)$ and $R_1\cap R_2=Q_{\alpha}$. It follows that $\bar{L_{\alpha}}\cong \SL_2(q_\alpha)$ in this case and by \cite{Greenbook}, no configurations exist with this structure. Hence, $Z_{\alpha}$ contains a unique non-central chief factor for $L_{\alpha}$ and this chief factor is also the unique non-central chief factor for $L_{\alpha}$ within $Q_{\alpha}$. 

Since $Z(L_{\alpha})=\{1\}$, \cref{SplitMod} yields that $Z_{\alpha}$ is irreducible and since $O^p(L_{\alpha})$ centralizes $Q_{\alpha}/Z_{\alpha}$, $\Phi(Q_{\alpha})=\{1\}$ and $Q_{\alpha}$ is elementary abelian. Since $D_{\beta}\le Q_{\alpha}$, $D_{\beta}$ is centralized by $S=F_{\beta}Q_{\alpha}$ so that $D_{\beta}=Z_{\beta}$. Now, since $Q_{\beta}$ is a direct sum of natural modules, for any $z\in Z_{\alpha}\setminus [Z_{\alpha}, Q_{\beta}]Z_{\beta}$, we have that $[z, Q_{\beta}]Z_{\beta}=[Z_{\alpha}, Q_{\beta}]Z_{\beta}$. Then, $Z_{\alpha}$ is a faithful simple nearly quadratic module for $\bar{L_{\alpha}}$. Applying \cref{nearquad}, we have that $F^*(\bar{L_{\alpha}})=Z(\bar{L_{\alpha}})K$ where $K$ is a component of $\bar{L_{\alpha}}$. If $\bar{L_{\alpha}}$ is not quasisimple then as $\bar{L_{\alpha}}/O_{3'}(\bar{L_{\alpha}})$ is a simple group, the only possibility is that $p=3$, $K\cong \Sz(2^r)$ for some $r$, and $\bar{L_{\alpha}}\le \Aut(K)$. Since $\Out(K)$ is solvable, this gives a contradiction.

Thus $\bar{L_{\alpha}}$ is quasisimple and by \cref{SE2}, $\bar{L_{\alpha}}\cong\SL_2(p^n), \PSL_2(p^n)$ for $n\geq 2$ and $p\in\{2,3\}$; or $\bar{L_{\alpha}}\cong \mathrm{M}_{11}$ or a coprime central extension of $\PSL_3(4)$ and $p=3$. If $\bar{L_{\alpha}}/Z(\bar{L_{\alpha}})\cong \PSL_2(q_\alpha)$ then $G$ has a weak BN-pair of rank $2$ and $G$ is determined up to local isomorphism in \cite{Greenbook}. Comparing with the amalgams determined there, we have that $G$ is locally isomorphic to $H$ where $F^*(H)\cong \PSU_4(p^n)$ or $\PSp_4(3^n)$ for $n\geq 2$. Hence, we may assume that $p=3$ and $q_\beta=q_\alpha=9$.

Suppose that $\bar{L_{\alpha}}\cong\mathrm{M}_{11}$. Then the amalgam is described in \cite{pap} and we have (v) as a conclusion in this case. If $G$ is obtained from a fusion system $\fs$ satisfying \cref{HypFus}, then since $S\in\syl_3(O^3(G_{\alpha}))$, it follows that $O^3(\fs)=\fs$ and we may apply the results in \cite{Comp1}. Indeed, $\fs$ is isomorphic to the $3$-fusion system of $\mathrm{Co_{3}}$.

We may assume that $\bar{L_{\alpha}}$ is isomorphic to a central extension of $\PSL_3(4)$. Since $\bar{L_{\alpha}}$ is generated by two conjugate Sylow $3$-subgroups and $Z_{\beta}$ has index $3^4$ in $Z_{\alpha}$, we deduce that $|Z_{\alpha}|\leq 3^8$. In fact, the possibilities for irreducible modules for central extensions of $\PSL_3(4)$ imply that $|Z_{\alpha}|\in\{3^6, 3^8\}$. If $|Z_{\alpha}|=3^8$, then $\bar{L_{\alpha}}\cong 4\cdot \PSL_3(4)$ and we infer that $|Z_{\beta}|=3^4$. Comparing with the structure of the $8$-dimensional module for $\bar{L_{\alpha}}$, this yields a contradiction. Thus, $|Z_{\alpha}|=3^6$. One can check that for each of the modules in question that $|Z(\bar{L_{\alpha}})|=2$ and so by coprime action, $Q_{\alpha}=[Q_{\alpha}, Z(\bar{L_{\alpha}})]\times C_{Q_{\alpha}}(Z(\bar{L_{\alpha}}))$ and as $Z_{\alpha}=[Q_{\alpha}, Z(\bar{L_{\alpha}})]$, we deduce that $Q_\alpha=Z_\alpha$. For $t$ an involution in the preimage in $L_{\alpha}$ of $Z(\bar{L_{\alpha}})$, we have that $S=Z_{\alpha}\rtimes C_S(t)$. Moreover, $S$ is of order $3^8$ and is isomorphic to a Sylow $3$-subgroup of $\mathrm{Suz}$ or $\PSp_4(9)$. In the former case, $Z_{\beta}$ is of order $3$ a contradiction since $Z_{\beta}$ has index $3^4$ in $Z_{\alpha}$.

Thus $S$ is isomorphic to a Sylow $3$-subgroup of $\PSp_4(9)$, and we calculate in MAGMA that $\bar{G_{\alpha}}$ embeds as a subgroup of $2\cdot\PSL_3(4).2^2$ and for any element $x\in \bar{G_{\alpha}}$ of order $8$, $[x^4, Z_{\alpha}]\not\le [S, Z_{\alpha}]=Z_{\alpha}\cap Q_{\beta}$. Let $t_{\beta}\in L_{\beta}\cap G_{\alpha,\beta}$ be an element of order $8$, so that $t_{\beta}^4Q_{\beta}\le Z(\bar{L_{\beta}})$. But then $[t_{\beta}^4, Z_{\alpha}]\le Z_{\alpha}\cap Q_{\beta}$ and since $t_{\beta}\le G_{\alpha}$, we have a contradiction.
\end{proof}

\begin{proposition}\label{b=1c}
Suppose that $m_p(S/Q_{\alpha})\geq 2$, $m_p(S/Q_{\beta})=1$ and $p\in\{2,3\}$. Then one of the following holds:
\begin{enumerate}
\item $G$ has a weak BN-pair of rank $2$ and $G$ is locally isomorphic to $H$ where $F^*(H)\cong \PSU_4(p), \PSU_5(2)$;
\item $p=3$, $|S|=3^6$, $\bar{L_{\alpha}}\cong\PSL_2(9)$, $Z_{\alpha}=Q_{\alpha}$ is a natural $\Omega_4^-(3)$-module, $\bar{L_{\beta}}\cong (Q_8\times Q_8):3$ and $Q_{\beta}\cong 3^{1+4}_+$;
\item $p=3$, $|S|=3^6$, $\bar{L_{\alpha}}\cong\PSL_2(9)$, $Z_{\alpha}=Q_{\alpha}$ is a natural $\Omega_4^-(3)$-module, $\bar{L_{\beta}}\cong 2\cdot\Alt(5)$ and $Q_{\beta}\cong 3^{1+4}_+$;
\item $p=3$, $|S|=3^6$, $\bar{L_{\alpha}}\cong\PSL_2(9)$, $Z_{\alpha}=Q_{\alpha}$ is a natural $\Omega_4^-(3)$-module, $\bar{L_{\beta}}\cong 2^{1+4}_-.\Alt(5)$ and $Q_{\beta}\cong 3^{1+4}_+$;
\item $p=3$, $|S|=3^7$, $\bar{L_{\alpha}}\cong \mathrm{M}_{11}$ and $Z_{\alpha}=Q_{\alpha}$ is the ``cocode'' module for $\bar{L_{\alpha}}$, $\bar{L_{\beta}}\cong \SL_2(3)$ and $Q_{\beta}\cong 3^{1+1+4}\cong T\in\syl_3(\SL_3(9))$; or
\item $p=3$, $|S|=3^7$, $\bar{L_{\alpha}}\cong \mathrm{M}_{11}$ and $Z_{\alpha}=Q_{\alpha}$ is the ``cocode'' module for $\bar{L_{\alpha}}$, $\bar{L_{\beta}}\cong \SL_2(5)$ and $Q_{\beta}\cong 3^{1+1+4}\cong T\in\syl_3(\SL_3(9))$.
\end{enumerate}
Moreover, if $G$ is obtained from a fusion system $\fs$ satisfying \cref{HypFus} then one of the following holds:
\begin{enumerate}
\item $p=2$ and $\fs=\fs_S(G)$ where $G\cong\PSU_4(2)$, $\Aut(\PSU_4(2))$, $\PSU_5(2)$ or $\Aut(\PSU_5(2))$;
\item $p=3$ and $\fs=\fs_S(G)$ where $F^*(G)\cong \PSU_4(3)$; or
\item  $p=3$ and $\fs=\fs_S(G)$ where $G\cong\mathrm{McL}$, $\Aut(\mathrm{McL})$, $\mathrm{Co}_2$, $\mathrm{Ly}$, $\mathrm{Suz}$, $\Aut(\mathrm{Suz})$,  $\PSU_6(2)$ or $\PSU_6(2).2$.
\end{enumerate}
\end{proposition}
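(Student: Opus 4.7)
The plan is to follow the same high-level strategy as in \cref{b=1b}, namely identify $\bar{L_\beta}$ first via quadratic-action constraints, then deduce that $Z_\alpha=Q_\alpha$ (or at least $R_\alpha=Q_\alpha$) carries a faithful nearly quadratic module for $\bar{L_\alpha}$, and finally invoke \cref{nearquad} together with the low-rank classification \cref{SE2} to pin down $\bar{L_\alpha}$ and $Z_\alpha$. Since $m_p(S/Q_\beta)=1$, \cref{SE1} applied to $\bar{L_\beta}$ severely restricts its shape: either $\bar{L_\beta}$ is $p$-solvable with $S/Q_\beta$ cyclic or generalized quaternion, or $p=2$ and $\bar{L_\beta}/\Omega(S/Q_\beta)O_{2'}(\bar{L_\beta})$ involves $\PSL_2(q)$ for some odd $q$ or $\Alt(7)$. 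Combining this with the fact that $Z_\alpha$ acts quadratically (via \cref{NotQuad}) on $Q_\beta/\Phi(Q_\beta)$, and that $F_\beta/D_\beta\cap F_\beta$ is a faithful non-trivial module, I expect to reduce the list of possibilities for $\bar{L_\beta}$ to $\SL_2(3)$, $\SL_2(5)$, $(Q_8\times Q_8){:}3$, $2\cdot\Alt(5)$, $2^{1+4}_{-}.\Alt(5)$ when $p=3$, and to the analogue list ($\Sym(3)$, $\SU_3(2)'$ extensions, etc.) when $p=2$. In each case I will then use \cref{NCCFZA} together with $Q_\alpha=Z_\alpha(Q_\alpha\cap Q_\beta)$ to conclude that $O^p(L_\alpha)$ centralizes $Q_\alpha/Z_\alpha$, hence $R_\alpha=Q_\alpha$, and coprime action with $Z_\beta\le Z_\alpha$ will force $Q_\alpha=Z_\alpha$.

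Next, with $Q_\alpha=Z_\alpha$ faithful for $\bar{L_\alpha}$, I argue $Z_\alpha$ is irreducible (else some proper $\bar{L_\alpha}$-invariant subgroup gives an FF-module and forces $Z(L_\alpha)\ne\{1\}$ as in \cref{b=1b}, contradicting $Z(L_\alpha)=\{1\}$), and that $Z_\alpha$ is nearly quadratic: for $v\in Z_\alpha\setminus Z_\alpha\cap Q_\beta$ the commutator $[v,Q_\beta]D_\beta$ is independent of $v$ because $Q_\beta/D_\beta$ is an irreducible $\bar{L_\beta}$-module generated by a single coset modulo $D_\beta$. Applying \cref{nearquad} forces $\bar{L_\alpha}$ to be quasisimple (the $\Frob(39)$ and $C_2\wr\Sym(n)$ exceptions are incompatible with $m_p(S/Q_\alpha)\ge 2$ and the $\bar{L_\beta}$ list above), so by \cref{SE2} we have $\bar{L_\alpha}\cong\mathrm{(P)SL}_2(p^n)$, $\mathrm{(P)SU}_3(p^n)$, $\mathrm{M}_{11}$, or a $p'$-cover of $\PSL_3(4)$ (the last only if $p=3$). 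The $\PSU_3$ case is excluded by matching Sylow-rank with $m_p(S/Q_\beta)=1$, and the $\PSL_3(4)$ case is eliminated exactly as at the end of \cref{b=1b} using the structure of $S$ and the fact that $Z_\beta$ has the wrong codimension in $Z_\alpha$.

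The remaining cases give the listed conclusions. If $\bar{L_\alpha}/Z(\bar{L_\alpha})\cong\PSL_2(p^n)$, then $(G_\alpha,G_\beta,G_{\alpha\beta})$ is a weak BN-pair of rank $2$ and \cite{Greenbook} identifies $G$ up to local isomorphism, giving the $\PSU_4(p)$, $\PSU_5(2)$ cases in~(i). When $\bar{L_\alpha}\cong\PSL_2(9)$ and $Z_\alpha$ is a natural $\Omega_4^-(3)$-module (which is forced because $|Z_\alpha\cap Q_\beta|=|Z_\alpha|/9$ together with \cref{SL2ModRecog}), the module $F_\beta/D_\beta\cap F_\beta$ must be a quadratic $2$F-module for $\bar{L_\beta}$ on which the $G_{\alpha,\beta}$-invariant line $Z_\alpha C_{V_\beta}(O^p(L_\beta))/C_{V_\beta}(O^p(L_\beta))$ generates; applying \cref{Quad2F} then yields exactly the three groups $(Q_8\times Q_8){:}3$, $2\cdot\Alt(5)$, $2^{1+4}_{-}.\Alt(5)$ in outcomes (ii)--(iv), with $Q_\beta\cong 3^{1+4}_+$ in each case (via the determination of the extraspecial quotient from faithful action on a $4$-dimensional module). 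When $\bar{L_\alpha}\cong\mathrm{M}_{11}$, the cocode module has dimension $5$, so $|Z_\alpha|=3^5$ and a direct computation (along the lines of \cite{pap}) shows $|Q_\beta|=3^5$ with $Q_\beta\cong T\in\syl_3(\SL_3(9))$; the possibilities for $\bar{L_\beta}$ acting faithfully on $Q_\beta/Z_\beta$ (an irreducible quadratic $2$F-module for a rank-$1$ group) are $\SL_2(3)$ or $\SL_2(5)$, giving (v) and (vi).

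The main obstacle is the $\mathrm{M}_{11}$ analysis: verifying that the cocode module is really what appears (ruling out the code module by the dimension of $Z_\beta$ and the transitivity of $\bar{L_\beta}$ on a specific orbit), and then identifying $Q_\beta$ and $\bar{L_\beta}$ precisely; this will likely require an appeal to \cite{pap} and an explicit calculation of the automorphism group of $Q_\beta\cong 3^{1+1+4}$. For the fusion-system ``moreover'' clause, since the structural constraints force $S=(S\cap O^p(G_\alpha))(S\cap O^p(G_\beta))$ in every case, $\fs=O^p(\fs)$ is reduced, and as $|S|\le 3^7$ the results of \cite{OliSmall} and \cite{Comp1} identify $\fs$ among the listed candidates; the only genuinely delicate check is that in the $\mathrm{M}_{11}$ cases one really obtains the simple groups $\mathrm{McL}$, $\mathrm{Co}_2$, $\mathrm{Ly}$, etc., which again reduces to the enumeration in \cite{Comp1}.
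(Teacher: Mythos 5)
Your high-level outline matches the paper's route (reduce to $Q_\alpha=Z_\alpha$ irreducible and faithful for $\bar{L_\alpha}$, determine $\bar{L_\alpha}$ via module theory, then read off $\bar{L_\beta}$ from \cref{Quad2F} and close with \cite{Comp1}), but the central reduction — showing $\bar{L_\alpha}$ is quasisimple — is where you have a genuine gap. Your proposed argument is that $Q_\beta$ acts \emph{nearly quadratically} on $Z_\alpha$ ``because $Q_\beta/D_\beta$ is an irreducible $\bar{L_\beta}$-module generated by a single coset modulo $D_\beta$.'' That style of argument is exactly what works in \cref{b=1b}, but there the justification is that $\bar{L_\beta}\cong\SL_2(q_\beta)$ and $Q_\beta/D_\beta$ is a \emph{direct sum of natural $\SL_2(q_\beta)$-modules}, from which ``$[z,Q_\beta]Z_\beta$ is independent of $z\notin[Z_\alpha,Q_\beta]Z_\beta$'' follows by inspecting the natural module. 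Here $m_p(S/Q_\beta)=1$, so $\bar{L_\beta}$ need not be $\SL_2(q)$ and $Q_\beta/D_\beta$ is one of the quadratic $2F$-modules of \cref{Quad2F} (e.g.\ for $2\cdot\Alt(5)$, $2^{1+4}_-.\Alt(5)$ or $(Q_8\times Q_8){:}3$), which is \emph{not} a direct sum of natural modules, and the ``single coset'' phrasing does not supply a replacement argument. Also note that the object of interest has to lie inside $Z_\alpha$, so you want $[v,Q_\beta]C_{Z_\alpha}(Q_\beta)$, not $[v,Q_\beta]D_\beta$ (which lives in $Q_\beta$); this is fixable, but signals that the near-quadratic verification hasn't actually been carried out.

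The paper's proof handles this step differently and in two separate ways depending on $p$. It first invokes \cref{alphaaction} and the coprime generation argument \cref{GLS2p'} to treat the case where no element $x\in S/Q_\alpha$ of order $p$ has $|Z_\alpha/C_{Z_\alpha}(x)|=p^2$. In the remaining case, for $p=3$ it distinguishes $x$ quadratic (handled by \cref{SEQuad}) from $x$ cubic-but-not-quadratic (where the near-quadratic condition \emph{for that single element} is checked by a short index computation and \cref{nearquad} is applied); for $p=2$ it cannot argue this way at all — involutions are automatically quadratic — and instead takes a $2$-minimal subgroup $P_\alpha$, applies \cite[Theorem 1]{ChermakSmall} to identify the $P_\alpha$-chief factor of $Z_\alpha$ as a natural $\SL_2(4)$- or $\Omega_4^-(2)$-module, and rules each out using the $G_{\alpha,\beta}$-action on $Q_\beta$ and the $\Omega_4^-$-cohomology splitting of \cref{A6Cohom} together with \cite[Proposition 4.6]{ChermakSmall}. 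Your proposal does not account for the $p=2$ obstruction, and your elimination of $\PSU_3(p^n)$ ``by matching Sylow-rank with $m_p(S/Q_\beta)=1$'' is also off: what actually kills $\PSU_3$ is that $S/Q_\alpha$ is elementary abelian (\cref{S/QAAbelian}) while $\PSU_3$ has special, non-abelian Sylow $p$-subgroups. The rest of your plan (the final case-by-case identification of $\bar{L_\beta}$, $Q_\beta$, and the fusion-system conclusions via \cite{pap} and \cite{Comp1}) is consistent with the paper's argument, but the quasisimplicity step as written would not go through.
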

\begin{proof}
Suppose that $m_p(S/Q_{\alpha})\geq 2$ and $m_p(S/Q_{\beta})=1$. Observe first that if $|S/Q_{\beta}|\ne p$, then using that $m_p(S/Q_{\alpha})>1$, we must have that $S/Q_{\beta}$ generalized quaternion and by \cref{NCCFZA} that $Q_{\alpha}/\Phi(Q_{\alpha})$ has an index $4$ subgroup centralized by $F_{\beta}$. Then \cref{SEFF} yields that $\bar{L_{\alpha}}\cong \PSL_2(4)$. But then $Q_{\beta}/\Phi(Q_{\beta})$ is a quadratic $2F$-module and by \cref{Quad2F}, we deduce that $\bar{L_{\beta}}\cong \SU_3(2)$. Thus, $(G_{\alpha}, G_{\beta}, G_{\alpha, \beta})$ is a weak BN-pair and is locally isomorphic to $H$ where $F^*(H)\cong \PSU_5(2)$. 

Thus, we continue with the added assumption that $|S/Q_{\beta}|=p$. By \cref{NCCFZA} we have that $O^p(L_{\alpha})$ centralizes $Q_{\alpha}/Z_{\alpha}$ so that $R_{\alpha}=Q_{\alpha}$. Furthermore, there is a unique non-central chief factor within $Z_{\alpha}$ for $L_{\alpha}$ and as $Z(L_{\alpha})=\{1\}$ and applying \cref{SplitMod}, we conclude that $Z_{\alpha}$ is an irreducible faithful module for $\bar{L_{\alpha}}$. Since $O^p(L_{\alpha})$ centralizes $Q_{\alpha}/Z_{\alpha}$, we conclude that $\Phi(Q_{\alpha})\cap Z_{\alpha}=\Phi(Q_{\alpha})=\{1\}$ and $Q_{\alpha}$ is elementary abelian. Note that if $O_{p'}(\bar{L_{\alpha}})\ne\{1\}$, then $Z_{\beta}\le Z_{\alpha}=[Q_{\alpha}, O_{p'}(\bar{L_{\alpha}})]$ by the irreducibility of $Z_{\alpha}$ and by coprime action, we infer that $Q_{\alpha}=Z_{\alpha}$ is elementary abelian.

We aim to show that $O_{p'}(\bar{L_{\alpha}})\le Z(\bar{L_{\alpha}})$. Assume that there does not exist $x\in S/Q_{\alpha}$ of order $p$ such that $|Z_{\alpha}/C_{Z_{\alpha}}(x)|=p^2$. Applying \cref{alphaaction}, we conclude that there is $C\le S/Q_{\alpha}$ of order $p^2$ such that $|Z_{\alpha}/C_{Z_{\alpha}}(C)|=|Z_{\alpha}/C_{Z_{\alpha}}(x)|=p^3$ for all $1\ne x \in C$. Following the methodology of \cref{alphaaction} using \cref{GLS2p'} we deduce that $O_{p'}(\bar{L_{\alpha}})\le Z(\bar{L_{\alpha}})$ as desired. 

Hence, we may assume that there is $x\in S/Q_{\alpha}$ of order $p$ such that $|Z_{\alpha}/C_{Z_{\alpha}}(x)|=p^2$. Suppose first that $p=3$. Note that if $x$ acts quadratically on $Z_{\alpha}$ then $O_{3'}(\bar{L_{\alpha}})\le Z(\bar{L_{\alpha}})$ by \cref{SEQuad}. Thus, $[Z_{\alpha}, x]C_{Z_{\alpha}}(x)$ has index $3$ in $Z_{\alpha}$ and by \cref{nearquad} we have that $\bar{L_{\alpha}}$ is quasisimple, as desired. Suppose now that $p=2$ so that $[Z_{\alpha}, x, x]=\{1\}$. Since $\bar{L_{\alpha}}$ is not quasisimple by assumption, we deduce that $F^*(\bar{L_{\alpha}})=F(\bar{L_{\alpha}})$. Letting $P_\alpha$ be a $2$-minimal subgroup of $L_{\alpha}$ such that $\bar{L_{\alpha}}=\bar{P_{\alpha}}O_{2'}(\bar{L_{\alpha}})$, applying \cite[Theorem 1]{ChermakSmall}, we conclude that for $U$ a non-central $P_{\alpha}$-chief factor of $Z_{\alpha}$ that $U$ is either a natural $\SL_2(4)$-module, or a natural $\Omega_4^-(2)$-module for $\bar{P_{\alpha}}\cong \PSL_2(4)$. In particular, $Z_{\alpha}\cap Q_{\beta}$ is an elementary abelian subgroup of $Q_{\beta}$ of index $4$ and we calculate by the action of $G_{\beta}$ that $Z(Q_{\beta})=Z_\beta$ has index $2^4$ in $Q_{\beta}$. Moreover, $Q_{\beta}/D_{\beta}$ is a quadratic faithful $2$F-module for $\bar{L_{\beta}}$ so that by \cref{Quad2F}, $\bar{L_{\beta}}\cong \Sym(3), (3\times 3):2$ or $\Dih(10)$ and $Q_{\beta}=(Z_{\alpha}\cap Q_{\beta})(Z_{\alpha+2}\cap Q_{\beta})$. For $W$ the preimage in $Z_{\alpha}$ of $U$, if $U$ is a natural $\SL_2(4)$-module then $C_{Z_{\alpha}}(x)=C_{Z_{\alpha}}(y)=C_{Z_{\alpha}}(S)=Z_{\beta}$ for distinct $x,y\in \bar{S}$. But then $[Z_{\alpha}, Q_{\beta}]\le [Z_{\alpha}, S]\le Z_{\beta}$, a contradiction since $Z_{\alpha}\not\le Q_{\beta}$. If $U$ is a natural $\Omega_4^-(2)$-module for $\bar{P_{\alpha}}$ then repeatedly applying \cref{A6Cohom}, we deduce that $Z_{\alpha}=[Z_{\alpha}, P_{\alpha}]\times C_{Z_{\alpha}}(P_{\alpha})$. In particular, $|Z_{\alpha}/Z_{\beta}|=2^3$ and applying \cite[Proposition 4.6]{ChermakSmall}, we conclude that $[F(\bar{L_{\alpha}}), \bar{L_{\alpha}}]\cong 3\times 3$ and where each factor is normalized by $\bar{L_{\alpha}}$. But then $[F(\bar{L_{\alpha}}), \bar{L_{\alpha}}, \bar{L_{\alpha}}]=\{1\}$ and by coprime action, $[F(\bar{L_{\alpha}}), \bar{S}]=\{1\}$, a contradiction since $F(\bar{L_{\alpha}})$ is self-centralizing. Hence, $O_{p'}(\bar{L_{\alpha}})\le Z(\bar{L_{\alpha}})$ and $\bar{L_{\alpha}}$ is quasisimple.

Suppose that $\bar{L_{\alpha}}\cong\SL_2(p^n)$ or $\PSL_2(p^n)$ for any $n>1$. Unless $r_\alpha=p$ we have that $|Z_{\alpha}|\leq r_\alpha^2p^2\leq p^{2n+2}$ and if $r_\alpha=p$, we have that $|Z_{\alpha}|\leq p^6$. Since the minimal degree of a $\mathrm{GF}(p)$-representation of $\bar{L_{\alpha}}$ is $2n$ and $n\geq 2$, $r_{\alpha}\geq p^{n-1}$ and it follows that there is at most one non-trivial irreducible composition factor within $Z_{\alpha}$. Since $C_{Z_{\alpha}}(O^p(L_{\alpha}))=Z(L_{\alpha})=\{1\}$, by \cref{SplitMod}, $Z_{\alpha}=[Z_{\alpha}, L_{\alpha}]$ is irreducible. Setting $K$ to be Hall $p'$-subgroup of $L_{\alpha}\cap G_{\alpha,\beta}$, it follows from Smith's theorem (\cite[Theorem 2.8.11]{GLS3}) that $Z_{\beta}=C_{Z_{\alpha}}(S)$ and $Z_{\alpha}/[Z_{\alpha}, S]$ are irreducible and $1$-dimensional as $\bar{F}K$-modules, where $\bar{F}$ is an algebraically closed field of characteristic $p$. But $[Z_{\alpha}, S]=[Z_{\alpha}, Q_{\beta}]\le Z_{\alpha}\cap Q_{\beta}$ and since $Z_{\alpha}\cap Q_{\beta}$ has index $p$ in $Z_{\alpha}$, $[Z_{\alpha}, S]=Z_{\alpha}\cap Q_{\beta}$ and $|Z_{\beta}|=|Z_{\alpha}/[Z_{\alpha}, S]|=p$. If $n>2$, then $|Z_{\alpha}|\leq p^{2n+2}< p^{3n}$ and \cite[Lemma 2.6]{ChermakJ} implies that $Z_{\alpha}$ is a triality module for $\bar{L_{\alpha}}\cong\SL_2(p^3)$ and $|Z_{\alpha}|=p^8$. Since $|Z_{\alpha}|\leq r_\alpha^2p^2$, we have that $r_\alpha=p^3$ and $S=(Z_{\alpha+2}\cap Q_{\beta})Q_\alpha$ centralizes $Z_{\alpha}\cap Q_{\beta}\cap Q_{\alpha+2}$. But then $Z_{\beta}=Z_{\alpha}\cap Q_{\beta}\cap Q_{\alpha+2}$ is index $p^4$ in $Z_{\alpha}$. Since $|Z_{\beta}|=p$, $p^5=|Z_{\alpha}|=p^8$, a contradiction.

Thus, we may assume that $|S/Q_{\alpha}|=p^2$ for the remainder of the proof. Then $F_{\beta}/F_{\beta}\cap D_{\beta}$ is a quadratic $2F$-module and so, by \cref{Quad2F}, both $\bar{L_{\beta}}$ and $F_{\beta}/D_{\beta}\cap F_{\beta}$ are determined. We have that $\bar{L_{\alpha}}$ is isomorphic to $\PSL_2(p^2), \SL_2(p^2), \mathrm{M}_{11}$ or a central extension of $\PSL_3(4)$. Since both $\mathrm{M}_{11}$ and central extensions of $\PSL_3(4)$ are generated by two conjugate Sylow $3$-subgroups (or by three $3$-elements), we see that $|Z_{\alpha}|\leq 3^6$ and by the above, in all cases we have that $|Z_{\alpha}|\leq p^6$. Checking against the degrees of the minimal $\mathrm{GF}(p)$-representations of the candidates for $\bar{L_{\alpha}}$, we see that $Z_{\alpha}$ contains a unique irreducible composition factor and since $C_{Z_{\alpha}}(O^p(L_{\alpha}))=Z(L_{\alpha})=\{1\}$, it follows from \cref{SplitMod}, that $Z_{\alpha}=[Z_{\alpha}, L_{\alpha}]$ is irreducible.
 
Now, checking against the list of groups provided in \cref{Quad2F}, either $\bar{L_{\beta}}$ is $p$-solvable or has a non-trivial center, and for $T_\beta$ the preimage in $L_{\beta}$ of $O_{p'}(\bar{L_{\beta}})$, we have by coprime action $Q_{\beta}/\Phi(Q_{\beta})=[Q_{\beta}/\Phi(Q_{\beta}), T_\beta]\times C_{Q_{\beta}/\Phi(Q_{\beta})}(T_\beta)$ where $[Q_{\beta}/\Phi(Q_{\beta}), T_\beta]$ contains all non-central chief factors in $Q_{\beta}/\Phi(Q_{\beta})$ and $C_{Q_{\beta}/\Phi(Q_{\beta})}(T_\beta)=C_{Q_{\beta}/\Phi(Q_{\beta})}(O^p(L_\beta))$. In particular, $F_{\beta}\Phi(Q_{\beta})/\Phi(Q_{\beta})=[Q_{\beta}/\Phi(Q_{\beta}), T_\beta]$. Since $\Phi(Q_{\beta})\le Q_{\alpha}$, $[\Phi(Q_{\beta}), Z_{\alpha}]=\{1\}$ and it follows that $\Phi(Q_{\beta})\le D_{\beta}$ so that $Q_{\beta}=F_{\beta}D_{\beta}$. Since $D_{\beta}\le Q_{\alpha}$ is elementary abelian and $F_{\beta}\le O^p(L_{\beta})$, $S=F_{\beta}Q_{\alpha}$ centralizes $D_{\beta}$ so that $D_{\beta}=Z_{\beta}$. 

Suppose that $\bar{L_{\alpha}}$ is isomorphic to a central extension of $\PSL_3(4)$. Since $|Z_{\alpha}|\leq 3^6$, we have that $|Z_{\alpha}|=3^6$, $|Z(\bar{L_{\alpha}})|=2$ and since $O^3(L_{\alpha})$ centralizes $Q_{\alpha}/Z_{\alpha}$, an easy coprime action argument yields that $Q_{\alpha}=Z_{\alpha}$ is elementary abelian. Hence, $|S|=3^8$ and comparing with the modules in \cref{Quad2F}, $|Q_{\beta}/Z_{\beta}|=3^4$ so that $|Z_{\beta}|=3^3$. Checking the irreducible $\mathrm{GF}(3)$-modules associated to $\bar{L_{\alpha}}$, we have that $|Z_{\beta}|\leq 3^2$, a contradiction.

Suppose that $\bar{L_{\alpha}}\cong\mathrm{M}_{11}$. Then $p=3$, $|Z_{\alpha}|=3^5$ and $\bar{L_{\beta}}\cong 2\cdot\Alt(5)$, $2^{1+4}_-.\Alt(5)$, $\SL_2(3)$ or $(Q_8\times Q_8):3$ by \cref{Quad2F}. In the first three cases, the structure of $L_{\alpha}$ and $L_{\beta}$ is determined in \cite{pap} and outcomes (vi) and (vii) follow in these cases. Suppose that $\bar{L_{\beta}}\cong (Q_8\times Q_8):3$ with $|Q_{\beta}/Z_{\beta}|=p^4$ and let $K_\beta$ be a Hall $2'$-subgroup of $G_{\alpha,\beta}\cap L_{\beta}$.  Then $K_\beta \le G_{\alpha}$ and so $K_{\beta}$ acts on $L_{\alpha}/Q_{\alpha}$. Since $\mathrm{M}_{11}$ has no outer automorphisms, if $K_{\beta}\not\le L_{\alpha}$, then there is an involution $t\in K_\beta$ such that $[t, L_{\alpha}]\le Q_{\alpha}$ and $[t, L_{\beta}]\le Q_{\beta}$, a contradiction by \cref{BasicAmal}. Thus, $K_\beta \le L_{\alpha}$ and we may assume that $L_{\alpha}=G_{\alpha}$. Since $[K_\beta, Z_{\alpha}]\le Z_{\alpha}\cap Q_{\beta}$ and $K_\beta$ centralizes $Z_{\beta}$ it follows that $|C_{Z_{\alpha}}(K_\beta)|=3^3$, and one can check that this provides a contradiction. 

Finally, suppose that $\bar{L_{\alpha}}\cong \PSL_2(p^2)$ or $\SL_2(p^2)$. Then, again by Smith's theorem, $|Z_{\beta}|=p$ so that $F_{\beta}=Q_{\beta}$. By the minimality of $F_{\beta}$, it follows that $Z(Q_{\beta})=\Phi(Q_{\beta})=Z_{\beta}$ is of order $p$ and $Q_{\beta}$ is extraspecial. Since $Q_{\beta}\cap Q_{\alpha}$ is an elementary abelian subgroup of index $p^2$ in $Q_{\beta}$, we have that $|Q_{\beta}|=p^5$. In particular, $|S|=p^6$ and $Z_{\alpha}=Q_{\alpha}$ is of order $p^4$. 

If $p=2$, then $\bar{L_{\beta}}\cong \Dih(10), \Sym(3)$ or $(3\times 3):2$. In the first two cases, $G$ has a weak BN-pair and so comparing with \cite{Greenbook}, we have that $L_{\beta}\cong\Sym(3)$ and $G$ is locally isomorphic to $H$ where $F^*(H)\cong\PSU_4(2)$. Since $Q_{\beta}$ is extraspecial, comparing with \cite{Winter}, $\bar{L_{\beta}}$ is isomorphic to a subgroup of $O_4^+(2)$ if $Q_{\beta}\cong 2^{1+4}_+$;  or $O_4^-(2)$ if $Q_{\beta}\cong 2^{1+4}_-$. Note that $9$ does not divide $|O_4^-(2)|$ and so we may assume that $Q_{\beta}\cong 2^{1+4}_+$. Let $K$ be a Sylow $3$-subgroup of $L_{\alpha}\cap G_{\alpha,\beta}$. Then $K$ acts non-trivially on $Q_{\beta}$ and so $K$ also embeds into $O_4^+(2)$ while normalizing $\bar{L_{\beta}}\cong (3\times 3):2$. But for $H\le O_4^+(2)$ with $H\cong (3\times 3):2$ we have that $|N_{O_4^+(2)}(H)/H|=2$, a contradiction. 

Thus, we may assume that $p=3$ and $L_{\beta}\cong \SL_2(3), (Q_8\times Q_8):3, 2\cdot\Alt(5)$ or $2^{1+4}_-.\Alt(5)$. Since $|Z_{\alpha}|=3^4$ and is not quadratic, we have that $\bar{L_{\alpha}}\cong \PSL_2(9)$ and $Z_{\alpha}$ is a natural $\Omega_4^-(3)$-module. If $\bar{L_{\beta}}\cong \SL_2(3)$ then $G$ has a weak BN-pair and comparing with \cite{Greenbook}, we have that $G$ is locally isomorphic to $H$ where $F^*(H)\cong\PSU_4(3)$. If $\bar{L_{\beta}}\cong 2\cdot\Alt(5)$ or $2^{1+4}_-.\Alt(5)$ then the structure of $L_{\alpha}$ and $L_{\beta}$ is determined in \cite{pap} and we obtain conclusions (iii) and (iv). If $G$ is obtained from a fusion system $\fs$ satisfying \cref{HypFus}, then applying the results in \cite{Comp1}, $\fs$ is isomorphic to the $3$-fusion system of $\mathrm{McL}$, $\Aut(\mathrm{McL})$ or $\mathrm{Co}_2$. Finally, suppose that $\bar{L_{\beta}}\cong (Q_8\times Q_8):3$. Since $Q_{\beta}$ is extraspecial of order $3^5$ and $\bar{L_{\beta}}$ embeds in the automorphism group of $Q_{\beta}$, it follows from \cite{Winter} that $Q_{\beta}\cong 3^{1+4}_+$ and we have (ii) as a conclusion. If $G$ is obtained from a fusion system $\fs$ satisfying \cref{HypFus}, then applying the results in \cite{Comp1}, $\fs$ is isomorphic to the $3$-fusion system of $\PSU_6(2)$ or $\PSU_6(2).2$.
\end{proof}

We conclude this final section by summarizing what has been shown:

\begin{theorem}\label{Mainbodd}
Suppose that $\mathcal{A}=\mathcal{A}(G_\alpha, G_\beta, G_{\alpha,\beta})$ is an amalgam satisfying \cref{MainHyp}. If $Z_{\alpha'}\le Q_{\alpha}$, then one of the following holds:
\begin{enumerate}
\item $G$ has a weak BN-pair of rank $2$; or
\item $p=3$, $b=1$, $|S|\leq 3^7$ and the shapes of $L_{\alpha}$ and $L_{\beta}$ are known.
\end{enumerate}
\end{theorem}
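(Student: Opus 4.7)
The plan is to assemble the results established across the two subsections of \cref{oddsec} into a single collated statement, so the proof is almost entirely bookkeeping. First I would invoke \cref{bodd} to record the basic reductions: $b$ is odd, $Z_{\beta}=\Omega(Z(S))=\Omega(Z(L_{\beta}))$, and $Z(L_{\alpha})=\{1\}$. With these in hand the argument splits on the value of $b$.

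Suppose $b=1$. If $p\geq 5$, I would quote the paragraph following \cref{betadeduct}, which uses critical subgroups together with \cref{CubicAction}, \cite{Greenbook} and the quadratic classification \cref{SEQuad} to conclude that $\mathcal A$ is a weak BN-pair locally isomorphic to some $H$ with $F^*(H)\in\{\PSp_4(p^n),\PSU_4(p^n),\PSU_5(p^n)\}$, placing us in outcome (i). If $p\in\{2,3\}$, I would split according to $m_p(S/Q_\alpha)$ and $m_p(S/Q_\beta)$: the three propositions \cref{b=1a}, \cref{b=1b}, \cref{b=1c} cover the possibilities $m_p(S/Q_\alpha)=1$; $m_p(S/Q_\alpha)\geq 2$ with $m_p(S/Q_\beta)\geq 2$; and $m_p(S/Q_\alpha)\geq 2$ with $m_p(S/Q_\beta)=1$ respectively. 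In each, the listed exceptional outcomes all satisfy $p=3$ and $|S|\leq 3^7$, giving (ii); the remaining outcomes produce a weak BN-pair of rank $2$, giving (i).

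Now suppose $b\geq 3$. I would partition on whether $C_{V_{\beta}}(V_{\alpha'})=V_{\beta}\cap Q_{\alpha'}$ or $C_{V_{\beta}}(V_{\alpha'})<V_{\beta}\cap Q_{\alpha'}$, which is legitimate by \cref{b>1}. In the first subcase, \cref{SL2VlQ}, \cref{SL2VnQ} and \cref{NotNatural} give a uniform rank~$1$ Lie-type description of the chief factors, and the cumulative bounds on $b$ drive this down to $b\in\{3,5\}$; the $b=3$ arguments culminate in \cref{b=3min} showing $G$ is parabolic isomorphic to $\mathrm{M}_{12}$ or $\Aut(\mathrm{M}_{12})$, and the $b=5$ arguments force parabolic isomorphism with $\mathrm{F}_3$, both of which are weak BN-pairs. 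In the second subcase, the chain \cref{VnotB1} through the final identification forces $G$ to be locally isomorphic to a ${}^2\mathrm{F}_4(2^n)$ or ${}^2\mathrm{F}_4(2)'$ amalgam, again a weak BN-pair by \cref{greenbook}. Thus every configuration with $b\geq 3$ lands in outcome (i).

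There is no genuine obstacle here beyond verifying that the stated case divisions are exhaustive and that each cited proposition produces an outcome within (i) or (ii); the hard work has already been carried out in \cref{oddsec}. The only mild care needed is to confirm, in the subcase $C_{V_{\beta}}(V_{\alpha'})=V_{\beta}\cap Q_{\alpha'}$ with $b=5$, that the $\mathrm{F}_3$-type parabolic is treated as a weak BN-pair under \cref{greenbook}~(ii), so the corresponding fusion-system exotic example detected in \cite{ExoSpo} does not need a separate line item in the group-theoretic statement.
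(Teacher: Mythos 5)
Your proposal is correct and takes exactly the approach the paper implicitly intends: the theorem is stated as a summary ("We conclude this final section by summarizing what has been shown") without an explicit proof environment, and your bookkeeping accurately accounts for the case split $b=1$ versus $b\geq 3$, the further split in the $b\geq 3$ case by whether $C_{V_\beta}(V_{\alpha'})$ equals or is strictly contained in $V_\beta\cap Q_{\alpha'}$, and the assembly of the relevant propositions. Two small points worth noting for precision: first, the legitimacy of the partition on $C_{V_\beta}(V_{\alpha'})$ versus $V_\beta\cap Q_{\alpha'}$ rests on the containment $C_{V_\beta}(V_{\alpha'})\le V_\beta\cap Q_{\alpha'}$, which follows from \cref{BasicVB} (giving $C_T(V_{\alpha'})=C_{\alpha'}\le Q_{\alpha'}$) together with the minimality of $b$ placing $V_\beta$ inside an appropriate Sylow $p$-subgroup of $G_{\alpha'}$, rather than from \cref{b>1} itself (which instead records that both subcases are nonvacuous); second, in subcase $C_{V_\beta}(V_{\alpha'})=V_\beta\cap Q_{\alpha'}$ with $b\geq 3$ the constraints $b$ odd and $b\le 5$ yield $b\in\{3,5\}$ as you say, while in the subcase $C_{V_\beta}(V_{\alpha'})<V_\beta\cap Q_{\alpha'}$ one already has $b=3$ by \cref{nb=3}. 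Your closing remark about the $\mathrm{F}_3$-type amalgam is well taken: \cref{greenbook} classifies it as a weak BN-pair of rank $2$, so outcome (i) covers it as a group-theoretic statement, with the exotic fusion system arising only after the fusion-theoretic identification in the \hyperlink{MainThm}{Main Theorem}.
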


\printbibliography

\end{document}